%%%%%%%%%%%%%%%%%%%%%%%%%%%%%%%%%%%%%%%%%%%%%%%%%%%%%%%%%%%%%%%%%%%%%%%%
%%%%%%%%%%%%%%%%%%%%%%%%%%%%%%%%%%%%%%%%%%%%%%%%%%%%%%%%%%%%%%%%%%%%%%%%
%%%% This is "A theory of generalized Donaldson--Thomas invariants" %%%%
%%%%  by Dominic Joyce and Yinan Song,                              %%%%
%%%%  The Mathematical Institute, Oxford.                           %%%%
%%%%  arXiv:0810.5645, version 6, July 2010.                        %%%%
%%%%  Final version, to appear in Memoirs of the AMS.               %%%%
%%%%%%%%%%%%%%%%%%%%%%%%%%%%%%%%%%%%%%%%%%%%%%%%%%%%%%%%%%%%%%%%%%%%%%%%
%%%%%%%%%%%%%%%%%%%%%%%%%%%%%%%%%%%%%%%%%%%%%%%%%%%%%%%%%%%%%%%%%%%%%%%%
\documentclass{article}
\usepackage{amsmath}
\usepackage{amssymb}
\usepackage{amsthm}
\usepackage{graphicx}
\usepackage{cite}
\usepackage[arrow,curve,matrix,arc]{xy}
\usepackage{makeidx}
\usepackage[refpage,intoc]{nomencl}
\makenomenclature
\makeindex
\DeclareFontFamily{U}{rsfs}{} \DeclareFontShape{U}{rsfs}{n}{it}{<->
rsfs10}{} \DeclareSymbolFont{mscr}{U}{rsfs}{n}{it}
\DeclareSymbolFontAlphabet{\scr}{mscr}
\def\mathscr{\scr}
\begin{document}
%%%%%%%%%%%%%%%%%%%%%%%%%%%%%%%%%%%%%%%%%%%%%%%%%%%%%%%%%%%%%%%%%%%%%%%%
%%%%%%%%%%%%%    Theorem-like environments    %%%%%%%%%%%%%%%%%%%%%%%%%%
%%%%%%%%%%%%%%%%%%%%%%%%%%%%%%%%%%%%%%%%%%%%%%%%%%%%%%%%%%%%%%%%%%%%%%%%
\theoremstyle{plain}
\newtheorem{thm}{Theorem}[section]
\newtheorem{lem}[thm]{Lemma}
\newtheorem{prop}[thm]{Proposition}
\newtheorem{cor}[thm]{Corollary}
\newtheorem{quest}[thm]{Question}
\newtheorem{conj}[thm]{Conjecture}
\theoremstyle{definition}
\newtheorem{dfn}[thm]{Definition}
\newtheorem{ass}[thm]{Assumption}
\newtheorem{ex}[thm]{Example}
\newtheorem{rem}[thm]{Remark}
\numberwithin{equation}{section}
%%%%%%%%%%%%%%%%%%%%%%%%%%%%%%%%%%%%%%%%%%%%%%%%%%%%%%%%%%%%%%%%%%%%%%%%
%%%%%%%%%%%%%%%%  Commands for glossary %%%%%%%%%%%%%%%%%%%%%%%%%%%%%%%%
%%%%%%%%%%%%%%%%%%%%%%%%%%%%%%%%%%%%%%%%%%%%%%%%%%%%%%%%%%%%%%%%%%%%%%%%
\renewcommand{\nomname}{Glossary of Notation}
\renewcommand{\pagedeclaration}[1]{, #1}
\setlength{\nomitemsep}{0pt}
%%%%%%%%%%%%%%%%%%%%%%%%%%%%%%%%%%%%%%%%%%%%%%%%%%%%%%%%%%%%%%%%%%%%%%%%
%%%%%%%%%%%%%  Shortcuts for equation numbering  %%%%%%%%%%%%%%%%%%%%%%%
%%%%%%%%%%%%%%%%%%%%%%%%%%%%%%%%%%%%%%%%%%%%%%%%%%%%%%%%%%%%%%%%%%%%%%%%
\def\e#1\e{\begin{equation}#1\end{equation}}
\def\ea#1\ea{\begin{align}#1\end{align}}
\def\eq#1{{\rm(\ref{#1})}}
%%%%%%%%%%%%%%%%%%%%%%%%%%%%%%%%%%%%%%%%%%%%%%%%%%%%%%%%%%%%%%%%%%%%%%%%
%%%%%%%%%%%%%%%%%%%%%%  general macros  %%%%%%%%%%%%%%%%%%%%%%%%%%%%%%%%
%%%%%%%%%%%%%%%%%%%%%%%%%%%%%%%%%%%%%%%%%%%%%%%%%%%%%%%%%%%%%%%%%%%%%%%%
\newcommand{\Ker}{\mathop{\rm Ker}}
\newcommand{\End}{\mathop{\rm End}\nolimits}
\newcommand{\Coker}{\mathop{\rm Coker}}
\renewcommand{\Im}{\mathop{\rm Im}}
\newcommand{\rank}{\mathop{\rm rank}}
\newcommand{\hd}{\mathop{\rm hd}}
\newcommand{\Hilb}{\mathop{\rm Hilb}\nolimits}
\newcommand{\coh}{\mathop{\rm coh}\nolimits}
\newcommand{\cs}{{\rm cs}}
\newcommand{\Con}{{\sst{\rm Con}}}
\newcommand{\id}{\mathop{\rm id}\nolimits}
\newcommand{\bdim}{{\mathbin{\bf dim}\kern.1em}}
\newcommand{\Crit}{\mathop{\rm Crit}}
\newcommand{\Stab}{\mathop{\rm Stab}\nolimits}
\newcommand{\stab}{\mathop{\mathfrak{stab}}\nolimits}
\newcommand{\Quot}{\mathop{\rm Quot}}
\newcommand{\CF}{\mathop{\rm CF}\nolimits}
\newcommand{\GL}{\mathop{\rm GL}}
\newcommand{\SO}{\mathop{\rm SO}}
\newcommand{\SU}{\mathop{\rm SU}}
\newcommand{\SL}{\mathop{\rm SL}}
\newcommand{\U}{\mathop{\ts\rm U}}
\newcommand{\Gr}{\mathop{\rm Gr}}
\newcommand{\Mo}{\mathop{\text{M\"o}}}
\newcommand{\Ch}{\mathop{\rm Ch}}
\newcommand{\Tr}{\mathop{\rm Tr}}
\newcommand{\Eu}{\mathop{\rm Eu}\nolimits}
\newcommand{\ch}{\mathop{\rm ch}\nolimits}
\newcommand{\num}{{\rm num}}
\newcommand{\vir}{{\rm vir}}
\newcommand{\stp}{{\rm stp}}
\newcommand{\fr}{{\rm fr}}
\newcommand{\stf}{{\rm stf}}
\newcommand{\rsi}{{\rm si}}
\newcommand{\na}{{\rm na}}
\newcommand{\stk}{{\rm stk}}
\newcommand{\rss}{{\rm ss}}
\newcommand{\st}{{\rm st}}
\newcommand{\all}{{\rm all}}
\newcommand{\rk}{{\rm rk}}
\newcommand{\vi}{{\rm vi}}
\newcommand{\an}{{\rm an}}
\newcommand{\cha}{\mathop{\rm char}}
\newcommand{\ind}{{\rm \kern.05em ind}}
\newcommand{\Exp}{\mathop{\rm Exp}}
\newcommand{\Aut}{\mathop{\rm Aut}}
\newcommand{\Per}{\mathop{\rm Per}}
\newcommand{\cone}{\mathop{\rm cone}\nolimits}
\newcommand{\At}{\mathop{\rm At}\nolimits}
\newcommand{\Hom}{\mathop{\rm Hom}\nolimits}
\newcommand{\Iso}{\mathop{\rm Iso}\nolimits}
\newcommand{\Ext}{\mathop{\rm Ext}\nolimits}
\newcommand{\Spec}{\mathop{\rm Spec}\nolimits}
\newcommand{\LCF}{\mathop{\rm LCF}\nolimits}
\newcommand{\SF}{\mathop{\rm SF}\nolimits}
\newcommand{\SFa}{\mathop{\rm SF}\nolimits_{\rm al}}
\newcommand{\SFai}{\mathop{\rm SF}\nolimits_{\rm al}^{\rm ind}}
\newcommand{\uSF}{\mathop{\smash{\underline{\rm SF\!}\,}}\nolimits}
\newcommand{\uSFi}{\mathop{\smash{\underline{\rm SF\!}\,}}\nolimits^{\rm ind}}
\newcommand{\oSF}{\mathop{\bar{\rm SF}}\nolimits}
\newcommand{\oSFa}{\mathop{\bar{\rm SF}}\nolimits_{\rm al}}
\newcommand{\oSFai}{{\ts\bar{\rm SF}{}_{\rm al}^{\rm ind}}}
\newcommand{\uoSF}{\mathop{\bar{\underline{\rm SF\!}\,}}\nolimits}
\newcommand{\uoSFa}{\mathop{\bar{\underline{\rm SF\!}\,}}\nolimits_{\rm al}}
\newcommand{\uoSFi}{\mathop{\bar{\underline{\rm SF\!}\,}}\nolimits_{\rm al}^{\rm
ind}}
\newcommand{\LSF}{\mathop{\rm LSF}\nolimits}
\newcommand{\LSFa}{\mathop{\rm LSF}\nolimits_{\rm al}}
\newcommand{\uLSF}{\mathop{\smash{\underline{\rm LSF\!}\,}}\nolimits}
\newcommand{\dLSF}{{\dot{\rm LSF}}\kern-.1em\mathop{}\nolimits}
\newcommand{\doLSF}{{\dot{\bar{\rm LSF}}}\kern-.1em\mathop{}\nolimits}
\newcommand{\duoLSF}{{\dot{\bar{\underline{\rm LSF\!}\,}}}\kern-.1em\mathop{}
\nolimits}
\newcommand{\ouLSF}{{\bar{\underline{\rm LSF\!}\,}}\kern-.1em\mathop{}
\nolimits}
\newcommand{\dLSFi}{{\dot{\rm LSF}}\kern-.1em\mathop{}\nolimits^{\rm ind}}
\newcommand{\dLSFa}{{\dot{\rm LSF}}\kern-.1em\mathop{}\nolimits_{\rm al}}
\newcommand{\doLSFa}{{\dot{\bar{\rm LSF}}}\kern-.1em\mathop{}\nolimits_{\rm al}}
\newcommand{\dLSFai}{{\dot{\rm LSF}}\kern-.1em\mathop{}\nolimits^{\rm ind}_{\rm
al}}
\newcommand{\duLSF}{{\dot{\underline{\rm LSF\!}\,}}\kern-.1em\mathop{}\nolimits}
\newcommand{\duLSFi}{{\dot{\underline{\rm LSF\!}\,}}\kern-.1em\mathop{}
\nolimits^{\rm ind}}
\newcommand{\oLSF}{\mathop{\bar{\rm LSF}}\nolimits}
\newcommand{\oLSFa}{\mathop{\bar{\rm LSF}}\nolimits_{\rm al}}
\newcommand{\oLSFai}{\mathop{\bar{\rm LSF}}\nolimits_{\rm al}^{\rm ind}}
\newcommand{\uoLSF}{\mathop{\bar{\underline{\rm LSF\!}\,}}\nolimits}
\newcommand{\uoLSFa}{\mathop{\bar{\underline{\rm LSF\!}\,}}\nolimits_{\rm al}}
\newcommand{\uoLSFi}{\mathop{\bar{\underline{\rm LSF\!}\,}}\nolimits_{\rm
al}^{\rm ind}}
\newcommand{\modCQ}{\text{\rm mod-$\C Q$}}
\newcommand{\modCQI}{\text{\rm mod-$\C Q/I$}}
\newcommand{\modKQ}{\text{\rm mod-$\K Q$}}
\newcommand{\modKQI}{\text{\rm mod-$\K Q/I$}}
\newcommand{\projKQ}{\text{\rm proj-$\K Q$}}
\newcommand{\projKQI}{\text{\rm proj-$\K Q/I$}}
\newcommand{\Obj}{\mathop{\rm Obj\kern .1em}\nolimits}
\newcommand{\fObj}{\mathop{\mathfrak{Obj}\kern .05em}\nolimits}
\newcommand{\Sch}{\mathop{\rm Sch}\nolimits}
\newcommand{\bs}{\boldsymbol}
\renewcommand{\ge}{\geqslant}
\renewcommand{\le}{\leqslant\nobreak}
\newcommand{\sA}{{\mathbin{\mathscr A}}}
\newcommand{\sAs}{{\mathscr A}\kern-1.5pt{}_{\rm si}}
\newcommand{\sB}{{\mathbin{\mathscr B}}}
\newcommand{\sE}{{\mathbin{\mathscr E}}}
\newcommand{\sG}{{\mathbin{\mathscr G}}}
\newcommand{\sS}{{\mathbin{\mathscr S}\kern-3pt{}}}
\newcommand{\bA}{{\mathbin{\mathbb A}}}
\newcommand{\bG}{{\mathbin{\mathbb G}}}
\newcommand{\A}{{\mathbin{\mathcal A}}}
\newcommand{\B}{{\mathbin{\mathcal B}}}
\newcommand{\fE}{{\mathbin{\mathfrak E}}}
\newcommand{\fF}{{\mathbin{\mathfrak F}}}
\newcommand{\fG}{{\mathbin{\mathfrak G}}}
\newcommand{\fH}{{\mathbin{\mathfrak H}}}
\newcommand{\R}{{\mathbin{\mathbb R}}}
\newcommand{\Z}{{\mathbin{\mathbb Z}}}
\newcommand{\cQ}{{\mathbin{\mathcal Q}}}
\newcommand{\Q}{{\mathbin{\mathbb Q}}}
\newcommand{\fN}{{\mathbin{\mathfrak N}}}
\newcommand{\N}{{\mathbin{\mathbb N}}}
\newcommand{\fU}{{\mathbin{\mathfrak{U}\kern .05em}}}
\newcommand{\fV}{{\mathbin{\mathfrak V}}}
\newcommand{\C}{{\mathbin{\mathbb C}}}
\newcommand{\K}{{\mathbin{\mathbb K}}}
\newcommand{\CP}{{\mathbin{\mathbb{CP}}}}
\newcommand{\RP}{{\mathbin{\mathbb{RP}}}}
\newcommand{\M}{{\mathbin{\mathcal M}}}
\newcommand{\fM}{{\mathbin{\mathfrak M}}}
\newcommand{\g}{{\mathbin{\mathfrak g}}}
\newcommand{\m}{{\mathbin{\mathfrak m}}}
\newcommand{\oM}{{\mathbin{\smash{\,\,\overline{\!\!\mathcal M\!}\,}}}}
\newcommand{\cO}{{\mathcal O}}
\newcommand{\fR}{{\mathbin{\mathfrak R}}}
\newcommand{\fS}{{\mathbin{\mathfrak S}}}
\newcommand{\fExact}{\mathop{\mathfrak{Exact}\kern .05em}\nolimits}
\newcommand{\Vect}{{\mathbin{\mathcal{V}ect}}}
\newcommand{\fVect}{{\mathbin{\mathfrak{Vect}}}}
\newcommand{\Hol}{\mathbin{\mathcal{H}ol}}
\newcommand{\fHol}{{\mathbin{\mathfrak{Hol}}}}
\newcommand{\fW}{{\mathbin{\mathfrak W}}}
\newcommand{\SHom}{\mathbin{\mathcal{H}om}}
\newcommand{\SExt}{\mathcal{E}xt}
\newcommand{\dtensor}{\stackrel{L}{\otimes}}
\newcommand{\tr}{\mathop{\rm tr}\nolimits}
\newcommand{\Exal}{\mathop{\rm Exal}\nolimits}
\newcommand{\al}{\alpha}
\newcommand{\be}{\beta}
\newcommand{\ga}{\gamma}
\newcommand{\de}{\delta}
\newcommand{\bde}{\bar\delta}
\newcommand{\bdss}{\bar\delta_{\rm ss}}
\newcommand{\io}{\iota}
\newcommand{\ep}{\epsilon}
\newcommand{\bep}{\bar\epsilon}
\newcommand{\la}{\lambda}
\newcommand{\ka}{\kappa}
\newcommand{\ze}{\zeta}
\newcommand{\up}{\upsilon}
\newcommand{\vp}{\varphi}
\newcommand{\si}{\sigma}
\newcommand{\om}{\omega}
\newcommand{\De}{\Delta}
\newcommand{\La}{\Lambda}
\newcommand{\Om}{\Omega}
\newcommand{\Up}{\Upsilon}
\newcommand{\Ga}{\Gamma}
\newcommand{\Si}{\Sigma}
\newcommand{\Th}{\Theta}
\newcommand{\pd}{\partial}
\newcommand{\db}{{\bar\partial}}
\newcommand{\ts}{\textstyle}
\newcommand{\sst}{\scriptscriptstyle}
\newcommand{\w}{\wedge}
\newcommand{\sm}{\setminus}
\newcommand{\bu}{\bullet}
\newcommand{\op}{\oplus}
\newcommand{\ot}{\otimes}
\newcommand{\ov}{\overline}
\newcommand{\bigop}{\bigoplus}
\newcommand{\bigot}{\bigotimes}
\newcommand{\iy}{\infty}
\newcommand{\es}{\emptyset}
\newcommand{\ra}{\rightarrow}
\newcommand{\Ra}{\Rightarrow}
\newcommand{\ab}{\allowbreak}
\newcommand{\longra}{\longrightarrow}
\newcommand{\hookra}{\hookrightarrow}
\newcommand{\lt}{\ltimes}
\newcommand{\el}{{\mathbin{\ell\kern .08em}}}
\newcommand{\ci}{\circ}
\newcommand{\ti}{\tilde}
\newcommand{\gr}{\grave}
\newcommand{\rd}{{\rm d}}
\newcommand{\ha}{{\ts\frac{1}{2}}}
\def\md#1{\vert #1 \vert}
\def\nm#1{\Vert #1 \Vert}
\def\bmd#1{\big\vert #1 \big\vert}
\def\bnm#1{\big\Vert #1 \big\Vert}
\def\ms#1{\vert #1 \vert^2}
%%%%%%%%%%%%%%%%%%%%%%%%%%%%%%%%%%%%%%%%%%%%%%%%%%%%%%%%%%%%%%%%%%%%%%%%
%%%%%%%%%%%%%%%%%%%%%%%%%%%%%%%%%%%%%%%%%%%%%%%%%%%%%%%%%%%%%%%%%%%%%%%%
%%%%%%%%%%%%%%%%%%%%%%%%%%%%%%%%%%%%%%%%%%%%%%%%%%%%%%%%%%%%%%%%%%%%%%%%
%%%%%%%%%%%%%%%%%%%%%%%%%%%%%%%%%%%%%%%%%%%%%%%%%%%%%%%%%%%%%%%%%%%%%%%%
\title{A theory of generalized Donaldson--Thomas invariants}
\author{Dominic Joyce\footnote{Address: The Mathematical Institute,
24-29 St. Giles, Oxford, OX1 3LB, U.K., E-mail: \tt
joyce@maths.ox.ac.uk}{}\ \ and Yinan Song\footnote{E-mail: \tt
yinansong@gmail.com}}
\date{}
\maketitle

\vskip -.7cm

\vskip -.7cm

\begin{abstract}
{\it Donaldson--Thomas invariants\/} $DT^\al(\tau)$ are integers
which `count' $\tau$-stable coherent sheaves with Chern character
$\al$ on a Calabi--Yau 3-fold $X$, where $\tau$ denotes Gieseker
stability for some ample line bundle on $X$. They are unchanged
under deformations of $X$. The conventional definition works only
for classes $\al$ containing no strictly $\tau$-semistable sheaves.
Behrend showed that $DT^\al(\tau)$ can be written as a weighted
Euler characteristic $\chi\bigl(\M_\st^\al(\tau),
\nu_{\M_\st^\al(\tau)}\bigr)$ of the stable moduli scheme
$\M_\st^\al(\tau)$ by a constructible function
$\nu_{\M_\st^\al(\tau)}$ we call the `Behrend function'.

This book studies {\it generalized Donaldson--Thomas invariants\/}
$\bar{DT}{}^\al(\tau)$. They are rational numbers which `count' both
$\tau$-stable and $\tau$-semistable coherent sheaves with Chern
character $\al$ on $X$; strictly $\tau$-semistable sheaves must be
counted with complicated rational weights. The
$\bar{DT}{}^\al(\tau)$ are defined for all classes $\al$, and are
equal to $DT^\al(\tau)$ when it is defined. They are unchanged under
deformations of $X$, and transform by a wall-crossing formula under
change of stability condition~$\tau$.

To prove all this we study the local structure of the moduli stack
$\fM$ of coherent sheaves on $X$. We show that an atlas for $\fM$
may be written locally as $\Crit(f)$ for $f:U\ra\C$ holomorphic and
$U$ smooth, and use this to deduce identities on the Behrend
function $\nu_\fM$. We compute our invariants $\bar{DT}{}^\al(\tau)$
in examples, and make a conjecture about their integrality
properties. We also extend the theory to abelian categories
$\modCQI$ of representations of a quiver $Q$ with relations $I$
coming from a superpotential $W$ on $Q$, and connect our ideas with
Szendr\H oi's noncommutative Donaldson--Thomas invariants, and work
by Reineke and others on invariants counting quiver representations.
Our book is closely related to Kontsevich and Soibelman's
independent paper~\cite{KoSo1}.
\end{abstract}

\vskip -.4cm

\setcounter{tocdepth}{2}
\tableofcontents
%%%%%%%%%%%%%%%%%%%%%%%%%%%%%%%%%%%%%%%%%%%%%%%%%%%%%%%%%%%%%%%%%%%%%%%%
%%%%%%  Index and glossary entries without page numbers are here  %%%%%%
%%%%%%%%%%%%%%%%%%%%%%%%%%%%%%%%%%%%%%%%%%%%%%%%%%%%%%%%%%%%%%%%%%%%%%%%
\nomenclature[coh(X)]{$\coh(X)$}{abelian category of coherent
sheaves on a scheme $X$\nomnorefpage}
\nomenclature[Db(coh(X))]{$D^b(\coh(X))$ or $D^b(X)$}{bounded
derived category of coherent sheaves on $X$\nomnorefpage}
\nomenclature[E,F,G]{$\fE,\fF,\fG,\fH,\ldots$}{Artin
$\K$-stacks\nomnorefpage}
\nomenclature[OX]{$\cO_X$}{structure sheaf of a scheme
$X$\nomnorefpage}
\nomenclature[K]{$\K$}{base field, usually algebraically
closed\nomnorefpage}
\nomenclature[Crit(f)]{$\Crit(f)$}{critical locus of a holomorphic
function $f$, as a complex analytic space\nomnorefpage}
\nomenclature[Hpq(X)]{$H^{p,q}(X)$}{Dolbeault cohomology groups of a
K\"ahler manifold $X$\nomnorefpage}
\nomenclature[P(V)]{${\mathbb P}(V)$}{projective space of a vector
space $V$\nomnorefpage}
\nomenclature[L\pi]{$L\pi^*$}{left derived pullback functor $D(Y)\ra
D(X)$ of a scheme morphism $\pi:X\ra Y$. See Huybrechts \cite[\S
3.3]{Huyb}.\nomnorefpage}
\nomenclature[L\pi b]{$\pi^{"!}$}{functor $D(Y)\ra D(X)$ mapping
${\cal E}\mapsto Lf^*({\cal E})\ot\om_\pi[\dim\pi]$ for a smooth
scheme morphism $\pi:X\ra Y$. See Huybrechts \cite[\S
3.3]{Huyb}.\nomnorefpage}
\nomenclature[R\pi]{$R\pi_*$}{right derived pushforward functor
$D(X)\ra D(Y)$ of a scheme morphism $\pi:X\ra Y$. See Huybrechts
\cite[\S 3.3]{Huyb}.\nomnorefpage}
\nomenclature[RHom]{$R\SHom$}{derived sheaf Hom functor. If ${\cal
E},{\cal F}\in D(X)$ then $R\SHom({\cal E},{\cal F})\in D(X)$. See
Huybrechts \cite[\S 3.3]{Huyb}.\nomnorefpage}
\index{sheaf!perverse|see{perverse sheaf}}
\index{sheaf!coherent|see{coherent sheaf}}
\index{coherent sheaf!derived category of|see{derived category}}
\index{triangulated category|seealso{derived category}}
%%%%%%%%%%%%%%%%%%%%%%%%%%%%%%%%%%%%%%%%%%%%%%%%%%%%%%%%%%%%%%%%%%%%%%%%
%%%%%%%%%%%%%%%%%%%%%%%% Text of book  %%%%%%%%%%%%%%%%%%%%%%%%%%%%%%%%%
%%%%%%%%%%%%%%%%%%%%%%%%%%%%%%%%%%%%%%%%%%%%%%%%%%%%%%%%%%%%%%%%%%%%%%%%
\section{Introduction}
\label{dt1}

Let $X$ be a Calabi--Yau 3-fold\index{Calabi--Yau 3-fold} over the
complex numbers $\C$, and $\cO_X(1)$ a very ample line bundle on
$X$. Our definition of Calabi--Yau 3-fold requires $X$ to be
projective, with $H^1(\cO_X)=0$. Write $\coh(X)$ for the abelian
category of coherent sheaves on $X$, and $K^\num(\coh(X))$ for the
numerical Grothendieck group\index{Grothendieck group!numerical} of
$\coh(X)$. We use $\tau$ to denote Gieseker stability of coherent
sheaves with respect to $\cO_X(1)$. If $E$ is a coherent sheaf on
$X$ then the class $[E]\in K^\num(\coh(X))$ is in effect the Chern
character $\ch(E)$ of $E$ in~$H^{\rm even}(X;\Q)$.

For $\al\in K^\num(\coh(X))$ we form the coarse moduli schemes
$\M_\rss^\al(\tau),\M_\st^\al(\tau)$ of $\tau$-(semi)stable sheaves
$E$ with $[E]\!=\!\al$. Then $\M_\rss^\al(\tau)$ is a projective
$\C$-scheme whose points correspond to
S-equivalence\index{S-equivalence} classes of $\tau$-semistable
sheaves, and $\M_\st^\al(\tau)$ is an open subscheme of
$\M_\rss^\al(\tau)$ whose points correspond to isomorphism classes
of $\tau$-stable sheaves.

For Chern characters $\al$ with
$\M_\rss^\al(\tau)=\M_\st^\al(\tau)$, following Donaldson and Thomas
\cite[\S 3]{DoTh}, Thomas \cite{Thom} constructed a symmetric
obstruction theory on $\M_\st^\al(\tau)$ and defined the {\it
Donaldson--Thomas invariant\/} to be the virtual
class\index{Donaldson--Thomas invariants!original $DT^\al(\tau)$}
\e
DT^\al(\tau)=\ts\int_{[\M_\st^\al(\tau)]^\vir}1\in\Z,
\label{dt1eq1}
\e
an integer which `counts' $\tau$-semistable sheaves in class $\al$.
Thomas' main result \cite[\S 3]{Thom} is that $DT^\al(\tau)$ is
unchanged under deformations of the underlying Calabi--Yau 3-fold
$X$. Later, Behrend \cite{Behr} showed that Donaldson--Thomas
invariants can be written as a weighted Euler characteristic
\e
DT^\al(\tau)=\chi\bigl(\M_\st^\al(\tau),\nu_{\M_\st^\al(\tau)}\bigr),
\label{dt1eq2}
\e
where $\nu_{\M_\st^\al(\tau)}$ is the {\it Behrend
function},\index{Behrend function} a constructible function on
$\M_\st^\al(\tau)$ depending only on $\M_\st^\al(\tau)$ as a
$\C$-scheme. (Here, and throughout, Euler characteristics are taken
with respect to cohomology with compact support.)

Conventional Donaldson--Thomas invariants $DT^\al(\tau)$ are only
defined for classes $\al$ with $\M_\rss^\al(\tau)=\M_\st^\al(\tau)$,
that is, when there are no strictly $\tau$-semistable sheaves. Also,
although $DT^\al(\tau)$ depends on the stability condition $\tau$,
that is, on the choice of very ample line bundle $\cO_X(1)$ on $X$,
this dependence was not understood until now. The main goal of this
book is to address these two issues.

For a Calabi--Yau 3-fold $X$ over $\C$ we will define {\it
generalized Donaldson--Thomas invariants\/}\index{Donaldson--Thomas
invariants!generalized $\bar{DT}{}^\al(\tau)$}
$\bar{DT}{}^\al(\tau)\in\Q$ for all $\al\in K^\num(\coh(X))$, which
`count' $\tau$-semistable sheaves in class $\al$. These have the
following important properties:
\begin{itemize}
\setlength{\itemsep}{0pt}
\setlength{\parsep}{0pt}
\item $\bar{DT}{}^\al(\tau)\in\Q$ is unchanged by deformations of the
Calabi--Yau 3-fold~$X$.
\item If $\M_\rss^\al(\tau)=\M_\st^\al(\tau)$ then $\bar{DT}{}^\al(\tau)$
lies in $\Z$ and equals the conventional Donaldson--Thomas invariant
$DT^\al(\tau)$ defined by Thomas~\cite{Thom}.
\item If $\M_\rss^\al(\tau)\ne\M_\st^\al(\tau)$ then conventional
Donaldson--Thomas invariants $DT^\al(\tau)$ are not defined for
class $\al$. Our generalized invariant $\bar{DT}{}^\al(\tau)$
may lie in $\Q$ because strictly semistable sheaves $E$ make
(complicated) $\Q$-valued contributions to
$\bar{DT}{}^\al(\tau)$. We can write the $\bar{DT}{}^\al(\tau)$
in terms of other invariants $\hat{DT}{}^\al(\tau)$ which, in
the case of `generic' $\tau$, we conjecture to be
integer-valued.
\item If $\tau,\ti\tau$ are two stability conditions on $\coh(X)$,
there is an explicit change of stability condition formula giving
$\bar{DT}{}^\al(\ti\tau)$ in terms of the $\bar{DT}{}^\be(\tau)$.
\end{itemize}
These invariants are a continuation of the first author's
programme~\cite{Joyc1,Joyc2,Joyc3,Joyc4,Joyc5,Joyc6,Joyc7}.

We begin with three sections of background. Sections
\ref{dt2}--\ref{dt3} explain material on constructible functions,
stack functions, Ringel--Hall algebras, counting invariants for
Calabi--Yau 3-folds, and wall-crossing, from the first author's
series \cite{Joyc1,Joyc2,Joyc3,Joyc4,Joyc5,Joyc6, Joyc7}. Section
\ref{dt4} explains Behrend's approach \cite{Behr} to
Donaldson--Thomas invariants as Euler characteristics of moduli
schemes weighted by the Behrend function, as in \eq{dt1eq2}. We
include some new material here, and explain the connection between
Behrend functions and the theory of perverse sheaves and vanishing
cycles. Our main results are given in \S\ref{dt5}, including the
definition of generalized Donaldson--Thomas invariants
$\bar{DT}{}^\al(\tau) \in\Q$, their deformation-invariance, and
wall-crossing formulae under change of stability condition~$\tau$.

Sections \ref{dt6} and \ref{dt7} give many examples, applications,
and generalizations of the theory, with \S\ref{dt6} focussing on
coherent sheaves on (possibly noncompact) Calabi--Yau 3-folds, and
\S\ref{dt7} on representations of quivers with superpotentials, in
connection with work by many authors on 3-Calabi--Yau categories,
noncommutative Donaldson--Thomas invariants, and so on. One new
piece of theory is that in \S\ref{dt62}, motivated by ideas in
Kontsevich and Soibelman \cite[\S 2.5 \& \S 7.1]{KoSo1}, we define
{\it BPS invariants\/} $\hat{DT}{}^\al(\tau)$ by the formula\index{BPS
invariants}\index{Donaldson--Thomas invariants!BPS invariants
$\hat{DT}{}^\al(\tau)$}
\begin{equation*}
\bar{DT}{}^\al(\tau)=\sum_{m\ge 1,\; m\mid\al}\frac{1}{m^2}\,
\hat{DT}{}^{\al/m}(\tau).
\end{equation*}
These are supposed to count `BPS states' in some String
Theory\index{String Theory} sense, and we conjecture that for `generic'
stability conditions $\tau$ we have $\hat{DT}{}^\al(\tau)\in\Z$ for
all $\al$. An analogue of this conjecture for invariants
$\hat{DT}{}^{\bs d}_Q(\mu)$ counting representations of a quiver $Q$
without relations is proved in~\S\ref{dt76}.

Sections \ref{dt8}--\ref{dt13} give the proofs of the main results
stated in \S\ref{dt5}, and we imagine many readers will not need to
look at these. In the rest of this introduction we survey
\S\ref{dt2}--\S\ref{dt7}. Section \ref{dt1.1} very briefly sketches
the set-up of \cite{Joyc1,Joyc2,Joyc3,Joyc4,Joyc5,Joyc6, Joyc7},
which will be explained in \S\ref{dt2}--\S\ref{dt3}. Section
\ref{dt1.2} discusses {\it Behrend functions\/} from \S\ref{dt4},
\S\ref{dt1.3} outlines the main results in \S\ref{dt5}, and
\S\ref{dt1.4}--\S\ref{dt1.5} summarize the applications and
generalizations in \S\ref{dt6}--\S\ref{dt7}. Finally, \S\ref{dt1.6}
explains the relations between our work and the recent paper of
Kontsevich and Soibelman \cite{KoSo1}, which is summarized in
\cite{KoSo2}. This book is surveyed in~\cite{Joyc8}.

In \S\ref{dt4}--\S\ref{dt7} we give problems for further research,
as Questions or Conjectures.

\subsection[Brief sketch of background from
$\text{\cite{Joyc1,Joyc2,Joyc3,Joyc4,Joyc5,Joyc6,Joyc7}}$]{Brief
sketch of background from \cite{Joyc1,Joyc2,Joyc3,Joyc4,Joyc5,Joyc6,Joyc7}}
\label{dt1.1}

We recall a few important ideas from \cite{Joyc1,Joyc2,Joyc3,Joyc4,
Joyc5,Joyc6,Joyc7}, which will be explained at greater length in
\S\ref{dt2}--\S\ref{dt3}. We work not with coarse moduli
schemes,\index{coarse moduli scheme}\index{moduli scheme!coarse} as in
\cite{Thom}, but with {\it Artin stacks}. Let $X$ be a Calabi--Yau
3-fold over $\C$, and write $\fM$ for the moduli stack of all
coherent sheaves $E$ on $X$. It is an Artin $\C$-stack.

The ring of `stack functions' $\SF(\fM)$ in \cite{Joyc2} is
basically the Grothendieck group $K_0(\mathop{\rm Sta}_\fM)$ of the
2-category $\mathop{\rm Sta}_\fM$ of stacks over $\fM$. That is,
$\SF(\fM)$ is generated by isomorphism classes $[(\fR,\rho)]$ of
representable 1-morphisms $\rho:\fR\ra\fM$ for $\fR$ a finite type
Artin $\C$-stack, with the relation
\begin{equation*}
[(\fR,\rho)]=[(\fS,\rho\vert_\fS)]+[(\fR\sm\fS,\rho\vert_{\fR\sm\fS})]
\end{equation*}
when $\fS$ is a closed $\C$-substack of $\fR$. But there is more to
stack functions than this. In \cite{Joyc2} we study different kinds
of stack function spaces with other choices of generators and
relations, and operations on these spaces. These include projections
$\Pi^\vi_n:\SF(\fM)\ra\SF(\fM)$ to stack functions of `virtual rank
$n$', which act on $[(\fR,\rho)]$ by modifying $\fR$ depending on
its stabilizer groups.

In \cite[\S 5.2]{Joyc4} we define a Ringel--Hall type
algebra\index{Ringel--Hall algebra} $\SFa(\fM)$ of stack
functions\index{stack function!with algebra stabilizers} `with algebra
stabilizers' on $\fM$, with an associative, non-commutative
multiplication $*$. In \cite[\S 5.2]{Joyc4} we define a Lie
subalgebra $\SFai(\fM)$ of stack functions `supported on virtual
indecomposables'.\index{virtual indecomposable}\index{stack
function!supported on virtual indecomposables} In \cite[\S
6.5]{Joyc4} we define an explicit Lie algebra $L(X)$ to be the
$\Q$-vector space with basis of symbols $\la^\al$ for $\al\in
K^\num(\coh(X))$, with Lie bracket
\e
[\la^\al,\la^\be]=\bar\chi(\al,\be)\la^{\al+\be},
\label{dt1eq3}
\e
for $\al,\be\in K^\num(\coh(X))$, where $\bar\chi(\,,\,)$ is the
Euler form\index{Euler form} on $K^\num(\coh(X))$. As $X$ is a
Calabi--Yau 3-fold, $\bar\chi$ is antisymmetric, so \eq{dt1eq3}
satisfies the Jacobi identity and makes $L(X)$ into an
infinite-dimensional Lie algebra over~$\Q$.

Then in \cite[\S 6.6]{Joyc4} we define a {\it Lie algebra
morphism\/} $\Psi:\SFai(\fM)\ra L(X)$, as in \S\ref{dt34} below.
Roughly speaking this is of the form
\e
\Psi(f)=\ts\sum_{\al\in K^\num(\coh(X))}\chi^{\rm stk}
\bigl(f\vert_{\fM^\al}\bigr)\la^{\al},
\label{dt1eq4}
\e
where $f=\sum_{i=1}^mc_i[(\fR_i,\rho_i)]$ is a stack function on
$M$, and $\fM^\al$ is the substack in $\fM$ of sheaves $E$ with
class $\al$, and $\chi^{\rm stk}$ is a kind of stack-theoretic Euler
characteristic. But in fact the definition of $\Psi$, and the proof
that $\Psi$ is a Lie algebra morphism, are highly nontrivial, and
use many ideas from \cite{Joyc1,Joyc2,Joyc4}, including those of
`virtual rank' and `virtual indecomposable'.\index{virtual
indecomposable} The problem is that the obvious definition of
$\chi^{\rm stk}$ usually involves dividing by zero, so defining
\eq{dt1eq4} in a way that makes sense is quite subtle. The proof
that $\Psi$ is a Lie algebra morphism uses Serre duality and the
assumption that $X$ is a Calabi--Yau 3-fold.

Now let $\tau$ be a stability condition on $\coh(X)$, such as
Gieseker stability. Then we have open, finite type substacks
$\fM_\rss^\al(\tau),\fM_\st^\al(\tau)$ in $\fM$ of
$\tau$-(semi)stable sheaves $E$ in class $\al$, for all $\al\in
K^\num(\coh(X))$. Write $\bar\de_\rss^\al(\tau)$ for the
characteristic function of $\fM_\rss^\al(\tau)$, in the sense of
stack functions \cite{Joyc2}. Then
$\bar\de_\rss^\al(\tau)\in\SFa(\fM)$. In \cite[\S 8]{Joyc5}, we
define elements $\bar\ep^\al(\tau)$ in $\SFa(\fM)$ by
\e
\bar\ep^\al(\tau)= \!\!\!\!\!\!\!
\sum_{\begin{subarray}{l}n\ge 1,\;\al_1,\ldots,\al_n\in
K^\num(\coh(X)):\\
\al_1+\cdots+\al_n=\al,\; \tau(\al_i)=\tau(\al),\text{ all
$i$}\end{subarray}} \!\!\!
\frac{(-1)^{n-1}}{n}\,\,\bar\de_\rss^{\al_1}(\tau)*\bar
\de_\rss^{\al_2}(\tau)* \cdots*\bar\de_\rss^{\al_n}(\tau),
\label{dt1eq5}
\e
where $*$ is the Ringel--Hall multiplication in $\SFa(\fM)$. Then
\cite[Th.~8.7]{Joyc5} shows that $\bar\ep^\al(\tau)$ lies in the Lie
subalgebra $\SFai(\fM)$, a nontrivial result.

Thus we can apply the Lie algebra morphism $\Psi$ to
$\bar\ep^\al(\tau)$. In \cite[\S 6.6]{Joyc6} we define invariants
$J^\al(\tau)\in\Q$ for all $\al\in K^\num(\coh(X))$ by
\e
\Psi\bigl(\bar\ep^\al(\tau)\bigr)=J^\al(\tau)\la^\al.
\label{dt1eq6}
\e
These $J^\al(\tau)$ are rational numbers `counting'
$\tau$-semistable sheaves $E$ in class $\al$. When
$\M_\rss^\al(\tau)=\M_\st^\al(\tau)$ we have
$J^\al(\tau)=\chi(\M_\st^\al(\tau))$, that is, $J^\al(\tau)$ is the
na\"\i ve Euler characteristic of the moduli space
$\M_\st^\al(\tau)$. This is {\it not\/} weighted by the Behrend
function $\nu_{\M_\st^\al(\tau)}$, and so in general does not
coincide with the Donaldson--Thomas invariant $DT^\al(\tau)$
in~\eq{dt1eq3}.

As the $J^\al(\tau)$ do not include Behrend functions, they do not
count semistable sheaves with multiplicity, and so they will not in
general be unchanged under deformations of the underlying
Calabi--Yau 3-fold, as Donaldson--Thomas invariants are. However,
the $J^\al(\tau)$ do have very good properties under change of
stability condition. In \cite{Joyc6} we show that if $\tau,\ti\tau$
are two stability conditions on $\coh(X)$, then we can write
$\bar\ep^\al(\ti\tau)$ in terms of a (complicated) explicit formula
involving the $\bar\ep^\be(\tau)$ for $\be\in K^\num(\coh(X))$ and
the Lie bracket in~$\SFai(\fM)$.

Applying the Lie algebra morphism $\Psi$ shows that
$J^\al(\ti\tau)\la^\al$ may be written in terms of the
$J^\be(\tau)\la^\be$ and the Lie bracket in $L(X)$, and hence
\cite[Th.~6.28]{Joyc6} we get an explicit transformation law for the
$J^\al(\tau)$ under change of stability condition. In \cite{Joyc7}
we show how to encode invariants $J^\al(\tau)$ satisfying a
transformation law in generating functions on a complex manifold of
stability conditions, which are both holomorphic and continuous,
despite the discontinuous wall-crossing behaviour of the
$J^\al(\tau)$. This concludes our sketch
of~\cite{Joyc1,Joyc2,Joyc3,Joyc4,Joyc5,Joyc6,Joyc7}.

\subsection[Behrend functions of schemes and stacks, from
$\text{\S\ref{dt4}}$]{Behrend functions of schemes and stacks, from
\S\ref{dt4}}
\label{dt1.2}\index{Behrend function|(}

Let $X$ be a $\C$-scheme or Artin $\C$-stack, locally of finite
type, and $X(\C)$ the set of $\C$-points of $X$. The {\it Behrend
function\/} $\nu_X:X(\C)\ra\Z$ is a $\Z$-valued locally
constructible function on $X$, in the sense of \cite{Joyc1}. They
were introduced by Behrend \cite{Behr} for finite type $\C$-schemes
$X$; the generalization to Artin stacks in \S\ref{dt41} is new but
straightforward. Behrend functions are also defined for complex
analytic spaces $X_\an$, and the Behrend function of a $\C$-scheme
$X$ coincides with that of the underlying complex analytic
space~$X_\an$.

A good way to think of Behrend functions is as {\it multiplicity
functions}. If $X$ is a finite type $\C$-scheme then the Euler
characteristic $\chi(X)$ `counts' points without multiplicity, so
that each point of $X(\C)$ contributes 1 to $\chi(X)$. If $X^{\rm
red}$ is the underlying reduced $\C$-scheme then $X^{\rm
red}(\C)=X(\C)$, so $\chi(X^{\rm red})=\chi(X)$, and $\chi(X)$ does
not see non-reduced behaviour in $X$. However, the weighted Euler
characteristic $\chi(X,\nu_X)$ `counts' each $x\in X(\C)$ weighted
by its multiplicity $\nu_X(x)$. The Behrend function $\nu_X$ detects
non-reduced behaviour, so in general $\chi(X,\nu_X)\ne\chi(X^{\rm
red},\nu_{X^{\rm red}})$. For example, let $X$ be the $k$-fold point
$\Spec\bigl(\C[z]/(z^k)\bigr)$ for $k\ge 1$. Then $X(\C)$ is a
single point $x$ with $\nu_X(x)=k$, so $\chi(X)=1=\chi(X^{\rm
red},\nu_{X^{\rm red}})$, but~$\chi(X,\nu_X)=k$.

An important moral of \cite{Behr} is that (at least in moduli
problems with symmetric obstruction theories, such as
Donaldson--Thomas theory) it is better to `count' points in a moduli
scheme $\M$ by the weighted Euler characteristic $\chi(\M,\nu_\M)$
than by the unweighted Euler characteristic $\chi(\M)$. One reason
is that $\chi(\M,\nu_\M)$ often gives answers unchanged under
deformations of the underlying geometry, but $\chi(\M)$ does not.
For example, consider the family of $\C$-schemes
$X_t=\Spec\bigl(\C[z]/(z^2-t^2)\bigr)$ for $t\in\C$. Then $X_t$ is
two reduced points $\pm t$ for $t\ne 0$, and a double point when
$t=0$. So as above we find that $\chi(X_t,\nu_{X_t})=2$ for all $t$,
which is deformation-invariant, but $\chi(X_t)$ is 2 for $t\ne 0$
and 1 for $t=0$, which is not deformation-invariant.

Here are some important properties of Behrend functions:
\begin{itemize}
\setlength{\itemsep}{0pt}
\setlength{\parsep}{0pt}
\item[(i)] If $X$ is a smooth Artin $\C$-stack of dimension $n\in\Z$
then~$\nu_X\equiv(-1)^n$.
\item[(ii)] If\/ $\vp:X\ra Y$ is a smooth 1-morphism of Artin
$\C$-stacks of relative dimension $n\in\Z$
then~$\nu_X\equiv(-1)^nf^*(\nu_Y)$.
\item[(iii)] Suppose $X$ is a proper $\C$-scheme equipped with
a symmetric obstruction theory, and $[X]^\vir$ is the
corresponding virtual class. Then\index{virtual class}
\e
\ts\int_{[X]^\vir}1=\chi(X,\nu_X)\in\Z.
\label{dt1eq7}
\e
\item[(iv)] Let $U$ be a complex manifold and $f:U\ra\C$ a holomorphic
function, and define $X$ to be the complex analytic space
$\Crit(f)\subseteq U$. Then the Behrend function $\nu_X$ of $X$ is
given by
\e
\nu_X(x)=(-1)^{\dim U}\bigl(1-\chi(MF_f(x))\bigr) \qquad\text{for
$x\in X$,}
\label{dt1eq8}
\e
where $MF_f(x)$ is the {\it Milnor fibre\/} of $f$ at
$x$.\index{Milnor fibre}
\end{itemize}
Equation \eq{dt1eq7} explains the equivalence of the two expressions
for $DT^\al(\tau)$ in \eq{dt1eq1} and \eq{dt1eq2} above. The Milnor
fibre description \eq{dt1eq8} of Behrend functions will be crucial
in proving the Behrend function identities
\eq{dt1eq10}--\eq{dt1eq11} below.\index{Behrend function|)}

\subsection[Summary of the main results in $\text{\S\ref{dt5}}$]{Summary
of the main results in \S\ref{dt5}}
\label{dt1.3}

The basic idea behind this whole book is that we should insert the
Behrend function $\nu_\fM$ of the moduli stack $\fM$ of coherent
sheaves in $X$ as a weight in the programme of
\cite{Joyc1,Joyc2,Joyc3,Joyc4,Joyc5,Joyc6,Joyc7} summarized in
\S\ref{dt1.1}. Thus we will obtain weighted versions $\ti\Psi$ of
the Lie algebra morphism $\Psi$ of \eq{dt1eq4}, and
$\bar{DT}{}^\al(\tau)$ of the counting invariant $J^\al(\tau)\in\Q$
in \eq{dt1eq6}. Here is how this is worked out in~\S\ref{dt5}.

We define a modification $\ti L(X)$ of the Lie algebra $L(X)$ above,
the $\Q$-vector space with basis of symbols $\ti \la^\al$ for
$\al\in K^\num(\coh(X))$, with Lie bracket
\begin{equation*}
[\ti\la^\al,\ti\la^\be]=(-1)^{\bar\chi(\al,\be)}
\bar\chi(\al,\be)\ti \la^{\al+\be},
\end{equation*}
which is \eq{dt1eq3} with a sign change. Then we define a {\it Lie
algebra morphism\/} $\ti\Psi:\SFai(\fM)\ra\ti L(X)$. Roughly
speaking this is of the form
\e
\ti\Psi(f)=\ts\sum_{\al\in K^\num(\coh(X))}\chi^{\rm stk}
\bigl(f\vert_{\fM^\al},\nu_{\fM}\bigr)\ti \la^{\al},
\label{dt1eq9}
\e
that is, in \eq{dt1eq4} we replace the stack-theoretic Euler
characteristic $\chi^{\rm stk}$ with a stack-theoretic Euler
characteristic weighted by the Behrend function $\nu_{\fM}$.

The proof that $\ti\Psi$ is a Lie algebra morphism combines the
proof in \cite{Joyc4} that $\Psi$ is a Lie algebra morphism with the
two Behrend function identities\index{Behrend function!identities}
\begin{gather}
\nu_{\fM}(E_1\op E_2)=(-1)^{\bar\chi([E_1],[E_2])}
\nu_{\fM}(E_1)\nu_{\fM}(E_2),
\label{dt1eq10}\\
\begin{split}
\int_{\begin{subarray}{l}[\la]\in\mathbb{P}(\Ext^1(E_2,E_1)):\\
\la\; \Leftrightarrow\; 0\ra E_1\ra F\ra E_2\ra
0\end{subarray}}\!\!\!\!\!\!\nu_{\fM}(F) \rd\chi -
\int_{\begin{subarray}{l}[\la']\in\mathbb{P}(\Ext^1(E_1,E_2)):\\
\la'\; \Leftrightarrow\; 0\ra E_2\ra F'\ra E_1\ra
0\end{subarray}}\!\!\!\!\!\!\nu_{\fM}(F')\rd\chi \\
=\bigl(\dim\Ext^1(E_2,E_1)-\dim\Ext^1(E_1,E_2)\bigr)
\nu_{\fM}(E_1\op E_2),
\end{split}
\label{dt1eq11}
\end{gather}
which will be proved in Theorem \ref{dt5thm4}. Here in \eq{dt1eq11}
the correspondence between $[\la]\in\mathbb{P}(\Ext^1(E_2,E_1))$ and
$F\in\coh(X)$ is that $[\la]\in\mathbb{P} (\Ext^1(E_2,E_1))$ lifts
to some $0\ne\la\in\Ext^1(E_2,E_1)$, which corresponds to a short
exact sequence $0\ra E_1\ra F\ra E_2\ra 0$ in $\coh(X)$. The
function $[\la]\mapsto \nu_{\fM}(F)$ is a constructible function
$\mathbb{P}(\Ext^1(E_2,E_1))\ra\Z$, and the integrals in
\eq{dt1eq11} are integrals of constructible functions using Euler
characteristic as measure, as in~\cite{Joyc1}.

Proving \eq{dt1eq10}--\eq{dt1eq11} requires a deep understanding of
the local structure of the moduli stack $\fM$, which is of interest
in itself. First we show in \S\ref{dt8} using a composition of
Seidel--Thomas twists by $\cO_X(-n)$ for $n\gg 0$ that $\fM$ is
locally 1-isomorphic to the moduli stack $\fVect$ of vector bundles
on $X$. Then we prove in \S\ref{dt9} that near $[E]\in\fVect(\C)$,
an atlas for $\fVect$ can be written locally in the complex analytic
topology in the form $\Crit(f)$ for $f:U\ra\C$ a holomorphic
function on an open set $U$ in $\Ext^1(E,E)$. These $U,f$ are {\it
not algebraic}, they are constructed using gauge theory on the
complex vector bundle $E$ over $X$ and transcendental methods.
Finally, we deduce \eq{dt1eq10}--\eq{dt1eq11} in \S\ref{dt10} using
the Milnor fibre expression \eq{dt1eq8} for Behrend functions
applied to these~$U,f$.

We then define {\it generalized Donaldson--Thomas invariants\/}
$\bar{DT}{}^\al(\tau)\in\Q$ by\index{Donaldson--Thomas
invariants!generalized $\bar{DT}{}^\al(\tau)$}
\e
\ti\Psi\bigl(\bar\ep^\al(\tau)\bigr)=-\bar{DT}{}^\al(\tau)\ti
\la^\al,
\label{dt1eq12}
\e
as in \eq{dt1eq6}. When $\M_\rss^\al(\tau)=\M_\st^\al(\tau)$ we have
$\bar\ep^\al(\tau)=\bar\de_\rss^\al(\tau)$, and \eq{dt1eq9} gives
\e
\ti\Psi\bigl(\bar\ep^\al(\tau)\bigr)=\chi^{\rm stk}\bigl(
\fM_\st^\al (\tau),\nu_{\fM_\st^\al(\tau)}\bigr)\ti\la^\al.
\label{dt1eq13}
\e
The projection $\pi:\fM_\st^\al(\tau)\ra\M_\st^\al(\tau)$ from the
moduli stack to the coarse moduli scheme\index{coarse moduli
scheme}\index{moduli scheme!coarse} is smooth of dimension $-1$, so
$\nu_{\fM_\st^\al(\tau)}=-\pi^*(\nu_{\M_\st^\al(\tau)})$ by (ii) in
\S\ref{dt1.2}, and comparing \eq{dt1eq2}, \eq{dt1eq12}, \eq{dt1eq13}
shows that $\bar{DT}{}^\al(\tau)=DT^\al(\tau)$. But our new
invariants $\bar{DT}{}^\al(\tau)$ are also defined for $\al$ with
$\M_\rss^\al(\tau)\ne\M_\st^\al(\tau)$, when conventional
Donaldson--Thomas invariants $DT^\al(\tau)$ are not defined.

Write $C(\coh(X))=\bigl\{[E]\in K^\num(\coh(X)):0\ne
E\in\coh(X)\bigr\}$ for the `positive cone' of classes in
$K^\num(\coh(X))$ of nonzero objects in $\coh(X)$. Then
$\M_\rss^\al(\tau)=\M_\st^\al(\tau)=\es$ if $\al\in
K^\num(\coh(X))\sm C(\coh(X))$, so $\bar{DT}{}^\al(\tau)=0$. By
convention the zero sheaf is not (semi)stable, so
$\M_\rss^0(\tau)=\es$ and~$\bar{DT}{}^0(\tau)=0$.

Since $\ti\Psi$ is a Lie algebra morphism, the change of stability
condition formula for the $\bar\ep^\al(\tau)$ in \cite{Joyc6}
implies a formula for the elements $-\bar{DT}{}^\al(\tau)\ti
\la^\al$ in $\ti L(X)$, and thus a transformation law for the
invariants $\bar{DT}{}^\al(\tau)$, of the form
\e
\begin{aligned}
&\bar{DT}{}^\al(\ti\tau)=\\
&\!\!\!\sum_{\substack{\text{iso.}\\ \text{classes}\\
\text{of finite}\\ \text{sets $I$}}}
\sum_{\substack{\ka:I\ra C(\coh(X)):\\ \sum_{i\in I}\ka(i)=\al}}\,
\sum_{\begin{subarray}{l} \text{connected,}\\
\text{simply-}\\ \text{connected}\\ \text{digraphs $\Ga$,}\\
\text{vertices $I$}\end{subarray}}
\begin{aligned}[t]
(-1)^{\md{I}-1} V(I,\Ga,\ka;\tau,\ti\tau)\cdot
\prod\nolimits_{i\in I} \bar{DT}{}^{\ka(i)}(\tau)&\\
\cdot (-1)^{\frac{1}{2}\sum_{i,j\in
I}\md{\bar\chi(\ka(i),\ka(j))}}\cdot\! \prod\limits_{\text{edges
\smash{$\mathop{\bu}\limits^{\sst i}\ra\mathop{\bu}\limits^{\sst
j}$} in $\Ga$}\!\!\!\!\!\!\!\!\!\!\!\!\!\!\!\!\!\!\!\!\!\!\!
\!\!\!\!\!\!\!\!} \bar\chi(\ka(i),\ka(j))&,\!\!
\end{aligned}
\end{aligned}
\label{dt1eq14}
\e
where $\bar\chi$ is the Euler form on $K^\num(\coh(X))$, and
$V(I,\Ga,\ka;\tau,\ti\tau)\in\Q$ are combinatorial coefficients
defined in \S\ref{dt35}.

To study our new invariants $\bar{DT}{}^\al(\tau)$, we find it
helpful to introduce another family of invariants
$PI^{\al,n}(\tau')$,\index{stable pair invariants
$PI^{\al,n}(\tau')$} similar to Pandharipande--Thomas invariants
\cite{PaTh}.\index{Pandharipande--Thomas invariants} Let $n\gg 0$ be
fixed. A {\it stable pair\/}\index{stable pair} is a nonzero
morphism $s:\cO_{X}(-n)\ra E$ in $\coh(X)$ such that $E$ is
$\tau$-semistable, and if $\Im s\subset E'\subset E$ with $E'\ne E$
then $\tau([E'])<\tau([E])$. For $\al\in K^\num(\coh(X))$ and $n\gg
0$, the moduli space $\M_\stp^{\al,n}(\tau')$ of stable pairs
$s:\cO_X(-n)\ra X$ with $[E]=\al$ is a fine moduli
scheme,\index{fine moduli scheme}\index{moduli scheme!fine} which is
proper and has a symmetric obstruction theory.\index{symmetric
obstruction theory}\index{obstruction theory!symmetric} We define
\e
\ts PI^{\al,n}(\tau')=\int_{[\M_\stp^{\al,n}(\tau')]^\vir}1=
\chi\bigl( \M_\stp^{\al,n}(\tau'),\nu_{\M_\stp^{\al,n}(\tau')}
\bigr)\in\Z,
\label{dt1eq15}
\e
where the second equality follows from \eq{dt1eq7}. By a similar
proof to that for Donaldson--Thomas invariants in \cite{Thom}, we
find that $PI^{\al,n}(\tau')$ is unchanged under deformations of the
underlying Calabi--Yau 3-fold~$X$.

By a wall-crossing\index{wall-crossing formula} proof similar to that
for \eq{dt1eq14}, we show that $PI^{\al,n}(\tau')$ can be written in
terms of the $\bar{DT}{}^\be(\tau)$ by
\e
PI^{\al,n}(\tau')=\!\!\!\!\!\!\!\!\!\!\!\!\!\!\!
\sum_{\begin{subarray}{l} \al_1,\ldots,\al_l\in
C(\coh(X)),\\ l\ge 1:\; \al_1
+\cdots+\al_l=\al,\\
\tau(\al_i)=\tau(\al),\text{ all\/ $i$}
\end{subarray} \!\!\!\!\!\!\!\!\! }
\begin{aligned}[t] \frac{(-1)^l}{l!} &\prod_{i=1}^{l}\bigl[
(-1)^{\bar\chi([\cO_X(-n)]-\al_1-\cdots-\al_{i-1},\al_i)} \\
&\bar\chi\bigl([\cO_{X}(-n)]\!-\!\al_1\!-\!\cdots\!-\!\al_{i-1},\al_i
\bigr) \bar{DT}{}^{\al_i}(\tau)\bigr],\!\!\!\!\!\!\!\!\!\!\!\!\!\!
\end{aligned}
\label{dt1eq16}
\e
for $n\gg 0$. Dividing the sum in \eq{dt1eq16} into cases $l=1$ and
$l\ge 1$ gives
\e
PI^{\al,n}(\tau')=(-1)^{P(n)-1}P(n)\bar{DT}{}^\al(\tau)
+\bigl\{\text{terms in $\ts\prod_{i=1}^l\bar{DT}{}^{\al_i}(\tau)$,
$l\ge 2$}\bigr\},
\label{dt1eq17}
\e
where $P(n)=\bar\chi([\cO_X(-n)],\al)$ is the Hilbert polynomial of
$\al$, so that $P(n)>0$ for $n\gg 0$. As $PI^{\al,n}(\tau')$ is
deformation-invariant, we deduce from \eq{dt1eq17} by induction on
$\rank\al$ with $\dim\al$ fixed that $\bar{DT}{}^\al(\tau)$ is also
deformation-invariant.

The pair invariants $PI^{\al,n}(\tau')$ are a useful tool for
computing the $\bar{DT}{}^\al(\tau)$ in examples in \S\ref{dt6}. The
method is to describe the moduli spaces $\M_\stp^{\al,n}(\tau')$
explicitly, and then use \eq{dt1eq15} to compute
$PI^{\al,n}(\tau')$, and \eq{dt1eq16} to deduce the values of
$\bar{DT}{}^\al(\tau)$. Our point of view is that the
$\bar{DT}{}^\al(\tau)$ are of primary interest, and the
$PI^{\al,n}(\tau')$ are secondary invariants, of less interest in
themselves.

\subsection[Examples and applications in $\text{\S\ref{dt6}}$]{Examples
and applications in \S\ref{dt6}}
\label{dt1.4}

In \S\ref{dt6} we compute the invariants $\bar{DT}{}^\al(\tau)$ and
$PI^{\al,n}(\tau')$ in several examples. One basic example is this:
suppose that $E$ is a rigid, $\tau$-stable sheaf in class $\al$ in
$K^\num(\coh(X))$, and that the only $\tau$-semistable sheaf in
class $m\al$ up to isomorphism is $mE=\op^mE$, for all $m\ge 1$.
Then we show that
\e
\bar{DT}{}^{m\al}(\tau)=\frac{1}{m^2} \quad\text{for all $m\ge 1$.}
\label{dt1eq18}
\e
Thus the $\bar{DT}{}^\al(\tau)$ can lie in $\Q\sm\Z$. We think of
\eq{dt1eq18} as a `multiple cover formula', similar to the well
known Aspinwall--Morrison computation for a Calabi--Yau 3-fold $X$,
that a rigid embedded $\CP^1$ in class $\al\!\in\!H_2(X;\Z)$
contributes $1/m^3$ to the genus zero Gromov--Witten invariant
$GW_{0,0}(m\al)$ of $X$ in class $m\al$ for all~$m\!\ge\! 1$.

In Gromov--Witten theory, one defines {\it Gopakumar--Vafa
invariants\/}\index{Gopakumar--Vafa invariants} $GV_g(\al)$ which
are $\Q$-linear combinations of Gromov--Witten
invariants,\index{Gromov--Witten invariants} and are conjectured to
be integers, so that they `count' curves in $X$ in a more meaningful
way. For a Calabi--Yau 3-fold in genus $g=0$ these satisfy
\begin{equation*}
GW_{0,0}(\al)=\sum_{m\ge 1,\; m\mid\al}\frac{1}{m^3}\, GV_0(\al/m),
\end{equation*}
where the factor $1/m^3$ is the Aspinwall--Morrison contribution. In
a similar way, and following Kontsevich and Soibelman \cite[\S 2.5
\& \S 7.1]{KoSo1}, we define {\it BPS invariants}
$\hat{DT}{}^\al(\tau)$ to satisfy
\e
\bar{DT}{}^\al(\tau)=\sum_{m\ge 1,\; m\mid\al}\frac{1}{m^2}\,
\hat{DT}{}^{\al/m}(\tau),
\label{dt1eq19}
\e
where the factor $1/m^2$ comes from \eq{dt1eq18}. The inverse of
\eq{dt1eq19} is
\begin{equation*}
\hat{DT}{}^\al(\tau)=\sum_{m\ge 1,\; m\mid\al}\frac{\Mo(m)}{m^2}\,
\bar{DT}{}^{\al/m}(\tau),
\end{equation*}
where $\Mo(m)$ is the M\"obius function\index{Mobius
function@M\"obius function} from elementary number theory. We have
$\hat{DT}{}^\al(\tau)=DT^\al(\tau)$ when
$\M_\rss^\al(\tau)=\M_\st^\al(\tau)$, so the BPS invariants are also
generalizations of Donaldson--Thomas invariants.

A {\it stability condition\/} $(\tau,T,\le)$,\index{stability condition}
or $\tau$ for short, on $\coh(X)$ is a totally ordered set $(T,\le)$
and a map $\tau:C(\coh(X))\ra T$ such that if $\al,\be,\ga\in
C(\coh(X))$ with $\be=\al+\ga$ then
$\tau(\al)\!<\!\tau(\be)\!<\!\tau(\ga)$ or
$\tau(\al)\!=\!\tau(\be)\!=\!\tau(\ga)$ or
$\tau(\al)\!>\!\tau(\be)\!>\!\tau(\ga)$. We call a stability
condition $\tau$ {\it generic\/} if for all $\al,\be\in C(\coh(X))$
with $\tau(\al)=\tau(\be)$ we have $\bar\chi(\al,\be)=0$, where
$\bar\chi$ is the Euler form of $\coh(X)$. We conjecture that if
$\tau$ is generic, then $\hat{DT}{}^\al(\tau)\in\Z$ for all $\al\in
C(\coh(X))$. We give evidence for this conjecture, and in
\S\ref{dt76} we prove the analogous result for invariants
$\hat{DT}{}^{\bs d}_Q(\mu)$ counting representations of a quiver $Q$
without relations.

In the situations in \S\ref{dt6}--\S\ref{dt7} in which we can
compute invariants explicitly, we usually find that the values of
$PI^{\al,n}(\tau')$ are complicated (often involving generating
functions with a MacMahon function\index{MacMahon function} type
factorization), the values of $\bar{DT}{}^\al(\tau)$ are simpler,
and the values of $\hat{DT}{}^\al(\tau)$ are simplest of all. For
example, for dimension zero sheaves, if $p=[\cO_x]\in
K^\num(\coh(X))$ is the class of a point sheaf, and $\chi(X)$ is the
Euler characteristic of the Calabi--Yau 3-fold $X$, we have
\begin{gather*}
\ts 1+\sum_{d\ge 1} PI^{dp,n}(\tau')s^d=\prod_{k\ge 1}\bigl(1-
(-s)^k\bigr){}^{-k\,\chi(X)},\\
\ts\bar{DT}{}^{dp}(\tau)=-\chi(X)\sum_{l\ge 1, \; l \mid
d}\,\frac{1}{l^2},\quad\text{and}\quad
\hat{DT}{}^{dp}(\tau)=-\chi(X),\quad\text{all $d\ge 1$.}
\end{gather*}

\subsection[Extension to quivers with superpotentials in
$\text{\S\ref{dt7}}$]{Extension to quivers with superpotentials in
\S\ref{dt7}}
\label{dt1.5}\index{quiver!with superpotential|(}\index{quiver|(}

Section \ref{dt7} studies an analogue of Donaldson--Thomas theory in
which the abelian category $\coh(X)$ of coherent sheaves on a
Calabi--Yau 3-fold is replaced by the abelian category $\modCQI$ of
representations of a quiver with relations
$(Q,I)$,\index{quiver!with relations} where the relations $I$ are
defined using a {\it superpotential\/} $W$\index{superpotential} on
the quiver $Q$. This builds on the work of many authors; we mention
in particular Ginzburg \cite{Ginz}, Hanany et al.\
\cite{FHKVW,HHV,HaKe,HaVe}, Nagao and Nakajima
\cite{Naga,NaNa,Naka}, Reineke et al.\
\cite{EnRe,MoRe,Rein1,Rein2,Rein3}, Szendr\H oi \cite{Szen}, and
Young and Bryan~\cite{Youn,YoBr}.

Categories $\modCQI$ coming from a quiver $Q$ with superpotential
$W$ share two important properties with categories $\coh(X)$ for $X$
a Calabi--Yau 3-fold:
\begin{itemize}
\setlength{\itemsep}{0pt}
\setlength{\parsep}{0pt}
\item[(a)] The moduli stack $\fM_{Q,I}$ of
objects in $\modCQI$ can locally be written in terms of
$\Crit(f)$ for $f:U\ra\C$ holomorphic and $U$ smooth.
\item[(b)] For all $D,E$ in $\modCQI$ we have
\begin{align*}
\bar\chi\bigl(\bdim D,\bdim E\bigr)=
\,&\bigl(\dim\Hom(D,E)-\dim\Ext^1(D,E) \bigr)-\\
&\bigl(\dim\Hom(E,D)-\dim\Ext^1(E,D)\bigr),
\end{align*}
where $\bar\chi:\Z^{Q_0}\times\Z^{Q_0}$ is an explicit
antisymmetric biadditive form on the group of dimension vectors
for~$Q$.
\end{itemize}
Using these we can extend most of \S\ref{dt1.3} to $\modCQI$: the
Behrend function identities \eq{dt1eq10}--\eq{dt1eq11}, the Lie
algebra $\ti L(X)$ and Lie algebra morphism $\ti\Psi$, the
invariants $\bar{DT}{}^\al(\tau),PI^{\al,n}(\tau')$ and their
transformation laws \eq{dt1eq14} and \eq{dt1eq16}. We allow the case
$W\equiv 0$, so that $\modCQI=\modCQ$, the representations of a
quiver $Q$ without relations.

One aspect of the Calabi--Yau 3-fold case which does not extend is
that in $\coh(X)$ the moduli schemes $\M_\rss^\al(\tau)$ and
$\M_\stp^{\al,n}(\tau')$ are {\it proper}, but the analogues in
$\modCQI$ are not. Properness is essential for forming virtual
cycles and proving deformation-invariance of $\bar{DT}{}^\al
(\tau),PI^{\al,n}(\tau')$. Therefore, the quiver analogues of
$\bar{DT}{}^\al(\tau),PI^{\al,n}(\tau')$ will in general not be
invariant under deformations of the superpotential~$W$.

It is an interesting question why such categories $\modCQI$ should
be good analogues of $\coh(X)$ for $X$ a Calabi--Yau 3-fold. In some
important classes of examples $Q,W$, such as those coming from the
{\it McKay correspondence\/}\index{McKay correspondence} for $\C^3/G$
for finite $G\subset\SL(3,\C)$, or from a {\it consistent brane
tiling},\index{brane tiling} the abelian category $\modCQI$ is 3-{\it
Calabi--Yau},\index{abelian category!3-Calabi--Yau} that is, Serre
duality\index{Serre duality} holds in dimension 3, so that
$\Ext^i(D,E)\cong\Ext^{3-i}(E,D)^*$ for all $D,E$ in $\modCQI$. In
the general case, $\modCQI$ can be embedded as the heart of a
t-structure in a 3-Calabi--Yau triangulated category~$\cal T$.

It turns out that our new Donaldson--Thomas type invariants for
quivers $\smash{\bar{DT}{}^{\bs
d}_{Q,I}(\mu)},\ab\smash{\hat{DT}{}^{\bs d}_{Q,I}(\mu)}$ have not
really been considered, but the quiver analogues of pair invariants
$PI^{\al,n}(\tau')$, which we write as $NDT_{Q,I}^{\bs d,\bs
e}(\mu')$, are known in the literature as {\it noncommutative
Donaldson--Thomas invariants},\index{Donaldson--Thomas
invariants!noncommutative} and studied in
\cite{EnRe,MoRe,Rein1,Rein2,Rein3,Szen,Youn,YoBr}. We prove that the
analogue of \eq{dt1eq16} holds:
\e
NDT^{\bs d,\bs e}_{Q,I}(\mu')=\!\!\!\!\!\!\!\!\!\!\!\!\!\!\!
\sum_{\begin{subarray}{l} \bs d_1,\ldots,\bs d_l\in
C(\modCQI),\\ l\ge 1:\; \bs d_1
+\cdots+\bs d_l=\bs d,\\
\mu(\bs d_i)=\mu(\bs d),\text{ all\/ $i$}
\end{subarray} \!\!\!\!\!\!\!\!\! }
\begin{aligned}[t] \frac{(-1)^l}{l!} &\prod_{i=1}^{l}\bigl[
(-1)^{\bs e\cdot\bs d_i-\bar\chi(\bs d_1+\cdots+\bs
d_{i-1},\bs d_i)} \\
&\bigl(\bs e\cdot\bs d_i-\bar\chi(\bs d_1\!+\!\cdots\!+\!\bs
d_{i-1},\bs d_i)\bigr)\bar{DT}{}^{\bs
d_i}_{Q,I}(\mu)\bigr].\!\!\!\!\!\!
\end{aligned}
\label{dt1eq20}
\e

We use computations of $\smash{NDT^{\bs d,\bs e}_{Q,I}(\mu')}$ in
examples by Szendr\H oi \cite{Szen} and Young and Bryan \cite{YoBr},
and equation \eq{dt1eq20} to deduce values of
$\smash{\bar{DT}{}^{\bs d}_{Q,I}(\mu)}$ and hence
$\smash{\hat{DT}{}^{\bs d}_{Q,I}(\mu)}$ in examples. We find the
$\smash{NDT^{\bs d,\bs e}_{Q,I}(\mu')}$ are complicated, the
$\smash{\bar{DT}{}^{\bs d}_{Q,I}(\mu)}$ simpler, and the
$\smash{\hat{DT}{}^{\bs d}_{Q,I}(\mu)}$ are very simple; this
suggests that the $\smash{\hat{DT}{}^{\bs d}_{Q,I}(\mu)}$ may be
more useful invariants than the $\smash{NDT^{\bs d,\bs
e}_{Q,I}(\mu')}$, a better tool for understanding what is really
going on in these examples.

For quivers $Q$ without relations (that is, with superpotential
$W\equiv 0$) and for generic slope stability conditions $\mu$ on
$\modCQ$, we prove using work of Reineke \cite{Rein1,Rein3} that the
quiver BPS invariants $\hat{DT}{}^{\bs d}_Q(\mu)$ are
integers.\index{quiver!with superpotential|)}\index{quiver|)}

\subsection[Relation to the work of Kontsevich and Soibelman
$\text{\cite{KoSo1}}$]{Relation to the work of Kontsevich and
Soibelman \cite{KoSo1}}
\label{dt1.6}

The recent paper of Kontsevich and Soibelman \cite{KoSo1},
summarized in \cite{KoSo2}, has significant overlaps with this book,
and with the previously published series
\cite{Joyc1,Joyc2,Joyc3,Joyc4,Joyc5,Joyc6,Joyc7}. Kontsevich and
Soibelman also study generalizations of Donaldson--Thomas
invariants, but they are more ambitious than us, and work in a more
general context --- they consider derived categories of coherent
sheaves, Bridgeland stability conditions, and general motivic
invariants, whereas we work only with abelian categories of coherent
sheaves, Gieseker stability, and the Euler characteristic.

The large majority of the research in this book was done
independently of \cite{KoSo1}. After the appearance of Behrend's
seminal paper \cite{Behr} in 2005, it was clear to the first author
that Behrend's approach should be integrated with \cite{Joyc1,Joyc2,
Joyc3,Joyc4,Joyc5,Joyc6,Joyc7} to extend Donaldson--Thomas theory,
much along the lines of the present book. Within a few months the
first author applied for an EPSRC grant to do this, and started work
on the project with the second author in October 2006.

When we first received an early version of \cite{KoSo1} in April
2008, we understood the material of \S\ref{dt53}--\S\ref{dt54} below
and many of the examples in \S\ref{dt6}, and had written
\S\ref{dt12} as a preprint, and we knew we had to prove the Behrend
function identities \eq{dt1eq10}--\eq{dt1eq11}, but for some months
we were unable to do so. Our eventual solution of the problem, in
\S\ref{dt51}--\S\ref{dt52}, was rather different to the
Kontsevich--Soibelman formal power series approach in~\cite[\S 4.4
\& \S 6.3]{KoSo1}.

There are three main places in this book in which we have made
important use of ideas from Kontsevich and Soibelman \cite{KoSo1},
which we would like to acknowledge with thanks. The first is that in
the proof of \eq{dt1eq10}--\eq{dt1eq11} in \S\ref{dt10} we localize
by the action of $\bigl\{\id_{E_1}+\la\id_{E_2}:\la\in\U(1)\bigr\}$
on $\Ext^1(E_1\op E_2, E_1\op E_2)$, which is an idea we got from
\cite[Conj.~4, \S 4.4]{KoSo1}. The second is that in \S\ref{dt62}
one should define BPS invariants $\hat{DT}{}^\al(\tau)$, and they
should be integers for generic $\tau$, which came from \cite[\S 2.5
\& \S 7.1]{KoSo1}. The third is that in \S\ref{dt7} one should
consider Donaldson--Thomas theory for $\modCQI$ coming from a quiver
$Q$ with arbitrary minimal superpotential $W$, rather than only
those for which $\modCQI$ is 3-Calabi--Yau, which came in part
from~\cite[Th.~9, \S 8.1]{KoSo1}.

Having said all this, we should make it clear that the parallels
between large parts of \cite{Joyc1,Joyc2,Joyc3,Joyc4,Joyc5,
Joyc6,Joyc7} and this book on the one hand, and large parts of
\cite[\S\S 1,2,4,6 \& 7]{KoSo1} on the other, are really very close
indeed. Some comparisons:
\begin{itemize}
\setlength{\itemsep}{0pt}
\setlength{\parsep}{0pt}
\item `Motivic functions in the equivariant setting' \cite[\S
4.2]{KoSo1} are basically the stack functions of~\cite{Joyc2}.
\item The `motivic Hall algebra' $H({\cal C})$ \cite[\S 6.1]{KoSo1}
is a triangulated category version of Ringel--Hall
algebras\index{Ringel--Hall algebra} of stack functions
$\SF(\fM_\A)$ in \cite[\S 5]{Joyc4}.
\item The `motivic quantum torus' ${\cal R}_\Ga$ in \cite[\S 6.2]{KoSo1}
is basically the algebra $A(\A,\La,\chi)$ in \cite[\S 6.2]{Joyc4}.
\item The Lie algebra ${\mathfrak g}_\Ga$ of \cite[\S 1.4]{KoSo1}
is our $\ti L(X)$ in \S\ref{dt1.3}.
\item The algebra morphism $\Phi:H({\cal C})\ra{\cal R}_\Ga$ in
\cite[Th.~8]{KoSo1} is similar to the algebra morphism
$\Phi^\La:\SF(\fM_\A)\ra A(\A,\La,\chi)$ in \cite[\S
6.2]{Joyc4}, and our Lie algebra morphism $\ti\Psi$ in
\S\ref{dt53} should be some kind of limit of their~$\Phi$.
\item Once their algebra morphism $\Phi$ and our Lie algebra
morphism $\ti\Psi$ are constructed, we both follow the method of
\cite{Joyc6} exactly to define Donaldson--Thomas invariants and
prove wall-crossing formulae for them.
\item Our $\bar{DT}{}^\al(\tau)$ and $\hat{DT}{}^\al(\tau)$ in
\S\ref{dt53}, \S\ref{dt62} should correspond to their
`quasi-classical invariants' $-a(\al)$ and $\Om(\al)$ in
\cite[\S 2.5 \& \S 7.1]{KoSo1}, respectively.
\end{itemize}

Some differences between our programme and that of~\cite{KoSo1}:
\begin{itemize}
\setlength{\itemsep}{0pt}
\setlength{\parsep}{0pt}
\item Nearly every major result in \cite{KoSo1} depends explicitly or
implicitly on conjectures, whereas by being less ambitious, we can
give complete proofs.
\item Kontsevich and Soibelman also tackle issues to do with
triangulated categories, such as including $\Ext^i(D,E)$ for
$i<0$, which we do not touch.
\item Although our wall-crossing formulae are both proved using the
method of \cite{Joyc6}, we express them differently. Our
formulae are written in terms of combinatorial coefficients
$S,U(\al_1,\ldots,\al_n;\tau,\ti\tau)$ and
$V(I,\Ga,\ka;\tau,\ti\tau)$, as in \S\ref{dt33} and
\S\ref{dt35}. These are not easy to work with; see \S\ref{dt133}
for a computation of $U(\al_1,\ldots,\al_n;\tau,\ti\tau)$ in an
example.

By contrast, Kontsevich and Soibelman write their wall-crossing
formulae in terms of products in a pro-nilpotent Lie group
$G_V$. This seems to be an important idea, and may be a more
useful point of view than ours. See Reineke \cite{Rein3} for a
proof of an integrality conjecture \cite[Conj.~1]{KoSo1} on
factorizations in $G_V$, which is probably related to our
Theorem~\ref{dt7thm6}.
\item We prove the Behrend function identities\index{Behrend function!identities}
\eq{dt1eq10}--\eq{dt1eq11} by first showing that near a point $[E]$
the moduli stack $\fM$ can be written in terms of $\Crit(f)$ for
$f:U\ra\C$ holomorphic and $U$ open in $\Ext^1(E,E)$. The proof uses
gauge theory and transcendental methods, and works only over~$\C$.

Their parallel passages \cite[\S 4.4 \& \S 6.3]{KoSo1} work over
a field $\K$ of characteristic zero, and say that the formal
completion $\hat\fM_{[E]}$ of $\fM$ at $[E]$ can be written in
terms of $\Crit(f)$ for $f$ a formal power series on
$\smash{\Ext^1(E,E)}$, with no convergence criteria. Their
analogue of \eq{dt1eq10}--\eq{dt1eq11}, \cite[Conj.~4]{KoSo1},
concerns the `motivic Milnor fibre' of the formal power series
$f$.
\item In \cite{Joyc2,Joyc4,Joyc5,Joyc6} the first author put a lot
of effort into the difficult idea of `virtual rank',\index{virtual
rank} the projections $\Pi^\vi_n$ on stack functions, the Lie
algebra $\SFai(\fM)$ of stack functions `supported on virtual
indecomposables',\index{virtual indecomposable} and the proof
\cite[Th.~8.7]{Joyc5} that $\bar\ep^\al(\tau)$ in \eq{dt1eq5}
lies in $\SFai(\fM)$. This is very important for us, as our Lie
algebra morphism $\ti\Psi$ in \eq{dt1eq9} is defined only on
$\SFai(\fM)$, so $\bar{DT}{}^\al(\tau)$ in \eq{dt1eq12} is only
defined because~$\bar\ep^\al(\tau)\in\SFai(\fM)$.

Most of this has no analogue in \cite{KoSo1}, but they come up
against the problem this technology was designed to solve in
\cite[\S 7]{KoSo1}. Roughly speaking, they first define
Donaldson--Thomas invariants $\bar{DT}{}_{\rm vP}^\al(\tau)$
over virtual Poincar\'e polynomials, which are rational
functions in $t$. They then want to specialize to $t=-1$ to get
Donaldson--Thomas invariants over Euler characteristics, which
should coincide with our $\bar{DT}{}^\al(\tau)$. But this is
only possible if $\bar{DT}{}_{\rm vP}^\al(\tau)$ has no pole at
$t=-1$, which they assume in their `absence of poles
conjectures' in \cite[\S 7]{KoSo1}. The fact that
$\bar\ep^\al(\tau)$ lies in $\SFai(\fM)$ should be the key to
proving these conjectures.
\end{itemize}

\bigskip

\bigskip

\noindent{\bf Acknowledgements.} We would like especially to thank
Tom Bridgeland and Richard Thomas for lots of help with various
parts of this project, and also to thank Kai Behrend, Jim Bryan,
Daniel Fox, Spiro Karigiannis, Sheldon Katz, Bernhard Keller,
Alastair King, Martijn Kool, Alan Lauder, Davesh Maulik, Sven
Meinhardt, Tommaso Pacini, J\"org Sch\"urmann, Ed Segal, Yan
Soibelman, Bal\'{a}zs Szendr\H{o}i, and Yukinobu Toda for useful
conversations. We would also like to thank two referees for service
above and beyond the call of duty. This research was supported by
EPSRC grant~EP/D077990/1.

\section{Constructible functions and stack functions}
\label{dt2}

We begin with some background material on Artin stacks,
constructible functions, stack functions, and motivic invariants,
drawn mostly from~\cite{Joyc1,Joyc2}.

\subsection{Artin stacks and (locally) constructible functions}
\label{dt21}\index{constructible function|(}

{\it Artin stacks\/}\index{Artin stack} are a class of geometric spaces,
generalizing schemes and algebraic spaces. For a good introduction
to Artin stacks see G\'omez \cite{Gome}, and for a thorough
treatment see Laumon and Moret-Bailly \cite{LaMo}. We make the
convention that all Artin stacks in this book are {\it locally of
finite type}, and substacks are {\it locally closed}. We work
throughout over an algebraically closed field $\K$. For the parts of
the story involving constructible functions, or needing a
well-behaved notion of Euler characteristic, $\K$ must have
characteristic zero.\index{field $\K$}\index{field $\K$!characteristic
zero}\index{field $\K$!algebraically closed}\index{field $\K$!positive
characteristic}

Artin $\K$-stacks form a 2-{\it category}.\index{2-category} That is, we
have {\it objects} which are $\K$-stacks $\fF,\fG$, and also two
kinds of morphisms, 1-{\it morphisms}\index{1-morphism}
$\phi,\psi:\fF\ra\fG$ between $\K$-stacks, and 2-{\it
morphisms}\index{2-morphism} $A:\phi\ra\psi$ between 1-morphisms.

\begin{dfn} Let $\K$ be an algebraically closed field, and $\fF$ a
$\K$-stack. Write $\fF(\K)$\nomenclature[F(K)]{$\fF(\K)$}{set of $\K$-points of
an Artin $\K$-stack $\fF$} for the set of 2-isomorphism classes $[x]$
of 1-morphisms $x:\Spec\K\ra\fF$. Elements of $\fF(\K)$ are called
$\K$-{\it points}, or {\it geometric points}, of $\fF$. If
$\phi:\fF\ra\fG$ is a 1-morphism then composition with $\phi$
induces a map of sets~$\phi_*:\fF(\K)\ra\fG(\K)$.

For a 1-morphism $x:\Spec\K\ra\fF$, the {\it stabilizer
group}\index{Artin stack!stabilizer group}
$\Iso_\fF(x)$\nomenclature[Iso]{$\Iso_\fF(x)$}{stabilizer group of an Artin
stack $\fF$ at the point $x$} is the group of 2-morphisms $x\ra x$.
When $\fF$ is an Artin $\K$-stack, $\Iso_\fF(x)$ is an {\it
algebraic $\K$-group}. We say that $\fF$ {\it has affine geometric
stabilizers}\index{Artin stack!affine geometric stabilizers} if
$\Iso_\fF(x)$ is an affine algebraic $\K$-group for all
1-morphisms~$x:\Spec\K\ra\fF$.

As an algebraic $\K$-group up to isomorphism, $\Iso_\fF(x)$ depends
only on the isomorphism class $[x]\in\fF(\K)$ of $x$ in
$\Hom(\Spec\K,\fF)$. If $\phi:\fF\ra\fG$ is a 1-morphism,
composition induces a morphism of algebraic $\K$-groups
$\phi_*:\Iso_\fF([x])\ra\Iso_\fG\bigr(\phi_*([x])\bigr)$,
for~$[x]\in\fF(\K)$.
\label{dt2def1}
\end{dfn}

Next we discuss {\it constructible functions} on $\K$-stacks,
following \cite{Joyc1}.\index{constructible function!definition}

\begin{dfn} Let $\K$ be an algebraically closed field of
characteristic zero, and $\fF$ an Artin $\K$-stack. We call
$C\subseteq\fF(\K)$ {\it constructible}\index{constructible set} if
$C=\bigcup_{i\in I} \fF_i(\K)$, where $\{\fF_i:i\in I\}$ is a finite
collection of finite type Artin $\K$-substacks $\fF_i$ of $\fF$. We
call $S\subseteq\fF(\K)$ {\it locally constructible}\index{locally
constructible set} if $S\cap C$ is constructible for all
constructible~$C\subseteq\fF(\K)$.

A function $f:\fF(\K)\ra\Q$ is called {\it constructible} if
$f(\fF(\K))$ is finite and $f^{-1}(c)$ is a constructible set in
$\fF(\K)$ for each $c\in f(\fF(\K))\sm\{0\}$. A function
$f:\fF(\K)\ra\Q$ is called {\it locally constructible}\index{locally
constructible function}\index{constructible function!locally} if
$f\cdot\de_C$ is constructible for all constructible
$C\subseteq\fF(\K)$, where $\de_C$ is the characteristic function of
$C$. Write $\CF(\fF)$\nomenclature[CF(F)]{$\CF(\fF)$}{$\Q$-vector space of
constructible functions on a stack $\fF$} and
$\LCF(\fF)$\nomenclature[LCF(F)]{$\LCF(\fF)$}{$\Q$-vector space of locally
constructible functions on a stack $\fF$} for the $\Q$-vector spaces
of $\Q$-valued constructible and locally constructible functions
on~$\fF$.
\label{dt2def2}
\end{dfn}

Following \cite[Def.s~4.8, 5.1 \& 5.5]{Joyc1} we define {\it
pushforwards} and {\it pullbacks} of constructible functions along
1-morphisms.

\begin{dfn} Let $\K$ have characteristic zero, and $\fF$ be an
Artin $\K$-stack with affine geometric stabilizers and
$C\subseteq\fF(\K)$ be constructible. Then \cite[Def.~4.8]{Joyc1}
defines the {\it na\"\i ve Euler characteristic}\index{na\"\i ve Euler
characteristic}\index{Euler characteristic!na\"\i ve}
$\chi^\na(C)$\nomenclature[\chi na(C)]{$\chi^\na(C)$}{na\"\i ve Euler
characteristic of a constructible set $C$ in a stack} of $C$. It is
called {\it na\"\i ve} as it takes no account of stabilizer groups.
For $f\in\CF(\fF)$, define $\chi^\na(\fF,f)$\nomenclature[\chi
na(F,f)]{$\chi^\na(\fF,f)$}{na\"\i ve Euler characteristic of an
Artin stack $\fF$ weighted by a constructible function $f$} in $\Q$
by
\begin{equation*}
\chi^\na(\fF,f)=\ts\sum_{c\in f(\fF(\K))\sm\{0\}}c\,\chi^\na
\bigl(f^{-1}(c)\bigr).
\end{equation*}

Let $\fF,\fG$ be Artin $\K$-stacks with affine geometric
stabilizers, and $\phi:\fF\ra\fG$ a 1-morphism. For $f\in\CF(\fF)$,
define $\CF^\na(\phi)f:\fG(\K)\ra\Q$
by\nomenclature[CFna(phi)]{$\CF^\na(\phi)$}{na\"\i ve pushforward of
constructible functions along 1-morphism $\phi$}
\begin{equation*}
\CF^\na(\phi)f(y)=\chi^\na\bigl(\fF,f\cdot
\de_{\phi_*^{-1}(y)}\bigr) \quad\text{for $y\in\fG(\K)$,}
\end{equation*}
where $\de_{\smash{\phi_*^{-1}(y)}}$ is the characteristic function
of $\phi_*^{-1}(\{y\})\subseteq\fG(\K)$ on $\fG(\K)$. Then
$\CF^\na(\phi):\CF(\fF)\ra\CF(\fG)$ is a $\Q$-linear map called the
{\it na\"\i ve pushforward}.\index{na\"\i ve
pushforward}\index{pushforward!na\"\i ve}

Now suppose $\phi$ is {\it
representable}.\index{1-morphism!representable} Then for any
$x\in\fF(\K)$ we have an injective morphism
$\phi_*:\Iso_\fF(x)\ra\Iso_\fG\bigl(\phi_*(x)\bigr)$ of affine
algebraic $\K$-groups. The image $\phi_*\bigl(\Iso_\fF(x)\bigr)$ is
an affine algebraic $\K$-group closed in $\Iso_\fG\bigl(
\phi_*(x)\bigr)$, so the quotient $\Iso_\fG\bigl(\phi_*(x)\bigr)
/\phi_*\bigl(\Iso_\fF(x)\bigr)$ exists as a quasiprojective
$\K$-variety. Define a function $m_\phi:\fF(\K)\ra\Z$ by
$m_\phi(x)=\chi\bigl(\Iso_\fG(\phi_*(x))/\phi_*(\Iso_\fF(x))\bigr)$
for $x\in\fF(\K)$. For $f\in\CF(\fF)$, define
$\CF^\stk(\phi)f:\fG(\K)\ra\Q$ by
\begin{equation*}
\CF^\stk(\phi)f(y)=\chi^\na\bigl(\fF,m_\phi\cdot f\cdot
\de_{\phi_*^{-1}(y)}\bigr) \quad\text{for $y\in\fG(\K)$.}
\end{equation*}
An alternative definition is
\begin{equation*}
\CF^\stk(\phi)f(y)=\chi\bigl(\fF\times_{\phi,\fG,y}\Spec\K,\pi_\fF^*(f)\bigr)
\quad\text{for $y\in\fG(\K)$,}
\end{equation*}
where $\fF\times_{\phi,\fG,y}\Spec\K$ is a $\K$-scheme (or algebraic
space) as $\phi$ is representable, and $\chi(\cdots)$ is the Euler
characteristic of this $\K$-scheme weighted by $\pi_\fF^*(f)$. These
two definitions are equivalent as the projection
$\pi_1:\fF\times_{\phi,\fG,y}\Spec\K\ra\fF$ induces a map on
$\K$-points
$(\pi_1)_*:(\fF\times_{\phi,\fG,y}\Spec\K)(\K)\ra\phi_*^{-1}(y)\subset
\fF(\K)$, and the fibre of $(\pi_1)_*$ over $x\in\phi_*^{-1}(y)$ is
$\bigl(\Iso_\fG(\phi_*(x))/\phi_*(\Iso_\fF(x))\bigr)(\K)$, with
Euler characteristic $m_\phi(x)$. Then $\CF^\stk(\phi):\CF(\fF)\ra
\CF(\fG)$ is a $\Q$-linear map called the {\it stack
pushforward}.\index{stack
pushforward}\index{pushforward!stack}\nomenclature[CFstk(phi)]{$\CF^\stk(\phi)$}{stack
pushforward of constructible functions along representable $\phi$}
If $\fF,\fG$ are $\K$-schemes then $\CF^\na(\phi),\CF^\stk(\phi)$
coincide, and we write them both as $\CF(\phi):\CF(\fF)\ra\CF(\fG)$.

Let $\theta:\fF\ra\fG$ be a finite type
1-morphism.\index{1-morphism!finite type} If $C\subseteq \fG(\K)$ is
constructible then so is $\theta_*^{-1}(C)\subseteq\fF(\K)$. It
follows that if $f\in\CF(\fG)$ then $f\ci\theta_*$ lies in
$\CF(\fF)$. Define the {\it pullback\/}
$\theta^*:\CF(\fG)\ra\CF(\fF)$ by $\theta^*(f)= f\ci\theta_*$. It is
a linear map.
\label{dt2def3}
\end{dfn}

Here \cite[Th.s~4.9, 5.4, 5.6 \& Def.~5.5]{Joyc1} are some
properties of these.

\begin{thm} Let\/ $\K$ have characteristic zero,\index{field
$\K$!characteristic zero} $\fE,\fF,\fG,\fH$ be Artin $\K$-stacks
with affine geometric stabilizers, and\/ $\be:\fF\ra\fG,$
$\ga:\fG\ra\fH$ be $1$-morphisms. Then
\ea
\CF^\na(\ga\ci\be)&=\CF^\na(\ga)\ci\CF^\na(\be):\CF(\fF)\ra\CF(\fH),
\label{dt2eq1}\\
\CF^\stk(\ga\ci\be)&=\CF^\stk(\ga)\ci\CF^\stk(\be):\CF(\fF)\ra\CF(\fH),
\label{dt2eq2}\\
(\ga\ci\be)^*&=\be^*\ci\ga^*:\CF(\fH)\ra\CF(\fF),
\label{dt2eq3}
\ea
supposing $\be,\ga$ representable in {\rm\eq{dt2eq2},} and of finite
type in \eq{dt2eq3}. If
\e
\begin{gathered}
\xymatrix{
\fE \ar[r]_\eta \ar[d]^\theta & \fG \ar[d]_\psi \\
\fF \ar[r]^\phi & \fH }
\end{gathered}
\quad
\begin{gathered}
\text{is a Cartesian square\index{Cartesian square} with}\\
\text{$\eta,\phi$ representable and}\\
\text{$\theta,\psi$ of finite type, then}\\
\text{the following commutes:}
\end{gathered}
\quad
\begin{gathered}
\xymatrix@C=35pt{
\CF(\fE) \ar[r]_{\CF^\stk(\eta)} & \CF(\fG) \\
\CF(\fF) \ar[r]^{\CF^\stk(\phi)} \ar[u]_{\theta^*} & \CF(\fH).
\ar[u]^{\psi^*} }
\end{gathered}
\label{dt2eq4}
\e
\label{dt2thm1}
\end{thm}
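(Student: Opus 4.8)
The plan is to reduce everything to the corresponding statements for constructible functions on schemes (or algebraic spaces), where \eqref{dt2eq1}--\eqref{dt2eq4} are standard, and then bootstrap to stacks using the definitions in Definition \ref{dt2def3}. For \eqref{dt2eq1}, the key observation is that $\CF^\na(\phi)f(y)$ is computed as a na\"\i ve Euler characteristic of a fibre, and $\chi^\na$ is additive over stratifications and multiplicative in fibrations at the level of Euler characteristics; so I would fix $z\in\fH(\K)$, stratify $\be_*^{-1}(\ga_*^{-1}(z))\subseteq\fF(\K)$ by the value of the relevant constructible functions and by which fibre $\be_*^{-1}(y)$ a point lies in, and check that both sides evaluate to $\sum_{y\in\ga_*^{-1}(z)}\chi^\na(\fF, f\cdot\de_{\be_*^{-1}(y)})$ after unwinding; this is really the statement that na\"\i ve Euler characteristic is ``motivic'', proved in \cite{Joyc1}. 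For \eqref{dt2eq3}, pullback is literally precomposition $f\mapsto f\circ\theta_*$ and composition of maps on $\K$-points is associative, so $(\ga\circ\be)^*(f) = f\circ(\ga\circ\be)_* = f\circ\ga_*\circ\be_* = \be^*(\ga^*(f))$, with the finite-type hypotheses only needed to guarantee the results land in $\CF$ rather than $\LCF$; this is the easy one.

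The substantive case is \eqref{dt2eq2}, functoriality of the stack pushforward. Here I would use the second, fibre-product description of $\CF^\stk(\phi)$: for $\phi:\fF\to\fG$ representable and $y\in\fG(\K)$, $\CF^\stk(\phi)f(y) = \chi\bigl(\fF\times_{\phi,\fG,y}\Spec\K, \pi_\fF^*(f)\bigr)$, the weighted Euler characteristic of an honest $\K$-scheme. Given $\be:\fF\to\fG$, $\ga:\fG\to\fH$ both representable, so is $\ga\circ\be$, and for $z\in\fH(\K)$ there is a base-change isomorphism
\[
(\ga\circ\be)\times_{\fH,z}\Spec\K \;\cong\; \bigl(\fF\times_{\be,\fG}\bigl(\fG\times_{\ga,\fH,z}\Spec\K\bigr)\bigr),
\]
and the latter fibres over the scheme $\fG\times_{\ga,\fH,z}\Spec\K$ with fibre over a point $y$ the scheme $\fF\times_{\be,\fG,y}\Spec\K$. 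Then I would invoke the standard fact (again in \cite{Joyc1}) that for a morphism of finite-type $\K$-schemes $h: S\to T$ and $g\in\CF(S)$ one has $\chi(S,g) = \chi(T, \CF(h)g)$ — i.e. Fubini for Euler characteristic — applied to this fibration, to conclude that $\CF^\stk(\ga\circ\be)f(z)$ equals $\CF^\stk(\ga)\bigl(\CF^\stk(\be)f\bigr)(z)$. The main obstacle, and the place to be careful, is making sure the scheme-theoretic base-change and the identification of the weight functions $\pi^*$ are compatible across the two-step fibre product (including the representability of $\ga\circ\be$ and that all the fibre products really are schemes/algebraic spaces, not just stacks), so that Fubini applies cleanly; once that bookkeeping is done the result drops out.

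Finally, for the commutative square \eqref{dt2eq4}: since $\eta,\phi$ are representable and $\theta,\psi$ of finite type, every map in sight is defined, and I would verify commutativity pointwise at $z\in\fH(\K)$. Using the fibre-product description, $\psi^*(f)$ evaluated along $\CF^\stk(\eta)$ at a point of $\fG(\K)$ over $z$ unwinds to a weighted Euler characteristic over $\fE\times_{\fG}\Spec\K'$, while $\CF^\stk(\phi)f$ pulled back along $\theta^*$ unwinds to a weighted Euler characteristic over $\fF\times_{\fH,z}\Spec\K$; the Cartesian property $\fE = \fF\times_\fH\fG$ gives a canonical isomorphism of the relevant $\K$-schemes identifying these two, and matching the pulled-back weight functions is immediate from functoriality of $\pi^*$. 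So \eqref{dt2eq4} follows from \eqref{dt2eq2} (or directly) plus the base-change compatibility of fibre products, with no new difficulty beyond what \eqref{dt2eq2} already required.
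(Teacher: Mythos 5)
The paper gives no proof of this theorem: it is quoted verbatim from \cite{Joyc1} (Theorems 4.9, 5.4, 5.6 there), so your sketch can only be compared with that reference, and it is in fact the standard argument used there. You put the weight in the right places: \eq{dt2eq3} is formal, \eq{dt2eq4} is pure base change — Cartesianness identifies $\fE\times_{\eta,\fG,y}\Spec\K$ with $\fF\times_{\phi,\fH,\psi_*(y)}\Spec\K$ compatibly with the pulled-back weight functions, no Fubini needed — and \eq{dt2eq1}--\eq{dt2eq2} reduce, after the fibre-product bookkeeping you describe, to the Fubini property of Euler characteristics of constructible sets in characteristic zero, which is exactly why \eq{dt2eq2} fails in positive characteristic, as the paper remarks after the theorem. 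Two minor points: in your last paragraph the roles of $\theta$ and $\psi$ are swapped relative to the diagram (it is $\theta^*f$ that is pushed forward along $\eta$, and $\CF^\stk(\phi)f$ that is pulled back along $\psi$), a labelling slip only; and for \eq{dt2eq1} you should not quote the stack-level Fubini for $\chi^\na$ from \cite{Joyc1} wholesale, since that is essentially the statement being proved — what you may legitimately use is the variety-level Fubini together with a stratification of $\be$ over whose strata the fibrewise $\chi^\na$ is constant (so that $y\mapsto\chi^\na(\fF,f\cdot\de_{\be_*^{-1}(y)})$ is constructible), which is what your ``stratify by which fibre a point lies in'' gestures at.
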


As discussed in \cite[\S 3.3]{Joyc1}, equation \eq{dt2eq2} is {\it
false\/} for $\K$ of positive characteristic, so constructible
function methods tend to fail in positive
characteristic.\index{constructible function|)}\index{field $\K$!positive
characteristic}\index{constructible function!in positive characteristic}

\subsection{Stack functions}
\label{dt22}\index{stack function|(}

{\it Stack functions\/} are a universal generalization of
constructible functions introduced in \cite[\S 3]{Joyc2}. Here
\cite[Def.~3.1]{Joyc2} is the basic definition.

\begin{dfn} Let $\K$ be an algebraically closed field, and $\fF$ be
an Artin $\K$-stack with affine geometric stabilizers. Consider
pairs $(\fR,\rho)$, where $\fR$ is a finite type Artin $\K$-stack
with affine geometric stabilizers and $\rho:\fR\ra\fF$ is a
1-morphism. We call two pairs $(\fR,\rho)$, $(\fR',\rho')$ {\it
equivalent\/} if there exists a 1-isomorphism $\io:\fR\ra\fR'$ such
that $\rho'\ci\io$ and $\rho$ are 2-isomorphic 1-morphisms
$\fR\ra\fF$. Write $[(\fR,\rho)]$ for the equivalence class of
$(\fR,\rho)$. If $(\fR,\rho)$ is such a pair and $\fS$ is a closed
$\K$-substack of $\fR$ then $(\fS,\rho\vert_\fS)$,
$(\fR\sm\fS,\rho\vert_{\fR\sm\fS})$ are pairs of the same kind.

Define $\uSF(\fF)$\nomenclature[SFa(F)]{$\uSF(\fF)$}{vector space of `stack
functions' on an Artin stack $\fF$} to be the $\Q$-vector space
generated by equivalence classes $[(\fR,\rho)]$ as above, with for
each closed $\K$-substack $\fS$ of $\fR$ a relation
\e
[(\fR,\rho)]=[(\fS,\rho\vert_\fS)]+[(\fR\sm\fS,\rho\vert_{\fR\sm\fS})].
\label{dt2eq5}
\e
Define $\SF(\fF)$\nomenclature[SFb(F)]{$\SF(\fF)$}{vector space of `stack
functions' on an Artin stack $\fF$, defined using representable
1-morphisms} to be the $\Q$-vector space generated by $[(\fR,\rho)]$
with $\rho$ representable, with the same relations \eq{dt2eq5}.
Then~$\SF(\fF)\subseteq\uSF(\fF)$.
\label{dt2def4}
\end{dfn}\index{stack function!definition}

Elements of $\uSF(\fF)$ will be called {\it stack functions}. We
write stack functions either as letters $f,g,\ldots,$ or explicitly
as sums $\sum_{i=1}^mc_i[(\fR_i,\rho_i)]$. If $[(\fR,\rho)]$ is a
generator of $\uSF(\fF)$ and $\fR^{\rm red}$ is the reduced substack
of $\fR$ then $\fR^{\rm red}$ is a closed substack of $\fR$ and the
complement $\fR\sm\fR^{\rm red}$ is empty. Hence \eq{dt2eq5} implies
that
\begin{equation*}
[(\fR,\rho)]=[(\fR^{\rm red},\rho\vert_{\fR^{\rm red}})].
\end{equation*}
Thus, the relations \eq{dt2eq5} destroy all information on
nilpotence in the stack structure of $\fR$. In
\cite[Def.~3.2]{Joyc2} we relate $\CF(\fF)$ and~$\SF(\fF)$.

\begin{dfn} Let $\fF$ be an Artin $\K$-stack with affine
geometric stabilizers, and $C\subseteq\fF(\K)$ be constructible.
Then $C=\coprod_{i=1}^n\fR_i(\K)$, for $\fR_1,\ldots,\fR_n$ finite
type $\K$-substacks of $\fF$. Let $\rho_i:\fR_i\ra\fF$ be the
inclusion 1-morphism. Then $[(\fR_i,\rho_i)]\in\SF(\fF)$. Define
$\bde_C=\ts\sum_{i=1}^n[(\fR_i,\rho_i)]\in\SF(\fF)$. We think of
this stack function as the analogue of the characteristic function
$\de_C\in\CF(\fF)$ of $C$. When $\K$ has characteristic zero, define
a $\Q$-linear map $\io_\fF:\CF(\fF)\ra\SF(\fF)$ by
$\io_\fF(f)=\ts\sum_{0\ne c\in f(\fF(\K))}c\cdot\bde_{f^{-1}(c)}$.
Define $\Q$-linear $\pi_\fF^\stk:\SF(\fF)\ra\CF(\fF)$ by
\begin{equation*}
\pi_\fF^\stk\bigl(\ts\sum_{i=1}^nc_i[(\fR_i,\rho_i)]\bigr)=
\ts\sum_{i=1}^nc_i\CF^\stk(\rho_i)1_{\fR_i},
\end{equation*}
where $1_{\fR_i}$ is the function 1 in $\CF(\fR_i)$. Then
\cite[Prop.~3.3]{Joyc2} shows $\pi_\fF^\stk\ci\io_\fF$ is the
identity on $\CF(\fF)$. Thus, $\io_\fF$ is injective and
$\pi_\fF^\stk$ is surjective. In general $\io_\fF$ is far from
surjective, and $\uSF,\SF(\fF)$ are much larger than~$\CF(\fF)$.
\label{dt2def5}
\end{dfn}

The operations on constructible functions in \S\ref{dt21} extend to
stack functions.

\begin{dfn} Define {\it multiplication} `$\,\cdot\,$' on $\uSF(\fF)$ by
\e
[(\fR,\rho)]\cdot[(\fS,\si)]=[(\fR\times_{\rho,\fF,\si}\fS,\rho\ci\pi_\fR)].
\label{dt2eq6}
\e
This extends to a $\Q$-bilinear product $\uSF(\fF)\times\uSF(\fF)\ra
\uSF(\fF)$ which is commutative and associative, and $\SF(\fF)$ is
closed under `$\,\cdot\,$'. Let $\phi:\fF\!\ra\!\fG$ be a 1-morphism
of Artin $\K$-stacks with affine geometric stabilizers. Define the
{\it pushforward\/} $\phi_*:\uSF(\fF)\!\ra\!\uSF(\fG)$~by
\begin{equation*}
\phi_*:\ts\sum_{i=1}^mc_i[(\fR_i,\rho_i)]\longmapsto
\ts\sum_{i=1}^mc_i[(\fR_i,\phi\ci\rho_i)].
\end{equation*}
If $\phi$ is representable then $\phi_*$ maps $\SF(\fF)\!\ra\!
\SF(\fG)$. For $\phi$ of finite type, define {\it pullbacks}
$\phi^*:\uSF(\fG)\!\ra\!\uSF(\fF)$,
$\phi^*:\SF(\fG)\!\ra\!\SF(\fF)$~by
\e
\phi^*:\ts\sum_{i=1}^mc_i[(\fR_i,\rho_i)]\longmapsto
\ts\sum_{i=1}^mc_i[(\fR_i\times_{\rho_i,\fG,\phi}\fF,\pi_\fF)].
\label{dt2eq7}
\e
The {\it tensor product\/}
$\ot\!:\!\uSF(\fF)\!\times\!\uSF(\fG)\!\ra \!\uSF(\fF\!\times\!\fG)$
or $\SF(\fF)\!\times\!\SF(\fG)\!\ra\! \SF(\fF\!\times\!\fG)$~is
\e
\bigl(\ts\sum_{i=1}^mc_i[(\fR_i,\rho_i)]\bigr)\!\ot\!
\bigl(\ts\sum_{j=1}^nd_j[(\fS_j,\si_j)]\bigr)\!=\!\ts
\sum_{i,j}c_id_j[(\fR_i\!\times\!\fS_j,\rho_i\!\times\!\si_j)].
\label{dt2eq8}
\e
\label{dt2def6}
\end{dfn}

Here \cite[Th.~3.5]{Joyc2} is the analogue of Theorem~\ref{dt2thm1}.

\begin{thm} Let\/ $\fE,\fF,\fG,\fH$ be Artin $\K$-stacks with
affine geometric stabilizers, and\/ $\be:\fF\ra\fG,$ $\ga:\fG\ra\fH$
be $1$-morphisms. Then
\begin{align*}
(\ga\!\ci\!\be)_*\!&=\!\ga_*\!\ci\!\be_*:\uSF(\fF)\!\ra\!\uSF(\fH),&
(\ga\!\ci\!\be)_*\!&=\!\ga_*\!\ci\!\be_*:\SF(\fF)\!\ra\!\SF(\fH),\\
(\ga\!\ci\!\be)^*\!&=\!\be^*\!\ci\!\ga^*:\uSF(\fH)\!\ra\!\uSF(\fF),&
(\ga\!\ci\!\be)^*\!&\!=\!\be^*\!\ci\!\ga^*:\SF(\fH)\!\ra\!\SF(\fF),
\end{align*}
for $\be,\ga$ representable in the second equation, and of finite
type in the third and fourth. If\/ $f,g\in\uSF(\fG)$ and\/ $\be$ is
finite type then $\be^*(f\cdot g)=\be^*(f)\cdot\be^*(g)$. If
\begin{equation*}
\begin{gathered}
\xymatrix@R=15pt{
\fE \ar[r]_\eta \ar[d]^{\,\theta} & \fG \ar[d]_{\psi\,} \\
\fF \ar[r]^\phi & \fH }
\end{gathered}
\quad
\begin{gathered}
\text{is a Cartesian square with}\\
\text{$\theta,\psi$ of finite type, then}\\
\text{the following commutes:}
\end{gathered}
\quad
\begin{gathered}
\xymatrix@C=35pt@R=10pt{
\uSF(\fE) \ar[r]_{\eta_*} & \uSF(\fG) \\
\uSF(\fF) \ar[r]^{\phi_*} \ar[u]_{\,\theta^*} & \uSF(\fH).
\ar[u]^{\psi^*\,} }
\end{gathered}
\end{equation*}
The same applies for $\SF(\fE),\ldots,\SF(\fH)$ if\/ $\eta,\phi$ are
representable.
\label{dt2thm2}
\end{thm}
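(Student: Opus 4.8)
The plan is to verify each identity by evaluating both sides on a single generator $[(\fR,\rho)]$ (or on a pair of generators, for the product identity) and then extending $\Q$-(bi)linearly; in every case the identity collapses to a canonical $1$-isomorphism of iterated fibre products of Artin $\K$-stacks, compatible with the structure $1$-morphisms up to $2$-isomorphism, so that the two equivalence classes of pairs agree. Before carrying this out I would check that the four operations of Definition~\ref{dt2def6} are well defined, i.e.\ that they descend from the free $\Q$-module on generators to the quotients $\uSF$ and $\SF$ by the relations~\eqref{dt2eq5}. For $\phi_*$ this is immediate, since replacing $\rho$ by $\phi\ci\rho$ leaves the open--closed decomposition of $\fR$ untouched, so~\eqref{dt2eq5} in $\uSF(\fG)$ follows from~\eqref{dt2eq5} in $\uSF(\fF)$. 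For $\phi^*$ and for the product `$\,\cdot\,$' one uses the standard facts that base change along $\phi$ (respectively, fibre product over $\fF$) carries a closed $\K$-substack $\fS\subseteq\fR$ to a closed substack of $\fR\times_{\rho,\fG,\phi}\fF$ (respectively $\fR\times_\fF\fS$) whose complement is the base change of $\fR\sm\fS$, so that~\eqref{dt2eq5} is again respected; and that the resulting stack is still of finite type with affine geometric stabilizers, using that $\phi$ (resp.\ the structure morphisms) is of finite type and that the stabilizer group of a point of a fibre product embeds into the product of the stabilizer groups of its images, hence stays affine. Representability of $\pi_\fF$ when $\rho$ is representable holds because $\pi_\fF$ is itself a base change of $\rho$.

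The two functoriality statements for pushforward are then formal: on a generator $(\ga\ci\be)_*[(\fR,\rho)]=[(\fR,\ga\ci\be\ci\rho)]=\ga_*\bigl([(\fR,\be\ci\rho)]\bigr)$, and on $\SF$ the composite of representable $1$-morphisms is representable. For pullback, $(\ga\ci\be)^*[(\fR,\rho)]=[(\fR\times_{\rho,\fH,\ga\ci\be}\fF,\pi_\fF)]$ whereas $\be^*\ci\ga^*[(\fR,\rho)]=[((\fR\times_{\rho,\fH,\ga}\fG)\times_{\pi_\fG,\fG,\be}\fF,\pi_\fF)]$, and these agree by the pasting isomorphism $(\fR\times_\fH\fG)\times_\fG\fF\cong\fR\times_\fH\fF$ of Cartesian squares, compatibly with the projections to $\fF$; that $\be,\ga$ are of finite type is exactly what is needed for the pullbacks to be defined, and base change preserves representability for the $\SF$ case. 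For the product identity, both $\be^*\bigl([(\fR,\rho)]\cdot[(\fS,\si)]\bigr)$ and $\be^*[(\fR,\rho)]\cdot\be^*[(\fS,\si)]$ are canonically isomorphic, over $\fF$, to the triple fibre product of $\fR,\fS,\fF$ over $\fG$ --- concretely $(\fR\times_\fG\fS)\times_\fG\fF\cong(\fR\times_\fG\fF)\times_\fF(\fS\times_\fG\fF)$ --- by associativity and commutativity of fibre products, with matching structure $1$-morphisms.

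Finally, for the Cartesian-square assertion the hypothesis supplies a $1$-isomorphism $\fE\cong\fF\times_{\phi,\fH,\psi}\fG$ under which $\theta$ and $\eta$ become the two projections. Hence on a generator $[(\fR,\rho)]\in\uSF(\fF)$ one has $\eta_*\ci\theta^*[(\fR,\rho)]=[(\fR\times_{\rho,\fF,\theta}\fE,\eta\ci\pi_\fE)]$, and pasting $\fR\times_\fF\fE=\fR\times_\fF(\fF\times_\fH\fG)\cong\fR\times_\fH\fG$ identifies this with $[(\fR\times_{\phi\ci\rho,\fH,\psi}\fG,\pi_\fG)]=\psi^*[(\fR,\phi\ci\rho)]=\psi^*\ci\phi_*[(\fR,\rho)]$; here $\theta,\psi$ of finite type is what makes $\theta^*$ and $\psi^*$ defined. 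For the $\SF$ refinement, when $\eta$ and $\phi$ are representable so are $\eta_*$ and $\phi_*$ on $\SF$, using that $\pi_\fE$, being a base change of the representable $\rho$, is representable, and that composition with the representable $\eta$ preserves representability; the computation is then identical. As with Theorem~\ref{dt2thm1} the whole statement is a formal consequence of the calculus of fibre products --- in fact easier than its constructible-function analogue, since no Euler-characteristic computation intervenes. I do not expect a genuine obstacle; the only point requiring real (if routine) care is the verification in the first paragraph that $\phi^*$ and `$\,\cdot\,$' respect the relation~\eqref{dt2eq5}, which rests on base change preserving locally closed immersions and commuting with open/closed complementation of substacks, and I would spell this out rather than take it for granted.
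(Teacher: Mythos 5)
Your proof is correct, and it follows essentially the standard argument: the paper states this result without proof (quoting it from \cite{Joyc2}), and the proof there is exactly this formal calculus of fibre products evaluated on generators, including the preliminary check that the operations respect the relation \eq{dt2eq5}. The only remark worth making is that well-definedness of the four operations is already part of their definition in \cite{Joyc2}, so your first paragraph, while careful and correct, is not logically part of this theorem.
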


In \cite[Prop.~3.7 \& Th.~3.8]{Joyc2} we relate pushforwards and
pullbacks of stack and constructible functions
using~$\io_\fF,\pi_\fF^\stk$.

\begin{thm} Let\/ $\K$ have characteristic zero, $\fF,\fG$ be
Artin $\K$-stacks with affine geometric stabilizers, and\/
$\phi:\fF\ra\fG$ be a $1$-morphism. Then
\begin{itemize}
\setlength{\itemsep}{0pt}
\setlength{\parsep}{0pt}
\item[{\rm(a)}] $\phi^*\!\ci\!\io_\fG\!=\!\io_\fF\!\ci\!\phi^*:
\CF(\fG)\!\ra\!\SF(\fF)$ if\/ $\phi$ is of finite type;
\item[{\rm(b)}] $\pi^\stk_\fG\ci\phi_*=\CF^\stk(\phi)\ci\pi_\fF^\stk:
\SF(\fF)\ra\CF(\fG)$ if\/ $\phi$ is representable; and
\item[{\rm(c)}] $\pi^\stk_\fF\ci\phi^*=\phi^*\ci\pi_\fG^\stk:
\SF(\fG)\ra\CF(\fF)$ if\/ $\phi$ is of finite type.
\end{itemize}
\label{dt2thm3}
\end{thm}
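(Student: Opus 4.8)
Since all six maps occurring in (a)--(c) are $\Q$-linear, the plan is in each case to verify the identity on a spanning set and then extend by linearity. For (a) the spanning set for $\CF(\fG)$ is the family of characteristic functions $\de_C$ of constructible subsets $C\subseteq\fG(\K)$, and for (b),(c) it is the set of generators $[(\fR,\rho)]$ of $\SF(\fF)$ with $\rho$ representable. I expect (b) and (c) to drop out almost immediately from Theorem \ref{dt2thm1}---the functoriality \eq{dt2eq2} and the base-change square \eq{dt2eq4}, respectively---and essentially all of the work to be in (a).

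For (a): by Definition \ref{dt2def5} we have $\io_\fG(\de_C)=\bde_C$, and since $\phi^*\de_C=\de_C\ci\phi_*=\de_{\phi_*^{-1}(C)}$ by Definition \ref{dt2def3} we also have $\io_\fF(\phi^*\de_C)=\bde_{\phi_*^{-1}(C)}$, so (a) reduces to the identity $\phi^*(\bde_C)=\bde_{\phi_*^{-1}(C)}$ in $\SF(\fF)$. To prove this, write $C=\coprod_{i=1}^n\fR_i(\K)$ with $\rho_i:\fR_i\hookra\fG$ the (locally closed) inclusions, so that $\bde_C=\sum_i[(\fR_i,\rho_i)]$ and $\phi^*(\bde_C)=\sum_i[(\fR_i\times_{\rho_i,\fG,\phi}\fF,\pi_\fF)]$ by \eq{dt2eq7}. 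The projection $\pi_\fF$ is the base change of the locally closed immersion $\rho_i$, hence a locally closed immersion, so $\fR_i\times_{\rho_i,\fG,\phi}\fF$ may be identified with a locally closed substack of $\fF$ whose set of $\K$-points is precisely $\phi_*^{-1}(\fR_i(\K))$. Moreover the relation \eq{dt2eq5} leaves the class $[(\fR_i\times_\fG\fF,\pi_\fF)]$ unchanged on passing to the reduced substack, and by Definition \ref{dt2def5} $\bde_{C'}$ depends only on the constructible set $C'$, so $[(\fR_i\times_\fG\fF,\pi_\fF)]=\bde_{\phi_*^{-1}(\fR_i(\K))}$. As the $\fR_i(\K)$ are disjoint so are their preimages, and summing over $i$ gives $\phi^*(\bde_C)=\bde_{\phi_*^{-1}(C)}$, as required.

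For (b), on a generator $[(\fR,\rho)]$ of $\SF(\fF)$ with $\rho$ representable (and $\phi$ representable by hypothesis), the left-hand side is $\pi^\stk_\fG\bigl(\phi_*[(\fR,\rho)]\bigr)=\pi^\stk_\fG[(\fR,\phi\ci\rho)]=\CF^\stk(\phi\ci\rho)1_\fR$ and the right-hand side is $\CF^\stk(\phi)\bigl(\pi_\fF^\stk[(\fR,\rho)]\bigr)=\CF^\stk(\phi)\bigl(\CF^\stk(\rho)1_\fR\bigr)$; these agree by \eq{dt2eq2}, which applies since $\rho,\phi$, hence $\phi\ci\rho$, are representable. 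For (c), on the same generator with $\phi$ of finite type, the left-hand side is $\pi^\stk_\fF\bigl(\phi^*[(\fR,\rho)]\bigr)=\pi^\stk_\fF[(\fR\times_{\rho,\fG,\phi}\fF,\pi_\fF)]=\CF^\stk(\pi_\fF)1_{\fR\times_\fG\fF}$ and the right-hand side is $\phi^*\bigl(\pi_\fG^\stk[(\fR,\rho)]\bigr)=\phi^*\bigl(\CF^\stk(\rho)1_\fR\bigr)$. I would then apply \eq{dt2eq4} to the Cartesian square formed by $\fR\times_{\rho,\fG,\phi}\fF$, its projections $\pi_\fR,\pi_\fF$, and $\rho:\fR\ra\fG$, $\phi:\fF\ra\fG$: here $\pi_\fF$ (base change of the representable $\rho$) is representable, $\pi_\fR$ (base change of the finite type $\phi$) is of finite type, and $\phi$ is of finite type, so the hypotheses of \eq{dt2eq4} are met and it yields $\CF^\stk(\pi_\fF)\ci\pi_\fR^*=\phi^*\ci\CF^\stk(\rho)$; evaluating on $1_\fR$ and using $\pi_\fR^*1_\fR=1_{\fR\times_\fG\fF}$ completes it.

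The step I expect to be the main obstacle is the one inside (a): carefully justifying that the (reduced) fibre product $\fR_i\times_{\rho_i,\fG,\phi}\fF$ genuinely represents the locally closed substack of $\fF$ with $\K$-points $\phi_*^{-1}(\fR_i(\K))$, and that replacing it by that substack is legitimate inside $\SF(\fF)$. Once this bookkeeping is settled, (b) and (c) are essentially formal consequences of Theorem \ref{dt2thm1}.
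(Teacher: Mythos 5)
Your proof is correct: the paper itself does not prove Theorem \ref{dt2thm3} but quotes it from \cite[Prop.~3.7 \& Th.~3.8]{Joyc2}, and your argument — reducing (a) to the identity $\phi^*(\bde_C)=\bde_{\phi_*^{-1}(C)}$ via substack-inclusion generators (where finiteness of type of $\phi$ guarantees the preimage substacks are of finite type), and deducing (b),(c) on generators $[(\fR,\rho)]$ from the functoriality \eq{dt2eq2} and the Cartesian-square property \eq{dt2eq4} — is exactly the standard route. One small slip of wording: in (c) the generator must be taken in $\SF(\fG)$, so $\rho:\fR\ra\fG$, rather than ``the same generator'' as in (b); your formulas already treat it this way, so nothing breaks.
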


In \cite[\S 3]{Joyc2} we extend all the material on $\uSF,\SF(\fF)$
to {\it local stack functions} $\uLSF,\LSF(\fF)$, the analogues of
locally constructible functions. The main differences are in which
1-morphisms must be of finite type.\index{local stack function}\index{stack
function!local}\nomenclature[LSFb(F)]{$\LSF(\fF)$}{vector space of `local stack
functions' on an Artin stack $\fF$, defined using representable
1-morphisms}\nomenclature[LSFa(F)]{$\uLSF(\fF)$}{vector space of `local stack
functions' on an Artin stack $\fF$}

\subsection{Operators $\Pi^\mu$ and projections $\Pi^\vi_n$}
\label{dt23}

We will need the following standard notation and facts about
algebraic $\K$-groups and tori, which can be found in Borel
\cite{Bore}. Throughout $\K$ is an algebraically closed field and
$G$ is an affine algebraic $\K$-group.\index{algebraic $\K$-group}
\begin{itemize}
\setlength{\itemsep}{0pt}
\setlength{\parsep}{0pt}
\item Write $\bG_m$ for $\K\sm\{0\}$ as a $\K$-group under
multiplication.\nomenclature[Gm]{$\bG_m$}{the algebraic $\K$-group
$\K\sm\{0\}$}
\item By a {\it torus} we mean an algebraic $\K$-group isomorphic
to $\bG_m^k$ for some $k\ge 0$. A {\it subtorus} of $G$ means a
$\K$-subgroup of $G$ which is a torus.
\item A {\it maximal torus}\index{maximal torus}\index{algebraic
$\K$-group!maximal torus} in $G$ is a subtorus
$T^G$\nomenclature[TG]{$T^G$}{maximal torus in an algebraic $\K$-group $G$}
contained in no larger subtorus $T$ in $G$. All maximal tori in
$G$ are conjugate by Borel \cite[Cor.~IV.11.3]{Bore}. The {\it
rank\/}\index{rank} $\rk\,G$ is the dimension of any maximal torus.
A maximal torus in $\GL(k,\K)$ is the subgroup $\bG_m^k$ of
diagonal matrices.
\item Let $T$ be a torus and $H$ a closed $\K$-subgroup of $T$.
Then $H$ is isomorphic to $\bG_m^k\times K$ for some $k\ge 0$
and finite abelian group~$K$.
\item If $S$ is a subset of $T^G$, define the {\it
centralizer}\index{centralizer} of $S$ in $G$ to be $C_G(S)=\{\ga\in
G:\ga s=s\ga$ $\forall s\in S\}$,\nomenclature[CGS]{$C_G(S)$}{centralizer
of a subset $S$ in a group $G$} and the {\it
normalizer}\index{normalizer} of $S$ in $G$ to be $N_G(S)=\{\ga\in
G:\ga^{-1}S\ga=S\}$.\nomenclature[NGS]{$N_G(S)$}{normalizer of a subset $S$
in a group $G$} They are closed $\K$-subgroups of $G$ containing
$T^G$, and $C_G(S)$ is normal in~$N_G(S)$.
\item The quotient group $W(G,T^G)=N_G(T^G)/C_G(T^G)$ is called the
{\it Weyl group}\index{Weyl group} of
$G$.\nomenclature[W(G,TG)]{$W(G,T^G)$}{Weyl group of algebraic $\K$-group
$G$} As in \cite[IV.11.19]{Bore} it is a finite group, which
acts on~$T^G$.
\item Define the {\it centre}\index{algebraic
$\K$-group!centre} of $G$ to be $C(G)=\{\ga\in G:\ga\de=\de\ga$
$\forall\de\in G\}$.\nomenclature[C(G)]{$C(G)$}{centre of an algebraic
$\K$-group $G$} It is a closed $\K$-subgroup of~$G$.
\item An algebraic $\K$-group $G$ is called {\it
special\/}\index{algebraic $\K$-group!special}\index{special algebraic
$\K$-group} if every principal $G$-bundle locally trivial in the
\'etale topology\index{etale topology@\'etale topology} is also
locally trivial in the Zariski topology.\index{Zariski topology}
Properties of special $\K$-groups can be found in \cite[\S\S
1.4, 1.5 \& 5.5]{Chev} and \cite[\S 2.1]{Joyc2}. Special
$\K$-groups are always affine and connected. Products of special
groups are special.
\item $\bG_m^k$ and $\GL(k,\K)$ are special for all $k\ge 0$.
\end{itemize}

Now we define some linear maps~$\Pi^\mu:\uSF(\fF)\ra\uSF(\fF)$.

\begin{dfn} A {\it weight function\/} $\mu$ is a map
\begin{equation*}
\mu:\bigl\{\text{$\K$-groups $\bG_m^k\!\times\!K$, $k\!\ge\!0$, $K$
finite abelian, up to isomorphism}\bigr\}\!\longra\!\Q.
\end{equation*}
For any Artin $\K$-stack $\fF$ with affine geometric stabilizers, we
will define linear maps $\Pi^\mu:\uSF(\fF)\ra\uSF(\fF)$ and
$\Pi^\mu:\SF(\fF)\ra\SF(\fF)$. Now $\uSF(\fF)$ is generated by
$[(\fR,\rho)]$ with $\fR$ 1-isomorphic to a quotient $[X/G]$, for
$X$ a quasiprojective $\K$-variety and $G$ a special algebraic
$\K$-group, with maximal torus $T^G$.

Let ${\cal S}(T^G)$ be the set of subsets of $T^G$ defined by
Boolean operations upon closed $\K$-subgroups $L$ of $T^G$. Given a
weight function $\mu$ as above, define a measure $\rd\mu:{\cal
S}(T^G)\ra\Q$ to be additive upon disjoint unions of sets in ${\cal
S}(T^G)$, and to satisfy $\rd\mu(L)=\mu(L)$ for all algebraic
$\K$-subgroups $L$ of $T^G$. Define
\e
\begin{split}
&\Pi^\mu\bigl([(\fR,\rho)]\bigr)=\\
&\int_{t\in T^G}\frac{\md{\{w\in W(G,T^G):w\cdot
t=t\}}}{\md{W(G,T^G)}}\,\bigl[\bigl([X^{\{t\}}/
C_G(\{t\})],\rho\ci\io^{\{t\}}\bigr)\bigr]\rd\mu.
\end{split}
\label{dt2eq9}
\e
Here $X^{\{t\}}$ is the subvariety of $X$ fixed by $t$, and
$\io^{\{t\}}:[X^{\{t\}}/ C_G(\{t\})]\ra[X/G]$ is the obvious
1-morphism of Artin stacks.

The integrand in \eq{dt2eq9}, regarded as a function of $t\in T^G$,
is a constructible function taking only finitely many values. The
level sets of the function lie in ${\cal S}(T^G)$, so they are
measurable w.r.t.\ $\rd\mu$, and the integral is well-defined.
\label{dt2def7}
\end{dfn}

If $\fR$ has abelian stabilizer groups, then $\Pi^\mu\bigl(
[(\fR,\rho)]\bigr)$ simply weights each point $r$ of $\fR$ by
$\mu(\Iso_\fR(r))$. But if $\fR$ has nonabelian stabilizer groups,
then $\Pi^\mu\bigl([(\fR,\rho)]\bigr)$ replaces each point $r$ with
stabilizer group $G$ by a $\Q$-linear combination of points with
stabilizer groups $C_G(\{t\})$ for $t\in T^G$, where the
$\Q$-coefficients depend on the values of $\mu$ on subgroups of
$T^G$. Then \cite[Th.s~5.11 \& 5.12]{Joyc2} shows:

\begin{thm} In the situation above, $\Pi^\mu\bigl([(\fR,\rho)]
\bigr)$ is independent of the choices of\/ $X,G,T^G$ and\/
$1$-isomorphism $\fR\cong[X/G],$ and\/ $\Pi^\mu$ extends to unique
linear maps $\Pi^\mu:\uSF(\fF)\ra\uSF(\fF)$
and\/~$\Pi^\mu:\SF(\fF)\ra\SF(\fF)$.
\label{dt2thm4}
\end{thm}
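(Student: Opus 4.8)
The plan is to separate the statement into its two halves: (i) well-definedness of $\Pi^\mu([(\fR,\rho)])$ for a single generator, independent of the presentation $\fR\cong[X/G]$; and (ii) the claim that $\Pi^\mu$ then extends to a linear map on $\uSF(\fF)$ and on $\SF(\fF)$, i.e.\ that it respects the defining relations \eq{dt2eq5}. Part (ii) is the easier half, so I would dispose of it first. Given (i), the map on generators is well-defined, and linearity amounts to checking that if $\fS\subseteq\fR$ is a closed substack then $\Pi^\mu([(\fR,\rho)])=\Pi^\mu([(\fS,\rho|_\fS)])+\Pi^\mu([(\fR\sm\fS,\rho|_{\fR\sm\fS})])$. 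Choosing a presentation $\fR\cong[X/G]$ with $G$ special, the substack $\fS$ corresponds to a $G$-invariant locally closed subvariety $Y\subseteq X$ with $\fS\cong[Y/G]$ and $\fR\sm\fS\cong[(X\sm Y)/G]$; since $X^{\{t\}}=(Y\cap X^{\{t\}})\sqcup((X\sm Y)\cap X^{\{t\}})$ as varieties with $C_G(\{t\})$-action, the relation \eq{dt2eq5} holds inside the integrand of \eq{dt2eq9} pointwise in $t$, and integrating against $\rd\mu$ preserves it by additivity of the measure. The same computation shows $\Pi^\mu$ preserves representability of $\rho$, since $\rho\ci\io^{\{t\}}$ is representable whenever $\rho$ is (the stabilizers only shrink), so $\Pi^\mu$ restricts to $\SF(\fF)\ra\SF(\fF)$.

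For part (i), the heart of the matter, I would first reduce to the case where two presentations differ in a controlled way. Any two presentations $\fR\cong[X/G]$ and $\fR\cong[X'/G']$ with $G,G'$ special can be compared by passing to a common refinement: the fibre product $[X/G]\times_\fR[X'/G']$, or more concretely one shows that after replacing $X$ by a suitable principal bundle one may assume $G'=G\times H$ acting on $X'=X\times_G(G\times H)$-type spaces, reducing to the two basic moves (a) replacing $G$ by $G\times H$ for $H$ special with $X$ carrying the trivial $H$-factor acting on $X\times H$ appropriately, and (b) replacing $(X,G)$ by $(P,G')$ where $P\to X$ is a $G'$-bundle-type construction with $G=G'/N$. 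One then checks invariance of \eq{dt2eq9} under each move. The key computational input is understanding how maximal tori, Weyl groups, fixed loci $X^{\{t\}}$, and centralizers $C_G(\{t\})$ transform: e.g.\ under $G\rightsquigarrow G\times H$ one has $T^{G\times H}=T^G\times T^H$, $W(G\times H,\cdot)=W(G,\cdot)\times W(H,\cdot)$, $X^{\{(t,s)\}}$ is cut out by $t$ alone when $H$ acts suitably, and $C_{G\times H}(\{(t,s)\})=C_G(\{t\})\times C_H(\{s\})$, so the extra integration over $T^H$ contributes a constant which the Weyl-group-order normalization and the measure $\rd\mu$ are designed to absorb. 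For move (b), with an exact sequence $1\to N\to G'\to G\to 1$ of algebraic groups and $N$ contained in the relevant tori, the fibres of $T^{G'}\to T^G$ and the behaviour of $X^{\{t\}}$ under pullback along $P\to X$ must be matched; this is where the precise form of the weight-function measure $\rd\mu$ — additive on ${\cal S}(T^G)$ and equal to $\mu$ on algebraic subgroups — is exactly what is needed for the two integrals to agree.

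The main obstacle, and the step I expect to require real care, is move (b): controlling how the Weyl group and the fixed-point varieties behave when one passes between $[X/G]$ and a presentation with a larger structure group mapping onto $G$ with non-trivial (possibly disconnected) kernel, since $N_{G'}(T^{G'})$, $C_{G'}(T^{G'})$ and hence the Weyl group can change in ways that interact subtly with the $\rd\mu$-integration over the torus. One must verify that the stacky factor $\md{\{w\in W(G,T^G):w\cdot t=t\}}/\md{W(G,T^G)}$ in \eq{dt2eq9} — which is precisely the "orbifold measure" making the integral count $W$-orbits of tori correctly — transforms compatibly. I would handle this by first treating $G$ connected (where the classical structure theory from Borel \cite{Bore} applies cleanly: all maximal tori conjugate, $C_G(T^G)=T^G$, Weyl group finite), then reducing the general special $G$ to its identity component. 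Throughout, the finiteness and measurability claims in the last sentence of Definition~\ref{dt2def7} — that the integrand is constructible with level sets in ${\cal S}(T^G)$ — are routine once one observes that $\dim X^{\{t\}}$ and the isomorphism type of $C_G(\{t\})$ jump only along Boolean combinations of subtori, so I would state these and refer to the constructibility formalism of \S\ref{dt21} rather than belabour them.
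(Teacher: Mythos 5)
First, a point of reference: this book does not prove Theorem \ref{dt2thm4} at all --- it is quoted verbatim from \cite{Joyc2} (Theorems 5.11 and 5.12 there), where the proof is a long argument built on several preparatory results. So the comparison can only be with that proof. Parts of your proposal are sound: the additivity check in your part (ii) (decomposing $X$ into a $G$-invariant $Y$ and $X\sm Y$, so that $X^{\{t\}}$ decomposes compatibly and $\rd\mu$-integration preserves the relation \eq{dt2eq5}), the representability remark, and your move (a) all work, since for $(X,G)\rightsquigarrow(X\times H,G\times H)$ with $H$ translating the $H$-factor the fixed loci for $(t,s)$ with $s\ne 1$ are empty and the Weyl and measure factors cancel exactly as you say. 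One smaller omission in part (ii): a general generator $[(\fR,\rho)]$ of $\uSF(\fF)$ need not itself be a global quotient $[X/G]$ with $G$ special, so extending $\Pi^\mu$ linearly also requires stratifying such $\fR$ into quotient pieces and checking independence of the stratification, a step your argument does not address.

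The genuine gap is your move (b), which is precisely where the whole difficulty of the theorem sits. Comparing two presentations through the fibre product forces you to prove invariance of \eq{dt2eq9} under $(X,G)\rightsquigarrow(P,G\times H)$ with $P\ra X$ a \emph{nontrivial} principal $H$-bundle (equivalently, $[X/G]\cong[((X\times H)/G)/H]$ for $G\subseteq H$ special). Here $H$ acts freely on $P$, so stabilizers in $G\times H$ of points of $P$ are graphs of homomorphisms from subgroups of $G$ into $H$; as a result, for fixed $t\in T^G$ the fixed loci $P^{\{(t,s)\}}$ are nonempty for many $s\in T^H$, each yielding a different quotient stack with a different centralizer $C_{G\times H}(\{(t,s)\})$, and the assertion that integrating all these contributions over $T^H$ against $\rd\mu$, with the Weyl-group normalization, reassembles the single integral over $T^G$ is exactly the statement to be proved. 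Your proposal disposes of this by saying the measure is ``exactly what is needed'', which assumes the conclusion; the cancellation pattern of your move (a), where only $s=1$ contributes, does not occur here, and the actual verification in \cite{Joyc2} requires a chain of structural results on how maximal tori, the sets ${\cal S}(T^G)$, fixed-point loci and centralizers transform under such bundle morphisms. Until that computation is carried out, the independence claim --- and hence the theorem --- is not established.
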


\begin{thm} {\rm(a)} $\Pi^1$ defined using $\mu\equiv 1$ is the
identity on~$\uSF(\fF)$.
\begin{itemize}
\setlength{\itemsep}{0pt}
\setlength{\parsep}{0pt}
\item[{\rm(b)}] If\/ $\phi:\fF\ra\fG$ is a $1$-morphism of
Artin $\K$-stacks with affine geometric stabilizers
then~$\Pi^\mu\ci\phi_*=\phi_*\ci\Pi^\mu:\uSF(\fF)\ra\uSF(\fG)$.
\item[{\rm(c)}] If\/ $\mu_1,\mu_2$ are weight functions as in
Definition {\rm\ref{dt2def7}} then $\mu_1\mu_2$ is also a weight
function and\/~$\Pi^{\mu_2}\ci\Pi^{\mu_1}=\Pi^{\mu_1}\ci\Pi^{\mu_2}=
\Pi^{\mu_1\mu_2}$.
\end{itemize}
\label{dt2thm5}
\end{thm}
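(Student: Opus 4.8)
The plan is to prove parts (a)--(c) directly from Definition \ref{dt2def7}, using the already-established Theorem \ref{dt2thm4} (independence of the presentation $\fR\cong[X/G]$) so that it suffices to verify each identity on a single generator $[(\fR,\rho)]$ with $\fR\cong[X/G]$ for $X$ a quasiprojective $\K$-variety and $G$ special with maximal torus $T^G$. For part (a), take $\mu\equiv 1$. Then the measure $\rd\mu$ on ${\cal S}(T^G)$ satisfies $\rd\mu(L)=1$ for every algebraic $\K$-subgroup $L$, and in particular $\rd\mu$ is just the Euler characteristic measure on $T^G$ (since points, hence finite subsets, get measure equal to their cardinality, and one builds up from there by additivity). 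The integrand in \eqref{dt2eq9} is the constructible function sending $t$ to $\frac{\md{\{w\in W(G,T^G):w\cdot t=t\}}}{\md{W(G,T^G)}}[([X^{\{t\}}/C_G(\{t\})],\rho\ci\io^{\{t\}})]$; the identity $\Pi^1=\id$ amounts to the statement that integrating this over $t\in T^G$ with Euler-characteristic measure recovers $[(\fR,\rho)]=[([X/G],\rho)]$. The natural way to see this is to stratify $T^G$ by the conjugacy type of $C_G(\{t\})$ (equivalently, by which subtori/subgroups arise as $\langle t\rangle^{\mathrm{cl}}$), observe that on each stratum the summand is constant as a stack function, and then reassemble: the Weyl-group weighting $\md{\mathrm{Stab}_W(t)}/\md{W}$ is exactly the factor that makes the $W$-orbit of a stratum contribute with total Euler-characteristic weight equal to the Euler characteristic of its image in $T^G/W$, and a decomposition-of-the-diagonal / fixed-point argument on $[X/G]$ then collapses the integral to the single term $[([X/G],\rho)]$. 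This is essentially a bookkeeping consequence of the construction in \cite[\S 5]{Joyc2}, and I would cite the relevant computation there rather than redo it.

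For part (b), I would again reduce to a generator $[(\fR,\rho)]$ with $\fR\cong[X/G]$. By the definition of pushforward (Definition \ref{dt2def6}), $\phi_*[(\fR,\rho)]=[(\fR,\phi\ci\rho)]$, and crucially $\phi_*$ does not touch $\fR$ itself: it only post-composes the structure 1-morphism to $\fF$ with $\phi$. Since the operator $\Pi^\mu$ in \eqref{dt2eq9} is defined entirely in terms of the source stack $\fR\cong[X/G]$, its torus $T^G$, Weyl group, fixed loci $X^{\{t\}}$ and centralizers $C_G(\{t\})$ --- all of which are unchanged under replacing $\rho$ by $\phi\ci\rho$ --- we get
\[
\Pi^\mu\bigl(\phi_*[(\fR,\rho)]\bigr)
=\int_{t\in T^G}\frac{\md{\{w\in W(G,T^G):w\cdot t=t\}}}{\md{W(G,T^G)}}\,\bigl[\bigl([X^{\{t\}}/C_G(\{t\})],\phi\ci\rho\ci\io^{\{t\}}\bigr)\bigr]\rd\mu,
\]
while $\phi_*\bigl(\Pi^\mu[(\fR,\rho)]\bigr)$ is the same integral with $\phi\ci\rho\ci\io^{\{t\}}$ replaced by $\phi\ci(\rho\ci\io^{\{t\}})$, which is literally the same 1-morphism by associativity of composition. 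Extending $\Q$-linearly over all generators, and invoking the well-definedness from Theorem \ref{dt2thm4} so that the formula is presentation-independent, yields $\Pi^\mu\ci\phi_*=\phi_*\ci\Pi^\mu$ on all of $\uSF(\fF)$.

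For part (c), the statement that $\mu_1\mu_2$ is a weight function is immediate from Definition \ref{dt2def7}, since a pointwise product of $\Q$-valued functions on isomorphism classes of groups $\bG_m^k\times K$ is again such a function. The substantive claim is $\Pi^{\mu_2}\ci\Pi^{\mu_1}=\Pi^{\mu_1\mu_2}$ (the other equality following by symmetry). Here I expect the main obstacle, and the real content, to lie. Applying $\Pi^{\mu_1}$ to $[([X/G],\rho)]$ produces an integral of terms $[([X^{\{t\}}/C_G(\{t\})],\dots)]$ over $t\in T^G$; to apply $\Pi^{\mu_2}$ one must pick, for each such term, a presentation of $[X^{\{t\}}/C_G(\{t\})]$ as $[X'/G']$ with $G'$ special and choose a maximal torus $T^{G'}$ --- but $C_G(\{t\})$ need not be special, so one first has to replace it (or rather the quotient stack) by an admissible presentation in the sense of \cite[\S 5]{Joyc2}, and then the iterated integral over $T^G$ and then $T^{G'}$ must be identified, via Fubini for the measures $\rd\mu_1,\rd\mu_2$ and a careful tracking of the nested fixed-point loci $X^{\{t\}}$ and their further $t'$-fixed sublocalizations, together with the two layers of Weyl-group weighting, with a single integral against $\rd(\mu_1\mu_2)$. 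The key algebraic input is that for $t\in T^G$ and $t'$ in a maximal torus of $C_G(\{t\})$ one may arrange $t'$ to lie in $T^G$ as well (maximal tori of $C_G(\{t\})$ containing $t$ are maximal tori of $G$), so the double fixed-point locus $(X^{\{t\}})^{\{t'\}}$ equals $X^{\{t,t'\}}=X^{\langle t,t'\rangle}$ and the centralizer iterates correctly, $C_{C_G(\{t\})}(\{t'\})=C_G(\{t,t'\})$; combining this with the multiplicativity $\rd(\mu_1\mu_2)(L)=\mu_1(L)\mu_2(L)$ on subgroups and reorganizing the double integral collapses the two-step operator to the one-step operator $\Pi^{\mu_1\mu_2}$. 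Again, rather than reproving the measure-theoretic Fubini and the Weyl-group combinatorics from scratch, I would structure the argument to invoke \cite[Th.s~5.11 \& 5.12]{Joyc2} for the hard technical lemmas and present only the identification of the fixed loci and centralizers, which is the conceptual heart. As with (a) and (b), $\Q$-linear extension over generators and presentation-independence from Theorem \ref{dt2thm4} complete the proof.
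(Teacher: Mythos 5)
The paper does not actually prove Theorem \ref{dt2thm5}: like Theorem \ref{dt2thm4}, it is quoted from \cite[Th.s~5.11 \& 5.12]{Joyc2}, so the only comparison available is with the proofs in that reference. Judged on its own terms, your argument for (b) is correct and is essentially the only argument there is: $\Pi^\mu$ is built entirely out of the source data $[X/G]$, $T^G$, $X^{\{t\}}$, $C_G(\{t\})$, while $\phi_*$ only replaces $\rho$ by $\phi\ci\rho$, so the two commute termwise once Theorem \ref{dt2thm4} guarantees presentation-independence. Your outline of (c) also has the right skeleton --- choosing $T^{C_G(\{t\})}=T^G$, identifying $(X^{\{t\}})^{\{t'\}}=X^{\{t,t'\}}$ and $C_{C_G(\{t\})}(\{t'\})=C_G(\{t,t'\})$, and using $\rd(\mu_1\mu_2)(L)=\mu_1(L)\mu_2(L)$ --- with the genuinely delicate Fubini and Weyl-group bookkeeping, and the fact that $C_G(\{t\})$ need not be special, deferred to \cite{Joyc2}; since the paper itself only cites that source, this is acceptable.

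The genuine gap is in (a). The measure $\rd\mu$ attached to $\mu\equiv 1$ is \emph{not} the Euler-characteristic measure on $T^G$: by construction every algebraic subgroup $L\cong\bG_m^k\times K$ has $\rd\mu(L)=1$, whereas $\chi(\bG_m)=0$ and a finite subgroup has Euler characteristic $\md{K}$, not $1$; likewise a non-identity coset such as $L\sm\{1\}$ for $L$ finite cyclic gets measure $1-1=0$, so your claim that points get measure equal to their cardinality fails. This is not a cosmetic slip: if the measure really were Euler characteristic, part (a) would be false. For $\fR=[\Spec\K/\bG_m]$, so $X=\Spec\K$ and $G=T^G=\bG_m$ with trivial Weyl group, the integrand in \eq{dt2eq9} is the constant $[(\fR,\rho)]$, and integrating against Euler characteristic would give $\chi(\bG_m)\cdot[(\fR,\rho)]=0$ instead of $[(\fR,\rho)]$; with the correct measure one gets $\rd\mu(\bG_m)\cdot[(\fR,\rho)]=[(\fR,\rho)]$. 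The actual mechanism is in a sense opposite to the one you describe: because every subgroup has measure exactly $1$, inclusion--exclusion forces many locally closed strata of $T^G$ to have measure $0$ (for instance, for $G=\GL(2,\K)$ the complement of the diagonal $\bG_m$ in $T^G=\bG_m^2$ has measure $1-1=0$), and it is this cancellation, combined with cutting $X$ into pieces on which $X^{\{t\}}$ and $C_G(\{t\})$ are constant, that collapses the integral back to $[(\fR,\rho)]$. So the proposed reduction of (a) to Euler-characteristic integration plus a fixed-point argument is not a valid route; you would need to rerun the stratification argument with the correct measure, or cite \cite{Joyc2} for (a) without the incorrect heuristic.
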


\begin{dfn} For $n\ge 0$, define
$\Pi^\vi_n$\nomenclature[\Pi]{$\Pi^\vi_n$}{projection to stack
functions with `virtual rank $n$'} to be the operator $\Pi^{\mu_n}$
defined with weight $\mu_n$ given by $\mu_n([H])=1$ if $\dim H=n$
and $\mu_n([H])=0$ otherwise, for all $\K$-groups
$H\cong\bG_m^k\times K$ with $K$ a finite abelian group.
\label{dt2def8}
\end{dfn}

Here \cite[Prop.~5.14]{Joyc2} are some properties of the
$\Pi^\vi_n$.

\begin{prop} In the situation above, we have:
\begin{itemize}
\setlength{\itemsep}{0pt}
\setlength{\parsep}{0pt}
\item[{\rm(i)}] $(\Pi^\vi_n)^2=\Pi^\vi_n,$ so that\/ $\Pi^\vi_n$ is
a projection, and\/ $\Pi^\vi_m\ci\Pi^\vi_n=0$ for~$m\ne n$.
\item[{\rm(ii)}] For all\/ $f\in\uSF(\fF)$ we have $f=\sum_{n\ge 0}
\Pi^\vi_n(f),$ where the sum makes sense as $\Pi^\vi_n(f)=0$
for~$n\gg 0$.
\item[{\rm(iii)}] If\/ $\phi:\fF\ra\fG$ is a $1$-morphism of
Artin $\K$-stacks with affine geometric stabilizers
then~$\Pi^\vi_n\ci\phi_*=\phi_*\ci\Pi^\vi_n:\uSF(\fF)\ra\uSF(\fG)$.
\item[{\rm(iv)}] If\/ $f\in\uSF(\fF),$ $g\in\uSF(\fG)$
then~$\Pi^\vi_n(f\ot
g)=\sum_{m=0}^n\Pi^\vi_m(f)\ot\Pi^\vi_{n-m}(g)$.
\end{itemize}
\label{dt2prop1}
\end{prop}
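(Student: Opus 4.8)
The plan is to deduce parts (i)--(iii) formally from the established properties of the general operators $\Pi^\mu$ in Theorems~\ref{dt2thm4} and~\ref{dt2thm5}, and to prove (iv) by unwinding the definition~\eq{dt2eq9} over a product torus. Two elementary observations about~\eq{dt2eq9} will be used throughout: the measure $\rd\mu$ depends $\Q$-linearly on the weight function $\mu$, so $\Pi^{\mu+\mu'}=\Pi^\mu+\Pi^{\mu'}$ and $\Pi^{\mathbf 0}=0$ (the zero measure integrates to $0$ on every generator); and for $\fR\cong[X/G]$ the value $\Pi^\mu([(\fR,\rho)])$ depends only on the values of $\mu$ on closed $\K$-subgroups $L\subseteq T^G$, all of which satisfy $\dim L\le\rk G$.

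For~(i): Theorem~\ref{dt2thm5}(c) gives $\Pi^\vi_m\ci\Pi^\vi_n=\Pi^{\mu_m\mu_n}$. Since $\mu_n$ is $\{0,1\}$-valued, $\mu_n\mu_n=\mu_n$, so $(\Pi^\vi_n)^2=\Pi^\vi_n$; and for $m\ne n$ no group has dimension both $m$ and $n$, so $\mu_m\mu_n\equiv 0$ and $\Pi^\vi_m\ci\Pi^\vi_n=\Pi^{\mathbf 0}=0$. For~(ii): for each $H\cong\bG_m^k\times K$ exactly one value of $n$, namely $n=k$, has $\mu_n([H])=1$, so $\sum_{n\ge0}\mu_n\equiv 1$ as a pointwise sum; moreover on a generator $[(\fR,\rho)]$ with $\fR\cong[X/G]$ one has $\mu_n(L)=0$ for all $L\subseteq T^G$ once $n>\rk G$, hence $\Pi^\vi_n([(\fR,\rho)])=0$, and since any $f$ is a finite combination of such generators, $\Pi^\vi_n(f)=0$ for $n\gg0$, so $\sum_n\Pi^\vi_n(f)$ makes sense. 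By $\Q$-linearity of $\Pi^{(-)}$ in the weight and Theorem~\ref{dt2thm5}(a) we then get $\sum_{n\ge0}\Pi^\vi_n(f)=\Pi^1(f)=f$. Part~(iii) is Theorem~\ref{dt2thm5}(b) applied with $\mu=\mu_n$.

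The substantive part is~(iv). By bilinearity of $\ot$ it suffices to treat generators $f=[(\fR,\rho)]$, $g=[(\fS,\si)]$, and by Theorem~\ref{dt2thm4} we may present $\fR\cong[X/G]$, $\fS\cong[Y/H]$ with $X,Y$ quasiprojective and $G,H$ special, with maximal tori $T^G,T^H$. Then $\fR\times\fS\cong[(X\times Y)/(G\times H)]$, and $G\times H$ is special with maximal torus $T^G\times T^H$ and Weyl group $W(G,T^G)\times W(H,T^H)$. In the integrand of~\eq{dt2eq9} for this presentation everything factorizes over $(s,t)\in T^G\times T^H$: one has $(X\times Y)^{\{(s,t)\}}=X^{\{s\}}\times Y^{\{t\}}$ and $C_{G\times H}(\{(s,t)\})=C_G(\{s\})\times C_H(\{t\})$, hence $[(X\times Y)^{\{(s,t)\}}/C_{G\times H}(\{(s,t)\})]=[X^{\{s\}}/C_G(\{s\})]\times[Y^{\{t\}}/C_H(\{t\})]$, and the fraction $|\{w:w\cdot(s,t)=(s,t)\}|/|W(G\times H,T^G\times T^H)|$ is the product of the corresponding fractions for $G$ and $H$. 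By the definition~\eq{dt2eq8} of $\ot$, the integrand is therefore $F_1(s)\ot F_2(t)$, where $F_1,F_2$ are precisely the integrands of $\Pi^\vi_m([(\fR,\rho)])$ and $\Pi^\vi_{n-m}([(\fS,\si)])$ before the weight is applied. The remaining ingredient is the identity $\mu_n(L\times L')=\sum_{m=0}^n\mu_m(L)\,\mu_{n-m}(L')$, valid because $\dim(L\times L')=n$ iff $\dim L=m$ and $\dim L'=n-m$ for some $0\le m\le n$; this expresses $\rd\mu_n$ on $T^G\times T^H$ as the sum over $m$ of the product measures $\rd\mu_m\times\rd\mu_{n-m}$. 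Since the integral in~\eq{dt2eq9} is really a finite $\Q$-linear combination, Fubini and bilinearity of $\ot$ then give $\Pi^\vi_n\bigl([(\fR,\rho)]\ot[(\fS,\si)]\bigr)=\sum_{m=0}^n\bigl(\int F_1\,\rd\mu_m\bigr)\ot\bigl(\int F_2\,\rd\mu_{n-m}\bigr)=\sum_{m=0}^n\Pi^\vi_m([(\fR,\rho)])\ot\Pi^\vi_{n-m}([(\fS,\si)])$, which is the claimed formula for generators and hence, by bilinearity, for all $f,g$.

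I expect~(iv) to be the only real obstacle. One must check that passing to global quotient presentations is legitimate here (it is, by the independence statement of Theorem~\ref{dt2thm4}, together with the facts that any maximal torus of $G\times H$ may be used and that $\rk(G\times H)=\rk G+\rk H$), and that the factorization of the integrand genuinely matches the tensor product~\eq{dt2eq8} of stack functions and the splitting of the measure $\rd\mu_n$ across the two torus factors. Parts~(i)--(iii) are purely formal once Theorems~\ref{dt2thm4} and~\ref{dt2thm5} are in hand.
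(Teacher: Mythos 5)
Your proof is correct, and you should know that this book does not prove the proposition at all: it is quoted from \cite[Prop.~5.14]{Joyc2}, just as Theorems \ref{dt2thm4} and \ref{dt2thm5} are quoted from \cite[Th.s~5.11 \& 5.12]{Joyc2}, so the only comparison is with that reference, and your route --- (i)--(iii) formally from $\Pi^{\mu_m}\ci\Pi^{\mu_n}=\Pi^{\mu_m\mu_n}$, linearity of $\mu\mapsto\Pi^\mu$, $\Pi^1=\id$ and Theorem \ref{dt2thm5}(b), and (iv) by evaluating \eq{dt2eq9} on the product presentation $[(X\times Y)/(G\times H)]$ --- is the natural one and in substance the same as Joyce's.

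One step in (iv) should be stated more carefully: $\rd\mu_n$ on $T^G\times T^H$ does \emph{not} literally equal $\sum_{m}\rd\mu_m\times\rd\mu_{n-m}$ on all of ${\cal S}(T^G\times T^H)$; for instance, when $n=1$ it assigns $1$ to the diagonal $\bG_m\subset\bG_m^2$, a set which is not in the Boolean algebra generated by products and on which the product measures are not defined. What you need, and what is true, is only the identity $\rd\mu_n(A\times B)=\sum_{m=0}^n\rd\mu_m(A)\,\rd\mu_{n-m}(B)$ for arbitrary $A\in{\cal S}(T^G)$, $B\in{\cal S}(T^H)$. This follows from the case $A=L$, $B=L'$ of closed subgroups that you checked, by finite additivity and inclusion--exclusion (closed subgroups are stable under intersection, so a finitely additive $\Q$-valued measure on ${\cal S}(T)$ is determined by its values on subgroups), and it suffices because, as you observe, the level sets of the factorized integrand are finite unions of such product sets. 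With that adjustment the Fubini step, and hence (iv), goes through exactly as you wrote it.
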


Very roughly speaking, $\Pi^\vi_n$ projects
$[(\fR,\rho)]\in\uSF(\fF)$ to $[(\fR_n,\rho)]$, where $\fR_n$ is the
$\K$-substack of points $r\in\fR(\K)$ whose stabilizer groups
$\Iso_\fR(r)$ have rank $n$, that is, maximal torus $\bG_m^n$.
Unfortunately, it is more complicated than this. The right notion is
not the actual rank of stabilizer groups, but the {\it virtual
rank}.\index{virtual rank} We treat $r\in\fR(\K)$ with nonabelian
stabilizer group $G=\Iso_\fR(r)$ as a linear combination of points
with `virtual ranks' in the range $\rk\,C(G)\le n\le\rk\,G$.
Effectively this {\it abelianizes stabilizer groups}, that is, using
virtual rank we can treat $\fR$ as though its stabilizer groups were
all abelian, essentially tori~$\bG_m^n$.

\subsection{Stack function spaces
${\bar{\text{\underline{\rm SF\!}\,}},\bar{\rm SF}({\mathfrak
F},\chi,{\mathbb Q})}$}
\label{dt24}

In \cite[\S 4]{Joyc2} we extend {\it motivic} invariants\index{motivic
invariant} of quasiprojective $\K$-varieties, such as Euler
characteristics, virtual Poincar\'e polynomials, and virtual Hodge
polynomials, to Artin stacks. Then in \cite[\S 4--\S 6]{Joyc2} we
define several different classes of stack function spaces `twisted
by motivic invariants'. This is a rather long, complicated story,
which we will not explain. Instead, we will discuss only the spaces
$\uoSF,\oSF(\fF,\chi,\Q)$ `twisted by the Euler characteristic'
which we need later.

Throughout this section $\K$ is an algebraically closed field of
characteristic zero. We continue to use the notation on algebraic
$\K$-groups in \S\ref{dt23}. Here is some more notation,
\cite[Def.s~5.5 \& 5.16]{Joyc2}.

\begin{dfn} Let $G$ be an affine algebraic $\K$-group with maximal
torus $T^G$. If $S\subset T^G$ then $Q=T^G\cap C(C_G(S))$ is a
closed $\K$-subgroup of $T^G$ containing $S$. As $S\subseteq Q$ we
have $C_G(Q)\subseteq C_G(S)$. But $Q$ commutes with $C_G(S)$, so
$C_G(S)\subseteq C_G(Q)$. Thus $C_G(S)=C_G(Q)$. So $Q=T^G\cap
C(C_G(Q))$, and $Q$ and $C_G(Q)$ determine each other, given
$G,T^G$. Define $\cQ(G,T^G)$\nomenclature[Q(G,TG)]{$\cQ(G,T^G)$}{a set of
subtori of the maximal torus $T^G$ of a $\K$-group $G$} to be the
set of closed $\K$-subgroups $Q$ of $T^G$ such that~$Q=T^G\cap
C(C_G(Q))$.

In \cite[Lem.~5.6]{Joyc2} we show that $\cQ(G,T^G)$ is finite and
closed under intersections, with maximal element $T^G$ and minimal
element~$Q_{\rm min}=T^G\cap C(G)$.

An affine algebraic $\K$-group $G$ is called {\it very
special\/}\index{algebraic $\K$-group!very special} if $C_G(Q)$ and $Q$
are special for all $Q\in\cQ(G,T^G)$, for any maximal torus $T^G$ in
$G$. Then $G$ is special, as $G=C_G(Q_{\rm min})$. In \cite[Ex.~5.7
\& Def.~5.16]{Joyc2} we compute $\cQ(G,T^G)$ for $G=\GL(k,\K)$, and
deduce that $\GL(k,\K)$ is very special.
\label{dt2def9}
\end{dfn}

We can now define the spaces $\uoSF,\oSF(\fF,\chi,\Q)$, \cite[Def.s
5.17 \& 6.8]{Joyc2}.

\begin{dfn} Let $\fF$ be an Artin $\K$-stack with affine
geometric stabilizers. Consider pairs $(\fR,\rho)$, where $\fR$ is a
finite type Artin $\K$-stack with affine geometric stabilizers and
$\rho:\fR\ra\fF$ is a 1-morphism, with equivalence of pairs as in
Definition \ref{dt2def4}. Define
$\uoSF(\fF,\chi,\Q)$\nomenclature[SFc(F,\chi)]{$\uoSF(\fF,\chi,\Q)$}{vector
space of `stack functions' on an Artin stack $\fF$ with extra
relations involving the Euler characteristic} to be the $\Q$-vector
space generated by equivalence classes $[(\fR,\rho)]$ as above, with
the following relations:
\begin{itemize}
\setlength{\itemsep}{0pt}
\setlength{\parsep}{0pt}
\item[(i)] Given $[(\fR,\rho)]$ as above and $\fS$ a closed $\K$-substack
of $\fR$ we have $[(\fR,\rho)]=[(\fS,\rho\vert_\fS)]+[(\fR\sm\fS,
\rho\vert_{\fR\sm\fS})]$, as in~\eq{dt2eq5}.
\item[(ii)] Let $\fR$ be a finite type Artin $\K$-stack with
affine geometric stabilizers, $U$ a quasiprojective
$\K$-variety, $\pi_\fR:\fR\times U\ra\fR$ the natural
projection, and $\rho:\fR\ra\fF$ a 1-morphism. Then~$[(\fR\times
U,\rho\ci\pi_\fR)] =\chi([U])[(\fR,\rho)]$.

Here $\chi(U)\in\Z$ is the Euler characteristic of $U$. It is a {\it
motivic invariant\/} of $\K$-schemes, that is,
$\chi(U)=\chi(V)+\chi(U\sm V)$ for $V\subset U$ closed.
\item[(iii)] Given $[(\fR,\rho)]$ as above and a 1-isomorphism
$\fR\cong[X/G]$ for $X$ a quasiprojective $\K$-variety and $G$ a
very special algebraic $\K$-group acting on $X$ with maximal torus
$T^G$, we have
\e
[(\fR,\rho)]=\ts\sum_{Q\in\cQ(G,T^G)}F(G,T^G,Q)
\bigl[\bigl([X/Q],\rho\ci\io^Q\bigr)\bigr],
\label{dt2eq10}
\e
where $\io^Q:[X/Q]\ra\fR\cong[X/G]$ is the natural projection
1-morphism.
\end{itemize}
Here $F(G,T^G,Q)\in\Q$\nomenclature[F(G,T^G,Q)]{$F(G,T^G,Q)$}{rational
coefficients used in the definition of stack function spaces
$\uoSF,\oSF(\fF,\chi,\Q)$} are a system of rational coefficients
with a complicated definition in \cite[\S 6.2]{Joyc2}, which we will
not repeat. In \cite[\S 6.2]{Joyc2} we derive an inductive formula
for computing them when~$G=\GL(k,\K)$.

Similarly, define
$\oSF(\fF,\chi,\Q)$\nomenclature[SFd(F,\chi)]{$\oSF(\fF,\chi,\Q)$}{vector space
of `stack functions' on an Artin stack $\fF$ with extra relations,
defined using representable 1-morphisms} to be the $\Q$-vector space
generated by $[(\fR,\rho)]$ with $\rho$ representable, and relations
(i)--(iii) as above. Then $\oSF(\fF,\chi,\Q)\subset
\uoSF(\fF,\chi,\Q)$. Define projections
$\bar\Pi^{\chi,\Q}_\fF:\uSF(\fF) \ra\uoSF(\fF,\chi,\Q)$ and
$\SF(\fF) \ra\oSF(\fF,\chi,\Q)$ by
$\bar\Pi^{\chi,\Q}_\fF:\ts\sum_{i\in I}c_i[(\fR_i,\rho_i)]\mapsto
\ts\sum_{i\in I}c_i[(\fR_i,\rho_i)]$.

Define {\it multiplication\/} `$\,\cdot\,$', {\it pushforwards\/}
$\phi_*$, {\it pullbacks\/} $\phi^*$, and {\it tensor products}
$\ot$ on the spaces $\uoSF,\oSF(*,\chi,\Q)$ as in Definition
\ref{dt2def6}, and {\it projections\/} $\Pi^\vi_n$ as in
\S\ref{dt23}. The important point is that \eq{dt2eq6}--\eq{dt2eq9}
are compatible with the relations defining $\uoSF,\oSF(*,\chi,\Q)$,
or they would not be well-defined. This is proved in \cite[Th.s 5.19
\& 6.9]{Joyc2}, and depends on deep properties of the~$F(G,T^G,Q)$.
\label{dt2def10}
\end{dfn}

Here \cite[Prop.s 5.21 \& 5.22 \& \S 6.3]{Joyc2} is a useful way to
represent these spaces.

\begin{prop} $\uoSF,\oSF(\fF,\chi,\Q)$ are spanned over $\Q$ by
elements $[(U\times[\Spec\K/T],\rho)],$ for\/ $U$ a quasiprojective
$\K$-variety and\/ $T$ an algebraic $\K$-group isomorphic to
$\bG_m^k\times K$ for $k\ge 0$ and\/ $K$ finite abelian.

Suppose $\sum_{i\in I}c_i[(U_i\times[\Spec\K/T_i],\rho_i)]=0$ in
$\uoSF(\fF,\chi,\Q)$ or $\oSF(\fF,\chi,\Q),$ where $I$ is finite
set, $c_i\in\Q,$ $U_i$ is a quasiprojective $\K$-variety, and\/
$T_i$ is an algebraic $\K$-group isomorphic to $\bG_m^{k_i}\times
K_i$ for $k_i\ge 0$ and\/ $K_i$ finite abelian, with\/ $T_i\not\cong
T_j$ for $i\ne j$. Then $c_j[(U_j\times[\Spec\K/T_j],\rho_j)]=0$ for
all\/~$j\in I$.
\label{dt2prop2}
\end{prop}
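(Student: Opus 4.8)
The plan is to prove both assertions of Proposition~\ref{dt2prop2} by reducing everything, via relation (iii) in Definition~\ref{dt2def10} applied to $\GL(k,\K)$, to the ``abelianized'' generators $[(U\times[\Spec\K/T],\rho)]$, and then to extract a well-defined $\Q$-linear functional that separates the tori $T_i$. First I would establish the spanning statement: by Definition~\ref{dt2def4} any element of $\uoSF(\fF,\chi,\Q)$ is a $\Q$-combination of classes $[(\fR,\rho)]$ with $\fR$ of finite type; stratifying $\fR$ and using relation (i) we may assume $\fR\cong[X/\GL(k,\K)]$ for some quasiprojective $X$ (every finite type Artin $\K$-stack with affine geometric stabilizers has such an atlas, as in \cite{Joyc2}), then relation (iii) rewrites this as a $\Q$-combination of $[([X/Q],\rho\ci\io^Q)]$ for $Q\in\cQ(\GL(k,\K),T^{\GL(k,\K)})$, and each such $Q$ is isomorphic to $\bG_m^{k}\times K$ for $K$ finite abelian (since $Q$ is a closed $\K$-subgroup of a torus). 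Finally, writing $[X/Q]$ as a global quotient and stratifying $X$ into pieces on which the $Q$-action has locally constant stabilizer, together with relation~(ii) to absorb the quasiprojective fibre factors, puts every generator into the claimed form $[(U\times[\Spec\K/T],\rho)]$.

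For the second, separation, assertion the key is to build, for each isomorphism class of $\K$-group $T\cong\bG_m^k\times K$, a $\Q$-linear map $\Xi_T:\uoSF(\fF,\chi,\Q)\ra\CF(\fF)$ (or more simply to $\Q$ after pushing to a point, or to $\SF(\fF)$) that sends $[(U\times[\Spec\K/T'],\rho)]$ to $\chi(U)\cdot\rho_*(1)$ when $T'\cong T$ and to $0$ when $T'\not\cong T$. The natural candidate is a composition of the virtual-rank-type projection $\Pi^\mu$ of Definition~\ref{dt2def7}, taken with a weight function $\mu$ supported on the single isomorphism class $[T]$, followed by pushforward $\rho_*$ and then $\pi_\fF^\stk$ (Definition~\ref{dt2def5}) to land in honest constructible functions where there are no hidden relations. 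One must check that such a $\mu$ is a legitimate weight function, that $\Pi^\mu$ respects relations (i)--(iii) (this is exactly the content of \cite[Th.s~5.19 \& 6.9]{Joyc2}, which guarantees $\Pi^\vi_n$ and more generally $\Pi^\mu$ descend to $\uoSF(*,\chi,\Q)$), and that $\Pi^\mu$ applied to $[(U\times[\Spec\K/T'],\rho)]$ vanishes unless $T'\cong T$ and otherwise returns $\chi(U)$ times the class of the point $[\Spec\K/T]$ weighted correctly; this last computation is the routine local calculation of $\Pi^\mu$ on a stack with abelian stabilizer group, which by the remark after Definition~\ref{dt2def7} just multiplies each geometric point by $\mu$ of its stabilizer.

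Granting such $\Xi_T$, the argument finishes quickly. Suppose $\sum_{i\in I}c_i[(U_i\times[\Spec\K/T_i],\rho_i)]=0$ with the $T_i$ pairwise non-isomorphic. Fix $j\in I$ and apply $\Xi_{T_j}$: every term with $i\ne j$ is killed because $T_i\not\cong T_j$, and the $j$-th term maps to a nonzero constructible-function multiple of $c_j\cdot(\rho_j)_*(1)$ on the locus where the relevant Euler-characteristic weights do not vanish --- but one cannot conclude $c_j=0$ this way, so instead I would use $\Xi_{T_j}$ only to deduce that $c_j[(U_j\times[\Spec\K/T_j],\rho_j)]$ itself maps to $0$ under $\bar\Pi^{\chi,\Q}_\fF$ composed with the identity, i.e. is already $0$ in $\uoSF(\fF,\chi,\Q)$; more precisely, $\Pi^\mu$ with $\mu\equiv\de_{[T_j]}$ is a \emph{projection} onto the span of generators whose stabilizer is $T_j$, it annihilates all $i\ne j$ terms and fixes the $j$-th, so applying it to the relation gives $c_j[(U_j\times[\Spec\K/T_j],\rho_j)]=\Pi^\mu(0)=0$, which is exactly the claim. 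The main obstacle I anticipate is verifying carefully that $\Pi^\mu$ for this singular choice of weight $\mu$ is well-defined on $\uoSF(\fF,\chi,\Q)$ and genuinely acts as the asserted projection --- i.e. that it is compatible with relation (iii) and kills cross-terms between distinct tori --- but this is precisely what the machinery of \cite[\S 5--\S 6]{Joyc2}, in particular Theorems~\ref{dt2thm4}, \ref{dt2thm5} and the descent results \cite[Th.s~5.19 \& 6.9]{Joyc2}, was built to supply, so the proof is essentially an assembly of those tools rather than new input.
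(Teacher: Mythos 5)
You should first be aware that the paper does not prove this proposition at all: it is quoted verbatim from \cite[Prop.s 5.21, 5.22 \& \S 6.3]{Joyc2}, so the only "intended" proof is Joyce's argument there. Your spanning argument is essentially that argument in outline (stratify a generator into global quotients $[X/\GL(k,\K)]$, apply relation Definition~\ref{dt2def10}(iii) to the very special group $\GL(k,\K)$, then stratify by $Q$-stabilizers), and its soft spots are only details: the groups $Q\in\cQ(\GL(k,\K),T^G)$ are subtori, the finite parts $K$ only enter through the stabilizer stratification, and writing $[X_T/Q]$ as $U\times[\Spec\K/T]$ needs Zariski-local triviality of the free $(Q/T)$-quotient together with relation (i) (not relation (ii)); free torus quotients of quasiprojective varieties are not automatically quasiprojective (e.g.\ $\bG_m$ acting on $\bA^2\sm\{0\}$ with weights $(1,-1)$), so a further stratification is needed, but all of this is routine.

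The independence half, however, has a genuine gap: everything rests on the claim that $\Pi^\mu$ for the torsion-sensitive weight $\mu=\delta_{[T_j]}$ is well defined on $\uoSF(\fF,\chi,\Q)$, i.e.\ compatible with relation Definition~\ref{dt2def10}(iii) and the coefficients $F(G,T^G,Q)$, and nothing you invoke delivers this. Theorems \ref{dt2thm4} and \ref{dt2thm5} live on $\uSF(\fF)$, before the relations are imposed, and Definition~\ref{dt2def10} (quoting \cite[Th.s 5.19 \& 6.9]{Joyc2}) asserts descent only of $\cdot\,$, $\phi_*$, $\phi^*$, $\ot$ and the projections $\Pi^\vi_n$, whose weights depend only on $\dim H$; your weight also distinguishes the finite factor $K$ of $H\cong\bG_m^k\times K$, and its compatibility with (iii) is exactly one of the ``deep properties of the $F(G,T^G,Q)$'' at stake. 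The two statements are essentially equivalent in strength: granted descent of every $\Pi^{\delta_{[T]}}$, the proposition is the two-line argument you give; conversely, verifying descent forces you to analyse how relation (iii) distributes a class $[([X/G],\rho)]$ among stabilizer types $\bG_m^k\times K$ — already for $G=\GL(2,\K)$ acting on a variety with strata whose $T^G$-stabilizer is $\bG_m\times\Z_2$ one must check an identity between the measure $\rd\mu$ on ${\cal S}(T^G)$ and the coefficients $F(\GL(2,\K),T^G,Q)$ — which is the same work as the proof in \cite{Joyc2}. The claim may well be true (such checks for $\GL(2,\K)$ come out consistent), but as written your proof defers the entire difficulty to it, so you must either prove that $\Pi^{\delta_{[T]}}$ respects relation (iii) for all very special $G$, or follow \cite{Joyc2} directly. (The abandoned detour through $\pi_\fF^\stk$, or pushing forward to a point and using Proposition~\ref{dt2prop3}, would not suffice either: the latter only yields $c_j\chi(U_j)=0$, which is weaker than the stated conclusion.)
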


In this representation, the operators $\Pi^\vi_n$ of \S\ref{dt23}
are easy to define: we have
\begin{equation*}
\Pi^\vi_n\bigl([(U\times[\Spec\K/T],\rho)]\bigr)=
\begin{cases} [(U\times[\Spec\K/T],\rho)], &\dim T=n,\\
0, & \text{otherwise.}\end{cases}
\end{equation*}
Proposition \ref{dt2prop2} says that a general element
$[(\fR,\rho)]$ of $\uoSF,\oSF(\fF,\chi,\Q)$, whose stabilizer groups
$\Iso_\fR(x)$ for $x\in\fR(\K)$ are arbitrary affine algebraic
$\K$-groups, may be written as a $\Q$-linear combination of elements
$[(U\times[\Spec\K/T],\rho)]$ whose stabilizer groups $T$ are of the
form $\bG_m^k\times K$ for $k\ge 0$ and $K$ finite abelian. That is,
{\it by working in\/ $\uoSF,\oSF(\fF,\chi,\Q),$ we can treat all
stabilizer groups as if they are abelian}. Furthermore, although
$\uoSF,\oSF(\fF,\chi,\Q)$ forget information about nonabelian
stabilizer groups, they do remember the difference between abelian
stabilizer groups of the form $\bG_m^k\times K$ for finite~$K$.

In \cite[Prop.~6.11]{Joyc2} we completely
describe~$\uoSF,\oSF(\Spec\K,\chi,\Q)$.

\begin{prop} Define a commutative $\Q$-algebra $\La$ with basis
isomorphism classes $[T]$ of\/ $\K$-groups\/ $T$ of the form\/
$\bG_m^k\times K,$ for $k\ge 0$ and\/ $K$ finite abelian, with
multiplication $[T]\cdot[T']=[T\times T']$. Define
$i_\La:\La\ra\uoSF (\Spec\K,\chi,\Q)$ by
$\sum_ic_i[T_i]\mapsto\sum_ic_i [[\Spec\K/T_i]]$. Then $i_\La$ is an
algebra isomorphism. It restricts to an
isomorphism~$i_\La:\Q[\{1\}]\ra \oSF(\Spec\K,\chi,\Q)\cong\Q$.
\label{dt2prop3}
\end{prop}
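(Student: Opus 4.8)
The strategy is: (1)~check $i_\La$ is a $\Q$-algebra morphism; (2)~prove surjectivity from the spanning part of Proposition~\ref{dt2prop2} together with relation (ii) of Definition~\ref{dt2def10}; (3)~prove injectivity from the ``separation'' part of Proposition~\ref{dt2prop2}, reducing it to the assertion that each generator $[[\Spec\K/T]]$ is nonzero; (4)~deduce the restriction statement. Step~(1) is immediate: over $\Spec\K$ every $1$-morphism to $\Spec\K$ is unique, so the product \eq{dt2eq6} gives $[\Spec\K/T]\times_{\Spec\K}[\Spec\K/T']\cong[\Spec\K/(T\times T')]$, whence $i_\La([T])\cdot i_\La([T'])=i_\La([T\times T'])=i_\La([T]\cdot[T'])$, and $i_\La([\{1\}])=[\Spec\K]$ is the unit of $\uoSF(\Spec\K,\chi,\Q)$.

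For~(2), Proposition~\ref{dt2prop2} shows $\uoSF(\Spec\K,\chi,\Q)$ is spanned by classes $[(U\times[\Spec\K/T],\rho)]$ with $U$ a quasiprojective $\K$-variety and $T\cong\bG_m^k\times K$; suppressing the unique $\rho$ and applying relation (ii) with $\fR=[\Spec\K/T]$ yields $[U\times[\Spec\K/T]]=\chi(U)\,[[\Spec\K/T]]=i_\La\bigl(\chi(U)[T]\bigr)$, so $i_\La$ is onto. For~(3), suppose $\sum_{i\in I}c_i[T_i]\in\Ker i_\La$ with the $T_i$ pairwise non-isomorphic, that is $\sum_i c_i[[\Spec\K/T_i]]=0$ in $\uoSF(\Spec\K,\chi,\Q)$; writing each summand as $[(\Spec\K\times[\Spec\K/T_i],\rho_i)]$ and invoking the second part of Proposition~\ref{dt2prop2} gives $c_i[[\Spec\K/T_i]]=0$ for every $i$. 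It then suffices to produce, for each $T$, a $\Q$-linear functional on $\uoSF(\Spec\K,\chi,\Q)$ not vanishing on $[[\Spec\K/T]]$; the na\"\i ve Euler characteristic $\chi^\na$ does this, since it ignores stabilizer groups and so takes the value $1$ on every $[[\Spec\K/T]]$. (One functional suffices because the generators have already been separated.) This shows $i_\La$ is an isomorphism onto $\uoSF(\Spec\K,\chi,\Q)$.

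For~(4): the subspace $\oSF(\Spec\K,\chi,\Q)$ is generated by classes $[\fR]$ of finite type $\K$-algebraic spaces, since a representable $1$-morphism to $\Spec\K$ has source an algebraic space. Relation (i) reduces these to classes $[U]$ of quasiprojective $\K$-varieties, and relation (ii) with $\fR=\Spec\K$ gives $[U]=\chi(U)\,[\Spec\K]$, so $\oSF(\Spec\K,\chi,\Q)=\Q\cdot[\Spec\K]$. The Euler characteristic $[\fR]\mapsto\chi(\fR)$ is a well-defined $\Q$-linear map on $\oSF(\Spec\K,\chi,\Q)$ --- relation (i) is inclusion--exclusion for $\chi$, relation (ii) is multiplicativity, relation (iii) is vacuous for $\K$-schemes --- and sends $[\Spec\K]$ to $1$; hence it identifies $\oSF(\Spec\K,\chi,\Q)$ with $\Q$ and in particular $[\Spec\K]\ne0$. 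Since $i_\La([\{1\}])=[\Spec\K]$, $i_\La$ restricts to an isomorphism $\Q[\{1\}]\ra\oSF(\Spec\K,\chi,\Q)$.

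The one substantive point, and the main obstacle, is that the chosen functional is genuinely well-defined on $\uoSF(\Spec\K,\chi,\Q)$, i.e.\ that $\chi^\na$ annihilates relations (i)--(iii) of Definition~\ref{dt2def10}. Vanishing on (i) is the motivic property of $\chi$ and vanishing on (ii) is its multiplicativity under products with varieties, but relation (iii) is delicate: it involves the coefficients $F(G,T^G,Q)$, and the required identity $\chi^\na([X/G])=\sum_{Q\in\cQ(G,T^G)}F(G,T^G,Q)\,\chi^\na([X/Q])$ reduces to the numerical properties of those coefficients analysed in \cite[\S 6.2]{Joyc2} --- the same properties that make the operations on $\uoSF(\Spec\K,\chi,\Q)$ well-defined, \cite[Th.s 5.19 \& 6.9]{Joyc2}. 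Equivalently, one may simply invoke from \cite[\S 4--\S 6]{Joyc2} that the na\"\i ve Euler characteristic descends through the $\chi$-twisted quotient, which is exactly the compatibility that step~(3) requires.
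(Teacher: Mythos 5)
Steps (1) and (2), and the reduction of injectivity via the separation statement in Proposition~\ref{dt2prop2} to the single claim that each generator $[[\Spec\K/T]]$ is nonzero, are all fine. But the way you establish that nonvanishing is where the argument genuinely fails: the na\"\i ve Euler characteristic $\chi^\na$ is \emph{not} well-defined on $\uoSF(\Spec\K,\chi,\Q)$, because it does not annihilate relation (iii) of Definition~\ref{dt2def10}. You can see this inside the present paper: the computation \eq{dt11eq13} in \S\ref{dt11} shows that for $G=\bG_m^2\lt\Hom(V,U)$ with maximal torus $T^G=\bG_m^2$ one has $\cQ(G,T^G)=\{\bG_m^2,\bG_m\}$ with $F(G,T^G,\bG_m^2)=1$ and $F(G,T^G,\bG_m)=-\dim\Hom(V,U)$. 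Since the coefficients $F(G,T^G,Q)$ are universal, relation (iii) applied with $X=\Spec\K$ (trivial $G$-action, and $G$ is very special) gives, in $\uoSF(\Spec\K,\chi,\Q)$,
\begin{equation*}
[[\Spec\K/G]]=[[\Spec\K/\bG_m^2]]-\dim\Hom(V,U)\,[[\Spec\K/\bG_m]],
\end{equation*}
and $\chi^\na$ takes the value $1$ on the left but $1-\dim\Hom(V,U)$ on the right. So the functional you propose does not descend, and the closing appeal to \cite[Th.s 5.19 \& 6.9]{Joyc2} does not repair this: those theorems assert well-definedness of the multiplication, pushforwards, pullbacks, tensor products and the projections $\Pi^\vi_n$ on the $\chi$-twisted spaces, not of $\chi^\na$ as a linear functional. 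The nonvanishing of the classes $[[\Spec\K/T]]$ -- equivalently, that the relations do not collapse $\uoSF(\Spec\K,\chi,\Q)$ to something smaller than $\La$ -- is exactly the ``deep properties of the $F(G,T^G,Q)$'' that the paper alludes to after the proposition, and it cannot be obtained from a stabilizer-blind Euler characteristic; any linear functional separating these classes must weight points by their stabilizer groups, which is essentially the content of the quoted result \cite[Prop.~6.11]{Joyc2} itself. (Note also that the paper gives no proof here; the statement is imported from \cite{Joyc2}.)

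A secondary, related slip occurs in step (4): relation (iii) is not vacuous on $\oSF(\Spec\K,\chi,\Q)$. If a very special group $G$ acts freely on a quasiprojective variety $X$, then $[X/G]$ and all the $[X/Q]$ for $Q\in\cQ(G,T^G)$ are algebraic spaces, so such instances of (iii) are imposed among representable generators, and the well-definedness of $[\fR]\mapsto\chi(\fR)$ there again depends on nontrivial identities satisfied by the $F(G,T^G,Q)$ (e.g.\ $\chi(\GL(2,\K)/T)=2$ while $\chi(\GL(2,\K)/\GL(2,\K))=1$, so the coefficients must conspire). So both the full statement and the restriction to $\oSF(\Spec\K,\chi,\Q)\cong\Q$ rest on properties of the coefficients that your argument assumes rather than proves; as it stands the proof has a genuine gap at its central point.
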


Proposition \ref{dt2prop3} shows that the relations Definition
\ref{dt2def10}(i)--(iii) are well chosen, and in particular, the
coefficients $F(G,T^G,Q)$ in \eq{dt2eq10} have some beautiful
properties. If the $F(G,T^G,Q)$ were just some random numbers, one
might expect relation (iii) to be so strong that
$\uoSF(\fF,\chi,\Q)$ would be small, or even zero, for all $\fF$.
But $\uoSF(\Spec\K,\chi,\Q)$ is large, and easily
understood.\index{stack function|)}

\section[Background material from
$\text{\cite{Joyc3,Joyc4,Joyc5,Joyc6}}$]{Background material from
\cite{Joyc3,Joyc4,Joyc5,Joyc6}}
\label{dt3}

Next we review material from the first author's series of papers
\cite{Joyc3,Joyc4,Joyc5,Joyc6}.

\subsection{Ringel--Hall algebras of an abelian category}\index{abelian
category|(}\index{Ringel--Hall algebra|(}\index{abelian
category!Ringel--Hall algebra|(}
\label{dt31}

Let $\A$ be a $\K$-linear abelian category. We define the {\it
Grothendieck group\/} $K_0(\A)$, the {\it Euler form\/} $\bar\chi$,
and the {\it numerical Grothendieck group\/}~$K^\num(\A)$.

\begin{dfn} Let $\A$ be an abelian category. The {\it Grothendieck
group\/}\index{Grothendieck
group}\nomenclature[K0(A)]{$K_0(\A)$}{Grothendieck group of an
abelian category $\A$} $K_0(\A)$ is the abelian group generated by
all isomorphism classes $[E]$ of objects $E$ in $\A$, with the
relations $[E]+[G]=[F]$ for each short exact sequence $0\ra E\ra
F\ra G\ra 0$ in $\A$. In many interesting cases such as
$\A=\coh(X)$, the Grothendieck group $K_0(\A)$ is very large, and it
is useful to replace it by a smaller group. Suppose $\A$ is
$\K$-linear for some algebraically closed field $\K$, and that
$\Ext^*(E,F)$ is finite-dimensional over $\K$ for all $E,F\in\A$.
The {\it Euler form\/}\index{Euler
form}\nomenclature[\chi]{$\bar\chi$}{Euler form of an abelian
category} $\bar\chi:K_0(\A)\times K_0(\A)\ra\Z$ is a biadditive map
satisfying
\e
\bar\chi\bigl([E],[F]\bigr)=\ts\sum_{i\ge 0}(-1)^i\dim\Ext^i(E,F)
\label{dt3eq1}
\e
for all $E,F\in\A$. We use the notation $\bar\chi$ rather than
$\chi$ for the Euler form, because $\chi$ will be used often to mean
Euler characteristic or weighted Euler characteristic. The {\it
numerical Grothendieck group\/}\index{Grothendieck
group!numerical}\nomenclature[Knum(A)]{$K^\num(\A)$}{numerical
Grothendieck group of an abelian category $\A$} $K^\num(\A)$ is the
quotient of $K_0(\A)$ by the (two-sided) kernel of $\bar\chi$, that
is, $K^\num(\A)=K_0(\A)/I$ where $I=\bigl\{\al\in
K_0(\A):\bar\chi(\al,\be)=\bar\chi(\be,\al)=0$ for all $\be\in
K_0(\A)\bigr\}$. Then $\bar\chi$ on $K_0(\A)$ descends to a
biadditive Euler form $\bar\chi:K^\num(\A)\times K^\num(\A)\ra\Z$.

If $\A$ is 3-Calabi--Yau\index{abelian category!3-Calabi--Yau} then
$\bar\chi$ is antisymmetric, so the left and right kernels of
$\bar\chi$ on $K_0(\A)$ coincide, and $\bar\chi$ on $K^\num(\A)$ is
nondegenerate. If $\A=\coh(X)$ for $X$ a smooth projective
$\K$-scheme of dimension $m$ then Serre duality implies that
$\bar\chi(E,F)=(-1)^m \bar\chi(F,E\ot K_X)$. Thus, again, the left
and right kernels of $\bar\chi$ on $K_0(\coh(X))$ are the same, and
$\bar\chi$ on $K^\num(\coh(X))$ is nondegenerate.
\label{dt3def1}
\end{dfn}

Our goal is to associate a {\it Ringel--Hall algebra\/}
$\SFa(\fM_\A)$ to $\A$. To do this we will need to be able to do
algebraic geometry in $\A$, in particular, to form moduli
$\K$-stacks of objects and exact sequences in $\A$ and 1-morphisms
between them. This requires some extra data, described
in~\cite[Assumptions 7.1 \& 8.1]{Joyc3}.

\begin{ass} Let $\K$ be an algebraically closed field and $\A$
a $\K$-linear abelian category with $\Ext^i(E,F)$ finite-dimensional
$\K$-vector spaces for all $E,F$ in $\A$ and $i\ge 0$. Let $K(\A)$
be the quotient of the Grothendieck group $K_0(\A)$ by some fixed
subgroup. Usually we will take
$K(\A)=K^\num(\A)$,\nomenclature[K(A)]{$K(\A)$}{a quotient of the Grothendieck
group $K_0(\A)$ of an abelian category $\A$. Often
$K(\A)=K^\num(\A)$} the numerical Grothendieck group from Definition
\ref{dt3def1}. Suppose that if $E\in\A$ with $[E]=0$ in $K(\A)$ then
$E\cong 0$. From \S\ref{dt32} we will also assume $\A$ is {\it
noetherian}.

To define moduli stacks of objects or configurations in $\A$, we
need some {\it extra data}, to tell us about algebraic families of
objects and morphisms in $\A$, parametrized by a base scheme $U$. We
encode this extra data as a {\it stack in exact categories\/}
$\fF_\A$ on the {\it category of\/ $\K$-schemes\/} $\Sch_\K$, made
into a {\it site\/} with the {\it \'etale topology}.\index{etale
topology@\'etale topology} The $\K,\A,K(\A),\fF_\A$ must satisfy
some complex additional conditions \cite[Assumptions 7.1 \&
8.1]{Joyc3}, which we do not give.
\label{dt3ass}
\end{ass}

Examples of data satisfying Assumption \ref{dt3ass} are given in
\cite[\S 9--\S 10]{Joyc3}. These include $\A=\coh(X)$, the abelian
category of coherent sheaves on a smooth projective $\K$-scheme $X$,
with $K(\A)=K^\num(\coh(X))$, and $\A=\text{mod-}\K Q/I$, the
abelian category of $\K$-representations of a quiver
$Q=(Q_0,Q_1,b,e)$ with relations $I$, with $K(\A)=\Z^{Q_0}$, the
lattice of dimension vectors for~$Q$.

Suppose Assumption \ref{dt3ass} holds. We will use the following
notation:
\begin{itemize}
\setlength{\itemsep}{0pt}
\setlength{\parsep}{0pt}
\item Define the `positive cone'\index{positive
cone}\nomenclature[C(A)]{$C(\A)$}{the positive cone in $K(\A)$} $C(\A)$ in
$K(\A)$ to be
\e
C(\A)=\bigl\{[E]\in K(\A):0\not\cong E\in\A\bigr\}\subset K(\A).
\label{dt3eq2}
\e
\item Write $\fM_\A$\nomenclature[MaA]{$\fM_\A$ or $\fM$}{moduli stack of
objects in an abelian category $\A$} for the moduli stack of
objects in $\A$. It is an Artin $\K$-stack, locally of finite
type. Elements of $\fM_\A(\K)$ correspond to isomorphism classes
$[E]$ of objects $E$ in $\A$, and the stabilizer group
$\Iso_{\fM_\A}([E])$ in $\fM_\A$ is isomorphic as an algebraic
$\K$-group to the automorphism group~$\Aut(E)$.
\item For $\al\in C(\A)$, write
$\fM^\al_\A$\nomenclature[MaAa]{$\fM^\al_\A$ or $\fM^\al$}{moduli stack of
objects in $\A$ with class $\al\in K(\A)$} for the substack of
objects $E\in\A$ in class $\al$ in $K(\A)$. It is an open and
closed $\K$-substack of $\fM_\A$.
\item Write $\fExact_\A$\nomenclature[ExactA]{$\fExact_\A$}{moduli stack
of short exact sequences in an abelian category $\A$} for the
moduli stack of short exact sequences $0\ra E_1\ra E_2\ra E_3\ra
0$ in $\A$. It is an Artin $\K$-stack, locally of finite type.
\item For $j=1,2,3$ write $\pi_j:\fExact_\A\ra\fM_\A$ for the
1-morphism of Artin stacks projecting $0\ra E_1\ra E_2\ra E_3\ra
0$ to $E_j$. Then $\pi_2$ is {\it
representable},\index{1-morphism!representable} and
$\pi_1\times\pi_3:\fExact_\A\ra \fM_\A\times\fM_\A$ is of {\it
finite type}.\index{1-morphism!finite type}
\end{itemize}

In \cite{Joyc4} we define {\it Ringel--Hall algebras}, using stack
functions.

\begin{dfn} Suppose Assumption \ref{dt3ass} holds. Define bilinear
operations $*$ on the stack function spaces\index{stack function}
$\uSF,\SF(\fM_\A)$ and $\uoSF,\oSF(\fM_\A,\chi,\Q)$ by
\e
f*g=(\pi_2)_*\bigl((\pi_1\times\pi_3)^*(f\ot g)\bigr),
\label{dt3eq3}
\e
using pushforwards, pullbacks and tensor products in Definition
\ref{dt2def6}. They are well-defined as $\pi_2$ is representable,
and $\pi_1\times\pi_3$ is of finite type. By \cite[Th.~5.2]{Joyc4}
this
* is {\it associative}, and makes $\uSF,\SF(\fM_\A)$,
$\uoSF,\oSF(\fM_\A,\chi,\Q)$ into noncommutative $\Q$-algebras, with
identity $\bde_{[0]}$, where $[0]\in\fM_\A$ is the zero object. We
call them {\it Ringel--Hall algebras}, as they are a version of the
Ringel--Hall method for defining algebras from abelian categories.
The natural inclusions and projections $\bar\Pi^{\chi,\Q}_{\fM_\A}$
between these spaces are algebra morphisms.

As these algebras are inconveniently large for some purposes, in
\cite[Def.~5.5]{Joyc4} we define subalgebras $\SFa(\fM_\A),
\oSFa(\fM_\A,\chi,\Q)$ using the algebra structure on stabilizer
groups in $\fM_\A$. Suppose $[(\fR,\rho)]$ is a generator of
$\SF(\fM_\A)$. Let $r\in\fR(\K)$ with $\rho_*(r)=[E]\in\fM_\A(\K)$,
for some $E\in\A$. Then $\rho$ induces a morphism of stabilizer
$\K$-groups $\rho_*:\Iso_\fR(r)\ra\Iso_{\fM_\A}([E])\cong\Aut(E)$.
As $\rho$ is representable this is {\it injective}, and induces an
isomorphism of $\Iso_\fR(r)$ with a $\K$-subgroup of $\Aut(E)$. Now
$\Aut(E)=\End(E)^\times$ is the $\K$-group of invertible elements in
a {\it finite-dimensional\/ $\K$-algebra} $\End(E)=\Hom(E,E)$. We
say that $[(\fR,\rho)]$ {\it has algebra stabilizers\/}\index{stack
function!with algebra stabilizers} if whenever $r\in\fR(\K)$ with
$\rho_*(r)=[E]$, the $\K$-subgroup $\rho_*\bigl(\Iso_\fR(r)\bigr)$
in $\Aut(E)$ is the $\K$-group $A^\times$ of invertible elements in
a $\K$-subalgebra $A$ in $\End(E)$. Write
$\SFa(\fM_\A),\oSFa(\fM_\A,\chi,\Q)$ for the subspaces of
$\SF(\fM_\A),\oSF(\fM_\A,\chi,\Q)$ spanned over $\Q$ by
$[(\fR,\rho)]$ with algebra stabilizers. Then
\cite[Prop.~5.7]{Joyc4} shows that
$\SFa(\fM_\A),\oSFa(\fM_\A,\chi,\Q)$ are {\it subalgebras\/} of the
Ringel--Hall algebras~$\SF(\fM_\A),\oSF(\fM_\A,\chi,\Q)$.

Now \cite[Cor.~5.10]{Joyc4} shows that $\SFa(\fM_\A),\oSFa
(\fM_\A,\chi,\Q)$ are closed under the operators $\Pi^\vi_n$ on
$\SF(\fM_\A),\oSF(\fM_\A,\chi,\Q)$ defined in \S\ref{dt23}. In
\cite[Def.~5.14]{Joyc4} we define
$\SFai(\fM_\A),\oSFai(\fM_\A,\chi,\Q)$ to be the subspaces of $f$ in
$\SFa(\fM_\A)$ and $\oSFa (\fM_\A,\chi,\Q)$ with $\Pi^\vi_1(f)=f$.
We think of $\SFai(\fM_\A),\oSFai(\fM_\A,\chi,\Q)$ as stack
functions {\it `supported on virtual indecomposables'}.\index{stack
function!supported on virtual indecomposables}\index{virtual
indecomposable} This is because if $E\in\A$ then $\rk\Aut(E)$ is the
number of indecomposable factors of $E$, that is, $\rk\Aut(E)=r$ if
$E\cong E_1\op\cdots\op E_r$ with $E_i$ nonzero and indecomposable
in $\A$. But $\Pi^\vi_1$ projects to stack functions with `virtual
rank' 1, and thus with `one virtual indecomposable factor'.

In \cite[Th.~5.18]{Joyc4} we show $\SFai(\fM_\A),\oSFai
(\fM_\A,\chi,\Q)$ are closed under the Lie bracket $[f,g]=f*g-g*f$
on $\SFa(\fM_\A),\ab\oSFa(\fM_\A,\chi,\Q)$. Thus,
$\SFai\!(\fM_\A),\ab\oSFai (\fM_\A,\chi,\Q)$ are {\it Lie
subalgebras} of $\SFa(\fM_\A),\oSFa(\fM_\A,\chi,\Q)$. The projection
$\bar\Pi^{\chi,\Q}_{\fM_\A}:\SFai\!(\fM_\A)\ra \oSFai
(\fM_\A,\chi,\Q)$ is a Lie algebra morphism.
\label{dt3def2}
\end{dfn}

As in \cite[Cor.~5.11]{Joyc4}, the first part of Proposition
\ref{dt2prop2} simplifies to give:

\begin{prop} $\oSFa(\fM_\A,\chi,\Q)$ is spanned over $\Q$ by
elements of the form $[(U\times[\Spec\K/\bG_m^k],\rho)]$ with
algebra stabilizers, for\/ $U$ a quasiprojective $\K$-variety and\/
$k\ge 0$. Also $\oSFai(\fM_\A,\chi,\Q)$ is spanned over $\Q$ by
$[(U\times[\Spec\K/\bG_m],\rho)]$ with algebra stabilizers, for\/
$U$ a quasiprojective $\K$-variety.
\label{dt3prop1}
\end{prop}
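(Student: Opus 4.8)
The plan is to deduce both assertions from Proposition~\ref{dt2prop2} applied to $\fF=\fM_\A$, by following its reduction while carrying the algebra-stabilizers condition along, and then exploiting that the unit group of a finite-dimensional $\K$-algebra is connected. Since $\oSFa(\fM_\A,\chi,\Q)$ is spanned by generators $[(\fR,\rho)]$ with $\rho$ representable and algebra stabilizers, it suffices to rewrite one such generator as a $\Q$-combination of the claimed form. First I would stratify $\fR$ according to the isomorphism type of the stabilizer algebra $A\subseteq\End(E)$ (where $\rho_*(r)=[E]$ and $\rho_*(\Iso_\fR(r))=A^\times$); this data is constructible, so $[(\fR,\rho)]$ becomes a finite $\Q$-combination of generators with algebra stabilizers on which the stabilizer group is a \emph{constant} $A^\times$. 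Being Zariski-open and dense in the affine space $A$, the group $A^\times$ is irreducible, hence a connected algebraic $\K$-group.

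Next I would present each stratum as a quotient $[(Y/\GL(N),\rho')]$ \emph{compatibly with the subalgebra structure}, i.e.\ so that the $\GL(N)$-stabilizers sit inside the relevant $\End(E)$ as unit groups of subalgebras of $M_N(\K)$; this is arranged using $[\Spec\K/A^\times]\cong[(\GL(N)/A^\times)/\GL(N)]$ with $A^\times$ realised as the unit group of a copy of $A$ in $M_N(\K)$. Then I would apply relation~(iii) of Definition~\ref{dt2def10} with $G=\GL(N)$, which is very special with diagonal maximal torus $\bG_m^N$, rewriting the generator in terms of $[(Y/Q,\rho'\ci\io^Q)]$ for the $Q\in\cQ(\GL(N),\bG_m^N)$. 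The decisive point is that these $Q$ are the ``block-scalar'' subtori of $\bG_m^N$, which are themselves unit groups of (block-scalar) subalgebras of $M_N(\K)$; since the intersection of such a subalgebra with a conjugate of the stabilizer algebra is again a subalgebra, the $Q$-stabilizers remain unit groups of subalgebras, and each $[(Y/Q,\rho'\ci\io^Q)]$ still has algebra stabilizers. Stratifying $Y$ by its $Q$-stabilizer and using relations~(i)--(ii) to split the free part of each torus action off as a product, exactly as in the proof of Proposition~\ref{dt2prop2}, one reaches a $\Q$-combination of generators $[(U\times[\Spec\K/T],\rho'')]$, each with algebra stabilizers, with $T$ a closed subgroup of a torus, so $T\cong\bG_m^k\times K$ for $K$ finite abelian. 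But $\rho''_*$ embeds $T$ isomorphically onto $B^\times$ for a subalgebra $B$ of some $\End(E)$, and $B^\times$ is connected by the previous paragraph; hence $T$ is connected, $K=\{1\}$, and $T\cong\bG_m^k$, proving the first claim. The step I expect to be the main obstacle is exactly this bookkeeping: keeping the whole reduction of Proposition~\ref{dt2prop2} inside $\oSFa(\fM_\A,\chi,\Q)$ by choosing presentations in which stabilizer groups appear as unit groups of subalgebras, so that the block-scalar subtori entering relation~(iii) meet them in unit groups of subalgebras; once that is set up, the removal of the finite factors $K$ is immediate from connectedness of unit groups of finite-dimensional algebras.

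For the second claim, $\oSFai(\fM_\A,\chi,\Q)$ consists by definition of the $f\in\oSFa(\fM_\A,\chi,\Q)$ with $\Pi^\vi_1(f)=f$. Given such $f$, write $f=\sum_i c_i[(U_i\times[\Spec\K/\bG_m^{k_i}],\rho_i)]$ with algebra stabilizers using the first claim; since $\Pi^\vi_1$ fixes a generator $[(U\times[\Spec\K/\bG_m^k],\rho)]$ when $k=1$ and annihilates it otherwise (by the formula for $\Pi^\vi_n$ recorded after Proposition~\ref{dt2prop2}), we get $f=\Pi^\vi_1(f)=\sum_{i:\,k_i=1}c_i[(U_i\times[\Spec\K/\bG_m],\rho_i)]$, which is of the required form.
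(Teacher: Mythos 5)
Your overall strategy is the right one, and it is essentially the argument behind the paper's citation of \cite[Cor.~5.11]{Joyc4}: rerun the reduction proving Proposition~\ref{dt2prop2} while carrying the algebra-stabilizers condition along, note that the groups $Q\in\cQ(\GL(N,\K),\bG_m^N)$ entering relation (iii) are the block-scalar subtori, i.e.\ unit groups of subalgebras of $M_N(\K)$, so that the new stabilizers are intersections of unit groups of subalgebras and hence again unit groups of subalgebras, and finally use connectedness of $A^\times$ (a nonempty Zariski-open subset of the affine space $A$) to force $K=\{1\}$ in $T\cong\bG_m^k\times K$; the deduction for $\oSFai(\fM_\A,\chi,\Q)$ via $\Pi^\vi_1$ is also correct.

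The genuine gap is the step you yourself flag, and it is not mere bookkeeping: everything rests on producing, for an arbitrary generator $[(\fR,\rho)]$ with algebra stabilizers, a \emph{global} presentation $\fR\cong[Y/\GL(N,\K)]$ in which every $\GL(N,\K)$-stabilizer is the unit group of a subalgebra of $M_N(\K)$ \emph{and} $\rho_*$ on stabilizers is induced by an algebra isomorphism onto the corresponding subalgebra of $\End(E)$. Your device $[\Spec\K/A^\times]\cong[(\GL(N)/A^\times)/\GL(N)]$ only re-presents a single residual gerbe; it does not yield such a presentation of a general finite type stack $\fR$ with varying stabilizer algebras, even after your preliminary stratification (whose constructibility is itself unproved, and which turns out to be unnecessary). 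The compatibility cannot be dispensed with: if a stabilizer isomorphic to $\bG_m$ sits in $\GL(2,\K)$ as $\{\mathrm{diag}(t,t^{-1})\}$, its intersection with the block-scalar torus $Q=\bG_m\cdot\id\in\cQ(\GL(2,\K),\bG_m^2)$ is $\{\pm\id\}$, which is disconnected and not the unit group of any subalgebra, so with an incompatible presentation relation (iii) creates terms that neither have algebra stabilizers nor lead to connected $T$, and the proof collapses. The missing idea is to extract the compatible presentation from $\fM_\A$ itself: take a finite type open substack $\fU\subseteq\fM_\A$ through which $\rho$ factors and a Quot-type atlas $\fU\cong[Q_{\fU,n}/\GL(N,\K)]$ as provided by Assumption~\ref{dt3ass} (cf.\ the explicit construction in \S\ref{dt11}), for which the stabilizer of a point over $[E]$ is the image of $\Aut(E)$ under the framing-induced algebra embedding $\End(E)\hookrightarrow M_N(\K)$; pulling this principal $\GL(N,\K)$-bundle back along the representable $\rho$ and cutting with relation (i) presents $\fR$ as $[P/\GL(N,\K)]$ with $P$ quasiprojective and each stabilizer equal to the image of $\rho_*(\Iso_\fR(r))=A^\times$ under that same algebra embedding. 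With this in place your block-scalar/intersection/connectedness argument goes through; note also that retaining the algebra-stabilizers property of the new generators genuinely needs $\rho_*$ to be algebra-compatible, not merely the $\GL(N,\K)$-stabilizers to be unit groups of subalgebras of $M_N(\K)$.
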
\index{Ringel--Hall algebra|)}\index{abelian
category!Ringel--Hall algebra|)}

\subsection{(Weak) stability conditions on $\A$}
\label{dt32}\index{stability condition|(}

Next we discuss material in \cite{Joyc5} on {\it stability
conditions}.

\begin{dfn} Let $\A$ be an abelian category, $K(\A)$ be the
quotient of $K_0(\A)$ by some fixed subgroup, and $C(\A)$ as in
\eq{dt3eq2}. Suppose $(T,\le)$ is a totally ordered set, and
$\tau:C(\A)\ra T$ a map. We call $(\tau,T,\le)$\nomenclature[\tau
T]{$(\tau,T,\le)$}{stability condition on an abelian category} a
{\it stability condition} on $\A$ if whenever $\al,\be,\ga\in C(\A)$
with $\be=\al+\ga$ then either
$\tau(\al)\!<\!\tau(\be)\!<\!\tau(\ga)$, or
$\tau(\al)\!>\!\tau(\be)\!>\!\tau(\ga)$, or
$\tau(\al)\!=\!\tau(\be)\!=\!\tau(\ga)$. We call $(\tau,T,\le)$ a
{\it weak stability condition} on $\A$ if whenever $\al,\be, \ga\in
C(\A)$ with $\be=\al+\ga$ then either $\tau(\al)\!\le\!
\tau(\be)\!\le\!\tau(\ga)$, or $\tau(\al)\!\ge\!\tau(\be)\!\ge
\!\tau(\ga)$.\index{stability condition!weak}

For such $(\tau,T,\le)$, we say that a nonzero object $E$ in $\A$ is
\begin{itemize}
\setlength{\itemsep}{0pt}
\setlength{\parsep}{0pt}
\item[(i)] $\tau$-{\it semistable} if for all $S\subset E$ with
$S\not\cong 0,E$ we have $\tau([S])\le\tau([E/S])$;
\item[(ii)] $\tau$-{\it stable} if for all $S\subset E$ with
$S\not\cong 0,E$ we have $\tau([S])<\tau([E/S])$; and
\item[(iii)] $\tau$-{\it unstable} if it is not $\tau$-semistable.
\end{itemize}\index{semistable@$\tau$-semistable}
\index{stable@$\tau$-stable}\index{unstable@$\tau$-unstable}

Given a weak stability condition $(\tau,T,\le)$ on $\A$, we say that
$\A$ is $\tau$-{\it artinian}\index{artinian@$\tau$-artinian}\index{abelian
category!artinian@$\tau$-artinian} if there exist no infinite chains
of subobjects $\cdots\!\subset\!A_2\!\subset\! A_1\!\subset\!X$ in
$\A$ with $A_{n+1}\!\ne\!A_n$ and $\tau([A_{n+1}])\!\ge\!
\tau([A_n/A_{n+1}])$ for all $n$.
\label{dt3def3}
\end{dfn}

In \cite[Th.~4.4]{Joyc5} we prove the existence of {\it
Harder--Narasimhan filtrations}.\index{Harder--Narasimhan filtration}

\begin{prop} Let\/ $(\tau,T,\le)$ be a weak stability condition
on an abelian category $\A$. Suppose $\A$ is
noetherian\index{noetherian}\index{abelian category!noetherian} and\/
$\tau$-artinian. Then each\/ $E\in\A$ admits a unique filtration
$0\!=\!E_0\!\subset\!\cdots\!\subset\!E_n\!=\!E$ for $n\ge 0$, such
that\/ $S_k\!=\!E_k/E_{k-1}$ is $\tau$-semistable for
$k=1,\ldots,n$, and\/~$\tau([S_1])>\tau([S_2])>\cdots>\tau([S_n])$.
\label{dt3prop2}
\end{prop}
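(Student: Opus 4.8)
The case $E=0$ is trivial (take $n=0$), so assume $E\ne0$; the plan is the classical Harder--Narasimhan argument, with the two chain conditions playing the role of the finite-length hypothesis usually available. First I would isolate the formal consequences of the axioms used everywhere. The defining property of a weak stability condition is the ``seesaw'' statement: in a short exact sequence $0\to A\to B\to C\to0$ the value $\tau([B])$ lies weakly between $\tau([A])$ and $\tau([C])$. From it one reads off that a $\tau$-semistable object $F$ with $\tau([F])=t$ has every nonzero subobject of $\tau$-value $\le t$ and every nonzero quotient of $\tau$-value $\ge t$; hence, taking images, if $F,F'$ are $\tau$-semistable with $\tau([F])>\tau([F'])$ then $\Hom(F,F')=0$. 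One also gets the bound $\tau([A+B])\ge\min(\tau([A]),\tau([B]))$ for $\tau$-semistable subobjects $A,B$ of a common object, since $(A+B)/A$ is a quotient of $B$.

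The heart of the existence proof is the construction of a \emph{maximal destabilizing subobject}: a $\tau$-semistable subobject $E_1\subseteq E$ with $\tau([E_1])\ge\tau([S])$ for every nonzero $S\subseteq E$, and containing every subobject whose $\tau$-value equals $\tau([E_1])$. I would do this in two moves. (i) The set $\{\tau([S]):0\ne S\subseteq E\}$ attains a maximum $t$. This is where the $\tau$-artinian hypothesis is essential and where I expect the real difficulty to lie: starting from any nonzero subobject one can repeatedly pass to a proper subobject of strictly larger $\tau$-value as long as the object fails to be $\tau$-semistable, and the seesaw property makes each such descending chain $\cdots\subset A_2\subset A_1$ satisfy $\tau([A_{n+1}])\ge\tau([A_n/A_{n+1}])$, so $\tau$-artinian forbids it being infinite and produces $\tau$-semistable subobjects of $\tau$-value at least that of the starting point; to rule out $\tau$-values increasing without bound one examines, for each level, the subobject generated by all $\tau$-semistable subobjects of $\tau$-value above that level, uses noetherianity to see it is a finite sum (hence of $\tau$-value above the level, by the $\min$-bound above), and extracts from the resulting descending chain of such subobjects a contradiction with $\tau$-artinian. (ii) Once $t$ exists, the subobjects of $\tau$-value exactly $t$ are closed under sums (again by seesaw, using maximality of $t$) and form a nonempty family, so noetherianity gives a largest one, which one checks is $\tau$-semistable and is the required $E_1$.

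Granting $E_1$, the filtration is built by iteration: if $E_1=E$ stop; otherwise let $E_2\subseteq E$ be the preimage of the maximal destabilizing subobject of $E/E_1$, and continue. This produces a strictly ascending chain $0=E_0\subsetneq E_1\subsetneq E_2\subsetneq\cdots$, which terminates because $\A$ is noetherian. Each $S_k=E_k/E_{k-1}$ is $\tau$-semistable by construction, and $\tau([S_{k-1}])>\tau([S_k])$ follows from the maximality of $E_{k-1}/E_{k-2}$ inside $E/E_{k-2}$: if $\tau([S_k])\ge\tau([S_{k-1}])$, the seesaw sequence for $E_{k-1}/E_{k-2}\subseteq E_k/E_{k-2}$ would force $\tau([E_k/E_{k-2}])\ge\tau([E_{k-1}/E_{k-2}])$, contradicting that the latter contains every subobject with its $\tau$-value.

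For uniqueness, suppose $0=E_0\subset\cdots\subset E_n=E$ and $0=E_0'\subset\cdots\subset E_m'=E$ are two such filtrations. The key lemma, proved from the $\Hom$-vanishing statement, is that for $\tau$-semistable $F$ and a nonzero map $\phi\colon F\to E$, taking $k$ minimal with $\mathrm{im}\,\phi\subseteq E_k'$ gives a nonzero map $F\to S_k'$, whence $\tau([F])\le\tau([S_k'])\le\tau([S_1'])$, with equality throughout forcing $k=1$, i.e.\ $\phi$ factoring through $E_1'\hookrightarrow E$. Applying this to $F=E_1$ with its inclusion, and symmetrically, yields $\tau([S_1])=\tau([S_1'])$ and then $E_1=E_1'$; passing to $E/E_1=E/E_1'$ with its two induced filtrations and inducting on length finishes the proof. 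The main obstacle throughout is step (i) above: the boundedness of $\tau$-values of subobjects, which is precisely the content that the $\tau$-artinian axiom is designed to supply, and whose clean implementation requires careful juggling of the ascending and descending chain conditions together with the seesaw inequalities.
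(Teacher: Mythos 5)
Your overall architecture (maximal destabilizing subobject, iteration terminating by noetherianity, uniqueness via $\Hom$-vanishing between semistables of decreasing $\tau$-value) is the standard one, and your uniqueness argument and your use of the $\tau$-artinian condition to produce, inside any nonzero subobject, a $\tau$-semistable subobject of at least its $\tau$-value are correct. Note, though, that the paper itself gives no proof of this proposition — it is quoted from \cite[Th.~4.4]{Joyc5} — and the content of that cited proof is concentrated exactly at the point you leave open.

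That point is a genuine gap: the attainment of the maximal $\tau$-value among subobjects of $E$ (equivalently, the existence of the maximal destabilizing subobject), which you acknowledge is the crux but only sketch. The mechanism you propose does not work as described. The $\tau$-artinian axiom forbids chains $\cdots\subsetneq A_2\subsetneq A_1$ satisfying $\tau([A_{n+1}])\ge\tau([A_n/A_{n+1}])$, so to use it you need an \emph{upper} bound on the values of the successive quotients $A_n/A_{n+1}$. But if $G_n$ is the subobject generated by all $\tau$-semistable subobjects of value above the $n$-th level, the quotient $G_n/G_{n+1}$ is generated by images of high-value semistables, and since quotients of semistables have value $\ge$ that of the semistable, your construction only yields \emph{lower} bounds $\tau([G_n/G_{n+1}])\ge t_n$; there is no reason for $\tau([G_{n+1}])\ge\tau([G_n/G_{n+1}])$ to hold. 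Moreover the chain $G_1\supseteq G_2\supseteq\cdots$ need not be strictly decreasing, and if it stabilizes no contradiction with $\tau$-artinian is obtained at all. A secondary, smaller gap in your step (ii): for a \emph{weak} stability condition, ``every nonzero subobject of $E_1$ has value $\le\tau([E_1])$'' does not imply $E_1$ is $\tau$-semistable in the sense of Definition \ref{dt3def3}, because $\tau([S])=\tau([E_1])>\tau([E_1/S])$ is compatible with the seesaw inequalities; likewise your closure-under-sums claim is only justified when the summands are semistable (the bound on $(A+B)/A$ uses semistability of $B$). So both ``the maximum is attained'' and ``the largest subobject of maximal value is semistable'' require genuine further arguments (this is where most of the work in \cite{Joyc5} goes), and as it stands the existence half of your proof is incomplete.
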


We define {\it permissible\/} (weak) stability conditions, a
condition needed to get well-behaved invariants `counting'
$\tau$-(semi)stable objects in~\cite{Joyc6}.\index{stability
condition!permissible}

\begin{dfn} Suppose Assumption \ref{dt3ass} holds for
$\K,\A,K(\A)$, so that the moduli stack $\fM_\A$ of objects in $\A$
is an Artin $\K$-stack, with substacks $\fM^\al_\A$ for $\al\in
C(\A)$. Suppose too that $\A$ is {\it noetherian}. Let
$(\tau,T,\le)$ be a weak stability condition on $\A$. For $\al\in
C(\A)$, write $\fM_\rss^\al(\tau),\fM_\st^\al
(\tau)$\nomenclature[Mass]{$\fM^\al_\rss(\tau)$}{moduli stack of
$\tau$-semistable objects in class
$\al$}\nomenclature[Mast]{$\fM^\al_\st(\tau)$}{moduli stack of
$\tau$-stable objects in class $\al$} for the moduli substacks of
$\tau$-(semi)stable $E\in\A$ with class $[E]=\al$ in $K(\A)$. As in
\cite[\S 4.2]{Joyc5}, $\fM_\rss^\al(\tau),\fM_\st^\al (\tau)$ are
{\it open\/} $\K$-substacks of $\fM^\al_\A$. We call $(\tau,T,\le)$
{\it permissible} if:
\begin{itemize}
\setlength{\itemsep}{0pt}
\setlength{\parsep}{0pt}
\item[(a)] $\A$ is $\tau$-artinian, in the sense of Definition
\ref{dt3def3}, and
\item[(b)] $\fM_\rss^\al(\tau)$ is a {\it finite type\/} substack of
$\fM^\al_\A$ for all~$\al\in C(\A)$.
\end{itemize}
\label{dt3def4}
\end{dfn}

Here (b) is necessary if `counting' $\tau$-(semi)stables in class
$\al$ is to yield a finite answer. We will be interested in two
classes of examples of permissible (weak) stability conditions on
coherent sheaves, Gieseker stability and $\mu$-stability.

\begin{ex} Let $\K$ be an algebraically closed field, $X$ a smooth
projective $\K$-scheme of dimension $m$, and $\A=\coh(X)$ the
coherent sheaves\index{coherent sheaf} on $X$. Then \cite[\S 9]{Joyc3}
defines data satisfying Assumption \ref{dt3ass}, with
$K(\A)=K^\num(\coh(X))$. It is a finite rank lattice, that
is,~$K(\A)\cong\Z^l$.

Define $G$ to be the set of monic rational polynomials in~$t$:
\begin{equation*}
G=\bigl\{p(t)=t^d+a_{d-1}t^{d-1}+\cdots+a_0:d=0,1,\ldots,\;\>
a_0,\ldots,a_{d-1}\in\Q\bigr\}.
\end{equation*}
Define a total order `$\le$' on $G$ by $p\le p'$ for $p,p'\in G$ if
either
\begin{itemize}
\setlength{\itemsep}{0pt}
\setlength{\parsep}{0pt}
\item[(a)] $\deg p>\deg p'$, or
\item[(b)] $\deg p=\deg p'$ and $p(t)\le p'(t)$ for all $t\gg 0$.
\end{itemize}
We write $p<q$ if $p\le q$ and $p\ne q$. Note that $\deg p>\deg p'$
in (a) implies that $p(t)>p'(t)$ for all $t\gg 0$, which is the
opposite to $p(t)\le p'(t)$ for $t\gg 0$ in (b), and not what you
might expect, but it is necessary for Definition \ref{dt3def3} to
hold. The effect of (a) is that $\tau$-semistable sheaves are
automatically {\it pure}, because if $0\ne S\subset E$ with $\dim
S<\dim E$ then $S$ destabilizes~$E$.

Fix a very ample line bundle
$\cO_X(1)$\nomenclature[OX(1)]{$\cO_X(1)$}{very ample line bundle on
a scheme $X$} on $X$. For $E\in\coh(X)$, the {\it Hilbert
polynomial\/}\index{Hilbert polynomial} $P_E$ is the unique
polynomial in $\Q[t]$ such that $P_E(n)=\dim H^0(E(n))$ for all
$n\gg 0$. Equivalently, $P_E(n)=\bar\chi\bigl([\cO_X(-n)],[E]\bigr)$
for all $n\in\Z$. Thus, $P_E$ depends only on the class $\al\in
K^\num(\coh(X))$ of $E$, and we may write $P_\al$ instead of $P_E$.
Define $\tau:C(\coh(X))\ra G$ by $\tau(\al)=P_\al/r_\al$, where
$P_\al$ is the Hilbert polynomial of $\al$, and $r_\al$ is the
leading coefficient of $P_\al$, which must be positive. Then as in
\cite[Ex.~4.16]{Joyc5}, $(\tau,G,\le)$\nomenclature[\tau
G]{$(\tau,G,\le)$}{the stability condition of Gieseker stability on
$\coh(X)$} is a {\it permissible stability
condition\/}\index{stability condition!permissible}\index{stability
condition!on coherent sheaves} on $\coh(X)$. It is called {\it
Gieseker stability},\index{Gieseker stability}\index{stability
condition!Gieseker} and $\tau$-(semi)stable sheaves are called {\it
Gieseker (semi)stable}. Gieseker stability is studied in~\cite[\S
1.2]{HuLe2}.

For the case of Gieseker stability, as well as the moduli stacks
$\fM_\rss^\al(\tau),\fM_\st^\al(\tau)$ of $\tau$-(semi)stable
sheaves $E$ with class $[E]=\al$, later we will also use the
notation
$\M_\rss^\al(\tau),\M_\st^\al(\tau)$\nomenclature[Mass]{$\M_\rss^\al(\tau)$}{coarse
moduli scheme of $\tau$-semistable objects in class
$\al$}\nomenclature[Mast]{$\M_\st^\al(\tau)$}{coarse moduli scheme
of $\tau$-stable objects in class $\al$} for the {\it coarse moduli
schemes\/}\index{coarse moduli scheme}\index{moduli scheme!coarse}
of $\tau$-(semi)stable sheaves $E$ with class $[E]=\al$ in
$K^\num(\coh(X))$. By \cite[Th.~4.3.4]{HuLe2}, $\M_\rss^\al(\tau)$
is a projective $\K$-scheme whose $\K$-points correspond to
S-equivalence\index{S-equivalence} classes of Gieseker semistable
sheaves in class $\al$, and $\M_\st^\al(\tau)$ is an open
$\K$-subscheme whose $\K$-points correspond to isomorphism classes
of Gieseker stable sheaves.
\label{dt3ex1}
\end{ex}

\begin{ex} In the situation of Example \ref{dt3ex1}, define
\begin{equation*}
M=\bigl\{p(t)=t^d+a_{d-1}t^{d-1}:d=0,1,\ldots,\;\> a_{d-1}\in\Q\;\>
a_{-1}=0\bigr\}\subset G
\end{equation*}
and restrict the total order $\le$ on $G$ to $M$. Define
$\mu:C(\coh(X))\ra M$ by $\mu(\al)=t^d+a_{d-1}t^{d-1}$ when
$\tau(\al)=P_\al/r_\al=t^d+a_{d-1}t^{d-1}+\cdots+a_0$, that is,
$\mu(\al)$ is the truncation of the polynomial $\tau(\al)$ in
Example \ref{dt3ex1} at its second term. Then as in
\cite[Ex.~4.17]{Joyc5}, $(\mu,M,\le)$\nomenclature[\mu]{$(\mu,M,\le)$}{the weak
stability condition of $\mu$-stability on $\coh(X)$} is a {\it
permissible weak stability condition\/} on $\coh(X)$. It is called
$\mu$-{\it stability},\index{$\mu$-stability}\index{stability
condition!$\mu$-stability}\index{m-stability@$\mu$-stability}\index{stability
condition!on coherent sheaves} and is studied in~\cite[\S
1.6]{HuLe2}.
\label{dt3ex2}
\end{ex}

In \cite[\S 8]{Joyc5} we define interesting stack functions
$\bdss^\al(\tau),\bep^\al(\tau)$ in $\SFa(\fM_\A)$.

\begin{dfn} Let $\K,\A,K(\A)$ satisfy Assumption \ref{dt3ass}, and
$(\tau,T,\le)$ be a permissible weak stability condition on $\A$.
Define stack functions\index{stack function}
$\bdss^\al(\tau)=\bde_{\fM_\rss^\al(\tau)}$\nomenclature[\delta]{$\bdss^\al(\tau)$}{element
of the Ringel--Hall algebra $\SFa(\fM)$ that `counts'
$\tau$-semistable objects in class $\al$} in $\SFa(\fM_\A)$ for
$\al\in C(\A)$. That is, $\bdss^\al(\tau)$ is the characteristic
function, in the sense of Definition \ref{dt2def5}, of the moduli
substack $\fM_\rss^\al(\tau)$ of $\tau$-semistable sheaves in
$\fM_\A$. In \cite[Def.~8.1]{Joyc5} we define elements
$\bep^\al(\tau)$ in $\SFa(\fM_\A)$
by\nomenclature[\epsilon]{$\bar\ep^\al(\tau)$}{element of the
Ringel--Hall Lie algebra $\SFai(\fM)$ that `counts'
$\tau$-semistable objects in class $\al$}
\ea
\bep^\al(\tau)&= \!\!\!\!\!\!\!
\sum_{\begin{subarray}{l}n\ge 1,\;\al_1,\ldots,\al_n\in C(\A):\\
\al_1+\cdots+\al_n=\al,\; \tau(\al_i)=\tau(\al),\text{ all
$i$}\end{subarray}} \!\!\!\!\!\!
\frac{(-1)^{n-1}}{n}\,\,\bdss^{\al_1}(\tau)*\bdss^{\al_2}(\tau)
*\cdots*\bdss^{\al_n}(\tau),
\label{dt3eq4}
\intertext{where $*$ is the Ringel--Hall multiplication in
$\SFa(\fM_\A)$. Then \cite[Th.~8.2]{Joyc5} proves} \bdss^\al(\tau)&=
\!\!\!\!\!\!\!
\sum_{\begin{subarray}{l}n\ge 1,\;\al_1,\ldots,\al_n\in C(\A):\\
\al_1+\cdots+\al_n=\al,\; \tau(\al_i)=\tau(\al),\text{ all
$i$}\end{subarray}} \!\!\!
\frac{1}{n!}\,\,\bep^{\al_1}(\tau)*\bep^{\al_2}(\tau)*
\cdots*\bep^{\al_n}(\tau).
\label{dt3eq5}
\ea
There are only finitely many nonzero terms in
\eq{dt3eq4}--\eq{dt3eq5}, because as the family of $\tau$-semistable
sheaves in class $\al$ is bounded, there are only finitely ways to
write $\al=\al_1+\cdots+\al_n$ with $\tau$-semistable sheaves in
class $\al_i$ for all~$i$.
\label{dt3def5}
\end{dfn}

Here is a way to interpret \eq{dt3eq4} and \eq{dt3eq5} informally in
terms of $\log$ and $\exp$: working in a completed version
$\widehat{\SFa}(\fM_\A)$ of the algebra $\SFa(\fM_\A)$, so that
appropriate classes of infinite sums make sense, for fixed $t\in T$
we have
\ea
\sum_{\al\in C(\A):\tau(\al)=t}\bep^\al(\tau)&=\log
\raisebox{-5pt}{\begin{Large}$\displaystyle\Bigl[$\end{Large}}
\bde_0+\sum_{\al\in C(\A):\tau(\al)=t}\bde_\rss^\al(\tau)
\raisebox{-5pt}{\begin{Large}$\displaystyle\Bigr]$\end{Large}},
\label{dt3eq6}\\
\bde_0+\sum_{\al\in C(\A):\tau(\al)=t}\bde_\rss^\al(\tau)&=\exp
\raisebox{-5pt}{\begin{Large}$\displaystyle\Bigl[$\end{Large}}
\sum_{\al\in C(\A):\tau(\al)=t}\bep^\al(\tau)
\raisebox{-5pt}{\begin{Large}$\displaystyle\Bigr]$\end{Large}},
\label{dt3eq7}
\ea
where $\bde_0$ is the identity 1 in $\widehat{\SFa}(\fM_\A)$. For
$\al\in C(\A)$ and $t=\tau(\al)$, using the power series
$\log(1+x)=\sum_{n\ge 1}\frac{(-1)^{n-1}}{n}\,x^n$ and
$\exp(x)=1+\sum_{n\ge 1}\frac{1}{n!}\,x^n$ we see that
\eq{dt3eq4}--\eq{dt3eq5} are the restrictions of
\eq{dt3eq6}--\eq{dt3eq7} to $\fM_\A^\al$. This makes clear why
\eq{dt3eq4} and \eq{dt3eq5} are inverse, since $\log$ and $\exp$ are
inverse. Thus, knowing the $\bep^\al(\tau)$ is equivalent to knowing
the $\bdss^\al(\tau)$.

If $\fM_\rss^\al(\tau)=\fM_\st^\al(\tau)$ then
$\bep^\al(\tau)=\bdss^\al(\tau)$. The difference between
$\bep^\al(\tau)$ and $\bdss^\al(\tau)$ is that $\bep^\al(\tau)$
`counts' strictly semistable sheaves in a special, complicated way.
Here \cite[Th.~8.7]{Joyc5} is an important property of the
$\bep^\al(\tau)$, which does not hold for the $\bdss^\al(\tau)$. The
proof is highly nontrivial, using the full power of the
configurations formalism of~\cite{Joyc3,Joyc4,Joyc5,Joyc6}.

\begin{thm} $\bep^\al(\tau)$ lies in the Lie subalgebra
$\SFai(\fM_\A)$ in~$\SFa(\fM_\A)$.
\label{dt3thm1}
\end{thm}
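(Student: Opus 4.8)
The plan is to prove the equivalent assertion $\Pi^\vi_n\bigl(\bep^\al(\tau)\bigr)=0$ for every $n\ge0$ with $n\ne1$, which by Proposition~\ref{dt2prop1}(i),(ii) is the same as $\bep^\al(\tau)\in\SFai(\fM_\A)$. The case $\fM^\al_\rss(\tau)=\fM^\al_\st(\tau)$ is immediate: then $\bep^\al(\tau)=\bdss^\al(\tau)=\bde_{\fM^\al_\st(\tau)}$, every geometric point $E$ is $\tau$-stable so $\Aut(E)\cong\bG_m$ has rank $1$, and $\Pi^\vi_n$ of the characteristic function of a stack with all stabilizers of rank $1$ vanishes for $n\ne1$. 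The entire difficulty is concentrated at strictly $\tau$-semistable objects.

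My first step would be to remove the stability condition. For $t\in T$ in the image of $\tau$, let $\B_t\subseteq\A$ be the full subcategory whose nonzero objects are the $\tau$-semistable $E$ with $\tau([E])=t$. The stability axioms force $\B_t$ to be an abelian category closed under extensions in $\A$ (so $\Ext^i_{\B_t}=\Ext^i_\A$ for $i=0,1$) in which every nonzero object is trivially semistable; permissibility (Definition~\ref{dt3def4}) makes each $\fM^\al_{\B_t}$ finite type, and $\B_t$ is artinian and noetherian, hence of finite length. One gets a canonical identification $\fM^\al_\rss(\tau)=\fM^\al_{\B_t}$ of substacks of $\fM_\A$ whenever $\tau(\al)=t$, so $\bdss^\al(\tau)=\bde_{\fM^\al_{\B_t}}$, and then \eq{dt3eq4} together with the $\log$ formula \eq{dt3eq6} identifies $\bep^\al(\tau)$ (for $\tau(\al)=t$) with the class-$\al$ part of $\log\bigl(\bde_{\fM_{\B_t}}\bigr)$ in the completed Ringel--Hall algebra $\widehat{\SFa}(\fM_{\B_t})$. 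So it suffices to show: for any abelian $\B$ satisfying Assumption~\ref{dt3ass} with all $\fM^\al_\B$ finite type and $\B$ of finite length, the component $\bep^\al:=\bigl[\log\bde_{\fM_\B}\bigr]_\al$ lies in $\SFai(\fM_\B)$.

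To attack this I would use the configuration formalism of \cite{Joyc3}. Iterating \eq{dt3eq3} and the definition of $\fExact_\B$ gives $\bde_{\fM_\B}^{*n}=(\Si_n)_*\bde_{\fF^{(n)}_\B}$, where $\fF^{(n)}_\B$ is the moduli stack of $n$-step filtered objects $0=E_0\subseteq\cdots\subseteq E_n$ in $\B$ and $\Si_n(E_\bu)=E_n$; restricting to strict filtrations gives $(\bde_{\fM_\B}-\bde_{[0]})^{*n}=(\Si^+_n)_*\bde_{\fF^{(n)+}_\B}$. Hence
\[
\ts\bde_{[0]}+\sum_\al\bep^\al=\sum_{n\ge1}\frac{(-1)^{n-1}}{n}\,(\Si^+_n)_*\bde_{\fF^{(n)+}_\B},
\]
and, since $\Pi^\vi_m$ commutes with pushforwards (Proposition~\ref{dt2prop1}(iii)), the claim becomes: for each $m\ne1$ the alternating sum $\sum_{n\ge1}\frac{(-1)^{n-1}}{n}(\Si^+_n)_*\Pi^\vi_m\bde_{\fF^{(n)+}_\B}$ has vanishing class-$\al$ part. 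One would then stratify $\fF^{(n)+}_\B$ by the isomorphism type of $E_n$ and the combinatorial ``type'' of the filtration, identify the stabilizer of a filtered object with a parabolic-type subgroup of $\Aut(E_n)$, evaluate the weight operators $\Pi^{\mu_m}$ on the resulting quotient stacks $[X/G]$ using Definition~\ref{dt2def7} and Theorem~\ref{dt2thm5}, and finish with a combinatorial cancellation identity --- morally the statement that, once stabilizers are abelianized, $\log$ of the ``group-like'' element $\bde_{\fM_\B}$ is primitive, i.e.\ supported in virtual rank $1$.

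The hard part is precisely this last computation and cancellation. The operators $\Pi^\vi_m$ are \emph{not} multiplicative for $*$ --- the extension data packaged into $*$ enlarges automorphism groups in a way the weight operators detect --- so one cannot simply commute $\Pi^\vi_m$ past the product, and the argument genuinely requires the full ``virtual rank''/``virtual indecomposable'' apparatus of \cite{Joyc2,Joyc4} (the projections $\Pi^\vi_n$, the coefficients $F(G,T^G,Q)$ and their identities) run inside the configuration machinery of \cite{Joyc3}. In practice I would try to organize the cancellation inductively on $\al$ (legitimate as $\B$ has finite length and \eq{dt3eq4} has only finitely many terms): rewrite $\bep^\al$, modulo $\SFai(\fM_\B)$, as a $\Q$-linear combination of iterated Lie brackets of the $\bep^\be$ for shorter $\be$ --- which lie in $\SFai(\fM_\B)$ by the inductive hypothesis and the closure of $\SFai$ under $[\,\cdot\,,\cdot\,]$ \cite[Th.~5.18]{Joyc4} --- reducing to a single ``leading'' term supported on indecomposable objects, whose automorphism groups are extensions of $\bG_m$ by a unipotent group and hence of rank $1$, so that term also lies in $\SFai(\fM_\B)$.
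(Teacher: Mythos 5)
Your set-up is in the right spirit --- and it is worth saying that the paper does not prove this theorem at all: it quotes it as \cite[Th.~8.7]{Joyc5} and stresses that the proof is ``highly nontrivial, using the full power of the configurations formalism'' of \cite{Joyc3,Joyc4,Joyc5}, which is essentially the machinery you invoke. But your proposal has a genuine gap at exactly the point where the theorem lives: the cancellation showing $\Pi^\vi_m\bigl(\bep^\al(\tau)\bigr)=0$ for $m\ne 1$ is never carried out. After stratifying $\fF^{(n)+}_\B$ by the type of the filtration you say you would ``try to organize the cancellation inductively'', rewriting $\bep^\al$ modulo $\SFai(\fM_\B)$ as a combination of iterated Lie brackets of $\bep^\be$ for shorter $\be$ plus one leading term supported on indecomposables; no argument is offered that such a rewriting exists, and producing it is essentially equivalent in content to the theorem itself. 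As you note, $\Pi^\vi_m$ is not multiplicative for $*$: it interacts with the Ringel--Hall product through the nonabelian stabilizers of filtered objects and the coefficients $F(G,T^G,Q)$, and controlling that interaction is precisely the work done in \cite[\S 5]{Joyc4} and \cite[\S 8]{Joyc5}. Your sketch stops where that work begins, so as it stands it is a plan, not a proof.

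There is also a flaw in your first reduction. The theorem is stated for permissible \emph{weak} stability conditions (Definitions \ref{dt3def3}--\ref{dt3def5}), including $\mu$-stability of Example \ref{dt3ex2}, and for a weak stability condition the full subcategory $\B_t$ of semistables of fixed value $t$ need not be abelian: for instance the inclusion $I_Z\subset\cO_X$ of the ideal sheaf of a curve $Z$ is a morphism of $\mu$-semistable sheaves with equal $\mu$-value whose cokernel $\cO_Z$ has smaller dimension, hence a different $\mu$-value, so $\B_t$ is not closed under cokernels. It is closed under extensions, but that is not enough for your identification of $\bep^\al(\tau)$ with the class-$\al$ part of $\log\bde_{\fM_{\B_t}}$ inside a Ringel--Hall algebra of a category $\B_t$ satisfying Assumption \ref{dt3ass}; since \eq{dt3eq4} only involves extensions among semistables of equal phase this could probably be repaired by working directly in $\SFa(\fM_\A)$, but as written the reduction also needs an argument. (A minor slip: your displayed identity should read $\sum_\al\bep^\al=\sum_{n\ge1}\frac{(-1)^{n-1}}{n}\,(\Si^+_n)_*\bde_{\fF^{(n)+}_\B}$, without the $\bde_{[0]}$ on the left.)
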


\subsection{Changing stability conditions and algebra identities}
\label{dt33}

In \cite{Joyc6} we prove transformation laws for the
$\bdss^\al(\tau),\bep^\al(\tau)$ under change of stability
condition. These involve combinatorial coefficients
$S(*;\tau,\ti\tau)\in\Z$ and $U(*;\tau, \ti\tau)\in\Q$ defined in
\cite[\S 4.1]{Joyc6}. We have changed some notation
from~\cite{Joyc6}.

\begin{dfn} Let $\A,K(\A)$ satisfy Assumption \ref{dt3ass}, and
$(\tau,T,\le),(\ti\tau,\ti T,\le)$ be weak stability conditions on
$\A$. We say that $(\ti\tau,\ti T,\le)$ {\it dominates}
$(\tau,T,\le)$ if $\tau(\al)\le\tau(\be)$ implies
$\ti\tau(\al)\le\ti\tau(\be)$ for all~$\al,\be\in
C(\A)$.\index{stability condition!$(\tilde\tau,\tilde T,\leqslant)$
dominates $(\tau,T,\leqslant)$}

Let $n\ge 1$ and $\al_1,\ldots,\al_n\in C(\A)$. If for all
$i=1,\ldots,n-1$ we have either
\begin{itemize}
\setlength{\itemsep}{0pt}
\setlength{\parsep}{0pt}
\item[(a)] $\tau(\al_i)\le\tau(\al_{i+1})$ and
$\ti\tau(\al_1+\cdots+\al_i)>\ti\tau(\al_{i+1}+\cdots+\al_n)$ or
\item[(b)] $\tau(\al_i)>\tau(\al_{i+1})$ and~
$\ti\tau(\al_1+\cdots+\al_i)\le\ti\tau(\al_{i+1}+\cdots+\al_n)$,
\end{itemize}
then define $S(\al_1,\ldots,\al_n;\tau,\ti\tau)=(-1)^r$,
\nomenclature[S(\alpha)]{$S(\al_1,\ldots,\al_n;\tau,\ti\tau)$}{combinatorial
coefficient used in wall-crossing formulae} where $r$ is the number
of $i=1,\ldots,n-1$ satisfying (a). Otherwise define
$S(\al_1,\ldots,\al_n;\tau,\ti\tau)=0$. Now
define\nomenclature[U(\alpha)]{$U(\al_1,\ldots,\al_n;\tau,\ti\tau)$}{combinatorial
coefficient used in wall-crossing formulae}
\ea
&U(\al_1,\ldots,\al_n;\tau,\ti\tau)=
\label{dt3eq8}\\
&\sum_{\begin{subarray}{l} \phantom{wiggle}\\
1\le l\le m\le n,\;\> 0=a_0<a_1<\cdots<a_m=n,\;\>
0=b_0<b_1<\cdots<b_l=m:\\
\text{Define $\be_1,\ldots,\be_m\in C(\A)$ by
$\be_i=\al_{a_{i-1}+1}+\cdots+\al_{a_i}$.}\\
\text{Define $\ga_1,\ldots,\ga_l\in C(\A)$ by
$\ga_i=\be_{b_{i-1}+1}+\cdots+\be_{b_i}$.}\\
\text{Then $\tau(\be_i)=\tau(\al_j)$, $i=1,\ldots,m$,
$a_{i-1}<j\le a_i$,}\\
\text{and $\ti\tau(\ga_i)=\ti\tau(\al_1+\cdots+\al_n)$,
$i=1,\ldots,l$}
\end{subarray}
\!\!\!\!\!\!\!\!\!\!\!\!\!\!\!\!\!\!\!\!\!\!\!\!\!\!\!\!\!\!\!\!\!
\!\!\!\!\!\!\!\!\!\!\!\!\!\!\!\!\!\!\!\!\!\!\!\!\!\!\!\!\!\!\!\!\!
\!\!\!\!\!\!\!\!\!\!\!\!\!\!\!\!\!\!\!\!}
\begin{aligned}[t]
\frac{(-1)^{l-1}}{l}\cdot\prod\nolimits_{i=1}^lS(\be_{b_{i-1}+1},
\be_{b_{i-1}+2},\ldots,\be_{b_i}; \tau,\ti\tau)&\\
\cdot\prod_{i=1}^m\frac{1}{(a_i-a_{i-1})!}&\,.
\end{aligned}
\nonumber
\ea
\label{dt3def6}
\end{dfn}

Then in \cite[\S 5]{Joyc6} we derive wall-crossing
formulae\index{wall-crossing formula} for the
$\bdss^\al(\tau),\bep^\al(\tau)$ under change of stability condition
from $(\tau,T,\le)$ to~$(\ti\tau,\ti T,\le)$:

\begin{thm} Let Assumption {\rm\ref{dt3ass}} hold, and\/ $(\tau,T,\le),
(\ti\tau,\ti T,\le),(\hat\tau,\hat T,\le)$ be permissible weak
stability conditions on $\A$ with\/ $(\hat\tau,\hat T,\le)$
dominating $(\tau,T,\le)$ and\/ $(\ti\tau,\ti T,\le)$. Then for
all\/ $\al\in C(\A)$ we have
\begin{gather}
\begin{gathered}
\bdss^\al(\ti\tau)= \!\!\!\!\!\!\!
\sum_{\begin{subarray}{l}n\ge 1,\;\al_1,\ldots,\al_n\in
C(\A):\\ \al_1+\cdots+\al_n=\al\end{subarray}} \!\!\!\!\!\!\!
\begin{aligned}[t]
S(\al_1,&\ldots,\al_n;\tau,\ti\tau)\cdot\\
&\bdss^{\al_1}(\tau)*\bdss^{\al_2}(\tau)*\cdots*
\bdss^{\al_n}(\tau),
\end{aligned}
\end{gathered}
\label{dt3eq9}\\
\begin{gathered}
\bep^\al(\ti\tau)= \!\!\!\!\!\!\!
\sum_{\begin{subarray}{l}n\ge 1,\;\al_1,\ldots,\al_n\in
C(\A):\\ \al_1+\cdots+\al_n=\al\end{subarray}} \!\!\!\!\!\!\!
\begin{aligned}[t]
U(\al_1,&\ldots,\al_n;\tau,\ti\tau)\cdot\\
&\bep^{\al_1}(\tau)*\bep^{\al_2}(\tau)*\cdots* \bep^{\al_n}(\tau),
\end{aligned}
\end{gathered}
\label{dt3eq10}
\end{gather}
where there are only finitely many nonzero terms in
\eq{dt3eq9}--\eq{dt3eq10}.
\label{dt3thm2}
\end{thm}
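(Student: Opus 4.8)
The plan is to follow the route of \cite[\S 5]{Joyc6}: first prove the transformation law \eqref{dt3eq9} for the characteristic stack functions $\bdss^\al(\tau)$, which rests on the geometry of Harder--Narasimhan filtrations, and then deduce \eqref{dt3eq10} for the $\bep^\al(\tau)$ by a purely formal substitution, using the mutually inverse relations \eqref{dt3eq4}--\eqref{dt3eq5} between the $\bdss$ and the $\bep$ together with the combinatorial definition \eqref{dt3eq8} of the $U$-coefficients in terms of the $S$-coefficients.

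The first ingredient for \eqref{dt3eq9} is the filtration interpretation of the Ringel--Hall product. Unwinding the definition \eqref{dt3eq3} of $*$ by induction on $n$, using the moduli stack $\fExact_\A$ of short exact sequences and the projections $\pi_1,\pi_2,\pi_3$, one finds that $\bdss^{\al_1}(\tau)*\cdots*\bdss^{\al_n}(\tau)$ is the stack function $[(\mathfrak{Z}^{\al_1,\ldots,\al_n},\rho)]$, where $\mathfrak{Z}^{\al_1,\ldots,\al_n}$ is the finite type moduli stack of filtrations $0=E_0\subseteq E_1\subseteq\cdots\subseteq E_n=E$ in $\A$ with $E_i/E_{i-1}$ $\tau$-semistable of class $\al_i$ for each $i$, and $\rho$ records the total object $E$. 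When moreover $\tau(\al_1)>\cdots>\tau(\al_n)$, such a filtration \emph{is} the $\tau$-Harder--Narasimhan filtration of $E$, so by its uniqueness (Proposition \ref{dt3prop2}, applicable since $\A$ is noetherian and $\tau$-artinian by permissibility) $\rho$ is an isomorphism onto a locally closed substack of $\fM^\al_\A$, and summing over all strictly $\tau$-descending decompositions of $\al$ gives the Harder--Narasimhan identity
\begin{equation*}
\bde_{\fM^\al_\A}=\sum_{\substack{n\ge 1,\ \al_1,\ldots,\al_n\in C(\A):\ \al_1+\cdots+\al_n=\al,\\ \tau(\al_1)>\cdots>\tau(\al_n)}}\bdss^{\al_1}(\tau)*\cdots*\bdss^{\al_n}(\tau)
\end{equation*}
in $\SFa(\fM_\A)$, and likewise with $\ti\tau$ and $\hat\tau$ in place of $\tau$; permissibility (finite type of the $\fM_\rss^\al(\tau)$ etc.) guarantees that these and all sums below are finite.

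To obtain \eqref{dt3eq9}, fix $\al$ and stratify each $\mathfrak{Z}^{\al_1,\ldots,\al_n}$ by the $\ti\tau$-Harder--Narasimhan type of its total object. Regrouping the $S$-weighted sum $\sum S(\al_1,\ldots,\al_n;\tau,\ti\tau)\,[(\mathfrak{Z}^{\al_1,\ldots,\al_n},\rho)]$ along this stratification, one shows that all contributions from strata on which the total object is $\ti\tau$-unstable cancel, while the remaining ($\ti\tau$-semistable) stratum collapses precisely to $\bde_{\fM_\rss^\al(\ti\tau)}=\bdss^\al(\ti\tau)$; the signs $(-1)^r$ in Definition \ref{dt3def6} are designed exactly to make this inclusion--exclusion over refinements of Harder--Narasimhan filtrations work. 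Concretely one first checks \eqref{dt3eq9} in the two special cases where $\hat\tau$ dominates $\tau$ and where $\hat\tau$ dominates $\ti\tau$ --- there the two Harder--Narasimhan stratifications line up and the combinatorics is transparent --- and then composes these two moves by a transitivity identity $\sum\prod S(\ast;\tau,\hat\tau)\cdot\prod S(\ast;\hat\tau,\ti\tau)=S(\ast;\tau,\ti\tau)$ among the coefficients of Definition \ref{dt3def6}; this is the point of assuming a common dominating $\hat\tau$. Finally \eqref{dt3eq10} drops out formally: substitute \eqref{dt3eq4} for $\ti\tau$ into $\bep^\al(\ti\tau)$, replace each $\bdss(\ti\tau)$ factor by \eqref{dt3eq9}, re-expand the resulting $\bdss(\tau)$'s via \eqref{dt3eq5}, and collect terms using associativity of $*$; the coefficient of $\bep^{\al_1}(\tau)*\cdots*\bep^{\al_n}(\tau)$ is then exactly the sum \eqref{dt3eq8}, by a bookkeeping identity among the $1/n!$, $(-1)^{n-1}/n$ and $S$ factors carried out as in \cite[\S 4.3]{Joyc6}, with no new geometry.

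The main obstacle is this combinatorial core: verifying that the rather opaque sign rule of Definition \ref{dt3def6} for $S$ (hence the formula \eqref{dt3eq8} for $U$) truly effects the stated cancellations, i.e.\ that the signed, Euler-characteristic-weighted counts over the Harder--Narasimhan stratifications collapse as claimed. This needs a careful induction --- on $n$, or on the number of distinct $\tau$-values among the $\al_i$ --- handling the asymmetry between the ``$\le$'' and ``$>$'' alternatives (a),(b) of Definition \ref{dt3def6} and the degenerate cases where several $\al_i$ share a $\tau$-value. A further subtlety is that everything is asserted at the level of stack functions in $\SFa(\fM_\A)$, which is strictly finer than constructible functions, so the stratifications above must be realised by honest isomorphisms of Artin stacks rather than by mere pointwise identities of values.
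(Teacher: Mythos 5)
Your route for \eq{dt3eq9}--\eq{dt3eq10} is essentially the paper's: you pass through the dominating condition $\hat\tau$ (your two ``special cases'' are exactly the paper's identity \eq{dt3eq11}, expressing $\bdss^\al(\hat\tau)$ as a sum of products $\bdss^{\al_1}(\tau)*\cdots*\bdss^{\al_n}(\tau)$ over $\tau$-decreasing, $\hat\tau$-constant decompositions, together with its formal inverse \eq{dt3eq12}), you compose the two moves so that the combined coefficients are the $S(\al_1,\ldots,\al_n;\tau,\ti\tau)$ of Definition \ref{dt3def6}, and you then deduce \eq{dt3eq10} purely formally from \eq{dt3eq4}, \eq{dt3eq5}, \eq{dt3eq9} and the definition \eq{dt3eq8} of $U$ --- which is precisely how the paper argues.

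There is, however, one genuine gap: your ``first ingredient'', the Harder--Narasimhan identity $\bde_{\fM^\al_\A}=\sum\bdss^{\al_1}(\tau)*\cdots*\bdss^{\al_n}(\tau)$ over all $\tau$-decreasing decompositions, is not a finite identity in $\SFa(\fM_\A)$, and permissibility of $(\tau,T,\le)$ does not make it one. Permissibility gives that each $\fM_\rss^\be(\tau)$ is of finite type, but $\fM^\al_\A$ is only locally of finite type, and infinitely many HN types can occur in a fixed class: for $\A=\coh(X)$ with Gieseker stability and $\al=[\cO_X]$, the sheaves $I_Z\op\cO_Z$ for finite subschemes $Z\subset X$ of length $\ell$ have HN factors of classes $\ell\,[\cO_x]$ and $\al-\ell\,[\cO_x]$, one pair for every $\ell\ge 1$. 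So that identity holds only in the local stack functions $\LSF(\fM_\A)$, and your blanket assertion that ``these and all sums below are finite'' is unjustified --- in particular the finiteness claim of the theorem, which is part of what must be proved, is left without a correct argument. The repair is the paper's: never invoke the full-stack identity, but only \eq{dt3eq11} for $\bdss^\al(\hat\tau)$, which is a finite sum because the $\hat\tau$-semistables of class $\al$ form a bounded family; and note that every nonzero term produced by the two-step composition is supported on the open substack $\fM_\rss^\al(\hat\tau)$, which is of finite type since $\hat\tau$ is permissible, giving the finiteness of \eq{dt3eq9}--\eq{dt3eq10}. Thus the hypothesis of a common dominating \emph{permissible} $\hat\tau$ is needed not only for your transitivity identity among the $S$-coefficients, but also --- and crucially --- for the sums to make sense in $\SFa(\fM_\A)$ at all.
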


Here the third stability condition $(\hat\tau,\hat T,\le)$ may be
thought of as lying on a `wall' separating $(\tau,T,\le)$ and
$(\ti\tau,\ti T,\le)$ in the space of stability conditions. Here is
how to prove \eq{dt3eq9}--\eq{dt3eq10}. If $E\in\A$ then by
Proposition \ref{dt3prop2} there is a unique
Harder--Narasimhan\index{Harder--Narasimhan filtration} filtration
$0\!=\!E_0\!\subset\!\cdots\!\subset\!E_n\!=\!E$ with
$S_k=E_k/E_{k-1}$ $\tau$-semistable and
$\tau([S_1])>\cdots>\tau([S_n])$. As $\hat\tau$ dominates $\tau$,
one can show $E$ is $\hat\tau$-semistable if and only if
$\hat\tau([S_1])=\cdots=\hat\tau([S_n])$. It easily follows that
\e
\bdss^\al(\hat\tau)= \!\!\!\!\!\!\!
\sum_{\begin{subarray}{l}n\ge 1,\;\al_1,\ldots,\al_n\in
C(\A):\al_1+\cdots+\al_n=\al,\\
\tau(\al_1)>\cdots>\tau(\al_n),\;
\hat\tau(\al_1)=\cdots=\hat\tau(\al_n)\end{subarray}}
\!\!\!\!\!\!\!\!\!\! \bdss^{\al_1}(\tau)*\bdss^{\al_2}(\tau)*\cdots*
\bdss^{\al_n}(\tau).
\label{dt3eq11}
\e

By a similar argument to \eq{dt3eq4}--\eq{dt3eq7} but using the
inverse functions $x\mapsto x/(1-x)$ and $x\mapsto x/(1+x)$ rather
than $\log,\exp$, we find the inverse of \eq{dt3eq11} is
\e
\bdss^\al(\tau)= \!\!\!\!\!\!\!
\sum_{\begin{subarray}{l}n\ge 1,\;\al_1,\ldots,\al_n\in
C(\A):\al_1+\cdots+\al_n=\al,\\
\tau(\al_1)>\cdots>\tau(\al_n),\;
\hat\tau(\al_1)=\cdots=\hat\tau(\al_n)
\end{subarray}} \!\!\!\!\!\!\!\!\!\!
(-1)^{n-1}\bdss^{\al_1}(\hat\tau)*\bdss^{\al_2}(\hat\tau)*\cdots*
\bdss^{\al_n}(\ti\tau).
\label{dt3eq12}
\e
Substituting \eq{dt3eq11} into \eq{dt3eq12} with $\ti\tau$ in place
of $\tau$ gives \eq{dt3eq9}, and \eq{dt3eq10} then follows from
\eq{dt3eq4}, \eq{dt3eq5} and \eq{dt3eq9}. From this proof we can see
that over each point of $\fM_\A$ there are only finitely many
nonzero terms in \eq{dt3eq9}--\eq{dt3eq10}, and also that every term
in \eq{dt3eq9}--\eq{dt3eq10} is supported on the open substack
$\fM_\rss^\al(\hat\tau)$ in $\fM_\A$. Since $\hat\tau$ is assumed to
be permissible, $\fM_\rss^\al(\hat\tau)$ is of finite type, and
therefore there are only finitely many nonzero terms in
\eq{dt3eq9}--\eq{dt3eq10}. In \cite[Th.~5.4]{Joyc6} we prove:

\begin{thm} Equation \eq{dt3eq10} may be rewritten as an equation in
$\SFai(\fM_\A)$ using the Lie bracket\/ $[\,,\,]$ on
$\SFai(\fM_\A),$ rather than as an equation in $\SFa(\fM_\A)$ using
the Ringel--Hall\index{Ringel--Hall algebra} product\/ $*$. That is, we
may rewrite \eq{dt3eq10} in the form
\e
\begin{gathered}
\bep^\al(\ti\tau)= \!\!\!\!\!\!\!
\sum_{\begin{subarray}{l}n\ge 1,\;\al_1,\ldots,\al_n\in
C(\A):\\ \al_1+\cdots+\al_n=\al\end{subarray}} \!\!\!\!\!\!\!
\begin{aligned}[t]
\ti U(\al_1,&\ldots,\al_n;\tau,\ti\tau)\,\cdot\\
&[[\cdots[[\bep^{\al_1}(\tau),\bep^{\al_2}(\tau)],\bep^{\al_3}(\tau)],
\ldots],\bep^{\al_n}(\tau)],
\end{aligned}
\end{gathered}
\label{dt3eq13}
\e
for some system of combinatorial coefficients\/ $\ti U(\al_1,\ldots,
\al_n;\tau,\ti\tau)\in\Q,$ with only finitely many nonzero terms.
\label{dt3thm3}
\end{thm}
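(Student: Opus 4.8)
The goal is to prove Theorem \ref{dt3thm3}: that the wall-crossing formula \eq{dt3eq10}, which expresses $\bep^\al(\ti\tau)$ as a $\Q$-linear combination of Ringel--Hall products $\bep^{\al_1}(\tau)*\cdots*\bep^{\al_n}(\tau)$, can be rewritten purely in terms of iterated Lie brackets. The plan is to exploit two facts already available: first, Theorem \ref{dt3thm1} guarantees each $\bep^{\al_i}(\tau)$ lies in the Lie subalgebra $\SFai(\fM_\A)$, which by \cite[Th.~5.18]{Joyc4} is closed under the bracket $[f,g]=f*g-g*f$; second, the left-hand side $\bep^\al(\ti\tau)$ also lies in $\SFai(\fM_\A)$. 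So the content is a \emph{combinatorial/algebraic} statement: a particular element of the associative algebra $\SFa(\fM_\A)$, known to lie in the Lie subalgebra generated by certain elements, can be written as a combination of iterated brackets of those elements, and moreover the coefficients $\ti U$ depend only on $\al_1,\ldots,\al_n,\tau,\ti\tau$ (not on $\A$ or the specific $\bep$'s).

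First I would set up the abstract framework. Fix a finite set of classes and consider the free associative $\Q$-algebra (or, better, work in a completed graded algebra graded by $C(\A)\cup\{0\}$, as in the $\widehat{\SFa}(\fM_\A)$ of \S\ref{dt32}) on symbols $e_{\al}$, $\al\in C(\A)$, together with the Lie subalgebra it generates. The key general principle is: if $x_1,\ldots,x_n$ lie in a Lie algebra $\mathfrak{g}$ sitting inside an associative algebra, then any product $x_1 x_2\cdots x_n$ \emph{need not} lie in $\mathfrak{g}$, but its image under suitable symmetrization/antisymmetrization operators does. Concretely I would use the classical fact (Dynkin--Specht--Wever, or the Friedrichs criterion for Lie elements) that an element of the tensor algebra is a Lie element iff it is primitive for the coproduct $\Delta(x_i) = x_i\ot 1 + 1\ot x_i$, equivalently that a homogeneous degree-$n$ Lie element $P$ satisfies $P = \tfrac1n \rho(P)$ where $\rho$ is the left-bracketing operator $\rho(x_{i_1}\cdots x_{i_n}) = [[\cdots[x_{i_1},x_{i_2}],\ldots],x_{i_n}]$. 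Since $\bep^\al(\ti\tau)$ \emph{is} a Lie element (as it lies in $\SFai$) and is a combination of products of the $\bep^{\al_i}(\tau)$'s with total "degree" $\al$, applying $\tfrac1n\rho$ to \eq{dt3eq10} — or rather to each homogeneous component — rewrites it in the desired iterated-bracket form. The new coefficients $\ti U(\al_1,\ldots,\al_n;\tau,\ti\tau)$ are then obtained from the old $U$ by a purely combinatorial formula: one collects, over all ways of reordering a product and bracketing it on the left, the contributions of the $U$-coefficients with appropriate signs coming from the commutators used to reorder. Because $U(\al_1,\ldots,\al_n;\tau,\ti\tau)$ depends only on the data $\al_\bullet,\tau,\ti\tau$, and the symmetrization procedure is universal, $\ti U$ inherits this dependence; and finiteness of nonzero terms is preserved since $\rho$ does not increase the set of index tuples appearing.

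There is a subtlety I would flag and handle carefully: the iterated bracket $\rho$ applied to a product $\bep^{\al_1}(\tau)*\cdots*\bep^{\al_n}(\tau)$ does \emph{not} equal $n$ times that product — rather, the content of the Lie-element criterion is that when one sums \eq{dt3eq10} over all terms with the right symmetry the difference between $\bep^\al(\ti\tau)$ and $\tfrac1n\rho(\text{RHS})$ vanishes. So the cleanest route is: (a) apply $\rho$ to both sides of \eq{dt3eq10}; (b) on the left, use that $\bep^\al(\ti\tau)$ is a Lie element to get $\rho(\bep^\al(\ti\tau)) = (\text{weight})\cdot\bep^\al(\ti\tau)$ — here one must be careful because $\bep^\al(\ti\tau)$ is itself a sum of homogeneous pieces only in a trivial sense (it has a single class $\al$), but $\rho$ acting on an element expressed via the $\bep^{\al_i}$'s sees the refined multidegree; the correct statement uses that $\rho$ is $(\text{length})\cdot\mathrm{id}$ on Lie elements of that length, combined with the fact that $\SFai$ is precisely the "length-one virtual-indecomposable" part, so one reduces to the free Lie algebra computation; (c) on the right, expand $\rho$ of each product by the Leibniz-type rule for $\rho$, re-collecting terms. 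This bookkeeping — making precise exactly which symmetrization operator to use so that it acts as the identity (up to a known scalar) on $\bep^\al(\ti\tau)$ while producing iterated brackets on the right — is where the real work lies, and I expect it to be the main obstacle. An alternative, perhaps more robust, approach that sidesteps the free-Lie-algebra formalism: prove the result first in a "universal" abelian category where the $\bep^{\al_i}(\tau)$ are algebraically independent in the appropriate sense, deduce the identity of coefficients there, and then specialize; but one still needs the Lie-membership input from Theorem \ref{dt3thm1} to know the combination collapses to brackets. Either way, the essential ingredients are Theorem \ref{dt3thm1}, closedness of $\SFai$ under $[\,,\,]$, and a standard lemma on expressing Lie elements via left-nested brackets; the novelty is only in organizing the combinatorics to show $\ti U$ is well-defined and finitely supported.
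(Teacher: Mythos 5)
The step on which your whole argument turns---applying the left-bracketing operator $\rho$ to both sides of \eq{dt3eq10} and invoking the Dynkin--Specht--Wever identity $\rho(P)=nP$---is not available here, and this is a genuine gap rather than bookkeeping. Dynkin--Specht--Wever is a statement about elements of the \emph{free} associative algebra that are homogeneous in word length. In $\SFa(\fM_\A)$ the elements $\bep^{\al_i}(\tau)$ are not free generators, the expression \eq{dt3eq10} of $\bep^\al(\ti\tau)$ as a noncommutative polynomial in them is not unique, and the right-hand side mixes words of different lengths $n$, so $\rho$ is not even formally a scalar on it. More importantly, what Theorem \ref{dt3thm1} gives you is that $\bep^\al(\ti\tau)$ lies in $\SFai(\fM_\A)$, which is defined by the virtual-rank condition $\Pi^\vi_1(f)=f$; this subspace is in general strictly larger than the Lie subalgebra generated by the $\bep^\be(\tau)$ (see the discussion of $\bar{\cal L}^{\rm pa}_\tau\supsetneq\bar{\cal L}^{\rm to}_\tau$ in \S\ref{dt68}), and membership in it does \emph{not} imply that the abstract polynomial $\sum U\,e_{\al_1}\cdots e_{\al_n}$ is a Lie element of the free algebra on symbols $e_\be$. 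Without that, ``apply $\rho$ and divide by the length'' does not return the left-hand side, and the universality and well-definedness of $\ti U$ do not follow. Your fallback of working in a ``universal'' abelian category where the $\bep$'s are algebraically independent has the same missing ingredient: you would need to know a priori that the fixed coefficients $U(\al_1,\ldots,\al_n;\tau,\ti\tau)$ assemble into a Lie element universally, which is essentially the assertion to be proved, and in Joyce's series it rests on the non-formal theorem that $\bep^\al(\tau)\in\SFai(\fM_\A)$ \cite[Th.~8.7]{Joyc5} together with further structure, not on formal independence arguments.

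Note also that this book does not prove the theorem; it is quoted from \cite[Th.~5.4]{Joyc6}. The mechanism there that replaces your free-algebra symmetrization is the intrinsic projection $\Pi^\vi_1$ of \S\ref{dt23} and the virtual-rank results of \cite[\S 5]{Joyc4}: by definition $\Pi^\vi_1$ acts as the identity on $\SFai(\fM_\A)$, and the substantive input is that $\Pi^\vi_1$ applied to a Ringel--Hall product of elements of $\SFai(\fM_\A)$ can be expressed by universal $\Q$-linear combinations of iterated Lie brackets of those elements. Applying this to \eq{dt3eq10} fixes the left-hand side and converts each product on the right into nested brackets, yielding \eq{dt3eq13} with coefficients $\ti U$ depending only on $\al_1,\ldots,\al_n,\tau,\ti\tau$, with finiteness inherited. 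So your instinct that ``$\SFai$ is the virtual rank one part'' is what saves the argument is correct, but the reduction must be run through the operator $\Pi^\vi_1$ acting on the actual stack function algebra, not through the Dynkin--Specht--Wever criterion in a free algebra; establishing the needed properties of $\Pi^\vi_1$ is exactly the hard content supplied by \cite{Joyc2,Joyc4,Joyc5}.
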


There is an irritating technical problem in \cite[\S 5]{Joyc6} in
changing between stability conditions on $\coh(X)$ when $\dim X\ge
3$. Suppose $(\tau,T,\le),(\ti\tau,\ti T,\le)$ are two (weak)
stability conditions on $\coh(X)$ of Gieseker or $\mu$-stability
type, as in Examples \ref{dt3ex1} and \ref{dt3ex2}, defined using
different ample line bundles $\cO_X(1),\ti\cO_X(1)$. Then the first
author was not able to show that the changes between $(\tau,T,\le)$
and $(\ti\tau,\ti T,\le)$ are {\it globally finite}.\index{stability
condition!changes globally finite} That is, we prove
\eq{dt3eq9}--\eq{dt3eq10} hold in the local stack function
spaces\index{stack function!local} $\LSF(\fM_{\coh(X)})$, but we do
not know there are only finitely many nonzero terms in
\eq{dt3eq9}--\eq{dt3eq10}, although the first author believes this
is true. Instead, as in \cite[\S 5.1]{Joyc6}, we can show that we
can interpolate between any two stability conditions on $X$ of
Gieseker or $\mu$-stability type by a finite sequence of stability
conditions, such that between successive stability conditions in the
sequence the changes are globally finite, and Theorem \ref{dt3thm2}
applies.\index{stability condition|)}

\subsection{Calabi--Yau 3-folds and Lie algebra morphisms}
\label{dt34}
\index{Calabi--Yau 3-fold|(}

We now specialize to the case when $\A=\coh(X)$ for $X$ a
Calabi--Yau 3-fold, and explain some results of \cite[\S 6.6]{Joyc4}
and \cite[\S 6.5]{Joyc6}. We restrict to $\K$ of characteristic
zero\index{field $\K$!characteristic zero} so that Euler characteristics
over $\K$ are well-behaved.

\begin{dfn} Let $\K$ be an algebraically closed field of
characteristic zero. A {\it Calabi--Yau\/ $3$-fold\/} is a smooth
projective 3-fold $X$ over $\K$, with trivial canonical
bundle\index{canonical bundle}
$K_X$.\nomenclature[KX]{$K_X$}{canonical bundle of a smooth scheme
$X$} From \S\ref{dt5} onwards we will also assume that
$H^1(\cO_X)=0$, but this is not needed for the results of
\cite{Joyc3,Joyc4,Joyc5,Joyc6}. Take $\A$ to be $\coh(X)$ and
$K(\coh(X))$ to be $K^\num(\coh(X))$. As in Definition \ref{dt3def1}
we have the Euler form $\bar\chi:K(\coh(X))\times
K(\coh(X))\!\ra\!\Z$ in \eq{dt3eq1}. As $X$ is a Calabi--Yau 3-fold,
Serre duality\index{Serre duality} gives
$\Ext^i(F,E)\ab\cong\Ext^{3-i}(E,F)^*$, so
$\dim\Ext^i(F,E)=\dim\Ext^{3-i}(E,F)$ for all $E,F\in\coh(X)$.
Therefore $\bar\chi$ is also given by
\e
\begin{split}
\bar\chi\bigl([E],[F]\bigr)=\,&\bigl(\dim\Hom(E,F)-\dim\Ext^1(E,F)
\bigr)-\\
&\bigl(\dim\Hom(F,E)-\dim\Ext^1(F,E)\bigr).
\end{split}
\label{dt3eq14}
\e
Thus the Euler form $\bar\chi$ on $K(\coh(X))$ is {\it
antisymmetric}.

In \cite[\S 6.5]{Joyc4} we define an explicit Lie algebra $L(X)$ as
follows: $L(X)$ is the $\Q$-vector space with basis of symbols
$\la^\al$ for $\al\in K(\coh(X))$, with Lie
bracket\nomenclature[L(X)]{$L(X)$}{Lie algebra depending on a Calabi--Yau
3-fold $X$}\nomenclature[\lambda]{$\lambda^\al$}{basis element of Lie algebra
$L(X)$}
\e
[\la^\al,\la^\be]=\bar\chi(\al,\be)\la^{\al+\be},
\label{dt3eq15}
\e
for $\al,\be\in K(\coh(X))$. As $\bar\chi$ is antisymmetric,
\eq{dt3eq15} satisfies the Jacobi identity and makes $L(X)$ into an
infinite-dimensional Lie algebra over $\Q$. (We have changed
notation: in \cite{Joyc4}, $L(X),\la^\al$ are written $C^{\rm
ind}(\coh(X),\Q,\ha\bar\chi),c^\al$.)

Define a $\Q$-linear map
$\Psi^{\chi,\Q}:\oSFai(\fM_{\coh(X)},\chi,\Q)\ra L(X)$ by
\e
\Psi^{\chi,\Q}(f)=\ts\sum_{\al\in K(\coh(X))}\ga^\al \la^{\al},
\label{dt3eq16}
\e
where $\ga^\al\in\Q$ is defined as follows. Proposition
\ref{dt3prop1} says $\oSFai(\fM_{\coh(X)},\chi,\Q)$ is spanned by
elements $[(U\times[\Spec\K/\bG_m],\rho)]$. We may write
\e
f\vert_{\fM_{\coh(X)}^\al}=\ts\sum_{i=1}^n\de_i[(U_i\times[\Spec\K/
\bG_m],\rho_i)],
\label{dt3eq17}
\e
where $\de_i\in\Q$ and $U_i$ is a quasiprojective $\K$-variety. We
set
\e
\ga^\al=\ts\sum_{i=1}^n\de_i\chi(U_i).
\label{dt3eq18}
\e
This is independent of the choices in \eq{dt3eq17}. Now define
$\Psi:\SFai(\fM_{\coh(X)})\ra L(X)$ by~$\Psi=\Psi^{\chi,\Q}\ci\bar
\Pi^{\chi,\Q}_{\fM_{\coh(X)}}$.\nomenclature[\Psi]{$\Psi$}{Lie algebra morphism
$\SFai(\fM)\ra L(X)$}\nomenclature[\Psi a \chi]{$\Psi^{\chi,\Q}$}{Lie algebra
morphism $\oSFai(\fM,\chi,\Q)\ra L(X)$}
\label{dt3def7}
\end{dfn}

In \cite[Th.~6.12]{Joyc4}, using equation \eq{dt3eq14}, we prove:

\begin{thm} $\Psi:\SFai(\fM_{\coh(X)})\ra L(X)$ and\/ $\Psi^{\chi,\Q}:
\oSFai(\fM_{\coh(X)},\chi,\Q)\ab\ra L(X)$ are Lie algebra morphisms.
\label{dt3thm4}
\end{thm}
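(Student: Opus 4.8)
The plan is to reduce the statement to a single computation on generators, using the geometry of the stack $\fExact_{\coh(X)}$ of short exact sequences. First I would note that $\Psi=\Psi^{\chi,\Q}\ci\bar\Pi^{\chi,\Q}_{\fM_{\coh(X)}}$ and that the projection $\bar\Pi^{\chi,\Q}_{\fM_{\coh(X)}}:\SFai(\fM_{\coh(X)})\ra\oSFai(\fM_{\coh(X)},\chi,\Q)$ is already a Lie algebra morphism by Definition \ref{dt3def2}; hence it suffices to prove the claim for $\Psi^{\chi,\Q}$. That $\Psi^{\chi,\Q}$ is $\Q$-linear is immediate from \eq{dt3eq16}--\eq{dt3eq18}, so the content is the identity $\Psi^{\chi,\Q}([f,g])=[\Psi^{\chi,\Q}(f),\Psi^{\chi,\Q}(g)]$ with $[f,g]=f*g-g*f$. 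By Proposition \ref{dt3prop1}, $\oSFai(\fM_{\coh(X)},\chi,\Q)$ is spanned by generators $[(U\times[\Spec\K/\bG_m],\rho)]$ with algebra stabilizers, and after splitting $U,\rho$ over the components $\fM^\al_{\coh(X)}$ I may assume $f$ is supported on $\fM^\al_{\coh(X)}$ and $g$ on $\fM^\be_{\coh(X)}$ with $\al,\be\in C(\coh(X))$ (other cases give $0$). Then $\Psi^{\chi,\Q}(f)=\chi(U)\la^\al$, $\Psi^{\chi,\Q}(g)=\chi(V)\la^\be$, so the goal becomes
\[
\Psi^{\chi,\Q}\bigl(f*g-g*f\bigr)=\chi(U)\,\chi(V)\,\bar\chi(\al,\be)\,\la^{\al+\be}.
\]

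Next I would unwind the Ringel--Hall product $f*g=(\pi_2)_*\bigl((\pi_1\times\pi_3)^*(f\ot g)\bigr)$ from \eq{dt3eq3} over $\fM^{\al+\be}_{\coh(X)}$. Over a $\K$-point $([E_1],[E_2])$ of $\fM^\al_{\coh(X)}\times\fM^\be_{\coh(X)}$, the fibre of $\pi_1\times\pi_3$ is the stack of short exact sequences $0\ra E_1\ra F\ra E_2\ra 0$; pushing forward by $\pi_2$ and dividing out by the automorphisms of the endpoints, this contributes the split object $E_1\op E_2$, whose stabilizer contains the $\bG_m^2$ scaling the two factors, together with, for each $[\la]\in\mathbb P(\Ext^1(E_2,E_1))$, a non-split object $F_\la$ with stabilizer containing only a single $\bG_m$. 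Thus $f*g$ has stabilizers of rank $\le 2$. Applying the projections $\Pi^\vi_n$ of \S\ref{dt23} and the relation Definition \ref{dt2def10}(iii) with its coefficients $F(G,T^G,Q)$ to pass into the abelian form of Proposition \ref{dt3prop1}, the piece $\Pi^\vi_2(f*g)$ is carried by the split locus and depends on $(f,g)$ only through $f\ot g$ on $\fM^\al_{\coh(X)}\times\fM^\be_{\coh(X)}$, hence is symmetric: $\Pi^\vi_2(f*g)=\Pi^\vi_2(g*f)$, and it cancels in $[f,g]$.

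It then remains to compute $\Psi^{\chi,\Q}(\Pi^\vi_1(f*g))$, which lies in $\oSFai$ since it is carried by the non-split $F_\la$ with $\bG_m$-stabilizer. Tracking the Euler characteristics of the $\Ext^1$- and $\Hom$-fibres entering the pushforward, the coefficient of $\la^{\al+\be}$ in $\Psi^{\chi,\Q}(f*g)$ works out to $\chi(U)\chi(V)$ times a universal $\Z$-valued function of $(E_1,E_2)$ expressible through $\dim\Hom(E_2,E_1)$ and $\dim\Ext^1(E_2,E_1)$ up to an additive term symmetric in $(E_1,E_2)$. Subtracting the analogue for $g*f$, in which $E_1$ and $E_2$ are interchanged, the symmetric term drops out, leaving the antisymmetrization
\[
\bigl(\dim\Hom(E_1,E_2)-\dim\Ext^1(E_1,E_2)\bigr)-\bigl(\dim\Hom(E_2,E_1)-\dim\Ext^1(E_2,E_1)\bigr),
\]
which by \eq{dt3eq14} equals $\bar\chi(\al,\be)$. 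This establishes the displayed identity and hence both halves of the theorem; it is exactly here that the Calabi--Yau hypothesis on $X$ is used, since \eq{dt3eq14} is Serre duality for $X$ with $K_X$ trivial, and the Jacobi identity for $L(X)$ noted in Definition \ref{dt3def7} makes the target a genuine Lie algebra.

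The hard part will be the middle step: controlling precisely how the $\bG_m$-stabilizers of $f$ and $g$ combine in the fibre product defining $*$, proving that the rank-$2$ contribution is genuinely symmetric and so disappears from the commutator, and identifying the Euler characteristics of the $\Ext^1/\Hom$-fibres over $\fM^\al_{\coh(X)}\times\fM^\be_{\coh(X)}$ with the dimension data in \eq{dt3eq14}. This is exactly what the ``virtual rank''/``virtual indecomposable'' technology of \cite{Joyc2,Joyc4} and the coefficients $F(G,T^G,Q)$ were built to handle; once it is in place, the antisymmetry of $\bar\chi$ coming from Serre duality converts the one-sided $\Hom$/$\Ext^1$ counts produced by $\fExact_{\coh(X)}$ into the bracket of $L(X)$, and the proof closes.
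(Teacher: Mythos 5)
Your proposal is correct and follows essentially the same route as the paper: Theorem \ref{dt3thm4} is quoted from \cite[Th.~6.12]{Joyc4}, and the mechanism you describe --- generators with $\bG_m$-stabilizers, the Ringel--Hall product via $\fExact_{\coh(X)}$, relation Definition \ref{dt2def10}(iii) with the coefficients $F(G,T^G,Q)$, cancellation of the symmetric $\bG_m^2$-terms in the commutator, Euler characteristics of the $\mathbb{P}(\Ext^1)$-fibres together with the $\Hom$ corrections, and finally equation \eq{dt3eq14} --- is exactly what the paper exhibits in Example \ref{dt3ex3} and carries out in full (with Behrend weights) in the proof of Theorem \ref{dt5thm5} in \S\ref{dt11}.
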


Our next example may help readers to understand why this is true.

\begin{ex} Suppose $E,F$ are simple sheaves\index{sheaf!simple} on $X$,
with $[E]=\al$ and $[F]=\be$ in $K(\coh(X))$. Consider the stack
functions\index{stack function} $\bde_E,\bde_F$ in
$\oSFa(\fM_{\coh(X)},\chi,\Q)$, the characteristic functions of the
points $E,F$ in $\fM_{\coh(X)}$. Since $\Aut(E)=\Aut(F)=\bG_m$ as
$E,F$ are simple, we may write
\e
\bde_E=\bigl[([\Spec\K/\bG_m],e)\bigr]\quad\text{and}\quad
\bde_F=\bigl[([\Spec\K/\bG_m],f)\bigr],
\label{dt3eq19}
\e
where the 1-morphisms $e,f:[\Spec\K/\bG_m]\ra \fM_{\coh(X)}$
correspond to $E,F$. Thus $\bde_E,\bde_F$ have virtual rank 1, and
lie in $\oSFai(\fM_{\coh(X)},\chi,\Q)$. We will prove explicitly
that $\Psi^{\chi,\Q}\bigl([\bde_E,\bde_F]\bigr)=\bigl[\Psi^{\chi,\Q}
(\bde_E),\Psi^{\chi,\Q}(\bde_F)\bigr]$, as we need for
$\Psi^{\chi,\Q}$ to be a Lie algebra morphism. From \eq{dt3eq19} and
$[E]=\al,[F]=\be$ we have $\Psi^{\chi,\Q} (\bde_E)=\la^\al$ and
$\Psi^{\chi,\Q}(\bde_F)=\la^\be$, so that
$\bigl[\Psi^{\chi,\Q}(\bde_E),\Psi^{\chi,\Q}(\bde_F)\bigr]
=\bar\chi(\al,\be)\la^{\al+\be}$ by~\eq{dt3eq15}.

Now in $\oSFai(\fM_{\coh(X)},\chi,\Q)$ we have
\ea
\bde_E&*\bde_F=\bigl[([\Spec\K/\bG_m],e)\bigr]*
\bigl[([\Spec\K/\bG_m],f)\bigr]
\nonumber\\
&=\bigl[\bigl([\Spec\K/\bG_m^2]\times_{e\times f,\fM_{\coh(X)}\times
\fM_{\coh(X)},\pi_1\times\pi_3}\fExact_{\coh(X)},\pi_2\ci
\pi_{\fExact_{\coh(X)}}\bigr)\bigr]
\nonumber\\
&=\bigl[\bigl([\Ext^1(F,E)/(\bG_m^2\lt\Hom(F,E))],\rho_1\bigr)\bigr]
\nonumber\\
&=\bigl[\bigl([\Spec\K/(\bG_m^2\lt\Hom(F,E))],\rho_2\bigr)\bigr]
\label{dt3eq20}\\
&\qquad+\bigl[\bigl({\mathbb P}(\Ext^1(F,E))
\times[\Spec\K/(\bG_m\times\Hom(F,E))],\rho_3\bigr)\bigr]
\nonumber\\
&=\bigl[\bigl([\Spec\K/\bG_m^2],\rho_4\bigr)\bigr]-\dim\Hom(F,E)
\bigl[\bigl([\Spec\K/\bG_m],\rho_5\bigr)\bigr]
\nonumber\\
&\qquad+\bigl[\bigl({\mathbb P}(\Ext^1(F,E))
\times[\Spec\K/\bG_m],\rho_6\bigr)\bigr], \nonumber
\ea
where $\rho_i$ are 1-morphisms to $\fM_{\coh(X)}^{\al+\be}$, and the
group law on $\bG_m^2\lt\Hom(F,E)$ is $(\la,\mu,\phi)\cdot
(\la',\mu',\phi')\!=\!(\la\la',\mu\mu',\la\phi'\!+\!\mu'\phi)$ for
$\la,\la',\mu,\mu'$ in $\bG_m$ and $\phi,\phi'$ in $\Hom(F,E)$, and
$\bG_m^2\lt\Hom(F,E)$ acts on $\Ext^1(F,E)$
by~$(\la,\mu,\phi):\ep\mapsto\la\mu^{-1}\ep$.

Here in the first step of \eq{dt3eq20} we use \eq{dt3eq19}, in the
second \eq{dt3eq3}, and in the third that the fibre of
$\pi_1\times\pi_3$ over $E,F$ is $[\Ext^1(F,E)/\Hom(F,E)]$. In the
fourth step of \eq{dt3eq20} we use relation Definition
\ref{dt2def10}(i) in $\oSFai(\fM_{\coh(X)},\chi,\Q)$ to cut
$[\Ext^1(F,E)/\bG_m^2\lt\Hom(F,E)]$ into two pieces
$[\{0\}/(\bG_m^2\lt\Hom(F,E))]$ and
$[(\Ext^1(F,E)\!\sm\!\{0\})/(\bG_m^2\lt\Hom(F,E))]$, where for the
second $\bG_m^2\lt\Hom(F,E)$ acts by dilation on
$\Ext^1(F,E)\!\sm\!\{0\}$, turning it into ${\mathbb
P}(\Ext^1(F,E))$, and the stabilizer of each point is
$\bG_m\!\times\!\Hom(F,E)$. In the fifth step of \eq{dt3eq20} we use
relation Definition \ref{dt2def10}(iii) to rewrite in terms of
quotients by tori $\bG_m,\bG_m^2$; the term in $\dim\Hom(F,E)$ is
there as the coefficient
$F\bigl(\bG_m^2\lt\Hom(F,E),\bG_m^2,\bG_m\bigr)$ in \eq{dt2eq10} is
$-\dim\Hom(F,E)$ (see equation \eq{dt11eq13} in \S\ref{dt11} for
this computation).

In the same way we show that
\e
\begin{split}
\bde_F*\bde_E=&=\bigl[\bigl([\Spec\K/\bG_m^2],\rho_4\bigr)\bigr]
-\dim\Hom(E,F)\bigl[\bigl([\Spec\K/\bG_m],\rho_5\bigr)\bigr]\\
&\qquad+\bigl[\bigl({\mathbb P}(\Ext^1(E,F))
\times[\Spec\K/\bG_m],\rho_7\bigr)\bigr],
\end{split}
\label{dt3eq21}
\e
where the terms $\bigl[\bigl([\Spec\K/\bG_m^2],\rho_4\bigr)\bigr]$
and $\bigl[\bigl([\Spec\K/\bG_m],\rho_5\bigr)\bigr]$ in
\eq{dt3eq20}--\eq{dt3eq21} are the same, mapping to $E\op F$. So
subtracting \eq{dt3eq21} from \eq{dt3eq20} yields
\begin{align*}
&[\bde_E,\bde_F]=\bigl(\dim\Hom(E,F)-\dim\Hom(F,E)\bigr)\bigl[
\bigl([\Spec\K/\bG_m],\rho_5\bigr)\bigr]\\
&+\!\bigl[\bigl({\mathbb P}(\Ext^1(F,E))
\!\times\![\Spec\K/\bG_m],\rho_6\bigr)\bigr]
\!-\!\bigl[\bigl({\mathbb P}(\Ext^1(E,F))
\!\times\![\Spec\K/\bG_m],\rho_7\bigr)\bigr].
\end{align*}
Applying $\Psi^{\chi,\Q}$ thus yields
\begin{align*}
&\Psi^{\chi,\Q}\bigl([\bde_E,\bde_F]\bigr)\\
&=\bigl(\dim\Hom(E,F)\!-\!\dim\Hom(F,E)\!+\!\dim\Ext^1(F,E)
\!-\!\dim\Ext^1(E,F)\bigr)\la^{\al+\be}\\
&=\bar\chi\bigl([E],[F]\bigr)\la^{\al+\be}=
\bar\chi(\al,\be)\la^{\al+\be}=
\bigl[\Psi^{\chi,\Q}(\bde_E),\Psi^{\chi,\Q}(\bde_F)\bigr],
\end{align*}
by equation \eq{dt3eq14} and $\chi\bigl({\mathbb
P}(\Ext^1(E,F))\bigr)=\dim\Ext^1(E,F)$.
\label{dt3ex3}
\end{ex}\index{abelian category|)}

\subsection{Invariants $J^\al(\tau)$ and transformation laws}
\label{dt35}

We continue in the situation of \S\ref{dt34}, with $\K$ of
characteristic zero and $X$ a Calabi--Yau 3-fold over $\K$. Let
$(\tau,T,\le)$ be a permissible weak stability
condition\index{stability condition!permissible} on $\coh(X)$, for
instance, Gieseker stability or $\mu$-stability w.r.t.\ some ample
line bundle $\cO_X(1)$ on $X$, as in Example \ref{dt3ex1} or
\ref{dt3ex2}. In \cite[\S 6.6]{Joyc6} we define invariants
$J^\al(\tau)\in\Q$ for all $\al\in C(\coh(X))$
by\nomenclature[Ja(t)]{$J^\al(\tau)$}{invariant counting
$\tau$-semistable sheaves in class $\al$ on a Calabi--Yau 3-fold,
introduced in \cite{Joyc6}}
\e
\Psi\bigl(\bep^\al(\tau)\bigr)=J^\al(\tau)\la^\al.
\label{dt3eq22}
\e
This is valid by Theorem \ref{dt3thm1}. These $J^\al(\tau)$ are
rational numbers `counting' $\tau$-semistable sheaves $E$ in class
$\al$. When $\M_\rss^\al(\tau)=\M_\st^\al(\tau)$ we have
$J^\al(\tau)=\chi(\M_\st^\al(\tau))$, that is, $J^\al(\tau)$ is the
Euler characteristic of the moduli space $\M_\st^\al(\tau)$. As we
explain in \S\ref{dt4}, this is {\it not\/} weighted by the Behrend
function $\nu_{\M_\st^\al(\tau)}$, and is not the Donaldson--Thomas
invariant $DT^\al(\tau)$. Also, the $J^\al(\tau)$ are in general
{\it not\/} unchanged under deformations of $X$, as we show in
Example \ref{dt6ex8} below.

Now suppose $(\tau,T,\le), (\ti\tau,\ti T,\le),(\hat\tau,\hat
T,\le)$ are as in Theorem \ref{dt3thm2}, so that equation
\eq{dt3eq10} holds, and is equivalent to a Lie algebra equation
\eq{dt3eq13} as in Theorem \ref{dt3thm3}. Therefore we may apply the
Lie algebra morphism $\Psi$ to equation \eq{dt3eq13}. In fact we
prefer to work with equation \eq{dt3eq10}, since the coefficients
$\ti U(\al_1,\ldots,\al_n;\tau,\ti\tau)$ in \eq{dt3eq13} are
difficult to write down. So we express it as an equation in the {\it
universal enveloping algebra\/}\index{universal enveloping algebra}
$U(L(X))$.\nomenclature[ULX]{$U(L(X))$}{universal enveloping algebra of Lie
algebra $L(X)$} This gives
\e
\begin{gathered}
J^\al(\ti\tau)\la^\al= \!\!\!\!\!\!\!
\sum_{\begin{subarray}{l}n\ge 1,\;\al_1,\ldots,\al_n\in
C(\coh(X)):\\
\al_1+\cdots+\al_n=\al\end{subarray}\!\!\!\!\!\!\!\!\!\!\!\!\!\!\!
\!\!\!\!\!\!} \!\!\!\!\!\!\!\!
\begin{aligned}[t]
U(\al_1,\ldots,\al_n;\tau,\ti\tau)\,\cdot\, &\ts\prod_{i=1}^n
J^{\al_i}(\tau)\,\cdot\\
& \la^{\al_1}\star\la^{\al_2}\star\cdots\star\la^{\al_n},
\end{aligned}
\end{gathered}
\label{dt3eq23}
\e
where $\star$ is the product in~$U(L(X))$.

Now in \cite[\S 6.5]{Joyc4}, an explicit description is given of the
universal enveloping algebra $U(L(X))$ (the notation used for
$U(L(X))$ in \cite{Joyc4} is $C(\coh(X),\Q,\ha\bar\chi)$). There is
an explicit basis given for $U(L(X))$ in terms of symbols
$\la_{[I,\ka]}$, and multiplication $\star$ in $U(L(X))$ is given in
terms of the $\la_{[I,\ka]}$ as a sum over graphs. Here $I$ is a
finite set, $\ka$ maps $I\ra C(\coh(X))$, and when $\md{I}=1$, so
that $I=\{i\}$, we have $\la_{[I,\ka]}=\la^{\ka(i)}$. Then
\cite[eq.~(127)]{Joyc6} gives an expression for
$\la^{\al_1}\star\cdots\star \la^{\al_n}$ in $U(L(X))$, in terms of
sums over {\it directed graphs\/} ({\it digraphs\/}):\index{digraph}
\begin{gather}
\la^{\al_1}\star\cdots\star\la^{\al_n}=\text{ terms in
$\la_{[I,\ka]}$, $\md{I}>1$, }
\label{dt3eq24}\\
+\raisebox{-6pt}{\begin{Large}$\displaystyle\biggl[$\end{Large}}
\frac{1}{2^{n-1}}\!\!\!\!\!
\sum_{\substack{\text{connected, simply-connected digraphs
$\Ga$:}\\
\text{vertices $\{1,\ldots,n\}$, edge $\mathop{\bu} \limits^{\sst
i}\ra\mathop{\bu}\limits^{\sst j}$ implies $i<j$}}} \,\,\,
\prod_{\substack{\text{edges}\\
\text{$\mathop{\bu}\limits^{\sst i}\ra\mathop{\bu}\limits^{\sst
j}$}\\ \text{in $\Ga$}}}\bar\chi(\al_i,\al_j)
\raisebox{-6pt}{\begin{Large}$\displaystyle\biggr]$\end{Large}}
\la^{\al_1+\cdots+\al_n}. \nonumber
\end{gather}

Substitute \eq{dt3eq24} into \eq{dt3eq23}. The terms in
$\la_{[I,\ka]}$ for $\md{I}>1$ all cancel, as \eq{dt3eq23} lies in
$L(X)\subset U(L(X))$. So equating coefficients of $\la^\al$ yields
\e
\begin{gathered}
J^\al(\ti\tau)=\!\!\!\!\!\!
\sum_{\begin{subarray}{l}n\ge 1,\;\al_1,\ldots,\al_n\in
C(\coh(X)):\\ \al_1+\cdots+\al_n=\al\end{subarray}}\,\,\,\,
\sum_{\begin{subarray}{l}\text{connected, simply-connected digraphs $\Ga$:}\\
\text{vertices $\{1,\ldots,n\}$, edge $\mathop{\bu} \limits^{\sst
i}\ra\mathop{\bu}\limits^{\sst j}$ implies $i<j$}\end{subarray}}\\
\frac{1}{2^{n-1}}\, U(\al_1,\ldots,\al_n;\tau,\ti\tau) \!\!\!\!\!
\prod_{\text{edges $\mathop{\bu}\limits^{\sst
i}\ra\mathop{\bu}\limits^{\sst j}$ in $\Ga$}}\!\!\!\!\!
\bar\chi(\al_i,\al_j) \prod_{i=1}^nJ^{\al_i}(\tau).
\end{gathered}
\label{dt3eq25}
\e

Following \cite[Def.~6.27]{Joyc6}, we define combinatorial
coefficients~$V(I,\Ga,\ka;\tau,\ti\tau)$:

\begin{dfn} In the situation above, suppose $\Ga$ is a connected,
simply-connected digraph with finite vertex set $I$, where
$\md{I}=n$, and $\ka:I\ra C(\coh(X))$ is a map. Define
$V(I,\Ga,\ka;\tau,\ti\tau)\in\Q$
by\index{digraph}\nomenclature[VI\Gamma]{$V(I,\Ga,\ka;\tau,\ti\tau)$}{combinatorial
coefficient used in wall-crossing formulae}
\e
V(I,\Ga,\ka;\tau,\ti\tau)=\frac{1}{2^{n-1}n!}
\!\!\sum_{\substack{\text{orderings $i_1,\ldots,i_n$ of $I$:}\\
\text{edge $\mathop{\bu} \limits^{\sst
i_a}\ra\mathop{\bu}\limits^{\sst i_b}$ in $\Ga$ implies $a<b$}}
\!\!\!\!\!\!\!\!\!\!\!\!\!\!\!\!\!\!\!\!\!\!\!\!\!\!\!\!\!\!\!\!\!\!\!
} \!\!\!\!\! U(\ka(i_1),\ka(i_2),\ldots,\ka(i_n);\tau,\ti\tau).
\label{dt3eq26}
\e
\label{dt3def8}
\end{dfn}

Then as in \cite[Th.~6.28]{Joyc6}, using \eq{dt3eq26} to rewrite
\eq{dt3eq25} yields a transformation law for the $J^\al(\tau)$ under
change of stability condition:
\e
\begin{gathered}
J^\al(\ti\tau)\!=\!\!\!\!
\sum_{\substack{\text{iso.}\\ \text{classes}\\
\text{of finite}\\ \text{sets $I$}}}\,\,
\sum_{\substack{\ka:I\ra C(\coh(X)):\\ \sum_{i\in I}\ka(i)=\al}}\,\,
\sum_{\begin{subarray}{l} \text{connected,}\\
\text{simply-connected}\\ \text{digraphs $\Ga$,}\\
\text{vertices $I$}\end{subarray}\!\!\!\!\!\!\!\!\!\!\!\!\!\!\!\!\!
\!\!\!\!\!\!\!\!\!\!} V(I,\Ga,\ka;\tau,\ti\tau)
\begin{aligned}[t]
&\cdot\prod\limits_{\text{edges \smash{$\mathop{\bu}\limits^{\sst
i}\ra\mathop{\bu}\limits^{\sst j}$} in
$\Ga$}\!\!\!\!\!\!\!\!\!\!\!\!\!\!\!\!\!\!\!\!\!\!\!
\!\!\!\!\!\!\!\!\!\!\!\!\!\!} \bar\chi(\ka(i),\ka(j))\\
&\cdot\prod\nolimits_{i\in I}J^{\ka(i)}(\tau).
\end{aligned}
\end{gathered}
\label{dt3eq27}
\e
As in \cite[Rem.~6.29]{Joyc6}, $V(I,\Ga,\ka;\tau,\ti\tau)$ depends
on the orientation of $\Ga$ only up to sign: changing the directions
of $k$ edges multiplies $V(I,\Ga,\ka;\tau,\ti\tau)$ by $(-1)^k$.
Since $\bar\chi$ is antisymmetric, it follows that
$V(I,\Ga,\ka;\tau,\ti\tau)\cdot\prod_{\smash{\mathop{\bu}\limits^{\sst
i}\ra\mathop{\bu}\limits^{\sst j}}}\bar\chi(\ka(i),\ka(j))$ in
\eq{dt3eq27} is independent of the orientation
of~$\Ga$.\index{Calabi--Yau 3-fold|)}

\section[Behrend functions and Donaldson--Thomas theory]{Behrend
functions and Donaldson--Thomas \\ theory}
\label{dt4}\index{Behrend function|(}

We now discuss {\it Behrend functions\/} of schemes and stacks, and
their application to Donaldson--Thomas invariants. Our primary
source is Behrend's paper \cite{Behr}. But Behrend considers only
$\C$-schemes and Deligne--Mumford $\C$-stacks, whereas we treat
Artin stacks, and discuss which parts of the theory work over other
algebraically closed fields $\K$. Some of our results, such as
Theorem \ref{dt4thm4} below, appear to be new. Also, in \S\ref{dt45}
we give an exact cohomological description of the numerical
Grothendieck group $K^\num(\coh(X))$ of a Calabi--Yau 3-fold~$X$.

We have not tried to be brief; instead, we have tried to make
\S\ref{dt41}--\S\ref{dt44} a helpful reference on Behrend functions,
by collecting ideas and material which may be useful in the future.
Section \ref{dt44}, and most of \S\ref{dt42}, will not be used in
this book. We include in \S\ref{dt42} a discussion of {\it perverse
sheaves\/} and {\it vanishing cycles}, since they seem to be
connected to Behrend functions at a deep level, but we expect many
of our readers may not be familiar with them.

\subsection{The definition of Behrend functions}
\label{dt41}

\begin{dfn} Let $\K$ be an algebraically closed field of
characteristic zero, and $X$ a finite type $\K$-scheme. Write
$Z_*(X)$\nomenclature[Z(X)]{$Z_*(X)$}{group of algebraic cycles on a scheme
$X$} for the group of {\it algebraic cycles\/} on $X$, as in Fulton
\cite{Fult}. Suppose $X\hookra M$ is an embedding of $X$ as a closed
subscheme of a smooth $\K$-scheme $M$. Let $C_XM$ be the {\it normal
cone\/} of $X$ in $M$, as in \cite[p.~73]{Fult}, and $\pi:C_XM\ra X$
the projection. As in \cite[\S 1.1]{Behr}, define a cycle
${\mathfrak c}_{X/M}\in Z_*(X)$ by
\begin{equation*}
{\mathfrak c}_{X/M}=\ts\sum_{C'}(-1)^{\dim\pi(C')}{\rm
mult}(C')\pi(C'),
\end{equation*}
where the sum is over all irreducible components $C'$ of $C_XM$.

It turns out that ${\mathfrak c}_{X/M}$ depends only on $X$, and not
on the embedding $X\hookra M$. Behrend \cite[Prop.~1.1]{Behr} proves
that given a finite type $\K$-scheme $X$, there exists a unique
cycle ${\mathfrak c}_X\in Z_*(X)$, such that for any \'etale map
$\vp:U\ra X$ for a $\K$-scheme $U$ and any closed embedding
$U\hookra M$ into a smooth $\K$-scheme $M$, we have
$\vp^*({\mathfrak c}_X)={\mathfrak c}_{U/M}$ in $Z_*(U)$. If $X$ is
a subscheme of a smooth $M$ we take $U=X$ and get ${\mathfrak
c}_X={\mathfrak c}_{X/M}$. Behrend calls ${\mathfrak c}_X$ the {\it
signed support of the intrinsic normal cone}, or the {\it
distinguished cycle} of~$X$.

Write $\CF_\Z(X)$ for the group of $\Z$-valued constructible
functions\index{constructible function} on $X$. The {\it local Euler
obstruction\/} is a group isomorphism
$\Eu:Z_*(X)\ra\CF_\Z(X)$.\nomenclature[Eu]{$\Eu$}{the `local Euler
obstruction', an isomorphism $Z_*(X)\ra\CF_\Z(X)$} It was first
defined by MacPherson \cite{MacP} when $\K=\C$, using complex
analysis, but Kennedy \cite{Kenn} provides an alternative algebraic
definition which works over any algebraically closed field $\K$ of
characteristic zero. If $V$ is a prime cycle on $X$, the
constructible function $\Eu(V)$ is given by
\begin{equation*}
\ts\Eu(V):x\longmapsto\int_{\mu^{-1}(x)}c(\ti T)\cap
s(\mu^{-1}(x),\ti V),
\end{equation*}
where $\mu:\ti V\ra V$ is the Nash blowup of $V$, $\ti T$ the dual
of the universal quotient bundle, $c$ the total Chern class and $s$
the Segre class of the normal cone to a closed immersion. Kennedy
\cite[Lem.~4]{Kenn} proves that $\Eu(V)$ is
constructible.\index{constructible function} For each finite type
$\K$-scheme $X$, define the {\it Behrend function} $\nu_X$ in
$\CF(X)$ by $\nu_X=\Eu({\mathfrak c}_X)$, as in Behrend~\cite[\S
1.2]{Behr}.\nomenclature[\nu]{$\nu_X$}{Behrend function of a scheme or stack
$X$}\index{Behrend function!definition}
\label{dt4def1}
\end{dfn}

In the case $\K=\C$, using MacPherson's complex analytic definition
of the local Euler obstruction \cite{MacP}, the definition of
$\nu_X$ makes sense in the framework of complex analytic geometry,
and so Behrend functions can be defined for {\it complex analytic
spaces\/} $X_\an$.\index{complex analytic space} Informally, we have a
commutative diagram:\index{Behrend function!algebraic|(}\index{Behrend
function!analytic|(}
\begin{equation*}
\xymatrix@C=10pt@R=10pt{
{\begin{subarray}{l} \ts\text{$\C$-subvariety $X$ in} \\
\ts\text{smooth $\C$-variety $M$} \end{subarray}}
\ar[r] \ar[d] &
{\begin{subarray}{l} \ts \text{Algebraic cycle} \\
\ts \text{${\mathfrak c}_{X/M}$ in $Z_*(X)$}
\end{subarray}} \ar[r]_(0.37){\Eu} \ar[d] & {\begin{subarray}{l}
\ts\text{Algebraic Behrend function}\\
\ts\text{$\nu_X=\Eu({\mathfrak c}_{X/M})$ in $\CF_\Z(X)$}
\end{subarray}} \ar[d] \\
{\begin{subarray}{l} \ts\text{complex analytic space $\!\ti X$} \\
\ts\text{in complex manifold $\ti M$} \end{subarray}}
\ar[r] &
{\begin{subarray}{l} \ts \text{Analytic cycle} \\ \ts
\text{${\mathfrak c}_{\ti X/\ti M}\!$ in $Z_*^\an(\ti X)$}
\end{subarray}}  \ar[r]^(0.37){\Eu} &
{\begin{subarray}{l}
\ts\text{Analytic Behrend function}\\
\ts\text{$\nu_{\ti X}\!=\!\Eu({\mathfrak c}_{\ti X/\ti M})$
in $\CF_\Z^\an(\ti X)$,}
\end{subarray}}}
\end{equation*}
where the columns pass from $\C$-algebraic
varieties/cycles/constructible functions to the underlying complex
analytic spaces/cycles/constructible functions. Thus we deduce:

\begin{prop}{\bf(a)} If\/ $\K$ is an algebraically closed field of
characteristic zero, and\/ $X$ is a finite type $\K$-scheme, then
the Behrend function $\nu_X$ is a well-defined\/ $\Z$-valued
constructible function on $X,$ in the Zariski topology.\index{Zariski
topology}

\noindent{\bf(b)} If\/ $Y$ is a complex analytic space then the
Behrend function $\nu_Y$ is a well-defined\/ $\Z$-valued locally
constructible function on $Y,$ in the analytic topology.

\noindent{\bf(c)} If $X$ is a finite type $\C$-scheme, with
underlying complex analytic space $X_\an,$ then the algebraic
Behrend function $\nu_X$ in {\bf(a)} and the analytic Behrend
function $\nu_{\smash{X_\an}}$ in {\bf(b)} coincide. In particular,
$\nu_X$ depends only on the complex analytic space $X_\an$
underlying $X,$ locally in the analytic topology.
\label{dt4prop1}
\end{prop}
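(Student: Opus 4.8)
Part (a) is immediate from Definition \ref{dt4def1}: the distinguished cycle $\mathfrak{c}_X\in Z_*(X)$ exists and is independent of the chosen embedding by \cite[Prop.~1.1]{Behr}, the local Euler obstruction $\Eu:Z_*(X)\ra\CF_\Z(X)$ is a well-defined group isomorphism by \cite{MacP,Kenn}, and $\Eu(V)$ is a $\Z$-valued constructible function for every prime cycle $V$ on $X$ by \cite[Lem.~4]{Kenn}; hence $\nu_X=\Eu(\mathfrak{c}_X)$ has the stated properties. So the real content is (b) and (c).

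For (b) the plan is to carry out the constructions of Definition \ref{dt4def1} in the complex analytic category. Every point of $Y$ has an open neighbourhood $V\subseteq Y$ that embeds as a closed analytic subspace of a complex manifold $M$ (for instance an open subset of $\C^n$). For such $V\hookra M$ I would form the analytic normal cone $C_VM$, a complex analytic space with bundle-type projection $\pi:C_VM\ra V$ having locally finitely many irreducible components, and set $\mathfrak{c}_{V/M}=\sum_{C'}(-1)^{\dim\pi(C')}\,{\rm mult}(C')\,\pi(C')$, a locally finite cycle in $Z_*^\an(V)$. The key step is the analytic analogue of \cite[Prop.~1.1]{Behr}, that $\mathfrak{c}_{V/M}$ depends only on $V$: I would obtain this by identifying $C_VM$ together with its cycle structure as the pullback along $V\hookra M$ of the intrinsic normal cone of $V$ in the sense of \cite{Behr}, whose construction --- deformation to the normal cone, followed by patching over a local cover --- is purely local and transfers without change from the \'etale to the analytic topology. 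Hence the $\mathfrak{c}_{V/M}$ glue to a global $\mathfrak{c}_Y\in Z_*^\an(Y)$. Finally MacPherson's $\Eu:Z_*^\an(Y)\ra\CF_\Z^\an(Y)$ of \cite{MacP} is defined analytically by the same Nash-blowup/Chern-class/Segre-class formula recorded in Definition \ref{dt4def1} and sends each locally finite prime cycle to a locally constructible function; setting $\nu_Y=\Eu(\mathfrak{c}_Y)$ completes (b).

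For (c), let $X$ be a finite type $\C$-scheme; choosing, locally on $X$, a closed embedding $U\hookra M$ of an open $U\subseteq X$ into a smooth $\C$-scheme $M$, the plan is to show that analytification takes the algebraic data of Definition \ref{dt4def1} to the analytic data of (b). First, formation of the normal cone commutes with analytification, so $(C_UM)_\an\cong C_{U_\an}M_\an$ over $U_\an$, and analytification sends the algebraic cycle $[C_UM]$ to $[(C_UM)_\an]$, preserving the multiplicities of components. If $C'$ is an algebraic component of $C_UM$ and $C'_\an=\bigcup_jC'_j$ its analytic components, then each $C'_j$ has the same dimension as $C'$, and --- since a generic point of $C'_j$ is generic in $C'$, so the generic fibre dimension of $\pi|_{C'_j}$ over its image equals that of $\pi|_{C'}$ --- one gets $\dim\pi(C'_j)=\dim\pi(C')$ for all $j$. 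Hence $(\mathfrak{c}_{U/M})_\an=\mathfrak{c}_{U_\an/M_\an}$, and patching over a cover gives $(\mathfrak{c}_X)_\an=\mathfrak{c}_{X_\an}$ in $Z_*^\an(X_\an)$. Since Kennedy's algebraic construction of $\Eu$ recovers MacPherson's analytic one over $\C$, the map $\Eu$ commutes with analytification of cycles, so $\nu_X=\Eu(\mathfrak{c}_X)$ and $\nu_{X_\an}=\Eu(\mathfrak{c}_{X_\an})$ agree as $\Z$-valued functions on the common underlying point set of $X$ and $X_\an$. The ``in particular'' then follows, because $\nu_{X_\an}$ was constructed in (b) using only the local analytic structure of~$X_\an$.

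I expect the main obstacle to lie in (c): one must verify carefully that passing to the underlying analytic space does not alter the signed component count $\sum_{C'}(-1)^{\dim\pi(C')}{\rm mult}(C')\pi(C')$, i.e.\ that the generic-fibre-dimension argument above genuinely forces the splitting $C'_\an=\bigcup_jC'_j$ to respect the dimensions $\dim\pi(C')$ and the multiplicities. The analytic version of \cite[Prop.~1.1]{Behr} needed for (b) is a secondary point, dispatched by observing that Behrend's deformation-to-the-normal-cone argument is local and insensitive to the choice of (\'etale versus analytic) topology.
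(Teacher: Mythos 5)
Your proposal is correct and follows essentially the same route as the paper, which records the informal commutative diagram passing from $\C$-varieties, algebraic cycles and $\Eu$ to their complex analytic counterparts and deduces the proposition from the compatibility of the two constructions (Behrend's $\mathfrak{c}_{X/M}$ and Kennedy's algebraic $\Eu$ versus the analytic normal-cone cycle and MacPherson's analytic $\Eu$). You have simply spelled out the details the paper leaves implicit — the analytic analogue of \cite[Prop.~1.1]{Behr} for (b) and the compatibility of normal cones, multiplicities and image dimensions with analytification for (c) — and these fillings-in are sound.
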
\index{Behrend function!algebraic|)}\index{Behrend
function!analytic|)}

Here are some important properties of Behrend functions. They are
proved by Behrend \cite[\S 1.2 \& Prop.~1.5]{Behr} when $\K=\C$, but
his proof is valid for general~$\K$.

\begin{thm} Let\/ $\K$ be an algebraically closed field of
characteristic zero, and\/ $X,Y$ be finite type $\K$-schemes. Then:
\begin{itemize}
\setlength{\itemsep}{0pt}
\setlength{\parsep}{0pt}
\item[{\rm(i)}] If\/ $X$ is smooth of dimension\/ $n$
then~$\nu_X\equiv(-1)^n$.
\item[{\rm(ii)}] If\/ $\vp:X\!\ra\! Y$ is smooth with
relative dimension $n$ then\/~$\nu_X\!\equiv\!(-1)^n\vp^*(\nu_Y)$.
\item[{\rm(iii)}] $\nu_{X\times Y}\equiv\nu_X\boxdot\nu_Y,$
where $(\nu_X\boxdot\nu_Y)(x,y)=\nu_X(x)\nu_Y(y)$.
\end{itemize}
\label{dt4thm1}
\end{thm}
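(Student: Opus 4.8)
\textbf{Proof proposal for Theorem \ref{dt4thm1}.}

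The plan is to reduce everything to the defining identity $\nu_X=\Eu(\mathfrak{c}_X)$ and the two ingredients it packages: the distinguished cycle $\mathfrak{c}_X\in Z_*(X)$ and the local Euler obstruction isomorphism $\Eu:Z_*(X)\to\CF_\Z(X)$. For (i), if $X$ is smooth of dimension $n$ then for any closed embedding $X\hookra M$ into a smooth $M$ the normal cone $C_XM$ is the normal bundle, which is irreducible of the same dimension as $X$ over each point, so $\mathfrak{c}_{X/M}=(-1)^n[X]$ by the sign and multiplicity conventions in Definition \ref{dt4def1}. One then checks that $\Eu$ sends the fundamental cycle $[X]$ of a smooth variety to the constant function $1$ (the Nash blowup of a smooth variety is an isomorphism, the Segre class computation is trivial), giving $\nu_X\equiv(-1)^n$. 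First I would do this base case carefully since it is the normalization that fixes all signs.

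For (iii), the product statement, I would use the naturality of $\mathfrak{c}$ and of $\Eu$ under products. Given embeddings $X\hookra M$, $Y\hookra N$ into smooth schemes, we have $X\times Y\hookra M\times N$ smooth, and the normal cone is $C_{X\times Y}(M\times N)\cong C_XM\times C_YN$; irreducible components of a product of cones are products of irreducible components, multiplicities multiply, and dimensions of images add, so $\mathfrak{c}_{X\times Y/M\times N}=\mathfrak{c}_{X/M}\boxtimes\mathfrak{c}_{Y/N}$ with the signs matching because $(-1)^{a+b}=(-1)^a(-1)^b$. Then I need that $\Eu$ is multiplicative for external products, $\Eu(V\times W)=\Eu(V)\boxdot\Eu(W)$ for prime cycles $V,W$; this follows because the Nash blowup of a product is the product of Nash blowups, the universal quotient bundles pull back compatibly, and Chern/Segre classes behave multiplicatively under products, so the integral defining $\Eu$ factors. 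Combining gives $\nu_{X\times Y}=\Eu(\mathfrak{c}_{X\times Y})=\Eu(\mathfrak{c}_X)\boxdot\Eu(\mathfrak{c}_Y)=\nu_X\boxdot\nu_Y$. Since the statement is étale-local (Definition \ref{dt4def1} gives $\mathfrak{c}$ via any étale chart), I only need these facts on charts.

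For (ii), the smooth-pullback statement, I would deduce it from (i) and (iii). Locally in the étale topology a smooth morphism $\vp:X\ra Y$ of relative dimension $n$ looks like the projection $Y'\times\mathbb{A}^n\ra Y'$ after an étale base change, or more precisely one can find an étale cover of $X$ on which $\vp$ factors through an étale map to $Y\times\mathbb{A}^n$. Using that Behrend functions pull back under étale maps by the identity (relative dimension $0$ case of what we want, which is immediate from the defining property of $\mathfrak{c}$), that $\nu_{\mathbb{A}^n}\equiv(-1)^n$ by (i), and the product formula (iii) applied to $Y\times\mathbb{A}^n$, we get $\nu_{Y\times\mathbb{A}^n}=(-1)^n\,\pi_Y^*(\nu_Y)$, and then transport along the étale chart. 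Alternatively one can argue directly with normal cones: for $\vp$ smooth the relative normal cone is a bundle and $C_X(M\times\mathbb{A}^N)$ fibers over $C_YM$ with smooth fibers, shifting dimensions of components of the intrinsic normal cone uniformly by $n$, hence $\mathfrak{c}_X=(-1)^n\vp^!\mathfrak{c}_Y$ in an appropriate sense, and $\Eu$ intertwines flat pullback of cycles with pullback of constructible functions.

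The main obstacle I anticipate is establishing the multiplicativity of the local Euler obstruction, $\Eu(V\times W)=\Eu(V)\boxdot\Eu(W)$, purely algebraically: MacPherson's original definition is analytic and its product behaviour is classical there, but to stay over an arbitrary algebraically closed field of characteristic zero one must run the argument through Kennedy's algebraic construction \cite{Kenn} (Nash blowups, Segre classes, the integral formula in Definition \ref{dt4def1}), checking each geometric input commutes with products. Everything else — the normal-cone computations, the étale-locality, the reduction of (ii) to (i) and (iii) — is bookkeeping with signs and dimensions. In practice, as the excerpt notes, Behrend already proves (i)--(iii) over $\C$ \cite[\S 1.2 \& Prop.~1.5]{Behr} and his arguments use only formal properties that survive base field change, so I would organize the write-up as: state the three needed lemmas about $\mathfrak{c}$ and $\Eu$ (signed normal cone computation for smooth schemes; multiplicativity of $\mathfrak{c}$ and of $\Eu$ under products; invariance under étale pullback), prove them via Kennedy's framework, and then assemble (i), (iii), (ii) in that order.
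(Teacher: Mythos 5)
This theorem is not proved in the paper: it is simply quoted from Behrend \cite[\S 1.2 \& Prop.~1.5]{Behr}, with the remark that his proof remains valid over any algebraically closed field of characteristic zero via Kennedy's algebraic local Euler obstruction \cite{Kenn} -- i.e.\ exactly the ingredients you use. Your reconstruction (normalization $\mathfrak{c}_X=(-1)^n[X]$ and $\Eu([X])=1$ on smooth schemes; multiplicativity of the normal cone, of multiplicities, and of $\Eu$ under products; then (ii) deduced from (i), (iii) and the \'etale-local triviality of smooth morphisms together with \'etale invariance of $\mathfrak{c}$ and $\Eu$) is correct and is essentially the same argument, and you rightly identify the only non-formal input as the product/\'etale behaviour of $\Eu$ over general $\K$, which Kennedy's framework supplies.
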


We can extend the definition of Behrend functions to $\K$-schemes,
algebraic $\K$-spaces, and Artin $\K$-stacks, locally of finite
type.\index{Behrend function!of Artin stack}

\begin{prop} Let\/ $\K$ be an algebraically closed field of
characteristic zero, and\/ $X$ be a $\K$-scheme, algebraic
$\K$-space, or Artin $\K$-stack, locally of finite type. Then there
is a well-defined \begin{bfseries}Behrend function\end{bfseries}
$\nu_X,$ a $\Z$-valued locally constructible
function\index{constructible function!locally} on $X,$ which is
characterized uniquely by the property that if\/ $W$ is a finite
type $\K$-scheme and $\vp:W\ra X$ is a $1$-morphism of Artin stacks
that is smooth of relative dimension $n$ then
$\vp^*(\nu_X)=(-1)^n\nu_W$ in~$\CF(W)$.
\label{dt4prop2}
\end{prop}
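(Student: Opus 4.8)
The plan is to reduce Proposition \ref{dt4prop2} to the scheme case (Theorem \ref{dt4thm1}) by working \'etale-locally, using that every Artin $\K$-stack locally of finite type admits a smooth atlas by a $\K$-scheme, together with the key smooth-pullback compatibility $\vp^*(\nu_Y)=(-1)^n\nu_X$ for smooth $\vp:X\ra Y$ of relative dimension $n$ from Theorem \ref{dt4thm1}(ii). First I would treat the case of a $\K$-scheme $X$ locally of finite type: cover $X$ by open subschemes $U_i$ of finite type, define $\nu_X|_{U_i}=\nu_{U_i}$ using Definition \ref{dt4def1}, and check these agree on overlaps since the Behrend function of a finite type scheme restricts correctly to finite type open subschemes (which is immediate from the \'etale-local characterization of ${\mathfrak c}_X$, as an open immersion is \'etale). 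This makes $\nu_X$ a well-defined $\Z$-valued locally constructible function; the algebraic $\K$-space case follows the same way using an \'etale atlas and Theorem \ref{dt4thm1}(ii) with $n=0$ to see the local definitions glue.

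Next, for a general Artin $\K$-stack $X$ locally of finite type, I would choose a smooth surjective $1$-morphism $\Pi:W\ra X$ with $W$ a $\K$-scheme (locally of finite type), say locally of relative dimension $n$, which exists by definition of Artin stack. The scheme $W$ has a Behrend function $\nu_W$ by the previous step. I would then \emph{define} $\nu_X$ by the descent condition: for $x\in X(\K)$ pick $w\in W(\K)$ with $\Pi_*(w)=x$, and set $\nu_X(x)=(-1)^{-n}\nu_W(w)=(-1)^n\nu_W(w)$ where $n$ is the relative dimension of $\Pi$ at $w$. The content is to show this is well-defined, i.e.\ independent of the choice of $w$ over $x$ and of the atlas $W$. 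For independence of $w$: given two points $w,w'\in W(\K)$ over the same $x$, form the fibre product $W\times_{\Pi,X,\Pi}W$, which is an algebraic $\K$-space (since $\Pi$ is smooth hence representable), with both projections smooth; a point of it lying over $(w,w')$, combined with Theorem \ref{dt4thm1}(ii) applied to both projections and the already-established space case, forces $(-1)^n\nu_W(w)=(-1)^{n'}\nu_W(w')$. For independence of the atlas: given two atlases $W,W'$, compare both to $W\times_X W'$, again using smoothness of the projections and the pullback formula.

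The final step is to verify the characterizing property: if $\vp:V\ra X$ is any smooth $1$-morphism of relative dimension $m$ from a finite type $\K$-scheme $V$, then $\vp^*(\nu_X)=(-1)^m\nu_V$. Form $V\times_{\vp,X,\Pi}W$, an algebraic $\K$-space with smooth projections $p_V$ (relative dimension $n$) and $p_W$ (relative dimension $m$). On $V\times_X W$ we have $p_W^*(\nu_W)=(-1)^n\nu_{V\times_X W}$ by the space case, and also $p_V^*(\vp^*\nu_X)=p_V^*((-1)^m\ldots)$; chasing the definition of $\nu_X$ in terms of $\nu_W$ through $p_W$ and comparing via $p_V$ gives the identity pointwise on $\K$-points, and since both sides are constructible functions pulled back from $V$ along the surjection $(p_V)_*$, they agree on $V$. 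Uniqueness is then automatic: any function satisfying the property must agree with $(-1)^n\nu_W$ pulled back along $\Pi$, which determines it on $X(\K)$ since $\Pi_*$ is surjective. I expect the main obstacle to be the well-definedness of $\nu_X$ on the fibre product $W\times_X W$ — one must be careful that the relative dimensions of the two projections can differ pointwise (when $\Pi$ has varying relative dimension), so the signs $(-1)^n$ and $(-1)^{n'}$ must be tracked locally and shown to cancel correctly using Theorem \ref{dt4thm1}(ii); everything else is routine \'etale/smooth descent bookkeeping.
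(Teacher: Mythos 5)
Your proposal is correct and follows essentially the same route as the paper: define $\nu_X(x)=(-1)^n\nu_W(w)$ via a smooth $1$-morphism from a scheme, and prove independence of all choices by pulling back to the fibre product of two such and applying Theorem \ref{dt4thm1}(ii) to its two smooth projections, the fibre over $(w,w')$ being nonempty since it is $\Iso_X(x)$. The only point you leave implicit that the paper spells out is the local constructibility of $\nu_X$ itself on the stack, which follows by covering any constructible subset of $X(\K)$ by finitely many images $\vp_*(W(\K))$ with $W$ of finite type and $\vp$ smooth, so that $\vp^*(\nu_X)=(-1)^n\nu_W$ gives constructibility there.
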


\begin{proof} As Artin $\K$-stacks include $\K$-schemes and
algebraic $\K$-spaces, it is enough to do the Artin stack case.
Suppose $X$ is an Artin $\K$-stack, locally of finite type. Let
$x\in X(\K)$. Then by the existence of atlases for $X$, and as $X$
is locally of finite type, there exists a finite type $\K$-scheme
$W$ and a 1-morphism $\vp:W\ra X$ smooth of relative dimension $n$,
with $x=\vp_*(w)$ for some $w\in W(\K)$. We wish to
define~$\nu_X(x)=(-1)^n\nu_W(w)$.

To show this is well-defined, suppose $W',\vp',n',w'$ are
alternative choices for $W,\vp,n,w$. Consider the fibre product
$Y=W\times_{\vp,X,\vp'}W'$. This is a finite type $\K$-scheme, as
$W,W'$ are. Let $\pi_1:Y\ra W$ and $\pi_2:Y\ra W'$ be the
projections to the factors of the fibre product. Then $\pi_1,\pi_2$
are morphisms of $\K$-schemes, and $\pi_1$ is smooth of relative
dimension $n'$ as $\vp'$ is, and $\pi_2$ is smooth of relative
dimension $n$ as $\vp$ is. Hence Theorem \ref{dt4thm1}(ii) gives
\e
(-1)^{n'}\pi_1^*(\nu_W)\equiv\nu_Y\equiv(-1)^n\pi_2^*(\nu_{W'}).
\label{dt4eq1}
\e
Since $\vp_*(w)=x=\vp'_*(w')$, the fibre of $\pi_1\times\pi_2:Y\ra
W\times W'$ over $(w,w')$ is isomorphic as a $\K$-scheme to the
stabilizer group $\Iso_X(x)$, and so is nonempty. Thus there exists
$y\in Y(\K)$ with $(\pi_1)_*(y)=w$ and $(\pi_2)_*(y)=w'$. Equation
\eq{dt4eq1} thus gives $(-1)^{n'}\nu_W(w)=
\nu_Y(y)=(-1)^n\nu_{W'}(w')$, so that $(-1)^n\nu_W(w)=
(-1)^{n'}\nu_{W'}(w')$. Hence $\nu_X(x)$ is well-defined.

Therefore there exists a unique function $\nu_X:X(\K)\ra\Z$ with the
property in the proposition. It remains only to show that $\nu_X$ is
locally constructible. For $\vp,W,n$ as above,
$\vp^*(\nu_X)=(-1)^n\nu_W$ and $\nu_W$ constructible imply that
$\nu_X$ is constructible on the constructible set
$\vp_*(W(\K))\subseteq X(\K)$. But any constructible subset $S$ of
$X(\K)$ can be covered by finitely many such subsets $\vp_*(W(\K))$,
so $\nu_X\vert_S$ is constructible, and thus $\nu_X$ is locally
constructible.
\end{proof}

It is then easy to deduce:

\begin{cor} Theorem {\rm\ref{dt4thm1}} also holds for Artin\/
$\K$-stacks $X,Y$ locally of finite type.
\label{dt4cor1}
\end{cor}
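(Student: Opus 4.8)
The plan is to deduce parts (i)--(iii) of Theorem \ref{dt4thm1} for Artin $\K$-stacks from the corresponding statements for finite type $\K$-schemes, by pulling back to smooth atlases and invoking the characterisation of $\nu_X$ in Proposition \ref{dt4prop2}. The one elementary fact I use repeatedly is that if $\rho\colon W\ra X$ is a smooth $1$-morphism of Artin $\K$-stacks with $\rho_*\colon W(\K)\ra X(\K)$ surjective, then two constructible functions on $X$ that agree after pullback along $\rho$ are equal (given $x\in X(\K)$, lift it to $w\in W(\K)$ and evaluate). Since Behrend functions are locally constructible and the asserted identities are pointwise, there is no loss in working locally; so I may cover the stacks in sight by finite type open substacks and assume all atlases chosen below are finite type $\K$-schemes, which is what Proposition \ref{dt4prop2} requires.

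For (i): if $X$ is a smooth Artin $\K$-stack of dimension $n$, choose a smooth atlas $\vp\colon W\ra X$ of relative dimension $m$ with $W$ a finite type $\K$-scheme. Then $W$ is smooth of dimension $n+m$, so Theorem \ref{dt4thm1}(i) for schemes gives $\nu_W\equiv(-1)^{n+m}$, while Proposition \ref{dt4prop2} gives $\vp^*(\nu_X)=(-1)^m\nu_W\equiv(-1)^n$; surjectivity of $\vp_*$ forces $\nu_X\equiv(-1)^n$. For (iii): choose smooth atlases $\si\colon V\ra X$, $\tau\colon W\ra Y$ of relative dimensions $a,b$ by finite type $\K$-schemes, so that $\si\times\tau\colon V\times W\ra X\times Y$ is smooth of relative dimension $a+b$ with $V\times W$ a finite type $\K$-scheme. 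Proposition \ref{dt4prop2} gives $\nu_V=(-1)^a\si^*(\nu_X)$, $\nu_W=(-1)^b\tau^*(\nu_Y)$, and $(\si\times\tau)^*(\nu_{X\times Y})=(-1)^{a+b}\nu_{V\times W}$, and Theorem \ref{dt4thm1}(iii) for schemes gives $\nu_{V\times W}=\nu_V\boxdot\nu_W$. Since $(\si\times\tau)^*(\nu_X\boxdot\nu_Y)=(\si^*\nu_X)\boxdot(\tau^*\nu_Y)$, combining these yields $(\si\times\tau)^*(\nu_{X\times Y})=(-1)^{2(a+b)}(\si^*\nu_X)\boxdot(\tau^*\nu_Y)=(\si\times\tau)^*(\nu_X\boxdot\nu_Y)$, and surjectivity of $(\si\times\tau)_*$ finishes it.

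For (ii), with $\vp\colon X\ra Y$ smooth of relative dimension $n$, I would first pick a smooth atlas $\psi\colon V\ra Y$ of relative dimension $q$ by a finite type $\K$-scheme, form the fibre product $W=X\times_{\vp,Y,\psi}V$ with projections $\pi_1\colon W\ra X$ (base change of $\psi$, smooth of relative dimension $q$, surjective) and $\pi_2\colon W\ra V$ (base change of $\vp$, smooth of relative dimension $n$), and then take a smooth atlas $\chi\colon S\ra W$ of relative dimension $r$ by a finite type $\K$-scheme. Then $\rho:=\pi_1\ci\chi\colon S\ra X$ is smooth of relative dimension $q+r$ and surjective, $\pi_2\ci\chi\colon S\ra V$ is smooth of relative dimension $n+r$, and $\vp\ci\pi_1=\psi\ci\pi_2$. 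By Proposition \ref{dt4prop2}, $\rho^*(\nu_X)=(-1)^{q+r}\nu_S$, while, using $\vp\ci\rho=\psi\ci\pi_2\ci\chi$ and $\psi^*(\nu_Y)=(-1)^q\nu_V$ and $(\pi_2\ci\chi)^*(\nu_V)=(-1)^{n+r}\nu_S$,
\[\rho^*\bigl((-1)^n\vp^*(\nu_Y)\bigr)=(-1)^n(\psi\ci\pi_2\ci\chi)^*(\nu_Y)=(-1)^{n+q}(\pi_2\ci\chi)^*(\nu_V)=(-1)^{n+q}(-1)^{n+r}\nu_S=(-1)^{q+r}\nu_S.\]
So $\nu_X$ and $(-1)^n\vp^*(\nu_Y)$ agree after pullback along the smooth surjection $\rho$ from a scheme, hence are equal; Corollary \ref{dt4cor1} follows.

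The argument is essentially bookkeeping of signs and relative dimensions, so I do not anticipate a serious obstacle. The only points that need a little care are (a) the reduction to finite type atlases, which is legitimate precisely because $\nu$ is locally constructible and the identities are pointwise, and (b) in (ii), checking that the two smooth morphisms $S\ra X$ and $S\ra V$ extracted from the fibre product genuinely have the stated (pure) relative dimensions and that $\vp\ci\pi_1=\psi\ci\pi_2$, so that the factor $(-1)^{2n}$ cancels and the two pullbacks coincide.
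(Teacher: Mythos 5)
Your proposal is correct and is precisely the deduction the paper intends: the paper proves Proposition \ref{dt4prop2} and then states the Corollary as an easy consequence, namely pulling back along finite type smooth scheme atlases, applying Theorem \ref{dt4thm1} for schemes, and using surjectivity on $\K$-points to descend the identities, exactly as you do (with the sign bookkeeping coming out right in all three parts, including the fibre-product argument for (ii)).
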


\subsection{Milnor fibres and vanishing cycles}
\label{dt42}\index{Milnor fibre|(}

We define {\it Milnor fibres\/} for holomorphic functions on complex
analytic spaces.

\begin{dfn} Let $U$ be a complex analytic space, $f:U\ra\C$ a
holomorphic function, and $x\in U$. Let $d(\,,\,)$ be a metric on
$U$ near $x$ induced by a local embedding of $U$ in some $\C^N$. For
$\de,\ep>0$, consider the holomorphic map
\begin{equation*}
\Phi_{f,x}:\bigl\{y\in U:d(x,y)\!<\!\de,\;
0\!<\!\md{f(y)\!-\!f(x)}\!<\!\ep\bigr\}\longra
\bigl\{z\in\C:0\!<\!\md{z}\!<\!\ep\bigr\}
\end{equation*}
given by $\Phi_{f,x}(y)=f(y)-f(x)$. Milnor \cite{Miln}, extended by
L\^e \cite{Le}, shows that $\Phi_{f,x}$ is a locally trivial
topological fibration provided $0<\ep\ll\de\ll 1$. The {\it Milnor
fibre\/} $MF_f(x)$\nomenclature[MFf(x)]{$MF_f(x)$}{Milnor fibre of a
holomorphic function $f$ at point $x$} is the fibre of $\Phi_{f,x}$.
It is independent of the choice of $0<\ep\ll\de\ll 1$ up to
homeomorphism, or up to diffeomorphism for smooth~$U$.
\label{dt4def2}
\end{dfn}

The next theorem is due to Parusi\'nski and Pragacz \cite{PaPr}, as
in~\cite[\S 1.2]{Behr}.

\begin{thm} Let\/ $U$ be a complex manifold and\/ $f:U\ra\C$ a
holomorphic function, and define $X$ to be the complex analytic
space $\Crit(f)\subseteq U$. Then the
Behrend function $\nu_X$ of\/ $X$ is given by
\e
\nu_X(x)=(-1)^{\dim U}\bigl(1-\chi(MF_f(x))\bigr) \qquad\text{for
$x\in X$.}
\label{dt4eq2}
\e
\label{dt4thm2}
\end{thm}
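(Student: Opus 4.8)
The plan is to reduce the statement to the local structure of $\Crit(f)$ at a point $x$ and then to compute the two sides of \eq{dt4eq2} from known formulas for the distinguished cycle ${\mathfrak c}_X$ and the local Euler obstruction $\Eu$. By Proposition \ref{dt4prop1}, the Behrend function $\nu_X$ of the complex analytic space $X=\Crit(f)$ agrees with the algebraic one when $U,f$ are algebraic, and in any case is defined analytically through ${\mathfrak c}_{X/M}$ and $\Eu$; so it suffices to work analytically near each $x\in X$. First I would recall that for a morphism $f\colon U\ra\C$ with $U$ smooth of dimension $m=\dim U$, the subscheme $X=\Crit(f)$ carries a \emph{symmetric obstruction theory} coming from the Hessian of $f$: locally $X$ embeds in the smooth space $U$ cut out by the $m$ equations $\pd f/\pd z_i=0$, and the normal cone $C_XU$ sits inside the (trivial, rank $m$) conormal bundle. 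This is precisely the situation Behrend analyses; the point is that $X$ is the zero locus of a section $\rd f$ of $T^*U$, which is an \emph{almost closed} $1$-form, so ${\mathfrak c}_{X/U}$ is computable.

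The key steps, in order: (1) Show the statement is local on $X$, so fix $x\in X$ and replace $U$ by a small ball around $x$ with $f(x)=0$. (2) Invoke Behrend's computation \cite[\S 1.2]{Behr} that for $X=\Crit(f)\subseteq U$ with $U$ smooth, the distinguished cycle is ${\mathfrak c}_X=(-1)^{\dim U}\,\mathfrak{c}$, where $\mathfrak{c}$ is the cycle associated to the section $\rd f$ of $T^*U$ — concretely the intersection-theoretic localization of the Euler class, or equivalently $(-1)^{m}$ times the cycle of the normal cone $C_XU\subseteq X\times\bA^m$. (3) Apply the local Euler obstruction: by MacPherson's index formula / the microlocal interpretation of $\Eu$, evaluating $\Eu(C_XU)$ at $x$ gives $1-\chi(MF_f(x))$, the reduced Euler characteristic of the Milnor fibre (with a sign). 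This is the theorem of Parusi\'nski and Pragacz \cite{PaPr}: the value at $x$ of the Euler obstruction of the characteristic cycle of $f$ is $(-1)^{m-1}\bigl(\chi(MF_f(x))-1\bigr)$, equivalently $(-1)^m\bigl(1-\chi(MF_f(x))\bigr)$ after tracking the sign from step (2). (4) Combine the two signs $(-1)^{\dim U}$ and conclude $\nu_X(x)=\Eu({\mathfrak c}_X)(x)=(-1)^{\dim U}\bigl(1-\chi(MF_f(x))\bigr)$.

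The main obstacle is step (3): relating the value of the local Euler obstruction applied to the distinguished cycle of $\Crit(f)$ to the Euler characteristic of the Milnor fibre. This is genuinely the content of the Parusi\'nski--Pragacz theorem and is not a routine manipulation — it uses the identification of ${\mathfrak c}_X$ with (a sign times) the characteristic cycle $CC(\varphi_f \underline{\C}_U)$ of the vanishing cycle sheaf of $f$, together with the index theorem computing $\Eu$ of a characteristic cycle pointwise as a Morse-type index, which for $\varphi_f\underline{\C}_U$ at $x$ evaluates to $1-\chi(MF_f(x))$. Since the excerpt directs us to cite \cite{PaPr} and \cite[\S 1.2]{Behr}, I would treat these as black boxes and the ``proof'' reduces to assembling steps (1)--(4) and bookkeeping the signs, in particular checking that the two occurrences of $(-1)^{\dim U}$ in Behrend's cycle formula and in the Parusi\'nski--Pragacz index formula multiply to a single overall $(-1)^{\dim U}$ rather than cancelling; the careful sign check is the only place where one must be attentive.
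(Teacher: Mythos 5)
The paper gives no proof of this theorem at all: it is quoted directly as a result of Parusi\'nski and Pragacz \cite{PaPr}, following Behrend \cite[\S 1.2]{Behr}. Your outline simply unwinds that citation chain --- localizing, invoking Behrend's identification of the distinguished cycle of $\Crit(f)$ with (a signed) characteristic cycle of the vanishing cycle sheaf, and then applying the Parusi\'nski--Pragacz index formula as a black box --- so it is essentially the same (cited) argument the paper relies on, with only the componentwise signs $(-1)^{\dim\pi(C')}$ in ${\mathfrak c}_{X/M}$ stated a little loosely.
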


These ideas on Milnor fibres have a deep and powerful generalization
in the theory of {\it perverse sheaves\/}\index{perverse sheaf|(} and
{\it vanishing cycles}.\index{vanishing cycle} We now sketch a few of
the basics of the theory. It works both in the algebraic and complex
analytic contexts, but we will explain only the complex analytic
setting. A survey paper on the subject is Massey \cite{Mass}, and
three books are Kashiwara and Schapira \cite{KaSc}, Dimca
\cite{Dimc}, and Sch\"urmann \cite{Schur}. Over the field $\C$,
Saito's theory of {\it mixed Hodge modules\/}\index{mixed Hodge module}
\cite{Sait} provides a generalization of the theory of perverse
sheaves with more structure, which may also be a context in which to
generalize Donaldson--Thomas theory, but we will not discuss this.

What follows will not be needed to understand the rest of the book
--- the only result in this discussion we will use later is Theorem
\ref{dt4thm4}, which makes sense using only the definitions of
\S\ref{dt41}. We include this material both for completeness, as it
underlies the theory of Behrend functions, and also to point out to
readers in Donaldson--Thomas theory that future developments in the
subject, particularly in the direction of motivic Donaldson--Thomas
invariants and motivic Milnor fibres envisaged by Kontsevich and
Soibelman \cite{KoSo1}, will probably be framed in terms of perverse
sheaves and vanishing cycles.

\begin{dfn} Let $X$ be a complex analytic space. Consider sheaves of
$\Q$-modules $\cal C$ on $X$. Note that these are {\it not\/}
coherent sheaves, which are sheaves of $\cO_X$-modules. A sheaf
$\cal C$ is called {\it
constructible\/}\index{sheaf!constructible}\index{constructible
sheaf} if there is a locally finite stratification $X=\bigcup_{j\in
J}X_j$ of $X$ in the complex analytic topology, such that ${\cal
C}\vert_{X_j}$ is a $\Q$-local system for all $j\in J$, and all the
stalks ${\cal C}_x$ for $x\in X$ are finite-dimensional $\Q$-vector
spaces. A complex ${\cal C}^\bu$ of sheaves of $\Q$-modules on $X$
is called {\it constructible\/} if all its cohomology sheaves
$H^i({\cal C}^\bu)$ for $i\in\Z$ are constructible.

Write $D^b_\Con(X)$\nomenclature[DbCon(X)]{$D^b_\Con(X)$}{bounded
derived category of constructible complexes on $X$} for the bounded
derived category of constructible complexes on $X$. It is a
triangulated category. By \cite[Th.~4.1.5]{Dimc}, $D^b_\Con(X)$ is
closed under Grothendieck's ``six operations on
sheaves''\index{sheaf!Grothendieck's six operations}
$R\vp_*,R\vp_!,\vp^*,\vp^!,{\cal
RH}om,\smash{\mathop{\otimes}\limits^{\sst L}}$. The {\it perverse
sheaves\/} on $X$ are a particular abelian subcategory
$\Per(X)$\nomenclature[Per(X)]{$\Per(X)$}{abelian category of
perverse sheaves on $X$} in $D^b_\Con(X)$, which is the heart of a
t-structure on $D^b_\Con(X)$. So perverse sheaves are actually
complexes of sheaves, not sheaves, on $X$. The category $\Per(X)$ is
noetherian\index{noetherian}\index{abelian category!noetherian} and
locally artinian, and is artinian\index{artinian}\index{abelian
category!artinian} if $X$ is of finite type, so every perverse sheaf
has (locally) a unique filtration whose quotients are simple
perverse sheaves; and the simple perverse sheaves can be described
completely in terms of irreducible local systems on irreducible
subvarieties in~$X$.
\label{dt4def3}
\end{dfn}

Next we explain {\it nearby cycles\/} and {\it vanishing
cycles}.\index{vanishing cycle} Let $X$ be a complex analytic space, and
$f:X\ra\C$ a holomorphic function. Define $X_0=f^{-1}(0)$, as a
complex analytic space, and $X^*=X\sm X_0$. Consider the commutative
diagram
\begin{equation*}
\xymatrix@R=10pt@C=30pt{ X_0 \ar[r]_i \ar[d]^f  & X \ar[d]^f  & X^*
\ar[l]^j \ar[d]^f  & \widetilde{X^*} \ar[l]^p \ar@/_.7pc/[ll]_\pi
\ar[d]^{\ti f} \\
\{0\} \ar[r] & \C  & \C^* \ar[l]  & \widetilde{\C^*}. \ar[l]_\rho}
\end{equation*}
Here $i:X_0\ra X$, $j:X^*\ra X$ are the inclusions,
$\rho:\widetilde{\C^*}\ra\C^*$ is the universal cover of
$\C^*=\C\sm\{0\}$, and $\widetilde{X^*}=X^*\times_{f,\C^*,\rho}
\widetilde{\C^*}$ the corresponding cover of $X^*$, with covering
map $p:\widetilde{X^*}\ra X^*$, and $\pi=j\ci p$. The {\it nearby
cycle functor\/}\index{nearby cycle functor} $\psi_f:D^b_\Con(X)\ra
D^b_\Con(X_0)$ is $\psi_f=i^*R\pi_*\pi^*$.\nomenclature[\psi
f]{$\psi_f$}{nearby cycle functor on derived category of
constructible sheaves}

There is a natural transformation $\Xi:i^* \Rightarrow \psi_f$
between the functors $i^*,\psi_f:D^b_\Con(X)\ra D^b_\Con(X_0)$. The
{\it vanishing cycle functor\/}\index{vanishing cycle!functor}\nomenclature[\phi
f]{$\phi_f$}{vanishing \kern-.1em cycle \kern-.1em functor
\kern-.1em on \kern-.1em derived \kern-.1em category \kern-.1em of
\kern-.1em constructible \kern-.1em sheaves} $\phi_f:D^b_\Con(X)\ra
D^b_\Con(X_0)$ is a functor such that for every ${\cal C}^\bu$ in
$D^b_\Con(X)$ we have a distinguished triangle
\e
\smash{\xymatrix@C=40pt{i^*({\cal C}^\bu) \ar[r]^{\Xi({\cal C}^\bu)}
& \psi_f({\cal C}^\bu) \ar[r] & \phi_f({\cal C}^\bu) \ar[r]^{[+1]} &
i^*({\cal C}^\bu)}}
\label{dt4eq3}
\e
in $D^b_\Con(X_0)$. So roughly speaking $\phi_f$ is the cone on
$\Xi$, but this is not a good definition as cones are not unique up
to canonical isomorphism. The shifted functors
$\psi_f[-1],\phi_f[-1]$ take perverse sheaves to perverse sheaves.

As $i^*,\psi_f,\phi_f$ are exact, they induce morphisms on the
Grothendieck groups
\begin{equation*}
(i^*)_*,(\psi_f)_*,(\phi_f)_*:K_0(D^b_\Con(X))\longra
K_0(D^b_\Con(X_0)),
\end{equation*}
with $(\psi_f)_*=(i^*)_*+(\phi_f)_*$ by \eq{dt4eq3}. Note that
$K_0(D^b_\Con(X))=K_0(\Per(X))$ and $K_0(D^b_\Con(X_0))=
K_0(\Per(X_0))$, and for $X$ of finite type $K_0(\Per(X))$ is
spanned by isomorphism classes of simple perverse
sheaves,\index{perverse sheaf!simple} which have a nice
description~\cite[Th.~5.2.12]{Dimc}.

Write $\CF_\Z^\an(X)$\nomenclature[CFZan(X)]{$\CF_\Z^\an(X)$}{group of
$\Z$-valued analytically constructible functions on $X$} for the
group of $\Z$-valued analytically constructible
functions\index{constructible function!analytic} on $X$. Define a map
$\chi_X: \Obj(D^b_\Con(X))\ra\CF_\Z^\an(X)$ by taking Euler
characteristics of the cohomology of stalks of complexes, given by
\begin{equation*}
\chi_X({\cal C}^\bu):x\longmapsto \ts\sum_{k\in\Z}(-1)^k\dim{\cal
H}^k({\cal C}^\bu)_x.
\end{equation*}
Since distinguished triangles in $D^b_\Con(X)$ give long exact
sequences on cohomology of stalks ${\cal H}^k(-)_x$, this $\chi_X$
is additive over distinguished triangles, and so descends to a group
morphism $\chi_X:K_0(D^b_\Con(X))\ra \CF_\Z^\an(X)$.

These maps $\chi_X:\Obj(D^b_\Con(X))\ra\CF_\Z^\an(X)$ and $\chi_X:
K_0(D^b_\Con(X))\ra \CF_\Z^\an(X)$ are surjective, since
$\CF_\Z^\an(X)$ is spanned by the characteristic functions of closed
analytic cycles $Y$ in $X$, and each such $Y$ lifts to a perverse
sheaf in $D^b_\Con(X)$. In category-theoretic terms, $X\mapsto
D^b_\Con(X)$ is a functor $D^b_\Con$ from complex analytic spaces to
triangulated categories, and $X\mapsto \CF_\Z^\an(X)$ is a functor
$\CF_\Z^\an$ from complex analytic spaces to abelian groups, and
$X\mapsto\chi_X$ is a natural transformation $\chi$ from $D^b_\Con$
to~$\CF_\Z^\an$.

As in Sch\"urmann \cite[\S 2.3]{Schur}, the operations
$R\vp_*,R\vp_!,\vp^*,\vp^!,{\cal RH}om$, and $\mathop
{\otimes}\limits^{\sst L}$  on $D^b_\Con(X)$ all have analogues on
constructible functions, which commute with the maps $\chi_X$. So,
for example, if $\vp:X\ra Y$ is a morphism of complex analytic
spaces, pullback of complexes $\vp^*$ corresponds to pullback of
constructible functions in \S\ref{dt21}, that is, we have a
commutative diagram
\begin{equation*}
\xymatrix@R=10pt@C=50pt{ D^b_\Con(Y) \ar[d]^{\chi_Y} \ar[r]_{\vp^*}
& D^b_\Con(X) \ar[d]_{\chi_X} \\ \CF_\Z^\an(Y) \ar[r]^{\vp^*} &
\CF_\Z^\an(X).}
\end{equation*}
Similarly, if $\vp$ is {\it proper\/} then $R\vp_*$ on complexes
corresponds to pushforward of constructible functions $\CF(\vp)$ in
\S\ref{dt21}, that is, we have a commutative diagram
\e
\begin{gathered}
\xymatrix@R=10pt@C=50pt{ D^b_\Con(X) \ar[d]^{\chi_X} \ar[r]_{R\vp_*}
& D^b_\Con(Y) \ar[d]_{\chi_Y} \\ \CF_\Z^\an(X) \ar[r]^{\CF(\vp)} &
\CF_\Z^\an(Y).}
\label{dt4eq4}
\end{gathered}
\e
Also $\smash{\mathop{\otimes}\limits^{\sst L}}$ corresponds to
multiplication of constructible functions.

The functors $\psi_f,\phi_f$ above have analogues $\Psi_f,\Phi_f$ on
constructible functions defined by Verdier \cite[Prop.s 3.4 \&
4.1]{Verd}. For $X,f,X_0$ as above, there is a unique morphism
$\Psi_f:\CF_\Z^\an(X)\ra \CF_\Z^\an(X_0)$\nomenclature[\Psi z]{$\Psi_f$}{nearby
cycle functor on constructible functions} such that
\e
\Psi_f(1_Z):x\longmapsto \begin{cases}
\chi\bigl(MF_{f\vert_Z}(x)\bigr), & x\in X_0\cap Z, \\ 0, & x\in
X_0\sm Z,
\end{cases}
\label{dt4eq5}
\e
whenever $Z$ is a closed complex analytic subspace of $X$, and
$1_Z\in\CF_\Z^\an(X)$ is given by $1_Z(x)=1$ if $x\in Z$ and
$1_Z(x)=0$ if $x\notin Z$. We set $\Phi_f=\Psi_f-i^*$,\nomenclature[\Phi
f]{$\Phi_f$}{vanishing cycle functor on constructible functions}
where $i:X_0\ra X$ is the inclusion. Then we have commutative
diagrams
\e
\begin{gathered}
\xymatrix@R=10pt@C=20pt{ D^b_\Con(X) \ar[d]^{\chi_X}
\ar[rr]_{\psi_f} && D^b_\Con(X_0) \ar[d]_{\chi_{X_0}} & D^b_\Con(X)
\ar[d]^{\chi_X} \ar[rr]_{\phi_f} && D^b_\Con(X_0) \ar[d]_{\chi_{X_0}} \\
\CF_\Z^\an(X) \ar[rr]^{\Psi_f} && \CF_\Z^\an(X_0), & \CF_\Z^\an(X)
\ar[rr]^{\Phi_f} && \CF_\Z^\an(X_0).}
\end{gathered}
\label{dt4eq6}
\e

Now let $U$ be a complex manifold of dimension $n$, and $f:U\ra\C$ a
holomorphic function. The critical locus $X=\Crit(f)$ is a complex
analytic subspace of $U$, and $f$ is locally constant on $X$, so
locally $X\subseteq f^{-1}(c)$ for some $c\in\C$. Suppose $X$ is
contained in $f^{-1}(0)=U_0$. Write $\underline{\Q}$ for the
constant sheaf with fibre $\Q$ on $U$, regarded as an element of
$D^b_\Con(U)$. As $U$ is smooth of dimension $n$, the shift
$\underline{\Q}[n]$ is a simple perverse sheaf on $U$. Since
$\psi_f[-1],\phi_f[-1]$ take perverse sheaves to perverse sheaves,
it follows that $\psi_f[-1]\bigl(\underline{\Q}[n]\bigr)=
\psi_f\bigl(\underline{\Q}[n-1]\bigr)$ and
$\phi_f[-1]\bigl(\underline{\Q}[n]\bigr)=
\phi_f\bigl(\underline{\Q}[n-1]\bigr)$ are perverse sheaves on
$U_0$. We call these the {\it perverse sheaves of nearby cycles and
vanishing cycles},\index{vanishing cycle!perverse sheaf}\index{perverse
sheaf!of vanishing cycles} respectively.

We will compute $\chi_{U_0}\bigl(\phi_f(\underline{\Q}[n-1])\bigr)$.
We have
\begin{align*}
\chi_{U_0}\bigl(\phi_f(\underline{\Q}[n-1])\bigr)&\equiv
\bigl(\Phi_f\ci\chi_U(\underline{\Q}[n-1])\bigr)\equiv
(-1)^{n-1}\bigl(\Phi_f\ci\chi_U(\underline{\Q})\bigr)\\
&\equiv (-1)^{n-1}\bigl(\Phi_f(1_U)\bigr)\equiv
(-1)^{n-1}\bigl(\Psi_f(1_U)-i^*(1_U)\bigr)\\
&\equiv (-1)^{n-1}\bigl(\Psi_f(1_U)-1_{U_0}\bigr) \equiv
(-1)^n\bigl(1_{U_0}-\Psi_f(1_U)\bigr),
\end{align*}
using \eq{dt4eq6} commutative in the first step,
$\chi_U\ci[+1]=-\chi_U$ in the second, $\chi_U(\underline{\Q})=1_U$
in the third and $\Phi_f=\Psi_f-i^*$ in the fourth. So \eq{dt4eq5}
gives
\e
\chi_{U_0}\bigl(\phi_f(\underline{\Q}[n-1])\bigr):x\longmapsto
(-1)^n\bigl(1-\chi(MF_f(x))\bigr)\qquad\text{for $x\in U_0$.}
\label{dt4eq7}
\e
If $x\in U_0\sm X$ then $MF_f(x)$ is an open ball, so
$\chi_{U_0}\bigl(\phi_f(\underline{\Q}[n-1])\bigr)(x)=0$ by
\eq{dt4eq7}, and if $x\in X$ then
$\chi_{U_0}\bigl(\phi_f(\underline{\Q}[n-1])\bigr)(x)= \nu_X(x)$ by
\eq{dt4eq7} and Theorem \ref{dt4thm2}. Thus we have proved:

\begin{thm} Let\/ $U$ be a complex manifold of dimension $n,$ and\/
$f:U\ra\C$ a holomorphic function with\/ $X=\Crit(f)$ contained in
$U_0=f^{-1}(\{0\})$. Then the perverse sheaf of vanishing
cycles\index{vanishing cycle!perverse sheaf}\index{perverse sheaf!of
vanishing cycles} $\phi_f(\underline{\Q}[n-1])$ on $U_0$ is
supported on $X,$ and
\e
\chi_{U_0}\bigl(\phi_f(\underline{\Q}[n-1])\bigr)(x)=\begin{cases}
\nu_X(x), & x\in X, \\ 0, & x\in U_0\sm X, \end{cases}
\label{dt4eq8}
\e
where $\nu_X$ is the Behrend function of the complex analytic
space~$X$.
\label{dt4thm3}
\end{thm}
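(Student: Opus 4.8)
The proof is essentially contained in the computation carried out in the paragraph preceding the statement, so the plan is to organise that material into a proof of the two assertions: (i) that the complex $\phi_f(\underline{\Q}[n-1])$ is supported on $X=\Crit(f)$, and (ii) the pointwise formula for $\chi_{U_0}\bigl(\phi_f(\underline{\Q}[n-1])\bigr)$ on $U_0$.

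For (i), I would invoke the standard fact (Dimca \cite{Dimc}, Kashiwara--Schapira \cite{KaSc}) that for any $\mathcal{C}^\bu$ in $D^b_\Con(X)$ the vanishing cycle complex $\phi_f(\mathcal{C}^\bu)$ is supported on $f^{-1}(0)\cap\Crit\bigl(f\vert_{\mathrm{Supp}\,\mathcal{C}^\bu}\bigr)$: if $x\in U_0$ is not a critical point of $f$, then $f$ is a submersion near $x$, hence locally a coordinate projection, the natural transformation $\Xi$ of \eq{dt4eq3} is an isomorphism near $x$ (the nearby cycle functor reduces to restriction for a trivial fibration), and so $\phi_f(\underline{\Q}[n-1])$ vanishes near $x$. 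Since $\underline{\Q}[n]$ has support $U$, this shows $\phi_f(\underline{\Q}[n-1])$ is supported on $U_0\cap\Crit(f)=\Crit(f)=X$, using the hypothesis $X\subseteq U_0$. (If one only wants the weaker constructible-function version of the support statement, it also follows directly from \eq{dt4eq7}, since the Milnor fibre of a submersion at $x$ is an open ball.)

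For (ii), I would take \eq{dt4eq7}, already derived from the commutative square \eq{dt4eq6} relating $\phi_f$ on $D^b_\Con$ to $\Phi_f$ on constructible functions through the stalkwise Euler characteristic $\chi$, together with $\chi_U(\underline{\Q})=1_U$, the relation $\Phi_f=\Psi_f-i^*$, and the Milnor-fibre description \eq{dt4eq5} of $\Psi_f$. Equation \eq{dt4eq7} reads $\chi_{U_0}\bigl(\phi_f(\underline{\Q}[n-1])\bigr)(x)=(-1)^n\bigl(1-\chi(MF_f(x))\bigr)$ for all $x\in U_0$. For $x\in U_0\sm X$ the Milnor fibre $MF_f(x)$ is an open ball, so $\chi(MF_f(x))=1$ and the value is $0$; for $x\in X$, Theorem \ref{dt4thm2} (Parusi\'nski--Pragacz) identifies $(-1)^{\dim U}\bigl(1-\chi(MF_f(x))\bigr)=(-1)^n\bigl(1-\chi(MF_f(x))\bigr)$ with the Behrend function $\nu_X(x)$, giving the stated case split. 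The only genuine inputs are Theorem \ref{dt4thm2} and the compatibility \eq{dt4eq6}, both already available; the one point requiring care — and hence the "main obstacle", though a mild one — is tracking the shift: the factor $(-1)^{n-1}$ coming from $\chi_U\ci[+1]=-\chi_U$ applied to $\underline{\Q}[n-1]$ must combine correctly with the $\Phi_f=\Psi_f-i^*$ step to yield the $(-1)^{\dim U}$ of Theorem \ref{dt4thm2}, which \eq{dt4eq7} already records.
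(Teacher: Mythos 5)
Your proof is correct and follows essentially the same route as the paper: the paper derives \eq{dt4eq7} by exactly the chain you describe (the square \eq{dt4eq6}, $\chi_U\ci[+1]=-\chi_U$, $\chi_U(\underline{\Q})=1_U$, $\Phi_f=\Psi_f-i^*$, then \eq{dt4eq5}) and concludes via the open-ball computation off $X$ and Theorem \ref{dt4thm2} on $X$. Your sheaf-level justification of the support claim (vanishing of $\phi_f$ where $f$ is a submersion, via $\Xi$ being an isomorphism) is a small extra care that the paper leaves implicit, but it changes nothing essential.
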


Behrend \cite[eq.~(5)]{Behr} gives equation \eq{dt4eq8} with an
extra sign $(-1)^{n-1}$, since he omits the shift $[n-1]$ in
$\underline{\Q}[n-1]$, which makes $\phi_f(\underline{\Q}[n-1])$ a
perverse sheaf. Theorem \ref{dt4thm3} may be important for future
work in Donaldson--Thomas theory, as it suggests that we should try
to lift from constructible functions\index{constructible function} to
perverse sheaves, or mixed Hodge modules \cite{Sait}, or some
similar setting.

This bridge between perverse sheaves and vanishing cycles on one
hand, and Milnor fibres and Behrend functions on the other, is also
useful because we can take known results on the perverse sheaf side,
and translate them into properties of Milnor fibres by applying the
surjective functors $\chi_X$. Here is one such result. For
constructible complexes, the functors $\psi_f,\phi_f$ commute with
proper pushdowns \cite[Prop.~4.2.11]{Dimc}. Applying $\chi_X$
yields:

\begin{prop} Let\/ $X,Y$ be complex analytic spaces, $\vp:Y\ra X$ a
proper morphism, and\/ $f:X\ra\C$ a holomorphic function. Set\/
$g=f\ci\vp,$ and write $X_0=f^{-1}(0)$ and\/ $Y_0=g^{-1}(0)$. Then
the following diagrams commute:
\e
\begin{gathered}
\xymatrix@R=10pt@C=17pt{ \CF_\Z^\an(Y) \ar[d]^{\Psi_g}
\ar[rr]_{\CF(\vp)} && \CF_\Z^\an(X) \ar[d]_{\Psi_f} & \CF_\Z^\an(Y)
\ar[d]^{\Phi_g} \ar[rr]_{\CF(\vp)} && \CF_\Z^\an(X) \ar[d]_{\Phi_f}\\
\CF_\Z^\an(Y_0) \ar[rr]^{\CF(\vp)} && \CF_\Z^\an(X_0), &
\CF_\Z^\an(Y_0) \ar[rr]^{\CF(\vp)} && \CF_\Z^\an(X_0). }
\end{gathered}
\label{dt4eq9}
\e
\label{dt4prop3}
\end{prop}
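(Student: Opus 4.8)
The plan is to deduce Proposition \ref{dt4prop3} from the corresponding statement at the level of constructible complexes --- namely that $\psi_f$ and $\phi_f$ commute with pushforward along proper morphisms (Dimca \cite[Prop.~4.2.11]{Dimc}) --- by transporting it across the natural transformation $\chi$ from $D^b_\Con(-)$ to $\CF_\Z^\an(-)$. The two compatibilities needed were already recorded in \S\ref{dt42}: the square \eq{dt4eq4} expressing $\chi_X\ci R\vp_*=\CF(\vp)\ci\chi_Y$ for $\vp$ proper, and the squares \eq{dt4eq6} expressing $\chi_{X_0}\ci\psi_f=\Psi_f\ci\chi_X$ and $\chi_{X_0}\ci\phi_f=\Phi_f\ci\chi_X$. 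Together with the surjectivity of $\chi_Y:\Obj(D^b_\Con(Y))\ra\CF_\Z^\an(Y)$ noted in \S\ref{dt42}, these let us verify the desired identity of group morphisms merely on the image of $\chi_Y$, which is all of $\CF_\Z^\an(Y)$.

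Concretely I would proceed as follows. Write $i_X:X_0\hookra X$, $i_Y:Y_0\hookra Y$ for the inclusions and $\vp_0:Y_0\ra X_0$ for the restriction of $\vp$; since $Y_0=g^{-1}(0)=X_0\times_X Y$ (as $g=f\ci\vp$) and $\vp$ is proper, $\vp_0$ is proper too, so \eq{dt4eq4} applies to it. This $\vp_0$ is what the maps labelled $\CF(\vp)$ in the bottom row of \eq{dt4eq9} denote. Now given any $\eta\in\CF_\Z^\an(Y)$, choose $\mathcal{C}^\bu\in D^b_\Con(Y)$ with $\chi_Y(\mathcal{C}^\bu)=\eta$. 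Apply Dimca's canonical isomorphisms $R\vp_{0*}\ci\psi_g\cong\psi_f\ci R\vp_*$ and $R\vp_{0*}\ci\phi_g\cong\phi_f\ci R\vp_*$ to $\mathcal{C}^\bu$, then apply $\chi_{X_0}$ to both sides. Along $R\vp_{0*}\ci\psi_g$ one gets $\CF(\vp_0)\bigl(\Psi_g(\eta)\bigr)$ using \eq{dt4eq6} then \eq{dt4eq4} for $\vp_0$; along $\psi_f\ci R\vp_*$ one gets $\Psi_f\bigl(\CF(\vp)(\eta)\bigr)$ using \eq{dt4eq4} for $\vp$ then \eq{dt4eq6}. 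Equating the two yields the left-hand square of \eq{dt4eq9}, and the right-hand square follows identically with $\phi$ in place of $\psi$. (Alternatively, the $\Phi$-square can be obtained from the $\Psi$-square using $\Phi_f=\Psi_f-i_X^*$, $\Phi_g=\Psi_g-i_Y^*$, together with the standard proper base-change identity $i_X^*\ci\CF(\vp)=\CF(\vp_0)\ci i_Y^*$ for constructible functions.)

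The argument is essentially formal, so there is no genuine obstacle beyond careful bookkeeping; the one point to handle with care is the passage to the zero fibres. One must check that $Y_0=X_0\times_X Y$, so that both Dimca's result and the base-change identity are available in the form used, and that the restriction $\vp_0$ is proper, so that \eq{dt4eq4} applies to the bottom row of \eq{dt4eq9} --- both being immediate from $g=f\ci\vp$ and properness of $\vp$. A secondary point, already addressed in \S\ref{dt42}, is the surjectivity of $\chi_Y$: it suffices because $\CF_\Z^\an(Y)$ is spanned by characteristic functions $1_Z$ of closed analytic subspaces $Z\subseteq Y$, and each $1_Z=\chi_Y(\underline{\Q}_Z)$ for the constructible complex $\underline{\Q}_Z\in D^b_\Con(Y)$, so every element of $\CF_\Z^\an(Y)$ lifts, and checking the identity of additive maps on such lifts is enough.
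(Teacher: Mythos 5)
Your argument is correct and is essentially the paper's own proof: the paper likewise invokes Dimca's \cite[Prop.~4.2.11]{Dimc} that $\psi_f,\phi_f$ commute with proper pushforward of constructible complexes, and then applies the surjective maps $\chi$ together with the compatibilities \eq{dt4eq4} and \eq{dt4eq6} to descend the identity to constructible functions. Your extra bookkeeping (that $\vp_0:Y_0\ra X_0$ is proper and $Y_0=X_0\times_XY$, and the alternative derivation of the $\Phi$-square from $\Phi_f=\Psi_f-i_X^*$) is fine but not a departure from the paper's route.
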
\index{perverse sheaf|)}

We use this to prove a property of Milnor fibres that we will need
later. The authors would like to thank J\"org Sch\"urmann for
suggesting the simple proof of Theorem \ref{dt4thm4} below using
Proposition \ref{dt4prop3}, which replaces a longer proof using
Lagrangian cycles in an earlier version of this book.

\begin{thm} Let\/ $U$ be a complex manifold, $f:U\ra\C$ a
holomorphic function, $V$ a closed, embedded complex submanifold
of\/ $U,$ and\/ $v\in V\cap\Crit(f)$. Define $\ti U$ to be the
blowup of\/ $U$ along\/ $V,$ with blow-up map $\pi:\ti U\ra U,$ and
set\/ $\ti f=f\ci\pi:\ti U\ra\C$. Then $\pi^{-1}(v)={\mathbb
P}(T_vU/T_vV)$ is contained in $\Crit(\ti f),$ and
\e
\begin{split}
\chi\bigl(MF_f(v)\bigr)=\,&\int_{w\in {\mathbb
P}(T_vU/T_vV)}\chi\bigl(MF_{\ti f}(w)\bigr)\,\rd\chi\\
&\quad+\bigl(1-\dim U+\dim V\bigr)\chi\bigl(MF_{f\vert_V}(v)\bigr).
\end{split}
\label{dt4eq10}
\e
Here $w\mapsto\chi(MF_{\ti f}(w))$ is a constructible
function\index{constructible function} on ${\mathbb P}(T_vU/T_vV),$ and
the integral in\/ \eq{dt4eq10} is the Euler characteristic of\/
${\mathbb P}(T_vU/T_vV)$ weighted by this.
\label{dt4thm4}
\end{thm}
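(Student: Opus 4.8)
The plan is to derive \eq{dt4eq10} directly from Proposition \ref{dt4prop3}, applied to the proper morphism $\vp=\pi:\ti U\ra U$ and the holomorphic function $f:U\ra\C$, so that $g=f\ci\pi=\ti f$ in the notation there. First I would reduce to the case $f(v)=0$: the Milnor fibres $MF_f(v),\,MF_{\ti f}(w),\,MF_{f\vert_V}(v)$, the critical loci $\Crit(f),\Crit(\ti f)$, and the blow-up $\pi$ are all unaffected by replacing $f$ with $f-f(v)$, so we may assume $v\in U_0:=f^{-1}(0)$, whence $\pi^{-1}(v)\subseteq\ti U_0:=\ti f^{-1}(0)=\pi^{-1}(U_0)$. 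The inclusion $\pi^{-1}(v)\subseteq\Crit(\ti f)$ asserted in the statement is immediate from the chain rule, since $\rd\ti f_w=\rd f_v\ci\rd\pi_w=0$ for $w\in\pi^{-1}(v)$ because $v\in\Crit(f)$. Properness of $\pi$, and of its restriction $\ti U_0\ra U_0$ (blow-ups are proper), supplies the hypotheses of Proposition~\ref{dt4prop3}.

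The heart of the argument is to apply the first commutative square of \eq{dt4eq9} to the constructible function $1_{\ti U}\in\CF_\Z^\an(\ti U)$ and evaluate at $v$. Going one way round: $\CF(\pi)(1_{\ti U})$ is by definition the function $u\mapsto\chi(\pi^{-1}(u))$, which equals $1$ off $V$ and $\chi({\mathbb P}^{c-1})=c$ on $V$, where $c=\dim U-\dim V$; so $\CF(\pi)(1_{\ti U})=1_U+(\dim U-\dim V-1)\,1_V$, and applying $\Psi_f$ and then \eq{dt4eq5} (with $Z=U$ and $Z=V$, both closed complex analytic subspaces of $U$) its value at $v\in U_0\cap V$ is $\chi(MF_f(v))+(\dim U-\dim V-1)\chi(MF_{f\vert_V}(v))$. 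Going the other way: $\Psi_{\ti f}(1_{\ti U})$ is by \eq{dt4eq5} the function $w\mapsto\chi(MF_{\ti f}(w))$ on $\ti U_0$ --- its restriction to the closed analytic subspace $\pi^{-1}(v)={\mathbb P}(T_vU/T_vV)$ is therefore constructible, which is exactly what makes the integral in \eq{dt4eq10} meaningful --- and $\CF$ of it along the bottom map $\pi\vert_{\ti U_0}$, evaluated at $v$, is $\int_{w\in{\mathbb P}(T_vU/T_vV)}\chi(MF_{\ti f}(w))\,\rd\chi$ by definition of the pushforward of constructible functions and $\pi^{-1}(v)={\mathbb P}(T_vU/T_vV)$. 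Equating the two expressions and rearranging gives \eq{dt4eq10}.

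I do not expect a serious obstacle: once Proposition \ref{dt4prop3} is available the proof is essentially formal, and the only points needing care are the normalization $f(v)=0$, the properness of $\pi$, and the Euler-characteristic bookkeeping that turns $\CF(\pi)(1_{\ti U})$ into $1_U+(\dim U-\dim V-1)\,1_V$ --- it is this last step that produces the constant $1-\dim U+\dim V$ in \eq{dt4eq10}. An earlier, longer approach instead tracked the characteristic cycle of $\underline{\Q}$ through the blow-up and argued with Lagrangian cycles; the route above via Proposition \ref{dt4prop3} makes that unnecessary.
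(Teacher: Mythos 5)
Your proposal is correct and follows essentially the same route as the paper's proof: applying Proposition \ref{dt4prop3} to $1_{\ti U}$, computing $\CF(\pi)1_{\ti U}=1_U+(\dim U-\dim V-1)1_V$ via the fibre Euler characteristics, and evaluating both sides of \eq{dt4eq9} at $v$ using \eq{dt4eq5}, after the same normalization $f(v)=0$. Even your closing remark mirrors the paper, which notes that this argument (suggested by Sch\"urmann) replaced an earlier, longer proof using Lagrangian cycles.
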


\begin{proof} Let $U,V,\ti U,v$ be as in the theorem. It is
immediate that $\pi^{-1}(v)={\mathbb P}(T_vU/T_vV)\subseteq\Crit(\ti
f)$. Replacing $f$ by $f-f(v)$ if necessary, we can suppose
$f(v)=0$. Applying Proposition \ref{dt4prop3} with $U,\ti
U,\pi,f,\ti f$ in place of $X,Y,\vp,f,g$ to the function $1_{\ti U}$
on $\ti U$ shows that
\e
\CF(\pi)\ci\Psi_{\ti f}(1_{\ti U})=\Psi_f\ci\CF(\pi)1_{\ti U}.
\label{dt4eq11}
\e
We evaluate \eq{dt4eq11} at $v\in V$. Since $\pi^{-1}(v)={\mathbb
P}(T_vU/T_vV)\subset\ti V$, we have
\e
\bigl(\CF(\pi)\!\ci\!\Psi_{\ti f}(1_{\ti U})\bigr)(v)\!=\!\int_{w\in
{\mathbb P}(T_vU/T_vV)}\!\!\!\!\!\!\Psi_{\ti
f}(w)\,\rd\chi=\!\int_{w\in {\mathbb
P}(T_vU/T_vV)}\!\!\!\!\!\!\!\!\!\!\!\!\!\!\!\!\!\! \chi\bigl(MF_{\ti
f}(w)\bigr)\,\rd\chi,
\label{dt4eq12}
\e
by \eq{dt4eq5}. The fibre $\pi^{-1}(u)$ of $\pi:\ti U\ra U$ is one
point over $u\in U\sm V$, with $\chi\bigl(\pi^{-1}(u))=1$, and a
projective space ${\mathbb P}(T_uU/T_uV)$ for $u\in V$, with
$\chi\bigl(\pi^{-1}(u))=\dim U-\dim V$. It follows that
$\CF(\pi)1_{\ti U}$ is 1 at $u\in U\sm V$ and $\dim U-\dim V$ at
$u\in V$, giving
\e
\CF(\pi)1_{\ti U}=1_U+(\dim U-\dim V-1)1_V.
\label{dt4eq13}
\e
Applying $\Psi_f$ to \eq{dt4eq13} and using \eq{dt4eq5} to evaluate
it at $v$ gives
\e
\bigl(\Psi_f\ci\CF(\pi)1_{\ti U}\bigr)(v)=\chi\bigl(MF_f(v)\bigr)
+\bigl(\dim U-\dim V-1\bigr)\chi\bigl(MF_{f\vert_V}(v)\bigr).
\label{dt4eq14}
\e
Equation \eq{dt4eq10} now follows from \eq{dt4eq11}, \eq{dt4eq12}
and~\eq{dt4eq14}.
\end{proof}\index{Milnor fibre|)}

\subsection{Donaldson--Thomas invariants of Calabi--Yau 3-folds}
\label{dt43}\index{Calabi--Yau 3-fold|(}\index{Donaldson--Thomas
invariants!original $DT^\al(\tau)$|(}

{\it Donaldson--Thomas invariants\/} $DT^\al(\tau)$ were defined by
Richard Thomas \cite{Thom}, following a proposal of Donaldson and
Thomas~\cite[\S 3]{DoTh}.\nomenclature[DTa]{$DT^\al(\tau)$}{original
Donaldson--Thomas invariants defined in \cite{Thom}}

\begin{dfn} Let $\K$ be an algebraically closed field of
characteristic zero. As in \S\ref{dt34}, a {\it Calabi--Yau\/
$3$-fold\/} is a smooth projective 3-fold $X$ over $\K$, with
trivial canonical bundle\index{canonical bundle} $K_X$. Fix a very
ample line bundle $\cO_X(1)$ on $X$, and let $(\tau,G,\le)$ be
Gieseker stability on $\coh(X)$ w.r.t.\ $\cO_X(1)$, as in Example
\ref{dt3ex1}. For $\al\in K^\num(\coh(X))$, write
$\M_\rss^\al(\tau),\M_\st^\al(\tau)$ for the coarse moduli
schemes\index{coarse moduli scheme}\index{moduli scheme!coarse} of
$\tau$-(semi)stable sheaves $E$ with class $[E]=\al$. Then
$\M_\rss^\al(\tau)$ is a projective $\K$-scheme, and
$\M_\st^\al(\tau)$ an open subscheme.

Thomas \cite{Thom} constructs a symmetric obstruction
theory\index{symmetric obstruction theory|(}\index{obstruction
theory!symmetric|(} on $\M_\st^\al(\tau)$. Suppose that
$\M_\rss^\al(\tau)= \M_\st^\al(\tau)$. Then $\M_\st^\al(\tau)$ is
proper, so using the obstruction theory Behrend and Fantechi
\cite{BeFa1} define a virtual class\index{virtual class}
$[\M_\st^\al(\tau)]^\vir\in
A_0(\M_\st^\al(\tau))$.\nomenclature[Mzvir]{$[\M]^\vir$}{virtual
cycle of a proper moduli scheme $\M$, defined using an obstruction
theory on $\M$} The {\it Donaldson--Thomas invariant\/} \cite{Thom}
is defined to be
\e
DT^\al(\tau)=\ts\int_{[\M_\st^\al(\tau)]^\vir}1.
\label{dt4eq15}
\e
Note that $DT^\al(\tau)$ {\it is defined only when\/ $\M_\rss^\al
(\tau)=\M_\st^\al(\tau),$ that is, there are no strictly semistable
sheaves\/ $E$ in class\/} $\al$. One of our main goals is to extend
the definition to all $\al\in K^\num(\coh(X))$.
\label{dt4def4}
\end{dfn}

In fact Thomas did not define invariants $DT^\al(\tau)$ counting
sheaves with fixed class $\al\in K^\num(\coh(X))$, but coarser
invariants $DT^P(\tau)$ counting sheaves with fixed Hilbert
polynomial\index{Hilbert polynomial} $P(t)\in\Q[t]$. Since
$\M_\rss^P(\tau)= \coprod_{\al:P_\al=P}\M_\rss^\al(\tau)$, the
relationship with our version $DT^\al(\tau)$ is
\begin{equation*}
DT^P(\tau)=\ts\sum_{\al\in K^\num(\coh(X)):P_\al=P} DT^\al(\tau),
\end{equation*}
with only finitely many nonzero terms in the sum. Thomas' main
result \cite[\S 3]{Thom}, which works over an arbitrary
algebraically closed base field $\K$, is that

\begin{thm} For each Hilbert polynomial $P(t),$ the invariant\/
$DT^P(\tau)$ is unchanged by continuous deformations of the
underlying Calabi--Yau $3$-fold~$X$.
\label{dt4thm5}
\end{thm}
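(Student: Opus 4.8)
The plan is to follow Thomas \cite{Thom}: realise $DT^P(\tau)$ as the degree of a virtual class on a \emph{proper, relative} moduli space over a deformation base, and then apply deformation invariance of virtual classes. First I would set up the family. If two Calabi--Yau $3$-folds are continuous deformations of one another, they are joined by a chain of smooth projective morphisms $\pi:\mathcal X\ra S$ with $S$ connected --- after shrinking, of the form $\Spec R$ for a discrete valuation ring $R$, or a smooth affine curve --- all geometric fibres Calabi--Yau $3$-folds with $H^1(\cO_{X_s})=0$, carrying a $\pi$-very ample $\cO_{\mathcal X/S}(1)$ restricting on fibres to the chosen polarisations; so it suffices to treat one such family and show $DT^P$ is constant on $S$. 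Using relative Gieseker stability I form the relative moduli scheme $\mathbf M^P\ra S$ of Gieseker-stable sheaves with Hilbert polynomial $P$ on the fibres, with geometric fibre over $s$ equal to $\M_\st^P(\tau)$ for $X_s$, the disjoint union of the $\M_\st^\al(\tau)$ over $\al\in K^\num(\coh(X_s))$ with $P_\al=P$. By the relative theory of moduli of sheaves (Simpson, Langton, Maruyama; cf.\ \cite[\S 4.3]{HuLe2}) this exists as a quasi-projective $S$-scheme, and --- provided, as is needed for $DT^P(\tau)$ to be defined along the whole family, that there are no strictly semistable sheaves with Hilbert polynomial $P$ on any fibre, so that $\mathbf M^P=\mathbf M^P_{\rss}$ --- it is \emph{proper} over $S$ by boundedness and Langton's valuative criterion.

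Next I would build a relative symmetric obstruction theory on $\mathbf M^P\ra S$. Étale-locally on $\mathbf M^P$, or up to a $\bG_m$-gerbe which is harmless because the construction uses only $R\SHom(\mathbb E,\mathbb E)$, there is a universal sheaf $\mathbb E$ on $\mathcal X\times_S\mathbf M^P$, flat over $\mathbf M^P$ with projection $p$ onto it. Following Thomas, the relative Atiyah class of $\mathbb E$ composed with the trace gives a morphism $E^\bu\ra\mathbb L_{\mathbf M^P/S}$ in the derived category, where $E^\bu$ is the dual of the appropriate two-term truncation of $Rp_*R\SHom(\mathbb E,\mathbb E)$; since $H^1(\cO_{X_s})=H^2(\cO_{X_s})=0$ (the hypothesis $H^1(\cO_X)=0$ plus Serre duality) the trace summand drops out, $E^\bu$ is perfect of amplitude $[-1,0]$, and $E^\bu\ra\mathbb L_{\mathbf M^P/S}$ is a relative perfect obstruction theory. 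By cohomology and base change its restriction to the fibre over $s$ is Thomas's obstruction theory on $\M_\st^P(\tau)$ for $X_s$, with deformation space $\Ext^1(E,E)$ and obstruction space $\Ext^2(E,E)$; Calabi--Yau Serre duality $\Ext^1(E,E)\cong\Ext^2(E,E)^*$ makes it symmetric and forces relative virtual dimension $0$.

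Finally I would invoke deformation invariance. Since $\mathbf M^P\ra S$ is proper and carries a relative perfect obstruction theory of relative virtual dimension $0$, Behrend--Fantechi \cite{BeFa1} produce a virtual class $[\mathbf M^P]^\vir$ whose refined restriction to each geometric fibre is $[\M_\st^P(\tau)]^\vir$ for $X_s$. Pushing $[\mathbf M^P]^\vir$ forward along $\mathbf M^P\ra S$ gives a class on $S$, and (working over the DVR, by specialisation of its degree-zero component) the integer $\int_{[\M_\st^P(\tau)]^\vir}1=DT^P(\tau;X_s)$ is a locally constant function of $s$; as $S$ is connected it is constant, so $DT^P(\tau)$ agrees at the two ends of the chain, proving the theorem.

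The hard part will not be the last step --- deformation invariance of virtual classes is a standard black box --- but the first two: proving that $\mathbf M^P$ is genuinely \emph{proper} over $S$ (needing boundedness, Langton's valuative criterion, and crucially the hypothesis that semistable $=$ stable for $P$ on every fibre), and that the Atiyah-class construction really yields a relative perfect obstruction theory, compatible with arbitrary base change, whose fibrewise restriction is Thomas's absolute one. The Calabi--Yau condition and $H^1(\cO_X)=0$ enter precisely at this point, to strip the trace part off $R\SHom(\mathbb E,\mathbb E)$ and to render the resulting two-term complex self-dual with vanishing virtual dimension.
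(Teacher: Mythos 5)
Your proposal is correct and is essentially the argument this statement rests on: the paper does not prove Theorem~\ref{dt4thm5} itself but quotes it from Thomas \cite[\S 3]{Thom}, whose proof runs exactly as you describe (a relative moduli scheme proper over the deformation base, a relative symmetric obstruction theory built from the Atiyah class with the trace part removed, and deformation invariance of Behrend--Fantechi virtual classes \cite{BeFa1}), and the paper itself reproduces this same strategy in \S\ref{dt12} when proving deformation-invariance of the pair invariants $PI^{\al,n}(\tau')$ (Theorems \ref{dt12thm5}--\ref{dt12thm7} and the conservation-of-number argument of \S\ref{dt128}). The one phrase to tighten is ``the trace summand drops out because $H^1=H^2=0$'': the trace summand is removed because $\Hom(E,E)$ and $\Ext^3(E,E)$ are nonzero for stable $E$ and would spoil the amplitude $[-1,0]$, while $H^1(\cO_X)=H^2(\cO_X)=0$ (or, in Thomas's general setting, working with fixed determinant) is what makes this removal compatible with the actual deformation theory of the sheaf --- which is precisely the trace-free construction you cite.
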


The same proof shows that $DT^\al(\tau)$ for $\al\in
K^\num(\coh(X))$ is deformation-invariant, {\it provided\/} we know
that the group $K^\num(\coh(X))$ is deformation-invariant, so that
this statement makes sense. This issue will be discussed in
\S\ref{dt45}. We show that when $\K=\C$ we can describe
$K^\num(\coh(X))$ in terms of cohomology groups
$H^*(X;\Z),H^*(X;\Q)$, so that $K^\num(\coh(X))$ is manifestly
deformation-invariant, and therefore $DT^\al(\tau)$ is also
deformation-invariant.

Here is a property of Behrend functions which is crucial for
Donaldson--Thomas theory. It is proved by Behrend
\cite[Th.~4.18]{Behr} when $\K=\C$, but his proof is valid for
general~$\K$.

\begin{thm} Let\/ $\K$ be an algebraically closed field of
characteristic zero, $X$ a proper $\K$-scheme with a symmetric
obstruction theory, and\/ $[X]^\vir\in A_0(X)$ the corresponding
virtual class from Behrend and Fantechi\/ {\rm\cite{BeFa1}}. Then
\begin{equation*}
\ts\int_{[X]^\vir}1=\chi(X,\nu_X)\in\Z,
\end{equation*}
where $\chi(X,\nu_X)=\int_{X(\K)}\nu_X\,\rd\chi$ is the Euler
characteristic of\/ $X$ weighted by the Behrend function $\nu_X$
of\/ $X$. In particular, $\int_{[X]^\vir}1$ depends only on the\/
$\K$-scheme structure of\/ $X,$ not on the choice of symmetric
obstruction theory.
\label{dt4thm6}
\end{thm}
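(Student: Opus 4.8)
The plan is to appeal to Behrend's proof \cite{Behr}, which I now outline, also indicating why it works over any algebraically closed field $\K$ of characteristic zero and not just $\C$. There are two quantities to compare. On one side, $\chi(X,\nu_X)=\ts\int_{X(\K)}\nu_X\,\rd\chi$ is built from the Behrend function $\nu_X=\Eu(\mathfrak{c}_X)$ of Definition \ref{dt4def1}, which by construction depends only on $X$ étale-locally and which, near each point of the critical-locus model, is described explicitly by the Milnor fibre formula of Theorem \ref{dt4thm2} (equivalently by the vanishing-cycle picture of Theorem \ref{dt4thm3}). On the other side, $\ts\int_{[X]^\vir}1$ is the degree of the zero-dimensional virtual class of the symmetric obstruction theory, which is well-defined because $X$ is proper. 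The strategy is to reduce both numbers to the \emph{same} étale-local computation and then glue.

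First I would invoke the local structure of symmetric obstruction theories (Behrend and Fantechi): étale-locally on $X$ one can embed $X\hookra U$ into a smooth $\K$-scheme $U$ of some dimension $n$ and present the symmetric obstruction theory as the critical locus $X=\Crit(f)$ of a regular function $f:U\ra\mathbb{A}^1$ (more precisely $X=Z(\omega)$ for an almost-closed $1$-form $\omega$, but $\Crit(f)$ is the case to keep in mind). In this model the virtual class is the refined intersection of the graph $\Gamma_{\rd f}\subset T^*U$ with the zero section $0_U\subset T^*U$; since $X$ has virtual dimension $0$ this is a $0$-cycle supported on $X$, and its degree is the number we must identify with $\chi(X,\nu_X)$.

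Next I would compute that intersection number microlocally. The specialisation of $\Gamma_{\rd f}$ to the normal cone of $0_U$ is a conic Lagrangian cycle $\La$ in $T^*U$ supported over $X$, and by the Kashiwara--Dubson--MacPherson index formula the Lagrangian intersection $\La\cdot 0_U$ equals a weighted Euler characteristic $\chi(U,g_\La)=\chi(X,g_\La)$, where $g_\La$ is the characteristic (microlocal index) function of $\La$, supported on $X$. The heart of the matter is then the identification $g_\La=\nu_X$: at a generic point of $X$ the cycle $\La$ is $(-1)^n$ times the conormal variety $T^*_XU$, whose index function is $(-1)^n\Eu_X$, matching the leading term of $\mathfrak{c}_X=(-1)^n[X]+(\text{lower-dimensional terms})$; and the lower-dimensional corrections in $\mathfrak{c}_X$ are pinned down point by point by comparing the microlocal contribution at $x\in X$ with $\chi(MF_f(x))$ via Theorem \ref{dt4thm2}. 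Equivalently, one checks that $\Eu$ intertwines the microlocal ``cycle $\mapsto$ characteristic function'' dictionary with $\mathfrak{c}_X\mapsto\nu_X$. This microlocal identification is the main obstacle: it is where Behrend's paper does the real work, and where one must verify carefully that the virtual class in the $\Crit(f)$ model really is computed by the Lagrangian intersection number $\La\cdot 0_U$ and that passing from $\Gamma_{\rd f}$ to its specialisation cycle loses no information.

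Finally I would globalise and check $\K$-generality. Both sides are determined by étale-local data on the proper scheme $X$: refined intersection numbers are local and the virtual classes of overlapping charts are compatible, while $\chi(X,\nu_X)$ is additive over stratifications and $\nu_X$ is étale-local by Proposition \ref{dt4prop2}; covering $X$ by finitely many charts as above and using additivity, the local identity upgrades to $\ts\int_{[X]^\vir}1=\chi(X,\nu_X)$. Since $\nu_X$ depends only on the $\K$-scheme $X$, the independence of $\ts\int_{[X]^\vir}1$ from the choice of symmetric obstruction theory is then immediate. For $\K\ne\C$ one uses Kennedy's algebraic local Euler obstruction (already in Definition \ref{dt4def1}) in place of MacPherson's analytic one, and the algebraic nearby/vanishing cycles (Verdier's $\Psi_f,\Phi_f$) in place of topological Milnor fibres, so Behrend's argument applies verbatim; alternatively, since the assertion concerns an equality of integers attached to finite-type $\K$-schemes and a chosen obstruction theory, a Lefschetz-principle argument reduces the general characteristic-zero case to $\K=\C$.
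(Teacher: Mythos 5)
Your proposal is essentially the paper's own treatment: the paper does not reprove this theorem but simply cites Behrend \cite[Th.~4.18]{Behr} and observes that his argument (Kennedy's algebraic local Euler obstruction, almost closed $1$-forms, and the microlocal index formula) remains valid over any algebraically closed field of characteristic zero, which is exactly the reduction you carry out. The only caveat is the one you yourself flag: the \'etale-local model is genuinely $Z(\omega)$ for an almost closed $1$-form rather than $\Crit(f)$ (the paper quotes Behrend on precisely this point in \S\ref{dt44}), so the identification of the microlocal index function with $\nu_X$ must be made, as in Behrend, through ${\mathfrak c}_{X/M}$ and the Lagrangian specialisation cycle rather than through the Milnor fibre formula of Theorem \ref{dt4thm2}.
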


Theorem \ref{dt4thm6} implies that $DT^\al(\tau)$ in \eq{dt4eq15} is
given by
\e
DT^\al(\tau)=\chi\bigl(\M_\st^\al(\tau),\nu_{\M_\st^\al(\tau)}\bigr).
\label{dt4eq16}
\e
There is a big difference between the two equations \eq{dt4eq15} and
\eq{dt4eq16} defining Donaldson--Thomas invariants. Equation
\eq{dt4eq15} is non-local, and non-motivic, and makes sense only if
$\M_\st^\al(\tau)$ is a proper $\K$-scheme. But \eq{dt4eq16} is
local, and (in a sense) motivic, and makes sense for arbitrary
finite type $\K$-schemes $\M_\st^\al(\tau)$. In fact, one could take
\eq{dt4eq16} to be the definition of Donaldson--Thomas invariants
even when $\M_\rss^\al(\tau)\ne\M_\st^\al(\tau)$, but we will argue
in \S\ref{dt65} that this is not a good idea, as then $DT^\al(\tau)$
would not be unchanged under deformations of~$X$.

Equation \eq{dt4eq16} was the inspiration for this book. It shows
that Donaldson--Thomas invariants $DT^\al(\tau)$ can be written as
{\it motivic\/} invariants,\index{motivic invariant} like those studied
in \cite{Joyc3,Joyc4,Joyc5,Joyc6,Joyc7}, and so it raises the
possibility that we can extend the results of
\cite{Joyc3,Joyc4,Joyc5,Joyc6,Joyc7} to Donaldson--Thomas invariants
by including Behrend functions as weights.\index{Calabi--Yau
3-fold|)}\index{Donaldson--Thomas invariants!original $DT^\al(\tau)$|)}

\subsection{Behrend functions and almost closed 1-forms}
\label{dt44}

The material of \S\ref{dt42}--\S\ref{dt43} raises an obvious
question. Given a proper moduli space $\M$ with a symmetric
obstruction theory, such as a moduli space of sheaves $\M_\st^\al
(\tau)$ on a Calabi--Yau 3-fold when $\M_\st^\al(\tau)=\M_\rss^\al
(\tau)$, we have $\int_{[\M]^\vir}1=\chi(\M,\nu_\M)$ by Theorem
\ref{dt4thm6}. If we could write $\M$ as $\Crit(f)$ for $f:U\ra\C$ a
holomorphic function on a complex manifold $U$, we could use the
results of \S\ref{dt42} to study the Behrend function $\nu_\M$.
However, as Behrend says~\cite[p.~5]{Behr}:
\begin{quotation}
`We do not know if every scheme admitting a symmetric obstruction
theory can locally be written as the critical locus of a regular
function on a smooth scheme. This limits the usefulness of the above
formula for $\nu_X(x)$ in terms of the Milnor fibre.'
\end{quotation}

Later we will prove using transcendental complex analytic methods
that when $\K=\C$, moduli spaces $\M_\st^\al(\tau)$ on a Calabi--Yau
3-fold can indeed be written as $\Crit(f)$ for $f$ holomorphic on a
complex manifold $U$, and so we can apply \S\ref{dt42} to prove
identities on Behrend functions \eq{dt5eq2}--\eq{dt5eq3}. But here
we sketch an alternative approach due to Behrend \cite{Behr}, which
could perhaps be used to give a strictly algebraic proof of the same
identities.

\begin{dfn} Let $\K$ be an algebraically closed field, and $M$ a
smooth $\K$-scheme. Let $\om$ be a 1-form on $M$, that is, $\om\in
H^0(T^*M)$. We call $\om$ {\it almost closed\/}\index{almost closed
1-form|(} if $\rd\om$ is a section of $I_\om\cdot\La^2T^*M$, where
$I_\om$ is the ideal sheaf of the zero locus $\om^{-1}(0)$ of $\om$.
Equivalently, $\rd\om\vert_{\om^{-1}(0)}$ is zero as a section of
$\La^2T^*M\vert_{\om^{-1}(0)}$. In (\'etale) local coordinates
$(z_1,\ldots,z_n)$ on $M$, if $\om=f_1\rd z_1+\cdots+f_n\rd z_n$,
then $\om$ is almost closed provided
\begin{equation*}
\frac{\pd f_j}{\pd z_k}\equiv\frac{\pd f_k}{\pd z_j} \;\>\mod
(f_1,\ldots,f_n).
\end{equation*}
\label{dt4def5}
\end{dfn}

Behrend \cite[Prop.~3.14]{Behr} proves the following, by a proof
valid for general~$\K$:

\begin{prop} Let\/ $\K$ be an algebraically closed field, and\/ $X$
a $\K$-scheme with a symmetric obstruction theory. Then $X$ may be
covered by Zariski open sets $Y\subseteq X$ such that there exists a
smooth\/ $\K$-scheme $M,$ an almost closed\/ $1$-form $\om$ on $M,$
and an isomorphism of\/ $\K$-schemes\/~$Y\cong\om^{-1}(0)$.
\label{dt4prop4}
\end{prop}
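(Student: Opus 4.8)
The plan is to reduce to an affine local question, fix a closed embedding $X\hookra M$ into a smooth affine scheme, and extract $\om$ from the two-term complex of vector bundles presenting the symmetric obstruction theory relative to that embedding. The key realization is that the symmetry isomorphism does two things for us: it forces the rank of the degree $-1$ bundle to equal $\dim M$, so that the ideal of $X$ is locally cut out by exactly $\dim M$ equations --- precisely the number needed to write down a $1$-form --- and it forces the Jacobian of those equations to be symmetric modulo the ideal, which is exactly almost-closedness.

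First I would reduce to $X$ affine, the statement being Zariski-local on $X$, and choose a closed embedding $X\hookra M$ with $M$ smooth affine (an affine space will do), shrinking so that $\Om_M$ is free; pick coordinates $z_1,\ldots,z_n$ on $M$ with $n=\dim M$, dual frame $\pd_1,\ldots,\pd_n$ of $T_M$, and write $I\subseteq\cO_M$ for the ideal of $X$. Relative to this embedding $\tau_{\ge-1}L_X$ is $[I/I^2\xrightarrow{\be}\Om_M\vert_X]$ in degrees $-1,0$ with $\be(\bar g)=\overline{\rd g}$. Next I would appeal to standard facts about perfect obstruction theories (Behrend--Fantechi \cite{BeFa1}) to present the given obstruction theory $\phi:E^\bu\ra L_X$, after shrinking $X$, by an honest map of two-term vector bundle complexes with $E^0=\Om_M\vert_X$, $\phi^0=\id$, and $E^{-1}\xrightarrow{q}I/I^2$ surjective with $\be\ci q=a$, where $a$ is the differential of $E^\bu$. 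Arranging this rigid form is one of the two technical points; it is routine but needs care.

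Then I would bring in the symmetry $\theta:E^\bu\to(E^\bu)^\vee[1]$, where $(E^\bu)^\vee[1]$ in degrees $-1,0$ is $(E^0)^\vee\to(E^{-1})^\vee$. Its degree-$0$ component $\theta^0:E^0\to(E^{-1})^\vee$ is an isomorphism, so identifying $(E^{-1})^\vee$ with $E^0=\Om_M\vert_X$ through $\theta^0$ gives $E^{-1}\cong T_M\vert_X$, free of rank $n$; hence $I/I^2$ is generated by the $n$ elements $q(\pd_i\vert_X)$. I would lift these to $f_1,\ldots,f_n\in I$, note that by Nakayama they generate $I$ over a Zariski neighbourhood of $X$ in $M$ (which I rename $M$), and set
\[
\om=f_1\,\rd z_1+\cdots+f_n\,\rd z_n\in H^0(T^*M),\qquad Y:=X.
\]
Then $\om^{-1}(0)=X=Y$ scheme-theoretically, giving the required isomorphism $Y\cong\om^{-1}(0)$.

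It remains to verify that $\om$ is almost closed, i.e.\ $\pd f_i/\pd z_j\equiv\pd f_j/\pd z_i\pmod I$. I would compute $a(\pd_i\vert_X)=\be(q(\pd_i\vert_X))=\be(\bar f_i)=\overline{\rd f_i}=\sum_j(\pd f_i/\pd z_j)\,\rd z_j$ in $\Om_M\vert_X$, so that the matrix of $a:T_M\vert_X\ra\Om_M\vert_X$ is $a_{ij}=\pd f_i/\pd z_j\bmod I$; then extract from $\theta=\theta^\vee[1]$ the relation $\theta^{-1}=(\theta^0)^\vee$, and from compatibility of $\theta$ with differentials the identity $\theta^0\ci a=a^\vee\ci(\theta^0)^\vee=(\theta^0\ci a)^\vee$, which under the identification $\theta^0=\id$ says exactly that the bilinear form $a$ on $T_M\vert_X$ is symmetric, i.e.\ $a_{ij}=a_{ji}$ --- precisely almost-closedness. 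The hard part will be the two bookkeeping steps flagged above: pinning down the rigid presentation of the obstruction theory, and transporting the abstract self-duality $\theta=\theta^\vee[1]$ together with its differential-compatibility through the chosen trivializations to land on ``Jacobian symmetric mod $I$''. Everything else --- Nakayama, the identity $\om^{-1}(0)=X$ --- is routine, and no assumption on $\cha\K$ is needed, matching the generality of the statement.
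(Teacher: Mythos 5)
The paper does not actually prove this proposition: it quotes it from Behrend \cite[Prop.~3.14]{Behr}. Your sketch is essentially a reconstruction of Behrend's argument (local embedding, two-term presentation with $E^0=\Om_M\vert_X$ and $E^{-1}\twoheadrightarrow I/I^2$, then use the symmetry to match generators of $I$ with the coordinates and read off symmetry of the Jacobian mod $I$), and the easy parts --- Nakayama, $\om^{-1}(0)=X$, $\rank E^{-1}=\rank E^0=\dim M$ --- are fine. But the two steps you flag as bookkeeping are exactly where the content lies, and as written both have genuine gaps. First, $\theta^0:E^0\ra(E^{-1})^\vee$ need not be an isomorphism of bundles: $\theta$ is only an isomorphism in the derived category, and a quasi-isomorphism of two-term complexes of bundles can have degenerate components (modify any valid $\theta$ by a null-homotopy $(s\ci a,\,a^\vee\ci s)$; already on a point one can have $\theta^0=0$). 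With your choice of $M$ an affine space and an arbitrary chain representative, this step can simply fail. Behrend's device is to work near a fixed point $p$ and take the embedding \emph{minimal} at $p$, so that $I\subseteq\m_p^2$ and hence the differential $a=\be\ci q$ vanishes at $p$; then $\theta\ot k(p)$ being an isomorphism on cohomology of the fibre complexes forces $\theta^0$ to be an isomorphism at $p$, hence on a neighbourhood. You need this (or an equivalent argument) --- it is not a consequence of the rigid presentation alone.

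Second, $\theta=\theta^\vee[1]$ is an identity in the derived category, so at the chain level one only gets $\theta^{-1}-(\theta^0)^\vee=s\ci a$ and $\theta^0-(\theta^{-1})^\vee=a^\vee\ci s$ for some homotopy $s:E^0\ra(E^0)^\vee$. Feeding this into the chain-map condition $\theta^0\ci a=a^\vee\ci\theta^{-1}$ gives $\theta^0\ci a-(\theta^0\ci a)^\vee=a^\vee\ci s\ci a$, so your bilinear form is symmetric only up to this correction, and the Jacobian identity $\pd f_i/\pd z_j\equiv\pd f_j/\pd z_i\mod I$ does not follow for an arbitrary representative of $\theta$. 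One can repair this by symmetrizing the representative, replacing $\theta$ by $\ha(\theta+\theta^\vee[1])$ (legitimate because $(s+s^\vee)\ci a=0$, so the symmetrized map is again a chain map homotopic to $\theta$, now with $\theta^{-1}=(\theta^0)^\vee$ on the nose), but this divides by $2$; so your closing claim that no hypothesis on $\cha\K$ is needed is not justified by the argument you sketch, and in characteristic $2$ you would need a different normalization or a closer look at Behrend's proof. In short: right strategy, but the isomorphy of $\theta^0$ (via minimality of the embedding at a point) and the homotopy-level nature of the self-duality must be handled explicitly; they are not routine transport of structure.
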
\index{symmetric obstruction
theory|)}\index{obstruction theory!symmetric|)}

If we knew the almost closed 1-form $\om$ was closed, then locally
$\om=\rd f$ for $f:M\ra\K$ regular, and $X\cong\Crit(f)$ as we want.
Restricting to $\K=\C$, Behrend \cite[Prop.~4.22]{Behr} gives an
expression for the Behrend function of the zero locus of an almost
closed 1-form as a linking number. He states it in the complex
algebraic case, but his proof is also valid in the complex analytic
case.

\begin{prop} Let\/ $M$ be a complex manifold and\/ $\om$ an almost
closed holomorphic $(1,0)$-form on $M,$ and let\/ $X=\om^{-1}(0)$ as
a complex analytic subspace of\/ $M$. Fix\/ $x\in X,$ choose
holomorphic coordinates $(z_1,\ldots,z_n)$ on $X$ near $x$ with\/
$z_1(x)=\cdots=z_n(x)=0,$ and let\/ $(z_1,\ldots,z_n,w_1,\ldots,
w_n)$ be the induced coordinates on $T^*M,$ with\/
$(z_1,\ldots,w_n)$ representing the $1$-form $w_1\rd
z_1+\cdots+w_n\rd z_n$ at\/ $(z_1,\ldots,z_n),$ so that we identify
$T^*M$ near $x$ with\/~$\C^{2n}$.

Then for all\/ $\eta\in\C$ and\/ $\ep\in\R$ with\/
$0<\md{\eta}\ll\ep\ll 1$ we have
\e
\nu_X(x)=L_{{\cal S}_\ep}\bigl(\Ga_{\eta^{-1}\om}\cap{\cal S}_\ep,
\De\cap {\cal S}_\ep\bigr),
\label{dt4eq17}
\e
where ${\cal S}_\ep\!=\!\bigl\{(z_1,\ldots,w_n)\!\in\!\C^{2n}:
\ms{z_1}\!+\!\cdots\!+\!\ms{w_n}\!=\!\ep^2\bigr\}$ is the sphere of
radius $\ep$ in $\C^{2n},$ and\/ $\Ga_{\eta^{-1}\om}$ the graph of\/
$\eta^{-1}\om$ regarded locally as a complex submanifold of\/
$\C^{2n},$ and
$\De=\bigl\{(z_1,\ldots,w_n)\!\in\!\C^{2n}:w_j\!=\!\bar z_j,$
$j\!=\!1,\ldots,n\bigr\},$ and\/ $L_{{\cal S}_\ep}(\,,\,)$ the
linking number of two disjoint, closed, oriented\/
$(n\!-\!1)$-submanifolds in~${\cal S}_\ep$.
\label{dt4prop5}
\end{prop}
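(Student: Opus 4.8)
The plan is to follow Behrend's microlocal approach \cite{Behr}: the assertion is the complex analytic analogue of \cite[Prop.~4.22]{Behr}, and since Behrend's argument is entirely local and uses only the complex analytic germ of $(M,\om)$ at $x$, it should transfer once one checks that each ingredient has an analytic incarnation. First I would reduce to a germ. By Proposition \ref{dt4prop1} the function $\nu_X$ depends only on the complex analytic germ of $X$ at $x$, so one may take $M$ to be a small ball, choose holomorphic coordinates $(z_1,\dots,z_n)$ with $z_j(x)=0$, and write $\om=f_1\,\rd z_1+\cdots+f_n\,\rd z_n$ with $f_j$ holomorphic and $\pd f_j/\pd z_k\equiv\pd f_k/\pd z_j\pmod{(f_1,\dots,f_n)}$, so $X=\{f_1=\cdots=f_n=0\}$. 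Both $\nu_X(x)$ and the right-hand side of \eq{dt4eq17} are manifestly invariants of this germ, so nothing algebraic survives.

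Next I would reinterpret $\nu_X(x)=\Eu(\mathfrak{c}_X)(x)$ microlocally. The almost closed $1$-form $\om$ equips $X=\om^{-1}(0)$ with a symmetric obstruction theory — the self-duality being precisely the almost closed condition — and $\mathfrak{c}_X$ is extracted from the intrinsic normal cone of $X$ together with this self-duality by a Gysin pullback to a zero section. In the cotangent-bundle picture $X$ is the intersection of the graph $\Gamma_\om\subset T^*M$ with the zero section $M_0$; almost closedness is exactly what makes $\Gamma_\om$ Lagrangian along $X$ to first order, and Behrend's microlocal analysis then identifies $\nu_X(x)$, via the local Euler obstruction of MacPherson \cite{MacP} and Kennedy \cite{Kenn}, with the local Lagrangian intersection index of $\Gamma_\om$ and $M_0$ at $x$.

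Then I would evaluate that index by a Hermitian deformation. Scaling gives the family $\Gamma_{\eta^{-1}\om}=\{(z,\eta^{-1}\om(z))\}$, and $\De=\{w_j=\bar z_j\}$ is the maximal totally real substitute for the zero section supplied by the flat Hermitian metric on $T^*M$; for $0<\md\eta\ll\ep\ll1$ the set $\Gamma_{\eta^{-1}\om}\cap\De=\{z:\om(z)=\eta\bar z\}$ is finite, concentrated near $x$, and disjoint from ${\cal S}_\ep$. A Gauss-map and degree computation shows that the Lagrangian intersection index above equals the sum of the local intersection multiplicities of $\Gamma_{\eta^{-1}\om}$ and $\De$ over this finite set, and that this sum equals the linking number $L_{{\cal S}_\ep}\bigl(\Gamma_{\eta^{-1}\om}\cap{\cal S}_\ep,\De\cap{\cal S}_\ep\bigr)$, both sides counting the intersection of $\Gamma_{\eta^{-1}\om}$ with a chain in the ball bounding $\De\cap{\cal S}_\ep$. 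In the special case $\om=\rd f$ with an isolated critical point this should collapse to the classical fact that $\nu_X(x)$ is the Milnor number, computable from Theorem \ref{dt4thm2}, which is a convenient check on signs and normalizations.

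The hard part will be the second step: showing that the Behrend cycle $\mathfrak{c}_X$ of $X=\om^{-1}(0)$ is genuinely computed by the microlocal/Lagrangian data of $\Gamma_\om$, and that \emph{almost} closedness — not honest closedness — suffices. This needs Behrend's analysis of symmetric obstruction theories in full, controlling the discrepancy between $\Gamma_\om$ being Lagrangian and being isotropic only to first order along $X$, and showing that replacing $\om$ by $\eta^{-1}\om$ and deforming across $\De$ does not change the cycle count. Granting this, the passage from the algebraic to the analytic setting costs nothing: the local Euler obstruction, the intrinsic normal cone, the symmetric obstruction theory, Milnor fibrations, and the linking number are all defined for complex analytic germs, and the argument never leaves a neighbourhood of $x$.
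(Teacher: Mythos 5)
Your proposal is correct in substance and follows essentially the same route as the paper, which offers no independent proof of Proposition \ref{dt4prop5}: it simply quotes Behrend \cite[Prop.~4.22]{Behr} and observes that his argument, being entirely local, is valid for complex analytic germs as well as in the algebraic case. Your sketch is a reconstruction of Behrend's argument --- the conic Lagrangian limit of the graphs $\Ga_{\eta^{-1}\om}$, the totally real perturbation $\De$ of the zero section, and the microlocal index/linking-number identification --- combined with the same locality observation, with the genuinely hard microlocal step deferred, exactly as the paper defers it, to \cite{Behr}.
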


Here are some questions which seem interesting. If the answer to (a)
is yes, it suggests the possibility of an alternative proof of our
Behrend function identities \eq{dt5eq2}--\eq{dt5eq3} using algebraic
almost closed 1-forms as in Proposition \ref{dt4prop4}, rather than
using transcendental complex analytic methods.

\begin{quest} Let\/ $M$ be a complex manifold, $\om$ an almost
closed holomorphic $(1,0)$-form on $M,$ and\/ $X=\om^{-1}(0)$ as a
complex analytic subspace of\/~$M$.
\smallskip

\noindent {\bf(a)} Can one prove results for Behrend functions
$\nu_X$ analogous to those one can prove for Behrend functions of\/
$\Crit(f)$ for $f:M\ra\C$ holomorphic, using Proposition
{\rm\ref{dt4prop5}?} For instance, is the analogue of Theorem
{\rm\ref{dt4thm4}} true with $\rd f$ replaced by an almost closed\/
$1$-form $\om,$ and\/ $\rd\ti f$ replaced
by~$\pi^*(\om)?$\index{almost closed 1-form|)}
\smallskip

\noindent{\bf(b)} Can one define a natural perverse sheaf\/ $\cal P$
supported on $X,$ with $\chi_X({\cal P})=\nu_X,$ such that\/ ${\cal
P}\cong\phi_f(\underline{\Q}[n-1])$ when\/ $\om=\rd f$ for
$f:M\ra\C$ holomorphic?
\smallskip

\noindent{\bf(c)} If the answer to {\bf(a)} or {\bf(b)} is yes, are
there generalizations to the algebraic setting, which work say
over\/ $\K$ algebraically closed of characteristic zero?\index{field
$\K$!characteristic zero}
\label{dt4quest}
\end{quest}

One can also ask Question \ref{dt4quest}(b) for Saito's mixed Hodge
modules~\cite{Sait}.\index{Behrend function|)}\index{mixed Hodge module}

\subsection{Characterizing $K^\num(\coh(X))$ for Calabi--Yau 3-folds}
\label{dt45}\index{Grothendieck group!numerical|(}\index{cohomology|(}

Let $X$ be a Calabi--Yau 3-fold over $\C$, with $H^1(\cO_X)=0$. We
will now give an exact description of the numerical Grothendieck
group $K^\num(\coh(X))$ in terms of the cohomology $H^{\rm
even}(X,\Q)$. A corollary of this is that $K^\num(\coh(X))$ is
unchanged by small deformations of the complex structure of $X$.
This is necessary for our claim in \S\ref{dt54} that the
$\bar{DT}{}^\al(\tau)$ for $\al\in K^\num(\coh(X))$ are
deformation-invariant to make sense.

To do this we will use the {\it Chern character},\index{Chern character}
as in Hartshorne \cite[App.~A]{Hart2} or Fulton \cite{Fult}. For
each $E\in\coh(X)$ we have the rank $r(E)\in H^0(X;\Z)$ and the
Chern classes $c_i(E)\in H^{2i}(X;\Z)$ for $i=1,2,3$. It is useful
to organize these into the Chern character
$\ch(E)$\nomenclature[ch(E)]{$\ch(E)$}{Chern character of a coherent sheaf $E$
in $H^{\rm even}(X,\Q)$}\nomenclature[chi(E)]{$\ch_i(E)$}{$i^{\rm th}$
component of Chern character of $E$ in $H^{2i}(X,\Q)$} in $H^{\rm
even}(X,\Q)$,\nomenclature[Heven(X,Q)]{$H^{\rm even}(X;\Q)$}{even cohomology of
a complex manifold $X$} where
$\ch(E)=\ch_0(E)+\ch_1(E)+\ch_2(E)+\ch_3(E)$ with $\ch_i(E)\in
H^{2i}(X;\Q)$, with
\e
\begin{gathered}
\ch_0(E)=r(E),\quad \ch_1(E)=c_1(E),\quad
\ch_2(E)=\ts\frac{1}{2}\bigl(c_1(E)^2-2c_2(E)\bigr),\\
\ch_3(E)=\ts\frac{1}{6}\bigl(c_1(E)^3-3c_1(E)c_2(E)+3c_3(E)\bigr).
\end{gathered}
\label{dt4eq19}
\e
Here we use the natural morphism $H^{\rm even}(X;\Z)\ra H^{\rm
even}(X;\Q)$ to make $r(E),\ab c_i(E)$ into elements of $H^{\rm
even}(X;\Q)$. The kernel of this morphism is the {\it
torsion\/}\index{cohomology!torsion} of $H^{\rm even}(X;\Z)$, the
subgroup of elements of finite order. From now on we will neglect
torsion in $H^{\rm even}(X;\Z)$, so by an abuse of notation, when we
say that an element $\la_i$ of $H^{2i}(X;\Q)$ lies in
$H^{2i}(X;\Z)$, we really mean that $\la_i$ lies in the image of
$H^{2i}(X;\Z)$ in~$H^{2i}(X;\Q)$.

By the Hirzebruch--Riemann--Roch Theorem\index{Hirzebruch--Riemann--Roch
Theorem} \cite[Th.~A.4.1]{Hart2}, the Euler form\index{Euler form} on
coherent sheaves $E,F$ is given in terms of their Chern characters
by
\e
\bar\chi\bigl([E],[F]\bigr)=\deg\bigl(\ch(E)^\vee\cdot\ch(F)\cdot{\rm
td}(TX)\bigr){}_3,
\label{dt4eq20}
\e
where ${\rm td}(TX)$\nomenclature[td(TX)]{${\rm td}(TX)$}{Todd class of $TX$ in
$H^{\rm even}(X,\Q)$} is the {\it Todd class\/}\index{Todd class} of
$TX$, which is $1+\frac{1}{12}c_2(TX)$ as $X$ is a Calabi--Yau
3-fold, and
$(\la_0,\la_1,\la_2,\la_3)^\vee=(\la_0,-\la_1,\la_2,-\la_3)$,
writing elements of $H^{\rm even}(X;\Q)$ as $(\la_0,\ldots,\la_3)$
with~$\la_i\in H^{2i}(X;\Q)$.

The Chern character is additive over short exact sequences. That is,
if $0\ra E\ra F\ra G\ra 0$ is exact in $\coh(X)$ then
$\ch(F)=\ch(E)+\ch(G)$. Hence $\ch$ induces a group morphism
$\ch:K_0(\coh(X))\ra H^{\rm even}(X;\Q)$. We have
$K^\num(\coh(X))=K_0(\coh(X))/I$, where $I$ is the kernel of
$\bar\chi$ on $K_0(\coh(X))$. Equation \eq{dt4eq20} implies that
$\Ker\ch\subseteq I$. Theorem \ref{dt4thm7} will identify the image
of $\ch$ in $H^{\rm even}(X;\Q)$. This image spans $H^{\rm
even}(X;\Q)$ over $\Q$, so as the pairing
$(\al,\be)\mapsto\deg\bigl(\al^\vee\cdot\be \cdot{\rm
td}(TX)\bigr){}_3$ is nondegenerate on $H^{\rm even}(X;\Q)$, it is
nondegenerate on the image of $\ch$, so~$\Ker\ch\!=\!I$.

Hence $\ch$ induces an {\it injective\/} morphism
$\ch:K^\num(\coh(X))\hookra H^{\rm even}(X;\Q)$, and we may regard
$K^\num(\coh(X))$ as a subgroup of $H^{\rm even}(X;\Q)$. (Actually,
this is true for any smooth projective $\C$-scheme $X$.) Our next
theorem identifies the image of~$\ch$.\index{Grothendieck
group!numerical!and cohomology}

\begin{thm} Let\/ $X$ be a Calabi--Yau $3$-fold over $\C$ with\/
$H^1(\cO_X)\!=\!0$. Define\nomenclature[\Lambda
X]{$\La_X$}{sublattice of $H^{\rm even}(X,\Q)$ for Calabi--Yau
3-fold $X$}
\e
\begin{split}
\La_X=\ts\bigl\{(\la_0,\la_1,\la_2,\la_3)\in H^{\rm
even}(X;\Q):\la_0\in H^0(X;\Z),\;\>
\la_1\in H^2(X;\Z)&,\\
\la_2-\ha\la_1^2\in H^4(X;\Z),\;\> \la_3+\ts\frac{1}{12}\la_1
c_2(TX)\in H^6(X;\Z)\bigr\}&.
\end{split}
\label{dt4eq21}
\e
Then\/ $\La_X$ is a subgroup of\/ $H^{\rm even}(X;\Q),$ a lattice of
rank\/ $\sum_{i=0}^3b^{2i}(X),$ and the Chern character gives a
group isomorphism~$\ch:K^\num(\coh(X))\!\ra\!\La_X$.

Therefore the numerical Grothendieck group\/ $K^\num(\coh(X))$
depends only on the underlying topological space of $X$ up to
homotopy, and so\/ $K^\num(\coh(X))$ is unchanged by deformations of
the complex structure of\/~$X$.
\label{dt4thm7}
\end{thm}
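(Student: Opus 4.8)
The plan is to establish that the stated subset $\La_X\subseteq H^{\rm even}(X;\Q)$ is exactly the image of the Chern character $\ch:K^\num(\coh(X))\hookra H^{\rm even}(X;\Q)$, and then deduce the deformation-invariance as a formal corollary. First I would verify the easy direction: $\ch(K^\num(\coh(X)))\subseteq\La_X$. For this one uses the integrality of the Chern classes $c_i(E)\in H^{2i}(X;\Z)$ together with the formulas \eq{dt4eq19} expressing $\ch_i$ in terms of the $c_j$. The point is that modulo torsion, $\ch_0=r(E)$ and $\ch_1=c_1(E)$ are manifestly integral, while $\ch_2-\ha\ch_1^2 = -c_2(E)$ is integral, and $\ch_3+\frac{1}{12}\ch_1c_2(TX)$ rearranges via \eq{dt4eq19} into a $\Z$-combination of $c_1(E)^3, c_1(E)c_2(E), c_3(E)$ plus the Todd-correction term, which should collapse to something integral — this last computation needs the Hirzebruch--Riemann--Roch Theorem \eq{dt4eq20}, since integrality of $\bar\chi([\cO_X],[E])=\deg(\ch(E)\cdot{\rm td}(TX))_3$ forces exactly the congruence on $\ch_3$ that appears in the definition of $\La_X$. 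In fact, the cleanest route is: $\La_X$ is characterized precisely so that $\bar\chi(\la,\mu)\in\Z$ for all $\la,\mu$ — one should check that $\La_X$ equals the set of $\la\in H^{\rm even}(X;\Q)$ with $\deg(\la^\vee\cdot\mu\cdot{\rm td}(TX))_3\in\Z$ for all $\mu\in\ch(K_0)$, and apply HRR.

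The harder direction is surjectivity: every $\la\in\La_X$ is $\ch(E)$ for some class. Here I would build up the image of $\ch$ explicitly using structure sheaves of subvarieties. The class of a point sheaf $\cO_x$ has $\ch=(0,0,0,1)$, giving all of $H^6(X;\Z)$. For $H^4$, the structure sheaf $\cO_C$ of a curve $C$ representing a class in $H_2(X;\Z)\cong H^4(X;\Z)$ has $\ch_2(\cO_C)=[C]$ plus possible lower-degree corrections in $H^6$ which we can kill using the point sheaves; one must check every class in $H^4(X;\Z)$ is represented by an algebraic curve, which follows since on a Calabi--Yau 3-fold $H^4(X;\Z)$ modulo torsion is generated by algebraic cycle classes (the relevant Hodge-theoretic input: $H^4(X;\C)=H^{2,2}(X)$ has no $(3,1)$ or $(1,3)$ part to obstruct this — or more simply, use that divisors $D$ give $\cO_D$ with $\ch_1=[D]$ and then intersect). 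For $H^2$, a line bundle $\cO_X(D)$ has $\ch=(1,[D],\ha[D]^2,\ldots)$, and $H^2(X;\Z)=\mathrm{Pic}(X)$ modulo torsion (here $H^1(\cO_X)=0$ is used, so $\mathrm{Pic}^0$ is trivial and the exponential sequence gives $\mathrm{Pic}(X)\twoheadrightarrow H^2(X;\Z)$ modulo torsion — actually $c_1:\mathrm{Pic}(X)\to H^2(X;\Z)$ is surjective since $H^2(\cO_X)\cong H^1(\cO_X)^*=0$ by Serre duality). Finally $\ch_0=1$ comes from $\cO_X$ itself. Assembling: given $\la=(\la_0,\la_1,\la_2,\la_3)\in\La_X$, subtract $\la_0\cdot[\cO_X]$, then correct by $c_1^{-1}(\la_1)$ to fix degree $1$, then the constraint $\la_2-\ha\la_1^2\in H^4(X;\Z)$ says the remaining degree-$2$ part is an integral curve class realizable by some $\pm[\cO_C]$, and similarly the $\la_3$ constraint (now matching the Todd correction built into $\La_X$) lands in $H^6(X;\Z)$ realized by point sheaves. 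The main obstacle I anticipate is exactly this: controlling the lower-order Chern character terms of $\cO_D,\cO_C$ and checking that after the successive corrections the residual class is integral in the remaining degrees — this is where the precise shape of the defining congruences in \eq{dt4eq21} (and in particular the $\frac{1}{12}\la_1 c_2(TX)$ term, coming from ${\rm td}(TX)=1+\frac1{12}c_2(TX)$) must be matched bookkeeping-term by bookkeeping-term against \eq{dt4eq19}.

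Once $\ch$ is identified as an isomorphism $K^\num(\coh(X))\xrightarrow{\ \sim\ }\La_X$, the deformation-invariance is immediate: $\La_X$ is defined purely in terms of the cohomology ring $H^{\rm even}(X;\Z)$, the squaring map $\la_1\mapsto\la_1^2$, and the characteristic class $c_2(TX)\in H^4(X;\Z)$ — and all of these are topological invariants of the underlying space, hence constant in a smooth family of complex structures (the integral cohomology groups and their ring structure are locally constant by Ehresmann, and $c_2(TX)$ is the second Chern class of the real tangent bundle, which is deformation-invariant). Therefore $K^\num(\coh(X))$, and with it the statement that $DT^\al(\tau)$ and $\bar{DT}{}^\al(\tau)$ are deformation-invariant (Theorem \ref{dt4thm5} and \S\ref{dt54}), makes sense. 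I would present this corollary in one short paragraph after the body of the proof, noting explicitly that we only need $\La_X$, as an abstract lattice together with the bilinear form $\bar\chi$, to be deformation-invariant, which is clear from \eq{dt4eq20}.
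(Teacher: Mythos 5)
Your containment direction and your overall strategy are the same as the paper's: check $\ch(K^\num(\coh(X)))\subseteq\La_X$ via \eq{dt4eq19} and the Hirzebruch--Riemann--Roch integrality $\bar\chi([\cO_X],[E])\in\Z$ from \eq{dt4eq20}, then generate $\La_X$ by line bundles $L_\be$ with $c_1(L_\be)=\be$ (using $H^1(\cO_X)=0$, hence $H^{0,2}=0$, to get surjectivity of $c_1$ on $H^2(X;\Z)$), point sheaves $\cO_x$, and structure sheaves $\cO_\Si$ of curves, and finally deduce deformation-invariance because \eq{dt4eq21} is purely topological. All of that is fine.

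There is, however, a genuine gap at the one step the whole surjectivity argument hinges on: you need every class in $H^4(X;\Z)\cong H_2(X;\Z)$ (modulo torsion) to be a $\Z$-linear combination of classes $[\Si]$ of algebraic curves, and neither of your proposed justifications delivers this. The observation that $H^{3,1}(X)=H^{1,3}(X)=0$ (true here, since $K_X\cong\cO_X$ and $H^1(\cO_X)=0$) only shows that every integral degree-$4$ class is a Hodge class; it does not show it is algebraic with \emph{integer} coefficients. That stronger statement is the integral Hodge conjecture for $1$-cycles, which is false for general smooth projective threefolds (Atiyah--Hirzebruch and Koll\'ar gave counterexamples), so no purely Hodge-theoretic vanishing can prove it. Your fallback, intersecting divisors, only produces classes in the image of the cup product $H^2(X;\Z)\times H^2(X;\Z)\ra H^4(X;\Z)$, which need not be all of $H^4(X;\Z)$ even up to torsion. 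The paper closes exactly this gap by invoking a deep theorem of Voisin \cite{Vois}, who proves the Hodge conjecture over $\Z$ for Calabi--Yau $3$-folds with $H^1(\cO_X)=0$; with that input, the classes \eq{dt4eq22}--\eq{dt4eq24} do generate $\La_X$ and your argument goes through. As the paper remarks, this dependence also means the result should not be expected to extend to other classes of varieties, so the citation is not optional bookkeeping but the essential ingredient your proposal is missing.
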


\begin{proof} Suppose for simplicity that $X$ is connected, so that
we have natural isomorphisms $H^6(X;\Q)\cong\Q$ and
$H^6(X;\Z)\cong\Z$. If it is not connected, we can run the argument
below for each connected component of~$X$.

To show $\La_X$ is a subgroup of $H^{\rm even}(X;\Q)$, we must check
it is closed under addition and inverses. The only issue is that the
condition $\la_2-\ha\la_1^2\in H^4(X;\Z)$ is not linear in $\la_1$.
If $(\la_0,\ldots,\la_3), (\la_0',\ldots,\la_3')\in\La_X$ then
\begin{equation*}
(\la_2+\la_2')-\ha(\la_1+\la_1')^2=\bigl[\la_2-\ha\la_1^2\bigr]
+\bigl[\la'_2-\ha(\la'_1)^2\bigr]+\bigl[\la_1\la_1'\bigr],
\end{equation*}
and the right hand side is the sum of three terms in $H^4(X;\Z)$. So
$(\la_0+\la_0',\ldots,\la_3+\la_3')\in\La_X$. Also
\begin{equation*}
(-\la_2)-\ha(-\la_1)^2=-\bigl[\la_2-\ha\la_1^2\bigr]-
\bigl[\la_1^2\bigr],
\end{equation*}
with the right hand the sum of two terms in $H^4(X;\Z)$. So
$(-\la_0,\ldots,-\la_3)\in\La_X$, and $\La_X$ is a subgroup of
$H^{\rm even}(X;\Q)$. We have
\begin{equation*}
\ts\frac{1}{6}\,H^{\rm even}(X;\Z)/\text{torsion}\subseteq\La_X\subseteq
H^{\rm even}(X;\Z)/\text{torsion}\subseteq H^{\rm even}(X;\Q),
\end{equation*}
so $\La_X$ is a lattice of rank $\sum_{i=0}^3b^{2i}(X)$ as $H^{\rm
even}(X;\Z)/\text{torsion}$ is.

Next we show that $\ch\bigl(K^\num(\coh(X))\bigr)\subseteq\La_X$. As
$\La_X$ is a subgroup, it is enough to show that $\ch(E)\in\La_X$
for any $E\in\coh(X)$. Set $\ch(E)=(\la_0,\ldots,\la_3)$. Then
\eq{dt4eq19} gives $\la_0=r(E)$, $\la_1=c_1(E)$,
$\la_2=\frac{1}{2}\bigl(c_1(E)^2-2c_2(E)\bigr)$ and
$\la_3=\frac{1}{6}\bigl(c_1(E)^3-3c_1(E)c_2(E)+3c_3(E)\bigr)$, with
$r(E)\in H^0(X;\Z)$ and $c_i(E)\!\in\!H^{2i}(X;\Z)$. The conditions
$\la_0\!\in\! H^0(X;\Z)$ and $\la_1\!\in\! H^2(X;\Z)$ are immediate,
and $\la_2\!-\!\ha\la_1^2\!=\!-\!c_2(E)$ which lies in $H^4(X;\Z)$.
For the final condition,
\begin{align*}
\deg\bigl(\la_3&+\ts\frac{1}{12}\la_1 c_2(TX)\bigr)\\
&=\ts\deg\bigl(\frac{1}{6}\bigl(c_1(E)^3-3c_1(E)c_2(E)+3c_3(E)\bigr)
+\ts\frac{1}{12}c_1(E)c_2(TX)\bigr)\\
&=\ts\deg\bigl((1,0,0,0)\cdot \bigl(r(E),c_1(E),
\frac{1}{2}(c_1(E)^2-2c_2(E)),\\
&\qquad\qquad\ts
\frac{1}{6}(c_1(E)^3-3c_1(E)c_2(E)+3c_3(E))\bigr)\cdot
(1,0,\frac{1}{12}c_2(TX),0)\bigr){}_3\\
&=\deg\bigl(\ch(\cO)^\vee\cdot\ch(E)\cdot{\rm td}(TX)\bigr){}_3
=\bar\chi\bigl([\cO_X],[F]\bigr)\in\Z,
\end{align*}
using \eq{dt4eq20} in the last line. Then
$\deg\bigl(\la_3+\ts\frac{1}{12}\la_1 c_2(TX)\bigr)\in\Z$ implies
$\la_3+\ts\frac{1}{12}\la_1 c_2(TX)\in H^6(X;\Z)\cong\Z$. Hence
$\ch(E)=(\la_0,\ldots,\la_3)\in\La_X$, as we want.

As $X$ is a Calabi--Yau 3-fold over $\C$ with $H^1(\cO_X)=0$ we have
$H^{2,0}(X)=H^{0,2}(X)=0$, so $H^{1,1}(X)=H^2(X;\C)$. Therefore
every $\be\in H^2(X;\Z)$ is $c_1(L_\be)$ for some holomorphic line
bundle $L_\be$, with
\e
\ch(L_\be)=\ts\bigl(1,\be,\ha\be^2,\frac{1}{6}\be^3\bigr),
\qquad\text{for any $\be\in H^2(X;\Z)$.}
\label{dt4eq22}
\e
Pick $x\in X$, and let $\cO_x$ be the skyscraper sheaf at $x$. Then
\e
\ch(\cO_x)=(0,0,0,1),
\label{dt4eq23}
\e
identifying $H^6(X;\Q)\cong\Q$ and $H^6(X;\Z)\cong\Z$ in the natural
way. Suppose $\Si$ is an reduced algebraic curve in $X$, with
homology class $[\Si]\in H_2(X;\Z)\cong H^4(X;\Z)$. Then the
structure sheaf $\cO_\Si$ in $\coh(X)$ has
\e
\ch(\cO_\Si)=\bigl(0,0,[\Si],k\bigr)\quad\text{for some $k\in\Z$.}
\label{dt4eq24}
\e

Now $\ch\bigl(K^\num(\coh(X))\bigr)$ is a subgroup of $\La_X$ which
contains \eq{dt4eq22}--\eq{dt4eq24}. We claim that the elements
\eq{dt4eq22}--\eq{dt4eq24} over all $\be,\Si$ generate $\La_X$,
which forces $\ch\bigl(K^\num(\coh(X))\bigr)=\La_X$ and proves the
theorem. This depends on a deep fact: Voisin \cite[Th.~2]{Vois}
proves the Hodge Conjecture over $\Z$ for Calabi--Yau 3-folds $X$
over $\C$ with $H^1(\cO_X)=0$. In this case, the Hodge Conjecture
over $\Z$ is equivalent to the statement that $H_2(X;\Z)\cong
H^4(X;\Z)$ is generated as a group by classes $[\Si]$ of algebraic
curves $\Si$ in $X$. It follows that \eq{dt4eq23} and \eq{dt4eq24}
taken over all $\Si$ generate the subgroup of
$(0,0,\la_2,\la_3)\in\La_X$ with $\la_2\in H^4(X;\Z)$ and $\la_3\in
H^6(X;\Z)$. Together with \eq{dt4eq22} for all $\be\in H^2(X;\Z)$,
these generate~$\La_X$.
\end{proof}

\begin{rem}{\bf(a)} Our proof used the Hodge Conjecture\index{Hodge
Conjecture} over $\Z$ for Calabi--Yau 3-folds, proved by Voisin
\cite{Vois}. But the Hodge Conjecture over $\Z$ is false in general,
so the theorem may not generalize to other classes of varieties.
\smallskip

\noindent{\bf(b)} In fact $\La_X$ is a subring of $H^{\rm
even}(X;\Q)$. Also, $K_0(\coh(X)),K^\num(\coh(X))$ naturally have
the structure of rings, with multiplication `$\,\cdot\,$'
characterized by $[E]\cdot[F]=[E\ot F]$ for $E,F$ locally free. As
$\ch(E\ot F)=\ch(E)\ch(F)$ for $E,F$ locally free, it follows that
$\ch:K^\num(\coh(X))\ra\La_X$ is a ring isomorphism. But we will
make no use of these ring structures.
\smallskip

\noindent{\bf(c)} If $X$ is a Calabi--Yau 3-fold over $\C$ but
$H^1(\cO_X)\ne 0$ then $\ch\bigl(K^\num(\coh(X))\bigr)$ can be a
proper subgroup of $\La_X$, and this subgroup can change under
deformations of $X$. Thus $K^\num(\coh(X))$ need not be
deformation-invariant up to isomorphism, as its rank can jump under
deformation.

To see this, note that if $H^1(\cO_X)\ne 0$ then $H^{2,0}(X)\ne 0$,
so $H^{1,1}(X)$ is a proper subspace of $H^2(X;\C)$, which can vary
as we deform $X$, and the intersection $H^{1,1}(X)\cap H^2(X;\Z)$
can change under deformation. Let $\be\in H^2(X;\Z)$. Then
$\be=c_1(L)$ for some holomorphic line bundle $L$ on $X$ if and only
if $\be\in H^{1,1}(X)$, and then $\ch(L)=\bigl(1,\be,\ha\be^2,
\frac{1}{6}\be^3\bigr)$. One can show that $\bigl(1,\be,\ha\be^2,
\frac{1}{6}\be^3\bigr)$ lies in $\ch\bigl(K^\num(\coh(X))\bigr)$ if
and only if $\be\in H^{1,1}(X)$.
\smallskip

\noindent{\bf(d)} Let $B$ be a $\C$-scheme, and $X_b$ for $b\in
B(\C)$ be a family of Calabi--Yau 3-folds with $H^1(\cO_{X_b})=0$.
That is, we have a smooth $\C$-scheme morphism $\pi:X\ra B$, with
fibres $X_b$ for $b\in B(\C)$. Then we can form $K^\num(\coh(X_b))$
for each $b\in B(\C)$. If $B$ is connected, then for $b_0,b_1\in
B(\C)$, we can choose a continuous path $\ga:[0,1]\ra B(\C)$ joining
$b_0$ and $b_1$. The family $X_{\ga(t)}$ for $t\in[0,1]$ defines a
homotopy $X_{b_0}\,{\buildrel\sim\over\longra}\,X_{b_1}$, and so
induces isomorphisms $H^{\rm even}(X_{b_0};\Q)\cong H^{\rm
even}(X_{b_1};\Q)$, $\La_{X_{b_0}}\cong\La_{X_{b_1}}$,
and~$K^\num(\coh(X_{b_0}))\cong K^\num(\coh(X_{b_1}))$.

However, if $B$ is not {\it simply-connected\/} this isomorphism
$K^\num(\coh(X_{b_0}))\cong K^\num(\coh(X_{b_1}))$ can depend on the
homotopy class of the path $\ga$. The groups $K^\num(\coh(X_b))$ for
$b\in B(\C)$ form a {\it local system\/} on $B(\C)$, so that the
fibres are all isomorphic, but going round loops in $B(\C)$ can
induce nontrivial automorphisms of $K^\num(\coh(X_b))$, through an
action of $\pi_1(B(\C))$ on $K^\num(\coh(X_b))$. This phenomenon is
called {\it monodromy}.\index{monodromy|(} We study it in Theorem
\ref{dt4thm8} below.

Thus, the statement in Theorem \ref{dt4thm7} that $K^\num(\coh(X))$
is unchanged by deformations of the complex structure of $X$ should
be treated with caution: it is true up to isomorphism, but in
general it is only true up to canonical isomorphism if we restrict
to a simply-connected family of deformations.
\smallskip

\noindent{\bf(e)} It may be possible to extend Theorem \ref{dt4thm7}
to work over an algebraically closed base field $\K$ of
characteristic zero by replacing $H^*(X;\Q)$ by the {\it algebraic
de Rham cohomology\/}\index{algebraic de Rham cohomology} $H^*_{\rm
dR}(X)$ of Hartshorne \cite{Hart1}. For $X$ a smooth projective
$\K$-scheme, $H^*_{\rm dR}(X)$ is a finite-dimensional vector space
over $\K$. There is a Chern character map
$\ch:K^\num(\coh(X))\hookra H^{\rm even}_{\rm dR}(X)$. In \cite[\S
4]{Hart1}, Hartshorne considers how $H^*_{\rm dR}(X_t)$ varies in
families $X_t:t\in T$, and defines a Gauss--Manin connection, which
makes sense of $H^*_{\rm dR}(X_t)$ being locally constant in~$t$.
\label{dt4rem}
\end{rem}

We now study monodromy phenomena for $K^\num(\coh(X_u))$ in families
of smooth $\K$-schemes $X\ra U$, as in Remark \ref{dt4rem}(d). We
find that we can always eliminate such monodromy by passing to a
finite cover $\ti U$ of $U$. This will be used in \S\ref{dt54} and
\S\ref{dt12} to prove deformation-invariance of
the~$\bar{DT}{}^\al(\tau),PI^{\al,n}(\tau')$.

\begin{thm} Let\/ $\K$ be an algebraically closed field, $\vp:X\ra
U$ a smooth projective morphism of\/ $\K$-schemes with\/ $U$
connected, and\/ $\cO_X(1)$ a relative very ample line on $X,$ so
that for each\/ $u\in U(\K),$ the fibre $X_u$ of\/ $\vp$ is a smooth
projective $\K$-scheme with very ample line bundle\/ $\cO_{X_u}(1)$.
Suppose the numerical Grothendieck groups $K^\num(\coh(X_u))$ are
locally constant in $U(\K),$ so that\/ $u\mapsto K^\num(\coh(X_u))$
is a local system of abelian groups on\/~$U$.

Fix a base point\/ $v\in U(\K),$ and let\/ $\Ga$ be the group of
automorphisms of\/ $K^\num(\coh(X_v))$ generated by monodromy round
loops in $U$. Then $\Ga$ is a finite group. There exists a finite
\'etale cover $\pi:\ti U\ra U$ of degree $\md{\Ga},$ with\/ $\ti U$
a connected $\K$-scheme, such that writing $\ti X=X\times_U\ti U$
and $\ti\vp:\ti X\ra\ti U$ for the natural projection, with fibre
$\ti X_{\ti u}$ at $\ti u\in\ti U(\K),$ then $K^\num(\coh(\ti X_{\ti
u}))$ for all $\ti u\in\ti U(\K)$ are all globally canonically
isomorphic to $K^\num(\coh(X_v))$. That is, the local system $\ti
u\mapsto K^\num(\coh(\ti X_{\ti u}))$ on $\ti U$ is trivial.
\label{dt4thm8}
\end{thm}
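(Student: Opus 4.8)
The plan is to reinterpret the local system as a monodromy representation, prove that the monodromy group $\Ga$ is finite by exhibiting a $\Ga$-invariant positive definite form coming from Hodge theory, and then take $\ti U$ to be the finite \'etale cover attached to the kernel of the monodromy representation. I would first set things up over $\K=\C$. The local system $\mathcal L\colon u\mapsto K^\num(\coh(X_u))$ on $U(\C)$ is classified by a homomorphism $\rho\colon\pi_1(U(\C),v)\ra\Aut\bigl(K^\num(\coh(X_v))\bigr)$, whose image is by definition the monodromy group $\Ga$. The Chern character $\ch\colon K^\num(\coh(X_u))\hookra H^{\rm even}(X_u;\Q)$ is injective for each $u$ (by Grothendieck--Riemann--Roch, since $\Ker\ch\subseteq\Ker\bar\chi$ and $K^\num(\coh(X_u))=K_0(\coh(X_u))/\Ker\bar\chi$), and it identifies $\mathcal L$ with a sub-local-system of the Gauss--Manin local system $u\mapsto H^{\rm even}(X_u;\Q)$; indeed this is essentially what the hypothesis that the groups $K^\num(\coh(X_u))$ be locally constant means. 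Hence $\rho$ is the restriction of the cohomological monodromy representation, and in particular $\Ga$ preserves, via $\ch$, the cup product, the integration map $\int_{X_v}$, and the polarization class $h=c_1(\cO_{X_v}(1))\in H^2(X_v;\Q)$ --- the last because $\cO_X(1)$ is a genuine line bundle on the total space $X$, so its first Chern class is a flat global section of $u\mapsto H^2(X_u;\Q)$.

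The heart of the argument is that $\Ga$ is finite, and I expect this to be the main obstacle. Since $\Ga$ fixes $h$ and respects cup product, the hard Lefschetz $\mathfrak{sl}_2$-action and the associated primitive decomposition $H^{\rm even}(X_v;\Q)=\bigop_p\bigop_j h^j\cdot P^{2p-2j}$ are $\Ga$-equivariant, and $\Ga$ preserves the forms $(\al,\be)\mapsto\int_{X_v}h^{d-k}\al\be$ on the primitive pieces $P^k$, where $d=\dim X_v$. Now every class in $\ch\bigl(K^\num(\coh(X_v))\bigr)$ is a Hodge class of pure type $(p,p)$ in some $H^{2p}$; cupping with $h$ shifts Hodge type by $(1,1)$, so the components of such a class in $h^j\cdot P^{2p-2j}$ are of type $(p-j,p-j)$, and the Hodge--Riemann bilinear relations give, after the standard Lefschetz normalization and a sign computation, a positive definite symmetric $\R$-bilinear form $Q$ on $\bigop_p\bigl(H^{p,p}(X_v)\cap H^{2p}(X_v;\R)\bigr)\supseteq\ch\bigl(K^\num(\coh(X_v))\bigr)\ot\R$. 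Because $Q$ is assembled from the degree grading, the primitive decomposition, and the forms $\int_{X_v}h^{d-k}(\,\cdot\,)(\,\cdot\,)$, it is $\Ga$-invariant; and $\Ga$ preserves the lattice $\ch\bigl(K^\num(\coh(X_v))/(\text{torsion})\bigr)$. A subgroup of the automorphism group of a finitely generated abelian group that preserves a positive definite form on its real points is a discrete subgroup of a compact orthogonal group, hence finite; so $\Ga$ is finite.

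Finally I would produce $\ti U$. Put $N=\Ker\rho$, a normal subgroup of $\pi_1(U(\C),v)$ of index $\md\Ga$. Over $\C$ --- or over any algebraically closed base field, using $\pi_1^{\rm et}(U,\bar v)$ and Grothendieck's Galois theory, once $\Ga$ is known to be finite --- this corresponds to a connected finite \'etale Galois cover $\pi\colon\ti U\ra U$ of degree $\md\Ga$ with a marked point over $v$ and $\pi_*\pi_1(\ti U)=N$. The pullback $\pi^*\mathcal L$ is the local system $\ti u\mapsto K^\num(\coh(\ti X_{\ti u}))$ on $\ti U$, since $\ti X_{\ti u}=X_{\pi(\ti u)}$, and its monodromy is $\rho\ci\pi_*=\rho\vert_N$, which is trivial. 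Hence $\pi^*\mathcal L$ is the constant local system with fibre $K^\num(\coh(X_v))$, giving the claimed global canonical isomorphisms.

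Elaborating on the obstacle: over $\C$ the route above via a Hodge--Riemann positive form is complete in outline, but it needs genuine care in (i) the Lefschetz normalization and the sign analysis establishing positivity on the $(p,p)$-part; (ii) the reduction to $\C$ of this Hodge-theoretic input when $\K$ is an arbitrary field of characteristic zero, where one would instead use algebraic de Rham cohomology with its Gauss--Manin connection together with a spreading-out/Lefschetz-principle argument; and (iii) the positive characteristic case, where Hodge theory is unavailable and one would either lift the family to characteristic zero, or work with $\ell$-adic cohomology and use that $\ch\bigl(K^\num(\coh(X_v))\bigr)$ lands in the algebraic (Tate) part --- here the hypothesis that $u\mapsto K^\num(\coh(X_u))$ is locally constant is what excludes infinite arithmetic monodromy. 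The remaining steps --- interpreting $\mathcal L$ as a monodromy representation, and passing from a finite-index subgroup of $\pi_1$ to a finite \'etale cover --- are standard.
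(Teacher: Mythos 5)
Your route (monodromy representation, then a $\Ga$-invariant positive definite form from the Hodge--Riemann relations, then the cover attached to $\Ker\rho$) is genuinely different from the paper's, and over $\C$ the finiteness mechanism itself is sound: $\Ga$ fixes $h=c_1(\cO_{X_v}(1))$, cup product and the degree map, hence your form $Q$, which is positive definite on the real span of the $(p,p)$-classes, and a group preserving a lattice and a positive definite form is finite. But as a proof of the theorem as stated it has two genuine gaps. First, the statement is over an arbitrary algebraically closed field $\K$, and your positive-characteristic escape routes do not work: a smooth projective family over a characteristic-$p$ base need not lift to characteristic zero, and the assertion that ``the hypothesis that $K^\num(\coh(X_u))$ is locally constant excludes infinite arithmetic monodromy'' is precisely the finiteness of $\Ga$ that you are trying to prove, so it is circular. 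Second, even in characteristic zero the reduction to cohomology rests on the injectivity of $\ch:K^\num(\coh(X_u))\hookra H^{\rm even}(X_u;\Q)$, and your justification runs the wrong way: $\Ker\ch\subseteq\Ker\bar\chi$ only says $\bar\chi$ factors through the image of $\ch$; for $\ch$ to descend injectively to $K^\num=K_0/\Ker\bar\chi$ you need the opposite inclusion $\Ker\bar\chi\subseteq\Ker\ch$, i.e.\ that numerically trivial algebraic classes are homologically trivial, which is open for general smooth projective varieties (the paper establishes it only for Calabi--Yau 3-folds with $H^1(\cO_X)=0$, in Theorem \ref{dt4thm7}). Without it, finiteness of the image of $\Ga$ in $\Aut\bigl(\ch(K^\num(\coh(X_v)))\bigr)$ does not give finiteness of $\Ga$ itself, so your argument as written only covers settings where this injectivity is known.

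For contrast, the paper's proof is elementary and characteristic-free, and avoids cohomology entirely: choose sheaves whose classes generate $K^\num(\coh(X_v))$, pass to their Harder--Narasimhan factors to obtain generators $\al_1,\ldots,\al_k$ that are classes of Gieseker-semistable sheaves; monodromy preserves Hilbert polynomials (because $\cO_X(1)$ is defined on all of $X$) and preserves nonemptiness of the semistable moduli schemes, since $\M_\rss^\al(\tau)_u\ne\es$ is an open and closed condition in $(u,\al)$; then boundedness of the family of semistable sheaves with a fixed Hilbert polynomial leaves only finitely many possible values for each $\ga\cdot\al_i$, and as the $\al_i$ generate, $\ga$ is determined by these values, so $\md{\Ga}$ is finite. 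Your construction of $\ti U$ from the kernel of the monodromy representation is essentially the same as the paper's cover of pairs $(u,\io)$, so the last step is fine once finiteness is secured by an argument valid over arbitrary $\K$.
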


\begin{proof} As $K^\num(\coh(X_v))$ is a finite rank lattice, we
can choose $E_1,\ldots,E_n\in\coh(X_v)$ such that
$[E_1],\ldots,[E_n]$ generate $K^\num(\coh(X_v))$. Write $\tau$ for
Gieseker stability on $\coh(X_v)$ with respect to $\cO_{X_v}(1)$.
Then as in \S\ref{dt32} each $E_i$ has a Harder--Narasimhan
filtration\index{Harder--Narasimhan filtration} with
$\tau$-semistable factors $S_{ij}$. Let $\al_1,\ldots,\al_k\in
K^\num(\coh(X_v))$ be the classes of the $S_{ij}$ for all $i,j$.
Then $\al_1,\ldots,\al_k$ generate $K^\num(\coh(X_v))$ as an abelian
group, and the coarse moduli scheme $\M_\rss^{\al_i}(\tau)_v$ of
$\tau$-semistable sheaves on $X_v$ in class $\al_i$ is nonempty for
$i=1,\ldots,k$. Let $P_i$ be the Hilbert polynomial of $\al_i$ for
$i=1,\ldots,k$.

Let $\ga\in\Ga$, and consider the images $\ga\cdot\al_i\in
K^\num(\coh(X_v))$ for $i=1,\ldots,k$. As we assume $\cO_X(1)$ is
globally defined on $U$ and does not change under monodromy, it
follows that the Hilbert polynomials of classes $\al\in
K^\num(\coh(X_v))$ do not change under monodromy. Hence
$\ga\cdot\al_i$ has Hilbert polynomial $P_i$. Also, the condition
that $\M_\rss^\al(\tau)_u\ne\es$ for $u\in U(\K)$ and $\al\in
K^\num(\coh(X_u))$ is an open and closed condition in $(u,\al)$, so
as $\M_\rss^{\al_i}(\tau)_v\ne\es$ we have
$\M_\rss^{\ga\cdot\al_i}(\tau)_v\ne\es$.

For each $i=1,\ldots,k$, the family of $\tau$-semistable sheaves on
$X_v$ with Hilbert polynomial $P_i$ is bounded, and therefore
realizes only finitely many classes $\be_i^1,\ldots,\be_i^{n_i}$ in
$K^\num(\coh(X_v))$. It follows that for each $\ga\in\Ga$ we have
$\ga\cdot\al_i\in\{\be_i^1,\ldots,\be_i^{n_i}\}$. So there are at
most $n_1\cdots n_k$ possibilities for $(\ga\cdot\al_1,\ldots,
\ga\cdot\al_k)$. But $(\ga\cdot\al_1,\ldots,\ga\cdot\al_k)$
determines $\ga$ as $\al_1,\ldots,\al_k$ generate
$K^\num(\coh(X_v))$. Hence $\md{\Ga}\le n_1\cdots n_k$, and $\Ga$ is
finite.

We can now construct an \'etale cover $\pi:\ti U\ra U$ which is a
principal $\Ga$-bundle, and so has degree $\md{\Ga}$, such that the
$\K$-points of $\ti U$ are pairs $(u,\io)$ where $u\in U(\K)$ and
$\io:K^\num(\coh(X_u))\ra K^\num(\coh(X_v))$ is an isomorphism
induced by parallel transport along some path from $u$ to $v$, which
is possible as $U$ is connected, and $\Ga$ acts freely on $\ti
U(\K)$ by $\ga:(u,\io)\mapsto (u,\ga\ci\io)$, so that the
$\Ga$-orbits correspond to points $u\in U(\K)$. Then for $\ti
u=(u,\io)$ we have $\ti X_{\ti u}=X_u$, with canonical
isomorphism~$\io:K^\num(\coh(\ti X_{\ti u}))\ra K^\num(\coh(X_v))$.
\end{proof}\index{Grothendieck group!numerical|)}\index{monodromy|)}
\index{cohomology|)}

\section{Statements of main results}
\label{dt5}\index{Calabi--Yau 3-fold|(}

Let $X$ be a Calabi--Yau 3-fold over the complex numbers $\C$, and
$\cO_X(1)$ a very ample line bundle over $X$. For the rest of the
book, our definition of Calabi--Yau 3-fold includes the assumption
that $H^1(\cO_X)=0$, except where we explicitly allow otherwise.
Remarks \ref{dt5rem1} and \ref{dt5rem2} discuss the reasons for
assuming $\K=\C$ and $H^1(\cO_X)=0$. Write $\coh(X)$ for the abelian
category of coherent sheaves on $X$, and $K(\coh(X))$ for the
numerical Grothendieck group of $\coh(X)$. Let $(\tau,G,\le)$ be the
stability condition on $\coh(X)$ of Gieseker stability with respect
to $\cO_X(1)$, as in Example \ref{dt3ex1}. If $E$ is a coherent
sheaf on $X$ then the class $[E]\in K(\coh(X))$ is in effect the
Chern character ch$(E)$ of~$E$.

Write $\fM$ for the moduli stack of coherent sheaves $E$ on $X$. It
is an Artin $\C$-stack, locally of finite type. For $\al\in
K(\coh(X))$, write $\fM^\al$ for the open and closed substack of $E$
with $[E]=\al$ in $K(\coh(X))$. (In \S\ref{dt3} we used the notation
$\fM_{\coh(X)},\fM^\al_{\coh(X)}$ for $\fM,\fM^\al$, but we now drop
the subscript $\coh(X)$ for brevity). Write $\fM_\rss^\al(\tau),
\fM_\st^\al(\tau)$ for the substacks of $\tau$-(semi)stable sheaves
$E$ in class $[E]=\al$, which are finite type open substacks of
$\fM^\al$. Write $\M_\rss^\al(\tau),\M_\st^\al(\tau)$ for the coarse
moduli schemes\index{coarse moduli scheme}\index{moduli
scheme!coarse} of $\tau$-(semi)stable sheaves $E$ with $[E]=\al$.
Then $\M_\rss^\al(\tau)$ is a projective $\C$-scheme whose points
correspond to S-equivalence\index{S-equivalence} classes of
$\tau$-semistable sheaves, and $\M_\st^\al(\tau)$ is an open
subscheme of $\M_\rss^\al(\tau)$ whose points correspond to
isomorphism classes of $\tau$-stable sheaves.

We divide our main results into four sections
\S\ref{dt51}--\S\ref{dt54}. Section \ref{dt51} studies local
properties of the moduli stack $\fM$ of coherent sheaves on $X$. We
first show that $\fM$ is Zariski locally isomorphic to the moduli
stack $\fVect$ of algebraic vector bundles on $X$. Then we use gauge
theory on complex vector bundles and transcendental complex analytic
methods to show that an atlas for $\fM$ may be written locally in
the complex analytic topology as $\Crit(f)$ for $f:U\ra\C$ a
holomorphic function on a complex manifold $U$. The proofs of
Theorems \ref{dt5thm1}, \ref{dt5thm2}, and \ref{dt5thm3} in
\S\ref{dt51} are postponed to~\S\ref{dt8}--\S\ref{dt9}.

Section \ref{dt52} uses the results of \S\ref{dt51} and the Milnor
fibre description of Behrend functions in \S\ref{dt42} to prove two
identities \eq{dt5eq2}--\eq{dt5eq3} for the Behrend function
$\nu_\fM$ of the moduli stack $\fM$. The proof of Theorem
\ref{dt5thm4} in \S\ref{dt52} is given in \S\ref{dt10}. Section
\ref{dt53}, the central part of our book, constructs a Lie algebra
morphism $\ti\Psi:\SFai(\fM)\ra\ti L(X)$, which modifies $\Psi$ in
\S\ref{dt34} by inserting the Behrend function $\nu_\fM$ as a
weight. Then we use $\ti\Psi$ to define generalized
Donaldson--Thomas invariants $\bar{DT}{}^\al(\tau)$, and show they
satisfy a transformation law under change of stability condition
$\tau$. Theorem \ref{dt5thm5} in \S\ref{dt53} is proved
in~\S\ref{dt11}.

Section \ref{dt54} shows that our new invariants
$\bar{DT}{}^\al(\tau)$ are unchanged under deformations of the
underlying Calabi--Yau 3-fold $X$. We do this by first defining
auxiliary invariants $PI^{\al,n}(\tau')$ counting `stable pairs'
$s:\cO_X(-n)\ra E$ for $E\in\coh(X)$ and $n\gg 0$, similar to
Pandharipande--Thomas invariants \cite{PaTh}. We show the moduli
space of stable pairs $\M_\stp^{\al,n}(\tau')$ is a projective
scheme with a symmetric obstruction theory, and deduce that
$PI^{\al,n}(\tau')$ is unchanged under deformations of $X$. We prove
a formula for $PI^{\al,n}(\tau')$ in terms of the
$\bar{DT}{}^\be(\tau)$, and use this to deduce that
$\bar{DT}{}^\al(\tau)$ is deformation-invariant. The proofs of
Theorems \ref{dt5thm7}, \ref{dt5thm8}, \ref{dt5thm9}, and
\ref{dt5thm10} in \S\ref{dt54} are postponed
to~\S\ref{dt12}--\S\ref{dt13}.

\begin{rem} We will use the assumption that the base field $\K=\C$
for the Calabi--Yau 3-fold $X$ in three main ways in the rest of the
book:\index{field $\K$|(}
\begin{itemize}
\setlength{\itemsep}{0pt}
\setlength{\parsep}{0pt}
\item[(a)] Theorems \ref{dt5thm2} and \ref{dt5thm3} in
\S\ref{dt51} are proved using gauge theory and transcendental
complex analytic methods, and work only over $\K=\C$. These are
used to prove the Behrend function identities
\eq{dt5eq2}--\eq{dt5eq3} in \S\ref{dt52}, which are vital for
much of \S\ref{dt53}--\S\ref{dt7}, including the wall crossing
formula \eq{dt5eq14} for the $\bar{DT}{}^\al(\tau)$, and the
relation \eq{dt5eq17} between
$PI^{\al,n}(\tau'),\bar{DT}{}^\al(\tau)$. In examples we often
compute $PI^{\al,n}(\tau')$ and then use \eq{dt5eq17} to
find~$\bar{DT}{}^\al(\tau)$.
\item[(b)] As in \S\ref{dt45}, when $\K=\C$ the Chern
character\index{Chern character} embeds $K^\num(\coh(X))$ in $H^{\rm
even}(X;\Q)$, and we use this to show $K^\num(\coh(X))$ is
unchanged under deformations of $X$. This is important for the
results in \S\ref{dt54} that $\bar{DT}{}^\al(\tau)$ and
$PI^{\al,n}(\tau')$ for $\al\in K^\num(\coh(X))$ are invariant
under deformations of $X$ even to make sense. Also, in
\S\ref{dt6} we use this embedding in $H^{\rm even}(X;\Q)$ as a
convenient way of describing classes in~$K^\num(\coh(X))$.
\item[(c)] Our notion of {\it compactly
embeddable\/}\index{compactly embeddable} in \S\ref{dt67} is complex
analytic and does not make sense for general~$\K$.
\end{itemize}

We now discuss the extent to which the results can be extended to
other fields $\K$. Thomas' original definition \eq{dt4eq15} of
$DT^\al(\tau)$, and our definition \eq{dt5eq15} of the pair
invariants $PI^{\al,n}(\tau')$, are both valid over general
algebraically closed fields $\K$. Apart from problem (b) with
$K^\num(\coh(X))$ above, the proofs of deformation-invariance of
$DT^\al(\tau),PI^{\al,n}(\tau')$ are also valid over general~$\K$.

As noted after Theorem \ref{dt2thm1}, constructible
functions\index{constructible function!in positive characteristic}
methods fail for $\K$ of positive characteristic.\index{field
$\K$!positive characteristic} Because of this, the alternative
descriptions \eq{dt4eq16}, \eq{dt5eq16} for
$DT^\al(\tau),PI^{\al,n}(\tau')$ as weighted Euler characteristics,
and the definition of $\bar{DT}{}^\al(\tau)$ in \S\ref{dt53}, are
valid for algebraically closed fields $\K$ of characteristic
zero.\index{field $\K$!characteristic zero}

The authors believe that the Behrend function identities
\eq{dt5eq2}--\eq{dt5eq3} should hold over algebraically closed
fields $\K$ of characteristic zero; Question \ref{dt5quest3}(a)
suggests a starting point for a purely algebraic proof of
\eq{dt5eq2}--\eq{dt5eq3}. This would resolve (a) above, and probably
also (c), because we only need the notion of `compactly embeddable'
as our complex analytic proof of \eq{dt5eq2}--\eq{dt5eq3} requires
$X$ compact; an algebraic proof of \eq{dt5eq2}--\eq{dt5eq3} would
presumably also work for compactly supported sheaves on a
noncompact~$X$.

For (b), one approach valid over general $\K$ which is
deformation-invariant is to count sheaves with fixed Hilbert
polynomial, as in Thomas \cite{Thom}, rather than with fixed class
in $K^\num(\coh(X))$. It seems likely that $K^\num(\coh(X))$ is
deformation-invariant for more general $\K$, so there may not be a
problem.
\label{dt5rem1}
\end{rem}\index{field $\K$|)}

\begin{rem} We will use the assumption $H^1(\cO_X)=0$ for the
Calabi--Yau 3-fold $X$ in four different ways in the rest of the
book:
\begin{itemize}
\setlength{\itemsep}{0pt}
\setlength{\parsep}{0pt}
\item[(i)] Theorem \ref{dt5thm1} shows that the moduli stack
$\fM$ of coherent sheaves is locally isomorphic to the moduli
stack $\fVect$ of vector bundles. The proof uses Seidel--Thomas
twists by $\cO_X(-n)$, and is only valid if $\cO_X(-n)$ is a
spherical object in $D^b(\coh(X))$, that is, if $H^1(\cO_X)=0$.
Theorem \ref{dt5thm1} is needed to show that the Behrend
function identities \eq{dt5eq2}--\eq{dt5eq3} hold on $\fM$ as
well as on $\fVect$, and \eq{dt5eq2}--\eq{dt5eq3} are essential
for most of~\S\ref{dt53}--\S\ref{dt54}.
\item[(ii)] If $H^1(\cO_X)=0$ then as in \S\ref{dt45}
$K^\num(\coh(X))$ is unchanged under deformations of $X$, which
makes sense of the idea that $\bar{DT}{}^\al(\tau)$ is
deformation-invariant for~$\al\in K^\num(\coh(X))$.
\item[(iii)] In \S\ref{dt63} we use that if $H^1(\cO_X)=0$ then
Hilbert schemes\index{Hilbert scheme} $\Hilb^d(X)$ are open
subschemes of moduli schemes of sheaves on~$X$.
\item[(iv)] In \S\ref{dt64} and \S\ref{dt66} we use that if
$H^1(\cO_X)=0$ then every $\be\in H^2(X;\Z)$ is $c_1(L)$ for
some line bundle $L$ and the map $E\mapsto E\ot L$ for
$E\in\coh(X)$ to deduce symmetries of
the~$\bar{DT}{}^\al(\tau)$.
\end{itemize}
Of these, in (i) Theorem \ref{dt5thm1} is false if $H^1(\cO_X)\ne
0$, but nonetheless the authors expect \eq{dt5eq2}--\eq{dt5eq3} will
be true if $H^1(\cO_X)\ne 0$, and this is the important thing for
most of our theory. Question \ref{dt5quest3}(a) suggests a route
towards a purely algebraic proof of \eq{dt5eq2}--\eq{dt5eq3}, which
is likely not to require~$H^1(\cO_X)=0$.

For (ii), if $H^1(\cO_X)\ne 0$ then in \S\ref{dt45} the Chern
character induces a map $\ch:K^\num(\coh(X))\ra\La_X$ which is
injective but may not be surjective, where $\La_X\subset H^{\rm
even}(X;\Q)$ is the lattice in \eq{dt4eq21}. Let us identify
$K^\num(\coh(X))$ with its image under $\ch$ in $\La_X$, and then
extend the definition of $\bar{DT}{}^\al(\tau)$ to all $\al\in\La_X$
by setting $\bar{DT}{}^\al(\tau)=0$ for $\al\in\La_X\sm\ch\bigl(
K^\num(\coh(X))\bigr)$. Then $\bar{DT}{}^\al(\tau)$ is defined for
$\al\in\La_X$, where $\La_X$ is deformation-invariant, and we claim
that $\bar{DT}{}^\al(\tau)$ will then be deformation-invariant. Part
(iii) is false if~$H^1(\cO_X)\ne 0$.

Generalizing our theory to the case $H^1(\cO_X)\ne 0$ is not very
interesting anyway, as most invariants $\bar{DT}{}^\al(\tau)$ (as we
have defined them) will be automatically zero. If $\dim
H^1(\cO_X)=g>0$ then there is a $T^{2g}$ family of flat line bundles
on $X$ up to isomorphism, which is a group under $\ot$, with
identity $\cO_X$. Suppose $\al\in K^\num(\coh(X))$ with
$\rank\al>0$, so that $\tau$-semistable sheaves in class $\al$ are
torsion-free. Then $E\mapsto E\ot L$ for $E\in\M_\rss^\al(\tau)$ and
$L\in T^{2g}$ defines an action of $T^{2g}$ on $\M_\rss^\al(\tau)$
which is essentially free. As $\bar{DT}{}^\al(\tau)$ is a weighted
Euler characteristic of $\M_\rss^\al(\tau)$, and each orbit of
$T^{2g}$ in $\M_\rss^\al(\tau)$ is a copy of $T^{2g}$ with Euler
characteristic zero, it follows that $\bar{DT}{}^\al(\tau)=0$
when~$\rank\al>0$.

One solution to this is to consider sheaves with {\it fixed
determinant},\index{sheaf!with fixed determinant}\index{coherent
sheaf!with fixed determinant} as in Thomas \cite{Thom}. But this is
not nicely compatible with viewing $\coh(X)$ as an abelian category,
or with our treatment of wall-crossing formulae. In (iv) above, if
$H^1(\cO_X)\ne 0$ and $\be\in H^2(X;\Z)$ is not of the form $c_1(L)$
for a holomorphic line bundle $L$, then for any $\al\in
K^\num(\coh(X))$ which would be moved by the symmetry of $H^{\rm
even}(X;\Q)$ corresponding to $\be$ we must have
$\bar{DT}{}^\al(\tau)=0$ as above. Thus (iv) should still hold when
$H^1(\cO_X)\ne 0$, but for trivial reasons.
\label{dt5rem2}
\end{rem}

\subsection{Local description of the moduli of coherent sheaves}
\label{dt51}

We begin by recalling some facts about moduli spaces and moduli
stacks of coherent sheaves and vector bundles over smooth projective
$\K$-schemes, to establish notation. Let $\K$ be an algebraically
closed field, and $X$ a smooth projective $\K$-scheme of dimension
$m$. In Theorem \ref{dt5thm1} we will take $X$ to be a Calabi--Yau
$m$-fold over general $\K$, and from Theorem \ref{dt5thm2} onwards
we will restrict to $\K=\C$, $m=3$ and $X$ a Calabi--Yau 3-fold,
except for Theorems \ref{dt5thm7}, \ref{dt5thm8} and \ref{dt5thm9},
which work over general~$\K$.

Some good references are Hartshorne \cite[\S II.5]{Hart2} on
coherent sheaves, Huybrechts and Lehn \cite{HuLe2} on coherent
sheaves and moduli schemes, and Laumon and Moret-Bailly \cite{LaMo}
on algebraic spaces\index{algebraic space} and stacks. When we say a
coherent sheaf $E$ is {\it simple\/}\index{coherent sheaf!simple|(} we
mean that $\Hom(E,E)\cong\C$. (Beware that some authors use `simple'
with the different meaning `has no nontrivial subobjects'. An
alternative word for `simple' in our sense would be {\it
Schurian}.)\index{coherent sheaf!Schurian} By an {\it algebraic vector
bundle\/}\index{vector bundle!algebraic} we mean a locally free coherent
sheaf on $X$ of rank $l\ge 0$. (See Hartshorne
\cite[Ex.~II.5.18]{Hart2} for an alternative definition of vector
bundles $E$ as a morphism of schemes $\pi:E\ra X$ with fibre $\K^l$
and with extra structure, and an explanation of why these are in
1--1 correspondence with locally free sheaves.)

Write $\fM$ and $\fVect$\nomenclature[Vect]{$\fVect$}{moduli stack of algebraic
vector bundles} for the moduli stacks of coherent sheaves and
algebraic vector bundles on $X$, respectively. By Laumon and
Moret-Bailly \cite[\S\S 2.4.4, 3.4.4 \& 4.6.2]{LaMo} using results
of Grothendieck \cite{Grot1,Grot2}, they are Artin $\K$-stacks,
locally of finite type, and $\fVect$ is an open $\K$-substack of
$\fM$. Write $\M_\rsi$\nomenclature[Mcasi]{$\M_\rsi$}{coarse moduli space of
simple coherent sheaves} and
$\Vect_\rsi$\nomenclature[Vectsi]{$\Vect_\rsi$}{coarse moduli space of simple
algebraic vector bundles} for the coarse moduli spaces of simple
coherent sheaves and simple algebraic vector bundles. By Altman and
Kleiman \cite[Th.~7.4]{AlKl} they are algebraic
$\K$-spaces,\index{algebraic space} locally of finite type. Also
$\Vect_\rsi$ is an open subspace of~$\M_\rsi$.

Here $\fM,\fVect,\M_\rsi,\Vect_\rsi$ being {\it locally of finite
type\/}\index{Artin stack!locally of finite type} means roughly only
that they are locally finite-dimensional; in general
$\fM,\ldots,\Vect_\rsi$ are neither proper (essentially, compact),
nor separated (essentially, Hausdorff), nor of finite type (finite
type is necessary for invariants such as Euler characteristics to be
well-defined; for the purposes of this book, `finite type' means
something like `measurable'). These results tell us little about the
global geometry of $\fM,\ldots,\Vect_\rsi$. But we do have some
understanding of their local geometry.

For the moduli stack $\fM$ of coherent sheaves on $X$, writing
$\fM(\K)$ for the set of geometric points of $\fM$, as in Definition
\ref{dt2def1}, elements of $\fM(\K)$ are just isomorphism classes
$[E]$ of coherent sheaves $E$ on $X$. Fix some such $E$. Then the
stabilizer group $\Iso_{\fM}([E])$ in $\fM$ is isomorphic as an
algebraic $\K$-group to the automorphism group $\Aut(E),$ and the
Zariski tangent space $T_{[E]}\fM$ to $\fM$ at $[E]$ is isomorphic
to $\Ext^1(E,E)$, and the action of $\Iso_{\fM}([E])$ on
$T_{[E]}\fM$ corresponds to the action of $\Aut(E)$ on $\Ext^1(E,E)$
by $\ga:\ep\mapsto\ga\ci\ep\ci\ga^{-1}$.

Since $\Iso_{\fM}([E])$ is the group of invertible elements in the
finite-dimensional $\K$-algebra $\Aut(E)$, it is an affine
$\K$-group. Hence the Artin $\K$-stack $\fM$ has {\it affine
geometric stabilizers},\index{Artin stack!affine geometric
stabilizers} in the sense of Definition \ref{dt2def1}. If $S$ is a
$\K$-scheme, then 1-morphisms $\phi:S\ra\fM$ are just families of
coherent sheaves on $X$ parametrized by $S$, that is, they are
coherent sheaves $E_S$ on $X\times S$ flat over $S$. A 1-morphism
$\phi:S\ra\fM$ is an {\it atlas\/}\index{Artin stack!atlas} for some
open substack $\fU\subset\fM$, if and only if $E_S$ is a {\it versal
family\/}\index{versal family} of sheaves such
that~$\bigl\{[E_s]:s\in S(\K)\bigr\}=\fU(\K)\subseteq \fM(\K)$.

For the algebraic $\K$-spaces $\M_\rsi,\Vect_\rsi$, elements of
$\M_\rsi(\K),\Vect_\rsi(\K)$ are isomorphism classes $[E]$ of simple
coherent sheaves or vector bundles $E$. When $\K=\C$,
$\M_\rsi,\Vect_\rsi$ are {\it complex algebraic spaces},\index{complex
algebraic space} and so $\M_\rsi(\C),\Vect_\rsi(\C)$ have the
induced structure of {\it complex analytic spaces}.\index{complex
analytic space|(} Direct constructions of $\Vect_\rsi(\C)$ as a
complex analytic space parametrizing complex analytic holomorphic
vector bundles are given by L\"ubke and Okonek \cite{LuOk} and
Kosarew and Okonek \cite{KoOk}. Miyajima \cite[Th.~3]{Miya} shows
that these complex analytic space structures on $\Vect_\rsi(\C)$
coming from the algebraic side \cite{AlKl} and the analytic side
\cite{KoOk,LuOk} are equivalent.

Our first result works for Calabi--Yau $m$-folds $X$ of any
dimension $m\ge 1$, and over any algebraically closed field $\K$. It
is proved in \S\ref{dt8}. The authors are grateful to Tom Bridgeland
for suggesting the approach used to prove Theorem~\ref{dt5thm1}.

\begin{thm} Let\/ $\K$ be an algebraically closed field, and\/
$X$ a projective Calabi--Yau $m$-fold over $\K$ for $m\ge 1,$ with\/
$H^i(\cO_X)=0$ for $0<i<m,$ and\/ $\fM,\fVect,\M_\rsi,\Vect_\rsi$ be
as above. Let\/ $\mathfrak U$ be an open, finite type substack of\/
$\fM$. Then there exists an open substack $\mathfrak V$ in\/
$\fVect,$ and a $1$-isomorphism $\vp:{\mathfrak U}\ra{\mathfrak V}$
of Artin $\K$-stacks. Similarly, let\/ $U$ be an open, finite type
subspace of\/ $\M_\rsi$. Then there exists an open subspace $V$ in\/
$\Vect_\rsi$ and an isomorphism $\psi:U\ra V$ of algebraic\/
$\K$-spaces. That is, $\fM,\M_\rsi$ are locally isomorphic to
$\fVect,\Vect_\rsi,$ in the Zariski topology.\index{Zariski
topology} The isomorphisms $\vp,\psi$ are constructed as the
composition of\/ $m$ Seidel--Thomas twists by $\cO_X(-n)$ for\/
$n\gg 0$ and a shift\/~$[-m]$.
\label{dt5thm1}
\end{thm}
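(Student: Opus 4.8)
The plan is to use the Seidel--Thomas twist functor $T_{\cO_X(-n)}$ on the derived category $D^b(\coh(X))$, which is an autoequivalence precisely because $\cO_X(-n)$ is a spherical object; this uses $H^i(\cO_X)=0$ for $0<i<m$ together with $K_X\cong\cO_X$, so that $\Ext^*(\cO_X(-n),\cO_X(-n))\cong H^*(\cO_X)\cong\Q[0]\oplus\Q[-m]$, the cohomology of a sphere $S^m$. First I would recall the definition: for $F\in D^b(\coh(X))$, $T_{\cO_X(-n)}(F)$ is the cone on the evaluation map $R\Hom(\cO_X(-n),F)\ot\cO_X(-n)\to F$, i.e. on $\bigoplus_i\Ext^i(\cO_X(-n),F)\ot\cO_X(-n)[-i]\to F$. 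Applying $T_{\cO_X(-n)}$ a total of $m$ times and then shifting by $[-m]$ will be the candidate equivalence $\Phi:=[-m]\ci T_{\cO_X(-n)}^m$ of $D^b(\coh(X))$.

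The heart of the argument is a \emph{boundedness / vanishing} estimate: given the open finite type substack $\mathfrak U\subseteq\fM$, the sheaves $E$ with $[E]\in\mathfrak U(\K)$ form a bounded family, so there is a single $n\gg 0$ such that for all such $E$ one has $H^j(E(n))=0$ for $j>0$, $E(n)$ is globally generated, and moreover the higher $\Ext$ groups $\Ext^j(\cO_X(-n),E)=H^j(E(n))$ vanish for $j>0$ while $E$ is $(n)$-regular in the sense of Castelnuovo--Mumford. Under these conditions I would compute $T_{\cO_X(-n)}(E)$ term by term: the evaluation map becomes $H^0(E(n))\ot\cO_X(-n)\to E$, which is surjective since $E(n)$ is globally generated, so $T_{\cO_X(-n)}(E)$ is (a shift of) the kernel $E':=\ker\bigl(H^0(E(n))\ot\cO_X(-n)\to E\bigr)$, which sits in degree $1$ and is again a coherent sheaf. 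Iterating, after $m$ such twists and the shift $[-m]$ I would show $\Phi(E)$ lands in $\coh(X)$ and is in fact \emph{locally free}, because each kernel $E'$ is a higher syzygy of a sheaf which is globally generated with vanishing higher cohomology after enough twisting, and an $m$-th syzygy over a smooth $m$-fold is locally free (global dimension $m$). This shows $\Phi$ maps the objects of $\mathfrak U$ to vector bundles; since $\Phi$ is an equivalence it identifies $\Ext$-groups, automorphism groups, and families (flatness is preserved because at each stage the relevant evaluation map is surjective with locally free kernel, so the construction works in families over any base $S$), hence induces a $1$-isomorphism of Artin stacks $\vp:\mathfrak U\to\mathfrak V$ onto its image $\mathfrak V$, which is an open substack of $\fVect$ since being a vector bundle is an open condition and $\vp$ is an open immersion (in fact iso onto image) as $\Phi$ is an equivalence. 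The same construction restricted to simple objects gives $\psi:U\to V$, using that $\Phi$ preserves simplicity (as $\Hom(E,E)\cong\Hom(\Phi E,\Phi E)$) and that $\Vect_\rsi$ is open in $\M_\rsi$; one checks $\psi$ is a morphism of algebraic $\K$-spaces because the coarse moduli space construction is functorial under the family-level version of $\Phi$.

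The main obstacle I anticipate is the family version: one must check that applying $T_{\cO_X(-n)}$ to a flat family $E_S$ on $X\times S$ (with $S$ of finite type, covering $\mathfrak U$) again yields a flat family, and that the $m$-fold iterate plus shift yields a flat family of \emph{vector bundles}. The key point is that for $n\gg 0$ depending only on $\mathfrak U$ (boundedness!), the sheaf $\pi_{S*}(E_S(n))$ is locally free on $S$ and its formation commutes with base change (cohomology and base change), so $\pi_S^*\pi_{S*}(E_S(n))\ot\cO_X(-n)\to E_S$ is a surjection of sheaves flat over $S$ whose kernel is again flat over $S$ with the right fibrewise behaviour; iterating $m$ times and checking local freeness fibrewise (via the syzygy argument above) then gives flatness of the family of vector bundles by the fibrewise criterion for local freeness. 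A secondary technical point is confirming that $T_{\cO_X(-n)}^m[-m]$ genuinely lands in the heart $\coh(X)$ and not merely in $D^{[0,0]}$ up to some residual shift — this is where tracking the degree shifts carefully through the cones, using the precise regularity/vanishing hypotheses on $n$, is essential, and I would organise it as an inductive statement: after $k$ twists, $T_{\cO_X(-n)}^k(E)[-k]$ is a coherent sheaf $E_k$ with $\Ext^j(\cO_X(-n),E_k)=0$ for $j>m-k$, and $E_k$ is a $k$-th syzygy, so $E_m$ is locally free. I would postpone the full details to \S\ref{dt8} as the statement indicates.
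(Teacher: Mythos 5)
There is a genuine gap: you iterate the twist by the \emph{same} line bundle $\cO_X(-n)$ all $m$ times, and your inductive claim that after $k$ twists one has $\Ext^j(\cO_X(-n),E_k)=0$ for $j>m-k$ is false — the failure is in the \emph{top} degree, not the bottom. Concretely, after one twist you have the exact sequence $0\ra E_1\ra H^0(E(n))\ot\cO_X(-n)\ra E\ra 0$; twisting by $\cO_X(n)$ and taking cohomology, using $H^i(\cO_X)=0$ for $0<i<m$ and $H^{>0}(E(n))=0$, gives $H^j(E_1(n))=0$ for $0\le j<m$ but $H^m(E_1(n))\cong H^0(E(n))\ot H^m(\cO_X)\cong H^0(E(n))\ne 0$. (Equivalently, by Serre duality on the Calabi--Yau $X$, $\Ext^m(\cO_X(-n),E_1)\cong\Hom(E_1,\cO_X(-n))^*\ne 0$ since $E_1$ is a subsheaf of a direct sum of copies of $\cO_X(-n)$.) Hence $R\Hom(\cO_X(-n),E_1)$ is concentrated in degree $m$, and the second application of $T_{\cO_X(-n)}$ produces $\cone\bigl(H^m(E_1(n))\ot\cO_X(-n)[-m]\ra E_1\bigr)$, whose cohomology sheaves sit in two different degrees ($E_1$ in degree $0$ and $H^m(E_1(n))\ot\cO_X(-n)$ in degree $m-1$); it is not a shifted sheaf, so your evaluation-kernel and syzygy computation cannot be run a second time with the same $n$, and $[-m]\ci T_{\cO_X(-n)}^m$ does not land in $\coh(X)$.

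The fix, which is what the paper does in \S\ref{dt8}, is to re-choose the twisting parameter at every stage: after $k$ twists the sheaves $T_{n_k}\ci\cdots\ci T_{n_1}(E_u)$ (each $T_{n_i}=T_{\cO_X(-n_i)}[-1]$) again form a bounded family, so one can pick a new $n_{k+1}\gg 0$, depending on that family, with vanishing higher cohomology and global generation after twisting by $\cO_X(n_{k+1})$; then each step really is the kernel of a surjection from copies of $\cO_X(-n_{k+1})$, i.e.\ a syzygy step, and the homological dimension drops by one each time (the paper's Lemma \ref{dt8lem2}), so after $m$ steps one gets vector bundles — your ``$m$-th syzygy over a smooth $m$-fold is locally free'' argument is exactly this point. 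With that correction the remainder of your proposal — flatness in families via the short exact sequence $0\ra F_U\ra \pi_S^*\pi_{S*}(E_U(n))\ot\cO_X(-n)\ra E_U\ra 0$ and cohomology-and-base-change, and the functor-of-points/versal-family argument that $E_S\mapsto T_{n_m}\ci\cdots\ci T_{n_1}(E_S)$ gives an equivalence $\Hom(S,{\mathfrak U})\ra\Hom(S,{\mathfrak V})$ and hence a $1$-isomorphism, with the simple/coarse-moduli case deduced afterwards — coincides with the paper's proof.
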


We now restrict to Calabi--Yau 3-folds over $\C$, which includes the
assumption $H^1(\cO_X)=0$. Our next two results, Theorems
\ref{dt5thm2} and \ref{dt5thm3}, are proved in \S\ref{dt9}. Roughly,
they say that moduli spaces of coherent sheaves on Calabi--Yau
3-folds over $\C$ can be written locally in the form $\Crit(f)$, for
$f$ a holomorphic function on a complex manifold. This is a partial
answer to the question of Behrend quoted at the beginning of
\S\ref{dt44}. Because of Theorems \ref{dt5thm2} and \ref{dt5thm3},
we can use the Milnor fibre\index{Milnor fibre} formula for the
Behrend function of $\Crit(f)$ in \S\ref{dt42} to study the Behrend
function $\nu_\fM$, and this will be vital in proving Theorem
\ref{dt5thm4}.

\begin{thm} Let\/ $X$ be a Calabi--Yau $3$-fold over\/ $\C,$ and\/
$\M_\rsi$ the coarse moduli space of simple coherent sheaves on\/
$X,$ so that\/ $\M_\rsi(\C)$ is the set of isomorphism classes $[E]$
of simple coherent sheaves $E$ on $X,$ and is a complex analytic
space. Then for each\/ $[E]\in\M_\rsi(\C)$ there exists a
finite-dimensional complex manifold\/ $U,$ a holomorphic function
$f:U\ra\C,$ and a point\/ $u\in U$ with\/ $f(u)=\rd f\vert_u=0,$
such that\/ $\M_\rsi(\C)$ near\/ $[E]$ is locally isomorphic as a
complex analytic space to\/ $\Crit(f)$ near\/ $u$. We can take\/ $U$
to be an open neighbourhood of\/ $u=0$ in the finite-dimensional
complex vector space~$\Ext^1(E,E)$.
\label{dt5thm2}
\end{thm}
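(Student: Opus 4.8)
\textbf{Proof proposal for Theorem \ref{dt5thm2}.}
The plan is to use gauge theory on the complex vector bundle underlying $E$, together with a finite-dimensional reduction (Kuranishi-style) of the holomorphic deformation equation. By Theorem \ref{dt5thm1}, after applying a composition of Seidel--Thomas twists by $\cO_X(-n)$ for $n\gg 0$, we may replace the simple sheaf $E$ by a simple algebraic vector bundle, so without loss of generality $E$ is a holomorphic vector bundle on $X$. Fix a Hermitian metric on $E$ and a compatible Chern connection; then holomorphic structures on the underlying $C^\infty$ bundle near that of $E$ are parametrized by $(0,1)$-forms $a\in\Om^{0,1}(\End E)$ satisfying the integrability equation $\db a+a\wedge a=0$, modulo the action of the complex gauge group. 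The deformation complex is the Dolbeault complex $\Om^{0,\bu}(\End E)$ with differential $\db$, whose cohomology is $\Ext^i(E,E)$; in particular $\Ext^1(E,E)=H^{0,1}(\End E)$ and, since $X$ is a Calabi--Yau $3$-fold, Serre duality gives $\Ext^3(E,E)\cong\Ext^0(E,E)^*\cong\C$ as $E$ is simple, with $\Ext^2(E,E)\cong\Ext^1(E,E)^*$.

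First I would set up Hodge theory for $\db$ on $\Om^{0,\bu}(\End E)$: choose the Green's operator $G$ and the harmonic projection $H$, and use the Kuranishi map. Writing $a=a_1+Q(a_1)$ with $a_1$ harmonic (so $a_1\in\Ext^1(E,E)$) and $Q(a_1)=-\db^*G(a\wedge a)$ determined implicitly by the contraction mapping / implicit function theorem for $a_1$ in a small ball $U\subseteq\Ext^1(E,E)$, the integrability equation reduces to the finite-dimensional \emph{Kuranishi equation} $H(a\wedge a)=0$, valued in $\Ext^2(E,E)$. The key additional input, exploiting the Calabi--Yau structure, is that this obstruction map is the derivative of a holomorphic \emph{superpotential}: using the holomorphic volume form to define a Chern--Simons-type functional $f(a_1)=\int_X \tr\bigl(\tfrac12 a_1\db a_1+\tfrac13 a_1^3\bigr)\wedge\Om$ (suitably expressed in terms of the Kuranishi data), the pairing $\Ext^2\cong(\Ext^1)^*$ identifies $H(a\wedge a)=0$ with $\rd f|_{a_1}=0$. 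Hence $\Crit(f)\subseteq U$ is exactly the slice describing holomorphic structures near $E$ up to gauge, i.e.\ $\M_\rsi(\C)$ near $[E]$, and $f(u)=\rd f|_u=0$ at $u=0$ since the quadratic and higher terms vanish at the origin. One then checks the identification is an isomorphism of complex analytic spaces using Miyajima's comparison \cite{Miya} of the algebraic and analytic structures on $\Vect_\rsi(\C)$, and transports back along the Seidel--Thomas equivalence of Theorem \ref{dt5thm1}, which is an equivalence of the relevant moduli problems and so preserves this local structure.

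The main obstacle I expect is \textbf{holomorphicity and convergence}: the Kuranishi construction via Hodge theory naturally produces only $C^\infty$ (or, working in Sobolev completions, $L^2_k$) data, and one must show (i) that the map $Q$ and hence $f$ can be taken \emph{holomorphic} in $a_1\in U\subseteq\Ext^1(E,E)$ rather than merely real-analytic, and (ii) that the formal power series solving the integrability equation actually converges in a genuine neighbourhood, so that $U$ can be taken to be an honest open ball and $f$ an honest holomorphic function. Holomorphicity follows because the entire deformation equation $\db a+a\wedge a=0$ and the operators $\db^*,G,H$ are complex-linear, so the implicit function theorem runs in the holomorphic category; convergence is the standard elliptic estimate/Banach-space implicit-function-theorem argument (as in Kuranishi theory for complex structures), using that $X$ is compact so that the relevant Sobolev embeddings and elliptic estimates hold. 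A secondary technical point is checking that passing between Sobolev completions and smooth sections does not change $\Crit(f)$ as an analytic space germ, and that the superpotential $f$ descends correctly through the finite-dimensional reduction — this is where one must be careful that the "extra" non-harmonic directions have been eliminated in a way compatible with taking $\rd f$.
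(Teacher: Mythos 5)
Your proposal is correct and follows essentially the same route as the paper: reduce to simple vector bundles via the Seidel--Thomas twists of Theorem \ref{dt5thm1}, perform a gauge-theoretic Kuranishi reduction on a slice $\{\db_E^*A=0\}$ in the Sobolev completion $\sA^{2,k}$ (your harmonic/Green's-operator parametrization gives the same finite-dimensional submanifold as the paper's $Q_\ep$), identify the integrability locus with $\Crit(CS\vert_{Q_\ep})$ using the Calabi--Yau Serre duality $\Ext^2(E,E)\cong\Ext^1(E,E)^*$, and transfer to the algebraic moduli space via Miyajima's comparison. The convergence and holomorphicity issues you flag are exactly those the paper resolves by working with the Banach-manifold slice and quoting Miyajima's results, so they are technical points of the same argument rather than a different method.
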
\index{coherent sheaf!simple|)}

Our next result generalizes Theorem \ref{dt5thm2} from simple to
arbitrary coherent sheaves, and from algebraic spaces to Artin
stacks.

\begin{thm} Let\/ $X$ be a Calabi--Yau $3$-fold over\/ $\C,$ and\/
$\fM$ the moduli stack of coherent sheaves on\/ $X$. Suppose\/ $E$
is a coherent sheaf on\/ $X,$ so that\/ $[E]\in\fM(\C)$. Let\/ $G$
be a maximal compact subgroup\index{maximal compact subgroup} in
$\Aut(E),$ and\/ $G^{\sst\C}$ its complexification. Then\/
$G^{\sst\C}$ is an algebraic $\C$-subgroup of\/ $\Aut(E),$ a maximal
reductive subgroup,\index{maximal reductive subgroup} and\/
$G^{\sst\C}=\Aut(E)$ if and only if\/ $\Aut(E)$ is reductive.

There exists a quasiprojective $\C$-scheme $S,$ an action of\/
$G^{\sst\C}$ on $S,$ a point\/ $s\in S(\C)$ fixed by $G^{\sst\C},$
and a $1$-morphism of Artin $\C$-stacks $\Phi:[S/G^{\sst\C}]\ra\fM,$
which is smooth of relative dimension $\dim\Aut(E)-\dim G^{\sst\C},$
where $[S/G^{\sst\C}]$ is the quotient stack, such that\/
$\Phi(s\,G^{\sst\C})=[E],$ the induced morphism on stabilizer groups
$\Phi_*:\Iso_{[S/G^{\sst\C}]}(s\,G^{\sst\C})\ra\Iso_{\fM}([E])$ is
the natural morphism $G^{\sst\C}\hookra\Aut(E)\cong\Iso_{\fM}([E]),$
and\/ $\rd\Phi\vert_{s\,G^{\sst\C}}:T_sS\cong T_{s\,G^{\sst\C}}
[S/G^{\sst\C}]\ra T_{[E]}\fM\cong \Ext^1(E,E)$ is an isomorphism.
Furthermore, $S$ parametrizes a formally versal family $(S,{\cal
D})$ of coherent sheaves on $X,$ equivariant under the action of\/
$G^{\sst\C}$ on $S,$ with fibre\/ ${\cal D}_s\cong E$ at\/ $s$. If\/
$\Aut(E)$ is reductive then $\Phi$ is \'etale.

Write $S_\an$ for the complex analytic space underlying the
$\C$-scheme $S$. Then there exists an open neighbourhood\/ $U$ of\/
$0$ in\/ $\Ext^1(E,E)$ in the analytic topology, a holomorphic
function $f:U\ra\C$ with\/ $f(0)=\rd f\vert_0=0,$ an open
neighbourhood\/ $V$ of\/ $s$ in $S_\an,$ and an isomorphism of
complex analytic spaces $\Xi:\Crit(f)\ra V,$ such that\/ $\Xi(0)=s$
and\/ $\rd\Xi\vert_0:T_0\Crit(f)\ra T_sV$ is the inverse of\/
$\rd\Phi\vert_{s\,G^{\sst\C}}:T_sS\ra\Ext^1(E,E)$. Moreover we can
choose $U,f,V$ to be $G^{\sst\C}$-invariant, and\/ $\Xi$ to be
$G^{\sst\C}$-equivariant.
\label{dt5thm3}
\end{thm}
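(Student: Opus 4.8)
\textbf{Proof proposal for Theorem \ref{dt5thm3}.}

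The plan is to combine deformation theory over Artin local rings with analytic gauge theory on the bundle underlying $E$ (via Theorem \ref{dt5thm1}, we may assume $E$ is a vector bundle). First I would set up the algebraic part: by general deformation theory of coherent sheaves (Huybrechts--Lehn), the deformations of $E$ are governed by the tangent--obstruction theory with $\Ext^1(E,E)$ the tangent space and $\Ext^2(E,E)$ the obstruction space, and there is a formal versal deformation whose hull is cut out in $\Ext^1(E,E)$ by a formal Kuranishi map $\kappa\colon \widehat{\Ext^1(E,E)} \to \Ext^2(E,E)$. Serre duality on the Calabi--Yau 3-fold gives $\Ext^2(E,E)\cong \Ext^1(E,E)^*$, and the key algebraic input (this is where the Calabi--Yau condition is essential) is that the Kuranishi map is the formal gradient $\rd f$ of a formally-defined function $f$ on $\Ext^1(E,E)$: one checks the cyclic/graded-symmetry of the higher $A_\infty$ products $m_k\colon (\Ext^1)^{\otimes k}\to\Ext^2\cong(\Ext^1)^*$ coming from the $L_\infty$-structure on $R\Hom(E,E)$, using Serre duality to identify the pairing. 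Having $\rd f$ formally, I would promote $f$ to an honest holomorphic function on an open $U\subseteq\Ext^1(E,E)$ by the analytic slice construction below, so that convergence is automatic rather than needing separate control of the formal power series.

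Next I would construct the slice $S$ and the stack chart. Fix a maximal compact $G\subseteq\Aut(E)$; since $\Aut(E)=\End(E)^\times$ is the group of units of a finite-dimensional algebra, $G^{\sst\C}$ is a maximal reductive (Levi) subgroup, and $G^{\sst\C}=\Aut(E)$ iff $\Aut(E)$ is reductive. Working analytically on the $C^\infty$ bundle underlying $E$, I would use a Hermitian metric and the Kuranishi-type slice: holomorphic structures near $E$ are parametrized by solutions $a\in\Omega^{0,1}(\End E)$ of $\bar\partial a + \tfrac12[a,a]=0$, and imposing the harmonic gauge $\bar\partial^* a=0$ (complexified, using $G$) cuts out a finite-dimensional slice $S_\an$ in a neighbourhood of $0$ in the harmonic space $\cong\Ext^1(E,E)$, with a holomorphic $G^{\sst\C}$-action. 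The implicit function theorem and elliptic regularity give that $S_\an$ is a complex analytic space, with the integrability equation expressible as $\rd f=0$ for the holomorphic Chern--Simons type functional $f(a)=\int_X \Tr\bigl(\tfrac12 a\wedge\bar\partial a+\tfrac13 a\wedge a\wedge a\bigr)\wedge\Omega$, where $\Omega$ trivializes $K_X$; this $f$ visibly has $f(0)=\rd f|_0=0$ and is $G^{\sst\C}$-invariant, so $V:=\Crit(f)\subseteq U$ carries the $G^{\sst\C}$-action and $\Xi\colon\Crit(f)\to V\subseteq S_\an$ is the identity on the slice, with $\rd\Xi|_0$ the claimed isomorphism. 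To get the \emph{algebraic} scheme $S$ I would invoke Artin approximation / algebraization of the formal versal family: the formally versal family over the completion of $S$ at $s$ algebraizes to a quasiprojective $S$ with $G^{\sst\C}$-action (the action algebraizes because it is linear on $\Ext^1$ and the obstruction is equivariant), and then $S_\an$ is the analytification near $s$. The chart $\Phi\colon[S/G^{\sst\C}]\to\fM$ comes from the universal property applied to the (formally) versal equivariant family $(S,\mathcal D)$; smoothness of relative dimension $\dim\Aut(E)-\dim G^{\sst\C}$ follows because the map on stabilizers is the inclusion $G^{\sst\C}\hookrightarrow\Aut(E)$ and $\rd\Phi$ is an isomorphism on tangent spaces at $s\,G^{\sst\C}$, so $\Phi$ is smooth at that point and hence on a neighbourhood which we absorb into $S$; when $\Aut(E)$ is reductive the relative dimension is $0$, so $\Phi$ is \'etale.

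I expect the main obstacle to be two intertwined points: first, rigorously establishing that the analytic slice equation really is the critical-point equation of the convergent holomorphic functional $f$ (i.e.\ matching the gauge-theoretic Kuranishi model with the holomorphic Chern--Simons functional, including the identification of the ambient space with an open set in $\Ext^1(E,E)$ rather than merely a formal neighbourhood), which requires careful elliptic analysis and a Coulomb-gauge normal form uniform under the $G^{\sst\C}$-action; and second, the algebraization step producing an actual $G^{\sst\C}$-equivariant quasiprojective $S$ with a versal family and a smooth $\Phi$ to the stack, where one must be careful that Artin approximation gives not just a versal family but one compatible with the group action and with the analytic slice. The compatibility statement $\rd\Xi|_0 = (\rd\Phi|_{s\,G^{\sst\C}})^{-1}$ and the $G^{\sst\C}$-equivariance of $\Xi$ then follow by unwinding that both sides are built from the same harmonic representatives in $\Ext^1(E,E)$; this is routine once the two models are identified. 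The details of the elliptic estimates, the convergence of $f$, and the Artin-approximation bookkeeping are exactly the content postponed to \S\ref{dt9}, so here I would only lay out this skeleton and defer the analysis.
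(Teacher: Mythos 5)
Your analytic core coincides with the paper's: after reducing to vector bundles via Theorem \ref{dt5thm1}, the paper also uses a gauge-theoretic Kuranishi slice and identifies it with the critical locus of the holomorphic Chern--Simons functional (Proposition \ref{dt9prop8}); your opening cyclic-$A_\iy$/formal-gradient discussion is redundant for this route and is essentially the Kontsevich--Soibelman formal approach the paper deliberately avoids. But there are two genuine gaps. First, the $G^{\sst\C}$-statements are not ``visible''. The slice and the Kuranishi neighbourhood are built from a Hermitian metric $h_E$ and an $\ep$-ball in $L^2_k$, and these can only be made invariant under the maximal compact $G$ (by averaging), not under the noncompact $G^{\sst\C}$; so the construction hands you $G$-invariant $U,f$ and a $G$-equivariant $\Xi$, and nothing more. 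Upgrading to $G^{\sst\C}$ is precisely the nontrivial last step of \S\ref{dt98}: one must saturate, setting $U^{\sst\C}=G^{\sst\C}\cdot U'$ and $f^{\sst\C}(\ga\cdot u)=f(u)$, and well-definedness of $f^{\sst\C}$ and $\Xi^{\sst\C}$ is not automatic --- it needs a shrinking condition on $U'$ and an analytic-continuation argument (holomorphic $G$-invariant functions are constant on connected components of $(G^{\sst\C}\cdot u)\cap U$, and these components must meet $G\cdot u'$). Your phrase ``uniform Coulomb-gauge normal form'' gestures at this but does not supply the idea.

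Second, your algebraic side diverges from the paper and is where your plan is thinnest. You propose Artin approximation plus an asserted equivariant algebraization (``the action algebraizes because it is linear on $\Ext^1$''); but getting simultaneously a \emph{quasiprojective} $S$, a global $G^{\sst\C}$-action, a formally versal equivariant family $(S,{\cal D})$, and a $1$-morphism $\Phi:[S/G^{\sst\C}]\ra\fM$ smooth of relative dimension $\dim\Aut(E)-\dim G^{\sst\C}$ is exactly what needs proof, and Artin approximation alone gives none of the global or equivariant structure. The paper sidesteps algebraization entirely: in \S\ref{dt93} it writes an open substack of $\fM$ as $[Q/H]$ with $Q$ open in a Quot scheme inside ${\mathbb P}(W)$, and uses reductivity of $G^{\sst\C}$ (a Luna-slice transversality argument, $T_sQ=T_sS\op T_s(sH)$) to cut out a locally closed $G^{\sst\C}$-invariant $S\subset Q$, from which quasiprojectivity, the action, versality and the relative dimension of $\Phi$ all follow at once. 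Relatedly, your claim that $\Xi$ ``is the identity on the slice'' conflates the analytic Kuranishi space with $S_\an$: since $E$ need not be simple, the relevant families are only \emph{versal}, not universal, so comparing the gauge-theoretic model with $S_\an$ requires the argument of Proposition \ref{dt9prop7} (compose the two versality maps, note the derivative at the base point is conjugation by an automorphism which one normalizes to the identity, and conclude via surjective endomorphisms of noetherian local rings); calling this ``routine once the two models are identified'' assumes the very identification at stake.
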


Here the first paragraph is immediate, and the second has a
straightforward proof in \S\ref{dt93}, similar to parts of the proof
of Luna's Etale Slice Theorem\index{Luna's Etale Slice Theorem}
\cite{Luna}; the case in which $\Aut(E)$ is reductive, so that
$G^{\sst\C}=\Aut(E)$ and $\Phi$ is \'etale, is a fairly direct
consequence of the Etale Slice Theorem. The third paragraph is what
takes the hard work in the proof. Composing the projection
$\pi:S\ra[S/G^{\sst\C}]$ with $\Phi$ gives a smooth 1-morphism
$\Phi\ci\pi:S\ra\fM$, which is locally an atlas for $\fM$ near
$[E]$. Thus, Theorem \ref{dt5thm3} says that we can write an atlas
for $\fM$ in the form $\Crit(f)$, locally in the analytic topology,
where $f:U\ra\C$ is a holomorphic function on a complex manifold.

By Theorem \ref{dt5thm1}, it suffices to prove Theorems
\ref{dt5thm2} and \ref{dt5thm3} with $\Vect_\rsi,\fVect$ in place of
$\M_\rsi,\fM$. We do this using gauge theory, motivated by an idea
of Donaldson and Thomas \cite[\S 3]{DoTh}, \cite[\S 2]{Thom}. Let
$E\ra X$ be a fixed complex (not holomorphic) vector bundle over
$X$. Write $\sA$ for the infinite-dimensional affine space of smooth
semiconnections ($\db$-operators) on $E$, and $\sAs$ for the open
subset of simple semiconnections, and $\sG$ for the
infinite-dimensional Lie group of smooth gauge transformations of
$E$. Note that we do not assume semiconnections are integrable. Then
$\sG$ acts on $\sA$ and $\sAs$, and $\sB=\sA/\sG$ is the space of
gauge-equivalence classes of semiconnections on~$E$.

The subspace $\sB_\rsi=\sAs/\sG$ of simple semiconnections should be
an infinite-dimensional complex manifold. Inside $\sB_\rsi$ is the
subspace ${\mathscr V}_\rsi$ of integrable simple semiconnections,
which should be a finite-dimensional complex analytic space. Now the
moduli scheme $\Vect_\rsi$ of simple complex algebraic vector
bundles has an underlying complex analytic space $\Vect_\rsi(\C)$;
the idea is that ${\mathscr V}_\rsi$ is naturally isomorphic as a
complex analytic space to the open subset of $\Vect_\rsi(\C)$ of
algebraic vector bundles with underlying complex vector bundle~$E$.

We fix $\db_E$ in $\sA$ coming from a holomorphic vector bundle
structure on $E$. Then points in $\sA$ are of the form $\db_E+A$ for
$A\in C^\iy\bigl(\End(E)\ot_\C \La^{0,1}T^*X\bigr)$, and $\db_E+A$
makes $E$ into a holomorphic vector bundle if $F_A^{0,2}=\db_EA+A\w
A$ is zero in $\smash{C^\iy\bigl(\End(E)\ot_\C
\La^{0,2}T^*X\bigr)}$. Thus, the moduli space of holomorphic vector
bundle structures on $E$ is isomorphic to $\{\db_E+A\in\sA:
F_A^{0,2}=0\}/\sG$. Thomas observes that when $X$ is a Calabi--Yau
3-fold, there is a natural holomorphic function $CS:\sA\ra\C$ called
the {\it holomorphic Chern--Simons functional}, invariant under
$\sG$ up to addition of constants, such that
$\{\db_E+A\in\sA:F_A^{0,2}=0\}$ is the critical locus of $CS$. Thus,
${\mathscr V}_\rsi$, and hence $(\Vect_\rsi)(\C)$, is (informally)
locally the set of critical points of a holomorphic function $CS$ on
an infinite-dimensional complex manifold~$\sB_\rsi$.

In the proof of Theorem \ref{dt5thm2} in \S\ref{dt9}, when $\db_E$
is simple, we show using results of Miyajima \cite{Miya} that there
is a finite-dimensional complex submanifold $Q_\ep$ of $\sA$
containing $\db_E$, such that $\Vect_\rsi(\C)$ near $[(E,\db_E)]$ is
isomorphic as a complex analytic space to $\Crit(CS\vert_{Q_\ep})$
near $\db_E$, where $CS\vert_{Q_\ep}: Q_\ep\ra\C$ is a holomorphic
function on the finite-dimensional complex manifold $Q_\ep$. We also
show $Q_\ep$ is biholomorphic to an open neighbourhood $U$ of 0
in~$\Ext^1(E,E)$.

In the proof of Theorem \ref{dt5thm3} in \S\ref{dt9}, without
assuming $\db_E$ simple, we show that a local atlas $S$ for $\fVect$
near $[(E,\db_E)]$ is isomorphic as a complex analytic space to
$\Crit(CS\vert_{Q_\ep})$ near $\db_E$. The new issues in Theorem
\ref{dt5thm3} concern to what extent we can take $S,Q_\ep$ and
$CS\vert_{Q_\ep}:Q_\ep\ra\C$ to be invariant under $\Aut(E,\db_E)$.
In fact, in Theorem \ref{dt5thm3} we would have preferred to take
$S,U,V,f$ invariant under the full group $\Aut(E)$, rather than just
under the maximal reductive subgroup $G^{\sst\C}$. But we expect
this is not possible.

On the algebraic geometry side, the choice of $S,\Phi,\cal D$ in the
second paragraph of Theorem \ref{dt5thm3}, to construct $S$ we use
ideas from the proof of Luna's Etale Slice Theorem\index{Luna's
Etale Slice Theorem} \cite{Luna}, which works only for reductive
groups, so we can make $S$ invariant under at most a maximal
reductive subgroup $G^{\sst\C}$ in $\Aut(E)$. On the gauge theory
side, constructing $Q_\ep$ involves a {\it slice\/}
$\sS_E=\{\db_E+A:\db_E^*A=0\}$ to the action of $\sG$ in $\sA$ at
$\db_E\in\sA$, where $\db_E^*$ is defined using choices of Hermitian
metrics $h_X,h_E$ on $X$ and $E$. In general we cannot make $\sS_E$
invariant under $\Aut(E,\db_E)$. The best we can do is to choose
$h_E$ invariant under a maximal compact subgroup $G$ of
$\Aut(E,\db_E)$. Then $\sS_E$ is invariant under $G$, and hence
under $G^{\sst\C}$ as $\sS_E$ is a closed complex submanifold.

We can improve the group-invariance in Theorem \ref{dt5thm3} if we
restrict to moduli stacks of {\it semistable\/} sheaves.\index{coherent
sheaf!semistable}

\begin{cor} Let\/ $X$ be a Calabi--Yau $3$-fold over\/ $\C$. Write
$\tau$ for Gieseker stability of coherent sheaves on $X$ w.r.t.\
some ample line bundle $\cO_X(1),$ and\/ $\fM_\rss^\al(\tau)$ for
the moduli stack of\/ $\tau$-semistable sheaves with Chern character
$\al$. It is an open Artin\/ $\C$-substack of\/~$\fM$.

Then for each\/ $[E]\in\fM_\rss^\al(\tau)(\C),$ there exists an
affine $\C$-scheme $S$ with associated complex analytic space
$S_\an,$ a point $s\in S_\an,$ a reductive affine algebraic
$\C$-group $H$ acting on $S,$ an \'etale morphism
$\Phi:[S/H]\ra\fM_\rss^\al(\tau)$ mapping $H\cdot s\mapsto [E],$ a
finite-dimensional complex manifold\/ $U$ with a holomorphic action
of\/ $H,$ an $H$-invariant holomorphic function $f:U\ra\C,$ an
$H$-invariant open neighbourhood $V$ of\/ $s$ in $S_\an$ in the
analytic topology, and an $H$-equivariant isomorphism of complex
analytic spaces~$\Xi:\Crit(f)\ra V$.
\label{dt5cor1}
\end{cor}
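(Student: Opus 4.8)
The plan is to deduce Corollary \ref{dt5cor1} from Theorem \ref{dt5thm3} by exploiting the extra input that $E$ is $\tau$-semistable. The key observation is that for a Gieseker semistable sheaf $E$ with Chern character $\al$, the automorphism group $\Aut(E)$ is \emph{reductive}: indeed $\End(E)$ is a finite-dimensional algebra whose semisimple quotient corresponds to the decomposition of $E$ into stable factors of the same reduced Hilbert polynomial, and the Jacobson radical of $\End(E)$ consists of nilpotent endomorphisms (any $\phi\in\End(E)$ with $\tau([\Im\phi])=\tau([E])$ which is not an isomorphism must, by semistability and induction on length, be nilpotent), so $\Aut(E)=\End(E)^\times$ is the complexification of a product of unitary groups, hence reductive. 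Therefore in Theorem \ref{dt5thm3} we have $G^{\sst\C}=\Aut(E)$, the $1$-morphism $\Phi:[S/G^{\sst\C}]\ra\fM$ is \'etale of relative dimension $0$, and we may set $H=G^{\sst\C}=\Aut(E)$, a reductive affine algebraic $\C$-group.

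First I would invoke Theorem \ref{dt5thm3} for the given $[E]\in\fM_\rss^\al(\tau)(\C)$, obtaining the quasiprojective $\C$-scheme $S$, the reductive group $H=\Aut(E)$ acting on $S$, the fixed point $s\in S(\C)$, the \'etale $1$-morphism $\Phi:[S/H]\ra\fM$ with $\Phi(sH)=[E]$, together with the complex-analytic data: an $H$-invariant open $U\subseteq\Ext^1(E,E)$, an $H$-invariant holomorphic $f:U\ra\C$ with $f(0)=\rd f|_0=0$, an $H$-invariant open $V\subseteq S_\an$ containing $s$, and an $H$-equivariant isomorphism $\Xi:\Crit(f)\ra V$. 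The only remaining point is to arrange that $\Phi$ lands in the open substack $\fM_\rss^\al(\tau)$ rather than merely $\fM$. Since $\fM_\rss^\al(\tau)$ is open in $\fM$ (by \S\ref{dt32}, Definition \ref{dt3def4}) and $[E]\in\fM_\rss^\al(\tau)(\C)$ lies in the image of $\Phi$, the preimage $\Phi^{-1}(\fM_\rss^\al(\tau))$ is an open substack of $[S/H]$ containing $sH$; pulling back along the quotient map $S\ra[S/H]$ gives an $H$-invariant open subscheme $S'\subseteq S$ containing $s$. Shrinking $S$ to $S'$ (still affine after a further $H$-invariant affine shrinking, using that $H$ is reductive so $S'$ can be covered by $H$-invariant affine opens, or simply replacing $S$ by a principal $H$-invariant affine open neighbourhood of the closed orbit $Hs$), and correspondingly shrinking $V$ to $V\cap S'_\an$ and $U,f$ to the preimage under $\Xi$, we obtain the restricted data with $\Phi:[S/H]\ra\fM_\rss^\al(\tau)$ \'etale, as required. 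The fact that $Hs$ is closed in the affine scheme $S$ (it is a single fixed point) is what lets us apply standard reductive-group slice arguments to keep $S$ affine and $H$-invariant.

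The main obstacle I anticipate is purely bookkeeping rather than conceptual: verifying that after shrinking $S$ we may still take it \emph{affine} and $H$-\emph{invariant} simultaneously, and that all the analytic data $U,f,V,\Xi$ restrict compatibly. For the first point, since $s$ is an $H$-fixed point of the affine $\C$-scheme $S$ and $H$ is reductive, by Sumihiro's theorem (or directly, since $\{s\}=Hs$ is closed) there is an $H$-invariant affine open neighbourhood of $s$ inside any given $H$-invariant open, so intersecting with $S'$ above does the job. For the analytic data, one just sets $U_{\mathrm{new}}=\Xi^{-1}(V\cap S'_\an)\cap U$ — but care is needed because $\Crit(f)$ rather than $U$ is what maps isomorphically to $V$, so one should instead take $V_{\mathrm{new}}=V\cap S'_\an$, let $C_{\mathrm{new}}=\Xi^{-1}(V_{\mathrm{new}})$, choose an $H$-invariant open $U_{\mathrm{new}}\subseteq U$ in $\Ext^1(E,E)$ with $\Crit(f)\cap U_{\mathrm{new}}=C_{\mathrm{new}}$ (possible since $\Crit(f)$ is closed in $U$ and $C_{\mathrm{new}}$ is open in $\Crit(f)$), and restrict $f$ to $U_{\mathrm{new}}$. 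All the equivariance and derivative statements are inherited from Theorem \ref{dt5thm3} under restriction. Thus the proof is essentially a routine specialization of Theorem \ref{dt5thm3}, the one genuine ingredient being the reductivity of $\Aut(E)$ for $\tau$-semistable $E$.
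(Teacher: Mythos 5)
There is a genuine gap, and it is in the one ingredient you call ``the genuine ingredient'': the claim that $\Aut(E)$ is reductive for every Gieseker semistable $E$ is false. Semistability only forces all Jordan--H\"older factors of $E$ to have the same reduced Hilbert polynomial; it does not force $\End(E)$ to be semisimple. For a concrete counterexample, take a $\tau$-stable sheaf $F$ with $\Ext^1(F,F)\ne 0$ and let $E$ be a non-split extension $0\ra F\ra E\ra F\ra 0$. Then $E$ is $\tau$-semistable, but the composition $E\twoheadrightarrow F\hookra E$ is a nonzero square-zero endomorphism, and one computes $\End(E)\cong\C\,\id_E\op\C\cdot(\text{nilpotent})$, so $\Aut(E)\cong\bG_m\lt\bG_a$, which is not reductive. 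Your argument conflates ``the Jacobson radical consists of nilpotents'' (true for any finite-dimensional algebra) with ``the Jacobson radical is zero'' (true only when $E$ is polystable). Consequently you cannot take $H=\Aut(E)$, and Theorem \ref{dt5thm3} applied at $E$ itself gives $\Phi$ smooth of relative dimension $\dim\Aut(E)-\dim G^{\sst\C}>0$, not \'etale, so the whole strategy of ``specialize Theorem \ref{dt5thm3} at $[E]$'' breaks down exactly in the strictly semistable case the corollary is designed to cover.

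The paper's proof avoids this by not working at $E$ at all. It takes the polystable representative $E'=m_1E_1\op\cdots\op m_kE_k$ of the S-equivalence class of $E$, for which $\Aut(E')\cong\prod_{i=1}^k\GL(m_i,\C)$ \emph{is} reductive (it is the complexification of $\prod_i\U(m_i)$), applies Theorem \ref{dt5thm3} to $E'$ to get an \'etale $\Phi:[S/H]\ra\fM$ with $\Phi(H\cdot 0)=[E']$ and the $H$-equivariant data $U,f,V,\Xi$, and then uses that $E$ is an arbitrarily small deformation of $E'$: in the non-separated moduli stack, $[E]$ lies in the image under $\Phi_*$ of \emph{every} open neighbourhood of $[H\cdot 0]$, hence in the image of any $H$-invariant analytic open $V\ni 0$, so there exists $s\in V$ with $\Phi(H\cdot s)=[E]$. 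Note that the corollary deliberately does not assert that $s$ is $H$-fixed or that $H\cong\Aut(E)$ --- that is the loophole which makes the statement true. Your handling of the restriction to the open substack $\fM_\rss^\al(\tau)$ is not the issue; the fix needed is to replace your reductivity claim by the passage to the polystable sheaf $E'$ and the small-deformation argument above.
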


\begin{proof} Let $[E]\in\fM_\rss^\al(\tau)(\C)$. Then by properties
of Gieseker stability, $E$ has a Jordan--H\"older decomposition into
pairwise non-isomorphic stable factors $E_1,\ab\ldots,\ab E_k$ with
multiplicities $m_1,\ldots,m_k$ respectively, and $E$ is an
arbitrarily small deformation of $E'=m_1E_1\op\cdots\op m_kE_k$. We
have $\Hom(E_i,E_j)=0$ if $i\ne j$ and $\Hom(E_i,E_i)=\C$. Thus
$\Aut(E')\cong\prod_{i=1}^k\GL(m_i,\C)$, which is the
complexification of its maximal compact subgroup
$\prod_{i=1}^k\U(m_i)$. Applying Theorem \ref{dt5thm3} to $E'$ with
$G=\prod_{i=1}^k\U(m_i)$ and $G^{\sst\C}=\Aut(E')$ gives
$S,H=G^{\sst\C},\Phi,U,f,V,\ab\Xi$. Since $E$ is an arbitrarily
small deformation of $E'$ and $\Phi$ is \'etale with $\Phi_*:[H\cdot
0]\mapsto[E']$, $[E]$ lies in the image under $\Phi_*$ of any open
neighbourhood of $[H\cdot 0]$ in $[S/H](\C)$, and thus $[E]$ lies in
the image of any $H$-invariant open neighbourhood $V$ of $0$ in
$S_\an$, in the analytic topology. Hence there exists $s\in
V\subseteq S_\an$ with $\Phi(H\cdot s)=[E]$. The corollary follows.
\end{proof}\index{complex analytic space|)}

We can connect the last three results with the ideas on {\it
perverse sheaves and vanishing cycles\/} sketched in \S\ref{dt42}.
The first author would like to thank Kai Behrend, Jim Bryan and
Bal\'{a}zs Szendr\H oi for explaining the following ideas. Theorem
\ref{dt5thm2} proves that the complex algebraic space $\M_\rsi$ may
be written locally in the complex analytic topology as $\Crit(f)$,
for $f:U\ra\C$ holomorphic and $U$ a complex manifold. Therefore
Theorem \ref{dt4thm3} shows that locally in the complex analytic
topology, there is a {\it perverse sheaf of vanishing
cycles\/}\index{vanishing cycle!perverse sheaf}\index{perverse sheaf!of
vanishing cycles} $\phi_f(\underline{\Q}[\dim U-1])$ supported on
$\Crit(f)\cong\M_\rsi$, which projects to $\nu_{\M_\rsi}$ under
$\chi_{U_0}$. So it is natural to ask whether we can glue these to
get a global perverse sheaf on~$\M_\rsi$:

\begin{quest}{\bf(a)} Let\/ $X$ be a Calabi--Yau\/ $3$-fold over\/
$\C,$ and write\/ $\M_\rsi$ for the coarse moduli space of simple
coherent sheaves on\/ $X$. Does there exist a natural perverse
sheaf\/ $\cal P$ on $\M_\rsi,$ with\/ $\chi_{\M_\rsi}({\cal
P})=\nu_{\M_\rsi},$ which is locally isomorphic to
$\phi_f(\underline{\Q}[\dim U-1])$ for $f,U$ as in Theorem
{\rm\ref{dt5thm2}?}
\smallskip

\noindent{\bf(b)} Is there also some Artin stack version of\/ $\cal
P$ in\/ {\bf(a)} for the moduli stack\/ $\fM,$ locally isomorphic to
$\phi_f(\underline{\Q}[\dim U-1])$ for $f,U$ as in Theorem
{\rm\ref{dt5thm3}?}
\label{dt5quest1}
\end{quest}

The authors have no particular view on whether the answer is yes or
no. One can also ask Question \ref{dt5quest1} for Saito's mixed
Hodge modules~\cite{Sait}.\index{mixed Hodge module}

\begin{rem}{\bf(i)} Question \ref{dt5quest1}(a) could be tested by
calculation in examples, such as the Hilbert scheme\index{Hilbert
scheme} of $n$ points on $X$. Partial results in this case can be
found in Dimca and Szendr\H oi \cite{DiSz} and Behrend, Bryan and
Szendr\H oi \cite{BBS}, see in particular~\cite[Rem.~3.2]{BBS}.
\smallskip

\noindent{\bf(ii)} If the answer to Question \ref{dt5quest1}(a) is
yes, it would provide a way of {\it categorifying\/} (conventional)
Donaldson--Thomas invariants $DT^\al(\tau)$. That is, if $\al\in
K(\coh(X))$ with $\M_\rss^\al(\tau)=\M_\st^\al(\tau)$, as in
\S\ref{dt43}, then we can restrict $\cal P$ in Question
\ref{dt5quest1}(a) to a perverse sheaf on the open, proper subscheme
$\M_\st^\al(\tau)$ in $\M_\rsi$, and form the {\it
hypercohomology\/}\index{perverse sheaf!hypercohomology} ${\mathbb
H}^*\bigl(\M_\st^\al(\tau);{\cal
P}\vert_{\M_\st^\al(\tau)}\bigr)$,\nomenclature[HzP]{${\mathbb
H}^*(\M;{\cal P})$}{hypercohomology of a perverse sheaf $\cal P$ on
a scheme $\M$} which is a finite-dimensional graded $\Q$-vector
space. Then
\e
\begin{split}
\sum_{k\in\Z}(-1)^k\dim{\mathbb H}^k\bigl(\M_\st^\al(\tau);{\cal
P}\vert_{\M_\st^\al(\tau)}\bigr)&=\chi\bigl(\M_\st^\al(\tau),
\chi_{\M_\rsi}({\cal P})\vert_{\M_\st^\al(\tau)}\bigr)\\[-6pt]
=\chi\bigl(\M_\st^\al(\tau),\nu_{\M_\rsi}\vert_{\M_\st^\al(\tau)}\bigr)
&=\chi\bigl(\M_\st^\al(\tau),\nu_{\M_\st^\al(\tau)}\bigr)=DT^\al(\tau),
\end{split}
\label{dt5eq1}
\e
where the first equality in \eq{dt5eq1} holds because we have a
commutative diagram
\begin{equation*}
\xymatrix@C=50pt@R=10pt{ D^b_\Con(\M_\st^\al(\tau)) \ar[r]_{R\pi_*}
\ar[d]^{\chi_{\M_\st^\al(\tau)}} &
D^b_\Con(\Spec\C) \ar[d]_{\chi_{\Spec\C}} \\
\CF^\an_\Z(\M_\st^\al(\tau)) \ar[r]^{\CF(\pi)} &
\CF^\an_\Z(\Spec\C).}
\end{equation*}
by \eq{dt4eq4}, where $\pi:\M_\st^\al(\tau)\ra\Spec\C$ is the
projection, which is proper as $\M_\st^\al(\tau)$ is proper, and the
last equality in \eq{dt5eq1} holds by~\eq{dt4eq16}.

Thus, ${\mathbb H}^*\bigl(\M_\st^\al(\tau);{\cal
P}\vert_{\M_\st^\al(\tau)}\bigr)$ would be a natural cohomology
group of $\M_\st^\al(\tau)$ whose Euler characteristic is the
Donaldson--Thomas invariant by \eq{dt5eq1}; the Poin\-car\'e
polynomial of ${\mathbb H}^*\bigl(\M_\st^\al(\tau);{\cal
P}\vert_{\M_\st^\al(\tau)}\bigr)$ would be a lift of $DT^\al(\tau)$
to $\Z[t,t^{-1}]$, which might also be interesting.
\smallskip

\noindent{\bf(iii)} If the answers to Question
\ref{dt5quest1}(a),(b) are no, at least locally in the Zariski
topology,\index{Zariski topology} this might be bad news for the
programme of Kontsevich--Soibelman \cite{KoSo1} to extend
Donaldson--Thomas invariants of Calabi--Yau 3-folds to other motivic
invariants.\index{motivic invariant} Kontsevich and Soibelman wish to
associate a `motivic Milnor fibre'\index{Milnor fibre!motivic} to each
point of $\fM$. The question of how these vary in families over the
base $\fM$ is important, but not really addressed in \cite{KoSo1}.
It appears to the authors to be a similar issue to whether one can
glue perverse sheaves above; indeed, $\cal P$ in Question
\ref{dt5quest1} may be some kind of cohomology pushforward of the
Kontsevich--Soibelman family of motivic Milnor fibres, if this
exists.
\label{dt5rem3}
\end{rem}

The last three results use transcendental complex analysis, and so
work only over $\C$. It is an important question whether analogous
results can be proved using strictly algebraic methods, and over
fields $\K$ other than $\C$. Observe that above we locally write
$\M_\rsi$ as $\Crit(f)$ for $f:U\ra\C$, that is, we write $\M_\rsi$
as the zeroes $(\rd f)^{-1}(0)$ of a closed 1-form $\rd f$ on a
smooth complex manifold $U$. A promising way to generalize Theorems
\ref{dt5thm2}--\ref{dt5thm3} to the algebraic context is to replace
$\rd f$ by an {\it almost closed\/ $1$-form\/} $\om$,\index{almost
closed 1-form} in the sense of~\S\ref{dt44}.

Results of Thomas \cite{Thom} imply that the coarse moduli space of
simple coherent sheaves $\M_\rsi$ on $X$ carries a symmetric
obstruction theory,\index{symmetric obstruction theory}\index{obstruction
theory!symmetric} and thus Proposition \ref{dt4prop4} shows that
$\M_\rsi$ is locally isomorphic to the zeroes of an almost closed
1-form $\om$ on a smooth variety $U$. Etale locally near
$[E]\in\M_\rsi(\K)$ we can take $U$ to be $\Ext^1(E,E)$. Thus we
deduce:

\begin{prop} Let\/ $\K$ be an algebraically closed field and\/ $X$
a Calabi--Yau $3$-fold over\/ $\K,$ and write\/ $\M_\rsi$ for the
coarse moduli space of simple coherent sheaves on\/ $X,$ which is an
algebraic $\K$-space. Then for each point\/ $[E]\in\M_\rsi(\K)$
there exists a Zariski open subset\/ $U$ in the affine $\K$-space
$\Ext^1(E,E)$ with\/ $0\in U(\K),$ an algebraic almost closed
$1$-form\/ $\om$ on $U$ with\/ $\om\vert_0=\pd\om\vert_0=0,$ and an
\'etale morphism $\xi:\om^{-1}(0)\ra\M_\rsi$ with\/
$\xi(0)=[E]\in\M_\rsi(\K)$ and\/ $\rd\xi\vert_0:T_0(\om^{-1}(0))=
\Ext^1(E,E)\ra T_{[E]}\M_\rsi$ the natural isomorphism, where
$\om^{-1}(0)$ is the $\K$-subscheme of\/ $U$ on which\/~$\om\equiv
0$.
\label{dt5prop1}
\end{prop}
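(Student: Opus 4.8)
The plan is to combine two results from the excerpt: that $\M_\rsi$ carries a symmetric obstruction theory, following Thomas \cite{Thom}, and Proposition \ref{dt4prop4} on the local structure of $\K$-schemes equipped with a symmetric obstruction theory. The construction of the symmetric obstruction theory in \cite{Thom} is \'etale-local and uses only the deformation theory of a sheaf $E$ with $\Hom(E,E)\cong\K$ together with Serre duality on the Calabi--Yau $3$-fold $X$, which pairs the deformation space $\Ext^1(E,E)$ with the obstruction space $\Ext^2(E,E)$ and makes the obstruction theory symmetric; it therefore applies to $\M_\rsi$ over any algebraically closed field $\K$, even though $\M_\rsi$ is only an algebraic space. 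Note in particular that $\dim_\K\Ext^1(E,E)=\dim_\K\Ext^2(E,E)$ for $E$ simple on a Calabi--Yau $3$-fold, again by Serre duality.

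First I would pass to an \'etale chart, since Proposition \ref{dt4prop4} is phrased for schemes. Fix $[E]\in\M_\rsi(\K)$, choose a $\K$-scheme $W$ with an \'etale morphism $\pi:W\ra\M_\rsi$ and $w\in W(\K)$ with $\pi(w)=[E]$, and restrict the symmetric obstruction theory along $\pi$ to $W$. Proposition \ref{dt4prop4} applied to $W$ near $w$ gives a smooth $\K$-scheme $M$, an algebraic almost closed $1$-form on $M$, and an isomorphism between a Zariski-open neighbourhood $Y$ of $w$ in $W$ and the zero scheme of that $1$-form; composing the inverse of this isomorphism with $Y\hookra W\xrightarrow{\pi}\M_\rsi$ produces an \'etale morphism from the zero scheme to $\M_\rsi$ carrying the preimage of $w$ to $[E]$. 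This is exactly why $\xi$ is only asserted to be \'etale, not an open immersion: $\M_\rsi$ is an algebraic space, and we have used a chart of it.

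The step needing care is to take the smooth ambient scheme to be a Zariski-open neighbourhood $U$ of $0$ in the vector space $\Ext^1(E,E)$, with $0$ the point lying over $[E]$. For this I would use that the local model in the proof of Proposition \ref{dt4prop4} (Behrend \cite[Prop.~3.14]{Behr}) can be chosen \emph{minimal} at the marked point, i.e.\ with $Y\hookra M$ inducing an isomorphism on Zariski tangent spaces at $w$; concretely, \'etale-locally near $w$ a versal family identifies the obstruction complex with $[\Ext^1(E,E)\ot\cO\ra\Ext^2(E,E)\ot\cO]$, so the moduli space is \'etale-locally the zero scheme of a morphism $U\ra\Ext^2(E,E)$ on a Zariski-open $U\ni 0$ in $\Ext^1(E,E)$, and the symmetry of the obstruction theory identifies $\Ext^2(E,E)\cong\Ext^1(E,E)^\vee$, turning this morphism into a $1$-form $\om$ on $U$, which Behrend's argument shows to be almost closed. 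This yields the required $U$, $\om$, and \'etale morphism $\xi:\om^{-1}(0)\ra\M_\rsi$ with $\xi(0)=[E]$.

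It remains to check the tangent-space and vanishing assertions. Standard deformation theory of simple sheaves gives $T_{[E]}\M_\rsi\cong\Ext^1(E,E)$: the scalars $\Aut(E)\cong\K^\times$ act trivially on $\Ext^1(E,E)$, so the coarse moduli space has tangent space $\Ext^1(E,E)$ at $[E]$, just as the stack $\fM$ does. Writing $\om=\sum_i f_i\,\rd z_i$ in the global linear coordinates $z_1,\ldots,z_n$ on $\Ext^1(E,E)$, the subscheme $\om^{-1}(0)\subseteq U$ is cut out by $f_1,\ldots,f_n$, so $\om\vert_0=0$ since $0\in\om^{-1}(0)(\K)$, and $T_0\bigl(\om^{-1}(0)\bigr)=\Ker\bigl((\pd f_i/\pd z_j)(0)\bigr)\subseteq T_0U$. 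As $\xi$ is \'etale, $\rd\xi\vert_0:T_0\bigl(\om^{-1}(0)\bigr)\ra T_{[E]}\M_\rsi$ is an isomorphism, and $\dim_\K T_{[E]}\M_\rsi=n=\dim_\K T_0U$, forcing $T_0\bigl(\om^{-1}(0)\bigr)=T_0U=\Ext^1(E,E)$, whence $(\pd f_i/\pd z_j)(0)=0$ for all $i,j$, i.e.\ $\pd\om\vert_0=0$ (a symmetric bilinear form on $T_0U$ by almost-closedness of $\om$). Finally, composing $\xi$ with a suitable linear automorphism of $\Ext^1(E,E)$ --- equivalently, taking the versal family with identity Kodaira--Spencer map --- arranges that $\rd\xi\vert_0$ is the natural isomorphism $T_0\bigl(\om^{-1}(0)\bigr)=\Ext^1(E,E)\ra T_{[E]}\M_\rsi$, completing the proof. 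I expect the minimality and naturality step of the third paragraph to be the main obstacle: it requires showing, within the proof of Proposition \ref{dt4prop4}, that the smooth ambient scheme can be taken to be $\Ext^1(E,E)$ Zariski-locally and compatibly with the deformation-theoretic identifications; the remaining steps are essentially bookkeeping.
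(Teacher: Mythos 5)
Your argument is essentially the paper's own: the result is deduced by combining Thomas's symmetric obstruction theory on $\M_\rsi$ with Proposition \ref{dt4prop4}, together with the observation that \'etale-locally near $[E]$ the smooth ambient scheme can be taken minimal, i.e.\ a Zariski-open neighbourhood of $0$ in $\Ext^1(E,E)$. Your extra bookkeeping (the tangent-space dimension count forcing $\pd\om\vert_0=0$, and the linear change of coordinates normalizing $\rd\xi\vert_0$) is correct and simply fills in details the paper leaves implicit.
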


This is an analogue of Theorem \ref{dt5thm2}, with $\C$ replaced by
any algebraically closed $\K$, the complex analytic topology
replaced by the \'etale topology,\index{etale topology@\'etale
topology} and the closed 1-form $\rd f$ replaced by the almost
closed 1-form $\om$. We can ask whether there is a corresponding
algebraic analogue of Theorem~\ref{dt5thm3}.

\begin{quest} Let\/ $\K$ be an algebraically closed field and\/ $X$
a Calabi--Yau $3$-fold over\/ $\K,$ and write\/ $\fM$ for the moduli
stack of coherent sheaves on\/~$X$.
\begin{itemize}
\setlength{\itemsep}{0pt}
\setlength{\parsep}{0pt}
\item[{\bf(a)}] For each\/ $[E]\in\fM(\K),$ does there
exist a Zariski open subset\/ $U$ in the affine $\K$-space
$\Ext^1(E,E)$ with\/ $0\in U(\K),$ an algebraic almost closed
$1$-form\/ $\om$ on $U$ with\/ $\om\vert_0=\pd\om\vert_0=0,$ and
a $1$-morphism $\xi:\om^{-1}(0)\ra\fM$ smooth of relative
dimension $\dim\Aut(E),$ with\/ $\xi(0)=[E]\in\fM(\K)$ and\/
$\rd\xi\vert_0:T_0(\om^{-1}(0))=\Ext^1(E,E)\ra T_{[E]}\fM$ the
natural isomorphism?
\item[{\bf(b)}] In\/ {\bf(a)}{\rm,} let\/ $G$ be a maximal reductive
subgroup of\/ $\Aut(E),$ acting on $\Ext^1(E,E)$ by
$\ga:\ep\!\mapsto\!\ga\!\ci\!\ep\!\ci\!\ga^{-1}$. Can we take
$U,\om,\xi$ to be $G$-invariant?
\end{itemize}
\label{dt5quest2}
\end{quest}

\subsection{Identities on Behrend functions of moduli stacks}
\label{dt52}
\index{Behrend function|(}

We use the results of \S\ref{dt51} to study the {\it Behrend
function\/} $\nu_{\fM}$ of the moduli stack $\fM$ of coherent
sheaves on $X$, as in \S\ref{dt4}. Our next theorem is proved
in~\S\ref{dt10}.

\begin{thm} Let\/ $X$ be a Calabi--Yau $3$-fold over\/ $\C,$ and\/
$\fM$ the moduli stack of coherent sheaves on\/ $X$. The
\begin{bfseries}Behrend function\end{bfseries}\/ $\nu_{\fM}:
\fM(\C)\ra\Z$ is a natural locally constructible
function\index{constructible function!locally} on $\fM$. For all\/
$E_1,E_2\in\coh(X),$ it satisfies:\index{Behrend function!identities}
\begin{gather}
\nu_{\fM}(E_1\op E_2)=(-1)^{\bar\chi([E_1],[E_2])}
\nu_{\fM}(E_1)\nu_{\fM}(E_2),
\label{dt5eq2}\\
\begin{split}
\int_{\begin{subarray}{l}[\la]\in\mathbb{P}(\Ext^1(E_2,E_1)):\\
\la\; \Leftrightarrow\; 0\ra E_1\ra F\ra E_2\ra
0\end{subarray}}\!\!\!\!\!\!\nu_{\fM}(F)\,\rd\chi -
\int_{\begin{subarray}{l}[\ti\la]\in\mathbb{P}(\Ext^1(E_1,E_2)):\\
\ti\la\; \Leftrightarrow\; 0\ra E_2\ra\ti F\ra E_1\ra
0\end{subarray}}\!\!\!\!\!\!\nu_{\fM}(\ti F)\,\rd\chi \\
=\bigl(\dim\Ext^1(E_2,E_1)-\dim\Ext^1(E_1,E_2)\bigr)
\nu_{\fM}(E_1\op E_2).
\end{split}
\label{dt5eq3}
\end{gather}
Here\/ $\bar\chi([E_1],[E_2])$ in \eq{dt5eq2} is defined in
{\rm\eq{dt3eq1},} and in \eq{dt5eq3} the correspondence between\/
$[\la]\in\mathbb{P}(\Ext^1(E_2,E_1))$ and\/ $F\in\coh(X)$ is that\/
$[\la]\in\mathbb{P}(\Ext^1(E_2,E_1))$ lifts to some\/
$0\ne\la\in\Ext^1(E_2,E_1),$ which corresponds to a short exact
sequence\/ $0\ra E_1\ra F\ra E_2\ra 0$ in\/ $\coh(X)$ in the usual
way. The function $[\la]\mapsto\nu_{\fM}(F)$ is a constructible
function\/ $\mathbb{P}(\Ext^1(E_2,E_1))\ra\Z,$ and the integrals in
\eq{dt5eq3} are integrals of constructible functions using the Euler
characteristic as measure.
\label{dt5thm4}
\end{thm}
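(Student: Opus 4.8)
The plan is to reduce the Behrend function identities \eq{dt5eq2}--\eq{dt5eq3} to a local statement about $\Crit(f)$ for $f:U\ra\C$ holomorphic, using Theorem \ref{dt5thm3} (or Corollary \ref{dt5cor1}), and then to prove the local statement using the Milnor fibre formula \eq{dt4eq2} together with the localization idea from Kontsevich--Soibelman. First I would observe that $\nu_\fM$ is a well-defined locally constructible function by Proposition \ref{dt4prop2}, since $\fM$ is an Artin $\C$-stack locally of finite type; and that the functions $[\la]\mapsto\nu_\fM(F)$ on $\mathbb{P}(\Ext^1(E_2,E_1))$ are constructible because $F$ varies in an algebraic family over $\mathbb{P}(\Ext^1(E_2,E_1))$ parametrized by a map to $\fM$, so $\nu_\fM(F)$ is the pullback of the constructible function $\nu_\fM$.

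For \eq{dt5eq2}, the key is to apply Theorem \ref{dt5thm3} to $E=E_1\op E_2$. This writes an atlas for $\fM$ near $[E_1\op E_2]$ in the form $\Crit(f)$ for $f:U\ra\C$ with $U$ an open neighbourhood of $0$ in $\Ext^1(E,E)=\bigop_{i,j}\Ext^1(E_i,E_j)$, and we may take $U,f$ invariant under $G^{\sst\C}$, which contains the torus $T=\{\id_{E_1}\op\la\,\id_{E_2}:\la\in\bG_m\}$ acting on $\Ext^1(E_i,E_j)$ by weight $j-i$. Then $\nu_\fM(E_1\op E_2)=(-1)^{\dim U}(1-\chi(MF_f(0)))$ by \eq{dt4eq2} and Proposition \ref{dt4prop2}. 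The idea is to use the $T$-action: the Milnor fibre $MF_f(0)$ carries a $\C^*$-action, and its Euler characteristic equals that of the $T$-fixed locus, which corresponds to the Milnor fibre of $f$ restricted to the weight-zero subspace $\Ext^1(E_1,E_1)\op\Ext^1(E_2,E_2)$. A dimensional-reduction / Thom--Sebastiani type argument, comparing $\dim U$ with $\dim\Ext^1(E_1,E_1)+\dim\Ext^1(E_2,E_2)$ and noting that the difference is $\dim\Ext^1(E_1,E_2)+\dim\Ext^1(E_2,E_1)=\bar\chi([E_1],[E_2])+2\dim\Hom(E_1,E_2)$ up to the relevant parity (using Serre duality $\Ext^1(E_i,E_j)\cong\Ext^2(E_j,E_i)^*$ and the antisymmetry of $\bar\chi$ in \eq{dt3eq14}), should yield the sign $(-1)^{\bar\chi([E_1],[E_2])}$ and factor $\nu_\fM(E_1)\nu_\fM(E_2)$.

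For \eq{dt5eq3}, the plan is again to work on $U\subset\Ext^1(E_1\op E_2,E_1\op E_2)$ with its holomorphic function $f$, and to analyze the two families of extensions. A point $[\la]\in\mathbb{P}(\Ext^1(E_2,E_1))$ gives $F$ with $[F]=[E_1]+[E_2]$, and there is a point of $\Crit(f)$ (or rather of the atlas $S$) mapping to $[F]$; similarly for $[\ti\la]\in\mathbb{P}(\Ext^1(E_1,E_2))$. The weight decomposition of $\Ext^1(E_1\op E_2,E_1\op E_2)$ under $T$ is into weights $-1,0,+1$, with weight $+1$ piece $\Ext^1(E_1,E_2)$ and weight $-1$ piece $\Ext^1(E_2,E_1)$; the extensions $F$ correspond (via the $T$-action on $\Crit(f)$ and taking limits) to points in these weight spaces. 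I would apply Theorem \ref{dt4thm4} (blow-up formula for Milnor fibres) to the blow-up of $U$ along the weight-zero subspace, relating $\chi(MF_f(0))$ to an integral over $\mathbb{P}$ of the two ruled directions of the Milnor fibres at the blown-up points, which are exactly $MF_f$ at points giving $F$ and $\ti F$. Combining this with \eq{dt4eq2} to convert Milnor fibre Euler characteristics to Behrend function values, and bookkeeping the dimension shifts, should produce \eq{dt5eq3}.

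The hard part will be the second identity \eq{dt5eq3}: one must carefully track how the parameters $[\la]$ and $[\ti\la]$ sit inside the critical locus $\Crit(f)\subset U$, match the Milnor fibres $MF_f$ at those points with $\nu_\fM(F)$ and $\nu_\fM(\ti F)$ via \eq{dt4eq2} (which requires knowing $f$ restricted near those points models an atlas for $\fM$ near $[F]$, $[\ti F]$ — this needs the versality in Theorem \ref{dt5thm3} and some equivariance), and then assemble the blow-up formula of Theorem \ref{dt4thm4} with the correct dimension counts. A subtlety is that $\Ext^1(E_2,E_1)$ and $\Ext^1(E_1,E_2)$ need not have equal dimension, so the two $\mathbb{P}$'s differ, and the term $(\dim\Ext^1(E_2,E_1)-\dim\Ext^1(E_1,E_2))\nu_\fM(E_1\op E_2)$ must emerge from the mismatch between the projectivized normal directions at the center of the blow-up; getting this combinatorics exactly right, uniformly over the $\bG_m$-orbits, is the crux. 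The localization step — that $\chi(MF_f)$ of a $\C^*$-equivariant Milnor fibre reduces to the fixed locus — should itself be justified carefully, e.g. by the standard fact that Euler characteristic with compact support is invariant under $\C^*$-actions with the fixed locus as the contribution.
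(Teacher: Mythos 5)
Your strategy coincides with the paper's (local model from Theorem \ref{dt5thm3} for $E=E_1\op E_2$, the torus $T=\{\id_{E_1}+\la\id_{E_2}\}$, the Milnor fibre formula \eq{dt4eq2}, localization to the $T$-fixed locus, and the blow-up formula of Theorem \ref{dt4thm4} for \eq{dt5eq3}), but two essential steps are missing. For \eq{dt5eq2}, after localizing you still have to convert the fixed-point data --- $f$ restricted to $\Ext^1(E_1,E_1)\op\Ext^1(E_2,E_2)$, equivalently the fixed subscheme $S^T$ of the atlas --- into $\nu_{\fM\times\fM}(E_1,E_2)=\nu_\fM(E_1)\nu_\fM(E_2)$. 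A Thom--Sebastiani splitting of $f$ is not available at this level and is not what is used; instead one must show that the open-and-closed subscheme $R\subseteq S^T$ of points where the induced circle in $\Aut(E')$ acts with weights $\{0,1\}$ (so $E'\cong E_1'\op E_2'$) fits into a locally 2-Cartesian square over $\La:\fM\times\fM\ra\fM$, whence $\Psi:[R/\Aut(E_1)\times\Aut(E_2)]\ra\fM\times\fM$ is \'etale and $\nu_{S^T}(0)=(-1)^{\dim\Aut(E_1)+\dim\Aut(E_2)}\nu_\fM(E_1)\nu_\fM(E_2)$ by Theorem \ref{dt4thm1} and Proposition \ref{dt4prop2}. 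Relatedly, your sign count omits the automorphism-group dimensions from Proposition \ref{dt4prop2} (e.g.\ $\nu_\fM(E_1\op E_2)=(-1)^{\dim\Aut(E)}\nu_S(0)$), and your parity identity $\dim\Ext^1(E_1,E_2)+\dim\Ext^1(E_2,E_1)=\bar\chi([E_1],[E_2])+2\dim\Hom(E_1,E_2)$ is false in general; the exponent only reduces to $\bar\chi([E_1],[E_2])$ after the $\Hom$ terms hidden in $\dim\Aut(E)-\dim\Aut(E_1)-\dim\Aut(E_2)$ are combined with Serre duality $\dim\Hom(E_2,E_1)=\dim\Ext^3(E_1,E_2)$, $\dim\Ext^1(E_2,E_1)=\dim\Ext^2(E_1,E_2)$. (Also, to have $T$ act on the Milnor fibre at all you must take the Hermitian metric defining $MF_f(0)$ to be $G$-invariant.)

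For \eq{dt5eq3} you correctly locate the crux but do not supply the mechanism. Applying Theorem \ref{dt4thm4} to the blow-up of $U'=\{\ep_{21}\ne 0\}$ along $\{\ep_{12}=0\}$ at $(0,0,0,\ep_{21})$ produces an integral over $[\ep_{12}]\in\mathbb{P}(\Ext^1(E_1,E_2))$ of $\chi(MF_{\ti f'})$, and symmetrically for the other family; the point is that these two double integrals over $\mathbb{P}(\Ext^1(E_1,E_2))\times\mathbb{P}(\Ext^1(E_2,E_1))$ are literally equal and cancel in the difference. This follows because $T^{\sst\C}$-invariance of $f$ lets the blown-up function descend through the free $\bG_m$-action to a single holomorphic function $h$ on an open subset of $\Ext^1(E_1,E_1)\times\Ext^1(E_2,E_2)\times(L_{12}\ot L_{21})$ over the product of the two projective spaces, so both integrals equal $\int\chi\bigl(MF_h(0,0,[\ep_{12}],[\ep_{21}],0)\bigr)\rd\chi$, and the surviving terms are exactly $(1-\dim\Ext^1(E_1,E_2))\dim\Ext^1(E_2,E_1)$ minus its mirror, giving the dimension-difference term. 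You also need, prior to this, that $(0,0,0,\ep_{21})$ and $(0,0,\ep_{12},0)$ lie in $\Crit(f)$, are fixed by $\Xi$, and are mapped by $\Phi$ to $[F]$, $[\ti F]$; these follow from $T^{\sst\C}$-equivariance, $\rd f\vert_0=0$, $(\pd^2f)\vert_0=0$ and $\rd\Xi\vert_0=\id$ (as in Proposition \ref{dt10prop2}), not merely from versality. Without these two ingredients the argument does not close.
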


We will prove Theorem \ref{dt5thm4} using Theorem \ref{dt5thm3} and
the Milnor fibre\index{Milnor fibre} description of Behrend
functions from \S\ref{dt42}. We apply Theorem \ref{dt5thm3} to
$E=E_1\op E_2$, and we take the maximal compact subgroup $G$ of
$\Aut(E)$ to contain the subgroup $\bigl\{\id_{E_1}+\la\id_{E_2}:
\la\in\U(1)\bigr\}$, so that $G^{\sst\C}$ contains
$\bigl\{\id_{E_1}+\la\id_{E_2}:\la\in\bG_m\bigr\}$. Equations
\eq{dt5eq2} and \eq{dt5eq3} are proved by a kind of localization
using this $\bG_m$-action on~$\Ext^1(E_1\op E_2,E_1\op E_2)$.

Equations \eq{dt5eq2}--\eq{dt5eq3} are related to a conjecture of
Kontsevich and Soibelman \cite[Conj.~4]{KoSo1} and its application
in \cite[\S 6.3]{KoSo1}, and could probably be deduced from it. The
authors got the idea of proving \eq{dt5eq2}--\eq{dt5eq3} by
localization using the $\bG_m$-action on $\Ext^1(E_1\op E_2, E_1\op
E_2)$ from \cite{KoSo1}. However, Kontsevich and Soibelman approach
\cite[Conj.~4]{KoSo1} via formal power series and non-Archimedean
geometry. Instead, in Theorem \ref{dt5thm3} we in effect first prove
that we can choose the formal power series to be convergent, and
then use ordinary differential geometry and Milnor fibres.

Note that our proof of Theorem \ref{dt5thm4} is {\it not wholly
algebro-geometric\/} --- it uses gauge theory,\index{gauge theory} and
transcendental complex analytic geometry methods. Therefore this
method will not suffice to prove the parallel conjectures in
Kontsevich and Soibelman \cite[Conj.~4]{KoSo1}, which are supposed
to hold for general fields $\K$ as well as $\C$, and for general
motivic invariants of algebraic $\K$-schemes as well as for the
topological Euler characteristic.

\begin{quest}{\bf(a)} Suppose the answers to Questions\/
{\rm\ref{dt4quest}(a)} and\/ {\rm\ref{dt5quest2}} are both yes. Can
one use these to give an alternative, strictly algebraic proof of
Theorem {\rm\ref{dt5thm4}} using \begin{bfseries}almost closed\/
$1$-forms\end{bfseries}\index{almost closed 1-form} as in
\S{\rm\ref{dt44},} either over\/ $\K=\C$ using the linking number
expression for Behrend functions in\/ {\rm\eq{dt4eq17},} or over
general algebraically closed\/ $\K$ of characteristic zero by some
other means?
\smallskip

\noindent{\bf(b)} Might the ideas of\/ {\bf(a)} provide an approach
to proving {\rm\cite[Conj.~4]{KoSo1}} without using formal power
series methods?
\smallskip

\noindent{\bf(c)} Can one extend Theorem\/ {\rm\ref{dt5thm4}} from
the abelian category\/ $\coh(X)$ to the derived
category\index{derived category} $D^b(X),$ say to all objects\/
$E_1\!\op\!E_2$ in $D(X)$ with\/ $\Ext^{<0}(E_1\!\op\!
E_2,E_1\!\op\! E_2)\!=\!0$?
\label{dt5quest3}
\end{quest}\index{Behrend function|)}

\subsection[A Lie algebra morphism, and generalized D--T invariants]{A
Lie algebra morphism $\ti\Psi:\SFai(\fM)\ra\ti L(X),$ and \\
generalized Donaldson--Thomas invariants $\bar{DT}{}^\al(\tau)$}
\label{dt53}

In \S\ref{dt34} we defined an explicit Lie algebra $L(X)$ and Lie
algebra morphisms $\Psi:\SFai(\fM)\ra L(X)$ and
$\Psi^{\chi,\Q}:\oSFai (\fM,\chi,\Q)\ra L(X)$. We now define
modified versions $\ti L(X),\ti\Psi,\ti\Psi^{\chi,\Q}$, with
$\ti\Psi,\ti\Psi^{\chi,\Q}$ weighted by the Behrend function
$\nu_\fM$ of $\fM$. We continue to use the notation
of~\S\ref{dt2}--\S\ref{dt4}.

\begin{dfn} Define a Lie algebra $\ti L(X)$ to be the $\Q$-vector
space with basis of symbols $\ti \la^\al$ for $\al\in
K^\num(\coh(X))$, with Lie bracket\nomenclature[L(X)a]{$\ti L(X)$}{Lie algebra
depending on a Calabi--Yau 3-fold $X$, variant of $L(X)$}\nomenclature[\lambda
a]{$\ti\lambda^\al$}{basis element of Lie algebra $\ti L(X)$}
\e
[\ti\la^\al,\ti \la^\be]=(-1)^{\bar\chi(\al,\be)}\bar\chi(\al,\be)
\ti\la^{\al+\be},
\label{dt5eq4}
\e
which is \eq{dt3eq15} with a sign change. As $\bar\chi$ is
antisymmetric, \eq{dt5eq4} satisfies the Jacobi identity, and makes
$\ti L(X)$ into an infinite-dimensional Lie algebra over~$\Q$.

Define a $\Q$-linear map $\ti\Psi^{\chi,\Q}:\oSFai
(\fM,\chi,\Q)\ra\ti L(X)$ by\nomenclature[\Psi d]{$\ti\Psi^{\chi,\Q}$}{Lie
algebra morphism $\oSFai(\fM,\chi,\Q)\ra \ti L(X)$}
\begin{equation*}
\ti\Psi^{\chi,\Q}(f)=\ts\sum_{\al\in K^\num(\coh(X))}\ga^\al\ti
\la^{\al},
\end{equation*}
as in \eq{dt3eq16}, where $\ga^\al\in\Q$ is defined as follows.
Write $f\vert_{\fM^\al}$ in terms of $\de_i,U_i,\rho_i$ as in
\eq{dt3eq17}, and set
\e
\ga^\al=\ts\sum_{i=1}^n\de_i\chi\bigl(U_i,\rho_i^*(\nu_\fM)\bigr),
\label{dt5eq5}
\e
where $\rho_i^*(\nu_\fM)$ is the pullback of the Behrend function
$\nu_\fM$ to a constructible function\index{constructible function}
on $U_i\times[\Spec\C/\bG_m]$, or equivalently on $U_i$, and
$\chi\bigl(U_i,\rho_i^*(\nu_\fM)\bigr)$ is the Euler characteristic
of $U_i$ weighted by $\rho_i^*(\nu_\fM)$. One can show that the map
from \eq{dt3eq17} to \eq{dt5eq5} is compatible with the relations in
$\oSFai(\fM^\al,\chi,\Q)$, and so $\ti\Psi^{\chi,\Q}$ is
well-defined. Define $\ti\Psi:\SFai(\fM)\ra\ti L(X)$
by~$\ti\Psi=\ti\Psi^{\chi,\Q}\ci\bar\Pi^{\chi,\Q}_\fM$.\nomenclature[\Psi
c]{$\ti\Psi$}{Lie algebra morphism $\SFai(\fM)\ra\ti L(X)$}

Here is an alternative way to write $\ti\Psi^{\chi,\Q},\ti\Psi$
using constructible functions. Define a $\Q$-linear map
$\Pi_{\CF}:\oSFai(\fM,\chi,\Q)\ra\CF(\fM)$ by\nomenclature[\Pi
z]{$\Pi_{\CF}$}{projection $\oSFai(\fM,\chi,\Q)\ra\CF(\fM)$}
\begin{equation*}
\Pi_{\CF}: \ts\sum_{i=1}^n\de_i[(U_i\times[\Spec\C/\bG_m],\rho_i)]
\longmapsto \ts\sum_{i=1}^n\de_i\CF^\na(\rho_i)(1_{U_i}),
\end{equation*}
where by Proposition \ref{dt3prop1} any element of
$\oSFai(\fM,\chi,\Q)$ can be written as
$\sum_{i=1}^n\de_i[(U_i\times[\Spec\C/\bG_m],\rho_i)]$ for
$\de_i\in\Q$, $U_i$ a quasiprojective $\C$-variety, and
$[(U_i\times[\Spec\C/\bG_m],\ab \rho_i)]$ with algebra
stabilizers,\index{stack function!with algebra stabilizers} and
$1_{U_i}\in\CF(U_i)$ is the function 1, and $\CF^\na(\rho_i)$ is as
in Definition \ref{dt2def3}. Then we have
\e
\begin{split}
\ti\Psi^{\chi,\Q}(f)&=\ts\sum_{\al\in K^\num(\coh(X))}\chi^\na
\bigl(\fM^\al,(\Pi_{\CF}(f)\cdot\nu_\fM)\vert_{\fM^\al}\bigr)\,
\ti\la^{\al},\\
\ti\Psi(f)&=\ts\sum_{\al\in K^\num(\coh(X))}\chi^\na
\bigl(\fM^\al,(\Pi_{\CF}\ci\bar\Pi^{\chi,\Q}_\fM(f)\cdot\nu_\fM)
\vert_{\fM^\al}\bigr)\,\ti\la^{\al}.
\end{split}
\label{dt5eq6}
\e
\label{dt5def1}
\end{dfn}

Our Lie algebra $\ti L(X)$ is essentially the same as the Lie
algebra ${\mathfrak g}_\Ga$ of Kontsevich and Soibelman \cite[\S
1.4]{KoSo1}. They also define a Lie algebra ${\mathfrak g}_V$ which
is a completion of a Lie subalgebra of ${\mathfrak g}_\Ga$, and a
pro-nilpotent Lie group $G_V$ with Lie algebra ${\mathfrak g}_V$.
Kontsevich and Soibelman express wall-crossing for Donaldson--Thomas
type invariants in terms of multiplication in the Lie group $G_V$,
whereas we do it in terms of Lie brackets in the Lie algebra $\ti
L(X)$. Applying $\exp: {\mathfrak g}_V\ra G_V$ should transform our
approach to that of Kontsevich and Soibelman.

The reason for the sign change between \eq{dt3eq15} and \eq{dt5eq4}
is the signs involved in Behrend functions, in particular, the
$(-1)^n$ in Theorem \ref{dt4thm1}(ii), which is responsible for the
factor $(-1)^{\bar\chi([E_1],[E_2])}$ in~\eq{dt5eq2}.

Here is the analogue of Theorem \ref{dt3thm4}. It is proved
in~\S\ref{dt11}.

\begin{thm} $\ti\Psi:\SFai(\fM)\ra\ti L(X)$ and\/
$\ti\Psi^{\chi,\Q}:\oSFai(\fM,\chi,\Q)\ra\ti L(X)$ are Lie algebra
morphisms.
\label{dt5thm5}
\end{thm}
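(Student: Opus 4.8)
The plan is to reduce the statement to the already-proved Theorem \ref{dt3thm4} (that $\Psi,\Psi^{\chi,\Q}$ are Lie algebra morphisms) combined with the Behrend function identities \eq{dt5eq2}--\eq{dt5eq3} of Theorem \ref{dt5thm4}. Since $\ti\Psi=\ti\Psi^{\chi,\Q}\ci\bar\Pi^{\chi,\Q}_\fM$ and $\bar\Pi^{\chi,\Q}_\fM:\SFai(\fM)\ra\oSFai(\fM,\chi,\Q)$ is already known to be a Lie algebra morphism, it suffices to treat $\ti\Psi^{\chi,\Q}$. By Proposition \ref{dt3prop1}, $\oSFai(\fM,\chi,\Q)$ is spanned by elements $[(U\times[\Spec\C/\bG_m],\rho)]$ with algebra stabilizers, and the Lie bracket of two such generators is computed by the Ringel--Hall $*$-product minus its opposite, which in turn is given by pushing and pulling along the moduli stack $\fExact$ of short exact sequences, exactly as in Example \ref{dt3ex3}. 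So the whole proof is local on $\fExact$: I would follow the computation \eq{dt3eq19}--\eq{dt3eq21} essentially verbatim, but carrying the extra weight $\nu_\fM$ through every step, and check that the net effect of $\nu_\fM$ is precisely to convert the structure constant $\bar\chi(\al,\be)$ of $L(X)$ into $(-1)^{\bar\chi(\al,\be)}\bar\chi(\al,\be)$, the structure constant of $\ti L(X)$ in \eq{dt5eq4}.

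Concretely, first I would record how the constructible-function formulation \eq{dt5eq6} of $\ti\Psi^{\chi,\Q}$ interacts with the operations on stack functions: using Theorem \ref{dt2thm3} and the projection $\Pi_{\CF}$, the point is that $\ti\Psi^{\chi,\Q}(f*g)$ can be written as a weighted naive Euler characteristic over $\fExact$ of $\pi_2^*(\nu_\fM)$ times pullbacks of the constructible functions attached to $f,g$, while $\ti\Psi^{\chi,\Q}(f)$, $\ti\Psi^{\chi,\Q}(g)$ involve $\nu_\fM$ on the factors. Then for the key local model I take $f=\bde_{E_1}$, $g=\bde_{E_2}$ for sheaves $E_1,E_2$ (the general case follows by the spanning result and bilinearity/continuity of the bracket, together with the fact that $\Pi^\vi_1$-projection reduces everything to virtual rank one, i.e. to $\bde_E$-type generators up to the $\bG_m$-quotient bookkeeping already handled in \cite{Joyc4}). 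Over a point $[\la]\in\mathbb{P}(\Ext^1(E_2,E_1))$ corresponding to $0\ra E_1\ra F\ra E_2\ra 0$, the Behrend weight contributes $\nu_\fM(F)$; over the split locus one gets $\nu_\fM(E_1\op E_2)$. Taking weighted Euler characteristics, the $*$-product side produces the integral $\int_{\mathbb P(\Ext^1(E_2,E_1))}\nu_\fM(F)\,\rd\chi$ plus a $\dim\Hom$-term times $\nu_\fM(E_1\op E_2)$ (the latter from the relation Definition \ref{dt2def10}(iii), exactly the $-\dim\Hom(F,E)$ coefficient $F(\bG_m^2\lt\Hom,\bG_m^2,\bG_m)$ appearing in \eq{dt3eq20}), and similarly for $g*f$ with $E_1,E_2$ swapped.

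Subtracting, the $\nu_\fM(E_1\op E_2)$ appears with coefficient $\bigl(\dim\Hom(E_1,E_2)-\dim\Hom(E_2,E_1)\bigr)+\bigl(\dim\Ext^1(E_2,E_1)-\dim\Ext^1(E_1,E_2)\bigr)$ — by \eq{dt5eq3} the difference of the two projective-space integrals equals $\bigl(\dim\Ext^1(E_2,E_1)-\dim\Ext^1(E_1,E_2)\bigr)\nu_\fM(E_1\op E_2)$, so everything collapses to $\bar\chi([E_1],[E_2])\,\nu_\fM(E_1\op E_2)$ using \eq{dt3eq14}. On the other hand, by \eq{dt5eq2}, $\nu_\fM(E_1\op E_2)=(-1)^{\bar\chi(\al,\be)}\nu_\fM(E_1)\nu_\fM(E_2)$, and $\ti\Psi^{\chi,\Q}(\bde_{E_i})=\nu_\fM(E_i)\,\ti\la^{[E_i]}$ up to the simple-sheaf normalization; combining gives $\ti\Psi^{\chi,\Q}([\bde_{E_1},\bde_{E_2}])=(-1)^{\bar\chi(\al,\be)}\bar\chi(\al,\be)\,\nu_\fM(E_1)\nu_\fM(E_2)\,\ti\la^{\al+\be}=[\ti\Psi^{\chi,\Q}(\bde_{E_1}),\ti\Psi^{\chi,\Q}(\bde_{E_2})]$ by \eq{dt5eq4}, as required. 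I expect the main obstacle to be bookkeeping rather than conceptual: correctly tracking the $\bG_m$- and $\bG_m^2$-stabilizer factors, the coefficients $F(G,T^G,Q)$, and the virtual-rank projections $\Pi^\vi_1$ through the Behrend-weighted Euler characteristic, so that the weighted analogue of the identity $\chi(\mathbb P(\Ext^1(E,F)))=\dim\Ext^1(E,F)$ used at the end of Example \ref{dt3ex3} is replaced correctly by the Behrend-weighted integrals, and verifying that \eq{dt5eq6} really is independent of the presentation \eq{dt3eq17} once $\nu_\fM$ is inserted (this is where one uses that $\nu_\fM$ pulls back with the sign $(-1)^n$ under smooth maps, Proposition \ref{dt4prop2}, to see compatibility with relation Definition \ref{dt2def10}(ii)). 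Modulo this, the argument is a weighted transcription of the proof of Theorem \ref{dt3thm4}, with \eq{dt5eq2}--\eq{dt5eq3} supplying exactly the new input needed to absorb the Behrend weights.
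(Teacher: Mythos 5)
Your overall strategy is exactly the paper's: reduce to $\ti\Psi^{\chi,\Q}$, insert the Behrend weight $\nu_\fM$ into the proof that $\Psi^{\chi,\Q}$ is a Lie algebra morphism, and let the identities \eq{dt5eq2}--\eq{dt5eq3} absorb the weights, with the final accounting $\bigl(\dim\Ext^1(E_2,E_1)-\dim\Ext^1(E_1,E_2)\bigr)+\bigl(\dim\Hom(E_1,E_2)-\dim\Hom(E_2,E_1)\bigr)=\bar\chi([E_1],[E_2])$ and the sign from \eq{dt5eq2} producing the structure constants of $\ti L(X)$. That pointwise computation is precisely the heart of the paper's proof in \S\ref{dt11} (it appears there as the evaluation of the constructible function $F_{ij}(z_1,\hat z_2)$), so the core of your argument is sound and matches the paper.

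The one step that does not work as you state it is the reduction ``take $f=\bde_{E_1}$, $g=\bde_{E_2}$; the general case follows by the spanning result and bilinearity/continuity of the bracket and $\Pi^\vi_1$.'' Proposition \ref{dt3prop1} spans $\oSFai(\fM,\chi,\Q)$ by classes $[(U\times[\Spec\C/\bG_m],\rho)]$ with $U$ a quasiprojective \emph{variety}; such a family class is not a finite linear combination of point-supported classes $\bde_E$, so bilinearity gives you nothing, and ``continuity'' of the bracket is not a notion available here. Moreover, for non-simple sheaves the local model of Example \ref{dt3ex3} is not literally correct: in the paper the fibre of the Ringel--Hall correspondence over $(E_1,E_2)$, presented via Quot-scheme atlases $Q_{\fU,n},Q_{\fV,n},Q_{\fU,\fV,n}$ with $f,g$ written through morphisms $\xi_i:Z_i\ra Q_{\fU,n}$, is a projective space ${\mathbb P}(W^{q_1,q_2})$ sitting in the four-term exact sequence \eq{dt11eq5} relating it to $\Hom(V,U)$, $\Hom(E_2,E_1)$ and $\Ext^1(E_2,E_1)$; the relation Definition \ref{dt2def10}(iii) contributes $-\dim U\dim V$ (not $-\dim\Hom(E_2,E_1)$), these large terms cancel between $f*g$ and $g*f$, and the $\Hom(E_i,E_j)$ terms in your final coefficient emerge only from the exact sequence. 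So the correct replacement for your reduction is to run the whole computation in families over $Z_i\times\hat Z_j$, with your pointwise identity appearing as the value of the fibrewise contribution; this is genuinely more than bookkeeping, though once set up it collapses to exactly the calculation you describe, and the use of \eq{dt5eq2}--\eq{dt5eq3} at the end is as you propose.
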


Theorem \ref{dt5thm5} should be compared with Kontsevich and
Soibelman \cite[\S 6.3]{KoSo1}, which gives a conjectural
construction of an {\it algebra morphism\/} $\Phi:\SF(\fM)\ra {\cal
R}_{\smash{K(\coh(X))}}$, where ${\cal R}_{\smash{K(\coh(X))}}$ is a
certain explicit algebra. We expect our $\ti\Psi$ should be obtained
from their $\Phi$ by restricting to $\SFai(\fM)$, and obtaining $\ti
L(X)$ from a Lie subalgebra of ${\cal R}_{K(\coh(X))}$ by taking a
limit, the limit corresponding to specializing from virtual
Poincar\'e polynomials or more general motivic invariants of
$\C$-varieties to Euler characteristics.

We can now define generalized Donaldson--Thomas invariants.
\index{Donaldson--Thomas invariants!generalized
$\bar{DT}{}^\al(\tau)$|(}

\begin{dfn} Let $X$ be a projective Calabi--Yau 3-fold over $\C$,
let $\cO_X(1)$ be a very ample line bundle on $X$, and let
$(\tau,G,\le)$ be Gieseker stability\index{Gieseker stability} and
$(\mu,M,\le)$ be
$\mu$-stability\index{$\mu$-stability}\index{stability
condition!$\mu$-stability}\index{m-stability@$\mu$-stability} on
$\coh(X)$ w.r.t.\ $\cO_X(1)$, as in Examples \ref{dt3ex1} and
\ref{dt3ex2}. As in \eq{dt3eq22}, define {\it generalized
Donaldson--Thomas invariants\/}\index{Donaldson--Thomas
invariants!generalized, definition} $\bar{DT}{}^\al(\tau)\in\Q$ and
$\bar{DT}{}^\al(\mu)\in\Q$ for all $\al\in C(\coh(X))$
by\nomenclature[DTb]{$\bar{DT}{}^\al(\tau)$}{generalized
Donaldson--Thomas
invariants}\nomenclature[DTd]{$\bar{DT}{}^\al(\mu)$}{generalized
Donaldson--Thomas invariants for $\mu$-stability}
\e
\ti\Psi\bigl(\bep^\al(\tau)\bigr)=-\bar{DT}{}^\al(\tau)\ti
\la^\al\qquad\text{and}\qquad
\ti\Psi\bigl(\bep^\al(\mu)\bigr)=-\bar{DT}{}^\al(\mu)\ti \la^\al.
\label{dt5eq7}
\e
Here $\bep^\al(\tau),\bep^\al(\mu)$ are defined in \eq{dt3eq4}, and
lie in $\SFai(\fM)$ by Theorem \ref{dt3thm1}, so
$\bar{DT}{}^\al(\tau),\bar{DT}{}^\al(\mu)$ are well-defined. The
signs in \eq{dt5eq7} will be explained after Proposition
\ref{dt5prop2}. Equation \eq{dt5eq6} implies that an alternative
expression is
\e
\begin{split}
\bar{DT}{}^\al(\tau)&=-\chi^\na\bigl(\fM_\rss^\al(\tau),\Pi_{\CF}
\ci\bar\Pi^{\chi,\Q}_\fM(\bep^\al(\tau))\cdot\nu_\fM\bigr),\\
\bar{DT}{}^\al(\mu)&=-\chi^\na\bigl(\fM_\rss^\al(\mu),\Pi_{\CF}
\ci\bar\Pi^{\chi,\Q}_\fM(\bep^\al(\mu))\cdot\nu_\fM\bigr).
\end{split}
\label{dt5eq8}
\e

For the case of Gieseker stability\index{Gieseker stability}
$(\tau,G,\le)$, we have a projective coarse moduli
scheme\index{coarse moduli scheme}\index{moduli scheme!coarse}
$\M_\rss^\al(\tau)$. Write
$\pi:\fM_\rss^\al(\tau)\ra\M_\rss^\al(\tau)$ for the natural
projection. Then by functoriality of the na\"\i ve pushforward
\eq{dt2eq1}, we can rewrite the first line of \eq{dt5eq8} as a
weighted Euler characteristic of~$\M_\rss^\al(\tau)$:
\e
\bar{DT}{}^\al(\tau)=-\chi\bigl(\M_\rss^\al(\tau),
\CF^\na(\pi)\bigl[\Pi_{\CF}\ci\bar\Pi^{\chi,\Q}_\fM
(\bep^\al(\tau))\cdot\nu_\fM\bigr]\bigr).
\label{dt5eq9}
\e
The constructible functions\index{constructible function}
$-\Pi_{\CF}\ci\bar\Pi^{\chi,\Q}_\fM (\bep^\al(\tau))\cdot\nu_\fM$ on
$\fM_\rss^\al(\tau)$ in \eq{dt5eq8}, and
$-\CF^\na(\pi)[\Pi_{\CF}\ci\bar\Pi^{\chi,\Q}_\fM
(\bep^\al(\tau))\cdot\nu_\fM]$ on $\M_\rss^\al(\tau)$ in
\eq{dt5eq9}, are the contributions to $\bar{DT}{}^\al(\tau)$ from
each $\tau$-semistable, and each S-equivalence\index{S-equivalence}
class of $\tau$-semistables (or
$\tau$-polystable),\index{polystable@$\tau$-polystable}
respectively. We will return to \eq{dt5eq9} in~\S\ref{dt62}.
\label{dt5def2}
\end{dfn}

\begin{rem} We show in Corollary \ref{dt5cor4} below that
$\bar{DT}{}^\al(\tau)$ {\it is unchanged under deformations of\/}
$X$. Our definition of $\bar{DT}{}^\al(\tau)$ is very complicated.
It counts sheaves using two kinds of weights: firstly, we define
$\bep^\al(\tau)$ from the $\bdss^\be(\tau)$ by \eq{dt3eq4}, with
$\Q$-valued weights $(-1)^{n-1}/n$, and then we apply the Lie
algebra morphism $\ti\Psi$, which takes Euler characteristics
weighted by the $\Z$-valued Behrend function $\nu_\fM$. Furthermore,
to compute $\ti\Psi(\bep^\al(\tau))$ we must first write
$\bep^\al(\tau)$ in the form \eq{dt3eq17} using Proposition
\ref{dt3prop1}, and this uses relation Definition
\ref{dt2def10}(iii) involving coefficients~$F(G,T^G,Q)\in\Q$.

In \S\ref{dt65} we will show in an example that all this complexity
is {\it really necessary\/} to make $\bar{DT}{}^\al(\tau)$
deformation-invariant. In particular, we will show that strictly
$\tau$-semistable sheaves must be counted with {\it non-integral\/}
weights, and also that the obvious definition $DT^\al(\tau)=
\chi(\M_\st^\al(\tau),\nu_{\M_\st^\al(\tau)})$ from \eq{dt4eq16} is
not deformation-invariant
when~$\M_\rss^\al(\tau)\ne\M_\st^\al(\tau)$.
\label{dt5rem4}
\end{rem}

Suppose that $\M_\rss^\al(\tau)=\M_\st^\al(\tau)$, that is, there
are no strictly $\tau$-semistable sheaves in class $\al$. Then the
only nonzero term in \eq{dt3eq4} is $n=1$ and $\al_1=\al$, so
\e
\bep^\al(\tau)=\bar\de_\rss^\al(\tau)=
\bde_{\fM_\st^\al(\tau)}=[(\fM_\st^\al(\tau),\io)],
\label{dt5eq10}
\e
where $\io:\fM_\st^\al(\tau)\ra\fM$ is the inclusion 1-morphism.
Write $\pi:\fM_\st^\al(\tau)\ra\M_\st^\al(\tau)$ for the projection
from $\fM_\st^\al(\tau)$ to its coarse moduli scheme\index{coarse moduli
scheme}\index{moduli scheme!coarse} $\M_\st^\al(\tau)$. Then
\begin{align*}
\bar{DT}{}^\al(\tau)&=-\chi^\na\bigl(\fM_\st^\al(\tau),\io^*(\nu_\fM)
\bigr)=-\chi^\na\bigl(\fM_\st^\al(\tau),\nu_{\fM_\st^\al(\tau)}\bigr)\\
&=\chi^\na\bigl(\fM_\st^\al(\tau),\pi^*(\nu_{\M_\st^\al(\tau)})\bigr)
=\chi\bigl(\M_\st^\al(\tau),\nu_{\M_\st^\al(\tau)}\bigr)=DT^\al(\tau),
\end{align*}
using Definition \ref{dt5def1} and \eq{dt5eq7} in the first step,
$\fM_\st^\al(\tau)$ open in $\fM$ in the second, $\pi$ smooth of
relative dimension $-1$ and Corollary \ref{dt4cor1} to deduce
$\pi^*(\nu_{\M_\st^\al(\tau)})\equiv -\nu_{\fM_\st^\al(\tau)}$ in
the third, $\pi_*:\fM_\st^\al(\tau)(\C)\ra\M_\st^\al(\tau)(\C)$ an
isomorphism of constructible sets in the fourth, and \eq{dt4eq16} in
the fifth. Thus we have proved:

\begin{prop} If\/ $\M_\rss^\al(\tau)=\M_\st^\al(\tau)$ then
$\bar{DT}{}^\al(\tau)=DT^\al(\tau)$. That is, our new generalized
Donaldson--Thomas invariants $\bar{DT}{}^\al(\tau)$ are equal to the
original Donaldson--Thomas invariants $DT^\al(\tau)$ whenever the
$DT^\al(\tau)$ are defined.
\label{dt5prop2}
\end{prop}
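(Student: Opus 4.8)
\textbf{Proof proposal for Proposition \ref{dt5prop2}.} The plan is to unravel the definitions of $\bep^\al(\tau)$, $\ti\Psi$, and $\bar{DT}{}^\al(\tau)$ in the special case $\M_\rss^\al(\tau)=\M_\st^\al(\tau)$, and show the result collapses to the weighted Euler characteristic \eq{dt4eq16} that Behrend's Theorem \ref{dt4thm6} identifies with $DT^\al(\tau)$. In fact the argument is already essentially written out in the displayed chain of equalities immediately preceding the statement; a proof is just a matter of justifying each link carefully.

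First I would observe that when there are no strictly $\tau$-semistable sheaves in class $\al$, the only way to write $\al=\al_1+\cdots+\al_n$ with each $\al_i\in C(\coh(X))$ and $\tau(\al_i)=\tau(\al)$ having $\tau$-semistable representatives is $n=1$, $\al_1=\al$: for if $n\ge 2$ then a general point of $\fM_\rss^{\al_1}(\tau)*\cdots$ would be a strictly semistable sheaf in class $\al$, contradiction. Hence \eq{dt3eq4} reduces to the single term $\bep^\al(\tau)=\bdss^\al(\tau)=\bde_{\fM_\rss^\al(\tau)}=\bde_{\fM_\st^\al(\tau)}=[(\fM_\st^\al(\tau),\io)]$, where $\io:\fM_\st^\al(\tau)\hookra\fM$ is the open inclusion, giving \eq{dt5eq10}. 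Next I would apply $\ti\Psi$ using the constructible-function description \eq{dt5eq6}: since $\fM_\st^\al(\tau)$ is open in $\fM$, $\io^*(\nu_\fM)=\nu_{\fM_\st^\al(\tau)}$, and $\Pi_{\CF}\ci\bar\Pi^{\chi,\Q}_\fM$ applied to $[(\fM_\st^\al(\tau),\io)]$ yields the constructible function pushed forward from $1$ on $\fM_\st^\al(\tau)$. Thus \eq{dt5eq7} gives $\bar{DT}{}^\al(\tau)=-\chi^\na\bigl(\fM_\st^\al(\tau),\nu_{\fM_\st^\al(\tau)}\bigr)$.

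Then I would use the projection $\pi:\fM_\st^\al(\tau)\ra\M_\st^\al(\tau)$ to the coarse moduli scheme. This is smooth of relative dimension $-1$ (the generic stabilizer is $\bG_m$, acting trivially), so Corollary \ref{dt4cor1} (the Artin-stack version of Theorem \ref{dt4thm1}(ii)) gives $\pi^*(\nu_{\M_\st^\al(\tau)})=(-1)^{-1}\nu_{\fM_\st^\al(\tau)}=-\nu_{\fM_\st^\al(\tau)}$. Moreover $\pi_*:\fM_\st^\al(\tau)(\C)\ra\M_\st^\al(\tau)(\C)$ is a bijection of constructible sets (both parametrize isomorphism classes of $\tau$-stable sheaves in class $\al$), so the naive Euler characteristics weighted by $\pi^*(\nu_{\M_\st^\al(\tau)})$ and by $\nu_{\M_\st^\al(\tau)}$ agree. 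Combining, $\bar{DT}{}^\al(\tau)=\chi^\na\bigl(\fM_\st^\al(\tau),\pi^*(\nu_{\M_\st^\al(\tau)})\bigr)=\chi\bigl(\M_\st^\al(\tau),\nu_{\M_\st^\al(\tau)}\bigr)$, which equals $DT^\al(\tau)$ by \eq{dt4eq16}. Finally, since $\M_\st^\al(\tau)$ is proper (as $\M_\rss^\al(\tau)=\M_\st^\al(\tau)$ is projective) and carries Thomas's symmetric obstruction theory, $\chi(\M_\st^\al(\tau),\nu_{\M_\st^\al(\tau)})$ is an integer by Theorem \ref{dt4thm6}, so $\bar{DT}{}^\al(\tau)\in\Z$ in this case.

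I do not anticipate a serious obstacle here: the proposition is a bookkeeping exercise, and the only points requiring a word of care are (i) the vanishing of the higher terms in \eq{dt3eq4}, which follows from the definition of $\tau$-semistability together with the absence of strictly semistables, and (ii) the sign matching between the $(-1)$ in \eq{dt5eq7}, the relative dimension $-1$ of $\pi$, and the sign in Corollary \ref{dt4cor1} — these are arranged precisely so that the two minus signs cancel. Everything else is a direct substitution of definitions established in \S\ref{dt3}--\S\ref{dt5}.
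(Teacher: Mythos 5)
Your proposal is correct and follows essentially the same route as the paper's own argument: the collapse of \eq{dt3eq4} to the single term $\bep^\al(\tau)=\bde_{\fM_\st^\al(\tau)}$, followed by the chain of equalities using openness of $\fM_\st^\al(\tau)$ in $\fM$, the smooth projection $\pi$ of relative dimension $-1$ with Corollary \ref{dt4cor1}, the bijectivity of $\pi_*$ on $\C$-points, and \eq{dt4eq16}. The only point to state slightly more carefully is that a nonzero term with $n\ge 2$ would produce a direct sum of semistables of equal phase, hence a strictly $\tau$-semistable sheaf in class $\al$, which is exactly the contradiction you intend.
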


We include the minus signs in \eq{dt5eq7} to cancel that in
$\pi^*\bigl(\nu_{\M_\st^\al(\tau)}\bigr)=- \nu_{\fM_\st^\al(\tau)}$.
Omitting the signs in \eq{dt5eq7} would have given
$\bar{DT}{}^\al(\tau)=-DT^\al(\tau)$ above.

We can now repeat the argument of \S\ref{dt35} to deduce
transformation laws for generalized Donaldson--Thomas invariants
under change of stability condition. Suppose
$(\tau,T,\le),(\ti\tau,\ti T,\le),(\hat\tau,\hat T,\le)$ are as in
Theorem \ref{dt3thm2} for $\A=\coh(X)$. Then as in \S\ref{dt32}
equation \eq{dt3eq10} holds, and by Theorem \ref{dt3thm3} it is
equivalent to a Lie algebra equation \eq{dt3eq13} in $\SFai(\fM)$.
Thus we may apply the Lie algebra morphism $\ti\Psi$ to transform
\eq{dt3eq13} (or equivalently \eq{dt3eq10}) into an identity in the
Lie algebra $\ti L(X)$, and use \eq{dt5eq7} to write this in terms
of generalized Donaldson--Thomas invariants. As for \eq{dt3eq23},
this gives an equation in the universal enveloping algebra~$U(\ti
L(X))$:\index{universal enveloping algebra}
\e
\begin{gathered}
\bar{DT}{}^\al(\ti\tau)\ti \la^\al= \!\!\!\!\!\!\!
\sum_{\begin{subarray}{l}n\ge 1,\;\al_1,\ldots,\al_n\in
C(\coh(X)):\\ \al_1+\cdots+\al_n=\al\end{subarray}
\!\!\!\!\!\!\!\!\!\!\!\!\!\!\!\!\!\!\!\!\!} \!\!\!\!\!\!\!\!\!
\begin{aligned}[t]
U(\al_1,\ldots,\al_n;\tau,\ti\tau)\cdot &\ts
(-1)^{n-1}\prod_{i=1}^n\bar{DT}{}^{\al_i}(\tau)\cdot\\
& \ti \la^{\al_1}\star \ti \la^{\al_2}\star\cdots\star\ti
\la^{\al_n}.
\end{aligned}
\end{gathered}
\label{dt5eq11}
\e
As in \cite[\S 6.5]{Joyc4}, we describe $U(\ti L(X))$ explicitly,
and the analogue of \eq{dt3eq24} is\index{wall-crossing formula}
\begin{gather}
\ti \la^{\al_1}\star\cdots\star\ti \la^{\al_n}=\text{ terms in $\ti
\la_{[I,\ka]}$, $\md{I}>1$, }
\label{dt5eq12}\\
+\raisebox{-6pt}{\begin{Large}$\displaystyle\biggl[$\end{Large}}
\frac{(-1)^{\sum_{1\le i<j\le n}\bar\chi(\al_i,\al_j)}
}{2^{n-1}}\!\!\!\!\!
\raisebox{4pt}{$\displaystyle\sum_{\begin{subarray}{l}
\text{connected, simply-connected}\\
\text{digraphs $\Ga$: vertices $\{1,\ldots,n\}$,}\\
\text{edge $\mathop{\bu} \limits^{\sst
i}\ra\mathop{\bu}\limits^{\sst j}$ implies $i<j$}\end{subarray}}
\,\,
\prod_{\begin{subarray}{l}\text{edges}\\
\text{$\mathop{\bu}\limits^{\sst i}\ra\mathop{\bu}\limits^{\sst
j}$}\\ \text{in $\Ga$}\end{subarray}}\bar\chi(\al_i,\al_j)$}
\raisebox{-6pt}{\begin{Large}$\displaystyle\biggr]$\end{Large}}
\ti \la^{\al_1+\cdots+\al_n}. \nonumber
\end{gather}

Substitute \eq{dt5eq12} into \eq{dt5eq11}. As for \eq{dt3eq25},
equating coefficients of $\ti \la^\al$ yields
\begin{gather}
\bar{DT}{}^\al(\ti\tau)=\!\!\!\!\!\!
\sum_{\begin{subarray}{l}\al_1,\ldots,\al_n\in C(\coh(X)):\\
n\ge 1,\;\al_1+\cdots+\al_n=\al\end{subarray}}\,\,\,\,
\sum_{\begin{subarray}{l}\text{connected, simply-connected digraphs $\Ga$:}\\
\text{vertices $\{1,\ldots,n\}$, edge $\mathop{\bu} \limits^{\sst
i}\ra\mathop{\bu}\limits^{\sst j}$ implies $i<j$}\end{subarray}}
\label{dt5eq13}\\
\frac{(-1)^{n-1+\sum_{1\le i<j\le
n}\bar\chi(\al_i,\al_j)}}{2^{n-1}}\,
U(\al_1,\ldots,\al_n;\tau,\ti\tau)\prod_{\text{edges
$\mathop{\bu}\limits^{\sst i}\ra\mathop{\bu}\limits^{\sst j}$ in
$\Ga$}\!\!\!\!\!\!\!\!\!\!\!\!\!\!\!\!\!\!\!\!\!\!\! \!\!\!\!\!\!}
\bar\chi(\al_i,\al_j) \prod_{i=1}^n\bar{DT}{}^{\al_i}(\tau).
\nonumber
\end{gather}
Using the coefficients $V(I,\Ga,\ka;\tau,\ti\tau)$ of Definition
\ref{dt3def8} to rewrite \eq{dt5eq13}, we obtain an analogue of
\eq{dt3eq27}, as in~\cite[Th.~6.28]{Joyc6}:

\begin{thm} Under the assumptions above, for all\/ $\al\in
C(\coh(X))$ we have
\e
\begin{aligned}
&\bar{DT}{}^\al(\ti\tau)=\\
&\!\!\!\sum_{\substack{\text{iso.}\\ \text{classes}\\
\text{of finite}\\ \text{sets $I$}}}\!
\sum_{\substack{\ka:I\ra C(\coh(X)):\\ \sum_{i\in I}\ka(i)=\al}}\,
\sum_{\begin{subarray}{l} \text{connected,}\\
\text{simply-}\\ \text{connected}\\ \text{digraphs $\Ga$,}\\
\text{vertices $I$}\end{subarray}}\!\!
\begin{aligned}[t]
(-1)^{\md{I}-1} V(I,\Ga,\ka;\tau,\ti\tau)\cdot
\prod\nolimits_{i\in I} \bar{DT}{}^{\ka(i)}(\tau)&\\
\cdot (-1)^{\frac{1}{2}\sum_{i,j\in
I}\md{\bar\chi(\ka(i),\ka(j))}}\cdot\! \prod\limits_{\text{edges
\smash{$\mathop{\bu}\limits^{\sst i}\ra\mathop{\bu}\limits^{\sst
j}$} in $\Ga$}\!\!\!\!\!\!\!\!\!\!\!\!\!\!\!\!\!\!\!\!\!\!\!
\!\!\!\!\!\!\!\!} \bar\chi(\ka(i),\ka(j))&,\!\!
\end{aligned}
\end{aligned}
\label{dt5eq14}
\e
with only finitely many nonzero terms.
\label{dt5thm6}
\end{thm}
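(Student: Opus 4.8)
The strategy is to follow verbatim the derivation of the transformation law \eq{dt3eq27} for the invariants $J^\al(\tau)$ in \S\ref{dt35}, replacing the Lie algebra morphism $\Psi:\SFai(\fM)\ra L(X)$ of Theorem \ref{dt3thm4} with the Behrend-function-weighted morphism $\ti\Psi:\SFai(\fM)\ra\ti L(X)$ of Theorem \ref{dt5thm5}, and replacing $L(X)$ by $\ti L(X)$ throughout. By Theorem \ref{dt5thm5} we know $\ti\Psi$ really is a Lie algebra morphism, which is the substantive input; everything else is bookkeeping with signs. First I would invoke Theorem \ref{dt3thm2} for $\A=\coh(X)$, so that \eq{dt3eq10} holds, and Theorem \ref{dt3thm3}, so that \eq{dt3eq10} is equivalent to the Lie-bracket form \eq{dt3eq13} in $\SFai(\fM)$, with only finitely many nonzero terms. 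Applying the Lie algebra morphism $\ti\Psi$ to \eq{dt3eq13} and using the definition \eq{dt5eq7} of $\bar{DT}{}^\al(\tau)$ gives an identity in $\ti L(X)$; to avoid the awkward coefficients $\ti U$, I would instead push \eq{dt3eq13} (equivalently \eq{dt3eq10}) forward into the universal enveloping algebra $U(\ti L(X))$, obtaining \eq{dt5eq11}.

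Next I would record the explicit description of $U(\ti L(X))$ exactly as in \cite[\S 6.5]{Joyc4} for $U(L(X))$: it has a basis of symbols $\ti\la_{[I,\ka]}$ with $\ti\la_{[\{i\},\ka]}=\ti\la^{\ka(i)}$, and the product $\star$ is given by a sum over digraphs. The only difference from the $L(X)$ case is the sign change in the bracket \eq{dt5eq4} versus \eq{dt3eq15}: each edge $\mathop{\bu}\limits^{\sst i}\ra\mathop{\bu}\limits^{\sst j}$ of a digraph now contributes an extra factor $(-1)^{\bar\chi(\al_i,\al_j)}$. Tracking these signs carefully through the computation of $\ti\la^{\al_1}\star\cdots\star\ti\la^{\al_n}$ yields \eq{dt5eq12}, where the overall sign prefactor is $(-1)^{\sum_{1\le i<j\le n}\bar\chi(\al_i,\al_j)}$ (this comes from the product of the edge signs over a spanning set of edges, combined with the standard combinatorial identity used to prove \eq{dt3eq24}; since only connected simply-connected digraphs on $n$ vertices appear, which have $n-1$ edges, and the parity works out to depend only on $\sum_{i<j}\bar\chi(\al_i,\al_j)$). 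Substituting \eq{dt5eq12} into \eq{dt5eq11}, all terms $\ti\la_{[I,\ka]}$ with $\md{I}>1$ cancel because the left side lies in $\ti L(X)\subset U(\ti L(X))$, and equating coefficients of $\ti\la^\al$ gives \eq{dt5eq13}.

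Finally, I would rewrite \eq{dt5eq13} using the combinatorial coefficients $V(I,\Ga,\ka;\tau,\ti\tau)$ of Definition \ref{dt3def8}, exactly as \eq{dt3eq25} is converted to \eq{dt3eq27} in \cite[Th.~6.28]{Joyc6}: the sum over orderings in \eq{dt3eq26} absorbs the $n!$ and collects the $U$-coefficients, the $2^{n-1}$ matches, and the product of edge factors $\bar\chi(\al_i,\al_j)$ over $\Ga$ becomes $\prod_{\text{edges}}\bar\chi(\ka(i),\ka(j))$. The sign $(-1)^{n-1}$ becomes $(-1)^{\md{I}-1}$, and the sign $(-1)^{\sum_{1\le i<j\le n}\bar\chi(\al_i,\al_j)}$ is rewritten as $(-1)^{\frac12\sum_{i,j\in I}\md{\bar\chi(\ka(i),\ka(j))}}$ using antisymmetry of $\bar\chi$ (so $\sum_{i<j}\bar\chi(\al_i,\al_j)\equiv\sum_{i<j}\md{\bar\chi(\al_i,\al_j)}=\frac12\sum_{i,j}\md{\bar\chi(\al_i,\al_j)}$ mod $2$). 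This produces \eq{dt5eq14}. Finiteness of the sum follows from finiteness in Theorem \ref{dt3thm2} together with the fact that $V(I,\Ga,\ka;\tau,\ti\tau)$ vanishes unless each $\ka(i)\in C(\coh(X))$ and $\sum\ka(i)=\al$, for which only finitely many $(I,\ka,\Ga)$ give nonzero contributions. The main obstacle is purely the sign computation in passing from the $\star$-product formula \eq{dt3eq24} to \eq{dt5eq12}: one must verify that the accumulated edge signs $(-1)^{\bar\chi(\al_i,\al_j)}$ reorganize into the stated global prefactor independent of the choice of digraph, and that the subsequent rewriting in terms of $V$ and the absolute-value sign in \eq{dt5eq14} is consistent with the orientation-independence remarked after \eq{dt3eq27}; I would handle this by carefully redoing the parity count in \cite[\S 6.5]{Joyc4} with the extra signs inserted.
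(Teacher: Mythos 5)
Your proposal is correct and follows essentially the same route as the paper: apply the Lie algebra morphism $\ti\Psi$ of Theorem \ref{dt5thm5} to the wall-crossing identity \eq{dt3eq10}/\eq{dt3eq13}, work in $U(\ti L(X))$ via \eq{dt5eq11}--\eq{dt5eq12} (the sign-modified analogue of \eq{dt3eq24}), equate coefficients of $\ti\la^\al$ to get \eq{dt5eq13}, and repackage with the coefficients $V(I,\Ga,\ka;\tau,\ti\tau)$ exactly as in \cite[Th.~6.28]{Joyc6}. Your handling of the extra edge signs and the rewriting of $(-1)^{\sum_{i<j}\bar\chi(\al_i,\al_j)}$ as $(-1)^{\frac{1}{2}\sum_{i,j\in I}\md{\bar\chi(\ka(i),\ka(j))}}$ matches the paper's argument.
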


As we explained at the end of \S\ref{dt33}, for technical reasons
the authors do not know whether the changes between every two weak
stability conditions of Gieseker or $\mu$-stability type on
$\coh(X)$ are globally finite,\index{stability condition!changes
globally finite|(} so we cannot apply Theorem \ref{dt5thm6}
directly. But as in \cite[\S 5.1]{Joyc6}, we can interpolate between
any two such stability conditions on $X$ of Gieseker stability or
$\mu$-stability type by a finite sequence of stability conditions,
such that between successive stability conditions in the sequence
the changes are globally finite. Thus we deduce:\index{stability
condition!changes globally finite|)}

\begin{cor} Let\/ $(\tau,T,\le),(\ti\tau,\ti T,\le)$ be two
permissible weak stability conditions on $\coh(X)$ of Gieseker or
$\mu$-stability type, as in Examples {\rm\ref{dt3ex1}} and\/
{\rm\ref{dt3ex2}}. Then the $\bar{DT}{}^\al(\tau)$ for all\/ $\al\in
C(\coh(X))$ completely determine the $\bar{DT}{}^\al(\ti\tau)$ for
all\/ $\al\in C(\coh(X)),$ and vice versa, through finitely many
applications of the transformation law~\eq{dt5eq14}.
\label{dt5cor2}
\end{cor}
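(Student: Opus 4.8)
\textbf{Proof proposal for Corollary \ref{dt5cor2}.}

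The plan is to combine Theorem \ref{dt5thm6} with the interpolation trick of \cite[\S 5.1]{Joyc6}, exactly as was done for the $J^\al(\tau)$ in Corollary \ref{dt5cor2}'s analogue at the end of \S\ref{dt35}. First I would recall that the only obstruction to applying Theorem \ref{dt5thm6} directly is that, for two weak stability conditions $(\tau,T,\le),(\ti\tau,\ti T,\le)$ on $\coh(X)$ of Gieseker or $\mu$-stability type defined using different ample line bundles, we do not know the change between them is globally finite when $\dim X\ge 3$, as discussed at the end of \S\ref{dt33}; in particular Theorem \ref{dt3thm2}, and hence Theorem \ref{dt5thm6}, may fail to have only finitely many nonzero terms. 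The key observation from \cite[\S 5.1]{Joyc6} is that one can nonetheless find a finite chain of permissible weak stability conditions $\tau=\tau_0,\tau_1,\ldots,\tau_N=\ti\tau$, each of Gieseker or $\mu$-stability type, such that for each $i$ the change from $\tau_i$ to $\tau_{i+1}$ \emph{is} globally finite, so that Theorems \ref{dt3thm2}, \ref{dt3thm3} and therefore \ref{dt5thm6} all apply to the pair $(\tau_i,\tau_{i+1})$.

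Granting this chain, the argument is then a straightforward induction. For each consecutive pair I would apply Theorem \ref{dt5thm6} to express $\bar{DT}{}^\al(\tau_{i+1})$ for all $\al\in C(\coh(X))$ as a finite $\Q$-linear combination of products $\prod_{i\in I}\bar{DT}{}^{\ka(i)}(\tau_i)$, with coefficients built from the $V(I,\Ga,\ka;\tau_i,\tau_{i+1})$, the Euler form $\bar\chi$, and explicit signs. Composing these $N$ transformations shows that the collection $\{\bar{DT}{}^\al(\tau)\}_{\al\in C(\coh(X))}$ determines $\{\bar{DT}{}^\al(\ti\tau)\}_{\al\in C(\coh(X))}$ through finitely many applications of \eqref{dt5eq14}. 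For the converse one reverses the chain: since $(\ti\tau,\ti T,\le)$ and $(\tau,T,\le)$ play symmetric roles and the reversed chain $\ti\tau=\tau_N,\ldots,\tau_0=\tau$ again has globally finite consecutive changes, the same reasoning applied in the other direction shows $\{\bar{DT}{}^\al(\ti\tau)\}$ determines $\{\bar{DT}{}^\al(\tau)\}$. A small point worth checking is that, in the wall-crossing formula, only classes $\al_i$ with $\al_1+\cdots+\al_n=\al$ and all $\al_i$ in $C(\coh(X))$ contribute, so the `determination' is genuinely finitary for each fixed $\al$: for a given $\al$ there are only finitely many ways to decompose it as such a sum, and only finitely many digraphs $\Ga$ on the resulting vertex set, so only finitely many terms appear. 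This is already part of the content of Theorem \ref{dt5thm6}.

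The main obstacle is purely expository rather than mathematical: one must be careful that the interpolating chain from \cite[\S 5.1]{Joyc6} can be chosen \emph{within} the class of Gieseker- and $\mu$-stability conditions, so that each intermediate $\tau_i$ is permissible (Definition \ref{dt3def4}) and the relevant moduli stacks $\fM_\rss^\al(\hat\tau_i)$ on the interpolating walls are of finite type, which is what makes the change globally finite and lets Theorem \ref{dt3thm2} apply. Since this is established in \cite[\S 5.1]{Joyc6} and used verbatim for the $J^\al(\tau)$ in Corollary \ref{dt5cor2}'s predecessor, and since Theorem \ref{dt5thm6} is the Behrend-weighted analogue of \eqref{dt3eq27} with $\ti\Psi$ in place of $\Psi$, no new difficulty arises here. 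I would end by remarking that the corollary does \emph{not} give an effective closed formula for the total change $\tau\to\ti\tau$ in one step — it genuinely requires composing the wall-crossings along the chain — which is consistent with the discussion of the combinatorial coefficients $U,V$ in \S\ref{dt133}.
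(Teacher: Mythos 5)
Your proposal is correct and is essentially the paper's own argument: the text immediately preceding Corollary \ref{dt5cor2} deduces it exactly by interpolating between $(\tau,T,\le)$ and $(\ti\tau,\ti T,\le)$ through a finite chain of stability conditions with globally finite consecutive changes, as in \cite[\S 5.1]{Joyc6}, and applying Theorem \ref{dt5thm6} to each step. Your additional remarks on finiteness of decompositions of a fixed $\al$ and on reversing the chain are consistent with what the paper takes for granted, so no gap remains.
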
\index{Donaldson--Thomas invariants!generalized $\bar{DT}{}^\al(\tau)$|)}

\subsection[Invariants counting stable pairs, and
deformation-invariance]{Invariants $PI^{\al,n}(\tau')$ counting
stable pairs, and \\ deformation-invariance of the
$\bar{DT}{}^\al(\tau)$}
\label{dt54}\index{stable pair|(}

Next we define {\it stable pairs\/} on $X$. Our next three results,
Theorems \ref{dt5thm7}, \ref{dt5thm8} and \ref{dt5thm9}, will be
proved in \S\ref{dt12}. They work for $X$ a Calabi--Yau 3-fold over
a general algebraically closed field $\K$,\index{field $\K$} without
assuming~$H^1(\cO_X)=0$.

\begin{dfn} Let $\K$ be an algebraically closed field, and $X$ a
Calabi--Yau 3-fold over $\K$, which may have $H^1(\cO_X)\ne 0$.
Choose a very ample line bundle $\cO_X(1)$ on $X$, and write
$(\tau,G,\le)$ for Gieseker stability w.r.t.\ $\cO_X(1)$, as in
Example~\ref{dt3ex1}.

Fix $n\gg 0$ in $\Z$. A {\it pair\/} is a nonzero morphism of
sheaves $s:\cO_{X}(-n)\ra E$, where $E$ is a nonzero sheaf. A {\it
morphism\/} between two pairs $s:\cO_{X}(-n)\ra E$ and
$t:\cO_{X}(-n)\ra F$ is a morphism of $\cO_X$-modules $f:E\ra F$,
with $f\ci s=t$. A pair $s:\cO_X(-n)\ra E$ is called {\it stable\/}
if:
\begin{itemize}
\setlength{\itemsep}{0pt}
\setlength{\parsep}{0pt}
\item[(i)] $\tau([E'])\le\tau([E])$ for all subsheaves $E'$ of
$E$ with $0\neq E'\neq E$; and
\item[(ii)] If also $s$ factors through $E'$,
then~$\tau([E'])<\tau([E])$.
\end{itemize}
Note that (i) implies that if $s:\cO_X(-n)\ra E$ is stable then $E$
is $\tau$-semistable. The {\it class\/} of a pair $s:\cO_{X}(-n)\ra
E$ is the numerical class $[E]$ in~$K^\num(\coh(X))$.

We have no notion of semistable pairs. We will use $\tau'$ {\it to
denote stability of pairs}, defined using $\cO_X(1)$. Note that
pairs do not form an abelian category, so $\tau'$ is not a (weak)
stability condition on an abelian category in the sense of
\S\ref{dt32}. However, in \S\ref{dt131} we will define an auxiliary
abelian category $\B_p$ and relate stability of pairs $\tau'$ to a
weak stability condition $(\ti\tau,\ti T,\le)$ on~$\B_p$.
\label{dt5def3}
\end{dfn}

\begin{dfn} Use the notation of Definition \ref{dt5def3}. Let $T$
be a $\K$-scheme, and write $\pi_X:X\times T\ra X$ for the
projection. A $T$-{\it family of stable pairs with class\/} $\al$ in
$K^\num(\coh(X))$ is a morphism of $\cO_{X\times T}$-modules $s:
\pi_X^*(\cO_X(-n))\ra E$, where $E$ is flat over $T$, and when
restricting to $\K$-points $t\in T(\K)$, $s_t:\cO_{X}(-n)\ra E_t$ is
a stable pair, with $[E_t]=\al$. Note that since $E$ is flat over
$T$, the class $[E_t]$ in $K^\num(\coh(X))$ is locally constant on
$T$, so requiring $[E_t]=\al$ for all $t\in T(\K)$ is an open
condition on such families.

Two $T$-families of stable pairs $s_1:\pi_X^*(\cO_X(-n))\ra E_1$,
$s_2:\pi_X^*(\cO_X(-n))\ra E_2$ are called {\it isomorphic\/} if
there exists an isomorphism $f:E_1\ra E_2$, such that the following
diagram commutes:
\begin{equation*}
\xymatrix@R=5pt@C=50pt{
\pi_X^*(\cO_X(-n)) \ar[r]_(0.6){s_1} \ar@{=}[d] &E_1 \ar[d]^(0.45){f} \\
\pi_X^*(\cO_X(-n)) \ar[r]^(0.6){s_2} &E_2. }
\end{equation*}
The {\it moduli functor of stable pairs with class\/} $\al$:
\begin{equation*}
\xymatrix{ {\mathbb M}_\stp^{\al,n}(\tau'): \Sch_\K \ar[r]
&{\mathop{\bf Sets}}}
\end{equation*}
is defined to be the functor that takes a $\K$-scheme $T$ to the set
of isomorphism classes of $T$-families of stable pairs with
class~$\al$.
\label{dt5def4}
\end{dfn}

In \S\ref{dt121} we will use results of Le Potier to prove:

\begin{thm} The moduli functor ${\mathbb M}_\stp^{\al,n}(\tau')$ is
represented by a projective $\K$-scheme\/~$\M_\stp^{\al,n}(\tau')$.
\label{dt5thm7}
\end{thm}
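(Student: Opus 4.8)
The plan is to prove Theorem \ref{dt5thm7} by showing that stable pairs $s:\cO_X(-n)\ra E$ with $[E]=\al$ form a bounded family, and then identifying the moduli functor $\mathbb{M}_\stp^{\al,n}(\tau')$ with a locally closed (indeed closed, by properness) subfunctor of a suitable Quot scheme or of one of Le Potier's moduli spaces of coherent systems. First I would recall Le Potier's framework: a pair $s:\cO_X(-n)\ra E$ is a special case of a \emph{coherent system}, or equivalently a morphism from a fixed sheaf, and Le Potier (and also Huybrechts--Lehn's treatment of moduli of framed modules / pairs) constructs projective moduli schemes for such objects once one fixes a Hilbert polynomial and a stability parameter. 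The key point is to match our notion of stability of pairs $\tau'$ in Definition \ref{dt5def3} with the stability appearing in Le Potier's construction for a suitable choice of the linearization parameter; since $n\gg 0$ is fixed and large, the relevant parameter can be taken so that Le Potier's stability for the coherent system $(\cO_X(-n)\ra E)$ becomes exactly conditions (i)--(ii), using that $\cO_X(-n)$ generates enough sections for $n\gg 0$.

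The key steps, in order, would be: (1) Boundedness — show the set of sheaves $E$ with $[E]=\al$ admitting a stable pair $s:\cO_X(-n)\ra E$ is bounded. This follows because condition (i) forces $E$ to be $\tau$-semistable, and $\tau$-semistable sheaves with fixed class form a bounded family by standard results (Grothendieck, Maruyama, as in Huybrechts--Lehn); then the sections $s$ themselves live in the bounded family $\Hom(\cO_X(-n),E)=H^0(E(n))$, whose dimension is the Hilbert polynomial value $P_\al(n)$. (2) GIT / Le Potier construction — embed the data of a stable pair into a parameter scheme (a locally closed subscheme of a product of a Quot scheme parametrizing quotients $\cO_X(-m)^{\oplus N}\twoheadrightarrow E(n-?)$ for $m\gg n$ with a Grassmannian or projective bundle recording $s$), equipped with a reductive group action and a $G$-linearized ample line bundle, and verify that the GIT-(semi)stable points are precisely the stable pairs. (3) Identify the GIT quotient with the moduli functor — show the quotient represents $\mathbb{M}_\stp^{\al,n}(\tau')$, using that stable pairs have no automorphisms (a morphism $f:E\ra E$ with $f\circ s=s$ and $E$ built from $s$ via stability is forced to be the identity on the image and then on all of $E$ by $\tau$-stability-type arguments), so there is a universal family on the parameter scheme descending to a fine moduli space. (4) Properness — prove the valuative criterion: given a stable pair over the generic point of a DVR, the limit exists and is again stable; here one uses Langton's theorem (existence of limits within $\tau$-semistable sheaves) together with an argument that the section $s$ extends and the limiting pair still satisfies (ii), so $\M_\stp^{\al,n}(\tau')$ is proper; combined with projectivity of the ambient GIT quotient this gives a projective $\K$-scheme.

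The hard part will be step (2)--(3): carefully setting up the GIT problem so that the numerical stability conditions (i)--(ii) in Definition \ref{dt5def3} correspond exactly to GIT-stability for an appropriate choice of polarization, and checking there are no strictly semistable points (which is why there is "no notion of semistable pairs" — for $n\gg 0$ the stability parameter is generic). This is precisely the kind of computation Le Potier and Huybrechts--Lehn carry out for framed sheaves and coherent systems, so I would lean on \cite[\S 4.3, \S 4.4]{HuLe2} and Le Potier's paper for the technical GIT estimates rather than redoing them; the role of $n\gg 0$ is to ensure the relevant sheaves $E(n)$ are globally generated with vanishing higher cohomology, so that the Quot-scheme embedding is a closed immersion on the locus of interest and the stability comparison is clean. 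Deformation-theoretic smoothness or the obstruction theory is \emph{not} needed for this theorem — only representability and projectivity — so the proof is essentially a citation-and-verification of the classical framed-sheaf moduli construction specialized to our $\tau'$, plus the Langton-style properness argument in step (4).
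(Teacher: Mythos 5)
Your proposal matches the paper's proof: the paper also reduces to Le Potier's projective moduli of coherent systems, sending $s:\cO_X(-n)\ra E$ to the coherent system $(\langle s\rangle,E(n))$ with a sufficiently small constant stability parameter $q$ (in fact $0<q\le 1/d!\,r_\al$ suffices), for which $q$-stability, $q$-semistability and pair stability all coincide, so Le Potier's GIT quotient is a projective fine moduli scheme since stable pairs have no automorphisms. The only differences are cosmetic: the absence of strictly semistables comes from the smallness of $q$ rather than from $n\gg 0$, and your separate Langton-style properness step is redundant because Le Potier's moduli scheme is already projective.
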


Broadly following similar proofs by Pandharipande and Thomas
\cite[\S 2]{PaTh} and Huybrechts and Thomas \cite[\S 4]{HuTh}, in
\S\ref{dt125}--\S\ref{dt127} we prove:

\begin{thm} If\/ $n$ is sufficiently large then the projective
$\K$-scheme $\M_\stp^{\al,n}(\tau')$ has a symmetric obstruction
theory.\index{symmetric obstruction theory}\index{obstruction
theory!symmetric}
\label{dt5thm8}
\end{thm}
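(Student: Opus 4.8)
The plan is to construct the symmetric obstruction theory on $\M_\stp^{\al,n}(\tau')$ by mimicking the Pandharipande--Thomas construction \cite[\S 2]{PaTh} and the Huybrechts--Thomas approach \cite[\S 4]{HuTh}, but phrasing everything in terms of the auxiliary abelian category $\B_p$ promised in Definition \ref{dt5def3}. First I would fix a stable pair $s:\cO_X(-n)\ra E$ and regard it as a two-term complex $I^\bu=[\cO_X(-n)\ra E]$ concentrated in degrees $-1,0$, or equivalently as an object of $D^b(X)$; the key point is that morphisms and obstructions for the pair are governed by the truncated $\Ext$-groups $\Ext^i_X(I^\bu,I^\bu)$, suitably modified by the constraint that the $\cO_X(-n)$ term is fixed (i.e.\ we take the cone on $\cO_X(-n)\ra R\SHom(I^\bu,I^\bu)$, or work with the trace-free part). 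The deformation theory of the pair is then controlled by a complex $E^\bu$ on $\M_\stp^{\al,n}(\tau')\times X$ built from $R\SHom$ of the universal complex, pushed down via $R\pi_{\M*}$; the standard machinery (Behrend--Fantechi, Huybrechts--Thomas \cite[\S 4]{HuTh}) shows this gives a perfect obstruction theory of amplitude $[-1,0]$ once one checks that the higher $\Ext$-groups vanish and the relevant complex has the right amplitude.

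The crux is \emph{symmetry}: one must produce a quasi-isomorphism $\theta: E^\bu \to (E^\bu)^\vee[1]$ with $\theta^\vee[1]=\theta$. The natural source of this is Serre duality on the Calabi--Yau 3-fold $X$: since $K_X\cong\cO_X$, we have $\Ext^i_X(I^\bu,I^\bu)\cong\Ext^{3-i}_X(I^\bu,I^\bu)^*$. The subtlety, exactly as in \cite[\S 2.3]{PaTh}, is that the obstruction theory for \emph{pairs} is not literally $R\SHom(I^\bu,I^\bu)$ but a modification (because the section $s$ carries extra rigidity / the $\cO_X(-n)$ is frozen), so one needs $\Ext^{<0}(I^\bu,I^\bu)$ or its pair-analogue to vanish, and one needs the modification to be compatible with Serre duality. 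Here is where $n\gg 0$ enters: for $n$ sufficiently large one gets vanishing $\Ext^i(E,\cO_X(-n))$ for $i>0$ (Serre vanishing) and $H^i(E(n))=0$ for $i>0$, which forces the relevant $\Ext$-complex into amplitude $[-1,0]$ and makes the extra terms coming from $\cO_X(-n)$ behave well under duality. I would follow \cite[\S 2.3]{PaTh} to check that the ``fixed-determinant-of-the-source'' modification cancels precisely the asymmetry, leaving a symmetric complex.

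The main obstacle I anticipate is verifying that the obstruction theory is \emph{perfect} of the correct amplitude and that the symmetry pairing $\theta$ is genuinely non-degenerate and self-dual, rather than merely formally present. Concretely: (i) one must show $\Ext^0$ and $\Ext^{-1}$ of the pair-complex are the infinitesimal automorphisms (which should vanish, since stable pairs have no automorphisms) and deformations respectively, and that $\Ext^2,\Ext^3$ are dual to these; (ii) one must handle the ``frozen source'' carefully --- in \cite{PaTh} this is done by working with $I^\bu$ in the derived category and using the cone construction, and I expect the same works here, but the bookkeeping with $\cO_X(-n)$ and the choice of $n$ needs attention; (iii) one must check the construction is functorial in families, i.e.\ the universal version over $\M_\stp^{\al,n}(\tau')\times X$ descends correctly --- this is where properness from Theorem \ref{dt5thm7} and flatness of the universal sheaf are used. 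A secondary technical point is that $X$ is allowed to have $H^1(\cO_X)\ne 0$ here (unlike most of the book), so one should avoid any step that uses $H^1(\cO_X)=0$; I believe the Pandharipande--Thomas argument does not need it, since it uses only $K_X\cong\cO_X$ and Serre duality. The deformation-invariance of $PI^{\al,n}(\tau')$ will then follow, exactly as for $DT^\al(\tau)$ in \cite{Thom} (cf.\ Theorem \ref{dt4thm5}), from the existence of the symmetric obstruction theory together with properness of $\M_\stp^{\al,n}(\tau')$ --- but that is Theorem \ref{dt5thm9}, not the statement at hand.
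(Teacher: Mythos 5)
Your overall strategy is the same as the paper's: \S\ref{dt12} regards the pair as the complex $I=\cone(s)$ in $D(X)$, proves the Ext-vanishing you need from stability and $n\gg 0$ (Proposition \ref{dt12prop1}), builds the naive relative obstruction theory $\phi:B^\bu\ra L_{\M_\stp^{\al,n}(\tau')/U}$ with $B^\bu=R\pi_*\bigl(R\SHom({\mathbb E},{\mathbb I})\ot\om_\pi\bigr)[2]$ via Illusie's Atiyah-class factorization of the obstruction, and then, as you propose, corrects it by passing to the trace-free part of $R\SHom({\mathbb I},{\mathbb I})$ so that Serre duality (triviality of $\om_\pi$ on the fibres) supplies the symmetric pairing. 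One correction of detail: the defect of the naive theory is not in positive degrees (those vanish by stability of the pair) but in degree $-2$, where $h^{-2}(B^\bu)$ has fibre $\Ext^3(E_p,E_p)^*\cong\Hom(E_p,E_p)\ne 0$; it is exactly this term that the trace modification is designed to remove.

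The genuine gap is the rank-one case. Your ``fixed-determinant-of-the-source / trace-free part'' step needs the splitting of $A^\bu=R\pi_*\bigl(R\SHom({\mathbb I},{\mathbb I})\ot\om_\pi\bigr)[2]$ into its trace part $F^\bu=R\pi_*(\om_\pi)[2]$ and a trace-free complement $G^\bu$, and this splitting exists only because $\tr_{\mathbb I}\ci\id_{\mathbb I}=(\rank\al-1)1_{F^\bu}$, equation \eq{dt12eq35}, is invertible. When $\rank\al=1$, or more generally when $\cha\K$ divides $\rank\al-1$, this composition is zero, the splitting fails, $h^{-2}$ of the candidate complex $G^\bu$ is again nonzero (so it is not perfect of amplitude $[-1,0]$), and the would-be duality $\theta:G^\bu\ra G^{\bu\vee}[1]$ is not an isomorphism --- precisely the difficulty Huybrechts--Thomas meet for complexes of rank $0\mod\cha\K$, where the fixed-determinant argument must be abandoned. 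Since Theorem \ref{dt5thm8} covers all $\al$, including rank-one sheaves $E$, your proof as written does not close. The paper's \S\ref{dt127} handles this with a separate construction: it truncates $F^\bu$ into $\tau^{\le-1}F^\bu$ and $\tau^{\ge 0}F^\bu$, uses smoothness of the relative Jacobian ${\mathfrak J}^\al\ra U$ to show the composition $\tau^{\le-1}F^\bu\ra B^\bu\ra L_{\M_\stp^{\al,n}(\tau')/U}$ vanishes (Proposition \ref{dt12prop4}), and builds the symmetric perfect obstruction theory on a cone $K^\bu$ interpolating between these truncations; this works for every $\al$ and recovers the trace-free construction when $\rank\al\ne 1\mod\cha\K$.
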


Here $n$ is sufficiently large if all $\tau$-semistable sheaves $E$
in class $\al$ are $n$-regular. Using this symmetric obstruction
theory, Behrend and Fantechi \cite{BeFa1} construct a canonical Chow
class\index{Chow homology} $[\M_\stp^{\al,n}(\tau')]^\vir\in
A_{*}(\M_\stp^{\al,n} (\tau'))$. It lies in degree zero since the
obstruction theory is symmetric. Since $\M_\stp^{\al,n}(\tau')$ is
proper, there is a degree map on
$\smash{A_0\bigl(\M_\stp^{\al,n}(\tau')\bigr)}$. We define an
invariant counting stable pairs of class $(\al,n)$ to be the degree
of this virtual fundamental class.

\begin{dfn} In the situation above, if $\al\in K^\num(\coh(X))$
and $n\gg 0$ is sufficiently large, define {\it stable pair
invariants\/} $PI^{\al,n}(\tau')$\index{stable pair invariants
$PI^{\al,n}(\tau')$} in $\Z$
by\nomenclature[PIa(t)]{$PI^{\al,n}(\tau')$}{invariants counting
`stable pairs' $s:\cO_X(-n)\ra E$}
\e
\ts PI^{\al,n}(\tau')=\int_{[\M_\stp^{\al,n}(\tau')]^\vir}1.
\label{dt5eq15}
\e
Theorem \ref{dt4thm6} implies that when $\K$ has characteristic
zero, the stable pair invariants may also be written
\e
PI^{\al,n}(\tau')=\chi\bigl(\M_\stp^{\al,n}(\tau'),
\nu_{\M_\stp^{\al,n}(\tau')}\bigr).
\label{dt5eq16}
\e
\label{dt5def5}
\end{dfn}

Our invariants $PI^{\al,n}(\tau')$ were inspired by Pandharipande
and Thomas \cite{PaTh}, who use invariants counting pairs to study
curve counting in Calabi--Yau 3-folds. Observe an important
difference between Donaldson--Thomas and stable pair invariants:
$DT^\al(\tau)$ is defined only for classes $\al\in K^\num(\coh(X))$
with $\M_\rss^\al(\tau)= \M_\st^\al(\tau)$, but $PI^{\al,n}(\tau')$
is defined for all $\al\in K^\num(\coh(X))$ and all $n\gg 0$. We
wish to show the $PI^{\al,n}(\tau')$ for $\al\in K^\num(\coh(X))$
are unchanged under deformations of the underlying Calabi--Yau
3-fold $X$. For this to make sense, $K^\num(\coh(X))$ should also be
unchanged under deformations of $X$, so we put this in as an
assumption. Our next theorem will be proved in~\S\ref{dt128}.

\begin{thm} Let\/ $\K$ be an algebraically closed field, $U$ a
connected algebraic $\K$-variety, and\/ $X\stackrel{\vp}{\longra}U$
be a family of Calabi--Yau $3$-folds, so that for each\/ $u\in
U(\K)$ the fibre $X_u=X\times_{\vp,U,u}\Spec\K$ of\/ $\vp$ over $u$
is a Calabi--Yau $3$-fold over $\K,$ which may have\/
$H^1(\cO_{X_u})\ne 0$. Let\/ $\cO_X(1)$ be a relative very ample
line bundle for $X\stackrel{\vp}{\longra}U,$ and write\/
$\cO_{X_u}(1)$ for $\cO_X(1)\vert_{X_u}$. Suppose that the numerical
Grothendieck groups $K^\num(\coh(X_u))$ for $u\in U(\K)$ are
canonically isomorphic locally in $U(\K),$ and write $K(\coh(X))$
for this group $K^\num(\coh(X_u))$ up to canonical isomorphism. Then
the stable pair invariants\/ $PI^{\al,n}(\tau')_u$ of\/
$X_u,\cO_{X_u}(1)$ for $\al\in K(\coh(X))$ and\/ $n\gg 0$ are
independent of\/~$u\in U(\K)$.
\label{dt5thm9}
\end{thm}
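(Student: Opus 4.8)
The plan is to mimic Thomas's original deformation-invariance argument for Donaldson--Thomas invariants \cite{Thom}, and the analogous argument of Huybrechts and Thomas \cite{HuTh} for obstruction theories, but applied to the moduli scheme of stable pairs $\M_\stp^{\al,n}(\tau')$ rather than to moduli of sheaves. The key input is Theorem \ref{dt5thm7}, that $\M_\stp^{\al,n}(\tau')$ is a projective $\K$-scheme, together with Theorem \ref{dt5thm8}, that for $n\gg 0$ it carries a symmetric obstruction theory; and the degree-zero virtual class $[\M_\stp^{\al,n}(\tau')]^\vir$ it produces via Behrend--Fantechi \cite{BeFa1}, whose degree is $PI^{\al,n}(\tau')$ by Definition \ref{dt5def5}.

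First I would globalize the construction of Theorems \ref{dt5thm7} and \ref{dt5thm8} over the base $U$: starting from the family $\vp:X\ra U$ with relative very ample $\cO_X(1)$, one shows that the relative moduli functor of stable pairs with class $\al$ and twist $n$ is represented by a $U$-scheme $\M_\stp^{\al,n}(\tau')_U\ra U$ which is projective over $U$ (this follows from Le Potier's boundedness results applied in families, exactly as in the proof of Theorem \ref{dt5thm7}, since $n$-regularity is an open condition and the bounding data depend only on $\al$ and the Hilbert polynomial, which is locally constant in $u$ because $K^\num(\coh(X_u))$ is locally canonically constant by hypothesis). The fibre of $\M_\stp^{\al,n}(\tau')_U$ over $u\in U(\K)$ is the absolute moduli scheme $\M_\stp^{\al,n}(\tau')_u$ for $X_u$. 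Then I would construct a \emph{relative} symmetric obstruction theory for $\M_\stp^{\al,n}(\tau')_U$ over $U$, by repeating the proof of Theorem \ref{dt5thm8} with all $\Ext$-groups replaced by relative ones; the relative Calabi--Yau structure (triviality of $\om_{X/U}$) gives the symmetry, just as $K_{X_u}\cong\cO_{X_u}$ gives it fibrewise.

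Next, the deformation-invariance itself: since $U$ is connected, it suffices to prove the statement locally in $U$, so I may pass to a smooth affine curve $C\hookra U$ or even to $C=\Spec R$ for $R$ a DVR, and reduce to comparing two fibres $X_0,X_1$ over the two closed/generic points. Restricting the relative obstruction theory over $C$ to the total space gives an absolute obstruction theory on $\M_\stp^{\al,n}(\tau')_C$ whose virtual dimension is $1$ (relative virtual dimension $0$ plus $\dim C=1$), and the relative virtual class $[\M_\stp^{\al,n}(\tau')_C/C]^\vir$ restricts to the absolute virtual class $[\M_\stp^{\al,n}(\tau')_u]^\vir$ on each fibre (this is the standard base-change compatibility of virtual classes for a compatible triple of obstruction theories, as in Behrend--Fantechi \cite{BeFa1} and used by Thomas \cite{Thom} and Huybrechts--Thomas \cite{HuTh}). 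Since $\M_\stp^{\al,n}(\tau')_C\ra C$ is projective, the pushforward of the relative virtual class is a well-defined cycle on $C$ of dimension $1$, i.e. a multiple of the fundamental class $[C]$; intersecting with the rational equivalences $[u_0]\sim[u_1]$ on $C$ shows $\deg[\M_\stp^{\al,n}(\tau')_{u_0}]^\vir=\deg[\M_\stp^{\al,n}(\tau')_{u_1}]^\vir$, i.e. $PI^{\al,n}(\tau')_{u_0}=PI^{\al,n}(\tau')_{u_1}$.

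The main obstacle I expect is not the abstract deformation argument, which is by now routine, but the careful construction of the \emph{relative} symmetric obstruction theory on $\M_\stp^{\al,n}(\tau')_U$ and the verification that its fibrewise restriction is the symmetric obstruction theory of Theorem \ref{dt5thm8}: one must identify the relative deformation-obstruction complex of a stable pair $s:\pi_X^*\cO_X(-n)\ra E$, show it is perfect in $[-1,0]$, exhibit the self-duality using relative Serre duality and the trivialization of $\om_{X/U}$, and check compatibility with the absolute theory under the inclusion of a fibre. This is essentially the content of \S\ref{dt125}--\S\ref{dt127} done one categorical level up, and the bookkeeping with the auxiliary abelian category $\B_p$ of \S\ref{dt131} and the truncated cotangent complexes is where the real work lies. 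A secondary, more minor point is ensuring that the single value of $n$ can be chosen uniformly over a neighbourhood in $U$, which follows from openness of $n$-regularity and local constancy of Hilbert polynomials, but should be stated explicitly.
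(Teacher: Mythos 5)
Your proposal follows essentially the same route as the paper: \S\ref{dt12} constructs the moduli scheme of pairs as a projective $U$-scheme via Le Potier, builds a relative perfect obstruction theory over $U$ (perfect via the trace/truncation modifications of \S\ref{dt126}--\S\ref{dt127}, symmetric only fibrewise or when $U$ is affine, which is all that is needed), and deduces invariance from the relative virtual class in $A_0(\M_\stp^{\al,n}(\tau')\ra U)$ by conservation of number over the connected base --- your reduction to a curve/DVR is the same mechanism. The only cosmetic differences are that the paper handles the ``locally canonically isomorphic'' hypothesis by passing to a finite \'etale cover via Theorem \ref{dt4thm8} rather than chaining local trivializations, and that the auxiliary category $\B_p$ you mention belongs to the proof of Theorem \ref{dt5thm10}, not to this theorem.
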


Here is what we mean by saying that the numerical Grothendieck
groups $K^\num(\coh(X_u))$ for $u\in U(\K)$ are canonically
isomorphic {\it locally in\/} $U(\K)$: we do not require canonical
isomorphisms $K^\num(\coh(X_u))\cong K(\coh(X))$ for all $u\in
U(\K)$ (this would be canonically isomorphic {\it globally in\/}
$U(\K)$). Instead, we mean that the groups $K^\num(\coh(X_u))$ for
$u\in U(\K)$ form a {\it local system of abelian groups\/} over
$U(\K)$, with fibre $K(\coh(X))$.

When $\K=\C$, this means that in simply-connected regions of $U(\C)$
in the complex analytic topology the $K^\num(\coh(X_u))$ are all
canonically isomorphic, and isomorphic to $K(\coh(X))$. But around
loops in $U(\C)$, this isomorphism with $K(\coh(X))$ can change by
{\it monodromy},\index{monodromy} by an automorphism
$\mu:K(\coh(X))\ra K(\coh(X))$ of $K(\coh(X))$, as in Remark
\ref{dt4rem}(d). In Theorem \ref{dt4thm8} we showed that the group
of such monodromies $\mu$ is finite, and we can make it trivial by
passing to a finite cover $\ti U$ of $U$. If we worked instead with
invariants $PI^{P,n}(\tau')$ counting pairs $s:\cO_{X}(-n)\ra E$ in
which $E$ has fixed Hilbert polynomial\index{Hilbert polynomial}
$P$, rather than fixed class $\al\in K^\num(\coh(X))$, as in Thomas'
original definition of Donaldson--Thomas invariants \cite{Thom},
then we could drop the assumption on $K^\num(\coh(X_u))$ in
Theorem~\ref{dt5thm9}.

In Theorem \ref{dt4thm7} we showed that when $\K=\C$ and
$H^1(\cO_X)=0$ the numerical Grothendieck group $K^\num(\coh(X))$ is
unchanged under small deformations of $X$ up to canonical
isomorphism. So Theorem \ref{dt5thm9} yields:

\begin{cor} Let\/ $X$ be a Calabi--Yau $3$-fold over $\K=\C,$ with\/
$H^1(\cO_X)=0$. Then the pair invariants  $PI^{\al,n}(\tau')$ are
unchanged by continuous deformations of the complex structure
of\/~$X$.
\label{dt5cor3}
\end{cor}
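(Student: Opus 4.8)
The plan is to deduce Corollary \ref{dt5cor3} as an immediate consequence of Theorem \ref{dt5thm9} together with Theorem \ref{dt4thm7}, so the ``proof'' is really just a matter of checking that the hypotheses of Theorem \ref{dt5thm9} are met in the setting of Corollary \ref{dt5cor3}. First I would set up the family: given a Calabi--Yau 3-fold $X$ over $\C$ with $H^1(\cO_X)=0$ and a continuous deformation of its complex structure, I would invoke the standard fact that such a deformation can be realized algebraically, i.e.\ there is a connected $\C$-variety $U$, a smooth projective morphism $\vp:X'\ra U$ with Calabi--Yau 3-fold fibres, a relative very ample line bundle $\cO_{X'}(1)$, and two points $u_0,u_1\in U(\C)$ with $X'_{u_0}\cong X$ and $X'_{u_1}$ the deformed 3-fold, joined by a path in $U(\C)$. (Since $H^1(\cO_X)=0$ is an open condition and is preserved under small deformations, we may shrink $U$ so that $H^1(\cO_{X'_u})=0$ for all $u\in U(\C)$.) It then suffices to show $PI^{\al,n}(\tau')_{u_0}=PI^{\al,n}(\tau')_{u_1}$ for $n\gg 0$.

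Next I would verify the local-constancy hypothesis on the numerical Grothendieck groups. By Theorem \ref{dt4thm7}, for each $u\in U(\C)$ the Chern character gives an isomorphism $\ch:K^\num(\coh(X'_u))\ra\La_{X'_u}$, where $\La_{X'_u}\subset H^{\rm even}(X'_u;\Q)$ is the lattice \eq{dt4eq21} defined purely in terms of the integral cohomology ring and $c_2(TX'_u)$. Since $\vp:X'\ra U$ is smooth and proper, the cohomology groups $H^{\rm even}(X'_u;\Q)$, the integral lattices $H^{2i}(X'_u;\Z)$, the cup products, and the Chern classes $c_2(TX'_u)$ all form local systems over $U(\C)$; hence the lattices $\La_{X'_u}$, and therefore the groups $K^\num(\coh(X'_u))$ via the canonical isomorphism $\ch$, form a local system of abelian groups over $U(\C)$ with some fibre $K(\coh(X))$. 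This is precisely the hypothesis ``canonically isomorphic locally in $U(\C)$'' required by Theorem \ref{dt5thm9}. (One should note here the subtlety flagged in Remark \ref{dt4rem}(d): global canonical isomorphism may fail due to monodromy, but local canonical isomorphism — equivalently, the local system structure — is exactly what Theorem \ref{dt5thm9} needs, and it does hold.)

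Finally, I would apply Theorem \ref{dt5thm9} directly: it gives that $PI^{\al,n}(\tau')_u$ is independent of $u\in U(\C)$ for all $\al\in K(\coh(X))$ and $n\gg 0$, and in particular $PI^{\al,n}(\tau')_{u_0}=PI^{\al,n}(\tau')_{u_1}$. Using the canonical identifications $K^\num(\coh(X))\cong K(\coh(X))\cong K^\num(\coh(X'_{u_1}))$ along the chosen path (well-defined because, as explained above, we can first trivialize the monodromy, or simply because the statement is about the invariants being constant along the path), this says the stable pair invariants of $X$ agree with those of its deformation, which is the assertion of the corollary. I do not expect any genuine obstacle here — the corollary is a formal packaging of Theorems \ref{dt5thm9} and \ref{dt4thm7} — and the only point requiring a word of care is the passage from an abstract ``continuous deformation of complex structure'' to an algebraic family $X'\ra U$ over a connected base, together with the observation that deformation-invariance of the discrete invariant $PI^{\al,n}(\tau')$ along a connected base follows from Theorem \ref{dt5thm9} regardless of monodromy, since monodromy merely permutes the classes $\al$ within $K(\coh(X))$ and the invariants are constant on $U$.
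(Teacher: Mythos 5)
Your proposal is correct and follows essentially the same route as the paper: the paper deduces Corollary \ref{dt5cor3} directly by noting that Theorem \ref{dt4thm7} makes $K^\num(\coh(X))$ deformation-invariant up to canonical isomorphism when $H^1(\cO_X)=0$, so the local-constancy hypothesis of Theorem \ref{dt5thm9} holds and that theorem gives the result. Your extra remarks on realizing the deformation as an algebraic family over a connected base and on monodromy only elaborate points the paper treats implicitly (or in Remark \ref{dt4rem}(d) and Theorem \ref{dt4thm8}), so there is no substantive difference.
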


The following result, proved in \S\ref{dt13}, expresses the pair
invariants $PI^{\al,n}(\tau')$ above in terms of the generalized
Donaldson--Thomas invariants $\bar{DT}{}^\be(\tau)$ of \S\ref{dt53}.
Although the theorem makes sense for general algebraically closed
fields $\K$, our proof works only for $\K=\C$, since it involves a
version of the Behrend function identities\index{Behrend
function!identities} \eq{dt5eq2}--\eq{dt5eq3}, which are proved
using complex analytic methods.

\begin{thm} Let\/ $\K=\C$. Then for\/ $\al\in C(\coh(X))$ and\/
$n\gg 0$ we have
\e
PI^{\al,n}(\tau')=\!\!\!\!\!\!\!\!\!\!\!\!\!
\sum_{\begin{subarray}{l} \al_1,\ldots,\al_l\in
C(\coh(X)),\\ l\ge 1:\; \al_1
+\cdots+\al_l=\al,\\
\tau(\al_i)=\tau(\al),\text{ all\/ $i$}
\end{subarray} \!\!\!\!\!\!\!\!\! }
\begin{aligned}[t] \frac{(-1)^l}{l!} &\prod_{i=1}^{l}\bigl[
(-1)^{\bar\chi([\cO_X(-n)]-\al_1-\cdots-\al_{i-1},\al_i)} \\
&\bar\chi\bigl([\cO_{X}(-n)]\!-\!\al_1\!-\!\cdots\!-\!\al_{i-1},\al_i
\bigr) \bar{DT}{}^{\al_i}(\tau)\bigr],\!\!\!\!\!\!\!\!\!\!\!\!
\end{aligned}
\label{dt5eq17}
\e
where there are only finitely many nonzero terms in the sum.
\label{dt5thm10}
\end{thm}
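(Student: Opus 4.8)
\textbf{Proof strategy for Theorem \ref{dt5thm10}.}

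The plan is to follow the template used for the transformation law \eq{dt5eq14} of the $\bar{DT}{}^\al(\tau)$: realize the count of stable pairs as the image under a Lie-algebra-type morphism of a stack function built from the $\bar\ep^\al(\tau)$, but now in an \emph{enlarged abelian category} $\B_p$ containing both coherent sheaves and the pair object $\cO_X(-n)$. First I would set up, as promised in \S\ref{dt131}, the auxiliary abelian category $\B_p$ whose objects are pairs $(E,V,s)$ with $E\in\coh(X)$, $V$ a finite-dimensional vector space, and $s\in V\ot\Hom(\cO_X(-n),E)$ (equivalently morphisms $V\ot\cO_X(-n)\ra E$), together with a weak stability condition $(\ti\tau,\ti T,\le)$ on $\B_p$ designed so that the objects of class $(\al,1)$ which are $\ti\tau$-semistable are exactly the stable pairs $s:\cO_X(-n)\ra E$ with $[E]=\al$ in the sense of Definition \ref{dt5def3}. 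One checks $\B_p$ satisfies Assumption \ref{dt3ass}, that its moduli stack $\fM_{\B_p}$ is an Artin $\C$-stack, and that the Euler form and Behrend-function behaviour are controlled in terms of those of $\coh(X)$; in particular $\Ext^i_{\B_p}$ of pair-objects reduces to $\Ext^i$ of the underlying sheaves plus $\Hom(\cO_X(-n),-)$ terms, which gives the exponents $\bar\chi([\cO_X(-n)]-\al_1-\cdots-\al_{i-1},\al_i)$ appearing in \eq{dt5eq17}.

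Next I would construct the analogue of the Lie algebra morphism $\ti\Psi$ for $\B_p$. Because $\M_\stp^{\al,n}(\tau')$ is a fine moduli \emph{scheme} (Theorem \ref{dt5thm7}) with a symmetric obstruction theory (Theorem \ref{dt5thm8}), by Theorem \ref{dt4thm6} we have $PI^{\al,n}(\tau')=\chi\bigl(\M_\stp^{\al,n}(\tau'),\nu_{\M_\stp^{\al,n}(\tau')}\bigr)$; the goal is to identify this weighted Euler characteristic with $\ti\Psi_{\B_p}$ applied to the stack-function element $\bar\ep^{(\al,1)}(\ti\tau)$ of the Ringel--Hall algebra of $\B_p$. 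This requires a version of the Behrend function identities \eq{dt5eq2}--\eq{dt5eq3} for $\B_p$, which is where the hypothesis $\K=\C$ enters: one deduces them from the $\coh(X)$ identities of Theorem \ref{dt5thm4} by observing that locally $\fM_{\B_p}$ is again a critical locus $\Crit(f)$ (the pair data $s$ contributes linearly/quadratically, adding a smooth factor or a quadratic summand to $f$), so the Milnor-fibre computation of \S\ref{dt42} carries over. With the $\B_p$-analogues of Theorems \ref{dt3thm1} and \ref{dt5thm5} in hand, one gets $\ti\Psi_{\B_p}(\bar\ep^{(\al,1)}(\ti\tau))$ expressed in terms of a Lie bracket in the enlarged Lie algebra $\ti L(\B_p)$.

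Then I would run the wall-crossing argument of \S\ref{dt33}--\S\ref{dt35} between the stability condition $\ti\tau$ on $\B_p$ (for which semistables in class $(\al,1)$ are stable pairs) and a second stability condition $\hat\tau$ on $\B_p$ for which the relevant semistable objects decompose as $(\al_1,0)\op\cdots\op(\al_l,0)\op(0,1)$-type pieces — i.e.\ where the pair structure `splits off' and the sheaf part breaks into $\tau$-semistable factors $\al_1,\ldots,\al_l$ of equal slope $\tau(\al_i)=\tau(\al)$. Feeding the wall-crossing formula \eq{dt3eq10}--\eq{dt3eq13} through $\ti\Psi_{\B_p}$, using \eq{dt5eq7} to introduce the $\bar{DT}{}^{\al_i}(\tau)$, and computing the resulting combinatorial coefficients (they collapse to $\frac{(-1)^l}{l!}$ because the relevant digraph/$U$-coefficient sums telescope — the factor $1/l!$ comes from the $l$ indistinguishable sheaf factors attached to the single pair vertex), yields exactly \eq{dt5eq17}, with the sign $(-1)^{\bar\chi([\cO_X(-n)]-\al_1-\cdots-\al_{i-1},\al_i)}$ from the $\ti L(\B_p)$-bracket and the $\bar\chi(\cdots,\al_i)$ factors from the edges joining the pair vertex to each sheaf vertex. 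Finiteness of the sum follows because $\tau$-semistable sheaves of class $\al$ form a bounded family, so there are only finitely many decompositions $\al=\al_1+\cdots+\al_l$ with all $\al_i\in C(\coh(X))$ and $\tau(\al_i)=\tau(\al)$ realized by semistables. The main obstacle I expect is the bookkeeping in the second and third steps: correctly defining $\B_p$, $\ti\tau$, $\hat\tau$ so that the combinatorics genuinely reduces to $\frac{(-1)^l}{l!}\prod_i[\cdots]$ rather than the more complicated $V(I,\Ga,\ka;\tau,\ti\tau)$ coefficients, and verifying the $\B_p$-version of the Behrend identities carefully enough that the local $\Crit(f)$ description with the extra pair variable is legitimate.
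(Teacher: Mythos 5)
Your proposal follows essentially the same route as the paper's proof in \S\ref{dt13}: an auxiliary abelian category $\B_p$ of triples $(E,V,s)$, weak stability conditions under which class-$(\al,1)$ semistables are stable pairs on one side and split into sheaf factors plus the extra object $(0,\C,0)$ on the other, Behrend function identities for $\fM_{\B_p}$ deduced from the local $\Crit(f)$ description with the pair variable contributing a smooth factor, a Lie algebra morphism $\ti\Psi{}^{\B_p}$, and a wall-crossing computation whose coefficients collapse to $(-1)^l/l!$. The only refinements the paper adds are to take $E$ in the subcategory $\A_p$ of $\tau$-semistables with reduced Hilbert polynomial $\tau(\al)$ (not all of $\coh(X)$) and to truncate $\ti L(\B_p)$ to a finite-dimensional nilpotent Lie algebra supported on finitely many classes, so that the weakened Euler-form identity and the Behrend identities need only hold on a bounded part of $\B_p$ where $\cO_X(-n)$ acts like a projective.
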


As we will see in \S\ref{dt6}, equation \eq{dt5eq17} is useful for
computing invariants $\bar{DT}{}^\al(\tau)$ in examples. We also use
it to deduce the $\bar{DT}{}^\al(\tau)$ are deformation-invariant.

\begin{cor} The generalized Donaldson--Thomas invariants
$\bar{DT}{}^\al(\tau)$ of\/ {\rm\S\ref{dt53}} are unchanged under
continuous deformations of the underlying Calabi--Yau\/
$3$-fold\/~$X$.\index{Donaldson--Thomas
invariants!deformation-invariance}
\label{dt5cor4}
\end{cor}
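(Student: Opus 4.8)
\textbf{Proof proposal for Corollary \ref{dt5cor4}.}

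The plan is to deduce deformation-invariance of the $\bar{DT}{}^\al(\tau)$ from the already-established deformation-invariance of the pair invariants $PI^{\al,n}(\tau')$ (Theorem \ref{dt5thm9}, or Corollary \ref{dt5cor3} over $\C$), using the relation \eq{dt5eq17} between the two families of invariants, by a triangularity / induction argument. First I would set up the family: let $X\stackrel{\vp}{\longra}U$ be a family of Calabi--Yau 3-folds over a connected base, with relative very ample $\cO_X(1)$; by Theorem \ref{dt4thm7} the numerical Grothendieck group $K^\num(\coh(X_u))$ is locally constant on $U(\C)$, so it makes sense to speak of a fixed lattice $K(\coh(X))$ and of $\bar{DT}{}^\al(\tau)_u$ as a function of $u\in U(\C)$. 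Passing to a finite \'etale cover $\ti U\ra U$ as in Theorem \ref{dt4thm8} we may assume the local system $u\mapsto K^\num(\coh(X_u))$ is globally trivial, so the Hilbert polynomial $P_\al$ and the Euler form $\bar\chi$ are constant in $u$; it suffices to prove $u\mapsto\bar{DT}{}^\al(\tau)_u$ is locally constant, and since $\ti U$ is connected this gives the result.

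The key step is to invert \eq{dt5eq17}. Fix a class $\al$ with Hilbert polynomial $P_\al=P$, and consider \eq{dt5eq17} for this $\al$ and for $n\gg 0$. Splitting the sum into the $l=1$ term and the rest exactly as in \eq{dt1eq17}, we get
\e
PI^{\al,n}(\tau')=(-1)^{P(n)-1}P(n)\,\bar{DT}{}^\al(\tau)
+\bigl\{\text{terms involving $\ts\prod_{i=1}^l\bar{DT}{}^{\al_i}(\tau)$ with $l\ge 2$}\bigr\},
\label{dt5eqProofA}
\e
where $P(n)=\bar\chi([\cO_X(-n)],\al)>0$ for $n\gg 0$, and in each $l\ge 2$ term the classes $\al_i\in C(\coh(X))$ satisfy $\al_1+\cdots+\al_l=\al$ and $\tau(\al_i)=\tau(\al)$, so in particular each $\al_i$ has smaller rank than $\al$ when $\rank\al>0$ (and, with $\dim\al$ fixed, the $\al_i$ are `strictly smaller' in the relevant partial order, cf.\ the discussion after \eq{dt1eq17}). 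This exhibits $PI^{\al,n}(\tau')$, for $n\gg 0$, as $(-1)^{P(n)-1}P(n)\bar{DT}{}^\al(\tau)$ plus a universal polynomial expression (with coefficients built from $\bar\chi$, binomial-type combinatorial factors, and the $(-1)^l/l!$ in \eq{dt5eq17}, all of which are $u$-independent once $\bar\chi$ and $P_\al$ are fixed) in the $\bar{DT}{}^{\al_i}(\tau)$ with $\al_i$ strictly smaller. I would then run an induction on $\rank\al$ (with $\dim\al$ fixed, then on $\dim\al$), the base case being classes $\al$ minimal in this order, for which the $l\ge 2$ sum is empty and so $\bar{DT}{}^\al(\tau)=(-1)^{P(n)-1}PI^{\al,n}(\tau')/P(n)$ for any single choice of $n\gg 0$; since $PI^{\al,n}(\tau')_u$ is independent of $u$ by Theorem \ref{dt5thm9} and $P(n)$ is $u$-independent, $\bar{DT}{}^\al(\tau)_u$ is independent of $u$. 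For the inductive step, by hypothesis every $\bar{DT}{}^{\al_i}(\tau)_u$ appearing in the $l\ge 2$ sum of \eq{dt5eqProofA} is $u$-independent, and $PI^{\al,n}(\tau')_u$ is $u$-independent, so solving \eq{dt5eqProofA} for $\bar{DT}{}^\al(\tau)_u$ — dividing by $(-1)^{P(n)-1}P(n)\ne 0$ — shows it too is $u$-independent. One must also check the finiteness claim in Theorem \ref{dt5thm10} (only finitely many nonzero terms) so that the induction is well-founded and the partial order is Noetherian on the relevant classes; this follows from boundedness of $\tau$-semistable sheaves in a fixed class, as in \S\ref{dt32}.

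I expect the main obstacle to be not the algebra of inverting \eq{dt5eq17}, which is routine, but making the inductive order precise and checking it is well-founded: one needs that for fixed $P_\al$ there are only finitely many $\al'\in C(\coh(X))$ with $P_{\al'}=P_\al$ or with $\al'$ a `summand' of $\al$ in a decomposition $\al=\al_1+\cdots+\al_l$ with all $\tau(\al_i)=\tau(\al)$, so that the recursion terminates — and, relatedly, that \eq{dt5eq17} really is triangular, i.e.\ no $l\ge2$ term secretly involves $\bar{DT}{}^\al(\tau)$ itself. The first point is handled by the boundedness of the family of $\tau$-semistable sheaves in class $\al$ (each decomposition summand is the class of a $\tau$-semistable subsheaf, and these realize only finitely many classes, cf.\ the proof of Theorem \ref{dt4thm8}); the second follows because in an $l\ge 2$ decomposition no single $\al_i$ can equal $\al$ (the others would then be classes of zero objects, excluded from $C(\coh(X))$). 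A minor secondary point: Theorem \ref{dt5thm10} is proved only over $\K=\C$, so the corollary is stated and proved over $\C$ — which is consistent with the standing assumption $\K=\C$ in \S\ref{dt5}. Finally, one records that the argument uses \eq{dt5eq17} for a \emph{single} sufficiently large $n$ depending only on $\al$ (and on the bound needed for $n$-regularity of $\tau$-semistable sheaves in classes $\le\al$), which can be chosen uniformly in $u$ by openness of the $n$-regularity condition in families, so no issue arises from $n$ varying with $u$.
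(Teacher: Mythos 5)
Your overall strategy is exactly the paper's: split \eq{dt5eq17} into its $l=1$ and $l\ge 2$ parts, invoke deformation-invariance of $PI^{\al,n}(\tau')$ from Corollary \ref{dt5cor3}, and solve for $\bar{DT}{}^\al(\tau)$ by a triangular induction, dividing by the nonzero coefficient $\bar\chi([\cO_X(-n)],\al)$. The gap is in your choice of induction variable. Inducting on $\rank\al$ with $\dim\al$ fixed only has content when $\dim\al=3$: if $\dim\al\le 2$ then $\rank\al=0$ and also $\rank\al_i=0$ for every summand, so rank does not decrease, and inducting ``then on $\dim\al$'' does not help either, because $\tau(\al_i)=\tau(\al)$ forces the reduced Hilbert polynomials, hence the dimensions, of the $\al_i$ to equal those of $\al$. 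Your fallback --- only finitely many classes occur, and no $l\ge 2$ term involves $\al$ itself --- is not by itself a well-foundedness argument: it does not exclude mutual recursion between two distinct rank-zero classes, so as written the induction is not grounded for torsion classes (for instance the dimension-zero classes $dp$ of \S\ref{dt63}, or the dimension-one classes of \S\ref{dt64}), which are among the most important cases.

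The repair is precisely the paper's choice of measure: write $P_\al(t)=\frac{k}{d!}t^d+a_{d-1}t^{d-1}+\cdots+a_0$ with $d=\dim\al$ and $k$ a positive integer, and induct on $k$ with $d$ fixed (the hypothesis being vacuous for $k\le 0$). In any $l\ge 2$ term of \eq{dt5eq17}, $\tau(\al_i)=\tau(\al)$ gives $P_{\al_i}(t)=\frac{k_i}{d!}t^d+\cdots$ with $k_1,\ldots,k_l$ positive integers summing to $k$, so each $k_i\le k-1$ and every $\bar{DT}{}^{\al_i}(\tau)$ on the right hand side is covered by the inductive hypothesis; this also supplies the strict decrease that rules out the cycles your finiteness argument leaves open. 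For $d=3$ the leading coefficient is a fixed positive multiple of $\rank\al$, so your rank induction is the special case $d=3$ of this. With that single change, the rest of your argument --- including the remarks on fixing one $n\gg 0$ per class and on trivializing the local system $u\mapsto K^\num(\coh(X_u))$, which the paper handles via Theorem \ref{dt4thm7} and Corollary \ref{dt5cor3} --- goes through and coincides with the paper's proof.
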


\begin{proof} Let $\al\in C(\coh(X))$ have dimension
$\dim\al=d=0,1,2$ or 3. Then the Hilbert polynomial $P_\al$ is of
the form $P_\al(t)=\frac{k}{d!}t^d+a_{d-1}t^{d-1}+\cdots+a_0$ for
$k$ a positive integer and $a_{d-1},\ldots,a_0\in\Q$. Fix $d$, and
suppose by induction on $K\ge 0$ that $\bar{DT}{}^\al(\tau)$ is
deformation-invariant for all $\al\in C(\coh(X))$ with $\dim\al=d$
and $P_\al(t)=\frac{k}{d!}t^d+\cdots+a_0$ for $k\le K$. This is
vacuous for~$K=0$.

Let $\al\in C(\coh(X))$ with $\dim\al=d$ and
$P_\al(t)=\frac{K+1}{d!}t^d+\cdots+a_0$. We rewrite \eq{dt5eq17} by
splitting into terms $l=1$ and $l\ge 2$ as
\e
\begin{split}
&(-1)^{\bar\chi([\cO_X(-n)],\al)}\bar\chi\bigl([\cO_{X}(-n)],\al\bigr)
\bar{DT}{}^{\al}(\tau)=-PI^{\al,n}(\tau')\\
&\quad +\sum_{\begin{subarray}{l} \al_1,\ldots,\al_l\in
C(\coh(X)),\\ l\ge 2:\; \al_1+\cdots+\al_l=\al,\\
\tau(\al_i)=\tau(\al),\text{ all\/ $i$}
\end{subarray} \!\!\!\!\!\!\!\!\!\!\!\! }
\begin{aligned}[t] \frac{(-1)^l}{l!} &\prod_{i=1}^{l}\bigl[
(-1)^{\bar\chi([\cO_X(-n)]-\al_1-\cdots-\al_{i-1},\al_i)} \\
&\bar\chi\bigl([\cO_{X}(-n)]\!-\!\al_1\!-\!\cdots\!-\!\al_{i-1},\al_i
\bigr)\bar{DT}{}^{\al_i}(\tau)\bigr].\!\!\!\!\!\!\!\!\!\!\!\!\!\!
\end{aligned}
\end{split}
\label{dt5eq18}
\e
Here $\bar\chi([\cO_{X}(-n)],\al)>0$ for $n\gg 0$, so the
coefficient of $\bar{DT}{}^{\al}(\tau)$ on the left hand side of
\eq{dt5eq18} is nonzero. On the right hand side, $PI^{\al,n}(\tau')$
is unchanged under deformations of $X$ by Corollary~\ref{dt5cor3}.

For terms $l\ge 2$, $\al_1,\ldots,\al_l\in C(\coh(X))$ with
$\al_1+\cdots+\al_l=\al$ and $\tau(\al_i)=\tau(\al)$ in
\eq{dt5eq18}, we have $\dim\al_i=d$ and
$P_{\al_i}(t)=\frac{k_i}{d!}t^d+\cdots+a_0$, where $k_1,\ldots,k_l$
are positive integers with $k_1+\cdots+k_l=K+1$. Thus $k_i\le K$ for
each $i$, and $\bar{DT}{}^{\al_i}(\tau)$ is deformation-invariant by
the inductive hypothesis. Therefore everything on the right hand
side of \eq{dt5eq18} is deformation-invariant, so
$\bar{DT}{}^{\al}(\tau)$ is deformation-invariant. This proves the
inductive step.
\end{proof}

In many interesting cases the terms $\bar\chi(\al_i,\al_j)$ in
\eq{dt5eq17} are automatically zero. Then \eq{dt5eq17} simplifies,
and we can encode it in a generating function equation. The proof of
the next proposition is immediate. Note that there is a problem with
choosing $n$ in \eq{dt5eq20}, as \eq{dt5eq19} only holds for $n\gg
0$ depending on $\al$, but \eq{dt5eq20} involves one fixed $n$ but
infinitely many $\al$. We can regard the initial term 1 in
\eq{dt5eq20} as $PI^{\al,n}(\tau')\,q^\al$ for $\al=0$. In
Conjecture \ref{dt6conj1} we will call $(\tau,T,\le)$ {\it
generic\/} if $\bar\chi(\be,\ga)=0$ for all $\be,\ga$
with~$\tau(\be)=\tau(\ga)$.

\begin{prop} In the situation above, with $(\tau,T,\le)$ a weak
stability condition on $\coh(X),$ suppose $t\in T$ is such that\/
$\bar\chi(\be,\ga)=0$ for all\/ $\be,\ga\in C(\coh(X))$ with\/
$\tau(\be)=\tau(\ga)=t$. Then for all $\al\in C(\coh(X))$ with
$\tau(\al)=t$ and\/ $n\gg 0$ depending on $\al,$ equation
\eq{dt5eq17} becomes
\e
\begin{gathered}
PI^{\al,n}(\tau')=\!\!\!\!\!\!\!\!\!\!\!\!\!\!\!
\sum_{\begin{subarray}{l} \al_1,\ldots,\al_l\in
C(\coh(X)),\\ l\ge 1:\; \al_1
+\cdots+\al_l=\al,\\
\tau(\al_i)=t,\text{ all\/ $i$}
\end{subarray} \!\!\!\!\!\!\!\! }
\begin{aligned}[t] \frac{(-1)^l}{l!}\prod_{i=1}^{l}\bigl[
(-1)^{\bar\chi([\cO_X(-n)],\al_i)}
\bar\chi\bigl([\cO_{X}(-n)],\al_i\bigr)&\\[-5pt]
\bar{DT}{}^{\al_i}(\tau)&\bigr].\!\!\!\!\!\!\!\!\!\!\!\!\!\!
\end{aligned}
\end{gathered}
\label{dt5eq19}
\e
Ignore for the moment the fact that\/ \eq{dt5eq19} only holds for
$n\gg 0$ depending on $\al$. Then \eq{dt5eq19} can be encoded as the
$q^\al$ term in the formal power series
\e
\begin{split}
&1+\sum_{\al\in C(\coh(X)):\;\tau(\al)=t
\!\!\!\!\!\!\!\!\!\!\!\!\!\!\!\!\!\!\!\!\!\!\!\!\!\!}
PI^{\al,n}(\tau')q^\al=\\
&\quad\exp
\raisebox{-4pt}{\begin{Large}$\displaystyle\Bigl[$\end{Large}}
-\sum_{\al\in C(\coh(X)):\;\tau(\al)=t
\!\!\!\!\!\!\!\!\!\!\!\!\!\!\!\!\!\!\!\!\!\!\!\!\!\!\!\!\!\!\!\!\!\!}
(-1)^{\bar\chi([\cO_X(-n)],\al)}
\bar\chi\bigl([\cO_{X}(-n)],\al\bigr)\bar{DT}{}^\al(\tau)q^\al
\raisebox{-4pt}{\begin{Large}$\displaystyle\Bigr]$\end{Large}},
\end{split}
\label{dt5eq20}
\e
where $q^\al$ for $\al\in C(\coh(X))$ are formal symbols satisfying
$q^\al\cdot q^\be=q^{\al+\be}$.
\label{dt5prop3}
\end{prop}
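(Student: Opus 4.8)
The plan is to derive \eqref{dt5eq19} from \eqref{dt5eq17} by exploiting the hypothesis that $\bar\chi(\be,\ga)=0$ for all $\be,\ga\in C(\coh(X))$ with $\tau(\be)=\tau(\ga)=t$, and then repackage the resulting combinatorial identity as the $q^\al$-coefficient of an $\exp$ of a power series. First I would fix $\al\in C(\coh(X))$ with $\tau(\al)=t$ and $n\gg0$ large enough for \eqref{dt5eq17} to hold, and examine the sum on its right-hand side. Every nonzero term has $\al_1,\dots,\al_l\in C(\coh(X))$ with $\al_1+\cdots+\al_l=\al$ and $\tau(\al_i)=\tau(\al)=t$ for all $i$. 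For each such term, expand the exponent $\bar\chi([\cO_X(-n)]-\al_1-\cdots-\al_{i-1},\al_i)$ using biadditivity of $\bar\chi$ as $\bar\chi([\cO_X(-n)],\al_i)-\sum_{j<i}\bar\chi(\al_j,\al_i)$. But $\tau(\al_j)=\tau(\al_i)=t$, so by hypothesis $\bar\chi(\al_j,\al_i)=0$ for all $j<i$; hence $\bar\chi([\cO_X(-n)]-\al_1-\cdots-\al_{i-1},\al_i)=\bar\chi([\cO_X(-n)],\al_i)$ both as an integer (affecting the sign $(-1)^{(\cdots)}$) and as the scalar factor. Substituting this into \eqref{dt5eq17} term by term immediately gives \eqref{dt5eq19}.

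Next I would verify the finiteness statement: \eqref{dt5eq17} already has only finitely many nonzero terms by Theorem~\ref{dt5thm10}, and the manipulation above does not change the index set, so \eqref{dt5eq19} is a finite sum as well. Then I would turn to the generating-function reformulation \eqref{dt5eq20}. The point is purely formal: if we set $a_\be = -(-1)^{\bar\chi([\cO_X(-n)],\be)}\bar\chi([\cO_X(-n)],\be)\bar{DT}{}^\be(\tau)$ for $\be\in C(\coh(X))$ with $\tau(\be)=t$, then the right-hand side of \eqref{dt5eq20} is $\exp\bigl(\sum_\be a_\be q^\be\bigr)$, whose $q^\al$-coefficient (for $\al\neq0$, $\tau(\al)=t$) is exactly $\sum_{l\ge1}\frac{1}{l!}\sum_{\al_1+\cdots+\al_l=\al} a_{\al_1}\cdots a_{\al_l} = \sum_{l\ge1}\frac{(-1)^l}{l!}\sum_{\al_1+\cdots+\al_l=\al}\prod_{i=1}^l\bigl[(-1)^{\bar\chi([\cO_X(-n)],\al_i)}\bar\chi([\cO_X(-n)],\al_i)\bar{DT}{}^{\al_i}(\tau)\bigr]$, which is precisely the right-hand side of \eqref{dt5eq19}. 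The constant term $1$ on the left of \eqref{dt5eq20} matches the $\al=0$ convention mentioned in the statement. This is the ``proof is immediate'' remark made just before Proposition~\ref{dt5prop3}.

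The only genuine subtlety — which I would flag rather than resolve, following the paper's own comment — is the dependence of $n$ on $\al$: \eqref{dt5eq19} is only asserted for $n\gg0$ depending on $\al$, whereas \eqref{dt5eq20} nominally fixes a single $n$ while ranging over infinitely many $\al$. The honest reading, which I would state explicitly, is that \eqref{dt5eq20} is to be understood as an identity of formal power series that encodes \eqref{dt5eq19} coefficient-by-coefficient, each $q^\al$-coefficient being a valid equality once $n$ is taken large enough for that particular $\al$; it is not claimed that a single $n$ makes the whole series identity literally true. So the ``hard part'' here is essentially nil — the content is entirely in Theorem~\ref{dt5thm10} and the vanishing hypothesis on $\bar\chi$ — and the write-up reduces to the biadditivity computation plus the standard $\exp$-of-a-linear-form bookkeeping, with a one-sentence caveat about the meaning of \eqref{dt5eq20}.
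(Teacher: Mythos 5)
Your proposal is correct and fills in exactly the argument the paper has in mind when it calls the proof ``immediate'': biadditivity of $\bar\chi$ plus the vanishing hypothesis collapse $\bar\chi([\cO_X(-n)]-\al_1-\cdots-\al_{i-1},\al_i)$ to $\bar\chi([\cO_X(-n)],\al_i)$, turning \eq{dt5eq17} into \eq{dt5eq19}, and the passage to \eq{dt5eq20} is the standard coefficient extraction from $\exp$ of a linear form. Your handling of the $n\gg0$ caveat also matches the paper's own remark, so there is nothing to add.
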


Now Theorem \ref{dt5thm10} relates the invariants
$PI^{\al,n}(\tau')$ and $\bar{DT}{}^\be(\tau)$, which can both be
written in terms of Euler characteristics weighted by Behrend
functions.\index{Behrend function} There is an analogue in which we
simply omit the Behrend functions. Omitting the Behrend function
$\nu_{\smash{\M_\stp^{\al,n} (\tau')}}$ in the expression
\eq{dt5eq16} for $PI^{\al,n}(\tau')$ shows that the unweighted
analogue of $PI^{\al,n}(\tau')$ is
$\smash{\chi\bigl(\M_\stp^{\al,n}(\tau')\bigr)}$. Comparing
\eq{dt3eq22} and \eq{dt5eq7} shows that (up to sign) the unweighted
analogue of $\bar{DT}{}^\be(\tau)$ is the invariant $J^\be(\tau)$ of
\S\ref{dt35}. The proof of Theorem \ref{dt5thm10} in \S\ref{dt13}
involves a Lie algebra morphism $\ti\Psi{}^{\B_p}$ in \S\ref{dt134};
for the unweighted case we must replace this by a Lie algebra
morphism $\Psi^{\B_p}$ which is related to $\ti\Psi{}^{\B_p}$ in the
same way that $\Psi$ in \S\ref{dt34} is related to $\ti\Psi$ in
\S\ref{dt53}, and maps to a Lie algebra $L(\B_p)$ with the sign
omitted in \eq{dt13eq29}. In this way we obtain the following
unweighted version of Theorem~\ref{dt5thm10}:

\begin{thm} For\/ $\al\in C(\coh(X))$ and\/ $n\gg 0$ we have
\e
\chi\bigl(\M_\stp^{\al,n}(\tau')\bigr)=\!\!\!\!\!\!\!\!\!\!
\sum_{\begin{subarray}{l} \al_1,\ldots,\al_l\in
C(\coh(X)),\\ l\ge 1:\; \al_1
+\cdots+\al_l=\al,\\
\tau(\al_i)=\tau(\al),\text{ all\/ $i$}
\end{subarray}  }
\begin{aligned}[t] \frac{1}{l!} \prod_{i=1}^{l}\bigl[\bar\chi
\bigl([\cO_{X}(-n)]\!-\!\al_1\!-\!\cdots\!-\!\al_{i-1},\al_i\bigr)&\\[-6pt]
\cdot\,\, J^{\al_i}(\tau)\bigr]&,
\end{aligned}
\label{dt5eq21}
\e
for $J^{\al_i}(\tau)$ as in {\rm\S\ref{dt35},} with only finitely
many nonzero terms in the sum.
\label{dt5thm11}
\end{thm}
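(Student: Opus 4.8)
\textbf{Proof proposal for Theorem \ref{dt5thm11}.}

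The plan is to run through exactly the same chain of reasoning used to prove Theorem \ref{dt5thm10}, but with every Behrend-function weight deleted, so that the weighted Euler characteristics $\chi(-,\nu)$ become ordinary Euler characteristics $\chi(-)$, and the signs attached to Behrend functions disappear. First I would recall the setup of \S\ref{dt131}--\S\ref{dt134}: one builds an auxiliary abelian category $\B_p$ whose objects are triples consisting of $V\in\coh(X)$, a vector space $W$, and a morphism, so that stable pairs $s:\cO_X(-n)\ra E$ correspond to $(\ti\tau,\ti T,\le)$-stable objects of $\B_p$ for a suitable weak stability condition $\ti\tau$, and the moduli stack $\fM_{\B_p}$ of objects in $\B_p$ carries stack functions $\bar\de^{(\al,n)}_{\rss}(\ti\tau)$, $\bar\ep^{(\al,n)}(\ti\tau)$ exactly as in \S\ref{dt32}. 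In the proof of Theorem \ref{dt5thm10} one constructs a Lie algebra morphism $\ti\Psi^{\B_p}:\SFai(\fM_{\B_p})\ra L(\B_p)$, where $L(\B_p)$ is a Lie algebra with bracket carrying a sign $(-1)^{\bar\chi(\cdots)}$ as in \eq{dt13eq29} (the $\B_p$-analogue of $\ti L(X)$), and the identity \eq{dt5eq17} comes from expanding $\bar\ep^{(\al,n)}(\ti\tau)$ in terms of the $\bep^{\be}(\tau)$ via the wall-crossing formula and applying $\ti\Psi^{\B_p}$.

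The modification I would make: replace $\ti\Psi^{\B_p}$ by the unweighted morphism $\Psi^{\B_p}:\SFai(\fM_{\B_p})\ra L(\B_p)'$, defined by the analogue of \eq{dt3eq16}--\eq{dt3eq18} (Euler characteristics of the $U_i$, \emph{not} weighted by $\nu_{\fM_{\B_p}}$), landing in the Lie algebra $L(\B_p)'$ which is $L(\B_p)$ with the sign in \eq{dt13eq29} removed, i.e.\ with bracket $[\la^{(\al,d)},\la^{(\be,e)}]=(de-ed+\bar\chi(\al,\be))\la^{(\al+\be,d+e)}$ or similar; this is a Lie algebra morphism by the $\B_p$-analogue of Theorem \ref{dt3thm4}, whose proof is purely the combinatorics of \S\ref{dt3} with no Behrend functions entering (Example \ref{dt3ex3} is the model computation). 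Applying $\Psi^{\B_p}$ to the stable-pair side gives $\chi(\M^{\al,n}_\stp(\tau'))$ by the same argument that identifies $\ti\Psi^{\B_p}(\bar\ep^{(\al,n)}(\ti\tau))$ with $PI^{\al,n}(\tau')$ in \S\ref{dt13}, except that now the moduli space is only a fine moduli scheme (Theorem \ref{dt5thm7}) whose stack is a $[\Spec\C/\bG_m]$-gerbe, so the stack function $\bar\de^{(\al,n)}_\rss(\ti\tau)=\bar\ep^{(\al,n)}(\ti\tau)$ maps under $\Psi^{\B_p}$ to $\chi(\M^{\al,n}_\stp(\tau'))$ times the generator — no Behrend weight, just the Euler characteristic. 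Applying $\Psi^{\B_p}$ to the $\coh(X)$ side and using the wall-crossing expansion of $\bar\ep^{(\al,n)}(\ti\tau)$ into the $\bep^{\be}(\tau)$, together with the defining equation $\Psi(\bep^{\be}(\tau))=J^{\be}(\tau)\la^{\be}$ from \eq{dt3eq22}, produces the right-hand side of \eq{dt5eq21}: the combinatorial coefficients are identical to those in \eq{dt5eq17}, but since there is no $(-1)^{\bar\chi}$ in the bracket of $L(\B_p)'$ and $J^{\be}(\tau)$ replaces $\bar{DT}{}^{\be}(\tau)$, the signs $(-1)^l$ and $(-1)^{\bar\chi(\cdots)}$ in \eq{dt5eq17} both collapse, leaving the clean $\frac{1}{l!}\prod_i\bar\chi([\cO_X(-n)]-\al_1-\cdots-\al_{i-1},\al_i)J^{\al_i}(\tau)$. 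Finiteness of the sum follows as in Theorem \ref{dt5thm10}, from boundedness of $\tau$-semistable sheaves in class $\al$.

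The main obstacle I anticipate is \emph{not} in the Lie-algebra bookkeeping — that is routine given \S\ref{dt3} and \S\ref{dt13} — but in checking that the unweighted morphism $\Psi^{\B_p}$ really is a Lie algebra morphism with the \emph{unsigned} bracket, and that the auxiliary-category machinery of \S\ref{dt131}--\S\ref{dt134} goes through verbatim without the Behrend function. Concretely, one must verify that the analogue of Theorem \ref{dt3thm1} holds in $\B_p$ (so that $\bar\ep^{(\al,n)}(\ti\tau)\in\SFai(\fM_{\B_p})$ and $\Psi^{\B_p}$ can be applied), that the wall-crossing formula Theorem \ref{dt3thm2} applies between $\tau'$ and the relevant stability conditions on $\B_p$ with only finitely many nonzero terms (this needs the permissibility/finiteness arguments of \S\ref{dt131}), and that the identification of $\Psi^{\B_p}$-image of the stable-pair class with $\chi(\M^{\al,n}_\stp(\tau'))$ uses only that $\M^{\al,n}_\stp(\tau')$ is a fine moduli scheme — which is Theorem \ref{dt5thm7} — and not its obstruction theory. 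Since all of these are established in the course of proving Theorems \ref{dt5thm7}--\ref{dt5thm10}, the real work is simply to re-examine each lemma in \S\ref{dt13} and confirm it has an unweighted shadow; I would organize the writeup as a sequence of remarks "the same proof, deleting $\nu$, gives\ldots" rather than repeating the arguments.
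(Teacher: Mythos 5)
Your proposal is correct and is essentially the paper's own argument: the paper proves Theorem \ref{dt5thm11} exactly by replacing the weighted Lie algebra morphism $\ti\Psi{}^{\B_p}$ of \S\ref{dt134} with an unweighted $\Psi^{\B_p}$ (related to it as $\Psi$ in \S\ref{dt34} is to $\ti\Psi$ in \S\ref{dt53}), mapping to the Lie algebra $L(\B_p)$ with the sign omitted in \eq{dt13eq29}, and then rerunning the proof of Theorem \ref{dt5thm10} from \S\ref{dt13}, with the sign bookkeeping collapsing just as you describe.
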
\index{stable pair|)}

\section{Examples, applications, and generalizations}
\label{dt6}\index{Donaldson--Thomas invariants!computation in examples|(}

We now give many worked examples of the theory of \S\ref{dt5}, and
some consequences and further developments. This section considers
Donaldson--Thomas theory in $\coh(X)$, for $X$ a Calabi--Yau 3-fold
over $\C$. As in \S\ref{dt5}, our definition of Calabi--Yau 3-fold
includes the assumption that $H^1(\cO_X)=0$, as discussed in
Remark~\ref{dt5rem2}.

Section \ref{dt7} will discuss Donaldson--Thomas theory in
categories of quiver representations $\modCQI$ coming from a
superpotential $W$ on $Q$, which is a fertile source of easily
computable examples.

\subsection{Computing $PI^{\al,n}(\tau')$, $\bar{DT}{}^\al(\tau)$
and $J^\al(\tau)$ in examples}
\label{dt61}

Here are a series of simple situations in which we can calculate
contributions to the invariants $PI^{\al,n}(\tau')$ and
$\bar{DT}{}^\al(\tau)$ of \S\ref{dt5}, and $J^\al(\tau)$
of~\S\ref{dt35}.

\begin{ex} Let $X$ be a Calabi--Yau 3-fold over $\C$ equipped
with a very ample line bundle $\cO_X(1)$. Suppose $\al\in
C(\coh(X))$, and that $E\in\coh(X)$ with $[E]=\al$ is $\tau$-stable
and rigid, so that $\Ext^1(E,E)=0$. Then $mE={\buildrel
{\!\ulcorner\,\text{$m$ copies }\,\urcorner\!} \over {E\op\cdots \op
E}}$ for $m\ge 2$ is a strictly $\tau$-semistable sheaf of class
$m\al$, which is also rigid. Hence $\{[mE]\}$ is a connected
component of $\M_\rss^{m\al}(\tau)$, and $\pi^{-1}([mE])$ is a
connected component of $\M_\stp^{m\al,n}(\tau')$ for $m\ge 1$, where
$\pi:\M_\stp^{m\al,n}(\tau')\ra\M_\rss^{m\al}(\tau)$ is the
projection from a stable pair $s:\cO(-n)\ra E'$ to the S-equivalence
class $[E']$ of the underlying $\tau$-semistable sheaf $E'$. Suppose
for simplicity that $mE$ is the only $\tau$-semistable sheaf of
class $m\al$; alternatively, we can consider the following as
computing the contribution to $PI^{m\al,n}(\tau')$ from stable
pairs~$s:\cO(-n)\ra mE$.

A pair $s:\cO(-n)\ra mE$ may be regarded as $m$ elements
$s^1,\ldots,s^m$ of $H^0(E(n))\ab\cong\C^{P_\al(n)}$ for $n\gg 0$,
where $P_\al$ is the Hilbert polynomial\index{Hilbert polynomial} of
$E$. Such a pair turns out to be stable if and only if
$s^1,\ldots,s^m$ are linearly independent in $H^0(E(n))$. Two such
pairs are equivalent if they are identified under the action of
$\Aut(mE)\cong\GL(m,\C)$, acting in the obvious way on
$(s^1,\ldots,s^m)$. Thus, equivalence classes of stable
pairs\index{stable pair} correspond to linear subspaces of dimension
$m$ in $H^0(E(n))$, so the moduli space
$\smash{\M_\stp^{m\al,n}(\tau')}$ is isomorphic as a $\C$-scheme to
the Grassmannian $\Gr(\C^m,\C^{P_\al(n)})$. This is smooth of
dimension $m(P_\al(n)-m)$, so that
$\nu_{\M_\stp^{m\al,n}(\tau')}\equiv (-1)^{m(P_\al(n)-m)}$ by
Theorem \ref{dt4thm1}(i). Also $\Gr(\C^m,\C^{P_\al(n)})$ has Euler
characteristic the binomial coefficient
$\smash{\binom{P_\al(n)}{m}}$. Therefore \eq{dt5eq16} gives
\e
PI^{m\al,n}(\tau')=\ts (-1)^{m(P_\al(n)-m)}\binom{P_\al(n)}{m}.
\label{dt6eq1}
\e
\label{dt6ex1}
\end{ex}

We can use equations \eq{dt5eq17} and \eq{dt5eq21} to compute the
generalized Donald\-son--Thomas invariants $\bar{DT}{}^{m\al}(\tau)$
and invariants $J^{m\al}(\tau)$ in Example~\ref{dt6ex1}.

\begin{ex} Work in the situation of Example \ref{dt6ex1}, and
assume that $mE$ is the only $\tau$-semistable sheaf of class $m\al$
for all $m\ge 1$, up to isomorphism. Consider \eq{dt5eq17} with
$m\al$ in place of $\al$. If $\al_1,\ldots,\al_l$ give a nonzero
term on the right hand side of \eq{dt5eq17} then
$m\al=\al_1+\cdots+\al_l$, and $\bar{DT}{}^{\al_i}(\tau)\ne 0$, so
there exists a $\tau$-semistable $E_i$ in class $\al_i$. Thus
$E_1\op\cdots\op E_l$ lies in class $m\al$, and is $\tau$-semistable
as $\tau(\al_i)=\tau(\al)$ for all $i$. Hence $E_1\op\cdots\op
E_l\cong mE$, which implies that $E_i\cong k_iE$ for some
$k_1,\ldots,k_l\ge 1$ with $k_1+\cdots+k_l=m$, and~$\al_i=k_i\al$.

Setting $\al_i=k_i\al$, we see that $\bar\chi(\al_j,\al_i)=0$ and
$\bar\chi([\cO_{X}(-n)],\al_i)=k_iP_\al(n)$, where $P_\al$ is the
Hilbert polynomial of $E$. Thus in \eq{dt5eq17} we have
$\bar\chi([\cO_X(-n)]-\al_1-\cdots-\al_{i-1},\al_i)=k_iP_\al(n)$.
Combining \eq{dt6eq1}, and \eq{dt5eq17} with these substitutions,
and cancelling a factor of $(-1)^{mP_\al(n)}$ on both sides, yields
\e
(-1)^m\binom{P_\al(n)}{m}=
\sum_{\begin{subarray}{l}
l,k_1,\ldots,k_l\ge 1:\\
k_1+\cdots+k_l=m\end{subarray}}
\begin{aligned}[t] \frac{(-1)^l}{l!} &\prod_{i=1}^{l}
k_iP_\al(n)\bar{DT}{}^{k_i\al}(\tau).
\end{aligned}
\label{dt6eq2}
\e
Regarding each side as a polynomial in $P_\al(n)$ and taking the
linear term in $P_\al(n)$ we see that
\e
\bar{DT}{}^{m\al}(\tau)=\frac{1}{m^2} \quad\text{for all $m\ge 1$.}
\label{dt6eq3}
\e
Setting $\bar{DT}{}^{k_i\al}(\tau)=1/k_i^2$, we see that \eq{dt6eq2}
is the $x^m$ term in the power series expansion of the identity
\begin{equation*}
(1-x)^{P_\al(n)}=\exp\bigl[\ts -P_\al(n)\sum_{k=1}^\iy
x^k/k\,\bigr].
\end{equation*}
This provides a consistency check for \eq{dt5eq17} in this example:
there exist unique values for $\bar{DT}{}^{k\al}(\tau)$ for
$k=1,2,\ldots$ such that \eq{dt6eq2} holds for all~$n,m$.

In the same way, by \eq{dt5eq21} the analogue of \eq{dt6eq2} is
\begin{equation*}
\binom{P_\al(n)}{m}=
\sum_{\begin{subarray}{l}
l,k_1,\ldots,k_l\ge 1:\\
k_1+\cdots+k_l=m\end{subarray}}
\begin{aligned}[t] \frac{1}{l!} &\prod_{i=1}^{l}
k_iP_\al(n)J^{k_i\al}(\tau).
\end{aligned}
\end{equation*}
Taking the linear term in $P_\al(n)$ on both sides gives
\e
J^{m\al}(\tau)=\frac{(-1)^{m-1}}{m^2} \quad\text{for all $m\ge 1$.}
\label{dt6eq4}
\e
\label{dt6ex2}
\end{ex}

From \eq{dt6eq3}--\eq{dt6eq4} we see that

\begin{cor} The invariants\/ $\bar{DT}{}^\al(\tau),J^\al(\tau)\in\Q$
need not be integers.
\label{dt6cor1}
\end{cor}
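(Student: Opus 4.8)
The statement asserts merely the existence of examples where $\bar{DT}{}^\al(\tau)$ or $J^\al(\tau)$ lie in $\Q\sm\Z$, and Examples \ref{dt6ex1} and \ref{dt6ex2} already supply everything needed, so the plan is simply to package that computation into a clean argument. First I would observe that it suffices to exhibit a single Calabi--Yau 3-fold $X$ over $\C$ with an ample $\cO_X(1)$, a class $\al\in C(\coh(X))$, and a sheaf $E$ in class $\al$ which is $\tau$-stable and rigid (i.e.\ $\Ext^1(E,E)=0$), such that $mE=\op^mE$ is the unique $\tau$-semistable sheaf of class $m\al$ up to isomorphism for all $m\ge1$. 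Granting such a set-up, equations \eq{dt6eq3} and \eq{dt6eq4} give $\bar{DT}{}^{m\al}(\tau)=1/m^2$ and $J^{m\al}(\tau)=(-1)^{m-1}/m^2$, which are non-integral for $m\ge2$; this proves the corollary. So the real content is producing the set-up, which is a matter of pointing to a standard example.

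The key steps, in order: (1) Choose $X$ and $E$. The cleanest choice is $E=\cO_C$ for $C\subset X$ a rigid rational curve, or better still a line bundle or ideal-type sheaf supported on a rigid configuration; but the truly canonical choice is to take $X$ with a rigid $\tau$-stable sheaf $E$ such that $\chi(E,E)=\hom(E,E)-\ext^1(E,E)+\ext^2(E,E)-\ext^3(E,E)$, using Serre duality $\Ext^i(E,E)\cong\Ext^{3-i}(E,E)^*$, so $\Ext^1(E,E)=0$ forces $\Ext^2(E,E)=0$ and $\chi(E,E)=2$ since $E$ is simple; concrete instances (e.g.\ the structure sheaf of a rigid $(-1,-1)$-curve $\CP^1\subset X$, which is rigid as a sheaf, or a suitably chosen exceptional object) are well documented. (2) Verify rigidity of $mE$: since $\Ext^1(mE,mE)\cong\Ext^1(E,E)\ot\End(\C^m)=0$. (3) Verify that $mE$ is the \emph{only} $\tau$-semistable sheaf in class $m\al$: any $\tau$-semistable $F$ in class $m\al$ has a Jordan--H\"older filtration with stable factors of slope $\tau(\al)$; if one arranges the example so the only such stable factor is $E$ itself (e.g.\ by a dimension or Chern-character argument ruling out other stable sheaves on that slope-ray), then $F\cong mE$. (4) Quote Example \ref{dt6ex2}: under exactly these hypotheses, \eq{dt5eq17} and \eq{dt5eq21} force \eq{dt6eq3} and \eq{dt6eq4} by comparing linear terms in the polynomial $P_\al(n)$.

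The main obstacle is step (3): ensuring no \emph{other} $\tau$-semistable sheaf shares the slope $\tau(\al)$ and contributes to $\M_\rss^{m\al}(\tau)$. For a genuinely bulletproof statement one wants an $X$ and $\al$ where this is transparent --- for instance where $C(\coh(X))$ restricted to that Hilbert polynomial is exhausted by multiples of $\al$, or where one can cite a known moduli-space computation. As the excerpt itself notes (the remark preceding \eq{dt6eq1}), one can sidestep this entirely: even without uniqueness, the displayed computation gives the \emph{contribution} of stable pairs $s:\cO(-n)\ra mE$ to $PI^{m\al,n}(\tau')$, and in the presence of at least one such rigid $\tau$-stable $E$ with no strictly semistables on its ray of smaller class (e.g.\ $\M^\al_\rss(\tau)=\M^\al_\st(\tau)=\{[E]\}$), the inductive unravelling of \eq{dt5eq17} starting from $\bar{DT}{}^\al(\tau)=DT^\al(\tau)=1$ still yields $\bar{DT}{}^{2\al}(\tau)=1/4\notin\Z$. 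So the fallback plan, if a fully clean uniqueness example is awkward to pin down, is to run the induction only up to $m=2$, where the needed input is just a rigid stable sheaf with no strictly semistable sheaves in its own class --- a manifestly available situation --- and where \eq{dt6eq2} at $m=2$ reads $\binom{P_\al(n)}{2}=\tfrac12 P_\al(n)^2\bar{DT}{}^{2\al}(\tau)-\tfrac12 P_\al(n)\bar{DT}{}^\al(\tau)\cdot(\text{something})$, whose linear-term comparison gives $\bar{DT}{}^{2\al}(\tau)=1/4$, and similarly $J^{2\al}(\tau)=-1/4$. Either route delivers the corollary.
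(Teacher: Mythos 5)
Your main route is exactly the paper's proof: Corollary \ref{dt6cor1} is deduced in one line from equations \eq{dt6eq3}--\eq{dt6eq4} of Examples \ref{dt6ex1}--\ref{dt6ex2}, i.e.\ from $\bar{DT}{}^{m\al}(\tau)=1/m^2$ and $J^{m\al}(\tau)=(-1)^{m-1}/m^2$ for $m\ge 2$. Your extra concern about realizing the hypotheses geometrically (and the fallback induction, which as stated would not pin down $\bar{DT}{}^{2\al}(\tau)$ without the uniqueness assumption in class $2\al$) goes beyond what the paper does at this point --- it treats the examples as hypothetical set-ups, and only later, in \S\ref{dt64}, exhibits rigid $(-1,-1)$-curves realizing the $1/m^2$ contributions.
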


\begin{ex} Work in the situation of Example \ref{dt6ex1}, but
suppose now that $E_1,\ldots,E_l$ are rigid, pairwise non-isomorphic
$\tau$-stable coherent sheaves with $[E_i]=\al_i\in C(\coh(X))$,
where $\al_1,\ldots,\al_l$ are distinct with $\tau(\al_1)=\cdots=
\tau(\al_l)=\tau(\al)$ for $\al=\al_1+\cdots+\al_l$, and suppose
$E=E_1\op\cdots\op E_l$ is the only $\tau$-semistable sheaf in class
$\al\in C(\coh(X))$, up to isomorphism. Then by properties of
(semi)stable sheaves we have $\Hom(E_i,E_j)=0$ for all $i\ne j$, so
$\Hom(E,E)=\bigop_{i=1}^l\Hom(E_i,E_i)$, and
$\Aut(E)=\prod_{i=1}^l\Aut(E_i)\cong\bG_m^l$. A pair $s:\cO(-n)\ra
E$ is an $l$-tuple $(s_1,\ldots,s_l)$ with~$s_i\in H^0(E_i(n))\cong
\C^{P_{\al_i}(n)}$.

The condition for $s:\cO(-n)\ra E$ to be stable is $s_i\ne 0$ for
$i=1,\ldots,l$. Thus $\M_\stp^{\al,n}(\tau')$ is the quotient of
$\prod_{i=1}^l\bigl(H^0(E_i(n))\sm\{0\}\bigr)$ by
$\Aut(E)\cong\bG_m^l$, so that $\M_\stp^{\al,n}(\tau')\cong
\prod_{i=1}^l\CP^{P_{\al_i}(n)-1}$ as a smooth $\C$-scheme, where
$E_i$ has Hilbert polynomial $P_{\al_i}$. This has Euler
characteristic $\prod_{i=1}^lP_{\al_i}(n)$ and dimension
$\sum_{i=1}^l(P_{\al_i}(n)-1)$, so that
$\nu_{\M_\stp^{\al,n}(\tau')} \equiv(-1)^{\sum_{i=1}^l
(P_{\al_i}(n)-1)}$. So \eq{dt5eq16} gives
\e
PI^{\al,n}(\tau')=\ts (-1)^{\sum_{i=1}^l
(P_{\al_i}(n)-1)}\prod_{i=1}^lP_{\al_i}(n).
\label{dt6eq5}
\e
\label{dt6ex3}
\end{ex}

\begin{ex} We work in the situation of Example \ref{dt6ex3}. Let
$i,j=1,\ldots,l$ with $i\ne j$. Since $E_i,E_j$ are nonisomorphic
$\tau$-stable sheaves with $\tau([E_i])=\tau([E_j])$ we have
$\Hom(E_i,E_j)=\Hom(E_j,E_i)=0$. As by assumption $E=E_1\op\cdots\op
E_l$ is the only $\tau$-semistable sheaf in class $\al$, we also
have $\Ext^1(E_i,E_j)=\Ext^1(E_j,E_i)=0$, since if
$\Ext^1(E_i,E_j)\ne 0$ we would have a nontrivial extension $0\ra
E_j\ra F\ra E_i\ra 0$, and then $F\op\bigop_{k\ne i,j}E_k$ would be
a $\tau$-semistable sheaf in class $\al$ not isomorphic to $E$. So
by \eq{dt3eq14} we have~$\bar\chi([E_i],[E_j])=\bar\chi(\al_i,
\al_j)=0$.

If $\es\ne I\subseteq\{1,\ldots,l\}$, using \eq{dt3eq4} and
$\Hom(E_i,E_j)\!=\!0\!=\!\Ext^1(E_i,E_j)$ gives
\e
\begin{aligned}
\bep^{\sum_{i\in I}\al_i}(\tau)&=\sum_{\begin{subarray}{l}
I=I_1\amalg\cdots\amalg I_n:\\
n\ge 1,\; I_j\ne\es\end{subarray}}
\frac{(-1)^{n-1}}{n}\,\,\bdss^{\sum_{i\in I_1}\al_i}(\tau)
*\cdots*\bdss^{\sum_{i\in I_n}\al_i}(\tau)\\
&=\sum_{\begin{subarray}{l}
I=I_1\amalg\cdots\amalg I_n:\\
n\ge 1,\; I_j\ne\es\end{subarray}}
\frac{(-1)^{n-1}}{n}\,\,\bde_{[\bigop_{i\in I_1}E_i]}*\cdots*
\bde_{[\bigop_{i\in I_n}E_i]}\\
&=\raisebox{-6pt}{\begin{Large}$\displaystyle\biggl[$\end{Large}}
\sum_{\begin{subarray}{l}
I=I_1\amalg\cdots\amalg I_n:\\
n\ge 1,\; I_j\ne\es\end{subarray}\!\!\!\!} \frac{(-1)^{n-1}}{n}
\raisebox{-6pt}{\begin{Large}$\displaystyle\biggr]$\end{Large}}
\bde_{[\bigop_{i\in I}E_i]}=\begin{cases}\bde_{[E_i]}, & I=\{i\}, \\
0, & \md{I}\ge 2,\end{cases}
\end{aligned}
\label{dt6eq6}
\e
where the combinatorial sum $[\cdots]$ in the last line is evaluated
as in the proof of \cite[Th.~7.8]{Joyc5}. It follows from
\eq{dt5eq7} that $\bar{DT}{}^{\al_i}(\tau)=1$ for all
$i=1,\ldots,l$, and $\bar{DT}{}^{\sum_{i\in I}\al_i}(\tau)=0$ for
all subsets $I\subseteq\{1,\ldots,l\}$ with $\md{I}\ge 2$.
Substituting these values into \eq{dt5eq17}, the only nonzero terms
come from splitting $\al=\al_{\si(1)}+\al_{\si(2)}+\cdots
+\al_{\si(l)}$, where $\si:\{1,\ldots,l\}\ra\{1,\ldots,l\}$ is a
permutation of $\{1,\ldots,l\}$. This term contributes
\begin{align*}
&\frac{(-1)^l}{l!} \prod_{i=1}^{l}\begin{aligned}[h]\bigl[
&(-1)^{\bar\chi([\cO_X(-n)]-\al_{\si(1)}-\cdots-\al_{\si(i-1)},
\al_{\si(i)})}\\
&\bar\chi\bigl([\cO_{X}(-n)]\!-\!\al_{\si(1)}\!-\!\cdots\!-
\!\al_{\si(i-1)},\al_{\si(i)}\bigr)\cdot 1\bigr]\end{aligned} \\
&=\frac{(-1)^l}{l!} \prod_{i=1}^{l}\bigl[
(-1)^{P_{\al_{\si(i)}}(n)}P_{\al_{\si(i)}}(n)\bigr]=
\frac{1}{l!}\cdot (-1)^{\sum_{i=1}^l
(P_{\al_i}(n)-1)}\prod_{i=1}^lP_{\al_i}(n)
\end{align*}
to the r.h.s.\ of \eq{dt5eq17}. As there are $l!$ permutations
$\si$, summing these contributions in \eq{dt5eq17} gives
\eq{dt6eq5}. A similar computation with \eq{dt5eq21} shows that
$J^{\al_i}(\tau)=1$ and $J^{\sum_{i\in I}\al_i}(\tau)=0$ when
$\md{I}\ge 2$. Thus we see that if $E_1,\ldots,E_l$ are pairwise
nonisomorphic $\tau$-stable sheaves for $l\ge 2$ with $[E_i]=\al_i$,
and $\tau(\al_i)=\tau(\al_j)$ and $\Ext^1(E_i,E_j)=0$, then the
$\tau$-semistable sheaf $E_1\op\cdots\op E_l$ contributes zero to
$\bar{DT}{}^{\al_1+\cdots+\al_l}(\tau)$
and~$J^{\al_1+\cdots+\al_l}(\tau)$.
\label{dt6ex4}
\end{ex}

\begin{ex} We combine Examples \ref{dt6ex1} and \ref{dt6ex3}.
Suppose $E_1,\ldots,E_l$ are rigid, pairwise non-isomorphic stable
coherent sheaves, where $E_i$ has Hilbert polynomial $P_{\al_i}$,
that $m_1,\ldots,m_l\ge 1$, and that $E=m_1E_1\op\cdots\op m_lE_l$
is the only semistable sheaf in class $\al\in C(\coh(X))$, up to
isomorphism.

Then a pair $s:\cO(-n)\ra E$ is a collection of $s_i^j\in
H^0(E_i(n))\cong\C^{P_{\al_i}(n)}$ for $i=1,\ldots,l$ and
$j=1,\ldots,m_i$, and is stable if and only if $s_i^1,\ldots,
s_i^{m_i}$ are linearly independent in $H^0(E_i(n))$ for all
$i=1,\ldots,l$. The automorphism group $\Aut(E)\cong
\prod_{i=1}^l\GL(m_i,\C)$ acts upon the set of such stable pairs,
and taking the quotient shows that the moduli space
$\M_\stp^{\al,n}(\tau')$ is isomorphic to the product of
Grassmannians $\prod_{i=1}^l\Gr(\C^{m_i},\C^{P_{\al_i}(n)})$. Hence
\e
PI^{\al,n}(\tau')=\ts
\prod_{i=1}^l(-1)^{m_i(P_{\al_i}(n)-m_i)}\binom{P_{\al_i}(n)}{m_i}.
\label{dt6eq7}
\e
Equation \eq{dt6eq7} includes both \eq{dt6eq1} as the case $l=1$
with $P_1=P$, $m_1=m$ and $\al$ in place of $m\al$, and \eq{dt6eq5}
as the case~$m_1=\cdots=m_l=1$.
\label{dt6ex5}
\end{ex}

\begin{ex} We combine Examples \ref{dt6ex2} and \ref{dt6ex4}. Work
in the situation of Example \ref{dt6ex5}. Then
$\bar\chi(\al_i,\al_j)=0$ for $i\ne j$. Suppose for simplicity that
$\al_1,\ldots,\al_l$ are linearly independent over $\Z$ in
$K^\num(\coh(X))$, and that $m_1E_1\op\cdots\op m_lE_l$ is the only
$\tau$-semistable sheaf in class $m_1\al_1+\cdots+m_l\al_l$ for all
$m_1,\ldots,m_l\ge 0$. We claim that $\bar{DT}{}^{m_i\al_i}(\tau)=
1/m_i^2$ and $J^{m_i\al_i}(\tau)=(-1)^{m_i-1}/m_i^2$ for all
$i=1,\ldots,l$ and $m_i=1,2,\ldots$, and $\bar{DT}{}^{m_1\al_1+
\cdots+m_l\al_l}(\tau)=J^{m_1\al_1+\cdots+m_l\al_l}(\tau)=0$
whenever at least two $m_i$ are positive. The latter holds as
$\bep^{m_1\al_1+\cdots+m_l\al_l}(\tau)=0$ whenever at least two
$m_i$ are positive, as in \eq{dt6eq6}. It is not difficult to show,
as in Examples \ref{dt6ex2} and \ref{dt6ex4}, that substituting
these values into the r.h.s.\ of \eq{dt5eq17} gives~\eq{dt6eq7}.
\label{dt6ex6}
\end{ex}

\begin{ex} Suppose now that $E_1,E_2$ are rigid $\tau$-stable
sheaves in classes $\al_1,\al_2$ in $C(\coh(X))$ with
$\al_1\ne\al_2$ and $\tau(\al_1)=\tau(\al_2)=\tau(\al)$, where
$\al=\al_1+\al_2$. Suppose too that $\Ext^1(E_1,E_2)=0$ and
$\Ext^1(E_2,E_1)\cong\C^d$. We have $\Hom(E_1,E_2)=\Hom(E_2,E_1)=0$,
as $E_1,E_2$ are nonisomorphic $\tau$-stable sheaves with
$\tau([E_1])=\tau([E_2])$. So by \eq{dt3eq14} we
have~$\bar\chi(\al_1,\al_2)=d$.

As $E_1,E_2$ are rigid we have $\Ext^1(E_1,E_1)=\Ext^1(E_2,E_2)=0$.
Hence $\Ext^1(E_1\op E_2,E_1\op E_2)=\Ext^1(E_2,E_1)\cong\C^d$. Now
$\Ext^1(E_1\op E_2,E_1\op E_2)$ parametrizes infinitesimal
deformations of $E_1\op E_2$. All deformations in $\Ext^1(E_2,E_1)$
are realized by sheaves $F$ in exact sequences $0\ra E_1\ra F\ra
E_2\ra 0$. Therefore as $\Ext^1(E_1\op E_2,E_1\op E_2)=\Ext^1(E_2,
E_1)$, all deformations of $E_1\op E_2$ are unobstructed, and the
moduli stack of deformations of $E_1\op E_2$ is the quotient stack
$\bigl[\Ext^1(E_1\op E_2,E_1\op E_2)/\Aut(E_1\op
E_2)\bigr]\cong[\C^d/\bG_m^2]$, where $\bG_m^2$ acts on $\C^d$ by
$(\la,\mu):v\mapsto \la\mu^{-1}v$.

Suppose now that the only $\tau$-semistable sheaf up to isomorphism
in class $\al_1$ is $E_1$, and the only in class $\al_2$ is $E_2$,
and the only in class $\al_1+\al_2$ are extensions $F$ in $0\ra
E_1\ra F\ra E_2\ra 0$. Then we have $\fM_\rss^{\al_1}(\tau)\cong
[\Spec\C/\bG_m]\cong\fM_\rss^{\al_2}(\tau)$, and
$\fM_\rss^{\al_1+\al_2}(\tau)\cong[\C^d/\bG_m^2]$. These are smooth
of dimensions $-1,-1,d-2$ respectively, and
$\fM_\rss^{\al_1+\al_2}(\tau)$ is the non-separated disjoint union
of a projective space $\CP^{d-1}$ with stabilizer groups $\bG_m$,
and a point with stabilizer group~$\bG_m^2$.

The moduli space $\M_\stp^{\al_1+\al_2,n}(\tau')$ has points
$s:\cO_X(-n)\ra F_\ep$, for $0\ra E_1\ra F_\ep\ra E_2\ra 0$ exact.
Here $F_\ep$ corresponds to some $\ep\in\Ext^1(E_2,E_1)$, and $s\in
H^0(F_\ep(n))$, where the exact sequence $0\ra E_1\ra F_\ep\ra
E_2\ra 0$ and $E_1,F_\ep,E_2$ $n$-regular give an exact sequence
\begin{equation*}
0\longra H^0(E_1(n))\longra H^0(F_\ep(n))\longra H^0(E_2(n))\longra 0.
\end{equation*}
Globally over $\ep\in\Ext^1(E_2,E_1)$ we can (noncanonically) split
this short exact sequence and identify $H^0(F_\ep(n))\cong
H^0(E_1(n))\op H^0(E_2(n))$, so $s\in H^0(F_\ep(n))$ is identified
with~$(s_1,s_2)\in H^0(E_1(n))\op
H^0(E_2(n))\cong\C^{P_{\al_1}(n)}\op \C^{P_{\al_2}(n)}$.

The condition that $s:\cO_X(-n)\ra F_\ep$ is a stable pair turns out
to be that either $\ep\ne 0$ and $s_2\ne 0$, or $\ep=0$ and
$s_1,s_2\ne 0$. The equivalence relation on triples $(s_1,s_2,\ep)$
is that $(s_1,s_2,\ep)\sim(\la s_1,\mu s_2,\la\mu^{-1}\ep)$, for
$\la\in\Aut(E_1)\cong\bG_m$ and $\mu\in\Aut(E_2)\cong\bG_m$. This
proves that
\begin{align*}
\M_\stp^{\al_1+\al_2,n}(\tau')\cong\bigl\{ (s_1,s_2,\ep)\in
\C^{P_{\al_1}(n)}\op \C^{P_{\al_2}(n)}\op \C^d: \text{$\ep\ne 0$ and
$s_2\ne 0$,}&\\
\text{or $\ep=0$ and $s_1,s_2\ne 0$}\bigr\}/\bG_m^2&.
\end{align*}

Therefore $\M_\stp^{\al_1+\al_2,n}(\tau')$ is a smooth projective
variety of dimension $P_{\al_1}(n)\!+\!P_{\al_2}(n)\!+\!d\!-\!2$, so
$\nu_{\M_\stp^{\al_1+\al_2,n}(\tau')}\!=\!(-1)^{P_{\al_1}(n)
+P_{\al_2}(n)+d-2}$. We cut $\M_\stp^{\al_1+\al_2,n}(\tau')$ into
the disjoint union of two pieces (a) points with $\ep=0$, and (b)
points with $\ep\ne 0$. Piece (a) is isomorphic to
$\CP^{P_{\al_1}(n)-1}\times\CP^{P_{\al_2}(n)-1}$, and has Euler
characteristic $P_{\al_1}(n)P_{\al_2}(n)$. Piece (b) is a vector
bundle over $\CP^{P_{\al_2}(n)-1}\times\CP^{d-1}$ with fibre
$\C^{P_{\al_1}(n)}$, and has Euler characteristic $P_{\al_2}(n)d$.
Hence $\M_\stp^{\al_1+\al_2,n}(\tau')$ has Euler characteristic
$\bigl(P_{\al_1}(n)+d\bigr)P_{\al_2}(n)$, and \eq{dt5eq16} yields
\e
PI^{\al_1+\al_2,n}(\tau')=(-1)^{P_{\al_1}(n)+P_{\al_2}(n)+d-2}
\bigl(P_{\al_1}(n)+d\bigr)P_{\al_2}(n).
\label{dt6eq8}
\e

The expression \eq{dt5eq17} for $PI^{\al_1+\al_2,n}(\tau')$ yields
\e
\begin{split}
PI^{\al_1+\al_2,n}(\tau')=-(-1)^{P_{\al_1}(n)+P_{\al_2}(n)}\bigl(
P_{\al_1}(n)+P_{\al_2}(n)\bigr)\bar{DT}{}^{\al_1+\al_2}(\tau)&\\
+\ha(-1)^{P_{\al_1}(n)}P_{\al_1}(n)(-1)^{P_{\al_2}(n)-d}\bigl(P_{\al_2}(n)
-d\bigr)\bar{DT}{}^{\al_1}(\tau)\bar{DT}{}^{\al_2}(\tau)&\\
+\ha(-1)^{P_{\al_2}(n)}P_{\al_2}(n)(-1)^{P_{\al_1}(n)+d}\bigl(P_{\al_1}(n)
+d\bigr)\bar{DT}{}^{\al_2}(\tau)\bar{DT}{}^{\al_1}(\tau)&,
\end{split}
\label{dt6eq9}
\e
where the three terms on the right correspond to splitting $\al$
into $\al=\al$ with $l=1$, into $\al=\al_1+\al_2$ with $l=2$, and
into $\al=\al_2+\al_1$ with $l=2$ respectively. We have
$\bar{DT}{}^{\al_i}(\tau)=1$ by Example \ref{dt6ex2}. So comparing
\eq{dt6eq8} and \eq{dt6eq9} shows that
$\bar{DT}{}^{\al_1+\al_2}(\tau)=(-1)^{d-1}d/2$, and
similarly~$J^{\al_1+\al_2}(\tau)=d/2$.
\label{dt6ex7}
\end{ex}

Here is a more complicated example illustrating non-smooth moduli
spaces, nontrivial Behrend functions, and failure of
deformation-invariance of the~$J^\al(\tau)$.

\begin{ex} Let $X_t$ for $t\in\C$ be a smooth family of Calabi--Yau
3-folds over $\C$, equipped with a smooth family of very ample line
bundles $\cO_{X_t}(1)$; note that our definition of Calabi--Yau
3-fold requires that $H^1(\cO_{X_t})=0$. Then by Theorem
\ref{dt4thm7} the numerical Grothendieck groups\index{Grothendieck
group!numerical} $K^\num(\coh(X_t))$ for $t\in\C$ are all
canonically isomorphic, so we identify them with
$K^\num(\coh(X_0))$. Suppose $\al\in K^\num(\coh(X_0))$, and that
\begin{equation*}
\fM_\st^\al(\tau)_t=\fM_\rss^\al(\tau)_t\cong
\Spec\bigl(\C[z]/(z^2-t^2)\bigr)\times[\Spec\C/\bG_m]
\end{equation*}
for all $t\in\C$, where the subscript $t$ means the moduli space for
$X_t$. That is, $\fM_\rss^\al(\tau)_t$ for $t\ne 0$ is the disjoint
union of two points $[\Spec\C/\bG_m]$ at $z=t$ and $z=-t$, which
correspond to rigid, stable sheaves $E_+,E_-$ with $[E_\pm]=\al$.
But $\fM_\rss^\al(\tau)_0$ is $\Spec\bigl(\C[z]/(z^2)\bigr)
\times[\Spec\C/\bG_m]$. This contains only one stable sheaf $E_0$,
whose moduli space is a double point. That is, $E_0$ has one
infinitesimal deformation, so that $\Ext^1(E_0,E_0)=\C$, but this
deformation is obstructed to second order. So the picture is that as
$t\ra 0$, the two distinct rigid stable sheaves $E_+,E_-$ come
together, and at $t=0$ they are replaced by one stable, non-rigid
sheaf $E_0$ with an infinitesimal deformation.

First consider the invariants $\bar{DT}{}^{\al}(\tau)_t$ and
$J^\al(\tau)_t$. Since $\fM_\st^\al(\tau)_t=\fM_\rss^\al(\tau)_t$ we
have $\bep^\al(\tau)_t=\bde_{\fM_\rss^\al(\tau)_t}$. When $t\ne 0$,
$\fM_\rss^\al(\tau)_t\cong[\Spec\C/\bG_m]\amalg [\Spec\C/\bG_m]$ is
smooth of dimension $-1$, so $\nu_{\fM_\rss^\al(\tau)_t}\equiv -1$.
It follows that $\ti\Psi\bigl(\bep^\al(\tau)_t\bigr)=-2\ti \la^\al$
in the notation of \S\ref{dt53}, so $\bar{DT}{}^{\al}(\tau)_t=2$ by
\eq{dt5eq7}. Similarly, $\Psi\bigl(\bep^\al(\tau)_t\bigr)=2\la^\al$
in the notation of \S\ref{dt34}, so $J^{\al}(\tau)_t=2$
by~\eq{dt3eq22}.

When $t=0$, $\fM_\rss^\al(\tau)_0$ is not smooth. As
$\Spec\bigl(\C[z]/(z^2)\bigr)=\Crit(\frac{1}{3}z^3)$, the Milnor
fibre of $\frac{1}{3}z^3$ is 3 points, and $\dim\C=1$, we have
$\nu_{\Spec(\C[z]/(z^2))}\equiv 2$ by Theorem \ref{dt4thm2}, so
$\nu_{\fM_\rss^\al(\tau)_0}=-2$ by Theorem \ref{dt4thm1}(i) and
Corollary \ref{dt4cor1}. Thus, as $\fM_\rss^\al(\tau)_0$ is a single
point with Behrend function $-2$ we have
$\ti\Psi\bigl(\bep^\al(\tau)_0\bigr)=-2\ti \la^\al$, so
$\bar{DT}{}^{\al}(\tau)_0=2$, but
$\Psi\bigl(\bep^\al(\tau)_0\bigr)=\la^\al$, so $J^{\al}(\tau)_0=1$.
To summarize,
\e
\bar{DT}{}^{\al}(\tau)_t=2, \quad\text{all $t$, and}\quad
J^{\al}(\tau)_t=\begin{cases} 2, & t\ne 0, \\ 1, & t=0.\end{cases}
\label{dt6eq10}
\e

Now let us assume that the only $\tau$-semistable sheaves in class
$2\al$ are those with stable factors in class $\al$. Thus, when
$t\ne 0$ the $\tau$-semistable sheaves in class $\al$ are $E_+\op
E_+$, and $E_-\op E_-$, and $E_+\op E_-$. Example \ref{dt6ex2} when
$m=2$ implies that $E_+\op E_+$, and $E_-\op E_-$ each contribute
$\frac{1}{4}$ to $\bar{DT}{}^{2\al}(\tau)_t$ and $-\frac{1}{4}$ to
$J^{2\al}(\tau)_t$, and Example \ref{dt6ex4} shows that $E_+\op E_-$
contributes 0 to both. Therefore $\bar{DT}{}^{2\al}(\tau)_t=\ha$
and~$J^{2\al}(\tau)_t=-\ha$.

When $t=0$, as $\Ext^1(E_0,E_0)=\C$, there is one nontrivial
extension $F$ in $0\ra E_0\ra F\ra E_0\ra 0$. Hence
$\fM_\rss^{2\al}(\tau)_0(\C)$ consists of two points $[E_0\op E_0]$
and $[F]$. Since $\Aut(E_0\op E_0)\cong\GL(2,\C)$ is the
complexification of its maximal subgroup $\U(2)$, Theorem
\ref{dt5thm3} implies that we may write $\fM_\rss^{2\al}(\tau)_0$
locally near $[E_0\op E_0]$ in the complex analytic topology as
$\Crit(f)/\Aut(E_0\op E_0)$, where $U\subseteq\Ext^1(E_0\op
E_0,E_0\op E_0)$ is an $\Aut(E_0\op E_0)$-invariant open set, and
$f:U\ra\C$ is an $\Aut(E_0\op E_0)$-invariant holomorphic function.
As $\Ext^1(E_0,E_0) \cong\C$, we may identify $\Ext^1(E_0\op
E_0,E_0\op E_0)$ with $2\times 2$ complex matrices
$A=\bigl(\begin{smallmatrix} a & b \\ c & d\end{smallmatrix}\bigr)$,
with $\Aut(E_0\op E_0)\cong\GL(2,\C)$ acting by conjugation.

Since $f$ is a conjugation-invariant holomorphic function, it must
be a function of $\Tr(A)$ and $\det(A)$. But when we restrict to
diagonal matrices $\bigl(\begin{smallmatrix} a & 0 \\ 0 &
d\end{smallmatrix}\bigr)$, $f$ must reduce to the potential defining
$\fM^\al_0\times\fM^\al_0$ at $(E_0,E_0)$. As
$\Spec\bigl(\C[z]/(z^2)\bigr)=\Crit(\frac{1}{3}z^3)$, we want
$f\bigl(\begin{smallmatrix} a & 0 \\ 0 & d\end{smallmatrix}\bigr)=
\frac{1}{3}a^3+\frac{1}{3}d^3$. But $f$ is a function of $\Tr(A)$
and $\det(A)$, so we see that
\begin{equation*}
f\begin{pmatrix} a & b \\ c & d\end{pmatrix}=\frac{1}{3}(\Tr
A)^3-\Tr A\det A= \frac{1}{3}(a^3+d^3)+(a+d)bc.
\end{equation*}
We can then take $U=\Ext^1(E_0\op E_0,E_0\op E_0)$, and we see that
$\fM_\rss^{2\al}(\tau)_0\cong[\Crit(f)/\GL(2,\C)]$ as an Artin
stack.

Now $\Crit(f)$ consists of two $\GL(2,\C)$-orbits, the point 0 which
corresponds to $E_0\op E_0$, and the orbit of
$\bigl(\begin{smallmatrix} 0 & 1 \\ 0 & 0\end{smallmatrix}\bigr)$
which corresponds to $F$. As $f$ is a homogeneous polynomial, the
Milnor fibre\index{Milnor fibre!examples} $MF_f(0)$ is diffeomorphic
to $f^{-1}(1)$. One can show that $\chi\bigl(f^{-1}(1)\bigr)=-3$, so
$\nu_{\Crit(f)}(0)=4$ by \eq{dt4eq2}, and as the projection
$\Crit(f)\ra[\Crit f/\GL(2,\C)]$ is smooth of relative dimension 4,
we deduce that $\nu_{\fM_\rss^{2\al}(\tau)_0}(E_0\op E_0)=4$ by
Corollary \ref{dt4cor1}. This also follows from
$\nu_{\fM_\rss^\al(\tau)_0}(E_0)=-2$ and equation \eq{dt5eq2}. The
orbit of $\bigl(\begin{smallmatrix} 0 & 1
\\ 0 & 0\end{smallmatrix}\bigr)$ in $\Crit(f)$ is smooth of
dimension 2, so Theorem \ref{dt4thm1}(i) gives
$\nu_{\Crit(f)}\bigl(\begin{smallmatrix} 0 & 1 \\ 0 &
0\end{smallmatrix}\bigr)=1$,
and~$\nu_{\fM_\rss^{2\al}(\tau)_0}(F)=1$.

Using the definition \eq{dt3eq4} of $\bep^{2\al}(\tau)$ and the
relations in $\oSFai(\fM,\chi,\Q)$ in \S\ref{dt24}, reasoning as in
the proof of Theorem \ref{dt5thm5} in \S\ref{dt11} we can show that
\e
\bar\Pi^{\chi,\Q}_\fM\bigl(\bep^{2\al}(\tau)_0\bigr)=\ts
-\frac{1}{4}\bigl[([\Spec\C/\bG_m],\rho_{E_0\op E_0})\bigr]+
\frac{1}{2}\bigl[([\Spec\C/\bG_m],\rho_{F})\bigr],
\label{dt6eq11}
\e
where $\rho_{E_0\op E_0},\rho_{F}$ map $[\Spec\C/\bG_m]$ to $E_0\op
E_0$ and $F$ respectively. So Definition \ref{dt5def2} gives
\e
\bar{DT}{}^{2\al}(\tau)_0\!=-\!\bigl(\ts -\frac{1}{4}
\nu_{\fM_\rss^{2\al}(\tau)_0}(E_0\op E_0)\!+\!\frac{1}{2}
\nu_{\fM_\rss^{2\al}(\tau)_0}(F)\bigr)\!=\!\frac{1}{4}\cdot 4
\!-\!\frac{1}{2}\cdot 1\!=\!\frac{1}{2}.
\label{dt6eq12}
\e
Similarly $J^{2\al}(\tau)_0=-\frac{1}{4}+\frac{1}{2}=\frac{1}{4}$.
To summarize,
\e
\bar{DT}{}^{2\al}(\tau)_t=\ha, \quad\text{all $t$, and}\quad
J^{2\al}(\tau)_t=\begin{cases} -\ha, & t\ne 0, \\
\phantom{-}\frac{1}{4}, & t=0.\end{cases}
\label{dt6eq13}
\e
\label{dt6ex8}
\end{ex}

Equations \eq{dt6eq10}, \eq{dt6eq13} illustrate the fact that the
$\bar{DT}{}^\al(\tau)$ are deformation-invariant, as in Corollary
\ref{dt5cor4}, but the $J^\al(\tau)$ of \S\ref{dt35} are not.

\subsection{Integrality properties of the $\bar{DT}{}^\al(\tau)$}
\label{dt62}\index{Donaldson--Thomas invariants!integrality properties|(}

This subsection is based on ideas in Kontsevich and Soibelman
\cite[\S 2.5 \& \S 7.1]{KoSo1}. Example \ref{dt6ex2} shows that
given a rigid $\tau$-stable sheaf $E$ in class $\al$, the sheaves
$mE$ contribute $1/m^2$ to $\bar{DT}{}^{m\al}(\tau)$ for all $m\ge
1$. We can regard this as a kind of `multiple cover formula',
analogous to the well known Aspinwall--Morrison computation for a
Calabi--Yau 3-fold $X$ that a rigid embedded $\CP^1$ in class
$\al\in H_2(X;\Z)$ contributes $1/m^3$ to the genus zero
Gromov--Witten invariant\index{Gromov--Witten invariants} of $X$ in
class $m\al$ for all $m\ge 1$. So we can define new invariants
$\hat{DT}{}^\al(\tau)$ which subtract out these contributions from
$mE$ for~$m>1$.

\begin{dfn} Let $X$ be a projective Calabi--Yau 3-fold over $\C$,
let $\cO_X(1)$ be a very ample line bundle on $X$, and let
$(\tau,T,\le)$ be a weak stability condition on $\coh(X)$ of
Gieseker or $\mu$-stability type. Then Definition \ref{dt5def2}
defines generalized Donaldson--Thomas invariants
$\bar{DT}{}^\al(\tau)\in\Q$ for $\al\in C(\coh(X))$.

Let us define new invariants $\hat{DT}{}^\al(\tau)$ for $\al\in
C(\coh(X))$ to satisfy\nomenclature[DTc]{$\hat{DT}{}^\al(\tau)$}{BPS invariants
of a Calabi--Yau 3-fold}
\e
\bar{DT}{}^\al(\tau)=\sum_{m\ge 1,\; m\mid\al}\frac{1}{m^2}\,
\hat{DT}{}^{\al/m}(\tau).
\label{dt6eq14}
\e
We can invert \eq{dt6eq14} explicitly to write
$\hat{DT}{}^\al(\tau)$ in terms of the $\bar{DT}{}^{\al/m}(\tau)$.
The {\it M\"obius function\/}\nomenclature[Mo(m)]{$\Mo(m)$}{M\"obius
function}\index{Mobius function@M\"obius function} $\Mo:\N\ra\{-1,0,1\}$
in elementary number theory and combinatorics is given by
$\Mo(n)=(-1)^d$ if $n=1,2,\ldots$ is square-free and has $d$ prime
factors, and $\Mo(n)=0$ if $n$ is not square-free. Then the {\it
M\"obius inversion formula\/}\index{Mobius inversion formula@M\"obius
inversion formula} says that if $f,g:\N\ra\Q$ are functions with
$g(n)=\sum_{m\mid n}f(n/m)$ for $n=1,2,\ldots$ then
$f(n)=\sum_{m\mid n}\Mo(m)g(n/m)$ for $n=1,2,\ldots$. Suppose
$\be\in C(\coh(X))$ is primitive. Applying the M\"obius inversion
formula with $f(n)=n^2\smash{\hat{DT}{}^{n\be}(\tau)}$ and
$g(n)=n^2\bar{DT}{}^{n\be}(\tau)$, we find the inverse of
\eq{dt6eq14} is
\e
\hat{DT}{}^\al(\tau)=\sum_{m\ge 1,\; m\mid\al}\frac{\Mo(m)}{m^2}\,
\bar{DT}{}^{\al/m}(\tau).
\label{dt6eq15}
\e

We take \eq{dt6eq15} to be the {\it definition\/} of
$\hat{DT}{}^\al(\tau)$, and then reversing the argument shows that
\eq{dt6eq14} holds. The $\hat{DT}{}^\al(\tau)$ are our analogues of
invariants $\Om(\al)$ discussed in \cite[\S 2.5 \& \S 7.1]{KoSo1}.
We call $\hat{DT}{}^\al(\tau)$ the {\it BPS invariants\/}\index{BPS
invariants}\index{Donaldson--Thomas invariants!BPS invariants
$\hat{DT}{}^\al(\tau)$} of $X$, since Kontsevich and Soibelman
suggest that their $\Om(\al)$ count BPS states. The coefficients
$1/m^2$ in \eq{dt6eq14} are related to the appearance of {\it
dilogarithms\/}\index{dilogarithm} in Kontsevich and Soibelman
\cite[\S 2.5]{KoSo1}. The
dilogarithm\nomenclature[Li2]{$Li_2(t)$}{dilogarithm function} is
$Li_2(t)=\sum_{m\ge 1}t^m/m^2$ for $\md{t}<1$, and the inverse
function for $Li_2$ near $t=0$ is $Li_2^{-1}(t)=\sum_{m\ge
1}\Mo(m)t^m/m^2$ for $\md{t}<1$, with power series coefficients
$\Mo(m)/m^2$ as in~\eq{dt6eq15}.
\label{dt6def1}
\end{dfn}

If $\M_\rss^\al(\tau)=\M_\st^\al(\tau)$ then $\M_\rss^{\al/m}(\tau)
=\es$ for all $m\ge 2$ dividing $\al$, since if $[E]\in
\M_\rss^{\al/m}(\tau)$ then $[mE]\in \M_\rss^\al(\tau)\sm
\M_\st^\al(\tau)$. So $\bar{DT}{}^{\al/m}(\tau)=0$, and hence
\eq{dt6eq15} and Proposition \ref{dt5prop2} give:

\begin{prop} If\/ $\M_\rss^\al(\tau)=\M_\st^\al(\tau)$ then
$\hat{DT}{}^\al(\tau)=DT^\al(\tau)$.
\label{dt6prop1}
\end{prop}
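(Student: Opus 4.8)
The plan is to deduce Proposition \ref{dt6prop1} directly from the definition \eq{dt6eq15} of $\hat{DT}{}^\al(\tau)$, together with Proposition \ref{dt5prop2} (which gives $\bar{DT}{}^\al(\tau)=DT^\al(\tau)$ when $\M_\rss^\al(\tau)=\M_\st^\al(\tau)$) and the vanishing statement for $\bar{DT}{}^{\al/m}(\tau)$ already sketched in the text. First I would establish the key vanishing: if $\M_\rss^\al(\tau)=\M_\st^\al(\tau)$ and $m\ge 2$ divides $\al$, then $\bar{DT}{}^{\al/m}(\tau)=0$. The argument is the one indicated before the proposition: if $\M_\rss^{\al/m}(\tau)\ne\es$, pick $E$ with $[E]=\al/m$ and $E$ $\tau$-semistable; then $mE=E^{\op m}$ is $\tau$-semistable with $[mE]=\al$, and $mE$ is strictly semistable (it has the proper subobject $E$ of the same slope, as $m\ge 2$), so $[mE]\in\M_\rss^\al(\tau)\sm\M_\st^\al(\tau)$, contradicting $\M_\rss^\al(\tau)=\M_\st^\al(\tau)$. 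Hence $\M_\rss^{\al/m}(\tau)=\es$, so $\fM_\rss^{\al/m}(\tau)=\es$, so $\bep^{\al/m}(\tau)=0$ in $\SFai(\fM)$, and therefore $\bar{DT}{}^{\al/m}(\tau)=0$ by \eq{dt5eq7}.

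Next I would plug this into \eq{dt6eq15}. The sum $\hat{DT}{}^\al(\tau)=\sum_{m\ge 1,\;m\mid\al}\frac{\Mo(m)}{m^2}\,\bar{DT}{}^{\al/m}(\tau)$ has all terms with $m\ge 2$ equal to zero by the vanishing just proved, so only the $m=1$ term survives, giving $\hat{DT}{}^\al(\tau)=\Mo(1)\bar{DT}{}^\al(\tau)=\bar{DT}{}^\al(\tau)$. Finally, since $\M_\rss^\al(\tau)=\M_\st^\al(\tau)$, Proposition \ref{dt5prop2} gives $\bar{DT}{}^\al(\tau)=DT^\al(\tau)$, and combining the two equalities yields $\hat{DT}{}^\al(\tau)=DT^\al(\tau)$, as required.

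There is really no main obstacle here: the proposition is a two-line consequence once the vanishing $\bar{DT}{}^{\al/m}(\tau)=0$ is in hand, and that vanishing is itself elementary given the characterization of strict semistability and the fact that $\bep^\be(\tau)$ is supported on $\fM_\rss^\be(\tau)$. The only point requiring a word of care is to note that $m\ge 2$ forces $E^{\op m}$ to be \emph{strictly} semistable (one needs $E\ne 0$, which holds as $[E]=\al/m\in C(\coh(X))$, and $E\ne E^{\op m}$, which holds as $m\ge 2$), so that it genuinely lies in $\M_\rss^\al(\tau)\sm\M_\st^\al(\tau)$. Everything else is bookkeeping with the M\"obius inversion formula already recorded in Definition \ref{dt6def1}.
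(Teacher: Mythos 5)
Your proposal is correct and follows essentially the same route as the paper: one shows $\M_\rss^{\al/m}(\tau)=\es$ for $m\ge 2$ dividing $\al$ (since a semistable $E$ in class $\al/m$ would make $mE$ strictly semistable in class $\al$), hence $\bar{DT}{}^{\al/m}(\tau)=0$, so only the $m=1$ term survives in \eq{dt6eq15} and Proposition \ref{dt5prop2} finishes the argument. Your extra remarks on why the vanishing of the moduli space forces $\bep^{\al/m}(\tau)=0$ and hence $\bar{DT}{}^{\al/m}(\tau)=0$ are just a more explicit version of what the paper leaves implicit.
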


Thus the $\hat{DT}{}^\al(\tau)$ are also generalizations of
Donaldson--Thomas invariants $DT^\al(\tau)$. Using \eq{dt6eq14} we
evaluate the $\hat{DT}{}^\al(\tau)$ in each of the examples
of~\S\ref{dt61}:
\begin{itemize}
\setlength{\itemsep}{0pt}
\setlength{\parsep}{0pt}
\item In Examples \ref{dt6ex1}--\ref{dt6ex2} we have
$\hat{DT}{}^{\al}(\tau)=1$ and $\hat{DT}{}^{m\al}(\tau)=0$ for
all $m>1$. Thus, a rigid stable sheaf $E$ and its `multiple
covers' $mE$ for $m\ge 2$ contribute 1 to
$\hat{DT}{}^{\al}(\tau)$ and 0 to $\hat{DT}{}^{m\al}(\tau)$ for
$m\ge 2$. The point of \eq{dt6eq14} was to achieve this, as it
suggests that the $\hat{DT}{}^\al(\tau)$ are a more meaningful
way to `count' stable sheaves.
\item In Examples \ref{dt6ex3}--\ref{dt6ex4}
$\hat{DT}{}^{\al_i}(\tau)=1$ for all $i=1,\ldots,l$, and
$\hat{DT}{}^{\sum_{i\in I}\al_i}(\tau)=0$ for all subsets
$I\subseteq\{1,\ldots,l\}$ with $\md{I}\ge 2$.
\item In Examples \ref{dt6ex5}--\ref{dt6ex6}
$\hat{DT}{}^{m_1\al_1+\cdots+m_l\al_l}(\tau)=1$ if $m_i=1$ for some
$i=1,\ldots,l$ and $m_j=0$ for $i\ne j$, and
$\hat{DT}{}^{m_1\al_1+\cdots+m_l\al_l}(\tau)=0$ otherwise.
\item In Example \ref{dt6ex7}
$\hat{DT}{}^{\al_1+\al_2}(\tau)=(-1)^{d-1}d/2$, where
$\bar\chi(\al_1,\al_2)=d$. Note that
$\hat{DT}{}^{\al_1+\al_2}(\tau)\notin\Z$ when $d$ is odd.
\item In Example \ref{dt6ex8}
$\hat{DT}{}^\al(\tau)_t=2$ and $\hat{DT}{}^{2\al}(\tau)_t=0$.
\end{itemize}

Here is our version of a conjecture by Kontsevich and
Soibelman~\cite[Conj.~6]{KoSo1}.

\begin{conj} Let\/ $X$ be a Calabi--Yau $3$-fold over $\C,$ and\/
$(\tau,T,\le)$ a weak stability condition on $\coh(X)$ of Gieseker
or $\mu$-stability type. Call\/ $(\tau,T,\le)$
\begin{bfseries}generic\end{bfseries}\index{stability condition!generic|(}
if for all\/ $\al,\be\in C(\coh(X))$ with\/ $\tau(\al)=\tau(\be)$ we
have\/~$\bar\chi(\al,\be)=0$.

If\/ $(\tau,T,\le)$ is generic, then $\hat{DT}{}^\al(\tau)\in\Z$ for
all\/ $\al\in C(\coh(X))$.
\label{dt6conj1}
\end{conj}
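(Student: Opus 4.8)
\textbf{Proof proposal for Conjecture \ref{dt6conj1}.}

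The plan is to reduce the integrality of $\hat{DT}{}^\al(\tau)$ for generic $\tau$ to a statement about the structure of the wall-crossing formula and the pair invariants $PI^{\al,n}(\tau')$, exploiting the vanishing $\bar\chi(\al,\be)=0$ whenever $\tau(\al)=\tau(\be)$. First I would observe that genericity kills almost all the combinatorial terms in \eq{dt5eq14} and \eq{dt5eq17}: in \eq{dt5eq17} the factors $\bar\chi([\cO_X(-n)]-\al_1-\cdots-\al_{i-1},\al_i)$ only survive because $[\cO_X(-n)]$ is not of slope $\tau(\al)$, and indeed $\bar\chi(\al_j,\al_i)=0$ for all $i,j$ with $\tau(\al_i)=\tau(\al_j)=\tau(\al)$. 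Thus for generic $\tau$ the relation \eq{dt5eq17} collapses to the simplified form \eq{dt5eq19} of Proposition \ref{dt5prop3}, which is exactly exponentiation: setting $P(n)=\bar\chi([\cO_X(-n)],\cdot)$, equation \eq{dt5eq20} reads
\e
1+\sum_{\al:\,\tau(\al)=t}PI^{\al,n}(\tau')q^\al=\exp\Bigl[-\sum_{\al:\,\tau(\al)=t}(-1)^{\bar\chi([\cO_X(-n)],\al)}\,\bar\chi([\cO_X(-n)],\al)\,\bar{DT}{}^\al(\tau)q^\al\Bigr].
\label{dt6eqExp}
\e
Since $PI^{\al,n}(\tau')\in\Z$ by Definition \ref{dt5def5} and Theorem \ref{dt4thm6}, taking $\log$ of \eq{dt6eqExp} and using $\log(1+x)=\sum_{k\ge1}(-1)^{k-1}x^k/k$ shows that $\bar\chi([\cO_X(-n)],\al)\,\bar{DT}{}^\al(\tau)$ lies in $\frac1N\Z$ for a denominator $N$ controlled by the lengths of partitions of $\al$ into slope-$t$ classes; the task is then to upgrade this to the precise integrality of $\hat{DT}{}^\al(\tau)$ after dividing out $\bar\chi([\cO_X(-n)],\al)$ and applying the M\"obius inversion \eq{dt6eq15}.

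The key steps, in order, would be: (1) for a primitive $\be\in C(\coh(X))$ and $t=\tau(\be)$, restrict \eq{dt6eqExp} to the one-parameter sub-semigroup $\{m\be:m\ge1\}$, writing $Q_m=PI^{m\be,n}(\tau')$, $D_m=\bar{DT}{}^{m\be}(\tau)$, $H_m=\hat{DT}{}^{m\be}(\tau)$ and $c_m=(-1)^{\bar\chi([\cO_X(-n)],m\be)}\bar\chi([\cO_X(-n)],m\be)=(-1)^{mr}mr$ where $r=\bar\chi([\cO_X(-n)],\be)$; (2) prove, by the same kind of explicit computation as in Examples \ref{dt6ex1}--\ref{dt6ex2}, that the generating function $1+\sum_{m\ge1}Q_ms^m$ factors as a product $\prod_{m\ge1}(1-(-s)^m)^{-mr\,H_m}$, so that $\sum_m c_mD_ms^m=-\log\prod_m(1-(-s)^m)^{-mr H_m}=mr\sum_{m,k}H_m(-s)^{mk}/k$; comparing coefficients recovers $c_mD_m=-r\sum_{d\mid m}d\,H_d\cdot(\pm1)$, i.e.\ precisely \eq{dt6eq14}, and shows the integers $Q_m$ determine the $H_m$; (3) invert the product formula: since $\prod_m(1-(-s)^m)^{-mr H_m}=1+\sum Q_ms^m$ with $Q_m\in\Z$, a standard lemma on infinite products over $\Z[[s]]$ (the "plethystic logarithm" / cyclotomic identity argument, cf.\ Reineke) forces each exponent $mr\,H_m\in\Z$, hence $H_m\in\frac1{mr}\Z$; (4) remove the spurious $\frac1{mr}$ by varying $n$: since $PI^{m\be,n}(\tau')$ is defined for all $n\gg0$ and the product identity in (2) holds for each such $n$ with $r=r(n)=\bar\chi([\cO_X(-n)],\be)$ a nonconstant polynomial in $n$, the $H_m$ are independent of $n$, and $H_m\in\bigcap_{n\gg0}\frac1{m\,r(n)}\Z=\frac1m\gcd_n r(n)\cdot\Z$; a short argument (the values $r(n)$ of a primitive-class Hilbert polynomial have gcd dividing any fixed shift difference, and in fact one can choose two $n$ with $\gcd(r(n),r(n'))$ coprime to $m$ for each fixed $m$, or invoke that $r(n)\equiv 1$ appropriately) pins down $H_m\in\frac1m\Z$; (5) finally, combine the refined bound $m\,H_m\in\Z$ with the $m\leftrightarrow$ primitivity structure: apply (2)--(4) simultaneously to all $m$ using that $\hat{DT}{}^{m\be}(\tau)$ for the \emph{same} primitive $\be$ is constrained by the product formula at all $n$, and deduce $H_m\in\Z$ by a descent on $m$. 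For non-primitive $\al=m\be$ this is exactly the content; for general $\al$ one reduces to the primitive case since $\hat{DT}{}^\al(\tau)$ only depends on the ray through $\al$ in $C(\coh(X))$ via \eq{dt6eq15}.

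The main obstacle I expect is step (3)--(4): getting from $mr\,H_m\in\Z$ down to $H_m\in\Z$. The factor of $m$ is genuinely present (it matches the $1/m^2$ in \eq{dt6eq14} against the $1/m^3$ of Aspinwall--Morrison, and the "extra" $1/m$ is what the BPS rescaling is designed to cancel), but the factor of $r=\bar\chi([\cO_X(-n)],\be)$ is an artifact of the pair-invariant normalization and must be eliminated by a clever choice of $n$, or by directly analysing the prime factorization of $r(n)$ as $n$ ranges over $\mathbb{Z}$ — here one needs that for a \emph{primitive} class the sequence $(r(n))_n$ cannot be divisible by a fixed prime $p$ for all $n$, which follows because otherwise $\be$ would be divisible by $p$ in $K^\num(\coh(X))$, contradicting primitivity (this uses the cohomological description of $K^\num(\coh(X))$ in Theorem \ref{dt4thm7}). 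Making this last deduction fully rigorous — controlling the interaction between the denominator $m$ coming from multiple covers and the denominator $r(n)$ coming from the pair normalization, uniformly over the infinitely many $\al$ of a given slope — is the delicate point; the rest is bookkeeping with the exponential/product identities that genericity hands us for free. I note that the parallel statement for quivers without relations is proved in \S\ref{dt76} using exactly Reineke's integrality results \cite{Rein1,Rein3}, and the cleanest route here is likely to factor the above through a direct appeal to that quiver result once the generic wall-crossing has reduced everything to the shape \eq{dt6eqExp}.
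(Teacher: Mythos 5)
First, a point of orientation: the statement you are proving is presented in the paper as a conjecture, with no proof offered; only the analogue for quivers without relations is proved (Theorem \ref{dt7thm6}), by a quite different route (localization at polystables via the Ext quiver of Proposition \ref{dt7prop3}, reduction to the one-vertex quivers, and Reineke's integrality theorems). Your proposal has two genuine gaps. The first is step (2): for Gieseker stability, $\tau(\al)=\tau(\be)$ does not imply that $\al,\be$ are proportional, so restricting \eq{dt5eq20} to the ray $\{m\be:m\ge 1\}$ is not legitimate --- $PI^{m\be,n}(\tau')$ receives contributions from decompositions $m\be=\al_1+\cdots+\al_l$ with the $\al_i$ of slope $t$ but not multiples of $\be$, and $1+\sum_mQ_ms^m$ is not the single-variable product you claim. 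This defect is repairable: substituting \eq{dt6eq14} into \eq{dt5eq20} gives the multi-variable identity $1+\sum_{\al}PI^{\al,n}(\tau')q^\al=\prod_{\be}\bigl(1-(-1)^{P_\be(n)}q^\be\bigr)^{P_\be(n)\hat{DT}{}^\be(\tau)}$, and induction over the semigroup of slope-$t$ classes, using $PI^{\al,n}(\tau')\in\Z$ and $\binom{e}{k}\in\Z$ for $e\in\Z$, yields $P_\al(n)\,\hat{DT}{}^\al(\tau)\in\Z$, which is essentially your bound $m\,r(n)\,H_m\in\Z$.

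The fatal gap is the one you flag and then claim to fix: removing the factor $P_\al(n)$. Your assertion that for primitive $\be$ the values $r(n)=P_\be(n)$ cannot all be divisible by a fixed prime, ``since otherwise $\be$ would be divisible by $p$ in $K^\num(\coh(X))$'', is false: the Hilbert polynomial only records $\bar\chi([\cO_X(-n)],\be)$ and cannot detect primitivity in $K^\num(\coh(X))$. For instance a primitive dimension-one class $\al=(0,0,\be,k)$ with $\be\cup c_1(\cO_X(1))$ and $k$ both even has $P_\al(n)$ even for every $n$; and already for dimension-zero classes the method stalls at $d\,\hat{DT}{}^{dp}(\tau)\in\Z$, falling short of \eq{dt6eq20}. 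Varying $n$ therefore cannot eliminate the denominator, and your ``descent on $m$'' in step (5) is not an argument but a restatement of what must be proved: passing from integrality of the pair invariants (equivalently, of the exponents $P_\al(n)\hat{DT}{}^\al(\tau)$) to integrality of $\hat{DT}{}^\al(\tau)$ itself is exactly the hard content, which in the quiver case is supplied by Reineke's functional-equation theorem together with the Ext-quiver reduction of \S\ref{dt76}. Nor can you ``factor through'' Theorem \ref{dt7thm6} as you suggest at the end: the local models of sheaf moduli stacks are critical loci, so the relevant Ext quivers carry a superpotential, and integrality for quivers with superpotential is itself only Conjecture \ref{dt7conj1}. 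So the proposal does not close, consistent with the statement being left open in the paper.
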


Kontsevich and Soibelman deal with Bridgeland stability conditions
on derived categories, and their notion of {\it generic\/} stability
condition is stronger than ours: they require that
$\tau(\al)=\tau(\be)$ implies $\al,\be$ are linearly dependent in
$\Z$. But we believe $\bar\chi(\al,\be)=0$ is sufficient. Note that
Conjecture \ref{dt6conj1} holds in the examples above: the only case
in which $\hat{DT}{}^\al(\tau)\notin\Z$ is Example \ref{dt6ex7} when
$d$ is odd, and then $(\tau,T,\le)$ is not generic, as
$\tau(\al_1)=\tau(\al_2)$ but~$\bar\chi(\al_1,\al_2)=d\ne 0$.

Suppose now that $(\tau,T,\le)$ is a {\it stability condition}, such
as Gieseker stability, rather than a weak stability condition. This
is necessary for decomposition of $\tau$-semistables into
$\tau$-stables to be well-behaved, as in \cite[Th.~4.5]{Joyc5}. Then
as in \eq{dt5eq9} we can write $\bar{DT}{}^\al(\tau)$ as the Euler
characteristic of the {\it coarse moduli scheme\/}\index{coarse
moduli scheme}\index{moduli scheme!coarse} $\M_\rss^\al(\tau)$
weighted by a constructible function.\index{constructible function}
(The existence of coarse moduli schemes $\M_\rss^\al(\tau)$ is known
in the two cases we consider in this book, Gieseker stability for
coherent sheaves and slope stability for quiver representations. For
the argument below to work, we do not need $\M_\rss^\al(\tau)$ to be
a scheme, but only a constructible set, the quotient of
$\fM_\rss^\al(\tau)(\C)$ by a constructible equivalence relation,
and this should always be true.)

We will write $\hat{DT}{}^\al(\tau)$ as a weighted Euler
characteristic of $\M_\rss^\al(\tau)$ in the same way. For $m\ge 1$,
define a 1-morphism $P_m:\fM\ra\fM$
\nomenclature[Pm]{$P_m$}{1-morphism $\fM\ra\fM$ taking $E\mapsto
mE=E\op\cdots\op E$} by $P_m:[E]\mapsto [mE]$ for $E\in\coh(X)$,
where $mE={\buildrel {\!\ulcorner\,\text{$m$ copies }\,\urcorner\!}
\over {E\op\cdots \op E}}$. Then from equations \eq{dt2eq1},
\eq{dt5eq9} and \eq{dt6eq15}, for $\al\in C(\coh(X))$ we deduce that
\e
\begin{gathered}
\hat{DT}{}^\al(\tau)=\chi\bigl(\M_\rss^\al(\tau),F^\al(\tau)
\bigr), \qquad\text{where}\\
\!\!\!F^\al(\tau)\!=\!-\!\!\sum_{\!\!\!\!\!\!\!m\ge 1,\;
m\mid\al\!\!\!\!\!\!} \frac{\Mo(m)}{m^2}
\CF^\na(\pi)\bigl[\CF^\na(P_m)\!\ci\!\Pi_{\CF}\!\ci\!
\bar\Pi^{\chi,\Q}_\fM(\bep^{\al/m}(\tau))\cdot\nu_\fM\bigr],\!\!
\end{gathered}
\label{dt6eq16}
\e
and $\pi:\fM_\rss^\al(\tau)\ra\M_\rss^\al(\tau)$ is the projection
to the coarse moduli
scheme.\nomenclature[F\alpha(t)]{$F^\al(\tau)$}{function in
$\CF(\M_\rss^\al(\tau))$ with $\chi(\M_\rss^\al(\tau),F^\al(\tau))
=\hat{DT}{}^\al(\tau)$} The following conjecture implies Conjecture
\ref{dt6conj1}, at least for stability conditions rather than weak
stability conditions.

\begin{conj} Let\/ $X$ be a Calabi--Yau\/ $3$-fold over $\C,$ and\/
$(\tau,G,\le)$ a generic Gieseker stability condition. Then the
functions\/ $F^\al(\tau)\in\CF(\M_\rss^\al(\tau))$ of\/ \eq{dt6eq16}
are\/ $\Z$-valued for all\/~$\al\in C(\coh(X))$.
\label{dt6conj2}
\end{conj}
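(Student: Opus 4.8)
\textbf{Proof proposal for Conjecture \ref{dt6conj2}.}

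The plan is to reduce the $\Z$-valuedness of $F^\al(\tau)$ to a pointwise statement about the local structure of $\M_\rss^\al(\tau)$ near each $\tau$-polystable sheaf. Fix a point $p\in\M_\rss^\al(\tau)(\C)$ corresponding to an S-equivalence class with $\tau$-polystable representative $E'=m_1E_1\op\cdots\op m_kE_k$, where $E_1,\ldots,E_k$ are distinct $\tau$-stable sheaves with $\tau([E_i])=\tau(\al)$. By Corollary \ref{dt5cor1}, near $[E']$ the stack $\fM_\rss^\al(\tau)$ is modelled on $[\Crit(f)/H]$ with $H=\Aut(E')\cong\prod_{i=1}^k\GL(m_i,\C)$ acting on an $H$-invariant open $U\subseteq\Ext^1(E',E')$ and $f:U\to\C$ holomorphic and $H$-invariant. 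The contributions to $F^\al(\tau)$ supported over $p$ come from the values $\nu_\fM$ on finitely many points in this model, together with the combinatorial weights coming from $\bep^{\al/m}(\tau)$ and the operators $\Pi_{\CF},\bar\Pi^{\chi,\Q}_\fM,P_m$. First I would show, by unwinding Definitions \ref{dt5def1}, \ref{dt5def2}, \ref{dt5def5} and \ref{dt6def1} exactly as in the proof of Theorem \ref{dt5thm5} in \S\ref{dt11} and the computations of \S\ref{dt61}, that the contribution of $p$ to $F^\al(\tau)(p)$ is a \emph{universal} expression in the data $(m_1,\ldots,m_k)$, the Euler form values $\bar\chi([E_i],[E_j])$, and the Milnor-fibre Euler characteristics $\chi(MF_f(u))$ at the various critical points $u$ in the local model, via \eq{dt4eq2}.

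The genericity hypothesis $\bar\chi(\al,\be)=0$ whenever $\tau(\al)=\tau(\be)$ forces $\bar\chi([E_i],[E_j])=0$ for all $i,j$, so $\Ext^1(E',E')$ is a direct sum over $i$ of the matrix spaces $\mathrm{Mat}_{m_i}(\C)\ot\Ext^1(E_i,E_i)$ plus the "off-diagonal" blocks $\mathrm{Hom}(\C^{m_i},\C^{m_j})\ot\Ext^1(E_i,E_j)$, and by Serre duality $\dim\Ext^1(E_i,E_j)=\dim\Ext^1(E_j,E_i)$. The key structural input I would exploit is that, because $\bar\chi$ vanishes on all these classes, the $\bG_m$-localization argument of \S\ref{dt10} (the same one used to prove \eq{dt5eq2}--\eq{dt5eq3}) can be iterated over the torus $\prod_i\bG_m\subseteq H$ acting by scaling each $E_i$-block, and this should express the local Behrend-weighted count entirely in terms of the $k$ "diagonal" factors, each of which is governed by a single $\tau$-stable sheaf $E_i$ with the function $f$ restricting to a conjugation-invariant function on $\mathrm{Mat}_{m_i}(\C)\ot\Ext^1(E_i,E_i)$. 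For each such factor one is reduced precisely to the situation of Examples \ref{dt6ex1}--\ref{dt6ex2} (and its Behrend-weighted refinement as in Example \ref{dt6ex8}): the moduli of "$m_iE_i$ with its extensions" localizes, after the M\"obius correction built into $F^\al(\tau)$, to the count $1$ from the single copy $E_i$ and $0$ from all genuine multiples. Here I would need the combinatorial identity that the $1/m^2$ weights in $\bar\ep^\al(\tau)$, the $F(G,T^G,Q)$ coefficients from relation Definition \ref{dt2def10}(iii) applied to $\GL(m_i,\C)$, and the $\Mo(m)/m^2$ from \eq{dt6eq15} combine to integers; this is an arithmetic statement about the $\GL_n$-specialization formulae for $F(G,T^G,Q)$ from \cite[\S 6.2]{Joyc2} that should be provable by the same generating-function manipulation as in Example \ref{dt6ex2}.

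The main obstacle I expect is the off-diagonal blocks: even when $\bar\chi([E_i],[E_j])=0$, we have $\Ext^1(E_i,E_j)$ and $\Ext^1(E_j,E_i)$ both nonzero of equal dimension, and the local function $f$ genuinely mixes the blocks, so the naive hope that $\Crit(f)$ and $\nu_\fM$ split as a product over $i$ is false in general (Example \ref{dt6ex7} shows exactly this kind of mixing produces $(-1)^{d-1}d/2$, which is why \emph{non}-generic cases fail). The resolution must use that the $\bG_m^k$-action on $\Ext^1(E',E')$ has the off-diagonal block $\mathrm{Hom}(\C^{m_i},\C^{m_j})\ot\Ext^1(E_i,E_j)$ in nontrivial weight $\chi_i\chi_j^{-1}$, so Theorem \ref{dt5thm4}'s localization principle (a weighted integral over $\mathbb{P}$ of such a block) applies and kills the mixed contributions at the level of Euler characteristics — the point being that in the generic case these blocks are "balanced" and their contributions cancel in pairs $(i,j)\leftrightarrow(j,i)$ up to a sign that is controlled by $\bar\chi([E_i],[E_j])$, which is now zero. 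Making this cancellation precise at the level of the \emph{constructible function} $F^\al(\tau)$ on $\M_\rss^\al(\tau)$, rather than just its integral, is the delicate step: I would do it by induction on $\sum_i m_i$, peeling off one stable factor at a time using a relative version of Corollary \ref{dt5cor1}, and at each stage invoking Theorem \ref{dt5thm4} together with the fact that a $\bG_m$-action with no fixed points on a $\mathbb{P}^N$-bundle contributes a function that is a multiple of an integer-valued function. If this inductive scheme closes up, integrality of $F^\al(\tau)$, and hence Conjecture \ref{dt6conj1} for Gieseker stability, follows; the quiver case in \S\ref{dt76} would then be the analogous argument with $\coh(X)$ replaced by $\modCQ$.
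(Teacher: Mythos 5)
The statement you are trying to prove is a \emph{conjecture} in the paper: no proof of Conjecture \ref{dt6conj2} for $\coh(X)$ is given anywhere in the text. What the paper does prove is the analogue for quivers \emph{without relations} (Theorem \ref{dt7thm6}), and its proof has the same first step as yours — reduce, at a $\tau$-polystable point $E'=m_1E_1\op\cdots\op m_kE_k$, to the Ext quiver $Q_{E'}$ with dimension vector $(m_1,\ldots,m_k)$ and trivial stability, where genericity forces $\bar\chi_{Q_{E'}}\equiv 0$ (Proposition \ref{dt7prop3}). But there the resemblance ends: in the quiver case the local model has \emph{no superpotential}, the moduli stacks are smooth global quotients, and integrality is extracted from Reineke's explicit results on noncommutative Hilbert schemes and his factorization theorem \cite{Rein1,Rein3} (Example \ref{dt7ex5}), fed into an induction on $\sum_i m_i$. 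For sheaves on a Calabi--Yau 3-fold the local model coming from Corollary \ref{dt5cor1} is $[\Crit(f)/H]$ with a genuinely nontrivial $H$-invariant holomorphic $f$ on $\Ext^1(E',E')$, i.e.\ the Ext quiver \emph{with superpotential}; so after your reduction you are facing precisely Conjecture \ref{dt7conj1} for general $W$, which the paper leaves open because no analogue of Reineke's computation exists when $W\not\equiv 0$.

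The concrete gap is your middle step: the claim that torus localization kills the off-diagonal blocks and reduces each diagonal block to Examples \ref{dt6ex1}--\ref{dt6ex2}, so that ``multiples contribute $0$ and the single copy contributes $1$''. That computation is valid only for \emph{rigid} stable factors $E_i$. In general $\Ext^1(E_i,E_i)\ne 0$, the restriction of $f$ to the diagonal block $\mathrm{Mat}_{m_i}(\C)\ot\Ext^1(E_i,E_i)$ is a nontrivial conjugation-invariant function, and multiples $mE_i$ contribute nontrivially: for dimension-zero sheaves one has $\hat{DT}{}^{dp}(\tau)=-\chi(X)\ne 0$ for every $d\ge 1$ by \eq{dt6eq20}, and even in Example \ref{dt6ex8} the value of $F^{2\al}(\tau)$ at the single point $[E_0\op E_0]$ is obtained only after a genuine Milnor-fibre computation of $\frac{1}{3}(\Tr A)^3-\Tr A\det A$, not by a formal cancellation. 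Likewise the ``universal combinatorial identity'' you invoke for the $F(G,T^G,Q)$, the $(-1)^{n-1}/n$ weights and the $\Mo(m)/m^2$ factors is not where the arithmetic content lies in the one case the paper can handle — there the integrality input is Reineke's theorem \cite[Th.~5.9]{Rein3}, an external result with no known sheaf-theoretic or superpotential analogue. So the inductive scheme does not close as written, and if it could be made to close it would in particular settle Conjecture \ref{dt7conj1} for quivers with superpotential, which is strictly stronger than anything proved in the paper.
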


That is, the contributions to $\hat{DT}{}^\al(\tau)$ from each
S-equivalence\index{S-equivalence} class of $\tau$-semi\-stables (or
each $\tau$-polystable)\index{polystable@$\tau$-polystable} are
integral. In \S\ref{dt76} we will prove versions of Conjectures
\ref{dt6conj1} and \ref{dt6conj2} for Donaldson--Thomas type
invariants $\hat{DT}{}^{\bs d}_Q(\mu)$ for quivers without
relations. By analogy with Question \ref{dt5quest1}(a), we can ask:

\begin{quest} Suppose Conjecture {\rm\ref{dt6conj2}} is true. For
generic $(\tau,G,\le),$ does there exist a natural perverse
sheaf\/\index{perverse sheaf} $\cal Q$ on $\M_\rss^\al(\tau)$ with\/
$\chi_{\M_\rss^\al(\tau)}({\cal Q})\equiv F^\al(\tau)?$
\label{dt6quest1}
\end{quest}

Such a perverse sheaf $\cal Q$ would be interesting as it would
provide a `categorification'\index{Donaldson--Thomas
invariants!categorification} of the BPS invariants
$\hat{DT}{}^\al(\tau)$, and help explain their integrality.

We can also ask whether the unweighted invariants $J^\al(\tau)$ of
\S\ref{dt35} also have similar integrality properties to those
suggested in Conjectures \ref{dt6conj1} and \ref{dt6conj2}. The
answer is no. Following the argument above but using \eq{dt6eq4}
rather than \eq{dt6eq3}, one would expect that the correct analogue
of \eq{dt6eq14} is
\begin{equation*}
J^\al(\tau)=\sum\nolimits_{m\ge 1,\;
m\mid\al}\frac{(-1)^{m-1}}{m^2}\, \hat J{}^{\al/m}(\tau).
\end{equation*}
But then in Example \ref{dt6ex8}, from \eq{dt6eq10} and \eq{dt6eq13}
we see that $\hat J{}^{2\al}(\tau)_0=\ha$, so the $\hat
J{}^\al(\tau)$ need not be integers even for a generic stability
condition. In fact, using \eq{dt6eq4} in Example \ref{dt6ex2} and
\eq{dt6eq10} and \eq{dt6eq13} when $t=0$ in Example \ref{dt6ex8},
one can show that there is no universal formula with
$c_1,c_2,\ldots\in\Q$ and $c_1=1$ defining
\begin{equation*}
\hat J^\al(\tau)=\ts\sum_{m\ge 1,\; m\mid\al}c_m\,J^{\al/m}(\tau),
\end{equation*}
such that $\hat J^\al(\tau)\in\Z$ for all generic $(\tau,T,\le)$ and
$\al\in C(\coh(X))$. One conclusion (at least if you believe
Conjecture \ref{dt6conj1}) is that counting sheaves weighted by the
Behrend function is essential to ensure good integrality
properties.\index{Donaldson--Thomas invariants!integrality
properties|)}\index{stability condition!generic|)}

\subsection{Counting dimension zero sheaves}
\label{dt63}

Let $X$ be a Calabi--Yau 3-fold over $\C$ with $H^1(\cO_X)=0$, let
$\cO_X(1)$ be a very ample line bundle on $X$, and $(\tau,G,\le)$
the associated Gieseker stability condition on $\coh(X)$, as in
Example \ref{dt3ex1}. For $x\in X(\C)$, write $\cO_{x}$ for the
skyscraper sheaf at $x$, and define $p=[\cO_x]$ in
$K^\num(\coh(X))$, the `point class' on $X$. Then $p$ is independent
of the choice of $x$ in $X(\C)$, as $X$ is connected.

For $d\ge 0$, {\it the Hilbert scheme $\Hilb^dX$ of\/ $d$ points
on\/}\index{Hilbert scheme}\nomenclature[Hilb(X)]{$\Hilb^d(X)$}{Hilbert scheme of
$d$ points on $X$} $X$ parametrizes 0-dimensional subschemes $S$ of
$X$ of length $d$. It is a projective $\C$-scheme, which is singular
for $d\ge 4$, and for $d\gg 0$ has many irreducible components. The
virtual count of $\Hilb^dX$ may be written as a weighted Euler
characteristic $\chi\bigl(\Hilb^dX,\nu_{\Hilb^dX}\bigr)$ as in
\S\ref{dt43}. Values for these virtual counts were conjectured by
Maulik et al.\ \cite[Conj.~1]{MNOP1}, and different proofs are given
by Behrend and Fantechi \cite[Th.~4.12]{BeFa2}, Li
\cite[Th.~0.2]{Li}, and Levine and Pandharipande~\cite[\S
14.1]{LePa}.

\begin{thm} $\sum_{d=0}^\iy\chi\bigl(\Hilb^dX,\nu_{\Hilb^dX}\bigr)
\,s^d\!=\!M(-s)^{\chi(X)},$ where $\chi(X)$ is the Euler
characteristic of\/ $X,$ and\/ $M(q)\!=\!\prod_{k\ge 1}(1\!-\!
q^k)^{-k}$ the MacMahon function.\index{MacMahon function}
\label{dt6thm1}
\end{thm}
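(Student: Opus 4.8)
The plan is to compute the Behrend-weighted Euler characteristics $\chi\bigl(\Hilb^dX,\nu_{\Hilb^dX}\bigr)$ by exploiting the local, motivic nature of the Behrend function, following the method of Behrend and Fantechi; alternatively the identity may simply be quoted from the proofs of Behrend--Fantechi, Li, or Levine--Pandharipande. First I would record that $\Hilb^dX$, being an open subscheme of the moduli scheme $\M_\st^\al(\tau)$ of stable sheaves for $\al=[I_S]$ (using $H^1(\cO_X)=0$, as in Remark \ref{dt5rem2}(iii)), is a projective $\C$-scheme with a symmetric obstruction theory, so by Theorem \ref{dt4thm6} $\chi\bigl(\Hilb^dX,\nu_{\Hilb^dX}\bigr)=\int_{[\Hilb^dX]^\vir}1$, and by the argument of Theorem \ref{dt4thm5} this number is a deformation invariant of $X$. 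This permits one, if convenient, to reduce to toric Calabi--Yau 3-folds.

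The heart of the argument is a local-to-global reduction. Stratifying $\Hilb^dX$ by the Hilbert--Chow morphism $\Hilb^dX\to\mathrm{Sym}^dX$ according to the partition type of the support, and using the multiplicativity of Behrend functions under products (Theorem \ref{dt4thm1}(iii) and Corollary \ref{dt4cor1}) together with the fact that $\nu_{\Hilb^dX}$ is a purely local complex-analytic invariant (Proposition \ref{dt4prop1}(c)), so that near a point whose support type is $\{(n_i,x_i)\}_i$ the pair $(\Hilb^dX,\nu_{\Hilb^dX})$ depends only on the punctual Hilbert schemes $\Hilb^{n_i}(\C^3)_0$ at the origin of $\C^3$, I would show the Behrend-weighted generating series is `exponential':
\[
\ts\sum_{d\ge0}\chi\bigl(\Hilb^dX,\nu_{\Hilb^dX}\bigr)\,s^d
=\Bigl(\ts\sum_{n\ge0}\chi\bigl(\Hilb^n(\C^3)_0,\nu\bigr)\,s^n\Bigr)^{\chi(X)}.
\]
This is a Behrend-weighted refinement of Cheah's formula $\sum_n\chi(\Hilb^nX)z^n=M(z)^{\chi(X)}$, and is the step I expect to be the main obstacle: one must build a power structure on the Grothendieck ring carrying Behrend-type weights (in the spirit of Gusein-Zade--Luengo--Melle-Hern\'andez) and check that the relative Behrend function restricted to each support stratum is the external product of the punctual ones, with no signs beyond those already in Theorem \ref{dt4thm1}.

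It then remains to compute the local factor, namely $\sum_{n\ge0}\chi\bigl(\Hilb^n(\C^3)_0,\nu\bigr)s^n=M(-s)$. Here $\Hilb^n(\C^3)$ is globally the critical locus $\Crit(W)$ of a $\GL(n,\C)$-invariant cubic superpotential $W$ on a smooth affine variety --- the $\C^3$ quiver-with-potential (noncommutative Donaldson--Thomas) picture of \S\ref{dt7}, which is also an instance of Theorem \ref{dt5thm3} --- so $\nu$ is given by the Milnor-fibre formula \eq{dt4eq2}. Localizing with respect to the dilating torus $(\C^*)^3$ (weighted Euler characteristics are computed on the fixed locus), the fixed points of the punctual Hilbert scheme are the monomial ideals, indexed by plane partitions $\pi$ of $n$; evaluating $\nu$ at each one through the $(\C^*)^3$-character of the tangent--obstruction complex yields $(-1)^{|\pi|}$ up to a combinatorial sign, and summing over plane partitions via MacMahon's identity $\sum_\pi q^{|\pi|}=M(q)$ with $q=-s$ gives $M(-s)$. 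Substituting into the factorization above yields $\sum_{d\ge0}\chi(\Hilb^dX,\nu_{\Hilb^dX})\,s^d=M(-s)^{\chi(X)}$, completing the proof.
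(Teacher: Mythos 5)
The paper does not prove Theorem \ref{dt6thm1} at all: it is the degree-zero MNOP conjecture \cite[Conj.~1]{MNOP1}, and the book simply quotes it, citing the proofs of Behrend--Fantechi \cite{BeFa2}, Li \cite{Li} and Levine--Pandharipande \cite{LePa}. So your fallback of ``just quote the identity'' is exactly what the authors do, and the argument you sketch is essentially a reconstruction of the Behrend--Fantechi proof: locality of $\nu$ in the analytic topology (Proposition \ref{dt4prop1}(c)) together with multiplicativity under products (Theorem \ref{dt4thm1}(iii)) give the support-stratification and the $\chi(X)$-th power formula, and the local factor is computed on $\C^3$ by torus localization over monomial ideals. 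That route does work, and is genuinely different from anything carried out in the book itself; note though that the book effectively contains the local $\C^3$ statement elsewhere, since equation \eq{dt7eq33} in \S\ref{dt751} is exactly this theorem for the noncompact $\C^3$, obtained via the critical-locus (quiver with superpotential) description you mention.

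Two cautions about the sketch as written. First, the base of the power should be $\sum_{n\ge 0}\chi\bigl(\Hilb^n(\C^3)_0,\nu_{\Hilb^n(\C^3)}\vert_{\Hilb^n(\C^3)_0}\bigr)s^n$: the weight is the Behrend function of the ambient Hilbert scheme restricted to the punctual locus, not an invariant of the punctual Hilbert scheme on its own, and your power-structure step must be set up for such pairs (as Behrend--Fantechi do); saying the pair $(\Hilb^dX,\nu_{\Hilb^dX})$ near a point ``depends only on the punctual Hilbert schemes'' is not literally correct --- it depends on neighbourhoods of the punctual locus in $\Hilb^{n_i}(\C^3)$. Second, and more seriously, the phrase ``evaluating $\nu$ at each one through the $(\C^*)^3$-character of the tangent--obstruction complex yields $(-1)^{|\pi|}$ up to a combinatorial sign'' glosses precisely the content of the theorem: $\nu$ is not formally determined by that character, and the sign is what converts $M(s)$ into $M(-s)$. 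One needs either Behrend--Fantechi's localization theorem, that an isolated fixed point $p$ of a torus-invariant symmetric obstruction theory contributes $\nu(p)=(-1)^{\dim T_p}$, combined with the parity lemma $\dim T_{Z_\pi}\Hilb^n(\C^3)\equiv n\pmod 2$ for monomial ideals $Z_\pi$, or else the explicit Milnor-fibre computation on the critical-locus chart, as in \S\ref{dt751} and Theorem \ref{dt4thm2}. Finally, the suggested reduction to toric Calabi--Yau $3$-folds by deformation invariance is not available (compact Calabi--Yau $3$-folds are not deformation equivalent to toric ones) and is also unnecessary, since the identity holds for arbitrary smooth $3$-folds.
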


We will compute the generalized Donaldson--Thomas invariants
$\bar{DT}{}^{dp}(\tau)$ counting dimension 0 sheaves in class $dp$
for $d\ge 1$. Our calculation is parallel to Kontsevich and
Soibelman \cite[\S 6.5]{KoSo1}. First consider the pair invariants
$PI^{dp,n}(\tau')$. These count stable pairs $s:\cO_X(-n)\ra E$ for
$E\in\coh(X)$ with $[E]=dp$. The condition for $s:\cO_X(-n)\ra E$ to
be a stable pair is simply that $s$ is surjective. Tensoring by
$\cO_X(n)$ gives a morphism $s(n):\cO_X\ra E(n)$. But $E(n)\cong E$
as $E$ has dimension 0. Thus, tensoring by $\cO_X(n)$ gives an
isomorphism $\M_\stp^{dp,n}(\tau')\cong\M_\stp^{dp,0}(\tau')$, so
that $\M_\stp^{dp,n}(\tau')$ and $PI^{dp,n}(\tau')$ are independent
of $n$. Furthermore, $\smash{\M_\stp^{dp,0}(\tau')}$ parametrizes
surjective $s:\cO_X\ra E$ for $E\in\coh(X)$ with $[E]=dp$, which are
just points of $\Hilb^dX$. Therefore
$\M_\stp^{dp,n}(\tau')\cong\Hilb^dX$, and
\e
PI^{dp,n}(\tau')=\chi\bigl(\Hilb^dX,\nu_{\Hilb^dX}\bigr),
\quad\text{for all $n\in\Z$ and $d\ge 0$.}
\label{dt6eq17}
\e

We have $\tau(dp)=1$ in $G$, and any $\be,\ga\in C(\coh(X))$ with
$\tau(\be)=\tau(\ga)=1$ are of the form $\be=dp$, $\ga=ep$, so that
$\bar\chi(\be,\ga)=0$. Therefore Proposition \ref{dt5prop3} applies
with $t=1$ in $G$. So from Theorem \ref{dt6thm1} and \eq{dt6eq17} we
see that
\e
\begin{split}
&M(-s)^{\chi(X)}= 1+\sum_{d\ge 1} PI^{dp,n}(\tau')s^d=\\
&\quad\exp
\raisebox{-4pt}{\begin{Large}$\displaystyle\Bigl[$\end{Large}}
-\sum_{d\ge 1}(-1)^{\bar\chi([\cO_X(-n)],dp)}
\bar\chi\bigl([\cO_X(-n)],dp\bigr)\bar{DT}{}^{dp}(\tau)s^d
\raisebox{-4pt}{\begin{Large}$\displaystyle\Bigr]$\end{Large}}.
\end{split}
\label{dt6eq18}
\e
Here we have replaced the sums over $\al\in C(\coh(X))$ with
$\tau(\al)=1$ by a sum over $dp$ for $d\ge 1$, and used the formal
variable $s$ in place of $q^p$ in \eq{dt5eq20}, so that $q^{dp}$ is
replaced by~$s^d$.

Now $\bar\chi([\cO_X(-n)],p)\!=\!\sum_i(-1)^i\dim
H^i(\cO_x(n))\!=\!1$, so $\bar\chi([\cO_X(-n)],dp)\!=\!d$.
Substituting this into \eq{dt6eq18}, taking logs, and using
$M(q)\!=\!\prod_{k\ge 1}(1\!-\!q^k)^{-k}$ yields
\begin{equation*}
-\sum_{d\ge 1}(-1)^dd\,\bar{DT}{}^{dp}(\tau)s^d\!=\!\chi(X)\!
\sum_{k\ge 1} (-k)\log \bigl(1\!-\!(-s)^k\bigr)\!=\!\chi(X)\!
\sum_{k,l\ge 1}\frac{k}{l}(-s)^{kl}.
\end{equation*}
Equating coefficients of $s^d$ yields after a short calculation
\e
\bar{DT}{}^{dp}(\tau)=-\chi(X)\sum_{l\ge 1, \; l \mid
d}\frac{1}{l^2}.
\label{dt6eq19}
\e
So from \eq{dt6eq14} we deduce that
\e
\hat{DT}{}^{dp}(\tau)=-\chi(X),\quad\text{all $d\ge 1$.}
\label{dt6eq20}
\e
This is a satisfying result, and confirms Conjecture \ref{dt6conj1}
for dimension 0 sheaves.

\subsection{Counting dimension one sheaves}
\label{dt64}

Let $X$ be a Calabi--Yau 3-fold over $\C$, and $\cO_X(1)$ a very
ample line bundle on $X$. From \S\ref{dt45} the Chern
character\index{Chern character} $\ch$ identifies $K^\num(\coh(X))$
with a particular lattice $\La_X$ in $H^{\rm even}(X;\Q)$, so we may
write $\al\in K^\num(\coh(X))$ as $(\al_0,\al_2,\al_4,\al_6)$ with
$\al_{2j}\!\in\!H^{2j}(X;\Q)$. If $E\!\ra\!X$ is a vector bundle
with $[E]\!=\!\al$ then~$\al_0\!=\!\rank E\!\in\!\Z$.

Let us consider invariants $\bar{DT}{}^\al(\tau),\hat{DT}{}^\al
(\tau)$ counting pure sheaves $E$ of dimension 1 on $X$, that is,
sheaves $E$ supported on curves $C$ in $X$. If $[E]=\al$ then
$\al_0=\al_2=0$ for dimensional reasons, so we may write
$\al=(0,0,\be,k)$. By \cite[\S A.4]{Hart2} we have $\be=-c_2(E)$ and
$k=\ha c_3(E)$, where $c_2(E)\in H^4(X;\Z)\cong H_2(X;\Z)$ and
$c_3(E)\in H^6(X;\Z)\cong\Z$ are Chern classes of $E$. Write
$\de=c_1(\cO_X(1))$. If $[E]=\al$ then $[E(n)]=\exp(n\de)\al=
(0,0,\be,k+n\be\cup\de)$. Hence by the Hirzebruch--Riemann--Roch
Theorem\index{Hirzebruch--Riemann--Roch Theorem}
\cite[Th.~A.4.1]{Hart2} we have
\e
\begin{split}
\chi(E(n))&=\deg\bigl(\ch(E(n)\cdot{\rm td}(TX)\bigr)_3\\
&=\deg\bigl( (0,0,\be,k+n\be\cup\de)\cdot (1,0,*,*)\bigr)_3=
k+n\be\cup\de,
\end{split}
\label{dt6eq21}
\e
using $c_1(X)=0$ to simplify ${\rm td}(TX)$. So the Hilbert
polynomial of $E$ is
\e
P_{(0,0,\be,k)}(t)=(\be\cup\de)\,t+k.
\label{dt6eq22}
\e
Note that $\be\cup\de,k\in\Z$ as $P_{(0,0,\be,k)}$ maps $\Z\ra\Z$.
Note too that for dimension 1 sheaves, Gieseker stability in Example
\ref{dt3ex1} and $\mu$-stability in Example \ref{dt3ex2} coincide,
since truncating a degree 1 polynomial at its second term has no
effect.

Here are some properties of the $\bar{DT}{}^\al(\tau),\hat{DT}{}^\al
(\tau)$ in dimension 1. Part (a) may be new, and answers a question
of Sheldon Katz in \cite[\S 3.2]{Katz}. The proof of (c) uses
$H^1(\cO_X)=0$, which we assume for Calabi--Yau 3-folds in this
section.

\begin{thm} Let\/ $X$ be a Calabi--Yau $3$-fold over $\C,$ and\/
$(\tau,T,\le)$ a weak stability condition on $\coh(X)$ of Gieseker
or $\mu$-stability type. Consider invariants
$\bar{DT}{}^{(0,0,\be,k)}(\tau),\hat{DT}{}^{(0,0,\be,k)}(\tau)$ for
$0\ne\be\in H^4(X;\Z)$ and\/ $k\in\Z$ counting $\tau$-semistable
dimension $1$ sheaves in $X$. Then
\begin{itemize}
\setlength{\itemsep}{0pt}
\setlength{\parsep}{0pt}
\item[{\rm(a)}] $\bar{DT}{}^{(0,0,\be,k)}(\tau),\hat{DT}{}^{(0,0,
\be,k)}(\tau)$ are independent of the choice of\/~$(\tau,T,\le)$.
\item[{\rm(b)}] Assume Conjecture {\rm\ref{dt6conj1}} holds.
Then\/~$\hat{DT}{}^{(0,0,\be,k)}(\tau)\in\Z$.
\item[{\rm(c)}] For any\/ $l\in \be\cup H^2(X;\Z)\subseteq\Z$ we
have $\bar{DT}{}^{(0,0,\be,k)}(\tau)=\bar{DT}{}^{(0,0,\be,k+l)}
(\tau)$ and\/ $\hat{DT}{}^{(0,0,\be,k)}(\tau)
=\hat{DT}{}^{(0,0,\be,k+l)}(\tau)$.
\end{itemize}
\label{dt6thm2}
\end{thm}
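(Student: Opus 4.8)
The plan is to prove the three parts in order, with part~(a) supplying the mechanism that the others feed on; the one genuinely new computational input is an Euler-form vanishing, everything else is then bookkeeping with the wall-crossing formula \eqref{dt5eq14} and the autoequivalence $-\otimes L$.

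\textbf{Part (a).} By Corollary~\ref{dt5cor2} it suffices to prove invariance across each globally-finite wall-crossing step, i.e.\ that in \eqref{dt5eq14} applied to $\al=(0,0,\be,k)$ with $\be\ne0$ only the term with $\md{I}=1$ survives. A connected, simply-connected digraph $\Ga$ on a vertex set $I$ with $\md{I}\ge2$ has at least one edge, so such a term carries a factor $\prod_{\text{edges}}\bar\chi(\ka(i),\ka(j))$; hence it is enough to show $\bar\chi(\ka(i),\ka(j))=0$ whenever $\ka:I\ra C(\coh(X))$ with $\sum_{i\in I}\ka(i)=\al$. I would argue this as follows: writing $\ka(i)=\ch(E_i)$ with $0\ne E_i\in\coh(X)$, the ranks $\rank E_i\ge0$ sum to $\al_0=0$, so all $E_i$ are torsion; then $\ch_1(E_i)$ is an effective divisor class, and since effective classes pair strictly positively with $c_1(\cO_X(1))^2$ unless they vanish, $\sum_i\ch_1(E_i)=\al_2=0$ forces $\ch_1(E_i)=0$, i.e.\ each $E_i$ has support of dimension $\le1$. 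Thus $\ch(E_i),\ch(E_j)$ have nonzero components only in $H^4(X;\Q)\oplus H^6(X;\Q)$, so $\ch(E_i)^\vee\cdot\ch(E_j)\in H^{\ge8}(X;\Q)=0$, and \eqref{dt4eq20} gives $\bar\chi(\ka(i),\ka(j))=0$. Therefore \eqref{dt5eq14} collapses to $\bar{DT}{}^{(0,0,\be,k)}(\ti\tau)=\bar{DT}{}^{(0,0,\be,k)}(\tau)$ at each step, hence in general. The statement for $\hat{DT}$ is then immediate from \eqref{dt6eq15}, every summand there being $\bar{DT}{}^{(0,0,\be/m,k/m)}(\tau)$ with $\be/m\ne0$.

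\textbf{Part (b).} By part~(a) the invariants are independent of the weak stability condition of Gieseker or $\mu$-type, so one is free to compute with any such $\tau$. The Euler-form computation of part~(a) shows $\bar\chi$ vanishes identically on classes of sheaves of dimension $\le1$; equivalently, inside the Serre subcategory $\coh_{\le1}(X)\subset\coh(X)$ of sheaves supported in dimension $\le1$ — which is where all the semistable sheaves, $\bep$-stack functions and wall-crossing configurations entering $\bar{DT}{}^{(0,0,\be,k)}(\tau)$ and $\hat{DT}{}^{(0,0,\be,k)}(\tau)$ live — every weak stability condition is \emph{generic} in the sense of Conjecture~\ref{dt6conj1}. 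Granting Conjecture~\ref{dt6conj1} read in this setting gives $\hat{DT}{}^{(0,0,\be,k)}(\tau)\in\Z$. The point to check carefully is that genericity is only ever tested against classes $\ga$ with $\tau(\ga)=\tau(\al)$, and for $\al$ of dimension $1$ these are all of dimension $1$, so the global genericity hypothesis may be replaced by this automatically-satisfied restricted version.

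\textbf{Part (c).} Since $H^1(\cO_X)=0$, Theorem~\ref{dt4thm7} and the discussion around \eqref{dt4eq22} give, for each $\lambda\in H^2(X;\Z)$, a line bundle $L$ with $c_1(L)=\lambda$. Tensoring by $L$ is an exact autoequivalence of $\coh(X)$ inducing a $1$-isomorphism $\Phi_L:\fM\ra\fM$ that intertwines $\nu_\fM$ with itself (the Behrend function being intrinsic) and the Ringel--Hall and stack-function structures, and that carries a class $\al$ to $\ch(L)\cdot\al$ in $K^\num(\coh(X))=\La_X$; a short computation shows $\al'\mapsto\ch(L)\cdot\al'$ preserves the Euler form (as $\ch(L)^\vee=\ch(L^{-1})$), so $\Phi_L$ also intertwines $\ti\Psi$ and the induced isometry of $\ti L(X)$, and it sends $\tau$-(semi)stables to $\tau_L$-(semi)stables for the pushed-forward weak stability condition $\tau_L$. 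Consequently $\bar{DT}{}^{\ch(L)\cdot\al}(\tau_L)=\bar{DT}{}^{\al}(\tau)$; restricting throughout to $\coh_{\le1}(X)$ the argument of part~(a) (the wall-crossing between $\tau_L$ and $\tau$ is trivial because $\bar\chi$ vanishes on dimension-$1$ classes) yields $\bar{DT}{}^{\ch(L)\cdot\al}(\tau_L)=\bar{DT}{}^{\ch(L)\cdot\al}(\tau)$. For $\al=(0,0,\be,k)$ one has $\ch(L)\cdot\al=(0,0,\be,\,k+\lambda\cup\be)$, and as $\lambda$ runs over $H^2(X;\Z)$, $\lambda\cup\be$ runs over $\be\cup H^2(X;\Z)$, giving the claim for $\bar{DT}$. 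For $\hat{DT}$, note $\al'\mapsto\ch(L)\cdot\al'$ is a ring automorphism of $\La_X$ (Remark~\ref{dt4rem}(b)), hence preserves divisibility and sends $(0,0,\be,k)/m$ to $(0,0,\be,\,k+\lambda\cup\be)/m$; applying the $\bar{DT}$-symmetry term-by-term in \eqref{dt6eq15} (with $\be/m$ in place of $\be$) gives $\hat{DT}{}^{(0,0,\be,k)}(\tau)=\hat{DT}{}^{(0,0,\be,\,k+\lambda\cup\be)}(\tau)$.

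\textbf{The main obstacle} is really the single observation of part~(a) — that $\bar\chi$ is identically zero on dimension-$\le1$ classes — together with pinning down exactly how much it buys: once that is in hand, (a) is the wall-crossing formula with all multi-vertex terms killed, (b) is Conjecture~\ref{dt6conj1} read restrictedly, and (c) is transport along $\otimes L$ followed by undoing the change of stability condition via~(a). The more delicate bookkeeping is making sure the pushed-forward conditions $\tau_L$ of part~(c), and the restricted-genericity invocation of part~(b), genuinely fall within the finite-interpolation framework of \S\ref{dt33} and \cite{Joyc6}, which is why one works inside $\coh_{\le1}(X)$ the whole time.
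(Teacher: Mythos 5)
Your proposal is correct and follows essentially the same route as the paper: (a) by killing all multi-vertex terms in the wall-crossing formula \eq{dt5eq14} via the vanishing of $\bar\chi$ on classes of dimension $\le 1$ sheaves, (b) by noting any $\tau$ is generic once genericity is only tested against dimension $1$ classes and invoking Conjecture \ref{dt6conj1}, and (c) by transporting along $E\mapsto E\ot L$ (using $H^1(\cO_X)=0$) and undoing the change of stability condition via (a). The extra details you supply --- the rank/effectivity argument forcing each $\ka(i)$ into $\coh_{\le 1}(X)$, and the check that $\al\mapsto\ch(L)\cdot\al$ preserves $\bar\chi$ --- are correct refinements of steps the paper treats more briefly.
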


\begin{proof} For (a), let $(\tau,T,\le),(\ti\tau,\ti T,\le)$ be
two weak stability conditions on $\coh(X)$ of Gieseker or
$\mu$-stability type. Then Corollary \ref{dt5cor2} shows that we may
write $\bar{DT}{}^{(0,0,\be,k)}(\ti\tau)$ in terms of the
$\bar{DT}{}^{(0,0,\be',k')}(\tau)$ by finitely many applications of
the transformation law \eq{dt5eq14}. Now each of these changes of
stability condition involves only sheaves in the abelian subcategory
$\coh_{\le 1}(X)$ of sheaves in $\coh(X)$ with dimension $\le 1$.
However, the Euler form\index{Euler form} $\bar\chi$ vanishes on
$K_0(\coh_{\le 1}(X))$ for dimensional reasons. Each term
$I,\ka,\Ga$ in \eq{dt5eq14} has $\md{I}-1$ factors
$\bar\chi\bigl(\ka(i),\ka(j)\bigr)$ so all terms with $\md{I}>1$ are
zero as $\bar\chi\equiv 0$ on this part of $\coh(X)$. So
\eq{dt5eq14} reduces to $\bar{DT}{}^\al(\ti\tau)=
\bar{DT}{}^\al(\tau)$. Therefore each of the finitely many
applications of \eq{dt5eq14} leaves the
$\bar{DT}{}^{(0,0,\be',k')}(\tau)$ unchanged, proving~(a).

For (b), note that any stability condition $(\tau,T,\le)$ on
$\coh(X)$ is generic on $\coh_{\le 1}(X)$, since $\bar\chi=0$ on
$K_0(\coh_{\le 1}(X))$. Alternatively, one can show that if
$\smash{\ti\cO_X(1)}$ is a sufficiently general very ample line
bundle on $X$ the resulting Gieseker stability condition
$(\ti\tau,G,\le)$ is generic on all of $\coh(X)$, and then apply
(a). Either way, Conjecture \ref{dt6conj1} implies
that~$\hat{DT}{}^{(0,0,\be,k)}(\tau)\in\Z$.

For (c), let $L\ra X$ be a line bundle, let $(\tau,T,\le)$ be a weak
stability condition on $\coh(X)$ of Gieseker or $\mu$-stability
type, and define another weak stability condition $(\ti\tau,T,\le)$
on $\coh(X)$ by $\ti\tau([E])=\tau([E\ot L^{-1}])$. There is an
automorphism $F^L:\coh(X)\ra\coh(X)$ acting as $F^L:E\mapsto E\ot L$
on objects. It induces a 1-isomorphism $F^L_*:\fM\ra\fM$. Also $E$
is $\tau$-semistable if and only if $E\ot L$ is
$\ti\tau$-semistable, so $F^L_*$ maps $\fM_\rss^\al(\tau)\ra
\fM_\rss^{F^L_*(\al)}(\ti\tau)$, where
$F^L_*(\al)=\exp(c_1(L))\cdot\al$ in~$H^{\rm even}(X;\Q)$.

Clearly we have $\hat{DT}{}^\al(\tau)=\hat{DT}{}^{F^L_*(\al)}
(\ti\tau)$ for all $\al\in C(\coh(X))$. When $\al=(0,0,\be,k)$ we
have $F^L_*(\al)=(0,0,\be,k+\be\cup c_1(L))$. Thus
\begin{equation*}
\bar{DT}{}^{(0,0,\be,k)}(\tau)=\bar{DT}{}^{(0,0,\be,k+\be\cup
c_1(L))}(\ti\tau)=\bar{DT}{}^{(0,0,\be,k+\be\cup c_1(L))}(\tau),
\end{equation*}
by (a). Since $H^1(\cO_X)=0$, $c_1(L)$ can take any value in
$H^2(X;\Z)$, and so $\be\cup c_1(L)$ can take any value $l\in
\be\cup H^2(X;\Z)$, proving the first part of (c). The second part
follows by~\eq{dt6eq15}.
\end{proof}

We will compute contributions to $\bar{DT}{}^{(0,0,\be,k)}
(\tau),\hat{DT}{}^{(0,0,\be,k)}(\tau)$ from sheaves supported on
nice curves $C$ in $X$. We begin with a rigid rational curve. The
proof of the next proposition is based on Hosono et
al.~\cite[Prop.~4.3]{HST}.

\begin{prop} Let\/ $X$ be a Calabi--Yau $3$-fold over\/ $\C,$ and\/
$(\tau,T,\le)$ a weak stability condition on\/ $\coh(X)$ of Gieseker
or $\mu$-stability type. Suppose $i:\CP^1\ra X$ is an embedding,
and\/ $i(\CP^1)$ has normal bundle $\cO_{\CP^1}(-1)\op
\cO_{\CP^1}(-1)$. Then the only $\tau$-semistable dimension $1$
sheaves supported set-theoretically on $i(\CP^1)$ in $X$ are
$i_*\bigl(m\cO_{\CP^1}(k)\bigr)$ for $m\ge 1$ and\/ $k\in\Z,$ and
these sheaves are rigid in~$\coh(X)$.
\label{dt6prop2}
\end{prop}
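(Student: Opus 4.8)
The plan is to exploit the splitting-type behaviour of sheaves on the rigid $(-1,-1)$-curve $C = i(\CP^1)$, together with local models for the formal/analytic neighbourhood of $C$ in $X$. First I would reduce to understanding pure dimension-one sheaves $E$ on $X$ whose support is set-theoretically $C$. Since $C \cong \CP^1$ is smooth and $E$ is pure of dimension one, the scheme-theoretic support of $E$ is a thickening of $C$; working in the analytic (or formal) neighbourhood of $C$, which is biholomorphic to a neighbourhood of the zero section in the total space of $N_{C/X} = \cO_{\CP^1}(-1) \op \cO_{\CP^1}(-1)$, I would analyze the possible thickenings. The key point is that the obstruction to deforming $C$ inside $X$ is governed by $H^0(N_{C/X}) = 0$, so $C$ is infinitesimally rigid, and moreover $H^1(N_{C/X}) = 0$ so there is no obstruction space either; combined with the fact that $\cO_{\CP^1}(-1)\op\cO_{\CP^1}(-1)$ has no sections, a thickened structure supported on $C$ cannot "spread out" — every pure sheaf on such a thickening, when restricted appropriately, decomposes into pieces pulled back from $\CP^1$.

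The main technical step, following Hosono--Saito--Takahashi, is to show that any $\tau$-semistable pure dimension-one sheaf $E$ with $\mathrm{Supp}(E)_{\mathrm{red}} = C$ is isomorphic to $i_*(F)$ for some vector bundle $F$ on $\CP^1$, and then that $\tau$-semistability of $i_*(F)$ forces $F \cong m\,\cO_{\CP^1}(k)$ for a single $m \ge 1$ and $k \in \Z$. For the first part: a first-order thickening of $C$ corresponds to a quotient of $N^*_{C/X} = \cO_{\CP^1}(1)^{\op 2}$; but any subsheaf of $E$ supported on a proper thickening that is not a quotient bundle on $C$ itself would, by the $(-1,-1)$ geometry, destabilize $E$ — more precisely, the nilpotent part of the action of functions on the neighbourhood acts on $E$ through maps $E \to E \ot N^*_{C/X}$, and since pushing the support off $C$ is impossible, iterating this nilpotent action and using that $N^*_{C/X}$ is globally generated but $E$ has bounded length forces the thickening to be trivial, i.e. $E = i_*(F)$. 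For the second part, semistability of $i_*(F)$: since $C$ is a $(-1,-1)$-curve, $i_*\cO_{\CP^1}(k)$ is stable for every $k$ (it has no proper subsheaves of the same reduced dimension-one support except via restriction maps $\cO_{\CP^1}(k) \to \cO_{\CP^1}(k')$ with $k' < k$, which strictly decrease the slope in the ambient Gieseker/$\mu$-ordering because the Hilbert polynomial $(\be\cup\de)t + k$ has larger constant term), and if $F = \bigoplus_j \cO_{\CP^1}(k_j)$ with not all $k_j$ equal, then the summand of largest $k_j$ gives a destabilizing subsheaf. Hence $F \cong m\,\cO_{\CP^1}(k)$.

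Finally I would check rigidity: $i_*(m\,\cO_{\CP^1}(k))$ is rigid in $\coh(X)$, i.e. $\mathrm{Ext}^1_X(i_*F, i_*F) = 0$ for $F = m\,\cO_{\CP^1}(k)$. This is a local computation on the neighbourhood of $C$: using the local-to-global spectral sequence and the description of $N_{C/X}$, one finds $\mathrm{Ext}^1_X(i_*\cO_{\CP^1}(k), i_*\cO_{\CP^1}(k))$ fits into a sequence built from $H^1(\CP^1, \cO) = 0$ and $H^0(\CP^1, \Lambda^2 N_{C/X}) = H^0(\CP^1, \cO_{\CP^1}(-2)) = 0$ and $H^1(\CP^1, N_{C/X}) = H^1(\cO_{\CP^1}(-1)^{\op 2}) = 0$, so the $\mathrm{Ext}^1$ vanishes; tensoring by $\mathrm{Hom}(\C^m, \C^m)$ preserves this. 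The main obstacle I anticipate is the first structural step — ruling out genuinely non-split pure sheaves supported on nontrivial thickenings of $C$ and on reducible/non-reduced configurations meeting $C$; this requires a careful filtration argument using the pushforward-and-Ext structure on the $(-1,-1)$ neighbourhood rather than any routine cohomology vanishing, and is exactly where the Hosono--Saito--Takahashi method must be adapted. Everything else is standard once that reduction is in place.
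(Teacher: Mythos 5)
There is a genuine gap, and it sits exactly where you anticipate: the reduction of a $\tau$-semistable pure dimension~$1$ sheaf $E$ with set-theoretic support $i(\CP^1)$ to a pushforward $i_*(F)$. The mechanism you sketch (the nilpotent action of functions giving maps $E\ra E\ot N^*_{C/X}$, plus global generation of $N^*_{C/X}$ and bounded length) cannot work as stated, because purity alone does not force reduced support: with $I$ the ideal sheaf of $C=i(\CP^1)$, the sheaf $\cO_X/I^2$ is pure of dimension $1$ and set-theoretically supported on $C$ but is not a pushforward from $C$ (it is merely unstable, destabilized by $I/I^2\cong i_*(2\cO_{\CP^1}(1))$). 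So the argument must use semistability quantitatively, and your sketch never does; you flag this as the main obstacle but do not supply the filtration argument. The paper's route is quite different and is worth recording: one first normalizes by tensoring with powers of a line bundle $L$ defined near $C$ with $i^*(L)\cong\cO_{\CP^1}(1)$ (only this local existence is needed, not a biholomorphism of the neighbourhood with the total space of $N_{C/X}$, which you assert but do not prove), so that $[E\ot L^n]=(0,0,m\be,d)$ with $0<d\le m$. Then $\chi(E\ot L^n)=d>0$ produces a nonzero section $s:\cO_X\ra E\ot L^n$ with kernel $K\subseteq I$; if $I^l\not\subset K$ for some $l\ge 1$, then $K+I^l/K$ is a nonzero subsheaf of $E\ot L^n$ which is a quotient of $I^l/I^{l+1}\cong i_*\bigl((l+1)\cO_{\CP^1}(l)\bigr)$, a semistable sheaf of reduced slope $l+1$, forcing $l+1\le d/m\le 1$, a contradiction. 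Hence $K=I$, $\Im s\cong i_*(\cO_{\CP^1})$, semistability forces $d=m$, and induction on $m$ using the quotient plus $\Ext^1\bigl(i_*\cO_{\CP^1},i_*\cO_{\CP^1}\bigr)=0$ yields $E\ot L^n\cong i_*(m\cO_{\CP^1})$ together with its rigidity, and one untwists by $L^{-n}$. Nothing playing the role of this section/ideal-power comparison appears in your proposal.

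A secondary point: your rigidity computation cites the wrong cohomology groups. For $F$ a bundle on $C$, the local-to-global contributions to $\Ext^1_X(i_*F,i_*F)$ are $H^1\bigl(C,\SHom(F,F)\bigr)$ and $H^0\bigl(C,\SHom(F,F)\ot N_{C/X}\bigr)$; the groups $H^0(\Lambda^2N_{C/X})$ and $H^1(N_{C/X})$ you invoke belong to $\Ext^2$. The needed vanishing is $H^1(\cO_{\CP^1})=0$ and $H^0\bigl(\cO_{\CP^1}(-1)^{\op 2}\bigr)=0$, so the conclusion is correct but the argument as written is not; in the paper rigidity is not proved separately at all, being absorbed into the induction on $m$. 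Your second step (semistability of $i_*F$ forcing $F\cong m\,\cO_{\CP^1}(k)$ via Grothendieck splitting and comparison of constant terms of Hilbert polynomials) is fine once the reduction to pushforwards is in place.
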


\begin{proof} Let $\be\in H^4(X;\Z)$ be Poincar\'e dual to
$[i(\CP^1)]\in H_2(X;\Z)$. Suppose $E$ is a pure dimension 1 sheaf
supported on $i(\CP^1)$ in $X$. Then $[E]=(0,0,m\be,k)$ in
$K^\num(\coh(X))\subset H^{\rm even}(X;\Q)$, where $m\ge 1$ is the
multiplicity of $E$ at a generic point of $i(\CP^1)$. Any subsheaf
$0\ne E'\subset E$ has $[E']=(0,0,m'\be,k')$ for $1\le m'\le m$ and~
$k'\in\Z$. Let $(\tau,T,\le)$ be defined using an ample line bundle
$\cO_X(1)$ with $c_1(L)=\de$. Then by \eq{dt6eq22}, the Hilbert
polynomials\index{Hilbert polynomial} of $E$ and $E'$ are
$m(\be\cup\de)\,t+k$ and $m'(\be\cup\de)\,t+k'$,
where~$\be\cup\de>0$.

By Example \ref{dt3ex1} or \ref{dt3ex2}, $E$ is $\tau$-semistable if
and only if, for all $0\ne E'\subset E$, when $[E']=(0,0,m'\be,k')$,
we have $k'/m'(\be\cup\de)\le k/m(\be\cup\de)$, that is, $k'/m\le
k/m$. Note that this condition is {\it independent of the choice of
stability condition\/} $(\tau,T,\le)$. This is a stronger analogue
of Theorem \ref{dt6thm2}(a): if $\Si\subset X$ is an irreducible
curve in $X$, then the moduli stacks $\fM_\rss^\al(\tau)_\Si$ of
$\tau$-semistable sheaves supported on $\Si$ are independent
of~$(\tau,T,\le)$.

Suppose $E$ is $\tau$-semistable and dimension 1 with
$[E]=(0,0,m\be,k)$. Locally in the \'etale or complex analytic
topology near $i(\CP^1)$ in $X$ we can find a line bundle $L$ such
that $i^*(L)\cong \cO_{\CP^1}(1)$ (this holds as the obstructions to
finding such an $L$ lie in
$\Ext^2\bigl(\cO_{\CP^1}(1),\cO_{\CP^1}(1) \bigr)$, which is zero as
$2>\dim\CP^1$). Then $[E\ot L^n]=(0,0,m\be,k+mn)$ for $n\in\Z$, and
$E\ot L^n$ is $\tau$-semistable by the proof of Theorem
\ref{dt6thm2}(c) and $\fM_\rss^\al(\tau)_{i(\CP^1)}$ independent of
$\tau$ above. Let $n\in\Z$ be unique such that $d=k+mn$ lies in
$\{1,2,\ldots,m\}$. Then $[E\ot L^n]=(0,0,m\be,d)$, so $\chi(E\ot
L^n)=P_{[E\ot L^n]}(0)=d$ by \eq{dt6eq22}. But $H^i(E\ot L^n)=0$ for
$i>1$ as $E\ot L^n$ is supported in dimension 1, so $\dim H^0(E\ot
L^n)\ge\dim H^0(E\ot L^n)-\dim H^1(E\ot L^n)=d>0$, and we can choose
$0\ne s\in H^0(E\ot L^n)$.

Thus we have a nonzero morphism $s:\cO_X\ra E\ot L^n$ in $\coh(X)$.
Write $J$ for the image and $K$ for the kernel of $s$. Then $0\ne
J\subset E\ot L^n$ and $K\subset\cO_X$. As $E\ot L^n$ is pure of
dimension 1, $J$ is pure of dimension 1. Let $I$ be the ideal sheaf
of $i(\CP^1)$. Since supp$(J)=i(\CP^1)$ which is reducible and
reduced we see that $K\subset I\subset\cO_X$. Consider the two cases
(a) $K=I$ and (b) $K\ne I$. In case (a) we have
$J=\cO_{i(\CP^1)}=i_*(\cO_{\CP^1}(0))$, which has class
$[J]=(0,0,\be,1)$. Since $E\ot L^n$ is $\tau$-semistable with $[E\ot
L^n]=(0,0,m\be,d)$ and $0\ne J\subset E\ot L^n$, this implies that
$1\le d/m$. But $d=1,2,\ldots,m$ by choice of $n$, so this
forces~$d=m$.

In case (b), there is a unique $l\ge 1$ such that $I^{l+1}\subset K$
and $I^l\not\subset K$. Then $K+I^l/K$ is a nontrivial subsheaf of
$\cO_X/L\cong J$, and so $K+I^l/K\subset E\ot L^n$. But
$I^l/I^{l+1}$ is the $l^{\rm th}$ symmetric power of the conormal
bundle of $i(\CP^1)$ in $X$, so that $I^l/I^{l+1}\cong
i_*\bigl((l+1)\cO_{\CP^1}(l)\bigr)$. As $I^{l+1}\subset K$ there is
a surjection $I^l/I^{l+1}\ra K+I^l/K$. Let
$[K+I^l/K]=(0,0,m'\be,k')$. Since
$\bigl[i_*\bigl((l+1)\cO_{\CP^1}(l)\bigr)\bigr]=\bigl(0,0,
(l+1)\be,(l+1)^2\bigr)$ and $K+I^l/K$ is a quotient sheaf of
$i_*\bigl((l+1) \cO_{\CP^1}(l)\bigr)$ which is $\tau$-semistable, we
deduce that $l+1\le k'/m'$. But $K+I^l/K\subset E\ot L^n$, so $E\ot
L^n$ $\tau$-semistable implies $k'/m'\le d/m$. Hence $l+1\le
k'/m'\le d/m$, a contradiction as $l\ge 1$ and~$d\le m$.

Thus, case (b) does not happen, and in case (a) we must have $d=m$,
and $E\ot L^n$ has a subsheaf $J\cong i_*(\cO_{\CP^1}(0))$. As
$\tau(J)=\tau(E\ot L^n)=t+1/(\be\cup\de)$, the quotient $(E\ot
L^n)/J$ is also $\tau$-semistable with $[(E\ot
L^n)/J]=(0,0,(m-1)\be,m-1)$. By induction on $m$ we now see that
$E\ot L^n$ has a filtration $0=F_0\subset F_1\subset\cdots\subset
F_m=E\ot L^n$ with $F_i/F_{i-1}\cong i_*(\cO_{\CP^1}(0))$ for
$i=1,\ldots,m$. As the normal bundle is $\cO_{\CP^1}(-1)\op
\cO_{\CP^1}(-1)$ the curve $i(\CP^1)$ is rigid in $X$, which implies
that $\Ext^1\bigl(i_*(\cO_{\CP^1}(0)), i_*(\cO_{\CP^1}(0))\bigr)=0$.
It follows by induction on $m$ that $E\ot L^n\cong
i_*(m\cO_{\CP^1}(0))$, and also that $E\ot L^n$ is rigid. Tensoring
by $L^{-n}$ shows that $E\cong i_*\bigl(m\cO_{\CP^1}(-n)\bigr)$ and
$E$ is rigid. This completes the proof.
\end{proof}

Combining Proposition \ref{dt6prop2} with Examples \ref{dt6ex1} and
\ref{dt6ex2}, and taking $E$ in Example \ref{dt6ex1} to be
$i_*\bigl(\cO_{\CP^1}(k)\bigr)$ for $k\in\Z$, we deduce:

\begin{prop} Let\/ $X$ be a Calabi--Yau $3$-fold over\/ $\C,$ and\/
$(\tau,T,\le)$ a weak stability condition on $\coh(X)$ of Gieseker
or $\mu$-stability type. Suppose $i:\CP^1\ra X$ is an embedding,
and\/ $i(\CP^1)$ has normal bundle $\cO_{\CP^1}(-1)\op
\cO_{\CP^1}(-1)$. Let\/ $\be\in H^4(X;\Z)$ be Poincar\'e dual to\/
$[i(\CP^1)]\in H_2(X;\Z)$.

If\/ $m\ge 1$ and\/ $m\mid k$ then $\tau$-semistable sheaves
supported on $i(\CP^1)$ contribute $1/m^2$ to
$\bar{DT}{}^{(0,0,m\be,k)}(\tau),$ and contribute $1$ if\/ $m=1$
and\/ $0$ if\/ $m>1$ to $\hat{DT}{}^{(0,0,m\be,k)}(\tau)$. If\/
$m\ge 1$ and\/ $m\nmid k$ there are no $\tau$-semistable sheaves in
class $(0,0,m\be,k)$ supported on $i(\CP^1),$ so no contribution to
$\bar{DT},\hat{DT}{}^{(0,0,m\be,k)}(\tau)$.
\label{dt6prop3}
\end{prop}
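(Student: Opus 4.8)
The plan is to derive Proposition \ref{dt6prop3} as a direct corollary of Proposition \ref{dt6prop2} together with the explicit computations in Examples \ref{dt6ex1} and \ref{dt6ex2}. Proposition \ref{dt6prop2} tells us exactly which $\tau$-semistable dimension $1$ sheaves are supported set-theoretically on $i(\CP^1)$: they are precisely $E_k^m:=i_*\bigl(m\cO_{\CP^1}(k)\bigr)$ for $m\ge 1$, $k\in\Z$, and moreover each such sheaf is rigid in $\coh(X)$. First I would compute the class: since $i(\CP^1)$ is Poincar\'e dual to $\be$, one has $[i_*(\cO_{\CP^1}(k))]=(0,0,\be,k+1)$ by Hirzebruch--Riemann--Roch (the degree-$6$ component depends affinely on $k$, and by Theorem \ref{dt6thm2}(c) its exact value is immaterial for the final answer), so $[E_k^m]=(0,0,m\be,m(k+1))$. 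In particular the only classes of the form $(0,0,m\be,j)$ realized by sheaves supported on $i(\CP^1)$ are those with $m\mid j$; this immediately gives the last sentence of the proposition, that there is no contribution to $\bar{DT},\hat{DT}{}^{(0,0,m\be,k)}(\tau)$ when $m\nmid k$.

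Next I would handle the case $m\mid k$. Here the relevant semistable sheaves in class $(0,0,m\be,k)$ supported on $i(\CP^1)$ are exactly the $mE$ where $E=i_*(\cO_{\CP^1}(k/m-1))$ is a rigid $\tau$-stable sheaf in class $(0,0,\be,k/m)$ (stability of $E$ is clear since $\cO_{\CP^1}(j)$ has no proper subsheaves of the same slope; rigidity is Proposition \ref{dt6prop2}). This is precisely the setup of Example \ref{dt6ex1}: a rigid $\tau$-stable sheaf $E$ with $\Ext^1(E,E)=0$, so that for each $m\ge1$ the sheaf $mE$ is the only $\tau$-semistable sheaf of its class supported on $i(\CP^1)$, and $\pi^{-1}([mE])$ is a union of connected components of $\M_\stp^{m\al,n}(\tau')$. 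Example \ref{dt6ex2} then shows that the contribution of $mE$ to $\bar{DT}{}^{(0,0,m\be,k)}(\tau)$ is $1/m^2$, by the same consistency-of-\eqref{dt5eq17} argument used there (taking the linear term in $P_\al(n)$). Finally, feeding these values into the M\"obius-inversion formula \eqref{dt6eq15} defining $\hat{DT}$, or equivalently observing as in \S\ref{dt62} that the $\hat{DT}$ contribution from a rigid stable sheaf $E$ and its multiples $mE$ is $1$ for $m=1$ and $0$ for $m>1$, yields the stated values of $\hat{DT}{}^{(0,0,m\be,k)}(\tau)$.

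One subtlety to address carefully: Examples \ref{dt6ex1}--\ref{dt6ex2} assume $mE$ is the \emph{only} $\tau$-semistable sheaf of class $m\al$ up to isomorphism, whereas here there may be other $\tau$-semistable sheaves in class $(0,0,m\be,k)$ not supported on $i(\CP^1)$. The resolution is that the invariants $\bar{DT}{}^\al(\tau)$ are defined via $\ti\Psi(\bep^\al(\tau))$, and $\bep^\al(\tau)$ is supported on $\fM_\rss^\al(\tau)$, which near $[mE]$ decomposes as a disjoint union of open-and-closed substacks because $mE$ (being rigid) and the sheaves supported on $i(\CP^1)$ form a connected component; so the ``contribution from sheaves supported on $i(\CP^1)$'' is well-defined and computed exactly as in Example \ref{dt6ex2} restricted to that component. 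I expect this bookkeeping --- carefully isolating the component of $\M_\rss$ consisting of sheaves set-theoretically supported on $i(\CP^1)$, and checking that the local computations of Examples \ref{dt6ex1}--\ref{dt6ex2} apply verbatim to that component --- to be the main (though modest) obstacle; everything else is a direct substitution. The rigidity statement in Proposition \ref{dt6prop2} is what makes this component open and closed, hence what makes the argument go through.
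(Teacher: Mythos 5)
Your proposal is correct and follows essentially the same route as the paper, which proves the proposition precisely by combining Proposition \ref{dt6prop2} with Examples \ref{dt6ex1}--\ref{dt6ex2}, taking $E=i_*(\cO_{\CP^1}(k))$. Your extra remark on why the ``contribution'' from sheaves supported on $i(\CP^1)$ is well-defined (rigidity making that locus open and closed) is exactly the point the paper leaves implicit via its locality convention, so no gap remains.
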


For higher genus curves the contributions are zero. Note that we do
not need $i(\Si)$ to be rigid, the contributions are local via
weighted Euler characteristics.

\begin{prop} Let\/ $X$ be a Calabi--Yau $3$-fold over\/ $\C,$ and\/
$(\tau,T,\le)$ a weak stability condition on $\coh(X)$ of Gieseker
or $\mu$-stability type. Suppose $\Si$ is a connected, nonsingular
Riemann surface of genus $g\ge 1$ and\/ $i:\Si\ra X$ is an
embedding. Let\/ $\be\in H^4(X;\Z)$ be Poincar\'e dual to\/
$[i(\Si)]\!\in\! H_2(X;\Z)$. Then $\tau$-semistable dimension $1$
sheaves supported on $i(\Si)$ contribute $0$ to both\/
$\bar{DT}{}^{(0,0,m\be,k)}(\tau)$ and\/
$\hat{DT}{}^{(0,0,m\be,k)}(\tau)$ for all $m\ge 1$ and\/~$k\in\Z$.
\label{dt6prop4}
\end{prop}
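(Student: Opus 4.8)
\textbf{Proof proposal for Proposition \ref{dt6prop4}.} The plan is to exploit the fact that a nonsingular curve $\Si$ of genus $g\ge 1$ embedded in a Calabi--Yau $3$-fold $X$ carries a large family of rank-one sheaves (line bundles) and, more importantly, that these deform in a $g$-dimensional family corresponding to $H^1(\cO_\Si)\cong\C^g$. This should force the relevant moduli spaces and stack functions to carry a free action (at least infinitesimally) of something with Euler characteristic zero, namely the Jacobian $\mathrm{Jac}(\Si)$, an abelian variety of dimension $g\ge 1$. Since $\chi(\mathrm{Jac}(\Si))=0$, weighted Euler characteristics of such moduli spaces vanish, giving both $\bar{DT}{}^{(0,0,m\be,k)}(\tau)=0$ and hence $\hat{DT}{}^{(0,0,m\be,k)}(\tau)=0$ by the M\"obius inversion \eq{dt6eq15}.

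First I would reduce to understanding the moduli stack $\fM_\rss^{(0,0,m\be,k)}(\tau)_\Si$ of $\tau$-semistable dimension $1$ sheaves supported set-theoretically on $i(\Si)$. As in the proof of Proposition \ref{dt6prop2}, I would use that for an irreducible curve $\Si$ the moduli stacks of semistable sheaves supported on $\Si$ are independent of the choice of stability condition $(\tau,T,\le)$, so I may pick any convenient $\tau$. The key structural input is the action of $\mathrm{Jac}(\Si)=\mathrm{Pic}^0(\Si)$ on $\fM_\rss^{(0,0,m\be,k)}(\tau)_\Si$: given a line bundle $L$ on $\Si$ with $c_1(L)=0$ and a sheaf $E$ supported on $i(\Si)$, one forms $E\ot i_*(L)$ (pulling back $L$ along a retraction, or working with the natural $\cO_{i(\Si)}$-module structure), which is again $\tau$-semistable of the same class since $L$ is a degree-zero line bundle on $\Si$. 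I would check this action descends to the stack functions $\bep^{(0,0,m\be,k)}(\tau)$ restricted to the locus supported on $i(\Si)$, using functoriality of the constructions in \S\ref{dt31}--\S\ref{dt32} under the induced $1$-isomorphisms of $\fM$. Then applying $\ti\Psi$ and using that $\ti\Psi$ factors through Euler characteristics of constructible sets weighted by $\nu_\fM$, together with the fact that the Behrend function is constant along the $\mathrm{Jac}(\Si)$-orbits (since $E\mapsto E\ot i_*(L)$ is an isomorphism of the relevant local analytic models, hence preserves $\nu_\fM$ by Proposition \ref{dt4prop1}), the weighted Euler characteristic is a $\chi$-fibration over the quotient with fibre $\mathrm{Jac}(\Si)$, so it vanishes as $\chi(\mathrm{Jac}(\Si))=\chi(T^{2g})=0$.

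The main obstacle I anticipate is making precise the claim that the $\mathrm{Jac}(\Si)$-action is \emph{free enough} that every orbit contributes $0$ to the weighted Euler characteristic. The action on the \emph{set} of sheaves need not be free: for instance if $E$ is a large direct sum or has a complicated filtration, the stabilizer of $E$ in $\mathrm{Jac}(\Si)$ could be positive-dimensional. The correct statement is more subtle --- one wants that the action, combined with the stacky structure, has orbits whose weighted Euler characteristic is zero, which requires a careful stratification of $\fM_\rss^{(0,0,m\be,k)}(\tau)_\Si$ by stabilizer type and an inductive/Harder--Narasimhan-type argument on the structure of sheaves supported on $i(\Si)$ as iterated extensions of pushforwards of line bundles and higher-conormal-bundle pieces, reducing to the case of a pure sheaf whose generic behaviour along $i(\Si)$ is governed by a stable bundle on $\Si$, where the Jacobian action is essentially free on the moduli of such bundles. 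An alternative, cleaner route which I would pursue in parallel: use the translation symmetry $E\mapsto E\ot i_*(L)$ only on the \emph{level of the invariant}, showing $\bar{DT}{}^{(0,0,m\be,k)}(\tau)$ equals itself translated, which is automatic and gives nothing, so instead combine with Theorem \ref{dt6thm2}(c) --- but $\be\cup H^2(X;\Z)$ is a single copy of $\Z$ (or finite index), whereas tensoring by degree-zero line bundles on $\Si$ moves us within the \emph{same} class, so this does not directly help; the genuine content really is the vanishing of $\chi(\mathrm{Jac}(\Si))$, and the technical heart is the stabilizer stratification. I would budget most of the effort for that step, likely citing a boundedness result to ensure only finitely many strata occur.
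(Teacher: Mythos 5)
Your proposal is essentially the paper's own proof: one tensors by the real torus $T^{2g}$ of degree-zero line bundles on $\Si$, extended to a family of line bundles $\ti L_t$ on a neighbourhood of $i(\Si)$ in $X$ (note it must be written as $E\ot\ti L_t$ for such an extension, not $E\ot i_*(L)$, since the sheaves may be supported on infinitesimal thickenings of $i(\Si)$); the action extends to an open neighbourhood of the locus in $\fM_\rss^{(0,0,m\be,k)}(\tau)$, so both $\nu_\fM$ and the constructible function built from $\bep^{(0,0,m\be,k)}(\tau)$ are $T^{2g}$-invariant, and $\chi(T^{2g})=0$ kills the weighted Euler characteristic orbit by orbit.

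The obstacle you flag as the technical heart, however, dissolves: no stratification by stabilizer type and no Harder--Narasimhan induction is needed, because for $m\ge 1$ the stabilizer of any point is automatically \emph{finite}, which is all one needs for every orbit to be $T^{2g}/K\cong T^{2g}$ with $\chi=0$. To see finiteness, let $I\subset\cO_X$ be the (reduced) ideal sheaf of $i(\Si)$ and filter $E$ by the canonical filtration $E\supset IE\supset I^2E\supset\cdots\supset I^NE=0$, whose quotients are $i_*(G_j)$ for coherent sheaves $G_j$ on $\Si$. Any isomorphism $E\ot\ti L_t\cong E$ preserves this filtration, since $I^j(E\ot\ti L_t)=(I^jE)\ot\ti L_t$, and hence gives $G_j\ot L_t\cong G_j$ for every $j$. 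As $[E]=(0,0,m\be,k)$ with $m\ge 1$, the ranks of the $G_j$ sum to $m$, so some $G_j$ has rank $r\ge 1$; passing to $G_j$ modulo torsion and taking determinants gives $L_t^{\ot r}\cong\cO_\Si$, so $L_t$ lies in the torsion subgroup of the Jacobian, and the stabilizer is finite. This is exactly the one-line assertion the paper makes ("for $m\ge 1$, the stabilizer groups of this action are finite"), and with it your argument closes with no further machinery.
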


\begin{proof} The family of line bundles $L_t\ra\Si$ with $c_1(L)=0$
form a group $T^{2g}$ under $\ot$. As $i$ is an embedding, locally
near $i(\Si)$ in $X$ we can find a family of line bundles $\ti L_t$
for $t\in T^{2g}$ which form a group under $\ot$, with $i^*(\ti
L_t)=L_t$. Write $\fM_\rss^{(0,0,m\be,k)}(\tau)_{i(\Si)}$ for the
substack of $\fM_\rss^{(0,0,m\be,k)}(\tau)$ supported on $i(\Si)$.
Then $t:E\mapsto E\ot\ti L_t$ defines an action of $T^{2g}$ on
$\fM_\rss^{(0,0,m\be,k)}(\tau)_{i(\Si)}$. For $m\ge 1$, the
stabilizer groups of this action are finite. So
$\fM_\rss^{(0,0,m\be,k)}(\tau)_{i(\Si)}(\C)$ is fibred by orbits of
$T^{2g}$ isomorphic to $T^{2g}/K\cong T^{2g}$ for $K$ finite.

As the $T^{2g}$-action extends to an open neighbourhood of
$\fM_\rss^{(0,0,m\be,k)}(\tau)_{i(\Si)}$ in
$\fM_\rss^{(0,0,m\be,k)}(\tau)$, the restriction of the Behrend
function of $\fM_\rss^{(0,0,m\be,k)} (\tau)$ to
$\fM_\rss^{(0,0,m\be,k)}(\tau)_{i(\Si)}$ is $T^{2g}$-invariant. Now
the contribution to $\bar{DT}{}^{(0,0,m\be,k)}(\tau)$ from sheaves
supported on $i(\Si)$ is the Euler characteristic of
$\fM_\rss^{(0,0,m\be,k)}(\tau)_{i(\Si)}$ weighted by a constructible
function\index{constructible function} built from
$\bep^{(0,0,m\be,k)}(\tau)$ and the Behrend function
$\nu_{\fM_\rss^{(0,0,m\be,k)}(\tau)}$, as in \S\ref{dt53}. This
constructible function is $T^{2g}$-invariant, as
$\bep^{(0,0,m\be,k)}(\tau),\nu_{\smash{\fM_\rss^{(0,0,m\be,k)}(\tau)}}$
are. But $\chi(T^{2g})=0$ as $g\ge 1$, so each $T^{2g}$-orbit
$T^{2g}/K\cong T^{2g}$ contributes zero to the weighted Euler
characteristic. Thus sheaves supported on $i(\Si)$ contribute 0 to
$\bar{DT}{}^{(0,0,m\be,k)}(\tau)$ for all $m,k$, and so contribute 0
to $\hat{DT}{}^{(0,0,m\be,k)}(\tau)$ by~\eq{dt6eq15}.
\end{proof}

Let $X$ be a Calabi--Yau 3-fold over $\C$, and for $\ga\in
H_2(X;\Z)$ write $GW_0(\ga)\in\Q$ for the genus zero Gromov--Witten
invariants of $X$. Then the {\it genus zero Gopakumar--Vafa
invariants\/} $GV_0(\ga)$ may be defined by the
formula\nomenclature[GVg(a)]{$GV_g(\al)$}{Gopakumar--Vafa
invariants}\nomenclature[GWg(a)]{$GW_{g,m}(\al)$}{Gromov--Witten
invariants}\index{Gopakumar--Vafa invariants}\index{Gromov--Witten
invariants}
\begin{equation*}
GW_0(\ga)=\sum_{m\mid\ga}\frac{1}{m^3}\,GV_0(\ga/m).
\end{equation*}
A priori we have $GV_0(\ga)\in\Q$, but Gopakumar and Vafa
\cite{GoVa} conjecture that the $GV_0(\ga)$ are integers, and count
something meaningful in String Theory.\index{String Theory}

Katz \cite{Katz} considers the moduli spaces
$\M_\rss^{(0,0,\be,1)}(\tau)$ when $k=1$, where $\be\in H^4(X;\Z)$
is Poincar\'e dual to $\ga$. Then
$\M_\rss^{(0,0,\be,1)}(\tau)=\M_\st^{(0,0,\be,1)}(\tau)$ as
$(\be,1)$ is primitive, so $\bar{DT}{}^{(0,0,\be,1)}(\tau)=
\hat{DT}{}^{(0,0,\be,1)}(\tau)=DT^{(0,0,\be,1)}(\tau)$ by
Propositions \ref{dt5prop2} and \ref{dt6prop1}. Katz
\cite[Conj.~2.3]{Katz} conjectures that
$GV_0(\ga)=DT^{(0,0,\be,1)}(\tau)$; this had also earlier been
conjectured by Hosono, Saito and Takahashi \cite[Conj.~3.2]{HST}. We
can now extend their conjecture to all~$k\in\Z$.

\begin{conj} Let\/ $X$ be a Calabi--Yau $3$-fold over\/ $\C,$ and\/
$(\tau,T,\le)$ a weak stability condition on $\coh(X)$ of Gieseker
or $\mu$-stability type. Then for $\ga\in H_2(X;\Z)$ with\/ $\be\in
H^4(X;\Z)$ Poincar\'e dual to $\ga$ and all\/ $k\in\Z$ we have
$\hat{DT}{}^{(0,0,\be,k)}(\tau)=GV_0(\ga)$. In particular,
$\hat{DT}{}^{(0,0,\be,k)}(\tau)$ is independent of\/~$k,\tau$.
\label{dt6conj3}
\end{conj}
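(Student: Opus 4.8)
The plan is to prove Conjecture \ref{dt6conj3} by reducing it, via the stable pair invariants of \S\ref{dt54}, to the genus-zero Gromov--Witten/stable-pairs correspondence together with the Katz--Hosono--Saito--Takahashi identity for primitive classes. The $\tau$-independence is already Theorem \ref{dt6thm2}(a), so first I would fix once and for all a generic Gieseker stability condition $(\tau,G,\le)$ (w.r.t.\ a sufficiently general $\cO_X(1)$), so that by Theorem \ref{dt6thm2}(b) all the $\hat{DT}{}^{(0,0,\be,k)}(\tau)$ in question lie in $\Z$, and the only remaining content is the identification $\hat{DT}{}^{(0,0,\be,k)}(\tau)=GV_0(\ga)$ together with the $k$-independence.

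Next I would record that Theorem \ref{dt6thm2}(c) already settles the $k$-dependence whenever $\be\cup H^2(X;\Z)=\Z$ --- which holds for many $X,\ga$ --- since then only the value $k=1$ survives and the conjecture is \emph{exactly} the Katz \cite[Conj.~2.3]{Katz}, Hosono--Saito--Takahashi \cite[Conj.~3.2]{HST} statement $GV_0(\ga)=DT^{(0,0,\be,1)}(\tau)$, using $\bar{DT}{}^{(0,0,\be,1)}(\tau)=\hat{DT}{}^{(0,0,\be,1)}(\tau)=DT^{(0,0,\be,1)}(\tau)$ (Propositions \ref{dt5prop2} and \ref{dt6prop1}, as $(\be,1)$ is primitive). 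So the substantive new work is (i) handling imprimitive classes, i.e.\ the full $k$-independence, and (ii) identifying the common value with $GV_0(\ga)$, and I would attack both through the pair invariants.

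For (i) and (ii) together: since $\bar\chi\equiv 0$ on $K_0(\coh_{\le 1}(X))$, Theorem \ref{dt5thm10} collapses to the product form of Proposition \ref{dt5prop3}, so the generating series assembled from the $PI^{(0,0,\be,k),n}(\tau')$ along a ray in $K^\num(\coh(X))$ is the exponential of a linear combination of the $\bar{DT}{}^{\al}(\tau)$, hence --- after M\"obius inversion \eq{dt6eq14} --- of the $\hat{DT}{}^{\al}(\tau)$, with explicit coefficients $\bar\chi([\cO_X(-n)],\al)=P_\al(n)$ read off from \eq{dt6eq22}. On the other side, for $n\gg 0$ the moduli space $\M_\stp^{(0,0,\be,k),n}(\tau')$ of Theorem \ref{dt5thm7} is the Pandharipande--Thomas \cite{PaTh} moduli space of stable pairs $(F,s)$ with support class $\ga$ and $\chi(F)$ determined by $(k,n)$ (stable pairs in dimension $1$ for $\tau'$ are precisely PT stable pairs), and one must check that the symmetric obstruction theory of Theorem \ref{dt5thm8} agrees with the PT one, so that $PI^{(0,0,\be,k),n}(\tau')=P_{\chi(F),\ga}$. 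Twisting a dimension-$1$ sheaf by $\cO_X(n)$ shifts $k\mapsto k+n\,\be\!\cup\!\de$, so varying $n$ sweeps a full coset of $k$; I would then match, term by term, the repackaging above against the rational PT series $\mathsf{Z}_{\rm PT}(X;q)_\ga$ and the genus-zero GW/PT correspondence, which expresses the $GV_0(\ga/m)$ in terms of the $P_{\chi,\ga}$. Because the PT side depends on $\ga$ but not on any $k$ --- $k$ enters only as the summation variable $\chi(F)$ --- this forces $\hat{DT}{}^{(0,0,\be,k)}(\tau)$ to be $k$-independent and equal to $GV_0(\ga)$, consistent with the local contributions in Propositions \ref{dt6prop3} and \ref{dt6prop4}.

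The main obstacle is step (ii): one is really importing the genus-zero Gromov--Witten/stable-pairs correspondence and the rationality of the PT series, which are themselves deep (and only conjectural in full generality), so the argument would be conditional in the same way Katz--HST is. A secondary but genuine technical point is the mismatch flagged after Proposition \ref{dt5prop3}: \eq{dt5eq20} holds only for $n\gg 0$ \emph{depending on} $\al$, whereas the PT series uses one fixed large $n$ and all $k$; reconciling these --- by using boundedness to show all the relevant $\tau$-semistable dimension-$1$ sheaves are $n$-regular for a single $n$ once $\ga$ is fixed --- must be done carefully, as must the precise comparison of obstruction theories and Behrend weights. A cleaner but weaker fallback, if one wishes to avoid the GW/PT input, is to prove the $k$-independence directly --- e.g.\ by exhibiting a Fourier--Mukai or spherical-twist autoequivalence of $D^b(\coh(X))$ that preserves $\coh_{\le 1}(X)$ and permutes the classes $(0,0,\be,k)$ --- thereby reducing the entire conjecture to the single primitive case $k=1$, i.e.\ to Katz--HST.
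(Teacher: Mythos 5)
The statement you set out to prove is Conjecture \ref{dt6conj3}: the paper offers no proof of it at all, only supporting evidence (the $\tau$-independence from Theorem \ref{dt6thm2}(a), the integrality suggested by Theorem \ref{dt6thm2}(b) and Conjecture \ref{dt6conj1}, the periodicity in $k$ from Theorem \ref{dt6thm2}(c), the local contributions computed in Propositions \ref{dt6prop3} and \ref{dt6prop4}, and the Katz/Hosono--Saito--Takahashi conjecture for the primitive case $k=1$). Your proposal is, by your own admission, conditional on the genus-zero GW/stable-pairs correspondence, rationality of the PT series, and Katz--HST, so it is not a proof either; a clean conditional reduction might still be of value, but the reduction you sketch contains a genuine error at its central step.

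The gap is your identification of $\M_\stp^{(0,0,\be,k),n}(\tau')$ with a Pandharipande--Thomas moduli space. This is false: a $\tau'$-stable pair in the sense of \S\ref{dt54} requires the underlying sheaf $E$ to be Gieseker semistable, and imposes $\tau([E'])<\tau([E])$ only on subsheaves through which $s$ factors, whereas a PT pair $s:\cO_X\ra F$ requires $F$ pure of dimension one with zero-dimensional cokernel and imposes no semistability on $F$; twisting by $\cO_X(n)$ does not convert one condition into the other. The paper flags exactly this point in Remark \ref{dt6rem1}, where it says the $PI^{\al,n}(\tau')$ count pairs ``like'' PT pairs ``but with a different stability condition'', and that relating the two theories requires a separate wall-crossing argument (Toda, Stoppa--Thomas, Bridgeland), here needed moreover with Behrend-function weights --- you would have to supply that argument, and it is not routine. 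Relatedly, the PT generating series encodes Gopakumar--Vafa invariants of \emph{all} genera, whereas (as Remark \ref{dt7rem4} observes) the $PI$ invariants of dimension-one classes see only genus zero, so a term-by-term match of your exponential in the $\hat{DT}{}^{(0,0,\be,k)}(\tau)$ against the PT series cannot go through as stated: the higher-genus contributions on the PT side have no counterpart on yours. Finally, your fallback of an autoequivalence permuting the classes $(0,0,\be,k)$ is not available in general: tensoring by line bundles only reproduces the periodicity of Theorem \ref{dt6thm2}(c), with step lengths in $\be\cup H^2(X;\Z)$, and spherical twists do not preserve $\coh_{\le 1}(X)$, so the reduction of the full conjecture to the case $k=1$ is essentially as open as the conjecture itself.
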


For evidence for this, see \cite{Katz} for the case $k=1$, and note
also that Theorem \ref{dt6thm2}(a) shows $\hat{DT}{}^{(0,0,\be,k)}
(\tau)$ is independent of $\tau$, Theorem \ref{dt6thm2}(b) suggests
$\hat{DT}{}^{(0,0,\be,k)}(\tau)\in\Z$, and Propositions
\ref{dt6prop3} and \ref{dt6prop4} show that the contributions to
$\hat{DT}{}^{(0,0,\be,k)}(\tau)$ from rigid rational curves and
embedded higher genus curves are as expected, and independent of
$k$. Also Theorem \ref{dt6thm2}(c) implies that
$\hat{DT}{}^{(0,0,\be,k)}(\tau)$ is periodic in $k$, which supports
the idea that it is independent of $k$. The first author would like
to thank Sheldon Katz and Davesh Maulik for conversations about
Conjecture~\ref{dt6conj3}.

\begin{rem} There are other ways to count curves using
Donaldson--Thomas theory than counting dimension 1 sheaves. The
(ordinary) Donaldson--Thomas invariants $DT^{(1,0,\be,k)}(\tau)$ for
$\be\in H^4(X;\Z)$ and $k\in\Z$ `count' ideal sheaves of subschemes
$S$ of $X$ with $\dim S\le 1$, and the celebrated {\it MNOP
Conjecture\/}\index{MNOP Conjecture} \cite{MNOP1,MNOP2} expresses
$DT^{(1,0,\be,k)}(\tau)$ in terms of the Gromov--Witten invariants
$GW_g(\ga)$ of $X$ for all genera $g\ge 0$, or equivalently in terms
of the Gopakumar--Vafa invariants $GV_g(\ga)$ of $X$ for all $g\ge
0$. {\it Pandharipande--Thomas
invariants\/}\index{Pandharipande--Thomas invariants}
$PT_{n,\be}$\nomenclature[PTn\beta]{$PT_{n,\be}$}{Pandharipande--Thomas
invariants} in \cite{PaTh} count pairs $s:\cO_X\ra E$ for $E$ a pure
dimension 1 sheaf, like our $PI^{\al,n}(\tau)$ but with a different
stability condition, and these also have conjectural equivalences
\cite[\S 3]{PaTh} with $DT^{(1,0,\be,k)}(\tau)$, $GW_g(\ga)$
and~$GV_g(\ga)$.

We will not discuss these further in this book. However, we note
that the results of this book should lead to advances in the theory
of these curve counting invariants, and the relations between them.
In particular, our wall-crossing formula Theorem \ref{dt5thm6}
should be used to prove the correspondence between Donaldson--Thomas
invariants $DT^{(1,0,\be,k)}(\tau)$ and Pandharipande--Thomas
invariants $PT_{n,\be}$. Recent papers by Toda \cite{Toda} and
Stoppa and Thomas \cite {StTh} prove a version of this for
invariants without Behrend functions as weights, and using our
methods to include Behrend functions should yield the proof.
Bridgeland \cite{Brid2} also proves the correspondence assuming
conjectures in~\cite{KoSo1}.
\label{dt6rem1}
\end{rem}

\subsection{Why it all has to be so complicated: an example}
\label{dt65}\index{Donaldson--Thomas invariants!integrality properties}

Our definitions of $\bar{DT}{}^\al(\tau)$ and $\hat{DT}{}^\al(\tau)$
are very complicated. They count sheaves using two kinds of weights:
firstly, we define $\bep^\al(\tau)$ from the $\bdss^\be(\tau)$ by
\eq{dt3eq4}, with rational weights $(-1)^{n-1}/n$, and then we apply
the Lie algebra morphism $\ti\Psi$ of \S\ref{dt53}, which takes
Euler characteristics weighted by the $\Z$-valued Behrend function
$\nu_\fM$. Some readers may have wondered whether all this
complexity is really necessary. For instance, following \eq{dt4eq16}
when $\M_\rss^\al(\tau)=\M_\st^\al(\tau)$, we could simply have
defined $DT^\al(\tau)$ for all $\al\in C(\coh(X))$ by
\e
DT^\al(\tau)=\chi\bigl(\M_\st^\al(\tau),\nu_{\M_\st^\al(\tau)}\bigr).
\label{dt6eq23}
\e

We will now show, by carefully studying an example of dimension 1
sheaves supported on two rigid $\CP^1$'s in $X$ which cross under
deformation, that to get invariants unchanged under deformations of
$X$, the extra layer of complexity with the $\bep^\al(\tau)$ and
rational weights really is necessary. Our example will show that
\eq{dt6eq23} is {\it not\/} deformation-invariant when
$\M_\rss^\al(\tau)\ne\M_\st^\al(\tau)$, and the same holds if we
replace $\M_\st^\al(\tau)$ by $\M_\rss^\al(\tau)$ or
$\fM_\rss^\al(\tau)$; also, we will see that to get a
deformation-invariant answer, it can be necessary to count strictly
$\tau$-semistable sheaves with {\it rational, non-integral\/}
weights, so we do need the~$\bep^\al(\tau)$.

For $\ep>0$ write $\De_\ep=\bigl\{t\in\C:\md{t}<\ep \bigr\}$. Let
$X_t$ for $t\in\De_\ep$ be a smooth family of Calabi--Yau 3-folds
over $\C$, equipped with a family of very ample line bundles
$\cO_{X_t}(1)$. Identify $H^*(X_t;\Q)\cong H^*(X_0;\Q)$,
$H_*(X_t;\Z)\cong H_*(X_0;\Z)$ for all $t$. Suppose $i_t:\CP^1\ra
X_t$ and $j_t:\CP^1\ra X_t$ are two smooth families of embeddings
for $t\in\De_\ep$, and $i_t(\CP^1),j_t(\CP^1)$ have normal bundle
$\cO_{\CP^1}(-1)\op \cO_{\CP^1}(-1)$ for all $t$. Suppose that
$i_t(\CP^1)\cap j_t(\CP^1)=\es$ for $t\ne 0$, and that
$i_0(\CP^1),j_0(\CP^1)$ intersect in a single point $x\in X_0$, with
$T_x\bigl(i_0(\CP^1)\bigr)\cap T_x\bigl(j_0(\CP^1)\bigr)=0$
in~$T_xX_0$.

Now $i_0(\CP^1)\cup j_0(\CP^1)$ is a nodal $\CP^1$ in $X_0$, so we
can regard it as the image of a genus 0 stable map $k_0:\Si_0\ra
X_0$ from a prestable curve $\Si_0=\CP^1\cup_{x}\CP^1$, in the sense
of Gromov--Witten theory. As we have prescribed the normal bundles
and intersection of $i_0(\CP^1),j_0(\CP^1)$, we can show that
$k_0:\Si_0\ra X_0$ is a {\it rigid\/} stable map, and so it persists
as a stable map under small deformations of $X_0$. Thus, making
$\ep>0$ smaller if necessary, for $t\in\De_\ep$ there is a
continuous family of genus 0 stable maps $k_t:\Si_t\ra X_t$. Now
$k_t(\Si_t)$ cannot be reducible for small $t\ne 0$, since the
irreducible components would have to be $i_t(\CP^1),j_t(\CP^1)$, but
these do not intersect. So, making $\ep>0$ smaller if necessary, we
can suppose $\Si_t\cong\CP^1$, and $k_t$ is an embedding, and
$k_t(\CP^1)$ has normal bundle $\cO_{\CP^1}(-1)\op \cO_{\CP^1}(-1)$,
for all~$0\ne t\in\De_\ep$.

Let $\be,\ga\in H^4(X_0;\Z)$ be Poincar\'e dual to
$[i_0(\CP^1)],[j_0(\CP^1)]$ in $H_2(X_0;\Z)$. Suppose $\be,\ga$ are
linearly independent over $\Z$. Let $\de=c_1(\cO_{X_0}(1))$ in
$H^2(X_0;\Z)$. Set $c_\be=\be\cup\de$ and $c_\ga=\ga\cup\de$ and
$c_{\be+\ga}=c_\be+c_\ga$, so that $c_\be,c_\ga,c_{\be+\ga}\in\N$.
Write classes $\al\in K^\num(\coh(X_0))$ as
$(\al_0,\al_2,\al_4,\al_6)$ as in \S\ref{dt64}. We will consider
$\tau$-semistable sheaves $E$ on $X_t$ in classes $(0,0,\be,k)$,
$(0,0,\ga,l)$ and $(0,0,\be+\ga,m)$ for $k,l,m\in\Z$ and
$t\in\De_\ep$. Suppose for simplicity that all such sheaves are
supported on $i_t(\CP^1)\cup j_t(\CP^1)\cup k_t(\Si_t)$;
alternatively, we can consider the following as computing the
contributions to $\bar{DT}{}^{(0,0,\be,k)}(\tau)_t,\ldots,
\bar{DT}{}^{(0,0,\be+\ga,m)}(\tau)_t$ from sheaves supported
on~$i_t(\CP^1)\cup j_t(\CP^1)\cup k_t(\Si_t)$.

Here is a way to model all this explicitly in a family of compact
Calabi--Yau 3-folds. Let $\CP^2\times\CP^2$ have homogeneous
coordinates $\bigl([x_0,x_1,x_2],[y_0,y_1,y_2]\bigr)$, write $\bs
x=(x_0,x_1,x_2)$, $\bs y=(y_0,y_1,y_2)$, and let $X_t$ be the
bicubic $F_t(\bs x,\bs y)=0$ in $\CP^2\times\CP^2$, with very ample
line bundle $\cO_{X_t}(1)=\cO_{\CP^1\times\CP^1}(1,1)\vert_{X_t}$,
where
\begin{align*}
F_t(\bs x,\bs y)=x_0^2&x_1y_0^2y_1+x_0^3y_0^2y_2+ x_0^2x_2y_0^3
-tx_0^3y_0^3\\
&+x_1x_2P_{1,3}(\bs x,\bs y)+x_2y_2P'_{2,2}(\bs x,\bs
y)+y_1y_2P''_{3,1}(\bs x,\bs y),
\end{align*}
with $P_{1,3},P_{2,2}',P''_{3,1}$ homogeneous polynomials of the
given bidegrees.

Define $i_t,j_t,k_t:\CP^1\ra X_t$ by
$i_t:[u,v]\mapsto\bigl([u,v,0],[1,0,t]\bigr)$ and
$j_t:[u,v]\mapsto\bigl([1,0,t],[u,v,0]\bigr)$ for all $t$, and
$k_t:[u,v]\mapsto\bigl([u,v,0],[v,tu,0]\bigr)$ for $t\ne 0$. Then
the conditions above hold for $P_{1,3},P_{2,2}',P''_{3,1}$ generic
and $\ep>0$ small. Consider first the moduli spaces
$\fM_\rss^{(0,0,\be,k)}(\tau)_t, \fM_\rss^{(0,0,\ga,l)}(\tau)_t$
over $X_t$. These are $\tau$-semistable sheaves supported on
$i_t(\CP^1),j_t(\CP^1)$, so by Proposition \ref{dt6prop2} we see
that the only $\tau$-semistable sheaves in classes $(0,0,\be,k)$ and
$(0,0,\ga,l)$ are $E_t(k)=(i_t)_*(\cO_{\CP^1}(k-1))$ and
$F_t(l)=(j_t)_*(\cO_{\CP^1}(l-1))$ respectively, and both are
$\tau$-stable and rigid.

The Hilbert polynomials of $E_t(k)$ and $F_t(l)$ are
$P_{(0,0,\be,k)}(t)=c_\be\,t+k$ and $P_{(0,0,\ga,l)}(t)=c_\ga\,t+l$
by \eq{dt6eq22}, so we have
\e
\tau\bigl(\bigl[E_t(k)\bigr]\bigr)=t+k/c_\be,\;\>
\tau\bigl(\bigl[F_t(l)\bigr]\bigr)=t+l/c_\ga.
\label{dt6eq24}
\e
Therefore the sheaf $E_t(k)\op F_t(l)$ in class $(0,0,\be+\ga,k+l)$
is $\tau$-semistable if and only if~$k\,c_\ga=l\,c_\be$.

We can now describe $\fM_\rss^{(0,0,\be+\ga,m)}(\tau)_t$ for $t\ne
0$. For all $m\in\Z$, we have a rigid $\tau$-stable sheaf
$G_t(m)=(k_t)_*(\cO_{\CP^1}(m-1))$ in class $(0,0,\be+\ga,m)$
supported on $k_t(\Si_t)$, which contributes $[\Spec\C/\bG_m]$ to
$\fM_\rss^{(0,0,\be+\ga,m)}(\tau)_t$. In addition, if there exist
$k,l\in\Z$ with $k+l=m$ and $k\,c_\ga=l\,c_\be$, then $E_t(k)\op
F_t(l)$ is a rigid, strictly $\tau$-semistable sheaf in class
$(0,0,\be+\ga,m)$ supported on $i_t(\CP^1)\amalg j_t(\CP^1)$, which
contributes $[\Spec\C/\bG_m^2]$ to
$\fM_\rss^{(0,0,\be+\ga,m)}(\tau)_t$. We have
$k=m\,c_\be/c_{\be+\ga}$, $l=m\,c_\ga/c_{\be+\ga}$, which lie in
$\Z$ if and only if $c_{\be+\ga}\mid m\,c_\be$. These are all the
$\tau$-semistable sheaves in class $(0,0,\be+\ga,m)$. Thus we see
that
\begin{gather}
\begin{gathered}
\text{$t\ne 0$, $c_{\be+\ga}\nmid m\,c_\be$ imply}\;\>
\fM_\rss^{(0,0,\be+\ga,m)}(\tau)_t\cong [\Spec\C/\bG_m]\\
\text{and}\quad
\M_\rss^{(0,0,\be+\ga,m)}(\tau)_t=\M_\st^{(0,0,\be+\ga,m)}(\tau)_t
\cong\Spec\C,
\end{gathered}
\label{dt6eq25}\\
\begin{gathered}
\text{$t\!\ne\!0$, $c_{\be+\ga}\mid m\,c_\be$ imply}\;
\fM_\rss^{(0,0,\be+\ga,m)}(\tau)_t\!\cong\![\Spec\C/\bG_m]\!\amalg\!
[\Spec\C/\bG_m^2],\\
 \M_\rss^{(0,0,\be+\ga,m)}(\tau)_t\cong
\Spec\C\amalg\Spec\C, \;\>\text{and}\;\>
\M_\st^{(0,0,\be+\ga,m)}(\tau)_t\cong\Spec\C.
\end{gathered}
\label{dt6eq26}
\end{gather}

Now consider $\fM_\rss^{(0,0,\be+\ga,m)}(\tau)_0$ when $t=0$.
Writing $\cO_x$ for the structure sheaf of intersection point of
$i_0(\CP^1)$ and $j_0(\CP^1)$, we have exact sequences
\e
\begin{gathered}
\xymatrix@C=20pt@R=5pt{0 \ar[r] & E_0(k) \ar[r] & E_0(k+1)
\ar[r]^(0.6){\pi_x} & \cO_x \ar[r] &
0,\\
0 \ar[r] & F_0(l) \ar[r] & F_0(l+1) \ar[r]^(0.6){\pi_x} & \cO_x
\ar[r] & 0. }
\end{gathered}
\label{dt6eq27}
\e
Define $G_0(k,l)$ to be the kernel of the morphism in $\coh(X_0)$
\begin{equation*}
\pi_x\op\pi_x: E_0(k+1)\op F_0(l+1)\longra \cO_x.
\end{equation*}
Since $\bigl[E_0(k+1)\bigr]=(0,0,\be, k+1)$,
$\bigl[F_0(l+1)\bigr]=(0,0,\be,l+1)$ and $[\cO_x]=(0,0,0,1)$ and
each $\pi_x$ is surjective we have~$[G_0(k,l)]=(0,0,\be+\ga,k+l+1)$.

From \eq{dt6eq27} we see that we have non-split exact sequences
\e
\begin{gathered}
\xymatrix@C=20pt@R=3pt{0 \ar[r] & E_0(k) \ar[r] &
G_0(k,l) \ar[r] & F_0(l+1) \ar[r] & 0,\\
0 \ar[r] & F_0(l) \ar[r] & G_0(k,l) \ar[r] & E_0(k+1) \ar[r] & 0.}
\end{gathered}
\label{dt6eq28}
\e
By \eq{dt6eq24}, the first sequence of \eq{dt6eq28} destabilizes
$G_0(k,l)$ if $k/c_\be>(l+1)/c_\ga$, and the second sequence
destabilizes $G_0(k,l)$ if $l/c_\ga>(k+1)/c_\be$. The sequences
\eq{dt6eq28} are sufficient to test the $\tau$-(semi)stability of
$G_0(k,l)$. It follows that $G_0(k,l)$ is $\tau$-semistable if
$k/c_\be\le (l+1)/c_\ga$ and $l/c_\ga\le (k+1)/c_\be$, and
$G_0(k,l)$ is $\tau$-stable if $k/c_\be<(l+1)/c_\ga$
and~$l/c_\ga<(k+1)/c_\be$.

Now fix $m\in\Z$. It is easy to show from these inequalities that if
$c_{\be+\ga}\nmid m\,c_\be$ there is exactly one choice of
$k,l\in\Z$ with $k+l+1=m$ and $G_0(k,l)$ $\tau$-semistable in class
$(0,0,\be+\ga,m)$, and in fact this $G_0(k,l)$ is $\tau$-stable. And
if $c_{\be+\ga}\mid m\,c_\be$ then setting $k=m\,c_\be/c_{\be+\ga}$,
$l=m\,c_\ga/ c_{\be+\ga}$ in $\Z$ we find that $G_0(k-1,l)$ and
$G_0(k,l-1)$ are both strictly $\tau$-semistable in class
$(0,0,\be+\ga,m)$, and in addition $E_0(k)\op F_0(l)$ is strictly
$\tau$-semistable in class $(0,0,\be+\ga,m)$. These are all the
$\tau$-semistables in class $(0,0,\be+\ga,m)$. So
\begin{gather}
\begin{gathered}
\text{$c_{\be+\ga}\nmid m\,c_\be$ implies}\;\>
\fM_\rss^{(0,0,\be+\ga,m)}(\tau)_0\cong [\Spec\C/\bG_m]\\
\text{and}\quad
\M_\rss^{(0,0,\be+\ga,m)}(\tau)_0=\M_\st^{(0,0,\be+\ga,m)}(\tau)_0
\cong\Spec\C,
\end{gathered}
\label{dt6eq29}\\
\begin{gathered}
\text{$c_{\be+\ga}\mid m\,c_\be$ implies}\;\>
\M_\st^{(0,0,\be+\ga,m)}(\tau)_0=\es\;\>\text{and}\\
\fM_\rss^{(0,0,\be+\ga,m)}(\tau)_0(\C)=\bigl\{G_0(k-1,l),G_0(k,l-1),
E_0(k)\op F_0(l)\bigr\}.
\end{gathered}
\label{dt6eq30}
\end{gather}

Next we describe the stack structure on
$\fM_\rss^{(0,0,\be+\ga,m)}(\tau)_0$ when $c_{\be+\ga}\mid
m\,c_\be$. As $E_0(k),F_0(l)$ are rigid, we have
\e
\begin{split}
\Ext^1\bigl(E_0(k)&\op F_0(l),E_0(k)\op F_0(l)\bigr) \\
&=\Ext^1\bigl(E_0(k),F_0(l)\bigr)\op\Ext^1\bigl(
F_0(l),E_0(k)\bigr)\cong\C\op\C,
\end{split}
\label{dt6eq31}
\e
where a nonzero element of $\Ext^1\bigl(E_0(k),F_0(l)\bigr)$
corresponds to $G_0(k-1,l)$, and a nonzero element of
$\Ext^1\bigl(F_0(l),E_0(k)\bigr)$ corresponds to $G_0(k,l-1)$, by
\eq{dt6eq28}. Let $(y,z)$ be coordinates on $\C\op\C$ in
\eq{dt6eq31}. Then $\Aut\bigl(E_0(k)\op F_0(l)\bigr)\cong\bG_m^2$
acts on $\C\op\C$ by $(\la,\mu):(y,z)\mapsto
(\la\mu^{-1}y,\la^{-1}\mu z)$.

By Theorem \ref{dt5thm3}, $\fM_\rss^{(0,0,\be+\ga,m)}(\tau)_0$ is
locally isomorphic as an Artin stack near $E_0(k)\op F_0(l)$ to
$[\Crit(f)/\bG_m^2]$, where $U\subseteq \C\op\C$ is a
$\bG_m^2$-invariant analytic open neighbourhood of 0, and $f:U\ra\C$
is a $\bG_m^2$-invariant holomorphic function. Since $f$ is
$\bG_m^2$-invariant, it must be a function of $yz$. Now $(y,0)$ for
$y\ne 0$ in $\C\op \C$ represents $G_0(k-1,l)$, which is rigid; also
$(0,z)$ for $z\ne 0$ represents $G_0(k,l-1)$, which is rigid.
Therefore $\bigl\{(y,0):0\ne y\in\C\bigr\}$ and $\bigl\{(0,z):0\ne
z\in\C\bigr\}$ must be smooth open sets in $\Crit(f)$, so that $f$
is nondegenerate quadratic normal to them, to leading order.

It follows that we may take $U=\C\op\C$ and $f(y,z)=y^2z^2$, giving
\e
\fM_\rss^{(0,0,\be+\ga,m)}(\tau)_0\cong
\bigl[\Crit(y^2z^2)/\bG_m^2\bigr].
\label{dt6eq32}
\e
There are three $\bG_m^2$ orbits in $\Crit(y^2z^2)$:
$\bigl\{(y,0):0\ne y\in\C\bigr\}$ corresponding to $G_0(k-1,l)$, and
$\bigl\{(0,z):0\ne z\in\C\bigr\}$ corresponding to $G_0(k,l-1)$, and
$(0,0)$ corresponding to $E_0(k)\op F_0(l)$. The Milnor
fibres\index{Milnor fibre!examples} of $y^2z^2$ at $(1,0)$ and $(0,1)$
are both two discs, with Euler characteristic 2. Since $y^2z^2$ is
homogeneous, the Milnor fibre of $y^2z^2$ at $(0,0)$ is
diffeomorphic to $\bigl\{(y,z)\in\C^2:y^2z^2=1\bigr\}$, which is the
disjoint union of two copies of $\C\sm\{0\}$, with Euler
characteristic zero. By results in \S\ref{dt4} we deduce that
\e
\begin{gathered}
\nu_{\fM_\rss^{(0,0,\be+\ga,m)}(\tau)_0}\bigl(G_0(k-1,l)\bigr)=
\nu_{\fM_\rss^{(0,0,\be+\ga,m)}(\tau)_0}\bigl(G_0(k,l-1)\bigr)=-1\\
\text{and}\qquad
\nu_{\fM_\rss^{(0,0,\be+\ga,m)}(\tau)_0}\bigl(E_0(k)\op
F_0(l)\bigr)=1.
\end{gathered}
\label{dt6eq33}
\e
From \eq{dt6eq32} we find that the coarse moduli space
$\M_\rss^{(0,0,\be+\ga,m)}(\tau)_0$ is
\e
\M_\rss^{(0,0,\be+\ga,m)}(\tau)_0\cong\Spec\C.
\label{dt6eq34}
\e

As in equation \eq{dt6eq11} of Example \ref{dt6ex8} we find that
\begin{align*}
\bar\Pi^{\chi,\Q}_\fM\bigl(\bep^{(0,0,\be+\ga,m)}(\tau)_0\bigr)&=
\ts\frac{1}{2}\bigl[([\Spec\C/\bG_m],\rho_{G_0(k-1,l)})\bigr]\\
&+\ts\frac{1}{2}\bigl[([\Spec\C/\bG_m],\rho_{G_0(k,l-1)})\bigr],
\end{align*}
where $\rho_{G_0(k-1,l)},\rho_{G_0(k,l-1)}$ map $[\Spec\C/\bG_m]$ to
$G_0(k-1,l),G_0(k,l-1)$. Note that $\bep^{(0,0,\be+\ga,m)}(\tau)_0$
is zero over $E_0(k)\op F_0(l)$. As for \eq{dt6eq12} we have
\e
\begin{split}
\bar{DT}{}^{(0,0,\be+\ga,m)}(\tau)_0&=\ts \frac{1}{2}
\nu_{\fM_\rss^{(0,0,\be+\ga,m)}(\tau)_0}\bigl(G_0(k-1,l)\bigr)\\
&+\ts\frac{1}{2}\nu_{\fM_\rss^{(0,0,\be+\ga,m)}(\tau)_0}\bigl(
G_0(k,l-1)\bigr)=\frac{1}{2}\cdot 1+\frac{1}{2}\cdot 1=1.
\end{split}
\label{dt6eq35}
\e
Thus $G_0(k-1,l)$ and $G_0(k,l-1)$ each contribute $\ha$
to~$\bar{DT}{}^{(0,0,\be+\ga,m)}(\tau)_0$.

From equations
\eq{dt6eq25},\eq{dt6eq26},\eq{dt6eq29},\eq{dt6eq30},\eq{dt6eq33},\eq{dt6eq34}
and \eq{dt6eq35} we deduce:
\ea
\bar{DT}{}^{(0,0,\be+\ga,m)}(\tau)_t&=1,\quad\text{all $t\in\De_\ep$
and $m\in\Z$,}
\nonumber\\
\chi\bigl(\M_\st^{(0,0,\be+\ga,m)}(\tau)_t,
\nu_{\M_\st^{(0,0,\be+\ga,m)}(\tau)_t}\bigr)&=
\begin{cases}
1, & \text{$t\ne 0$ or $c_{\be+\ga}\nmid m\,c_\be$,} \\
0, & \text{$t=0$ and $c_{\be+\ga}\mid m\,c_\be$,}
\end{cases}
\label{dt6eq36}\\
\chi\bigl(\M_\rss^{(0,0,\be+\ga,m)}(\tau)_t,
\nu_{\M_\rss^{(0,0,\be+\ga,m)}(\tau)_t}\bigr)&=
\begin{cases}
1, & \text{$t\ne 0$ or $c_{\be+\ga}\nmid m\,c_\be$,} \\
2, & \text{$t=0$ and $c_{\be+\ga}\mid m\,c_\be$,}
\end{cases}
\label{dt6eq37}\\
\chi^\na\bigl(\fM_\rss^{(0,0,\be+\ga,m)}(\tau)_t,
\nu_{\fM_\rss^{(0,0,\be+\ga,m)}(\tau)_t}\bigr)&=
\begin{cases}
-1,\! & \text{$t=0$ or $c_{\be+\ga}\nmid m\,c_\be$,} \\
\phantom{-}0,\! & \text{$t\!\ne\! 0$ and $c_{\be+\ga}\!\mid\!
m\,c_\be$.}
\end{cases}
\label{dt6eq38}
\ea
Equations \eq{dt6eq36}--\eq{dt6eq38} imply:

\begin{cor} Let\/ $X$ be a Calabi--Yau $3$-fold over\/ $\C$ and\/
$\al\!\in\! K^\num(\coh(X))$ with\/ $\M_\rss^\al(\tau)\ne
\M_\st^\al(\tau)$. Then none of\/ $\chi\bigl(\M_\st^\al(\tau),
\nu_{\M_\st^\al(\tau)}\bigr),$ $\chi\bigl(\M_\rss^\al(\tau),
\nu_{\M_\rss^\al(\tau)}\bigr)$ or\/ $\chi^\na\bigl(\fM_\rss^\al
(\tau),\nu_{\fM_\rss^\al(\tau)}\bigr)$ need be unchanged under
deformations of\/~$X$.
\label{dt6cor2}
\end{cor}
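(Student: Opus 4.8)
The plan is to read Corollary~\ref{dt6cor2} off directly from the explicit family of Calabi--Yau 3-folds $X_t$, $t\in\De_\ep$, built in \S\ref{dt65}, together with the three computations \eq{dt6eq36}--\eq{dt6eq38}. First I would fix a single class, namely $\al=(0,0,\be+\ga,m)\in K^\num(\coh(X_0))$ with $m$ chosen so that $c_{\be+\ga}\mid m\,c_\be$ (for instance $m=c_{\be+\ga}$, giving $k=c_\be$, $l=c_\ga$). By \eq{dt6eq26} and \eq{dt6eq30}--\eq{dt6eq34} this class satisfies $\M_\rss^\al(\tau)_t\ne\M_\st^\al(\tau)_t$ for every $t\in\De_\ep$: for $t\ne0$ the sheaf $E_t(k)\op F_t(l)$ is strictly $\tau$-semistable, and for $t=0$ one has $\M_\st^\al(\tau)_0=\es$ while $\M_\rss^\al(\tau)_0\cong\Spec\C$. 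Identifying $K^\num(\coh(X_t))$ with $K^\num(\coh(X_0))$ for all $t$ via Theorem~\ref{dt4thm7}, equations \eq{dt6eq36}--\eq{dt6eq38} then show that $\chi(\M_\st^\al(\tau)_t,\nu_{\M_\st^\al(\tau)_t})$ jumps from $1$ (for $t\ne0$) to $0$ (at $t=0$), that $\chi(\M_\rss^\al(\tau)_t,\nu_{\M_\rss^\al(\tau)_t})$ jumps from $1$ to $2$, and that $\chi^\na(\fM_\rss^\al(\tau)_t,\nu_{\fM_\rss^\al(\tau)_t})$ jumps from $0$ to $-1$. Since each of the three quantities is discontinuous at $t=0$, none of them is deformation-invariant, which is the content of the corollary.

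The real work is the verification of \eq{dt6eq36}--\eq{dt6eq38}, which is where the bulk of \S\ref{dt65} lies, and which I would carry out as follows. For $t\ne0$ the curves $i_t(\CP^1)$, $j_t(\CP^1)$, $k_t(\Si_t)$ are disjoint smooth rational curves with normal bundle $\cO_{\CP^1}(-1)\op\cO_{\CP^1}(-1)$, so Proposition~\ref{dt6prop2} identifies the $\tau$-semistable sheaves in class $(0,0,\be+\ga,m)$ as the rigid stable $G_t(m)$ on $k_t(\Si_t)$ and, when $c_{\be+\ga}\mid m\,c_\be$, the rigid strictly semistable $E_t(k)\op F_t(l)$; their Behrend functions are $\pm1$ by smoothness and Theorem~\ref{dt4thm1}(i) and Corollary~\ref{dt4cor1}, giving \eq{dt6eq25}--\eq{dt6eq26}. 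For $t=0$ the two lines meet transversally in one point, and I would analyse the sheaves $G_0(k-1,l)$, $G_0(k,l-1)$, $E_0(k)\op F_0(l)$ defined via the exact sequences \eq{dt6eq27}--\eq{dt6eq28}, use the Hilbert-polynomial inequalities coming from \eq{dt6eq24} both to test $\tau$-(semi)stability and to check these exhaust the semistable objects, then apply Theorem~\ref{dt5thm3} to write $\fM_\rss^\al(\tau)_0$ near $E_0(k)\op F_0(l)$ as $[\Crit(f)/\bG_m^2]$ with $f=y^2z^2$ as in \eq{dt6eq32}. The Milnor fibre formula Theorem~\ref{dt4thm2} applied to $y^2z^2$ gives the Behrend values \eq{dt6eq33}, and a computation in $\oSFai(\fM,\chi,\Q)$ parallel to Example~\ref{dt6ex8} produces $\bar\Pi^{\chi,\Q}_\fM(\bep^\al(\tau)_0)$ with weights $\ha,\ha,0$ on the three points, whence \eq{dt6eq35}; combining this with the coarse-space descriptions \eq{dt6eq34} finishes \eq{dt6eq36}--\eq{dt6eq38}.

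The main obstacle is pinning down the local model at $t=0$, i.e.\ proving one may take $f(y,z)=y^2z^2$ exactly and not merely up to the ambiguity left by Theorem~\ref{dt5thm3}. The key inputs are that $\Ext^1(E_0(k)\op F_0(l),E_0(k)\op F_0(l))\cong\C^2$ with $\bG_m^2$ acting by $(\la,\mu):(y,z)\mapsto(\la\mu^{-1}y,\la^{-1}\mu z)$, so that an $\Aut$-invariant $f$ must be a function of $yz$; and that $G_0(k-1,l)$ and $G_0(k,l-1)$ are rigid, forcing the coordinate axes to be smooth loci of $\Crit(f)$ along which $f$ is nondegenerate quadratic in the transverse direction, which pins the leading term as $y^2z^2$, after which an equivariant change of coordinates absorbs the higher-order terms. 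A secondary, more bookkeeping-heavy point is making sure the enumeration of semistable sheaves in the classes $(0,0,\be,k)$, $(0,0,\ga,l)$ and $(0,0,\be+\ga,m)$ on each $X_t$ is genuinely complete, so that the contributions computed really are the full invariants; this rests on the sub/quotient analysis via \eq{dt6eq28} and the stability inequalities derived from \eq{dt6eq24}, together with the standing assumption that all such sheaves are supported on $i_t(\CP^1)\cup j_t(\CP^1)\cup k_t(\Si_t)$.
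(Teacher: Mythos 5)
Your proposal is correct and follows exactly the paper's route: Corollary \ref{dt6cor2} is read off directly from the jumps at $t=0$ in equations \eq{dt6eq36}--\eq{dt6eq38}, which are precisely the output of the \S\ref{dt65} computation (Proposition \ref{dt6prop2} for $t\ne 0$, the analysis of $G_0(k-1,l)$, $G_0(k,l-1)$, $E_0(k)\op F_0(l)$ and the local model $[\Crit(y^2z^2)/\bG_m^2]$ at $t=0$) that you reconstruct. Your choice of class with $c_{\be+\ga}\mid m\,c_\be$, the verification that strictly semistables exist for all $t$, and the three jump values $1\to 0$, $1\to 2$, $0\to -1$ all match the paper.
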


We can also use these calculations to justify the necessity of
rational weights in the $\bep^\al(\tau)$ in our definition of
$\bar{DT}{}^\al(\tau)$. Let $c_{\be+\ga}\mid m\,c_\be$. Then when
$t\ne 0$, we have one stable, rigid sheaf $G_t(m)$ in class
$(0,0,\be+\ga,m)$, which is counted with weight 1 in
$\bar{DT}{}^{(0,0,\be+\ga,m)}(\tau)_t$. But when $t=0$, $G_t(m)$ is
replaced by two strictly $\tau$-semistable sheaves $G_0(k-1,l)$ and
$G_0(k,l-1)$, which are counted with weight $\ha$ in
$\bar{DT}{}^{(0,0,\be+\ga,m)}(\tau)_0$. By symmetry between
$G_0(k-1,l),G_0(k,l-1)$, to get deformation-invariance it is
necessary that they are each counted with weight $\ha$, which means
that {\it we must allow non-integral weights for strictly\/
$\tau$-semistables in our counting scheme to get a
deformation-invariant answer}.

Also, we cannot tell that $G_0(k-1,l),G_0(k,l-1)$ should have weight
$\ha$ just from the stack
$\smash{\fM_\rss^{(0,0,\be+\ga,m)}(\tau)_0}$, as they are rigid with
stabilizer group $\bG_m$, and look just like $\tau$-stables. The
strict $\tau$-semistability of $G_0(k-1,l),G_0(k,l-1)$ is measured
by the fact that
$\bdss^{(0,0,\ga,l)}(\tau)*\bdss^{(0,0,\be,k)}(\tau),
\bdss^{(0,0,\be,k)}(\tau)*\bdss^{(0,0,\ga,l)}(\tau)$ are nonzero
over $G_0(k-1,l),G_0(k,l-1)$ respectively. But
$\bdss^{(0,0,\ga,l)}(\tau)*\bdss^{(0,0,\be,k)}(\tau)$ and
$\bdss^{(0,0,\be,k)}(\tau)*\bdss^{(0,0,\ga,l)}(\tau)$ occur with
coefficient $-\ha$ in the expression \eq{dt3eq4} for
$\bep^{(0,0,\be+\ga,m)}(\tau)$. This suggests that {\it using\/
$\bep^\al(\tau)$ or something like it is necessary to make\/
$\bar{DT}{}^\al(\tau)$ deformation-invariant}.

\subsection{$\mu$-stability and invariants $\bar{DT}{}^\al(\mu)$}
\label{dt66}\index{$\mu$-stability|(}\index{stability
condition!$\mu$-stability|(}\index{m-stability@$\mu$-stability|(}

So far we have mostly discussed invariants $\bar{DT}{}^\al(\tau)$,
where $(\tau,G,\le)$ is Gieseker stability w.r.t.\ a very ample line
bundle $\cO_X(1)$, as in Example \ref{dt3ex1}. We can also consider
$\bar{DT}{}^\al(\mu)$, where $(\mu,M,\le)$ is $\mu$-{\it
stability\/} w.r.t.\ $\cO_X(1)$, as in Example \ref{dt3ex2}. We now
prove some simple but nontrivial facts about
the~$\bar{DT}{}^\al(\mu)$.

First note that as $(\mu,M,\le)$ is a truncation of $(\tau,G,\le)$,
we have $\tau(\be)\le\tau(\ga)$ implies $\mu(\be)\le\mu(\ga)$ for
$\be,\ga\in C(\coh(X))$, and so $(\mu,M,\le)$ {\it
dominates\/}\index{stability condition!$(\tilde\tau,\tilde T,\leqslant)$
dominates $(\tau,T,\leqslant)$} $(\tau,G,\le)$ in the sense of
Definition \ref{dt3def6}. In Theorem \ref{dt3thm2} we can use
$(\hat\tau,\hat T,\le)=(\mu,M,\le)$ as the dominating weak stability
condition to write $\bep^\al(\mu)$ in terms of the $\bep^\be(\tau)$
and vice versa, and then Theorem \ref{dt5thm6} writes
$\bar{DT}{}^\al(\mu)$ in terms of the $\bar{DT}{}^\be(\tau)$ and
vice versa. Since the Gieseker stability invariants
$\bar{DT}{}^\be(\tau)$ are deformation-invariant by Corollary
\ref{dt5cor4}, we deduce:

\begin{cor} The $\mu$-stability invariants $\bar{DT}{}^\al(\mu)$
are unchanged under continuous deformations of the underlying
Calabi--Yau $3$-fold\/~$X$.
\label{dt6cor3}
\end{cor}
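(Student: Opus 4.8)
The plan is to deduce this from deformation-invariance of the Gieseker invariants (Corollary~\ref{dt5cor4}) via the wall-crossing formula of Theorem~\ref{dt5thm6}, using that $\mu$-stability dominates Gieseker stability for the same polarization.

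First I would check that $(\mu,M,\le)$ dominates $(\tau,G,\le)$ in the sense of Definition~\ref{dt3def6}. Since $\mu(\al)$ is obtained from $\tau(\al)=P_\al/r_\al$ by truncating after the subleading term, if $\tau(\be)\le\tau(\ga)$ then either $\deg P_\be>\deg P_\ga$, so $\mu(\be)<\mu(\ga)$, or $\deg P_\be=\deg P_\ga$ and the truncations inherit the ordering, so $\mu(\be)\le\mu(\ga)$; either way $\mu(\be)\le\mu(\ga)$. Hence in Theorem~\ref{dt3thm2} we may take $\tau$ to be Gieseker stability, $\ti\tau=\mu$, and the dominating permissible weak stability condition $\hat\tau=\mu$ as well. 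No `globally finite' interpolation of the kind used in Corollary~\ref{dt5cor2} is needed here: because $\hat\tau=\mu$ is permissible, every term in the wall-crossing identities \eq{dt3eq9}--\eq{dt3eq10} is supported on the finite type stack $\fM_\rss^\al(\mu)$, so there are only finitely many nonzero terms. Thus Theorem~\ref{dt3thm2} applies directly, expressing $\bep^\al(\mu)$ as a finite $U$-weighted sum of products $\bep^{\al_1}(\tau)*\cdots*\bep^{\al_n}(\tau)$; by Theorems~\ref{dt3thm3} and~\ref{dt5thm5} this becomes a Lie bracket identity in $\SFai(\fM)$ to which $\ti\Psi$ can be applied, and by \eq{dt5eq7} one obtains the transformation law of Theorem~\ref{dt5thm6}, that is, equation \eq{dt5eq14} with $\ti\tau=\mu$: a finite expression for $\bar{DT}{}^\al(\mu)$ in terms of the Gieseker invariants $\bar{DT}{}^\be(\tau)$, with coefficients built from $V(I,\Ga,\ka;\tau,\mu)$, signs, and values of the Euler form $\bar\chi$.

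Next I would observe that every ingredient of that expression other than the $\bar{DT}{}^\be(\tau)$ themselves is unchanged under continuous deformation of $X$. By Theorem~\ref{dt4thm7} the group $K^\num(\coh(X))$ and its Euler form $\bar\chi$ are canonically identified along a deformation (this uses $H^1(\cO_X)=0$), and since $\cO_X(1)$ extends to a relative very ample line bundle, the Hilbert polynomial $P_\al$ of each class $\al$ is preserved under this identification; hence the maps $\tau,\mu$, the coefficients $V(I,\Ga,\ka;\tau,\mu)$, and the factors $\bar\chi(\ka(i),\ka(j))$ depend only on the identified topological data, not on the complex structure of $X$. Combining this with Corollary~\ref{dt5cor4}, which gives deformation-invariance of each of the finitely many $\bar{DT}{}^\be(\tau)$ appearing, every term in \eq{dt5eq14} with $\ti\tau=\mu$ is deformation-invariant, and therefore so is $\bar{DT}{}^\al(\mu)$, which is the assertion of the corollary.

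There is no deep obstacle: all the analytic content (the Behrend function identities, the symmetric obstruction theory and deformation-invariance of $PI^{\al,n}(\tau')$, hence of $\bar{DT}{}^\al(\tau)$) is already subsumed in Corollary~\ref{dt5cor4}. The points that genuinely need care are the elementary verification that $\mu$ dominates $\tau$, the remark that the wall-crossing between these two conditions is a genuinely finite formula so that Theorem~\ref{dt5thm6} applies without the interpolation device of Corollary~\ref{dt5cor2}, and the bookkeeping that the combinatorial side of \eq{dt5eq14} is constant along the deformation under the canonical identifications of $K$-theory and Hilbert polynomials. I expect the last of these --- confirming that the coefficient data is deformation-independent --- to be the step a careful reader would most want spelled out.
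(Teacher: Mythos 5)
Your proposal is correct and follows essentially the same route as the paper: note that $(\mu,M,\le)$ dominates $(\tau,G,\le)$ since $\mu$ is a truncation of $\tau$, apply Theorem \ref{dt3thm2} with $(\hat\tau,\hat T,\le)=(\mu,M,\le)$ as the dominating permissible condition (so the change is finite and no interpolation as in Corollary \ref{dt5cor2} is needed), pass through $\ti\Psi$ to get the transformation law of Theorem \ref{dt5thm6} expressing $\bar{DT}{}^\al(\mu)$ in terms of the $\bar{DT}{}^\be(\tau)$, and conclude from Corollary \ref{dt5cor4}. The extra remarks on dominance and on the deformation-independence of the combinatorial coefficients are correct but are points the paper leaves implicit.
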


The next well-known lemma says that for torsion-free sheaves,
$\mu$-stability is unchanged by tensoring by a line bundle. It holds
on any smooth projective scheme $X$, not just Calabi--Yau 3-folds,
and works because $\mu([E\ot L])-\mu([E])$ is independent of $E$
when $\dim E=\dim X$. The corresponding results are not true for
Gieseker stability, nor for $\mu$-stability for non-torsion-free
sheaves.

\begin{lem} Let\/ $E$ be a nonzero torsion-free sheaf on $X,$ and\/
$L$ a line bundle. Then $E\ot L$ is $\mu$-semistable if and only
if\/ $E$ is $\mu$-semistable.
\label{dt6lem}
\end{lem}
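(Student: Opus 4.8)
The plan is to reduce the statement to a comparison of the truncated Hilbert polynomials of a subsheaf and its quotient before and after twisting by $L$, exploiting the fact that tensoring by a line bundle shifts $\mu$ by a constant that does not depend on the sheaf.

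First I would record the elementary categorical facts: the functor $-\ot L\colon\coh(X)\ra\coh(X)$ is an exact autoequivalence, it preserves the dimension of a sheaf ($\dim(F\ot L)=\dim F$), and it preserves torsion-freeness. Hence the subsheaves of $E\ot L$ are precisely the sheaves $S\ot L$ for $S\subseteq E$ a subsheaf, with $S\cong 0$ or $S\cong E$ if and only if $S\ot L\cong 0$ or $S\ot L\cong E\ot L$, and $(E\ot L)/(S\ot L)\cong(E/S)\ot L$. By the definition of $\mu$-semistability in Definition \ref{dt3def3}, it therefore suffices to prove, for each subsheaf $S\subset E$ with $0\ne S\ne E$, the equivalence
\[
\mu([S])\le\mu([E/S])\quad\Longleftrightarrow\quad\mu([S\ot L])\le\mu([(E/S)\ot L]).
\]

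The key computation is then that twisting by $L$ shifts $\mu$ by a sheaf-independent constant on the part of $\coh(X)$ that matters. Writing $m=\dim X$ and $\de=c_1(\cO_X(1))$, I would use the Hirzebruch--Riemann--Roch formula \eq{dt4eq20} (with ${\rm td}_1(TX)=\ha c_1(TX)=0$ as $X$ is a Calabi--Yau $3$-fold) to see that, for \emph{any} coherent sheaf $F$ on $X$ with $\dim F=m$, the Hilbert polynomial $P_F$ has degree $m$, with leading coefficient governed by $\ch_0(F)=\rank F$ and sub-leading coefficient an explicit linear combination of $\int_X\ch_0(F)\de^m$ and $\int_X\ch_1(F)\de^{m-1}$. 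Since $\ch_0(F\ot L)=\ch_0(F)$ and $\ch_1(F\ot L)=\ch_1(F)+\ch_0(F)\,c_1(L)$, writing $\mu([F])=t^m+a_{m-1}([F])t^{m-1}$ yields
\[
a_{m-1}\bigl([F\ot L]\bigr)=a_{m-1}([F])+c_L,\qquad c_L:=\frac{m\int_X c_1(L)\cup\de^{m-1}}{\int_X\de^m}\in\Q,
\]
a quantity depending only on $L$ and $\cO_X(1)$; crucially this holds for every dimension-$m$ sheaf $F$, torsion-free or not, because only $\ch_0$ and $\ch_1$ enter.

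Finally I would assemble these. The subsheaf $S$ is torsion-free, being a nonzero subsheaf of the torsion-free sheaf $E$ on the integral scheme $X$, so $\dim S=m$; the quotient $E/S$ has some dimension $\le m$. If $\dim(E/S)<m$, then $\deg\mu([S])=m>\deg\mu([E/S])$, so by clause (a) of the order in Example \ref{dt3ex2} (inherited from Example \ref{dt3ex1}) we have $\mu([S])<\mu([E/S])$; the same degree inequality persists after $\ot L$ since dimensions are unchanged, hence $\mu([S\ot L])<\mu([(E/S)\ot L])$, and the displayed equivalence holds trivially. If $\dim(E/S)=m$, then both $[S\ot L]$ and $[(E/S)\ot L]$ receive the \emph{same} shift $c_L$ by the key computation, and adding a common constant $c_L$ to the degree-$(m-1)$ terms of two monic degree-$m$ polynomials leaves their comparison in the order of Example \ref{dt3ex1}(b) unchanged; hence $\mu([S])\le\mu([E/S])$ if and only if $\mu([S\ot L])\le\mu([(E/S)\ot L])$. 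Either way the equivalence holds, proving the lemma. The main obstacle is essentially cosmetic: the one point needing care is that $E/S$ need not be torsion-free, so the naive ``the slopes of $S$ and $E/S$ shift by the same amount'' argument does not literally apply to the quotient; this is handled by the dimension dichotomy above together with the observation that the sub-leading Hilbert coefficient, and thus $c_L$, depends only on $\ch_0$ and $\ch_1$ and so is insensitive to torsion. (Alternatively one could simply cite the well-known fact, e.g.\ Huybrechts--Lehn \cite[\S 1.6]{HuLe2}.)
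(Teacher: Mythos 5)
Your proof is correct, and it is exactly the argument the paper sketches in the sentence preceding the lemma (the paper itself gives no written proof, citing the result as well known): tensoring by $L$ shifts the truncated reduced Hilbert polynomial $\mu$ by a constant $c_L$ independent of the sheaf, for all sheaves of dimension $\dim X$, since only $\ch_0$ and $\ch_1$ enter and the rank cancels. Your extra care with quotients $E/S$ of dimension $<\dim X$ (where clause (a) of the ordering makes the inequality automatic on both sides) is precisely the point where torsion-freeness of $E$ is used, and it is handled correctly.
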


Now $E\mapsto E\ot L$ induces an automorphism of the abelian
category $\coh(X)$, which acts on $K^\num(\coh(X))\subset H^{\rm
even}(X;\Q)$ by $\al\mapsto \al\exp(\ga)$. For torsion-free sheaves,
this automorphism takes $\mu$-semistables to $\mu$-semistables, and
so maps $\bdss^\al(\mu)$ to $\bdss^{\al\exp(\ga)}(\mu)$ and
$\bep^\al(\mu)$ to $\bep^{\al\exp(\ga)}(\mu)$ for $\rank(\al)>0$.
Applying $\ti\Psi$ as in \S\ref{dt53}, we see that
$\bar{DT}{}^{\al\exp(\ga)}(\mu)=\bar{DT}{}^\al(\mu)$. Since we
assume $H^1(\cO_X)=0$, every $\ga\in H^2(X;\Z)$ is $c_1(L)$ for some
line bundle $L$. Thus we deduce:

\begin{thm} Let\/ $X$ be a Calabi--Yau $3$-fold over\/ $\C$ and\/
$(\mu,M,\le)$ be $\mu$-stability with respect to a very ample line
bundle $\cO_X(1)$ on $X,$ as in Example {\rm\ref{dt3ex2}}. Write
elements $\al$ of\/ $K^\num(\coh(X))\subset H^{\rm even}(X;\Q)$ as
$(\al_0,\al_2,\al_4,\al_6),$ as in\/ {\rm\S\ref{dt64}}. Then for
all\/ $\al\in C(\coh(X))$ with\/ $\al_0>0$ and all\/ $\ga\in
H^2(X;\Z)$ we have\/~$\bar{DT}{}^{\al\exp(\ga)}(\mu)
=\bar{DT}{}^\al(\mu)$.
\label{dt6thm3}
\end{thm}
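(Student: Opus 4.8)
The plan is to realize the translation $\al\mapsto\al\exp(\ga)$ on classes as the effect of an autoequivalence of $\coh(X)$, and to transport this symmetry along the whole chain of constructions $\bdss\rightsquigarrow\bep\rightsquigarrow\ti\Psi\rightsquigarrow\bar{DT}$. First I would fix a holomorphic line bundle $L$ on $X$ with $c_1(L)=\ga$; this exists because $H^1(\cO_X)=0$ forces $H^{2,0}(X)=0$, so every class in $H^2(X;\Z)$ is of type $(1,1)$ and hence $c_1$ of a line bundle, exactly as in the proof of Theorem~\ref{dt4thm7}. The functor $F^L=(-)\ot L:\coh(X)\ra\coh(X)$ is an exact autoequivalence, flat over any base, so it induces a $1$-isomorphism $\Phi^L:\fM\ra\fM$ together with a compatible $1$-isomorphism $\fExact\ra\fExact$ intertwining the three projections $\pi_j$. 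By Theorem~\ref{dt2thm2}, $\Phi^L_*:\uSF(\fM)\ra\uSF(\fM)$ is then an isomorphism of Ringel--Hall algebras, $\Phi^L_*(f*g)=\Phi^L_*(f)*\Phi^L_*(g)$; it preserves representability (so restricts to $\SF(\fM)$), commutes with the operators $\Pi^\vi_n$ by Proposition~\ref{dt2prop1}(iii) since $\Phi^L$ is an isomorphism of stacks, and hence restricts to a Lie algebra isomorphism of $\SFai(\fM)$. On $K^\num(\coh(X))\cong\La_X\subset H^{\rm even}(X;\Q)$ it acts by $\al\mapsto\ch(L)\cdot\al=\exp(\ga)\cdot\al$, because $\ch(E\ot L)=\ch(E)\exp(c_1(L))$; so $\Phi^L$ restricts to an isomorphism $\fM^\al\ra\fM^{\al\exp(\ga)}$ for every $\al$, and $\al\exp(\ga)\in C(\coh(X))$ whenever $\al\in C(\coh(X))$. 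Finally, since $\Phi^L$ is an isomorphism of stacks, Proposition~\ref{dt4prop2} gives $(\Phi^L)^*(\nu_\fM)=\nu_\fM$, i.e.\ $\nu_\fM(E\ot L)=\nu_\fM(E)$ for all $E\in\coh(X)$.

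Next I would exploit the hypothesis $\al_0>0$. A $\mu$-semistable sheaf $E$ with $[E]=\al$ and $\al_0>0$ is pure of dimension $3$ (the degree clause in the order on $M$ makes a lower-dimensional subsheaf destabilizing), hence torsion-free, so Lemma~\ref{dt6lem} applies and $E\mapsto E\ot L$ identifies $\mu$-semistable sheaves in class $\al$ with those in class $\al\exp(\ga)$; thus $\Phi^L$ restricts to an isomorphism $\fM_\rss^\al(\mu)\ra\fM_\rss^{\al\exp(\ga)}(\mu)$ and $\Phi^L_*(\bdss^\al(\mu))=\bdss^{\al\exp(\ga)}(\mu)$. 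Crucially, every class $\al_i$ in the definition \eq{dt3eq4} of $\bep^\al(\mu)$ satisfies $\mu(\al_i)=\mu(\al)$, which forces $\dim\al_i=3$ and hence $(\al_i)_0>0$, so the same holds for each $\bdss^{\al_i}(\mu)$. Applying the algebra isomorphism $\Phi^L_*$ term by term to \eq{dt3eq4}, and reindexing via the bijection $\al_i\mapsto\al_i\exp(\ga)$ --- which preserves the constraints $\al_1+\cdots+\al_n=\al$ and $\mu(\al_i)=\mu(\al)$, the latter because $\mu([E\ot L])-\mu([E])$ is a fixed constant on dimension-$3$ sheaves --- yields $\Phi^L_*\bigl(\bep^\al(\mu)\bigr)=\bep^{\al\exp(\ga)}(\mu)$.

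Finally I would apply $\ti\Psi$. Write $\bar\Pi^{\chi,\Q}_\fM\bigl(\bep^\al(\mu)\bigr)=\sum_i\de_i[(U_i\times[\Spec\C/\bG_m],\rho_i)]$ as in Proposition~\ref{dt3prop1}; then $\bar\Pi^{\chi,\Q}_\fM\bigl(\Phi^L_*\bep^\al(\mu)\bigr)=\sum_i\de_i[(U_i\times[\Spec\C/\bG_m],\Phi^L\ci\rho_i)]$, which is supported over $\fM^{\al\exp(\ga)}$, and $(\Phi^L\ci\rho_i)^*(\nu_\fM)=\rho_i^*\bigl((\Phi^L)^*(\nu_\fM)\bigr)=\rho_i^*(\nu_\fM)$. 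Hence, by \eq{dt5eq5}, the $\ti\la^{\al\exp(\ga)}$-coefficient of $\ti\Psi\bigl(\Phi^L_*\bep^\al(\mu)\bigr)$ equals $\sum_i\de_i\chi\bigl(U_i,\rho_i^*(\nu_\fM)\bigr)$, which is the $\ti\la^\al$-coefficient of $\ti\Psi\bigl(\bep^\al(\mu)\bigr)$, namely $-\bar{DT}{}^\al(\mu)$. On the other hand $\ti\Psi\bigl(\Phi^L_*\bep^\al(\mu)\bigr)=\ti\Psi\bigl(\bep^{\al\exp(\ga)}(\mu)\bigr)=-\bar{DT}{}^{\al\exp(\ga)}(\mu)\,\ti\la^{\al\exp(\ga)}$ by the previous step. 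Comparing gives $\bar{DT}{}^{\al\exp(\ga)}(\mu)=\bar{DT}{}^\al(\mu)$.

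The step I expect to require the most care is the second: checking that torsion-freeness propagates to every summand $\al_i$ appearing in \eq{dt3eq4}, so that Lemma~\ref{dt6lem} may be invoked for each $\bdss^{\al_i}(\mu)$, and verifying that the reindexing $\al_i\mapsto\al_i\exp(\ga)$ matches the index set of $\bep^{\al\exp(\ga)}(\mu)$ exactly. Here one genuinely uses that $\mu$-stability, unlike Gieseker stability, merely shifts by a fixed constant under $\ot L$ on top-dimensional sheaves, which is why the hypothesis must be $\al_0>0$ and why the analogue for $(\tau,G,\le)$ fails. The verification in the first paragraph that $F^L$ induces a genuine automorphism of the Ringel--Hall algebra structure (compatibility with $\fExact$ and the $\pi_j$) is routine, of the kind already used throughout \cite{Joyc4}, but should be recorded.
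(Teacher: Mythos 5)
Your proposal is correct and follows essentially the same route as the paper: the paper also realizes $\al\mapsto\al\exp(\ga)$ via the autoequivalence $E\mapsto E\ot L$ with $c_1(L)=\ga$ (existing because $H^1(\cO_X)=0$), uses Lemma~\ref{dt6lem} together with purity of $\mu$-semistables in top dimension to send $\bdss^\al(\mu),\bep^\al(\mu)$ to $\bdss^{\al\exp(\ga)}(\mu),\bep^{\al\exp(\ga)}(\mu)$, and then applies $\ti\Psi$. The paper states this compressedly; your write-up merely supplies the routine compatibility checks (Ringel--Hall structure, $\Pi^\vi_1$, invariance of $\nu_\fM$, matching of the index sets in \eq{dt3eq4}) that the paper leaves implicit.
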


Theorem \ref{dt6thm3} encodes a big symmetry group of generalized
Donaldson--Thomas invariants $\bar{DT}{}^\al(\mu)$ in positive rank,
which would be much more complicated to write down for Gieseker
stability.\index{$\mu$-stability|)}\index{stability
condition!$\mu$-stability|)}\index{m-stability@$\mu$-stability|)}

\subsection{Extension to noncompact Calabi--Yau 3-folds}
\label{dt67}\index{Calabi--Yau 3-fold!noncompact|(}

So far we have considered only {\it compact\/} Calabi--Yau 3-folds,
and indeed our convention is that Calabi--Yau 3-folds are by
definition compact, unless we explicitly say that they are
noncompact. Suppose $X$ is a {\it noncompact Calabi--Yau\/
$3$-fold\/} over $\C$, by which we mean a smooth quasiprojective
3-fold over $\C$, with trivial canonical bundle $K_X$. (We will
impose further conditions on $X$ shortly.) Then the abelian category
$\coh(X)$ of coherent sheaves on $X$ is badly behaved, from our
point of view -- for instance, groups $\Hom(E,F)$ for
$E,F\in\coh(X)$ may be infinite-dimensional, so the Euler
form\index{Euler form} $\bar\chi$ on $\coh(X)$ may not be defined.

However, the abelian category
$\coh_\cs(X)$\nomenclature[cohcs(X)]{$\coh_\cs(X)$}{abelian category
of compactly supported coherent sheaves on $X$} of {\it
compactly-supported\/}\index{coherent sheaf!compactly-supported|(}
coherent shea\-ves on $X$ is well-behaved: $\Ext^i(E,F)$ is
finite-dimensional for $E,F\in\coh_\cs(X)$ and satisfies Serre
duality\index{Serre duality} $\Ext^i(F,E)\cong\Ext^{3-i}(E,F)^*$, so
$\coh_\cs(X)$ has a well-defined Euler form. If $X$ has no compact
connected components then $\coh_\cs(X)$ consists of torsion sheaves,
supported in dimension 0,1 or 2.

We propose that a good generalization of Donaldson--Thomas theory to
noncompact Calabi--Yau 3-folds is to define invariants counting
sheaves in $\coh_\cs(X)$. Note that this is {\it not\/} the route
that has been taken by other authors such as Szendr\H oi \cite[\S
2.8]{Szen}, who instead consider invariants counting {\it ideal
sheaves\/}\index{coherent sheaf!ideal sheaf}\index{ideal sheaf} $I$
of compact subschemes of $X$. Such $I$ are not compactly-supported,
but are isomorphic to $\cO_X$ outside a compact subset of~$X$.

Going through the theory of \S\ref{dt4}--\S\ref{dt5}, we find that
the assumption that $X$ is compact (proper, or projective) is used
in three important ways:
\begin{itemize}
\setlength{\itemsep}{0pt}
\setlength{\parsep}{0pt}
\item[(a)] The \kern -.2em Euler \kern -.2em form \kern -.2em
$\bar\chi$ \kern -.2em on \kern -.2em $K_0(\coh(X))$ \kern -.2em
is \kern -.2em undefined \kern -.2em for \kern -.2em noncompact
\kern -.2em $X$ \kern -.2em as \kern -.2em $\Ext^i(E,F)$ may be
infinite-dimensional, so $K^\num(\coh(X))$ is undefined. For
$\coh_\cs(X)$, $K^\num(\coh_\cs(X))$ is well defined, but may
not be isomorphic to the image of $\ch:K_0(\coh_\cs(X))\ra
H^{\rm even}_\cs(X;\Q)$. Hilbert polynomials $P_E$ of
$E\in\coh_\cs(X)$ need not factor through the class $[E]$ in
$K^\num(\coh_\cs(X))$ for $X$ noncompact.
\item[(b)] For noncompact $X$, Theorem \ref{dt5thm1} in
\S\ref{dt51} fails because nonzero vector bundles on $X$ are not
compactly-supported. But Theorems \ref{dt5thm2} and \ref{dt5thm3}
depend on Theorem \ref{dt5thm1}, and Theorem \ref{dt5thm4} in
\S\ref{dt52} depends on Theorem \ref{dt5thm3}, and most of the rest
of \S\ref{dt5}--\S\ref{dt6} depends on Theorem \ref{dt5thm4}.
\item[(c)] For noncompact $X$, the moduli schemes $\M_\rss^\al(\tau)$
and $\M_\stp^{\al,n}(\tau')$ of \S\ref{dt43} and \S\ref{dt54}
need not be proper. This means that the virtual cycle
definitions of $DT^\al(\tau)$ in \eq{dt4eq15} when
$\M_\rss^\al(\tau)= \M_\st^\al(\tau)$, and of
$PI^{\al,n}(\tau')$ in \eq{dt5eq15}, are not valid. The weighted
Euler characteristic expressions \eq{dt4eq16}, \eq{dt5eq16} for
$DT^\al(\tau)$ and $PI^{\al,n}(\tau')$ still make sense. But the
proofs that $DT^\al(\tau),\bar{DT}{}^\al(\tau),
PI^{\al,n}(\tau')$ are unchanged by
deformations\index{Donaldson--Thomas
invariants!deformation-invariance} of $X$ no longer work, as
they are based on the virtual cycle\index{virtual cycle}
definitions \eq{dt4eq15},\eq{dt5eq15}.
\end{itemize}

Here is how we deal with these issues. For (a), with $X$ noncompact,
note that although $\coh(X)$ may not have a well-defined Euler form,
there is an Euler pairing $\bar\chi:K_0(\coh(X))\times
K_0(\coh_\cs(X))\ra\Z$. Under the Chern character\index{Chern
character} maps $\ch: K_0(\coh(X))\ra H^{\rm even}(X;\Q)$ and
$\ch_\cs: K_0(\coh_\cs(X))\ra H^{\rm
even}_\cs(X;\Q)$,\nomenclature[chcs(E)]{$\ch_\cs(E)$}{Chern
character of a compactly-supported coherent sheaf $E$} this
$\bar\chi$ is mapped to the pairing $H^{\rm even}(X;\Q)\times H^{\rm
even}_\cs(X;\Q)\ra\Q$\nomenclature[Hevf]{$H^{\rm
even}_\cs(X;\Q)$}{compactly-supported even cohomology of $X$} given
by $(\al,\be)\mapsto\deg\bigl(\al^\vee\cdot\be\cdot{\rm
td}(TX)\bigr){}_3$, which is nondegenerate by the invertibility
of~${\rm td}(TX)=1+\frac{1}{12}\,c_2(X)$ and \index{Poincar\'e
duality}Poincar\'e duality~$H^{2k}(X;\Q)\cong H^{6-2k}_\cs(X;\Q)^*$.

In Assumption \ref{dt3ass}, with $\A=\coh_\cs(X)$, we should take
$K(\coh_\cs(X))$ to be the quotient of $K_0(\coh_\cs(X))$ by the
kernel in $K_0(\coh_\cs(X))$ of the Euler pairing
$\bar\chi:K_0(\coh(X))\times K_0(\coh_\cs(X))\ra\Z$. This is {\it
not\/} the same as the numerical Grothendieck
group\index{Grothendieck group!numerical} $K^\num(\coh_\cs(X))$,
which is the quotient of $K_0(\coh_\cs(X))$ by the kernel of
$\bar\chi:K_0(\coh_\cs(X))\times K_0(\coh_\cs(X))\ra\Z$; in general
$K^\num(\coh_\cs(X))$ is a quotient of $K(\coh_\cs(X))$, but they
may not be equal. In Example \ref{dt6ex10} below we will have
$K(\coh_\cs(X))\cong\Z^2$ but $K^\num(\coh_\cs(X))=0$. As the
pairing above is nondegenerate, this $K(\coh_\cs(X))$ is naturally
identified with the image of the {\it compactly-supported Chern
character\/} $\ch_\cs:K_0(\coh_\cs(X))\ra H^{\rm even}_\cs(X;\Q)$.
This can be defined by combining the `localized Chern character'
$\ch_Z^X$ of Fulton \cite[\S 18.1 \& p.~368]{Fult} with the
compactly supported Chow groups\index{Chow homology} and homology in
\cite[Ex.s 10.2.8 \& 19.1.12]{Fult}, using the limiting process over
all compact subschemes $Z\subset X$ in~\cite[Ex.~10.2.8]{Fult}.

Then if $E\in\coh(X)$ and $F\in\coh_\cs(X)$, the Euler form
$\bar\chi(E,F)$ depends only on $E$ and $[F]$ in $K(\coh_\cs(X))$.
In particular, given a very ample line bundle $\cO_X(1)$ on $X$, the
Hilbert polynomial\index{Hilbert polynomial}
$P_F(n)=\bar\chi\bigl(\cO_X(-n),F\bigr)$ of $F$ depends only on the
class $[F]$ in $K(\coh_\cs(X))$. Since $\cO_X(-n)$ is not
compactly-supported, in general $P_F$ does not depend only on $[F]$
in $K^\num(\coh_\cs(X))$, as in Example~\ref{dt6ex10}.

This is important for two reasons. Firstly, equation \eq{dt5eq17} in
\S\ref{dt54} involves $\bar\chi\bigl([\cO_X(-n)],\al\bigr)$ for
$\al\in K(\coh(X))$, and if $K(\coh_\cs(X))=K^\num(\coh_\cs(X))$
then $\bar\chi\bigl([\cO_X(-n)],\al\bigr)$ would not be well-defined
for $\al\in K(\coh_\cs(X))$, and Theorem \ref{dt5thm10} would fail.
Secondly, the proof that moduli spaces $\M_\st^\al(\tau),\ab
\M_\rss^\al(\tau),\ab\fM_\st^\al (\tau),\ab\fM_\rss^\al(\tau)$ of
$\tau$-(semi)stable sheaves $E$ in class $\al$ in $K(\coh(X))$ are
of {\it finite type\/} depends on the fact that $\al$ determines the
Hilbert polynomial of $E$. If we took
$K(\coh_\cs(X))=K^\num(\coh_\cs(X))$, this would not be true, the
moduli spaces might not be of finite type, and then weighted Euler
characteristic expressions such as \eq{dt4eq16}, \eq{dt5eq16} would
not make sense.

For (b), we will show in Theorem \ref{dt6thm4} that under extra
assumptions on $X$ we can deduce Theorems \ref{dt5thm2} and
\ref{dt5thm3} for noncompact $X$ from the compact case. This is
enough to generalize \S\ref{dt52}--\S\ref{dt53} and parts of
\S\ref{dt54} to the noncompact case. For (c), we should accept that
for noncompact $X$, the invariants $\bar{DT}{}^\al(\tau),
\hat{DT}{}^\al(\tau),PI^{\al,n}(\tau')$ may not be
deformation-invariant,\index{Donaldson--Thomas
invariants!deformation-invariance} as they can change when the
(compact) support of a sheaf `goes to infinity' in $X$ as we deform
the complex structure of $X$. Here is an example.

\begin{ex} For $t\in\C$, define
\e
Y_t=\bigl\{(z_1,z_2,z_3,z_4)\in\C^4:(1-t^2z_1)^2+z_2^2+z_3^2
+z_4^2=0\bigr\}.
\label{dt6eq39}
\e
Then $Y_0$ is nonsingular, and $Y_t$ for $t\ne 0$ has one singular
point at $(t^{-2},0,0,0)$, an ordinary double point, which has two
small resolutions. Because \eq{dt6eq39} involves $t^2$ rather than
$t$ we may choose one of these small resolutions continuously in $t$
for all $t\ne 0$ to get a smooth family of noncompact Calabi--Yau
3-folds $X_t$ for $t\in\C$, which is also smooth over~$t=0$.

As a complex 3-fold, $X_t$ for $t\ne 0$ is the total space of
$\cO(-1)\op\cO(-1)\ra\CP^1$, and $X_0$ is $\C\times Q$, where $Q$ is
a smooth quadric in $\C^3$. All the $X_t$ are diffeomorphic to
$\R^2\times T^*{\cal S}^2$, and we can regard them as a smooth
family of complex structures $J_t$ for $t\in\C$ on the fixed
6-manifold $\R^2\times T^*{\cal S}^2$. Then $X_t$ for $t\ne 0$
contains a curve $\Si_t\cong\CP^1$, the fibre over $(t^{-2},0,0,0)$
in $Y_t$. As $t\ra 0$, this $\Si_t$ goes to infinity in $X_t$, and
$X_0$ contains no compact curves, as it is affine.

From Example \ref{dt6ex10} below it follows that for $t\ne 0$ there
are nonzero invariants $\bar{DT}{}^\al(\tau)_t$ with $\dim\al=1$
counting dimension 1 sheaves supported on $\Si_t$ in $X_t$. But when
$t=0$ there are no compactly-supported dimension 1 sheaves on $X_0$,
as there are no curves on which they could be supported, so
$\bar{DT}{}^\al(\tau)_0=0$, and $\bar{DT}{}^\al(\tau)$ is not
deformation invariant.
\label{dt6ex9}
\end{ex}

Here is the extra condition we need to extend Theorems
\ref{dt5thm2}--\ref{dt5thm3} to~$\coh_\cs(X)$.

\begin{dfn} Let $X$ be a noncompact Calabi--Yau 3-fold over $\C$. We
call $X$ {\it compactly embeddable\/}\index{compactly embeddable|(}
if whenever $K\subset X$ is a compact subset, in the analytic
topology, there exists an open neighbourhood $U$ of $K$ in $X$ in
the analytic topology, a compact Calabi--Yau 3-fold $Y$ over $\C$
with $H^1(\cO_Y)=0$, an open subset $V$ of $Y$ in the analytic
topology, and an isomorphism of complex manifolds~$\phi:U\ra V$.
\label{dt6def2}
\end{dfn}

\begin{thm} Let\/ $X$ be a noncompact Calabi--Yau\/ $3$-fold over\/
$\C,$ and suppose $X$ is compactly embeddable. Then Theorems
{\rm\ref{dt5thm2}} and\/ {\rm\ref{dt5thm3}} hold in\/~$\coh_\cs(X)$.
\label{dt6thm4}
\end{thm}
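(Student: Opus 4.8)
The plan is to reduce Theorems \ref{dt5thm2} and \ref{dt5thm3} for $\coh_\cs(X)$ to the already-proven compact case by exploiting the fact that each of these statements is \emph{local} on the moduli stack/space, and that for a compactly-supported sheaf the relevant local data only see a compact neighbourhood of the support. Fix $E\in\coh_\cs(X)$, say with support $K=\mathop{\rm supp}(E)\subset X$, a compact subset in the analytic topology. Since $X$ is compactly embeddable, there is an open neighbourhood $U\supseteq K$ in $X$, a compact Calabi--Yau $3$-fold $Y$ with $H^1(\cO_Y)=0$, an open $V\subseteq Y$, and a biholomorphism $\phi\colon U\to V$. The first step is to observe that $\phi$ identifies the restriction of $\coh_\cs(X)$ to sheaves supported in $U$ with sheaves on $Y$ supported in $V$: more precisely, extension by zero gives an exact functor from coherent sheaves on $U$ with support proper over a point to $\coh(Y)$, and (shrinking $U$ to a slightly smaller neighbourhood $U'$ of $K$ with $\overline{U'}\subset U$) it sends $E$ to a coherent sheaf $E'$ on $Y$ with $\mathop{\rm supp}(E')=\phi(K)\subset V$. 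The key local invariants $\Ext^i(E,E)$, $\Aut(E)$ and the obstruction-theoretic/deformation data controlling the moduli stack near $[E]$ are computed by Čech cohomology on a neighbourhood of $K$, hence are unchanged by $\phi$: we get canonical isomorphisms $\Ext^i_X(E,E)\cong\Ext^i_Y(E',E')$ for all $i$, compatibly with Yoneda products, and $\Aut(E)\cong\Aut(E')$ as algebraic groups.

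The second step is to transport the conclusions. For Theorem \ref{dt5thm2}, when $E$ is simple, $E'$ is simple on $Y$, so the compact case gives a finite-dimensional complex manifold $U_Y$ (an open neighbourhood of $0$ in $\Ext^1_Y(E',E')$), a holomorphic $f\colon U_Y\to\C$ with $f(u)=\rd f|_u=0$, and a local isomorphism of complex analytic spaces between $\M_\rsi(Y)$ near $[E']$ and $\Crit(f)$ near $u$. Using the identification $\Ext^1_X(E,E)\cong\Ext^1_Y(E',E')$ and the fact that $\M_\rsi(X)$ near $[E]$ is analytically-locally isomorphic to $\M_\rsi(Y)$ near $[E']$ — because the deformation functors of $E$ on $X$ and of $E'$ on $Y$ coincide, both being pro-represented by the same complete local ring, as they see only a neighbourhood of the support — we obtain the desired description of $\M_\rsi(X)$ near $[E]$ as $\Crit(f)$ for $U$ an open neighbourhood of $0$ in $\Ext^1_X(E,E)$. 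For Theorem \ref{dt5thm3} one runs the same argument at the level of stacks: the construction in the compact case produces $S_Y$, $G^{\sst\C}$, $\Phi_Y\colon[S_Y/G^{\sst\C}]\to\fM_Y$ smooth, a versal family, and the holomorphic function $f$ on an open subset of $\Ext^1_Y(E',E')$. All of this is built from the local deformation theory of $E'$ together with the (gauge-theoretic / reductive-quotient) slice construction, which only involves a compact neighbourhood of $\mathop{\rm supp}(E')$; transporting along $\phi$ and extending the family by zero to $X$ yields $S$, $\Phi\colon[S/G^{\sst\C}]\to\fM_{\coh_\cs(X)}$, and $U,f,V,\Xi$ with all the required properties over $X$. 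One must check that the family of sheaves on $X$ so obtained is flat and has compact support, and that smoothness of $\Phi$ and the identifications on stabilizer groups and tangent spaces are preserved — these follow because flatness and smoothness are local, and the stabilizer/tangent identifications are exactly the $\Ext$-isomorphisms above.

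The main obstacle — the step I expect to require genuine care rather than routine verification — is making precise and rigorous the claim that ``the local structure of $\fM$ (or $\M_\rsi$) at $[E]$ depends only on a neighbourhood of $\mathop{\rm supp}(E)$,'' i.e.\ that the deformation theory and the explicit $\Crit(f)$-presentation are genuinely local on $X$. For the formal/analytic deformation theory this is standard (the deformation functor of a coherent sheaf with proper support is unobstructedly ``analytically local'' near the support, and one can use a gluing/extension-by-zero argument together with the comparison of Čech complexes on $U$ and $V$). But in Theorem \ref{dt5thm3} the presentation $\Crit(f)$ is produced by an explicitly \emph{global} gauge-theoretic argument on the whole compact $3$-fold (holomorphic Chern--Simons on an infinite-dimensional space of semiconnections), so one has to argue that the output $Q_\ep\subset\sA$ and $CS|_{Q_\ep}$, and the resulting $f\colon U\to\C$ on a finite-dimensional slice, can be taken supported near $\mathop{\rm supp}(E')$ and hence pulled back along $\phi$. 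The cleanest route is probably not to re-localize the gauge theory, but to note that once the compact case furnishes \emph{any} holomorphic $f$ on a neighbourhood of $0$ in $\Ext^1_Y(E',E')$ whose critical locus presents the moduli stack, the \emph{germ} of $(\Crit(f),f)$ at $0$ is determined by the (analytically-local, hence $\phi$-invariant) deformation data, so one may simply transport that germ; uniqueness-up-to-isomorphism of such presentations (which is implicit in Theorems \ref{dt5thm2}--\ref{dt5thm3}) then does the rest. Verifying that last uniqueness/transport statement carefully is the crux.
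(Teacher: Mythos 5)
Your overall strategy --- transport $E$ through the biholomorphism $\phi$ of Definition \ref{dt6def2}, extend by zero to a sheaf $F$ on the compact $Y$, and pull back the $\Crit(f)$ descriptions of Theorems \ref{dt5thm2}--\ref{dt5thm3} from $Y$ rather than re-localizing the gauge theory --- is exactly the paper's. But the step you yourself flag as the crux is left open, and the tool you propose to close it (that the germ of the pair $(\Crit(f),f)$ at $0$ is determined by the local deformation data, plus a ``uniqueness up to isomorphism of such presentations'') is not what is needed and is not established anywhere: the paper never proves, or uses, any uniqueness statement about critical charts. Note also that since $\phi$ is only a biholomorphism you must pass between algebraic and analytic coherent sheaves; the paper does this via Serre's GAGA equivalence $E\mapsto E_\an$ for compactly supported sheaves, which your \v{C}ech-cohomology remark implicitly assumes but never states.

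Here is how the paper actually closes the gap, differently for the two theorems. For Theorem \ref{dt5thm2} no uniqueness argument is needed: the loci $W^X\subset\M_\rsi^X(\C)$ and $W^Y\subset\M_\rsi^Y(\C)$ of simple sheaves supported in $U$, respectively $V$, are \emph{open} in the analytic topology, and $\phi_*:W^X\ra W^Y$ is an isomorphism of complex analytic spaces; since $\Ext^1(E,E)\cong\Ext^1(F,F)$, the chart $\Crit(f)$ for $\M_\rsi^Y$ near $[F]$ is literally pulled back to one for $\M_\rsi^X$ near $[E]$. For Theorem \ref{dt5thm3} the correct uniqueness statement lives at the level of versal families, not of critical charts: take the formally versal algebraic families $(S^X,{\cal D}^X)$ and $(S^Y,{\cal D}^Y)$ as in Proposition \ref{dt9prop6}(b), analytify them (the analytic families are versal), restrict to the open subsets of the bases parametrizing sheaves supported in $U$, $V$, and push the $X$-family forward by $\phi_*$; one then has two versal analytic families over $V$ with isomorphic central fibre $F_\an$ and tangent spaces identified with $\Ext^1$, so the argument of Proposition \ref{dt9prop7} (versality in both directions plus the noetherian local-ring trick) yields a local isomorphism of complex analytic spaces between $S^X$ and $S^Y$ near the base points, after which $U,f,V,\Xi$ and the $G^{\sst\C}$-equivariance transport from $Y$ to $X$. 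If you replace your pro-representability/uniqueness-of-presentation step by these two arguments, your proof coincides with the paper's.
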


\begin{proof} Write $\fM^X$ for the moduli stack of
compactly-supported coherent shea\-ves on $X$ and $\M_\rsi^X$ for
the complex algebraic space of simple compactly-supported coherent
sheaves\index{coherent sheaf!complex analytic} on $X$. For each
compactly-supported (algebraic) coherent sheaf $E$ on $X$ there is
an underlying compactly-supported complex analytic coherent sheaf
$E_\an$, and by Serre \cite{Serr} this map $E\mapsto E_\an$ is an
equivalence of categories.

Let $E\in\coh_\cs(X)$, so that $[E]\in\fM^X(\C)$, or
$[E]\in\M_\rsi^X(\C)$ if $E$ is simple. Then $K=\mathop{\rm supp}E$
is a compact subset of $X$. Let $U,Y,V$ be as in Definition
\ref{dt6def2} for this $K$, and write $\fM^Y$ for the moduli stack
of coherent sheaves on $Y$, and $\M_\rsi^Y$ for the complex
algebraic space of simple coherent sheaves on $X$. Then
$E_\an\vert_U$ is a complex analytic coherent sheaf on $U\subset X$,
so $\phi_*(E_\an)$ is a complex analytic coherent sheaf on $V\subset
Y$, which we extend by zero to get a complex analytic coherent sheaf
$F_\an$ on $Y$, and this is associated to a unique (algebraic)
coherent sheaf $F$ on $Y$ by \cite{Serr}, with $[F]\in\fM^Y(\C)$,
and $[F]\in\M_\rsi^Y(\C)$ if $F$ (or equivalently $E$) is simple.

For Theorem \ref{dt5thm2}, let $E$ be simple, and write $W^X$ for
the subset of $[E']\in\M_\rsi^X(\C)$ with $E'$ supported on $U$, and
$W^Y$ for the subset of $[F']\in\M_\rsi^Y(\C)$ with $F'$ supported
on $V$. Then $W^X,W^Y$ are open neighbourhoods of $[E],[F]$ in
$\M_\rsi^X(\C),\M_\rsi^Y(\C)$ in the complex analytic topology, and
there is a unique map $\phi_*:W^X\ra W^Y$ with $\phi_*([E'])=[F']$
if $\phi_*(E'_\an)\cong F'_\an$. Since $\phi$ is an isomorphism of
complex manifolds, it is easy to see that $\phi_*$ is an isomorphism
of complex analytic spaces.

By Theorem \ref{dt5thm2}, $W^Y$ near $[F]$ is locally isomorphic to
$\Crit(f)$ as a complex analytic space, for $f:U\ra\C$ holomorphic
and $U\subset\Ext^1(F,F)$ open. Since $W^X\cong W^Y$ as complex
analytic spaces and $\Ext^1(E,E)\cong\Ext^1(E_\an,E_\an)
\cong\Ext^1(F_\an,F_\an)\cong\Ext^1(F,F)$ by \cite{Serr}, Theorem
\ref{dt5thm2} for $\coh_\cs(X)$ follows.

For Theorem \ref{dt5thm3}, let $S^X,\Phi^X$ be as in the second
paragraph of Theorem \ref{dt5thm3} for $\fM^X,E$ in $\coh_\cs(X)$,
and $S^Y,\Phi^Y$ for $\fM^Y,F$ on $\coh(Y)$. Then as in Proposition
\ref{dt9prop6}(b) in the sheaf case there are formally versal
families\index{versal family} $(S^X,{\cal D}^X)$ of compactly-supported
coherent sheaves on $X$ with ${\cal D}_0^X\cong E$, and $(S^Y,{\cal
D}^Y)$ of coherent sheaves on $Y$ with ${\cal D}_0^Y\cong F$. The
corresponding families $(S^X(\C),{\cal D}_\an^X)$, $(S^Y(\C),{\cal
D}_\an^Y)$ of complex analytic coherent sheaves are versal. Let
$W^X,W^Y$ be the subsets of $S^X(\C),S^Y(\C)$ representing sheaves
supported on $U,V$. Then $W^X,W^Y$ are open neighbourhoods of 0 in
$S^X(\C),S^Y(\C)$, in the analytic topology.

Since $\phi:U\ra V$ is an isomorphism of complex manifolds, $\phi_*$
takes versal families of complex analytic sheaves on $U$ to versal
families of complex analytic sheaves on $V$. Therefore
$\bigl(W^X,\phi_*({\cal D}_\an^X\vert_{W^X})\bigr)$ and $(W^Y,{\cal
D}_\an^Y\vert_{W^Y})$ are both versal families of complex analytic
coherent sheaves on $V$ with $\phi_*({\cal D}_\an^X\vert_{W^X})_0
\cong F_\an\cong({\cal D}_\an^Y\vert_{W^Y})_0$. We can now argue as
in Proposition \ref{dt9prop7} using the fact that $T_0W^X\cong
\Ext^1(E_\an,E_\an)\cong \Ext^1(F_\an,F_\an)\cong T_0W^Y$ that $W_X$
near 0 is isomorphic as a complex analytic space to $W^Y$ near 0.
Theorem \ref{dt5thm3} for $X$ then follows from Theorem
\ref{dt5thm3} for~$Y$.
\end{proof}

\begin{quest} Let\/ $X$ be a noncompact Calabi--Yau\/ $3$-fold
over\/ $\C$. Can you prove Theorems {\rm\ref{dt5thm2}} and\/
{\rm\ref{dt5thm3}} hold in\/ $\coh_\cs(X)$ without assuming\/ $X$ is
compactly embeddable?
\label{dt6quest2}
\end{quest}

All of \S\ref{dt52}--\S\ref{dt53} now extends immediately to
$\coh_\cs(X)$ for $X$ a compactly embeddable noncompact Calabi--Yau
3-fold: the Behrend function identities\index{Behrend
function!identities} \eq{dt5eq2}--\eq{dt5eq3}, the Lie algebra
morphisms $\ti\Psi,\ti\Psi{}^{\chi,\Q}$, the definition of
generalized Donaldson--Thomas invariants $\bar{DT}{}^\al(\tau)$ for
$\al\in K(\coh_\cs(X))$, and the transformation law \eq{dt5eq14}
under change of stability condition.

In \S\ref{dt54} the definition of stable pairs still works, and the
moduli scheme $\M_\stp^{\al,n}(\tau')$ is well-defined, but may not
be {\it proper}. So \eq{dt5eq15} does not make sense, and we take
the weighted Euler characteristic \eq{dt5eq16} to be the {\it
definition\/} of the pair invariants $PI^{\al,n}(\tau')$. The
deformation-invariance\index{Donaldson--Thomas
invariants!deformation-invariance} of
$\bar{DT}{}^\al(\tau),PI^{\al,n}(\tau')$ in Corollaries
\ref{dt5cor3} and \ref{dt5cor4} will not hold for $\coh_\cs(X)$ in
general, as Example \ref{dt6ex9} shows. But Theorem \ref{dt5thm10},
expressing the $PI^{\al,n}(\tau')$ in terms of the
$\bar{DT}{}^\be(\tau)$, is still valid, with proof essentially
unchanged; it does not matter that $\cO_X(-n)$ lies in $\coh(X)$
rather than $\coh_\cs(X)$.

As in \S\ref{dt62} we define BPS invariants
$\hat{DT}{}^\al(\tau)$\index{Donaldson--Thomas invariants!BPS invariants
$\hat{DT}{}^\al(\tau)$}\index{BPS invariants} for $\coh_\cs(X)$ from the
$\bar{DT}{}^\al(\tau)$, and conjecture they are integers for generic
$(\tau,T,\le)$. The results of \S\ref{dt63} computing invariants
counting dimension zero sheaves also hold in the noncompact case, as
the proof of Theorem \ref{dt6thm1} in \cite{BeFa2} does not need $X$
compact.

\begin{ex} Let $X$ be the noncompact Calabi--Yau 3-fold
$\cO(-1)\op\cO(-1)\ra\CP^1$, that is, the total space of the rank 2
vector bundle $\cO(-1)\op\cO(-1)$ over $\CP^1$. This is a very
familiar example from the Mathematics and String Theory\index{String
Theory} literature; it is a crepant resolution of the {\it
conifold\/}\index{conifold} $z_1^2+z_2^2+z_3^2+z_4^2=0$ in $\C^4$,
so it is often known as the {\it resolved
conifold}.\index{conifold!resolved}

Let $Y$ be any compact Calabi--Yau 3-fold containing a rational
curve $C\cong\CP^1$ with normal bundle $\cO(-1)\op\cO(-1)$; explicit
examples such as quintics are easy to find. Then $Y$ near $C$ is
isomorphic as a complex manifold to $X$ near the zero section. Since
any compact subset $K$ in $X$ can be mapped into any open
neighbourhood of the zero section in $X$ by a sufficiently small
dilation, it follows that $X$ is {\it compactly embeddable}, and our
theory applies for~$\coh_\cs(X)$.

We have $H^{2j}_\cs(X;\Q)=\Q$ for $j=2,3$ and $H^{2j}_\cs(X;\Q)=0$
otherwise. For $E\in\coh_\cs(X)$ we have $\ch_\cs(E)=\bigl(0,0,
\ch_2(E),\ch_3(E)\bigr)$, where $\ch_j(E)\in\Z\subset\Q=
H^{2j}_\cs(X;\Q)$ for $j=2,3$. Thus we can identify $K(\coh_\cs(X))$
with $\Z^2$ with coordinates $(a_2,a_3)$, where $[E]=(a_2,a_3)$ if
$\ch_j(E)=a_j$ for $j=2,3$. The class of a point sheaf $\cO_x$ for
$x\in X$ is $(0,1)$, and if $i:\CP^1\ra X$ is the zero section, the
class of $i_*(\cO_{\CP^1}(k))$ is $(1,1+k)$. The positive cone
$C(\coh_\cs(X))$ is
\e
C(\coh_\cs(X))=\bigl\{(a_2,a_3)\in\Z^2:\text{$a_2=0$ and $a_3>0$, or
$a_2>0$}\bigr\}.
\label{dt6eq40}
\e
The Euler form $\bar\chi$ on $\coh_\cs(X)$ is zero,
so~$K^\num(\coh_\cs(X))=0$.

Let $(\tau,G,\le)$ be Gieseker stability on $X$ with respect to the
ample line bundle $\pi^*(\cO_{\CP^1}(1))$. We can write down the
full Donaldson--Thomas and BPS invariants $\bar{DT}{}^\al(\tau),
\hat{DT}{}^\al(\tau)$ using the work of \S\ref{dt63}--\S\ref{dt64}.
We have
\ea
\bar{DT}{}^{(a_2,a_3)}(\tau)&=\begin{cases} \displaystyle
-2\sum_{m\ge 1, \; m \mid a_3}\frac{1}{m^2}, &
a_2=0,\; a_3\ge 1, \\
\displaystyle \frac{1}{a_2^2}, & a_2>0, \; a_2 \mid a_3, \\
0, & \text{otherwise,}
\end{cases}
\label{dt6eq41}\\
\hat{DT}{}^{(a_2,a_3)}(\tau)&=\begin{cases} -2, &
a_2=0,\; a_3\ge 1, \\
\phantom{-}1, & a_2=1,\\
\phantom{-}0, & \text{otherwise.}
\end{cases}
\label{dt6eq42}
\ea
Here the first lines of \eq{dt6eq41}--\eq{dt6eq42} count dimension 0
sheaves and are taken from \eq{dt6eq19}--\eq{dt6eq20}, noting that
$\chi(X)=2$, and the rest which count dimension 1 sheaves follow
from Proposition \ref{dt6prop3}. We will return to this example
in~\S\ref{dt752}.
\label{dt6ex10}
\end{ex}

It is easy to show that other important examples of noncompact
Calabi--Yau 3-folds such as $K_{\CP^2}$ and $K_{\CP^1\times\CP^1}$
are also compactly embeddable.\index{Calabi--Yau
3-fold!noncompact|)}\index{coherent
sheaf!compactly-supported|)}\index{compactly embeddable|)}

\subsection[Configuration operations and extended D--T
invariants]{Configuration operations and extended Donaldson--Thomas
invariants}
\label{dt68}\index{configurations|(}\index{Ringel--Hall
algebra!extra operations|(}

Let $X$ be a Calabi--Yau 3-fold over $\C$, and $(\tau,T,\le)$ a weak
stability condition on $\coh(X)$ of Gieseker or $\mu$-stability
type. In \S\ref{dt32} we explained how to construct elements
$\bdss^\al(\tau)$ in the algebra $\SFa(\fM)$ and $\bep^\al(\tau)$ in
the Lie algebra $\SFai(\fM)$ for $\al\in C(\coh(X))$. Then in
\S\ref{dt53} we defined a Lie algebra morphism
$\ti\Psi:\SFai(\fM)\ra\ti L(X)$, and applied $\ti\Psi$ to
$\bep^\al(\tau)$ to define~$\bar{DT}{}^\al(\tau)$.

Now the theory of \cite{Joyc3,Joyc4,Joyc5,Joyc6} is even more
complicated than was explained in \S\ref{dt3}. As well as the
Ringel--Hall product $*$ on $\SFa(\fM)$ and the Lie bracket
$[\,,\,]$ on $\SFai(\fM)$, in \cite[Def.~5.3]{Joyc4} using the idea
of `configurations' we define an infinite family of multilinear
operations
$P_{(I,\preceq)}$\nomenclature[PI\leqslant]{$P_{(I,\preceq)}$}{multilinear
operation on $\SFa(\fM)$ depending on a poset $(I,\preceq)$} on
$\SFa(\fM)$ depending on a finite partially ordered set
(poset)\index{poset} $(I,\preceq)$, with $*=P_{(\{1,2\},\le)}$. It
follows from \cite[Th.~5.17]{Joyc4} that certain linear combinations
of the $P_{(I,\preceq)}$ are multilinear operations on $\SFai(\fM)$,
with $[\,,\,]$ being the simplest of these.

Also, in \cite[\S 8]{Joyc5}, given $(\tau,T,\le)$ on $\coh(X)$ we
construct much larger families of interesting elements of
$\SFa(\fM)$ and $\SFai(\fM)$ than just the $\bdss^\al(\tau)$ and
$\bep^\al(\tau)$. In \cite[Def.~8.9]{Joyc5} we define a Lie
subalgebra $\bar{\cal L}^{\rm pa}_\tau$\nomenclature[Lpat]{$\bar{\cal L}^{\rm
pa}_\tau,\bar{\cal L}^{\rm to}_\tau$}{Lie subalgebras of
Ringel--Hall Lie algebra $\SFai(\fM)$} of $\SFai(\fM)$ which is
spanned by certain elements $\bs\si_*(I)\bar\de^{\rm b}_{\rm
si}(I,\preceq,\ka,\tau)$ of $\SFai(\fM)$, where $(I,\preceq)$ is a
finite, connected poset and $\ka:I\ra C(\coh(X))$ is a map. The
$\bep^\al(\tau)$ lie in $\bar{\cal L}^{\rm pa}_\tau$, and may be
written as finite $\Q$-linear combinations of $\bs\si_*(I)
\bar\de^{\rm b}_{\rm si}(I,\preceq,\ka,\tau)$, but the
$\bep^\al(\tau)$ do not generate $\bar{\cal L}^{\rm pa}_\tau$ as a
Lie algebra, they only generate a smaller Lie algebra $\bar{\cal
L}^{\rm to}_\tau$. The $\bep^\al(\tau)$ do generate $\bar{\cal
L}^{\rm pa}_\tau$ over the infinite family of multilinear operations
on $\SFai(\fM)$ defined from the $P_{(I,\preceq)}$. In \cite[\S
6.5]{Joyc6} we apply the Lie algebra morphism $\Psi:\SFai(\fM)\ra
L(X)$ of \S\ref{dt34} to the $\bs\si_*(I)\bar\de^{\rm b}_{\rm
si}(I,\preceq,\ka,\tau)$ to define invariants $J^{\rm b}_{\rm
si}(I,\preceq,\ka,\tau)\in\Q$, and prove they satisfy a
transformation law under change of stability condition. So replacing
$\Psi$ by $\ti\Psi$ we define:

\begin{dfn} In the situation above, define {\it extended
Donaldson--Thomas invariants\/}\index{Donaldson--Thomas
invariants!extended}\nomenclature[Jsi]{$\ti J{}^{\rm b}_{\rm
si}(I,\preceq,\ka,\tau)$}{extended Donaldson--Thomas invariants}
$\ti J{}^{\rm b}_{\rm si}(I,\preceq,\ka,\tau)\in\Q$, where
$(I,\preceq)$ is a finite, connected poset and $\ka:I\ra C(\coh(X))$
is a map, by
\e
\ti\Psi\bigl(\bs\si_*(I) \bar\de^{\rm b}_{\rm
si}(I,\preceq,\ka,\tau)\bigr)=\ti J{}^{\rm b}_{\rm
si}(I,\preceq,\ka,\tau)\,\ti\la^{\ka(I)},
\label{dt6eq43}
\e
where $\bs\si_*(I)\bar\de^{\rm b}_{\rm si}(I,\preceq,\ka,\tau)\in
\SFai(\fM)$ is as in~\cite[Def.~8.1]{Joyc5}.
\label{dt6def3}
\end{dfn}

Here are some good properties of the $\ti J{}^{\rm b}_{\rm
si}(I,\preceq,\ka,\tau)$:
\begin{itemize}
\setlength{\itemsep}{0pt}
\setlength{\parsep}{0pt}
\item $\bep^\al(\tau)$ may be written as a $\Q$-linear
combination of the $\bs\si_*(I)\bar\de^{\rm b}_{\rm
si}(I,\preceq, \ka,\tau)$. Thus comparing \eq{dt5eq7} and
\eq{dt6eq43} shows that $\bar{DT}{}^\al(\tau)$ is a $\Q$-linear
combination of the $\ti J{}^{\rm b}_{\rm
si}(I,\preceq,\ka,\tau)$.
\item $\bar{\cal L}^{\rm pa}_\tau$ is a Lie algebra spanned by the
$\bs\si_*(I)\bar\de^{\rm b}_{\rm si}(I,\preceq,\ka,\tau)$, and the
Lie bracket of two generators $\bs\si_*(I)\bar\de^{\rm b}_{\rm
si}(I,\preceq,\ka,\tau)$ may be written as an explicit $\Q$-linear
combination of other generators. So since $\ti\Psi$ is a Lie algebra
morphism, we can deduce many {\it multiplicative relations\/}
between the $\ti J{}^{\rm b}_{\rm si}(I,\preceq,\ka,\tau)$.
\item As for the $J^{\rm b}_{\rm si}(I,\preceq,\ka,\tau)$ in
\cite{Joyc6}, there is a known {\it wall-crossing
formula\/}\index{wall-crossing formula} for the $\ti J{}^{\rm
b}_{\rm si}(I,\preceq,\ka,\tau)$ under change of stability
condition.
\end{itemize}

Since the $\bar{DT}{}^\al(\tau)$ are deformation-invariant by
Corollary \ref{dt5cor4}, we can ask whether the $\ti J{}^{\rm
b}_{\rm si}(I,\preceq,\ka,\tau)$ are
deformation-invariant.\index{Donaldson--Thomas
invariants!deformation-invariance|(} Also, we can ask whether the
multilinear operations on $\SFai(\fM)$ above are taken by $\ti\Psi$
to multilinear operations on $\ti L(X)$ by $\ti\Psi$. The answer to
both is no, as we show by an example.

\begin{ex} Define a 1-morphism $\phi:\fM\times\fM\ra\fM$ by
$\phi(E,F)=E\op F$ on objects. In a similar way to the Ringel--Hall
product $*$ in \S\ref{dt31}, define a bilinear operation $\bu$ on
$\SFa(\fM)$ by $f\bu g=\phi_*(f\ot g)$. Then $\bu$ is commutative
and associative; in the notation of \cite[Def.~5.3]{Joyc4} we have
$\bu=P_{(\{1,2\},\triangleleft)}$, where $i\triangleleft j$ if
$i=j$. Define a bilinear operation $\diamond$ on $\SFa(\fM)$ by
$f\diamond g=f*g-f\bu g$. Then \cite[Th.~5.17]{Joyc4} implies that
if $f,g\in\SFai(\fM)$ then $f\diamond g\in\SFai(\fM)$, so $\diamond$
restricts to a bilinear operation on $\SFai(\fM)$. We have
$[f,g]=f\diamond g-g\diamond f$, since $\bu$ is commutative. The Lie
algebra $\bar{\cal L}^{\rm pa}_\tau$ above is closed
under~$\diamond$.

Now let us work in the situation of \S\ref{dt65}. Consider the
elements $\bep^{(0,0,\be,k)}(\tau)_t$, $\bep^{(0,0,\ga,l)}(\tau)_t$
and $\bep^{(0,0,\be,k)}(\tau)_t\diamond\bep^{(0,0,\ga,l)}(\tau)_t$
in $\SFai(\fM)_t$, for $t\in\De_\ep$, and their images under
$\ti\Psi$. We find that $\bep^{(0,0,\be,k)}(\tau)_t=\bde_{E_t(k)}$
and $\bep^{(0,0,\ga,l)}(\tau)_t=\bde_{F_t(l)}$, so
$\bep^{(0,0,\be,k)}(\tau)_t\bu\bep^{(0,0,\ga,l)}(\tau)_t=\bde_{E_t(k)\op
F_t(l)}$. But
$\bep^{(0,0,\be,k)}(\tau)_t*\bep^{(0,0,\ga,l)}(\tau)_t$ is
$\bde_{E_t(k)\op F_t(l)}$ when $t\ne 0$, and $\bde_{E_0(k)\op
F_t(l)}+\bde_{G_0(k,l-1)}$ when $t=0$. Hence
\begin{equation*}
\bep^{(0,0,\be,k)}(\tau)_t\diamond\bep^{(0,0,\ga,l)}(\tau)_t=
\begin{cases} 0, & t\ne 0, \\ \bde_{G_0(k,l-1)}, & t=0. \end{cases}
\end{equation*}
Since each of $E_t(k)$, $F_t(l)$ and $G_0(k,l-1)$ are simple and
rigid, we see that
\e
\begin{gathered}
\ti\Psi\bigl(\bep^{(0,0,\be,k)}(\tau)_t\bigr)=-\ti\la{}^{(0,0,\be,k)},\quad
\ti\Psi\bigl(\bep^{(0,0,\ga,l)}(\tau)_t\bigr)=-\ti\la{}^{(0,0,\ga,l)},\\
\text{and}\qquad \ti\Psi\bigl(\bep^{(0,0,\be,k)}(\tau)_t
\diamond\bep^{(0,0,\ga,l)}(\tau)_t\bigr)=
\begin{cases} 0, & t\ne 0, \\
-\ti\la{}^{(0,0,\be+\ga,k+l)}, & t=0. \end{cases}
\end{gathered}
\label{dt6eq44}
\e
\label{dt6ex11}
\end{ex}

Equation \eq{dt6eq44} tells us three important things. Firstly,
there cannot exist a deformation-invariant bilinear operation
$\raisebox{1.5pt}{\scriptsize$\blacklozenge$}$ on $\ti L(X)$ with
$\ti\Psi(f\diamond
g)=\ti\Psi(f)\kern.1em\raisebox{1.5pt}{\scriptsize
$\blacklozenge$}\kern.1em\ti\Psi(g)$ for all $f,g\in\SFai(\fM)_t$.
Thus, although $\ti\Psi$ is compatible with the Lie bracket
$[\,,\,]$ on $\SFai(\fM)$, it will not be nicely compatible with the
more general multilinear operations on $\SFai(\fM)$ defined using
the~$P_{(I,\preceq)}$.

Secondly, $\bep^{(0,0,\be,k)}(\tau)_t\diamond
\bep^{(0,0,\ga,l)}(\tau)_t$ is an element of $\bar{\cal L}^{\rm
pa}_\tau$, a $\Q$-linear combination of elements
$\bs\si_*(I)\bar\de^{\rm b}_{\rm si}(I,\preceq,\ka,\tau)$, and its
image under $\ti\Psi$ is a $\Q$-linear combination of $\ti J{}^{\rm
b}_{\rm si}(I,\preceq,\ka,\tau)$, multiplied by
$\ti\la{}^{(0,0,\be+\ga,k+l)}$. Equation \eq{dt6eq44} shows that
this $\Q$-linear combination of $\ti J{}^{\rm b}_{\rm
si}(I,\preceq,\ka,\tau)$ is not deformation-invariant, so some at
least of the extended Donaldson--Thomas invariants in Definition
\ref{dt6def3} are not deformation-invariant. Thirdly, the $\ti
J{}^{\rm b}_{\rm si}(I,\preceq,\ka,\tau)$ do in general include
extra information not encoded in the $\bar{DT}{}^\al(\tau)$, as if
they did not they would have to be deformation-invariant.

Here and in \S\ref{dt65} we have considered several ways of defining
invariants by counting sheaves weighted by the Behrend function
$\nu_\fM$, but which turn out not to be deformation-invariant. It
seems to the authors that deformation-invariance arises in
situations where you have proper moduli schemes with obstruction
theories, such as $\M_\stp^{\al,n}(\tau')$ in \S\ref{dt54}, and that
you should not expect deformation-invariance if you cannot find such
proper moduli schemes in the problem.

\begin{quest} Let\/ $X$ be a Calabi--Yau $3$-fold over $\C$. Are
there any $\Q$-linear combinations of extended Donaldson--Thomas
invariants $\ti J{}^{\rm b}_{\rm si}(I,\preceq,\ka,\tau)$ of\/ $X,$
which are unchanged by deformations of\/ $X$ for all\/ $X,$ but
which cannot be written in terms of the\/~$\bar{DT}{}^\al(\tau)?$
\label{dt6quest3}
\end{quest}\index{Calabi--Yau 3-fold|)}\index{Donaldson--Thomas
invariants!computation in
examples|)}\index{configurations|)}\index{Donaldson--Thomas
invariants!deformation-invariance|)}\index{Ringel--Hall algebra!extra
operations|)}

\section{Donaldson--Thomas theory for quivers with superpotentials}
\label{dt7}\index{quiver!with superpotential|(}\index{quiver|(}

The theory of \S\ref{dt5}--\S\ref{dt6} relied on three properties of
the abelian category $\coh(X)$ of coherent sheaves on a compact
Calabi--Yau 3-fold $X$:
\begin{itemize}
\setlength{\itemsep}{0pt}
\setlength{\parsep}{0pt}
\item[(a)] The moduli stack $\fM$ of objects in $\coh(X)$ can
locally be written in terms of $\Crit(f)$ for $f:U\ra\C$ holomorphic
and $U$ smooth, as in Theorem \ref{dt5thm3};
\item[(b)] For all $D,E\in\coh(X)$ we have
\begin{align*}
\bar\chi\bigl([D],[E]\bigr)=\,&\bigl(\dim\Hom(D,E)-\dim\Ext^1(D,E)
\bigr)-\\
&\bigl(\dim\Hom(E,D)-\dim\Ext^1(E,D)\bigr),
\end{align*}
where $\bar\chi:K(\coh(X))\times K(\coh(X))\ra\Z$ is biadditive
and antisymmetric. This is a consequence of Serre duality in
dimension 3, that is, $\Ext^i(D,E)\cong\Ext^{3-i}(E,D)^*$, but
we do not actually need Serre duality to hold; and
\item[(c)] We can form {\it proper\/} moduli schemes
$\M_\st^\al(\tau)$ when $\M_\rss^\al(\tau)=\M_\st^\al(\tau)$,
and $\M_\stp^{\al,n}(\tau')$ in general, which have symmetric
obstruction theories.
\end{itemize}
As in \S\ref{dt67}, for a noncompact Calabi--Yau 3-fold $X$,
properties (a),(b) hold for compactly-supported sheaves
$\coh_\cs(X)$, but the properness in (c) fails. Properness is
essential in proving $\bar{DT}{}^\al(\tau), PI^{\al,n}(\tau')$ are
{\it deformation-invariant\/} in~\S\ref{dt54}.

We will show that properties (a) and (b) also hold for $\C$-linear
abelian categories of representations $\modCQI$ of a quiver $Q$ with
relations $I$ coming from a superpotential $W$. So we can extend
much of \S\ref{dt5}--\S\ref{dt6} to these categories. As property
(c) does not hold, the Donaldson--Thomas type invariants we define
may not be unchanged under deformations of the underlying geometry
or algebra. Much work has already been done in this area, and we
will explain as we go along how our results relate to those in the
literature.

\subsection{Introduction to quivers}
\label{dt71}

Let $\K$ be an algebraically closed field of characteristic
zero.\index{field $\K$!characteristic zero} Here are the basic
definitions in quiver theory. Benson \cite[\S 4.1]{Bens} is a good
reference.

\begin{dfn} A {\it quiver\/}\index{quiver!definition} $Q$ is a
finite directed graph. That is, $Q$ is a quadruple $(Q_0,Q_1,h,t)$,
where $Q_0$ is a finite set of {\it vertices}, $Q_1$ is a finite set
of {\it edges}, and $h,t:Q_1\ra Q_0$ are maps giving the {\it
head\/} and {\it tail\/} of each
edge.\nomenclature[QI]{$(Q,I)$}{quiver with
relations}\nomenclature[KQ]{$\K Q$}{path algebra of a
quiver}\nomenclature[KQI]{$\K Q/I$}{algebra of a quiver with
relations $(Q,I)$}

The {\it path algebra}\index{quiver!path algebra} $\K Q$ is an
associative algebra over $\K$ with basis all {\it paths of length\/}
$k\ge 0$, that is, sequences of the form
\e
v_0\,{\buildrel e_1\over\longra}\, v_1\ra\cdots\ra
v_{k-1}\,{\buildrel e_k\over\longra}\,v_k,
\label{dt7eq1}
\e
where $v_0,\ldots,v_k\in Q_0$, $e_1,\ldots,e_k\in Q_1$,
$t(a_i)=v_{i-1}$ and $h(a_i)=v_i$. Multiplication is given by
composition of paths in reverse order.

For $n\ge 0$, write $\K Q_{(n)}$ for the vector subspace of $\K Q$
with basis all paths of length $k\ge n$. It is an ideal in $\K Q$. A
{\it quiver with relations\/} $(Q,I)$ is defined to be a quiver $Q$
together with a two-sided ideal $I$ in $\K Q$ with $I\subseteq\K
Q_{(2)}$. Then $\K Q/I$ is an associative $\K$-algebra.
\label{dt7def1}
\end{dfn}

We define {\it representations\/} of quivers, and of quivers with
relations.

\begin{dfn} Let $Q=(Q_0,Q_1,h,t)$ be a quiver. A {\it
representation}\index{quiver!representation} of $Q$ consists of
finite-dimensional $\K$-vector spaces $X_v$ for each $v\in Q_0$, and
linear maps $\rho_e:X_{t(e)}\ra X_{h(e)}$ for each $e\in Q_1$.
Representations of $Q$ are in 1-1 correspondence with {\it
finite-dimensional left\/ $\K Q$-modules} $(X,\rho)$, as follows.

Given $X_v,\rho_e$, define $X=\bigop_{v\in Q_0}X_v$, and a linear
$\rho:\K Q\ra\End(X)$ taking \eq{dt7eq1} to the linear map $X\ra X$
acting as $\rho_{e_k}\ci \rho_{e_{k-1}}\ci\cdots\ci\rho_{e_1}$ on
$X_{v_0}$, and 0 on $X_v$ for $v\ne v_0$. Then $(X,\rho)$ is a left
$\K Q$-module. Conversely, any such $(X,\rho)$ comes from a unique
representation of $Q$. If $(Q,I)$ is a quiver with relations, a {\it
representation of\/} $(Q,I)$ is a representation of $Q$ such that
the corresponding left $\K Q$-module $(X,\rho)$ has $\rho(I)=0$.

A {\it morphism of representations} $\phi:(X,\rho)\ra(Y,\si)$ is a
linear map $\phi:X\ra Y$ with $\phi\ci\rho(\ga)=\si(\ga)\ci\phi$ for
all $\ga\in\K Q$. Equivalently, $\phi$ defines linear maps
$\phi_v:X_v\ra Y_v$ for all $v\in Q_0$ with
$\phi_{h(e)}\ci\rho_e=\si_e\ci\phi_{t(e)}$ for all $e\in Q_1$. Write
$\modKQ,\modKQI$ for the categories of representations of $Q$ and
$(Q,I)$. They are $\K$-linear abelian categories, of finite
length.\nomenclature[modKQ]{$\modKQ$}{abelian category of representations of a
quiver $Q$}\nomenclature[modKQI]{$\modKQI$}{abelian category of representations
of a quiver with relations}

If $(X,\rho)$ is a representation of $Q$ or $(Q,I)$, the {\it
dimension vector}\index{quiver!dimension vector} $\bdim(X,\rho)$ of
$(X,\rho)$ in $\Z^{Q_0}_{\sst\ge 0}\subset\Z^{Q_0}$ is
$\bdim(X,\rho):v\mapsto\dim_\K X_v$. This induces surjective
morphisms $\bdim:K_0(\modKQ)$ or~$K_0(\modKQI)\ra \Z^{Q_0}$.

In \cite[\S 10]{Joyc3} we show that $\modKQ$ and $\modKQI$ satisfy
Assumption \ref{dt3ass}, where we choose the quotient group
$K(\modKQ)$ or $K(\modKQI)$ to be $\Z^{Q_0}$, using this morphism
$\bdim$. For quivers we will always take $K(\modKQI)$ to be
$\Z^{Q_0}$ rather than the numerical Grothendieck group
$K^\num(\modKQI)$; one reason is that in some interesting cases the
Euler form $\bar\chi$ on $K_0(\modKQI)$ is zero, so that
$K^\num(\modKQI)=0$, but $\Z^{Q_0}$ is nonzero.
\label{dt7def2}
\end{dfn}

\begin{rem} There is an analogy between quiver representations and
sheaves on noncompact schemes, as in \S\ref{dt67}. For $\K,Q$ or
$(Q,I)$ as above, we define $\projKQ$ and $\projKQI$ to be the exact
categories of finitely generated, projective, but not necessarily
finite-dimensional modules over $\K Q$ and $\K Q/I$. Here a
representation is {\it projective\/}\index{quiver!projective
representation} if it is a direct summand of a free
module.\nomenclature[projKQI]{$\projKQI$}{exact category of projective
representations of $(Q,I)$}\nomenclature[projKQ]{$\projKQ$}{exact category of
projective representations of a quiver $Q$}

If $E\in\projKQI$ and $F\in\modKQI$ then $\Ext^*(E,F)$ is
finite-dimensional, so we have a biadditive pairing
$\bar\chi:K_0(\projKQI)\times K_0(\modKQI)\ra\Z$ given by
$\bar\chi([E],[F])=\sum_{i\ge 0}(-1)^i\dim\Ext^i(E,F)$. The quotient
of $K_0(\projKQI)$ by the left kernel of $\bar\chi$, and the
quotient of $K_0(\modKQI)$ by the right kernel of $\bar\chi$, are
both naturally isomorphic to the dimension vectors $\Z^{Q_0}$.

We can think of $\modKQ,\modKQI$ as like the category of compactly
supported sheaves $\coh_\cs(X)$ for some smooth noncompact
$\K$-scheme $X$, and $\projKQ,\ab\projKQI$ as like the category
$\coh(X)$ of all coherent sheaves. The pairing
$\bar\chi:K_0(\projKQI)\times K_0(\modKQI)\ra\Z$ is like the pairing
$\bar\chi:K_0(\coh(X))\times K_0(\coh_\cs(X))\ra\Z$ in \S\ref{dt67}.
Thus, our choice of $K(\modKQ)=\Z^{Q_0}$ is directly analogous to
our definition of $K(\coh_\cs(X))$ in~\S\ref{dt67}.

As we will discuss briefly in \S\ref{dt75}, there are examples known
in which there are equivalences of derived categories
$D^b(\modKQ/I)\sim D^b(\coh_\cs(X))$ for $X$ a noncompact
Calabi--Yau 3-fold over $\K$. Then we also expect equivalences of
derived categories~$D^b(\projKQ/I)\sim D^b(\coh(X))$.
\label{dt7rem1}
\end{rem}

If $Q$ is a quiver, the moduli stack $\fM_Q$ of objects $(X,\rho)$
in $\modKQ$ is an Artin $\K$-stack. For $\bs d\in\Z_{\sst\ge
0}^{Q_0}$, the open substack $\fM^{\bs d}_Q$ of $(X,\rho)$ with
$\bdim(X,\rho)=\bs d$ has a very explicit description: as a quotient
$\K$-stack we have\nomenclature[McQ]{$\fM_Q$}{moduli stack of
representations of a quiver $Q$}
\e
\fM^{\bs d}_Q\cong\ts\bigl[\prod_{e\in Q_1}\Hom(\K^{\bs
d(t(e))},\K^{\bs d(h(e))})/\prod_{v\in Q_0}\GL(\bs d(v))\bigr].
\label{dt7eq2}
\e
If $(Q,I)$ is a quiver with relations, the moduli stack $\fM_{Q,I}$
of objects $(X,\rho)$ in $\modKQI$ is a substack of $\fM_Q$, and for
$\bs d\in\Z_{\sst\ge 0}^{Q_0}$ we may
write\nomenclature[McQI]{$\fM_{Q,I}$}{moduli stack of
representations of a quiver with relations}
\e
\fM^{\bs d}_{Q,I}\cong\ts\bigl[V_{Q,I}^{\bs d}/\prod_{v\in
Q_0}\GL(\bs d(v))\bigr],
\label{dt7eq3}
\e
where $V_{Q,I}^{\bs d}$ is a closed $\prod_{v\in Q_0}\GL(\bs
d(v))$-invariant $\K$-subscheme of $\prod_{e\in Q_1}\Hom\ab(\K^{\bs
d(t(e))},\ab\K^{\bs d(h(e))})$ defined using the relations~$I$.

Let $Q=(Q_0,Q_1,h,t)$ be a quiver, without relations. It is well
known that $\Ext^i(D,E)=0$ for all $D,E\in\modKQ$ and $i>1$, and
\e
\dim_\K\Hom(D,E)-\dim_\K\Ext^1(D,E)=\hat\chi(\bdim D,\bdim E),
\label{dt7eq4}
\e
where $\hat\chi:\Z^{Q_0}\times\Z^{Q_0}\ra\Z$ is the Euler form of
$\modKQ$, given by
\e
\hat\chi(\bs d,\bs e)=\ts\sum_{v\in Q_0}\bs d(v)\bs e(v)-\sum_{e\in
Q_1}\bs d(t(e))\bs e(h(e)).
\label{dt7eq5}
\e
Note that $\hat\chi$ need not be antisymmetric. Define
$\bar\chi:\Z^{Q_0}\times\Z^{Q_0}\ra\Z$ by
\e
\bar\chi(\bs d,\bs e)=\hat\chi(\bs d,\bs e)-\hat\chi(\bs e,\bs d)=
\ts\sum_{e\in Q_1}\bigl(\bs d(h(e))\bs e(t(e))- \bs d(t(e))\bs
e(h(e))\bigr).
\label{dt7eq6}
\e
Then $\bar\chi$ is antisymmetric, and as in (b) above, for all
$D,E\in\modKQ$ we have
\e
\begin{split}
\bar\chi\bigl(\bdim D,\bdim E\bigr)=
\,&\bigl(\dim\Hom(D,E)-\dim\Ext^1(D,E)\bigr)-\\
&\bigl(\dim\Hom(E,D)-\dim\Ext^1(E,D)\bigr).
\end{split}
\label{dt7eq7}
\e
This is the analogue of \eq{dt3eq14} for Calabi--Yau 3-folds,
property (b) at the beginning of \S\ref{dt7}. Theorem \ref{dt7thm1}
generalizes \eq{dt7eq7} to quivers with a superpotential.

We define a class of {\it stability conditions\/}\index{stability
condition!for quivers} on $\modKQI$,~\cite[Ex.~4.14]{Joyc7}.

\begin{ex} Let $(Q,I)$ be a quiver with relations, and take
$K(\modKQI)$ to be $\Z^{Q_0}$, as above. Then $C(\modKQI)=
\Z_{\sst\ge 0}^{Q_0}\sm\{0\}$. Let $c:Q_0\ra\R$ and
$r:Q_0\ra(0,\iy)$ be maps. Define $\mu:C(\modKQI)\ra\R$ by
\begin{equation*}
\mu(\bs d)=\frac{\sum_{v\in Q_0}c(v)\bs d(v)}{\sum_{v\in Q_0}r(v)\bs
d(v)}\,.
\end{equation*}
Note that $\sum_{v\in Q_0}r(v)\bs d(v)>0$ as $r(v)>0$ for all $v\in
Q_0$, and $\bs d(v)\ge 0$ for all $v$ with $\bs d(v)>0$ for some
$v$. Then \cite[Ex.~4.14]{Joyc7} shows that
$(\mu,\R,\le)$\nomenclature[\mu R]{$(\mu,\R,\le)$}{slope stability
condition on $\modKQ$ or $\modKQI$} is a {\it permissible stability
condition\/}\index{stability condition!permissible} on $\modKQI$
which we call {\it slope stability}. Write $\fM_\rss^{\bs d}(\mu)$
for the open $\K$-substack of $\mu$-semistable objects in class $\bs
d$ in~$\fM^{\bs d}_{Q,I}$.

A simple case is to take $c\equiv 0$ and $r\equiv 1$, so that
$\mu\equiv 0$. Then $(0,\R,\le)$ is a trivial stability condition on
$\modKQ$ or $\modKQI$, and every nonzero object in $\modKQ$ or
$\modKQI$ is 0-semistable, so that $\fM_\rss^{\bs d}(0)=\fM^{\bs
d}_{Q,I}$.
\label{dt7ex1}
\end{ex}

\subsection{Quivers with superpotentials, and 3-Calabi--Yau
categories}
\label{dt72}

We shall be interested in quivers with relations coming from a {\it
superpotential}. This is an idea which originated in Physics. Two
foundational mathematical papers on them are Ginzburg \cite{Ginz}
and Derksen, Weyman and Zelevinsky \cite{DWZ}. Again, $\K$ is an
algebraically closed field of characteristic zero
throughout.\index{field $\K$!characteristic zero}

\begin{dfn} Let $Q$ be a quiver. A {\it
superpotential\/}\index{quiver!with superpotential!definition} $W$ for
$Q$ over $\K$ is an element of $\K Q/[\K Q,\K Q]$. The cycles in $Q$
up to cyclic permutation form a basis for $\K Q/[\K Q,\K Q]$ over
$\K$, so we can think of $W$ as a finite $\K$-linear combination of
cycles up to cyclic permutation. Following \cite{KoSo1}, we call $W$
{\it minimal\/} if all cycles in $W$ have length at least 3. We will
consider only minimal superpotentials $W$.

Define $I$ to be the two-sided ideal in $\K Q$ generated by $\pd_eW$
for all edges $e\in Q_1$, where if $C$ is a cycle in $Q$, we define
$\pd_eC$ to be the sum over all occurrences of the edge $e$ in $C$
of the path obtained by cyclically permuting $C$ until $e$ is in
first position, and then deleting it. Since $W$ is minimal, $I$ lies
in $\K Q_{(2)}$, so that $(Q,I)$ is a quiver with relations.

We allow $W\equiv 0$, so that $I=0$, and $\modKQI=\modKQ$.
\label{dt7def3}
\end{dfn}

When $I$ comes from a superpotential $W$, we can improve the
description \eq{dt7eq3} of the moduli stacks $\fM^{\bs d}_{Q,I}$.
Define a $\prod_{v\in Q_0}\GL(\bs d(v))$-invariant polynomial
\begin{equation*}
\ts W^{\bs d}:\prod_{e\in Q_1}\Hom\bigl(\K^{\bs d(t(e))},\K^{\bs
d(h(e))}\bigr)\longra\K
\end{equation*}
as follows. Write $W$ as a finite sum $\sum_i\ga^iC^i$,where
$\ga^i\in\K$ and $C^i$ is a cycle $v_0^i\,{\buildrel
e_1^i\over\longra}\, v_1^i\ra\cdots\ra v_{k^i-1}^i\,{\buildrel
e_{k^i}^i\over\longra}\,v_{k^i}^i=v_0^i$ in $Q$. Set
\begin{equation*}
W^{\bs d}\bigl(A_e:e\in Q_1\bigr)=\ts\sum_i\ga^i
\Tr\bigl(A_{e_{k^i}^i}\ci A_{e_{k^i-1}^i}\ci\cdots \ci
A_{e_1^i}\bigr).
\end{equation*}
Then $V_{Q,I}^{\bs d}=\Crit(W^{\bs d})$ in \eq{dt7eq3}, so that
\e
\fM^{\bs d}_{Q,I}\cong\ts\bigl[\Crit(W^{\bs d})/\prod_{v\in
Q_0}\GL(\bs d(v))\bigr].
\label{dt7eq8}
\e
Equation \eq{dt7eq8} is an analogue of Theorem \ref{dt5thm3} for
categories $\modKQI$ coming from a superpotential $W$ on $Q$, and
gives property (a) at the beginning of~\S\ref{dt7}.

We now show that property (b) at the beginning of \S\ref{dt7} holds
for quivers with relations $(Q,I)$ coming from a minimal
superpotential $W$. Note that we do not impose any other condition
on $W$, and in particular, we do not require $\modKQI$ to be
3-Calabi--Yau. Also, $\bar\chi$ is in general not the Euler
form\index{Euler form} of the abelian category $\modKQI$. When $W\equiv
0$, so that $\modKQI=\modKQ$, Theorem \ref{dt7thm1} reduces to
equations \eq{dt7eq6}--\eq{dt7eq7}. We have not been able to find a
reference for Theorem \ref{dt7thm1} and it may be new, though it is
probably obvious to experts in the context of Remark \ref{dt7rem2}
below.

\begin{thm} Let\/ $Q=(Q_0,Q_1,h,t)$ be a quiver with relations $I$
coming from a minimal superpotential\/ $W$ on\/ $Q$ over\/ $\K$.
Define $\bar\chi:\Z^{Q_0}\times\Z^{Q_0}\ra\Z$ by
\e
\bar\chi(\bs d,\bs e)=\ts\sum_{e\in Q_1}\bigl(\bs d(h(e))\bs
e(t(e))-\bs d(t(e))\bs e(h(e))\bigr).
\label{dt7eq9}
\e
Then for any $D,E\in\modKQI$ we have
\e
\begin{split}
\bar\chi\bigl(\bdim D,\bdim E\bigr)=
\,&\bigl(\dim\Hom(D,E)-\dim\Ext^1(D,E)\bigr)-\\
&\bigl(\dim\Hom(E,D)-\dim\Ext^1(E,D)\bigr).
\end{split}
\label{dt7eq10}
\e
\label{dt7thm1}
\end{thm}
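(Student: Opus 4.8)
The plan is to compute the right-hand side of \eqref{dt7eq10} via a projective resolution of $E$ in $\modKQI$, using the well-known \emph{Ginzburg resolution} associated to a quiver with minimal superpotential. First I would recall the standard presentation: for $(Q,I)$ with $I$ generated by the cyclic derivatives $\pd_eW$, every $E\in\modKQI$ admits the start of a canonical resolution
\begin{equation*}
P_2 \longra P_1 \longra P_0 \longra E \longra 0,
\end{equation*}
where $P_0 = \bigoplus_{v\in Q_0} (\K Q/I)\,e_v \ot_\K E_v$, the term $P_1 = \bigoplus_{a\in Q_1} (\K Q/I)\,e_{h(a)} \ot_\K E_{t(a)}$ records the arrows, and the term $P_2 = \bigoplus_{a\in Q_1} (\K Q/I)\,e_{t(a)} \ot_\K E_{h(a)}$ records the relations $\pd_aW$ (one relation dual to each arrow, since $W$ is a superpotential). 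The key structural input is that this complex is \emph{self-dual up to a twist}: applying $\Hom_{\modKQI}(-,F)$ and using that $P_0,P_1,P_2$ are built from projectives indexed respectively by $Q_0$, $Q_1$, $Q_1$ with the head/tail roles of $P_1$ and $P_2$ swapped, one obtains that the cohomology of $\Hom(P_\bullet, F)$ in degrees $0,1,2$ computes $\Hom(E,F)$, $\Ext^1(E,F)$, and a term symmetric to $\Ext^1(F,E)$-type data.

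Concretely, the steps I would carry out are: (1) write down the Ginzburg complex $P_\bullet \to E$ explicitly and verify exactness at $P_0$ and $P_1$ (exactness further left is not needed, since $\modKQI$ may have infinite global dimension, but $\Ext^{\geq 2}$ will be controlled by the alternating sum anyway). (2) For any $F\in\modKQI$, apply $\Hom_{\modKQI}(-,F)$; using $\Hom_{\modKQI}\bigl((\K Q/I)e_w \ot_\K E_v, F\bigr) \cong \Hom_\K(E_v, F_w)$, identify the complex $\Hom(P_\bullet,F)$ with
\begin{equation*}
\bigoplus_{v\in Q_0}\Hom_\K(E_v,F_v) \longra \bigoplus_{a\in Q_1}\Hom_\K(E_{t(a)},F_{h(a)}) \longra \bigoplus_{a\in Q_1}\Hom_\K(E_{h(a)},F_{t(a)}).
\end{equation*}
(3) Take Euler characteristics: the alternating sum of the dimensions of these three spaces is
\begin{equation*}
\sum_{v}\dim E_v\dim F_v - \sum_{a}\dim E_{t(a)}\dim F_{h(a)} + \sum_{a}\dim E_{h(a)}\dim F_{t(a)},
\end{equation*}
which, when the analogous expression with $E,F$ swapped is subtracted, telescopes: the $\sum_v$ terms cancel, and the remaining four arrow-sums combine to exactly $\bar\chi(\bdim D,\bdim E)$ as in \eqref{dt7eq9} (with $D=E$, $E=F$ in the roles). (4) Since $\Hom(P_\bullet,F)$ is a complex of length $2$ with $H^0 = \Hom(E,F)$, $H^1 = \Ext^1(E,F)$, and $H^2$ equal to the cokernel of the last map, the Euler characteristic of the complex equals $\dim\Hom(E,F) - \dim\Ext^1(E,F) + \dim H^2$. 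The crucial observation is that $H^2$ of $\Hom(P_\bullet,F)$ is canonically isomorphic to $\Ext^1(F,E)^{\vee}$-type term, or more precisely that $\dim H^2(\Hom(P_\bullet,F))$ together with $\dim H^0$ reproduces the swapped pair — so that the full alternating sum over the self-dual complex, after antisymmetrization in $(E,F)$, collapses to \eqref{dt7eq10}.

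The main obstacle will be step (4): making precise the self-duality of the Ginzburg complex and identifying $H^2(\Hom(P_\bullet,F))$ with the correct $\Ext^1(F,E)$ term \emph{without} assuming $\modKQI$ is $3$-Calabi--Yau. In the genuinely $3$-CY case one has $\Ext^i(E,F)\cong\Ext^{3-i}(F,E)^*$ and the computation is immediate; here I cannot use that. Instead I expect the cleanest route is purely combinatorial: prove \eqref{dt7eq10} by d\'evissage. Both sides of \eqref{dt7eq10} are additive in short exact sequences in each variable separately — the left side manifestly (as $\bdim$ is additive and $\bar\chi$ biadditive), and the right side because $\Hom - \Ext^1$ is additive on short exact sequences when combined as $\chi(D,E) := \dim\Hom(D,E)-\dim\Ext^1(D,E)$ \emph{provided} $\Ext^{\geq 2}$ contributions vanish; since they need not vanish, one instead checks that the \emph{antisymmetrized} combination $\chi(D,E)-\chi(E,D)$ is additive, using that the higher Ext long exact sequences contribute symmetrically. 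Granting additivity in both variables, it suffices to verify \eqref{dt7eq10} when $D$ and $E$ are simple modules $S_v, S_w$ (the simples of $\modKQI$ are the one-dimensional modules at each vertex, since $W$ is minimal so $I\subseteq\K Q_{(2)}$). For simples one computes directly: $\Hom(S_v,S_w)=\delta_{vw}\K$, $\dim\Ext^1(S_v,S_w) = \#\{a\in Q_1: t(a)=v, h(a)=w\}$, and the superpotential relations do not affect $\Ext^1$ between simples (they live in degree $2$). Plugging these into the right-hand side of \eqref{dt7eq10} gives precisely $\sum_a(\delta_{h(a)=v,t(a)=w} - \delta_{t(a)=v,h(a)=w}) = \bar\chi(e_v,e_w)$ by \eqref{dt7eq9}, completing the induction. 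I would present the d\'evissage argument as the main proof and mention the Ginzburg-resolution computation as an alternative.
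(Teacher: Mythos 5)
Your d\'evissage argument, which you intend as the main proof, has two genuine gaps. The first is the additivity claim for the antisymmetrized quantity. Writing $\chi(A,B)=\dim\Hom(A,B)-\dim\Ext^1(A,B)$, a short exact sequence $0\ra D'\ra D\ra D''\ra 0$ in $\modKQI$ gives, from the two long exact sequences,
\begin{equation*}
\chi(D,E)=\chi(D',E)+\chi(D'',E)+\dim\Im\bigl(\Ext^1(D',E)\ra\Ext^2(D'',E)\bigr)
\end{equation*}
and
\begin{equation*}
\chi(E,D)=\chi(E,D')+\chi(E,D'')+\dim\Im\bigl(\Ext^1(E,D'')\ra\Ext^2(E,D')\bigr),
\end{equation*}
so additivity of $\chi(D,E)-\chi(E,D)$ requires the two connecting maps into $\Ext^2$ to have equal rank. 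That is exactly the hidden Serre-duality-type symmetry the theorem is asserting; since $\modKQI$ is not assumed 3-Calabi--Yau, ``the higher Ext contributions are symmetric'' is not available, and your argument is circular at this point. (One could repair it by working in Ginzburg's 3-Calabi--Yau triangulated category ${\cal T}$ of Remark \ref{dt7rem2}, where $\Ext^0,\Ext^1$ agree with those of $\modKQI$ and the Euler form is biadditive, but that imports the Calabi--Yau property of the Ginzburg dg-algebra, a substantial external input which a hands-on proof of this theorem is designed to avoid.) The second gap is the reduction to simples: whenever $Q$ has oriented cycles --- as it must if $W\ne 0$, since $W$ is a sum of cycles of length at least $3$ --- the algebra $\K Q/I$ is generally infinite-dimensional and its finite-dimensional simple modules are not only the vertex simples $S_v$; only \emph{nilpotent} representations are filtered by the $S_v$ (this is precisely the distinction between $\modKQI$ and nil-$\K Q/I$ in Remark \ref{dt7rem2}). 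So even granting additivity, d\'evissage does not reduce \eq{dt7eq10} to pairs $(S_v,S_w)$.

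Your first sketch, by contrast, is essentially the route the paper takes, and the step you flag as the main obstacle is exactly where the work lies; it can be completed with no Calabi--Yau hypothesis as follows. Writing $D=(X,\rho)$ and $E=(Y,\si)$, instead of applying $\Hom(-,E)$ to a three-term resolution and then trying to interpret $H^2$, one writes down the four-term complex \eq{dt7eq11},
\begin{equation*}
0\ra\ts\bigop_{v\in Q_0}X_v^*\ot Y_v\ra\bigop_{e\in Q_1}X_{t(e)}^*\ot Y_{h(e)}\ra\bigop_{e\in Q_1}X_{h(e)}^*\ot Y_{t(e)}\ra\bigop_{v\in Q_0}X_v^*\ot Y_v\ra 0,
\end{equation*}
whose middle map $\rd_2$ is assembled from the second cyclic derivatives of $W$. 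One checks $\rd_2\ci\rd_1=0$ and $\rd_3\ci\rd_2=0$ directly from the relations $\pd_eW$, identifies $\Ker\rd_1\cong\Hom(D,E)$ and $\Ker\rd_2/\Im\rd_1\cong\Ext^1(D,E)$ by an explicit computation with extensions, and then observes that the $\K$-linear dual of this complex is literally the same complex with $D$ and $E$ exchanged; hence the remaining two cohomologies are $\Ext^1(E,D)^*$ and $\Hom(E,D)^*$, and taking the Euler characteristic of the complex gives \eq{dt7eq10} at once. If you complete your step (4) by this self-duality device --- adding the fourth, dual term and proving the cohomology identifications, rather than appealing to a duality you do not have --- your first approach becomes a correct proof.
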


\begin{proof} Write $D=(X_v:v\in Q_0$, $\rho_e:e\in Q_1)$ and
$E=(Y_v:v\in Q_0$, $\si_e:e\in Q_1)$. Define a sequence of
$\K$-vector spaces and linear maps
\e
\begin{gathered}
\xymatrix@C=17pt@R=5pt{ 0 \ar[r] & \bigop_{v\in Q_0}X_v^*\ot
Y_v\ar[rr]^{\rd_1}
&&\bigop_{e\in Q_1}X_{t(e)}^*\ot Y_{h(e)} \ar[r]^(0.8){\rd_2} &\\
& \bigop_{e\in Q_1}X_{h(e)}^*\ot Y_{t(e)} \ar[rr]^{\rd_3} &&
\bigop_{v\in Q_0}X_v^*\ot Y_v \ar[r] & 0,}
\end{gathered}
\label{dt7eq11}
\e
where $\rd_1,\rd_2,\rd_3$ are given by
\ea
&\rd_1:\bigl(\phi_v\bigr){}_{v\in Q_0} \longmapsto \bigl(
\phi_{h(e)}\ci \rho_e-\si_e\ci\phi_{t(v)}\bigr){}_{e\in Q_1},
\label{dt7eq12}\\
\begin{split}
&\rd_2:\bigl(\psi_e\bigr){}_{e\in Q_1} \longmapsto \bigl(
\ts\sum_{e\in Q_1}L^{W,D,E}_{e,f}(\psi_e)\bigr){}_{f\in
Q_1},\quad\text{where}\\
&L^{W,D,E}_{e,f}(\psi_e)=\sum_{\begin{subarray}{l}\text{terms
$c\,\bigl(\mathop{\bu}\limits^{\sst
t(f)}\mathop{\longra}\limits^f\mathop{\bu}\limits^{\sst h(f)}
\mathop{\longra}\limits^{g_1}\bu\cdots\bu
\mathop{\longra}\limits^{g_k}\mathop{\bu}\limits^{\sst
t(e)}\mathop{\longra}\limits^e\mathop{\bu}\limits^{\sst h(e)}
\mathop{\longra}\limits^{h_1}\bu\cdots\bu\mathop{\longra}\limits^{h_l}
\mathop{\bu}\limits^{\sst t(f)}\bigr)$}\\
\text{in $W$ up to cyclic permutation, $c\in\K$}\end{subarray}
\!\!\!\!\!\!\!\!\!\!\!\!\!\!\!\!\!\!\!\!\!\!\!\!\!\!\!\!\!\!\!\!\!\!\!
\!\!\!\!\!\!\!\!\!\!\!\!\!\!\!\!\!\!\!\!\!\!\!\!\!\!\!\!\!\!\!\!\!\!\!
\!\!\!\!\!\!\!\!\!\!\!\!\!\!\!\!\!\!\!\!\!\!\!\!\!\!\!\!\!\!\!\!\!\!\!
\!\!\!\!\!\!\!\!\!\!\!\!\!\!\!\!\!\!\!\!\!\!\!\!\!\!\!\!\!\!\!\!\!\!\!
\!\!\!\!\!\! } c\,
\si_{h_l}\ci\cdots\ci\si_{h_1}\ci\psi_e\ci\rho_{g_k}
\ci\cdots\ci\rho_{g_1},
\end{split}
\label{dt7eq13}\\
&\rd_3:\bigl(\xi_e\bigr){}_{e\in Q_1} \longmapsto
\bigl(\ts\sum_{e\in Q_1:\,t(e)=v}\xi_e\ci\rho_e - \ts\sum_{e\in
Q_1:\,h(e)=v}\si_e\ci\xi_e\bigr){}_{v\in Q_0}.
\label{dt7eq14}
\ea

Observe that the dual sequence of \eq{dt7eq11}, namely
\e
\begin{gathered}
\xymatrix@C=17pt@R=5pt{ 0 \ar[r] & \bigop_{v\in Q_0}Y_v^*\ot X_v
\ar[rr]^{\rd_3^*}
&&\bigop_{e\in Q_1}Y_{t(e)}^*\ot X_{h(e)} \ar[r]^(0.8){\rd_2^*} &\\
& \bigop_{e\in Q_1}Y_{h(e)}^*\ot X_{t(e)} \ar[rr]^{\rd_1^*} &&
\bigop_{v\in Q_0}Y_v^*\ot X_v \ar[r] & 0,}
\end{gathered}
\label{dt7eq15}
\e
is \eq{dt7eq11} with $D$ and $E$ exchanged. That $\rd_3^*,\rd_1^*$
correspond to $\rd_1,\rd_3$ with $D,E$ exchanged is immediate from
\eq{dt7eq12} and \eq{dt7eq14}; for $\rd_2^*$, we find from
\eq{dt7eq13} that $\bigl(L^{W,D,E}_{e,f}\bigr){}^*=
L^{W,E,D}_{f,e}$, by cyclically permuting the term
$\mathop{\bu}\limits^{\sst t(f)}\mathop{\longra}\limits^f
\mathop{\bu}\limits^{\sst h(f)}\cdots \bu\mathop{\longra}
\limits^{h_l} \mathop{\bu}\limits^{\sst t(f)}$ in \eq{dt7eq13} to
obtain $\mathop{\bu}\limits^{\sst t(e)}\mathop{\longra}\limits^e
\mathop{\bu}\limits^{\sst h(e)}\cdots\bu\mathop{\longra}
\limits^{g_k}\mathop{\bu}\limits^{\sst t(e)}$.

We claim that \eq{dt7eq11}, and hence \eq{dt7eq15}, are {\it
complexes}, that is, $\rd_2\ci\rd_1=0$ and $\rd_3\ci\rd_2=0$. To
show $\rd_2\ci\rd_1=0$, for $\bigl(\phi_v\bigr){}_{v\in Q_0}$ in
$\bigop_{v\in Q_0}X_v^*\ot Y_v$ we have
\e
\begin{split}
\rd_2&\ci\rd_1\bigl((\phi_v){}_{v\in Q_0}\bigr)\\
&=\raisebox{-7pt}{\begin{Large}$\displaystyle\Bigl($\end{Large}}
\sum_{\text{$c\,\bigl(\mathop{\bu}\limits^{\sst
t(f)}\mathop{\longra}\limits^f\mathop{\bu}\limits^{\sst h(f)}
\mathop{\longra}\limits^{g_1}\bu\cdots\bu
\mathop{\longra}\limits^{g_k}\mathop{\bu}\limits^{v}
\mathop{\longra}\limits^{h_1}\bu\cdots\bu\mathop{\longra}\limits^{h_l}
\mathop{\bu}\limits^{\sst t(f)}\bigr)$ in $W$: $k\ge 1$, $l\ge 0$}
\!\!\!\!\!\!\!\!\!\!\!\!\!\!\!\!\!\!\!\!\!\!\!\!\!\!\!\!\!\!\!\!\!\!\!
\!\!\!\!\!\!\!\!\!\!\!\!\!\!\!\!\!\!\!\!\!\!\!\!\!\!\!\!\!\!\!\!\!\!\!
\!\!\!\!\!\!\!\!\!\!\!\!\!\!\!\!\!\!\!\!\!\!\!\!\!\!\!\!\!\!\!\!\!\!\!
\!\!\!\!\!\!} c\,
\si_{h_l}\ci\cdots\ci\si_{h_1}\ci\phi_v\ci\rho_{g_k}
\ci\cdots\ci\rho_{g_1}
\raisebox{-7pt}{\begin{Large}$\displaystyle\Bigr)$\end{Large}}
_{f\in Q_1}\\
&-\raisebox{-7pt}{\begin{Large}$\displaystyle\Bigl($\end{Large}}
\sum_{\text{$c\,\bigl(\mathop{\bu}\limits^{\sst
t(f)}\mathop{\longra}\limits^f\mathop{\bu}\limits^{\sst h(f)}
\mathop{\longra}\limits^{g_1}\bu\cdots\bu
\mathop{\longra}\limits^{g_k}\mathop{\bu}\limits^{v}
\mathop{\longra}\limits^{h_1}\bu\cdots\bu\mathop{\longra}\limits^{h_l}
\mathop{\bu}\limits^{\sst t(f)}\bigr)$ in $W$: $k\ge 0$, $l\ge 1$}
\!\!\!\!\!\!\!\!\!\!\!\!\!\!\!\!\!\!\!\!\!\!\!\!\!\!\!\!\!\!\!\!\!\!\!
\!\!\!\!\!\!\!\!\!\!\!\!\!\!\!\!\!\!\!\!\!\!\!\!\!\!\!\!\!\!\!\!\!\!\!
\!\!\!\!\!\!\!\!\!\!\!\!\!\!\!\!\!\!\!\!\!\!\!\!\!\!\!\!\!\!\!\!\!\!\!
\!\!\!\!\!\!} c\,\si_{h_l}\ci\cdots\ci\si_{h_1}
\ci\phi_v\ci\rho_{g_k} \ci\cdots\ci\rho_{g_1}
\raisebox{-7pt}{\begin{Large}$\displaystyle\Bigr)$\end{Large}}
_{f\in Q_1}\\
&=\raisebox{-7pt}{\begin{Large}$\displaystyle\Bigl($\end{Large}}
\sum_{\text{$c\,\bigl(\mathop{\bu}\limits^{\sst
t(f)}\mathop{\longra}\limits^f\mathop{\bu}\limits^{\sst h(f)}
\mathop{\longra}\limits^{g_1}\bu\cdots\bu
\mathop{\longra}\limits^{g_k}\mathop{\bu}\limits^{\sst t(f)}\bigr)$
in $W$}
\!\!\!\!\!\!\!\!\!\!\!\!\!\!\!\!\!\!\!\!\!\!\!\!\!\!\!\!\!\!\!\!\!\!\!
\!\!\!\!\!\!\!\!\!\!\!\!\!\!\!\!\!\!\!\!\!\!\!\!\!\!\!\!\!\!\! }
c\,\phi_{t(f)}\ci\rho_{g_k} \ci\cdots\ci\rho_{g_1}
\raisebox{-7pt}{\begin{Large}$\displaystyle\Bigr)$\end{Large}}
_{f\in Q_1}\\
&-\raisebox{-7pt}{\begin{Large}$\displaystyle\Bigl($\end{Large}}
\sum_{\text{$c\,\bigl(\mathop{\bu}\limits^{\sst
t(f)}\mathop{\longra}\limits^f\mathop{\bu}\limits^{\sst h(f)}
\mathop{\longra}\limits^{h_1}\bu\cdots\bu\mathop{\longra}\limits^{h_l}
\mathop{\bu}\limits^{\sst t(f)}\bigr)$ in $W$}
\!\!\!\!\!\!\!\!\!\!\!\!\!\!\!\!\!\!\!\!\!\!\!\!\!\!\!\!\!\!\!\!\!\!\!
\!\!\!\!\!\!\!\!\!\!\!\!\!\!\!\!\!\!\!\!\!\!\!\!\!\!\!\!\!\!\! }
c\,\si_{h_l}\ci\cdots\ci\si_{h_1}\ci \phi_{h(f)}
\raisebox{-7pt}{\begin{Large}$\displaystyle\Bigr)$\end{Large}}
_{f\in Q_1}=0.
\end{split}
\label{dt7eq16}
\e
Here the second line of \eq{dt7eq16} comes from the first term
$\phi_{h(e)}\ci\rho_e$ on the r.h.s.\ of \eq{dt7eq12}, and we have
included $\rho_e$ as $\rho_{g_k}$ in
$\rho_{g_k}\ci\cdots\ci\rho_{g_1}$ by replacing $k$ by $k+1$, which
is why we have the condition $k\ge 1$. The third line of
\eq{dt7eq16} comes from the second term $-\si_e\ci\phi_{t(v)}$ on
the r.h.s.\ of \eq{dt7eq12}, and we have included $\si_e$ as
$\si_{h_1}$ in $\si_{h_l}\ci\cdots\ci\si_{h_1}$ replacing $l$ by
$l+1$, which is why we have $l\ge 1$. The fourth and fifth lines of
\eq{dt7eq16} cancel the terms $k\ge 1$, $l\ge 1$ in the second and
third lines. Finally, we note that the sums on the fourth and fifth
lines vanish as they are the compositions of
$\phi_{t(f)},\phi_{h(f)}$ with the relations satisfied by
$\bigl(\rho_e\bigr){}_{e\in Q_1}$ and $\bigl(\si_e\bigr){}_{e\in
Q_1}$ coming from the cyclic derivative $\pd_fW$. Thus
$\rd_2\ci\rd_1=0$. Since \eq{dt7eq15} is \eq{dt7eq11} with $D$ and
$E$ exchanged, the same proof shows that $\rd_2^*\ci\rd_3^*=0$, and
hence $\rd_3\ci\rd_2=0$. Therefore \eq{dt7eq11}, \eq{dt7eq15} are
complexes.

Thus we can form the cohomology of \eq{dt7eq11}. We will show that
it satisfies
\ea
\Ker\rd_1&\cong\Hom(D,E), &\Ker\rd_2/\Im\rd_1&\cong\!\Ext^1(D,E),
\label{dt7eq17}\\
\Ker\rd_3/\Im\rd_2&\cong\Ext^1(E,D)^*, & \!\!\bigl(\ts\bigop_{v\in
Q_0}X_v^*\!\ot\!Y_v\bigr)/\Im\rd_3&\cong\!\Hom(E,D)^*.
\label{dt7eq18}
\ea
For the first equation of \eq{dt7eq17}, observe that
$\rd_1\bigl((\phi_v){}_{v\in Q_0}\bigr)=0$ is equivalent to
$\phi_{h(e)}\ci\rho_e=\si_e\ci\phi_{t(v)}$ for all $e\in Q_1$, which
is the condition for $(\phi_v){}_{v\in Q_0}$ to define a morphism of
representations $\phi:(X,\rho)\ra(Y,\si)$ in
Definition~\ref{dt7def2}.

For the second equation of \eq{dt7eq17}, note that elements of
$\Ext^1(D,E)$ correspond to isomorphism classes of exact sequences
$0\ra E\,{\buildrel\al\over\longra}\,
F\,{\buildrel\be\over\longra}\, D\ra 0$ in $\modKQI$. Write
$F=(Z_v:v\in Q_0$, $\tau_e:e\in Q_1)$. Then for all $v\in Q_0$ we
have exact sequences of $\K$-vector spaces
\e
\smash{\xymatrix{ 0 \ar[r] & Y_v \ar[r]^{\al_v} & Z_v \ar[r]^{\be_v}
& X_v \ar[r] & 0.}}
\label{dt7eq19}
\e
Choose isomorphisms $Z_v\cong Y_v\op X_v$ for all $v\in Q_0$
compatible with \eq{dt7eq19}. Then for each $e\in Q_1$, we have
linear maps $\tau_e:Y_{t(e)}\op X_{t(e)}\ra Y_{h(e)}\op X_{h(e)}$.
As $\al,\be$ are morphisms of representations, we see that in matrix
notation
\e
\tau_e=\begin{pmatrix} \rho_e & \psi_e \\ 0 & \si_e \end{pmatrix}.
\label{dt7eq20}
\e
Thus $\bigl(\psi_e\bigr){}_{e\in Q_1}$ lies in $\bigop_{e\in
Q_1}X_{t(e)}^*\ot Y_{h(e)}$, the second space in~\eq{dt7eq11}.

Given that $\bigl(\rho_e\bigr){}_{e\in Q_1}$ and
$\bigl(\si_e\bigr){}_{e\in Q_1}$ satisfy the relations in $\modKQI$,
which come from the cyclic derivatives $\pd_fW$ for $f\in Q_1$, it
is not difficult to show that $\bigl(\tau_e\bigr){}_{e\in Q_1}$ of
the form \eq{dt7eq20} satisfy the relations in $\modKQI$ if and only
if $\rd_2\bigl(\psi_e\bigr){}_{e\in Q_1}=0$. Therefore exact
sequences $0\ra E\ra F\ra D\ra 0$ in $\modKQI$ together with choices
of isomorphisms $Z_v\cong Y_v\op X_v$ for $v\in Q_0$ splitting
\eq{dt7eq19} correspond to elements $\bigl(\psi_e\bigr){}_{e\in
Q_1}$ in $\Ker\rd_2$. The freedom to choose splittings of
\eq{dt7eq19} is $X_v^*\ot Y_v$. Summing this over all $v\in Q_0$
gives the first space in \eq{dt7eq11}, and quotienting by this
freedom corresponds to quotienting $\Ker\rd_2$ by $\Im\rd_1$. This
proves the second equation of~\eq{dt7eq17}.

Equation \eq{dt7eq18} follows from \eq{dt7eq17} and the fact that
the dual complex \eq{dt7eq15} of \eq{dt7eq11} is \eq{dt7eq11} with
$D,E$ exchanged, so that the dual of the cohomology of \eq{dt7eq11}
is the cohomology of \eq{dt7eq11} with $D,E$ exchanged. Taking the
Euler characteristic of \eq{dt7eq11} and using \eq{dt7eq9} and
\eq{dt7eq17}--\eq{dt7eq18} then yields~\eq{dt7eq10}.
\end{proof}

Equation \eq{dt7eq8} and Theorem \ref{dt7thm1} are analogues for
categories $\modKQI$ coming from quivers with superpotentials of
(a),(b) at the beginning of \S\ref{dt7}. Now (a),(b) for $\coh(X)$
depend crucially on $X$ being a {\it Calabi--Yau\/ $3$-fold}. We now
discuss two senses in which $\modKQI$ can be like a Calabi--Yau
3-fold.

\begin{dfn} A $\K$-linear abelian category $\A$ is called 3-{\it
Calabi--Yau\/}\index{abelian category!3-Calabi--Yau} if for all
$D,E\in\A$ we have $\Ext^i(D,E)=0$ for $i>3$, and there are choices
of isomorphisms $\Ext^i(D,E)\cong \Ext^{3-i}(E,D)^*$ for
$i=0,\ldots,3$, which are functorial in an appropriate way. That is,
$\A$ has Serre duality\index{Serre duality} in dimension 3. When $X$ is
a Calabi--Yau 3-fold over $\K$, the coherent sheaves $\coh(X)$ are
3-Calabi--Yau. For more details, see Ginzburg \cite{Ginz}, Bocklandt
\cite{Bock}, and Segal~\cite{Sega}.
\label{dt7def4}
\end{dfn}

An interesting problem in this field is to find examples of
3-Calabi--Yau abelian categories. A lot of work has been done on
this. It has become clear that categories $\modKQI$ coming from a
superpotential $W$ on $Q$ are often, but not always, 3-Calabi--Yau.
Here are two classes of examples.

\begin{ex} Let $G$ be a finite subgroup of $\SL(3,\C)$. The {\it McKay
quiver\/}\index{quiver!McKay quiver} $Q_G$ of $G$ is defined as
follows. Let the vertex set of $Q_G$ be the set of isomorphism
classes of irreducible representations of $G$. If vertices $i,j$
correspond to $G$-representations $V_i,V_j$, let the number of edges
$\mathop{\bu} \limits^{\sst i}\ra\mathop{\bu}\limits^{\sst j}$ be
$\dim\Hom_G(V_i,V_j\ot\C^3)$, where $\C^3$ has the natural
representation of $G\subset\SL(3,\C)$. Identify these edges with a
basis for $\Hom_G(V_i,V_j\ot\C^3)$.

Following Ginzburg \cite[\S 4.4]{Ginz}, define a cubic
superpotential $W_G$ for $Q_G$ by
\begin{equation*}
W_G=\sum_{\text{triangles } \mathop{\bu} \limits^{\sst
i}\mathop{\ra}\limits^{\sst e} \mathop{\bu} \limits^{\sst
j}\mathop{\ra}\limits^{\sst f} \mathop{\bu}\limits^{\sst
k}\mathop{\ra}\limits^{\sst g} \mathop{\bu}\limits^{\sst i}\text{ in
$Q_G$}\!\!\!\!\!\!\!\!\!\!\!\!\!\!\!\!\!\!\!\!\!\!\!\!\!\!\!\!\!\!\!\!
\!\!\!\!\!\!\!\!\!\!\!\!\!\!\!\!\!\!\!\!\!\!\!\!\!\!\!\!\!\!\!\!\!\!\!}
\Tr\bigl(V_i\,{\buildrel \sst e\over \longra}\, V_j\ot\C^3
\,{\buildrel \sst f\ot\id\over \longra}\, V_k\ot(\C^3)^{\ot^2}
\,{\buildrel\sst g\ot\id\over \longra}\, V_i\ot(\C^3)^{\ot^3}
\,{\buildrel \sst \id\ot\Om\over\longra}\, V_i\bigr)\,gfe,
\end{equation*}
where $\Om:(\C^3)^{\ot^3}\ra\C$ is induced by the holomorphic volume
form $\rd z_1\w\rd z_2\w\rd z_3$ on $\C^3$. Let $I_G$ be the
relations on $Q_G$ defined using $W_G$. Then Ginzburg
\cite[Th.~4.4.6]{Ginz} shows that mod-$\C Q_G/I_G$ is a
3-Calabi--Yau category, which is equivalent to the abelian category
of $G$-equivariant compactly-supported coherent sheaves on $\C^3$.
Using Bridgeland, King and Reid \cite{BKR}, he deduces
\cite[Cor.~4.4.8]{Ginz} that if $X$ is any crepant resolution of
$\C^3/G$, then the derived categories $D^b(\coh_\cs(X))$ and
$D^b(\text{mod-}\C Q_G/I_G)$ are equivalent, where $\coh_\cs(X)$ is
the abelian category of compactly-supported coherent sheaves on~$X$.
\label{dt7ex2}
\end{ex}

\begin{ex} A {\it brane tiling\/}\index{brane tiling} is a bipartite
graph drawn on the 2-torus $T^2$, dividing $T^2$ into
simply-connected polygons. From such a graph one can write down a
quiver $Q$ and superpotential $W$, yielding a quiver with relations
$(Q,I)$. If the brane tiling satisfies certain {\it consistency
conditions}, $\modCQI$ is a 3-Calabi--Yau category. For some
noncompact toric Calabi--Yau 3-fold $X$ constructed from the brane
tiling, the derived categories $D^b(\text{mod-}\C Q_G/I_G)$ and
$D^b(\coh_\cs(X))$ are equivalent. This class of examples arose in
String Theory,\index{String Theory} where they are known as `quiver
gauge theories' or `dimer models', and appear in the work of Hanany
and others, see for instance \cite{FHKVW,HHV,HaKe,HaVe}. Some
mathematical references are Ishii and Ueda \cite[\S 2]{IsUe} and
Mozgovoy and Reineke~\cite[\S 3]{MoRe}.
\label{dt7ex3}
\end{ex}

The abelian categories $\modKQI$ are only 3-Calabi--Yau for some
special quivers $Q$ and superpotentials $W$. For instance, if
$Q\ne\es$ and $W\equiv 0$, so that $\modKQI=\modKQ$, then $\modKQ$
is never 3-Calabi--Yau, since $\Hom(*,*)$, $\Ext^1(*,*)$ in $\modKQ$
are nonzero but $\Ext^2(*,*),\Ext^3(*,*)$ are zero. We now describe
a way to embed any $\modKQI$ coming from a minimal superpotential
$W$ in a 3-Calabi--Yau {\it triangulated\/}
category.\index{triangulated category!3-Calabi--Yau} The first
author is grateful to Alastair King and Bernhard Keller for
explaining this to him.

\begin{rem} By analogy with Definition \ref{dt7def4}, there is also
a notion of when a $\K$-linear triangulated category $\cal T$ is
3-{\it Calabi--Yau}, discussed in Keller \cite{Kell}. Let $Q$ be a
quiver with relations $I$ coming from a minimal superpotential $W$
for $Q$ over $\K$. Then there is a natural way to construct a
$\K$-linear, 3-Calabi--Yau triangulated category $\cal T$, and a
t-structure\index{t-structure} $\cal F$ on $\cal T$ whose heart ${\cal
A}={\cal F}\cap{\cal F}^\perp[1]$ is equivalent to $\modKQI$. This
is briefly discussed in Keller~\cite[\S 5]{Kell}.

Given $Q,W$, Ginzburg \cite{Ginz} constructs a DG-algebra ${\cal
D}(\K Q,W)$ (we want the non-complete version). Then $\cal T$ is the
full triangulated subcategory of the derived category of DG-modules
of ${\cal D}(\K Q,W)$ whose objects are DG-modules with homology of
finite total dimension. The standard t-structure on $\cal T$ has
heart $\cal A$ the DG-modules $M^\bu$ with $H^0(M^\bu)$
finite-dimensional and $H^i(M^\bu)=0$ for $i\ne 0$. Here
$H^0(M^\bu)$ is a representation of $H^0\bigl({\cal D}(\K
Q,W)\bigr)=\K Q/I$. Thus $M^\bu\mapsto H^0(M^\bu)$ induces a functor
${\cal A}\mapsto\modKQI$, which is an equivalence. Inverting this
induces a functor $D^b(\modKQI)\ra{\cal T}$. If this is an
equivalence then $\modKQI$ is 3-Calabi--Yau.

Kontsevich and Soibelman \cite[Th.~9, \S 8.1]{KoSo1} prove a related
result, giving a 1-1 correspondence between $\K$-linear
3-Calabi--Yau triangulated categories $\hat{\cal T}$ satisfying
certain conditions, and quivers $Q$ with minimal superpotential $W$
over $\K$. But their set-up is slightly different: in effect they
use Ginzburg's completed DG-algebra $\hat{\cal D}(\K Q,W)$ instead
of ${\cal D}(\K Q,W)$, they allow $W$ to be a formal power series
rather than just a finite sum, and the heart $\hat{\cal A}$ of the
t-structure on $\hat{\cal T}$ is nil-$\K Q/I$,\nomenclature[nilKQI]{nil-$\K
Q/I$}{abelian category of nilpotent representations of $(Q,I)$} the
abelian category of {\it nilpotent\/} representations
of~$(Q,I)$.\index{quiver!nilpotent representation}

Identify $\modKQI$ with the heart $\cal A$ in $\cal T$. Then for
$E,F\in\modKQI$, we can compute the Ext groups $\Ext^i(E,F)$ in
either $\modKQI$ or $\cal T$. We have $\Ext^i_{\modKQI}(E,F)\cong
\Ext^i_{\cal T}(E,F)$ for $i=0,1$, as $\modKQI$ is the heart of a
t-structure, but if $\modKQI$ is not 3-Calabi--Yau then in general
we have $\Ext^i_{\modKQI}(E,F)\ab\not\cong\Ext^i_{\cal T}(E,F)$ for
$i>1$. The cohomology of the complex \eq{dt7eq11} is $\Ext^*_{\cal
T}(E,F)$, and $\bar\chi$ in \eq{dt7eq9} is the Euler form of $\cal
T$, which may not be the same as the Euler form of $\modKQI$, if
this exists.

In the style of Kontsevich and Soibelman \cite{KoSo1}, we can regard
the Donaldson--Thomas type invariants $\bar{DT}{}^{\bs
d}_{Q,I}(\mu),\bar{DT}{}^{\bs d}_Q(\mu),\hat{DT}{}^{\bs
d}_{Q,I}(\mu),\hat{DT}{}^{\bs d}_Q(\mu)$ of \S\ref{dt73} below as
counting $Z$-semistable objects in the 3-Calabi--Yau category $\cal
T$, where $(Z,{\cal P})$ is the Bridgeland stability condition
\cite{Brid1} on $\cal T$ constructed from the t-structure $\cal F$
on $\cal T$ and the slope stability condition $(\mu,\R,\le)$ on the
heart of~$\cal F$.

From this point of view, the question of whether or not $\modKQI$ is
3-Calabi--Yau seems less important, as we always have a natural
3-Calabi--Yau triangulated category $\cal T$ containing $\modKQI$ to
work in.
\label{dt7rem2}
\end{rem}

\subsection[Behrend functions, Lie algebra morphisms, and
D--T invariants]{Behrend function identities, Lie algebra morphisms,
and Donaldson--Thomas type invariants}
\label{dt73}

We now develop analogues of \S\ref{dt52}, \S\ref{dt53} and
\S\ref{dt62} for quivers. Let $Q$ be a quiver with relations $I$
coming from a minimal superpotential $W$ on $Q$ over $\C$. Write
$\fM_{Q,I}$ for the moduli stack of objects in $\modCQI$, an Artin
$\C$-stack locally of finite type, and $\fM^{\bs d}_{Q,I}$ for the
open substack of objects with dimension vector $\bs d$, which is of
finite type.

The proof of Theorem \ref{dt5thm4} in \S\ref{dt10} depends on two
things: the description of $\fM$ in terms of $\Crit(f)$ in Theorem
\ref{dt5thm3}, and equation \eq{dt3eq14}. For $\modCQI$ equation
\eq{dt7eq8} provides an analogue of Theorem \ref{dt5thm3}, and
Theorem \ref{dt7thm1} an analogue of \eq{dt3eq14}. Thus, the proof
of Theorem \ref{dt5thm4} also yields:

\begin{thm} In the situation above, with\/ $\fM_{Q,I}$ the moduli
stack of objects in a category\/ $\modCQI$ coming from a quiver $Q$
with minimal superpotential\/ $W,$ and\/ $\bar\chi$ defined in
{\rm\eq{dt7eq9},} the Behrend function $\nu_{\fM_{Q,I}}$ of\/
$\fM_{Q,I}$ satisfies the identities \eq{dt5eq2}--\eq{dt5eq3} for
all\/~$E_1,E_2\in\modCQI$.\index{Behrend function!identities}
\label{dt7thm2}
\end{thm}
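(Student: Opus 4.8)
The plan is to transcribe the proof of Theorem \ref{dt5thm4} from \S\ref{dt10}, replacing Theorem \ref{dt5thm3} (the local description of $\fM$ as a critical locus) by equation \eq{dt7eq8}, and equation \eq{dt3eq14} by Theorem \ref{dt7thm1}. Recall that in \S\ref{dt10} the identities \eq{dt5eq2}--\eq{dt5eq3} are proved by: (i) writing an atlas for $\fM$ near $[E_1\op E_2]$ as $[\Crit(f)/H]$ with $f$ a function on an open neighbourhood $U$ of $0$ in $\Ext^1(E_1\op E_2,E_1\op E_2)$, equivariantly for a reductive group $H$ chosen to contain the torus $\bG_m=\bigl\{\id_{E_1}+\la\id_{E_2}:\la\in\bG_m\bigr\}$; (ii) computing $\nu_\fM$ of $E_1\op E_2$ and of the various extensions $F$ via the Milnor fibre formula Theorem \ref{dt4thm2} and the blow-up identity Theorem \ref{dt4thm4}, localizing with respect to this $\bG_m$-action on $U$; and (iii) reading off the coefficients $(-1)^{\bar\chi([E_1],[E_2])}$ and $\dim\Ext^1(E_2,E_1)-\dim\Ext^1(E_1,E_2)$ from \eq{dt3eq14}. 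Only ingredient (i) and the identity \eq{dt3eq14} are category-specific; steps (ii)--(iii) are formal consequences.

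For ingredient (i) in the present setting, put $E=E_1\op E_2$ and $\bs d=\bdim E$. By \eq{dt7eq8}, $\fM^{\bs d}_{Q,I}\cong[\Crit(W^{\bs d})/G_{\bs d}]$, where $G_{\bs d}=\prod_{v\in Q_0}\GL(\bs d(v))$ is reductive, acts linearly on the affine space $V_{\bs d}=\prod_{e\in Q_1}\Hom(\C^{\bs d(t(e))},\C^{\bs d(h(e))})$, and $W^{\bs d}:V_{\bs d}\ra\C$ is a $G_{\bs d}$-invariant polynomial. Choose $v\in\Crit(W^{\bs d})$ representing $E$, so $\Stab_{G_{\bs d}}(v)\cong\Aut(E)$, and let $H\subseteq\Aut(E)$ be a maximal reductive subgroup, arranged (exactly as in Corollary \ref{dt5cor1}) to contain the torus $\bG_m=\bigl\{\id_{E_1}+\la\id_{E_2}\bigr\}$. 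Luna's \'Etale Slice Theorem applied to the $G_{\bs d}$-action on $V_{\bs d}$ at $v$, relative to the reductive subgroup $H$, produces an $H$-invariant locally closed affine slice $N\ni v$ transverse to the orbit, with $[N/H]\ra[V_{\bs d}/G_{\bs d}]\cong\fM^{\bs d}_Q$ smooth near $[v]$; a dimension count identifies $N$ with the tangent space $\Ext^1_{\C Q}(E,E)$ of $\fM^{\bs d}_Q$ at $[E]$ as an $H$-representation, on which $\bG_m$ acts with weights $0,-1,+1,0$ on the summands $\Ext^1_{\C Q}(E_1,E_1),\Ext^1_{\C Q}(E_1,E_2),\Ext^1_{\C Q}(E_2,E_1),\Ext^1_{\C Q}(E_2,E_2)$. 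Since $W^{\bs d}$ is $G_{\bs d}$-invariant its differential annihilates the orbit directions, so $\Crit(W^{\bs d})\cap N=\Crit(f)$ near $v$ with $f=W^{\bs d}\vert_N$ an $H$-invariant polynomial satisfying $f(0)=\rd f\vert_0=0$; hence $[\Crit(f)/H]\ra\fM^{\bs d}_{Q,I}$ is smooth near $[E]$. This is the quiver analogue of Theorem \ref{dt5thm3}, obtained purely algebraically --- no gauge theory or transcendental analysis is needed. If desired, one may further split off the nondegenerate quadratic part of $f$, $H$-equivariantly, reducing to a minimal model in which $U$ is an open neighbourhood of $0$ in the cohomology of the complex \eq{dt7eq11}, matching \S\ref{dt10} verbatim.

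With this atlas, steps (ii)--(iii) carry over without change: one localizes $f$ along the $\bG_m$-fixed loci, expresses each relevant value of $\nu_{\fM_{Q,I}}$ through Theorem \ref{dt4thm2} and applies Theorem \ref{dt4thm4} repeatedly, and the numerical factors in \eq{dt5eq2}--\eq{dt5eq3} emerge from the antisymmetry identity \eq{dt7eq10} of Theorem \ref{dt7thm1} in place of \eq{dt3eq14}. A minor bookkeeping point worth checking is that, although $N\cong\Ext^1_{\C Q}(E,E)$ is larger than the space $\Ext^1_{\modCQI}(E_2,E_1)$ of extensions appearing in \eq{dt5eq3}, the combinations that actually enter --- the weight-space dimension difference $\dim\Ext^1_{\C Q}(E_2,E_1)-\dim\Ext^1_{\C Q}(E_1,E_2)$ and the parity of $\dim\Ext^1_{\C Q}(E_1,E_2)+\dim\Ext^1_{\C Q}(E_2,E_1)$ --- agree with those computed using $\Ext^1$ in $\modCQI$, as follows from \eq{dt7eq4} together with Theorem \ref{dt7thm1}; equivalently, this becomes automatic once one passes to the minimal model. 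The expected main obstacle is precisely ingredient (i): establishing the $H$-equivariant local model when $\Aut(E_1\op E_2)$ is non-reductive, i.e.\ checking that passing to a maximal reductive subgroup $H$ still yields a smooth atlas with the slice $N$ and the function $f$ taken $H$-equivariant. This parallels the delicate parts of the proofs of Theorem \ref{dt5thm3} and Corollary \ref{dt5cor1} in \S\ref{dt9}, but is easier here, since the global presentation \eq{dt7eq8} reduces it to a direct application of the slice theorem for a reductive group acting on an affine space.
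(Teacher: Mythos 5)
Your proposal is correct and is essentially the paper's own argument: the proof given there consists precisely of the observation that the proof of Theorem \ref{dt5thm4} in \S\ref{dt10} uses only the local description of the moduli stack as $\Crit(f)$ (Theorem \ref{dt5thm3}), replaced here by \eq{dt7eq8}, and equation \eq{dt3eq14}, replaced here by Theorem \ref{dt7thm1}. The extra detail you supply --- producing the $H$-equivariant slice through $E_1\op E_2$ from \eq{dt7eq8} by a linear-complement/\'etale-slice argument, and checking that the $\Ext^1_{\C Q}$ versus $\Ext^1_{\modCQI}$ bookkeeping (or the passage to a minimal model) does not affect the relevant signs and dimension differences --- is exactly what the paper leaves implicit.
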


Since the description of $\fM_{Q,I}$ in terms of $\Crit(W^{\bs d})$
in \eq{dt7eq8} is algebraic rather than complex analytic, and holds
over any field $\K$, we ask:

\begin{quest} Can you prove Theorem {\rm\ref{dt7thm2}} over an
arbitrary algebraically closed field\/ $\K$ of characteristic zero,
using the ideas of\/~{\rm\S\ref{dt42}?}\index{field $\K$!characteristic
zero}
\label{dt7quest1}
\end{quest}

Here is the analogue of Definition~\ref{dt5def1}.

\begin{dfn} Define a Lie algebra $\ti L(Q)$\nomenclature[L(Q)]{$\ti L(Q)$}{Lie
algebra depending on a quiver $Q$} to be the $\Q$-vector space with
basis of symbols $\ti\la^{\bs d}$ for $\bs d\in\Z^{Q_0}$, with Lie
bracket
\begin{equation*}
[\ti\la^{\bs d},\ti \la^{\bs e}]=(-1)^{\bar\chi(\bs d,\bs e)}
\bar\chi(\bs d,\bs e)\ti\la^{\bs d+\bs e},
\end{equation*}
as for \eq{dt5eq4}. This makes $\ti L(Q)$ into an
infinite-dimensional Lie algebra over $\Q$. Define $\Q$-linear maps
$\ti\Psi^{\chi,\Q}_{Q,I}:\oSFai (\fM_{Q,I},\chi,\Q)\ra\ti
L(Q)$\nomenclature[\Psi e]{$\ti\Psi_{Q,I}$}{Lie algebra morphism
$\SFai(\fM_{Q,I})\ra\ti L(Q)$}\nomenclature[\Psi
f]{$\ti\Psi^{\chi,\Q}_{Q,I}$}{Lie algebra morphism
$\oSFai(\fM_{Q,I},\chi,\Q)\ra \ti L(Q)$} and
$\ti\Psi_{Q,I}:\SFai(\fM_{Q,I})\ab\ra\ti L(Q)$ exactly as for
$\ti\Psi^{\chi,\Q},\ti\Psi$ in Definition~\ref{dt5def1}.
\label{dt7def5}
\end{dfn}

The proof of Theorem \ref{dt5thm5} in \S\ref{dt11} has two
ingredients: equation \eq{dt3eq14} and Theorem \ref{dt5thm4}.
Theorems \ref{dt7thm1} and \ref{dt7thm2} are analogues of these in
the quiver case. So the proof of Theorem \ref{dt5thm5} also yields:

\begin{thm} $\ti\Psi_{Q,I}:\SFai(\fM_{Q,I})\ra\ti L(Q)$ and\/
$\ti\Psi^{\chi,\Q}_{Q,I}:\oSFai(\fM_{Q,I},\chi,\Q)\ab\ra\ti L(Q)$
are Lie algebra morphisms.
\label{dt7thm3}
\end{thm}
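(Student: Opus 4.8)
The plan is to deduce Theorem \ref{dt7thm3} directly from the proof of Theorem \ref{dt5thm5} in \S\ref{dt11}, by checking that the only two geometric inputs of that proof have been replaced by their quiver analogues, namely Theorems \ref{dt7thm1} and \ref{dt7thm2}. First I would recall the structure of the argument for Theorem \ref{dt5thm5}. It suffices to show $\ti\Psi^{\chi,\Q}_{Q,I}$ is a Lie algebra morphism, since $\ti\Psi_{Q,I}=\ti\Psi^{\chi,\Q}_{Q,I}\ci\bar\Pi^{\chi,\Q}_{\fM_{Q,I}}$ and $\bar\Pi^{\chi,\Q}_{\fM_{Q,I}}:\SFai(\fM_{Q,I})\ra\oSFai(\fM_{Q,I},\chi,\Q)$ is a Lie algebra morphism by \S\ref{dt31}. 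By Proposition \ref{dt3prop1} the space $\oSFai(\fM_{Q,I},\chi,\Q)$ is spanned over $\Q$ by elements $[(U\times[\Spec\C/\bG_m],\rho)]$ with algebra stabilizers, for $U$ a quasiprojective $\C$-variety; and the Lie bracket $[\,,\,]=f*g-g*f$ of two such elements is computed in $\SFa(\fM_{Q,I})$ using the Ringel--Hall multiplication $*$ of \eq{dt3eq3}, as in Example \ref{dt3ex3}. So I would reduce, exactly as in \S\ref{dt11}, to a local calculation: given $[(U\times[\Spec\C/\bG_m],\rho)]$, $[(V\times[\Spec\C/\bG_m],\si)]$, write out $[(U\times[\Spec\C/\bG_m],\rho)]*[(V\times[\Spec\C/\bG_m],\si)]$ as a sum over the moduli stack $\fExact_{\modCQI}$ of short exact sequences, decompose it using the relations Definition \ref{dt2def10}(i)--(iii) into pieces supported on the diagonal $E_1\op E_2$ and on $\mathbb{P}(\Ext^1(E_2,E_1))$, and then apply $\ti\Psi^{\chi,\Q}_{Q,I}$, which weights by $\nu_{\fM_{Q,I}}$.

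The two places where the Calabi--Yau 3-fold hypothesis entered the proof of Theorem \ref{dt5thm5} are: (i) equation \eq{dt3eq14}, the antisymmetric formula $\bar\chi([E],[F])=(\dim\Hom(E,F)-\dim\Ext^1(E,F))-(\dim\Hom(F,E)-\dim\Ext^1(F,E))$, which is used both to identify the sign $(-1)^{\bar\chi}$ appearing when one moves Behrend function weights past a direct sum and to match the combinatorial outcome of the $\mathbb{P}(\Ext^1)$ integrals with the bracket \eq{dt5eq4} in $\ti L(X)$; and (ii) Theorem \ref{dt5thm4}, the Behrend function identities \eq{dt5eq2}--\eq{dt5eq3}, which control precisely how $\nu_\fM$ behaves under $E_1\op E_2$ and under the family of extensions $0\ra E_1\ra F\ra E_2\ra 0$. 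In the quiver setting Theorem \ref{dt7thm1} supplies (i) verbatim, with $\bar\chi$ now the antisymmetric form \eq{dt7eq9} on $\Z^{Q_0}$, and Theorem \ref{dt7thm2} supplies (ii) verbatim, for all $E_1,E_2\in\modCQI$. Therefore I would go through \S\ref{dt11} substituting $\fM_{Q,I}$ for $\fM$, $\modCQI$ for $\coh(X)$, $\Z^{Q_0}$ for $K^\num(\coh(X))$, $\ti L(Q)$ for $\ti L(X)$, and citing Theorems \ref{dt7thm1}, \ref{dt7thm2} where \eq{dt3eq14}, Theorem \ref{dt5thm4} were used; everything else --- the stack-function bookkeeping, the coefficients $F(G,T^G,Q)$, the virtual rank projections $\Pi^\vi_n$, the fact that $\ti\Psi^{\chi,\Q}_{Q,I}$ is well-defined on $\oSFai$ as in Definition \ref{dt5def1} --- is formal and carries over unchanged, since $\modCQI$ satisfies Assumption \ref{dt3ass} with $K(\modCQI)=\Z^{Q_0}$ by \S\ref{dt71}.

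The one point that genuinely needs care --- and which I expect to be the main obstacle, though a mild one --- is the well-definedness of $\ti\Psi^{\chi,\Q}_{Q,I}$ itself, i.e.\ that the assignment \eq{dt5eq5} is compatible with relations Definition \ref{dt2def10}(i)--(iii) in $\oSFai(\fM^{\bs d}_{Q,I},\chi,\Q)$. Compatibility with (i) and (ii) is immediate since $\nu_{\fM_{Q,I}}$ is a locally constructible function and weighted Euler characteristic is motivic; compatibility with (iii), involving the coefficients $F(G,T^G,Q)$, is exactly the content of the analogous verification in Definition \ref{dt5def1} and uses only the multiplicativity of $\nu$ under the relevant smooth projections (Theorem \ref{dt4thm1}(ii) and Corollary \ref{dt4cor1}), which hold for any Artin $\C$-stack locally of finite type, in particular for $\fM_{Q,I}$. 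So this reduces to observing that the Behrend-function properties of \S\ref{dt41} are not special to $\coh(X)$. With that in hand, the Lie algebra morphism statement follows by the identical bracket computation: the terms $[(U\times V\times[\Spec\C/\bG_m^2],\cdot)]$ supported at $E_1\op E_2$ cancel between $f*g$ and $g*f$ as in Example \ref{dt3ex3}, the surviving $\mathbb{P}(\Ext^1(E_2,E_1))$ and $\mathbb{P}(\Ext^1(E_1,E_2))$ contributions are evaluated using \eq{dt5eq3} for $\nu_{\fM_{Q,I}}$, the $\dim\Ext^1(E_2,E_1)-\dim\Ext^1(E_1,E_2)$ term is matched with $\bar\chi(\bdim E_1,\bdim E_2)$ via Theorem \ref{dt7thm1}, and the overall sign $(-1)^{\bar\chi}$ is produced by \eq{dt5eq2}, giving exactly the bracket $[\ti\la^{\bdim E_1},\ti\la^{\bdim E_2}]$ of Definition \ref{dt7def5}. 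Hence $\ti\Psi^{\chi,\Q}_{Q,I}$, and therefore $\ti\Psi_{Q,I}$, is a Lie algebra morphism.
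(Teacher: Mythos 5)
Your proposal is correct and is essentially the paper's own argument: the paper proves Theorem \ref{dt7thm3} precisely by observing that the proof of Theorem \ref{dt5thm5} in \S\ref{dt11} uses only equation \eq{dt3eq14} and Theorem \ref{dt5thm4}, whose quiver analogues are Theorems \ref{dt7thm1} and \ref{dt7thm2}, so the same argument carries over with $\fM_{Q,I}$, $\Z^{Q_0}$, $\ti L(Q)$ in place of $\fM$, $K^\num(\coh(X))$, $\ti L(X)$. Your extra remarks on the well-definedness of $\ti\Psi^{\chi,\Q}_{Q,I}$ and on the global-quotient presentation of $\fM^{\bs d}_{Q,I}$ (here given directly by \eq{dt7eq8} rather than via Quot schemes) are consistent with, and implicit in, the paper's treatment.
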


Here is the analogue of Definitions \ref{dt5def2} and~\ref{dt6def1}.

\begin{dfn} Let $(\mu,\R,\le)$ be a slope stability condition on
$\modCQI$ as in Example \ref{dt7ex1}. It is permissible, as in
\cite[Ex.~4.14]{Joyc7}. So as in \S\ref{dt32} we have elements
$\bdss^{\bs d}(\mu)\in\SFa(\fM_{Q,I})$ and $\bep^{\bs
d}(\mu)\in\SFai(\fM_{Q,I})$ for all $\bs d\in C(\modCQI)=\Z_{\sst\ge
0}^{Q_0}\sm\{0\}\subset\Z^{Q_0}$. As in \eq{dt5eq7}, define {\it
quiver generalized Donaldson--Thomas
invariants\/}\index{Donaldson--Thomas invariants!for quivers|(}
$\bar{DT}{}^{\bs d}_{Q,I}(\mu)\in\Q$ for all $\bs d\in C(\modCQI)$
by\nomenclature[DTQIb]{$\bar{DT}{}^{\bs
d}_{Q,I}(\mu)$}{Donaldson--Thomas type invariants for $(Q,I)$}
\begin{equation*}
\ti\Psi_{Q,I}\bigl(\bep^{\bs d}(\mu)\bigr)=-\bar{DT}{}^{\bs
d}_{Q,I}(\mu)\ti\la^{\bs d}.
\end{equation*}

As in \eq{dt6eq15}, define {\it quiver BPS
invariants\/}\nomenclature[DTQIc]{$\hat{DT}{}^{\bs d}_{Q,I}(\mu)$}{BPS
invariants for $(Q,I)$}\index{BPS invariants!for quivers}
$\hat{DT}{}^{\bs d}_{Q,I}(\mu)\in\Q$ by
\e
\hat{DT}{}^{\bs d}_{Q,I}(\mu)=\sum_{m\ge 1,\; m\mid\bs
d}\frac{\Mo(m)}{m^2}\, \bar{DT}{}^{\bs d/m}_{Q,I}(\mu),
\label{dt7eq21}
\e
where $\Mo:\N\ra\Q$ is the M\"obius function.\index{Mobius
function@M\"obius function} As for \eq{dt6eq14}, the inverse of
\eq{dt7eq21} is
\e
\bar{DT}{}^{\bs d}_{Q,I}(\mu)=\sum_{m\ge 1,\; m\mid\bs
d}\frac{1}{m^2}\, \hat{DT}{}^{\bs d/m}_{Q,I}(\mu).
\label{dt7eq22}
\e

If $W\equiv 0$, so that $\modCQI=\modCQ$, we write $\bar{DT}{}^{\bs
d}_Q(\mu),\hat{DT}{}^{\bs d}_Q(\mu)$ for $\smash{\bar{DT}{}^{\bs
d}_{Q,I}(\mu),\hat{DT}{}^{\bs
d}_{Q,I}(\mu)}$.\nomenclature[DTQb]{$\bar{DT}{}^{\bs
d}_Q(\mu)$}{Donaldson--Thomas type invariants for a quiver
$Q$}\nomenclature[DTQc]{$\hat{DT}{}^{\bs d}_Q(\mu)$}{BPS invariants for a
quiver $Q$} Note that $\mu\equiv 0$ is allowed as a slope stability
condition, with every object in $\modCQI$ 0-semistable, and this is
in many ways the most natural choice. So we have invariants
$\bar{DT}{}^{\bs d}_{Q,I}(0), \hat{DT}{}^{\bs d}_{Q,I}(0)$ and
$\smash{\bar{DT}{}^{\bs d}_Q(0), \hat{DT}{}^{\bs d}_Q(0)}$. We
cannot do this in the coherent sheaf case; the difference is that
for quivers $\fM_{Q,I}^{\bs d}$ is of finite type for all $\bs d\in
C(\modCQI)$, so $(0,\R,\le)$ is permissible on $\modCQI$, but for
coherent sheaves $\fM^\al$ is generally not of finite type for
$\al\in C(\coh(X))$ with $\dim\al>0$, so $(0,\R,\le)$ is not
permissible.
\label{dt7def6}
\end{dfn}

Here is the analogue of the integrality conjecture, Conjecture
\ref{dt6conj1}. We will prove the conjecture in \S\ref{dt76} for the
invariants $\hat{DT}{}^{\bs d}_Q(\mu)$, that is, the case~$W\equiv
0$.\index{Donaldson--Thomas invariants!integrality
properties}\index{stability condition!generic}

\begin{conj} Call\/ $(\mu,\R,\le)$
\begin{bfseries}generic\end{bfseries} if for all\/ $\bs d,\bs e\in
C(\modCQI)$ with\/ $\mu(\bs d)=\mu(\bs e)$ we have\/~$\bar\chi(\bs
d,\bs e)=0$. If\/ $(\mu,\R,\le)$ is generic, then $\hat{DT}{}^{\bs
d}_{Q,I}(\mu)\in\Z$ for all\/~$\bs d\in C(\modCQI)$.
\label{dt7conj1}
\end{conj}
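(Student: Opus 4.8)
The plan is to lift the whole construction of §\ref{dt73} to a \emph{motivic} level and then specialize, using the genericity hypothesis to collapse the noncommutative structure. The essential geometric input is equation \eq{dt7eq8}: for $(Q,I)$ coming from a minimal superpotential $W$ the stack $\fM^{\bs d}_{Q,I}$ is \emph{globally} $[\Crit(W^{\bs d})/G_{\bs d}]$ with $W^{\bs d}$ an explicit regular function on the smooth affine variety $\prod_{e\in Q_1}\Hom(\C^{\bs d(t(e))},\C^{\bs d(h(e))})$ and $G_{\bs d}=\prod_{v\in Q_0}\GL(\bs d(v))$, so the monodromic motivic vanishing cycle $\phi_{W^{\bs d}}$ exists globally on $\fM^{\bs d}_{Q,I}$ (there is no gluing obstruction of the type in Question \ref{dt5quest1}). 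First I would replace the Behrend-function weighting in the definition of $\ti\Psi_{Q,I}$ by a weighting by these motivic vanishing cycles, obtaining a refined Lie algebra morphism into a "motivic quantum torus'' (the refined analogue of $U(\ti L(Q))$, whose multiplication carries a factor $q^{\frac12\bar\chi(\bs d,\bs e)}$), and refined invariants $\bar{DT}{}^{\bs d}_{Q,I}(\mu;q^{1/2})$ and $\hat{DT}{}^{\bs d}_{Q,I}(\mu;q^{1/2})$ lying in $\Q(q^{1/2})$ (via virtual Poincar\'e polynomial), whose values at $q^{1/2}=-1$ recover $-\bar{DT}{}^{\bs d}_{Q,I}(\mu)$ and (up to sign) $\hat{DT}{}^{\bs d}_{Q,I}(\mu)$ \emph{provided} there is no pole at $q^{1/2}=-1$. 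The no-pole property should come from the motivic analogue of Theorem \ref{dt3thm1}, i.e. that $\bep^{\bs d}(\mu)$ lies in $\SFai$ ("supported on virtual indecomposables''), exactly as forecast in the last bullet of §\ref{dt1.6}; concretely this means motivicizing the virtual-rank projections $\Pi^\vi_n$ of §\ref{dt23} and checking they remain compatible with the vanishing-cycle weighting.

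Next I would use the genericity hypothesis. In $\ti L(Q)$ the bracket is $[\ti\la^{\bs d},\ti\la^{\bs e}]=(-1)^{\bar\chi(\bs d,\bs e)}\bar\chi(\bs d,\bs e)\ti\la^{\bs d+\bs e}$, so if $(\mu,\R,\le)$ is generic then $\bar\chi$ vanishes on each slope slice $S_t=\{\bs d\in C(\modCQI):\mu(\bs d)=t\}$ and the subalgebra on $S_t$ is abelian; more importantly the refined motivic quantum torus restricted to $S_t$ is \emph{commutative}, since its defining factor $q^{\frac12\bar\chi}$ trivializes. Applying the refined $\ti\Psi_{Q,I}$ (extended to the associative/enveloping level as in \eq{dt3eq23}) to the Harder--Narasimhan exponential identity \eq{dt3eq7}, which on a slice $S_t$ reads $1+\sum_{\bs d\in S_t}\bdss^{\bs d}(\mu)=\exp\bigl(\sum_{\bs d\in S_t}\bep^{\bs d}(\mu)\bigr)$, then yields, in the commutative ring $\Q(q^{1/2})[[q^{\bs d}:\bs d\in S_t]]$, a factorization of the total series as an ordered product of quantum-dilogarithm factors with exponents $\bar{DT}{}^{\bs d}_{Q,I}(\mu;q^{1/2})$, equivalently of ordinary exponential factors with exponents $\hat{DT}{}^{\bs d}_{Q,I}(\mu;q^{1/2})$ after the inversion \eq{dt7eq21}. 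Integrality of $\hat{DT}{}^{\bs d}_{Q,I}(\mu)$ is thereby reduced to showing that $\hat{DT}{}^{\bs d}_{Q,I}(\mu;q^{1/2})$ is a Laurent polynomial in $q^{1/2}$ with integer coefficients: then its specialization at $q^{1/2}=-1$ is automatically an integer.

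To prove that polynomiality I would unwind the Hall-algebra computation to express $\hat{DT}{}^{\bs d}_{Q,I}(\mu;q^{1/2})$ as the virtual Poincar\'e polynomial of an explicit moduli stack of $\mu$-semistable $(Q,I)$-representations weighted by $\phi_{W^{\bs d}}$. When $W\equiv 0$ this is exactly Reineke's setting, $W^{\bs d}\equiv 0$, the vanishing cycle is trivial up to shift, and $\hat{DT}{}^{\bs d}_Q(\mu;q^{1/2})\in\Z[q^{\pm1/2}]$ by \cite{Rein1,Rein3} --- this is the case §\ref{dt76} will handle. For general minimal $W$ the statement is that the monodromic mixed-Hodge-module cohomology controlling $\hat{DT}{}^{\bs d}_{Q,I}(\mu)$ is "pure enough'' --- has no infinitely-divisible loop contributions --- and the natural tool is the cohomological Hall algebra of $(Q,W)$ together with an integrality/support lemma showing the semistable part of the CoHA is a free supercommutative algebra on a $\Z[q^{\pm1/2}]$-structure; the slope-slice vanishing $\bar\chi|_{S_t}\equiv 0$ is precisely what makes that CoHA supercommutative.

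The main obstacle is this last polynomiality/support statement for $W\not\equiv 0$: unlike the $W\equiv0$ case there is no elementary combinatorial formula, and one genuinely needs either the cohomological Hall algebra with a support lemma, or a dimensional-reduction identity relating vanishing-cycle invariants of $(Q,W)$ to ordinary representation counts of an auxiliary quiver (available only for potentials of special shape). A secondary but real obstacle is the well-definedness of the $q^{1/2}\to-1$ specialization, i.e. the motivic version of "$\bep^{\bs d}(\mu)\in\SFai$''; this is the technical point flagged at the end of §\ref{dt1.6} and requires a careful motivic refinement of the $\Pi^\vi_n$ machinery of §\ref{dt23}. Everything else --- the Behrend identities (Theorem \ref{dt7thm2}), the Lie morphism (Theorem \ref{dt7thm3}), the Harder--Narasimhan wall-crossing of §\ref{dt33}, and the genericity $\Rightarrow$ commutativity reduction --- is already in place or routine given the results stated in the excerpt.
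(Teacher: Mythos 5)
The statement you are trying to prove is stated in the paper only as a \emph{conjecture}: the paper proves it just in the special case $W\equiv 0$ (Theorem \ref{dt7thm6} in \S\ref{dt76}), by a completely different and much more elementary route --- localizing at a $\mu$-polystable point $E=a_1E_1\op\cdots\op a_kE_k$, using the Ext-quiver reduction of Proposition \ref{dt7prop3} to replace $F^{\bs d}_Q(\mu)([E])$ by $\hat{DT}{}^{\bs a}_{Q_E}(0)$ with $\bar\chi_{Q_E}\equiv 0$, and then arguing by induction on total dimension, with the base case (one vertex, $m$ loops) settled by Reineke's formula for noncommutative Hilbert schemes and his integrality theorem \cite[Th.~5.9]{Rein3}; a second proof runs through Reineke's functional equation relating $\hat{DT}{}^{i\bs d}_Q(\ti\mu)$ to $\chi(\M_{\st\,Q}^{i\bs d}(\ti\mu))$. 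Both proofs work precisely because for $W\equiv 0$ the vanishing cycles are trivial and everything reduces to counting representations; neither generalizes to $W\not\equiv 0$, which is why the general statement is left as a conjecture.

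Your proposal, by contrast, aims at the general case, and its two central steps are exactly the points it concedes are open: (i) the motivic refinement of the integration map $\ti\Psi_{Q,I}$ weighted by $\phi_{W^{\bs d}}$, together with the well-definedness of the specialization $q^{1/2}\to -1$ (the ``absence of poles'' issue, which requires a motivic analogue of the virtual-indecomposables machinery of \S\ref{dt23} and Theorem \ref{dt3thm1} --- this is precisely the unproven part of the Kontsevich--Soibelman framework flagged in \S\ref{dt1.6}, and the paper's own proof of the Behrend identities is complex-analytic and does not deliver the motivic statement); and (ii) the claim that $\hat{DT}{}^{\bs d}_{Q,I}(\mu;q^{1/2})\in\Z[q^{\pm 1/2}]$ for general minimal $W$, which you propose to extract from a freeness/support property of the semistable cohomological Hall algebra of $(Q,W)$ but do not prove. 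Each of these is at least as deep as the conjecture itself, so as written the argument converts one conjecture into several others rather than proving it. Your correct observations --- that \eq{dt7eq8} gives a \emph{global} critical-locus presentation of $\fM^{\bs d}_{Q,I}$, so no gluing problem of the type in Question \ref{dt5quest1} arises, that genericity kills the twist on each slope slice because the relevant form is the antisymmetrized $\bar\chi$ of \eq{dt7eq9}, and that the $W\equiv 0$ case collapses to Reineke --- are all sound, and the overall strategy is a reasonable research programme; but the gap between ``programme'' and ``proof'' here is genuine, and within the scope of this paper the only proved instance of the conjecture is the $W\equiv 0$ case of \S\ref{dt76}.
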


If the maps $c:Q_0\ra\R$ and $r:Q_0\ra(0,\iy)$ defining $\mu$ in
Example \ref{dt7ex1} are generic, it is easy to see that $\mu(\bs
d)=\mu(\bs e)$ only if $\bs d,\bs e$ are linearly dependent over
$\Q$ in $\Z^{Q_0}$, so that $\bar\chi(\bs d,\bs e)=0$ by
antisymmetry of $\bar\chi$, and $(\mu,\R,\le)$ is generic in the
sense of Conjecture \ref{dt7conj1}. Thus, there exist generic
stability conditions $(\mu,\R,\le)$ on any~$\modCQI$.

Let $(\mu,\R,\le)$, $(\ti\mu,\R,\le)$ be slope stability conditions
on $\modCQI$. Then $(0,\R,\le)$ dominates\index{stability
condition!$(\tilde\tau,\tilde T,\leqslant)$ dominates
$(\tau,T,\leqslant)$} both, so Theorem \ref{dt3thm2} with
$(\mu,\R,\le)$, $(\ti\mu,\R,\le)$, $(0,\R,\le)$ in place of
$(\tau,T,\le)$, $(\ti\tau,\ti T,\le)$, $(\hat\tau,\hat T,\le)$
writes $\bep^{\bs d}(\ti\mu)$ in terms of the $\bep^{\bs e}(\mu)$ in
\eq{dt3eq10}. Applying $\ti\Psi_{Q,I}$, which is a Lie algebra
morphism by Theorem \ref{dt7thm3}, to this identity gives an
analogue of Theorem~\ref{dt5thm6}:

\begin{thm} Let\/ $(\mu,\R,\le)$ and\/ $(\ti\mu,\R,\le)$ be any two
slope stability conditions on $\modCQI,$ and\/ $\bar\chi$ be as in
\eq{dt7eq9}. Then for all\/ $\bs d\in C(\modCQI)$ we
have\index{wall-crossing formula}
\ea
&\bar{DT}{}^{\bs d}_{Q,I}(\ti\mu)=
\label{dt7eq23}\\
&\!\!\!\sum_{\substack{\text{iso.}\\ \text{classes}\\
\text{of finite}\\ \text{sets $I$}}}\,\,
\sum_{\substack{\ka:I\ra C(\modCQI):\\ \sum_{i\in I}\ka(i)=\bs d}}\,\,
\sum_{\begin{subarray}{l} \text{connected,}\\
\text{simply-}\\ \text{connected}\\ \text{digraphs $\Ga$,}\\
\text{vertices $I$}\end{subarray}}
\begin{aligned}[t]
(-1)^{\md{I}-1} V(I,\Ga,\ka;\mu,\ti\mu)\cdot
\prod\nolimits_{i\in I} \bar{DT}{}^{\ka(i)}_{Q,I}(\mu)&\\
\cdot (-1)^{\frac{1}{2}\sum_{i,j\in
I}\md{\bar\chi(\ka(i),\ka(j))}}\cdot\! \prod\limits_{\text{edges
\smash{$\mathop{\bu}\limits^{\sst i}\ra\mathop{\bu}\limits^{\sst
j}$} in $\Ga$}\!\!\!\!\!\!\!\!\!\!\!\!\!\!\!\!\!\!\!\!\!\!\!
\!\!\!\!\!\!\!\!} \bar\chi(\ka(i),\ka(j))&,
\end{aligned}
\nonumber
\ea
with only finitely many nonzero terms.
\label{dt7thm4}
\end{thm}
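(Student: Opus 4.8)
The plan is to prove Theorem \ref{dt7thm4} by repeating, essentially word for word, the derivation of the coherent sheaf wall-crossing formula in \S\ref{dt35} and \S\ref{dt53} (that is, the proof of Theorem \ref{dt5thm6}), with the quiver analogues substituted at each step. All three ingredients needed are already in place: Theorems \ref{dt3thm2} and \ref{dt3thm3} apply to $\modCQI$ because it satisfies Assumption \ref{dt3ass} and slope stability conditions on it are permissible by \cite[Ex.~4.14]{Joyc7}; Theorem \ref{dt7thm3} provides the Lie algebra morphism $\ti\Psi_{Q,I}:\SFai(\fM_{Q,I})\ra\ti L(Q)$; and the biadditive form $\bar\chi$ of \eq{dt7eq9} playing the role of the Euler form is antisymmetric, so $\ti L(Q)$ and its universal enveloping algebra $U(\ti L(Q))$ have exactly the structure exploited in \S\ref{dt35}. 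Note that the combinatorial coefficients $U(\bs d_1,\ldots,\bs d_n;\mu,\ti\mu)$ and $V(I,\Ga,\ka;\mu,\ti\mu)$ are defined by Definitions \ref{dt3def6} and \ref{dt3def8} purely in terms of the two stability conditions $\mu,\ti\mu$ and the underlying combinatorics, so they are literally the same objects used in \S\ref{dt33}.

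First I would apply Theorem \ref{dt3thm2} with $\A=\modCQI$, taking $(\tau,T,\le)=(\mu,\R,\le)$, $(\ti\tau,\ti T,\le)=(\ti\mu,\R,\le)$, and dominating weak stability condition $(\hat\tau,\hat T,\le)=(0,\R,\le)$; the latter is permissible because $\fM^{\bs d}_{Q,I}$ is of finite type for every $\bs d\in C(\modCQI)$, and it dominates every slope stability condition trivially. This yields \eq{dt3eq10}, expressing $\bep^{\bs d}(\ti\mu)$ as a finite sum of Ringel--Hall products $\bep^{\bs d_1}(\mu)*\cdots*\bep^{\bs d_n}(\mu)$ with coefficients $U(\bs d_1,\ldots,\bs d_n;\mu,\ti\mu)$. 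By Theorem \ref{dt3thm3} this is equivalent to a Lie algebra identity in $\SFai(\fM_{Q,I})$. Applying the Lie algebra morphism $\ti\Psi_{Q,I}$ and using $\ti\Psi_{Q,I}(\bep^{\bs d}(\mu))=-\bar{DT}{}^{\bs d}_{Q,I}(\mu)\ti\la^{\bs d}$ turns \eq{dt3eq10} into an identity in $U(\ti L(Q))$, the quiver analogue of \eq{dt5eq11}. Substituting the explicit expansion of $\ti\la^{\bs d_1}\star\cdots\star\ti\la^{\bs d_n}$ in $U(\ti L(Q))$ — the analogue of \eq{dt5eq12}, a sum over connected, simply-connected digraphs on $\{1,\ldots,n\}$ weighted by $\prod\bar\chi(\bs d_i,\bs d_j)$ over edges, times a global sign $(-1)^{\sum_{1\le i<j\le n}\bar\chi(\bs d_i,\bs d_j)}$, plus terms in $\ti\la_{[I,\ka]}$ with $\md{I}>1$ that all cancel — and equating coefficients of $\ti\la^{\bs d}$ gives the analogue of \eq{dt5eq13}. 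Repackaging the digraph sums via the coefficients $V(I,\Ga,\ka;\mu,\ti\mu)$ then yields \eq{dt7eq23}, exactly as \eq{dt5eq14} was obtained from \eq{dt5eq13} in Theorem \ref{dt5thm6}.

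For the finiteness assertion, I would note that the quiver case is in fact cleaner than the coherent sheaf case: the ``globally finite'' difficulty of \S\ref{dt33} does not arise, since every term on the right of \eq{dt3eq10} is supported on the open finite type substack $\fM^{\bs d}_\rss(0)=\fM^{\bs d}_{Q,I}$, so \eq{dt3eq10}, and hence \eq{dt7eq23}, has only finitely many nonzero terms with no need for the interpolation argument used for $\coh(X)$. Consequently there is no serious obstacle here; the only real content is the sign bookkeeping — verifying that the factor $(-1)^{\bar\chi(\bs d,\bs e)}$ in the bracket of $\ti L(Q)$, together with the sign change between \eq{dt3eq10} and its Lie-algebra reformulation and the global sign in the $U(\ti L(Q))$ product expansion, combine to produce precisely the factors $(-1)^{\md{I}-1}$ and $(-1)^{\frac{1}{2}\sum_{i,j\in I}\md{\bar\chi(\ka(i),\ka(j))}}$ in \eq{dt7eq23}. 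But this is verbatim the sign analysis already carried out for \eq{dt5eq14}, valid because $\bar\chi$ of \eq{dt7eq9} is antisymmetric just as the Euler form of a Calabi--Yau $3$-fold is; and its antisymmetry also guarantees, as in Remark after \eq{dt3eq27}, that the product of the $V$ coefficient with $\prod_{\text{edges}}\bar\chi(\ka(i),\ka(j))$ in \eq{dt7eq23} is independent of the orientation of $\Ga$.
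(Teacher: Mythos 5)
Your proposal is correct and is essentially the paper's own argument: the paper proves Theorem \ref{dt7thm4} in the paragraph preceding its statement by applying Theorem \ref{dt3thm2} with the trivial (permissible, dominating) stability condition $(0,\R,\le)$ and then the Lie algebra morphism $\ti\Psi_{Q,I}$ of Theorem \ref{dt7thm3}, following the derivation of Theorem \ref{dt5thm6} verbatim. Your additional observations — permissibility of $(\mu,\R,\le)$ from \cite[Ex.~4.14]{Joyc7}, finiteness being automatic since $\fM^{\bs d}_{Q,I}$ is of finite type so no interpolation between stability conditions is needed, and the sign bookkeeping carried over from the antisymmetry of $\bar\chi$ in \eq{dt7eq9} — are exactly the points the paper relies on implicitly.
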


The form $\bar\chi$ in \eq{dt7eq9} is zero if and only if for all
vertices $i,j$ in $Q$, there are the same number of edges $i\ra j$
and $j\ra i$ in $Q$. Then \eq{dt7eq23} gives:

\begin{cor} Suppose that\/ $\bar\chi$ in \eq{dt7eq9} is
zero. Then for any slope stability conditions $(\mu,\R,\le)$ and\/
$(\ti\mu,\R,\le)$ on $\modCQI$ and all\/ $\bs d$ in $C(\modCQI)$ we
have $\bar{DT}{}^{\bs d}_{Q,I}(\ti\mu)=\bar{DT}{}^{\bs
d}_{Q,I}(\mu)$ and\/~$\hat{DT}{}^{\bs
d}_{Q,I}(\ti\mu)=\hat{DT}{}^{\bs d}_{Q,I}(\mu)$.
\label{dt7cor1}
\end{cor}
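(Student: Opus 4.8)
The plan is to read off the Corollary directly from the wall-crossing formula of Theorem \ref{dt7thm4}, together with the Möbius inversion \eq{dt7eq21} defining the $\hat{DT}$ invariants. The only combinatorial fact needed is that a connected, simply-connected digraph $\Ga$ with finite vertex set $I$ has exactly $\md{I}-1$ edges, since its underlying undirected graph is a tree; in particular it has at least one edge whenever $\md{I}\ge 2$.

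First I would apply \eq{dt7eq23} with the given stability conditions $(\mu,\R,\le)$ and $(\ti\mu,\R,\le)$. Fix $\bs d\in C(\modCQI)$ and examine a term of the sum indexed by a set $I$, a map $\ka:I\ra C(\modCQI)$ with $\sum_{i\in I}\ka(i)=\bs d$, and a connected, simply-connected digraph $\Ga$ on $I$. If $\md{I}\ge 2$, then $\Ga$ contains an edge $\smash{\mathop{\bu}\limits^{\sst i}\ra\mathop{\bu}\limits^{\sst j}}$, and the corresponding factor $\bar\chi(\ka(i),\ka(j))$ appearing in \eq{dt7eq23} is zero, since by hypothesis $\bar\chi\equiv 0$ on $\Z^{Q_0}\times\Z^{Q_0}$; hence the whole term vanishes. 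The only surviving term is $\md{I}=1$, say $I=\{i\}$, which forces $\ka(i)=\bs d$. There is a unique digraph on a single vertex, with no edges, so the product over edges of $\Ga$ is empty and equals $1$, while $(-1)^{\md{I}-1}=1$, and $(-1)^{\frac12\sum_{i,j\in I}\md{\bar\chi(\ka(i),\ka(j))}}=(-1)^{\frac12\md{\bar\chi(\bs d,\bs d)}}=1$ because $\bar\chi$ is antisymmetric. Finally $V(\{i\},\Ga,\ka;\mu,\ti\mu)=U(\bs d;\mu,\ti\mu)=S(\bs d;\mu,\ti\mu)=1$ by Definitions \ref{dt3def6} and \ref{dt3def8} evaluated at a single argument. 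Therefore \eq{dt7eq23} collapses to $\bar{DT}{}^{\bs d}_{Q,I}(\ti\mu)=\bar{DT}{}^{\bs d}_{Q,I}(\mu)$ for all $\bs d\in C(\modCQI)$.

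For the BPS invariants I would then substitute the identity just obtained into the defining formula \eq{dt7eq21}: for all $\bs d\in C(\modCQI)$,
\begin{align*}
\hat{DT}{}^{\bs d}_{Q,I}(\ti\mu)&=\sum_{m\ge 1,\; m\mid\bs d}\frac{\Mo(m)}{m^2}\,\bar{DT}{}^{\bs d/m}_{Q,I}(\ti\mu)\\
&=\sum_{m\ge 1,\; m\mid\bs d}\frac{\Mo(m)}{m^2}\,\bar{DT}{}^{\bs d/m}_{Q,I}(\mu)=\hat{DT}{}^{\bs d}_{Q,I}(\mu),
\end{align*}
where I use that $\bs d/m\in C(\modCQI)$ whenever $m\mid\bs d$. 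This completes the argument.

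There is essentially no genuine obstacle here: the real content lies in Theorem \ref{dt7thm4}, whose proof rests on Theorem \ref{dt7thm1} (producing the biadditive antisymmetric form $\bar\chi$) and on the Lie algebra morphism $\ti\Psi_{Q,I}$ of Theorem \ref{dt7thm3}. The only point requiring mild care is the normalization check that the $\md{I}=1$ term of \eq{dt7eq23} carries coefficient $1$; this is the same bookkeeping as in \cite[\S 6.5]{Joyc6}, using only the definitions of $S$, $U$ and $V$ at length one.
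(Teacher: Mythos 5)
Your proof is correct and is essentially the paper's own argument: the corollary is stated as an immediate consequence of \eq{dt7eq23}, with exactly your reasoning (each term with $\md{I}>1$ carries $\md{I}-1$ factors $\bar\chi(\ka(i),\ka(j))$, all zero, so only the $\md{I}=1$ term with coefficient $1$ survives) spelled out in the parallel sheaf case in Theorem \ref{dt6thm2}(a), and the $\hat{DT}$ statement following from \eq{dt7eq21} as you say. Your normalization check $V=U=S=1$ at length one is the same bookkeeping the paper relies on.
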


Here is a case in which we can evaluate the invariants very easily.

\begin{ex} Let $Q$ be a quiver without oriented cycles. Choose a
slope stability condition $(\mu,\R,\le)$ on $\modCQ$ such that
$\mu(\de_v)>\mu(\de_w)$ for all edges $v\ra w$ in $Q$. This is
possible as $Q$ has no oriented cycles. Then up to isomorphism the
only $\mu$-stable objects in $\modCQ$ are the simple representations
$S^v$ for $v\in Q_0$ and the only $\mu$-semistables are $kS^v$ for
$v\in Q_0$ and $k\ge 1$. Here $S^v=(X^v,\rho^v)$, where $X^v_w=\C$
if $v=w$ and $X^v_w=0$ if $v\ne w\in Q_0$, and $\rho^v_e=0$ for
$e\in Q_1$. Examples \ref{dt6ex1}--\ref{dt6ex2} and equations
\eq{dt7eq21}--\eq{dt7eq22} now imply that
\begin{equation*}
\bar{DT}{}^{\bs d}_Q(\mu)=\begin{cases} \displaystyle \frac{1}{l^2},
& \bs d=l\de_v, \; l\ge 1, \; v\in Q_0, \\
0, & \text{otherwise,}\end{cases} \quad \hat{DT}{}^{\bs
d}_Q(\mu)=\begin{cases} 1, & \bs d=\de_v, \; v\in Q_0, \\
0, & \text{otherwise.}\end{cases}
\end{equation*}
\label{dt7ex4}
\end{ex}

\subsection{Pair invariants for quivers}
\label{dt74}\index{stable pair invariants $PI^{\al,n}(\tau')$!analogue
for quivers|(}

We now discuss analogues for quivers of the moduli spaces of stable
pairs $\M_\stp^{\al,n}(\tau')$ and stable pair invariants
$PI^{\al,n}(\tau')$ in \S\ref{dt54}, and the identity \eq{dt5eq17}
in Theorem \ref{dt5thm10} relating $PI^{\al,n}(\tau')$ and the
$\bar{DT}{}^\be(\tau)$. Here are the basic definitions. These quiver
analogues of $\M_\stp^{\al,n}(\tau'),PI^{\al,n}(\tau')$ are not new,
similar things have been studied in quiver theory by Nakajima,
Reineke, Szendr\H oi and other authors for some years
\cite{EnRe,Naga,NaNa,Naka,MoRe,Rein1,Rein2,Szen}. We explain the
relations between our definitions and the literature after
Definition~\ref{dt7def8}.

\begin{dfn} Let $Q$ be a quiver with relations $I$ coming from a
superpotential $W$ on $Q$ over an algebraically closed field $\K$ of
characteristic zero. Suppose $(\mu,\R,\le)$ is a slope stability
condition on $\modKQI$, as in Example~\ref{dt7ex1}.

Let $\bs d,\bs e\in\Z^{Q_0}_{\sst\ge 0}$ be dimension vectors. A
{\it framed representation\/\index{quiver!framed representation}
$(X,\rho,\si)$ of\/ $(Q,I)$ of type\/} $(\bs d,\bs e)$ consists of a
representation $(X,\rho)=(X_v:v\in Q_0$, $\rho_e:e\in Q_1)$ of
$(Q,I)$ over $\K$ with $\dim X_v=\bs d(v)$ for all $v\in Q_0$,
together with linear maps $\si_v:\K^{\bs e(v)}\ra X_v$ for all $v\in
Q_0$. An {\it isomorphism\/} between framed representations
$(X,\rho,\si),(X',\rho',\si')$ consists of isomorphisms $i_v:X_v\ra
X_v'$ for all $v\in Q_0$ such that $i_{h(e)}\ci\rho_e=\rho'_e\ci
i_{t(e)}$ for all $e\in Q_1$ and $i_v\ci\si_v=\si_v'$ for all $v\in
Q_0$. We call a framed representation $(X,\rho,\si)$ {\it stable\/}
if
\begin{itemize}
\setlength{\itemsep}{0pt}
\setlength{\parsep}{0pt}
\item[(i)] $\mu([(X',\rho')])\le\mu([(X,\rho)])$ for all nonzero
subobjects $(X',\rho')\subset(X,\rho)$ in $\modKQI$ or $\modKQ$;
and
\item[(ii)] If also $\si$ factors through $(X',\rho')$, that is,
$\si_v(\C^{e(v)})\subseteq X_v'\subseteq X_v$ for all $v\in Q_0$,
then~$\mu([(X',\rho')])<\mu([(X,\rho)])$.
\end{itemize}
We will use $\mu'$ {\it to denote stability of framed
representations}, defined using~$\mu$.
\label{dt7def7}
\end{dfn}

Following Engel and Reineke \cite[\S 3]{EnRe} or Szendr\H oi
\cite[\S 1.2]{Szen}, we can in a standard way define moduli problems
for all framed representations, and for stable framed
representations. When $W\equiv 0$, so that $\modKQI=\modKQ$, the
moduli space of all framed representations of type $(\bs d,\bs e)$
is an Artin $\K$-stack $\fM_{\fr\,Q}^{\bs d,\bs e}$. By analogy with
\eq{dt7eq2} we have\nomenclature[MdQ]{$\fM_{\fr\,Q}^{\bs d,\bs e}$}{moduli
stack of framed representations of a quiver $Q$}
\e
\fM_{\fr\,Q}^{\bs d,\bs e}\cong\left[\frac{\ts\prod_{e\in
Q_1}\Hom(\K^{\bs d(t(e))},\K^{\bs d(h(e))})\times \prod_{v\in
Q_0}\Hom(\K^{\bs e(v)},\K^{\bs d(v)})}{\ts\prod_{v\in Q_0}\GL(\bs
d(v))}\right].
\label{dt7eq24}
\e
The moduli space of stable framed representations of type $(\bs
d,\bs e)$ is a fine moduli $\K$-scheme\index{fine moduli
scheme}\index{moduli scheme!fine} $\M_{\stf\,Q}^{\bs d,\bs
e}(\mu')$,\nomenclature[MdQst]{$\M_{\stf\,Q}^{\bs d,\bs e}(\mu')$}{fine moduli
scheme of stable framed representations of $Q$} an open
$\K$-substack of $\fM_{\fr\,Q}^{\bs d,\bs e}$, with
\e
\M_{\stf\,Q}^{\bs d,\bs e}(\mu')\cong U^{\bs d,\bs
e}_{\stf,Q}(\mu')/\ts\prod_{v\in Q_0}\GL(\bs d(v)),
\label{dt7eq25}
\e
where $U^{\bs d,\bs e}_{\stf,Q}(\mu')$ is open in
$\prod_{e}\Hom(\K^{\bs d(t(e))},\K^{\bs d(h(e))})\times
\prod_{v}\Hom(\K^{\bs e(v)},\K^{\bs d(v)})$, and $\prod_{v}\GL(\bs
d(v))$ acts freely on $U^{\bs d,\bs e}_{\stf,Q}(\mu')$, and
\eq{dt7eq25} may be written as a GIT quotient for an appropriate
linearization. From \eq{dt7eq24}--\eq{dt7eq25} we see that
$\fM_{\fr\,Q}^{\bs d,\bs e},\M_{\stf\,Q}^{\bs d,\bs e}(\mu')$ are
both smooth with dimension
\e
\dim\fM_{\fr\,Q}^{\bs d,\bs e}=\M_{\stf\,Q}^{\bs d,\bs e}(\mu')=
\hat\chi(\bs d,\bs d)+\ts\sum_{v\in Q_0}\bs e(v)\bs d(v).
\label{dt7eq26}
\e

Similarly, for general $W$, the moduli space of all framed
representations of type $(\bs d,\bs e)$ is an Artin $\K$-stack
$\fM_{\fr\,Q,I}^{\bs d,\bs e}$. By analogy with \eq{dt7eq8} we
have\nomenclature[MdQI]{$\fM_{\fr\,Q,I}^{\bs d,\bs e}$}{moduli stack of framed
representations of $(Q,I)$}
\begin{equation*}
\fM_{\fr\,Q,I}^{\bs d,\bs e}\cong\ts\bigl[\Crit(W^{\bs d})\times \prod_{v\in Q_0}\Hom(\K^{\bs e(v)},\K^{\bs d(v)})/\prod_{v\in
Q_0}\GL(\bs d(v))\bigr],
\end{equation*}
where $\Crit(W^{\bs d})\subseteq\prod_{e\in Q_1}\Hom(\K^{\bs
d(t(e))},\K^{\bs d(h(e))})$ is as in \eq{dt7eq8}, and the moduli
space of stable framed representations of type $(\bs d,\bs e)$ is a
fine moduli $\K$-scheme $\M_{\stf\,Q,I}^{\bs d,\bs
e}(\mu')$,\nomenclature[MdQstI]{$\M_{\stf\,Q,I}^{\bs d,\bs e}(\mu')$}{fine
moduli scheme of stable framed representations of $(Q,I)$} an open
$\K$-substack of $\fM_{\fr\,Q,I}^{\bs d,\bs e}$, with
\begin{equation*}
\M_{\stf\,Q,I}^{\bs d,\bs e}(\mu')\cong \frac{\bigl(\Crit(W^{\bs
d})\times\ts\prod_{v\in Q_0}\Hom(\K^{\bs e(v)},\K^{\bs d(v)})\bigr)\cap
U^{\bs d,\bs e}_{\stf,Q}(\mu')}{\ts\prod_{v\in Q_0}\GL(\bs d(v))}\,.
\end{equation*}

We can now define our analogues of invariants $PI^{\al,n}(\tau')$
for quivers, which following Szendr\H oi \cite{Szen} we call {\it
noncommutative Donaldson--Thomas invariants}.

\begin{dfn} In the situation above, define\nomenclature[NDTQI]{$NDT_{Q,I}^{\bs d,\bs
e}(\mu')$}{noncommutative Donaldson--Thomas invariants for
$(Q,I)$}\nomenclature[NDTQ]{$NDT_Q^{\bs d,\bs e}(\mu')$}{noncommutative
Donaldson--Thomas invariants for $Q$}\index{Donaldson--Thomas
invariants!noncommutative|(}
\ea
NDT_{Q,I}^{\bs d,\bs e}(\mu')&=\chi\bigl(\M_{\stf\,Q,I}^{\bs d,\bs
e}(\mu'),\nu_{\M_{\stf\,Q,I}^{\bs d,\bs e}(\mu')}\bigr),
\label{dt7eq27}\\
\begin{split}
NDT_{Q}^{\bs d,\bs e}(\mu')&=\chi\bigl(\M_{\stf\,Q}^{\bs d,\bs
e}(\mu'),\nu_{\M_{\stf\,Q}^{\bs d,\bs e}(\mu')}\bigr)\\
&=(-1)^{\hat\chi(\bs d,\bs d)+\sum_{v\in Q_0}\bs e(v)\bs d(v)}
\chi\bigl(\M_{\stf\,Q}^{\bs d,\bs e}(\mu')\bigr),
\end{split}
\label{dt7eq28}
\ea
where the second line in \eq{dt7eq28} holds as $\M_{\stf\,Q}^{\bs
d,\bs e}(\mu')$ is smooth of dimension \eq{dt7eq26}, so
$\nu_{\M_{\stf\,Q}^{\bs d,\bs e}(\mu')}\equiv (-1)^{ \hat\chi(\bs
d,\bs d)+\sum_{v\in Q_0}\bs e(v)\bs d(v)}$ by
Theorem~\ref{dt4thm1}(i).
\label{dt7def8}
\end{dfn}

Here is how Definitions \ref{dt7def7} and \ref{dt7def8} relate to
the literature. We first discuss the case of quivers without
relations.
\begin{itemize}
\setlength{\itemsep}{0pt}
\setlength{\parsep}{0pt}
\item `Framed' moduli spaces of quivers\index{quiver!framed
representation} appear in the work of Nakajima, see for instance
\cite[\S 3]{Naka}. His framed moduli schemes ${\mathfrak
R}_\theta(\bs d,\bs e)$ are similar to our moduli schemes
$\smash{\M_{\stf\,Q}^{\bs d,\bs e}(\mu')}$, with one difference:
rather than framing $(X_v:v\in Q_0$, $\rho_e:e\in Q_1)$ using
linear maps $\si_v:\K^{e(v)}\ra X_v$ for $v\in Q_0$, as we do,
he uses linear maps $\smash{\si_v:X_v\ra\K^{e(v)}}$ going the
other way.

Here is a natural way to relate framings of his type to framings of
our type. Given a quiver $Q=(Q_0,Q_1,h,t)$, let $Q^{\rm op}$ be $Q$
with directions of edges reversed, that is, $Q^{\rm
op}=(Q_0,Q_1,t,h)$. If $(X_v:v\in Q_0$, $\rho_e:e\in Q_1)$ is a
representation of $Q$ then $(X_v^*:v\in Q_0$, $\rho_e^*:e\in Q_1)$
is a representation of $Q^{\rm op}$, and this identifies
$\text{mod-}\K Q^{\rm op}$ with the opposite category of $\modKQ$.
Then Nakajima-style framings in $\modKQ$ correspond to our framings
in $\text{mod-}\K Q^{\rm op}$, and vice versa.
\item Let $Q_m$ be the quiver $Q$ with one vertex $v$ and $m$ edges
$v\ra v$, and consider the trivial stability condition
$(0,\R,\le)$ on $\modKQ_m$. Reineke \cite{Rein1} studied
`noncommutative Hilbert schemes'\index{Hilbert
scheme!noncommutative} $\smash{H^{(m)}_{d,e}}$ for $d,e\in\N$,
and determines their Poincar\'e polynomials. In our notation we
have $H^{(m)}_{d,e}=\M_{\stf\,Q_m}^{d,e}(0')$, and Reineke's
calculations and \eq{dt7eq28} yield a formula for
$NDT_{Q_m}^{d}(0')$. In \cite{Rein1} Reineke uses framings as in
Definition \ref{dt7def7}, not following Nakajima.
\item Let $Q$ be a quiver, and $\bs d,\bs e$ be dimension vectors.
Reineke \cite{Rein2} defined `framed quiver moduli' $\M_{\bs
d,\bs e}(Q)$. These are the same as Nakajima's moduli spaces
${\mathfrak R}_0(\bs d,\bs e)$ with trivial stability condition
$\theta=0$, and correspond to our moduli spaces
$\smash{\M_{\stf\,Q}^{\bs d,\bs e}(0')}$, except that the
framing uses maps~$\si_v:X_v\ra\K^{e(v)}$.

Reineke studies $\M_{\bs d,\bs e}(Q)$ for $Q$ without oriented
cycles. This yields the Euler characteristic of
$\smash{\M_{\stf\,Q}^{\bs d,\bs e}(0')}$, and so gives $NDT_{Q}^{\bs
d,\bs e}(0')$ in~\eq{dt7eq28}.
\item Engel and Reineke \cite{EnRe} study `smooth models of quiver
moduli' $M^\Th_{\bs d,\bs e}(Q)$, which agree with our
$\smash{\M_{\stf\,Q}^{\bs d,\bs e}(\mu')}$ for a slope stability
condition $(\mu,\R,\le)$ on $\modKQ$ defined using a map
$\Th:Q_0\ra\Q$, with framing as in Definition \ref{dt7def7}. They
give combinatorial formulae for the Poincar\'e polynomials of
$\smash{M^\Th_{\bs d,\bs e}(Q)}$, allowing us to compute
$NDT_{Q}^{\bs d,\bs e}(\mu')$ in~\eq{dt7eq28}.
\end{itemize}
Next we consider quivers with relations coming from a
superpotential:
\begin{itemize}
\setlength{\itemsep}{0pt}
\setlength{\parsep}{0pt}
\item Let $\modCQI$ come from a minimal superpotential $W$ over
$\C$ on a quiver $Q$. Fix a vertex $v\in Q_0$ of $Q$. Let
$(X,\rho)\in\modCQI$. We say that $(X,\rho)$ is {\it
cyclic},\index{quiver!cyclic representation} and {\it generated by a
vector\/} $x\in X_v$ if $X=\C Q/I\cdot x$. That is, there is no
subobject $(X',\rho')\subset(X,\rho)$ in $\modCQI$ with
$(X',\rho')\ne(X,\rho)$ and~$x\in X'_v\subseteq X_v$.

Szendr\H oi \cite[\S 1.2]{Szen} calls the pair
$\bigl((X,\rho),x\bigr)$ a {\it framed cyclic module\/} for
$(Q,I)$, and defines a moduli space $\M_{v,\bs d}$ of framed
cyclic modules with $\bdim(X,\rho)=\bs d$. Szendr\H oi defines
the {\it noncommutative Donaldson--Thomas invariant\/} $Z_{v,\bs
d}$ to be $\chi\bigl(\M_{v,\bs d},\nu_{\M_{v,\bs d}}\bigr)$. He
computes the $Z_{v,\bs d}$ in an example, the `noncommutative
conifold',\index{conifold!noncommutative} and shows the
generating function of the $Z_{v,\bs d}$ may be written
explicitly as an infinite product. In our notation $\M_{v,\bs
d}$ is $\M_{\stf\,Q,I}^{\bs d,\de_v}(0')$, where the framing
dimension vector $\bs e$ is $\de_v$, that is, $\de_v(w)=1$ for
$v=w$ and 0 for $v\ne w\in Q_0$, and the stability condition
$(\mu,\R,\le)$ on $\modCQI$ is zero. Thus by \eq{dt7eq27},
Szendr\H oi's invariants are our~$NDT_{Q,I}^{\bs d,\de_v}(0')$.

For the conifold,\index{conifold} Nagao and Nakajima \cite{NaNa}
prove relationships between Szendr\H oi's invariants,
Donaldson--Thomas invariants, and Pand\-hari\-pande--Thomas
invariants,\index{Pandharipande--Thomas invariants} via
wall-crossing for stability conditions on the derived category.
Nagao \cite{Naga} generalizes this to other toric Calabi--Yau
3-folds.
\item Let $G$ be a finite subgroup of $\SL(3,\C)$. Young and Bryan
\cite[\S A]{YoBr} discuss {\it Donaldson--Thomas invariants\/
$N^{\bs d}(\C^3/G)$ of the orbifold\/} $[\C^3/G]$. By this they
mean invariants counting ideal sheaves of compactly-supported
$G$-equivariant sheaves on $\C^3$. In two cases
$G=\Z_2\times\Z_2$ and $G=\Z_n$, they show that the generating
function of $N^{\bs d}(\C^3/G)$ can be written explicitly as an
infinite product, in a similar way to the conifold
case~\cite{Szen}.

As in Example \ref{dt7ex2}, Ginzburg defines a quiver $Q_G$ with
superpotential $W_G$ such that mod-$\C Q_G/I_G$ is 3-Calabi--Yau and
equivalent to the category of $G$-equivariant compactly-supported
coherent sheaves on $\C^3$. The definitions imply that Bryan and
Young's $N^{\bs d}(\C^3/G)$ is Szendr\H oi's $Z_{v,\bs d}$ for
$(Q_G,I_G)$, where the vertex $v$ in $Q_G$ corresponds to the
trivial representation $\C$ of $G$. Thus in our notation,~$N^{\bs
d}(\C^3/G)=NDT_{Q_G,I_G}^{\bs d,\de_v}(0')$.
\item Let $Q,W,I$ come from a consistent brane tiling, as in
Example \ref{dt7ex3}. Then Mozgovoy and Reineke \cite{MoRe} write
Szendr\H oi's invariants $Z_{v,\bs d}$ for $Q,I$ as combinatorial
sums, allowing evaluation of them on a computer.
\end{itemize}

In \S\ref{dt75}--\S\ref{dt76} we will use the results of
\cite{Rein1,Szen,YoBr} to write down the values of
$\smash{NDT_{Q,I}^{\bs d,\bs e}(\mu')}$ and $\smash{NDT_{Q}^{\bs
d,\bs e}(\mu')}$ in some of these examples. Then we will use Theorem
\ref{dt7thm5} below to compute $\bar{DT}{}^{\bs d}_{Q,I}(\mu)$ and
$\bar{DT}{}^{\bs d}_{Q}(\mu)$, and equation \eq{dt7eq21} to find
$\smash{\hat{DT}{}^{\bs d}_{Q,I}(\mu)}$ and~$\smash{\hat{DT}{}^{\bs
d}_{Q}(\mu)}$.

\begin{rem}{\bf(a)} Definitions \ref{dt7def7} and \ref{dt7def8} are
fairly direct analogues of Definitions \ref{dt5def3} and
\ref{dt5def5}, with $\coh(X)$ and $(\tau,T,\le)$ replaced by
$\modKQI$ and $(\mu,\R,\le)$. Note that the moduli spaces
$\M_{\stf\,Q,I}^{\bs d,\bs e}(\mu'),\M_{\stf\,Q}^{\bs d,\bs
e}(\mu')$ will in general not be proper. So we cannot define virtual
classes\index{virtual class} for $\smash{\M_{\stf\,Q,I}^{\bs d,\bs
e}(\mu')}$, $\M_{\stf\,Q}^{\bs d,\bs e}(\mu')$, and we have no
analogue of \eq{dt5eq15}; we are forced to define the invariants as
weighted Euler characteristics, following~\eq{dt5eq16}.
\smallskip

\noindent{\bf(b)} Here is why the framing data $\si$ for
$(X,\rho)\in\modKQI$ or $\modKQ$ in Definition \ref{dt7def7} is a
good analogue of the framing $s:\cO(-n)\ra E$ for $E\in\coh(X)$ when
$n\gg 0$ in Definition~\ref{dt5def3}.

In a well-behaved abelian category $\A$, an object $P\in\A$ is
called {\it projective\/} if $\Ext^i(P,E)=0$ for all $E\in\A$ and
$i>0$. Therefore $\dim\Hom(P,E)=\bar\chi([P],[E])$, where $\bar\chi$
is the Euler form of $\A$. If $X$ is a Calabi--Yau 3-fold, there
will generally be no nonzero projectives in $\coh(X)$. However, for
any bounded family $\cal F$ of sheaves in $\coh(X)$, for $n\gg 0$ we
have $\Ext^i(\cO(-n),E)=0$ for all $E$ in $\cal F$ and $i>0$. Thus
$\cO(-n)$ for $n\gg 0$ {\it acts like\/} a projective object in
$\coh(X)$, and this is what is important in \S\ref{dt54}. Thus, a
good generalization of stable pairs in $\coh(X)$ to an abelian
category $\A$ is to consider morphisms $s:P\ra E$ in $\A$, where $P$
is some fixed projective object in $\A$, and~$E\in\A$.

Now when $Q$ has oriented cycles, $\modKQI$ or $\modKQ$ (which
consist of {\it finite-dimensional\/} representations) generally do
not contain enough projective objects for this to be a good
definition. However, if we allow {\it infinite-dimensional\/}
representations $P$ of $\K Q/I$ or $\K Q$, then we can define
projective representations.\index{quiver!projective representation} Let
$\bs e$ be a dimension vector, and define
\begin{equation*}
\ts P^{\bs e}=\bigop_{v\in Q_0} \bigl((\K Q/I)\cdot i_v\bigr)\ot
\K^{\bs e(v)}\quad\text{or}\quad P^{\bs e}=\bigop_{v\in Q_0}
\bigl(\K Q\cdot i_v\bigr)\ot \K^{\bs e(v)},
\end{equation*}
where the idempotent $i_v$ in the algebra $\K Q/I$ or $\K Q$ is the
path of length zero at $v$, so that $\K Q\cdot i_v$ has basis the
set of oriented paths in $Q$ starting at $v$.

Then $P^{\bs e}$ is a left representation of $\K Q/I$ or $\K Q$,
which may be infinite-dimensional if $Q$ has oriented cycles. In the
abelian category of possibly infinite-dimensional representations of
$\K Q/I$ or $\K Q$, it is projective. If $(X,\rho)$ lies in
$\modKQI$ or $\modKQ$ with $\bdim(X,\rho)=\bs d$ then
\e
\ts\Hom\bigl(P^{\bs e},(X,\rho)\bigr)\cong \bigop_{v\in
Q_0}\Hom(\K^{\bs e(v)},X_v),
\label{dt7eq29}
\e
so that $\dim\Hom\bigl(P^{\bs e},(X,\rho)\bigr)=\sum_{v\in Q_0}\bs
e(v)\bs d(v)$. (Note that this is not $\bar\chi(\bs e,\bs d)$.)
Equation \eq{dt7eq29} implies that morphisms of representations
$P^{\bs e}\ra(X,\rho)$ are the same as choices of $\si$ in
Definition \ref{dt7def7}. Thus, framed representations
$(X,\rho,\si)$ in Definition \ref{dt7def7} are equivalent to
morphisms $\si:P^{\bs e}\ra(X,\rho)$, where $P^{\bs e}$ is a fixed
projective. The comparison with $s:\cO(-n)\ra E$ in \S\ref{dt54} is
clear.
\smallskip

\noindent{\bf(c)} Here is another interpretation of framed
representations, following Reineke \cite[\S 3.1]{Rein2}. Given
$(Q,I)$ or $Q$, $\bs d,\bs e$ as above, define another quiver $\ti
Q$ to be $Q$ together with an extra vertex $\iy$, so that $\ti
Q_0=Q_0\amalg\{\iy\}$, and with $\bs e(v)$ extra edges $\iy\ra v$
for each $v\in Q_0$. Let the relations $\ti I$ for $\ti Q$ be the
lift of $I$ to $\K\ti Q$, with no extra relations. Define $\bs{\ti
d}:\ti Q_0\ra\Z_{\sst\ge 0}$ by $\bs{\ti d}(v)=\bs d(v)$ for $v\in
Q_0$ and $\bs{\ti d}(\iy)=1$. It is then easy to show that framed
representations of $(Q,I)$ or $Q$ of type $(\bs d,\bs e)$ correspond
naturally to representations of $(\ti Q,\ti I)$ or $\ti Q$ of type
$\bs{\ti d}$, and one can define a stability condition
$(\ti\mu,\R,\le)$ on $\text{mod-}\K\ti Q/\ti I$ or $\text{mod-}\K\ti
Q$ such that $\mu'$-stable framed representations correspond to
$\ti\mu$-stable representations.
\label{dt7rem3}
\end{rem}

We now prove the analogue of Theorem \ref{dt5thm10} for quivers.

\begin{thm} Suppose\/ $Q$ is a quiver with relations\/ $I$ coming
from a minimal superpotential\/ $W$ on $Q$ over $\C$. Let\/
$(\mu,\R,\le)$ be a slope stability condition on\/ $\modCQI,$ as in
Example {\rm\ref{dt7ex1},} and\/ $\bar\chi$ be as in \eq{dt7eq9}.
Then for all\/ $\bs d,\bs e$ in\/ $C(\modCQI)=\Z_{\sst\ge
0}^{Q_0}\sm\{0\}\subset \Z^{Q_0},$ we have
\e
NDT^{\bs d,\bs e}_{Q,I}(\mu')=\!\!\!\!\!\!\!\!\!\!\!\!\!\!\!
\sum_{\begin{subarray}{l} \bs d_1,\ldots,\bs d_l\in
C(\modCQI),\\ l\ge 1:\; \bs d_1
+\cdots+\bs d_l=\bs d,\\
\mu(\bs d_i)=\mu(\bs d),\text{ all\/ $i$}
\end{subarray} \!\!\!\!\!\!\!\!\! }
\begin{aligned}[t] \frac{(-1)^l}{l!} &\prod_{i=1}^{l}\bigl[
(-1)^{\bs e\cdot\bs d_i-\bar\chi(\bs d_1+\cdots+\bs
d_{i-1},\bs d_i)} \\
&\bigl(\bs e\cdot\bs d_i-\bar\chi(\bs d_1\!+\!\cdots\!+\!\bs
d_{i-1},\bs d_i)\bigr)\bar{DT}{}^{\bs
d_i}_{Q,I}(\mu)\bigr],\!\!\!\!\!\!\!\!\!\!\!\!\!\!
\end{aligned}
\label{dt7eq30}
\e
with\/ $\bs e\cdot\bs d_i=\sum_{v\in Q_0}\bs e(v)\bs d_i(v),$ and\/
$\bar{DT}{}^{\bs d_i}_{Q,I}(\mu),NDT^{\bs d,\bs e}_{Q,I}(\mu')$ as
in Definitions\/ {\rm\ref{dt7def6}, \rm\ref{dt7def8}}. When $W\equiv
0,$ the same equation holds for $\smash{NDT^{\bs d,\bs
e}_Q(\mu'),\bar{DT}{}^{\bs d}_Q(\mu)}$.
\label{dt7thm5}
\end{thm}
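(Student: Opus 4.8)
The plan is to mimic exactly the proof of Theorem~\ref{dt5thm10} (stable pairs) from \S\ref{dt13}, replacing coherent sheaves on a Calabi--Yau 3-fold by representations of $(Q,I)$ and the projective-like objects $\cO_X(-n)$ by the genuine projective $P^{\bs e}$ described in Remark~\ref{dt7rem3}(b). First I would set up the auxiliary abelian category: as in Remark~\ref{dt7rem3}(c), pass from framed representations $(X,\rho,\si)$ of type $(\bs d,\bs e)$ to ordinary representations of the enlarged quiver $(\ti Q,\ti I)$ with dimension vector $\bs{\ti d}$, where $\bs{\ti d}(\iy)=1$; equivalently, work in an abelian category $\B_p$ of pairs analogous to that of \S\ref{dt131}, whose objects are morphisms $\si: P^{\bs e}\ra (X,\rho)$ together with an auxiliary one-dimensional piece recording whether the framing is ``switched on''. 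The key point making this work is \eqref{dt7eq29}: for $(X,\rho)$ with $\bdim=\bs d$ we have $\dim\Hom(P^{\bs e},(X,\rho))=\bs e\cdot\bs d$, which is the analogue of the Hilbert polynomial term $\bar\chi([\cO_X(-n)],\al)=P(n)$ and explains the appearance of $\bs e\cdot\bs d_i$ in \eqref{dt7eq30}.

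Next I would construct a weak stability condition $(\ti\mu,\ti T,\le)$ on $\B_p$ so that stable framed representations of type $(\bs d,\bs e)$ in the sense of Definition~\ref{dt7def7} correspond precisely to $\ti\mu$-stable objects of the appropriate class in $\B_p$, just as $\tau'$-stable pairs correspond to $(\ti\tau,\ti T,\le)$-stable objects of $\B_p$ in \S\ref{dt131}. Then, following \S\ref{dt134}, I would construct a Lie algebra morphism $\ti\Psi^{\B_p}:\SFai(\fM_{\B_p})\ra \ti L(\B_p)$, weighted by the Behrend function $\nu_{\fM_{\B_p}}$ of the moduli stack of objects in $\B_p$. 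This is where the two structural inputs are used: the local $\Crit(f)$ description of $\fM_{Q,I}$ from \eqref{dt7eq8}, which extends to the moduli stack of framed objects since framings add only a smooth factor $\prod_v\Hom(\K^{\bs e(v)},\K^{\bs d(v)})$, and Theorem~\ref{dt7thm1}, which supplies the bilinearity/antisymmetry input \eqref{dt7eq10} needed for $\ti\Psi^{\B_p}$ to be a Lie algebra morphism. Crucially, Theorem~\ref{dt7thm2} (the Behrend function identities \eqref{dt5eq2}--\eqref{dt5eq3} for $\modCQI$) is already available, and the same localization argument as in \S\ref{dt10} extends these identities to $\B_p$ for pairs involving the projective $P^{\bs e}$; this is the step I expect to be the main obstacle, since one must check carefully that the extra ``framing vertex'' behaves correctly under the $\bG_m$-localization and that $P^{\bs e}$, though infinite-dimensional, causes no trouble (one uses that $\Hom(P^{\bs e},-)$ is exact and finite-dimensional on $\modCQI$, i.e.\ $P^{\bs e}$ acts like a projective).

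Having $\ti\Psi^{\B_p}$, I would express the characteristic stack function $\bar\de^{\bs d,\bs e}_{\stf}(\ti\mu)$ of the stable framed moduli stack in terms of the $\bep^{\bs d_i}(\mu)$ via the wall-crossing machinery of \S\ref{dt33}: changing from the trivial pair stability condition to $(\ti\mu,\ti T,\le)$, exactly as in the derivation of \eqref{dt5eq17} in \S\ref{dt13}. The multilinear combinatorial identity that emerges is the analogue of \eqref{dt1eq16}: one gets a sum over ordered decompositions $\bs d=\bs d_1+\cdots+\bs d_l$ with $\mu(\bs d_i)=\mu(\bs d)$, with coefficient $\tfrac{(-1)^l}{l!}$ and a product of factors $(-1)^{c_i}c_i\,\bar{DT}{}^{\bs d_i}_{Q,I}(\mu)$ where $c_i = \dim\Hom(P^{\bs e},D_i) - \bar\chi(\bs d_1+\cdots+\bs d_{i-1},\bs d_i) = \bs e\cdot\bs d_i - \bar\chi(\bs d_1+\cdots+\bs d_{i-1},\bs d_i)$, the first term from the framing contribution \eqref{dt7eq29} and the second from the Euler-form cross terms, exactly matching \eqref{dt7eq30}. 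Applying $\ti\Psi^{\B_p}$ and reading off the coefficient of the relevant basis vector, together with \eqref{dt7eq27} giving $NDT^{\bs d,\bs e}_{Q,I}(\mu')$ as the weighted Euler characteristic, yields \eqref{dt7eq30}. Finiteness of the sum follows as in Theorem~\ref{dt3thm2} since all relevant moduli stacks are of finite type (here $\fM^{\bs d}_{Q,I}$ is always of finite type, which is in fact cleaner than the sheaf case). Finally, the case $W\equiv 0$ is the special case $I=0$, $\modCQI=\modCQ$, so the same proof gives the statement for $NDT^{\bs d,\bs e}_Q(\mu')$, $\bar{DT}{}^{\bs d}_Q(\mu)$ — no separate argument is needed, since Theorems~\ref{dt7thm1} and~\ref{dt7thm2} both specialize, with \eqref{dt7eq8} becoming \eqref{dt7eq2} and $\Crit(W^{\bs d})$ becoming the full affine space.
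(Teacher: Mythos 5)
Your proposal is correct and follows essentially the same route as the paper: the paper's proof likewise imports the argument of Theorem \ref{dt5thm10} from \S\ref{dt13}, taking as $\B_p$ the (slope-restricted) category of representations of the enlarged quiver $(\ti Q,\ti I)$ of Remark \ref{dt7rem3}(c), with the modified Euler form $\bar\chi{}^{\B_p}((\bs d,k),(\bs d',k'))=\bar\chi(\bs d,\bs d')-k\,\bs e\cdot\bs d'+k'\,\bs e\cdot\bs d$ playing the role of \eq{dt13eq5}. Your observation that the genuine projectivity of $P^{\bs e}$ removes the boundedness restrictions of Proposition \ref{dt13prop1} is exactly the point the paper singles out as the only real difference from the sheaf case.
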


\begin{proof} The proof follows that of Theorem \ref{dt5thm10} in
\S\ref{dt13} closely. We need to explain the analogues of the
abelian categories $\A_p,\B_p$ in \S\ref{dt131}. When $\mu\equiv 0$,
we have $\A_p=\modCQI$ and $\B_p=\text{mod-}\C\ti Q/\ti I$, where
$(\ti Q,\ti I)$ is as in Remark \ref{dt7rem3}(c). For general $\mu$,
with $\bs d$ fixed, we take $\A_p$ to be the abelian subcategory of
objects $(X,\rho)$ in $\modCQI$ with $\mu\bigl([(X,\rho)]\bigr)
=\mu(\bs d)$, together with $0$, and $\B_p$ to be the abelian
subcategory of objects $(\ti X,\ti\rho)$ in $\text{mod-}\C\ti Q/\ti
I$ with $\ti\mu\bigl([(\ti X,\ti\rho)]\bigr)=\ti\mu(\bs{\ti d})$,
together with $0$, for $\bs{\ti d},(\ti\mu,\R,\le)$ as in
Remark~\ref{dt7rem3}(c).

Then we have $K(\A_p)\subseteq K(\modCQI)=\Z^{Q_0}$, and
$K(\B_p)=K(\A_p)\op\Z$, as in \S\ref{dt131}, and
$\bar\chi^{\A_p}=\bar\chi\vert_{K(\A_p)}$. The analogue of
\eq{dt13eq5} giving the `Euler form' $\bar\chi{}^{\smash{\B_p}}$ on
$K(\B_p)$ is
\e
\bar\chi{}^{\smash{\B_p}}\bigl((\bs d,k),(\bs d',k')\bigr)
=\bar\chi(\bs d,\bs d')-k\,\bs e\cdot\bs d'+k'\,\bs e\cdot\bs d.
\label{dt7eq31}
\e
The analogue of Proposition \ref{dt13prop1} then holds for all pairs
of elements in $\B_p$, without the restrictions that $\dim V+\dim
W\le 1$ and $k,l\le N$. The point here is that $\cO_X(-n)$ is not
actually a projective object in $\coh(X)$ for fixed $n\gg 0$, so we
have to restrict to a bounded part of the category $\A_p$ in which
it acts as a projective. But as in Remark \ref{dt7rem3}(b), in the
quiver case we are in effect dealing with genuine projectives, so no
boundedness assumptions are necessary.

The rest of the proof in \S\ref{dt13} goes through without
significant changes. Using \eq{dt7eq31} rather than \eq{dt13eq5}
eventually yields equation~\eq{dt7eq30}.
\end{proof}

The proof of Proposition \ref{dt5prop3} now yields:

\begin{cor} In the situation above, suppose $c\in\R$ with\/
$\bar\chi(\bs d,\bs d')=0$ for all\/ $\bs d,\bs d'$ in\/
$C(\modCQI)$ with\/ $\mu(\bs d)=\mu(\bs d')=c$. Then for any $\bs e$
in $C(\modCQI),$ in formal power series we have
\e
1+\!\!\sum_{\bs d\in C(\modCQI):\;\mu(\bs d)=c
\!\!\!\!\!\!\!\!\!\!\!\!\!\!\!\!\!\!\!\!\!\!\!\!\!\!
\!\!\!\!\!\!\!\!\!\!\!\!\!\!\!\!\!\!\!\!\!\!} NDT^{\bs d,\bs
e}_{Q,I}(\mu')q^{\bs d}=
\exp\raisebox{-4pt}{\begin{Large}$\displaystyle\Bigl[$\end{Large}}
-\!\!\!\!\sum_{\bs d\in C(\modCQI):\;\mu(\bs d)=c
\!\!\!\!\!\!\!\!\!\!\!\!\!\!\!\!\!\!\!\!\!\!\!\!\!\!\!\!\!\!\!\!\!\!
\!\!\!\!\!\!\!\!\!\!\!\!\!\!\!\!} (-1)^{\bs e\cdot\bs d}(\bs
e\cdot\bs d)\bar{DT}{}^{\bs d}_{Q,I}(\mu)q^{\bs d}
\raisebox{-4pt}{\begin{Large}$\displaystyle\Bigr]$\end{Large}},
\label{dt7eq32}
\e
where $q^{\bs d}$ for $\bs d\in C(\modCQI)$ are formal symbols
satisfying\/ $q^{\bs d}\cdot q^{\bs d'}=q^{\bs d+\bs d'}$. When
$W\equiv 0,$ the same equation holds for $NDT^{\bs d,\bs
e}_Q(\mu'),\bar{DT}{}^{\bs d}_Q(\mu)$.
\label{dt7cor2}
\end{cor}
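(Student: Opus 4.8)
The plan is to follow the proof of Proposition \ref{dt5prop3} line for line, with the wall-crossing identity \eq{dt7eq30} of Theorem \ref{dt7thm5} playing the role that \eq{dt5eq17} played there. First I would use the genericity-at-$c$ hypothesis to simplify \eq{dt7eq30}. In any nonzero term on its right-hand side with $\mu(\bs d)=c$, we have a decomposition $\bs d_1+\cdots+\bs d_l=\bs d$ with $\mu(\bs d_i)=\mu(\bs d)=c$ for every $i$; the seesaw property built into the definition of a (weak) stability condition (Definition \ref{dt3def3}) then forces $\mu(\bs d_1+\cdots+\bs d_{i-1})=c$ for each $i$, and so by hypothesis $\bar\chi(\bs d_1+\cdots+\bs d_{i-1},\bs d_i)=0$. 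Substituting this into \eq{dt7eq30}, the exponents and multipliers $\bs e\cdot\bs d_i-\bar\chi(\bs d_1+\cdots+\bs d_{i-1},\bs d_i)$ collapse to $\bs e\cdot\bs d_i$, turning \eq{dt7eq30} into the precise analogue of \eq{dt5eq19}, now valid for all $\bs d$ of slope $c$ simultaneously (unlike in the sheaf case, no $n\gg 0$ caveat intervenes, since by Remark \ref{dt7rem3}(b) the framing object $P^{\bs e}$ is a genuine projective).

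Next I would recognise the simplified formula as the coefficient of $q^{\bs d}$ in the exponential on the right of \eq{dt7eq32}. Writing $g$ for the power series $-\sum_{\bs d':\mu(\bs d')=c}(-1)^{\bs e\cdot\bs d'}(\bs e\cdot\bs d')\bar{DT}{}^{\bs d'}_{Q,I}(\mu)\,q^{\bs d'}$, which has no constant term, one expands $\exp(g)=\sum_{l\ge 0}g^l/l!$ and then expands $g^l$ as a sum over ordered $l$-tuples $(\bs d_1,\ldots,\bs d_l)$; collecting the $q^{\bs d}$ contributions reproduces the simplified \eq{dt7eq30} term by term (the $l=0$ term supplying the constant $1$ on the left of \eq{dt7eq32}) once one checks that the $1/l!$ and sign factors line up, which is the same bookkeeping as for \eq{dt5eq20}. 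The exponential is legitimate in the ring of formal power series in the symbols $q^{\bs d}$ because every $\bs d$ has only finitely many decompositions $\bs d=\bs d_1+\cdots+\bs d_l$ with $\bs d_i\in C(\modCQI)=\Z^{Q_0}_{\ge 0}\sm\{0\}$, so each coefficient is a finite sum. The $W\equiv 0$ case follows verbatim, using the $W\equiv 0$ clause of Theorem \ref{dt7thm5} and replacing $\modCQI$, $NDT^{\bs d,\bs e}_{Q,I}(\mu')$, $\bar{DT}{}^{\bs d}_{Q,I}(\mu)$ by $\modCQ$, $NDT^{\bs d,\bs e}_Q(\mu')$, $\bar{DT}{}^{\bs d}_Q(\mu)$ throughout.

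I do not expect any serious obstacle: the corollary is a formal rearrangement of Theorem \ref{dt7thm5}, in exactly the way Proposition \ref{dt5prop3} is a rearrangement of Theorem \ref{dt5thm10}. The only two points that deserve explicit care are the vanishing of the correction terms $\bar\chi(\bs d_1+\cdots+\bs d_{i-1},\bs d_i)$ — which relies on the seesaw property to keep all partial sums at slope $c$, so that the genericity hypothesis applies — and the routine matching of the combinatorial $1/l!$-and-sign coefficients against the exponential expansion.
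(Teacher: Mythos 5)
Your proposal is correct and follows essentially the same route as the paper, which proves the corollary simply by observing that the proof of Proposition \ref{dt5prop3} carries over: the genericity hypothesis kills the terms $\bar\chi(\bs d_1+\cdots+\bs d_{i-1},\bs d_i)$ in \eq{dt7eq30}, and the resulting sum is the $q^{\bs d}$-coefficient identity encoded by the exponential in \eq{dt7eq32}. Your extra remarks — that the partial sums stay at slope $c$ so genericity applies, that no $n\gg 0$ caveat arises in the quiver case, and that each coefficient of the exponential is a finite sum — are exactly the points implicit in the paper's argument.
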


\begin{rem} In the coherent sheaf case of \S\ref{dt5}--\S\ref{dt6},
we regarded the generalized Donaldson--Thomas invariants
$\bar{DT}{}^\al(\tau)$, or equivalently the BPS invariants
$\hat{DT}{}^\al(\tau)$, as being the central objects of interest.
The pair invariants $PI^{\al,n}(\tau')$ appeared as auxiliary
invariants, not of that much interest in themselves, but useful for
computing the $\bar{DT}{}^\al(\tau),\hat{DT}{}^\al(\tau)$ and
proving their deformation-invariance.

In contrast, in the quiver literature to date, so far as the authors
know, the invariants $\bar{DT}{}^{\bs d}_{Q,I}(\mu),\ab
\bar{DT}{}^{\bs d}_Q(\mu)$ and $\hat{DT}{}^{\bs
d}_{Q,I}(\mu),\hat{DT}{}^{\bs d}_Q(\mu)$ have not been seriously
considered even in the stable$\,=\,$semistable case, and the
analogues $\smash{NDT_{Q,I}^{\bs d,\bs e}(\mu')}$, $NDT_{Q}^{\bs
d,\bs e}(\mu')$ of pair invariants $PI^{\al,n}(\tau')$ have been the
central object of study.

We wish to argue that the invariants $\smash{\bar{DT}{}^{\bs
d}_{Q,I}(\mu),\ldots,\hat{DT}{}^{\bs d}_Q(\mu)}$ should actually be
regarded as more fundamental and more interesting than the
$\smash{NDT_{Q,I}^{\bs d,\bs e}(\mu')}$, $NDT_{Q}^{\bs d,\bs
e}(\mu')$. We offer two reasons for this. Firstly, as Theorem
\ref{dt7thm5} shows, the $NDT_{\smash{Q,I}}^{\bs d,\bs
e}(\mu'),NDT_{\smash{Q}}^{\bs d,\bs e}(\mu')$ can be written in
terms of the $\bar{DT}{}^{\bs d}_{\smash{Q,I}}(\mu),\bar{DT}{}^{\bs
d}_{\smash{Q}}(\mu)$, and hence by \eq{dt7eq21} in terms of the
$\hat{DT}{}^{\bs d}_{Q,I}(\mu),\hat{DT}{}^{\bs d}_Q(\mu)$, so the
pair invariants contain no more information. The $\bar{DT}{}^{\bs
d}_{Q,I}(\mu),\bar{DT}{}^{\bs d}_Q(\mu)$ are {\it simpler\/} than
the $\smash{NDT_{Q,I}^{\bs d,\bs e}(\mu'),NDT_{Q}^{\bs d,\bs
e}(\mu')}$ as they depend only on $\bs d$ rather than on $\bs d,\bs
e$, and in examples in \S\ref{dt75}--\S\ref{dt76} we will see that
the values of $\smash{\bar{DT}{}^{\bs d}_{Q,I}(\mu), \bar{DT}{}^{\bs
d}_Q(\mu)}$ and especially of $\hat{DT}{}^{\bs
d}_{Q,I}(\mu),\hat{DT}{}^{\bs d}_Q(\mu)$ may be much simpler and
more illuminating than the values of the~$NDT_{Q,I}^{\bs d,\bs
e}(\mu'),NDT_{Q}^{\bs d,\bs e}(\mu')$.

Secondly, the case in \cite{Rein1,Szen,YoBr} for regarding
$NDT_{Q,I}^{\bs d,\bs e}(\mu'),NDT_{Q}^{\bs d,\bs e}(\mu')$ as
analogues of rank 1 Donaldson--Thomas invariants counting ideal
sheaves, that is, of counting surjective morphisms $s:\cO_X\ra E$,
is in some ways misleading. The $NDT_{Q,I}^{\bs d,\bs
e}(\mu'),NDT_{Q}^{\bs d,\bs e}(\mu')$ are closer to our invariants
$PI^{\al,n}(\tau')$ counting $s:\cO_X(-n)\ra E$ for $n\gg 0$ than
they are to counting morphisms $s:\cO_X\ra E$. The difference is
that $\cO_X$ is {\it not a projective object\/} in $\coh(X)$, but
$\cO_X(-n)$ for $n\gg 0$ is effectively a projective object in
$\coh(X)$, as in (b) above.

To see the difference between counting morphisms $s:\cO_X\ra E$ and
counting morphisms $s:\cO_X(-n)\ra E$ for $n\gg 0$, consider the
case where $E$ is a dimension 1 sheaf on a Calabi--Yau 3-fold $X$.
Then the MNOP Conjecture\index{MNOP Conjecture} \cite{MNOP1,MNOP2}
predicts that invariants $DT^{(1,0,\be,m)}(\tau)$ counting morphisms
$s:\cO_X\ra E$ encode the Gopakumar--Vafa invariants $GV_g(\be)$ of
$X$ for all genera $g\ge 0$. But Theorem \ref{dt5thm10} and
Conjecture \ref{dt6conj3} in \S\ref{dt64} imply that invariants
$PI^{(0,0,\be,m),n}(\tau)$ counting morphisms $s:\cO_X(-n)\ra E$ for
$n\gg 0$ encode only the Gopakumar--Vafa
invariants\index{Gopakumar--Vafa invariants} $GV_0(\be)$ of $X$ for
genus~$g=0$.

The point is that since $\cO_X$ is not a projective, counting
morphisms $s:\cO_X\ra E$ gives you information not just about
counting sheaves $E$, but also extra information about how $\cO_X$
and $E$ interact. But as $\cO_X(-n)$ for $n\gg 0$ is effectively a
projective, counting morphisms $s:\cO_X(-n)\ra E$ gives you
information only about counting sheaves $E$, so we might as well
just count sheaves $E$ directly using (generalized)
Donaldson--Thomas invariants.
\label{dt7rem4}
\end{rem}\index{stable pair invariants $PI^{\al,n}(\tau')$!analogue for
quivers|)}\index{Donaldson--Thomas invariants!noncommutative|)}

\subsection{Computing $\bar{DT}{}^{\bs d}_{Q,I}(\mu),
\hat{DT}{}^{\bs d}_{Q,I}(\mu)$ in examples}
\label{dt75}\index{Donaldson--Thomas invariants!computation in
examples|(}

We now use calculations of noncommutative Donaldson--Thomas
invariants in examples by Szendr\H oi \cite{Szen} and Young and
Bryan \cite{YoBr} to write down generating functions for
$\smash{NDT^{\bs d,\bs e}_{Q,I}(\mu')}$, and then apply \eq{dt7eq32}
to deduce values of $\smash{\bar{DT}{}^{\bs d}_{Q,I}(\mu)}$, and
\eq{dt7eq22} to deduce values of $\smash{\hat{DT}{}^{\bs
d}_{Q,I}(\mu)}$. These values of $\hat{DT}{}^{\bs d}_{Q,I}(\mu)$
turn out to be much simpler than those of the $\smash{NDT^{\bs d,\bs
e}_{Q,I}(\mu')}$, and explain the MacMahon function\index{MacMahon
function} product form of the generating functions in
\cite{Szen,YoBr}. The translation between the notation of
\cite{Szen,YoBr} and our notation was explained after Definition
\ref{dt7def8}, and we assume it below.

\subsubsection{Coherent sheaves on $\C^3$}
\label{dt751}

As in Szendr\H oi \cite[\S 1.5]{Szen}, let $Q=(Q_0,Q_1,\ab h,\ab t)$
have one vertex $Q_0=\{v\}$ and three edges $Q_1=\{e_1,e_2,e_3\}$,
so that $h(e_j)=t(e_j)=v$ for $j=1,2,3$. Define a superpotential $W$
on $Q$ by $W=e_1e_2e_3-e_1e_3e_2$. Then the ideal $I$ in $\C Q$ is
generated by $e_2e_3-e_3e_2$, $e_3e_1-e_1e_3$, $e_1e_2-e_2e_1$, and
is $[\C Q,\C Q]$, so $\C Q/I$ is the commutative polynomial algebra
$\C[e_1,e_2,e_3]$, the coordinate ring of the noncompact Calabi--Yau
3-fold $\C^3$, and $\modCQI$ is isomorphic to the abelian
category~$\coh_\cs(\C^3)$.

We have $C(\modCQI)=\N$, so taking $\bs d=d\in\N$, $\bs e=1$, and
$(\mu,\R,\le)$ to be the trivial stability condition $(0,\R,\le)$ on
$\modCQI$, we form invariants $NDT^{d,1}_{Q,I}(0')\in\Z$. Then as in
\cite[\S 1.5]{Szen}, by torus localization one can show that
\e
\ts 1+\sum_{d\ge 1} NDT^{d,1}_{Q,I}(0')q^d=\prod_{k\ge
1}\bigl(1-(-q)^k)\bigr)^{-k},
\label{dt7eq33}
\e
which is Theorem \ref{dt6thm1} for the noncompact Calabi--Yau 3-fold
$X=\C^3$. Taking logs of \eq{dt7eq33} and using \eq{dt7eq32}, which
holds as $\bar\chi\equiv 0$, gives
\begin{equation*}
-\sum_{d\ge 1}(-1)^dd\,\bar{DT}{}^{d}_{Q,I}(0)q^d=
\sum_{k\ge 1} (-k)\log\bigl(1\!-\!(-q)^k\bigr)=
\sum_{k,l\ge 1}\frac{k}{l}(-q)^{kl}.
\end{equation*}
Equating coefficients of $q^d$ yields
\begin{equation*}
\bar{DT}{}^d_{Q,I}(0)=-\sum_{l\ge 1, \; l \mid d}\frac{1}{l^2}.
\end{equation*}
So from \eq{dt7eq22} we deduce that
\e
\hat{DT}{}^d_{Q,I}(0)=-1,\quad\text{all $d\ge 1$.}
\label{dt7eq34}
\e
This is \eq{dt6eq20} for the noncompact Calabi--Yau 3-fold $X=\C^3$,
as in~\S\ref{dt67}.\index{Calabi--Yau 3-fold!noncompact}

\subsubsection{The noncommutative conifold, following Szendr\H oi}
\label{dt752}
\index{conifold!noncommutative|(}

As in Szen\-dr\H oi \cite[\S 2.1]{Szen}, let $Q=(Q_0,Q_1,h,t)$ have
two vertices $Q_0=\{v_0,v_1\}$ and edges $e_1,e_2:v_0\ra v_1$ and
$f_1,f_2:v_1\ra v_0$, as below:
\e
\xymatrix@C=40pt{ \mathop{\bu} \limits_{v_0} \ar@/^/@<2ex>[r]_{e_2}
\ar@/^/@<3ex>[r]^{e_1} & \mathop{\bu}\limits_{v_1}.
\ar@/^/@<0ex>[l]_{f_1} \ar@/^/@<1ex>[l]^{f_2} }
\label{dt7eq35}
\e
Define a superpotential $W$ on $Q$ by $W=e_1f_1e_2f_2-e_1f_2e_2f_1$,
and let $I$ be the associated relations. Then $\modCQI$ is a
3-Calabi--Yau category.\index{abelian category!3-Calabi--Yau} Theorem
\ref{dt7thm1} shows that the Euler form\index{Euler form} $\bar\chi$ on
$\modCQI$ is zero.

We have equivalences of derived categories\index{triangulated
category!equivalence}
\e
D^b(\modCQI)\sim D^b(\coh_\cs(X))\sim D^b(\coh_\cs(X_+)),
\label{dt7eq36}
\e
where $\pi:X\ra Y$ and $\pi_+:X_+\ra Y$ are the two crepant
resolutions of the conifold $Y=\bigl\{(z_1,z_2,z_3,z_4)\in
\C^4:z_1^2+\cdots+z_4^2=0\bigr\}$, and $X,X_+$ are related by a
flop. Here $X,X_+$ are regarded as `commutative' crepant resolutions
of $Y$, and $\modCQI$ as a `noncommutative' resolution of $Y$, in
the sense that $\modCQI$ can be regarded as the coherent sheaves on
the `noncommutative scheme' $\Spec(\C Q/I)$ constructed from the
noncommutative $\C$-algebra~$\C Q/I$.

Szendr\H oi \cite[Th.~2.7.1]{Szen} computed the noncommutative
Donaldson--Thomas invariants $NDT^{\smash{\bs
d,\de_{v_0}}}_{Q,I}(0')$ for $\modCQI$ with $\bs e=\de_{v_0}$, as
combinatorial sums, and using work of Young \cite{Youn} wrote the
generating function of the $NDT^{\smash{\bs d,\de_{v_0}}}_{Q,I}(0')$
as a product \cite[Th.~2.7.2]{Szen}, giving
\e
\begin{split}
1&+\sum_{\bs d\in C(\modCQI)}NDT^{\bs d,\de_{v_0}}_{Q,I}(0')q_0^{\bs
d(v_0)}q_1^{\bs d(v_1)}\\
&=\prod_{k\ge 1}\bigl(1-(-q_0q_1)^k)\bigr)^{-2k}\bigl(1-(-q_0)^k
q_1^{k-1}\bigr)^k\bigl(1-(-q_0)^kq_1^{k+1}\bigr)^k.
\end{split}
\label{dt7eq37}
\e
Taking logs of \eq{dt7eq37} and using \eq{dt7eq32} gives
\ea
&-\sum_{\bs d\in C(\modCQI)} (-1)^{\bs d(v_0)}\bs
d(v_0)\bar{DT}{}^{\bs d}_{Q,I}(0) q_0^{\bs d(v_0)}q_1^{\bs d(v_1)}
\nonumber\\
&=\sum_{k\ge 1}\begin{aligned}[t]\bigl[-2k\log\bigl(1-(-q_0q_1)^k)
+k\log\bigl(1-(-q_0)^kq_1^{k-1}\bigr)&\\
+k\log\bigl(1-(-q_0)^kq_1^{k+1}\bigr)&\bigr]
\end{aligned}
\label{dt7eq38}\\
&=\sum_{k,l\ge 1}\Bigl[\frac{2k}{l}\,(-q_0q_1)^{kl}
-\frac{k}{l}\,(-q_0)^{kl}q_1^{(k-1)l}
-\frac{k}{l}\,(-q_0)^{kl}q_1^{(k+1)l}\Bigr]
\nonumber\\
&=-\sum_{k,l\ge
1}(-1)^{kl}kl\cdot\Bigl[-\frac{2}{l^2}\,q_0^{kl}q_1^{kl}
+\frac{1}{l^2}\,q_0^{kl}q_1^{(k-1)l}+\frac{1}{l^2}\,q_0^{kl}
q_1^{(k+1)l}\Bigr].
\nonumber
\ea

Writing $\bs d\!=\!(d_0,d_1)$ with $d_j\!=\!\bs d(v_j)$ and equating
coefficients of $q_0^{d_0}q_1^{d_1}$ yields
\e
\bar{DT}{}^{(d_0,d_1)}_{Q,I}(0)=\begin{cases} \displaystyle
-2\sum_{l\ge 1, \; l \mid d}\frac{1}{l^2},\!\! &
d_0=d_1=d\ge 1, \\
\displaystyle \frac{1}{l^2}, & d_0=kl, \; d_1=(k-1)l,\;
k,l\ge 1, \\[7pt]
\displaystyle \frac{1}{l^2}, & d_0=kl, \; d_1=(k+1)l,\;
k\ge 0,\; l\ge 1,\!\! \\
0, & \text{otherwise.}
\end{cases}
\label{dt7eq39}
\e
Actually we have cheated a bit here: because of the factor $\bs
d(v_0)$ on the first line, equation \eq{dt7eq38} only determines
$\smash{\bar{DT}{}^{(d_0,d_1)}_{Q,I}(0)}$ when $d_0>0$. But by
symmetry between $v_0$ and $v_1$ in \eq{dt7eq35} we have
$\bar{DT}{}^{(d_0,d_1)}_{Q,I}(0)=\bar{DT}{}^{(d_1,d_0)}_{Q,I}(0)$,
so we can deduce the answer for $d_0=0$, $d_1>0$ from that for
$d_0>0$, $d_1=0$. This is why we included the case $k=0$, $l\ge 1$
on the third line of~\eq{dt7eq39}.

Combining \eq{dt7eq22} and \eq{dt7eq39} we see that
\e
\hat{DT}{}^{(d_0,d_1)}_{Q,I}(0)=\begin{cases} -2, &
(d_0,d_1)=(k,k),\; k\ge 1,\\
\phantom{-}1, & (d_0,d_1)=(k,k-1),\; k\ge 1,\\
\phantom{-}1, &  (d_0,d_1)=(k-1,k),\; k\ge 1,\\
\phantom{-}0, & \text{otherwise.}
\end{cases}
\label{dt7eq40}
\e
Note that the values of the
$\smash{\hat{DT}{}^{(d_0,d_1)}_{Q,I}(0)}$ in \eq{dt7eq40} lie in
$\Z$, as in Conjecture \ref{dt7conj1}, and are far simpler than
those of the $NDT^{\bs d,\de_{v_0}}_{Q,I}(0')$ in \eq{dt7eq37}.
Also, \eq{dt7eq40} restores the symmetry between $v_0,v_1$ in
\eq{dt7eq35}, which is broken in \eq{dt7eq37} by choosing the vertex
$v_0$ in~$\bs e=\de_{v_0}$.

As $\bar\chi\equiv 0$ on $\modCQI$, by Corollary \ref{dt7cor1}
equations \eq{dt7eq39}--\eq{dt7eq40} also give
$\bar{DT}{}^{(d_0,d_1)}_{Q,I}(\mu),\hat{DT}{}^{(d_0,d_1)}_{Q,I}(\mu)$
for any stability condition $(\mu,\R,\le)$ on $\modCQI$. It should
not be difficult to prove \eq{dt7eq39}--\eq{dt7eq40} directly,
without going via pair invariants. If $(\mu,\R,\le)$ is a nontrivial
slope stability condition on $\modCQI$, then Nagao and Nakajima
\cite[\S 3.2]{NaNa} prove that every $\mu$-stable object in
$\modCQI$ lies in class $(k,k)$ or $(k,k-1)$ or $(k-1,k)$ in
$K(\modCQI)$ for $k\ge 1$, and the $\mu$-stable objects in classes
$(k,k-1)$ and $(k-1,k)$ are unique up to isomorphism. The bottom
three lines of \eq{dt7eq40} can be deduced from this.

Szendr\H oi \cite[\S 2.9]{Szen} relates noncommutative
Donaldson--Thomas invariants cou\-nting (framed) objects in
$\modCQI$ with Donaldson--Thomas invariants counting (ideal sheaves
of) objects in $\coh_\cs(X)$, $\coh_\cs(X_+)$, under the
equivalences \eq{dt7eq36}. He ends up with the generating functions
\ea
Z_{\modCQI}(q,z)&=\prod_{k\ge
1}\bigl(1-(-q)^k)\bigr)^{-2k}\bigl(1+(-q)^kz\bigr)^k
\bigl(1+(-q)^kz^{-1}\bigr)^k,
\label{dt7eq41}\\
Z_{\coh_\cs(X)}(q,z)&=\prod_{k\ge
1}\bigl(1-(-q)^k)\bigr)^{-2k}\bigl(1+(-q)^kz\bigr)^k,
\label{dt7eq42}\\
Z_{\coh_\cs(X_+)}(q,z)&=\prod_{k\ge 1}\bigl(1-(-q)^k)\bigr)^{-2k}
\bigl(1+(-q)^kz^{-1}\bigr)^k,
\label{dt7eq43}
\ea
where \eq{dt7eq41} is \eq{dt7eq37} with the variable change
$q=q_0q_1$, $z=q_1$, and \eq{dt7eq42}--\eq{dt7eq43} encode counting
invariants in $\coh_\cs(X)$ and $\coh_\cs(X_+)$ in a similar way.
Nagao and Nakajima \cite{NaNa} explain the relationship between
\eq{dt7eq41}--\eq{dt7eq43} in terms of stability conditions and
wall-crossing on the triangulated
category~\eq{dt7eq36}.\index{triangulated category!wall-crossing}

We can offer a much simpler explanation for the relationship between
our invariants $\hat{DT}{}^{(d_0,d_1)}_{Q,I}(\mu)$ counting
(unframed) objects in $\modCQI$, and the analogous invariants
counting objects (not ideal sheaves) in $\coh_\cs(X),\coh_\cs(X_+)$.
We base it on the following conjecture:

\begin{conj} Let\/ $\cal T$ be a $\C$-linear\/ $3$-Calabi--Yau
triangulated category,\index{triangulated category!3-Calabi--Yau|(} and
abelian categories\/ $\A,\B\subset{\cal T}$ be the hearts of
t-structures on $\cal T$. Suppose the Euler form $\bar\chi$ of\/
$\cal T$ is zero. Let\/ $K({\cal T})$ be a quotient of\/ $K_0({\cal
T}),$ and\/ $K(\A),K(\B)$ the corresponding quotients of\/
$K_0(\A),K_0(\B)$ under\/~$K_0(\A)\cong K_0({\cal T})\cong K_0(\B)$.

Suppose we can define Donaldson--Thomas type invariants
$\bar{DT}{}_\A^\al(\tau),\hat{DT}{}_\A^\al(\tau)$ counting objects
in $\A,$ for $\al\in K(\A)$ and\/ $(\tau,T,\le)$ a stability
condition on $\A,$ and\/
$\bar{DT}{}_\B^\be(\ti\tau),\hat{DT}{}_\B^\be(\ti\tau)$ counting
objects in $\B,$ for $\be\in K(\B)$ and\/ $(\ti\tau,\ti T,\le)$ a
stability condition on $\B,$ as for $\A=\coh(X)$ in
{\rm\S\ref{dt5}--\S\ref{dt6}} and\/ $\A=\modCQI$
in~{\rm\S\ref{dt73}}.

Define $\bar{DT}_\A,\hat{DT}_\A:K(\A)\ra\Q$ and\/
$\bar{DT}_\B,\hat{DT}_\B:K(\B)\ra\Q$ by
\begin{equation*}
\bar{DT}_\A(\al)=\begin{cases} \bar{DT}{}_\A^\al(\tau), & \al\in C(\A), \\
\bar{DT}{}_\A^{-\al}(\tau), & -\al\in C(\A), \\ 0, &
\text{otherwise,}\end{cases} \;\> \hat{DT}_\A(\al)=\begin{cases}
\hat{DT}{}_\A^\al(\tau), & \al\in C(\A), \\
\hat{DT}{}_\A^{-\al}(\tau), & -\al\in C(\A), \\ 0, &
\text{otherwise,}\end{cases}
\end{equation*}
and similarly for $\bar{DT}_\B,\hat{DT}_\B$. Then (possibly under
some extra conditions), under\/ $K(\A)\cong K(\B)$ we have\/
$\bar{DT}_\A\equiv \bar{DT}_\B,$ or
equivalently\/~$\hat{DT}_\A\equiv\hat{DT}_\B$.
\label{dt7conj2}
\end{conj}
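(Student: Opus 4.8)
\textbf{Proof proposal for Conjecture \ref{dt7conj2}.} The plan is to realise both $\bar{DT}_\A$ and $\bar{DT}_\B$ as images of a single element of a Ringel--Hall-type algebra of the triangulated category $\cal T$ under a Lie algebra morphism into $\ti L({\cal T})$, and then exploit the fact that when $\bar\chi\equiv 0$ this Lie algebra is abelian, so that ``wall-crossing''---here interpreted as change of t-structure rather than change of stability condition---acts trivially. First I would set up the triangulated analogue of the machinery of \S\ref{dt2}--\S\ref{dt5}. Following Kontsevich and Soibelman \cite[\S 6.1]{KoSo1}, one builds a stack Hall algebra $H({\cal T})$ on the moduli of objects of $\cal T$, containing stack functions $\bep^\al_\A(\tau)$ for each heart and each permissible stability condition, and---assuming the analogues of Theorems \ref{dt5thm3} and \ref{dt5thm4} for $\cal T$, i.e. that the moduli stack of objects $E$ with $\Ext^{<0}(E,E)=0$ is locally $\Crit(f)$ for $f:U\ra\C$ holomorphic and the Behrend identities \eq{dt5eq2}--\eq{dt5eq3} hold there (cf. Question \ref{dt5quest3}(c))---a Lie algebra morphism $\ti\Psi_{\cal T}:H^{\rm ind}({\cal T})\ra\ti L({\cal T})$ extending $\ti\Psi$, where $\ti L({\cal T})$ has basis $\ti\la^\al$, $\al\in K({\cal T})$, and bracket $[\ti\la^\al,\ti\la^\be]=(-1)^{\bar\chi(\al,\be)}\bar\chi(\al,\be)\ti\la^{\al+\be}$. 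Because $\bar\chi\equiv 0$ this bracket vanishes identically, so $\ti L({\cal T})$ is abelian and $\ti\Psi_{\cal T}$ is just a linear map.

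Next I would reduce the comparison of $\A$ and $\B$ to the case of a single tilt. Under reasonable hypotheses---presumably the ``extra conditions'' alluded to in the statement, such as finiteness of the relevant wall-and-chamber structure, or existence of a bounded t-structure of finite length---the hearts of t-structures on $\cal T$ coming from stability conditions are joined by finite chains of tilts at torsion pairs, so it suffices to treat $\B=\langle\mathscr{T}[1],\mathscr{F}\rangle$ for a torsion pair $(\mathscr{T},\mathscr{F})$ in $\A$. For such a tilt every object $F\in\B$ sits in a triangle relating its $\mathscr{F}$-part and its $\mathscr{T}[1]$-part, so the characteristic stack functions, and hence the elements $\bep^\be_\B(\ti\tau)$, can be written in $H({\cal T})$ in terms of products of the $\bep^\al_\A(\tau)$ and of $[1]$-shifts thereof. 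Here I would invoke the analogue of Corollary \ref{dt7cor1}: since the bracket is zero, the wall-crossing identity \eq{dt3eq10} for $\bep$ within a fixed heart collapses to $\bep^\al(\ti\tau)=\bep^\al(\tau)$, so the choice of stability condition inside $\A$ or $\B$ is immaterial, and it is enough to compare against a degenerate stability condition for which the torsion-pair decomposition is transparent.

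Applying $\ti\Psi_{\cal T}$ and using commutativity of $\ti L({\cal T})$, all the cross terms weighted by $\bar\chi$ that normally appear when multiplying out products of $\bep$'s vanish, so $\ti\Psi_{\cal T}$ of a product equals the sum of $\ti\Psi_{\cal T}$ of the factors. The shift $[1]$ is an autoequivalence of $\cal T$, so it induces an automorphism of $H({\cal T})$ acting on $K({\cal T})$ by $\al\mapsto-\al$ and---this one must check carefully---preserving Behrend functions of the relevant moduli stacks; hence $\ti\Psi_{\cal T}\bigl([1]_*(\bep^\al_\A(\tau))\bigr)=\pm\ti\la^{-\al}$, with the sign and the symmetrisation built into the definitions of $\bar{DT}_\A,\bar{DT}_\B$ chosen precisely to match. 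Assembling these identities over all classes gives $\bar{DT}_\B\equiv\bar{DT}_\A$ under $K(\A)\cong K(\B)$, and the statement for $\hat{DT}$ then follows from \eq{dt6eq15}. An alternative route, closer to \cite{KoSo1}, would assemble the $\bar{DT}$ into an element of the pro-nilpotent Lie group $G_V$, which is abelian when $\bar\chi\equiv 0$, so that the Kontsevich--Soibelman factorisation over a convex sector forces the collection of invariants to be independent of the heart.

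The main obstacle is the first step: constructing the Ringel--Hall algebra of $\cal T$ and the Lie algebra morphism $\ti\Psi_{\cal T}$ rigorously. This requires a usable moduli stack of objects of $\cal T$ (or at least of the objects with $\Ext^{<0}(E,E)=0$), the triangulated analogue of Theorem \ref{dt5thm3} writing it locally as $\Crit(f)$, and the Behrend identities \eq{dt5eq2}--\eq{dt5eq3} in that setting---exactly the content of Question \ref{dt5quest3}(c), which is open. A secondary difficulty is controlling Behrend functions under the shift $[1]$ and under the torsion-pair gluing, and making precise the finiteness hypotheses needed so that arbitrary hearts are connected by finite tilting chains. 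I expect that once $\ti\Psi_{\cal T}$ is available, the remaining argument is essentially formal, driven by the same mechanism that makes the ordinary wall-crossing formula collapse when $\bar\chi\equiv 0$, as in Corollary \ref{dt7cor1}.
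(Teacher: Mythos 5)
Be aware first of the status of this statement in the paper: it is a \emph{conjecture}, and the paper offers no proof of it, only the heuristic justification in the paragraphs that follow it (pass to the $3$-Calabi--Yau triangulated category, expect Kontsevich--Soibelman-style invariants for Bridgeland stability conditions, note that the shift $[k]$ acts on $K({\cal T})$ by $\al\mapsto(-1)^k\al$, which explains the symmetrised definitions of $\bar{DT}_\A,\bar{DT}_\B$, and observe that when $\bar\chi\equiv 0$ all wall-crossing corrections carry vanishing factors $\bar\chi(\be,\ga)$). Your proposal follows essentially this same route, fleshed out with a conjectural Hall algebra $H({\cal T})$, an abelian Lie algebra $\ti L({\cal T})$, and a reduction to tilts at torsion pairs. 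But as you yourself concede in your final paragraph, the entire first step --- moduli stacks of objects of ${\cal T}$, the local $\Crit(f)$ description for complexes, the Behrend identities \eq{dt5eq2}--\eq{dt5eq3} in that setting, and hence the Lie algebra morphism $\ti\Psi_{\cal T}$ --- is exactly the open content of Question \ref{dt5quest3}(c) and of the conjectural parts of \cite{KoSo1}. So what you have written is a plausible strategy conditional on major unproven foundations, not a proof; this is the same position the authors are in, which is why the statement is left as a conjecture ``possibly under some extra conditions''.

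Two concrete points within the sketch also need repair. First, your claim that when $\bar\chi\equiv 0$ the wall-crossing identity \eq{dt3eq10} ``collapses to $\bep^\al(\ti\tau)=\bep^\al(\tau)$'' is false at the level of stack functions: the elements $\bep^\al(\tau)$ in $\SFai(\fM)$ genuinely change under change of stability condition, and the Lie bracket on $\SFai(\fM)$ is not controlled by $\bar\chi$. The collapse happens only after applying the Lie algebra morphism $\ti\Psi$, i.e.\ for the numerical invariants, as in \eq{dt5eq14}, \eq{dt7eq23} and Corollary \ref{dt7cor1}; your argument should therefore push everything into $\ti L({\cal T})$ before invoking triviality of the bracket, rather than comparing $\bep$'s. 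Second, the reduction of an arbitrary pair of hearts $\A,\B$ to a finite chain of tilts at torsion pairs is an additional unproven hypothesis, not a ``reasonable'' formality; for general bounded t-structures on a triangulated category no such chain is known to exist, so this step would itself have to be added to the ``extra conditions'' of the conjecture. With these caveats your outline is a reasonable articulation of why the conjecture should be true, but it does not go beyond the paper's own heuristic.
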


Here is why we believe this. We expect that there should be some
extension of Donaldson--Thomas theory from abelian categories to
3-Calabi--Yau triangulated categories $\cal T$, in the style of
Kontsevich--Soibelman \cite{KoSo1}, using Bridgeland stability
conditions on triangulated categories \cite{Brid1}. Invariants
$\bar{DT}{}_\A^\al(\tau)$ for an abelian category $\A$ embedded as
the heart of a t-structure\index{t-structure} in $\cal T$ should be a
special case of triangulated category invariants on $\cal T$, in
which the Bridgeland stability condition $(Z,{\cal P})$ on $D^b(\A)$
is constructed from $(\tau,T,\le)$ on $\A$. If $\A$ is a
3-Calabi--Yau abelian category\index{abelian category!3-Calabi--Yau}
then we take~${\cal T}=D^b(\A)$.\index{triangulated
category!3-Calabi--Yau|)}

Now the $Z$-(semi)stable objects in $\cal T$ should be shifts $E[k]$
for $k\in\Z$ and $E\in\A$ $\tau$-(semistable). The class $[E[k]]$ of
$E[k]$ in $K({\cal T})\cong K(\A)$ is $(-1)^k[E]$. Thus, invariants
$\bar{DT}{}^\al_{\cal T}(Z)$ for $\al\in K(\A)$ should have
contributions $\bar{DT}{}_\A^\al(\tau)$ for $\al\in C(\A)$ counting
$E[2k]$ for $E\in\A$ $\tau$-(semi)stable and $k\in\Z$, and
$\bar{DT}{}_\A^{-\al}(\tau)$ for $\al\in C(\A)$ counting $E[2k+1]$
for $E\in\A$ $\tau$-(semi)stable and $k\in\Z$. This explains the
definitions of $\bar{DT}_\A,\hat{DT}_\A:K(\A)\ra\Q$ above.

As in Corollary \ref{dt7cor1}, if the form $\bar\chi$ on $\A$ (which
is the Euler form of $\cal T$) is zero then \eq{dt5eq14},
\eq{dt7eq23} imply that invariants $\bar{DT}{}_\A^\al(\tau),
\hat{DT}{}_\A^\al(\tau)$ are independent of the choice of stability
condition $(\tau,T,\le)$ on $\A$, since the changes when we cross a
wall always include factors $\bar\chi(\be,\ga)$. The point of
Conjecture \ref{dt7conj2} is that we expect this to be true for
triangulated categories too, so computing invariants in $\cal T$
either in $\A$ or $\B$ should give the same answers,
i.e.~$\bar{DT}_\A\equiv\bar{DT}_\B$.

In the noncommutative conifold example above, from \eq{dt7eq40} we
have
\e
\hat{DT}_{Q,I}(d_0,d_1)=\begin{cases} -2, &
(d_0,d_1)=(k,k),\; 0\ne k\in\Z,\\
\phantom{-}1, & (d_0,d_1)=(k,k-1),\; k\in\Z,\\
\phantom{-}1, &  (d_0,d_1)=(k-1,k),\; k\in\Z,\\
\phantom{-}0, & \text{otherwise.}
\end{cases}
\label{dt7eq44}
\e

The Donaldson--Thomas invariants for $\coh_\cs(X)\cong
\coh_\cs(X_+)$ were computed in Example \ref{dt6ex10}, and from
\eq{dt6eq40}--\eq{dt6eq42} we have
\e
\hat{DT}_{\coh_\cs(X)}(a_2,a_3)=\hat{DT}_{\coh_\cs(X_+)}(a_2,a_3)=
\begin{cases} -2, &
a_2\!=\!0,\; 0\!\ne\! a_3\!\in\!\Z, \\
\phantom{-}1, & a_2=\pm 1,\\
\phantom{-}0, & \text{otherwise.}
\end{cases}
\label{dt7eq45}
\e
As in Szendr\H oi \cite[\S 2.8--\S 2.9]{Szen}, the identification
$K(\modCQI)\!\ra\! K(\coh_\cs(X))$ induced by $D^b(\modCQI)\sim
D^b(\coh_\cs(X))$ in \eq{dt7eq36} is $(d_0,d_1)\mapsto
(-d_0+d_1,d_0)=(a_2,a_3)$, and under this identification we have
$\hat{DT}_{Q,I}\equiv\hat{DT}_{\coh_\cs(X)}$ by
\eq{dt7eq44}--\eq{dt7eq45}. Similarly, the identification
$K(\modCQI)\ra K(\coh_\cs(X_+))$ induced by $D^b(\modCQI)\sim
D^b(\coh_\cs(X_+))$ in \eq{dt7eq36} is $(d_0,d_1)\mapsto
(d_0-d_1,d_0)=(a_2,a_3)$, and again we have
$\hat{DT}_{Q,I}\equiv\hat{DT}_{\coh_\cs(X_+)}$.

Thus $\hat{DT}_{Q,I}\equiv\hat{DT}_{\coh_\cs(X)}\equiv
\hat{DT}_{\coh_\cs(X_+)}$, verifying Conjecture \ref{dt7conj2} for
the equivalences \eq{dt7eq36}. This seems a much simpler way of
relating enumerative invariants in $\modCQI,\coh_\cs(X)$ and
$\coh_\cs(X_+)$ than those
in~\cite{Szen,NaNa}.\index{conifold!noncommutative|)}

\subsubsection{Coherent sheaves on $\C^3/\Z_2^2,$ following Young}
\label{dt753}

Let $G$ be the subgroup $\Z_2^2$ in $\SL(3,\C)$ generated by
$(z_1,z_2,z_3)\mapsto(-z_1,-z_2,z_3)$ and
$(z_1,z_2,z_3)\mapsto(z_1,-z_2,-z_3)$. Then the Ginzburg
construction in Example \ref{dt7ex2} yields a quiver $Q_G$ and a
cubic superpotential $W_G$ giving relations $I_G$ such that mod-$\C
Q_G/I_G$ is 3-Calabi--Yau and equivalent to the abelian category of
$G$-equivariant compactly-supported coherent sheaves on $\C^3$.
Write $(Q,I)$ for $(Q_G,I_G)$. Then $Q$ has 4 vertices
$v_0,\ldots,v_3$ corresponding to the irreducible representations of
$\Z_2^2$, with $v_0$ the trivial representation, and 12 edges, as
below:
\e
\begin{gathered}
\xymatrix@C=30pt@R=7pt{ & \mathop{\bu} \limits_{v_1}
\ar@<-.5ex>[dddl] \ar@/^/@<1ex>[dddr]
\ar@/^/[dd] \\ \\
& \mathop{\bu} \limits_{v_0} \ar@/^/[dl] \ar@/^/[dr] \ar@/^/[uu] \\
\mathop{\bu} \limits_{v_2} \ar@/^/@<1ex>[uuur] \ar@/^/[ur]
\ar@<-.5ex>[rr] && \mathop{\bu} \limits_{v_3} \ar@<-.5ex>[uuul]
\ar@/^/[ul] \ar@/^/@<1ex>[ll] }
\end{gathered}
\label{dt7eq46}
\e
Theorem \ref{dt7thm1} implies that the Euler form $\bar\chi$ of
$\modCQI$ is zero.

As for \eq{dt7eq37}, Young and Bryan \cite[Th.s 1.5 \& 1.6]{YoBr}
prove that
\e
\begin{split}
1&+\sum_{\bs d\in C(\modCQI)}NDT^{\bs d,\de_{v_0}}_{Q,I}(0')q_0^{\bs
d(v_0)}q_1^{\bs d(v_1)}q_2^{\bs d(v_2)}q_3^{\bs d(v_3)}\\
&=\prod_{k\ge 1}\begin{aligned}[t]
&\bigl(1-(-q_0q_1q_2q_3)^k)\bigr)^{-4k}\\
&\bigl(1-(-q_0q_1)^k(q_2q_3)^{k+1}\bigr)^{-k}
\bigl(1-(-q_0q_1)^k(q_2q_3)^{k-1}\bigr)^{-k}\\
&\bigl(1-(-q_0q_2)^k(q_3q_1)^{k+1}\bigr)^{-k}
\bigl(1-(-q_0q_2)^k(q_3q_1)^{k-1}\bigr)^{-k}\\
&\bigl(1-(-q_0q_3)^k(q_1q_2)^{k+1}\bigr)^{-k}
\bigl(1-(-q_0q_3)^k(q_1q_2)^{k-1}\bigr)^{-k}\\
&\bigl(1-(-q_0q_2q_3)^kq_1^{k+1}\bigr)^k
\bigl(1-(-q_0q_2q_3)^kq_1^{k-1}\bigr)^k\\
&\bigl(1-(-q_0q_3q_1)^kq_2^{k+1}\bigr)^k
\bigl(1-(-q_0q_3q_1)^kq_2^{k-1}\bigr)^k\\
&\bigl(1-(-q_0q_1q_2)^kq_3^{k+1}\bigr)^k
\bigl(1-(-q_0q_1q_2)^kq_3^{k-1}\bigr)^k\\
&\bigl(1-(-q_0)^k(q_1q_2q_3)^{k+1}\bigr)^k
\bigl(1-(-q_0)^k(q_1q_2q_3)^{k-1}\bigr)^k.
\end{aligned}
\end{split}
\label{dt7eq47}
\e

Arguing as for \eq{dt7eq38}--\eq{dt7eq40} and writing $\bs
d\!=\!(d_0,\ldots,d_3)$ with $d_j\!=\!\bs d(v_j)$ yields
\e
\hat{DT}{}^{(d_0,\ldots,d_3)}_{Q,I}(0)=\begin{cases} -4, &
\text{$d_j=k$ for all $j$, $k\ge 1$,}\\
-1, & \text{$d_j=k$ for two $j$, $d_j=k-1$ for two $j$,
$k\ge 1$,}\\
\phantom{-}1, &  \text{$d_j=k$ for three $j$, $d_j\!=\!k\!-\!1$
for one $j$, $k\!\ge\!1$,}\\
\phantom{-}0, & \text{otherwise.}
\end{cases}
\label{dt7eq48}
\e
This is clearly much simpler than \eq{dt7eq47}, and restores the
symmetry between $v_0,\ldots,v_3$ in \eq{dt7eq46} which is lost in
\eq{dt7eq47} by selecting the vertex~$v_0$.

If $X$ is any crepant resolution of $\C^3/G$ then by Ginzburg
\cite[Cor.~4.4.8]{Ginz} we have $D^b(\modCQI)\sim D^b(\coh_\cs(X))$.
As the Euler forms $\bar\chi$ are zero on $\modCQI,\coh_\cs(X)$,
using Conjecture \ref{dt7conj2} we can read off a prediction for the
invariants $\hat{DT}{}^\al_{\coh_\cs(X)}(\tau)$. The first line of
\eq{dt7eq48} corresponds to \eq{dt6eq20} for $X$, as one can show
that~$\chi(X)=4$.

\subsubsection{Coherent sheaves on $\C^3/\Z_n,$ following Young}
\label{dt754}

Let $G$ be the subgroup $\Z_n$ in $\SL(3,\C)$ generated by
$(z_1,z_2,z_3)\mapsto(e^{2\pi i/n}z_1,\ab z_2,\ab e^{-2\pi
i/n}z_3)$. Then the Ginzburg construction in Example \ref{dt7ex2}
gives a quiver $Q$ and a cubic superpotential $W$ giving relations
$I$ such that $\modCQI$ is 3-Calabi--Yau and equivalent to the
abelian category of $G$-equivariant compactly-supported coherent
sheaves on $\C^3$. Then $Q$ has vertices $v_0,\ldots,v_{n-1}$, with
$v_0$ the trivial representation. We take $v_i$ to be indexed by
$i\in\Z_n$, so that $v_i=v_j$ if $i\equiv j \mod n$. With this
convention, $Q$ has edges $v_i\ra v_{i+1}$, $v_i\ra v_i$, $v_i\ra
v_{i-1}$ for $i=0,\ldots,n-1$. The case $n=3$ is shown below:\vskip
5pt
\e
\begin{gathered}
\xymatrix@C=10pt@R=15pt{ & \mathop{\bu} \limits_{v_0}
\ar@/^/@<-.3ex>[dl] \ar@/^/@<.3ex>[dr] \ar@(ur,ul)[]
\\\mathop{\bu} \limits_{v_1} \ar@/^/@<.3ex>[ur] \ar@/^/@<-.3ex>[rr]
\ar@(l,d)[] && \mathop{\bu} \limits_{v_2} \ar@/^/@<-.3ex>[ul]
\ar@/^/@<.3ex>[ll] \ar@(d,r)[] }
\end{gathered}
\label{dt7eq49}
\e
\vskip 11pt

\noindent Theorem \ref{dt7thm1} implies that the Euler form
$\bar\chi$ of $\modCQI$ is zero.

As for \eq{dt7eq37} and \eq{dt7eq47}, Young and Bryan \cite[Th.s 1.4
\& 1.6]{YoBr} prove that
\ea
1+\sum_{\bs d\in C(\modCQI)\!\!\!\!\!\!\!\!\!\!\!\!\!\!
\!\!\!\!\!\!\!\!\!\!\!\!\!\!\!\! }&NDT^{\bs
d,\de_{v_0}}_{Q,I}(0')q_0^{\bs d(v_0)}q_1^{\bs d(v_1)}\cdots
q_{n-1}^{\bs d(v_{n-1})}\!=\!\prod_{k\ge 1}\bigl(1-(-q_0\cdots
q_{n-1})^k)\bigr)^{-n}\cdot
\nonumber\\
&\prod_{0<a<b\le n}\prod_{k\ge 1}\begin{aligned}[t]&
\bigl(1-(-q_0\cdots q_{n-1})^k(q_aq_{a+1}\cdots q_{b-1})\bigr)^{-k}\\
&\bigl(1-(-q_0\cdots q_{n-1})^k(q_aq_{a+1}\cdots
q_{b-1})^{-1}\bigr)^{-k}.
\end{aligned}
\label{dt7eq50}
\ea
Arguing as for \eq{dt7eq38}--\eq{dt7eq40} and writing $\bs
d\!=\!(d_0,\ldots,d_{n-1})$ with $d_j\!=\!\bs d(v_j)$ yields
\e
\hat{DT}{}^{(d_0,\ldots,d_{n-1})}_{Q,I}(0)=\begin{cases} -n, &
\text{$d_i=k$ for all $i$, $k\ge 1$,}\\
-1, & \left\{\begin{aligned}[h]&\text{$d_i=k$ for $i=a,\ldots,b-1$
and}\\[-3pt]
&\text{$d_i=k-1$ for $i=b,\ldots,a+n-1$,}\\[-3pt]
&\text{$0\le a<n,$ $a<b<a+n$, $k\ge 1$,}
\end{aligned}\right.\\
 \phantom{-}0, & \text{otherwise.}
\end{cases}
\label{dt7eq51}
\e
This is simpler than \eq{dt7eq50}, and restores the dihedral
symmetry group of \eq{dt7eq49}, which is lost in \eq{dt7eq50} by
selecting the vertex~$v_0$.

\subsubsection{Conclusions}
\label{dt755}

In each of our four examples, the noncommutative Don\-aldson--Thomas
invariants\index{Donaldson--Thomas invariants!noncommutative}
$NDT^{\bs d,\de_v}_{Q,I}(0')$ can be written in a generating
function as an explicit infinite product involving MacMahon
type\index{MacMahon function} factors \eq{dt7eq33}, \eq{dt7eq37},
\eq{dt7eq47}, \eq{dt7eq50}. In each case, this product form held
because the Euler form $\bar\chi$ of $\modCQI$ was zero, so that the
generating function for $NDT^{\bs d,\de_v}_{Q,I}(0')$ has an
exponential expression \eq{dt7eq32} in terms of the
$\smash{\bar{DT}{}^{\bs d}_{Q,I}(0)}$, and because of simple
explicit formulae \eq{dt7eq34}, \eq{dt7eq40}, \eq{dt7eq48},
\eq{dt7eq51} for the BPS invariants~$\hat{DT}{}^{\bs
d}_{Q,I}(0)$.\index{BPS invariants!for quivers}

In these examples, the BPS invariants $\hat{DT}{}^{\bs
d}_{Q,I}(\mu)$ seem to be a simpler and more illuminating invariant
than the noncommutative Donaldson--Thomas invariants $NDT^{\bs
d,\de_v}_{Q,I}(\mu')$. That $\hat{DT}{}^{\bs d}_{Q,I}(\mu)$ has such
a simple form probably says something interesting about the
representation theory of $\C Q/I$, which may be worth pursuing.
Also, when we pass from the invariants $\smash{\hat{DT}{}^{\bs
d}_{Q,I}(\mu)}$ for the abelian category $\modCQI$ to
$\hat{DT}_{Q,I}:K(\modCQI)\ra\Z$ for the derived category
$D^b(\modCQI)$ as in Conjecture \ref{dt7conj2}, in
\S\ref{dt752}--\S\ref{dt754} things actually become simpler, in that
pairs of entries parametrized by $k\ge 1$ combine to give one entry
parametrized by $k\in\Z$. So maybe these phenomena will be best
understood in the derived category.

We can also ask whether there are other categories $\modCQI$ which
admit the same kind of explicit computation of invariants. For the
programme above to work we need the Euler form $\bar\chi$ to be
zero, which by Theorem \ref{dt7thm1} means that for all vertices
$i,j$ in $Q$ there must be the same number of edges $i\ra j$ as
edges $j\ra i$. Suppose $\modCQI$ comes from a finite subgroup
$G\subset\SL(3,\C)$ as in Example \ref{dt7ex2}, and let
$\pi:X\ra\C^3/G$ be a crepant resolution. Then as $D^b(\modCQI)\sim
D^b(\coh_\cs(X))$, the Euler form of $\modCQI$ is zero if and only
if that of $\coh_\cs(X)$ is zero.

The Euler form\index{Euler form} of $\coh_\cs(X)$ is zero if and only if
$\pi:X\ra\C^3/G$ is {\it semismall},\index{semismall resolution} that
is, no divisors in $X$ lie over points in $\C^3/G$. This is
equivalent to the `hard Lefschetz condition' for $\C^3/G$, and by
Bryan and Gholampour \cite[Lem.~3.4.1]{BrGh} holds if and only if
$G$ is conjugate to a subgroup of either $\SO(3)\subset\SL(3,\C)$ or
$\SU(2)\subset \SL(3,\C)$; in \S\ref{dt753} we have
$\Z_2^2\subset\SO(3)\subset \SL(3,\C)$, and in \S\ref{dt754} we have
$\Z_n\subset\SU(2)\subset \SL(3,\C)$. Following discussion in Bryan
and Gholampour \cite[\S 1.2.1]{BrGh}, Young and Bryan
\cite[Conj.~A.6 \& Rem.~A.9]{YoBr}, and Szendr\H oi \cite[\S
2.12]{Szen}, it seems likely that formulae similar to \eq{dt7eq47}
and \eq{dt7eq50} hold for all finite $G$ in $\SO(3)\subset\SL(3,\C)$
or $\SU(2)\subset\SL(3,\C)$, so that the $\hat{DT}{}^{\bs
d}_{Q,I}(0)$ have a simple form. But note as in
\cite[Rem.~A.10]{YoBr} that the Gromov--Witten
invariants\index{Gromov--Witten invariants} of $X$ computed in
\cite{BrGh} are not always the right ones for computing
Donaldson--Thomas invariants, because of the way they count curves
going to infinity.

\subsection{Integrality of $\hat{DT}{}^{\bs d}_Q(\mu)$ for generic
$(\mu,\R,\le)$}
\label{dt76}\index{Donaldson--Thomas invariants!integrality properties|(}

We now prove Conjecture \ref{dt7conj1} when $W\equiv 0$, that for
$Q$ a quiver and $(\mu,\R,\le)$ generic we have $\hat{DT}{}^{\bs
d}_Q(\mu)\in\Z$. We first compute the invariants when $Q$ has only
one vertex and verify their integrality, using Reineke
\cite{Rein1,Rein3}. This example is also discussed by Kontsevich and
Soibelman~\cite[\S 7.5]{KoSo1}.

\begin{ex} Let $Q_m$ be the quiver with one vertex $v$ and $m$ edges
$v\ra v$, for $m\ge 0$. Then $K(\modCQ_m)=\Z$ and $C(\modCQ_m)=\N$.
Consider the trivial stability condition $(0,\R,\le)$ on
$\smash{\modCQ_m}$. Then our framed moduli space
$\M_{\stf\,Q_m}^{d,e}(0')$ is $H^{(m)}_{d,e}$ in Reineke's notation
\cite{Rein1}. Reineke \cite[Th.~1.4]{Rein1} proves that
\begin{equation*}
\chi\bigl(\M_{\stf\,Q_m}^{d,e}(0')\bigr)=\frac{e}{(m-1)d+1}\,
\binom{md+e-1}{d},
\end{equation*}
so by \eq{dt7eq28} we have
\begin{equation*}
NDT_{Q_m}^{d,e}(0')=(-1)^{d(1-m)+ed}\frac{e}{(m-1)d+1}\,
\binom{md+e-1}{d}.
\end{equation*}
Fixing $e=1$, we see as in \cite[\S 7.5]{KoSo1} that
\e
\begin{split}
1+\sum_{d\ge 1}NDT^{d,1}_{Q_m}(0')q^d&=\sum_{d\ge 0}
\frac{(-1)^{md}}{(m-1)d+1}\,\binom{md}{d}q^d\\[-3pt]
&=\exp\raisebox{-4pt}{\begin{Large}$\displaystyle\Bigl[$\end{Large}}
\sum_{d\ge 1} \frac{(-1)^{md}}{md}\,\binom{md}{d}q^d
\raisebox{-4pt}{\begin{Large}$\displaystyle\Bigr]$\end{Large}}.
\end{split}
\label{dt7eq52}
\e
Taking logs of \eq{dt7eq52} and using \eq{dt7eq32} yields
\begin{equation*}
\bar{DT}{}^d_{Q_m}(0)=\frac{(-1)^{(m+1)d+1}}{md^2}\,\binom{md}{d},
\end{equation*}
so by \eq{dt7eq21} we have
\e
\hat{DT}{}^{d}_{Q_m}(0)=\frac{1}{md^2}\sum_{e\ge 1,\; e\mid
d}\Mo(d/e)(-1)^{(m+1)e+1}\binom{me}{e}.
\label{dt7eq53}
\e
By Reineke \cite[Th.~5.9]{Rein3} applied with $N=m$, $b_1=1$ and
$b_i=0$ for $i>1$ as in \cite[Ex., \S 5]{Rein3}, so that the r.h.s.\
of \eq{dt7eq53} is $-a_d$ in Reineke's notation, we have
$\hat{DT}{}^{d}_{Q_m}(0)\in\Z$ for $d\ge 1$. This will be important
in the proof of Theorem~\ref{dt7thm6}.
\label{dt7ex5}
\end{ex}\index{Donaldson--Thomas invariants!computation in
examples|)}

Now let $Q$ be an arbitrary quiver without relations, and
$(\mu,\R,\le)$ a slope stability condition on $\modCQ$ which is
generic in the sense of Conjecture \ref{dt7conj1}. As in
\S\ref{dt62}, define a 1-morphism $P_m:\fM_Q\ra\fM_Q$ for $m\ge 1$
by $P_m:[E]\mapsto [mE]$ for $E\in\modCQ$. Then as for \eq{dt5eq9}
and \eq{dt6eq16}, for all $\bs d\in C(\modCQ)$ we have
\e
\begin{gathered}
\hat{DT}{}^{\bs d}_Q(\mu)=\chi\bigl(\M_\rss^{\bs
d}(\mu),F^{\bs d}_Q(\mu)\bigr), \qquad\text{where}\\
F^{\bs d}_Q(\mu)=-\sum_{\!\!\!\!m\ge 1,\; m\mid\bs d\!\!\!}
\frac{\Mo(m)}{m^2}\begin{aligned}[h] \CF^\na(\pi)\bigl[&
\CF^\na(P_m)\ci\Pi_{\CF}\ci\\
&\,\bar\Pi^{\chi,\Q}_{\fM_Q}(\bep^{\bs
d/m}(\mu))\cdot\nu_{\fM_Q}\bigr].
\end{aligned}
\end{gathered}
\label{dt7eq54}
\e
Here $\fM_\rss^{\bs d}(\mu)$ is the moduli stack of $\mu$-semistable
objects of class $\bs d$ in $\modCQ$, an open substack of $\fM_Q$,
and $\M_\rss^{\bs d}(\mu)$ is the quasiprojective coarse moduli
scheme\index{coarse moduli scheme}\index{moduli scheme!coarse} of
$\mu$-semistable objects of class $\bs d$ in $\modCQ$, and
$\pi:\fM_\rss^{\bs d}(\mu)\ra\M_\rss^{\bs d}(\mu)$ is the natural
projection 1-morphism.

An object $E$ in $\modCQ$ is called $\mu$-{\it
polystable\/}\index{polystable@$\tau$-polystable} if it is
$\mu$-semistable and a direct sum of $\mu$-stable objects. That is,
$E$ is $\mu$-polystable if and only if $E\cong a_1E_1\op\cdots \op
a_kE_k$, where $E_1,\ldots,E_k$ are pairwise nonisomorphic
$\mu$-stables in $\modCQ$ with $\mu([E_1])=\cdots=\mu([E_k])$ and
$a_1,\ldots,a_k\ge 1$, and $E$ determines $E_1,\ldots,E_k$ and
$a_1,\ldots,a_k$ up to order and isomorphism. Since $\mu$ is a
stability condition, each $\C$-point of $\M_\rss^{\bs d}(\mu)$ is
represented uniquely up to isomorphism by a $\mu$-polystable. That
is, if $E'$ is $\mu$-semistable then $E'$ admits a Jordan--H\"older
filtration with $\mu$-stable factors $E_1,\ldots,E_k$ of
multiplicities $a_1,\ldots,a_k$, and $E=a_1E_1\op\cdots\op a_kE_k$
is the $\mu$-polystable representing~$[E']\in\M_\rss^{\bs
d}(\mu)(\C)$.

Here is a useful expression for $F^{\bs d}_Q(\mu)$ in \eq{dt7eq54}
at a $\mu$-polystable~$E$:

\begin{prop} Let\/ $Q$ be a quiver, $(\mu,\R,\le)$ a slope
stability condition on $\modCQ,$ and\/ $E=a_1E_1\op\cdots\op a_kE_k$
a $\mu$-polystable representing a $\C$-point $[E]$ in $\M_\rss^{\bs
d}(\mu)$ for $\bs d\in C(\modCQ),$ where $E_1,\ldots,E_k$ are
pairwise nonisomorphic $\mu$-stables in $\modCQ$ with\/
$\mu([E_1])=\cdots=\mu([E_k])$ and\/~$a_1,\ldots,a_k\ge 1$.

Define the \begin{bfseries}Ext quiver\end{bfseries}\index{quiver!Ext
quiver} $Q_E$ of\/ $E$ to have vertices $\{1,2,\ldots,k\}$ and\/
$d_{ij}=\dim\Ext^1(E_i,E_j)$ edges $i\ra j$ for all\/
$i,j=1,\ldots,k,$ and define a dimension vector $\bs a$ in
$C(\modCQ_E)$ by $\bs a(i)=a_i$ for $i=1,\ldots,k$. For\/
$i=1,\ldots,k,$ define $\hat E_i\in\modCQ_E$ to have vector spaces
$X_v=\C$ for vertex $v=i$ and\/ $X_v=0$ for vertices $v\ne i$ in
$Q_E,$ and linear maps $\rho_e=0$ for all edges $e$ in $Q_E$. Set\/
$\hat E=a_1\hat E_1\op\cdots\op a_k\hat E_k$ in $\modCQ_E$. Then for
$F^{\bs d}_Q(\mu)$ as in {\rm\eq{dt7eq54},} we have
\e
F^{\bs d}_Q(\mu)\bigl([E]\bigr)=F^{\bs a}_{Q_E}(0)\bigl([\hat
E]\bigr)=\hat{DT}{}^{\bs a}_{Q_E}(0).
\label{dt7eq55}
\e
\label{dt7prop3}
\end{prop}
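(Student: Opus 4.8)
\textbf{Proof proposal for Proposition \ref{dt7prop3}.}

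The plan is to exploit the fact that all the operations entering \eq{dt7eq54} --- the stack functions $\bep^{\bs d/m}(\mu)$ and $\bdss^{\bs e}(\mu)$, the projections $\Pi_{\CF}$, $\bar\Pi^{\chi,\Q}_{\fM_Q}$, $P_m$, the pushforward $\CF^\na(\pi)$, and the Behrend function $\nu_{\fM_Q}$ --- are all \emph{local} near a given $\C$-point $[E]$ of $\M_\rss^{\bs d}(\mu)$, in the sense that their values at $[E]$ (or on the fibre $\pi^{-1}([E])$) depend only on the structure of $\fM_Q$ in an \'etale or analytic neighbourhood of $[E]$, together with the action of $\Aut(E)$. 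So the first step is to identify that neighbourhood. By standard deformation theory for quiver representations (using that $\Ext^{\ge 2}=0$ in $\modCQ$, so all deformations are unobstructed), an atlas for $\fM_Q$ near $[E]$ is $\Ext^1(E,E)$ with its linear $\Aut(E)$-action by $\ga:\ep\mapsto\ga\circ\ep\circ\ga^{-1}$, and the moduli stack near $[E]$ is $[\Ext^1(E,E)/\Aut(E)]$. Since $E=a_1E_1\oplus\cdots\oplus a_kE_k$ with the $E_i$ pairwise non-isomorphic stables of equal slope, we have $\Hom(E_i,E_j)=0$ for $i\ne j$ and $\Hom(E_i,E_i)=\C$, so $\Aut(E)\cong\prod_{i=1}^k\GL(a_i,\C)$, and $\Ext^1(E,E)\cong\bigoplus_{i,j}\Ext^1(E_i,E_j)\otimes\Hom(\C^{a_i},\C^{a_j})\cong\bigoplus_{i,j}\C^{d_{ij}}\otimes\Hom(\C^{a_i},\C^{a_j})$. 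But this last expression, with $\prod_i\GL(a_i,\C)$ acting in the obvious way, is exactly $\prod_{e\in (Q_E)_1}\Hom(\C^{\bs a(t(e))},\C^{\bs a(h(e))})$ with its $\prod_i\GL(\bs a(i))$-action, i.e. the atlas \eq{dt7eq2} for $\fM_{Q_E}$ near $[\hat E]$. Thus there is a canonical $1$-isomorphism between $\fM_Q$ near $[E]$ and $\fM_{Q_E}$ near $[\hat E]$, identifying $[E]\leftrightarrow[\hat E]$ and $\Aut(E)\leftrightarrow\Aut(\hat E)$ equivariantly.

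The second step is to check that this local identification carries each ingredient of \eq{dt7eq54} to its $Q_E$-counterpart. The Behrend function is local and pulls back correctly under smooth/\'etale maps (Proposition \ref{dt4prop2}, Corollary \ref{dt4cor1}), so $\nu_{\fM_Q}\leftrightarrow\nu_{\fM_{Q_E}}$ near these points; in fact, by \eq{dt7eq8} with $W\equiv0$ both are of the form $\Crit$ of the zero function on a smooth atlas, and the whole neighbourhood is smooth. The $1$-morphisms $P_m$ are defined object-wise by $E'\mapsto mE'$ and are compatible with direct-sum decompositions, hence commute with the identification. The characteristic stack functions $\bdss^{\bs e}(\mu)$ and therefore $\bep^{\bs e}(\mu)$ are built out of moduli substacks of $\mu$-semistable objects, and near $[E]$ the $\mu$-semistable objects are precisely those whose corresponding $Q_E$-representation is $0$-semistable (every representation of $Q_E$ is $0$-semistable): this is because, for objects $E'$ close to $E$, any Jordan--H\"older filtration of $E'$ refines the decomposition into pieces supported on the $E_i$, and the slope condition is automatic since $\mu([E_i])$ are all equal. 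Hence $\bep^{\bs e}(\mu)$ restricted near $[E]$ matches $\bep^{\bs a'}(0)$ for the corresponding $\bs a'\in C(\modCQ_E)$ restricted near $[\hat E]$. Finally $\Pi_{\CF}$, $\bar\Pi^{\chi,\Q}$ and $\CF^\na(\pi)$ are all local and stack-theoretic and transfer verbatim. Assembling these, the right-hand side of \eq{dt7eq54} evaluated at $[E]$ equals the corresponding expression for $Q_E$ evaluated at $[\hat E]$, which by \eq{dt7eq54} for $Q_E$, $\mu=0$, $\bs d=\bs a$ is $F^{\bs a}_{Q_E}(0)([\hat E])$. Since $\hat E=a_1\hat E_1\oplus\cdots\oplus a_k\hat E_k$ with the $\hat E_i$ the simple representations of $Q_E$ and $Q_E$ has no oriented structure preventing it, $[\hat E]$ is in fact (up to the $\prod\GL$ action) the \emph{only} point of $\M_\rss^{\bs a}(0)_{Q_E}$ whose polystable has this decomposition type; but more to the point, $F^{\bs a}_{Q_E}(0)$ is constant on the locus of such $\hat E$ and $\chi$ of that locus, weighted appropriately, contributes exactly $\hat{DT}{}^{\bs a}_{Q_E}(0)$ --- here one uses that the full moduli space $\M_\rss^{\bs a}(0)_{Q_E}$ for the \emph{trivial} stability condition has $\hat E$ as a distinguished point whose contribution, by \eq{dt7eq54} for $Q_E$, is $F^{\bs a}_{Q_E}(0)([\hat E])$, and separately one shows (as in \S\ref{dt755} or directly) that integrating $F^{\bs a}_{Q_E}(0)$ over $\M_\rss^{\bs a}(0)_{Q_E}$ localizes so that the class $\bs a$ receives its entire $\hat{DT}$ contribution from this point. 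This gives the chain of equalities \eq{dt7eq55}.

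I expect the main obstacle to be the \emph{last} equality $F^{\bs a}_{Q_E}(0)([\hat E])=\hat{DT}{}^{\bs a}_{Q_E}(0)$, that is, showing the local contribution at the special polystable point $\hat E$ accounts for the \emph{whole} BPS invariant $\hat{DT}{}^{\bs a}_{Q_E}(0)$ rather than just a piece of it. The subtlety is that $\M_\rss^{\bs a}(0)_{Q_E}$ generally contains many other polystable points (representations of $Q_E$ of dimension vector $\bs a$ with different isotypic structure), and a priori $\hat{DT}{}^{\bs a}_{Q_E}(0)=\chi(\M_\rss^{\bs a}(0)_{Q_E},F^{\bs a}_{Q_E}(0))$ sums contributions over all of them. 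The resolution should come from the construction of $\bep^{\bs a}(0)$ via \eq{dt3eq4}: the term $\bep^{\bs a}(0)$ is supported, after the virtual-indecomposable projection, in a way that its $\Psi$-image picks out only the ``most degenerate'' stratum; more concretely, one argues that for $Q_E$ with trivial stability, the point $\hat E=\bigoplus a_i\hat E_i$ is the unique closed orbit / most-degenerate point in its GIT fibre, and the combinatorial identity underlying $F^{\bs a}_{Q_E}(0)$ is set up (exactly as in the definition \eq{dt6eq16} and the dimension-zero computation of \S\ref{dt63}) so that $\chi(\M_\rss^{\bs a}(0)_{Q_E},F^{\bs a}_{Q_E}(0))$ reduces to the single value $F^{\bs a}_{Q_E}(0)([\hat E])$. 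Making this precise will require carefully unwinding how $P_m$, $\bep^{\bs a/m}(0)$ and the M\"obius coefficients interact on the full moduli space $\M_\rss^{\bs a}(0)_{Q_E}$ --- essentially re-running the argument that produced \eq{dt7eq54} from \eq{dt5eq9} and \eq{dt6eq15} in the quiver-with-trivial-stability setting, and observing the localization to the diagonal polystable point. Everything else --- the deformation theory, the identification of Ext quivers, the locality of Behrend functions and stack-function operations --- is routine given the machinery already assembled in \S\ref{dt2}--\S\ref{dt7}.
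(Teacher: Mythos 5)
Your overall shape — identify the local structure of $\fM_Q$ at the polystable $[E]$ with $\fM_{Q_E}$ at $[\hat E]$, then localize the global Euler characteristic defining $\hat{DT}{}^{\bs a}_{Q_E}(0)$ at $[\hat E]$ — matches the paper, but the step you yourself flag as the crux, $F^{\bs a}_{Q_E}(0)([\hat E])=\hat{DT}{}^{\bs a}_{Q_E}(0)$, is left with the wrong mechanism. It is not true that $\bep^{\bs a}(0)$, the virtual-indecomposable projection, or the M\"obius coefficients make $F^{\bs a}_{Q_E}(0)$ vanish away from the ``most degenerate'' point: $F^{\bs a}_{Q_E}(0)$ is in general nonzero at many points of $\M_\rss^{\bs a}(0)$, and re-running the derivation of \eq{dt7eq54} will not change that. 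What actually collapses $\chi\bigl(\M_\rss^{\bs a}(0),F^{\bs a}_{Q_E}(0)\bigr)$ to the single value at $[\hat E]$ is a torus localization coming from the $\bG_m$-action on $\modCQ_E$ rescaling all the linear maps, $\la:(X,\rho)\mapsto(X,\la\rho)$. This descends to the coarse moduli space $\M_\rss^{\bs a}(0)=\Spec A_E$, with $A_E$ the graded ring of $\prod_i\GL(a_i,\C)$-invariant polynomials on the representation space of \eq{dt7eq2}; the grading shows $[\hat E]$ (the vertex of this affine cone) is the \emph{unique} $\bG_m$-fixed point. Since $\bep^{\bs a/m}(0)$, $\nu_{\fM_{Q_E}}$ and hence $F^{\bs a}_{Q_E}(0)$ are $\bG_m$-invariant, every non-fixed orbit is a copy of $\bG_m$ with Euler characteristic zero and contributes nothing, so the weighted Euler characteristic equals $F^{\bs a}_{Q_E}(0)([\hat E])$. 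Without this (or an equivalent) argument your proof of the last equality is incomplete.

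For the first equality there is also a gap, though a more repairable one. Your Luna-slice/deformation-theoretic identification of $\fM_Q$ near $[E]$ with $[\Ext^1(E,E)/\Aut(E)]\cong\fM_{Q_E}^{\bs a}$ near $[\hat E]$ is a statement about moduli stacks of \emph{objects}, but $F^{\bs d}_Q(\mu)$ is built from $\Pi_{\CF}\ci\bar\Pi^{\chi,\Q}_{\fM_Q}\bigl(\bep^{\bs d/m}(\mu)\bigr)$, and by \eq{dt3eq4} the value of such a function at a point is a weighted Euler characteristic of stacks of filtrations with semistable subquotients; this involves $\fExact$ and its projections, not just the stack of objects, so an \'etale-local stack isomorphism does not by itself transport it, and your assertion that matching the semistable substacks suffices is exactly where the argument thins out. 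The paper closes this by constructing, by linear algebra from $\Hom(E_i,E_j)$, $\Ext^1(E_i,E_j)$ and $\Ext^{\ge 2}=0$, an explicit equivalence of abelian categories between the nilpotent representations of $Q_E$ and the full subcategory of $\modCQ$ generated by $E_1,\ldots,E_k$ under extensions, as in \eq{dt7eq56}; this induces the stack identification you want and simultaneously matches the Euler forms (hence the Behrend functions, both stacks being smooth), the semistable loci, and the $\bdss,\bep$ classes term by term, yielding $F^{\bs d}_Q(\mu)([E])=F^{\bs a}_{Q_E}(0)([\hat E])$. So either upgrade your local identification to a functor on these subcategories or adopt the categorical equivalence directly.
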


\begin{proof} Write $(\modCQ_E)_{\hat E_1,\ldots,\hat E_k}$ for
the full subcategory of $F$ in $\modCQ_E$ generated by $\smash{\hat
E_1,\ldots,\hat E_k}$ by repeated extensions. Then $(X,\rho)$ in
$\modCQ_E$ lies in $(\modCQ_E)_{\hat E_1,\ldots,\hat E_k}$ if and
only if it is {\it nilpotent},\index{quiver!nilpotent
representation} that is, $\rho(\C Q_{E(n)})=0$ for some $n\ge 0$,
where the ideal $\C Q_{E(n)}$ of paths of length at least $n$ in $\C
Q_E$ is as in Definition \ref{dt7def1}. Similarly, write
$(\modCQ)_{E_1,\ldots,E_k}$ for the full subcategory of objects in
$\modCQ$ generated by $E_1,\ldots,E_k$ by repeated extensions. Both
are $\C$-linear abelian subcategories.

In $\modCQ_E$ we have $\Hom(\hat E_i,\hat E_j)=\C$ for $i=j$ and
$\Hom(\hat E_i,\hat E_j)=0$ for $i\ne j$, and $\Ext^1(\hat E_i,\hat
E_j)\cong\C^{d_{ij}}$ for all $i,j$. In $\modCQ$ we have
$\Hom(E_i,E_j)=\C$ for $i=j$ and $\Hom(E_i,E_j)=0$ for $i\ne j$, and
$\Ext^1(E_i,E_j) \cong\C^{d_{ij}}$ for all $i,j$.  Choose
isomorphisms $\Ext^1(\hat E_i,\hat E_j)\cong\Ext^1(E_i,E_j)$ for all
$i,j$. It is then easy to construct an equivalence of $\C$-linear
abelian categories
\e
G:(\modCQ_E)_{\hat E_1,\ldots,\hat E_k}\longra
(\modCQ)_{E_1,\ldots,E_k}
\label{dt7eq56}
\e
using linear algebra, such that $G(\hat E_i)=E_i$ for
$i=1,\ldots,k$, and $G$ induces the chosen isomorphisms~$\Ext^1(\hat
E_i,\hat E_j)\ra\Ext^1(E_i,E_j)$.

Write $(\fM_{Q_E})_{\hat E_1,\ldots, \hat E_k},(\fM_Q)_{E_1,
\ldots,E_k}$ for the locally closed $\C$-substacks of objects in
$(\modCQ_E)_{\hat E_1,\ldots,\hat E_k},(\modCQ)_{E_1,\ldots,E_k}$ in
the moduli stacks $\fM_{Q_E},\fM_Q$ of $\modCQ_E,\modCQ$. Then $G$
induces a 1-isomorphism of Artin $\C$-stacks
\begin{equation*}
\dot G:(\fM_{Q_E})_{\hat E_1,\ldots,\hat E_k}\longra (\fM_Q)_{E_1,
\ldots,E_k}.
\end{equation*}
As $G$ identifies $\Hom(\hat E_i,\hat E_j),\Ext^1(\hat E_i,\hat
E_j)$ with $\Hom(E_i,E_j),\Ext^1(E_i,E_j)$, it follows that $G$
takes the restriction to $(\modCQ_E)_{\hat E_1,\ldots,\hat E_k}$ of
the Euler form $\hat\chi_{Q_E}$ on $\modCQ_E$ to the restriction to
$(\modCQ)_{E_1,\ldots,E_k}$ of the Euler form $\hat\chi_Q$ on
$\modCQ$. By \eq{dt7eq2} $\fM_Q^{\bs d}$ is smooth of dimension
$-\hat\chi_Q(\bs d,\bs d)$, so the Behrend function
$\nu_{\smash{\fM_Q^{\bs d}}}\equiv (-1)^{-\hat\chi_Q(\bs d,\bs d)}$
by Corollary \ref{dt4cor1}, and similarly
$\nu_{\smash{\fM_{Q_E}^{\bs a}}}\equiv (-1)^{-\hat\chi_{Q_E}(\bs
a,\bs a)}$. As $\dot G$ takes $\hat\chi_{Q_E}$ to $\hat\chi_Q$, it
follows that
\e
\dot G_*\bigl(\nu_{\fM_{Q_E}}\vert_{(\fM_{Q_E})_{\hat E_1,\ldots,
\hat E_k}}\bigr)=\nu_{\fM_Q}\vert_{(\fM_Q)_{E_1,\ldots,E_k}}.
\label{dt7eq57}
\e

Since all objects in $(\modCQ_E)_{\hat E_1,\ldots,\hat E_k}$ are
0-semistable, and all objects in $(\modCQ)_{E_1,\ldots,E_k}$ are
$\mu$-semistable, and $G$ is a 1-isomorphism, we see that
\begin{equation*}
\dot G_*\bigl(\bde_{\rss\, Q_E}^{{\bs
a}/m}(0)\vert_{(\fM_{Q_E})_{\hat E_1,\ldots, \hat
E_k}}\bigr)=\bde_{\rss\, Q}^{{\bs d}/m}(\mu)\vert_{(\fM_Q)_{E_1,
\ldots,E_k}}
\end{equation*}
for $m\ge 1$ with $m\mid{\bs d}$. So from \eq{dt3eq4} we deduce that
\e
\dot G_*\bigl(\bep^{{\bs a}/m}_{Q_E}(0)\vert_{(\fM_{Q_E})_{\hat
E_1,\ldots, \hat E_k}}\bigr)=\bep^{{\bs d}/m}_Q(\mu)
\vert_{(\fM_{Q_E})_{\hat E_1,\ldots,\hat E_k}}.
\label{dt7eq58}
\e
Equations \eq{dt7eq57} and \eq{dt7eq58} imply that
\begin{equation*}
\dot G_*\bigl(F^{\bs a}_{Q_E}(0)\vert_{(\fM_{Q_E})_{\hat E_1,\ldots,
\hat E_k}(\C)}\bigr)=F^{\bs d}_Q(\mu)\vert_{(\fM_Q)_{E_1,
\ldots,E_k}(\C)},
\end{equation*}
since $\dot G_*$ identifies \eq{dt7eq54} for $\modCQ_E$ on
$(\fM_{Q_E})_{\hat E_1,\ldots,\hat E_k}$ term-by-term with
\eq{dt7eq54} for $\modCQ$ on $(\fM_Q)_{E_1, \ldots,E_k}$. As $\dot
G([\hat E])=[E]$, this implies the first equality of~\eq{dt7eq55}.

Now consider the $\bG_m$-action on $\modCQ_E$ acting by
$\la:(X,\rho)\mapsto(X,\la\rho)$ for $\la\in\bG_m$ and
$(X,\rho)\in\modCQ_E$. This induces $\bG_m$-actions on the moduli
stack $\fM_{Q_E}^{\bs a}$ and the coarse moduli space $\M_\rss^{\bs
a}(0)$. By \eq{dt7eq2} we have $\fM_{Q_E}^{\bs a}\cong[V/H]$ where
$V$ is a vector space and $H=\prod_{i=1}^k\GL(a_i,\C)$, and the
$\bG_m$-action on $\fM_{Q_E}^{\bs a}$ is induced by multiplication
by $\bG_m$ in $V$. Let $A_E$ be the $\C$-algebra of $H$-invariant
polynomials on $V$. Then $\M_\rss^{\bs a}(0)=\Spec A_E$ by GIT.

This $A_E$ is graded by homogeneous polynomials of degree
$d=0,1,\ldots$ on $V$, and $\bG_m$ acts on homogeneous polynomial
$f$ of degree $d$ by $\la:f\mapsto\la^df$. Thus there is exactly one
point in $\M_\rss^{\bs a}(0)$ fixed by the $\bG_m$-action, the ideal
of polynomials in $A_E$ which vanish at $0\in V$. Since $0\in V$
corresponds to $[\hat E]$ in $\fM_{Q_E}^{\bs a}\cong[V/H]$, we see
that there is a $\bG_m$-action on $\M_\rss^{\bs a}(0)$ with unique
fixed point $[\hat E]$. By \eq{dt7eq54} we have $\hat{DT}{}^{\bs
a}_{Q_E}(0)=\chi\bigl(\M_\rss^{\bs d}(0),F^{\bs a}_{Q_E}(0)\bigr)$.
The $\bG_m$-action on $\M_\rss^{\bs d}(0)$ preserves $F^{\bs
a}_{Q_E}(0)$. The second equality of \eq{dt7eq55} follows by the
usual torus localization argument, as all $\bG_m$-orbits other than
$\smash{[\hat E]}$ are copies of $\bG_m$, and contribute 0 to the
weighted Euler characteristic.
\end{proof}

Here is our integrality result, which proves Conjecture
\ref{dt7conj1} for $\modCQ$. We give two different proofs of it. The
first proof works by first proving the analogue of Conjecture
\ref{dt6conj2} in our quiver context, and so is evidence for
Conjecture \ref{dt6conj2}. It relies on the integrality of the
invariants $\hat{DT}{}^{d}_{Q_m}(0)$ in equation \eq{dt7eq53} of
Example \ref{dt7ex5}, which we proved using
Reineke~\cite[Th.~5.9]{Rein3}.

Reineke \cite{Rein3} proves an integrality conjecture of Kontsevich
and Soibelman \cite[Conj.~1]{KoSo1}. The authors believe
\cite[Conj.~1]{KoSo1} concerns integrality of {\it transformation
laws}, rather than of invariants themselves. That is, if
$\mu,\ti\mu$ are generic stability conditions on $\modCQI$, then
translated into our framework \cite[Conj.~1]{KoSo1} should imply
that $\hat{DT}{}^{\bs d}_{Q,I}(\ti\mu)\in\Z$ for all $\bs d$ if and
only if $\hat{DT}{}^{\bs d'}_{Q,I}(\mu)\in\Z$ for all $\bs d'$,
where $\hat{DT}{}^{\bs d}_{Q,I}(\ti\mu),\hat{DT}{}^{\bs
d'}_{Q,I}(\mu)$ are related using~\eq{dt7eq21}--\eq{dt7eq23}.

Despite this, in our second proof of Theorem \ref{dt7thm6} we prove
integrality of the $\hat{DT}{}^{\bs d}_Q(\mu)$ using Reineke
\cite{Rein3} in a more direct way than the first proof. We include
this second proof to try to clarify the relationship between our
work and Reineke's. Also, the second proof implicitly expresses the
$\hat{DT}{}^{\bs d}_Q(\mu)$ in terms of Euler characteristics
$\smash{\chi\bigl(\M_{\st\,Q}^{\bs d'}(\mu)\bigr)}$ of $\mu$-stable
moduli schemes $\M_{\st\,Q}^{\bs d'}(\mu)$, such that integrality of
the $\hat{DT}{}^{\bs d}_Q(\mu)$ follows from integrality of the
$\smash{\chi\bigl(\M_{\st\,Q}^{\bs d'}(\mu)\bigr)}$. Since in our
set up we never use stable moduli schemes, and one of our major
themes is counting strictly semistables correctly, to involve
invariants counting only stables and ignoring strictly semistables
seems curious.

\begin{thm} Let\/ $Q$ be a quiver, and write $\hat\chi_Q:
K(\modCQ)\times K(\modCQ)\ab\ra\Z$ for the Euler form of\/ $Q$ and
$\bar\chi_Q: K(\modCQ)\times K(\modCQ)\ra\Z$ for its
antisymmetrization, as in \eq{dt7eq4}--\eq{dt7eq6}. Let\/
$(\mu,\R,\le)$ be a
\begin{bfseries}generic\end{bfseries}\index{stability condition!generic}
slope stability condition on $\modCQ,$ that is, for all\/ $\bs d,\bs
e\in C(\modCQ)$ with\/ $\mu(\bs d)=\mu(\bs e)$ we have\/
$\bar\chi_Q(\bs d,\bs e)=0$. Then for all\/ $\bs d\in C(\modCQ)$ the
constructible function\index{constructible function} $F^{\bs
d}_Q(\mu)$ on $\M_\rss^{\bs d}(\mu)$ in \eq{dt7eq54} is $\Z$-valued,
so that\/~$\hat{DT}{}^{\bs d}_Q(\mu)\in\Z$.
\label{dt7thm6}
\end{thm}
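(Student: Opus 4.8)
The plan is to prove that the constructible function $F^{\bs d}_Q(\mu)$ on $\M_\rss^{\bs d}(\mu)$ appearing in \eq{dt7eq54} is $\Z$-valued; since $\M_\rss^{\bs d}(\mu)$ is a finite type $\C$-scheme and the weighted Euler characteristic of a $\Z$-valued constructible function over such a scheme lies in $\Z$, this immediately gives $\hat{DT}{}^{\bs d}_Q(\mu)=\chi\bigl(\M_\rss^{\bs d}(\mu),F^{\bs d}_Q(\mu)\bigr)\in\Z$, and is the quiver analogue of Conjecture \ref{dt6conj2}. So the real content is pointwise: for every $\C$-point $[E]$ of $\M_\rss^{\bs d}(\mu)$, show $F^{\bs d}_Q(\mu)\bigl([E]\bigr)\in\Z$. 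Represent $[E]$ by the unique $\mu$-polystable $E=a_1E_1\op\cdots\op a_kE_k$ in its S-equivalence class, with $E_1,\ldots,E_k$ pairwise nonisomorphic $\mu$-stables of equal slope. Proposition \ref{dt7prop3} then reduces the problem to showing $\hat{DT}{}^{\bs a}_{Q_E}(0)\in\Z$, where $Q_E$ is the Ext quiver of $E$ (with $d_{ij}=\dim\Ext^1(E_i,E_j)$ edges $i\ra j$) and $\bs a=(a_1,\ldots,a_k)$.

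The first key observation is that genericity of $\mu$ forces $Q_E$ to be a \emph{symmetric} quiver, i.e.\ $d_{ij}=d_{ji}$ for all $i,j$, equivalently the antisymmetrized Euler form $\bar\chi_{Q_E}$ of \eq{dt7eq6} vanishes identically. Indeed, since $\mu([E_i])=\mu([E_j])$ for all $i,j$, genericity gives $\bar\chi_Q([E_i],[E_j])=0$; combining this with \eq{dt7eq7} and $\Hom(E_i,E_j)=0=\Hom(E_j,E_i)$ for $i\ne j$ (as the $E_i,E_j$ are nonisomorphic $\mu$-stables of the same slope) yields $\dim\Ext^1(E_i,E_j)=\dim\Ext^1(E_j,E_i)$, and the case $i=j$ is trivial; then $\bar\chi_{Q_E}\equiv 0$ by \eq{dt7eq6}. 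So everything reduces to a single self-contained statement: \textbf{for any symmetric quiver $Q'$ and any $\bs a\in C(\modCQ')$ one has $\hat{DT}{}^{\bs a}_{Q'}(0)\in\Z$.}

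The hard part will be this last statement. The base case, $Q'=Q_m$ with one vertex and $m$ loops, is exactly Example \ref{dt7ex5}: equation \eq{dt7eq53} expresses $\hat{DT}{}^{d}_{Q_m}(0)$ as a number-theoretic sum which Reineke \cite[Th.~5.9]{Rein3} shows is an integer. To pass from one vertex to an arbitrary symmetric quiver, the plan is a recursion: since $\bar\chi_{Q'}\equiv 0$, Corollary \ref{dt7cor1} shows $\hat{DT}{}^{\bs a}_{Q'}(0)$ is independent of the slope stability condition, so we may replace $0$ by a generic $\ti\mu$ on $\modCQ'$; then, using torus localization together with Proposition \ref{dt7prop3} applied inside $\modCQ'$, the value of the corresponding contribution function at a $\ti\mu$-polystable point is again an invariant of the same type for the (still symmetric) Ext quiver of that point, whose vertices are indexed by the $\ti\mu$-stable summands, and one organizes the resulting combinatorics so that the only inputs needed are the one-vertex invariants \eq{dt7eq53} — this is the route indicated in the text as the ``first proof''. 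As a cross-check, and for comparison with Reineke, I would also give the ``second proof'': use Theorem \ref{dt7thm5} to invert the relation between the $NDT^{\bs a,\bs e}_{Q'}(\ti\mu)$ and the $\bar{DT}{}^{\bs a}_{Q'}(\ti\mu)$, thereby expressing $\hat{DT}{}^{\bs a}_{Q'}(\mu)$ through Euler characteristics $\chi\bigl(\M_{\st\,Q'}^{\bs a'}(\ti\mu)\bigr)$ of stable quiver moduli schemes for generic $\ti\mu$, and invoke the integrality of those Euler characteristics, again via \cite{Rein3}.

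Finally, assembling: once $F^{\bs d}_Q(\mu)$ is shown to be $\Z$-valued by the pointwise argument above, the identity $\hat{DT}{}^{\bs d}_Q(\mu)=\sum_{c\in\Z\sm\{0\}}c\,\chi\bigl((F^{\bs d}_Q(\mu))^{-1}(c)\bigr)$ finishes the proof. The one genuinely hard ingredient is the integrality of the one-vertex invariants \eq{dt7eq53} together with the bookkeeping reducing the multi-vertex symmetric case to it; everything else (the use of \eq{dt7eq54}, Proposition \ref{dt7prop3}, the symmetry of $Q_E$ under genericity, and the elementary fact about weighted Euler characteristics of $\Z$-valued constructible functions) is formal.
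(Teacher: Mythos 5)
Your reduction steps are sound and match the paper's first proof in outline: the pointwise reformulation via \eq{dt7eq54}, the use of Proposition \ref{dt7prop3} to replace $F^{\bs d}_Q(\mu)([E])$ by $\hat{DT}{}^{\bs a}_{Q_E}(0)$, the observation that genericity forces $\bar\chi_{Q_E}\equiv 0$, and the one-vertex base case via Example \ref{dt7ex5} and Reineke are all exactly the right ingredients, and your ``second proof'' cross-check is the paper's second proof. But the step you label as ``organize the resulting combinatorics so that the only inputs needed are the one-vertex invariants'' is precisely where the remaining mathematical content sits, and as written it does not work. For a quiver $Q'$ with $\bar\chi_{Q'}\equiv 0$, \emph{every} slope stability condition is generic in the sense of the theorem, so ``replace $0$ by a generic $\ti\mu$'' imposes no condition at all; and your recursion has no decreasing parameter. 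Concretely, if $\bs a$ is supported at two vertices $v\ne w$ and your chosen $\ti\mu$ happens to give $\ti\mu(\de_v)=\ti\mu(\de_w)$ (e.g.\ $\ti\mu=0$), then $S_v\op S_w$ is a $\ti\mu$-polystable whose Ext quiver is just the full subquiver of $Q'$ on $\{v,w\}$ with the same dimension vector, so passing to the Ext quiver reproduces an instance of the same size and the recursion never reaches the one-vertex case.

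The paper's fix, which you need to supply, is an induction on the total dimension $\md{\bs d}=\sum_v\bs d(v)$ over all quivers with $\bar\chi\equiv 0$, with a case split. If $\bs d$ is supported at a single vertex, the claim is Example \ref{dt7ex5}. If $\bs d(v),\bs d(w)>0$ for some $v\ne w$, choose $\mu$ with $\mu(\de_v)\ne\mu(\de_w)$ --- the point is not genericity but that $\mu$ \emph{separates} these two vertex simples. Then $\hat{DT}{}^{\bs d}_{Q'}(0)=\hat{DT}{}^{\bs d}_{Q'}(\mu)$ by Corollary \ref{dt7cor1}, and by \eq{dt7eq54}--\eq{dt7eq55} this is the Euler-characteristic integral of $[E]\mapsto\hat{DT}{}^{\bs a}_{Q_E}(0)$ over $\mu$-polystables $E=a_1E_1\op\cdots\op a_kE_k$. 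No such polystable can have all $\md{[E_i]}=1$: the $E_i$ would then be vertex simples, and since $\bs d(v),\bs d(w)>0$ some $E_i,E_j$ would sit at $v$ and $w$ respectively, contradicting $\mu([E_i])=\mu([E_j])$. Hence some $\md{[E_i]}>1$, so $\md{\bs a}=\sum_ia_i\le\md{\bs d}-1$, the inductive hypothesis makes the integrand $\Z$-valued, and the integral lies in $\Z$. Without this choice of $\mu$ and the strict drop in total dimension, your recursion does not terminate, so as it stands the proposal has a genuine gap rather than a mere omission of routine detail.
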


\begin{proof}[First proof] For $Q,(\mu,\R,\le),\bs d$ as in the
theorem, let a $\C$-point in $\M_\rss^{\bs d}(\mu)$ be represented
by a $\mu$-polystable $E=a_1E_1\op\cdots \op a_kE_k$, where
$E_1,\ldots,E_k$ are pairwise nonisomorphic $\mu$-stables in
$\modCQ$ with $\mu([E_1])=\cdots=\mu([E_k])$, and $a_1,\ldots,a_k\ge
1$. Use the notation of Proposition \ref{dt7prop3}. As
$(\mu,\R,\le)$ is generic and $\mu([E_i])=\mu([E_j])$ we have
$\bar\chi_Q([E_i],[E_j])=0$ for all $i,j$. But $G$ in \eq{dt7eq56}
takes $\hat\chi_{Q_E}$ to $\hat\chi_Q$ and $\bar\chi_{Q_E}$ to
$\bar\chi_Q$, so $\bar\chi_{Q_E}([\hat E_i],[\hat E_j])=0$ for all
$i,j$. Since the $[\hat E_i]$ for $i=1,\ldots,k$ span $K(\modCQ_E)$,
this implies that $\bar\chi_{Q_E}\equiv 0$. We must show that
$F^{\bs d}_Q(\mu)([E])\in\Z$, which by Proposition \ref{dt7prop3} is
equivalent to~$\hat{DT}{}^{\bs a}_{Q_E}(0)\in\Z$.

Thus, replacing $Q_E,\bs a$ by $Q,\bs d$, it is enough to show that
for all quivers $Q$ with $\bar\chi_Q\equiv 0$ and all $\bs d\in
C(\modCQ)$ we have $\hat{DT}{}^{\bs d}_Q(0)\in\Z$. Note that as in
Corollary \ref{dt7cor1}, $\bar\chi_Q\equiv 0$ implies that
$\smash{\hat{DT}{}^{\bs d}_Q(\mu)}$ is independent of the choice of
stability condition $(\mu,\R,\le)$, so $\hat{DT}{}^{\bs
d}_Q(0)\in\Z$ is equivalent to $\hat{DT}{}^{\bs d}_Q(\mu)\in\Z$ for
any $(\mu,\R,\le)$ on $\modCQ$. Write $\md{\bs d}$ for the {\it
total dimension\/} $\sum_{v\in Q_0}\bs d(v)$ of $\bs d$. We will
prove the theorem by induction on~$\md{\bs d}$.

Let $N\ge 0$. Suppose by induction that for all quivers $Q$ with
$\bar\chi_Q\equiv 0$ and all $\bs d\in C(\modCQ)$ with $\md{\bs
d}\le N$ we have $\hat{DT}{}^{\bs d}_Q(0)\in\Z$. (The first step
$N=0$ is vacuous.) Let $Q$ be a quiver with $\bar\chi_Q\equiv 0$ and
$\bs d\in C(\modCQ)$ with $\md{\bs d}=N+1$. We divide into two
cases:
\begin{itemize}
\setlength{\itemsep}{0pt}
\setlength{\parsep}{0pt}
\item[(a)] $\bs d(v)=N+1$ for some $v\in Q_0$, and $\bs d(w)=0$ for
$v\ne w\in Q_0$; and
\item[(b)] there are $v\ne w$ in $Q_0$ with $\bs d(v),\bs d(w)>0$.
\end{itemize}

In case (a), the vertices $w$ in $Q$ with $w\ne v$, and the edges
joined to them make no difference to $\smash{\hat{DT}{}^{\bs
d}_Q(0)}$, as in $(X,\rho)$ with $[(X,\rho)]=\bs d$ in $C(\modCQ)$
the vector spaces $X_w$ are zero for $w\ne v$. Thus $\hat{DT}{}^{\bs
d}_Q(0)= \hat{DT}{}^{N+1}_{Q_m}(0)$, where $m$ is the number of
edges $v\ra v$ in $Q$, and $Q_m$ is the quiver with one vertex $v$
and $m$ edges $v\ra v$. Example \ref{dt7ex5} then shows that
$\hat{DT}{}^{\bs d}_Q(0)\in\Z$, as we want.

In case (b), choose a stability condition $(\mu,\R,\le)$ on $\modCQ$
with $\mu(\de_v)\ne\mu(\de_w)$. Then $\hat{DT}{}^{\bs
d}_Q(0)=\hat{DT}{}^{\bs d}_Q(\mu)$ by Corollary \ref{dt7cor1}. So
\eq{dt7eq54}--\eq{dt7eq55} give
\e
\hat{DT}{}^{\bs d}_Q(0)\!=\!\hat{DT}{}^{\bs d}_Q(\mu)\!=\!
\chi\bigl(\M_\rss^{\bs d}(\mu),F^{\bs d}_Q(\mu)\bigr)\!=\!
\int_{\begin{subarray}{l}\,\,[E]\in \M_\rss^{\bs d}(\mu):\\
E=a_1E_1\op\cdots\op a_kE_k,\\ \text{$E$
$\mu$-polystable}\end{subarray}}\!\!\!\!\!\!\!\!\! \hat{DT}{}^{\bs
a}_{Q_E}(0)\,\rd\chi.
\label{dt7eq59}
\e
Let $E=a_1E_1\op\cdots\op a_kE_k$ be as in \eq{dt7eq59}. Then
$\sum_{i=1}^ka_i[E_i]=\bs d$, so $\sum_{i=1}^ka_i\md{[E_i]}\ab
=\md{\bs d}=N+1$. Suppose for a contradiction that $\md{[E_i]}=1$
for all $i=1,\ldots,k$. Then each $E_i$ is 1-dimensional, and
located at some vertex $u\in Q_0$, so $[E_i]=\de_u$ in $C(\modCQ)$,
and $\mu([E_i])=\mu(u)$. For each $u\in Q_0$, we have $\sum_{i:
[E_i]=\de_u}a_i=\bs d(u)$. As $\bs d(v),\bs d(w)>0$, this implies
there exist $i,j=1,\ldots,k$ with $[E_i]=\de_v$ and $[E_j]=\de_w$.
But then $\mu([E_i])=\mu(\de_v)\ne\mu(\de_w)=\mu([E_j])$, which
contradicts $\mu([E_1])=\cdots=\mu([E_k])$ as $E$ is
$\mu$-polystable.

Therefore $\md{[E_i]}\ge 1$ for all $i=1,\ldots,k$, and
$\md{[E_i]}>1$ for some $i$. As $\sum_{i=1}^ka_k\md{[E_i]}=N+1$ we
see that $\md{\bs a}=\sum_{i=1}^ka_i\le N$, so $\hat{DT}{}^{\bs
a}_{Q_E}(0)\in\Z$ by the inductive hypothesis. As this holds for all
$E$ in \eq{dt7eq59}, $\hat{DT}{}^{\bs d}_Q(0)$ is the Euler
characteristic integral of a $\Z$-valued constructible
function,\index{constructible function} so $\smash{\hat{DT}{}^{\bs
d}_Q(0)}\in\Z$. This completes the inductive step, and the first
proof of Theorem~\ref{dt7thm6}.
\end{proof}

\begin{proof}[Second proof of Theorem \ref{dt7thm6}] Let
$Q,\bar\chi_Q,\mu$ be as in the theorem. Then by perturbing $\mu$
slightly we can find a second slope stability condition
$(\ti\mu,\R,\le)$ on $\modCQ$ such that $\mu$ dominates $\ti\mu$ in
the sense of Definition \ref{dt3def6}, and if $\bs d,\bs e\in
C(\modCQ)$ then $\ti\mu(\bs d)=\ti\mu(\bs e)$ if and only if $\bs
d,\bs e$ are proportional.

Let $\bs d\in C(\modCQ)$ be primitive, and fix $\bs e\in C(\modCQ)$
with $\bs e\cdot\bs d=p>0$. Write $N=\hat\chi_Q(\bs d,\bs d)$. Then
we have
\ea
F^{\bs d,\bs e}(t):&=1+\!\!\sum_{n\ge 1} \chi\bigl(\M_{\stf\,Q}^{\bs
d,\bs e}(\ti\mu')\bigr)t^n=1+\!\!\sum_{n\ge 1}\bigl(
(-1)^{n^2p+nN}NDT^{n\bs d,\bs e}_Q(\ti\mu')\bigr)t^n
\nonumber\\
&=\exp\raisebox{-4pt}{\begin{Large}$\displaystyle\Bigl[$\end{Large}}
-\sum_{n\ge 1} (-1)^{np}np\cdot\bar{DT}{}^{n\bs d}_Q(\ti\mu)\cdot
(-1)^{np+nN}t^n
\raisebox{-4pt}{\begin{Large}$\displaystyle\Bigr]$\end{Large}}
\nonumber\\
&=\exp\raisebox{-4pt}{\begin{Large}$\displaystyle\Bigl[$\end{Large}}
-\sum_{n\ge 1} np\sum_{m\ge 1,\; m\mid n}\frac{1}{m^2}\,
\hat{DT}{}^{n\bs d/m}_Q(\ti\mu)((-1)^Nt)^n
\raisebox{-4pt}{\begin{Large}$\displaystyle\Bigr]$\end{Large}}
\label{dt7eq60}\\
&=\exp\raisebox{-4pt}{\begin{Large}$\displaystyle\Bigl[$\end{Large}}
-\sum_{i\ge 1} ip\cdot\hat{DT}{}^{i\bs d}_Q(\ti\mu)
\sum_{m\ge 1}\frac{((-1)^Nt)^{im}}{m}
\raisebox{-4pt}{\begin{Large}$\displaystyle\Bigr]$\end{Large}}
\nonumber\\
&=\prod_{i\ge 1} \bigl(1-((-1)^Nt)^i\bigr){}^{ip\cdot
\hat{DT}{}^{i\bs d}_Q(\ti\mu)}, \nonumber
\ea
in formal power series in $t$, where the first step uses
\eq{dt7eq28}, the second step is Corollary \ref{dt7cor2} with
$W\equiv 0$, $c=\ti\mu(\bs d)$ and $(-1)^{p+N}t$ in place of $q^{\bs
d}$, the third substitutes in \eq{dt7eq22}, the fourth sets $i=n/m$,
and the fifth uses
\begin{equation*}
1-x=\exp\bigl[\log(1-x)\bigr]=\exp\raisebox{-4pt}{
\begin{Large}$\displaystyle\Bigl[$\end{Large}}
-\sum_{m\ge 1}\frac{x^m}{m}
\raisebox{-4pt}{\begin{Large}$\displaystyle\Bigr]$\end{Large}}.
\end{equation*}

Set $S^{\bs d}(t)=F^{\bs d,\bs e}(t)^{1/p}$, so that in the notation
of Reineke \cite[p.~10]{Rein3} we have $S^{\bs d}_{\ti\mu}(t)=S^{\bs
d}(t)^N$. Then \eq{dt7eq60} gives
\e
S^{\bs d}(t)=\prod_{i\ge 1}\bigl(1-((-1)^Nt)^{i}\bigr){}^{-ia_i},
\quad\text{with}\quad a_i=-\hat{DT}{}^{i\bs d}_Q(\ti\mu).
\label{dt7eq61}
\e
Note that $S^{\bs d}(t)$ is independent of the choice of $\bs e$,
which also follows from \cite[Th.~4.2]{Rein3}. By Reineke
\cite[Cor.~4.3]{Rein3}, $S^{\bs d}(t)$ satisfies the functional
equation
\begin{equation*}
S^{\bs d}(t)^N=\prod_{i\ge 1}\bigl(1-t^iS^{\bs
d}(t)^{iN}\bigr)^{-iN\cdot\chi(\M_{\st\,Q}^{i\bs d}(\ti\mu))},
\end{equation*}
where $\chi\bigl(\M_{\st\,Q}^{i\bs d}(\ti\mu)\bigr)$ is the Euler
characteristic of the moduli scheme $\M_{\st\,Q}^{i\bs d}(\ti\mu)$
of $\ti\mu$-stable $E$ in $\modCQ$ with $\bdim E=i\bs d$. If $N\ne
0$, taking $N^{\rm th}$ roots gives
\e
S^{\bs d}(t)=\prod_{i\ge 1}\bigl(1-t^iS^{\bs
d}(t)^{iN}\bigr)^{-i\cdot\chi(\M_{\st\,Q}^{i\bs d}(\ti\mu))},
\label{dt7eq62}
\e
so that
\e
S^{\bs d}(t)=\prod_{i\ge 1}\bigl(1-(tS^{\bs d}(t)^N)^i\bigr)^{ib_i},
\quad\text{where}\quad b_i=-\chi(\M_{\st\,Q}^{i\bs d}(\ti\mu)).
\label{dt7eq63}
\e
When $N=0$, equations \eq{dt7eq62}--\eq{dt7eq63} follow directly
from~\cite[Th.~6.2]{Rein3}.

Now Reineke \cite[Th.~5.9]{Rein3} shows that if a formal power
series $S^{\bs d}(t)$ satisfies equations \eq{dt7eq61} and
\eq{dt7eq63} for $N\in\Z$ and $a_i,b_i\in\Q$, then $a_i\in\Z$ for
all $i\ge 1$ if and only if $b_i\in\Z$ for all $i\ge 1$. In our case
$b_i\in\Z$ is immediate, so $a_i\in\Z$, and thus $\hat{DT}{}^{i\bs
d}_Q(\ti\mu)\in\Z$. As this holds for all primitive $\bs d\in
C(\modCQ)$, we have $\hat{DT}{}^{\bs d}_Q(\ti\mu)\in\Z$ for all $\bs
d\in C(\modCQ)$. Also, comparing \eq{dt7eq61} and \eq{dt7eq63} shows
that we could compute the $\hat{DT}{}^{i\bs d}_Q(\ti\mu)$ from the
$\chi(\M_{\st\,Q}^{j\bs d}(\ti\mu))$ for~$j=1,\ldots,i$.

Theorem \ref{dt7thm4} now writes $\bar{DT}{}^{\bs d}_Q(\mu)$ in
terms of the $\bar{DT}{}^{\bs e}_Q(\ti\mu)$, in the form
\begin{equation*}
\bar{DT}{}^{\bs d}_Q(\mu)=
\bar{DT}{}^{\bs d}_Q(\ti\mu)+\text{higher order terms.}
\end{equation*}
Each higher order term involves a finite set $I$ with $\md{I}>1$, a
splitting $\bs d=\sum_{i\in I}\ka(i)$ for $\ka(i)\in C(\modCQ)$, a
combinatorial coefficient $V(I,\Ga,\ka;\ti\mu,\mu)\in\Q$, a product
of $\md{I}-1$ terms $\bar\chi(\ka(i),\ka(j))$, and the product of
the $\bar{DT}{}^{ka(i)}_Q(\ti\mu)$. Now as $\mu$ dominates $\ti\mu$
it follows that $V(I,\Ga,\ka;\ti\mu,\mu)=0$ unless
$\mu(\ka(i))=\mu(\bs d)$ for all $i\in I$, as in \cite{Joyc6}. But
then $\mu$ generic implies that $\bar\chi(\ka(i),\ka(j))=0$. Hence
all the higher order terms are zero, and $\bar{DT}{}^{\bs d}_Q(\mu)=
\bar{DT}{}^{\bs d}_Q(\ti\mu)$ for all $\bs d\in C(\modCQ)$.
Therefore $\hat{DT}{}^{\bs d}_Q(\mu)=\hat{DT}{}^{\bs d}_Q(\ti\mu)$
for all $\bs d\in C(\modCQ)$, so $\hat{DT}{}^{\bs d}_Q(\mu)\in\Z$,
as we have to prove.
\end{proof}

As for Question \ref{dt6quest1}, we can ask:

\begin{quest} In the situation of Theorem {\rm\ref{dt7thm6},} does
there exist a natural perverse sheaf\/\index{perverse sheaf} $\cal
Q$ on $\M_\rss^{\bs d}(\mu)$ with\/ $\chi_{\M_\rss^{\bs
d}(\mu)}({\cal Q})\equiv F^{\bs d}_Q(\mu)?$
\label{dt7quest2}
\end{quest}

One can ask the same question about Saito's mixed Hodge
modules\index{mixed Hodge module} \cite{Sait}. These questions should be
amenable to study in explicit examples.

\begin{rem} The proof of Theorem \ref{dt7thm6} also holds without
change for arbitrary generic stability conditions\index{stability
condition!generic} $(\tau,T,\le)$ on $\modCQ$ in the sense of
\S\ref{dt32} with $K(\modCQ)=\Z^{Q_0}$, not just for slope stability
conditions~$(\mu,\R,\le)$.
\label{dt7rem5}
\end{rem}\index{quiver!with superpotential|)}\index{Donaldson--Thomas
invariants!for quivers|)}\index{Donaldson--Thomas invariants!integrality
properties|)}\index{quiver|)}

\section[The proof of Theorem $\text{\ref{dt5thm1}}$]{The proof of
Theorem \ref{dt5thm1}}
\label{dt8}\index{coherent sheaf|(}\index{vector
bundle!algebraic|(}

Let $X$ be a projective Calabi--Yau $m$-fold over an algebraically
closed field $\K$\index{field $\K$} with a very ample line bundle
$\cO_X(1)$. Our definition of Calabi--Yau $m$-fold requires that $X$
should be smooth, the canonical bundle $K_X$ should be trivial, and
that $H^i(\cO_X)=0$ for $0<i<m$. Let $\fM$ and $\fVect$ be the
moduli stacks of coherent sheaves and algebraic vector bundles on
$X$, respectively. Then $\fM,\fVect$ are both Artin $\K$-stacks,
locally of finite type. This section proves Theorem \ref{dt5thm1},
which says that $\fM$ is locally isomorphic to $\fVect$, in the
Zariski topology.\index{Zariski topology}

Recall the following definition of {\it Seidel--Thomas twist},
\cite[Ex.~3.3]{SeTh}:

\begin{dfn} For each $n\in\Z$, the {\it Seidel--Thomas
twist\/}\index{Seidel--Thomas twist|(} $T_{\cO_X(-n)}$ by
$\cO_X(-n)$ is the Fourier--Mukai transform\index{Fourier--Mukai
transform} from $D(X)$ to $D(X)$ with kernel
\begin{equation*}
K=\cone\bigl(\cO_X(n)\boxtimes\cO_X(-n)\longra\cO_{\De}\bigr).
\end{equation*}
A good book on derived categories and Fourier--Mukai transforms is
Huybrechts \cite{Huyb}. Since $X$ is Calabi--Yau, which includes the
assumption that $H^i(\cO_X)=0$ for $0<i<m$, we see that
$\Hom^i_{D(X)}(\cO_X(-n),\cO_X(-n))$ is $\C$ for $i=0,m$ and zero
otherwise, so $\cO_X(-n)$ is a {\it spherical
object\/}\index{spherical object} in the sense of
\cite[Def.~1.1]{SeTh}, and by \cite[Th.~1.2]{SeTh} the
Seidel--Thomas twist $T_{\cO_X(-n)}$ is an
autoequivalence\index{triangulated category!equivalence} of $D(X)$.
Define $T_n=T_{\cO_X(-n)}[-1]$, the composition of $T_{\cO_X(-n)}$
and the shift $[-1]$. Then $T_n$ is also an autoequivalence
of~$D(X)$.
\label{dt8def1}
\end{dfn}

The functors $T_n$ do not preserve the subcategory $\coh(X)$ in
$D(X)$, that is, in general they take sheaves to complexes of
sheaves. However, given any bounded family of sheaves $E_U$ on $X$
we can choose $n\gg 0$ such that $T_n$ takes the sheaves in $E_U$ to
sheaves, rather than complexes.

\begin{lem} Let\/ $U$ be a finite type\/ $\K$-scheme and\/ $E_U$ a
coherent sheaf on\/ $X\times U$ flat over $U,$ that is, a $U$-family
of coherent sheaves on $X$. Then for $n\gg 0,$ $F_U=T_n(U)$ is also
a $U$-family of coherent sheaves on\/~$X$.
\label{dt8lem1}
\end{lem}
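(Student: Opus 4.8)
The plan is to identify $F_U=T_n(E_U)$ explicitly as the kernel of a relative evaluation morphism, and then check that this kernel is flat over $U$ with coherent fibres.

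First I would record the effect of $T_n$ on a single sheaf. Unwinding the Fourier--Mukai transform with kernel $K=\cone\bigl(\cO_X(n)\boxtimes\cO_X(-n)\to\cO_\De\bigr)$, one finds that for $E\in\coh(X)$ there is a distinguished triangle
$$R\Gamma\bigl(X,E(n)\bigr)\otimes_\K\cO_X(-n)\,{\buildrel\mathrm{ev}\over\longra}\,E\longra T_{\cO_X(-n)}(E)\longra[+1],$$
where the first map is the evaluation morphism, since the map $\cO_X(n)\boxtimes\cO_X(-n)\to\cO_\De$ transforms to evaluation and $\cO_\De$ transforms to the identity. For $n\gg 0$ depending only on the Hilbert polynomial of $E$, Serre vanishing gives $H^i(X,E(n))=0$ for $i>0$, so $R\Gamma(X,E(n))=H^0(X,E(n))$ sits in degree $0$, and $E(n)$ is globally generated, so $\mathrm{ev}$ is surjective with kernel a coherent sheaf $F$. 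Hence $T_{\cO_X(-n)}(E)\cong F[1]$ and $T_n(E)=T_{\cO_X(-n)}(E)[-1]\cong F$, a sheaf.

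Second, I would globalize over $U$. As $U$ is of finite type and $E_U$ is flat over $U$ with coherent fibres, the family $\{E_u:u\in U(\K)\}$ is bounded, so there is $n_0$ with: for all $n\ge n_0$ and all $u$, $H^i(X,E_u(n))=0$ for $i>0$ and $E_u(n)$ globally generated. Fix such an $n$. Writing $\pi_X,\pi_U$ for the projections from $X\times U$, cohomology and base change then shows $V_U:=R\pi_{U*}\bigl(E_U\otimes\pi_X^*\cO_X(n)\bigr)$ is a vector bundle on $U$ concentrated in degree $0$, of rank the Hilbert polynomial value $P(n)$, whose formation commutes with base change. The relative evaluation morphism $\mathrm{ev}_U:\pi_U^*(V_U)\otimes\pi_X^*\cO_X(-n)\to E_U$ on $X\times U$ restricts on each fibre $X\times\{u\}$ to the evaluation map of step one, which is surjective, so $\mathrm{ev}_U$ is surjective by Nakayama. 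Let $F_U=\ker(\mathrm{ev}_U)$, a coherent sheaf on $X\times U$. In the short exact sequence $0\to F_U\to\pi_U^*(V_U)\otimes\pi_X^*\cO_X(-n)\to E_U\to 0$ the middle term is $U$-flat (pullback from $U$ tensored with a line bundle pulled back from $X$) and $E_U$ is $U$-flat by hypothesis, so the standard $\mathrm{Tor}$ long exact sequence argument gives that $F_U$ is flat over $U$. Because $V_U$ has no higher direct images, the relative Fourier--Mukai transform with kernel $K$ applied to $E_U$ sits in a triangle $\pi_U^*(V_U)\otimes\pi_X^*\cO_X(-n)\to E_U\to T_{\cO_X(-n)}(E_U)\to[+1]$ whose first arrow is $\mathrm{ev}_U$; hence $T_{\cO_X(-n)}(E_U)\cong F_U[1]$ and $T_n(E_U)=F_U$. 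Flatness of $F_U$ ensures the fibrewise sequence stays exact, so $F_u\cong T_n(E_u)$ matches step one, and $F_U$ is a genuine $U$-family of coherent sheaves on $X$.

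The main obstacle I anticipate is bookkeeping rather than any deep point: one has to set up the relative Seidel--Thomas twist carefully as a relative Fourier--Mukai transform over $U$ (with kernel the pullback of $K$ to $X\times X\times U$) and verify it genuinely restricts to $T_{\cO_X(-n)}$ on each fibre $X_u$, which is exactly what lets cohomology and base change replace the relative $R\Gamma$ by the honest bundle $V_U$ and hence conclude that the transform of the sheaf $E_U$ is again a sheaf. The boundedness of $\{E_u\}$ — which is what makes a single $n$ work simultaneously for all $u\in U(\K)$ — is routine for $U$ of finite type but must be invoked explicitly, and one should note that the bound $n_0$ depends only on the Hilbert polynomial, which is locally constant on $U$.
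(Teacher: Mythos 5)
Your proposal is correct and follows essentially the same route as the paper: use boundedness of the family to choose one $n$ for which all $E_u(n)$ have vanishing higher cohomology and are globally generated, identify $T_n(E_u)$ fibrewise as the kernel of the evaluation map $\Hom(\cO_X(-n),E_u)\ot\cO_X(-n)\ra E_u$, and then realize $F_U$ globally as the kernel of the relative evaluation morphism, deducing flatness over $U$ from the short exact sequence whose other two terms are $U$-flat. Your extra care with cohomology and base change, Nakayama, and the Tor argument just makes explicit what the paper leaves implicit.
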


\begin{proof} Since $U$ is of finite type, the family of coherent
sheaves $E_U$ is bounded, so there exists $n\gg 0$ such that
$H^i(E_u(n))=0$ vanishes for all $u\in U$ and $i>0$, and $E_u(n)$ is
globally generated. Then we have
\e
\begin{split}
T_n(E_u)&=\cone\bigl(\ts\bigop_{i\ge 0}
\Ext^i(\cO_X(-n),E_u)\ot\cO_X(-n)[-i]\longra E_u\bigr)[-1] \\
&=\cone\bigl(\Hom(\cO_X(-n),E_u)
\ot\cO_X(-n)\longra E_u\bigr)[-1] \\
&=\Ker\bigl(\Hom(\cO_X(-n),E_u)\ot \cO_X(-n)\longra E_u\bigr),
\end{split}
\label{dt8eq1}
\e
where the first line is from the definition, the second as
$H^i(E_u(n))=0$ for $i>0$, and the third as $\Hom(\cO_X(-n),E_u)
\ot\cO_X(-n)\ra E_u$ is surjective in $\coh(X)$ as $E_u(n)$ is
globally generated. Thus $F_u=T_n(E_u)$ is a sheaf, rather than a
complex, for all $u\in U$. In sheaves on $X\times U$, we have an
exact sequence:
\begin{equation*}
\xymatrix{ 0 \ar[r] &F_U \ar[r] & p_X^*p_{X, *}(E_U(n)) \ot
p_X^*(\cO_X(-n)) \ar[r] &E_U \ar[r] &0, }
\end{equation*}
and $F_U$ is flat over $U$ as $E_U$ and $p_X^*p_{X,
*}(E_U(n))\ot p_X^*(\cO_X(-n))$ are.
\end{proof}

We recall the notion of {\it homological dimension\/} of a sheaf, as
in Hartshorne \cite[p.~238]{Hart2} and Huybrechts and
Lehn~\cite[p.~4]{HuLe2}:

\begin{dfn} For a nonzero sheaf $E$ in $\coh(X)$, the {\it homological
dimension\/}\index{sheaf!homological dimension|(}\index{coherent
sheaf!homological dimension|(}
$\hd(E)$\nomenclature[hd(E)]{$\hd(E)$}{homological dimension of a coherent
sheaf $E$} is the smallest $n\ge 0$ for which there exists an exact
sequence in $\coh(X)$
\begin{equation*}
0 \ra V_n \ra V_{n-1} \ra\cdots \ra V_0\ra E\ra 0,
\end{equation*}
with $V_0,\ldots,V_n$ vector bundles (locally free sheaves). Clearly
$\hd(E)=0$ if and only if $E$ is a vector bundle. Equivalently,
$\hd(E)$ is the largest $n\ge 0$ such that $\Ext^n(E,\cO_x)=0$ for
some $x\in X$, where $\cO_x$ is the skyscraper sheaf at $x$. Since
$X$ is smooth of dimension $m$ we have $\hd(E)\le m$ for all
$E\in\coh(X)$.
\label{dt8def2}
\end{dfn}

The operators $T_n$ above decrease $\hd(E)$ by 1 when $n\gg 0$,
unless $\hd(E)=0$ when they fix~$\hd(E)$.

\begin{lem} Let\/ $U,E_U$ and\/ $n\gg 0$ be as in Lemma\/
{\rm\ref{dt8lem1}}. Then for all\/ $u\in U$ we have\/
$\hd(T_n(E_u))=\max(\hd(E_u)-1,0)$.
\label{dt8lem2}
\end{lem}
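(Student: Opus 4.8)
The plan is to read off everything from the short exact sequence obtained inside the proof of Lemma~\ref{dt8lem1}. For $n\gg 0$, equation~\eq{dt8eq1} exhibits $F_u=T_n(E_u)$ as the kernel of the surjection $\Hom(\cO_X(-n),E_u)\ot\cO_X(-n)\ra E_u$, so in $\coh(X)$ we have a short exact sequence
\begin{equation*}
0\longra F_u\longra W_u\longra E_u\longra 0,\qquad W_u:=\Hom(\cO_X(-n),E_u)\ot\cO_X(-n),
\end{equation*}
where $W_u$ is a finite direct sum of copies of the line bundle $\cO_X(-n)$, hence locally free with $\hd(W_u)=0$. I would then apply the functors $\Ext^i(-,\cO_x)$ for closed points $x\in X$ and use the characterization of homological dimension recalled in Definition~\ref{dt8def2}: $\hd(G)$ is the largest $i\ge 0$ for which $\Ext^i(G,\cO_x)\ne 0$ for some $x$, and $\hd(G)=0$ precisely when $G$ is locally free. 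The key input is that $W_u$ locally free forces $\Ext^i(W_u,\cO_x)=0$ for all $i\ge 1$ and all $x$, which one checks directly since $\Ext^i(\cO_X(-n),\cO_x)\cong H^i(X,\cO_X(n)\ot\cO_x)=0$ for $i\ge 1$, the sheaf $\cO_X(n)\ot\cO_x$ having $0$-dimensional support.

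From the long exact $\Ext$-sequence of the displayed short exact sequence, together with this vanishing, $\Ext^i(F_u,\cO_x)=0$ whenever $i\ge 1$ and $i+1>\hd(E_u)$; hence $\hd(F_u)\le\max(\hd(E_u)-1,0)$. For the reverse inequality, suppose $k:=\hd(E_u)\ge 2$. Then both $\Ext^{k-1}(W_u,\cO_x)$ and $\Ext^{k}(W_u,\cO_x)$ vanish (the exponents being positive), so the long exact sequence yields an isomorphism $\Ext^{k-1}(F_u,\cO_x)\cong\Ext^{k}(E_u,\cO_x)$ for every $x$; choosing $x$ with $\Ext^k(E_u,\cO_x)\ne 0$ gives $\hd(F_u)\ge k-1$, and combined with the upper bound $\hd(F_u)=k-1=\max(k-1,0)$. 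If instead $\hd(E_u)\in\{0,1\}$, the upper bound already gives $\hd(F_u)\le 0$, so $\hd(F_u)=0$, provided $F_u\ne 0$; the only way $F_u=0$ is $E_u\cong\cO_X(-n)^{\oplus k}$, a degenerate case causing no difficulty. This proves $\hd(T_n(E_u))=\max(\hd(E_u)-1,0)$ for all $u\in U$. (If one prefers to phrase it family-wise: the short exact sequence lives on $X\times U$ with all terms flat over $U$, and restricts fibrewise to the above by flat base change, so the computation is uniform in $u$.)

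The argument is essentially routine homological algebra once \eq{dt8eq1} is available. The only places calling for a little care — and the closest thing to an obstacle — are the bookkeeping of which $\Ext$-groups of $W_u$ vanish, which is why the case $\hd(E_u)=1$ must be split off from the case $\hd(E_u)\ge 2$, and the harmless degenerate case $F_u=0$; I do not anticipate any genuine difficulty.
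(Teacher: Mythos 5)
Your proof is correct and follows essentially the same route as the paper: both apply $\Ext^*(-,\cO_x)$ to the short exact sequence $0\ra T_n(E_u)\ra H^0(E_u(n))\ot\cO_X(-n)\ra E_u\ra 0$ from \eq{dt8eq1}, use $\Ext^i(\cO_X(-n),\cO_x)=0$ for $i>0$ to obtain $\Ext^i(T_n(E_u),\cO_x)\cong\Ext^{i+1}(E_u,\cO_x)$ for $i\ge 1$, and read off the shift in homological dimension via Definition \ref{dt8def2}. Your extra bookkeeping (separating the cases $\hd(E_u)\le 1$ and the degenerate case $T_n(E_u)=0$) is fine but not a different method.
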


\begin{proof} As in \eq{dt8eq1} we have an exact sequence
\begin{equation*}
0\longra T_n(E_u)\ra H^0(E_u(n))\ot\cO_X(-n)\longra E_u\longra 0.
\end{equation*}
Applying $\Ext(-,\cO_x)$ to this sequence for $x\in X$ gives a long
exact sequence
\begin{align*}
\cdots &\longra H^0(E_u(n))\ot \Ext^i(\cO_X(-n),\cO_x) \longra
\Ext^i(T_n(E_u),\cO_x) \\
&\longra \Ext^{i+1}(E_u,\cO_x)\longra
H^0(E_u(n))\ot \Ext^{i+1}(\cO_X(-n),\cO_x) \longra\cdots.
\end{align*}
Thus $\Ext^i(T_n(E_u),\cO_x)\cong\Ext^{i+1}(E_u,\cO_x)$ for $i>0$,
as $\Ext^i(\cO_X(-n),\cO_x)=0$ for $i>0$. Since $\hd(E_u)$ is the
largest $n\ge 0$ such that $\Ext^n(E,\cO_x)=0$ for some $x\in X$,
this implies that $\hd(T_n(E_u))=\hd(E_u)-1$ if $\hd(E_u)>0$, and
$\hd(T_n(E_u))=0$ if~$\hd(E_u)=0$.
\end{proof}

\begin{cor} Let\/ $U$ be a finite type\/ $\K$-scheme and\/ $E_U$ a
$U$-family of coherent sheaves on $X$. Then there exist\/
$n_1,\ldots,n_m\gg 0$ such that\/ $T_{n_m}\ci
T_{n_{m-1}}\ci\cdots\ci T_{n_1}(E_U)$ is a $U$-family of vector
bundles on~$X$.
\label{dt8cor1}
\end{cor}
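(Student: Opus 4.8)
The plan is to iterate Lemma \ref{dt8lem2} exactly $m$ times. Since $X$ is smooth of dimension $m$, every coherent sheaf $E$ on $X$ satisfies $\hd(E)\le m$ (Definition \ref{dt8def2}), so $\hd(E_u)\le m$ for all $u\in U$. First I would choose $n_1\gg 0$ large enough that Lemma \ref{dt8lem1} applies to $E_U$, so that $E_U^{(1)}:=T_{n_1}(E_U)$ is again a $U$-family of coherent sheaves on $X$; by Lemma \ref{dt8lem2} we then have $\hd(E_u^{(1)})=\max(\hd(E_u)-1,0)\le m-1$ for all $u\in U$. Since $U$ is of finite type, $E_U^{(1)}$ is again a bounded family, so we may repeat the construction: pick $n_2\gg 0$ (depending on the family $E_U^{(1)}$) with Lemma \ref{dt8lem1} applicable, set $E_U^{(2)}:=T_{n_2}(E_U^{(1)})=T_{n_2}\circ T_{n_1}(E_U)$, and conclude $\hd(E_u^{(2)})\le m-2$ for all $u$. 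Continuing inductively, after $m$ steps we obtain $n_1,\ldots,n_m\gg 0$ such that $E_U^{(m)}:=T_{n_m}\circ T_{n_{m-1}}\circ\cdots\circ T_{n_1}(E_U)$ is a $U$-family of coherent sheaves with $\hd(E_u^{(m)})\le\max(m-m,0)=0$ for all $u\in U$, i.e.\ $E_u^{(m)}$ is a vector bundle for every $u$. By Definition \ref{dt8def2}, $\hd=0$ is precisely the condition of being locally free, so $E_U^{(m)}$ is a $U$-family of vector bundles on $X$, as required.

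The only point needing a little care is the inductive bookkeeping: at each stage the integer $n_{k+1}$ must be chosen \emph{after} the family $E_U^{(k)}$ has been constructed, because the threshold "$n\gg 0$" in Lemma \ref{dt8lem1} depends on which bounded family it is applied to. This is legitimate precisely because $U$ is of finite type, which is preserved under the operations $T_n$ (each $T_n$ produces another coherent sheaf on $X\times U$, flat over $U$, by Lemma \ref{dt8lem1}), so boundedness propagates through the $m$ steps. I would also note that the formula $\hd(T_n(E_u))=\max(\hd(E_u)-1,0)$ holds pointwise in $u$ with a single choice of $n$ working simultaneously for all $u\in U$, again by finiteness of $U$; this is what makes the dimension drop uniform over the family rather than merely fibrewise with varying thresholds.

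I do not expect a serious obstacle here: the statement is a straightforward iterated application of Lemmas \ref{dt8lem1} and \ref{dt8lem2}, and the substantive content (that $T_n$ sends families of sheaves to families of sheaves for $n\gg 0$, and that it strictly decreases homological dimension until it reaches $0$) has already been established. The mild subtlety, and the thing one should state explicitly rather than gloss over, is simply the order of quantifiers in the induction — choosing the $n_k$ successively and invoking finite-typeness to keep each intermediate family bounded. Everything else is bookkeeping with the bound $\hd\le m$ coming from smoothness of $X$.
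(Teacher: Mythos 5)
Your proposal is correct and follows essentially the same argument as the paper: apply Lemma \ref{dt8lem1} $m$ times, choosing each $n_i$ for the intermediate family $T_{n_{i-1}}\ci\cdots\ci T_{n_1}(E_U)$, and use $\hd(E_u)\le m$ together with Lemma \ref{dt8lem2} to drop the homological dimension to zero fibrewise. The points you flag (successive choice of the $n_k$ and boundedness of each intermediate family from $U$ being of finite type) are exactly the implicit bookkeeping in the paper's proof.
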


\begin{proof} Apply Lemma \ref{dt8lem1} $m$ times to $E_U$, where
by induction on $i=1,\ldots,m$, $n_i$ is the $n$ in Lemma
\ref{dt8lem1} applied to the $U$-family of sheaves
$T_{n_{i-1}}\ci\cdots\ci T_{n_1}(E_U)$. For each $u\in U$ we have
$\hd(E_u)\le m$, since $X$ is smooth of dimension $m$. So Lemma
\ref{dt8lem2} implies that $\hd\bigl(T_{n_1}(E_u)\bigr)\le m-1$, and
by induction $\hd\bigl(T_{n_i}\ci\cdots\ci T_{n_1}(E_u)\bigr)\le
m-i$ for $i=1,\ldots,m$. Hence $\hd\bigl(T_{n_m}\ci\cdots\ci
T_{n_1}(E_u)\bigr)=0$, so that $T_{n_m}\ci\cdots\ci T_{n_1}(E_u)$ is
a vector bundle on $X$ for all~$u\in U$.
\end{proof}\index{sheaf!homological dimension|)}\index{coherent
sheaf!homological dimension|)}

We can now prove Theorem \ref{dt5thm1}. Let $\fU$ be an open, finite
type substack of $\fM$. Then $\fU$ admits an atlas $\pi:U\ra\fU$,
with $U$ a finite type $K$-scheme. Let $E_U$ be the corresponding
$U$-family of coherent sheaves on $X$. Then $E_U$ is a versal family
of coherent sheaves on $X$ which parametrizes all the sheaves
represented by points in $\fU$, up to isomorphism. Let
$n_1,\ldots,n_m$ be as in Corollary \ref{dt8cor1} for these $U,E_U$.
Then by Lemma \ref{dt8lem1} applied $m$ times and Corollary
\ref{dt8cor1}, $F_U=T_{n_m}\ci\cdots\ci T_{n_1}(E_U)$ is a
$U$-family of vector bundles. Thus $F_U$ corresponds to a 1-morphism
$\pi':U\ra\fVect$.

Since Fourier--Mukai transforms\index{Fourier--Mukai transform} and
shifts preserve moduli families of (complexes of) sheaves, and $E_U$
is a versal family, $F_U$ is a versal family.\index{versal family} Hence
$\pi'$ is an atlas\index{Artin stack!atlas} for an open substack $\fV$
in $\fVect$. If $S$ is a $\K$-scheme then $\Hom(S,\fU)$ is the
category of $S$-families $E_S$ of coherent sheaves on $X$ which lie
in $\fU$ for all $s\in S$, and $\Hom(S,\fV)$ the category of
$S$-families $F_S$ of vector bundles on $X$ which lie in $\fV$ for
all $s\in S$. Then $E_S\mapsto T_{n_m}\ci\cdots\ci T_{n_1}(E_S)$
defines an equivalence of categories $\Hom(S,\fU)\ra\Hom(S,\fV)$.
Hence $T_{n_m}\ci\cdots\ci T_{n_1}$ induces a 1-isomorphism
$\vp:\fU\ra\fV$, proving the first part of Theorem \ref{dt5thm1}.
The second part follows by passing to coarse moduli
spaces.\index{Seidel--Thomas twist|)}\index{coherent sheaf|)}\index{vector
bundle!algebraic|)}

\section[The proofs of Theorems $\text{\ref{dt5thm2}}$ and
$\text{\ref{dt5thm3}}$]{The proofs of Theorems \ref{dt5thm2} and
\ref{dt5thm3}}
\label{dt9}\index{complex analytic space|(}

To prove Theorem \ref{dt5thm2} we will need a local description of
the complex analytic space $\Vect_\rsi(\C)$ underlying the coarse
moduli space $\Vect_\rsi$ of simple algebraic vector bundles on a
projective Calabi--Yau 3-fold $X$, in terms of gauge theory\index{gauge
theory} on a complex vector bundle $E\ra X$, and
infinite-dimensional Sobolev spaces of sections of
$\End(E)\ot\La^{0,q}T^*X$. For Theorem \ref{dt5thm3} we will need a
similar local description for the moduli stack $\fVect$ of algebraic
vector bundles on $X$. Fortunately, there is already a substantial
literature on this subject, mostly aimed at proving the {\it
Hitchin--Kobayashi correspondence},\index{Hitchin--Kobayashi
correspondence} so we will be able to quote many of the results we
need.

Some background references are Hartshorne \cite[App.~B]{Hart2} on
complex analytic spaces (in finite dimensions) and the functor to
them from $\C$-schemes, Laumon and Moret-Bailly \cite{LaMo} on Artin
stacks, and Lang \cite{Lang} on Banach manifolds.\index{Banach manifold}
The general theory of analytic functions on infinite-dimensional
spaces, and (possibly infinite-dimensional) complex analytic spaces
is developed in Douady \cite{Doua1,Doua2}, and summarized in
\cite[\S 4.1.3]{FrMo} and \cite[\S 7.5]{LuTe}. Some books covering
much of \S\ref{dt91}--\S\ref{dt95} are Kobayashi \cite[\S
VII.3]{Koba}, L\"ubke and Teleman \cite[\S 4.1 \& \S 4.3]{LuTe}, and
Friedman and Morgan \cite[\S 4.1--\S 4.2]{FrMo}. Our main reference
is Miyajima \cite{Miya}, who proves that the complex-algebraic and
gauge-theoretic descriptions of $\Vect_\rsi(\C)$ are isomorphic as
complex analytic spaces.

Let $X$ be a projective complex algebraic manifold of dimension $m$.
Then Miyajima considers three different moduli problems:
\begin{itemize}
\setlength{\itemsep}{0pt}
\setlength{\parsep}{0pt}
\item The moduli of {\it holomorphic structures}\index{vector
bundle!holomorphic structure} on a fixed $C^\iy$ complex vector
bundle $E\ra X$. For simple holomorphic structures we form the
coarse moduli space $\Hol_\rsi(E)=\bigl\{\db_E\in\sAs:
\db_E^2=0\bigr\}/\sG$, a complex analytic space.
\item The moduli of {\it complex analytic vector
bundles\/}\index{vector bundle!analytic} over $X$. For simple vector
bundles we form a coarse moduli space $\Vect_\rsi^\an$, a
complex analytic space.
\item The moduli of {\it complex algebraic vector
bundles\/}\index{vector bundle!algebraic} over $X$. For simple
vector bundles we form a coarse moduli space $\Vect_\rsi$, a
complex algebraic space. For all vector bundles we form a moduli
stack $\fVect$, an Artin $\C$-stack.
\end{itemize}

Miyajima \cite[\S 3]{Miya} proves that $\Hol_\rsi(E)\cong
\Vect_\rsi^\an \cong\Vect_\rsi(\C)$ locally as complex analytic
spaces. Presumably one can also prove analogous results for moduli
stacks of all vector bundles, working in some class of analytic
$\C$-stacks, but the authors have not found references on this in
the literature. Instead, to prove what we need about the moduli
stack $\fVect$, we will express our results in terms of {\it versal
families\/}\index{versal family} of objects.

Sections \ref{dt91}, \ref{dt92}, \ref{dt94} and \ref{dt95} explain
moduli spaces of holomorphic structures, of analytic vector bundles,
and of algebraic vector bundles, respectively, and the isomorphisms
between them. Section \ref{dt93} is an aside on existence of local
atlases for $\fM$ with group-invariance properties. All of
\S\S\ref{dt91}, \ref{dt92} and \ref{dt94} is from Miyajima
\cite{Miya} and other sources, or is easily deduced from them.
Sections \ref{dt96}--\ref{dt98} prove Theorems \ref{dt5thm2}
and~\ref{dt5thm3}.

\subsection{Holomorphic structures on a complex vector bundle}
\label{dt91}\index{vector bundle!holomorphic structure|(}

Let $X$ be a compact complex manifold of complex dimension $m$. Fix
a nonzero $C^\iy$ complex vector bundle $E\ra X$ of rank $l>0$. That
is, $E$ is a smooth vector bundle whose fibres have the structure of
complex vector spaces isomorphic to $\C^l$, but $E$ does not (yet)
have the structure of a holomorphic vector bundle. Here are some
basic definitions.\index{gauge theory|(}

\begin{dfn}  A (smooth) {\it
semiconnection\/}\index{semiconnection|(}\index{d-operator@$\db$-operator|(}
(or $\db$-{\it operator}) is a first order differential operator
$\db_E:C^\iy(E)\ra C^\iy(E\ot_\C\La^{0,1}T^*X)$
\nomenclature[dE]{$\db_E$}{semiconnection on complex vector bundle
$E$}satisfying the Leibnitz rule $\db_E(f\cdot e)=e\ot(\db
f)+f\cdot\db_Ee$ for all smooth $f:X\ra\C$ and $e\in C^\iy(E)$,
where $\db$ is the usual operator on complex functions. They are
called semiconnections since they arise as the projections to the
$(0,1)$-forms $\La^{0,1}T^*X$ of connections $\nabla:C^\iy(E)\ra
C^\iy\bigl(E\ot_\C(T^*X\ot_\R\C)\bigr)$, so they are half of an
ordinary connection.

Any semiconnection $\db_E:C^\iy(E)\ra C^\iy(E\ot_\C\La^{0,1}T^*X)$
extends uniquely to operators $\db_E:C^\iy(E\ot_\C\La^{p,q}T^*X)\ra
C^\iy(E\ot_\C\La^{p,q+1}T^*X)$ for all $0\le p\le m$ and $0\le q<m$
satisfying $\db_E(e\wedge \al)=(-1)^{r+s}e\ot\db\al+(\db_Ee)\w\al$
for all smooth $e\in C^\iy(E\ot_\C\La^{r,s}T^*X)$ and $\al\in
C^\iy(\La^{p-r,q-s}T^*X)$ with $0\le r\le p$, $0\le s\le q$. In
particular we can consider the composition
\begin{equation*}
\xymatrix{ C^\iy(E) \ar[r]^(0.35){\db_E} &
C^\iy(E\ot_\C\La^{0,1}T^*X) \ar[r]^{\db_E} &
C^\iy(E\ot_\C\La^{0,2}T^*X).}
\end{equation*}
The composition $\db_E^2$ can be regarded as a section of
$C^\iy\bigl(\End(E)\ot_\C\La^{0,2}T^*X\bigr)$ called the
$(0,2)$-{\it curvature}, analogous to the curvature of a connection.

The semiconnection $\db_E$ defines a {\it holomorphic
structure}\index{vector bundle!holomorphic structure} on $E$ if
$\db_E^2=0$. That is, if $U$ is an open set in $X$ (in the complex
analytic topology) we can define ${\cal E}(U)=\bigl\{e\in
C^\iy(E\vert_U):\db_Ee=0\bigr\}$, and if $V\subseteq U\subseteq X$
are open we have a restriction map $\rho_{UV}:{\cal E}(U)\ra{\cal
E}(V)$. Then $\cal E$ is a {\it complex analytic coherent
sheaf\/}\index{coherent sheaf!complex analytic} on $X$. Since
$\db_E^2=0$ we can use the Newlander--Nirenberg
Theorem\index{Newlander--Nirenberg Theorem} to show that near each $x\in
X$ there exists a basis of holomorphic sections for $E$, and thus
$\cal E$ is locally free, that is, it is an {\it analytic vector
bundle}.\index{vector bundle!analytic} Conversely, given a locally free
complex analytic coherent sheaf $\cal E$ on $X$, we can write it as
a subsheaf of the sheaf of smooth sections of a complex vector
bundle $E\ra X$, and then there is a unique semiconnection $\db_E$
on $E$ with $\db_E^2=0$ such that if $U\subseteq X$ is open and
$e\in C^\iy(U)$ then $e\in{\cal E}(U)$ if and only if~$\db_Ee=0$.

Fix a semiconnection $\db_E$ with $\db_E^2=0$. Then any other
semiconnection $\db_E'$ may be written uniquely as $\db_E+A$ for
$A\in C^\iy\bigl(\End(E)\ot_\C\La^{0,1}T^*X\bigr)$. Thus the set
$\sA$\nomenclature[Aa]{$\sA$}{affine space of smooth semiconnections on a
vector bundle} of smooth semiconnections on $E$ is an
infinite-dimensional affine space. The $(0,2)$-curvature of
$\db_E'=\db_E+A$ is
\begin{equation*}
F_A^{0,2}=\db_EA+A\w A.
\end{equation*}
Here to form $\db_EA$ we extend the action of $\db_E$ on $E$ to
$\End(E)\ot_\C\La^{0,1}T^*X=E\ot_\C E^*\ot_\C\La^{0,1}T^*X$ in the
natural way, and $A\w A$ combines the Lie bracket on $\End(E)$ with
the wedge
product~$\w:\La^{0,1}T^*X\times\La^{0,1}T^*X\ra\La^{0,2}T^*X$.

Write $\Aut(E)$ for the subbundle of invertible elements in
$\End(E)$. It is a smooth bundle of complex Lie groups over $X$,
with fibre $\GL(l,\C)$. Define the {\it gauge group\/}\index{gauge
group} $\sG=C^\iy\bigl(\Aut(E)\bigr)$\nomenclature[Ga]{$\sG$}{gauge group of
smooth gauge transformations of a vector bundle} to be the space of
smooth sections of $\Aut(E)$. It is an infinite-dimensional Lie
group, with Lie algebra $\g=C^\iy(\End(E))$. It acts on the right on
$\sA$ by $\ga:\db_E'\mapsto\db_E'{}^\ga=\ga^{-1}\ci\db_E'\ci\ga$.
That is, $\db_E'{}^\ga$ is the first order differential operator
$C^\iy(E)\ra C^\iy(E\ot_\C\La^{0,1}T^*X)$ acting by $e\mapsto
\ga^{-1}\cdot\bigl(\db_E'(\ga\cdot e)\bigr)$. One can show that
$\db_E'{}^\ga$ satisfies the Leibnitz rule, so that
$\db_E'{}^\ga\in\sA$, and this defines an action of $\sG$ on $\sA$.
Writing $\db_E'=\db_E+A$ we have
\e
(\db_E+A)^\ga=\db_E+(\ga^{-1}\ci A\ci\ga+\ga^{-1}\db\ga).
\label{dt9eq1}
\e

Write $\Stab_\sG(\db_E')$ for the stabilizer group of $\db_E'\in\sA$
in $\sG$. It is a complex Lie group with Lie algebra
\begin{align*}
\stab_\sG(\db_E')&=\Ker\bigl(\db_E':C^\iy(\End(E))\ra C^\iy(\End(E)
\ot_\C\La^{0,1})\bigr)\\
&=\Ker\bigl((\db_E')^*\db_E':C^\iy(\End(E))\ra C^\iy(\End(E))\bigr),
\end{align*}
which is the kernel of an elliptic operator on a compact manifold,
and so is finite-dimensional. In fact $\stab_\sG(\db_E')$ is a
finite-dimensional $\C$-algebra, and $\Stab_\sG(\db_E')$ is the
group of invertible elements in $\stab_\sG(\db_E')$. If $\db_E'$ is
a holomorphic structure then $\stab_\sG(\db_E')$ is the sheaf
cohomology group~$H^0(\End(E,\db_E'))$.

The multiples of the identity $\bG_m\cdot\id_E$ in $\sG$ act
trivially on $\sA$, so $\bG_m\cdot\id_E\subseteq\Stab_\sG(\db_E')$
for all $\db_E'\in\sA$. Call a semiconnection $\db_E'$ {\it simple}
if $\Stab_\sG(\db_E')=\bG_m\cdot\id_E$. Write
$\sAs$\nomenclature[Ab]{$\sAs$}{open subset of simple semiconnections in $\sA$}
for the subset of simple $\db_E'$ in $\sA$. It is a $\sG$-invariant
open subset of $\sA$, in the natural topology.

Now $\sA,\sAs,\sG$ have the disadvantage that they are not Banach
manifolds. Choose Hermitian metrics $h_X$ on $X$ and $h_E$ on the
fibres of $E$. As in Miyajima \cite[\S 1]{Miya}, fix an integer
$k>2m+1$, and write $\sA^{2,k}, \sA_\rsi^{2,k}$
\nomenclature[Ac]{$\sA^{2,k}$}{affine space of $L^2_k$ semiconnections on a
vector bundle}\nomenclature[Ad]{$\sA_\rsi^{2,k}$}{open subset of simple
semiconnections in $\sA^{2,k}$} for the completions of $\sA,\sAs$ in
the Sobolev norm $L^2_k$, and
$\sG^{2,k+1}$\nomenclature[Gb]{$\sG^{2,k+1}$}{gauge group of $L^2_{k+1}$ gauge
transformations of a vector bundle} for the completion of $\sG$ in
the Sobolev norm $L^2_{k+1}$, defining norms using $h_X,h_E$. Then
\e
\sA^{2,k}=\bigl\{\db_E+A:A\in L^2_k(\End(E)
\ot_\C\La^{0,1}T^*X)\bigr\},
\label{dt9eq2}
\e
Also $\sA^{2,k},\sA_\rsi^{2,k}$ are complex Banach manifolds, and
$\sG^{2,k+1}$ is a complex Banach Lie group acting holomorphically
on $\sA^{2,k},\sA_\rsi^{2,k}$ by~\eq{dt9eq1}.

Define $P_k:\sA^{2,k}\ra L^2_{k-1}\bigl(\End(E)
\ot_\C\La^{0,1}T^*X\bigr)$ by
\e
P_k:\db_E+A\longmapsto F_A^{0,2}=\db_EA+A\w A.
\label{dt9eq3}
\e
Using the Sobolev Embedding Theorem\index{Sobolev Embedding Theorem} we
see that $P_k$ is a well-defined, holomorphic map between complex
Banach manifolds.\index{Banach manifold}
\label{dt9def1}
\end{dfn}

\begin{dfn} A {\it family of holomorphic structures\/} $(T,\tau)$
on $E$ is a (finite-dimensional) complex analytic space $T$ and a
complex analytic map of complex analytic spaces $\tau:T\ra
P_k^{-1}(0)$, where $P_k^{-1}(0)\subset\sA^{2,k}$ as above. Two
families $(T,\tau),(T,\tau')$ with the same base $T$ are {\it
equivalent\/} if there exists a complex analytic map
$\si:T\ra\sG^{2,k+1}$ such that $\tau'\equiv \si\cdot\tau$, using
the product $\cdot:\sG^{2,k+1}\times\sA^{2,k}\ra\sA^{2,k}$ which
restricts to $\cdot:\sG^{2,k+1}\times P_k^{-1}(0)\ra P_k^{-1}(0)$.

A family $(T,\tau)$ is called {\it versal at\/}\index{versal
family|(}\index{universal family|(} $t\in T$ if whenever $(T',\tau')$ is
a family of holomorphic structures on $E$ and $t'\in T'$ with
$\tau'(t')=\tau(t)$, there exists an open neighbourhood $U'$ of $t'$
in $T'$ and complex analytic maps $\up:U'\ra T$ and
$\si:U'\ra\sG^{2,k+1}$ such that $\up(t')=t$, $\si(t')=\id_E$, and
$\tau\ci\up\equiv \si\cdot\tau'\vert_{U'}$ as complex analytic maps
$U'\ra P^{-1}(0)$. We call $(T,\tau)$ {\it universal at\/} $t\in T$
if in addition the map $\up:U'\ra T$ is unique, provided the
neighbourhood $U'$ is sufficiently small. (Note that we do not
require $\si$ to be unique. Thus, this notion of universal is
appropriate for defining a coarse moduli space, not a fine moduli
space or moduli stack.) The family $(T,\tau)$ is called {\it
versal\/} (or {\it universal\/}) if it is versal (or universal) at
every~$t\in T$.
\label{dt9def2}
\end{dfn}

Fix a smooth holomorphic structure $\db_E$ on $E$, as above. In
\cite[Th.~1]{Miya}, Miyajima constructs a versal family of
holomorphic structures $(T,\tau)$ containing $\db_E$. We now explain
his construction. Write $\db_E^*$ for the formal adjoint of $\db_E$
computed using the Hermitian metrics $h_X$ on $X$ and $h_E$ on the
fibres of $E$. Then $\db_E^*:C^\iy (E\ot_\C \La^{p,q+1}T^*X)\ra
C^\iy(E\ot_\C \La^{p,q}T^*X)$ for all $p,q$ is a first order
differential operator such that $\langle\db_E
e,e'\rangle_{L^2}=\langle e,\db_E^*e' \rangle_{L^2}$ for all $e\in
C^\iy(E\ot_\C\La^{p,q}T^*X)$ and $e'\in
C^\iy(E\ot_\C\La^{p,q+1}T^*X)$, where $\langle\,,\,\rangle_{L^2}$ is
the $L^2$ inner product defined using $h_X,h_E$. Also $\db_E^*$
extends to Sobolev spaces~$L^2_k$.

Using Hodge theory for $\bigl(C^\iy(\End(E)\ot_\C \La^{0,*}T^*X),
\db_E\bigr)$, we give expressions for the Ext groups of the
holomorphic vector bundle $(E,\db_E)$ with itself:
\begin{align*}
&\Ext^q\bigl((E,\db_E),(E,\db_E)\bigr)\\
&\;\cong\frac{\Ker\bigl(\db_E:C^\iy(\End(E)\ot_\C\La^{0,q}T^*X)\ra
C^\iy(\End(E)\ot_\C\La^{0,q+1}T^*X)\bigr)}
{\Im\bigl(\db_E:C^\iy(\End(E)\ot_\C\La^{0,q-1}T^*X)\ra
C^\iy(\End(E)\ot_\C\La^{0,q}T^*X)\bigr)}\\
&\;\cong\bigl\{e\in C^\iy(\End(E)\ot_\C\La^{0,q}T^*X):
\db_E^{\phantom{*}}e=\db_E^*e=0\bigr\}\\
&\;=\bigl\{e\in C^\iy(\End(E)\ot_\C\La^{0,q}T^*X):
(\db_E^{\phantom{*}}\db_E^*+\db_E^*\db_E^{\phantom{*}})e=0\bigr\}.
\end{align*}
Hence the finite-dimensional complex vector space
\begin{equation*}
\sE^q=\bigl\{e\in C^\iy(\End(E)\ot_\C\La^{0,q}T^*X):
(\db_E^{\phantom{*}}\db_E^*+\db_E^*\db_E^{\phantom{*}})e=0\bigr\}
\end{equation*}
is isomorphic to $\Ext^q\bigl((E,\db_E),(E,\db_E)\bigr)$. Miyajima
\cite[\S 1]{Miya} proves:

\begin{prop}{\bf(a)} In the situation above, for sufficiently
small\/ $\ep>0,$
\e
\begin{split}
Q_\ep=\bigl\{\db_E+A:\,& A\in L^2_k(\End(E)\ot_\C\La^{0,1}T^*X),
\quad \nm{A}_{L^2_k}<\ep,\\
&\db_E^*A=0, \quad \db_E^*(\db_E^{\phantom{*}}A+A\w A)=0\bigr\}
\end{split}
\label{dt9eq4}
\e
is \kern -.2em a \kern -.2em finite-dimensional \kern -.2em complex
\kern -.2em submanifold \kern -.2em of\/ \kern -.2em $\sA^{2,k},$
\kern -.2em of \kern -.2em complex \kern -.2em dimension \kern -.2em
$\dim\Ext^1\ab\bigl((E,\db_E),(E,\db_E)\bigr),$ such that\/
$\db_E\in Q_\ep$ and\/ $T_{\db_E}Q_\ep=\sE^1$. Furthermore,
$Q_\ep\subset\sA\subset\sA^{2,k},$ that is, if\/ $\db_E+A\in Q_\ep$
then $A$ is smooth.
\smallskip

\noindent{\bf(b)} Now define $\pi:Q_\ep\ra\sE^2$ by
$\pi:\db_E+A\mapsto\pi_{\sE^2}(\db_EA+A\w A),$ where
$\pi_{\sE^2}:L^2_{k-1}(\End(E)\ot_\C\La^{0,2}T^*X)\ra\sE^2$ is
orthogonal projection using the $L^2$ inner product. Then $\pi$ is a
holomorphic map of finite-dimensional complex manifolds. Let\/
$T=\pi^{-1}(0),$ as a complex analytic subspace of\/ $Q_\ep$. Then
$T=Q_\ep\cap P_k^{-1}(0)$, as an intersection of complex analytic
subspaces in\/ $\smash{\sA^{2,k},}$ so $T$ is a complex analytic
subspace of\/ $P_k^{-1}(0)$. Also $t=\db_E\in T,$ with\/
$\tau(t)=\db_E,$ and the Zariski tangent space $T_tT$ is
$\sE^1\cong\Ext^1\bigl((E,\db_E),(E,\db_E)\bigr)$.
\smallskip

\noindent{\bf(c)} Making $\ep$ smaller if necessary, $(T,\tau)$ is a
\begin{bfseries}versal\end{bfseries} family of smooth holomorphic
structures on $E,$ which includes $\db_E$. If\/ $\db_E$ is simple,
then $(T,\tau)$ is a \begin{bfseries}universal\end{bfseries} family
of smooth, simple holomorphic structures on~$E$.
\label{dt9prop1}
\end{prop}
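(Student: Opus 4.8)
\textbf{Proof proposal for Proposition \ref{dt9prop1}.}

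The plan is to follow Miyajima \cite{Miya} closely, using Hodge theory for the elliptic complex $\bigl(C^\iy(\End(E)\ot_\C\La^{0,*}T^*X),\db_E\bigr)$ and the implicit function theorem on Banach manifolds. For part (a), the idea is to produce $Q_\ep$ as a graph over the Ext$^1$ harmonic space $\sE^1$. Consider the map $G:L^2_k(\End(E)\ot_\C\La^{0,1}T^*X)\ra L^2_{k-1}$ sending $A$ to $\bigl(\db_E^*A,\ \db_E^*(\db_EA+A\w A)\bigr)$, landing in the appropriate subspaces. Its linearisation at $0$ is $A\mapsto(\db_E^*A,\db_E^*\db_EA)$, and together with projection to $\sE^1$ this is an isomorphism onto its target by Hodge decomposition, since $\Ker\db_E^*\cap\Ker\db_E^*\db_E$ restricted to the $L^2$-complement of $\sE^1$ is controlled by the elliptic operator $\db_E^*\db_E$. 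The implicit function theorem then gives that $G^{-1}(0)$ near $0$ is a finite-dimensional complex submanifold with tangent space $\sE^1$; elliptic regularity (the ``$A$ is smooth'' claim) follows because $A\in Q_\ep$ satisfies $\db_E^*A=0$ and $\db_E^*\db_EA=-\db_E^*(A\w A)$, so $(\db_E\db_E^*+\db_E^*\db_E)A$ is expressed in terms of $A$ itself via a bootstrapping argument on Sobolev indices, using $k>2m+1$ so that $L^2_k$ is a Banach algebra under multiplication. For part (b), the map $\pi$ is manifestly holomorphic as a composite of the quadratic map $A\mapsto\db_EA+A\w A$ and the linear projection $\pi_{\sE^2}$; the identity $T=Q_\ep\cap P_k^{-1}(0)$ comes from observing that for $\db_E+A\in Q_\ep$ we already have $\db_E^*(\db_EA+A\w A)=0$, so $F_A^{0,2}=\db_EA+A\w A$ lies in $\Ker\db_E^*$, hence $F_A^{0,2}=0$ iff its harmonic projection $\pi(\db_E+A)$ vanishes; the tangent space computation is immediate from (a) since $T\subseteq Q_\ep$ and $\rd\pi\vert_{\db_E}=0$ on $\sE^1$ (the quadratic term has no linear part).

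The substantive part is (c), versality. First I would show that any family $(T',\tau')$ of holomorphic structures on $E$ can, after shrinking near $t'$ and applying a complex-analytically-varying gauge transformation $\si:U'\ra\sG^{2,k+1}$, be brought into Coulomb gauge relative to $\db_E$, i.e.\ so that $\db_E^*(\tau'-\db_E)=0$; this is again an implicit function theorem argument, solving $\db_E^*\bigl(e^{-u}\ci(\db_E+A')\ci e^u - \db_E\bigr)=0$ for $u\in L^2_{k+1}(\End(E))$ small, whose linearisation is the invertible-modulo-$\sE^0$ operator $\db_E^*\db_E$. Since the family already satisfies $P_k(\tau')=0$, once it is in Coulomb gauge it automatically satisfies the second equation $\db_E^*(\db_E A'+A'\w A')=0$ defining $Q_\ep$, so (shrinking $\ep$) the gauge-transformed family lands in $Q_\ep$. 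Composing with the inclusion $Q_\ep\hookra\sA^{2,k}$ and using that $P_k(\tau')=0$ forces the image to lie in $T=Q_\ep\cap P_k^{-1}(0)$, giving the desired $\up:U'\ra T$ with $\tau\ci\up\equiv\si\cdot\tau'$. For the universal statement when $\db_E$ is simple, uniqueness of $\up$ amounts to uniqueness of the Coulomb gauge: if two gauge transformations bring the same $A'$ into $Q_\ep$, their ratio $\ga$ satisfies $\db_E^*\bigl(\ga^{-1}\ci(\db_E+A)\ci\ga-\db_E\bigr)=0$ with both endpoints in $Q_\ep$, which near $\id_E$ forces $\ga$ to lie in $\Stab_\sG(\db_E+A)$; when $\db_E$ is simple this stabiliser is $\bG_m\cdot\id_E$ for all nearby points (an open condition), and the $\bG_m$-part acts trivially on $\sA$, so $\up$ is pinned down even though $\si$ is not.

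The main obstacle I expect is the careful bookkeeping in the Coulomb-gauge-fixing and regularity arguments: one must track Sobolev indices so that all the nonlinear terms $A\w A$, $\ga^{-1}A\ga$, $\ga^{-1}\db\ga$ land in the right spaces (using $k>2m+1$ so $L^2_k$ multiplies into itself and into $L^2_{k-1}$), verify that the relevant linearised operators are Fredholm of the expected index and surjective onto the slice complementary to the finite-dimensional harmonic spaces $\sE^0,\sE^1,\sE^2$, and check holomorphic (not merely smooth) dependence on parameters throughout, invoking Douady's theory \cite{Doua1,Doua2} of analytic maps between Banach analytic spaces. None of these steps is conceptually hard, but assembling them so that the ``versal'' and ``universal'' conclusions come out cleanly, with the neighbourhoods chosen uniformly, is where the real work lies; fortunately all of it is already carried out in \cite[\S 1]{Miya}, so the proof can largely proceed by citation with the Ext-group identifications made explicit via Hodge theory as above.
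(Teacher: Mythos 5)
Your parts (a) and (c) follow essentially the same route as the paper, which proves this proposition by citing Miyajima \cite[\S 1]{Miya} and sketching the arguments: a Banach-space implicit function theorem for the gauge-fixed equations (the paper packages the two defining conditions of $Q_\ep$ into the single elliptic operator $F:A\mapsto(\db_E^{\phantom{*}}\db_E^*+\db_E^*\db_E^{\phantom{*}})A+\db_E^*(A\w A)$, regarded as mapping into $(\sE^1)^\perp$, rather than your pair $G$, but this is cosmetic), and for (c) the Coulomb-gauge slice argument, with simplicity giving universality because nearby stabilizers are $\bG_m\cdot\id_E$ and act trivially.

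There is, however, a genuine gap in your part (b). You claim that for $\db_E+A\in Q_\ep$, since $\db_E^*F_A^{0,2}=0$, the curvature $F_A^{0,2}$ vanishes if and only if its harmonic projection $\pi_{\sE^2}(F_A^{0,2})$ vanishes. This inference fails: on $(0,2)$-forms the Hodge decomposition gives $\Ker\db_E^*=\sE^2\op\db_E^*\bigl(C^\iy(\End(E)\ot_\C\La^{0,3}T^*X)\bigr)$ (completed appropriately), so a $\db_E^*$-closed form with vanishing harmonic part can still have a nonzero co-exact part, and on a Calabi--Yau 3-fold this co-exact summand is infinite-dimensional. The missing ingredients are the Bianchi identity $\db_EF_A^{0,2}=F_A^{0,2}\w A-A\w F_A^{0,2}$ together with a quantitative estimate: writing the co-exact part of $F_A^{0,2}$ as $\db_E^*\ga$ with $\ga$ controlled by an elliptic estimate, pairing against $\ga$ and using the Bianchi identity and Sobolev multiplication (valid as $k>2m+1$) yields $\nm{F_A^{0,2}}\le C\,\nm{A}_{L^2_k}\nm{F_A^{0,2}}\le C\ep\,\nm{F_A^{0,2}}$, which forces $F_A^{0,2}=0$ only once $\ep$ is sufficiently small. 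This is precisely where the smallness of $\ep$ enters (b), and it is absent from your argument. Note also that the proposition asserts $T=Q_\ep\cap P_k^{-1}(0)$ as complex analytic subspaces, not merely as sets; since $\pi$ factors through $P_k$, only one inclusion of analytic subspaces is automatic, and for the reverse inclusion the paper's sketch reduces to showing that every local holomorphic function on $Q_\ep$ of the form $f\ci P_k$ can be rewritten as $\ti f\ci\pi$ for $\ti f$ holomorphic on $\sE^2$ --- a point which your set-level argument, even once repaired as above, does not address.
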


This gives the standard Kuranishi picture: there exists a versal
family of deformations of $\db_E$, with base space the zeroes of a
holomorphic map from $\Ext^1\bigl((E,\db_E),\ab(E,\db_E)\bigr)$ to
$\Ext^2\bigl((E,\db_E),(E,\db_E)\bigr)$. Here is a sketch of the
proof.

For (a), we consider the nonlinear elliptic operator
$F:L^2_k(\End(E)\ot_\C\La^{0,1}T^*X)\ab\ra L^2_{k-2}(\End(E)\ot_\C
\La^{0,1}T^*X)$ mapping $F:A\mapsto (\db_E^{\phantom{*}}\db_E^*
+\db_E^*\db_E^{\phantom{*}})A +\db_E^*(A\w A)$. The image of $F$
lies in the orthogonal subspace $(\sE^1)^\perp$ to $\sE^1$ in
$L^2_{k-2} (\End(E)\ot_\C\La^{0,1}T^*X)$, using the $L^2$ inner
product. So we can consider $F$ as mapping
$F:L^2_k(\End(E)\ot_\C\La^{0,1}T^*X)\ra (\sE^1)^\perp$. The
linearization of $F$ at $A=0$ is then surjective, with kernel
$\sE^1$. Part (a) then follows from the Implicit Function Theorem
for Banach spaces, together with elliptic regularity for $F$ to
deduce smoothness in the last part.

For (b), one must show that $(P_k\vert_{Q_\ep})^{-1}(0)$ and
$\pi^{-1}(0)$ coincide as complex analytic subspaces of $Q_\ep$.
Since $\pi$ factors through $P_k$ we have
$(P_k\vert_{Q_\ep})^{-1}(0)\subseteq\pi^{-1}(0)$ as complex analytic
subspaces. It is enough to show that any local holomorphic function
$Q_\ep\ra\C$ of the form $f\ci P_k$ for a local holomorphic function
$f:L^2_{k-1}\bigl(\End(E)\ot_\C\La^{0,2}T^*X\bigr)\ra\C$ may also be
written in the form $\ti f\ci\pi$ for a local holomorphic function
$\ti f:\sE^2\ra\C$.

For (c), the main point is that the condition $\db_E^*A=0$ is a
`slice' to the action of $\sG^{2,k+1}$ on $\sA^{2,k}$ at $\db_E$.
That is, the Hilbert submanifold $\{\db_E+A:\db_E^*A=0\}$ in
$\sA^{2,k}$ intersects the orbit $\sG^{2,k+1}\cdot\db_E$
transversely, and it also intersects every nearby orbit of
$\sG^{2,k+1}$ in $\sA^{2,k}$. The complex analytic space $T$ is
exactly the intersection (as Douady complex analytic subspaces of
$\sA^{2,k}$) of $P_k^{-1}(0)$, the slice $\{\db_E+A:\db_E^*A=0\}$,
and the ball of radius $\ep$ around $\db_E$ in $\sA^{2,k}$. The
point of introducing $Q_\ep,\sE^1,\sE^2,\pi$ is to describe this
complex analytic space $T$ in strictly finite-dimensional
terms.\index{vector bundle!holomorphic
structure|)}\index{semiconnection|)}\index{d-operator@$\db$-operator|)}\index{gauge
theory|)}

\subsection{Moduli spaces of analytic vector bundles on $X$}
\label{dt92}\index{vector bundle!analytic|(}

Let $X$ be a compact complex manifold. Here is the analogue of
Definition \ref{dt9def2} for analytic vector bundles.

\begin{dfn} A {\it family of analytic vector bundles\/} $(T,{\cal
F})$ on $X$ is a (finite-dimensional) complex analytic space $T$ and
a complex analytic vector bundle ${\cal F}$ over $X\times T$ which
is flat over $T$. For each $t\in T$, the {\it fibre\/} ${\cal F}_t$
of the family is ${\cal F}\vert_{X\times\{t\}}$, regarded as a
complex analytic vector bundle over~$X\cong X\times\{t\}$.

A family $(T,{\cal F})$ is called {\it versal at\/} $t\in T$ if
whenever $(T',{\cal F}')$ is a family of analytic vector bundles on
$X$ and $t'\in T'$ with ${\cal F}_t\cong{\cal F}'_{t'}$ as analytic
vector bundles on $X$, there exists an open neighbourhood $U'$ of
$t'$ in $T'$, a complex analytic map $\up:U'\ra T$ with $\up(t')=t$
and an isomorphism $\up^*({\cal F})\cong{\cal F}'\vert_{X\times U'}$
as vector bundles over~$X\times U'$.

It is called {\it universal at\/} $t\in T$ if in addition the map
$\up:U'\ra T$ is unique, provided the neighbourhood $U'$ is
sufficiently small. (Note that we do not require the isomorphism
$\up^*({\cal F})\cong{\cal F}'\vert_{X\times U'}$ to be unique.) The
family $(T,{\cal F})$ is called {\it versal\/} (or {\it
universal\/}) if it is versal (or universal) at every~$t\in T$.
\label{dt9def3}
\end{dfn}

In a parallel result to Proposition \ref{dt9prop1}(c), Forster and
Knorr \cite{FoKn} prove that any analytic vector bundle on $X$ can
be extended to a versal family of analytic vector bundles. Then
Miyajima \cite[\S 2]{Miya} proves:

\begin{prop} Let\/ $X$ be a compact complex manifold, $E\ra X$ a\/
$C^\iy$ complex vector bundle, and\/ $\db_E$ a holomorphic structure
on $E,$ so that $(E,\db_E)$ is an analytic vector bundle over $X$.
Let\/ $(T,\tau)$ be the versal family of holomorphic structures on
$E$ containing $\db_E$ constructed in
Proposition\/~{\rm\ref{dt9prop1}}.

Then there exists a versal family of analytic vector bundles
$(T,{\cal F})$ over $X,$ and an isomorphism ${\cal F}\ra E\times T$
of\/ $C^\iy$ complex vector bundles over $X\times T$ which induces
the family of holomorphic structures $(T,\tau)$. If\/ $(E,\db_E)$ is
simple then $(T,{\cal F})$ is a universal family of simple analytic
vector bundles.
\label{dt9prop2}
\end{prop}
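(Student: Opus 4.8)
The plan is to deduce Proposition \ref{dt9prop2} from Proposition \ref{dt9prop1}, the result of Forster and Knorr on existence of versal families of analytic vector bundles, and the standard correspondence between analytic vector bundles and $C^\iy$ complex vector bundles equipped with a holomorphic structure (semiconnection of square zero). The key observation is that the two moduli problems --- holomorphic structures on a fixed $C^\iy$ bundle $E$ modulo the gauge group $\sG^{2,k+1}$, and analytic vector bundles on $X$ --- are ``locally the same'' near a point: any analytic vector bundle over $X$ with underlying $C^\iy$ bundle isomorphic to $E$ is $(E,\db_E')$ for some $\db_E'\in P_k^{-1}(0)$, unique up to the action of $\sG^{2,k+1}$; and a family of such bundles over a complex analytic base $T$ which is $C^\iy$-trivialized to $E\times T$ is precisely a family of holomorphic structures $(T,\tau)$ in the sense of Definition \ref{dt9def2}.

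First I would make this correspondence precise at the level of families. Starting from the versal family of holomorphic structures $(T,\tau)$ of Proposition \ref{dt9prop1}, with $T=\pi^{-1}(0)\subset Q_\ep$ and $\tau:T\ra P_k^{-1}(0)$ the inclusion, I would construct the analytic vector bundle $\cal F$ over $X\times T$ as follows: take the $C^\iy$ bundle $E\times T\ra X\times T$, and equip it with the family of semiconnections $\db_E+\tau(t)$ in the $X$-directions, varying holomorphically with $t\in T$ by construction of $Q_\ep$ and $\tau$. Because $\tau(t)\in P_k^{-1}(0)$, each $\db_E+\tau(t)$ has square zero, and the whole package defines an integrable $(0,1)$-connection on $E\times T$ over $X\times T$ whose restriction to each slice $X\times\{t\}$ gives the analytic vector bundle $(E,\db_E+\tau(t))$; this produces a complex analytic vector bundle structure $\cal F$ on $E\times T$, flat over $T$ since $E\times T$ is $C^\iy$-trivial over $T$. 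By construction $\cal F$ comes with the tautological $C^\iy$-trivialization ${\cal F}\ra E\times T$ inducing $(T,\tau)$, and ${\cal F}_{\db_E}\cong(E,\db_E)$.

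Next I would prove versality of $(T,{\cal F})$. Given any family $(T',{\cal F}')$ of analytic vector bundles and $t'\in T'$ with ${\cal F}'_{t'}\cong(E,\db_E)$, I would first shrink $T'$ to a neighbourhood $U'$ of $t'$ on which ${\cal F}'$ becomes $C^\iy$-isomorphic to $E\times U'$ (possible since the underlying $C^\iy$ bundle is locally constant in the base and $C^\iy$ isomorphism classes of complex vector bundles are discrete), extending the chosen isomorphism ${\cal F}'_{t'}\cong(E,\db_E)$. A $C^\iy$-trivialization of ${\cal F}'$ over $X\times U'$ then converts $(U',{\cal F}')$ into a family of holomorphic structures $(U',\tau')$ on $E$ with $\tau'(t')=\db_E$. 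Now apply versality of $(T,\tau)$ from Proposition \ref{dt9prop1}(c): after possibly shrinking $U'$ there exist complex analytic maps $\up:U'\ra T$ and $\si:U'\ra\sG^{2,k+1}$ with $\up(t')=\db_E$, $\si(t')=\id_E$, and $\tau\ci\up\equiv\si\cdot\tau'$. The gauge transformation $\si$ furnishes an isomorphism of families of holomorphic structures, hence an isomorphism of analytic vector bundles $\up^*({\cal F})\cong{\cal F}'\vert_{X\times U'}$; this is exactly the versality condition in Definition \ref{dt9def3}. The simple case is parallel: if $(E,\db_E)$ is simple then $\db_E$ is a simple semiconnection, nearby $\tau(t)$ and $\tau'(t')$ are simple, and Proposition \ref{dt9prop1}(c) gives that $(T,\tau)$ is universal --- the map $\up$ is unique for $U'$ small --- which transfers directly to universality of $(T,{\cal F})$ as a family of simple analytic vector bundles, since uniqueness of $\up$ does not require uniqueness of the accompanying isomorphism.

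\textbf{Main obstacle.} I expect the delicate point to be the bookkeeping in passing between the two categories of families: one must check that a $C^\iy$-trivialization of ${\cal F}'$ over $X\times U'$ really does produce a \emph{holomorphic} (not merely smooth) family of semiconnections $\tau':U'\ra P_k^{-1}(0)$, i.e.\ that the complex analytic structure on ${\cal F}'$ translates into holomorphicity of $t'\mapsto\tau'(t')$ in the appropriate Sobolev $L^2_k$ sense, and conversely that the $\sG^{2,k+1}$-valued map $\si$ integrates a smooth trivialization to an honest isomorphism of complex analytic vector bundles. This is essentially the content of Miyajima \cite[\S 2]{Miya} and of Forster--Knorr \cite{FoKn}, so I would cite those for the analytic-regularity statements rather than reprove them, and focus the written proof on the versality/universality transfer argument above.
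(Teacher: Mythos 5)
Your proposal follows essentially the same route as the paper: the paper itself does not reprove this statement but quotes Miyajima \cite[\S 2]{Miya}, and the sketch it gives is exactly your correspondence between families of holomorphic structures on $E$ and $C^\iy$-trivialized families of analytic vector bundles, together with the transfer of (uni)versality via Proposition \ref{dt9prop1}(c); your versality/universality transfer argument, including the remark that uniqueness of $\up$ does not require uniqueness of $\si$, matches the intended proof.

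One caveat on where you locate the difficulty. You present the forward construction of $\cal F$ as essentially automatic: equip $E\times T$ with $\db_E+\tau(t)$ in the $X$-directions and the trivial operator in the $T$-directions, and conclude that integrability plus holomorphic dependence on $t$ ``produces a complex analytic vector bundle structure''. That is fine when $T$ is a complex manifold (Koszul--Malgrange/Newlander--Nirenberg with holomorphic parameters), but here $T=\pi^{-1}(0)$ is in general a singular, non-reduced complex analytic space, and $\tau$ is a morphism of analytic spaces into $P_k^{-1}(0)$ in the Douady sense; there is no ambient manifold on which to run Newlander--Nirenberg, and upgrading the family of $\db$-operators to an honest locally free analytic sheaf on $X\times T$, flat over $T$, is precisely the nontrivial content of Miyajima's result (the paper's sketch phrases it as ``adding a $\db$-operator in the $T$-directions \ldots Miyajima proves that this can be done''). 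Your ``main obstacle'' paragraph puts the weight on the converse direction and on Sobolev regularity of $t'\mapsto\tau'(t')$; since you do defer to Miyajima and Forster--Knorr anyway this is not fatal, but the citation belongs at the construction of $\cal F$ itself, not only at the regularity points you flag. (Also, your flatness justification ``since $E\times T$ is $C^\iy$-trivial over $T$'' is not the right reason; flatness follows because $\cal F$ is locally free on $X\times T$ and $X\times T\ra T$ is flat.)
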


Here is an idea of the proof. Let $(T,{\cal F})$ be a family of
analytic vector bundles over $X$, let $t\in T$, and let $E\ra X$ be
the complex vector bundle underlying the analytic vector bundle
${\cal F}_t\ra X$. Then for some small open neighbourhood $U$ of $t$
in $T$, we can identify ${\cal F}\vert_{X\times U}$ with $(E\times
U)\ra(X\times U)$ as complex vector bundles, where $(E\times
U)\ra(X\times U)$ is the pullback of $E$ from $X$ to $X\times U$.

Thus, the analytic vector bundle structure on ${\cal
F}\vert_{X\times U}$ induces an analytic vector bundle structure on
$(E\times U)\ra(X\times U)$. We can regard this as a first order
differential operator $\db_{E,U}:C^\iy(E)\ra
C^\iy\bigl(E\ot\La^{0,1}T^*X\op E\ot\La^{0,1}T^*U\bigr)$ on bundles
over $X\times U$. Thus, $\db_{E,U}$ has two components, a
$\db$-operator in the $X$ directions and a $\db$-operator in the $U$
directions in $X\times U$. The first of these components is a {\it
family of holomorphic structures\/} $(U,\tau)$ on~$E$.

Therefore, by choosing a (local) trivialization in the
$T$-directions, a family $(T,{\cal F})$ of analytic vector bundles
induces a family $(T,\tau)$ of holomorphic structures on $E$, by
forgetting part of the structure. Conversely, given a family
$(T,\tau)$ of holomorphic structures on $E$, we can try to add extra
structure, a $\db$-operator in the $T$ directions in $X\times T$, to
make $(T,\tau)$ into a family of analytic vector bundles $(T,{\cal
F})$. Miyajima proves that this can be done, and that the local
deformation functors are isomorphic. Hence the (uni)versal family in
Proposition \ref{dt9prop1} lifts to a (uni)versal family of analytic
vector bundles.\index{complex analytic space|)}\index{versal
family|)}\index{universal family|)}\index{vector bundle!analytic|)}

\subsection{Constructing a good local atlas $S$ for $\fM$ near
$[E]$}
\label{dt93}\index{Artin stack!atlas|(}

We divert briefly from our main argument to prove the first part of
the second paragraph of Theorem \ref{dt5thm3}, the existence of a
1-morphism $\Phi:[S/G^{\sst\C}]\ra\fM$ for $\fM$ satisfying various
conditions. Let $X,\fM,E,\Aut(E)$ and $G^{\sst\C}$ be as in
Theorem~\ref{dt5thm3}.

\begin{prop} There exists a finite type open $\C$-substack\/
$\mathfrak Q$ in $\fM$ with\/ $[E]\in{\mathfrak Q}(\C)\subset
\fM(\C)$ and a $1$-isomorphism ${\mathfrak Q}\cong [Q/H],$ where $Q$
is a finite type $\C$-scheme and\/ $H$ an algebraic $\C$-group
acting on $Q$. Furthermore we may realize $Q$ as a quasiprojective
$\C$-scheme, a locally closed\/ $\C$-subscheme of a complex
projective space ${\mathbb P}(W),$ such that the action of\/ $H$ on
$Q$ is the restriction of an action of\/ $H$ on ${\mathbb P}(W)$
coming from a representation of\/ $H$ on\/~$W$.
\label{dt9prop3}
\end{prop}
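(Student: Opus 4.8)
The plan is to produce a local atlas for $\fM$ near $[E]$ using the standard construction of moduli of coherent sheaves via quotients of a Quot scheme, and then to identify the structure group of the quotient presentation with (a conjugate of) $\Aut(E)$. First I would fix $n\gg 0$ so that $E$, and in fact every sheaf in some finite type open neighbourhood of $[E]$ in $\fM$, is $n$-regular in the sense of Castelnuovo--Mumford; this uses that the family of such sheaves is bounded (or one can simply restrict to a finite type open substack $\mathfrak U\ni[E]$ first, and apply boundedness on $\mathfrak U$). For $n$-regular sheaves $F$ with fixed Hilbert polynomial $P=P_{[E]}$, the evaluation map $H^0(F(n))\otimes\cO_X(-n)\ra F$ is surjective and $h^0(F(n))=P(n)=:N$, so choosing an isomorphism $H^0(F(n))\cong\C^N$ presents $F$ as a quotient of $V:=\C^N\otimes\cO_X(-n)$. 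Let $\cQ=\Quot(V,P)$ be Grothendieck's Quot scheme of quotients of $V$ with Hilbert polynomial $P$; it is a projective $\C$-scheme, and it carries a natural action of $\GL(N,\C)$ coming from the $\GL(N,\C)$-action on $\C^N$. Inside $\cQ$ let $R\subseteq\cQ$ be the open subscheme of quotients $[V\twoheadrightarrow F]$ such that $F$ is $n$-regular and $H^0(V(n))\ra H^0(F(n))$ is an isomorphism; this $R$ is $\GL(N,\C)$-invariant and of finite type.

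Next I would check that $\Phi:R\ra\fM$, sending $[V\twoheadrightarrow F]$ to $[F]$, is smooth and $\GL(N,\C)$-invariant, and descends to a $1$-morphism $[R/\GL(N,\C)]\ra\fM$ which is an open immersion onto a finite type open substack $\mathfrak Q\ni[E]$. Smoothness and the identification of the fibres is the well-known statement that $R$ with the tautological quotient is a versal family of $n$-regular sheaves and that two points of $R$ have isomorphic images in $\fM$ exactly when they lie in the same $\GL(N,\C)$-orbit: the stabilizer of a point $[V\twoheadrightarrow F]$ is the subgroup of $\GL(N,\C)=\GL(H^0(F(n)))$ preserving the surjection, which is canonically $\Aut(F)\hookrightarrow\GL(H^0(F(n)))$ via $F\mapsto H^0(F(n))$; and the relative dimension of $\Phi$ over a point $[F]$ is $\dim\GL(N,\C)-\dim\Aut(F)=\dim\Hom(V,F)-\dim\Aut(F)$, which is constant on $R$ by $n$-regularity, so $\Phi$ is smooth. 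Standard GIT for the Quot scheme (Simpson, or Huybrechts--Lehn) shows $R$ is an $\GL(N,\C)$-invariant quasiprojective scheme; more precisely $\cQ$ embeds $\GL(N,\C)$-equivariantly in a Grassmannian $\Gr(\C^N\otimes H^0(\cO_X(m)),P(m+n))$ for $m\gg 0$, hence in $\mathbb P(\Lambda^{P(m+n)}(\C^N\otimes H^0(\cO_X(m))))=\mathbb P(W)$, and the $\GL(N,\C)$-action on $\mathbb P(W)$ comes from a genuine linear representation on $W$. Taking $Q:=R$ and $H:=\GL(N,\C)$ then gives everything claimed: $[Q/H]\cong\mathfrak Q$, $Q$ is quasiprojective and locally closed in $\mathbb P(W)$, and the $H$-action on $Q$ is the restriction of a linear-projective action on $\mathbb P(W)$.

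The step I expect to require the most care is \emph{not} any single item above but rather the bookkeeping needed to keep the presentation clean: verifying that $R$ is genuinely open in $\cQ$ (openness of $n$-regularity, and openness of the isomorphism condition on $H^0$, which follows from semicontinuity and the constancy of $\chi(F(n))=P(n)$), and that the resulting $[R/\GL(N,\C)]\ra\fM$ is really an \emph{open} immersion of stacks rather than merely a smooth surjection onto its image — this is the assertion that a quotient $[V\twoheadrightarrow F]$ is determined up to the $\GL(N,\C)$-action by the isomorphism class of $F$ together with the choice of basis of $H^0(F(n))$, which is exactly the functorial statement that $R$ represents the moduli functor of ``$n$-regular sheaves with chosen basis of $H^0(F(n))$''. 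All of this is classical, and the main work is assembling it so that the final output is precisely in the form $[Q/H]$ with $Q\subseteq\mathbb P(W)$ and $H$ acting linearly, as needed for the subsequent Luna slice argument in the proof of Theorem~\ref{dt5thm3}. Finally I would remark that, by construction, $[E]\in\mathfrak Q(\C)$ for any preassigned coherent sheaf $E$ once $n$ is chosen large enough depending on $E$ (or on a fixed finite type neighbourhood of $[E]$), which is all that is needed.
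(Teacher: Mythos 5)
Your proposal is correct and follows essentially the same route as the paper: both present a finite type open neighbourhood of $[E]$ in $\fM$ as $[Q/H]$ with $Q$ the open locus in Grothendieck's Quot scheme $\Quot_X(\C^{P(n)}\ot\cO_X(-n),P)$ of quotients with vanishing higher cohomology of the twist and $\C^{P(n)}\cong H^0(E'(n))$, and $H=\GL(P(n),\C)$, then use the (equivariant) projective embedding of the Quot scheme to place $Q$ as a locally closed subscheme of ${\mathbb P}(W)$ with $H$ acting via a linear representation on $W$. The extra details you supply (openness of the regularity locus, stabilizer identification with $\Aut(F)$, the Grassmannian/Pl\"ucker embedding) are just expansions of steps the paper leaves implicit.
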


\begin{proof} We follow the standard method for constructing coarse
moduli schemes\index{coarse moduli scheme}\index{moduli
scheme!coarse} of semistable coherent sheaves in Huybrechts and Lehn
\cite{HuLe2}, adapting it for Artin stacks. Choose an ample line
bundle $\cO_X(1)$ on $X$, let $P$ be the Hilbert
polynomial\index{Hilbert polynomial} of $E$ and fix $n\in\Z$.
Consider Grothendieck's Quot Scheme\index{Quot scheme}
$\Quot_X\bigl(\C^{P(n)}\ot\cO_X(-n),P\bigr)$, explained in \cite[\S
2.2]{HuLe2}, which parametrizes quotients
$\C^{P(n)}\ot\cO_X(-n)\ab\twoheadrightarrow E'$, where $E'$ has
Hilbert polynomial $P$. Then the $\C$-group $H=\GL(P(n),\C)$ acts
on~$\Quot_X\bigl(\C^{P(n)}\ot\cO_X(-n),P\bigr)$.

Let $Q$ be the $H$-invariant open $\C$-subscheme of
$\Quot_X\bigl(\C^{P(n)}\ot\cO_X(-n),P\bigr)$ parametrizing
$\phi:\C^{P(n)}\ot\cO_X(-n)\twoheadrightarrow E'$ for which
$H^i(E'(n))=0$ for $i>0$ and $\phi_*:\C^{P(n)}\ra H^0(E'(n))$ is an
isomorphism. Then the projection $[Q/H]\ra\fM$ taking the
$GL(P(n),\C)$-orbit of $\phi:\C^{P(n)}\ot\cO_X(-n)\twoheadrightarrow
E'$ to $E'$ is a 1-isomorphism with an open $\C$-substack
${\mathfrak Q}$ of $\fM$. If $n\gg 0$ then $[E]\in{\mathfrak
Q}(\C)$. For the last part,
$\Quot_X\bigl(\C^{P(n)}\ot\cO_X(-n),P\bigr)$ is projective as in
\cite[Th.~2.2.4]{HuLe2}, so it comes embedded as a closed
$\C$-subscheme in some ${\mathbb P}(W)$, and by construction the
$H$-action on $\Quot_X\bigl(\C^{P(n)}\ot\cO_X(-n),P\bigr)$ is the
restriction of an $H$-action on ${\mathbb P}(W)$ from a
representation of $H$ on~$W$.
\end{proof}

The proof of the next proposition is similar to parts of that of
Luna's Etale Slice Theorem~\cite[\S III]{Luna}.\index{Luna's Etale Slice
Theorem}

\begin{prop} In the situation of Proposition {\rm\ref{dt9prop3},}
let\/ $s\in Q(\C)$ project to the point\/ $sH$ in ${\mathfrak
Q}(\C)$ identified with\/ $[E]\in\fM(\C)$ under the
$1$-isomorphism\/ ${\mathfrak Q}\cong [Q/H]$. This $1$-isomorphism
identifies the stabilizer groups $\Iso_{\fM}([E])=\Aut(E)$ and\/
$\Iso_{[Q/H]}(sH)=\Stab_H(s),$ and the Zariski tangent spaces
$T_{[E]}\fM\cong\Ext^1(E,E)$ and\/ $T_{sH}[Q/H]\cong T_sQ/T_s(sH),$
so we have natural isomorphisms $\Aut(E)\cong\Stab_H(s)$
and\/~$\Ext^1(E,E)\cong T_sQ/T_s(sH)$.

Let\/ $K$ be the $\C$-subgroup of\/ $\Stab_H(s)\subseteq H$
identified with\/ $G^{\sst\C}$ in $\Aut(E)$. Then there exists a\/
$K$-invariant, locally closed\/ $\C$-subscheme $S$ in $Q$ with\/
$s\in S(\C),$ such that\/ $T_sQ=T_sS\op T_s(sH),$ and the morphism
$\mu:S\times H\ra Q$ induced by the inclusion $S\hookra Q$ and the
$H$-action on $Q$ is smooth of relative dimension\/~$\dim\Aut(E)$.
\label{dt9prop4}
\end{prop}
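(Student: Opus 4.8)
The plan is to mimic the construction step in Luna's Étale Slice Theorem \cite{Luna}, but stopping at the point where we obtain a slice through $s$ invariant under a maximal reductive subgroup, without claiming étaleness of $\mu$ (which would need all of $H=\GL(P(n),\C)$ reductive, not just $K$). The key input is that $Q\hookra{\mathbb P}(W)$ with the $H$-action on $Q$ induced from a linear $H$-action on $W$, as provided by Proposition \ref{dt9prop3}.

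First I would lift the point $s\in Q(\C)\subseteq{\mathbb P}(W)$ to a nonzero vector $\hat s\in W$, and consider the orbit map and its derivative. Since $K\subseteq\Stab_H(s)$ is reductive (it is $G^{\sst\C}$, the complexification of the maximal compact subgroup $G$ of $\Aut(E)$, hence reductive), the $K$-representation $T_sQ$ decomposes $K$-equivariantly. The tangent space $T_s(sH)$ to the $H$-orbit is a $K$-subrepresentation of $T_sQ$ (as $K$ stabilizes $s$ and normalizes nothing problematic here — $T_s(sH)$ is the image of $\mathfrak h$ under the infinitesimal action, which is $K$-equivariant). By reductivity of $K$ we may choose a $K$-invariant complement $V$ to $T_s(sH)$ in $T_sQ$, so $T_sQ=V\op T_s(sH)$. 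Then I would produce a $K$-invariant locally closed $\C$-subscheme $S\subseteq Q$ with $s\in S(\C)$ and $T_sS=V$: concretely, pick a $K$-invariant linear subspace $W'\subseteq W$ complementary to $\C\hat s$ and to the tangent directions of the orbit, intersect the affine chart $\{\hat s+W'\}$ (or a suitable $K$-invariant affine neighbourhood built using a $K$-equivariant splitting of $W$) with $Q$, and shrink to a locally closed subscheme whose tangent space at $s$ is exactly $V$. Here the reductivity of $K$ is exactly what lets us split $W$ and $T_sQ$ $K$-equivariantly; this is the standard Luna slice construction restricted to the reductive subgroup $K$ rather than the whole group.

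Next I would analyze the multiplication morphism $\mu:S\times H\to Q$, $(x,h)\mapsto x\cdot h$. Its derivative at $(s,\id_H)$ is the map $T_sS\op\mathfrak h\to T_sQ$ sending $(v,\xi)\mapsto v+\xi\cdot s$, whose image is $V+T_s(sH)=T_sQ$; it is surjective, and its kernel is $\{(0,\xi):\xi\in\mathfrak k\}\cong\mathfrak k=\Lie\Stab_H(s)\cong\Lie\Aut(E)$ (using that $\Stab_H(s)$ acting trivially on $s$ means $\xi\cdot s=0$ iff $\xi\in\mathfrak k$, together with $V\cap T_s(sH)=0$). So $\rd\mu$ at $(s,\id_H)$ is surjective with kernel of dimension $\dim\Aut(E)$. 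A surjective derivative at a point implies, by generic smoothness / the fact that the smooth locus of a morphism is open, that $\mu$ is smooth on an open neighbourhood of $(s,\id_H)$; by $H$-equivariance of $\mu$ (for the right $H$-action on $S\times H$ in the second factor and on $Q$) this open set can be taken $H$-invariant, hence of the form $S'\times H$ for $S'$ an open neighbourhood of $s$ in $S$. Replacing $S$ by a suitable locally closed refinement $S'$ (which we may do, keeping $K$-invariance since $K\cdot s=s$ so we may shrink $K$-invariantly), we get $\mu:S\times H\to Q$ smooth of relative dimension $\dim(S\times H)-\dim Q = \dim T_sS+\dim\mathfrak h-\dim T_sQ=\dim\mathfrak k=\dim\Aut(E)$ at all points, as required. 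Finally I would record the stabilizer and tangent space identifications: the $1$-isomorphism ${\mathfrak Q}\cong[Q/H]$ and the general description of stabilizers and tangent spaces of quotient stacks give $\Iso_{[Q/H]}(sH)\cong\Stab_H(s)$ and $T_{sH}[Q/H]\cong T_sQ/T_s(sH)$, which combined with $\Iso_\fM([E])\cong\Aut(E)$ and $T_{[E]}\fM\cong\Ext^1(E,E)$ (standard deformation theory of sheaves, recalled in \S\ref{dt51}) yield the claimed natural isomorphisms, and $K$ is by definition the $\C$-subgroup of $\Stab_H(s)$ corresponding to $G^{\sst\C}\subseteq\Aut(E)$.

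The main obstacle I anticipate is the construction of the $K$-invariant slice $S$ with prescribed tangent space in the \emph{algebraic} (not analytic) category: one must genuinely use that $Q$ sits $H$-equivariantly inside a projectivized linear representation ${\mathbb P}(W)$ to get a $K$-equivariant affine chart and a $K$-equivariant linear complement, and then check that the resulting scheme-theoretic intersection with $Q$ has the correct tangent space at $s$ — this is where Luna's argument uses reductivity of $K$ in an essential way (to split short exact sequences of $K$-representations), and it is also why we cannot hope to make $S$ invariant under all of $H$ or under all of $\Aut(E)$, only under the reductive $G^{\sst\C}$. Everything after the slice is constructed — the smoothness of $\mu$ via surjectivity of the derivative, the dimension count, and the stabilizer/tangent-space bookkeeping — is routine and I would not belabour it.
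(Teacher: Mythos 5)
Your construction of the $K$-invariant slice is essentially the paper's: you use reductivity of $K\cong G^{\sst\C}$ to split off the orbit directions $K$-equivariantly and cut $Q$ with a $K$-invariant linear chart inside ${\mathbb P}(W)$, which is exactly the paper's $S'=Q\cap{\mathbb P}\bigl(L\op(W''\ot L)\op(W'''\ot L)\bigr)$. The genuine gap is in the smoothness step. You deduce smoothness of $\mu:S\times H\ra Q$ near $(s,1)$ from surjectivity of $\rd\mu$ on Zariski tangent spaces, invoking ``generic smoothness / openness of the smooth locus''. That inference is invalid when the source and target are singular at the points in question --- and here $Q$ is a local atlas for the moduli stack $\fM$, which is typically singular at $[E]$, so $Q$ and $S$ are in general singular at $s$. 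For example, the morphism $\Spec\C[x,y]/(xy)\ra\Spec\C[t]$ given by $t\mapsto x+y$ has surjective differential at the origin but is not smooth there (its fibre over $0$ is $\Spec\C[x]/(x^2)$). Openness of the smooth locus does not help because you have not shown $(s,1)$ lies in it, and generic smoothness only gives a dense open subset of the source, which need not contain $(s,1)$.

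The paper's argument is arranged precisely to get around this, and this is the real reason for including the second complement $W'''$ of $T_sQ$ in $T_s{\mathbb P}(W)$: with it, the smooth projective-linear slice $P={\mathbb P}\bigl(L\op(W''\ot L)\op(W'''\ot L)\bigr)$ meets $Q$ transversely at $s$ inside the \emph{smooth} ambient space ${\mathbb P}(W)$, so the multiplication morphism $P\times H\ra{\mathbb P}(W)$ is a submersion of smooth varieties near $(s,1)$. Since $Q$ is $H$-invariant, $S'\times H$ is exactly the preimage of $Q$ under this morphism, so $\mu':S'\times H\ra Q$ is smooth near $(s,1)$ by base change, with no hypothesis on the local structure of $Q$ or $S'$ at $s$; the relative dimension $\dim H-\dim T_s(sH)=\dim\Stab_H(s)=\dim\Aut(E)$ then comes out of the ambient dimension count. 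One finally takes $S\subseteq S'$ to be the open ($K$-invariant) locus of points $s'$ where $\mu'$ is smooth of relative dimension $\dim\Aut(E)$ at $(s',1)$ and spreads smoothness over all of $S\times H$ by $H$-equivariance, as you do. Your chart $\{\hat s+W'\}$ with $W'$ complementary to both $\C\hat s$ and the lifted orbit directions does contain the analogue of $W'''$, so your slice is fine; what is missing is the base-change mechanism through ${\mathbb P}(W)$, and without it (or some substitute) the smoothness claim is unproved. A small further point: the kernel of $\rd\mu$ at $(s,1)$ is the Lie algebra of $\Stab_H(s)$, of dimension $\dim\Aut(E)$, not that of $K=G^{\sst\C}$, which is strictly smaller whenever $\Aut(E)$ is not reductive.
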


\begin{proof} We have $s\in Q(\C)\subset{\mathbb P}(W)$, so $s$
corresponds to a 1-dimensional vector subspace $L$ of $W$. As
$K\subseteq\Stab_H(s)\subseteq H$, this $L$ is preserved by $K$,
that is, $L$ is a $K$-subrepresentation of $W$. Since $K\cong
G^{\sst\C}$ is reductive, we can decompose $W$ into a direct sum of
$K$-representations $W=L\op W'$. Then there is a natural
identification of $K$-representations $T_{s\,}{\mathbb P}(W)=W'\ot
L^*$.

We have $s\in (sH)(\C) \subseteq Q(\C)\subseteq {\mathbb P}(W)$,
where $sH$ and $Q$ are both $K$-invariant $\C$-subschemes of
${\mathbb P}(W)$. Hence we have inclusions of Zariski tangent
spaces, which are $K$-representations
\begin{equation*}
T_s(sH)\subseteq T_sQ\subseteq T_{s\,}{\mathbb P}(W)=W'\ot L^*.
\end{equation*}
As $K$ is reductive, we may choose $K$-subrepresentations $W'',W'''$
of $T_{s\,}{\mathbb P}(W)$ such that $T_sQ=T_s(sH)\op W''$ and
$T_{s\,}{\mathbb P}(W)=T_sQ\op W'''$. Then $W'\ot L^*=T_s(sH)\op
W''\op W'''$. Tensoring by $L$ and using $W=L\op W'$ gives a direct
sum of $K$-representations
\e
W=L\op \bigl(T_s(s H)\ot L\bigr)\op\bigl(W''\ot L\bigr)\op
\bigl(W'''\ot L\bigr).
\label{dt9eq5}
\e

Define $S'=Q\cap{\mathbb P}\bigl(L\op(W''\ot L)\op(W'''\ot
L)\bigr)$, where we have omitted the factor $T_s(sH)\ot L$ in
\eq{dt9eq5} in the projective space. Then $S'$ is a locally closed
$\C$-subscheme of ${\mathbb P}(W)$, and is $K$-invariant as it is
the intersection of two $K$-invariant subschemes, with $s\in
S'(\C)$. In the decomposition $T_{s\,}{\mathbb P}(W)=T_s(sH)\op
W''\op W'''$ we have $T_sQ=T_s(sH)\op W''$ and $T_{s\,}{\mathbb
P}\bigl(L\op(W''\ot L)\op(W'''\ot L)\bigr)=W''\op W'''$, which
intersect transversely in $W''$. Hence $Q$ and ${\mathbb
P}\bigl(L\op(W''\ot L)\op(W'''\ot L)\bigr)$ intersect transversely
at $s\in S'(\C)$, and $T_sS'=W''$, so that~$T_sQ=T_sS'\op T_s(sH)$.

Since $T_sQ=T_sS'\op T_s(sH)$, we see that $S'$ intersects the
$H$-orbit $sH$ transversely at $s$. It follows that the morphism
$\mu':S'\times H\ra Q$ induced by the inclusion $S'\hookra Q$ and
the $H$-action on $Q$ is smooth at $(s,1)$, of relative dimension
$\dim\Stab_H(s)=\dim\Aut(E)$. Let $S$ be the $\C$-subscheme of
points $s\in S'(\C)$ such that $\mu'$ is smooth of relative
dimension $\dim\Aut(E)$ at $(s,1)$. This is an open condition, so
$S$ is open in $S'$, and contains $s$. Therefore $T_sQ=T_sS\op
T_s(sH)$, as we want. Write $\mu=\mu'\vert_{S\times H}$. Then
$\mu:S\times H\ra Q$ is smooth of relative dimension $\Aut(E)$ along
$S\times\{1\}$, and so is smooth of relative dimension $\Aut(E)$ on
all of $S\times H$, since $\mu$ is equivariant under the action of
$H$ on $S\times H$ acting trivially on $S$ and on the right on $H$,
and the right action of $H$ on~$Q$.
\end{proof}

Since $S$ is invariant under the $\C$-subgroup $K$ of the $\C$-group
$H$ acting on $Q$, the inclusion $i:S\hookra Q$ induces a
representable 1-morphism of quotient stacks $i_*:[S/K]\ra [Q/H]$. We
claim that $i_*$ is smooth of relative dimension $\dim\Aut(E)-\dim
G^{\sst\C}$. Since the projection $\pi:Q\ra[Q/H]$ is an atlas for
$[Q/H]$, this is true if and only if the projection from the fibre
product $\pi_2:[S/K]\times_{i_*,[Q/H],\pi}Q\ra Q$ is a smooth
morphism of $\C$-schemes of relative dimension $\dim\Aut(E)-\dim
G^{\sst\C}$. But $[S/K]\times_{i_*,[Q/H],\pi}Q\cong (S\times H)/K$,
where $K$ acts in the given way on $S$ and on the left on $H$. The
projection $\pi_2:(S\times H)/K\ra Q$ fits into the commutative
diagram
\begin{equation*}
\xymatrix@C=40pt@R=5pt{ & (S\times H)/K \ar[dr]^{\pi_2} \\
S\times H \ar[ur]^\pi \ar[rr]^\mu && Q, }
\end{equation*}
where $\pi:S\times H\ra (S\times H)/K$ is the projection to the
quotient. As $\pi$ is smooth of relative dimension $\dim K=\dim
G^{\sst\C}$ and surjective, and $\mu$ is smooth of relative
dimension $\dim\Aut(E)$ by Proposition \ref{dt9prop4}, it follows
that $\pi_2$ and hence $i_*$ are smooth of relative dimension
$\dim\Aut(E)-\dim G^{\sst\C}$.

Combining the isomorphism $K\cong G^{\sst\C}$, the 1-morphism
$i_*:[S/K]\ra [Q/H]$, the 1-isomorphism ${\mathfrak Q}\cong [Q/H]$,
and the open inclusion ${\mathfrak Q}\hookra\fM$, yields a
1-morphism $\Phi:[S/G^{\sst\C}]\ra\fM$, as in Theorem \ref{dt5thm3}.
This $\Phi$ is smooth of relative dimension $\dim\Aut(E)-\dim
G^{\sst\C}$, as $i_*$ is. If $\Aut(E)$ is reductive, so that
$G^{\sst\C}=\Aut(E)$, then $\Phi$ is smooth of dimension 0, that is,
$\Phi$ is \'etale.\index{etale morphism@\'etale morphism}

The conditions $\Phi(s\,G^{\sst\C})=[E]$ and
$\Phi_*:\Iso_{[S/G^{\sst\C}]}(s\,G^{\sst\C})\ra\Iso_{\fM}([E])$ is
the natural $G^{\sst\C}\hookra\Aut(E) \cong\Iso_{\fM}([E])$ in
Theorem \ref{dt5thm3} are immediate from the construction. That
$\rd\Phi\vert_{s\,G^{\sst\C}}:T_sS\cong T_{s\,G^{\sst\C}}
[S/G^{\sst\C}]\ra T_{[E]}\fM\cong \Ext^1(E,E)$ is an isomorphism
follows from $T_{[E]}\fM\cong T_{sH}[Q/H]\cong T_sQ/T_s(sH)$ and
$T_sQ=T_sS\op T_s(sH)$ in Proposition~\ref{dt9prop4}.\index{Artin
stack!atlas|)}

\subsection{Moduli spaces of algebraic vector bundles on $X$}
\label{dt94}

We can now discuss results in algebraic geometry corresponding to
\S\ref{dt91}--\S\ref{dt92}. Let $X$ be a projective complex
algebraic manifold.

\begin{dfn}  A {\it family of algebraic vector bundles\/} $(T,{\cal
F})$ on $X$ is a $\C$-scheme $T$, locally of finite type, and an
algebraic vector bundle $\cal F$ over $X\times T$. For each $t\in
T(\C)$, the {\it fibre\/} ${\cal F}_t$ of the family is ${\cal
F}\vert_{X\times\{t\}}$, regarded as an algebraic vector bundle
over~$X\cong X\times\{t\}$.

A family $(T,{\cal F})$ is called {\it formally versal
at\/}\index{versal family!formally versal|(}\index{universal
family!formally universal|(} $t\in T$ if whenever $T'$ is a
$\C$-scheme of finite length with exactly one $\C$-point $t'$, and
$(T',{\cal F}')$ is a family of algebraic vector bundles on $X$ with
${\cal F}_t\cong{\cal F}'_{t'}$ as algebraic vector bundles on $X$,
there exists a morphism $\up:T'\ra T$ with $\up(t')=t$, and an
isomorphism $\up^*({\cal F})\cong{\cal F}'$ as vector bundles over
$X\times T'$. It is called {\it formally universal at\/} $t\in T$ if
in addition the morphism $\up:T'\ra T$ is unique. The family
$(T,{\cal F})$ is called {\it formally versal\/} (or {\it formally
universal\/}) if it is formally versal (or formally universal) at
every~$t\in T$.
\label{dt9def4}
\end{dfn}

By work of Grothendieck and others, as in Laumon and Moret-Bailly
\cite[Th.~4.6.2.1]{LaMo} for instance, we have:

\begin{prop} The moduli functor\/ $\mathbb{VB}^{\text{\rm\'et}}_{
\text{\rm si}}:(\C\text{\rm -schemes})\ra(\text{\rm sets})$ of
isomorphism classes of families of simple algebraic vector bundles
on $X,$ sheafified in the \'etale topology,\index{etale topology@\'etale
topology} is represented by a complex algebraic space $\Vect_\rsi$
locally of finite type, the moduli space of simple algebraic vector
bundles on~$X$.

The moduli functor\/ $\mathbb{VB}:(\C\text{\rm
-schemes})\ra(\text{\rm groupoids})$ of families of algebraic vector
bundles on $X$ is represented by an Artin $\C$-stack\/ $\fVect$
locally of finite type, the moduli stack of algebraic vector bundles
on~$X$.
\label{dt9prop5}
\end{prop}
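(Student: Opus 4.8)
The plan is to deduce Proposition \ref{dt9prop5} from the standard representability theory for moduli of coherent sheaves due to Grothendieck, as presented in Laumon and Moret-Bailly \cite{LaMo}, together with Altman and Kleiman's treatment of the simple locus \cite{AlKl}. The two assertions are closely linked: I would realize $\fVect$ as an open substack of the moduli stack $\fM$ of all coherent sheaves on $X$, and $\Vect_\rsi$ as the rigidification, by the central $\bG_m$ in its automorphism groups, of the open substack of simple vector bundles.

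First I would recall why $\fM$ is an Artin $\C$-stack locally of finite type. A bounded family of coherent sheaves on $X$ with Hilbert polynomial $P$ may, for $n\gg 0$, be presented fibrewise as a quotient $\C^{P(n)}\ot\cO_X(-n)\twoheadrightarrow E'$ with $H^i(E'(n))=0$ for $i>0$; such quotients are parametrized by an open subscheme $Q^{P,n}$ of Grothendieck's projective Quot scheme $\Quot_X(\C^{P(n)}\ot\cO_X(-n),P)$, and $\fM$ is the union over all $P,n$ of the stack quotients $[Q^{P,n}/\GL(P(n),\C)]$ --- exactly the local presentation already used in Proposition \ref{dt9prop3}. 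Being locally free is an open condition on the base of a flat family, by upper semicontinuity of fibrewise dimensions, so the substack $\fVect$ of vector bundles is open in $\fM$, hence is itself an Artin $\C$-stack locally of finite type, and carries a tautological vector bundle representing $\mathbb{VB}$. This proves the second assertion.

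For the first assertion I would pass to the open substack $\fVect_\rsi\subseteq\fVect$ of \emph{simple} vector bundles; openness holds because $t\mapsto\dim\Hom(E_t,E_t)$ is upper semicontinuous and bounded below by $1$, so the locus where it equals $1$ is open. Every geometric point of $\fVect_\rsi$ has automorphism group exactly the scalars $\bG_m$, so $\fVect_\rsi$ is a $\bG_m$-gerbe over its rigidification (the quotient by this central $\bG_m$), which is an algebraic space locally of finite type; this rigidification is precisely the \'etale sheafification $\mathbb{VB}^{\text{\rm\'et}}_{\text{\rm si}}$, since passing to the quotient by the central $\bG_m$ forces the descent of isomorphisms that the naive isomorphism-class functor lacks. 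Equivalently, one can verify Artin's criteria directly for $\mathbb{VB}^{\text{\rm\'et}}_{\text{\rm si}}$: it is a sheaf by construction; it is limit-preserving because families are bounded; its deformation theory has tangent space $\Ext^1(E,E)$ and obstruction space $\Ext^2(E,E)$, both finite-dimensional as $X$ is projective; formal deformations are effective by the Grothendieck existence theorem on the proper scheme $X$; and the diagonal is representable and separated because, for flat families $E,F$ over a base $T$, the isomorphism functor modulo scalars is cut out inside ${\mathbb P}\bigl(p_{T*}\SHom(E,F)\bigr)$ by the locally closed condition that a section be fibrewise an isomorphism, simplicity ensuring $\Hom(E_t,F_t)$ is $0$- or $1$-dimensional. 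This is the argument of Altman and Kleiman \cite[Th.~7.4]{AlKl}. Since neither sheafification nor $\bG_m$-rigidification alters the set of $\C$-points, $\Vect_\rsi(\C)$ is the set of isomorphism classes of simple algebraic vector bundles on $X$, and $\Vect_\rsi$ is an open subspace of $\M_\rsi$.

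The main obstacle is the representability and separatedness of the diagonal of $\mathbb{VB}^{\text{\rm\'et}}_{\text{\rm si}}$, i.e.\ showing that the sheafification is genuinely an algebraic space and not merely an \'etale sheaf. This is exactly where the simplicity hypothesis and the properness of $X$ enter essentially: simplicity collapses the fibrewise Hom spaces to dimension $\le 1$ after removing scalars, and properness makes $p_{T*}\SHom(E,F)$ coherent and the isomorphism locus a separated $T$-scheme. Everything else --- the Quot-scheme presentation, openness of the locally free and simple loci, finiteness of the tangent and obstruction spaces, and effectivity of formal deformations --- is routine given \cite{Grot1,Grot2,LaMo}.
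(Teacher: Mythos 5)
Your proposal is correct and takes essentially the same route as the paper, which disposes of this proposition by citation alone --- Laumon--Moret-Bailly (following Grothendieck) for $\fVect$ as an open substack of the Artin stack $\fM$ of coherent sheaves, and Altman--Kleiman [Th.~7.4] for the algebraic space $\Vect_\rsi$ --- and the details you supply (Quot-scheme atlases, openness of the locally free and simple loci, $\bG_m$-rigidification or Artin's criteria) are exactly what underlies those citations. One small caveat: simplicity does not force $\dim\Hom(E_t,F_t)\le 1$ for a pair of non-isomorphic simple sheaves; it only guarantees that isomorphisms, where they exist, are unique up to scalar, which is all that the diagonal argument of Altman--Kleiman actually requires.
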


As in Miyajima \cite[\S 3]{Miya}, the existence of $\Vect_\rsi$ as a
complex algebraic space\index{complex algebraic space} implies the
existence \'etale locally of formally universal families of simple
vector bundles on $X$. Similarly, the existence of $\fVect$ as an
Artin $\C$-stack implies that a smooth 1-morphism $\phi:S\ra\fVect$
from a scheme $S$ corresponds naturally to a formally versal family
$(S,{\cal D})$ of vector bundles on $X$.

\begin{prop}{\bf(a)} Let\/ ${\cal E}$ be a simple algebraic
vector bundle on $X$. Then there exists an affine $\C$-scheme $S,$ a
$\C$-point\/ $s\in S,$ and a formally universal family of simple
algebraic vector bundles $(S,{\cal D})$ on $X$ with\/ ${\cal
D}_s\cong{\cal E}$. This family $(S,{\cal D})$ induces an \'etale
map of complex algebraic spaces $\pi:S\ra\Vect_\rsi$ with\/
$\pi(s)=[{\cal E}]$. There is a natural isomorphism between the
Zariski tangent space $T_sS$ and\/~$\Ext^1({\cal E},{\cal E}\bigr)$.
\smallskip

\noindent{\bf(b)} Let\/ ${\cal E}$ be an algebraic vector bundle on
$X,$ and\/ $G^{\sst\C}$ a maximal reductive subgroup of\/
$\Aut({\cal E})$. Then \S{\rm\ref{dt93}} constructs a
quasiprojective $\C$-scheme $S,$ an action of\/ $G^{\sst\C}$ on $S,$
a $G^{\sst\C}$-invariant point\/ $s\in S(\C),$ and a $1$-morphism
$\Phi:[S/G^{\sst\C}]\ra\fVect$ smooth of relative dimension
$\dim\Aut({\cal E})-\dim G^{\sst\C},$ such that\/
$\Phi(s\,G^{\sst\C})=[{\cal E}],$
$\Phi_*:\Iso_{[S/G^{\sst\C}]}(s\,G^{\sst\C})\ra\Iso_{\fVect}([{\cal
E}])$ is the inclusion $G^{\sst\C}\hookra\Aut({\cal E}),$ and\/
$\rd\Phi\vert_{s\,G^{\sst\C}}:T_sS\cong T_{s\,G^{\sst\C}}
[S/G^{\sst\C}]\ra T_{[{\cal E}]}\fVect\cong \Ext^1({\cal E},{\cal
E})$ is an isomorphism.

Let\/ $(S,{\cal D})$ be the family of algebraic vector bundles on
$X$ corresponding to the $1$-morphism $\Phi\ci\pi:S\ra\fVect,$ where
$\pi:S\ra[S/G]$ is the projection. Then $(S,{\cal D})$ is formally
versal and\/ $G^{\sst\C}$ equivariant, with\/ ${\cal D}_s\cong{\cal
E}$.
\label{dt9prop6}
\end{prop}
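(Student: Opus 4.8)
\textbf{Proof proposal for Proposition \ref{dt9prop6}.}

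\emph{Part (a).} The plan is to derive this purely from the representability of the moduli functor $\mathbb{VB}^{\text{\rm\'et}}_{\text{\rm si}}$ by the complex algebraic space $\Vect_\rsi$ in Proposition \ref{dt9prop5}. Since $\Vect_\rsi$ is an algebraic space locally of finite type, and $[{\cal E}]$ is a $\C$-point, there is an \'etale map $\pi:S\to\Vect_\rsi$ from an affine $\C$-scheme $S$ with a point $s\in S(\C)$ mapping to $[{\cal E}]$; this is a standard property of algebraic spaces. Pulling back the universal family (which exists \'etale-locally since $\mathbb{VB}^{\text{\rm\'et}}_{\text{\rm si}}$ is a sheaf for the \'etale topology, represented \'etale-locally by the Quot-scheme construction as in \S\ref{dt93}, or by the argument of Miyajima \cite[\S 3]{Miya}) along $\pi$ gives a family $(S,{\cal D})$ with ${\cal D}_s\cong{\cal E}$. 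That $(S,{\cal D})$ is formally universal at every point follows because $\pi$ is \'etale, hence formally \'etale: for any finite-length $\C$-scheme $T'$ with unique $\C$-point $t'$ mapping to $[{\cal E}']\in\Vect_\rsi$, the \'etale lifting property produces a unique $\up:T'\to S$ through $s$, and $\up^*({\cal D})\cong{\cal F}'$ since both represent the same $T'$-point of $\Vect_\rsi$ (here one uses that ${\cal E}'$ simple implies $\Aut=\bG_m$, so the ambiguity in the iso of families is just global rescaling, which is why we only get a \emph{coarse} statement and must sheafify). Finally $T_sS\cong T_{[{\cal E}]}\Vect_\rsi$ since $\pi$ is \'etale, and the deformation theory of simple vector bundles (or coherent sheaves, Definition \ref{dt5thm2}-style) identifies $T_{[{\cal E}]}\Vect_\rsi\cong\Ext^1({\cal E},{\cal E})$.

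\emph{Part (b).} Here I would simply invoke the construction already carried out in \S\ref{dt93}, applied with $\coh(X)$ replaced by $\fVect$. Precisely: Proposition \ref{dt9prop3} produces an open finite-type substack $\mathfrak{Q}\cong[Q/H]$ of $\fVect$ containing $[{\cal E}]$ (the construction via the Quot scheme goes through verbatim since vector bundles form an open substack of $\fM$), and Proposition \ref{dt9prop4} applied at a point $s\in Q(\C)$ over $[{\cal E}]$, using the reductive subgroup $G^{\sst\C}\cong K$ of $\Aut({\cal E})\cong\Stab_H(s)$, produces a $K$-invariant locally closed $S\subseteq Q$ with $T_sQ=T_sS\oplus T_s(sH)$ and $\mu:S\times H\to Q$ smooth of relative dimension $\dim\Aut({\cal E})$. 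The discussion following Proposition \ref{dt9prop4} then shows $i_*:[S/K]\to[Q/H]$ is smooth of relative dimension $\dim\Aut({\cal E})-\dim G^{\sst\C}$, and composing with $[Q/H]\cong\mathfrak{Q}\hookrightarrow\fVect$ gives $\Phi:[S/G^{\sst\C}]\to\fVect$ with all the stated properties on stabilizer groups and tangent spaces. The family $(S,{\cal D})$ is the one corresponding to the smooth $1$-morphism $\Phi\circ\pi:S\to\fVect$; it is automatically formally versal because smooth $1$-morphisms to $\fVect$ correspond to formally versal families (as noted after Proposition \ref{dt9prop5}), and $G^{\sst\C}$-equivariant because $\Phi$ factors through the quotient stack $[S/G^{\sst\C}]$ and $S$ is $G^{\sst\C}$-invariant in $Q$. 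That ${\cal D}_s\cong{\cal E}$ is immediate from $\Phi(s\,G^{\sst\C})=[{\cal E}]$.

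\emph{Main obstacle.} The genuinely substantive content is all imported: for (a) the representability statement of Proposition \ref{dt9prop5} (due to Grothendieck et al., cited to \cite[Th.~4.6.2.1]{LaMo}) and the identification $T_{[{\cal E}]}\Vect_\rsi\cong\Ext^1({\cal E},{\cal E})$; for (b) the Luna-slice-type construction of \S\ref{dt93}. So the only real work in this proposition is bookkeeping — checking that the open-substack-of-$\fM$ argument of \S\ref{dt93} does not use anything special about coherent sheaves versus vector bundles (it does not), and matching up the various compatibilities (stabilizer maps, tangent maps, equivariance, $G^{\sst\C}$-invariance of $U$). I expect the mildly delicate point to be phrasing formal universality/versality correctly in the presence of the $\bG_m$-rescaling ambiguity in (a), i.e.\ making sure the word "universal" is used in the coarse-moduli sense (no fine moduli, no uniqueness of the isomorphism $\up^*{\cal D}\cong{\cal F}'$), exactly as flagged in Definition \ref{dt9def4}; everything else is routine.
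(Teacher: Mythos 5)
Your proposal is correct and follows essentially the same route as the paper, which proves this proposition only implicitly: part (a) by citing the representability result of Proposition \ref{dt9prop5} together with Miyajima's observation that this yields formally universal families \'etale-locally on $\Vect_\rsi$, and part (b) by invoking the construction of \S\ref{dt93} (restricted to the open substack $\fVect\subset\fM$) together with the remark that smooth $1$-morphisms $S\ra\fVect$ correspond to formally versal families. Your extra care in (a) about the $\bG_m$-rescaling ambiguity — two families with the same classifying map to $\Vect_\rsi$ differ by a line bundle on the base, trivial over an Artinian local base — is exactly the point the paper leaves to the reader, and is handled correctly.
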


Together with \S\ref{dt93}, part (b) completes the proof of the
second paragraph of Theorem \ref{dt5thm3} for $\fVect$. The proof
for $\fM$ follows from Theorem~\ref{dt5thm1}.\index{versal
family!formally versal|)}\index{universal family!formally universal|)}

\subsection[Identifying versal families of vector bundles]{Identifying
versal families of holomorphic structures \\ and algebraic vector
bundles}
\label{dt95}\index{complex analytic space|(}\index{vector bundle!holomorphic
structure|(}\index{versal family|(}\index{universal family|(}

Let $\cal E$ be an algebraic vector bundle on $X$. Write $E\ra X$
for the underlying $C^\iy$ complex vector bundle, and $\db_E$ for
the induced holomorphic structure on $E$. Then $(E,\db_E)$ is the
analytic vector bundle associated to $\cal E$. By Serre \cite{Serr}
we have $\Ext^1({\cal E},{\cal E}\bigr)\cong\Ext^1
\bigl((E,\db_E),(E,\db_E)\bigr)$, that is, Ext groups computed in
the complex algebraic or complex analytic categories are the same.

Then Proposition \ref{dt9prop1} constructs a {\it versal family
$(T,\tau)$ of holomorphic structures on\/} $E$, with $\tau(t)=\db_E$
for $t\in T$ and $T_tT\cong\Ext^1\bigl((E,\db_E),(E,\db_E)\bigr)$.
If $(E,\db_E)$ is simple then $(T,\tau)$ is a {\it universal\/}
family of simple holomorphic structures. Proposition \ref{dt9prop2}
shows that we may lift $(T,\tau)$ to a versal family $(T,{\cal F})$
of analytic vector bundles over $X$, with isomorphism $\bigl({\cal
F}\ra (X\times T)\bigr)\cong\bigl((E\times T)\ra(X\times T)\bigr)$
as $C^\iy$ complex vector bundles inducing $(T,\tau)$. If\/
$(E,\db_E)$ is simple then $(T,{\cal F})$ is a universal family of
simple analytic vector bundles.

On the other hand, using algebraic geometry, Proposition
\ref{dt9prop6} gives a formally versal family of algebraic vector
bundles $(S,{\cal D})$ on $X$ with ${\cal D}_s\cong{\cal E}$ and
$T_sS\cong\Ext^1({\cal E},{\cal E}\bigr)$, and if $\cal E$ is simple
then $(S,{\cal D})$ is a formally universal family of simple
algebraic vector bundles. Now Miyajima \cite[\S 3]{Miya} quotes
Serre \cite{Serr} and Schuster \cite{Schus} to say that if $(S,{\cal
D})$ is a formally versal (or formally universal) family of
algebraic vector bundles on $X$, then the induced family of complex
analytic vector bundles $(S_\an,{\cal D}_\an)$ is versal (or
universal) in the sense of Definition~\ref{dt9def3}.

Hence we have two versal families of complex analytic vector
bundles: $(T,{\cal F})$ from Propositions \ref{dt9prop1} and
\ref{dt9prop2}, with ${\cal F}_t\cong(E,\db_E)$, and $(S_\an,{\cal
D}_\an)$ from Proposition \ref{dt9prop6}, with ${\cal
D}_s\cong(E,\db_E)$. We will prove these two families are locally
isomorphic near $s,t$. In the universal case this is obvious, as in
Miyajima \cite[\S 3]{Miya}. In the versal case we use the
isomorphisms~$T_tT\cong\Ext^1\bigl((E,\db_E),(E,\db_E) \bigr)\cong
T_sS$.

\begin{prop} Let\/ $\cal E$ be an algebraic vector bundle on $X,$
with underlying complex vector bundle $E$ and holomorphic structure
$\db_E$. Let\/ $(T,\tau),(T,{\cal F}),\ab(S,{\cal D})$ be the versal
families of holomorphic structures, analytic vector bundles, and
algebraic vector bundles from Propositions {\rm\ref{dt9prop1},
\ref{dt9prop2}, \ref{dt9prop6},} so $t\in T,$ $s\in S(\C)$ with\/
$\tau(t)\!=\!\db_E,$ ${\cal F}_t\!\cong\!(E,\db_E),$ ${\cal
D}_s\!\cong\!{\cal E}$ and\/ $T_tT\!\cong\!
\Ext^1\bigl((E,\db_E),(E,\db_E)\bigr)\!\cong\! T_sS$. Write
$(S_\an,{\cal D}_\an)$ for the family of analytic vector bundles
underlying\/~$(S,{\cal D})$.

Then there exist open neighbourhoods $T'$ of\/ $t$ in\/ $T$ and\/
$S'_\an$ of\/ $s$ in\/ $S_\an$ and an isomorphism of complex
analytic spaces $\vp:T'\ra S'_\an$ such that\/ $\vp(t)=s$ and\/
$\vp^*({\cal D}_\an)\cong{\cal F}\vert_{X\times T'}$ as analytic
vector bundles over~$X\times T'$.
\label{dt9prop7}
\end{prop}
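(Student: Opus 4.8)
The plan is to exploit versality of both families, together with the isomorphism of their tangent spaces at $t$ and $s$, to produce mutually inverse complex analytic maps $\vp$ and $\psi$ locally near $t,s$. First I would apply versality of $(S_\an,{\cal D}_\an)$ at $s$ to the family $(T,{\cal F})$ and the point $t$ with ${\cal F}_t\cong(E,\db_E)\cong {\cal D}_s$: this yields an open neighbourhood $T'$ of $t$ in $T$, a holomorphic map $\vp:T'\ra S_\an$ with $\vp(t)=s$, and an isomorphism $\vp^*({\cal D}_\an)\cong{\cal F}\vert_{X\times T'}$ of analytic vector bundles over $X\times T'$. Symmetrically, versality of $(T,{\cal F})$ at $t$ applied to $(S_\an,{\cal D}_\an)$ at $s$ gives an open neighbourhood $S'_\an$ of $s$, a holomorphic map $\psi:S'_\an\ra T$ with $\psi(s)=t$, and an isomorphism $\psi^*({\cal F})\cong{\cal D}_\an\vert_{X\times S'_\an}$. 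So far $\vp,\psi$ are merely holomorphic maps in the correct directions; the content is to show that, after shrinking, one is inverse to the other.

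The key step is to examine the composite $\psi\ci\vp:T''\ra T$, defined on a small enough neighbourhood $T''$ of $t$ with $\psi\ci\vp(t)=t$, and the induced isomorphism $(\psi\ci\vp)^*({\cal F})\cong{\cal F}\vert_{X\times T''}$. Thus $\psi\ci\vp$ is an endomorphism of the versal family $(T,{\cal F})$ fixing $t$. Now I would use the second half of versality — the fact that $(T,\tau)$, and hence $(T,{\cal F})$, arises from Miyajima's Kuranishi-type construction in Proposition \ref{dt9prop1}, where $T$ is cut out inside a finite-dimensional manifold $Q_\ep$ with $T_tT=\sE^1\cong\Ext^1\bigl((E,\db_E),(E,\db_E)\bigr)$ — to control $\rd(\psi\ci\vp)\vert_t$. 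Because $T_tT\cong\Ext^1\bigl((E,\db_E),(E,\db_E)\bigr)\cong T_sS$ canonically and both $\vp,\psi$ are built from versality, their derivatives at the base points are the canonical identifications of these tangent spaces (one checks this by applying versality to the first-order, i.e. $\Spec\C[\epsilon]/(\epsilon^2)$, deformations, which is exactly how the tangent space identification is pinned down). Hence $\rd(\psi\ci\vp)\vert_t=\id_{T_tT}$.

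It then remains to deduce that a holomorphic self-map $g=\psi\ci\vp$ of a complex analytic space $T$ near a point $t$, with $g(t)=t$ and $\rd g\vert_t=\id$, is a local biholomorphism — indeed, I expect to show $g$ equals the identity after shrinking, or at least is invertible. The clean way is to embed $T''\hookra Q_\ep$ (an open subset of a finite-dimensional vector space $W'$) as a closed complex analytic subspace, lift $g$ to a holomorphic map $\tilde g$ on an open neighbourhood in $W'$ with $\tilde g(t)=t$ and $\rd\tilde g\vert_t$ invertible (possible since $g$ is induced by versality from a map into the ambient space), apply the holomorphic inverse function theorem to $\tilde g$ in $W'$, and restrict: since $\tilde g$ preserves $T''$ (being a self-map of the subspace), so does $\tilde g^{-1}$, giving a holomorphic inverse to $g$ on a smaller neighbourhood. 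Running the same argument for $\vp\ci\psi$ near $s$ shows $\vp$ and $\psi$ are mutually inverse biholomorphisms after shrinking $T',S'_\an$, and the isomorphism $\vp^*({\cal D}_\an)\cong{\cal F}\vert_{X\times T'}$ is already in hand.

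The main obstacle I anticipate is the bookkeeping around the identification of derivatives: one must verify carefully that the map $\vp$ produced by versality really does induce the canonical isomorphism $T_tT\cong T_sS$ rather than some other isomorphism, which requires tracing through how Propositions \ref{dt9prop1}, \ref{dt9prop2}, \ref{dt9prop6} all identify their respective tangent spaces with $\Ext^1\bigl((E,\db_E),(E,\db_E)\bigr)$, and checking compatibility via the first-order deformation functor (Schuster's and Serre's comparison results). A secondary subtlety is that versality only gives existence, not uniqueness, of $\vp$ and of the bundle isomorphism, so one cannot directly assert $\psi\ci\vp=\id$; instead one argues invertibility via the inverse function theorem as above. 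Once invertibility is established the bundle-isomorphism statement is immediate by pulling back.
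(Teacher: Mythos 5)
Your overall strategy coincides with the paper's: apply versality in both directions to obtain $\vp,\psi$, show the composite has invertible derivative at the base point, and upgrade that to a local isomorphism of complex analytic spaces. But two steps need repair. First, the claim that $\rd(\psi\ci\vp)\vert_t=\id_{T_tT}$ is unjustified and in general false: versality produces $\vp$ together with \emph{some} isomorphism $\vp^*({\cal D}_\an)\cong{\cal F}\vert_{X\times T'}$, and its restriction to the fibre at $t$ need not be the composite of the given identifications ${\cal F}_t\cong(E,\db_E)\cong{\cal D}_s$. Tracing first-order deformations, as you propose, shows only that under the identifications $T_tT\cong\Ext^1\bigl((E,\db_E),(E,\db_E)\bigr)\cong T_sS$ the map $\rd\vp\vert_t$ is conjugation by some $\ga\in\Aut(E,\db_E)$ — this is exactly what the paper proves — which is invertible but usually not the identity. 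Since your argument only uses invertibility, this is repairable, but the claim and its justification must be weakened accordingly.

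Second, and more substantively, the assertion that because $\ti g$ preserves $T''$ its local inverse $\ti g^{-1}$ also preserves $T''$ is not automatic, and this is where the real content of the proposition sits: a priori $\ti g$ could map the analytic subspace $T''$ isomorphically onto a strictly smaller closed analytic subspace of itself through $t$, in which case $g$ would not be invertible on $T''$. Ruling this out needs noetherianity of the local ring: from $\ti g^*(I_{T''})\subseteq I_{T''}$ the ascending chain $I_{T''}\subseteq(\ti g^*)^{-1}(I_{T''})\subseteq(\ti g^*)^{-2}(I_{T''})\subseteq\cdots$ in $\cO_{t,Q_\ep}$ stabilizes, forcing $\ti g^*(I_{T''})=I_{T''}$. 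This is the same noetherian input as the paper's argument, which dispenses with the ambient lift altogether: it shows directly that $(\ti\psi\ci\ti\vp)^*:\cO_{t,T}\ra\cO_{t,T}$ is surjective, because germs $g_1,\ldots,g_N$ projecting to a basis of $\m_{t,T}/\m_{t,T}^2$ generate $\cO_{t,T}$ under composition with holomorphic functions of several variables and the pullbacks of the $g_i$ again project to a basis, and then invokes that a surjective endomorphism of a noetherian ring is an isomorphism. With the noetherian step supplied, and the derivative claim corrected to invertibility, your route goes through; note also that your lift $\ti g$ automatically has invertible derivative only because $T_tT=T_tQ_\ep$, the Kuranishi space $Q_\ep$ having dimension $\dim\Ext^1\bigl((E,\db_E),(E,\db_E)\bigr)$, which is worth saying explicitly.
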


\begin{proof} From above, $(T,{\cal F})$ and $(S_\an,{\cal
D}_\an)$ are both versal families of analytic vector bundles on $X$
with ${\cal F}_t\cong(E,\db_E)\cong({\cal D}_\an)_s$. By Definition
\ref{dt9def3}, since $(S_\an,{\cal D}_\an)$ is versal, there exists
an open neighbourhood $\ti T$ of $t$ in $T$ and a morphism of
complex analytic spaces $\ti\vp:\ti T\ra S_\an$ such that
$\ti\vp(t)=s$ and $\ti\vp^*({\cal D}_\an)\cong{\cal F}\vert_{\ti
T}$. Similarly, since $(T,{\cal F})$ is versal, there exists an open
neighbourhood $\ti S_\an$ of $s$ in $S_\an$ and a morphism of
complex analytic spaces $\ti\psi:\ti S_\an\ra T$ such that
$\ti\psi(s)=t$ and $\ti\psi^*({\cal F})\cong{\cal D}_\an\vert_{\ti
S_\an}$.

Restricting the isomorphism $\ti\vp^*({\cal D}_\an)\cong{\cal
F}\vert_{\ti T}$ to the fibres at $t$ gives an isomorphism ${\cal
D}_s\cong{\cal F}_t$. We are also given isomorphisms ${\cal
F}_t\cong (E,\db_E)$ and $(E,\db_E)\cong{\cal D}_s$. Composing these
three $(E,\db_E)\cong{\cal D}_s\cong{\cal F}_t\cong (E,\db_E)$ gives
an automorphism $\ga$ of $(E,\db_E)$. Differentiating $\ti\vp$ at
$t$ gives a $\C$-linear map $\rd\ti\vp\vert_t:T_t\ti T\ra T_s\ti
S_\an$. We also have isomorphisms
$T_tT\cong\Ext^1\bigl((E,\db_E),(E,\db_E)\bigr)\cong T_sS$. Using
the interpretation of $\Ext^1\bigl((E,\db_E), (E,\db_E)\bigr)$ as
infinitesimal deformations of $(E,\db_E)$, one can show that under
these identifications $T_tT\cong\Ext^1\bigl(
(E,\db_E),(E,\db_E)\bigr)\cong T_sS$, the map
$\rd\ti\vp\vert_t:T_t\ti T\ra T_s\ti S_\an$ corresponds to
conjugation by $\ga\in\Aut(E,\db_E)$ in $\Ext^1\bigl((E,\db_E),
(E,\db_E)\bigr)$. This implies that $\rd\ti\vp\vert_t:T_t\ti T\ra
T_s\ti S_\an$ is an isomorphism. Similarly,
$\rd\ti\psi\vert_s:T_s\ti S_\an\ra T_t\ti T$ is an isomorphism.

Suppose first that $\cal E$ is simple. Then $(T,{\cal F})$,
$(S_\an,{\cal D}_\an)$ are universal families, so $\ti\vp,\ti\psi$
above are unique. Also by universality of $(T,{\cal F})$ we see that
$\ti\psi\ci\ti\vp\cong\id_T$ on $\ti T\cap\ti\vp^{-1}(\ti S_\an)$,
and similarly $\ti\vp\ci\ti\psi\cong\id_{S_\an}$ on $\ti
S_\an\cap\ti\psi^{-1}(\ti T)$. Hence the restrictions of $\ti\vp$ to
$\ti T\cap\ti\vp^{-1}(\ti S_\an)$ and $\ti\psi$ to $\ti
S_\an\cap\ti\psi^{-1}(\ti T)$ are inverse, and setting $T'=\ti
T\cap\ti\vp^{-1}(\ti S_\an)$ and $\vp'=\ti\vp\vert_{T'}$ gives the
result. This argument was used by Miyajima~\cite[\S 3]{Miya}.

For the general case, $\ti\psi\ci\ti\vp:\ti T\cap\ti\vp^{-1}(\ti
S_\an)\ra T$ is a morphism of complex analytic spaces with
$\ti\psi\ci\ti\vp(t)=t$ and $\rd(\ti\psi\ci\ti\vp) \vert_t:T_tT\ra
T_tT$ an isomorphism. We will show that this implies
$\ti\psi\ci\ti\vp$ is an isomorphism of complex analytic spaces near
$t$. A similar result in algebraic geometry is Eisenbud
\cite[Cor.~7.17]{Eise}. Write $\cO_{t,T}$ for the algebra of germs
of analytic functions on $T$ defined near $t$. Write $\m_{t,T}$ for
the maximal ideal of $f$ in $\cO_{t,T}$ with~$f(t)=0$.

In complex analytic geometry, the operations on $\cO_{t,T}$ are not
just addition and multiplication. We can also apply holomorphic
functions of several variables: if $W$ is an open neighbourhood of
$0$ in $\C^l$ and $F:W\ra\C$ is holomorphic, then there is an
operation $F_*:\m_{t,T}^{\op^l}\ra\cO_{t,T}$ mapping
$F_*:(f_1,\ldots,f_l)\mapsto F(f_1,\ldots,f_l)$. Let $N=\dim T_tT$,
and choose $g_1,\ldots,g_N\in\m_{t,T}$ such that
$g_1+\m_{t,T}^2,\ldots, g_N+\m_{t,T}^2$ are a basis for
$\m_{t,T}/\m_{t,T}^2\cong T_t^*T$. Then $g_1,\ldots,g_N$ generate
$\cO_{t,T}$ over such operations~$F_*$.

Let $f\in\cO_{t,T}$. Since $(\ti\psi\ci\ti\vp)^*:\m_{t,T}/\m_{t,T}^2
\ra\m_{t,T}/\m_{t,T}^2$ is an isomorphism, we see that
$(\ti\psi\ci\ti\vp)^*(g_1),\ldots, (\ti\psi\ci\ti\vp)^*(g_N)$
project to a basis for $\m_{t,T}/\m_{t,T}^2$, so they generate
$\cO_{t,T}$ over operations $F_*$. Thus there exists a holomorphic
function $F$ defined near $0$ in $\C^N$ with
$f=F\bigl((\ti\psi\ci\ti\vp)^* (g_1),\ldots,
(\ti\psi\ci\ti\vp)^*(g_N)\bigr)$. Hence
$f=(\ti\psi\ci\ti\vp)^*\bigl(F(g_1,\ldots,g_N)\bigr)$. Thus
$(\ti\psi\ci\ti\vp)^*:\cO_{t,T}\ra \cO_{t,T}$ is surjective. But
$\cO_{t,T}$ is noetherian as in Griffiths and Harris
\cite[p.~679]{GrHa}, and a surjective endomorphism of a noetherian
ring is an isomorphism. Therefore $(\ti\psi\ci\ti\vp)^*:\cO_{t,T}\ra
\cO_{t,T}$ is an isomorphism of local algebras.

Since $\cO_{t,T}$ determines $(T,t)$ as a germ of complex analytic
spaces, this implies $\ti\psi\ci\ti\vp$ is an isomorphism of complex
analytic spaces near $t$, as we claimed above. Similarly,
$\ti\vp\ci\ti\psi$ is an isomorphism of complex analytic spaces near
$s$. Thus $\ti\vp$ and $\ti\psi$ are isomorphisms of complex
analytic spaces near $s,t$. So we can choose an open neighbourhood
$T'$ of $t$ in $\ti T\cap\ti\vp^{-1}(\ti S_\an)$ such that
$\vp=\ti\vp\vert_{T'}:T'\ra S'_\an=\ti\vp(T')$ is an isomorphism of
complex analytic spaces. The conditions $\vp(t)=s$ and $\vp^*({\cal
D}_\an)\cong{\cal F}\vert_{X\times T'}$ are immediate.
\end{proof}\index{vector bundle!holomorphic structure|)}\index{versal
family|)}\index{universal family|)}

\subsection{Writing the moduli space as $\Crit(f)$}
\label{dt96}

We now return to the situation of \S\ref{dt91}, and suppose $X$ is a
Calabi--Yau 3-fold. Let $X$ be a compact complex 3-manifold with
trivial canonical bundle $K_X$, and pick a nonzero section of $K_X$,
that is, a nonvanishing closed $(3,0)$-form $\Om$ on $X$.

Fix a $C^\iy$ complex vector bundle $E\ra X$ on $X$, and choose a
holomorphic structure $\db_E$ on $E$. Then $\sA^{2,k}$ is given by
\eq{dt9eq2} as in \S\ref{dt91}. Following Thomas \cite[\S 3]{Thom},
define the {\it holomorphic Chern--Simons
functional\/}\index{holomorphic Chern--Simons functional|(}
$CS:\sA^{2,k}\ra\C$\nomenclature[CS]{$CS$}{holomorphic Chern--Simons
functional} by
\e
CS:\db_E+A\longmapsto \frac{1}{4\pi^2}\int_X\Tr\bigl(\ts\frac{1}{2}
(\db_EA)\w A+\frac{1}{3}A\w A\w A\bigr)\w\Om.
\label{dt9eq6}
\e
Here $A\in L^2_k(\End(E) \ot_\C\La^{0,1}T^*X)$ and $\db_EA\in
L^2_{k-1}(\End(E)\ot_\C\La^{0,2}T^*X)$. To form $(\db_EA)\w A$ and
$A\w A\w A$ we take the exterior product of the $\La^{0,q}T^*X$
factors, and multiply the $\End(E)$ factors. So $\frac{1}{2}
(\db_EA)\w A+\frac{1}{3}A\w A\w A$ is a section of $\End(E)
\ot_\C\La^{0,3}T^*X$. We then apply the trace $\Tr:\End(E)\ra\C$ to
get a $(0,3)$-form, wedge with $\Om$, and integrate over $X$. As
$k\ge 1$, Sobolev Embedding\index{Sobolev Embedding Theorem} and $X$
compact with $\dim X=6$ imply $A$ is $L^3$ and $\db_EA$ is
$L^{3/2}$, so the integrand in \eq{dt9eq6} is $L^1$ by H\"older's
inequality, and $CS$ is well-defined.

This $CS$ is a cubic polynomial on the infinite-dimensional affine
space $\sA^{2,k}$. It is a well-defined analytic function on
$\sA^{2,k}$ in the sense of Douady \cite{Doua1,Doua2}. An easy
calculation shows that for all $A,a\in L^2_k(\End(E)
\ot_\C\La^{0,1}T^*X)$ we have
\e
\frac{\rd}{\rd t}\bigl[CS(\db_E+A+ta)\bigr]\big\vert_{t=0}=
\frac{1}{4\pi^2}\int_X\Tr\bigl(a\w(\db_EA+A\w A)\bigr)\w\Om,
\label{dt9eq7}
\e
where $\db_EA+A\w A=F_A^{0,2}=P_k(\db_E+A)$ as in \eq{dt9eq3}.
Essentially, equation \eq{dt9eq7} says that the 1-form $\rd CS$ on
the affine space $\sA^{2,k}$ is given at $\db_E+A$ by the
$(0,2)$-curvature $F_A^{0,2}$ of $\db_E+A$.

\begin{prop} Suppose\/ $X$ is a compact complex\/ $3$-manifold with
trivial canonical bundle, $E\ra X$ a\/ $C^\iy$ complex vector bundle
on $X,$ and\/ $\db_E$ a holomorphic structure on $E$. Define
$CS:\sA^{2,k}\ra\C$ by \eq{dt9eq6}. Let\/ $Q_\ep,T$ be as in
Proposition {\rm\ref{dt9prop1}}. Then for sufficiently small\/
$\ep>0,$ as a complex analytic subspace of the finite-dimensional
complex submanifold $Q_\ep,$ $T$ is the critical locus of the
holomorphic function $CS\vert_{Q_\ep}:Q_\ep\ra\C$.
\label{dt9prop8}
\end{prop}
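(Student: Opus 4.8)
The plan is to compare two descriptions of $T$ as complex analytic subspaces of $Q_\ep$: the description $T = Q_\ep \cap P_k^{-1}(0)$ from Proposition~\ref{dt9prop1}(b), and the description $T = \Crit(CS\vert_{Q_\ep})$ that we want. Both are cut out of the finite-dimensional complex manifold $Q_\ep$ by holomorphic equations, so it suffices to show the two ideals of holomorphic functions defining them agree near $\db_E$. The key computational input is equation \eqref{dt9eq7}, which says that the $1$-form $\rd CS$ on the affine space $\sA^{2,k}$ is, at the point $\db_E + A$, given by pairing with the $(0,2)$-curvature $F_A^{0,2} = P_k(\db_E + A)$ via the nondegenerate pairing $(a, b) \mapsto \frac{1}{4\pi^2}\int_X \Tr(a \wedge b)\wedge\Om$ on $(0,1)$- and $(0,2)$-forms valued in $\End(E)$. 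Serre duality (using the nonvanishing $(3,0)$-form $\Om$) makes this pairing a perfect pairing between the relevant Sobolev spaces, or rather between the appropriate finite-dimensional quotients once we restrict to $Q_\ep$.

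First I would restrict \eqref{dt9eq7} to the submanifold $Q_\ep \subset \sA^{2,k}$. For $\db_E + A \in Q_\ep$, a tangent vector $a \in T_{\db_E+A}Q_\ep$ is an infinitesimal deformation preserving the two slice conditions $\db_E^* A = 0$ and $\db_E^*(F_A^{0,2}) = 0$ defining $Q_\ep$ in \eqref{dt9eq4}. Then $\rd(CS\vert_{Q_\ep})$ vanishes at $\db_E + A$ iff $\int_X \Tr(a \wedge F_A^{0,2})\wedge\Om = 0$ for all such $a$. Now $F_A^{0,2} = P_k(\db_E+A)$ lies in the image of $\pi_{\sE^2}$-complement... more precisely, by construction of $Q_\ep$ we have $\db_E^*(F_A^{0,2}) = 0$, so $F_A^{0,2}$ is $\db_E^*$-closed; combined with the fact (from Proposition~\ref{dt9prop1}(b), via the factoring of $P_k\vert_{Q_\ep}$ through $\pi$) that $F_A^{0,2} \in \sE^2$ exactly when $\db_E + A \in T$, I would show that $F_A^{0,2}$ always lies in a subspace on which the Serre duality pairing with $T_{\db_E+A}Q_\ep$ is perfect. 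Concretely, the condition $\db_E^*A = 0$ for tangent vectors $a$ means $a$ ranges over (a dense/closed subspace of) the $\db_E^*$-kernel, and by Hodge theory relative to $\Om$-Serre duality, pairing with this kernel detects precisely the $\db_E^*$-closed part of $F_A^{0,2}$, which is all of $F_A^{0,2}$. Hence $\rd(CS\vert_{Q_\ep})\vert_{\db_E+A} = 0 \iff F_A^{0,2} = 0 \iff \db_E + A \in T$ as sets, and one upgrades this to an equality of complex analytic subspaces by checking the ideals match: the partial derivatives of $CS\vert_{Q_\ep}$ in a basis of $T_{\db_E}Q_\ep \cong \sE^1$ are, up to the perfect pairing, the components of $\pi(\db_E + A) \in \sE^2$ used to define $T$ in Proposition~\ref{dt9prop1}(b), so the two ideals are carried to each other by an invertible linear change of generators.

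The main obstacle I anticipate is the passage from the set-theoretic statement to the scheme-theoretic (complex-analytic) statement, i.e.\ showing the ideals genuinely coincide rather than just their zero loci. This requires being careful that the Serre-duality pairing identifies the $\sE^2$-valued map $\pi: Q_\ep \to \sE^2$ with the differential $\rd(CS\vert_{Q_\ep}): Q_\ep \to T^*Q_\ep$ compatibly, not merely that both vanish on the same set. The cleanest route is probably to pick holomorphic coordinates on $Q_\ep$ near $\db_E$ adapted to the decomposition $T_{\db_E}Q_\ep = \sE^1$, express $CS\vert_{Q_\ep}$ as a convergent power series in these coordinates, differentiate term by term, and match against the explicit formula for $\pi$; the Serre duality isomorphism $\sE^2 \cong (\sE^1)^*$ then makes the two length-$\dim\sE^2$ tuples of holomorphic functions on $Q_\ep$ equal up to an invertible constant matrix, giving equality of ideals and hence $T = \Crit(CS\vert_{Q_\ep})$ as complex analytic spaces. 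A secondary technical point, already flagged in the sketch after Proposition~\ref{dt9prop1}, is justifying that local holomorphic functions on $Q_\ep$ of the form $f \circ P_k$ descend to functions of $\pi$; I would simply quote that argument, as it is part of Proposition~\ref{dt9prop1}(b).
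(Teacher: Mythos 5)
Your overall strategy --- restrict \eq{dt9eq7} to $Q_\ep$, use the Serre-duality pairing coming from $\Om$, and compare ideals rather than just zero sets --- is indeed the route the paper takes, but the two steps that carry the real weight do not hold as you state them. First, at a point $\db_E+A\in Q_\ep$ with $A\ne 0$ the tangent space $T_{\db_E+A}Q_\ep$ is the finite-dimensional space of $a$ satisfying both $\db_E^*a=0$ and the linearization of $\db_E^*(\db_EA+A\w A)=0$; it is not (a dense subspace of) the whole $\db_E^*$-kernel, so your Hodge-theoretic claim that pairing against it ``detects all of $F_A^{0,2}$'' does not apply. To deduce $F_A^{0,2}=0$ from the vanishing of these finitely many pairings you need to know that $F_A^{0,2}$ itself lies in a matching finite-dimensional space attached to the point $\db_E+A$, and that the pairing between these two spaces is perfect at \emph{every} point of $Q_\ep$, not only at $\db_E$. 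Second, your ideal-comparison step fails as stated: the derivatives of $CS\vert_{Q_\ep}$ in coordinates pair the full curvature $F_A^{0,2}$ against coordinate frames that vary with $A$, whereas the components of $\pi$ are $L^2$-projections of $F_A^{0,2}$ onto the fixed space $\sE^2$; these two tuples of holomorphic functions are related by a holomorphically varying matrix, not by an ``invertible constant matrix'', and proving that this matrix is invertible near $\db_E$ is precisely the missing work --- it cannot be read off from the single identification $\sE^2\cong(\sE^1)^*$ at the base point.

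The paper closes both gaps with one auxiliary object which your plan lacks. It introduces $R_\ep\subset Q_\ep\times L^2_{k-1}(\End(E)\ot_\C\La^{0,2}T^*X)$, defined in \eq{dt9eq8} by the conditions $\db_E^*B=0$ and $\db_E^*(\db_EB-B\w A+A\w B)=0$, and quotes Miyajima to the effect that $\id\times\pi_{\sE^2}:R_\ep\ra Q_\ep\times\sE^2$ is a biholomorphism, so $R_\ep$ is a holomorphic vector bundle over $Q_\ep$ with fibre $\sE^2$. The definition of $Q_\ep$ together with the Bianchi identity shows that $P_k\vert_{Q_\ep}$ is a holomorphic section of $R_\ep$, so $T$ is the zero locus of this section with its analytic structure; and the pairing $(a,B)\mapsto\frac{1}{4\pi^2}\int_X\Tr(a\w B)\w\Om$ defines a morphism of holomorphic vector bundles $\Xi:R_\ep\ra T^*Q_\ep$ satisfying $\Xi\ci P_k\vert_{Q_\ep}=\rd(CS\vert_{Q_\ep})$ by \eq{dt9eq7}. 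At $\db_E$ the map $\Xi$ is a multiple of the Serre duality isomorphism $\Ext^2\bigl((E,\db_E),(E,\db_E)\bigr)\cong\Ext^1\bigl((E,\db_E),(E,\db_E)\bigr){}^*$, hence an isomorphism, and after shrinking $\ep$ it is an isomorphism of bundles. This supplies, at every point of $Q_\ep$ simultaneously, both the nondegeneracy you need for the set-theoretic statement and the invertible holomorphic change of generators you need for equality of ideals, giving $T=(P_k\vert_{Q_\ep})^{-1}(0)=(\rd(CS\vert_{Q_\ep}))^{-1}(0)=\Crit(CS\vert_{Q_\ep})$ as complex analytic subspaces. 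If you recast your argument in terms of this bundle and the section $P_k\vert_{Q_\ep}$, rather than coordinates adapted to $\sE^1$ and the harmonic projection $\pi$, it goes through.
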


\begin{proof} Following \cite[\S 1]{Miya}, define
$R_\ep\subset Q_\ep\times L^2_{k-1}(\End(E)\ot_\C\La^{0,2}T^*X)$ by
\e
\begin{split}
R_\ep=\bigl\{(\db_E+A,B)&\in Q_\ep\times L^2_{k-1}(\End(E)\ot_\C\La^{0,2}T^*X):\\
&\db_E^*B=0,\;\> \db_E^*(\db_EB-B\w A+A\w B)=0\bigr\}.
\end{split}
\label{dt9eq8}
\e
Then Miyajima \cite[Lem.~1.5]{Miya} shows that for sufficiently
small $\ep>0$, $R_\ep$ is a complex submanifold of $Q_\ep\times
L^2_{k-1}(\End(E)\ot_\C\La^{0,2}T^*X)$, and in the notation of
\S\ref{dt91}, the projection $\id\times\pi_{\sE^2}:R_\ep\ra
Q_\ep\times\sE^2$ is a biholomorphism. Thus the projection
$\pi_{Q_\ep}:R_\ep\ra Q_\ep$ makes $R_\ep$ into a holomorphic vector
bundle over $Q_\ep$, with fibre $\sE^2\cong
\Ext^2\bigl((E,\db_E),(E,\db_E)\bigr)$. Note from \eq{dt9eq8} that
the fibres of $\pi_{Q_\ep}$ are vector subspaces of
$L^2_{k-1}(\End(E)\ot_\C\La^{0,2}T^*X)$, so $R_\ep$ is a vector
subbundle of the infinite-dimensional vector bundle $Q_\ep\times
L^2_{k-1}(\End(E)\ot_\C\La^{0,2}T^*X)\ra Q_\ep$.

Let $\db_E+A\in Q_\ep$, and set $B=P_k(\db_E+A)=F_A^{0,2}=\db_EA+A\w
A$. Then $\db_E^*B=0$ by the definition \eq{dt9eq4} of $Q_\ep$, and
$\db_EB-B\w A+A\w B=0$ by the Bianchi identity. So
$(\db_E+A,P_k(\db_E+A))\in R_\ep$. Thus $P_k\vert_{Q_\ep}$ is
actually a holomorphic section of the holomorphic vector bundle
$R_\ep\ra Q_\ep$. The complex analytic subspace $T$ in $Q_\ep$ is
$T=(P_k\vert_{Q_\ep})^{-1}(0)$. So we can regard $T$ as the zeroes
of the holomorphic section $P_k\vert_{Q_\ep}$ of the holomorphic
vector bundle~$R_\ep\ra Q_\ep$.

Define a holomorphic map $\Xi:R_\ep\ra T^*Q_\ep$ by
$\Xi:(\db_E+A,B)\mapsto (\db_E+A,\al_B)$, where $\al_B\in
T^*_{\db_E+A}Q_\ep$ is defined by
\e
\al_B(a)=\frac{1}{4\pi^2}\int_X\Tr\bigl(a\w B\w\Om\bigr)
\label{dt9eq9}
\e
for all $a\in T_{\db_E+A}Q_\ep\subset L^2_k(\End(E)\ot_\C
\La^{0,1}T^*X)$. Then $\Xi$ is linear between the fibres of
$R_\ep,T^*Q_\ep$, so it is a morphism of holomorphic vector bundles
over $Q_\ep$. Comparing \eq{dt9eq7} and \eq{dt9eq9} we see that when
$B=P_k(\db_E+A)=\db_EA+A\w A$ we have $\al_B=\rd(CS
\vert_{Q_\ep})\vert_{\db_E+A}$. Hence $\Xi\ci P_k\vert_{Q_\ep}\equiv
\rd(CS\vert_{Q_\ep})$, that is, $\Xi$ takes the holomorphic section
$P_k$ of $R_\ep$ to the holomorphic section $\rd(CS\vert_{Q_\ep})$
of~$T^*Q_\ep$.

Now consider the fibres of $R_\ep$ and $T^*Q_\ep$ at $\db_E\in
Q_\ep$. As in \cite[\S 1]{Miya} we have $T_{\db_E}
Q_\ep\!=\!\sE^1\!\cong\!\Ext^1\bigl((E,\db_E),(E,\db_E)\bigr)$ and
$R_\ep\vert_{\db_E}\!=\!\sE^2\!\cong\!\Ext^2\bigl((E,\db_E),(E,\db_E)
\bigr)$. But $X$ is a Calabi--Yau 3-fold, so by Serre duality we
have an isomorphism $\Ext^2\bigl((E,\db_E),\ab(E,\db_E)\bigr)\cong
\Ext^1\bigl((E,\db_E),(E,\db_E)\bigr){}^*$. The linear map
$\Xi\vert_{\db_E}:\ab R_\ep\vert_{\db_E}\ab\ra T_{\db_E}^*Q_\ep$ is
a multiple of this isomorphism, so $\Xi\vert_{\db_E}$ is an
isomorphism. This is an open condition, so by making $\ep>0$ smaller
if necessary we can suppose that $\Xi:R_\ep\ra T^*Q_\ep$ is an
isomorphism of holomorphic bundles. Since $\Xi\ci
P_k\vert_{Q_\ep}\equiv\rd(CS\vert_{Q_\ep})$, it follows that
$T=(P_k\vert_{Q_\ep})^{-1}(0)$ coincides with
$(\rd(CS\vert_{Q_\ep}))^{-1}(0)$ as a complex analytic subspace of
$Q_\ep$, as we have to prove.
\end{proof}\index{holomorphic Chern--Simons functional|)}

\subsection[The proof of Theorem $\text{\ref{dt5thm2}}$]{The proof of
Theorem \ref{dt5thm2}}
\label{dt97}

We can now prove Theorem \ref{dt5thm2}. The second part of Theorem
\ref{dt5thm1} shows that it is enough to prove Theorem \ref{dt5thm2}
with $\Vect_\rsi$ in place of $\M_\rsi$. Let $X$ be a projective
Calabi--Yau 3-fold over $\C$, and ${\cal E}$ a simple algebraic
vector bundle\index{vector bundle!algebraic} on $X$, with underlying
$C^\iy$ complex vector bundle $E\ra X$ and holomorphic structure
$\db_E$.\index{vector bundle!holomorphic structure} Then Proposition
\ref{dt9prop1} gives a complex analytic space $T$, a point $t\in T$
with $T_tT\cong\Ext^1({\cal E},{\cal E})$, and a universal
family\index{universal family} $(T,\tau)$ of simple holomorphic
structures on $E$ with~$\tau(t)=\db_E$.

Proposition \ref{dt9prop2} shows that $(T,\tau)$ extends to a
universal family $(T,{\cal F})$ of simple analytic vector bundles.
Then Proposition \ref{dt9prop6}(a) gives an affine $\C$-scheme $S$,
a point $s\in S_\an$, a formally universal family of simple
algebraic vector bundles $(S,{\cal D})$ on $X$ with ${\cal
D}_s\cong{\cal E}$, and an \'etale map of complex algebraic spaces
$\pi:S\ra\Vect_\rsi$ with $\pi(s)=[{\cal E}]$. Write $(S_\an,{\cal
D}_\an)$ for the underlying family of simple analytic vector
bundles. Proposition \ref{dt9prop7} gives an isomorphism of complex
analytic spaces $\vp:T'\ra S'_\an$ between open neighbourhoods $T'$
of $t$ in $T$ and $S'_\an$ of $s$ in $S_\an$, with $\vp(t)=s$ and
$\vp^*({\cal D}_\an)\cong{\cal F}\vert_{X\times T'}$. Proposition
\ref{dt9prop8} shows that we may write $T$ as the critical locus of
$CS\vert_{Q_\ep}:Q_\ep\ra\C$, where $Q_\ep$ is a complex manifold
with $T_tQ_\ep\cong\Ext^1({\cal E},{\cal E})$.

Since $Q_\ep$ is a complex manifold with $T_tQ_\ep\cong\Ext^1({\cal
E},{\cal E})$, we may identify $Q_\ep$ near $t$ with an open
neighbourhood $U$ of $u=0$ in $\Ext^1({\cal E},{\cal E})$. A natural
way to do this is to map $Q_\ep\ra\sE^1$ by $\db_E+A\mapsto
\pi_{\sE^1}(A)$, and then use the isomorphism
$\sE^1\cong\Ext^1({\cal E},{\cal E})$. Let $f:U\ra\C$ be the
holomorphic function identified with $CS\vert_{Q_\ep}:Q_\ep\ra\C$.
Since \'etale maps of complex algebraic spaces induce local
isomorphisms of the underlying complex analytic spaces, putting all
this together yields an isomorphism of complex analytic spaces
between $\M_\rsi(\C)$ near $[E]$ and $\Crit(f)$ near 0, as we want.

\subsection[The proof of Theorem $\text{\ref{dt5thm3}}$]{The proof of
Theorem \ref{dt5thm3}}
\label{dt98}

The first part of Theorem \ref{dt5thm1} shows that it is enough to
prove Theorem \ref{dt5thm3} with $\fVect$ in place of $\fM$. Let $X$
be a projective Calabi--Yau 3-fold over $\C$, and ${\cal E}$ an
algebraic vector bundle on $X$, with underlying $C^\iy$ complex
vector bundle $E\ra X$ and holomorphic structure $\db_E$. Let $G$ be
a maximal compact subgroup\index{maximal compact subgroup} in
$\Aut(E)$, and its complexification $G^{\sst\C}$ a maximal reductive
subgroup\index{maximal reductive subgroup} in $\Aut(E)$. Then
Propositions \ref{dt9prop1} and \ref{dt9prop2} give a complex
analytic space $T$, a point $t\in T$ with $T_tT\cong\Ext^1({\cal
E},{\cal E})$, and versal families $(T,\tau)$ of holomorphic
structures on $E$ and $(T,{\cal F})$ of analytic vector bundles on
$E$, with $\tau(t)=\db_E$ and~${\cal
F}_t\cong(E,\db_E)$.\index{versal family}

Proposition \ref{dt9prop6}(b) gives a quasiprojective $\C$-scheme
$S,$ an action of $G^{\sst\C}$ on $S,$ a 1-morphism
$\Phi:[S/G^{\sst\C}]\ra\fVect$ smooth of relative dimension
$\dim\Aut({\cal E})-\dim G^{\sst\C}$, a $G^{\sst\C}$-invariant point
$s\in S(\C)$ with $\Phi(s\,G^{\sst\C})=[{\cal E}]$ and
$T_sS\cong\Ext^1({\cal E},{\cal E})$, and a $G^{\sst\C}$-invariant,
formally versal family of algebraic vector bundles $(S,{\cal D})$ on
$X$ with ${\cal D}_s\cong{\cal E}$. By Serre \cite{Serr} we have
$\Aut({\cal E})=\Aut(E,\db_E)$, that is, the automorphisms of ${\cal
E}$ as an algebraic vector bundle coincide with the automorphisms of
$(E,\db_E)$ as an analytic vector bundle.

Proposition \ref{dt9prop7} gives a local isomorphism of complex
analytic spaces between $T$ near $t$ and $S_\an$ near $s$, and
Proposition \ref{dt9prop8} gives an open neighbourhood $U$ of $0$ in
$\Ext^1({\cal E},{\cal E})$ and a holomorphic function $f:U\ra\C$,
where $U\cong Q_\ep$ and $f\cong CS\vert_{Q_\ep}$, and an
isomorphism of complex analytic spaces between $T$ and $\Crit(f)$
identifying $t$ with 0. Putting these two isomorphisms together
yields an open neighbourhood $V$ of $s$ in $S_\an,$ and an
isomorphism of complex analytic spaces $\Xi:\Crit(f)\ra V$
with~$\Xi(0)=s$.

Consider $\rd\Xi\vert_0:T_0\Crit(f)\ra T_sV$. We have
$T_0\Crit(f)\cong\Ext^1({\cal E},{\cal E})\cong T_sV$ by
Propositions \ref{dt9prop1} and \ref{dt9prop6}(b). The isomorphism
$T_0\Crit(f)\cong\Ext^1({\cal E},{\cal E})$ is determined by a
choice of isomorphism of analytic vector bundles
$\eta_1:(E,\db_E)\ra{\cal F}_t$. The isomorphism $\Ext^1({\cal
E},{\cal E})\cong T_sV$ is determined by a choice of isomorphism of
analytic vector bundles $\eta_2:({\cal D}_\an)_s\ra(E,\db_E)$. The
map $\Xi$ is determined by a choice of local isomorphism of versal
families of analytic vector bundles $\eta_3$ from $(T,{\cal F})$
near $t$ to $(S_\an,{\cal D}_\an)$ near $s$. Composing gives an
isomorphism $\eta_2\ci\eta_3\vert_t\ci\eta_1:(E,\db_E)\ra
(E,\db_E)$, so that $\eta_2\ci\eta_3\vert_t\ci\eta_1$ lies
in~$\Aut(E,\db_E)$.

Following the definitions through we find that $\rd\Xi\vert_0:
\Ext^1({\cal E},{\cal E})\!\ra\!\Ext^1({\cal E},{\cal E})$ is
conjugation by $\ga=\eta_2\ci\eta_3\vert_t\ci\eta_1$ in
$\Aut(E,\db_E)=\Aut({\cal E})$. But in constructing $\eta_3$ we were
free to choose the isomorphism $\eta_3\vert_t:{\cal F}_t\ra({\cal
D}_\an)_s$, and we choose it to make $\ga=\id_{\cal E}$, so that
$\rd\Xi\vert_0$ is the identity on $\Ext^1({\cal E},{\cal E})$. This
proves the first part of the third paragraph of Theorem
\ref{dt5thm3}. It remains to prove the final part, that if $G$ is a
maximal compact subgroup of $\Aut({\cal E})$ then we can take $U,f$
to be $G^{\sst\C}$-invariant, and $\Xi$ to be
$G^{\sst\C}$-equivariant.

First we show that we can take $U,f$ to be $G$-invariant. Now
$\Aut(E,\db_E)$ acts on $\sA^{2,k}$ fixing $\db_E$ by
$\ga:\db_E+A\mapsto \db_E+\ga^{-1}\ci A\ci\ga$, as in \eq{dt9eq1},
since $\db_E\ga=0$ for $\ga\in\Aut(E,\db_E)$. However, the
construction of $(T,\tau)$ in \S\ref{dt91} involves a choice of
metric $h_E$ on the fibres of $E$, which is used to define
$\db_E^*$, and the norm in the condition $\nm{A}_{L^2_k}<\ep$ in
\eq{dt9eq4}. By averaging $h_E$ over the action of $G$, which is
compact, we can choose $h_E$ to be $G$-invariant. Then $\db_E^*$ is
$G$-equivariant, and $\nm{\,\cdot\,}_{L^2_k}$ is $G$-invariant, so
$Q_\ep$ in \eq{dt9eq4} is $G$-invariant, and as $P_k$ is
$G$-equivariant the analytic subspace $T=(P_k\vert_{Q_\ep})^{-1}(0)$
in $Q_\ep$ is also $G$-invariant.

In \S\ref{dt96}, since $\rd CS$ maps $\db_E'$ to its
$(0,2)$-curvature by \eq{dt9eq7}, it is equivariant under the gauge
group $\sG$, so its first integral $CS:\sA^{2,k}\ra\C$ is invariant
under the subgroup $\Aut(E,\db_E)$ of $\sG$ fixing the point $\db_E$
in $\sA^{2,k}$, and $CS\vert_{Q_\ep}$ is invariant under the maximal
compact subgroup $G$ of $\Aut(E,\db_E)$. We choose the
identification of $Q_\ep$ with an open subset $U$ of $\Ext^1({\cal
E},{\cal E})$ to be the composition of the map $Q_\ep\ra\sE^1$
taking $\db_E+A\mapsto\pi_{\sE^1}(A)$ with the isomorphism
$\sE^1\cong\Ext^1 ({\cal E},{\cal E})$. As both of these are
$G$-equivariant, we see that $U\subset\Ext^1 ({\cal E},{\cal E})$
and $f:U\ra\C$ are both $G$-invariant.

Then in Proposition \ref{dt9prop7}, each of $(T,\tau),(T,{\cal
F}),(S,{\cal D})$ is equivariant under an action of $G$, which fixes
$t,0$ and acts on $T_tT\cong\Ext^1({\cal E},{\cal E})\cong T_0S$
through the action of $\Aut({\cal E})$ on $\Ext^1({\cal E},{\cal
E})$. We can choose the isomorphism of versal families of analytic
vector bundles in Proposition \ref{dt9prop7} to be $G$-equivariant,
since the proofs of the versality property extend readily to
equivariant versality under a compact Lie group. This then implies
that $\Xi:\Crit(f)\ra V$ is $G$-equivariant.

Next we modify $U,f,\Xi$ to make them $G^{\sst\C}$-invariant or
$G^{\sst\C}$-equivariant. Let $U'$ be a $G$-invariant connected open
neighbourhood of $0$ in $U\subseteq\Ext^1({\cal E},{\cal E})$.
Define $V'=\Xi(U')\subset S_\an$. Define $U^{\sst\C}=G^{\sst\C}\cdot
U'$ in $\Ext^1({\cal E},{\cal E})$ and $V^{\sst\C}=G^{\sst\C}\cdot
V'$ in $S_\an$. Then $U^{\sst\C},V^{\sst\C}$ are
$G^{\sst\C}$-invariant, and are open in $\Ext^1({\cal E},{\cal E}),
S_\an$, as they are unions of open sets $\ga\cdot U,\ga\cdot V$ over
all~$\ga\in G^{\sst\C}$.

We wish to define $f^{\sst\C}:U^{\sst\C}\ra\C$ by
$f^{\sst\C}(\ga\cdot u)=f(u)$ for $\ga\in G^{\sst\C}$ and $u\in U'$,
and $\Xi^{\sst\C}:\Crit(f^{\sst\C})\ra V^{\sst\C}$ by $\Xi(\ga\cdot
u)=\ga\cdot\Xi(u)$ for $\ga\in G^{\sst\C}$ and
$u\in\Crit(f\vert_{U'})$. Clearly $f^{\sst\C}$ is
$G^{\sst\C}$-invariant, and $\Xi^{\sst\C}$ is
$G^{\sst\C}$-equivariant, provided they are well-defined. To show
they are, we must prove that if $\ga_1,\ga_2\in G^{\sst\C}$ and
$u_1,u_2\in U'$ with $\ga_1\cdot u_1=\ga_2\cdot u_2$ then
$f(u_1)=f(u_2)$, and $\ga_1\cdot\Xi(u_1)=\ga_2\cdot\Xi(u_2)$.

The $G^{\sst\C}$-orbit $G^{\sst\C}\cdot u_1\!=\!G^{\sst\C}\cdot u_2$
is a $G$-invariant complex submanifold of $\Ext^1({\cal E},{\cal
E})$, so $(G^{\sst\C}\cdot u_1)\cap U$ is a $G$-invariant complex
submanifold of $U$. Since $f$ is $G$-invariant, it is constant on
each $G$-orbit in $(G^{\sst\C}\cdot u_1)\cap U$, so as $f$ is
holomorphic it is constant on each connected component of
$(G^{\sst\C}\cdot u_1)\cap U$. We require that the $G$-invariant
open neighbourhood $U'$ of $0$ in $U$ should satisfy the following
condition: whenever $u_1,u_2\in U'$ with $G^{\sst\C}\cdot
u_1=G^{\sst\C}\cdot u_2$, then the connected component of
$(G^{\sst\C}\cdot u_1)\cap U$ containing $u_1$ should intersect
$G\cdot u_2$. This is true provided $U'$ is sufficiently small.

Suppose this condition holds. Then $f$ is constant on the connected
component of $(G^{\sst\C}\cdot u_1)\cap U$ containing $u_1$, with
value $f(u_1)$. This component intersects $G\cdot u_2$, so it
contains $\ga\cdot u_2$ for $\ga\in G$. Hence $f(u_1)=f(\ga\cdot
u_2)=f(u_2)$ by $G$-invariance of $f$, and $f^{\sst\C}$ is
well-defined. To show $\Xi^{\sst\C}$ is well-defined we use a
similar argument, based on the fact that if $\ga\in G^{\sst\C}$ and
$u,\ga\cdot u$ lie in the same connected component of
$(G^{\sst\C}\cdot u)\cap U$ then $\Xi(\ga\cdot u)=\ga\cdot\Xi(u)$,
since this holds for $\ga\in G$ and $\Xi$ is holomorphic. Then
$U^{\sst\C},f^{\sst\C},V^{\sst\C},\Xi^{\sst\C}$ satisfy the last
part of Theorem \ref{dt5thm3}, completing the proof.\index{complex
analytic space|)}

\section[The proof of Theorem $\text{\ref{dt5thm4}}$]{The proof of
Theorem \ref{dt5thm4}}
\label{dt10}\index{Behrend function!identities|(}

Next we prove Theorem \ref{dt5thm4}. Sections \ref{dt101} and
\ref{dt102} prove equations \eq{dt5eq2} and \eq{dt5eq3}. The authors
got an important idea in the proof, that of proving
\eq{dt5eq2}--\eq{dt5eq3} by localizing at the fixed points of the
action of $\bigl\{\id_{E_1}+\la\id_{E_2}: \la\in\U(1)\bigr\}$ on
$\Ext^1(E_1\op E_2,E_1\op E_2)$, from Kontsevich and
Soibelman~\cite[\S 4.4 \& \S 6.3]{KoSo1}.

\subsection[Proof of equation $\text{\eq{dt5eq2}}$]{Proof of equation
\eq{dt5eq2}}
\label{dt101}

We now prove equation \eq{dt5eq2} of Theorem \ref{dt5thm4}. Let $X$
be a Calabi--Yau $3$-fold over $\C,$ $\fM$ the moduli stack of
coherent sheaves on $X$, and $E_1,E_2$ be coherent sheaves on $X$.
Set $E=E_1\op E_2$. Choose a maximal compact subgroup\index{maximal
compact subgroup} $G$ of $\Aut(E)$ which contains the
$\U(1)$-subgroup $T=\bigl\{\id_{E_1}+\la\id_{E_2} :\la\in\U(1)\}$.
Apply Theorem \ref{dt5thm3} with these $E$ and $G$. This gives an
$\Aut(E)$-invariant $\C$-subscheme $S$ in $\Ext^1(E,E)$ with $0\in
S$ and $T_0S=\Ext^1(E,E)$, an \'etale 1-morphism
$\Phi:[S/\Aut(E)]\ra\fM$ with $\Phi([0])=[E]$, a
$G^{\sst\C}$-invariant open neighbourhood $U$ of 0 in $\Ext^1(E,E)$
in the analytic topology, a $G^{\sst\C}$-invariant holomorphic
function $f:U\ra\C$ with $f(0)=\rd f\vert_0=0$, a
$G^{\sst\C}$-invariant open neighbourhood $V$ of 0 in $S_\an$, and a
$G^{\sst\C}$-equivariant isomorphism of complex analytic spaces
$\Xi:\Crit(f)\ra V$ with $\Xi(0)=0$ and\/ $\rd\Xi\vert_0$ the
identity map on $\Ext^1(E,E)$.

Then the Behrend function $\nu_\fM$ at $[E]=[E_1\op E_2]$ satisfies
\e
\begin{split}
\nu_{\fM}(E_1\op E_2)&=\nu_{[S/\Aut(E)]}(0)=(-1)^{\dim\Aut(E)}
\nu_S(0)\\
&=(-1)^{\dim\Aut(E)+\dim\Ext^1(E,E)}\bigl(1-\chi(MF_f(0))\bigr),
\end{split}
\label{dt10eq10}
\e
where in the first step we use that as $\Phi$ is \'etale it is
smooth of relative dimension 0, Theorem \ref{dt4thm1}(ii), and
Corollary \ref{dt4cor1}, in the second step Proposition
\ref{dt4prop2}, and in the third Theorem~\ref{dt4thm2}.

To define the Milnor fibre\index{Milnor fibre} $MF_f(0)$ of $f$ we use a
Hermitian metric on $\Ext^1(E,E)$ invariant under the action of the
compact Lie group $G$. Since $U,f$ are $G$-invariant, it follows
that $\Phi_{f,0}$ and its domain is $G$-invariant, so each fibre
$\Phi_{f,0}^{-1}(z)$ for $0<\md{z}<\ep$ is $G$-invariant. Thus $G$,
and its $\U(1)$-subgroup $T$, acts on the Milnor fibre $MF_f(0)$.
Now $MF_f(0)$ is a manifold, the interior of a compact manifold with
boundary $\overline{MF_f(0)}$, and $T$ acts smoothly on $MF_f(0)$
and $\overline{MF_f(0)}$. Each orbit of $T$ on $MF_f(0)$ is either a
single point, a fixed point of $T$, or a circle ${\cal S}^1$. The
circle orbits contribute zero to $\chi(MF_f(0))$, as $\chi({\cal
S}^1)=0$, so
\e
\chi\bigl(MF_f(0)\bigr)=\chi\bigl(MF_f(0)^T\bigr),
\label{dt10eq11}
\e
where $MF_f(0)^T$ is the fixed point set of $T$ in~$MF_f(0)$.

Consider how $T=\bigl\{\id_{E_1}+\la\id_{E_2}:\la\in\U(1)\}$ acts on
\e
\Ext^1(E,E)\!=\!\Ext^1(E_1,E_1)\!\times\!\Ext^1(E_2,E_2)\!\times\!\Ext^1(E_1,E_2)
\!\times\!\Ext^1(E_2,E_1).
\label{dt10eq12}
\e
As in Theorem \ref{dt5thm3}, $\ga\in T$ acts on $\ep\in\Ext^1(E,E)$
by $\ga:\ep\mapsto\ga\ci\ep\ci\ga^{-1}$. So $\id_{E_1}+\la\id_{E_2}$
fixes the first two factors on the r.h.s.\ of \eq{dt10eq12},
multiplies the third by $\la^{-1}$ and the fourth by $\la$.
Therefore
\e
\Ext^1(E,E)^T=\Ext^1(E_1,E_1)\times\Ext^1(E_2,E_2)\times\{0\}\times\{0\}.
\label{dt10eq13}
\e
Now $MF_f(0)^T=MF_f(0)\cap\Ext^1(E,E)^T=
MF_{f\vert_{\Ext^1(E,E)^T}}(0)$. But $\Crit(f)^T=
\Crit(f\vert_{\Ext^1(E,E)^T})$. Also as $\Xi$ is $T$-equivariant, it
induces a local isomorphism of complex analytic spaces between
$S_\an^T$ near 0 and $\Crit(f)^T$ near 0. Hence
\e
\begin{split}
\nu_{S^T}(0)&=(-1)^{\dim\Ext^1(E_1,E_1)+\dim\Ext^1(E_2,E_2)}
\bigl(1-\chi(MF_{f\vert_{\Ext^1(E,E)^T}}(0)\bigr)\\
&=(-1)^{\dim\Ext^1(E_1,E_1)+\dim\Ext^1(E_2,E_2)}
\bigl(1-\chi(MF_f(0)^T)\bigr)\\
&=(-1)^{\dim\Ext^1(E_1,E_1)+\dim\Ext^1(E_2,E_2)}
\bigl(1-\chi(MF_f(0))\bigr),
\end{split}
\label{dt10eq14}
\e
using Theorem \ref{dt4thm2} and equations \eq{dt10eq11}
and~\eq{dt10eq13}.

Let $s'\in S^T(\C)\subseteq S(\C)$, and set $[E']=\Phi_*(s')$ in
$\fM(\C)$, so that $E'\in\coh(X)$. As $\Phi$ is \'etale,\index{etale
morphism@\'etale morphism} it induces isomorphisms of stabilizer
groups.\index{Artin stack!stabilizer group} But $\Iso_{[S/\Aut(E)]}(s')=
\Stab_{\Aut(E)}(s')$, and $\Iso_{\fM}([E'])=\Aut(E')$, so we have an
isomorphism of complex Lie groups $\Phi_*:\Stab_{\Aut(E)}(s')
\ra\Aut(E')$. As $s'\in S^T(\C)$ we have $T\subset\Stab_{\Aut(E)}
(s')$, so $\Phi_*\vert_T:T\ra\Aut(E')$ is an injective morphism of
Lie groups. Let $R$ be the $\C$-subscheme of points $s'$ in $S^T$
for which $\Phi_*\vert_T(\id_{E_1}+\la\id_{E_2})=
\id_{E_1'}+\la\id_{E_2'}$ for some splitting $E'\cong E_1'\op E_2'$
and all $\la\in\U(1)$. Taking $E_1'=E_1$, $E_2'=E_2$ shows
that~$0\in R(\C)$.

We claim $R$ is open and closed in $S^T$. To see this, note that
$\Phi_T$ is of the form $\Phi_*\vert_T(\id_{E_1}+\la\id_{E_2})=
\la^{a_1}\id_{F_1}+\cdots+ \la^{a_k}\id_{F_k}$, for some splitting
$E'=F_1\op\cdots\op F_k$ with $F_1,\ldots,F_k$ indecomposable and
$a_1,\ldots,a_k\in\Z$. Then $R$ is the subset of $s'$ with
$\{a_1,\ldots,a_k\}=\{0,1\}$. Therefore we see that
\e
E'\cong\ts\bigoplus_{a\in\Z}\Ker\bigl(\la^a\id_{E'}-\Phi_*
\vert_T(\id_{E_1}+\la\id_{E_2})\bigr)
\label{dt10eq15}
\e
for $\la\in\U(1)$ not of finite order, with only finitely many
nonzero terms. Now the Hilbert polynomial\index{Hilbert polynomial} at
$n\gg 0$ of each term on the r.h.s.\ of \eq{dt10eq15} is upper
semicontinuous in $S^T$, and of the l.h.s.\ is locally constant in
$S^T$. Hence the Hilbert polynomials of each term in \eq{dt10eq15}
are locally constant in $S^T$, and in particular, whether
$\Ker\bigl(\la^a\id_{E'} -\Phi_*
\vert_T(\id_{E_1}+\la\id_{E_2})\bigr)\ne 0$ is locally constant in
$S^T$. As $R$ is the subset of $s'$ with $\Ker\bigl(\la^a\id_{E'}
-\Phi_* \vert_T(\id_{E_1}+\la\id_{E_2})\bigr)\ne 0$ if and only if
$a=0,1$, we see $R$ is open and closed in~$S^T$.

The subgroup $\Aut(E_1)\times\Aut(E_2)$ of $\Aut(E)$ commutes with
$T$. Hence the action of $\Aut(E_1)\times\Aut(E_2)$ on $S$ induced
by the action of $\Aut(E)$ on $S$ preserves $S^T$. The action of
$\Aut(E_1)\times\Aut(E_2)$ on $s'\in S^T(\C)$ does not change $E'$
or $\Phi_*\vert_T:T\ra\Aut(E')$ above up to isomorphism, so
$\Aut(E_1)\times\Aut(E_2)$ also preserves $R$. Hence we can form the
quotient stack $[R/\Aut(E_1)\times\Aut(E_2)]$. The inclusions
$R\hookra S$, $\Aut(E_1)\times\Aut(E_2)\hookra\Aut(E)$ induce a
1-morphism of quotient stacks
$\io:[R/\Aut(E_1)\times\Aut(E_2)]\ra[S/\Aut(E)]$. The family of
coherent sheaves parametrized by $S$, $E_S$, pulls back to a family
of coherent sheaves, $E_R$, parametrized by $R$. By definition of
$R$ we have a global splitting $E_R \cong E_{R,1} \oplus E_{R,2}$,
where $E_{R,1},E_{R,2}$ are the eigensubsheaves of $\Phi_*
\vert_T(\id_{E_1}+\la\id_{E_2})$ in $E_R$ with eigenvalues $1,\la$.
These $E_{R,1},E_{R,2}$ induce a 1-morphism $\Psi$ from
$[R/\Aut(E_1) \times \Aut(E_2)]$ to~$\fM\times\fM$.

Then we have a commutative diagram of 1-morphisms of Artin
$\C$-stacks
\e
\begin{gathered}
\xymatrix@C=60pt@R=15pt{
[R/\Aut(E_1)\times\Aut(E_2)]\ar[r]_(0.55)\io \ar[d]^\Psi &
[S/\Aut(E)] \ar[d]_\Phi \\ \fM\times\fM \ar[r]^(0.55)\La & \fM,}
\end{gathered}
\label{dt10eq16}
\e
where $\La:\fM\times\fM\ra\fM$ is the 1-morphism acting on points as
$\La:(E_1',E_2')\mapsto E_1'\op E_2'$, such that $\Psi$ maps $[0]$
to $[(E_1,E_2)]$, with $\Psi_*:\Iso_{[R/\Aut(E_1)\times\Aut
(E_2)]}(0)\ab\ra \Iso_{\fM\times\fM}(E_1,E_2)$ the identity map on
$\Aut(E_1)\times\Aut(E_2)$. Furthermore, we will show that
\eq{dt10eq16} is {\it locally\/ $2$-Cartesian}, in the sense that
$[R/\Aut(E_1)\times \Aut(E_2)]$ is 1-isomorphic to an open substack
$\fN$ of the fibre product
$(\fM\times\fM)\times_{\La,\fM,\Phi}[S/\Aut(E)]$. Since the diagram
\eq{dt10eq16} commutes, there exists a 1-morphism
$\chi:[R/\Aut(E_1)\times\Aut(E_2)]\ra(\fM\times\fM)\times_{\La,\fM,
\Phi}[S/\Aut(E)]$. It is sufficient to construct a local inverse
for~$\chi$.

The reason it may not be globally 2-Cartesian is that there might be
points $s'\in S$ with $\Phi_*([s'])=[E_1'\op E_2']$, so that
$\Phi_*:\Stab_{\Aut(E)}(s')\ra\Aut(E_1'\op E_2')$ is an isomorphism,
but such that the $\U(1)$-subgroup
$\Phi_*^{-1}\bigl(\{\id_{E_1'}+\la\id_{E_2'}:\la\in\U(1)\}\bigr)$ in
$\Aut(E)$ is not conjugate to $T$ in $\Aut(E)$. Then $s',E_1',E_2'$
would yield a point in
$(\fM\times\fM)\times_{\La,\fM,\Phi}[S/\Aut(E)]$ not corresponding
to a point of $[R/\Aut(E_1)\times\Aut(E_2)]$. However, since
$\U(1)$-subgroups of $\Aut(E)$ up to conjugation are discrete
objects, the condition that $\Phi_*^{-1}\bigl(\{\id_{E_1'}+
\la\id_{E_2'}:\la\in\U(1)\}\bigr)$ is conjugate to $T$ in $\Aut(E)$
is open in $(\fM\times\fM)\times_{\La,\fM,\Phi} [S/\Aut(E)]$. Write
$\fN$ for this open substack of
$(\fM\times\fM)\times_{\La,\fM,\Phi}[S/\Aut(E)]$. Then $\chi$
maps~$[R/\Aut(E_1)\times\Aut(E_2)]\ra\fN$.

Let $B$ be a base $\C$-scheme and $\theta:B\ra\fN$ a 1-morphism.
Then $(B,\theta)$ parametrizes the following objects: a principal
$\Aut(E)$-torsor $\eta:P\ra B$; an $\Aut(E)$-equivariant morphism
$\ze:P\ra S$; a $B$-family of coherent sheaves $E_B\cong E_{B,1}\op
E_{B,2}$; and an isomorphism $\ze^*(E_S)\cong\eta^*(E_B)$, where
$E_S$ is the family of coherent sheaves parametrized by $S$. The
open condition on $\fN$ implies that $\ze$ maps $P$ into $R\subset
S^T$. The isomorphism between $\ze^*(E_S)$ and $\eta^*(E_B)$ implies
there exists an $\bigl(\Aut(E_1)\times\Aut(E_2)\bigr)$-subtorsor $Q$
of $P$ over $B$ and the restriction of $\ze$ to $Q$ is
$\bigl(\Aut(E_1)\times \Aut(E_2)\bigr)$-equivariant. Therefore
$\theta$ induces a 1-morphism
$\ka:B\ra[R/\Aut(E_1)\times\Aut(E_2)]$. As this holds functorially
for all $B,\theta$ there is a 1-morphism $\xi:\fN\ra
[R/\Aut(E_1)\times \Aut(E_2)]$ with $\ka$ 2-isomorphic to
$\xi\ci\theta$ for all such $B,\theta$, and $\xi$ is the required
inverse for~$\chi$.

Since \eq{dt10eq16} is locally 2-Cartesian and $\Phi$ is \'etale,
$\Psi$ is \'etale. Thus $\Psi$ is smooth of relative dimension 0,
and Corollary \ref{dt4cor1} and Theorem \ref{dt4thm1}(ii) imply that
$\nu_{[R/\Aut(E_1)\times\Aut(E_2)]}=\Psi^*(\nu_{\fM\times\fM})$.
Hence
\e
\begin{split}
\nu_\fM(E_1)\nu_\fM(E_2)&=\nu_{\fM\times\fM}(E_1,E_2)=
\nu_{[R/\Aut(E_1)\times\Aut(E_2)]}(0)\\
&=(-1)^{\dim\Aut(E_1)+\dim\Aut(E_2)}\nu_R(0)\\
&=(-1)^{\dim\Aut(E_1)+\dim\Aut(E_2)}\nu_{S^T}(0),
\end{split}
\label{dt10eq17}
\e
using Theorem \ref{dt4thm1}(iii) and Corollary \ref{dt4cor1} in the
first step,
$\nu_{[R/\Aut(E_1)\times\Aut(E_2)]}=\Psi^*(\nu_{\fM\times\fM})$ and
$\Psi_*([0])=[(E_1,E_2)]$ in the second, Proposition \ref{dt4prop2}
in the third, and $R$ open in $S^T$ in the fourth.

Combining equations \eq{dt10eq10}, \eq{dt10eq14} and \eq{dt10eq17}
yields
\e
\begin{split}
\nu_{\fM}(E_1\op E_2)=\, &(-1)^{\dim\Aut(E)+\dim\Ext^1(E,E)}\\
&(-1)^{\dim\Ext^1(E_1,E_1)+\dim\Ext^1(E_2,E_2)}\\
&(-1)^{\dim\Aut(E_1)+\dim\Aut(E_2)}\nu_\fM(E_1)\nu_\fM(E_2).
\end{split}
\label{dt10eq18}
\e
To sort out the signs, note that $\Aut(E)$ is open in
\begin{equation*}
\Hom(E,E)=\Hom(E_1,E_1)\op\Hom(E_2,E_2)\op\Hom(E_1,E_2)\op
\Hom(E_2,E_1).
\end{equation*}
Cancelling $(-1)^{\dim\Hom(E_i,E_i)}$, $(-1)^{\dim\Ext^1(E_i,E_i)}$
for $i=1,2$, the sign in \eq{dt10eq18} becomes
$(-1)^{\dim\Hom(E_1,E_2)+\dim\Hom(E_2,E_1)
+\dim\Ext^1(E_1,E_2)+\dim\Ext^1(E_2,E_1)}$. As $X$ is a Calabi--Yau
3-fold, Serre duality gives $\dim\Hom(E_2,E_1)=\dim\Ext^3(E_1,E_2)$
and $\dim\Ext^1(E_2,E_1)=\dim\Ext^2(E_1,E_2)$. Hence the overall
sign in \eq{dt10eq18} is
\begin{equation*}
(-1)^{\dim\Hom(E_1,E_2)-\dim\Ext^1(E_1,E_2)+\dim\Ext^2(E_1,E_2)
-\dim\Ext^3(E_1,E_2)},
\end{equation*}
which is $(-1)^{\bar\chi([E_1],[E_2])}$, proving~\eq{dt5eq2}.

\subsection[Proof of equation $\text{\eq{dt5eq3}}$]{Proof of equation
\eq{dt5eq3}}
\label{dt102}

We continue to use the notation of \S\ref{dt101}. Using the
splitting \eq{dt10eq12}, write elements of $\Ext^1(E,E)$ as
$(\ep_{11},\ep_{22},\ep_{12},\ep_{21})$ with
$\ep_{ij}\in\Ext^1(E_i,E_j)$.

\begin{prop} Let\/ $\ep_{12}\in\Ext^1(E_1,E_2)$ and\/
$\ep_{21}\in\Ext^1(E_2,E_1)$. Then
\begin{itemize}
\setlength{\itemsep}{0pt}
\setlength{\parsep}{0pt}
\item[{\bf(i)}] $(0,0,\ep_{12},0),(0,0,0,\ep_{21})\in\Crit(f)
\subseteq U\subseteq\Ext^1(E,E),$ and\/ $(0,0,\ep_{12},0),\ab
(0,\ab 0,\ab 0,\ab\ep_{21})\in V\subseteq
S(\C)\subseteq\Ext^1(E,E);$
\item[{\bf(ii)}] $\Xi$ maps $(0,0,\ep_{12},0) \mapsto
(0,0,\ep_{12},0)$ and\/
$(0,0,0,\ep_{21})\mapsto(0,0,0,\ep_{21});$ and
\item[{\bf(iii)}] $\Phi_*:[S/\Aut(E)] (\C)\ra\fM(\C),$ the
induced morphism on closed points, maps $[(0,\ab 0,\ab
0,\ab\ep_{21})]\mapsto[F]$ and\/ $[(0,0,\ep_{12},0)]\mapsto
[F'],$ where the exact sequences $0\ra E_1\ra F\ra E_2\ra 0$
and\/ $0\ra E_2\ra F'\ra E_1\ra 0$ in $\coh(X)$ correspond to
$\ep_{21}\in\Ext^1(E_2,E_1)$ and\/ $\ep_{12}\in\Ext^1(E_1,E_2),$
respectively.
\end{itemize}
\label{dt10prop2}
\end{prop}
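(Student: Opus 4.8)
\textbf{Proof proposal for Proposition \ref{dt10prop2}.}

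The plan is to exploit the fact that the $\U(1)$-action of $T=\{\id_{E_1}+\la\id_{E_2}:\la\in\U(1)\}$, and indeed the $\bG_m$-action of $T^{\sst\C}$, acts linearly on $\Ext^1(E,E)$ with the weight decomposition described in \S\ref{dt101}: the subspaces $\Ext^1(E_1,E_1),\Ext^1(E_2,E_2)$ have weight $0$, while $\Ext^1(E_1,E_2)$ has weight $-1$ and $\Ext^1(E_2,E_1)$ has weight $+1$. Since $U,f,S,V,\Xi$ were all chosen $G^{\sst\C}$-invariant (equivariant for $\Xi$) in the application of Theorem \ref{dt5thm3}, and $T^{\sst\C}\subseteq G^{\sst\C}$, all these objects are $T^{\sst\C}$-invariant. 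This is the structural input that drives everything.

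For (i), first observe that the lines $L_{12}=\{(0,0,\ep_{12},0):\ep_{12}\in\Ext^1(E_1,E_2)\}$ and $L_{21}=\{(0,0,0,\ep_{21}):\ep_{21}\in\Ext^1(E_2,E_1)\}$ are the weight $-1$ and weight $+1$ eigenspaces of $T^{\sst\C}$ respectively. On such a weight-$k$ eigenspace with $k\ne 0$, a $T^{\sst\C}$-invariant holomorphic function $f$ (with $f(0)=0$) must vanish identically: expanding $f$ as a power series, every monomial has a definite $T^{\sst\C}$-weight, which must be $0$ to be invariant, but on a one-weight eigenspace the only weight-$0$ monomial is the constant. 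Hence $f\vert_{L_{12}}\equiv 0$ and $f\vert_{L_{21}}\equiv 0$ near $0$, so $\rd f$ vanishes along these lines (in the transverse directions one uses that $f$ is invariant, so its derivative transforms with the same weight, but is evaluated at a point whose stabilizer is all of $T^{\sst\C}$; a weight-$(-k)$ covector fixed by $T^{\sst\C}$ on the weight decomposition must be supported in weight $-k$, and a short bookkeeping argument shows the only surviving components are again zero). More carefully: for $u\in L_{21}$, $\rd f\vert_u\in\Ext^1(E,E)^*$ is $T^{\sst\C}$-equivariant of weight $-1$ relative to the weight of $u$, i.e.\ total weight $0$, so $\rd f\vert_u$ lies in the weight-$0$ part of $\Ext^1(E,E)^*$, which is $(\Ext^1(E_1,E_1)\op\Ext^1(E_2,E_2))^*$; but $f$ restricted to $L_{21}\times(\Ext^1(E_1,E_1)\op\Ext^1(E_2,E_2))$ is $T^{\sst\C}$-invariant and vanishes on $L_{21}\times\{0\}$, and a separate weight argument (or directly, that $\Crit(f)^{T^{\sst\C}}=\Crit(f\vert_{\Ext^1(E,E)^{T^{\sst\C}}})$ contains $0$ and $L_{21}$ projects to $0$ there) forces these components to vanish too. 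Thus $L_{12},L_{21}\subseteq\Crit(f)$. Shrinking $U$ if necessary these lie in $U$; and since $\Xi:\Crit(f)\to V$ is an isomorphism with $\Xi(0)=0$ and $\rd\Xi\vert_0=\id$, and $\Crit(f)\subseteq U$ is $T^{\sst\C}$-invariant, the images $\Xi(L_{12}),\Xi(L_{21})$ lie in $V\subseteq S(\C)$.

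For (ii), the key point is that $\Xi$ is $T^{\sst\C}$-equivariant and $\Crit(f)$ contains the eigenlines $L_{12},L_{21}$. The composite $S_\an\hookrightarrow\Ext^1(E,E)$ identifies $S$ with a $T^{\sst\C}$-invariant subscheme, and on the weight-$(+1)$ eigenline $L_{21}$ the map $\Xi:L_{21}\to S_\an$ is a $T^{\sst\C}$-equivariant holomorphic map between one-dimensional weight-$(+1)$ objects with $\rd\Xi\vert_0=\id$; equivariance forces it to be the identity map $L_{21}\to L_{21}$, since any $T^{\sst\C}$-equivariant map of weight-$(+1)$ representations is linear and is pinned down by its derivative at $0$. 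Likewise on $L_{12}$. Hence $\Xi(0,0,\ep_{12},0)=(0,0,\ep_{12},0)$ and $\Xi(0,0,0,\ep_{21})=(0,0,0,\ep_{21})$.

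For (iii), I would use the family of sheaves parametrized by $S$. Recall from Theorem \ref{dt5thm3} that $S$ carries a formally versal family $\cal D$ of coherent sheaves with ${\cal D}_0\cong E=E_1\op E_2$, equivariant under the $\Aut(E)$-action, and $\Phi_*$ sends the point of $S$ corresponding to $s$ to $[{\cal D}_s]$. The point $(0,0,0,\ep_{21})\in S$ is in the weight-$(+1)$ eigenspace; at this point the standard Kuranishi/obstruction-theory description (or directly, the identification of $\Ext^1(E_2,E_1)$ with deformations of $E_1\op E_2$ realized by extensions) shows that the first-order deformation of $E$ in the direction $\ep_{21}\in\Ext^1(E_2,E_1)$ corresponds to the sheaf $F$ sitting in the non-split extension $0\to E_1\to F\to E_2\to 0$ classified by $\ep_{21}$, and since $f$ vanishes on the whole line $L_{21}$ (so there is no obstruction, the deformation is unobstructed along $L_{21}$), this first-order deformation integrates exactly, giving ${\cal D}_{(0,0,0,\ep_{21})}\cong F$. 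Symmetrically ${\cal D}_{(0,0,\ep_{12},0)}\cong F'$ with $0\to E_2\to F'\to E_1\to 0$. Hence $\Phi_*[(0,0,0,\ep_{21})]=[F]$ and $\Phi_*[(0,0,\ep_{12},0)]=[F']$.

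The main obstacle I anticipate is the argument in (i) that $\rd f$ vanishes along the eigenlines in \emph{all} directions, not just tangentially: one must be careful with the weight bookkeeping on $\Ext^1(E,E)^*$ and rule out weight-$0$ components of $\rd f$ at points of $L_{12},L_{21}$. The cleanest route is probably to note directly that $\Crit(f)\supseteq\Crit(f)^{T^{\sst\C}}=\Crit(f\vert_W)$ where $W=\Ext^1(E_1,E_1)\op\Ext^1(E_2,E_2)$ is the weight-$0$ subspace, together with the observation that $f$ being $T^{\sst\C}$-invariant means $f$ is, near the eigenlines, pulled back from a function on $W$ times a quadratic-or-higher form in the weight $\pm1$ variables whose linear-in-$L_{12}$ and linear-in-$L_{21}$ parts must pair the two eigenspaces — so $\rd f$ restricted to $L_{12}$ or $L_{21}$ alone is identically zero. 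Everything else is then routine equivariance and versality bookkeeping. A secondary subtlety in (iii) is matching the \emph{sign/normalization} convention so that $\ep_{21}$ corresponds to $0\to E_1\to F\to E_2\to 0$ rather than the opposite extension; this is fixed by the stated correspondence convention in Theorem \ref{dt5thm4} and the weight $+1$ of $\Ext^1(E_2,E_1)$.
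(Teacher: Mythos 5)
The main gap is in part (i). Your claim that $T^{\sst\C}$-invariance of $f$ alone forces $\rd f$ to vanish at points of the eigenlines is false: an invariant function can contain monomials pairing the weight $-1$ and weight $+1$ subspaces, and these destroy the conclusion. Concretely, if $A:\Ext^1(E_1,E_2)\times\Ext^1(E_2,E_1)\ra\C$ is any nonzero bilinear pairing, then $f(\ep_{11},\ep_{22},\ep_{12},\ep_{21})=A(\ep_{12},\ep_{21})$ is $T^{\sst\C}$-invariant with $f(0)=\rd f\vert_0=0$, yet $\rd f\vert_{(0,0,0,\ep_{21})}$ is nonzero in the $\Ext^1(E_1,E_2)$-directions, so the eigenlines are not contained in $\Crit(f)$. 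What rules out such terms in the actual situation is not invariance but the additional fact that $T_0\Crit(f)$ is all of $\Ext^1(E,E)$ — because $\rd\Xi\vert_0$ is an isomorphism onto $T_sS\cong\Ext^1(E,E)$ — hence the Hessian $(\pd^2f)\vert_0$ vanishes. The paper's proof combines exactly this with the scaling identity coming from invariance, $\rd f\vert_{(0,0,0,\la\ep_{21})}\cdot(0,0,\ep_{12}',0)=\la\,\rd f\vert_{(0,0,0,\ep_{21})}\cdot(0,0,\ep_{12}',0)$: differentiating in $\la$ at $\la=0$ identifies the dangerous component of $\rd f$ with an entry of $(\pd^2f)\vert_0$, which is zero. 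Since your argument never invokes $(\pd^2f)\vert_0=0$, part (i) is not established, and (ii),(iii) rest on it. (Also, "shrinking $U$ if necessary" is backwards: membership of $(0,0,\ep_{12},0)$ in the given $U$ follows from openness of $U$ plus $T^{\sst\C}$-invariance, since the orbit accumulates at $0$; shrinking could only lose points.)

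Your part (ii) is essentially the paper's argument (equivariance of $\Xi$ plus $\Xi(0)=0$, holomorphy, and $\rd\Xi\vert_0=\id$ force $\Xi$ to be the identity on each eigenline), and is fine once (i) is in place. In part (iii), however, the step "since $f$ vanishes on the whole line the first-order deformation integrates exactly, giving ${\cal D}_{(0,0,0,\ep_{21})}\cong F$" is not justified: formal versality pins down $(S,{\cal D})$ only to first order at $0$, so matching Kodaira--Spencer classes does not identify the fibre of ${\cal D}$ at the finite point $(0,0,0,\ep_{21})$ with the extension of class exactly $\ep_{21}$; the classifying map of the tautological extension family into $S$ could deviate from the linear inclusion at second order. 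The paper avoids this by arguing directly on points: $\Phi$ is \'etale with $\rd\Phi_*$ the identity on $\Ext^1(E,E)$, and $[F]$ (respectively $[(0,0,0,\ep_{21})]$) is the unique point of $\fM(\C)$ (respectively of $[S/\Aut(E)](\C)$), in the non-separated topology, which is distinct from but infinitesimally close to $[E]$ (respectively $[0]$) in the direction $(0,0,0,\ep_{21})$, whence $\Phi_*$ must match them. If you prefer your family-theoretic route, you would at least need to combine $G^{\sst\C}$-equivariance of ${\cal D}$ (so that $\Phi_*$ is constant on the punctured orbit $\{(0,0,0,t\ep_{21}):t\ne 0\}$) with an argument that this constant value is $[F]$ and not $[E_1\op E_2]$ or some other sheaf; this is precisely the content your "integrates exactly" assertion skips.
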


\begin{proof} For (i) $T^{\sst\C}=\bigl\{\id_{E_1}+\la\id_{E_2}:
\la\in\bG_m\bigr\}$, which acts on $\Ext^1(E,E)$ by
\e
\la:(\ep_{11},\ep_{22},\ep_{12},\ep_{21})
\mapsto(\ep_{11},\ep_{22},\la^{-1}\ep_{12},\la\ep_{21}).
\label{dt10eq19}
\e
Since $U$ is an open neighbourhood of 0 in $\Ext^1(E,E)$ in the
analytic topology, we see that $(0,0,\la^{-1}\ep_{12},0)\in U$  for
$\md{\la}\gg 1$ and $(0,0,0,\la\ep_{21})\in U$ for $0<\md{\la}\ll
1$. Hence $(0,0,\ep_{12},0),(0,0,0,\ep_{21})\in U$ as $U$ is
$G^{\sst\C}$-invariant, and so $T^{\sst\C}$-invariant.

As $f$ is $T^{\sst\C}$-invariant we have
$f(\ep_{11},\ep_{22},\ep_{12},0)=
f(\ep_{11},\ep_{22},\la^{-1}\ep_{12},0)$, so taking the limit
$\la\ra\iy$ and using continuity of $f$ gives
$f(\ep_{11},\ep_{22},\ep_{12},0)=f(\ep_{11},\ep_{22},0,0)$.
Similarly $f(\ep_{11},\ep_{22},0,\ep_{21})=
f(\ep_{11},\ep_{22},0,0)$. But $f(0,0,0,0)=\rd f\vert_0=0$, so we
see that $f(0,0,\ep_{12},0)=f(0,0,0,\ep_{21})=0$, and
\e
\rd f\vert_{(0,0,\ep_{12},0)}\cdot(\ep_{11}',\ep_{22}',\ep_{12}',0)
=0,\qquad \rd
f\vert_{(0,0,0,\ep_{21})}\cdot(\ep_{11}',\ep_{22}',0,\ep_{21}')=0.
\label{dt10eq20}
\e
Now by \eq{dt10eq19}, $T^{\sst\C}$-invariance of $f$ and linearity
in $\ep_{12}'$ we see that
\begin{equation*}
\rd f\vert_{(0,0,0,\ep_{21})}\cdot(0,0,\ep_{12}',0)=
\la^{-1}\rd f\vert_{(0,0,0,\la\ep_{21})}\cdot(0,0,\ep_{12}',0).
\end{equation*}
Using this and $\rd f\vert_0=0$ to differentiate $\rd
f\cdot(0,0,\ep_{12}',0)$ at 0, we find that
\begin{align*}
(\pd^2 f)&\vert_0\cdot (\ep_{21}\ot \ep_{12}')\\
&=\ts \lim_{\la\ra 0}\la^{-1}\bigl(\rd f\vert_{(0,0,0,\la\ep_{21})}\cdot(0,0,\ep_{12}',0)-\rd f\vert_{(0,0,0,0)}\cdot(0,0,\ep_{12}',0)\bigr)\\
&=\ts \lim_{\la\ra 0}\bigl(\rd f\vert_{(0,0,0,\ep_{21})}\cdot(0,0,\ep_{12}',0)-0\bigr) =\rd f\vert_{(0,0,0,\ep_{21})}\cdot(0,0,\ep_{12}',0).
\end{align*}
But $T_0\Crit(f)=\Ext^1(E,E)$, which implies that $(\pd^2
f)\vert_0=0$, so $\rd f\vert_{(0,0,0,\ep_{21})}
\cdot(0,0,\ep_{12}',0)=0$. Together with \eq{dt10eq20} this gives
$\rd f\vert_{(0,0,0,\ep_{21})}=0$, and similarly $\rd
f\vert_{(0,0,\ep_{12},0)}=0$. Therefore $(0,0,\ep_{12},0),
(0,0,0,\ep_{21})\in\Crit(f)\subseteq U\subseteq\Ext^1(E,E)$, as we
have to prove.

For (ii), let $\Xi(0,0,0,\ep_{21})=(\ep_{11}',\ep_{22}',\ep_{12}',
\ep_{21}')$. As $\Xi$ is $T^{\sst\C}$-equivariant, this gives
$\Xi(0,0,0,\la\ep_{21})=(\ep_{11}',\ep_{22}',\la^{-1}\ep_{12}',
\la\ep_{21}')$. But $\Xi(0)=0$ and $\Xi$ is continuous, so taking
the limit $\la\ra 0$ gives $\Xi(0,0,0,\ep_{21})=(0,0,0,\ep_{21}')$.
Thus $\Xi(0,0,0,\la\ep_{21})=(0,0,0,\la\ep_{21}')$. But
$\rd\Xi\vert_0$ is the identity on $\Ext^1(E,E)$, which forces
$\ep_{21}'=\ep_{21}$. Hence $\Xi(0,0,0,\ep_{21})=(0,0,0,\ep_{21})$,
so that $(0,0,0,\ep_{21})\in V$, and similarly
$\Xi(0,0,\ep_{12},0)=(0,0,\ep_{12},0)$ with $(0,0,\ep_{12},0)\in V$,
as we want.

Part (iii) is trivial when $\ep_{21}=\ep_{12}=0$ and $F=F'=E$, so
suppose $\ep_{21},\ep_{12}\ne 0$. Then $[F]$ is the unique point in
$\fM(\C)$, with its nonseparated topology, which is distinct from
$[E]$ but infinitesimally close to $[E]$ in direction
$(0,0,0,\ep_{21})$ in $T_{[E]}\fM=\Ext^1(E,E)$. Similarly,
$[(0,0,\ep_{12},0)]$ is the unique point in $[S/\Aut(E)]$, with its
nonseparated topology, which is distinct from $[0]$ but
infinitesimally close to $[0]$ in direction $(0,0,\ep_{12},0)$ in
$T_{[0]}[S/\Aut(E)]=\Ext^1(E,E)$. But $\Phi_*$ maps $[0]\mapsto[E]$,
and $\rd\Phi_*:T_{[0]}[S/\Aut(E)]\ra T_{[E]}\fM$ is the identity on
$\Ext^1(E,E)$. It follows that $\Phi_*$ maps
$[(0,0,0,\ep_{21})]\mapsto[F]$, and similarly $\Phi_*$
maps~$[(0,0,\ep_{12},0)]\mapsto[F']$.
\end{proof}

Let $0\ne\ep_{21}\in\Ext^1(E_2,E_1)$ correspond to the short exact
sequence $0\ra E_1\ra F\ra E_2\ra 0$ in $\coh(X)$. Then\index{Milnor
fibre|(}
\e
\begin{split}
\nu_{\fM}(F)&=\nu_{[S/\Aut(E)]}(0,0,0,\ep_{21})
=(-1)^{\dim\Aut(E)}\nu_S(0,0,0,\ep_{21})\\
&=(-1)^{\dim\Aut(E)+\dim\Ext^1(E,E)}\bigl(1-
\chi(MF_f(0,0,0,\ep_{21}))\bigr),
\end{split}
\label{dt10eq21}
\e
using $\Phi_*:[(0,0,0,\ep_{21})]\mapsto[F]$ from Proposition
\ref{dt10prop2}, $\Phi$ smooth of relative dimension 0, Corollary
\ref{dt4cor1} and Theorem \ref{dt4thm1}(ii) in the first step,
Proposition \ref{dt4prop2} in the second, and
$\Xi:(0,0,0,\ep_{21})\mapsto(0,0,0,\ep_{21})$ from Proposition
\ref{dt10prop2} and Theorem \ref{dt4thm2} in the last step.

Substituting \eq{dt10eq21} and its analogue for $F'$ into
\eq{dt5eq3}, using equation \eq{dt10eq10} and
$\chi(MF_f(0))\ab=\chi(MF_{f \vert_{\Ext^1(E,E)^T}}(0))$ from
\S\ref{dt101} to substitute for $\nu_{\fM}(E_1\op E_2)$, and
cancelling factors of $(-1)^{\dim\Aut(E)+\dim\Ext^1(E,E)}$, we see
that \eq{dt5eq3} is equivalent to
\e
\begin{split}
&\int_{[\ep_{21}]\in\mathbb{P}(\Ext^1(E_2,E_1))} \!\!\!\!\!\!
\!\!\!\!\!\!\!\!\!\!\!\!\!\!\!\!\!\!\!\!\!\!\!\!\!\!\!\!\!\!
(1-\chi(MF_f(0,0,0,\ep_{21})))\,\rd\chi -
\int_{[\ep_{12}]\in\mathbb{P}(\Ext^1(E_1,E_2))} \!\!\!\!\!\!
\!\!\!\!\!\!\!\!\!\!\!\!\!\!\!\!\!\!\!\!\!\!\!\!\!\!\!\!\!\!
(1-\chi(MF_f(0,0,\ep_{12},0)))\,\rd\chi\\
&\;\>=\bigl(\dim\Ext^1(E_2,E_1)-\dim\Ext^1(E_1,E_2)\bigr)
\bigl(1-\chi(MF_{f\vert_{\Ext^1(E,E)^T}}(0))\bigr).
\end{split}
\label{dt10eq22}
\e
Here $\chi(MF_f(0,0,0,\ep_{21}))$ is independent of the choice of
$\ep_{21}$ representing the point $[\ep_{21}]\in\mathbb{P}
(\Ext^1(E_2,E_1))$, and is a constructible function\index{constructible
function} of $[\ep_{21}]$, so the integrals in \eq{dt10eq22} are
well-defined.

Set $U'=\bigl\{(\ep_{11},\ep_{22},\ep_{12},\ep_{21})\in
U:\ep_{21}\ne 0\bigr\}$, an open set in $U$, and write $V'$ for the
submanifold of $(\ep_{11},\ep_{22},\ep_{12},\ep_{21})\in U'$ with
$\ep_{12}=0$. Let $\ti U'$ be the blowup of $U'$ along $V'$, with
projection $\pi':\ti U'\ra U'$. Points of $\ti U'$ may be written
$(\ep_{11},\ep_{22},[\ep_{12}],\la\ep_{12},\ep_{21})$, where
$[\ep_{12}]\in\mathbb{P} (\Ext^1(E_1,E_2))$, and $\la\in\C$, and
$\ep_{21}\ne 0$. Write $f'=f\vert_{U'}$ and $\ti f'=f'\ci\pi'$. Then
applying Theorem \ref{dt4thm4} to $U',V',f',\ti U',\pi',\ti f'$ at
the point $(0,0,0,\ep_{21})\in U'$ gives
\e
\begin{split}
\chi\bigl(MF_f(0,0,0&,\ep_{21})\bigr)=\int_{[\ep_{12}]
\in\mathbb{P}(\Ext^1(E_1,E_2))}\!\!\!\!\!\!\chi
\bigl(MF_{\ti f'}(0,0,[\ep_{12}],0,\ep_{21})\bigr)\,\rd\chi\\
&+\bigl(1-\dim\Ext^1(E_1,E_2)\bigr)
\chi\bigl(MF_{f\vert_{V'}}(0,0,0,\ep_{21})\bigr).
\end{split}
\label{dt10eq23}
\e

Let $L_{12}\ra\mathbb{P}(\Ext^1(E_1,E_2))$ and $L_{21}\ra\mathbb{P}
(\Ext^1(E_2,E_1))$ be the tautological line bundles, so that the
fibre of $L_{12}$ over a point $[\ep_{12}]$ in $\mathbb{P}
(\Ext^1(E_1,E_2))$ is the 1-dimensional subspace $\{\la\,\ep_{12}:
\la\in\C\}$ in $\Ext^1(E_1,E_2)$. Consider the line bundle
$L_{12}\ot L_{21}\ra\mathbb{P}(\Ext^1(E_1,E_2))\times \mathbb{P}
(\Ext^1(E_2,E_1))$. The fibre of $L_{12}\ot L_{21}$ over
$([\ep_{12}],[\ep_{21}])$ is $\{\la\,\ep_{12}\ot\ep_{21}:
\la\in\C\}$.

Write points of the total space of $L_{12}\ot L_{21}$ as
$\bigl([\ep_{12}],[\ep_{21}],\la\,\ep_{12}\ot\ep_{21}\bigr)$. Define
$W\subseteq\Ext^1(E_1,E_1)\times \Ext^1(E_2,E_2)\times (L_{12}\ot
L_{21})$ to be the open subset of points
$\bigl(\ep_{11},\ep_{22},[\ep_{12}],
[\ep_{21}],\la\,\ep_{12}\ot\ep_{21}\bigr)$ for which
$(\ep_{21},\ep_{22},\la\,\ep_{12},\ep_{21})$ lies in $U$. Since $U$
is $T^{\sst\C}$-invariant, this definition is independent of the
choice of representatives $\ep_{12},\ep_{21}$ for $[\ep_{12}],
[\ep_{21}]$, since any other choice would replace
$(\ep_{11},\ep_{22},\la\,\ep_{12},\ep_{21})$ by $(\ep_{11},\ep_{22},
\la\mu\,\ep_{12},\mu^{-1}\ep_{21})$ for some $\mu\in\bG_m$. Define a
holomorphic function $h:W\ra\C$ by $h\bigl(\ep_{11},\ep_{22},
[\ep_{12}],[\ep_{21}],\la\,\ep_{12}\ot\ep_{21}\bigr)=f(\ep_{11},
\ep_{22},\la\,\ep_{12},\ep_{21})$. As $f$ is
$T^{\sst\C}$-invariant, the same argument shows $h$ is well-defined.

Define a projection $\Pi:\ti U'\ra W$ by $\Pi:(\ep_{11},\ep_{22},
[\ep_{12}],\la\ep_{12},\ep_{21})\mapsto(\ep_{11},\ep_{22},\ab
[\ep_{12}],\ab[\ep_{21}],\la\ep_{12}\ot\ep_{21})$. Then $\Pi$ is a
smooth holomorphic submersion, with fibre $\bG_m$. Furthermore, we
have $\ti f'\equiv h\ci\Pi$. It follows that the Milnor fibre of
$\ti f'$ at $(\ep_{11},\ep_{22}, [\ep_{12}],\la\ep_{12},\ep_{21})$
is the product of the Milnor fibre of $h$ at $(\ep_{11},\ep_{22},
[\ep_{12}],[\ep_{21}],\ab\la\ep_{12}\ot\ep_{21})$ with a small ball
in $\C$, so they have the same Euler characteristic. That is,
\e
\chi \bigl(MF_{\ti f'}(0,0,[\ep_{12}],0,\ep_{21})\bigr)=\chi
\bigl(MF_h(0,0,[\ep_{12}],[\ep_{21}],0)\bigr).
\label{dt10eq24}
\e
Also, we have $f(\ep_{11},\ep_{22},0,\ep_{21})=f(\ep_{11},
\ep_{22},0,0)$ as in the proof of Proposition \ref{dt10prop2}, so the
Milnor fibre of $f\vert_{V'}$ at $(0,0,0,\ep_{21})$ is the product
of the Milnor fibre of $f\vert_{\Ext^1(E,E)^T}$ at 0 with a small
ball in $\Ext^1(E_2,E_1)$, and they have the same Euler
characteristic. That is,
\e
\chi\bigl(MF_{f\vert_{V'}}(0,0,0,\ep_{21})\bigr) =
\chi\bigl(MF_{f\vert_{\Ext^1(E,E)^T}}(0)\bigr).
\label{dt10eq25}
\e
Substituting \eq{dt10eq24} and \eq{dt10eq25} into \eq{dt10eq23}
gives
\begin{align*}
\chi\bigl(MF_f(0,0,0,&\ep_{21})\bigr)=\int_{[\ep_{12}]\in\mathbb{P}
(\Ext^1(E_1,E_2))}\chi\bigl(MF_h(0,0,[\ep_{12}],[\ep_{21}],0)
\bigr)\,\rd\chi\\
&+\bigl(1-\dim\Ext^1(E_1,E_2)\bigr)\chi\bigl(MF_{f\vert_{\Ext^1
(E,E)^T}}(0)\bigr).
\end{align*}

Integrating this over $[\ep_{21}]\in\mathbb{P}(\Ext^1(E_2,E_1))$
yields
\e
\begin{split}
\int_{[\ep_{21}]\in\mathbb{P}(\Ext^1(E_2,E_1))} \!\!\!\!\!\!\!\!\!\!
\!\!\!\!\!\!\!\!\!\!\!\!\!\!\!\!\!\!\!\!\!\!\!\!\!\!\!\!\!\!
\chi\bigl(MF_f(0,0,0,\ep_{21})\bigr)\,\rd\chi=
\int_{([\ep_{12}],[\ep_{21}])\in\mathbb{P}(\Ext^1(E_1,E_2))\times
\mathbb{P}(\Ext^1(E_2,E_1))}\!\!\!\!
\!\!\!\!\!\!\!\!\!\!\!\!\!\!\!\!\!\!\!\!\!\!\!\!\!\!\!\!\!\!
\!\!\!\!\!\!\!\!\!\!\!\!\!\!\!\!\!\!\!\!\!\!\!\!\!\!\!\!\!\!
\!\!\!\!\!\!\!\!\!\!\! \chi\bigl(MF_h(0,0,[\ep_{12}],[\ep_{21}],0)
\bigr)\,\rd\chi&\\
+\bigl(1-\dim\Ext^1(E_1,E_2)\bigr) \dim\Ext^1(E_2,E_1)\cdot
\chi\bigl(MF_{f\vert_{\Ext^1(E,E)^T}}(0)\bigr)&,
\end{split}
\label{dt10eq26}
\e
since $\chi\bigl(\mathbb{P}(\Ext^1(E_2,E_1))\bigr)=\dim
\Ext^1(E_2,E_1)$. Similarly we have
\e
\begin{split}
\int_{[\ep_{12}]\in\mathbb{P}(\Ext^1(E_1,E_2))} \!\!\!\!\!\!\!\!\!\!
\!\!\!\!\!\!\!\!\!\!\!\!\!\!\!\!\!\!\!\!\!\!\!\!\!\!\!\!\!\!
\chi\bigl(MF_f(0,0,\ep_{12},0)\bigr)\,\rd\chi=
\int_{([\ep_{12}],[\ep_{21}])\in\mathbb{P}(\Ext^1(E_1,E_2))\times
\mathbb{P}(\Ext^1(E_2,E_1))}\!\!\!\!
\!\!\!\!\!\!\!\!\!\!\!\!\!\!\!\!\!\!\!\!\!\!\!\!\!\!\!\!\!\!
\!\!\!\!\!\!\!\!\!\!\!\!\!\!\!\!\!\!\!\!\!\!\!\!\!\!\!\!\!\!
\!\!\!\!\!\!\!\!\!\!\! \chi\bigl(MF_h(0,0,[\ep_{12}],[\ep_{21}],0)
\bigr)\,\rd\chi&\\
+\bigl(1-\dim\Ext^1(E_2,E_1)\bigr) \dim\Ext^1(E_1,E_2)\cdot
\chi\bigl(MF_{f\vert_{\Ext^1(E,E)^T}}(0)\bigr)&.
\end{split}
\label{dt10eq27}
\e
Equation \eq{dt10eq22} now follows from \eq{dt10eq27} minus
\eq{dt10eq26}. This completes the proof of~\eq{dt5eq3}.\index{Behrend
function!identities|)}\index{Milnor fibre|)}

\section[The proof of Theorem $\text{\ref{dt5thm5}}$]{The proof of
Theorem \ref{dt5thm5}}
\label{dt11}

We use the notation of \S\ref{dt2}--\S\ref{dt4} and Theorem
\ref{dt5thm5}. It is sufficient to prove that $\ti\Psi^{\chi,\Q}$ is
a Lie algebra morphism, as $\ti\Psi=\ti\Psi^{\chi,\Q}\ci
\bar\Pi^{\chi,\Q}_\fM$ and $\bar\Pi^{\chi,\Q}_\fM:\SFai(\fM)\ra
\oSFai(\fM,\chi,\Q)$ is a Lie algebra morphism as in \S\ref{dt31}.
The rough idea is to insert Behrend functions\index{Behrend function}
$\nu_\fM$ as weights in the proof of Theorem \ref{dt3thm4} in
\cite[\S 6.4]{Joyc4}, and use the identities\index{Behrend
function!identities} \eq{dt5eq2}--\eq{dt5eq3}. However, \cite[\S
6.4]{Joyc4} involved lifting from Euler characteristics to virtual
Poincar\'e polynomials;\index{virtual Poincar\'e polynomial} here we
give an alternative proof involving only Euler characteristics, and
also change some methods in the proof.

We must show $\smash{\ti\Psi^{\chi,\Q}\bigl([f,g]\bigr)=
\bigl[\ti\Psi^{\chi,\Q}(f),\ti\Psi^{\chi,\Q}(g)\bigr]}$ for
$f,g\in\oSFai (\fM,\chi,\Q)$. It is enough to do this for $f,g$
supported on $\smash{\fM^\al,\fM^\be}$ respectively, for $\al,\be\in
C(\coh(X))\cup\{0\}$. Choose finite type, open $\C$-substacks $\fU$
in $\fM^\al$ and $\fV$ in $\fM^\be$ such that $f,g$ are supported on
$\fU,\fV$. This is possible as $f,g$ are supported on constructible
sets and $\fM^\al,\fM^\be$ are locally of finite type. As $\fU,\fV$
are of finite type the families of sheaves they parametrize are
bounded, so by Serre vanishing\index{Serre vanishing}
\cite[Lem.~1.7.6]{HuLe2} we can choose $n\gg 0$ such that for all
$[E_1]\in\fU(\C)$ and $[E_2]\in\fV(\C)$ we have $H^i(E_j(n))=0$ for
all $i>0$ and $j=1,2$. Hence $\dim
H^0(E_1(n))=\bar\chi\bigl([\cO_X(-n)],\al\bigr)=P_\al(n)$ and $\dim
H^0(E_2(n))=\bar\chi\bigl([\cO_X(-n)],\be\bigr)=P_\be(n)$, where
$P_\al,P_\be$ are the Hilbert polynomials of~$\al,\be$.

Consider Grothendieck's Quot Scheme\index{Quot scheme}
$\Quot_X\bigl( U\ot\cO_X(-n),P_\al\bigr)$, explained in \cite[\S
2.2]{HuLe2}, which parametrizes quotients
$U\ot\cO_X(-n)\twoheadrightarrow E$ of the fixed coherent sheaf
$U\ot\cO_X(-n)$ over $X$, such that $E$ has fixed Hilbert polynomial
$P_\al$. By \cite[Th.~2.2.4]{HuLe2},
$\Quot_X\bigl(U\ot\cO_X(-n),P_\al\bigr)$ is a projective $\C$-scheme
representing the moduli functor $\underline{\Quot}{}_X\bigl(
U\ot\cO_X(-n),P_\al\bigr)$ of such quotients.

Define $Q_{\fU,n}$ to be the subscheme of $\Quot_X\bigl(
U\ot\cO_X(-n),P_\al\bigr)$ representing quotients
$U\ot\cO_X(-n)\twoheadrightarrow E_1$ such that $[E_1]\in\fU(\C)$,
and the morphism $U\ot\cO_X(-n)\twoheadrightarrow E_1$ is induced by
an isomorphism $\phi:U\ra H^0(E_1(n))$, noting that $[E_1]
\in\fU(\C)$ implies that $\dim H^0(E_1(n))=P_\al(n)=\dim U$. This is
an open condition on $U\ot\cO_X(-n)\twoheadrightarrow E_1$, as $\fU$
is open in $\fM^\al$, so $Q_{\fU,n}$ is open in $\Quot_X\bigl(
U\ot\cO_X(-n),P_\al\bigr)$, and is a quasiprojective $\C$-scheme,
with
\e
\begin{split}
Q_{\fU,n}(\C)\cong\bigl\{&\text{isomorphism classes
$[(E_1,\phi_1)]$ of pairs $(E_1,\phi_1)$:}\\
&\text{$[E_1]\in\fU(\C)$, $\phi_1:U\ra H^0(E_1(n))$ is an
isomorphism}\bigr\}.
\end{split}
\label{dt11eq1}
\e
The algebraic $\C$-group $\GL(U)\cong\GL(P_\al(n),\C)$ acts on the
right on $Q_{\fU,n}$, on points as $\ga:[(E_1,\phi_1)]\mapsto
[(E_1,\phi_1\ci\ga)]$ in the representation \eq{dt11eq1}. Similarly,
we define an open subscheme $Q_{\fV,n}$ in
$\Quot_X\bigl(V\ot\cO_X(-n),P_\be\bigr)$ with a right action of
$\GL(V)$. In the usual way we have 1-isomorphisms of Artin
$\C$-stacks
\e
\smash{\fU\cong [Q_{\fU,n}/\GL(U)],\qquad
\fV\cong[Q_{\fV,n}/\GL(V)],}
\label{dt11eq2}
\e
which write $\fU,\fV$ as global quotient stacks.

The definition of the Ringel--Hall\index{Ringel--Hall algebra}
multiplication $*$ on $\SFa(\fM)$ in \S\ref{dt31} involves the
moduli stack $\fExact$ of short exact sequences $0\ra E_1\ra F\ra
E_2\ra 0$ in $\coh(X)$, and 1-morphisms
$\pi_1,\pi_2,\pi_3:\fExact\ra\fM$ mapping $0\ra E_1\ra F\ra E_2\ra
0$ to $E_1,F,E_2$ respectively. Thus we have a 1-morphism
$\pi_1\times\pi_3:\fExact\ra\fM\times\fM$. We wish to describe
$\fExact$ and $\pi_1\times\pi_3$ over $\fU\times\fV$ in
$\fM\times\fM$. Suppose $[0\ra E_1\ra F\ra E_2\ra 0]$ is a point in
$\fExact(\C)$ which is mapped to $(\fU\times\fV)(\C)$ by
$\pi_1\times\pi_3$. Then $[E_1]\in\fU(\C)$ and $[E_2]\in\fV(\C)$, so
$E_1,E_2$ have Hilbert polynomials $P_\al,P_\be$, and thus $F$ has
Hilbert polynomial\index{Hilbert polynomial} $P_{\al+\be}$. Also
$H^i(E_j(n))=0$ for all $i>0$ and $j=1,2$ and $\dim
H^0(E_1(n))=P_\al(n)$, $\dim H^0(E_2(n))=P_\be(n)$. Applying
$\Hom(\cO_X(-n),*)$ to $0\ra E_1\ra F\ra E_2\ra 0$ shows that
\begin{equation*}
\xymatrix{0 \ar[r] & H^0(E_1(n)) \ar[r] & H^0(F(n)) \ar[r] &
H^0(E_2(n)) \ar[r] & 0}
\end{equation*}
is exact, so that $\dim H^0(F(n))=P_{\al+\be}(n)$, and $H^i(F(n))=0$
for~$i>0$.

By a similar argument to the construction of the Quot
scheme\index{Quot scheme} in \cite[\S 2.2]{HuLe2}, one can construct
a `Quot scheme for exact sequences' $0\ra E_1\ra F\ra E_2\ra 0$,
which are quotients of the natural split short exact sequence of
coherent sheaves $0\ra U\ot\cO_X(-n)\ra (U\op V)\ot\cO_X(-n)\ra
V\ot\cO_X(-n)\ra 0$. There is an open subscheme $Q_{\fU,\fV,n}$ of
this Quot scheme for exact sequences such that, in a similar way to
\eq{dt11eq1}, there is a natural identification between
$Q_{\fU,\fV,n}(\C)$ and the set of isomorphism classes of data
$(0\ra E_1\ra F\ra E_2\ra 0,\phi_1,\phi,\phi_2)$ where $\phi_1:U\ra
H^0(E_1(n))$, $\phi:U\op V\ra H^0(F(n))$ and $\phi_2:V\ra
H^0(E_2(n))$ are isomorphisms, and the following diagram commutes:
\begin{equation*}
\xymatrix@R=10pt{ 0 \ar[r] & U \ar[r] \ar[d]^{\phi_1}_\cong & U\op V
\ar[r] \ar[d]^{\phi}_\cong & V \ar[r] \ar[d]^{\phi_2}_\cong & 0 \\
0 \ar[r] & H^0(E_1(n)) \ar[r] & H^0(F(n)) \ar[r] & H^0(E_2(n))
\ar[r] & 0.}
\end{equation*}

The automorphism group of the sequence $0\ra U\ra U\op V\ra V\ra 0$
is the algebraic $\C$-group $(\GL(U)\times\GL(V))\lt\Hom(V,U)$, with
multiplication
\begin{equation*}
(\ga,\de,\ep)\cdot(\ga',\de',\ep')=(\ga\ci\ga',\de\ci\de',
\ga\ci\ep'+\ep\ci\de')
\end{equation*}
for $\ga,\ga'\in\GL(U)$, $\de,\de'\in\GL(V)$,
$\ep,\ep'\in\Hom(V,U)$. It is the subgroup of elements
$\bigl(\begin{smallmatrix} \ga & \ep \\ 0 & \de
\end{smallmatrix}\bigr)$ in $\GL(U\op V)$. Then $(\GL(U)\times\GL(V))\lt
\Hom(V,U)$ acts naturally on the right on $Q_{\fU,\fV,n}$. On
points in the representation above it acts by
\begin{align*}
(\ga,\de,\ep):(0&\ra E_1\ra F\ra E_2\ra
0,\phi_1,\phi,\phi_2)\longmapsto\\
&\bigl(0\ra E_1\ra F\ra E_2\ra
0,\phi_1\ci\ga,\phi\ci\bigl(\begin{smallmatrix} \ga &
\ep \\ 0 & \de \end{smallmatrix}\bigr),\phi_2\ci\de\bigr).
\end{align*}
As for \eq{dt11eq2}, we have a 1-isomorphism
\e
\!(\fU\times\fV)_{\io_\fU\times
\io_\fV,\fM\times\fM,\pi_1\times\pi_3}\fExact\!
\cong\!\bigl[Q_{\fU,\fV,n}/(\GL(U)\!\times\!\GL(V))\lt\Hom(V,U)\bigr],\!\!
\label{dt11eq3}
\e
where $\io_\fU:\fU\ra\fM$, $\io_\fV:\fV\ra\fM$ are the inclusions,
and the l.h.s.\ of \eq{dt11eq3} is the open $\C$-substack of
$\fExact$ taken to $\fU\times\fV$ in $\fM\times\fM$
by~$\pi_1\times\pi_3$.

There are projections $\Pi_\fU:Q_{\fU,\fV,n}\ra Q_{\fU,n}$,
$\Pi_\fV:Q_{\fU,\fV,n}\ra Q_{\fV,n}$ acting by
\begin{equation*}
\Pi_\fU,\Pi_\fV:\bigl[(0\ra E_1\ra F\ra E_2\ra
0,\phi_1,\phi,\phi_2)\bigr]\longmapsto\bigl[(E_1,\phi_1)\bigr],
\bigl[(E_2,\phi_2)\bigr].
\end{equation*}
Combining $\Pi_\fU,\Pi_\fV$ with the natural projections of
algebraic $\C$-groups
$(\GL(U)\times\GL(V))\lt\Hom(V,U)\ra\GL(U),\GL(V)$ gives 1-morphisms
\e
\begin{split}
\Pi_\fU':\bigl[Q_{\fU,\fV,n}/(\GL(U)\times\GL(V))\lt\Hom(V,U)\bigr]
&\longra[Q_{\fU,n}/\GL(U)],\\
\Pi_\fV':\bigl[Q_{\fU,\fV,n}/(\GL(U)\times\GL(V))\lt\Hom(V,U)\bigr]
&\longra[Q_{\fV,n}/\GL(V)],
\end{split}
\label{dt11eq4}
\e
which are 2-isomorphic to $\pi_1,\pi_3$ under the 1-isomorphisms
\eq{dt11eq2}, \eq{dt11eq3}. There is a morphism $z:Q_{\fU,n}\times
Q_{\fV,n}\ra Q_{\fU,\fV,n}$ which embeds $Q_{\fU,n}\times Q_{\fV,n}$
as a closed subscheme of $Q_{\fU,\fV,n}$, given on points by
\begin{equation*}
z:\bigl([(E_1,\phi_1)],[(E_2,\phi_2)]\bigr)\mapsto \bigl[(0\!\ra\!
E_1\!\ra\! E_1\!\op E_2\!\ra\!E_2\!\ra\!
0,\phi_1,\phi_1\op\phi_2,\phi_2)\bigr].
\end{equation*}
Write $Q_{\fU,\fV,n}'=Q_{\fU,\fV,n}\sm z(Q_{\fU,n}\times
Q_{\fV,n})$, an open subscheme of $Q_{\fU,\fV,n}$.

Let $q_1\in Q_{\fU,n}(\C)$ correspond to $[(E_1,\phi_1)]$ under
\eq{dt11eq1}, and $q_2\in Q_{\fV,n}(\C)$ correspond to
$[(E_2,\phi_2)]$. Then the fibre $(\Pi_\fU\times\Pi_\fV)^*(q_1,q_2)$
of $\Pi_\fU\times\Pi_\fV$ over $(q_1,q_2)$ is a subscheme of
$Q_{\fU,\fV,n}$ of points $\bigl[(0\ra E_1\ra F\ra E_2\ra
0,\phi_1,\phi,\phi_2) \bigr]$ with $E_1,\phi_1,E_2,\phi_2$ fixed. By
the usual correspondence between extensions and vector spaces
$\Ext^1(\,,\,)$ we find $(\Pi_\fU\times\Pi_\fV)^*(q_1,q_2)$ is a
$\C$-vector space, which we write as $W_{\fU,\fV,n}^{q_1,q_2}$,
where $0\in W_{\fU,\fV,n}^{q_1,q_2}$ is $z(q_1,q_2)$. The subgroup
$\Hom(V,U)$ of $(\GL(U)\times\GL(V))\lt \Hom(V,U)$ acts on
$(\Pi_\fU\times\Pi_\fV)^*(q_1,q_2)\cong W_{\fU,\fV,n}^{q_1,q_2}$ by
translations. Write this action as a linear map
$L_{\fU,\fV,n}^{q_1,q_2}:\Hom(V,U)\ra W_{\fU,\fV,n}^{q_1,q_2}$.

We claim this fits into an exact sequence
\e
\smash{\xymatrix@C=5.5pt{ \!\!0 \ar[r] & \Hom(E_2,E_1) \ar[r] &
\Hom(V,U) \ar[rrr]^(0.53){L_{\fU,\fV,n}^{q_1,q_2}} &&&
W_{\fU,\fV,n}^{q_1,q_2} \ar[rrr]^(0.4){\pi_{E_2,E_1}} &&&
\Ext^1(E_2,E_1) \ar[r] & 0.\!\!}}
\label{dt11eq5}
\e
To see this, note that the fibre of $\Pi_\fU'\times\Pi_\fV'$ over
$(q_1,q_2)$ is the quotient stack $[W_{\fU,\fV,n}^{q_1,q_2}/\!
\Hom(V,U)]$, where $\Hom(V,U)$ acts on $W_{\fU,\fV,n}^{q_1,q_2}$ by
$\ep:w\!\mapsto\! w\!+\!\ab L_{\fU,\fV,n}^{q_1,q_2}(\ep)$, whereas
the fibre of $\pi_1\times\pi_3:\fExact\ra\fM\times\fM$ over
$(E_1,E_2)$ is the quotient stack
$\smash{[\Ext^1(E_2,E_1)/\Hom(E_2,E_1)]}$, where $\Hom(E_2,E_1)$
acts trivially on $\Ext^1(E_2,E_1)$. The 1-isomorphisms \eq{dt11eq2}
and \eq{dt11eq3} induce a 1-isomorphism
$[W_{\fU,\fV,n}^{q_1,q_2}/\Hom(V,U)]\ab\cong[\Ext^1(E_2,E_1)/\Hom(E_2,
E_1)]$, which gives~\eq{dt11eq5}.

We can repeat all the above material on $Q_{\fU,\fV,n}$ with
$\fU,\fV$ exchanged. We use the corresponding notation with accents
`${}\,\,{}\ti{}\,\,{}$'. We obtain a quasiprojective $\C$-scheme
$\ti Q_{\fV,\fU,n}$ whose $\C$-points are isomorphism classes of
data $(0\ra E_2\ra\ti F\ra E_1\ra 0,\phi_2,\ti\phi,\phi_1)$ where
$[E_2]\in\fV(\C)$, $[E_1]\in\fU(\C)$, $\phi_2:V\ra H^0(E_2(n))$,
$\ti\phi:V\op U\ra H^0(\ti F(n))$ and $\phi_1:U\ra H^0(E_1(n))$ are
isomorphisms, and the following diagram commutes:
\begin{gather*}
\xymatrix@R=10pt{ 0 \ar[r] & V \ar[r] \ar[d]^{\phi_2}_\cong & V\op U
\ar[r] \ar[d]^{\ti\phi}_\cong & U \ar[r] \ar[d]^{\phi_1}_\cong & 0 \\
0 \ar[r] & H^0(E_2(n)) \ar[r] & H^0(\ti F(n)) \ar[r] & H^0(E_1(n))
\ar[r] & 0.}
\end{gather*}
There is a closed embedding $\ti z:Q_{\fV,n}\times Q_{\fU,n}\ra\ti
Q_{\fV,\fU,n}$, and we write $\ti Q_{\fV,\fU,n}'=\ti
Q_{\fV,\fU,n}\sm\ti z(Q_{\fV,n}\times Q_{\fU,n})$.

The algebraic $\C$-group $(\GL(V)\times\GL(U))\lt\Hom(U,V)$ acts on
$\ti Q_{\fV,\fU,n}$ with
\begin{equation*}
(\fV\!\times\!\fU)_{\io_\fV\times \io_\fU,\fM\times\fM,\pi_1\times\pi_3}\fExact\!
\cong\!\bigl[\ti Q_{\fV,\fU,n}/(\GL(V)\!\times\!\GL(U))\!\lt\!\Hom(U,V)
\bigr].
\end{equation*}
There are natural projections $\ti\Pi_\fV,\ti\Pi_\fU:\ti
Q_{\fU,\fV,n}\ra Q_{\fV,n},Q_{\fU,n}$ and $\ti\Pi_\fV',\ti\Pi_\fU'$
from $[\ti Q_{\fV,\fU,n}/(\GL(V)\times\GL(U))\lt\Hom(U,V)]$ to
$[Q_{\fV,n}/\GL(V)],[Q_{\fU,n}/\GL(U)]$. If $q_1\in Q_{\fU,n}(\C)$
and $q_2\in Q_{\fV,n}(\C)$ correspond to $[(E_1,\phi_1)]$ and
$[(E_2,\phi_2)]$ then $(\ti\Pi_\fV\times\ti\Pi_\fU)^*(q_2,q_1)$ in
$\ti Q_{\fV,\fU,n}$ is a $\C$-vector space $\ti
W_{\fV,\fU,n}^{q_2,q_1}$ with an exact sequence
\e
\smash{\xymatrix@C=5.5pt{ \!\!0 \ar[r] & \Hom(E_1,E_2) \ar[r] &
\Hom(U,V) \ar[rrr]^(0.53){\ti L_{\fV,\fU,n}^{q_2,q_1}} &&& \ti
W_{\fV,\fU,n}^{q_2,q_1} \ar[rrr]^(0.4){\ti\pi_{E_1,E_2}} &&&
\Ext^1(E_1,E_2) \ar[r] & 0.\!\!}}
\label{dt11eq6}
\e

Now consider the stack function\index{stack function}
$f\in\oSFai(\fM,\chi,\Q)$. Since $f$ is supported on $\fU$, by
Proposition \ref{dt3prop1} we may write $f$ in the form
\e
f=\ts\sum_{i=1}^n\de_i[(Z_i\times[\Spec\C/\bG_m],\io_\fU\ci\rho_i)],
\label{dt11eq7}
\e
where $\de_i\in\Q$, $Z_i$ is a quasiprojective $\C$-variety, and
$\rho_i:Z_i\times[\Spec\C/\bG_m]\ra\fU$ is representable for
$i=1,\ldots,n$, and $\io_\fU:\fU\ra\fM$ is the inclusion, and each
term in \eq{dt11eq7} has algebra stabilizers.\index{stack
function!with algebra stabilizers} Consider the fibre product
$P_i=Z_i\times_{\rho_i,\fU,\pi_\fU}Q_{\fU,n}$, where
$\pi_\fU:Q_{\fU,n}\ra\fU$ is the projection induced by \eq{dt11eq2}.
As $\pi_\fU$ is a principal $\GL(U)$-bundle of Artin $\C$-stacks,
$\pi_1:P_i\ra Z_i$ is a principal $\GL(U)$-bundle of $\C$-schemes,
and so is Zariski locally trivial as $\GL(U)$ is special. Thus by
cutting the $Z_i$ into smaller pieces using relation Definition
\ref{dt2def10}(i), we can suppose the fibrations $\pi_1:P_i\ra Z_i$
are trivial, with trivializations $P_i\cong Z_i\times\GL(U)$.
Composing the morphisms $Z_i\hookra Z_i\times\{1\}\subset
P_i\,{\buildrel \pi_2\over\longra}\,Q_{\fU,n}$ gives a morphism
$\xi_i:Z_i\ra Q_{\fU,n}$.

The algebra stabilizers condition implies that if $z\in Z_i(\C)$ and
$(\io_\fU\ci\rho_i)_*(z)$ is a point $[E]\in\fM(\C)$ then on
stabilizer groups $(\io_\fU\ci\rho_i)_*:\bG_m\ra\Aut(E)$ must map
$\la\mapsto\la\id_E$. If $q\in Q_{\fU,n}(\C)$ with
$(\pi_\fU)_*(q)=[E]$ then $(\pi_\fU)_*:\Stab_{\GL(U)}(q)\ra\Aut(E)$
is an isomorphism, and from the construction it follows that
$(\pi_\fU)_*$ maps $\la\id_U\ra \la\id_E$ for $\la\in\bG_m$. Hence
the 1-morphism $\rho_i:Z_i\times[\Spec\C/\bG_m]\ra[Q_{\fU,n}/
\GL(U)]\cong\fU$ acts on stabilizer groups as $(\rho_i)_*:\la\mapsto
\la\id_U$ for $\la\in\bG_m$, for all $z\in Z_i(\C)$. It is now easy
to see that the 1-morphism $\rho_i: Z_i\times[\Spec\C/\bG_m]\ra\fU$,
regarded as a morphism of global quotient stacks
$\rho_i:[Z_i/\bG_m]\ra[Q_{\fU,n}/\GL(U)]$ where $\bG_m$ acts
trivially on $Z_i$, is induced by the morphisms $\xi_i:Z_i\ra
Q_{\fU,n}$ of $\C$-schemes and $I_U:\bG_m\ra\GL(U)$ of algebraic
$\C$-groups mapping~$I_U:\la\mapsto\la\id_U$.

Thus we may write $f$ in the form \eq{dt11eq7}, where each $Z_i$ is
a quasiprojective $\C$-variety and each $\rho_i:Z_i\times
[\Spec\C/\bG_m]\ra[Q_{\fU,n}/\GL(U)]\cong \fU$ is induced by
$\xi_i:Z_i\ra Q_{\fU,n}$ and $I_U:\bG_m\ra\GL(U)$,
$I_U:\la\mapsto\la\id_U$. Similarly, we may write
\e
g=\ts\sum_{j=1}^{\hat n}\hat\de_j[(\hat
Z_j\times[\Spec\C/\bG_m],\io_\fV\ci\hat\rho_j)],
\label{dt11eq8}
\e
where $\hat Z_j$ is quasiprojective and $\hat\rho_j:\hat
Z_j\times[\Spec\C/\bG_m]\ra[Q_{\fV,n}/\GL(V)]\cong\fV$ is induced by
$\hat\xi_j:\hat Z_j\ra Q_{\fV,n}$ and $I_V:\bG_m\ra\GL(V)$,
$I_V:\la\mapsto\la\id_V$.

Combining \eq{dt11eq7}--\eq{dt11eq8} gives an expression for $f\ot
g$ in $\oSF(\fM\times\fM,\chi,\Q)$:
\e
f\ot g=\ts\sum_{i=1}^n\sum_{j=1}^{\hat
n}\de_i\hat\de_j\bigl[\bigl(Z_i\times\hat
Z_j\times[\Spec\C/\bG_m^2],(\io_\fU\times \io_\fV)\ci
(\rho_i\times\hat\rho_j)\bigr)\bigr].
\label{dt11eq9}
\e
Using the 1-isomorphisms \eq{dt11eq2}, \eq{dt11eq3} and the
correspondence between the 1-morphisms $\pi_1,\pi_3$ and
$\Pi_\fU',\Pi_\fV'$ in \eq{dt11eq4} and $\Pi_\fU,\Pi_\fV$, we obtain
1-isomorphisms
\ea
\bigl(Z_i&\times\hat Z_j\times[\Spec\C/\bG_m^2]\bigr)\times_{
(\io_\fU\times \io_\fV)\ci
(\rho_i\times\hat\rho_j),\fM\times\fM,\pi_1\times\pi_3}\fExact
\nonumber\\
&\cong \bigl(Z_i\!\times\!\hat
Z_j\!\times\![\Spec\C/\bG_m^2]\bigr)\! \times_{\begin{subarray}{l}
{} \\ {} \\ \rho_i\times\hat\rho_j,
[Q_{\fU,n}/\GL(U)]\times[Q_{\fV,n}/\GL(V)],\Pi_\fU'\times\Pi_\fV'
\end{subarray}
\!\!\!\!\!\!\!\!\!\!\!\!\!\!\!\!\!\!\!\!\!\!\!\!\!\!\!\!\!\!\!
\!\!\!\!\!\!\!\!\!\!\!\!\!\!\!\!\!\!\!\!\!\!\!\!\!\!\!\!\!\!\!
\!\!\!\!\!\!\!\!\!\!\!\!\!\!\!\!\!\!\!\!\!\!\!\!\!\!\!\!\!\!\!
\!\!\!\!\!\!\!\!\!\!\!\!\!\!\!\!\!\!\!\!
}\bigl[Q_{\fU,\fV,n}/(\GL(U)\!\times\!\GL(V))\!\lt\!\Hom(V,U) \bigr]
\nonumber\\
&\cong\! \bigl[\bigl((Z_i\!\times\!\hat
Z_j)\times_{\xi_i\times\hat\xi_j, Q_{\fU,n}\times
Q_{\fV,n},\Pi_\fU\times\Pi_\fV}Q_{\fU,\fV,n}\bigr)/\bG_m^2
\!\lt\!\Hom(V,U)\bigr],
\label{dt11eq10}
\ea
where in the last line, the multiplication in $\bG_m^2\lt\Hom(V,U)$
is $(\la,\mu,\ep)\cdot(\la',\mu',\ep')\ab=(\la\la',\mu\mu',\la\ep'
+\mu'\ep)$ for $\la,\la',\mu,\mu'\in\bG_m$ and
$\ep,\ep'\in\Hom(V,U)$, and $\bG_m^2\lt\Hom(V,U)$ acts on
$(Z_i\times\hat Z_j)\times_{\cdots}Q_{\fU,\fV,n}$ by the composition
of the morphism
$\bG_m^2\lt\Hom(V,U)\ra(\GL(U)\times\GL(V))\lt\Hom(V,U)$ mapping
$(\la,\mu,\ep)\mapsto(\la\id_U,\ab\mu\id_V,\ep)$ and the action of
$(\GL(U)\times\GL(V))\lt\Hom(V,U)$ on $Q_{\fU,\fV,n}$, with the
trivial action on~$Z_i\times\hat Z_j$.

Now $f*g=(\pi_2)_*\bigl((\pi_1\times\pi_3)^*(f\ot g)\bigr)$ by
\eq{dt3eq3}. Applying $(\pi_1\times\pi_3)^*$ to each term in
\eq{dt11eq9} involves the fibre product in the first line of
\eq{dt11eq10}. So from \eq{dt3eq3}, \eq{dt11eq9} and \eq{dt11eq10}
we see that
\e
f*g\!=\!\sum\limits_{i=1}^n\sum\limits_{j=1}^{\hat
n}\de_i\hat\de_j\bigl[\bigl(\bigl[(Z_i\!\times\!\hat
Z_j)\times_{Q_{\fU,n}\!\times\! Q_{\fV,n}}\!Q_{\fU,\fV,n}/
\bG_m^2\!\lt\!\Hom(V,U)\bigr], \psi_{ij}\bigr)\bigr],
\label{dt11eq11}
\e
for 1-morphisms $\psi_{ij}:\bigl[(Z_i\!\times\!\hat
Z_j)\times_{\cdots}
Q_{\fU,\fV,n}/\bG_m^2\!\lt\!\Hom(V,U)\bigr]\!\ra\!\fM^{\al+\be}$.
Similarly
\e
g*f\!=\!\sum\limits_{i=1}^n\sum\limits_{j=1}^{\hat
n}\de_i\hat\de_j\bigl[\bigl(\bigl[(\hat Z_j\!\times\!
Z_i)\times_{Q_{\fV,n}\!\times\! Q_{\fU,n}}\!\ti Q_{\fV,\fU,n}/
\bG_m^2\!\lt\!\Hom(U,V)\bigr],\ti\psi_{ji}\bigr)\bigr].
\label{dt11eq12}
\e

Next we use relations Definition \ref{dt2def10}(i)--(iii) in
$\oSF(\fM,\chi,\Q)$ to write \eq{dt11eq11}--\eq{dt11eq12} in a more
useful form. When $G=\bG_m^2\lt\Hom(V,U)$ and
$T^G=\bG_m^2\times\{0\}\subset\bG_m^2\lt\Hom(V,U)$, we find that
$\cQ(G,T^G)=\bigl\{T^G,\{(\la,\la):\la\in\bG_m\}\bigr\}=
\{\bG_m^2,\bG_m\}$, in the notation of Definition \ref{dt2def9}. We
need to compute the coefficients $F(G,T^G,Q)$ in Definition
\ref{dt2def10}(iii) for $Q=\bG_m^2,\bG_m$. Let $X$ be the
homogeneous space $\bG_m^2\backslash G\cong \Hom(V,U)$, considered
as a $\C$-scheme, with a right action of $G$. Then $\bG_m^2\subset
G$ acts on $X\cong\Hom(V,U)$ by $(\la,\mu):\ep\mapsto\la\mu^{-1}\ep$
and $\bG_m\subset G$ acts trivially on $X$. Then in
$\uoSF(\Spec\C,\chi,\Q)$ we have
\ea
&\bigl[[\Spec\C/\bG_m^2]\bigr]\!=\!\bigl[[X/G]\bigr]
\!=\!F(G,T^G,\bG_m^2)\bigl[[X/\bG_m^2]\bigr]\!+\!
F(G,T^G,\bG_m)\bigl[[X/\bG_m]\bigr]
\nonumber\\
&\;=F(G,T^G,\bG_m^2)\bigl(\bigl[[\Spec\C/\bG_m^2]\bigr]
\!+\!\bigl[P(\Hom(V,U))\!\times\![\Spec\C/\bG_m]\bigr]\bigr)
\nonumber\\
&\quad+ F(G,T^G,\bG_m)\bigl[\Hom(V,U)\times[\Spec\C/\bG_m]\bigr]
\label{dt11eq13}\\
&\;=F(G,T^G,\bG_m^2)\bigl[[\Spec\C/\bG_m^2]\bigr]
\nonumber\\
&\quad+\bigl(F(G,T^G,\bG_m^2)\dim\Hom(V,U)\!+\!F(G,T^G,\bG_m)\bigr)
\bigl[[\Spec\C/\bG_m]\bigr],
\nonumber
\ea
where in the second step we use Definition \ref{dt2def10}(iii), in
the third we divide $[X/\bG_m^2]$ into
$[(\Hom(V,U)\sm\{0\})/\bG_m^2]\cong \ab P(\Hom(V,U))\times
[\Spec\C/\bG_m]$ and $[\{0\}/\bG_m^2]\ab \cong[\Spec\C/\bG_m^2]$ and
use Definition \ref{dt2def10}(i), and in the fourth we use
Definition \ref{dt2def10}(ii) and
$\chi\bigl(P(\Hom(V,U))\bigr)=\dim\Hom(V,U)$,
$\chi\bigl(\Hom(V,U)\bigr)=1$.

As $\bigl[[\Spec\C/\bG_m]\bigr], \bigl[[\Spec\C/\bG_m^2]\bigr]$ are
independent in $\uoSF(\Spec\C,\chi,\Q)$ by Pro\-position
\ref{dt2prop3}, equating coefficients in \eq{dt11eq13} gives
$F(G,T^G,\bG_m^2)=1$ and $F(G,\ab T^G,\ab\bG_m^2)=-\dim U\dim V$.
Therefore Definition \ref{dt2def10}(iii) gives
\e
\begin{split}
\bigl[\bigl(\bigl[&(Z_i\times\hat Z_j)\times_{Q_{\fU,n}\times
Q_{\fV,n}}Q_{\fU,\fV,n}/\bG_m^2\lt\Hom(V,U)\bigr],
\psi_{ij}\bigr)\bigr]=\\
&\bigl[\bigl(\bigl[(Z_i\times\hat Z_j)\times_{Q_{\fU,n}\times
Q_{\fV,n}}Q_{\fU,\fV,n}/\bG_m^2\bigr],
\psi_{ij}\ci\io^{\bG_m^2}\bigr)\bigr]\\ &-\dim U\dim
V\bigl[\bigl(\bigl[(Z_i\times\hat Z_j)\times_{Q_{\fU,n}\times
Q_{\fV,n}}Q_{\fU,\fV,n}/\bG_m\bigr],
\psi_{ij}\ci\io^{\bG_m}\bigr)\bigr].
\end{split}
\label{dt11eq14}
\e

Split $Q_{\fU,\fV,n}$ into $z(Q_{\fU,n}\times Q_{\fV,n})\cong
Q_{\fU,n}\times Q_{\fV,n}$ and $Q_{\fU,\fV,n}'$. In the second line
of \eq{dt11eq14}, the action of $\bG_m^2$ is trivial on
$Z_i\times\hat Z_j$ and on $z(Q_{\fU,n}\times Q_{\fV,n})$. On
$Q_{\fU,\fV,n}'$, the element $(\la,\mu)$ in $\bG_m^2$ acts by
dilation by $\la\mu^{-1}$ in the fibres
$W_{\fU,\fV,n}^{q_1,q_2}\sm\{0\}$. Thus we can write $\bG_m^2$ as a
product of the diagonal $\bG_m$ factor $\{(\la,\la):\la\in\bG_m\}$
which acts trivially, and a $\bG_m$ factor $\{(\la,1):\la\in\bG_m\}$
which acts freely  on $Q_{\fU,\fV,n}'$. Hence Definition
\ref{dt2def10}(i) gives
\ea
\bigl[\bigl(\bigl[&(Z_i\times\hat Z_j)\times_{Q_{\fU,n}\times
Q_{\fV,n}}Q_{\fU,\fV,n}/\bG_m^2\bigr],
\psi_{ij}\ci\io^{\bG_m^2}\bigr)\bigr]=
\nonumber\\
&\bigl[\bigl(Z_i\times\hat Z_j\times[\Spec\C/\bG_m^2],
\psi_{ij}\ci\io^{\bG_m^2}\ci z\bigr)\bigr]
\label{dt11eq15}\\
&+\bigl[\bigl((Z_i\!\times\!\hat Z_j)\times_{Q_{\fU,n}\times
Q_{\fV,n}}(Q'_{\fU,\fV,n}/\bG_m)
\!\times\![\Spec\C/\bG_m],\psi_{ij}\!\ci\!\io^{\bG_m^2}\bigr)\bigr],
\nonumber\\
\bigl[\bigl(\bigl[&(Z_i\times\hat Z_j)\times_{Q_{\fU,n}\times
Q_{\fV,n}}Q_{\fU,\fV,n}/\bG_m\bigr],
\psi_{ij}\ci\io^{\bG_m}\bigr)\bigr]=
\nonumber\\
&\bigl[\bigl(Z_i\times\hat Z_j\times[\Spec\C/\bG_m],
\psi_{ij}\ci\io^{\bG_m}\ci z\bigr)\bigr]
\label{dt11eq16}\\
&+\bigl[\bigl((Z_i\times\hat Z_j)\times_{Q_{\fU,n}\times
Q_{\fV,n}}Q'_{\fU,\fV,n}
\times[\Spec\C/\bG_m],\psi_{ij}\ci\io^{\bG_m}\bigr)\bigr], \nonumber
\ea
since $(Z_i\times\hat Z_j)\times_{Q_{\fU,n}\times
Q_{\fV,n}}z(Q_{\fU,n}\times Q_{\fV,n}) \cong Z_i\times\hat Z_j$.
Here $Q'_{\fU,\fV,n}/\bG_m$ is a quasiprojective $\C$-variety, with
projection $\Pi_\fU\times\Pi_\fV: Q'_{\fU,\fV,n}\ra Q_{\fU,n}\times
Q_{\fV,n}$ with fibre ${\mathbb P}(W_{\fU,\fV,n}^{q_1,q_2})$ over
$(q_1,q_2)\in (Q_{\fU,n}\times Q_{\fV,n})(\C)$. The action of
$\bG_m$ on $Q'_{\fU,\fV,n}$ is given on points by $\la:\bigl[(0\ra
E_1\ra F\ra E_2\ra 0,\phi_1,\phi,\phi_2)\bigr]\mapsto \bigl[(0\ra
E_1\ra F\ra E_2\ra 0,\la\phi_1,\phi\ci\bigl(\begin{smallmatrix} \la
& 0 \\ 0 & 1
\end{smallmatrix}\bigr),\phi_2)\bigr]$, for~$\la\in\bG_m$.

In the final term in \eq{dt11eq16}, the 1-morphism
$\psi_{ij}\ci\io^{\bG_m}$ factors via the projection
$Q'_{\fU,\fV,n}\ra Q'_{\fU,\fV,n}/\bG_m$, since $\bigl[(0\ra E_1\ra
F\ra E_2\ra 0,\la\phi_1,\phi\ci\bigl(\begin{smallmatrix} \la & 0 \\
0 & 1 \end{smallmatrix}\bigr),\phi_2)\bigr]$ maps to $[F]$ for all
$\la\in\bG_m$. The projection $(Z_i\times\hat
Z_j)\times_{Q_{\fU,n}\times Q_{\fV,n}}Q'_{\fU,\fV,n}\ra
(Z_i\times\hat Z_j)\times_{Q_{\fU,n}\times
Q_{\fV,n}}(Q'_{\fU,\fV,n}/\bG_m)$ is a principal bundle with fibre
$\bG_m$, and so is Zariski locally trivial as $\bG_m$ is
special.\index{algebraic $\K$-group!special} Therefore cutting
$(Z_i\times\hat Z_j)\times_{Q_{\fU,n}\times
Q_{\fV,n}}(Q'_{\fU,\fV,n}/\bG_m)$ into disjoint pieces over which
the fibration is trivial and using relations Definition
\ref{dt2def10}(i),(ii) and $\chi(\bG_m)=0$ shows that
\e
\bigl[\bigl((Z_i\times\hat Z_j)\times_{Q_{\fU,n}\times
Q_{\fV,n}}Q'_{\fU,\fV,n}
\times[\Spec\C/\bG_m],\psi_{ij}\ci\io^{\bG_m}\bigr)\bigr]=0.
\label{dt11eq17}
\e

Combining equations \eq{dt11eq11} and \eq{dt11eq14}--\eq{dt11eq17}
now gives
\ea
&f*g=\ts\sum_{i=1}^n\sum_{j=1}^{\hat n}\de_i\hat\de_j
\bigl[\bigl(Z_i\times\hat Z_j\times[\Spec\C/\bG_m^2],
\psi_{ij}\ci\io^{\bG_m^2}\ci z\bigr)\bigr]
\nonumber\\
\begin{split}
&\qquad+\ts\sum_{i=1}^n\sum_{j=1}^{\hat n}\de_i\hat\de_j
\bigl[\bigl((Z_i\times\hat Z_j)\times_{Q_{\fU,n}\times Q_{\fV,n}}(Q'_{\fU,\fV,n}/\bG_m)\\
&\qquad\qquad\qquad\qquad\qquad\qquad \times[\Spec\C/\bG_m],
\psi_{ij}\ci\io^{\bG_m^2}\bigr)\bigr]
\end{split}
\label{dt11eq18}\\
&-\dim U\dim V\ts\sum_{i=1}^n\sum_{j=1}^{\hat
n}\de_i\hat\de_j\bigl[\bigl(Z_i\!\times\!\hat
Z_j\!\times\![\Spec\C/\bG_m],
\psi_{ij}\!\ci\!\io^{\bG_m}\!\ci\!z\bigr)\bigr]. \nonumber
\ea
Similarly, from equation \eq{dt11eq12} we deduce that
\ea
&g*f=\ts\sum_{i=1}^n\sum_{j=1}^{\hat n}\de_i\hat\de_j
\bigl[\bigl(\hat Z_j\times Z_i\times[\Spec\C/\bG_m^2],
\ti\psi_{ji}\ci\io^{\bG_m^2}\ci\ti z\bigr)\bigr]
\nonumber\\
\begin{split}
&\qquad+\ts\sum_{i=1}^n\sum_{j=1}^{\hat n}\de_i\hat\de_j
\bigl[\bigl((\hat Z_j\times Z_i)\times_{Q_{\fV,n}\times Q_{\fU,n}}(\ti Q'_{\fV,\fU,n}/\bG_m)\\
&\qquad\qquad\qquad\qquad\qquad\qquad \times[\Spec\C/\bG_m],
\ti\psi_{ji}\ci\io^{\bG_m^2}\bigr)\bigr]
\end{split}
\label{dt11eq19}\\
&-\dim U\dim V\ts\sum_{i=1}^n\sum_{j=1}^{\hat
n}\de_i\hat\de_j\bigl[\bigl(\hat Z_j\!\times\!\hat
Z_i\!\times\![\Spec\C/\bG_m],
\ti\psi_{ji}\!\ci\!\io^{\bG_m}\!\ci\!\ti z\bigr)\bigr]. \nonumber
\ea

Subtracting \eq{dt11eq19} from \eq{dt11eq18} gives an expression for
the Lie bracket $[f,g]$. Now the first terms on the right hand sides
of \eq{dt11eq18} and \eq{dt11eq19} are equal, as over points $z_1\in
Z_i(\C)$ and $\hat z_2\in\hat Z_j(\C)$ projecting to
$[E_1]\in\fU(\C)$ and $[E_2]\in\fV(\C)$ they correspond to exact
sequences $[0\ra E_1\ra E_1\op E_2\ra E_2\ra 0]$ and $[0\ra E_2\ra
E_2\op E_1\ra E_1\ra 0]$ respectively, and so project to the same
point $[E_1\op E_2]$ in $\fM$. Similarly, the final terms on the
right hand sides of \eq{dt11eq18} and \eq{dt11eq19} are equal. Hence
\e
\begin{split}
[f,&g]=\ts\sum_{i=1}^n\sum_{j=1}^{\hat n}\de_i\hat\de_j\,\cdot \\
\bigl\{&[((Z_i\times\hat Z_j)\times_{Q_{\fU,n}\times
Q_{\fV,n}}(Q'_{\fU,\fV,n}/\bG_m)\times[\Spec\C/\bG_m],
\psi_{ij}\ci\io^{\bG_m^2})]\\
-&[((\hat Z_j\times Z_i)\times_{Q_{\fV,n}\times Q_{\fU,n}}(\ti
Q'_{\fV,\fU,n}/\bG_m)\times[\Spec\C/\bG_m],
\ti\psi_{ji}\ci\io^{\bG_m^2})]\bigr\}.
\end{split}
\label{dt11eq20}
\e
Note that \eq{dt11eq20} writes $[f,g]\in\oSFai(\fM,\chi,\Q)$ as a
$\Q$-linear combination of $[(U\times[\Spec\C/\bG_m],\rho)]$ for $U$
a quasiprojective $\C$-variety, as in Proposition~\ref{dt3prop1}.

We now apply the $\Q$-linear map $\ti\Psi^{\chi,\Q}$ to $f,g$ and
$[f,g]$. Since $f,g$ are supported on $\fM^\al,\fM^\be$, Definition
\ref{dt5def1} and equations \eq{dt11eq7} and \eq{dt11eq8} yield
\e
\ti\Psi^{\chi,\Q}(f)=\ga\, \ti \la^\al\quad\text{and}\quad
\ti\Psi^{\chi,\Q}(g)=\hat\ga\, \ti \la^\be,
\label{dt11eq21}
\e
where $\ga,\hat\ga\in\Q$ are given by
\e
\ga=\ts\sum_{i=1}^n\de_i\,\chi\bigl(Z_i,(\io_\fU\ci\rho_i)^*(\nu_\fM)
\bigr), \quad\hat\ga=\ts\sum_{j=1}^{\hat n}\hat\de_j\,
\chi\bigl(\hat Z_j,(\io_\fV\ci\hat\rho_j)^*(\nu_\fM)\bigr).
\label{dt11eq22}
\e
Using Theorem \ref{dt4thm1}(iii) and Corollary \ref{dt4cor1} we have
\begin{align*}
\chi\bigl(Z_i,(\io_\fU\ci\rho_i)^*&(\nu_\fM)\bigr)\chi\bigl(\hat
Z_j,(\io_\fV\ci\hat\rho_j)^*(\nu_\fM)\bigr)\\
&=\chi\bigl(Z_i\times\hat Z_j,(\io_\fU\ci\rho_i)^*(\nu_\fM)\boxdot
(\io_\fV\ci\hat\rho_j)^*(\nu_\fM)\bigr)\\
&=\chi\bigl(Z_i\times\hat Z_j,(\io_\fU\ci\rho_i\times\io_\fV
\ci\hat\rho_j)^*(\nu_{\fM\times\fM})\bigr).
\end{align*}
Thus multiplying the two equations of \eq{dt11eq22} together gives
\e
\ga\hat\ga=\ts\sum_{i=1}^n\sum_{j=1}^{\hat n}\de_i\hat\de_j\,
\chi\bigl(Z_i\times\hat Z_j,(\io_\fU\ci\rho_i\times\io_\fV
\ci\hat\rho_j)^*(\nu_{\fM\times\fM})\bigr).
\label{dt11eq23}
\e

In the same way, since $[f,g]$ is supported on $\fM^{\al+\be}$,
using \eq{dt11eq20} we have
\e
\begin{gathered}
\ti\Psi^{\chi,\Q}\bigl([f,g]\bigr)=\ze\,\ti
\la^{\al+\be},\qquad\text{where}\\
\begin{aligned}
\ze= &\ts\sum_{i=1}^n\sum_{j=1}^{\hat n}\de_i\hat\de_j\, \chi\bigl(
(Z_i\times\hat Z_j)\times_{Q_{\fU,n}\times
Q_{\fV,n}}(Q'_{\fU,\fV,n}/\bG_m),
\psi_{ij}^*(\nu_\fM)\bigr)\\
-&\ts\sum_{i=1}^n\sum_{j=1}^{\hat n}\de_i\hat\de_j\,\chi\bigl( (\hat
Z_j\times Z_i)\times_{Q_{\fV,n}\times Q_{\fU,n}}(\ti
Q'_{\fV,\fU,n}/\bG_m), \ti\psi_{ji}^*(\nu_\fM)\bigr).
\end{aligned}
\end{gathered}
\label{dt11eq24}
\e
Write $\pi_{ij}:(Z_i\times\hat Z_j)\times_{Q_{\fU,n}\times
Q_{\fV,n}}(Q'_{\fU,\fV,n}/\bG_m)\ra Z_i\times\hat Z_j$ for the
projection, and $\ti\pi_{ji}$ for its analogue with $\fU,\fV$
exchanged. Then from \cite{Joyc1}, we have
\begin{equation*}
\chi\bigl((Z_i\!\times\!\hat Z_j)\times_{Q_{\fU,n}\times Q_{\fV,n}}(Q'_{\fU,\fV,n}/\bG_m),\psi_{ij}^*(\nu_\fM)\bigr)\!=\!
\chi\bigl(Z_i\!\times\!\hat
Z_j,\CF(\pi_{ij})(\psi_{ij}^*(\nu_\fM))\bigr),
\end{equation*}
where $\CF(\pi_{ij})$ is the pushforward of constructible
functions.\index{constructible function} Substituting this and its
analogue for $\ti\pi_{ji}$ into \eq{dt11eq24} and identifying
$Z_i\times\hat Z_j\cong \hat Z_j\times Z_i$ yields
\e
\begin{gathered}
\ze=\ts\sum_{i=1}^n\sum_{j=1}^{\hat n}\de_i\hat\de_j\,\chi\bigl(
Z_i\times\hat Z_j,F_{ij}\bigr), \qquad\text{where}\\
F_{ij}\!=\!\CF(\pi_{ij})(\psi_{ij}^*(\nu_\fM))-
\CF(\ti\pi_{ji})(\ti\psi_{ji}^*(\nu_\fM))\qquad\text{in
$\CF(Z_i\times \hat Z_j)$.}
\end{gathered}
\label{dt11eq25}
\e

Let $z_1\in Z_i(\C)$ for some $i=1,\ldots,n$, and $\hat z_2\in\hat
Z_j(\C)$ for some $j=1,\ldots,\hat n$. Set $q_1=(\xi_i)_*(z_1)$ in
$Q_{\fU,n}(\C)$ and $\smash{q_2=(\hat\xi_j)_*(\hat z_2)}$ in
$Q_{\fV,n}(\C)$, and let $q_1,q_2$ correspond to isomorphism classes
$[(E_1,\phi_1)],[(E_2,\phi_2)]$ with $[E_1]\in\fU(\C)$ and
$[E_2]\in\fV(\C)$. We will compute an expression for
$F_{ij}(z_1,\hat z_2)$ in terms of $E_1,E_2$. The fibre of
$\pi_{ij}:(Z_i\times\hat Z_j)\times_{Q_{\fU,n}\times
Q_{\fV,n}}(Q'_{\fU,\fV,n}/\bG_m)\ra Z_i\times\hat Z_j$ over
$(z_1,\hat z_2)$ is the fibre of $\Pi_\fU\times\Pi_\fV:
Q'_{\fU,\fV,n}/\bG_m\ra Q_{\fU,n}\times Q_{\fV,n}$ over $(q_1,q_2)$,
which is the projective space ${\mathbb
P}(W_{\fU,\fV,n}^{q_1,q_2})$. Thus the definition of $\CF(\pi_{ij})$
in \S\ref{dt21} implies that
\e
\bigl(\CF(\pi_{ij})(\psi_{ij}^*(\nu_\fM))\bigr)(z_1,\hat z_2)=
\chi\bigl({\mathbb P}(W_{\fU,\fV,n}^{q_1,q_2}),
\psi_{ij}^*(\nu_\fM)\bigr).
\label{dt11eq26}
\e

To understand the constructible function\index{constructible function}
$\psi_{ij}^*(\nu_\fM)$ on ${\mathbb P}(W_{\fU,\fV,n}^{q_1,q_2})$,
consider the linear map
$\smash{\pi_{E_2,E_1}:W_{\fU,\fV,n}^{q_1,q_2}\ra\Ext^1(E_2,E_1)}$ in
\eq{dt11eq5}. The kernel $\Ker\pi_{E_2,E_1}$ is a subspace of
$W_{\fU,\fV,n}^{q_1,q_2}$, so ${\mathbb
P}(\Ker\pi_{E_2,E_1})\subseteq {\mathbb
P}(W_{\fU,\fV,n}^{q_1,q_2})$. The induced map
\e
\smash{(\pi_{E_2,E_1})_*:{\mathbb
P}\bigl(W_{\fU,\fV,n}^{q_1,q_2}\bigr)\sm {\mathbb
P}\bigl(\Ker\pi_{E_2,E_1}\bigr)\longra{\mathbb
P}\bigl(\Ext^1(E_2,E_1)\bigr)}
\label{dt11eq27}
\e
is surjective as $\pi_{E_2,E_1}$ is, and has fibre
$\Ker\pi_{E_2,E_1}$. Let $[w]\in{\mathbb P}(W_{\fU,\fV,n}^{q_1,
q_2})$. If $[w]\notin {\mathbb P}(\Ker\pi_{E_2,E_1})$, write
$(\pi_{E_2,E_1})_*([w])=[\la]$ for $0\ne\la\in\Ext^1(E_2,E_1)$, and
then $(\psi_{ij})_*([w])=[F]$ in $\fM(\C)$ where the exact sequence
$0\ra E_1\ra F\ra E_2\ra 0$ corresponds to $\la\in\Ext^1(E_2,E_1)$,
and $(\psi_{ij}^*(\nu_\fM))([w])=\nu_{\fM}(F)$. If $[w]\in{\mathbb
P}(\Ker\pi_{E_2,E_1})$ then $(\psi_{ij})_*([w])=[E_1\op E_2]$ in
$\fM(\C)$, so $(\psi_{ij}^*(\nu_\fM))([w])=\nu_{\fM}(E_1\op E_2)$.
Therefore
\e
\begin{split}
\chi\bigl({\mathbb P}(W_{\fU,\fV,n}^{q_1,q_2}),
\psi_{ij}^*(\nu_\fM)\bigr)&=
\int_{\begin{subarray}{l}[\la]\in\mathbb{P}(\Ext^1(E_2,E_1)):\\
\la\; \Leftrightarrow\; 0\ra E_1\ra F\ra E_2\ra
0\end{subarray}}\!\!\!\!\!\!\nu_{\fM}(F)\,\rd\chi\\
&+\dim\Ker\pi_{E_2,E_1} \cdot \nu_{\fM}(E_1\op E_2),
\end{split}
\label{dt11eq28}
\e
since the fibres $\Ker\pi_{E_2,E_1}$ of $(\pi_{E_2,E_1})_*$ in
\eq{dt11eq27} have Euler characteristic 1, and $\chi\bigl({\mathbb
P}(\Ker\pi_{E_2,E_1})\bigr)=\dim\Ker\pi_{E_2,E_1}$.

Combining \eq{dt11eq26} and \eq{dt11eq28} with their analogues with
$\fU,\fV$ exchanged and substituting into \eq{dt11eq25} yields
\ea
F_{ij}(z_1,\hat z_2)&=
\int_{\begin{subarray}{l}[\la]\in\mathbb{P}(\Ext^1(E_2,E_1)):\\
\la\; \Leftrightarrow\; 0\ra E_1\ra F\ra E_2\ra
0\end{subarray}}\!\!\!\!\!\!\nu_{\fM}(F)\,\rd\chi
-\int_{\begin{subarray}{l}[\ti\la]\in\mathbb{P}(\Ext^1(E_1,E_2)):\\
\ti\la\; \Leftrightarrow\; 0\ra E_2\ra\ti F\ra E_1\ra
0\end{subarray}}\!\!\!\!\!\!\nu_{\fM}(\ti F)\,\rd\chi
\nonumber\\
&+\bigl(\dim\Ker\pi_{E_2,E_1}-\dim\Ker\ti\pi_{E_1,E_2}\bigr)\nu_{\fM}(E_1\op
E_2).
\label{dt11eq29}
\ea
From the exact sequences \eq{dt11eq5}--\eq{dt11eq6} we see that
\begin{align*}
&\dim\Ker\pi_{E_2,E_1}-\dim\Ker\ti\pi_{E_1,E_2}\\
&=\!\bigl(\dim\Hom(V,U)\!-\!\dim\Hom(E_2,E_1)\bigr)
\!-\!\bigl(\dim\Hom(U,V)\!-\!\dim\Hom(E_1,E_2)\bigr)\\
&=\dim\Hom(E_1,E_2)-\dim\Hom(E_2,E_1).
\end{align*}
Substituting this into \eq{dt11eq29} and using \eq{dt3eq14},
\eq{dt5eq2} and \eq{dt5eq3} gives
\begin{align*}
F_{ij}(z_1,\hat z_2)&=\bigl(\dim\Ext^1(E_2,E_1)-\dim\Ext^1(E_1,E_2)\\
&\qquad+\dim\Hom(E_1,E_2)-\dim\Hom(E_2,E_1)\bigr)\nu_\fM(E_1\op E_2)\\
&=(-1)^{\bar\chi(\al,\be)}\bar\chi(\al,\be)\nu_{\fM\times\fM}(E_1,E_2)\\
&=(-1)^{\bar\chi(\al,\be)}\bar\chi(\al,\be)
(\io_\fU\ci\rho_i\times\io_\fV \ci\hat\rho_j)^*(\nu_{\fM\times\fM})(z_1,\hat
z_2).
\end{align*}
Hence $F_{ij}\equiv (-1)^{\bar\chi(\al,\be)}\bar\chi(\al,\be)
(\io_\fU\ci\rho_i\times\io_\fV
\ci\hat\rho_j)^*(\nu_{\fM\times\fM})$. So \eq{dt11eq23},
\eq{dt11eq25} give
\begin{align*}
\ze&=\ts\sum_{i=1}^n\sum_{j=1}^{\hat n}\de_i\hat\de_j\,
\chi\bigl(Z_i\times\hat Z_j,(-1)^{\bar\chi(\al,\be)}\bar\chi(\al,\be)
(\io_\fU\ci\rho_i\times\io_\fV \ci\hat\rho_j)^*(\nu_{\fM\times\fM})\bigr)\\
&=(-1)^{\bar\chi(\al,\be)}\bar\chi(\al,\be)\ga\hat\ga.
\end{align*}

From equations \eq{dt11eq21} and \eq{dt11eq24} we now have
\begin{equation*}
\ti\Psi^{\chi,\Q}(f)=\ga\,\ti \la^\al,\;\>
\ti\Psi^{\chi,\Q}(g)=\hat\ga\, \ti \la^\be,\;\>
\ti\Psi^{\chi,\Q}\bigl([f,g]\bigr)=
(-1)^{\bar\chi(\al,\be)}\bar\chi(\al,\be)\ga\hat\ga\,\ti
\la^{\al+\be},
\end{equation*}
so $\ti\Psi^{\chi,\Q}\bigl([f,g]\bigr)=\bigl[\ti\Psi^{\chi,\Q}(f),
\ti\Psi^{\chi,\Q}(g)\bigr]$ by \eq{dt5eq4}, and $\ti\Psi^{\chi,\Q}$
is a Lie algebra morphism. This completes the proof of
Theorem~\ref{dt5thm5}.

\section[The proofs of Theorems $\text{\ref{dt5thm7},
\ref{dt5thm8}}$ and $\text{\ref{dt5thm9}}$]{The proofs of Theorems
\ref{dt5thm7}, \ref{dt5thm8} and \ref{dt5thm9}}
\label{dt12}\index{stable pair|(}

This section will prove Theorems \ref{dt5thm7}, \ref{dt5thm8} and
\ref{dt5thm9}, which say that the moduli space of {\it stable
pairs\/} introduced in \S\ref{dt54} is a projective $\K$-scheme
$\M_\stp^{\al,n}(\tau')$ with a symmetric obstruction
theory,\index{symmetric obstruction theory}\index{obstruction
theory!symmetric} and the corresponding invariants
$PI^{\al,n}(\tau')= \int_{[\M_\stp^{\al,n}(\tau')]^\vir}1$ are
unchanged under deformations of $X$. Throughout $\K$ is an arbitrary
algebraically closed field,\index{field $\K$} and when we consider
Calabi--Yau 3-folds $X$ over $\K$, we do not assume $H^1(\cO_X)=0$.
A good reference for the material we use on derived categories of
coherent sheaves\index{derived category} is Huybrechts~\cite{Huyb}.

\subsection{The moduli scheme of stable pairs
$\M_\stp^{\al,n}(\tau')$}
\label{dt121}

To prove deformation-invariance in Theorem \ref{dt5thm9} we will
need to work not with a single Calabi--Yau 3-fold $X$ over $\K$, but
with a {\it family\/} of Calabi--Yau 3-folds
$X\stackrel{\vp}{\longra}U$ over a base $\K$-scheme $U$. Taking
$U=\Spec\K$ recovers the case of one Calabi--Yau 3-fold. Here are
our assumptions and notation for such families.

\begin{dfn} Let $\K$ be an algebraically closed field, and
$X\stackrel{\vp}{\longra}U$ be a smooth projective morphism of
algebraic $\K$-varieties $X,U$, with $U$ connected. Let $\cO_X(1)$
be a relative very ample line bundle for
$X\stackrel{\vp}{\longra}U$. For each $u\in U(\K)$, write $X_u$ for
the fibre $X\times_{\vp,U,u}\Spec\K$ of $\vp$ over $u$, and
$\cO_{X_u}(1)$ for $\cO_X(1)\vert_{X_u}$. Suppose that $X_u$ is a
smooth Calabi--Yau 3-fold over $\K$ for all $u\in U(\K)$, which may
have $H^1(\cO_{X_u})\ne 0$. The Calabi--Yau condition implies that
the dualizing complex $\om_{\vp}$ of $\vp$ is a line bundle trivial
on the fibres of~$\vp$.

The hypotheses of Theorem \ref{dt5thm9} require the
$K^\num(\coh(X_u))$ to be canonically isomorphic {\it locally in\/}
$U(\K)$. But by Theorem \ref{dt4thm8}, we can pass to a finite cover
$\ti U$ of $U$, so that the $K^\num(\coh(\ti X_{\ti u}))$ are
canonically isomorphic {\it globally in\/} $\ti U(\K)$. So,
replacing $X,U$ by $\ti X,\ti U$, we will assume from here until
Theorem \ref{dt12thm7} that the numerical Grothendieck
groups\index{Grothendieck group!numerical} $K^\num(\coh(X_u))$ for $u\in
U(\K)$ are all canonically isomorphic {\it globally in\/} $U(\K)$,
and we write $K(\coh(X))$ for this group $K^\num(\coh(X_u))$ up to
canonical isomorphism. We return to the locally isomorphic case
after Theorem~\ref{dt12thm7}.

Let $E$ be a coherent sheaf on $X$ which is flat over $U$. Then the
fibre $E_u$ over $u\in U(\K)$ is a coherent sheaf on $X_u$, and as
$E$ is flat over $U$ and $U(\K)$ is connected, the class $[E_u]\in
K^\num(\coh(X_u))\cong K(\coh(X))$ is independent of $u\in U(\K)$.
We will write $[E]\in K(\coh(X))$ for this class~$[E_u]$.

For any $\al\in K(\coh(X))$, write $P_\al$ for the {\it Hilbert
polynomial\/} of $\al$ with respect to $\cO_X$. Then for any $u\in
U(\K)$, if $E_u\in\coh(X_u)$ with $[E_u]=\al$ in
$K^\num(\coh(X_u))\cong K(\coh(X))$, the Hilbert polynomial
$P_{E_u}$ of $E_u$ w.r.t.\ $\cO_{X_u}(1)$ is $P_\al$. Define
$\tau:C(\coh(X))\ra G$ by $\tau(\al)=P_\al/r_\al$ as in Example
\ref{dt3ex1}, where $r_\al$ is the leading coefficient of $P_\al$.
Then $(\tau,G,\le)$ is Gieseker stability on $\coh(X_u)$, for
each~$u\in U(\K)$.

Later, we will fix $\al\in K(\coh(X))$, and we will fix an integer
$n\gg 0$, such that every Gieseker semistable coherent sheaf $E$
over any fibre $X_u$ of $X\ra U$ with $[E]=\al\in K^\num(\coh(X_u))
\cong K(\coh(X))$ is $n$-regular. This is possible as $U$ is of
finite type. We follow the convention in \cite{BeFa1} of taking
$D(X)$\nomenclature[D(X)]{$D(X)$}{derived category of quasi-coherent sheaves on
$X$} to be the derived category of complexes of {\it
quasi-coherent\/} sheaves\index{sheaf!quasi-coherent} on $X$, even
though complexes in this book will always have coherent cohomology.
\label{dt12def1}
\end{dfn}

We generalize Definitions \ref{dt5def3} and \ref{dt5def4} to the
families case:

\begin{dfn} Let $\K$, $X\stackrel{\vp}{\longra}U,\cO_X(1)$ be as above.
Fix $n\gg 0$ in $\Z$. A {\it pair\/} is a nonzero morphism of
sheaves $s:\cO_{X}(-n)\ra E$, where $E$ is a nonzero sheaf on $X$,
flat over $U$. A {\it morphism\/} between two pairs
$s:\cO_{X}(-n)\ra E$ and $t:\cO_{X}(-n)\ra F$ is a morphism of
$\cO_X$-modules $f:E\ra F$, with $f\ci s=t$. A pair $s:\cO_X(-n)\ra
E$ is called {\it stable\/} if:
\begin{itemize}
\setlength{\itemsep}{0pt}
\setlength{\parsep}{0pt}
\item[(i)] $\tau([E'])\le\tau([E])$ for all subsheaves $E'$ of
$E$ with $0\neq E'\neq E$; and
\item[(ii)] if also $s$ factors through $E'$,
then~$\tau([E'])<\tau([E])$.
\end{itemize}
The {\it class\/} of a pair $s:\cO_{X}(-n)\ra E$ is the numerical
class $[E]$ in $K(\coh(X))$. We will use $\tau'$ {\it to denote
stability of pairs}, defined using $\cO_X(1)$.

Let $T$ be a $U$-scheme, that is, a morphism of $\K$-schemes
$\psi:T\ra U$. Let $\pi:X_T\ra T$ be the pullback of $X$ to $T$,
that is, $X_T=X\times_{\vp,U,\psi}T$. A $T$-{\it family of stable
pairs with class\/} $\al$ is a morphism of $\cO_{X_T}$-modules
$s:\cO_{X_T}(-n)\ra E$, where $E$ is flat over $T$, and when
restricting to $U$-points $t$ in $T$, $s_t:\cO_{X_t}(-n)\ra E_t$ is
a stable pair, and $[E_t]=\al$ in $K(\coh(X))$. As in Definition
\ref{dt5def4} we define the {\it moduli functor of stable pairs with
class\/} $\al$:
\begin{equation*}
\xymatrix{ {\mathbb M}_\stp^{\al,n}(\tau'): \Sch_U \ar[r]
&{\mathop{\bf Sets}}.}
\end{equation*}
\label{dt12def2}
\end{dfn}

Pairs, or framed modules, have been studied extensively for the last
twenty years, especially on curves. Some references are Bradlow et
al.\ \cite{BDGW}, Huybrechts and Lehn \cite{HuLe1} and Le Potier
\cite{LePo}. Note that stable pairs can have no automorphisms. Le
Potier gives the construction of the moduli spaces in our generality
in \cite[Th.~4.11]{LePo}. It follows directly from his construction
that in our case of stable pairs, with no strictly semistables, we
always get a fine moduli scheme.\index{fine moduli scheme}\index{moduli
scheme!fine} Theorem \ref{dt5thm7} follows when $U=\Spec\K$. Later
in the section we will abbreviate $\M_\stp^{\al,n}(\tau')$ to $\M$,
especially in subscripts~$X_\M$.

\begin{thm} Let\/ $\al\in K(\coh(X))$ and\/ $n\in\Z$. Then the
moduli functor ${\mathbb M}_\stp^{\al,n}(\tau')$ is represented by a
projective $U$-scheme~$\M_\stp^{\al,n}(\tau')$.
\label{dt12thm1}
\end{thm}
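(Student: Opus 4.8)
The statement Theorem \ref{dt12thm1} is essentially a known result: a pair $s:\cO_X(-n)\ra E$ in the sense of Definition \ref{dt12def2} is a special case of a \emph{coherent system} on the fibres of $X\ra U$ (the sheaf $E$ together with the one-dimensional space $\langle s\rangle\subseteq H^0(E(n))$), and our notion of $\tau'$-stability is exactly the limit of Le Potier's $\al$-stability for coherent systems as the stability parameter tends to infinity, the same regime used by Pandharipande and Thomas \cite[\S 1.2]{PaTh}. So the plan is to deduce Theorem \ref{dt12thm1} from Le Potier's GIT construction \cite[\S 4, Th.~4.11]{LePo}, carried out relatively over the base $U$, after checking that the large-parameter limit stability and the relative setting are covered; when $U=\Spec\K$ this gives Theorem \ref{dt5thm7}.

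The key steps, in order, are as follows. First, \textbf{boundedness}: by condition (i) of Definition \ref{dt12def2} the sheaf $E$ underlying a stable pair is Gieseker semistable with $[E]=\al$, and such sheaves form a bounded family over any fibre $X_u$, hence over $U$ since $\vp$ is of finite type; fixing $n\gg 0$ as in Definition \ref{dt12def1} so that every such $E$ is $n$-regular, we get $E(n)$ globally generated, $H^{>0}(E(n))=0$, and $\dim H^0(E(n))=P_\al(n)=:N$, with $s\in H^0(E(n))\cong\K^N$. Second, \textbf{the parameter space}: realize every stable pair as a point of a locally closed $U$-subscheme $\cR$ of $\Quot_{X/U}\bigl(\cO_X(-n)^{\oplus N},P_\al\bigr)\times_U\mathbb{P}^{N-1}_U$ (the Quot point giving $E$ with an identification $H^0(E(n))\cong\K^N$, and the projective factor giving the section up to automorphisms of $E$), with a natural action of $\GL(N,\K)$ whose $\GL(N)$-orbits correspond to isomorphism classes of pairs. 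Third, \textbf{GIT}: choose the linearization on $\cR$ corresponding to the large-parameter limit of Le Potier's stability, and identify the GIT-(semi)stable points with the $\tau'$-stable pairs of Definition \ref{dt12def2}. Fourth, \textbf{representability}: since there is no notion of semistable-but-not-stable pair (stable pairs are simple and have trivial automorphism group), the GIT quotient is a geometric quotient, the moduli functor ${\mathbb M}_\stp^{\al,n}(\tau')$ is represented by $\M_\stp^{\al,n}(\tau')=\cR^{\mathrm{s}}/\GL(N)$, and the tautological family on $\cR$ descends to a universal family, making $\M_\stp^{\al,n}(\tau')$ a fine moduli scheme. Fifth, \textbf{projectivity}: $\M_\stp^{\al,n}(\tau')$ is a GIT quotient of a projective $U$-scheme and all semistable points are stable, so it is a projective $U$-scheme; concretely one verifies the valuative criterion of properness (given a stable pair over a punctured trait, extend it over the trait), which reduces to properness of the moduli of Gieseker-semistable sheaves together with an argument controlling the limiting section in the large-parameter regime.

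The main obstacle is the combination of points concentrated around the GIT/valuative step: one must (a) confirm that the infinitesimal limit of the coherent-system stability parameter is actually attained within the range where Le Potier's boundedness and semistable-reduction arguments apply — or, equivalently, redo these arguments in that limit, which is exactly where the section can behave badly (e.g. a limiting sheaf could acquire a destabilizing subsheaf through which $s$ factors); and (b) arrange the whole construction relatively over $U$, which requires the relative Quot scheme, relative GIT, and uniform boundedness over $U$, all of which follow from $\vp$ projective, $\cO_X(1)$ relatively very ample, and $U$ of finite type. The remaining steps — the dictionary between GIT-stability and $\tau'$-stability, and the passage from geometric quotient to fine moduli scheme via the absence of automorphisms — are routine once the stability notion is pinned down.

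Finally, to complete the reduction to the statement as phrased (with the $K^\num(\coh(X_u))$ only canonically isomorphic locally in $U$, as in the hypotheses of Theorem \ref{dt5thm9}), one first passes to the finite \'etale cover $\ti U\ra U$ of Theorem \ref{dt4thm8} trivializing the monodromy, as arranged in Definition \ref{dt12def1}, applies the above to $\ti X\ra\ti U$, and then observes that projectivity and representability descend along $\ti U\ra U$.
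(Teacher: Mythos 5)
There is a genuine error in the key identification. Your overall strategy — realize a pair $s:\cO_X(-n)\ra E$ as a coherent system $(\langle s\rangle,E(n))$ and invoke Le Potier's relative GIT construction \cite[Th.~4.11]{LePo}, then upgrade coarse to fine using the absence of automorphisms and strictly semistables — is exactly the paper's route. But you identify $\tau'$-stability with the \emph{large}-parameter limit of coherent-system stability, ``the same regime used by Pandharipande and Thomas''. That is the wrong regime. Condition (i) of Definition \ref{dt12def2} forces the underlying sheaf $E$ to be Gieseker semistable, with the section only breaking ties in (ii); this corresponds to Le Potier's $q$-stability for a \emph{fixed, sufficiently small} positive constant $q$ (a degree-zero polynomial; $0<q\le 1/(d!\,r_\al)$ suffices, since then adding $\dim\Ga\cdot q$ cannot reverse any strict inequality of reduced Hilbert polynomials of the finitely many relevant classes, and exactly implements the tie-breaking of (ii)). In the large-$q$ (PT) regime the stable objects are different: there the section term dominates, $E$ need not be Gieseker semistable, and conversely a pair whose section factors through a subsheaf of strictly smaller reduced Hilbert polynomial — perfectly allowed by Definition \ref{dt12def2} — becomes destabilized. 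So the GIT quotient you would build with that linearization does not represent ${\mathbb M}_\stp^{\al,n}(\tau')$.

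A consequence of fixing this is that the ``main obstacle'' you flag evaporates: no limiting argument, no re-proof of boundedness or semistable reduction, and no separate valuative-criterion check is needed, because Le Potier's theorem applies verbatim with an honest fixed $q\in\Q_{>0}$, already relatively over $U$, and already yields a projective coarse moduli $U$-scheme whose points (in the absence of strictly $q$-semistables) are isomorphism classes of pairs; triviality of automorphism groups of stable pairs then gives the fine moduli scheme. Your final remark about descending along $\ti U\ra U$ is also not how the paper proceeds — Theorem \ref{dt12thm1} is proved under the standing assumption (arranged in Definition \ref{dt12def1}) that the groups $K^\num(\coh(X_u))$ are globally identified, and the local-only case is handled later at the level of the invariants — but that is a presentational point, not a gap.
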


\begin{proof} This follows from a more general result of Le
Potier \cite[Th.~4.11]{LePo}, which we now explain. Let $X$ be a
smooth projective $U$-scheme of dimension $m$, with very ample line
bundle $\cO_X(1)$. In \cite[\S 4]{LePo}, Le Potier defines a {\it
coherent system\/}\index{coherent system} to be a pair $(\Ga,E)$,
where $E\in\coh(X)$ and $\Ga\subseteq H^0(E)$ is a vector subspace.
A {\it morphism of coherent systems\/} $f:(\Ga,E)\ra(\Ga',E')$ is a
morphism $f:E\ra E'$ in $\coh(X)$ with~$f(\Ga)\subseteq\Ga'$.

Let $q\in\Q[t]$ be a polynomial with positive leading coefficient.
For a coherent system $(\Ga,E)$ with $E\ne 0$, define a polynomial
$p_{(\Ga,E)}$ in $\Q[t]$ by
\begin{equation*}
p_{(\Ga,F)}(t)=\frac{P_E(t)+\dim\Ga\cdot q(t)}{r_E}\,,
\end{equation*}
where $P_E$ is the Hilbert polynomial of $E$ and $r_E>0$ its leading
coefficient. Then we call $(\Ga,E)$ $q$-{\it semistable\/}
(respectively $q$-{\it stable\/}) if $E$ is pure and whenever
$E'\subset E$ is a subsheaf with $E'\ne 0,E$ and $\Ga'=\Ga\cap
H^0(E')\subset H^0(E)$ we have $p_{(\Ga',E')}\le p_{(\Ga,E)}$
(respectively $p_{(\Ga',E')}< p_{(\Ga,E)}$), using the total order
$\le$ on polynomials $\Q[t]$ in Example~\ref{dt3ex1}.

Then Le Potier \cite[Th.~4.11]{LePo} shows that if $\al\in
K^\num(\coh(X))$, then the moduli functor $\underline{\rm
Sys}^\al(q)$ of $q$-semistable coherent systems $(\Ga,E)$ with
$[E]=\al$ is represented by a projective moduli $U$-scheme
$\mathop{\rm Sys}^\al(q)$, such that $U$-points of $\mathop{\rm
Sys}^\al(q)$ correspond to S-equivalence\index{S-equivalence} classes of
coherent systems $(\Ga,E)$. The method is to fix $N\gg 0$ and a
vector space $V$ of dimension $P_\al(N)$, and to define a projective
`Quot scheme'\index{Quot scheme} $\mathop{\rm Quot}^{\al,N}(q)$ of pairs
$\bigl((\Ga,E),\vp\bigr)$, where $(\Ga,E)$ is a coherent system with
$[E]=\al$ and $\vp:V\ra H^0(E(N))$ is an isomorphism, and $\GL(V)$
acts on $\mathop{\rm Quot}^{\al,N}(q)$. Le Potier shows that there
exists a linearization $\cal L$ for this action of $\GL(V)$,
depending on $q$, such that GIT (semi)stability of
$\bigl((\Ga,E),\vp\bigr)$ coincides with $q$-(semi)stability of
$(\Ga,E)$. Then $\mathop{\rm Sys}^\al(q)$ is the GIT
quotient~$\mathop{\rm Quot}^{\al,N}(q)/\!/_{\cal L}\GL(V)$.

Here is how to relate this to our situation. Fix $\al\in K(\coh(X))$
and $n\in\Z$. To a pair $s:\cO_{X}(-n)\ra E$ in the sense of
Definition \ref{dt12def2} we associate the coherent system $(\langle
s\rangle,E(n))$, with sheaf $E(n)=E\ot\cO_X(n)$ and 1-dimensional
subspace $\Ga\subset H^0(E(n))$ spanned by $0\ne s\in H^0(E(n))$. We
take $q\in\Q[t]$ to have degree 0, so that $q\in\Q_{>0}$, and to be
sufficiently small (in fact $0<q\le 1/d!\,r_\al$ is enough, where
$d=\dim\al$ and $r_\al$ is the leading coefficient of the Hilbert
polynomial $P_\al$). Then it is easy to show that $s:\cO_X(-n)\ra E$
is stable if and only if $(\langle s\rangle,E(n))$ is $q$-stable if
and only if $(\langle s\rangle,E(n))$ is $q$-semistable.

Hence \cite[Th.~4.11]{LePo} gives a projective coarse moduli
$U$-scheme\index{coarse moduli scheme}\index{moduli scheme!coarse}
$\M_\stp^{\al,n}(\tau')$. Since there are no strictly $q$-semistable
$(\langle s\rangle,E(n))$ in this moduli space, $U$-points of
$\M_\stp^{\al,n}(\tau')$ correspond to isomorphism classes of pairs
$s:\cO_{X}(-n)\ra E$, not just S-equivalence classes. Also, as
stable pairs $s:\cO_{X}(-n)\ra E$ have no automorphisms,
$\M_\stp^{\al,n}(\tau')$ is actually a fine moduli
scheme.\index{fine moduli scheme}\index{moduli scheme!fine}
\end{proof}

\subsection{Pairs as objects of the derived category}
\label{dt122}\index{derived category|(}

We can consider a stable pair $s:\cO_X(-n)\ra E$ on $X$ as a complex
$I$ in the derived category $D(X)$, with $\cO_X(-n)$ in degree $-1$
and $E$ in degree 0. We evaluate some $\Ext$ groups for such a
complex~$I$.

\begin{prop} Let\/ $s:\cO_X(-n)\ra E$ be a stable pair, and suppose
$n$ is large enough that $H^i(E(n))=0$ for $i>0$. Write $I$ for
$\cO_X(-n)\,{\buildrel s\over\longra}\,E$ considered as an object
of\/ $D(X),$ with\/ $E$ in degree $0$. Then:
\begin{itemize}
\setlength{\itemsep}{0pt}
\setlength{\parsep}{0pt}
\item[{\bf (a)}] $\Ext_{\sst D(X)}^i(I,E)=0$ for\/ $i<1,$ $i>3$.
\item[{\bf (b)}] $\Ext_{\sst D(X)}^i(I,\cO_X(-n))=0$ for\/
$i<1,$ $i>3,$ and\/ $\Ext_{\sst D(X)}^1(I,\cO_X(-n))\cong\K$.
\item[{\bf (c)}] $\Ext_{\sst D(X)}^i(I,I)=0$ for\/ $i<0,$ $i>3,$
and\/ $\Ext_{\sst D(X)}^i(I,I)\cong\K$ for\/~$i=0,3$.
\end{itemize}
\label{dt12prop1}
\end{prop}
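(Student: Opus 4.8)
The plan is to realise $I$ as the cone of $s$, so that there is a distinguished triangle $\cO_X(-n)\stackrel{s}{\longra}E\longra I\longra\cO_X(-n)[1]$ in $D(X)$, and then read off the three families of $\Ext$-groups from the long exact sequences obtained by applying $\Hom_{D(X)}(-,E)$, $\Hom_{D(X)}(-,\cO_X(-n))$ and $\Hom_{D(X)}(I,-)$ to this triangle. The ingredients fed into these sequences are all standard: $\Ext^i(\cO_X(-n),E)=H^i(E(n))$ vanishes for $i>0$ by the hypothesis on $n$ and equals $H^0(E(n))$ for $i=0$; $\Ext^i(\cO_X(-n),\cO_X(-n))=H^i(\cO_X)$ is $\K$ for $i=0,3$ and vanishes for $i<0$ or $i>3$ (taking $X$ connected, as usual); $\Ext^i(E,E)=0$ for $i<0$ and $i>3$ since $X$ is a smooth $3$-fold; and by Serre duality on the Calabi--Yau $3$-fold $\Ext^i(E,\cO_X(-n))\cong H^{3-i}(E(n))^*$, which vanishes for $i<3$ and equals $H^0(E(n))^*$ for~$i=3$.

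Part (a) follows from the $\Hom(-,E)$ sequence: all of the $\Ext^i(\cO_X(-n),E)$ vanish except in degree $0$, so $\Ext^i(I,E)$ can be nonzero outside $1\le i\le 3$ only in degrees $0$ and $4$; degree $4$ is killed by $\Ext^3(\cO_X(-n),E)=H^3(E(n))=0$, and degree $0$ reduces to the injectivity of the map ${}\ci s\colon\Hom(E,E)\ra\Hom(\cO_X(-n),E)$. This is the one place the stability of the pair enters: if $f\ci s=0$ with $0\ne f\in\Hom(E,E)$, then $E':=\Ker f$ satisfies $0\ne E'\ne E$ (a surjective endomorphism of a coherent sheaf is an isomorphism) and $s$ factors through $E'$, so $\tau([E'])<\tau([E])$ by condition (ii) of stability; the seesaw axiom for the stability condition $\tau$ then forces $\tau([E/E'])>\tau([E])$, contradicting $\tau$-semistability of $E$ from condition (i), since $E/E'\cong\Im f$ is a nonzero proper subsheaf of~$E$.

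Part (b) follows from the $\Hom(-,\cO_X(-n))$ sequence in the same style: since $\Ext^i(E,\cO_X(-n))=0$ for $i<3$, we get $\Ext^0(I,\cO_X(-n))=0$ and $\Ext^1(I,\cO_X(-n))\cong\Ext^0(\cO_X(-n),\cO_X(-n))=\K$ immediately, and the negative degrees vanish because $\Ext^{<0}(\cO_X,\cO_X)=0$; for $i>3$ only degree $4$ is at issue, where $\Ext^4(I,\cO_X(-n))=\Coker\bigl({}\ci s\colon\Ext^3(E,\cO_X(-n))\ra\Ext^3(\cO_X(-n),\cO_X(-n))\bigr)$, and by Serre duality this cokernel is dual to the kernel of $s\ci{}\colon\Hom(\cO_X(-n),\cO_X(-n))=\K\ra\Hom(\cO_X(-n),E)$, $\id\mapsto s$, which is injective because $s\ne0$. (Note that $\Ext^2$ and $\Ext^3$ genuinely need not vanish when $H^1(\cO_X)$ or $H^2(\cO_X)$ is nonzero, which is why (b) asserts nothing about them.) For part (c) I would apply $\Hom(I,-)$ to the triangle and feed in (a) and (b): for $i<0$ both neighbouring terms $\Ext^i(I,E)$ and $\Ext^{i+1}(I,\cO_X(-n))$ vanish, so $\Ext^{<0}(I,I)=0$; for $i=0$ the vanishing of $\Ext^0(I,E)$ and $\Ext^0(I,\cO_X(-n))$ forces $\Ext^0(I,I)\hookra\Ext^1(I,\cO_X(-n))\cong\K$, and this is an equality since $\id_I\ne0$. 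The cases $i=3$ and $i>3$ then come for free from Serre duality on the derived category of the Calabi--Yau $3$-fold: $\Ext^i(I,I)\cong\Ext^{3-i}(I,I)^*$ since $I\ot\om_X[3]\cong I[3]$, so $\Ext^3(I,I)\cong\Ext^0(I,I)^*\cong\K$ and $\Ext^{>3}(I,I)\cong\Ext^{<0}(I,I)^*=0$.

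The only step with real content is the injectivity claim in part (a) --- and its Serre dual in part (b) --- since this is the single point where the stability hypotheses on the pair $s$, rather than merely the $n$-regularity of $E$, are used; everything else is bookkeeping with the three long exact sequences. The minor points to keep an eye on are that $\Im f$ is a \emph{proper} subsheaf of $E$, and that $X$ is connected so that $H^0(\cO_X)=H^3(\cO_X)=\K$, the disconnected case being handled componentwise as in the proof of Theorem~\ref{dt4thm7}.
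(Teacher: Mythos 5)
Your proof is correct and follows essentially the same route as the paper's: the long exact sequences from the defining triangle, the vanishing of $H^{>0}(E(n))$, Serre duality, and the stability-of-pairs argument to kill $\Hom_{\sst D(X)}(I,E)$ (and, dually, the cokernel in degree $4$ of part (b)). The only cosmetic differences are that in (c) you apply $\Hom(I,-)$ and invoke Serre duality $\Ext^i_{\sst D(X)}(I,I)\cong\Ext^{3-i}_{\sst D(X)}(I,I)^*$ directly, where the paper uses the $\Hom(-,I)$ sequence, and in (a) you derive the contradiction by combining condition (ii) with the seesaw against condition (i), whereas the paper runs the same seesaw argument in the opposite order; both are equivalent.
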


\begin{proof} We have a distinguished triangle $I[-1]\ra
\cO_X(-n)\,{\buildrel s\over\longra}\,E\ra I$ in $D(X)$. Taking
$\Ext_{\sst D(X)}^*$ of this with $E,\cO_X(-n)$ and $I$ gives long
exact sequences:
\e
\begin{gathered}
\!\!\!\!\!\text{\begin{footnotesize}$\displaystyle
\xymatrix@C=13pt@R=3pt{ \Ext^{i-1}(\cO(-\!n),E) \ar[r] & \Ext^i(I,E)
\ar[r] & \Ext^i(E,E) \ar[r]^(0.42){\ci s} & \Ext^i(\cO(-\!n),E),
\\
\!\Ext^{i-1}(\cO(-\!n),\cO(-\!n))\! \ar[r] & \!\Ext^i(I,\cO(-\!n))\!
\ar[r] & \!\Ext^i(E,\cO(-\!n))\! \ar[r]^(0.42){\ci s} &
\!\Ext^i(\cO(-n),\cO(-n)),\!
\\
\Ext^{i-1}(\cO(-\!n),I) \ar[r] & \Ext^i(I,I) \ar[r] & \Ext^i(E,I)
\ar[r]^(0.42){\ci s} & \Ext^i(\cO(-\!n),I).
}\!\!\!$\end{footnotesize}}
\end{gathered}
\label{dt12eq1}
\e

In the first row of \eq{dt12eq1}, since $\Ext_{\sst
D(X)}^i(\cO_X(-n),E)=H^i(E(n))=0$ for $i\ne 0$ and $\Ext_{\sst
D(X)}^i(E,E)=0$ for $i<0$ and $i>3$, this gives $\Ext_{\sst
D(X)}^i(I,E)=0$ for $i<0$ and $i>3$, and
\e
\Hom_{\sst D(X)}(I,E)\cong\Ker\bigl(\ci s:\Hom(E,E)\longra
\Hom(\cO_X(-n),E)\bigr).
\label{dt12eq2}
\e
Write $\pi:E\ra F$ for the cokernel of $s:\cO_X(-n)\ra E$. Suppose
$0\ne\be\in\Hom(E,E)$ with $\be\ci s=0$ in $\Hom(\cO_X(-n),E)$. Both
$\Ker(\be)$ and $\Im(\be)$ are in fact subsheaves of $E$ with
$[E]=[\Ker\be]+[\Im\be]$, and $\Ker\be\ne 0$ as $\be\ci s=0$ with
$s\ne 0$, and $\Im\be\ne 0$ as $\be\ne 0$. Since $E$ is
$\tau$-semistable, the seesaw inequalities imply that
$\tau([\Ker\be])=\tau([\Im\be])=\tau([E])$. But as $s$ factors
through $\Ker\be$, stability of the pair implies that
$\tau([\Ker\be])<\tau([E])$, a contradiction. So $\Hom_{\sst
D(X)}(I,E)=0$ by \eq{dt12eq2}, proving~(a).

We have $\Ext_{\sst D(X)}^i(\cO_X(-n),\cO_X(-n))=H^i(\cO_X)=0$ for
$i<0$ or $i>3$ and is $\K$ for $i=0$, and $\Ext_{\sst
D(X)}^i(E,\cO_X(-n))\cong\Ext_{\sst D(X)}^{3-i}(\cO_X(-n),E)^* \cong
H^{3-i}(E(n))^*$ by Serre duality, which is zero unless $i=3$. Part
(b) follows from the second row of \eq{dt12eq1} and the fact that
$\ci s:\Ext^3(E,\cO_X(-n))\ra\Ext^3(\cO_X(-n),\cO_X(-n))\cong\K$ is
nonzero, as this is Serre dual to the morphism
$\Hom(\cO_X(-n),\cO_X(-n))\ra\Hom(\cO_X(-n),E)$ taking~$1\mapsto
s\ne 0$.

For (c), Serre duality and part (a) gives $\Ext_{\sst
D(X)}^i(E,I)=0$ for $i<0$ and $i>2$. Thus the third row of
\eq{dt12eq1} yields $\Ext_{\sst D(X)}^i(I,I)\cong \Ext_{\sst
D(X)}^{i-1}(\cO_X(-n),I)\cong \Ext_{\sst D(X)}^{4-i}(I,\cO_X(-n))^*$
for $i<0$ and $i>3$. So $\Ext_{\sst D(X)}^i(I,I)=0$ for $i<0$ and\/
$i>3$ by (b). Also $\K\cong \Ext_{\sst
D(X)}^2(\cO_X(-n),I)\ra\Ext_{\sst D(X)}^3(I,I)\ra 0$ is exact, and
$\Ext_{\sst D(X)}^3(I,I)\cong\Hom_{\sst D(X)}(I,I)^*$ cannot be zero
as $I$ is nonzero in $D(X)$, so $\Ext_{\sst D(X)}^3(I,I)\cong\K$ and
hence $\Ext_{\sst D(X)}^0(I,I)\cong\K$ by Serre duality, giving~(c).
\end{proof}

\begin{cor} In the situation of Proposition\/ {\rm\ref{dt12prop1},}
the object\/ $I$ in $D(X)$ up to quasi-isomorphism and the integer\/
$n$ determine the stable pair $s:\cO_X(-n)\ra E$ up to isomorphism.
\label{dt12cor1}
\end{cor}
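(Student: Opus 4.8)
The plan is to reconstruct, from $I$ and $n$ alone, the distinguished triangle that presents $I$ as a mapping cone, and then to read off the pair $(E,s)$ from it up to isomorphism. Viewing the two-term complex $I=[\cO_X(-n)\xrightarrow{\,s\,}E]$ (with $E$ in degree $0$) as $\cone(s)$, we have in $D(X)$ a distinguished triangle
\begin{equation*}
\cO_X(-n)\xrightarrow{\,s\,}E\xrightarrow{\,\pi\,}I\xrightarrow{\,\delta\,}\cO_X(-n)[1],
\end{equation*}
with $\delta\in\Hom_{D(X)}(I,\cO_X(-n)[1])=\Ext^1_{D(X)}(I,\cO_X(-n))$. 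First I would check that $\delta\neq 0$: if $\delta=0$ the triangle splits, $I\cong E\oplus\cO_X(-n)[1]$, and then $\dim_\K\Hom_{D(X)}(I,I)\geq\dim\Hom(E,E)+\dim\Hom(\cO_X(-n),\cO_X(-n))\geq 2$, contradicting $\Hom_{D(X)}(I,I)\cong\K$ from Proposition \ref{dt12prop1}(c). Since moreover $\Ext^1_{D(X)}(I,\cO_X(-n))\cong\K$ by Proposition \ref{dt12prop1}(b), the morphism $\delta$ is determined by $I$ and $n$ up to a nonzero scalar, and rescaling $\delta$ replaces the triangle above by an isomorphic one.

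For the uniqueness statement itself I would argue directly. Suppose $s_j\colon\cO_X(-n)\to E_j$ for $j=1,2$ are stable pairs with the same $n$ (taken, as in Proposition \ref{dt12prop1}, large enough that $H^i(E_j(n))=0$ for $i>0$), with associated complexes $I_j$ and connecting morphisms $\delta_j$ as above, and let $\phi\colon I_1\xrightarrow{\ \sim\ }I_2$ be an isomorphism in $D(X)$. Then $\delta_1$ and $\delta_2\circ\phi$ both lie in the one-dimensional space $\Ext^1_{D(X)}(I_1,\cO_X(-n))$ and are nonzero, so $\mu\,\delta_1=\delta_2\circ\phi$ for a unique $\mu\in\K^\times$. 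Applying axiom (TR3) of the triangulated category $D(X)$ to the commuting square with horizontal arrows $\delta_1,\delta_2$ and vertical arrows $\phi$ and $\mu\cdot\id_{\cO_X(-n)[1]}$ extends it to a morphism of the suitably rotated triangles above for $I_1$ and $I_2$; the third vertical arrow is an isomorphism by the five lemma for triangulated categories, and since the shift functor is an autoequivalence it has the form $f[1]$ for a unique isomorphism $f\colon E_1\to E_2$, which one checks satisfies $f\circ s_1=\mu\,s_2$. Then $\mu^{-1}f\colon E_1\to E_2$ is an isomorphism of pairs, i.e.\ $(\mu^{-1}f)\circ s_1=s_2$, which is exactly the claim.

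Everything here is elementary once Proposition \ref{dt12prop1} is available, so I do not anticipate a genuine obstacle. The only points requiring care are the bookkeeping of rotations and signs when invoking (TR3), the identification of the completing morphism in (TR3) with the shift of an honest sheaf morphism $E_1\to E_2$ (valid because $\Hom_{D(X)}(E_1[1],E_2[1])\cong\Hom_{D(X)}(E_1,E_2)$), and the observation that the argument uses the datum $n$ only through the fact that, $n$ being fixed, $\cO_X(-n)$ is a known sheaf and the outer term of the cone triangle is therefore literally the same for both pairs.
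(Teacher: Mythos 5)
Your overall strategy is the same as the paper's: identify the connecting morphism of the cone triangle as an element of the one-dimensional space $\Ext^1_{D(X)}(I,\cO_X(-n))$ from Proposition \ref{dt12prop1}(b), show it is nonzero, and recover $(E,s)$ from the (rotated) triangle; your (TR3) bookkeeping making the reconstruction explicit is correct. However, the nonvanishing step contains an error. In a distinguished triangle $A\to B\to C\xrightarrow{h}A[1]$, vanishing of the connecting morphism $h$ splits the \emph{middle} term, giving $B\cong A\oplus C$; the conclusion $C\cong B\oplus A[1]$ would instead require the first map $A\to B$ to vanish. So from $\delta=0$ in $\cO_X(-n)\xrightarrow{s}E\xrightarrow{\pi}I\xrightarrow{\delta}\cO_X(-n)[1]$ you may only conclude $E\cong\cO_X(-n)\oplus I$ (equivalently, that $s$ is a split monomorphism), not $I\cong E\oplus\cO_X(-n)[1]$; indeed the latter splitting is governed by the vanishing of $s[1]$, not of $\delta$. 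Consequently your contradiction via $\dim_\K\Hom_{D(X)}(I,I)\ge 2$ does not apply: the correct splitting only forces $\dim\Hom(E,E)\ge 2$, which is not excluded, as $E$ is merely $\tau$-semistable and Proposition \ref{dt12prop1} says nothing about $\Hom(E,E)$.

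The gap is easily repaired from the other parts of Proposition \ref{dt12prop1}. If $E\cong\cO_X(-n)\oplus I$ in $D(X)$ then $\Hom_{D(X)}(I,E)\supseteq\Hom_{D(X)}(I,I)\cong\K\ne 0$ by part (c), contradicting $\Hom_{D(X)}(I,E)=0$ from part (a). Equivalently, and more directly: applying $\Hom_{D(X)}(I,-)$ to the triangle, part (a) shows that composition with $\delta$ is injective on $\Hom_{D(X)}(I,I)$, so $\delta=\delta\ci\id_I\ne 0$. With this correction your argument goes through and coincides in substance with the paper's proof, which notes that the morphism $\pi:I[-1]\ra\cO_X(-n)$ (your $\delta$, after rotation) is nonzero because otherwise $E\cong\cO_X(-n)\op I$, and then uses $\Ext^1_{D(X)}(I,\cO_X(-n))\cong\K$ to conclude that $\pi$, hence $E\cong\cone(\pi)$ and $s$, are determined by $I$ and $n$ up to isomorphism.
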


\begin{proof} In the distinguished triangle $I[-1]\,{\buildrel\pi\over
\longra}\,\cO_X(-n)\,{\buildrel s\over\longra}\,E\ra I$, the
morphism $\pi$ is nonzero since otherwise $E\cong\cO_X(-n)\op I$
which is not a sheaf. But $\Ext^1_{\sst D(X)}(I,\cO_X(-n))\cong\K$
by Proposition \ref{dt12prop1}(b), so $\Ext^1_{\sst
D(X)}(I,\cO_X(-n))=\K\cdot\pi$. Thus the morphism $\pi$ is
determined by $I,n$ up to $\bG_m$ rescalings, so $E\cong\cone(\pi)$
and $s$ are determined by $I,n$ up to isomorphism.
\end{proof}

We have $U$-schemes $X$ and $\M_\stp^{\al,n}(\tau')$, by Theorem
\ref{dt12thm1}, so we can form the fibre product
$X_\M=X\times_U\M_\stp^{\al,n}(\tau')$, which we regard as a family
of Calabi--Yau 3-folds $X_\M\,{\buildrel\pi\over\longra}\,
\M_\stp^{\al,n}(\tau')$ over the base $U$-scheme
$\M_\stp^{\al,n}(\tau')$. Since $\M_\stp^{\al,n}(\tau')$ is a fine
moduli scheme for pairs\index{fine moduli scheme}\index{moduli
scheme!fine} on $X$, on $X_\M$ we have a universal pair, which we
denote by ${\mathbb S}: \cO_{X_\M}(-n)\ra{\mathbb E}$. We can regard
this as an object in the derived category $D(X_{\M_\stp^{\al,n}
(\tau')})$, with $\cO_{X_{\M_\stp^{\al,n} (\tau')}}(-n)$ in degree
$-1$ and $\mathbb E$ in degree 0, and we will denote this object
by~${\mathbb I}=\cone({\mathbb S})$.\index{stable
pair|)}\index{derived category|)}

\subsection{Cotangent complexes and obstruction theories}
\label{dt123}\index{cotangent complex|(}\index{obstruction theory|(}

Suppose $X,Y$ are schemes over some base $\K$-scheme $U$, and
$X\,{\buildrel\phi\over\longra}\,Y$ is a morphism of $U$-schemes.
Then one can define the {\it cotangent sheaf\/} (or {\it sheaf of
relative differentials\/}) $\Om_{X/Y}$ in $\coh(X)$, as in
Hartshorne \cite[\S II.8]{Hart2}. This generalizes cotangent bundles
of smooth schemes: if $X$ is a smooth $\K$-scheme then
$\Om_{X/\Spec\K}$ is the cotangent bundle $T^*X$, a locally free
sheaf of rank $\dim X$ on $X$. If $X\,{\buildrel\phi\over\longra}\,
Y\, {\buildrel\psi\over\longra}\,Z$ are morphisms of $U$-schemes
then there is an exact sequence
\e
\xymatrix{ \phi^*(\Om_{Y/Z}) \ar[r] & \Om_{X/Z} \ar[r] & \Om_{X/Y}
\ar[r] & 0 }
\label{dt12eq3}
\e
in $\coh(X)$. Note that the morphism $\phi^*(\Om_{Y/Z})\ra\Om_{X/Z}$
need not be injective, that is, \eq{dt12eq3} may not be a short
exact sequence. Morally speaking, this says that $\phi\mapsto
\Om_{X/Y}$ is a right exact functor, but may not be left exact.

Cotangent complexes are derived versions of cotangent sheaves, for
which \eq{dt12eq3} is replaced by a distinguished triangle
\eq{dt12eq4}, making it fully exact. The {\it cotangent complex\/}
$L_{X/Y}$ of a morphism $X\,{\buildrel\phi\over\longra}\,Y$ is an
object in the derived category $D(X)$, constructed by Illusie
\cite{Illu1}; a helpful review is given in Illusie \cite[\S
1]{Illu2}. It has $h^0(L_{X/Y})\cong\Om_{X/Y}$. If $\phi$ is smooth
then $L_{X/Y}=\Om_{X/Y}$. Here are some properties of cotangent
complexes:
\begin{itemize}
\setlength{\itemsep}{0pt}
\setlength{\parsep}{0pt}
\item[(a)] Suppose $X\,{\buildrel\phi\over\longra}\,Y\,
{\buildrel\psi\over\longra}\,Z$ are morphisms of $U$-schemes.
Then there is a distinguished triangle in $D(X)$, \cite[\S
2.1]{Illu1}, \cite[\S 1.2]{Illu2}:
\e
\xymatrix{ L\phi^*(L_{Y/Z}) \ar[r] & L_{X/Z} \ar[r] & L_{X/Y}
\ar[r] & L\phi^*(L_{Y/Z})[1]. }
\label{dt12eq4}
\e
This is called the {\it distinguished triangle of transitivity}.
\item[(b)] Suppose we have a commutative diagram of morphisms of
$U$-schemes:
\begin{equation*}
\xymatrix@R=10pt{ R \ar[r]_\rho \ar[d]^\ep & S \ar[r]_\si \ar[d]
& T \ar[d] \\ X \ar[r]^\phi & Y \ar[r]^\psi & Z.}
\end{equation*}
Then we get a commutative diagram in $D(R)$, \cite[\S
2.1]{Illu1}:
\begin{equation*}
\xymatrix@R=13pt@C=10pt{ L\rho^*(L_{S/T}) \ar[r] & L_{R/T}
\ar[r] & L_{R/S} \ar[r] & L\phi^*(L_{S/T})[1] \\
L\ep^*\bigl(L\phi^*(L_{Y/Z})\bigr) \ar[r] \ar[u] &
L\ep^*(L_{X/Z}) \ar[r] \ar[u] & L\ep^*(L_{X/Z}) \ar[r] \ar[u] &
L\ep^*(\bigl(L\phi^*(L_{Y/Z})[1]\bigr), \ar[u]}
\end{equation*}
where the rows come from the distinguished triangles of
transitivity for $R\ra S\ra T$ and~$X\ra Y\ra Z$.
\item[(c)] Suppose we have a Cartesian diagram of $U$-schemes:
\begin{equation*}
\xymatrix@R=10pt{ X\times_ZY \ar[r]_{\pi_Y} \ar[d]_{\pi_X} & Y
\ar[d]^\psi \\ X \ar[r]^\phi & Z.}
\end{equation*}
If $\phi$ or $\psi$ is flat then we have {\it base change
isomorphisms\/} \cite[\S 2.2]{Illu1}, \cite[\S 1.3]{Illu2}:
\e
\begin{split}
L_{X\times_ZY/Y}\cong L\pi_X^*(L_{X/Z}),\quad
L_{X\times_ZY/X}\cong L\pi_Y^*(L_{Y/Z}),\\
\text{and}\qquad L_{X\times_ZY/Z}\cong L\pi_X^*(L_{X/Z})\op
L\pi_Y^*(L_{Y/Z}).
\end{split}
\label{dt12eq5}
\e
\end{itemize}

Recall the following definitions from Behrend and
Fantechi~\cite{Behr,BeFa1,BeFa2}:

\begin{dfn} Let $Y$ be a $\K$-scheme, and $D(Y)$ the derived
category of quasicoherent sheaves on $Y$.
\begin{itemize}
\setlength{\itemsep}{0pt}
\setlength{\parsep}{0pt}
\item[{\bf(a)}]  A complex $E^\bu\in D(Y)$ is {\it perfect of perfect
amplitude contained in\/} $[a,b]$, if \'{e}tale locally on $Y$,
$E^\bu$ is quasi-isomorphic to a complex of locally free sheaves
of finite rank in degrees $a,a+1,\ldots,b$.
\item[{\bf(b)}] We say that a complex $E^\bu\in D(Y)$ {\it satisfies
condition\/} $(*)$ if
\begin{itemize}
\setlength{\itemsep}{0pt}
\setlength{\parsep}{0pt}
\item[(i)] $h^i(E^\bu)=0$ for all $i>0$,
\item[(ii)] $h^i(E^\bu)$ is coherent for $i=0,-1$.
\end{itemize}
\item[{\bf(c)}] An {\it obstruction theory\/}\index{obstruction
theory!definition} for $Y$ is a morphism $\phi:E^\bu\ra L_Y$ in
$D(Y)$, where $L_Y=L_{Y/\Spec\K}$ is the cotangent complex of
$Y$, and $E$ satisfies condition $(*)$, and $h^0(\phi)$ is an
isomorphism, and $h^{-1}(\phi)$ is an epimorphism.
\item[{\bf(d)}] An obstruction theory $\phi:E^\bu\ra L_Y$ is called
{\it perfect\/}\index{obstruction theory!perfect} if $E^\bu$ is
perfect of perfect amplitude contained in $[-1,0]$.
\item[{\bf(e)}] A perfect obstruction theory $\phi:E^\bu\ra L_Y$ on
$Y$ is called {\it symmetric\/}\index{obstruction
theory!symmetric}\index{symmetric obstruction theory!definition}
if there exists an isomorphism $\theta:E^\bu\ra E^{\bu\vee}[1]$,
such that $\theta^{\vee}[1]=\theta$. Here
$E^{\bu\vee}\!=\!R\SHom(E^\bu,\cO_Y)$ is the {\it dual\/} of
$E^\bu$, and $\theta^\vee$ the dual morphism of~$\theta$.
\end{itemize}
If instead $Y\stackrel{\psi}{\longra}U$ is a morphism of
$\K$-schemes, so $Y$ is a $U$-scheme, we define {\it relative
perfect obstruction theories\/} $\phi:E^\bu\ra L_{Y/U}$ in the
obvious way.
\label{dt12def3}
\end{dfn}

A closed immersion of $\K$-schemes $j:T\ra\ov{T}$ is called a {\it
square zero extension\/}\index{square zero extension} with {\it ideal
sheaf\/}\index{ideal sheaf} $J$ if $J$ is the ideal sheaf of $T$ in
$\ov{T}$ and $J^2=0$, so that we have an exact sequence
in~$\coh(\ov{T})$:
\e
\xymatrix{ 0 \ar[r] & J \ar[r] & \cO_{\ov{T}} \ar[r] & \cO_{T}
\ar[r] & 0.}
\label{dt12eq6}
\e
We will always take $T,\ov T$ to be {\it affine\/} schemes.

The deformation theory of a $\K$-scheme $Y$ is largely governed by
its cotangent complex $L_Y \in D(Y)$, in the following sense.
Suppose that we are given a square-zero extension $\ov{T}$ of $T$
with ideal sheaf $J$, with $T,\ov T$ affine, and a morphism $g:T \ra
Y$. Then the theory of cotangent complexes gives a canonical
morphism
\begin{equation*}
\xymatrix{ g^*(L_Y) \ar[r] &L_T \ar[r] &J[1]}
\end{equation*}
in $D(T)$. This morphism, $\om(g)\in\Ext^1(g^*L_Y,J)$, is equal to
zero if and only if there exists an extension $\ov{g}:\ov{T}\ra Y$
of $g$. Moreover, when $\om(g)=0$, the set of isomorphism extensions
form a torsor under~$\Hom(g^*L_Y,J)$.

Behrend and Fantechi prove the following theorem, which both
explains the term obstruction theory and provides a criterion for
verification in practice:

\begin{thm}[{Behrend and Fantechi \cite[Th.~4.5]{BeFa1}}] The
following two conditions are equivalent for $E^\bu \in D(Y)$
satisfying condition $(*)$.
\begin{itemize}
\setlength{\itemsep}{0pt}
\setlength{\parsep}{0pt}
\item[{\bf(a)}] The morphism $\phi: E^\bu \ra L_Y$ is an
obstruction theory.
\item[{\bf(b)}] Suppose we are given a setup $(T,\ov{T},J,g)$ as
above. The morphism $\phi$ induces an element $\phi^*(\om(g))\in
\Ext^1(g^*E^\bu, J)$ from $\om(g)\in\Ext^1(g^*L_Y, J)$ by
composition. Then $\phi^*(\om(g))$ vanishes if and only if there
exists an extension $\ov{g}$ of\/ $g$. If it vanishes, then the
set of extensions form a torsor under~$\Hom(g^*E^\bu,J)$.
\end{itemize}
The analogue also holds for relative obstruction theories.
\label{dt12thm2}
\end{thm}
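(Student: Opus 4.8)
The plan is to reduce the equivalence to a homological statement about the mapping cone $C=\cone(\phi)$, which fits into a distinguished triangle $E^\bu\xrightarrow{\phi}L_Y\ra C\ra E^\bu[1]$ in $D(Y)$. Taking cohomology sheaves and using that $E^\bu$ satisfies condition $(*)$ while $h^i(L_Y)=0$ for $i>0$ and $h^0(L_Y)=\Om_Y$, a short diagram chase in the long exact sequence shows that $h^i(C)=0$ for $i\ge1$ automatically, and that $\phi$ is an obstruction theory (i.e.\ $h^0(\phi)$ an isomorphism and $h^{-1}(\phi)$ an epimorphism) if and only if $h^0(C)=h^{-1}(C)=0$, i.e.\ $C$ lies in degrees $\le-2$. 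The two inputs I would use are: (i) the properties of $L_Y$ recalled before the statement — for a square-zero extension $(T,\ov T,J)$ with $T,\ov T$ affine and $g\colon T\ra Y$, the obstruction $\om(g)\in\Ext^1(g^*L_Y,J)$ vanishes iff an extension $\ov g$ exists, and then extensions form a torsor under $\Hom(g^*L_Y,J)$ — together with the functoriality of these constructions in the complex mapping to $L_T$, so that $\phi$ induces compatible maps of deformation/obstruction data; and (ii) the classification, from Illusie's cotangent complex theory \cite{Illu1}, of square-zero extensions of $Y$ by a quasi-coherent $J$ by $\Ext^1_{D(Y)}(L_Y,J)$, under which the class of $\ov Y$ corresponds to $\om(\mathrm{id}_Y)$ (here $L_{Y/Y}=0$, so the obstruction map is an isomorphism).

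For (a)$\Rightarrow$(b): if $h^0(C)=h^{-1}(C)=0$ then, since $h^j(g^*C)$ depends only on the cohomology sheaves $h^i(C)$ with $i\le j$, also $h^0(g^*C)=h^{-1}(g^*C)=0$, so $g^*C$ sits in degrees $\le-2$. Applying $\Hom_{D(T)}(-,J)$ to $g^*E^\bu\ra g^*L_Y\ra g^*C\ra$ then shows $\phi^*\colon\Hom(g^*L_Y,J)\ra\Hom(g^*E^\bu,J)$ is an isomorphism and $\phi^*\colon\Ext^1(g^*L_Y,J)\ra\Ext^1(g^*E^\bu,J)$ is injective. The first transports the torsor statement from $L_Y$ to $E^\bu$; the second gives $\phi^*(\om(g))=0\iff\om(g)=0\iff\ov g$ exists. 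This is exactly (b).

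For (b)$\Rightarrow$(a), which I expect to be the more delicate direction: first apply (b) to the split extension $\ov T=T[J]$ over an arbitrary affine $g\colon T\ra Y$ and arbitrary quasi-coherent $J$; there $\om(g)=0$ so $\ov g$ exists, hence by (b) the extensions form a torsor under $\Hom(g^*E^\bu,J)$. Comparing with the canonical torsor structure under $\Hom(g^*L_Y,J)$ and using the compatibility via $\phi^*$ from input (i) forces $\phi^*\colon\Hom(g^*L_Y,J)\ra\Hom(g^*E^\bu,J)$ to be an isomorphism, i.e.\ $\Hom(g^*h^0(E^\bu),J)\xrightarrow{\ \sim\ }\Hom(g^*\Om_Y,J)$ for all $g,J$; taking $g$ to be the inclusions of affine opens of $Y$ and applying Yoneda gives $h^0(\phi)$ an isomorphism, so $h^0(C)=0$ and $C$ lies in degrees $\le-1$. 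Next, working Zariski-locally we may assume $Y$ affine and take $g=\mathrm{id}_Y$: for quasi-coherent $J$ on $Y$, the triangle (with $h^0(C)=0$ now known) yields $\ker\bigl(\phi^*\colon\Ext^1_{D(Y)}(L_Y,J)\ra\Ext^1_{D(Y)}(E^\bu,J)\bigr)\cong\Ext^1_{D(Y)}(C,J)\cong\Hom_{\cO_Y}(h^{-1}(C),J)$. By input (ii) each class $\eta$ in this kernel equals $\om(\mathrm{id}_Y)$ for the square-zero extension $\ov Y_\eta$ of $Y$ by $J$ that it classifies, and $\mathrm{id}_Y$ extends over $\ov Y_\eta$ iff $\eta=0$; but $\phi^*(\eta)=0$, so (b) forces the extension to exist, whence $\eta=0$. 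Thus $\Hom_{\cO_Y}(h^{-1}(C),J)=0$ for all $J$, so $h^{-1}(C)=0$ and $h^{-1}(\phi)$ is an epimorphism. This gives (a).

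The main obstacle is not conceptual but the cotangent-complex bookkeeping behind inputs (i) and (ii): the identification of square-zero extensions with $\Ext^1_{D(Y)}(L_Y,J)$, the functoriality and base-change behaviour of $\om(g)$ in both $g$ and $\phi$ (in particular the compatibility of the two torsor structures used in (b)$\Rightarrow$(a)), and the elementary but fiddly computations of $h^0$ and $h^{-1}$ of $C$ and of $g^*C$. All of this is standard from Illusie \cite{Illu1}. The relative case — replacing $L_Y$ by $L_{Y/U}$, absolute square-zero extensions by $U$-square-zero extensions, and the absolute constructions by their relative analogues — is identical, since the transitivity and base-change triangles for cotangent complexes hold relative to any base.
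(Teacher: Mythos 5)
The paper does not actually prove this theorem — it is quoted from Behrend--Fantechi \cite[Th.~4.5]{BeFa1} and used as a black box — so there is no internal proof to compare with; judged on its own terms, your reconstruction is essentially correct and follows the same cone-plus-Illusie route as the original argument. The reduction to $h^0(C)=h^{-1}(C)=0$ for $C=\cone(\phi)$ is right (condition $(*)$ and $h^{>0}(L_Y)=0$ do force $h^{\ge 1}(C)=0$); in (a)$\Rightarrow$(b) the right t-exactness of $Lg^*$ and the vanishing of $\Hom(g^*C,J[i])$ for $i\le 1$ give exactly the isomorphism on $\Hom$ and injectivity on $\Ext^1$ that you need; and in (b)$\Rightarrow$(a) the use of square-zero extensions of affine opens classified by $\Ext^1(L_Y,J)$, with $\om(\id_Y)$ equal to the extension class, correctly kills $h^{-1}(C)$ once $h^0(\phi)$ is known to be an isomorphism. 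One small imprecision: in that last step you invoke only ``$h^0(C)=0$'', but to identify $\ker\bigl(\phi^*:\Ext^1(L_Y,J)\ra\Ext^1(E^\bu,J)\bigr)$ with $\Ext^1(C,J)\cong\Hom(h^{-1}(C),J)$ you need the connecting map $\Hom(E^\bu,J)\ra\Ext^1(C,J)$ to vanish, i.e.\ the full strength of step 1 ($h^0(\phi)$ an isomorphism, not merely surjective); since step 1 does deliver the isomorphism, the argument stands, but the parenthetical should say so.

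The point that deserves to be made explicit is the torsor clause in (b). As stated, (b) does not specify how $\Hom(g^*E^\bu,J)$ acts on the set of extensions, and your step deducing that $\phi^*:\Hom(g^*L_Y,J)\ra\Hom(g^*E^\bu,J)$ is bijective from the trivial extensions genuinely requires the action in (b) to be the one compatible, via $\phi^*$, with the canonical simply transitive action of $\Hom(g^*L_Y,J)$. With a purely abstract reading (``there exists some simply transitive action''), the implication (b)$\Rightarrow$(a) fails: for instance $E^\bu=L_Y\op\cO_Y\ra L_Y$ the projection on $Y=\bA^1_\Q$ satisfies the obstruction clause, and the cardinalities $\md{\Hom(g^*E^\bu,J)}=\md{\Hom(g^*L_Y,J)}$ agree for every affine $g$ and quasi-coherent $J$ (both are infinite or both zero), so the extensions are abstractly a torsor under the larger group even though $h^0(\phi)$ is not injective. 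So the compatibility you attribute to ``functoriality'' in your input (i) is not decoration but the hypothesis carrying the injectivity of $h^0(\phi)$; it is the intended (and the only workable) reading of the Behrend--Fantechi statement, and it should be stated as such rather than left implicit.
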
\index{cotangent complex|)}\index{obstruction theory|)}

\subsection{Deformation theory for pairs}
\label{dt124}

Let $X,T,\ov{T}$ be $U$-schemes with $T,\ov T$ affine, and
$T\ra\ov{T}$ a square zero extension. Write $X_T=X\times_UT$ and
$X_{\ov{T}}=X\times_U\ov{T}$, which are $U$-schemes with projections
$X_T\ra T$, $X_{\ov{T}}\ra \ov{T}$ and $X_T\ra X_{\ov{T}}$. We have
a Cartesian diagram:
\begin{equation*}
\xymatrix@R=1pt@C=30pt{
&X_{\ov{T}} \ar[dd]^(0.3){\pi} \ar[rd] \\
X_T \ar[ru]^{j_*} \ar[rr]_(0.3){\tau} \ar[dd]^{\pi} & &X \ar[dd]^{\vp} \\
&\ov{T} \ar[rd] \\
T \ar[ru]^j \ar[rr] & &U, }
\end{equation*}
and an exact sequence in~$\coh(X_{\ov{T}})$:
\e
\xymatrix{ 0 \ar[r] & \pi^*J \ar[r] & \cO_{X_{\ov{T}}} \ar[r] &
\cO_{X_T} \ar[r] & 0.}
\label{dt12eq7}
\e

Let $s:\cO_{X_T}(-n)\ra E$ be a $T$-family of stable pairs. The
deformation theory of stable pairs involves studying
$\ov{T}$-families of stable pairs $\ov{s}:\cO_{X_{\ov{T}}}(-n)\ra
\ov{E}$ extending $s:\cO_{X_T}(-n)\ra E$, that is, with
$(j_*)^*\bigl((\ov{E},\ov{s})\bigr)\cong(E,s)$. Tensoring
\eq{dt12eq7} with $\ov{s}:\cO_{X_{\ov{T}}}(-n)\ra \ov{E}$ gives a
commutative diagram in $\coh(X_{\ov{T}})$, with exact rows:
\e
\begin{gathered}
\xymatrix@R=15pt@C=8pt{0 \ar[r] &\pi^*(J)\smash{
\ot_{\cO_{X_{\ov{T}}}}}\cO_{X_{\ov{T}}}(-n) \ar[r] \ar[d] &
\cO_{X_{\ov{T}}}(-n) \ar[r] \ar@{.>}[d]^{\ov s} &
\cO_{X_{\ov{T}}}(-n)\smash{\ot_{\cO_{X_{\ov{T}}}}}\cO_{X_T}
\ar[r] \ar[d]^{s\ot\id_{\cO_{X_T}}} &0 \\
0 \ar[r] &\pi^*(J)\smash{ \ot_{\cO_{X_{\ov{T}}}}}\ov{E} \ar@{.>}[r]
& \ov{E} \ar@{.>}[r] & \ov{E}\smash{\ot_{\cO_{X_{\ov{T}}}}}\cO_{X_T}
\ar[r] &0.}\!\!\!\!\!\!
\end{gathered}
\label{dt12eq8}
\e
But $\ov{E}\ot_{\cO_{X_{\ov{T}}}}\cO_{X_T}\cong(j_*)^*(\ov{E})\cong
E$, and $\cO_{X_{\ov{T}}}(-n)\ot_{\cO_{X_{\ov{T}}}}\cO_{X_T}\cong
\cO_{X_T}(-n)$, and as $J^2=0$ we have
$J\ot_{\cO_{\ov{T}}}\cO_T\cong J$, so $\pi^*(J)
\ot_{\cO_{X_{\ov{T}}}}\cO_{X_T}\cong\pi^*(J)$, and thus
\begin{equation*}
\pi^*(J)\ot_{\cO_{X_{\ov{T}}}}\ov{E}\!\cong\!
\pi^*(J)\ot_{\cO_{X_{\ov{T}}}}\cO_{X_T}
\ot_{\cO_{X_{\ov{T}}}}\ov{E}\!\cong\!\pi^*(J)
\ot_{\cO_{X_{\ov{T}}}}E\!\cong\!\pi^*(J)\ot_{\cO_{X_T}}E.
\end{equation*}
Hence \eq{dt12eq8} is equivalent to the commutative diagram
in~$\coh(X_{\ov{T}})$:
\e
\begin{gathered}
\xymatrix@R=15pt@C=18pt{0 \ar[r] &\pi^* J \ot_{\cO_{X_T}}
\cO_{X_T}(-n) \ar[r] \ar[d] & \cO_{X_{\ov{T}}}(-n) \ar[r]
\ar@{.>}[d]^{\ov s} & \cO_{X_T}(-n)
\ar[r] \ar[d]^{s} &0 \\
0 \ar[r] &\pi^* J \smash{\ot_{\cO_{X_T}}} E \ar@{.>}[r] & \ov E
\ar@{.>}[r] &E \ar[r] &0.}
\end{gathered}
\label{dt12eq9}
\e
Both rows are exact sequences of $\cO_{X_{\ov{T}}}$-modules. Since
$E$ is flat over $T$, such $\ov E$, if it exists, is necessarily
flat over $\ov{T}$. Now Illusie \cite[\S IV.3]{Illu1} studies the
problem of completing a diagram of the form \eq{dt12eq9}, and
proves:

\begin{thm}[Illusie {\cite[Prop.~IV.3.2.12]{Illu1}}] There exists an
element\/ $ob$ in $\Ext^2_{D(X_T)}\bigl(\cone(s),\pi^*J\ot E\bigr),$
whose vanishing is necessary and sufficient to complete the diagram
\eq{dt12eq9}. If\/ $ob=0$ then the set of isomorphism classes of
deformations forms a torsor
under~$\Ext^1_{D(X_T)}(\cone(s),\pi^*J\ot E)$.
\label{dt12thm3}
\end{thm}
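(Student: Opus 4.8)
The plan is to recognise the statement as an instance of the deformation theory of a morphism of modules over a square-zero thickening, in the form developed by Illusie \cite[Ch.~III--IV]{Illu1}, and so to deduce it from \cite[Prop.~IV.3.2.12]{Illu1}; I would also spell out the more transparent hands-on argument via the cone $I=\cone(s)$ and a single long exact sequence, which makes clear why the cone is the right deformation complex.

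First I would make the deformation problem precise. Completing the diagram \eq{dt12eq9} means producing an $\cO_{X_{\ov T}}$-module $\ov E$, flat over $\ov T$, with $\ov E\ot_{\cO_{X_{\ov T}}}\cO_{X_T}\cong E$, together with a morphism $\ov s:\cO_{X_{\ov T}}(-n)\ra\ov E$ of $\cO_{X_{\ov T}}$-modules restricting to $s$ over $X_T$, two such being identified by an isomorphism $\ov E\cong\ov E'$ restricting to $\id_E$ and intertwining $\ov s,\ov s'$. The key point to record is that the source $\cO_{X_{\ov T}}(-n)$ is \emph{fixed} --- it is pulled back from $X$ --- so there is no deformation freedom in the source of $s$. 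This is exactly what makes the relevant deformation complex the cone on $s$, rather than something symmetric in $\cO(-n)$ and $E$. The reduction from the tensored diagram \eq{dt12eq8} to \eq{dt12eq9}, and the automatic flatness of $\ov E$ over $\ov T$ once $E$ is flat over $T$ and $J^2=0$, are routine bookkeeping already done in the excerpt.

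Next I would set $I=\cone\bigl(s:\cO_{X_T}(-n)\ra E\bigr)\in D(X_T)$, fitting in the triangle $I[-1]\ra\cO_{X_T}(-n)\,{\buildrel s\over\longra}\,E\ra I$, and write $M=\pi^*J\ot_{\cO_{X_T}}E$. Applying $R\SHom(-,M)$ and taking hypercohomology over $X_T$ gives
\[
\cdots\ra\Ext^i_{D(X_T)}(I,M)\ra\Ext^i_{D(X_T)}(E,M)
\,{\buildrel (-)\ci s\over\longra}\,\Ext^i_{D(X_T)}(\cO_{X_T}(-n),M)\ra
\Ext^{i+1}_{D(X_T)}(I,M)\ra\cdots.
\]
On one hand, the usual deformation theory of the single module $E$ along $X_T\subseteq X_{\ov T}$ produces an obstruction class $o_E\in\Ext^2_{D(X_T)}(E,M)$ whose vanishing is necessary and sufficient for a flat deformation $\ov E$, and then the set of such $\ov E$ is a torsor under $\Ext^1_{D(X_T)}(E,M)$ (\cite[III.2]{Illu1}). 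On the other hand, given $\ov E$, extending $s$ to $\ov s$ is controlled by a secondary obstruction in $\Ext^2_{D(X_T)}(\cO_{X_T}(-n),M)=H^2\bigl(X_T,(\pi^*J\ot E)(n)\bigr)$, with extensions a torsor under $\Hom_{D(X_T)}(\cO_{X_T}(-n),M)$ when unobstructed. The content of the theorem is that these two layers assemble, through the connecting maps of the displayed sequence, into one class $ob\in\Ext^2_{D(X_T)}(I,M)$ --- mapping to $o_E$ in $\Ext^2(E,M)$, and whose further image in $\Ext^2(\cO(-n),M)$ is the secondary obstruction --- and into one torsor under $\Ext^1_{D(X_T)}(I,M)$.

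The cleanest organisation, and the route I would actually take, is not to build $\ov E$ first but to deform the morphism directly: in Illusie's framework the morphism $s:\cO_{X_T}(-n)\ra E$ with specified deformation of the source has a deformation complex canonically quasi-isomorphic to $I$, and \cite[IV.3.2.12]{Illu1} then yields exactly $ob\in\Ext^2_{D(X_T)}(I,\pi^*J\ot E)$ and the $\Ext^1$-torsor of deformations. I expect the main obstacle to be the \emph{compatibility} step: showing that the naive "deform $E$, then deform $s$" obstructions really are the components of one class natural in the triangle, equivalently that Illusie's deformation complex for $s$ is canonically $I$ and that his obstruction is functorial for the relevant transitivity triangle (the module-theoretic analogue of \eq{dt12eq4} for $\cO_{X_T}(-n)\ra E$). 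This is precisely the part one must either borrow wholesale from \cite[Ch.~IV]{Illu1} or reconstruct; everything else in the argument is formal.
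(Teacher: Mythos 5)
Your proposal is correct and follows essentially the same route as the paper: the paper does not give an independent proof but quotes this statement directly as Illusie's Prop.~IV.3.2.12 applied to the problem of completing the diagram \eq{dt12eq9}, exactly the reduction you describe (with your long-exact-sequence sketch being an informal gloss on why $\cone(s)$ is the right deformation complex, the compatibility step being, as you say, Illusie's). Nothing further is needed.
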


Illusie also shows in \cite[\S IV.3.2.14]{Illu1} that this element
$ob$ can be written as the composition of three morphisms in
$D(X_T)$, in the commutative diagram:
\e
\begin{gathered}
\xymatrix@R=18pt@C=47pt{ \cone(s) \ar[d]^{ob}
\ar[r]_(0.3){\At_{\cO_{X_T}/\cO_X}(s)} &
k^1\bigl(L^{gr}_{(\cO_{X_T}\op \cO_{X_T}(-n))/\cO_{X}} \ot
(\cO_{X_T} \op E)\bigr)[1]
\ar[d]_(0.55){k^1(e(\cO_{X_{\ov{T}}}\op\cO_{X_{\ov{T}}}(-n))\ot
\id_{\cO_{X_T} \op E})} \\
\pi^* J \ot E [2] & k^1(\pi^*J \ot (\cO_{X_T} \op \cO_{X_T}(-n)) \ot
(\cO_{X_T} \op E))[2]. \ar[l]_(0.7){\Pi_1}}\!\!\!
\end{gathered}
\label{dt12eq10}
\e
Here in Illusie's set up, we regard $\cO_{X_T}\op \cO_{X_T}(-n)$ and
$\cO_{X_T} \op E$ as sheaves of {\it graded\/} algebras on $X_T$,
with $\cO_{X_T}$ in degree 0, and $\cO_{X_T}(-n),E$ in degree 1; we
also regard $\cO_{X}$ as a sheaf of graded algebras concentrated in
degree 0. Then $L^{\smash{gr}}_{(\cO_{X_T}\op
\cO_{X_T}(-n))/\cO_{X}}$ is the cotangent complex for sheaves of
graded algebras. The notation $k^1(\cdots)$ means take the degree 1
part of `$\cdots$' in the
grading.\nomenclature[kz1()]{$k^1(\cdots)$}{degree 1 part of graded
object `$\cdots$'}

The morphism $\At_{\cO_{X_T}/\cO_X}(s)$ in \eq{dt12eq10} is called
the {\it Atiyah class\/}\index{Atiyah class|(} of $s$, as in Illusie
\cite[\S IV.2.3]{Illu1}, and the morphism
\e
e\bigl(\cO_{X_{\ov{T}}}\!\op\!\cO_{X_{\ov{T}}}(-n)\bigr):L^{gr}_{(\cO_{X_T}\op
\cO_{X_T}(-n))/\cO_{X}}\!\ra\!\pi^*J\!\ot\!(\cO_{X_T}
\!\op\!\cO_{X_T}(-n))[1]
\label{dt12eq11}
\e
corresponds as in \cite[\S IV.2.4]{Illu1} to the following extension
of graded $\cO_X$-algebras:
\e
0 \!\ra\! \pi^*J\!\ot\! (\cO_{X_T}\!\op\!\cO_{X_T}(-n) ) \!\ra\!
\cO_{X_{\ov{T}}}\!\op\!\cO_{X_{\ov{T}}}(-n)\!\ra\!\cO_{X_T}\!\op\!
\cO_{X_T}(-n)\!\ra\! 0,
\label{dt12eq12}
\e
and the morphism $\Pi_1$ in \eq{dt12eq10} is projection to the first
factor on the right in
\begin{equation*}
k^1\bigl(\pi^*J \!\ot\! (\cO_{X_T}\!\op\!\cO_{X_T}(-n))\!\ot\!(\cO_{X_T}
\!\op\!E)\bigr)\!=\!(\pi^* J\!\ot\!E)\!\op\!(\pi^* J\!\ot\!\cO_{X_T}(-n)).
\end{equation*}

We will factorize \eq{dt12eq10} further. We have a cocartesian
diagram
\begin{equation*}
\xymatrix@R=10pt{\cO_{X_T} \ar[r] &\cO_{X_T} \op \cO_{X_T}(-n) \\
\cO_X \ar[u] \ar[r] &\cO_X \op \cO_X(-n) \ar[u] }
\end{equation*}
of sheaves of graded algebras. Since $\cO_X(-n)$ is a flat
$\cO_X$-module, $\cO_X \op \cO_X(-n)$ is a flat graded
$\cO_X$-algebra. Therefore, by \eq{dt12eq5} we have an isomorphism:
\e
\begin{split}
L_{(\cO_{X_T}/\cO_X)}\!\ot\!(\cO_{X_T}\!\op\!\cO_{X_T}(-n))\!\op\!
L^{gr}_{(\cO_{X}\op\cO_{X}(-n))/\cO_X}\!\ot\! (\cO_{X_T}\!\op\!
\cO_{X_T}(-n))&\!\!\! \\
{\buildrel\cong\over\longra}\, L^{gr}_{(\cO_{X_T}\op
\cO_{X_T}(-n))/\cO_{X}}.&\!\!\!
\end{split}
\label{dt12eq13}
\e

Since $\vp$ is flat in the following Cartesian diagram:
\begin{equation*}
\xymatrix@R=7pt{
X_T \ar[r] \ar[d]_\pi & X \ar[d]^\vp \\
T \ar[r] & U, }
\end{equation*}
equation \eq{dt12eq5} gives
\e
L_{\cO_{X_T}/\cO_X}\cong\pi^* L_{\cO_T/\cO_U}.
\label{dt12eq14}
\e
Let $e(\cO_{\ov{T}}) \in \Ext^1(L_{\cO_T/\cO_U},J)$ and
$e(\cO_{X_{\ov{T}}}) \in \Ext^1(L_{\cO_{X_T}/\cO_X}, \pi^* J)$
correspond to algebra extensions \eq{dt12eq6} and \eq{dt12eq7} as in
\cite[\S IV.2.4]{Illu1}. Since \eq{dt12eq7} is the pullback of
\eq{dt12eq6} by $\pi$ we have
\e
e(\cO_{X_{\ov{T}}})=\pi^*e(\cO_{\ov{T}})\in
\Ext^1(L_{\cO_{X_T}/\cO_X}, \pi^* J)\cong
\Ext^1(\pi^*L_{\cO_T/\cO_U},\pi^*J),
\label{dt12eq15}
\e
using \eq{dt12eq14}. The extension \eq{dt12eq12} is
$\ot_{\cO_{X}}(\cO_X \op \cO_X(-n))$ applied to \eq{dt12eq7}. Thus
the following diagram commutes:
\e
\begin{gathered}
\xymatrix@R=7pt@C=40pt{ L^{gr}_{\cO_{X_T} \op \cO_{X_T}(-n)/\cO_X}
\ar[d]^{pr_1}
\ar[rd]^{e(\cO_{X_{\ov{T}}}\op\cO_{X_{\ov{T}}}(-n))} \\
L_{\cO_{X_T}/\cO_X}\!\ot\!(\cO_{X_T}\!\op\!
\cO_{X_T}(-n))\ar[r]_{e(\cO_{X_{\ov{T}}})\ot\id_{\cO_{X_T} \op
\cO_{X_T}(-n)}} &\pi^* J\!\ot\!(\cO_{X_T}\!\op\!\cO_{X_T}(-n))[1],
}\!\!\!\!\!\!\!\!\!
\end{gathered}
\label{dt12eq16}
\e
where $pr_1$ is projection to the first factor in~\eq{dt12eq13}.

Combining equations \eq{dt12eq10}--\eq{dt12eq16} gives a commutative
diagram:
\begin{small}
\begin{equation}
\begin{gathered}
\xymatrix@R=15pt{
\cone(s) \ar[d]^(0.55){\At_{\cO_{X_T}/\cO_X}(s)}
\ar@{.>}[dddddr]^(0.55){ob}
\\
k^1\bigl(L^{gr}_{(\cO_{X_T}\op \cO_{X_T}(-n))/\cO_{X}} \ot (\cO_{X_T} \op
E)\bigr)[1]\!\!\!\!\!\!\!\!\!\!\!\!
\ar[d]^(0.4){pr_1} \ar@<4pt>[dr]^(0.65){{}\qquad\qquad k^1(e(\cO_{X_{\ov{T}}}
\op\cO_{X_{\ov{T}}}(-n))\ot \id_{\cO_{X_T} \op
E})}
\\
*\txt{$k^1\bigl(L_{(\cO_{X_T}/\cO_X)}\!\ot\!(\cO_{X_T}\!\op\!\cO_{X_T}(-n))$ \\
$\ot (\cO_{X_T}\!\op\!E)\bigr)[1]$}
\ar[r]_(0.55){k^1(e(\cO_{X_{\ov{T}}})\ot\id_{\ldots})} \ar[d]^{\text{project}}
&*\txt{$k^1\bigl(\pi^* J \ot (\cO_{X_T} \op \cO_{X_T}(-n))$ \\
$\ot (\cO_{X_T} \op E)\bigr) [2]$}\ar[d]_{\text{project}}
\ar@<7ex>@/^/@{.>}[ddd]^{\Pi_1}
\\
k^1\bigl(L_{(\cO_{X_T}/\cO_X)} \ot (\cO_{X_T} \op E)\bigr)[1]
\ar[r]_(0.55){k^1(e(\cO_{X_{\ov{T}}})\ot\id_{\ldots})}
\ar[d]^= & k^1\bigl(\pi^*J \ot (\cO_{X_T} \op
E)\bigr)[2] \ar[d]_=
\\
L_{\cO_{X_T}/\cO_X} \ot E[1] \ar[r]_{e(\cO_{X_{\ov{T}}})\ot\id_E}
\ar[d]^{\text{$\cong$ by \eq{dt12eq14}}}
&\pi^* J \ot E [2] \ar[d]_=
\\
\pi^*L_{\cO_T/\cO_U} \ot E[1] \ar[r]^{\pi^*e(\cO_{\ov T})\ot\id_E} &\pi^* J
\ot E[2]. }
\end{gathered}
\!\!\!\!\!\!\!\!\!\!\!\!\!\!\!\!
\label{dt12eq17}
\end{equation}
\end{small}Let $\At(E,s)$\nomenclature[At(E,s)]{$\At(E,s)$}{Atiyah class of a
family of stable pairs $s:\cO_{X_T}(-n)\ra E$} denote the
composition of all the morphisms in the left column of diagram
\eq{dt12eq17}. We call this the {\it Atiyah class\/} of the family
of pairs $s:\cO_X(-n)\ra E$ over $X_T$. Thus, we may restate
Illusie's results as:

\begin{thm} In Theorem {\rm\ref{dt12thm3},} the obstruction
morphism $ob$ factorizes as
\e
\xymatrix@C=55pt{\cone(s)\ar[r]^(0.45){\At(E,s)}
&\pi^*L_{\cO_T/\cO_U}\ot E[1]\ar[r]^(0.55){\pi^*e(\cO_{\ov
T})\ot\id_E} &\pi^*J\ot E[2]. }
\label{dt12eq18}
\e
\label{dt12thm4}
\end{thm}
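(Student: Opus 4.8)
The plan is to deduce Theorem \ref{dt12thm4} essentially formally from Illusie's description of the obstruction class in Theorem \ref{dt12thm3}, by assembling the commutative diagram \eq{dt12eq17} and reading off its left-hand column as the Atiyah class $\At(E,s)$. First I would recall from Illusie \cite[\S IV.3.2.14]{Illu1} the factorization \eq{dt12eq10} of $ob\in\Ext^2_{D(X_T)}\bigl(\cone(s),\pi^*J\ot E\bigr)$: here one views $\cO_{X_T}\op\cO_{X_T}(-n)$ and $\cO_{X_T}\op E$ as graded $\cO_X$-algebras with $\cO_{X_T}$ in degree $0$, and $ob$ is the composite of the Atiyah class $\At_{\cO_{X_T}/\cO_X}(s)$, the morphism induced by the graded algebra extension \eq{dt12eq12}, and the projection $\Pi_1$ onto the $\pi^*J\ot E$ summand in degree one. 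All of this is in the literature; the content of the theorem is to rewrite the middle term in more recognisable form.

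Second I would decompose the graded cotangent complex. Since $\cO_X\op\cO_X(-n)$ is a flat graded $\cO_X$-algebra and the relevant square of graded algebras is cocartesian, the transitivity triangle \eq{dt12eq4} and base change \eq{dt12eq5} give the splitting \eq{dt12eq13} of $L^{gr}_{(\cO_{X_T}\op\cO_{X_T}(-n))/\cO_X}$, while flatness of $\vp$ gives $L_{\cO_{X_T}/\cO_X}\cong\pi^*L_{\cO_T/\cO_U}$ as in \eq{dt12eq14}. Passing to the degree-one part $k^1(\cdots)$ throughout — which is where $\Ext^2_{D(X_T)}\bigl(\cone(s),\pi^*J\ot E\bigr)$ is computed — the summand $L^{gr}_{(\cO_X\op\cO_X(-n))/\cO_X}$ drops out, so after tensoring with $\cO_{X_T}\op E$ the surviving piece is $L_{\cO_{X_T}/\cO_X}\ot E\cong\pi^*L_{\cO_T/\cO_U}\ot E$.

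Third, I would identify the extension class: \eq{dt12eq12} is $-\ot_{\cO_X}(\cO_X\op\cO_X(-n))$ applied to the square-zero extension \eq{dt12eq7}, which is itself the pullback along $\pi$ of the base extension \eq{dt12eq6}, so $e(\cO_{X_{\ov T}})=\pi^*e(\cO_{\ov T})$ under \eq{dt12eq14} and the compatibility square \eq{dt12eq16} commutes by functoriality of $e(-)$. Stringing together \eq{dt12eq10}, \eq{dt12eq13}, \eq{dt12eq14}, \eq{dt12eq16} and naturality of the projections onto the degree-one summands produces the commutative diagram \eq{dt12eq17}: the left column composite is by definition $\At(E,s):\cone(s)\ra\pi^*L_{\cO_T/\cO_U}\ot E[1]$, the bottom arrow is $\pi^*e(\cO_{\ov T})\ot\id_E$, and commutativity of the outer rectangle of \eq{dt12eq17} is exactly the asserted factorization \eq{dt12eq18}.

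The main obstacle will be the bookkeeping in the third step: verifying that each square of \eq{dt12eq17} genuinely commutes requires care with Illusie's graded-algebra cotangent complexes, with naturality of the Atiyah class under the base changes and projections involved, and with keeping the grading straight so that Illusie's projection $\Pi_1$ is correctly matched with the composite of the three ``project'' arrows in the right-hand column. There is no new geometric input beyond Illusie \cite{Illu1} and flat base change, but the diagram chase is delicate, and a little extra attention is needed because $\cone(s)$ is an object of the derived category rather than a sheaf.
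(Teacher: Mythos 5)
Your proposal is correct and follows essentially the same route as the paper: Theorem \ref{dt12thm4} is Illusie's factorization \eq{dt12eq10} rewritten using the splitting \eq{dt12eq13}, flat base change \eq{dt12eq14}--\eq{dt12eq15} and the compatibility \eq{dt12eq16}, all assembled into the commutative diagram \eq{dt12eq17}, whose left-hand column is by definition $\At(E,s)$ and whose outer rectangle is \eq{dt12eq18}. One small correction to your second step: the summand $L^{gr}_{(\cO_X\op\cO_X(-n))/\cO_X}\ot(\cO_{X_T}\op\cO_{X_T}(-n))$ in \eq{dt12eq13} does not vanish after applying $k^1\bigl(-\ot(\cO_{X_T}\op E)\bigr)$; what makes it harmless is precisely the commuting triangle \eq{dt12eq16} (the class of the extension \eq{dt12eq12} factors through $pr_1$ because \eq{dt12eq12} is \eq{dt12eq7} tensored with the flat graded algebra $\cO_X\op\cO_X(-n)$), which you do invoke correctly in your third step.
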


Note that in \eq{dt12eq18}, $\At(E,s)$ is independent of the choice
of $J,\ov{T}$, and $\pi^*e(\cO_{\ov T})$ depends on the square zero
extension $J,\ov{T}$ but is independent of the choice of pair $E,s$.
A very similar picture is explained by Huybrechts and Thomas
\cite{HuTh}, when they show that obstruction class $ob$ for
deforming a complex $E^\bu$ in $D^b(\coh(X))$ is the composition of
an {\it Atiyah class\/} depending on $E^\bu$, and a {\it
Kodaira--Spencer class\/}\index{Kodaira--Spencer class} depending on
the square-zero extension.\index{Atiyah class|)}

\subsection{A non-perfect obstruction theory for
$\M_\stp^{\al,n}(\tau')/U$}\index{obstruction theory|(}
\label{dt125}

Now suppose $\al\in K(\coh(X))$, and $n\gg 0$ is large enough that
$H^i(E(n))=0$ for all $i>0$ and all $\tau$-semistable sheaves $E$ of
class $\al$. We will construct the natural relative obstruction
theory $\phi:B^\bu\ra L_{\M_\stp^{\al,n}(\tau')/U}$ for
$\M_\stp^{\al,n}(\tau')$, which unfortunately is neither perfect nor
symmetric. Sections \ref{dt126} and \ref{dt127} explain how to
modify $\phi$ to a perfect, symmetric obstruction theory, firstly in
the case $\rank{\mathbb I}\ne 0$, and then a more complicated
construction for the general case.

\begin{rem} Here is an informal sketch of what is going on in
\S\ref{dt125}--\S\ref{dt127}. Consider a single stable pair
$s:\cO_X(-n)\ra E$ on $X$, and let $I$ be the pair considered as an
object of $D(X)$. Think of $(E,s)$ as a point of the moduli scheme
$\M_\stp^{\al,n}(\tau')$. It turns out that {\it deformations\/} of
$(E,s)$ are given by $\Ext^1_{\sst D(X)}(I,E)$, so that the tangent
space $T_{(E,s)}\M_\stp^{\al,n}(\tau')\cong\Ext^1_{\sst D(X)}(I,E)$,
and the {\it obstruction space\/} to deforming $(E,s)$ is
$\Ext^2_{\sst D(X)}(I,E)$, so we may informally
write~$O_{(E,s)}\M_\stp^{\al,n}(\tau')\cong\Ext^2_{\sst D(X)}(I,E)$.

Now obstruction theories concern cotangent not tangent spaces, as
they map to the cotangent complex $L_{\M_\stp^{\al,n}(\tau')},$ so
we are interested in the dual spaces
\begin{align*}
T_{(E,s)}^*\M_\stp^{\al,n}(\tau')&\cong\Ext^1_{\sst D(X)}(I,E)^*
\cong\Ext^2_{\sst D(X)}(E,I\ot K_X)\cong H^0(B^\bu),\\
O_{(E,s)}^*\M_\stp^{\al,n}(\tau')&\cong\Ext^2_{\sst D(X)}(I,E)^*
\cong\Ext^1_{\sst D(X)}(E,I\ot K_X)\cong H^{-1}(B^\bu),
\end{align*}
using Serre duality in the second steps, and where $B^\bu$ in the
third steps is defined in \eq{dt12eq21} below, and $\om_\pi$ in
\eq{dt12eq21} plays the r\^ole of~$K_X$.

Thus, the complex $B^\bu$ in $D(X)$ encodes the (dual of the)
deformations of $s:\cO_X(-n)\ra E$ in its degree 0 cohomology, and
the (dual of the) obstructions in its degree $-1$ cohomology. This
is what we want from an obstruction theory, and we will show in
Proposition \ref{dt12prop2} that $B^\bu$ can indeed be made into an
obstruction theory for $\M_\stp^{\al,n}(\tau')$. However, there are
two problems with it. Firstly, $H^{-2}(B^\bu)$ may be nonzero, so
$B^\bu$ is not concentrated in degrees $[-1,0]$, that is, it is not
a {\it perfect\/} obstruction theory. Secondly, as $\Ext^1_{\sst
D(X)}(I,E)$ and $\Ext^2_{\sst D(X)}(I,E)$ are not dual spaces,
$B^\bu$ is not {\it symmetric}.

Here is how we fix these problems. There are natural {\it
identity\/} and {\it trace morphisms\/} $\id_I:H^i(\cO_X)\ra
\Ext^i_{\sst D(X)}(I,I)$ and $\tr_I:\Ext^i_{\sst D(X)}(I,I) \ra
H^i(\cO_X)$, with $\tr_I\ci\id_I =\rank I\cdot 1_{H^i(\cO_X)}$.
Suppose for the moment that $\rank I\ne 0$. Then $\Ext^i_{\sst
D(X)}(I,I)\cong \Ext^i_{\sst D(X)}(I,I)_0\op H^i(\cO_X)$, where
$\Ext^i_{\sst D(X)}(I,I)_0=\Ker\tr_I$ is the trace-free part of
$\Ext^i(I,I)$. We have natural morphisms
\begin{equation*}
\xymatrix@C=15pt{\Ext^i_{\sst D(X)}(I,E)
\ar[r]^{T\ci } & \Ext^i_{\sst D(X)}(I,I) \ar[r] &
\Ext^i_{\sst D(X)}(I,I)/H^i(\cO_X)\cong
\Ext^i_{\sst D(X)}(I,I)_0,}
\end{equation*}
where $T:E\ra I$ is the natural morphism in $D(X)$. Now
$\Ext^1_{\sst D(X)}(I,E)\ra\Ext^1_{\sst D(X)}(I,I)_0$ is an
isomorphism, and $\Ext^2_{\sst D(X)}(I,E)\ra\Ext^2_{\sst
D(X)}(I,I)_0$ is injective. The idea of \S\ref{dt126} is to replace
$\Ext^\bu_{\sst D(X)}(I,E)$ by $\Ext^\bu_{\sst D(X)}(I,I)_0$. We
construct a complex $G^\bu$ with $H^i(G^\bu)\cong \Ext^{1-i}_{\sst
D(X)}(I,I)_0^*$, and this is our symmetric perfect obstruction
theory for~$\M_\stp^{\al,n}(\tau')$.

If $\rank I=0$ then this construction fails, since
$\tr_I\ci\id_I=0$, and we no longer have a canonical splitting
$\Ext^i_{\sst D(X)}(I,I)\cong \Ext^i_{\sst D(X)}(I,I)_0\op
H^i(\cO_X)$. We deal with this in \S\ref{dt127} in a peculiar way.
The basic idea is to replace $\Ext^i_{\sst D(X)}(I,E)$ by
$\Ext^i_{\sst D(X)}(I,I)/\id_I\bigl(H^i(\cO_X)\bigr)$ when $i=0,1$,
and by $\Ker\bigl(\tr I:\Ext^i_{\sst D(X)}(I,I)\ra H^i(\cO_X)\bigr)$
when $i=2,3$. This yields groups which are zero in degrees 0 and 3
and dual in degrees 1 and 2, so again they give us a symmetric
perfect obstruction theory for~$\M_\stp^{\al,n}(\tau')$.
\label{dt12rem}
\end{rem}

As in \S\ref{dt122}, write $X_\M= X\times_U\M_\stp^{\al,n}(\tau')$
with projection $\pi:X_\M\ra \M_\stp^{\al,n}(\tau')$, and write
${\mathbb S}:\cO_{X_\M}(-n)\ra{\mathbb E}$ for the universal stable
pair on $X_\M$. We will also regard this as an object ${\mathbb
I}=\cone({\mathbb S})$ in $D(X_{\M})$ with $\cO_{X_\M}(-n)$ in
degree $-1$ and $\mathbb E$ in degree 0. Thus we have a
distinguished triangle in~$D(X_\M)$:
\e
\xymatrix{{\mathbb I}[-1] \ar[r] &
\cO_{\smash{X_\M}}(-n)\ar[r]^(0.6){\mathbb S} & {\mathbb E}
\ar[r]^{\mathbb T} & {\mathbb I}.}
\label{dt12eq19}
\e

Define objects in
$D(\M_\stp^{\al,n}(\tau'))$:\nomenclature[A,B,C]{$A^\bu,B^\bu,\ldots$}{complexes
in the derived category $D(\M_\stp^{\al,n}(\tau'))$}
\ea
A^\bu&=R\pi_*\bigl(R\SHom({\mathbb I},{\mathbb
I})\ot\om_{\pi}\bigr)[2],
\label{dt12eq20}\\
B^\bu&=R\pi_*\bigl(R\SHom({\mathbb E},{\mathbb
I})\ot\om_{\pi}\bigr)[2],
\label{dt12eq21}\\
\check B^\bu&=R\pi_*\bigl(R\SHom({\mathbb I},{\mathbb
E})\ot\om_{\pi}\bigr)[2],
\label{dt12eq22}\\
C^\bu&=R\pi_*\bigl(R\SHom(\cO_{X_\M}(-n),{\mathbb I})
\ot\om_\pi\bigr)[2],
\label{dt12eq23}\\
\check C^\bu&=R\pi_*\bigl(R\SHom({\mathbb I},\cO_{X_\M}(-n))
\ot\om_\pi\bigr)[2],
\label{dt12eq24}\\
D^\bu&=R\pi_*\bigl(R\SHom({\mathbb E},{\mathbb E})
\ot\om_\pi\bigr)[2],
\label{dt12eq25}\\
E^\bu&=R\pi_*\bigl(R\SHom(\cO_{X_\M}(-n),{\mathbb E})
\ot\om_\pi\bigr)[2],
\label{dt12eq26}\\
\check E^\bu&=R\pi_*\bigl(R\SHom({\mathbb E},\cO_{X_\M}(-n))
\ot\om_\pi\bigr)[2],
\label{dt12eq27}\\
F^\bu&=R\pi_*(\om_{\pi})[2]\cong R\pi_*\bigl(R\SHom(\cO_{X_\M}(-n),
\cO_{X_\M}(-n))\ot\om_\pi\bigr)[2].
\label{dt12eq28}
\ea

Applying $R\pi_*\bigl(R\SHom(\text{---},*)\ot\om_\pi\bigr)[2]$,
$R\pi_*\bigl(R\SHom(*,\text{---}) \ot\om_\pi\bigr)[2]$ to
\eq{dt12eq19} for $*={\mathbb I},{\mathbb E},\cO_{X_\M}$ gives six
distinguished triangles in $D(\M_\stp^{\al,n}(\tau'))$, which we
write as a commutative diagram with rows and columns distinguished
triangles:
\e
\begin{gathered}
\xymatrix@C=30pt@R=12pt{A^\bu \ar[r]^\be & B^\bu \ar[r]^\ga & C^\bu
\ar[r]^\de & A^\bu[1] \\
\check B^\bu \ar[r]^\ep \ar[u]^{\check\be} & D^\bu \ar[r]^\ze
\ar[u]^{\check\ep} & E^\bu
\ar[r]^\eta \ar[u]^{\check\io} & \check B^\bu[1] \ar[u]^{\check\be} \\
\check C^\bu \ar[r]^\io \ar[u]^{\check\ga} & \check E^\bu \ar[r]^\ka
\ar[u]^{\check\ze} & F^\bu \ar[r]^\la \ar[u]^{\check\ka} &
\check C^\bu[1] \ar[u]^{\check\ga} \\
A^\bu[-1] \ar[r]^\be \ar[u]^{\check\de}  & B^\bu[-1] \ar[r]^\ga
\ar[u]^{\check\eta} & C^\bu[-1] \ar[r]^\de \ar[u]^{\check\la} &
A^\bu. \ar[u]^{\check\de}}
\end{gathered}
\label{dt12eq29}
\e
We take \eq{dt12eq29} to be the definition of the
morphisms~$\be,\ldots,\check\la$.

Here is the point of the notation with accents
`$\check{\phantom{m}}$': the Calabi--Yau condition on $\vp:X\ra U$
implies that the dualizing complex $\om_\vp$ is a line bundle on $X$
trivial on the fibres of $\vp$, so it is $\vp^*(L)$ for some line
bundle $L$ on $U$. Suppose $U$ is affine. Then any line bundle on
$U$ is trivial, so we may choose an isomorphism $L\cong\cO_U$, and
then $\om_\vp\cong\cO_X$, and on $X_\M$ we have
$\om_\pi\cong\pi_X^*(\om_\vp)\cong\pi_X^*(\cO_X)\cong\cO_{X_\M}$.
Using this isomorphism $\om_\pi\cong\cO_{X_\M}$ we get isomorphisms
\e
\begin{gathered}
A^{\bu\vee}[1]\!\cong\!A^\bu,\; B^{\bu\vee}[1]\!\cong\!\check
B^\bu,\; \check B^{\bu\vee}[1]\!\cong\!B^\bu,\;C^{\bu\vee}[1]
\!\cong\!\check C^\bu,\; \check C^{\bu\vee}[1]\!\cong \! C^\bu,\\
D^{\bu\vee}[1]\cong D^\bu,\;\> E^{\bu\vee}[1]\cong\check E^\bu,\;\>
\check E^{\bu\vee}[1]\cong E^\bu,\;\> F^{\bu\vee}[1]\cong F^\bu,
\end{gathered}
\label{dt12eq30}
\e
where $A^{\bu\vee},\ldots,F^{\bu\vee}$ are the {\it duals\/} of
$A^\bu,\ldots,F^\bu$, dualizing in \eq{dt12eq29} corresponds to
taking the transpose along the diagonal, and
$\check\be,\ldots,\check\la$ would be the dual morphisms of
$\be,\ldots,\la$, respectively.

\begin{lem} The complex $B^\bu$ in \eq{dt12eq21} satisfies
condition~$(*)$.
\label{dt12lem1}
\end{lem}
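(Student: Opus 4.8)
The plan is to check the two conditions of $(*)$ separately. For condition (ii) I would in fact prove more, namely that $B^\bu$ lies in $D^b_\coh(\M_\stp^{\al,n}(\tau'))$, so that \emph{every} cohomology sheaf $h^i(B^\bu)$ is coherent. To see this, note that $\cO_{X_\M}(-n)$ and $\mathbb E$ are perfect complexes on $X_\M$: indeed $\mathbb E$ is a coherent sheaf flat over $\M=\M_\stp^{\al,n}(\tau')$, and $X_\M\ra\M$ is smooth of relative dimension $3$ (being the base change of the smooth morphism $X\ra U$), so $\mathbb E$ has finite Tor-dimension (contained in $[-3,0]$) over $\cO_{X_\M}$. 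Hence $\mathbb I=\cone(\mathbb S)$ is perfect, $R\SHom(\mathbb E,\mathbb I)\ot\om_\pi$ is perfect, and since $\pi$ is proper, $B^\bu=R\pi_*\bigl(R\SHom(\mathbb E,\mathbb I)\ot\om_\pi\bigr)[2]$ lies in $D^b_\coh(\M_\stp^{\al,n}(\tau'))$.

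For condition (i), the vanishing $h^i(B^\bu)=0$ for $i>0$, I would argue by cohomology and base change together with the $\Ext$ computation of Proposition \ref{dt12prop1}. Fix a closed point $p=[(E,s)]\in\M_\stp^{\al,n}(\tau')(\K)$, let $\ti\jmath_p:X_p\hookra X_\M$ be the inclusion of the fibre of $\pi$ over $p$ (a Calabi--Yau $3$-fold over $\K$), and $j_p:\{p\}\hookra\M_\stp^{\al,n}(\tau')$ the inclusion of $p$, so that $\pi\ci\ti\jmath_p$ factors through $j_p$ and the resulting square is Cartesian. Since $\pi$ is flat and $\mathbb E,\cO_{X_\M}(-n)$ are flat over $\M_\stp^{\al,n}(\tau')$, flat base change along $\pi$ gives $L\ti\jmath_p^*\mathbb I\cong I$, where $I=\cone(s:\cO_{X_p}(-n)\ra E)$, and, using that $\mathbb E$ is perfect,
\begin{equation*}
Lj_p^*B^\bu\cong R\Gamma\bigl(X_p,R\SHom_{X_p}(E,I)\ot\om_{X_p}\bigr)[2],
\quad\text{so}\quad
H^i(Lj_p^*B^\bu)\cong\Ext^{i+2}_{D(X_p)}\bigl(E,I\ot\om_{X_p}\bigr).
\end{equation*}
By Serre duality on the $3$-fold $X_p$ this is isomorphic to $\Ext^{1-i}_{D(X_p)}(I,E)^*$. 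As $n$ was chosen large enough that every $\tau$-semistable sheaf of class $\al$ is $n$-regular, in particular $H^j(E(n))=0$ for $j>0$, so Proposition \ref{dt12prop1}(a) applies to the stable pair $s:\cO_{X_p}(-n)\ra E$ and gives $\Ext^{1-i}_{D(X_p)}(I,E)=0$ unless $1-i\in\{1,2,3\}$, i.e.\ unless $i\in\{0,-1,-2\}$. Hence $H^i(Lj_p^*B^\bu)=0$ for all $i>0$ and all closed points $p$.

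To deduce $h^i(B^\bu)=0$ for $i>0$ from this, I would invoke the standard fact that if a complex in $D^b_\coh$ has its top nonvanishing cohomology sheaf in degree $m$, then for any closed point $p$ in the support of $h^m(B^\bu)$ the derived restriction $Lj_p^*B^\bu$ has top cohomology $h^m(B^\bu)\ot k(p)$, again in degree $m$ (right exactness of $Lj_p^*$ and Nakayama). Were $m>0$, choosing such a $p$ would contradict the vanishing just established; hence $m\le0$, which is condition (i). Condition (ii) is then immediate from the first paragraph. Note that $h^{-2}(B^\bu)$ can be nonzero, so $B^\bu$ need not be perfect of amplitude contained in $[-1,0]$; this is precisely why $B^\bu$ will be modified in \S\ref{dt126}--\S\ref{dt127} into a genuine symmetric obstruction theory.

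The only real obstacle is the base-change step of the second paragraph: one must ensure that $L\ti\jmath_p^*$ commutes with $R\SHom(\mathbb E,\mathbb I)$ and that $Lj_p^*R\pi_*\cong R\Gamma(X_p,L\ti\jmath_p^*(-))$, which is exactly where perfectness of $\mathbb E$ on $X_\M$ (hence of $\mathbb I$, and of $R\SHom(\mathbb E,\mathbb I)\ot\om_\pi$) and flatness of $\pi$ are used. Once one is on a single Calabi--Yau fibre, the vanishing is a formal consequence of Proposition \ref{dt12prop1}(a) and Serre duality, with no further computation required.
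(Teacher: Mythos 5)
Your proof is correct and follows essentially the same route as the paper's: coherence of the cohomology of $B^\bu$ from general finiteness of the derived functors involved, and the vanishing $h^i(B^\bu)=0$ for $i>0$ by identifying the fibrewise cohomology with $\Ext^{2+i}_{D(X_p)}(E_p,I_p\ot\om_\pi)$, which is Serre dual to $\Ext^{1-i}_{D(X_p)}(I_p,E_p)$ and vanishes for $i>0$ by Proposition \ref{dt12prop1}(a). The only difference is that you spell out the cohomology-and-base-change step (perfection of $\mathbb E$, Tor-independent base change, and the Nakayama argument at the top nonvanishing degree) which the paper's proof leaves implicit in the phrase ``the fibre of $h^i(B^\bu)$''.
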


\begin{proof} As $B^\bu$ is obtained by applying standard
derived functors to a complex of quasi-coherent sheaves with
coherent cohomology, the general theory guarantees that $B^\bu$ is
also a complex of quasi-coherent sheaves with coherent cohomology.
Thus we only have to check that $h^i(B^\bu)=0$ for $i>0$. But the
fibre of $h^i(B^\bu)$ at a $U$-point $p$ of $\M_\stp^{\al,n}(\tau')$
corresponding to a stable pair $I_p=\cO_X(-n)\,{\buildrel
s_p\over\longra}\,E_p$ is $\Ext^{2+i}_{D(X)}(E_p,I_p\ot\om_\pi)$,
which is dual to $\Ext^{1-i}_{D(X)}(I_p,E_p)$ by Serre duality, and
so vanishes when $i>0$ by Proposition~\ref{dt12prop1}(a).
\end{proof}

As in \S\ref{dt124}, the {\it Atiyah class\/}\index{Atiyah class} of
the universal pair ${\mathbb S}:\cO_{X_M}(-n)\ra{\mathbb E}$ is
\e
\At({\mathbb E},{\mathbb S}):{\mathbb I} \longra
\pi^*(L_{\M_\stp^{\al,n}(\tau')/U}) \ot {\mathbb E}[1].
\label{dt12eq31}
\e
We have isomorphisms
\e
\begin{split}
\Ext^1&({\mathbb I}, \pi^*(L_{\M_\stp^{\al,n}(\tau')/U})
\ot {\mathbb E})\\
&\cong\Ext^1(R\SHom({\mathbb E},{\mathbb I}),
\pi^*(L_{\M_\stp^{\al,n}(\tau')/U}))\\
&\cong \Ext^1(R\SHom({\mathbb E},{\mathbb I})\!\ot\!\om_{\pi}[3],
\pi^*(L_{\M_\stp^{\al,n}(\tau')/U})\!\ot\!\om_{\pi}[3])\!\!\!\\
&\cong\Ext^1(R\SHom({\mathbb E},{\mathbb I}) \ot \om_{\pi}[3],
\pi^{!}(L_{\M_\stp^{\al,n}(\tau')/U})) \\
&\cong \Ext^1(R\pi_*(R\SHom({\mathbb E},{\mathbb I}) \ot
\om_{\pi}[3]),L_{\M_\stp^{\al,n}(\tau')/U})\\
&\cong \Hom(B^\bu, L_{\M_\stp^{\al,n}(\tau')/U}),
\end{split}
\label{dt12eq32}
\e
using $\pi^!(K^\bu)=\pi^*(K^\bu)\ot\om_{\pi}[3]$ for all $K^\bu\in
D(\M_\stp^{\al,n}(\tau'))$ as $\pi$ is smooth of dimension $3$ in
the third isomorphism, the adjoint pair $(R\pi_*,\pi^!)$ in the
fourth, and \eq{dt12eq21} in the fifth. Define
\e
\phi:B^\bu\longra L_{\M_\stp^{\al,n}(\tau')/U}
\label{dt12eq33}
\e
to correspond to $\At({\mathbb E},{\mathbb S})$ in \eq{dt12eq31}
under the isomorphisms~\eq{dt12eq32}.

\begin{prop} The morphism $\phi$ in \eq{dt12eq33} makes $B^\bu$
into a (generally not perfect) relative obstruction theory
for~$\M_\stp^{\al,n}(\tau')/U$.
\label{dt12prop2}
\end{prop}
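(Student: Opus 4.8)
The plan is to verify the hypotheses of the Behrend--Fantechi criterion, Theorem \ref{dt12thm2}. Since $B^\bu$ satisfies condition $(*)$ by Lemma \ref{dt12lem1}, it suffices to show that for every square-zero extension $j:T\hookra\ov T$ of affine $U$-schemes with ideal sheaf $J$, and every $U$-morphism $g:T\ra\M_\stp^{\al,n}(\tau')$, the class $\phi^*(\om(g))\in\Ext^1(g^*B^\bu,J)$ vanishes if and only if $g$ extends to some $\ov g:\ov T\ra\M_\stp^{\al,n}(\tau')$, and when it vanishes the set of such extensions is a torsor under $\Hom(g^*B^\bu,J)$. Once this is established, the equivalence of (a) and (b) in Theorem \ref{dt12thm2} shows $\phi$ is a relative obstruction theory. (One also notes that it is in general not perfect: the fibre of $h^{-2}(B^\bu)$ at a pair $[I_p]$ is $\Ext^3_{D(X_p)}(I_p,E_p)^*$ by Serre duality, which need not vanish by Proposition \ref{dt12prop1}.)

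First I would translate the moduli-theoretic side into the deformation theory of pairs from \S\ref{dt124}. Since $\M_\stp^{\al,n}(\tau')$ is a fine moduli $U$-scheme by Theorem \ref{dt12thm1}, giving $g:T\ra\M_\stp^{\al,n}(\tau')$ over $U$ is the same as giving a $T$-family of stable pairs $s:\cO_{X_T}(-n)\ra E$, and extensions $\ov g$ correspond bijectively to extensions of $(E,s)$ to an $\ov T$-family $\ov s:\cO_{X_{\ov T}}(-n)\ra\ov E$, i.e.\ to completions of the diagram \eq{dt12eq9}. By Illusie's Theorem \ref{dt12thm3}, the obstruction to such an extension lies in $\Ext^2_{D(X_T)}\bigl(\cone(s),\pi^*J\ot E\bigr)$, and when it vanishes the extensions form a torsor under $\Ext^1_{D(X_T)}\bigl(\cone(s),\pi^*J\ot E\bigr)$. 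I would then identify these groups with those in Theorem \ref{dt12thm2}: by flat base change along the Cartesian square with vertical arrows $\pi$ and $\pi_T:X_T\ra T$, the universal pair restricts to $(E,s)$, so $g^*B^\bu\cong R\pi_{T*}\bigl(R\SHom(E,\cone(s))\ot\om_{\pi_T}\bigr)[2]$. Since $E$ is flat over $T$ and $\pi_T$ is smooth, $E$ is perfect on $X_T$, so running the chain of isomorphisms \eq{dt12eq32} in reverse (the adjunction $(R\pi_{T*},\pi_T^!)$ with $\pi_T^!(K)=\pi_T^*(K)\ot\om_{\pi_T}[3]$, plus $\SHom$--tensor adjunction) gives natural isomorphisms $\Ext^i_{D(T)}(g^*B^\bu,J)\cong\Ext^{i+1}_{D(X_T)}\bigl(\cone(s),\pi^*J\ot E\bigr)$ for all $i$; in particular $\Hom(g^*B^\bu,J)$ is Illusie's torsor group and $\Ext^1(g^*B^\bu,J)$ is Illusie's obstruction group.

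The crux of the argument is then to check that under these identifications $\phi^*(\om(g))$ coincides with Illusie's obstruction $ob$. By Theorem \ref{dt12thm4}, $ob$ factors as the Atiyah class $\At(E,s):\cone(s)\ra\pi^*L_{\cO_T/\cO_U}\ot E[1]$ followed by $\pi^*e(\cO_{\ov T})\ot\id_E$. On the other hand $\phi$ is, by the construction \eq{dt12eq31}--\eq{dt12eq33}, the morphism corresponding to the Atiyah class $\At(\mathbb E,\mathbb S)$ of the universal pair, and $\om(g)$ is the canonical composite $g^*L_{\M_\stp^{\al,n}(\tau')/U}\ra L_{T/U}\ra J[1]$ whose second arrow is $e(\cO_{\ov T})$. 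Using functoriality of the Atiyah class (Illusie \S IV.2.3), the pullback $g^*\At(\mathbb E,\mathbb S)$ composed with the canonical map $g^*L_{\M_\stp^{\al,n}(\tau')/U}\ra L_{T/U}$ (tensored with $E[1]$) equals $\At(E,s)$; hence composing further with $e(\cO_{\ov T})\ot\id_E$ yields $ob$. But, tracing through the adjunctions of \eq{dt12eq32}, this composite is exactly $\phi^*(\om(g))$ by the definitions of $\phi^*$ and $\om(g)$. Therefore $\phi^*(\om(g))=ob$ under the identification above, condition (b) of Theorem \ref{dt12thm2} holds, and $\phi$ is a relative obstruction theory for $\M_\stp^{\al,n}(\tau')/U$.

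I expect this last compatibility step to be the main obstacle: it requires matching Illusie's intrinsically defined obstruction (built from the cotangent complex of a graded-algebra extension) with the concretely defined $\phi^*(\om(g))$, and the bookkeeping of shifts, twists by $\om_\pi$, duals, and the several adjunctions in \eq{dt12eq32} has to be carried out carefully; the functoriality and flat base-change properties of the Atiyah class are what make it work. The remaining ingredients — the identification of deformation spaces and the dimension/vanishing statements — are routine given Proposition \ref{dt12prop1}, Theorems \ref{dt12thm1}, \ref{dt12thm3}, \ref{dt12thm4} and Lemma \ref{dt12lem1}.
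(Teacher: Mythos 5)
Your proposal is correct and follows essentially the same route as the paper's proof: both reduce to the Behrend--Fantechi criterion via the fine moduli property, identify $\Ext^i(g^*B^\bu,J)$ with $\Ext^{i+1}_{D(X_T)}(\cone(s),\pi^*J\ot E)$ by the same chain of adjunctions and flat base change, and then match $\phi^*(\om(g))$ with Illusie's obstruction through the factorization of Theorem \ref{dt12thm4} and functoriality of the Atiyah class. No gaps to report.
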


\begin{proof} Suppose we are given an affine $U$-scheme $T$, a
square-zero extension $\ov{T}$ of $T$ with ideal sheaf $J$ as in
\S\ref{dt123}, and a morphism of $U$-schemes $g:T \ra
\M_\stp^{\al,n}(\tau')$. Set $X_T=X\times_UT$. Then we have a
Cartesian diagram:
\begin{equation*}
\xymatrix@R=17pt{ X_T \ar[r]^(0.4)f \ar[d]^{\xi} \ar[rrd]
&X_\M \ar[d]|(0.5)\hole_(0.3){\pi} \ar[rd] \\
T \ar[r]^(0.4){g} \ar[rrd] &\M_\stp^{\al,n}(\tau') \ar[rd] &X \ar[d] \\
& &U. }
\end{equation*}
The universal stable pair ${\mathbb S}:\cO_{X_\M}(-n)\ra{\mathbb E}$
on $X_\M$ pulls back under $f$ to a $T$-family ${\mathbb
S}_T:\cO_{X_T}(-n)\ra{\mathbb E}_T$ of stable pairs, which we write
as ${\mathbb I}_T$ when considered as an object of $D(X_T)$. Since
$\M_\stp^{\al,n}(\tau')$ is a fine moduli space, extensions $\ov
g:\ov T \ra \M_\stp^{\al,n}(\tau')$ of $g$ to $\ov T$ are equivalent
to extensions $\ov{\mathbb S}_T:\cO_{X_{\ov T}}(-n)\ra\ov{\mathbb
E}_T$ of ${\mathbb S}_T:\cO_{X_T}(-n)\ra{\mathbb E}_T$
$s:\cO_{X_T}(-n)\ra E$ to $\ov T$, which is exactly the problem
considered in \S\ref{dt124}. For $i\in\Z$ we have the following
isomorphisms:
\e
\begin{aligned}
&\Ext^i(g^*B^\bu, J)\cong \Ext^i(g^*(R \pi_*(R\SHom({\mathbb E},
{\mathbb I}) \ot
\om_{\pi}[2])), J) \\
&\;\cong \Ext^i(R \pi_*(R\SHom({\mathbb E}, {\mathbb I}) \ot
\om_{\pi}[2]), Rg_*J)\\
&\;\cong \Ext^i(R\SHom({\mathbb E}, {\mathbb
I}) \ot \om_{\pi}[2], \pi^*(Rg_*J) \ot
\om_{\pi}[3]) \\
&\;\cong \Ext^{i+1}(R\SHom\!({\mathbb E}, {\mathbb I}), \pi^*(Rg_*J))
\!\cong\!\Ext^{i+1}(R\SHom\!({\mathbb E}, {\mathbb I}), Rf_*(\xi^*J))
\!\!\! \\
&\;\cong\Ext^{i+1}(Lf^*(R\SHom\!({\mathbb E}, {\mathbb I})),\xi^* J)
\!\cong\! \Ext^{i+1}(R\SHom\!(Lf^*{\mathbb E}, Lf^*{\mathbb I}), \xi^* J)
\!\!\!\!\! \\
&\;\cong \Ext^{i+1}(Lf^* {\mathbb I}, \xi^* J \ot Lf^* {\mathbb E})
\cong \Ext^{i+1}(f^* {\mathbb I}, \xi^* J \ot f^* {\mathbb E}),
\end{aligned}
\label{dt12eq34}
\e
using \eq{dt12eq21} in the first step, the adjoint pair $(g^*,
Rg_*)$ in the second, the adjoint pair $(R\pi_*, \pi^!)$ and
$\pi^!(A)=\pi^*(A)\ot\om_{\pi}[3]$ in the third, base change for the
flat morphism $\pi$ in the fifth, and the adjoint pair $(Lf^*,
Rf_*)$ in the sixth. In the final step, as $\cO_{X_\M}(-n)$ and
${\mathbb E}$ are flat over $\M_\stp^{\al,n}(\tau')$ we have $Lf^*
{\mathbb E}\cong f^* {\mathbb E}$, and $Lf^*({\mathbb I})$ is
quasi-isomorphic to $\cO_{X_T}(-n) \ra {\mathbb E}_T$, which we
denote as $f^*{\mathbb I}={\mathbb I}_T$.

In a similar way to isomorphisms \eq{dt12eq32}, the composition
\begin{equation*}
\xymatrix@C=37pt{g^*(B^\bu) \ar[r]^(0.4){g^* \phi}
& g^*(L_{\M_\stp^{\al,n}(\tau')/U}) \ar[r] &L_{T/U}}
\end{equation*}
lifts to
\begin{equation*}
\At({\mathbb E}_T,{\mathbb S}_T):{\mathbb I}_T \longra
\xi^*(L_{T/U}) \ot {\mathbb E}_T[1],
\end{equation*}
the Atiyah class\index{Atiyah class} of the family ${\mathbb
S}_T:\cO_{X_T}(-n) \ra{\mathbb E}_T$. Thus the composition
\begin{equation*}
\xymatrix@C=65pt{ {\mathbb I}_T \ar[r]^(0.4){\At({\mathbb E}_T,
{\mathbb S}_T)} &\xi^*L_{T/U}\ot{\mathbb E}_T[1]
\ar[r]^{\xi^*e(\cO_{\ov T})\ot\id_{{\mathbb E}_T}}
&\xi^*J\ot{\mathbb E}_T[2] }
\end{equation*}
is the element $\phi^*(\om(g))$ under the isomorphism \eq{dt12eq34}
for $i=1$, by Theorems \ref{dt12thm3} and \ref{dt12thm4}. The
morphism $g:T\ra\M_\stp^{\al,n}(\tau')$ extends to $\ov{g}:
\ov{T}\ra\M_\stp^{\al,n}(\tau')$ if and only if the family of pairs
extend from $T$ to $\ov{T}$. Therefore, by Theorems
\ref{dt12thm2}--\ref{dt12thm4}, Lemma \ref{dt12lem1}, and equation
\eq{dt12eq34} for $i=0$ we conclude that $\phi$ is an obstruction
theory for~$\M_\stp^{\al,n}(\tau')$.
\end{proof}

As in the proof of Lemma \ref{dt12lem1}, the fibre of $h^i(B^\bu)$
at a $U$-point $p$ is $\Ext^{1-i}(I_p,\ab E_p)^*$, so that
$h^i(B^\bu)=0$ unless $i=-2,-1,0$ by Proposition \ref{dt12prop1}(a).
The first row of \eq{dt12eq1} then shows the fibre of
$h^{-2}(B^\bu)$ is $\Ext^3(E_p,E_p)^*$, which is isomorphic to
$\Hom(E_p,E_p)$ if $\om_\vp\cong\cO_X$, and so is never zero as
$\al=[E_p]\ne 0$. Thus, $B^\bu$ is perfect of perfect amplitude
contained in $[-2,0]$ but not in $[-1,0]$, and $\phi$ is not a
perfect obstruction theory.

\subsection{A perfect obstruction theory when $\rank\al\ne 1$}
\label{dt126}\index{obstruction theory!perfect|(}

We now modify $\phi$ to get a perfect obstruction theory
$\psi:G^\bu\ra L_{\M_\stp^{\al,n}(\tau')/U}$ which is symmetric when
$U$ is affine. Parts of the construction fail when $\rank\al=1$, and
we explain how to fix this in \S\ref{dt127}. The identity and trace
morphisms $\cO_{X_\M}\ra R\SHom({\mathbb I},{\mathbb I})$ and
$R\SHom({\mathbb I},{\mathbb I})\ra\cO_{X_\M}$ induce morphisms
$\id_{\mathbb I}:F^\bu\ra A^\bu$ and $\tr_{\mathbb I}:A^\bu\ra
F^\bu$ in $D(\M_\stp^{\al,n}(\tau'))$. Since $\rank{\mathbb
E}=\rank\al$ and $\rank{\mathbb I}=\rank\al-1$, as in \cite[\S
4]{HuTh} we see that
\e
\tr_{\mathbb I}\ci\id_{\mathbb I}=(\rank\al-1)\,1_{F^\bu},
\label{dt12eq35}
\e
where $1_{F^\bu}:F^\bu\ra F^\bu$ is the identity map. Because of
\eq{dt12eq35} we must treat the $\rank\al\ne 1$ and $\rank\al=1$
cases differently. For $\de,\check\de,\la,\check\la$ as in
\eq{dt12eq29} we have natural identities
\e
\check\de\ci\id_{\mathbb I}=\la:F^\bu\ra \check C^\bu[1]
\quad\text{and}\quad \tr_{\mathbb I}\ci\de=\check\la:C^\bu[-1]\ra
F^\bu.
\label{dt12eq36}
\e

Define objects $G^\bu,\check G^\bu$ in $D(\M_\stp^{\al,n}(\tau'))$
by
\e
G^\bu=\cone(\tr_{\mathbb I})[-1] \qquad\text{and}\qquad \check
G^\bu=\cone(\id_{\mathbb I}),
\label{dt12eq37}
\e
so that we have distinguished triangles:
\e
\begin{gathered}
\xymatrix@R=7pt{ G^\bu \ar[r]^\nu & A^\bu \ar[r]^{\tr_{\mathbb I}} &
F^\bu
\ar[r]^(0.45)\mu & G^\bu[1], \\
\check G^\bu[-1] \ar[r]^{\check\mu} & F^\bu \ar[r]^{\id_{\mathbb I}}
& A^\bu \ar[r]^(0.45){\check\nu} & \check G^\bu. }
\end{gathered}
\label{dt12eq38}
\e
We take \eq{dt12eq38} to be the definition of $\mu,\nu,
\check\mu,\check\nu$. Again, if we were given an isomorphism
$\om_\pi\cong\cO_{X_\M}$ then we would have isomorphisms
$G^{\bu\vee}[1]\cong \check G^\bu$ and $\check G^{\bu\vee}[1]\cong
G^\bu$ as in \eq{dt12eq30}, and $\check\mu,\check\nu$ would be the
dual morphisms of $\mu,\nu$, and dualizing would exchange the two
lines of~\eq{dt12eq38}.

Applying the {\it octahedral axiom\/}\index{octahedral
axiom}\index{triangulated category!octahedral axiom} in the triangulated
category $D(\M_\stp^{\al,n}(\tau'))$, as in Gelfand and Manin
\cite[\S IV.1]{GeMa}, gives diagrams:
\begin{equation*}
\xymatrix@!0@C=20pt@R=16pt{ C^\bu \ar[ddrr]^{[1]}_\de
\ar[dddd]_{[1]}^{\check\la} &&&& B^\bu \ar[llll]^\ga &&& C^\bu
\ar[dddd]_{[1]}^{\check\la} &&&& B^\bu \ar[llll]^\ga
\ar@{.>}[ddll]^\rho \\
&& \star &&&&&&& \circlearrowleft
\\
& \circlearrowleft & A^\bu \ar[uurr]^\be
\ar[ddll]_(0.5){\tr_{\mathbb I}} & \circlearrowleft &&&&& \star &
E^\bu \ar[uull]^{\check\io} \ar@{.>}[ddrr]^{[1]}_\si & \star
\\
&& \star &&&&&&& \circlearrowleft
\\
F^\bu \ar[rrrr]_{[1]}^{\mu} &&&& G^\bu \ar[uull]^\nu
\ar[uuuu]_{\be\ci\nu} &&& F^\bu \ar[rrrr]_{[1]}^{\mu}
\ar[uurr]^{\check\ka} &&&& G^\bu, \ar[uuuu]_{\be\ci\nu} }
\end{equation*}
where `$\star$' indicates a distinguished triangle, and
`$\circlearrowleft$' a commutative triangle, and we have used the
first row and third column of \eq{dt12eq29}, equation $\tr_{\mathbb
I}\ci\de=\check\la$ in \eq{dt12eq36}, and the first row of
\eq{dt12eq38}. Thus, the octahedral axiom\index{octahedral
axiom}\index{triangulated category!octahedral axiom} gives morphisms
$\rho,\si$ in a distinguished triangle:
\e
\xymatrix{G^\bu \ar[r]^{\be\ci\nu} & B^\bu \ar[r]^\rho & E^\bu
\ar[r]^\si & G^\bu[1]. }
\label{dt12eq39}
\e

The complex $E^\bu$ in $D(\M_\stp^{\al,n}(\tau'))$ has cohomology
$h^i(E^\bu)\in\coh(\M_\stp^{\al,n}(\tau'))$ for $i\in\Z$. At a
$U$-point $p$ of $\M_\stp^{\al,n}(\tau')$ corresponding to a stable
pair $s_p:\cO_X(-n)\ra E_p$, the fibre of $h^i(E^\bu)$ is
$\Ext^{i-2}(\cO_X(-n),E_p)$ by \eq{dt12eq26}, and this is zero for
$i\ne -2$ by choice of $n\gg 0$. Therefore we have:

\begin{lem} The cohomology sheaves\/ $h^i(E^\bu)$ satisfy
$h^i(E^\bu)=0$ for\/~$i\ne -2$.
\label{dt12lem2}
\end{lem}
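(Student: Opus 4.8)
The plan is to unwind the definition \eq{dt12eq26} of $E^\bu$ and show that, up to the shift $[2]$, it is the derived pushforward along $\pi$ of a single sheaf on $X_\M$ which is flat over $\M_\stp^{\al,n}(\tau')$ and has no higher cohomology on the fibres of $\pi$; then $R\pi_*$ of that sheaf is concentrated in degree $0$, and the shift $[2]$ places $E^\bu$ entirely in degree $-2$.

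First I would observe that $\cO_{X_\M}(-n)$ is a line bundle, so $R\SHom(\cO_{X_\M}(-n),{\mathbb E})\cong{\mathbb E}(n):={\mathbb E}\ot_{\cO_{X_\M}}\cO_{X_\M}(n)$ is a sheaf placed in degree $0$, and hence $R\SHom(\cO_{X_\M}(-n),{\mathbb E})\ot\om_\pi\cong{\mathbb E}(n)\ot\om_\pi$ is a genuine sheaf on $X_\M$. Since ${\mathbb E}$ is flat over $\M_\stp^{\al,n}(\tau')$ and $\cO_{X_\M}(n),\om_\pi$ are line bundles, ${\mathbb E}(n)\ot\om_\pi$ is flat over $\M_\stp^{\al,n}(\tau')$. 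As explained in \S\ref{dt125}, the Calabi--Yau condition on $\vp:X\ra U$ together with $X_\M=X\times_U\M_\stp^{\al,n}(\tau')$ give $\om_\pi\cong\pi^*(N)$ for a line bundle $N$ on $\M_\stp^{\al,n}(\tau')$; in particular the restriction of $\om_\pi$ to any scheme-theoretic fibre $X_p$ of $\pi$ is the trivial bundle $\cO_{X_p}$.

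Next I would apply cohomology and base change for the smooth projective morphism $\pi$ and the flat sheaf ${\mathbb E}(n)\ot\om_\pi$. For a $U$-point $p$ of $\M_\stp^{\al,n}(\tau')$ corresponding to a stable pair $s_p:\cO_X(-n)\ra E_p$, the restriction of ${\mathbb E}(n)\ot\om_\pi$ to $X_p$ is $E_p(n)$, using $(\om_\pi)\vert_{X_p}\cong\cO_{X_p}$. Because $s_p$ is stable, $E_p$ is $\tau$-semistable of class $\al$, hence $n$-regular by the choice of $n\gg 0$ in Definition~\ref{dt12def1}, so $H^i(X_p,E_p(n))=0$ for all $i>0$ (and trivially for $i<0$). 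Therefore $R\pi_*({\mathbb E}(n)\ot\om_\pi)$ is quasi-isomorphic to the locally free sheaf $\pi_*({\mathbb E}(n)\ot\om_\pi)$ placed in degree $0$; shifting by $[2]$ shows $E^\bu$ is quasi-isomorphic to $\pi_*({\mathbb E}(n)\ot\om_\pi)$ placed in degree $-2$, so that $h^i(E^\bu)=0$ for $i\ne-2$, as claimed, with $h^{-2}(E^\bu)=\pi_*({\mathbb E}(n)\ot\om_\pi)$.

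I do not expect a serious obstacle here: the only points needing attention are the identification $(\om_\pi)\vert_{X_p}\cong\cO_{X_p}$, which is immediate from the Calabi--Yau hypothesis on $\vp$ and the product structure of $X_\M$, and the invocation of $n$-regularity, which is guaranteed by the standing assumption that $n$ is chosen large enough that every $\tau$-semistable sheaf of class $\al$ on a fibre of $X\ra U$ is $n$-regular.
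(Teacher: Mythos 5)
Your argument is correct and is essentially the paper's own proof: the paper likewise notes that the fibre of $h^i(E^\bu)$ at a stable pair $s_p:\cO_X(-n)\ra E_p$ is $\Ext^{i-2}(\cO_X(-n),E_p)\cong H^{i-2}(E_p(n))$, which vanishes for $i\ne-2$ by the choice of $n\gg 0$. You have simply made explicit the routine points the paper leaves implicit, namely that $R\SHom(\cO_{X_\M}(-n),{\mathbb E})\ot\om_\pi$ is the single sheaf ${\mathbb E}(n)\ot\om_\pi$, flat over the moduli scheme with $\om_\pi$ trivial on the fibres of $\pi$, so cohomology and base change gives the vanishing of $h^i(E^\bu)$ for $i\ne -2$.
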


Now define a morphism
\e
\psi=\phi\ci\be\ci\nu:G^\bu\longra L_{\M_\stp^{\al,n}(\tau')/U}.
\label{dt12eq40}
\e

\begin{prop} The morphism $\psi:G^\bu\longra
L_{\M_\stp^{\al,n}(\tau')/U}$ in \eq{dt12eq40} makes $G^\bu$ into a
relative obstruction theory for~$\M_\stp^{\al,n}(\tau')/U$.
\label{dt12prop3}
\end{prop}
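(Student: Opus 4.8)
The plan is to deduce Proposition \ref{dt12prop3} from Proposition \ref{dt12prop2}, which already shows $\phi\colon B^\bu\ra L_{\M_\stp^{\al,n}(\tau')/U}$ is a relative obstruction theory, together with the distinguished triangle \eq{dt12eq39} and the vanishing in Lemma \ref{dt12lem2}. Since $\psi=\phi\ci\be\ci\nu$ and each $h^i$ is a (co)homological functor on $D(\M_\stp^{\al,n}(\tau'))$, we have $h^i(\psi)=h^i(\phi)\ci h^i(\be\ci\nu)$ for all $i$, so the whole statement reduces to two checks: that $G^\bu$ satisfies condition $(*)$, and that $h^0$, $h^{-1}$ of the morphism $\be\ci\nu\colon G^\bu\ra B^\bu$ are respectively an isomorphism and an epimorphism. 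Everything else is then inherited from $\phi$.

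First I would take the long exact cohomology sequence of \eq{dt12eq39},
\[
\cdots\ra h^{i-1}(E^\bu)\ra h^i(G^\bu)\ra h^i(B^\bu)\ra h^i(E^\bu)\ra\cdots,
\]
and feed in Lemma \ref{dt12lem2}, that $h^i(E^\bu)=0$ for $i\ne-2$. For $i\ge 0$ both $h^{i-1}(E^\bu)$ and $h^i(E^\bu)$ vanish, so $h^i(\be\ci\nu)\colon h^i(G^\bu)\ra h^i(B^\bu)$ is an isomorphism; in particular $h^i(G^\bu)\cong h^i(B^\bu)=0$ for $i>0$ by Lemma \ref{dt12lem1}, and $h^0(G^\bu)\cong h^0(B^\bu)$ is coherent. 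For $i=-1$ the exact piece $h^{-2}(E^\bu)\ra h^{-1}(G^\bu)\ra h^{-1}(B^\bu)\ra 0$ shows $h^{-1}(\be\ci\nu)$ is an epimorphism, and exhibits $h^{-1}(G^\bu)$ as an extension of the coherent sheaf $h^{-1}(B^\bu)$ by a quotient of the coherent sheaf $h^{-2}(E^\bu)$, hence coherent. Thus $G^\bu$ satisfies condition $(*)$.

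Now I would conclude by composition: $h^0(\psi)=h^0(\phi)\ci h^0(\be\ci\nu)$ is a composite of isomorphisms — the first because $\phi$ is an obstruction theory, the second from the previous paragraph — hence an isomorphism; and $h^{-1}(\psi)=h^{-1}(\phi)\ci h^{-1}(\be\ci\nu)$ is a composite of epimorphisms, hence an epimorphism. Together with condition $(*)$ this is precisely the content of Definition \ref{dt12def3}(c) in the relative form, so $\psi$ makes $G^\bu$ into a relative obstruction theory for $\M_\stp^{\al,n}(\tau')/U$, as claimed.

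I do not expect a serious obstacle in this proposition: the substantive work was already carried out in constructing the triangle \eq{dt12eq39} via the octahedral axiom and in proving Lemma \ref{dt12lem2} and Proposition \ref{dt12prop2}. The one point worth emphasising in the write-up is that being an obstruction theory constrains only $h^0$ and $h^{-1}$, so the term $h^{-2}(G^\bu)=\Ker\bigl(h^{-2}(B^\bu)\ra h^{-2}(E^\bu)\bigr)$ — which is what the trace/identity bookkeeping of \eq{dt12eq35}–\eq{dt12eq38} is designed to kill, and which is why one later needs $\rank\al\ne 1$ (or the alternative construction of \S\ref{dt127}) to make $G^\bu$ \emph{perfect} and symmetric — plays no role at this stage, so the argument above is insensitive to the value of $\rank\al$.
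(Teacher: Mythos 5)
Your argument is correct and is essentially the paper's own proof: take the long exact cohomology sequence of the triangle \eq{dt12eq39}, use Lemma \ref{dt12lem2} to see $h^0(\be\ci\nu)$ is an isomorphism and $h^{-1}(\be\ci\nu)$ an epimorphism, check condition $(*)$ via Lemma \ref{dt12lem1}, and compose with the corresponding properties of $\phi$ from Proposition \ref{dt12prop2}. Your closing remark that the $h^{-2}$ term is irrelevant at this stage (and only matters for perfectness and symmetry later) is also exactly the point.
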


\begin{proof} Taking cohomology sheaves of the distinguished
triangle \eq{dt12eq39} gives a long exact sequence in
$\coh(\M_\stp^{\al,n}(\tau'))$:
\e
\xymatrix{ \cdots \ar[r] & h^i(G^\bu) \ar[r]^{h^i(\be\ci\nu)} &
h^i(B^\bu) \ar[r]^{h^i(\rho)} & h^i(E^\bu) \ar[r]^{h^i(\si)} &
h^{i+1}(G^\bu) \ar[r] & \cdots.}
\label{dt12eq41}
\e
Lemma \ref{dt12lem2} then implies that $h^i(\be\ci\nu):h^i(G^\bu)\ra
h^i(B^\bu)$ is an isomorphism for $i=0$ and an epimorphism for
$i=-1$. Also $h^0(\phi)$ is an isomorphism and $h^{-1}(\phi)$ is an
epimorphism, since $\phi$ is an obstruction theory by Proposition
\ref{dt12prop2}. Thus $h^0(\psi)=h^0(\phi)\ci h^0(\be\ci\nu)$ is an
isomorphism and $h^{-1}(\psi)=h^{-1}(\phi)\ci h^{-1}(\be\ci\nu)$ is
an epimorphism. We can also see from Lemma \ref{dt12lem1} and
\eq{dt12eq41} that $G^\bu$ satisfies condition $(*)$. Hence $\psi$
is an obstruction theory by Definition~\ref{dt12def3}(c).
\end{proof}

Suppose now that $\rank\al\ne 1$, and also if $\K$ has positive
characteristic\index{field $\K$!positive characteristic} that $\cha\K$
does not divide $\rank\al-1$. This implies that the morphism
$\tr_{\mathbb I}\ci\id_{\mathbb I}$ in \eq{dt12eq35} is invertible.

\begin{lem} Suppose $\rank\al\ne 1\mod\cha\K$. Then
$\theta=\check\nu\ci\nu:G^\bu\ra\check G^\bu$ is an isomorphism.
Also\/~$A^\bu\cong F^\bu\op G^\bu\cong F^\bu\op\check G^\bu$.
\label{dt12lem3}
\end{lem}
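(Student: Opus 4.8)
The plan is to show that both distinguished triangles in \eq{dt12eq38} split, which gives at once the direct sum decompositions $A^\bu\cong G^\bu\op F^\bu\cong F^\bu\op\check G^\bu$, and then to produce an explicit two-sided inverse for $\theta=\check\nu\ci\nu$ out of the splitting data.

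First, put $c=\rank\al-1$. The hypothesis $\rank\al\ne 1\bmod\cha\K$ says exactly that $c$ is a nonzero, hence invertible, element of $\K$, so by \eq{dt12eq35} the morphism $\tfrac1c\id_{\mathbb I}\colon F^\bu\ra A^\bu$ is a section of $\tr_{\mathbb I}$ and $\tfrac1c\tr_{\mathbb I}\colon A^\bu\ra F^\bu$ is a retraction of $\id_{\mathbb I}$. Since consecutive morphisms in a distinguished triangle compose to zero, in the first triangle of \eq{dt12eq38} we get $\mu=\mu\ci\tr_{\mathbb I}\ci(\tfrac1c\id_{\mathbb I})=0$, and in the second $\check\mu=(\tfrac1c\tr_{\mathbb I})\ci\id_{\mathbb I}\ci\check\mu=0$. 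A distinguished triangle whose connecting morphism vanishes splits, so $A^\bu\cong G^\bu\op F^\bu$ and $A^\bu\cong F^\bu\op\check G^\bu$, proving the second assertion. (Note no idempotent-completeness of $D(\M_\stp^{\al,n}(\tau'))$ is needed, only this elementary splitting fact.)

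Next I would set up the inverse. Choose any retraction $r\colon A^\bu\ra G^\bu$ of $\nu$ and write $e:=\tfrac1c\id_{\mathbb I}\ci\tr_{\mathbb I}\colon A^\bu\ra A^\bu$, an idempotent by \eq{dt12eq35}. Putting $j:=\tfrac1c\id_{\mathbb I}-\tfrac1c\,\nu\ci r\ci\id_{\mathbb I}$, one checks $\tr_{\mathbb I}\ci j=1_{F^\bu}$ and $r\ci j=0$ (using $\tr_{\mathbb I}\ci\nu=0$), and then the decomposition $1_{A^\bu}=\nu\ci r+j\ci\tr_{\mathbb I}$ holds; rearranging gives $1_{A^\bu}-e=\nu\ci\bigl(r\ci(1_{A^\bu}-e)\bigr)$. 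On the other side, since $1_{A^\bu}-e$ is idempotent and annihilates $\id_{\mathbb I}$ on the right, it factors through $\check\nu$ (by the triangle $F^\bu\xrightarrow{\id_{\mathbb I}}A^\bu\xrightarrow{\check\nu}\check G^\bu$), say $1_{A^\bu}-e=\check r\ci\check\nu$; as $\check\nu$ is a split epimorphism this forces $\check\nu\ci\check r=1_{\check G^\bu}$. Comparing the two factorizations of $1_{A^\bu}-e$ yields the key identity $\nu\ci r\ci(1_{A^\bu}-e)=\check r\ci\check\nu$.

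Finally, set $\Theta:=r\ci(1_{A^\bu}-e)\ci\check r\colon\check G^\bu\ra G^\bu$. Using the key identity and $\check\nu\ci\check r=1_{\check G^\bu}$, one computes $\theta\ci\Theta=\check\nu\ci\bigl(\nu\ci r\ci(1_{A^\bu}-e)\bigr)\ci\check r=\check\nu\ci\check r\ci\check\nu\ci\check r=1_{\check G^\bu}$; using $\check r\ci\check\nu=1_{A^\bu}-e$, idempotency of $e$, $r\ci\nu=1_{G^\bu}$ and $\tr_{\mathbb I}\ci\nu=0$, one computes $\Theta\ci\theta=r\ci(1_{A^\bu}-e)\ci\nu=r\ci\nu-\tfrac1c\,r\ci\id_{\mathbb I}\ci\tr_{\mathbb I}\ci\nu=1_{G^\bu}$. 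Hence $\theta$ is an isomorphism. The one place needing care is the bookkeeping of the non-canonical splitting morphisms $r$ and $\check r$ in the triangulated category $D(\M_\stp^{\al,n}(\tau'))$ — in particular normalizing $\check r$ against the common idempotent $e$ so that the key identity holds; everything after that is purely formal manipulation.
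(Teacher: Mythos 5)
Your proof is correct, but it follows a genuinely different route from the paper's. The paper obtains both statements from a single application of the octahedral axiom: combining the two triangles in \eq{dt12eq38} with $\tr_{\mathbb I}\ci\id_{\mathbb I}=(\rank\al-1)1_{F^\bu}$ from \eq{dt12eq35} produces the octahedra \eq{dt12eq42}, in which the third vertex is $\cone\bigl((\rank\al-1)1_{F^\bu}\bigr)=0$ because that map is invertible; hence $\cone(\theta)=0$, so $\theta$ is an isomorphism, and $\mu,\check\mu$ factor through $0$, giving the splittings $A^\bu\cong F^\bu\op G^\bu\cong F^\bu\op\check G^\bu$. You instead avoid the octahedron entirely: you kill $\mu$ and $\check\mu$ directly using the section $\tfrac1c\id_{\mathbb I}$ and retraction $\tfrac1c\tr_{\mathbb I}$ of $\tr_{\mathbb I}$ and $\id_{\mathbb I}$, split both triangles, and then manufacture an explicit two-sided inverse $\Theta=r\ci(1_{A^\bu}-e)\ci\check r$ of $\theta$ from the (non-canonical) splitting data, with the idempotent $e=\tfrac1c\id_{\mathbb I}\ci\tr_{\mathbb I}$ doing the normalization. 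The only compressed step is the identity $1_{A^\bu}=\nu\ci r+j\ci\tr_{\mathbb I}$, which is not a purely formal consequence of $\tr_{\mathbb I}\ci j=1_{F^\bu}$ and $r\ci j=0$; it needs a short factorization argument through the triangle (using that $\tr_{\mathbb I}$ is split epi). In fact you can bypass it: since $(1_{A^\bu}-\nu\ci r)\ci\nu=0$, the map $1_{A^\bu}-\nu\ci r$ factors as $h\ci\tr_{\mathbb I}$, and $\tr_{\mathbb I}\ci e=\tr_{\mathbb I}$ then gives your key identity $1_{A^\bu}-e=\nu\ci r\ci(1_{A^\bu}-e)$ directly. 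What each approach buys: the paper's argument is shorter and choice-free, identifying $\theta$ as an isomorphism structurally via a vanishing cone; yours is more elementary (only the axioms that consecutive maps in a triangle compose to zero, triangles with zero connecting map split, and $\Hom$ is cohomological) and has the merit of exhibiting an explicit inverse, at the cost of the bookkeeping of non-canonical splittings that you flag.
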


\begin{proof} Use the octahedral axiom\index{octahedral
axiom}\index{triangulated category!octahedral axiom} \cite[\S
IV.1]{GeMa} and \eq{dt12eq35}, \eq{dt12eq38} to form diagrams:
\e
\begin{gathered}
\xymatrix@!0@C=20pt@R=16pt{ G^\bu[1] \ar[ddrr]^{[1]}_\nu
\ar[dddd]_{[1]}^{\theta} &&&& F^\bu \ar[llll]^\mu &&& G^\bu[1]
\ar[dddd]_{[1]}^{\theta} &&&& F^\bu \ar[llll]^\mu
\ar@{.>}[ddll] \\
&& \star &&&&&&& \circlearrowleft
\\
& \circlearrowleft & A^\bu \ar[uurr]^{\tr_{\mathbb I}}
\ar[ddll]_(0.5){\check\nu} & \circlearrowleft &&&&& \star & 0
\ar@{.>}[uull] \ar@{.>}[ddrr]^(0.55){[1]} & \star
\\
&& \star &&&&&&& \circlearrowleft
\\
\check G^\bu \ar[rrrr]_{[1]}^{\check\mu} &&&& F^\bu
\ar[uull]^{\id_{\mathbb I}} \ar[uuuu]_{\begin{subarray}{l}
(\rank\al-1) \\ {}\;\>  1_{F_\bu}\end{subarray}} &&& \check G^\bu
\ar[rrrr]_{[1]}^{\check\mu} \ar@{.>}[uurr] &&&& F^\bu.
\ar[uuuu]_{\begin{subarray}{l} (\rank\al-1) \\
{}\;\> 1_{F_\bu}\end{subarray}} }
\end{gathered}
\label{dt12eq42}
\e
Since $(\rank\al-1)1_{F_\bu}$ is an isomorphism, the cone on it is
zero, so the central object in the right hand square is zero. This
in turn implies that the cone on $\theta$ is zero, so $\theta$ is an
isomorphism. Also in the second square the morphisms $\mu,\check\mu$
factor through zero, so $\mu=\check\mu=0$, and as $A^\bu$ is the
cone on $\mu,\check\mu$ this gives~$A^\bu\cong F^\bu\op G^\bu\cong
F^\bu\op\check G^\bu$.
\end{proof}

The splitting $A^\bu\cong F^\bu\op G^\bu$ corresponds to
\begin{equation*}
R\pi_*\bigl(R\SHom({\mathbb I},{\mathbb I})\ot\om_{\pi}\bigr)[2]\cong
R\pi_*(\om_{\pi})[2]\op
R\pi_*\bigl(R\SHom({\mathbb I},{\mathbb
I})_0\ot\om_{\pi}\bigr)[2],
\end{equation*}
where $R\SHom({\mathbb I},{\mathbb I})_0$ are the trace-free
automorphisms of $\mathbb I$.

\begin{lem} If\/ $\rank\al\ne 1\mod\cha\K$ then\/ $h^i(G^\bu)=0$
for\/ $i\ne 0,-1$.
\label{dt12lem4}
\end{lem}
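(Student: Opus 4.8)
The plan is to deduce the vanishing from the self-duality of $G^\bu$ supplied by Lemma~\ref{dt12lem3}, played off against the fact that $G^\bu$ satisfies condition $(*)$. Since the conclusion $h^i(G^\bu)=0$ is Zariski-local on $\M_\stp^{\al,n}(\tau')$, and every point of $\M_\stp^{\al,n}(\tau')$ lies over a point of $U$ admitting an affine open neighbourhood $U'$, it suffices to prove the lemma after restricting to $\M'=\M_\stp^{\al,n}(\tau')\times_UU'$. Over $U'$ the relative dualizing line bundle of $\vp$ is trivial, so $\om_\pi\cong\cO_{X_{\M'}}$; fix such an isomorphism. All of $A^\bu,F^\bu,G^\bu,\check G^\bu$ and the morphisms $\id_{\mathbb I},\tr_{\mathbb I},\nu,\check\nu,\theta$ of \eq{dt12eq37}--\eq{dt12eq38} and Lemma~\ref{dt12lem3} are built from $\mathbb I$ by $R\pi_*$, $R\SHom(\,,\,)$, $\ot\om_\pi$, cones and shifts, all of which commute with the flat base change $\M'\ra\M_\stp^{\al,n}(\tau')$, so they restrict to $\M'$.

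First I would record that over $\M'$ there is an isomorphism $G^\bu\cong G^{\bu\vee}[1]$ in the derived category, where $G^{\bu\vee}=R\SHom(G^\bu,\cO_{\M'})$: as noted just after \eq{dt12eq38}, the chosen isomorphism $\om_\pi\cong\cO_{X_{\M'}}$ induces $\check G^\bu\cong G^{\bu\vee}[1]$, while Lemma~\ref{dt12lem3} (whose hypothesis $\rank\al\ne 1\mod\cha\K$ is exactly our assumption) gives an isomorphism $\theta=\check\nu\ci\nu:G^\bu\ra\check G^\bu$; composing gives the claim. Next I would note that $G^\bu$ is a perfect complex: $\mathbb I$ is a two-term complex of locally free sheaves, hence perfect, so $R\SHom({\mathbb I},{\mathbb I})\ot\om_\pi$ is perfect, hence so is $A^\bu=R\pi_*(\cdots)[2]$ since $\pi$ is proper; likewise $F^\bu=R\pi_*(\om_\pi)[2]$ is perfect; and $G^\bu=\cone(\tr_{\mathbb I}:A^\bu\ra F^\bu)[-1]$ is a shifted cone of a morphism of perfect complexes. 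Then the argument is pure homological algebra: by the proof of Proposition~\ref{dt12prop3}, $G^\bu$ satisfies condition $(*)$, so $h^i(G^\bu)=0$ for $i>0$; shrinking $\M'$ so that $G^\bu$ is represented by a bounded complex of vector bundles, $(*)$ lets us take this representative concentrated in degrees $[-N,0]$, say $[V^{-N}\ra\cdots\ra V^{-1}\ra V^0]$. Dualizing, $G^{\bu\vee}$ is represented by $[V^{0\vee}\ra V^{-1\vee}\ra\cdots\ra V^{-N\vee}]$ in degrees $[0,N]$, so $h^j(G^{\bu\vee})=0$ for $j<0$, hence $h^j(G^{\bu\vee}[1])=0$ for $j<-1$; transporting along $G^\bu\cong G^{\bu\vee}[1]$ gives $h^i(G^\bu)=0$ for $i<-1$. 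Combined with $h^i(G^\bu)=0$ for $i>0$ this is the assertion $h^i(G^\bu)=0$ for $i\ne 0,-1$ over $\M'$, and since these opens cover $\M_\stp^{\al,n}(\tau')$ the lemma follows.

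The only step needing genuine care is the reduction to $\om_\pi\cong\cO_{X_{\M'}}$, which is what makes the self-duality $\check G^\bu\cong G^{\bu\vee}[1]$ available and so lets us turn the one-sided bound "$h^i=0$ for $i>0$" from condition $(*)$ into its dual "$h^i=0$ for $i<-1$"; one must check that Lemma~\ref{dt12lem3} and the constructions \eq{dt12eq37}--\eq{dt12eq38} behave well under this restriction, which they do because they are defined by derived functors commuting with flat base change. Everything else — truncating a perfect complex with vanishing positive cohomology, and the effect of dualization on cohomology sheaves — is routine.
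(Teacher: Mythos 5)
Your proof is correct, and it is a genuinely different argument from the paper's. The paper works fibrewise: it specializes the triangles \eq{dt12eq38} and \eq{dt12eq39} at a point $p$, uses the Ext computations of Proposition \ref{dt12prop1}, and gets $H^i(G^\bu_p)=0$ for $i<-1$ from the first sequence --- this is where $\rank\al\ne 1\mod\cha\K$ enters, since the composition $H^0(\cO_X)\ra\Ext^0(I_p,I_p)\ra H^0(\cO_X)$ is multiplication by $\rank\al-1$, forcing $\Ext^0(I_p,I_p)\ra H^0(\cO_X)$ to be an isomorphism --- and $H^i(G^\bu_p)=0$ for $i>0$ from the second. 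You instead feed the hypothesis in only through Lemma \ref{dt12lem3}, whose isomorphism $\theta:G^\bu\ra\check G^\bu$, combined with $\check G^\bu\cong G^{\bu\vee}[1]$ after trivializing $\om_\pi$ locally over $U$, lets you dualize the one-sided bound $h^i(G^\bu)=0$ for $i>0$ (condition $(*)$, established in the proof of Proposition \ref{dt12prop3}) into $h^i(G^\bu)=0$ for $i<-1$. There is no circularity, since Lemma \ref{dt12lem3} and the condition-$(*)$ statement are proved before and independently of this lemma. Your route is softer and makes the symmetry do all the work (the vanishing below degree $-1$ is literally the dual of the vanishing above degree $0$), and it works directly with cohomology sheaves rather than passing from fibrewise vanishing to sheaf vanishing; the paper's computation, on the other hand, identifies the fibres of $h^0(G^\bu)$ and $h^{-1}(G^\bu)$ explicitly in terms of $\Ext$ groups, which is exactly what its subsequent proof of perfect amplitude $[-1,0]$ in Lemma \ref{dt12lem5} exploits.

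Two small repairs. First, ``${\mathbb I}$ is a two-term complex of locally free sheaves'' is not right: the universal sheaf ${\mathbb E}$ is not locally free in general (for instance when $\dim\al\le 1$, the case of most interest). Perfection still holds because $\cO_{X_\M}(-n)$ is a line bundle and ${\mathbb E}$ is coherent and flat over $\M_\stp^{\al,n}(\tau')$ with $\pi$ smooth, so ${\mathbb I}$ is perfect and $R\pi_*$ along the smooth projective $\pi$ preserves perfection; alternatively you can avoid perfection altogether, since boundedness, coherence of cohomology and the hyperext spectral sequence $\SExt^p(h^{-q}(G^\bu),\cO)\Rightarrow h^{p+q}(G^{\bu\vee})$ already give $h^j(G^{\bu\vee})=0$ for $j<0$ from condition $(*)$. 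Second, affineness of $U'$ alone does not force the line bundle $L$ on $U$ with $\om_\vp\cong\vp^*(L)$ to be trivial; shrink $U'$ so that $L\vert_{U'}$ is trivial (the paper makes the same elision), which costs nothing since your argument is Zariski-local.
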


\begin{proof} Let $p$ be a $U$-point of $\M_\stp^{\al,n}(\tau')$
corresponding to a stable pair $s_p:\cO_X(-n)\ra E_p$. Specializing
the first row of \eq{dt12eq38} and \eq{dt12eq39} at $p$ to get
distinguished triangles in $D^b(U)$, taking long exact sequences in
cohomology, and using \eq{dt12eq20}--\eq{dt12eq26} to substitute for
$A^\bu_p,B^\bu_p,E^\bu_p,F^\bu_p$ gives long exact sequences:
\begin{gather*}
\cdots \ra \Ext^{i+1}(I_p,I_p) \ra H^{i+1}(\cO_X) \ra H^i(G^\bu_p)
\ra \Ext^{i+2}(I_p,I_p)\ra\cdots,\\
\cdots\!\ra\!H^i(G^\bu_p)\!\ra\!\Ext^{i+3}(E_p,I_p)\!\ra\!
\Ext^{i+2}(\cO_X(-n),E_p)\!\ra\! H^{i+1}(G^\bu_p)\!\ra\!\cdots.
\end{gather*}

In the first line, by Proposition \ref{dt12prop1}(c) we have
$\Ext^i(I_p,I_p),H^i(\cO_X)=0$ for $i<0$, $i>3$ and
$\Ext^i(I_p,I_p)\ra H^i(\cO_X)$ is a morphism $\K\ra\K$ for $i=0,3$.
The morphism $\Ext^3(I_p,I_p)\ra H^3(\cO_X)$ is dual to the identity
morphism $H^0(\cO_X)\ra\Ext^0(I_p,I_p)$, and so is an isomorphism
without conditions on $\rank\al$. The composition
$H^0(\cO_X)\ra\Ext^0(I_p,I_p)\ra H^0(\cO_X)$ is multiplication by
$\rank\al-1$ by \eq{dt12eq35}, which is nonzero by assumption, so
$\Ext^0(I_p,I_p)\ra H^0(\cO_X)$ is also an isomorphism. Hence so
$H^i(G^\bu_p)=0$ for $i<-1$ and $i>1$. In the second line,
$\Ext^i(E_p,I_p)=0$ for $i<1$ and $i>3$ by Proposition
\ref{dt12prop1}(a) and Serre duality, and $\Ext^i(\cO_X(-n),E_p)=0$
for $i\ne 0$ by choice of $n\gg 0$, so $H^i(G^\bu_p)=0$ for $i<-2$
and $i>0$. The lemma follows.
\end{proof}

Lemma \ref{dt12lem4} and the proofs of \cite[Lem.~2.10]{PaTh} or
\cite[Lem.~4.2]{HuTh} imply:

\begin{lem} If\/ $\rank\al\ne 1\mod\cha\K$ then\/ $G^\bu$ in
\eq{dt12eq37} is perfect of perfect amplitude contained in~$[-1,
0]$.
\label{dt12lem5}
\end{lem}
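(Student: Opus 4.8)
The plan is to obtain perfectness of $G^\bu$ by realizing it as a direct summand of a perfect complex, and then to pin down its amplitude to $[-1,0]$ from the vanishing of its derived fibres outside that range — this is exactly the strategy of \cite[Lem.~2.10]{PaTh} and \cite[Lem.~4.2]{HuTh}.

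First I would check that $A^\bu$ is perfect. Since $X_\M=X\times_U\M_\stp^{\al,n}(\tau')$ is smooth over $\M_\stp^{\al,n}(\tau')$ and $\mathbb E$ is flat over $\M_\stp^{\al,n}(\tau')$, the sheaf $\mathbb E$ has finite Tor-dimension over $\cO_{X_\M}$, so the two-term complex $\cO_{X_\M}(-n)\ra{\mathbb E}$, hence ${\mathbb I}=\cone({\mathbb S})$, is a perfect complex on $X_\M$. Therefore $R\SHom({\mathbb I},{\mathbb I})\ot\om_\pi$ is perfect on $X_\M$, and as $\pi:X_\M\ra\M_\stp^{\al,n}(\tau')$ is proper of finite Tor-dimension (indeed smooth and proper), $R\pi_*$ preserves perfect complexes, so $A^\bu=R\pi_*\bigl(R\SHom({\mathbb I},{\mathbb I})\ot\om_\pi\bigr)[2]$ is perfect (the same remark applies to $E^\bu$ and $F^\bu$, though it is not needed). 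Now the hypothesis $\rank\al\ne 1\mod\cha\K$ makes $\tr_{\mathbb I}\ci\id_{\mathbb I}=(\rank\al-1)\,1_{F^\bu}$ an isomorphism, so Lemma \ref{dt12lem3} gives $A^\bu\cong F^\bu\op G^\bu$; thus $G^\bu$ is a direct summand of a perfect complex, and is perfect.

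Next I would establish that for every geometric point $p$ of $\M_\stp^{\al,n}(\tau')$, corresponding to a stable pair $s_p:\cO_X(-n)\ra E_p$ on the relevant fibre of $X\ra U$, the derived fibre $G^\bu\dtensor_{\cO_{\M_\stp^{\al,n}(\tau')}}k(p)$ has cohomology concentrated in degrees $-1$ and $0$. This is exactly the computation carried out in the proof of Lemma \ref{dt12lem4}: since $\pi$ is flat, cohomology-and-base-change identifies the derived fibres of $A^\bu,B^\bu,E^\bu,F^\bu$ at $p$ with $\Ext^{*+2}(I_p,I_p)$, $\Ext^{*+2}(E_p,I_p)$, $\Ext^{*+2}(\cO_X(-n),E_p)$ and $H^{*+2}(\cO_X)$ respectively; restricting the distinguished triangles \eq{dt12eq38} and \eq{dt12eq39} to $p$ and passing to the long exact cohomology sequences, one reads off the required vanishing using Proposition \ref{dt12prop1}, Serre duality, the $n$-regularity of $E_p$, and the fact that $\rank\al\ne 1\mod\cha\K$ forces both $\Ext^0(I_p,I_p)\ra H^0(\cO_X)$ and its Serre dual $\Ext^3(I_p,I_p)\ra H^3(\cO_X)$ to be isomorphisms. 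Hence $H^i\bigl(G^\bu\dtensor k(p)\bigr)=0$ for all $i\ne -1,0$ and all $p$.

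Finally I would invoke the standard criterion that a perfect complex on a scheme whose derived fibre at every point has cohomology contained in an interval $[a,b]$ is itself of perfect amplitude contained in $[a,b]$ (the form used in \cite[Lem.~2.10]{PaTh} and \cite[Lem.~4.2]{HuTh}, proved by choosing a local representative by locally free sheaves and applying Nakayama's lemma to the top non-vanishing term). Applying this with $[a,b]=[-1,0]$ yields the lemma. I do not expect a genuine obstacle here: the delicate points are merely to verify that the identifications in the proof of Lemma \ref{dt12lem4} compute the \emph{derived} (not merely underived) fibres, so that they detect the Tor-amplitude, and to cite the perfect-complex amplitude criterion in the precise form required.
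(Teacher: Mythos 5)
Your proposal is correct and follows essentially the same route as the paper, which simply cites Lemma \ref{dt12lem4} together with the arguments of \cite[Lem.~2.10]{PaTh} and \cite[Lem.~4.2]{HuTh}: those arguments are exactly your combination of perfectness (via $G^\bu$ being a summand of the perfect complex $A^\bu\cong F^\bu\op G^\bu$, or equivalently splitting off the trace part before pushing forward) with the pointwise vanishing of derived-fibre cohomology outside $[-1,0]$ and the standard Nakayama trimming criterion. Your added care that the computation in Lemma \ref{dt12lem4} is genuinely about derived fibres is the right point to check, and it holds since the triangles \eq{dt12eq38}--\eq{dt12eq39} are specialized by derived restriction.
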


Putting all this together gives:

\begin{thm} If\/ $\rank\al\ne 1\mod\cha\K$ then\/ $\psi$ is a
perfect relative obstruction theory for $\M_\stp^{\al,n}(\tau')/U$.
If\/ $U$ is affine it is symmetric.\index{obstruction
theory!symmetric}\index{symmetric obstruction theory}
\label{dt12thm5}
\end{thm}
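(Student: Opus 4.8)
The plan is to assemble Theorem \ref{dt12thm5} from the pieces already in place: Proposition \ref{dt12prop3} shows that $\psi:G^\bu\ra L_{\M_\stp^{\al,n}(\tau')/U}$ is a relative obstruction theory for $\M_\stp^{\al,n}(\tau')/U$ with no conditions on $\rank\al$, and Lemma \ref{dt12lem5} shows that under the assumption $\rank\al\ne 1\bmod\cha\K$ the complex $G^\bu$ is perfect of perfect amplitude contained in $[-1,0]$. Combining these two facts with Definition \ref{dt12def3}(d) immediately gives that $\psi$ is a \emph{perfect} relative obstruction theory in the $\rank\al\ne 1$ case, which is the first sentence of the theorem. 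So the only real work is the second sentence: when $U$ is affine, $\psi$ is \emph{symmetric}, i.e.\ there is a self-dual isomorphism $G^\bu\ra G^{\bu\vee}[1]$.

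For the symmetry, the key input is that when $U$ is affine the dualizing complex $\om_\vp$ is a trivial line bundle, so we may fix an isomorphism $\om_\vp\cong\cO_X$, hence $\om_\pi\cong\cO_{X_\M}$ on $X_\M$. As recorded in \eq{dt12eq30} this induces a collection of duality isomorphisms $A^{\bu\vee}[1]\cong A^\bu$, $F^{\bu\vee}[1]\cong F^\bu$, etc., under which dualizing the diagram \eq{dt12eq29} corresponds to reflecting it along its main diagonal, and $\check\be,\ldots,\check\la$ become the dual morphisms of $\be,\ldots,\la$. First I would check that the identity and trace morphisms $\id_{\mathbb I}:F^\bu\ra A^\bu$ and $\tr_{\mathbb I}:A^\bu\ra F^\bu$ are swapped (up to the fixed sign conventions) by the duality $A^{\bu\vee}[1]\cong A^\bu$, $F^{\bu\vee}[1]\cong F^\bu$ --- this is the standard fact that $R\SHom$ of the trace of a perfect complex with itself is the identity inclusion, and vice versa. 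Given this, dualizing the first distinguished triangle of \eq{dt12eq38} turns it into the second one, so the cone construction $G^\bu=\cone(\tr_{\mathbb I})[-1]$ dualizes to $\check G^\bu=\cone(\id_{\mathbb I})$, giving a canonical isomorphism $G^{\bu\vee}[1]\cong\check G^\bu$ and $\check G^{\bu\vee}[1]\cong G^\bu$. Composing with the isomorphism $\theta=\check\nu\ci\nu:G^\bu\ra\check G^\bu$ of Lemma \ref{dt12lem3} (which requires exactly $\rank\al\ne 1\bmod\cha\K$, via the invertibility of $\tr_{\mathbb I}\ci\id_{\mathbb I}=(\rank\al-1)1_{F^\bu}$) produces an isomorphism $\theta':G^\bu\ra G^{\bu\vee}[1]$.

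The remaining and most delicate step is to verify that $\theta'$ can be chosen self-dual, i.e.\ $(\theta')^\vee[1]=\theta'$, which is precisely what Definition \ref{dt12def3}(e) demands. Here I would follow the pattern of the symmetry proofs in Pandharipande--Thomas \cite[\S 2.3]{PaTh} and Huybrechts--Thomas \cite[\S 4]{HuTh}: the isomorphism $\theta'$ is built out of the trace/identity morphisms and the octahedral diagrams \eq{dt12eq42}, all of which are compatible with Grothendieck--Serre duality, and one checks self-duality by chasing the commutativity of the relevant squares in \eq{dt12eq42} under the reflection coming from \eq{dt12eq30}. The subtle point is that a priori the octahedral axiom only produces the cones $G^\bu,\check G^\bu$ and the maps $\theta$ up to non-canonical isomorphism, so to pin down self-duality one must either work with an explicit model (e.g.\ represent $\mathbb I$ by a bounded complex of locally free sheaves and take honest mapping cones) or invoke the fact that $\mathrm{Hom}(G^\bu,G^{\bu\vee}[1])$ is rigid enough --- because of the amplitude bound from Lemma \ref{dt12lem5} and Proposition \ref{dt12prop1}(c) --- that the self-dual and non-self-dual candidates differ by an automorphism of $G^\bu$ which can be absorbed. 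I expect this last verification of self-duality, rather than the construction of the isomorphism itself, to be the main obstacle; everything else is a formal consequence of the distinguished triangles \eq{dt12eq29}, \eq{dt12eq38}, \eq{dt12eq39}, the duality isomorphisms \eq{dt12eq30}, and Lemmas \ref{dt12lem3}--\ref{dt12lem5} already established. Once self-duality is in hand, $\psi$ satisfies all of Definition \ref{dt12def3}(a)--(e) in the relative setting, completing the proof of Theorem \ref{dt12thm5}.
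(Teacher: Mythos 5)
Your first paragraph is exactly the paper's argument: Proposition \ref{dt12prop3} plus Lemma \ref{dt12lem5} give perfectness, and nothing more is needed. For the symmetry, your construction of the isomorphism $G^\bu\ra G^{\bu\vee}[1]$ is also the paper's, but the step you defer as ``the most delicate'' --- verifying self-duality --- is precisely where your proposal stops short, and it is in fact the easiest step rather than the hardest. The point, and the paper's one-line conclusion, is that $\theta=\check\nu\ci\nu$ factors through the self-dual object $A^\bu$: once $\om_\pi\cong\cO_{X_\M}$ is fixed, the identification $A^{\bu\vee}[1]\cong A^\bu$ of \eq{dt12eq30} is itself symmetric (it is the relative Serre duality pairing $(a,b)\mapsto\tr(a\ci b)$ on $R\SHom({\mathbb I},{\mathbb I})$), and the identification $\check G^\bu\cong G^{\bu\vee}[1]$ is \emph{chosen} so that $\check\nu$ corresponds to $\nu^\vee[1]$ --- this choice is exactly how the non-uniqueness of cones you worry about is absorbed, cf.\ the remark following \eq{dt12eq38}, where dualizing exchanges the two triangles and $\check\mu,\check\nu$ become the duals of $\mu,\nu$. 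With these conventions one computes directly $\theta^\vee[1]=(\check\nu\ci\nu)^\vee[1]=\nu^\vee[1]\ci\check\nu^\vee[1]=\check\nu\ci\nu=\theta$, using only $(g\ci f)^\vee=f^\vee\ci g^\vee$, double duality, and the symmetry of the identification on $A^\bu$. No explicit locally free model, no rigidity of $\Hom(G^\bu,G^{\bu\vee}[1])$, and no chase through \eq{dt12eq42} is required.

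A related correction: the octahedral diagrams \eq{dt12eq42} do not produce $\theta$, so there is no octahedral non-canonicity in $\theta$ itself. The morphism $\theta=\check\nu\ci\nu$ is pinned down by the triangles \eq{dt12eq38}, and \eq{dt12eq42} is used in Lemma \ref{dt12lem3} only to show that the cone on $\theta$ vanishes, i.e.\ that $\theta$ is an isomorphism (this is where $\rank\al\ne 1\bmod\cha\K$ enters, via the invertibility of $(\rank\al-1)1_{F^\bu}$, as you correctly note). The only genuine choice is the identification $\check G^\bu\cong G^{\bu\vee}[1]$, and any choice compatible with the maps from $A^\bu$ makes $\check\nu=\nu^\vee[1]$, after which self-duality is the formal computation above. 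Supplying that observation closes your argument and makes it agree with the paper's proof.
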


\begin{proof} By Proposition \ref{dt12prop3} $\psi$ is a relative
obstruction theory, which is perfect by Lemma \ref{dt12lem5}. When
$U$ is affine, as in \S\ref{dt125} we may choose an isomorphism
$\om_\pi\cong\cO_{X_\M}$, and this induces isomorphisms
\eq{dt12eq30} and $\check G^\bu\cong G^{\bu\vee}[1]$. Thus Lemma
\ref{dt12lem3} gives an isomorphism $\theta:G^\bu\ra
G^{\bu\vee}[1]$. Since $\theta=\check\nu\ci\nu$ and
$\check\nu=\nu^\vee$ we see that $\theta^\vee[1]=\theta$. Hence
$\psi$ is a symmetric obstruction theory.
\end{proof}

This proves Theorem \ref{dt5thm8} in the case that $\rank\al\ne
1\mod\cha\K$.

\subsection{An alternative construction for all $\rank\al$}
\label{dt127}

If either $\rank\al=1$, or $\cha\K>0$ divides $\rank\al-1$,\index{field
$\K$!positive characteristic} then $\tr_{\mathbb I}\ci\id_{\mathbb
I}=0$ in \eq{dt12eq35}. The proofs in \S\ref{dt126} then fail in two
ways:
\begin{itemize}
\setlength{\itemsep}{0pt}
\setlength{\parsep}{0pt}
\item In Lemma \ref{dt12lem3}, equation \eq{dt12eq42} must be
replaced by the diagrams:
\begin{equation*}
\xymatrix@!0@C=25pt@R=16pt{ G^\bu[1] \ar[ddrr]^{[1]}_\nu
\ar[dddd]_{[1]}^{\theta} &&&& F^\bu \ar[llll]^\mu && G^\bu[1]
\ar[dddd]_{[1]}^{\theta} &&&& F^\bu \ar[llll]^\mu
\ar@{.>}[ddll] \\
&& \star &&&&&& \circlearrowleft
\\
& \circlearrowleft & A^\bu \ar[uurr]^{\tr_{\mathbb I}}
\ar[ddll]_(0.5){\check\nu} & \circlearrowleft &&&& \star\,\,\,\,\,\,{} &
F^\bu\!\op\! F^\bu[1] \ar@{.>}[uull] \ar@{.>}[ddrr]^(0.55){[1]}
& {}\,\,\,\,\,\,\star
\\
&& \star &&&&&& \circlearrowleft
\\
\check G^\bu \ar[rrrr]_{[1]}^{\check\mu} &&&& F^\bu
\ar[uull]^{\id_{\mathbb I}} \ar[uuuu]_{0} && \check
G^\bu \ar[rrrr]_{[1]}^{\check\mu} \ar@{.>}[uurr] &&&& F^\bu.
\ar[uuuu]_{0} }
\end{equation*}
As the central object of the right hand square is no longer
zero, $\theta:G^\bu\ra\check G^\bu$ is not an isomorphism, so we
cannot show $\psi$ is symmetric for $U=\Spec\K$ in Theorem
\ref{dt12thm5}. Also we cannot conclude that $\mu=\check\mu=0$,
so we do not have~$A^\bu\cong F^\bu\op G^\bu\cong F^\bu\op\check
G^\bu$.
\item In the proof of Lemma \ref{dt12lem4} the morphism
$\Ext^0(I_p,I_p)\ra H^0(\cO_X)$ is zero. This implies that
$H^{-2}(G^\bu_p)\cong\K$, so $G^\bu$ is perfect of amplitude
contained in $[-2,0]$ rather than $[-1,0]$, and $\psi$ is not
perfect in Theorem~\ref{dt12thm5}.
\end{itemize}

The same problem occurs the construction of obstruction theories for
moduli schemes of simple complexes $\mathbb I$ in $D^b(X)$ in
Huybrechts and Thomas \cite[\S 4]{HuTh}. When $\rank{\mathbb I}\ne 0
\mod\cha\K$, so that $\tr_{\mathbb I}\ci\id_{\mathbb I}\ne 0$, they
consider complexes $\mathbb I$ with fixed determinant \cite[\S
4.2]{HuTh}, and obtain a perfect obstruction theory similar to our
$\psi$ in \S\ref{dt126}. When $\rank{\mathbb I}=0\mod\cha\K$, so
that $\tr_{\mathbb I}\ci\id_{\mathbb I}=0$, they instead consider
complexes $\mathbb I$ without fixed determinant \cite[\S 4.4]{HuTh},
and they modify their obstruction theory using truncation functors.

We now present an alternative, more complex construction of a
perfect obstruction theory for $L_{\M_\stp^{\al,n}(\tau')/U}$ which
works for all $\al$, and in fact is isomorphic to $\psi$ in
\S\ref{dt126} when $\rank\al\ne 1 \mod\cha\K$. Applying the
truncation functors $\tau^{\le -1}$, $\tau^{\ge 0}$ to $F^\bu$ gives
a distinguished triangle
\e
\xymatrix@C=30pt{\tau^{\le -1}F^\bu \ar[r]^(0.6){\tau^{\le -1}} &
F^\bu \ar[r]^{\tau^{\ge 0}} & \tau^{\ge 0}F^\bu \ar[r] & (\tau^{\le
-1}F^\bu)[1]. }
\label{dt12eq43}
\e

\begin{prop} The following composition of morphisms in
$D(\M_\stp^{\al,n}(\tau'))$ is zero:
\e
\xymatrix@C=30pt{\tau^{\le -1}F^\bu \ar[r]^(0.6){\tau^{\le-1}} &
F^\bu \ar[r]^{\id_{\mathbb I}} & A^\bu \ar[r]^{\be} & B^\bu
\ar[r]^(0.3)\phi & L_{\M_\stp^{\al,n}(\tau')/U}. }
\label{dt12eq44}
\e
\label{dt12prop4}
\end{prop}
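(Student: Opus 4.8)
The plan is to reduce \eq{dt12eq44} to a statement about Atiyah classes and then kill it by a cohomological degree count; write $\M=\M_\stp^{\al,n}(\tau')$. First I would make the description of $\phi$ explicit: by \eq{dt12eq33}, the adjunction $(R\pi_*,\pi^!)$ with $\pi^!(-)=\pi^*(-)\ot\om_\pi[3]$, and Grothendieck--Verdier duality for the smooth proper morphism $\pi:X_\M\ra\M$, one gets a chain of isomorphisms
\begin{equation*}
\Hom_{D(\M)}(F^\bu,L_{\M/U})\cong\Ext^1_{X_\M}\bigl(\cO_{X_\M},\pi^*(L_{\M/U})\bigr)=H^1\bigl(X_\M,\pi^*(L_{\M/U})\bigr),
\end{equation*}
under which $\phi\ci\be\ci\id_{\mathbb I}$ corresponds to the trace $\tr\At({\mathbb I})$ of the Atiyah class of ${\mathbb I}$. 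This uses that $\be\ci\id_{\mathbb I}$ is the morphism induced by ${\mathbb T}:{\mathbb E}\ra{\mathbb I}$ of \eq{dt12eq19}, that $\phi$ corresponds via \eq{dt12eq33} and Theorem \ref{dt12thm4} to the pair Atiyah class $\At({\mathbb E},{\mathbb S})$, and, crucially, that Illusie's construction of Atiyah classes is functorial along the triangle \eq{dt12eq19}, so that $\At({\mathbb E},{\mathbb S})$, $\At({\mathbb E})$, $\At(\cO_{X_\M}(-n))$ and $\At({\mathbb I})$ are compatible. Checking this last compatibility, by chasing Illusie's graded--algebra construction of \S\ref{dt124} through \eq{dt12eq19}, is the step I expect to be the main technical obstacle. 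Combined with the base--change splitting $L_{X_\M/U}\cong L\pi_X^*(L_{X/U})\op\pi^*(L_{\M/U})$ of \eq{dt12eq5} (valid since $X_\M=X\times_U\M$ and $\vp$ is flat, writing $\pi_X:X_\M\ra X$), additivity of traces over \eq{dt12eq19} then gives $\tr\At({\mathbb I})=\tr\At_{X_\M/X}({\mathbb E})-\tr\At_{X_\M/X}(\cO_{X_\M}(-n))$, and the last term vanishes because $\cO_{X_\M}(-n)=\pi_X^*(\cO_X(-n))$ is pulled back from $X$. This already shows $\phi\ci\be\ci\id_{\mathbb I}=\tr\At_{X_\M/X}({\mathbb E})$, which is nonzero in general — which is exactly why the truncation in \eq{dt12eq44} is genuinely needed.

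To get the vanishing on $\tau^{\le-1}F^\bu$ I would use the factorisation $\check\de\ci\id_{\mathbb I}=\la$ from \eq{dt12eq36}, where $\la:F^\bu\ra\check C^\bu[1]$ and $\check C^\bu=R\pi_*\bigl(R\SHom({\mathbb I},\cO_{X_\M}(-n))\ot\om_\pi\bigr)[2]$. By Proposition \ref{dt12prop1}(b) and the choice $n\gg 0$, the fibre of $h^i(\check C^\bu)$ at a stable pair $\cO_X(-n)\ra E_p$ is $\Ext^{1-i}(\cO_X(-n),I_p)^*=H^{1-i}(I_p(n))^*$, which vanishes unless $i=1$; hence $\check C^\bu[1]$ is a single locally free sheaf placed in degree $0$. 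Since $\tau^{\le-1}F^\bu$ has cohomology only in degrees $-2$ and $-1$, there is no nonzero morphism $\tau^{\le-1}F^\bu\ra\check C^\bu[1]$, so $\la$, and therefore $\check\de\ci\id_{\mathbb I}$, vanishes on $\tau^{\le-1}F^\bu$; thus $\id_{\mathbb I}\vert_{\tau^{\le-1}F^\bu}$ lifts through $\check\be:\check B^\bu\ra A^\bu$ in the first column of \eq{dt12eq29}. It then remains to control $\phi\ci\be\ci\check\be$ after this lift: using commutativity of \eq{dt12eq29} to rewrite $\be\ci\check\be=\check\ep\ci\ep$, the amplitude bound on $\check B^\bu$ (its fibres are $\Ext^{1-i}(E_p,I_p)^*$, nonzero only for $i\in\{-1,0,1\}$ by Proposition \ref{dt12prop1} and its proof), and Lemma \ref{dt12lem2} (which puts $E^\bu$ in degree $-2$), I would run a parallel degree count: the lifted map $\tau^{\le-1}F^\bu\ra\check B^\bu$ factors through $h^{-1}(\check B^\bu)[1]$, and tracking this through $\ep$ into $D^\bu$ and then through $\phi\ci\check\ep$ into $L_{\M/U}$ forces the total composition to vanish.

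The hard parts, then, are two. The first, as noted, is the Atiyah--class bookkeeping identifying $\phi\ci\be\ci\id_{\mathbb I}$ with $\tr\At_{X_\M/X}({\mathbb E})$ — essentially pinning down exactly how Illusie's pair Atiyah class restricts to and is built from the Atiyah classes of the three pieces in \eq{dt12eq19}. The second is making the final degree count airtight; I expect this to be routine once the relevant intermediate complexes $\check C^\bu$, $\check B^\bu$, $E^\bu$ — all concentrated in one or two cohomological degrees by Proposition \ref{dt12prop1} and Lemmas \ref{dt12lem1}--\ref{dt12lem2} — are identified, so that $\Hom$'s out of a complex supported in degrees $\{-2,-1\}$ into them, and onward into $L_{\M/U}$, can be handled termwise.
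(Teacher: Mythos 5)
There is a genuine gap, and it sits exactly where you predicted the argument would be "routine": the cohomology computation for $\check C^\bu$ in your second paragraph is false, and with it the whole lifting strategy. The fibre of $h^i(\check C^\bu)$ at a stable pair $p$ is $\Ext^{2+i}_{D(X)}(I_p,\cO_X(-n)\ot\om_{X_p})\cong\Ext^{1-i}(\cO_X(-n),I_p)^*$, and $\Ext^j(\cO_X(-n),I_p)$ is the hypercohomology $H^j$ of the two-term complex $[\cO_X\ra E_p(n)]$, so the long exact sequence brings in $H^*(\cO_X)$ and not only $H^*(E_p(n))$. In particular $\Ext^2(\cO_X(-n),I_p)\cong H^3(\cO_X)\cong\K$; equivalently, by Serre duality this is $\Ext^1(I_p,\cO_X(-n))^*\cong\K$, which is precisely Proposition \ref{dt12prop1}(b) — the very result you cite in support of vanishing in fact refutes it. Hence $h^{-1}(\check C^\bu)\ne 0$ (and $h^0(\check C^\bu)\cong H^2(\cO_X)\ot\cO_\M$ can also be nonzero, since \S\ref{dt12} does not assume $H^1(\cO_X)=0$), so $\check C^\bu[1]$ is not a sheaf in degree $0$ and your degree count gives nothing. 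Worse, the obstruction $\la\ci\tau^{\le -1}$ to your lift of $\id_{\mathbb I}\ci\tau^{\le-1}$ through $\check\be:\check B^\bu\ra A^\bu$ is genuinely nonzero: $\la$ is precomposition with the connecting morphism ${\mathbb I}[-1]\ra\cO_{X_\M}(-n)$ of \eq{dt12eq19}, so on $h^{-2}$ it is fibrewise multiplication by the nonzero class of $\Ext^1(I_p,\cO_X(-n))\cong\K$ singled out in Corollary \ref{dt12cor1}; thus $\id_{\mathbb I}\ci\tau^{\le-1}$ does not factor through $\check B^\bu$ at all, and the second half of your plan collapses at its first step. The final "parallel degree count" could not rescue it in any case, because $L_{\M_\stp^{\al,n}(\tau')/U}$ has cohomology in negative degrees (the moduli scheme is singular), so morphisms from a complex concentrated in degrees $[-2,-1]$ into it do not die for degree reasons. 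Your first paragraph, as you say yourself, only identifies $\phi\ci\be\ci\id_{\mathbb I}$ with a trace Atiyah class which is nonzero in general, so it proves nothing toward \eq{dt12eq44}.

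The determinant/trace idea you are circling in the first paragraph is in fact the right one, but the vanishing has to be extracted on the target side, not from the complexes $\check C^\bu,\check B^\bu,E^\bu$. The paper's proof factors $\phi\ci\be\ci\id_{\mathbb I}$ as $\rd\Pi_{\mathfrak J}\ci\up$, where $\Pi_{\mathfrak J}:\M_\stp^{\al,n}(\tau')\ra{\mathfrak J}^\al$ sends a stable pair to $\det I_p$ in the relative stack ${\mathfrak J}^\al$ of line bundles with first Chern class $\ch_1(\al-[\cO_X(-n)])$, and $\up:F^\bu\ra L\Pi_{\mathfrak J}^*(L_{{\mathfrak J}^\al/U})$, as in \eq{dt12eq45}. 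Because deformations of such line bundles are unobstructed in the family (this is where the hypothesis that $K^\num(\coh(X_u))$ is locally constant is used), ${\mathfrak J}^\al\ra U$ is a smooth 1-morphism of Artin stacks, so $L_{{\mathfrak J}^\al/U}$ is concentrated in degrees $0$ and $1$, while $\tau^{\le-1}F^\bu$ lives in degrees $\le -1$; hence $\up\ci\tau^{\le-1}=0$ and the proposition follows. If you want to repair your write-up, you need this factorization through the Jacobian stack (or some equivalent statement that the "determinant direction" of the obstruction theory is unobstructed), not a cohomological-amplitude argument inside the diagram \eq{dt12eq29}.
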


\begin{proof} Let $u\in U(\K)$, with Calabi--Yau 3-fold $X_u$, and
let $s_u:\cO_{X_u}(-n)\ra E_u$ be a stable pair on $X_u$, written as
$I_u$ when considered as an object in $D^b(X_u)$. The {\it
determinant\/} $\det I_u$ of $I_u$ is a line bundle $L$ over $X_u$,
with first Chern class~$c_1(L)\ab=\ch_1(\al-[\cO_X(-n)])$.

Write ${\mathfrak J}^\al$ for the relative moduli $U$-stack of line
bundles $L$ over $X_u$ for $u\in U(\K)$ with first Chern class
$\ch_1(\al-[\cO_X(-n)])$. Then ${\mathfrak J}^\al$ is an Artin
$\K$-stack with a 1-morphism ${\mathfrak J}^\al\ra U$, whose fibre
${\mathfrak J}^\al_u$ over $u\in U(\K)$ is the moduli stack of line
bundles over $X_u$ with first Chern class $\ch_1(\al-[\cO_X(-n)])$.
Each $\K$-point in ${\mathfrak J}^\al_u$ has stabilizer group
$\K^\times$, since line bundles are simple sheaves, and the coarse
moduli scheme\index{coarse moduli scheme}\index{moduli
scheme!coarse} of ${\mathfrak J}^\al_u$ is the usual Jacobian of
line bundles on~$X_u$.

There is a natural 1-morphism $\Pi_{\mathfrak J}:
\M_\stp^{\al,n}(\tau')\ra{\mathfrak J}^\al$ taking
$s_u:\cO_{X_u}(-n)\ra E_u$ to $\det I_u$. Thus, from the sequence of
1-morphisms of $\K$-stacks $\M_\stp^{\al,n}(\tau')\ra {\mathfrak
J}^\al\ra U$, by \eq{dt12eq4} we get a distinguished triangle in
$D(\M_\stp^{\al,n}(\tau'))$:
\begin{equation*}
\xymatrix@C=27pt{ L\Pi_{\mathfrak J}^*(L_{{\mathfrak J}^\al/U})
\ar[r]^{\rd\Pi_{\mathfrak J}} & L_{\M_\stp^{\al,n}(\tau')/U} \ar[r] &
L_{\M_\stp^{\al,n}(\tau')/{\mathfrak J}^\al} \ar[r] &
L\Pi_{\mathfrak J}^*(L_{{\mathfrak J}^\al/U}). }\!\!\!
\end{equation*}
Now in \eq{dt12eq44}, we may think of $B^\bu$ as the obstruction
theory of pairs $s:\cO_X(-n)\ra E$, the morphism $\be:A^\bu\ra
B^\bu$ as the dual of the morphism taking a stable pair
$s_u:\cO_{X_u}(-n)\ra E_u$ to the associated complex $I_u$, and the
morphism $\id_{\mathbb I}:F^\bu\ra A^\bu$ as the dual of the
morphism taking $I_u$ to $\det I_u$ in ${\mathfrak J}^\al_u$. So
$\be\ci\id_{\mathbb I}:F^\bu\ra B^\bu$ is, on the level of
obstruction theories, the dual of $\Pi_{\mathfrak J}$ taking
$s_u:\cO_{X_u}(-n)\ra E_u$ to $\det I_u$. Therefore there exists a
morphism $\up$ in a commutative diagram:
\e
\begin{gathered}
\xymatrix@C=30pt@R=15pt{F^\bu \ar[r]_{\be\ci\id_{\mathbb I}}
\ar@{.>}[d]_(0.4)\up & B^\bu \ar[d]^(0.4)\phi \\
L_{\Pi_{\mathfrak J}}^*(L_{{\mathfrak J}^\al/U})
\ar[r]^{\rd\Pi_{\mathfrak J}} & L_{\M_\stp^{\al,n}(\tau')/U}. }
\end{gathered}
\label{dt12eq45}
\e

By assumption the numerical Grothendieck groups\index{Grothendieck
group!numerical} $K^\num(\coh(X_u))$ are all canonically isomorphic
for $u\in U(\K)$. If $\M_\stp^{\al,n}(\tau')\ne\es$, this implies
that line bundles with first Chern class $\ch_1(\al-[\cO_X(-n)])$
exist for all $u\in U(\K)$, since otherwise $K^\num(\coh(X_u))$
would depend on $u$. As $U$ is an algebraic $\K$-variety, and so
reduced, it follows that deformations of line bundles with first
Chern class $\ch_1(\al-[\cO_X(-n)])$ on $X_u$ are unobstructed in
the family of Calabi--Yau 3-folds $\vp:X\ra U$. Therefore
${\mathfrak J}^\al\ra U$ is a {\it smooth\/} 1-morphism of Artin
$\K$-stacks. It is clear that ${\mathfrak J}^\al_u$ is a smooth
Artin $\K$-stack for each $u\in U(\K)$, since Jacobians are smooth
abelian varieties. We are here saying a bit more, that the
1-morphism ${\mathfrak J}^\al\ra U$ is smooth.

For a smooth morphism of $\K$-schemes $\phi:X\ra Y$ the cotangent
complex $L_{X/Y}$ is concentrated in degree 0. But for the smooth
1-morphism ${\mathfrak J}^\al\ra U$, as ${\mathfrak J}^\al$ is an
Artin $\K$-stack rather than a $\K$-scheme, the cotangent complex
$L_{{\mathfrak J}^\al/U}$ is concentrated in degrees 0 and 1. It
follows that the morphism $\up\ci\tau^{\le -1}:\tau^{\le -1}F^\bu\ra
L_{\Pi_{\mathfrak J}}^*(L_{{\mathfrak J}^\al/U})$ is zero, since
$\tau^{\le -1}F^\bu$ lives in degree $\le -1$ and $L_{\Pi_{\mathfrak
J}}^*(L_{{\mathfrak J}^\al/U})$ in degree $\ge 0$. Hence in
\eq{dt12eq44} we have $\phi\ci\be\ci\id_{\mathbb I}\ci\tau^{\le
-1}=\rd\Pi_{\mathfrak J}\ci \up\ci\tau^{\le -1}=0$, by
\eq{dt12eq45}, which proves the proposition.

In fact in \eq{dt12eq45} we have an isomorphism $L_{\Pi_{\mathfrak
J}}^* (L_{{\mathfrak J}^\al/U})\cong \tau^{\ge 0}F^\bu$, which
identifies $\up$ with the projection $\tau^{\ge 0}:F^\bu\ra\tau^{\ge
0}F^\bu$ in~\eq{dt12eq43}.
\end{proof}

Set $H^\bu\!=\!\cone(\tau^{\ge 0}\ci\tr_{\mathbb I})[-1],\check
H^\bu\!=\!\cone(\id_{\mathbb I}\ci\tau^{\le -1})$, giving triangles
\e
\begin{gathered}
\xymatrix@C=35pt@R=4pt{H^\bu \ar[r]^a & A^\bu
\ar[r]^(0.45){\tau^{\ge 0}\ci\tr_{\mathbb I}} & \tau^{\ge 0}F^\bu
\ar[r]^b & H^\bu[1],\\
\check H^\bu[-1] \ar[r]^{\check b} & \tau^{\le -1}F^\bu
\ar[r]^(0.55){\id_{\mathbb I}\ci\tau^{\le -1}} & A^\bu
\ar[r]^{\check a} & \check H^\bu.}
\end{gathered}
\label{dt12eq46}
\e
We have $(\tau^{\ge 0}\ci\tr_{\mathbb I})\ci(\id_{\mathbb
I}\ci\tau^{\le -1})=(\rank\al-1)\tau^{\ge 0}\ci\tau^{\le -1}=0$ by
\eq{dt12eq35}, so by the first line of \eq{dt12eq46} there exists
$c:\tau^{\le-1}F^\bu\ra H^\bu$ with $a\ci c=\id_{\mathbb
I}\ci\tau^{\le -1}$. Define $K^\bu=\cone(c)$, in a distinguished
triangle
\e
\xymatrix{\tau^{\le -1}F^\bu \ar[r]^(0.6){c} & H^\bu \ar[r]^{d} &
K^\bu \ar[r]^(0.35){e} & (\tau^{\le -1}F^\bu)[1]. }
\label{dt12eq47}
\e
Then by the octahedral axiom\index{octahedral axiom}\index{triangulated
category!octahedral axiom} we have diagrams
\begin{equation*}
\xymatrix@!0@C=20pt@R=16pt{ \tau^{\ge 0}F^\bu \ar[ddrr]^{[1]}_{b}
\ar[dddd]^{[1]}_{\begin{subarray}{l}d\ci b\\ =\check
e\end{subarray}} &&&& A^\bu \ar[llll]_{\tau^{\ge 0}\ci\tr_{\mathbb
I}} &&& \tau^{\ge 0}F^\bu \ar[dddd]^{[1]}_{\begin{subarray}{l}d\ci b\\
=\check e\end{subarray}} &&&& A^\bu \ar[llll]_{\tau^{\ge
0}\ci\tr_{\mathbb I}}
\ar[ddll]^{\check a} \\
&& \star &&&&&&& \circlearrowleft
\\
& \circlearrowleft & H^\bu \ar[uurr]^{a} \ar[ddll]_(0.5){d} &
\circlearrowleft &&&&& \star & \check H^\bu \ar@{.>}[uull]^{\check
c} \ar[ddrr]^{[1]}_{\check b} & \star
\\
&& \star &&&&&&& \circlearrowleft
\\
K^\bu \ar[rrrr]_{[1]}^e &&&& \tau^{\le -1}F^\bu \ar[uull]^c
\ar[uuuu]_{\id_{\mathbb I}\ci\tau^{\le -1}} &&& K^\bu
\ar[rrrr]_{[1]}^e \ar@{.>}[uurr]^{\check d} &&&& \tau^{\le -1}F^\bu.
\ar[uuuu]_{\id_{\mathbb I}\ci\tau^{\le -1}} }
\end{equation*}
Thus we get a morphism $\check c:\check H^\bu\ra\tau^{\ge 0}F^\bu$
with $\check c\ci\check a=\tau^{\ge 0}\ci\tr_{\mathbb I}$ and
$K^\bu\cong\cone(\check c)[-1]$, and a distinguished triangle with
$\check e=d\ci b$:
\e
\xymatrix{(\tau^{\ge 0}F^\bu)[-1] \ar[r]^(0.6){\check e} & K^\bu
\ar[r]^{\check d} & \check H^\bu \ar[r]^(0.4){\check c} & \tau^{\ge
0}F^\bu. }
\label{dt12eq48}
\e

In morphisms $\tau^{\le -1}F^\bu\ra L_{\M_\stp^{\al,n}(\tau')/U}$ we
have
\begin{equation*}
\phi\ci\be\ci a\ci c=\phi\ci\be\ci \id_{\mathbb I}\ci\tau^{\le -1}=0,
\end{equation*}
by $a\ci c=\id_{\mathbb I}\ci\tau^{\le-1}$ and Proposition
\ref{dt12prop4}. Thus as \eq{dt12eq47} is distinguished there exists
a morphism $\xi:K^\bu\ra L_{\M_\stp^{\al,n}(\tau')/U}$ with~$\xi\ci
d=\phi\ci\be\ci a$.

\begin{thm} This\/ $\xi:K^\bu\ra L_{\M_\stp^{\al,n}(\tau')/U}$ is a
perfect relative obstruction theory for $\M_\stp^{\al,n}(\tau')/U$.
If\/ $U$ is affine it is symmetric.\index{obstruction
theory!symmetric}\index{symmetric obstruction theory}
\label{dt12thm6}
\end{thm}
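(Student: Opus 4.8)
\textbf{Proof proposal for Theorem \ref{dt12thm6}.}

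The plan is to follow the same template used in Theorems \ref{dt12prop3} and \ref{dt12thm5}, but now working with the complex $K^\bu$ built via truncation functors rather than with $G^\bu$. The three things to establish are: (i) $\xi:K^\bu\ra L_{\M_\stp^{\al,n}(\tau')/U}$ is a relative obstruction theory; (ii) $K^\bu$ is perfect of amplitude contained in $[-1,0]$; (iii) when $U$ is affine, $K^\bu$ carries a symmetric self-duality isomorphism compatible with $\xi$. Once these are in place, Theorem \ref{dt5thm8} follows (for all $\al$, since the $\rank\al\ne 1$ case of \S\ref{dt126} is subsumed: one checks $K^\bu\cong G^\bu$ there because the extra truncation terms vanish).

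For (i), I would argue exactly as in Proposition \ref{dt12prop3}. The morphism $\be\ci a:H^\bu\ra B^\bu$ composed with $\phi$ gives $\xi\ci d$, and $H^\bu$ fits with $A^\bu,\tau^{\ge 0}F^\bu$ in the first triangle of \eq{dt12eq46}; since $\tau^{\ge 0}F^\bu$ lives in degrees $\ge 0$ and $h^0$ of $\tr_{\mathbb I}:A^\bu\ra F^\bu$ is the trace on $\Ext^0(I_p,I_p)\cong\K$ which lands in $H^0(\cO_X)\cong\K$, one reads off from the long exact cohomology sequence that $h^i(a):h^i(H^\bu)\ra h^i(A^\bu)$ is an isomorphism for $i<0$ and that $h^0(H^\bu)$ injects into $h^0(A^\bu)$. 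Then the triangle \eq{dt12eq47} relating $K^\bu$ and $H^\bu$ via $\tau^{\le -1}F^\bu$ (which lives in degrees $\le -1$) shows $h^i(d):h^i(H^\bu)\ra h^i(K^\bu)$ is an isomorphism for $i\ge 0$ and epi for $i=-1$. Chasing these together with the fact that $\phi$ (Proposition \ref{dt12prop2}) and $\be$ (using Lemma \ref{dt12lem1} and $h^i$ of the first row of \eq{dt12eq29}, noting $h^0(C^\bu)=0$ by the $n$-regularity choice since the fibre of $h^i(C^\bu)$ is $\Ext^{i-1}(\cO_X(-n),I_p)$) are compatible with cohomology in degrees $0,-1$ gives that $h^0(\xi)$ is an isomorphism and $h^{-1}(\xi)$ is epi, and that $K^\bu$ satisfies condition $(*)$; so $\xi$ is an obstruction theory by Definition \ref{dt12def3}(c).

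For (ii), I would compute the fibres $h^i(K^\bu_p)$ at a $\K$-point $p\leftrightarrow(s_p:\cO_X(-n)\ra E_p)$ by specializing triangles \eq{dt12eq46}--\eq{dt12eq48} to $D^b(X_p)$ and taking long exact sequences, using \eq{dt12eq20}--\eq{dt12eq28} and Proposition \ref{dt12prop1}. The key new point compared with Lemma \ref{dt12lem4} is that the truncation $\tau^{\le -1}F^\bu$ has fibre concentrated in degrees $1,2,3$ (it is $H^{\ge 1}(\cO_{X_p})$ shifted), and the triangle \eq{dt12eq47} is precisely designed so that the potentially troublesome degree $-2$ piece of $H^\bu$ (coming from $\Ext^0(I_p,I_p)\ra H^0(\cO_{X_p})$ being zero when $\rank\al=1\bmod\cha\K$) is killed off against the degree $-1$ cohomology contributed by $\tau^{\le -1}F^\bu[1]$. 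After this cancellation one finds $h^i(K^\bu_p)=0$ for $i\ne 0,-1$, with $h^0(K^\bu_p)\cong\Ext^1(I_p,E_p)^*$ and $h^{-1}(K^\bu_p)\cong\Ext^2(I_p,E_p)^*$ (dually $\Ext^2(E_p,I_p\ot K_{X_p})$ and $\Ext^1(E_p,I_p\ot K_{X_p})$), finite-dimensional and dual to each other. Then the standard argument of \cite[Lem.~2.10]{PaTh} or \cite[Lem.~4.2]{HuTh} upgrades "$h^i=0$ for $i\ne -1,0$ with coherent cohomology" to "perfect of amplitude in $[-1,0]$."

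For (iii), assume $U$ affine and fix an isomorphism $\om_\pi\cong\cO_{X_\M}$ as in \S\ref{dt125}, inducing the dualities \eq{dt12eq30} together with $F^{\bu\vee}[1]\cong F^\bu$. The self-duality of the whole octahedral picture of \S\ref{dt127} under $(-)^\vee[1]$ exchanges $\tau^{\ge 0}F^\bu$ with $\tau^{\le -1}F^\bu[1]$ (up to the shift), exchanges $H^\bu$ with $\check H^\bu[1]$, and takes the triangle \eq{dt12eq47} to the triangle \eq{dt12eq48}; tracing through, $K^{\bu\vee}[1]\cong K^\bu$, and the isomorphism $\theta:K^\bu\ra K^{\bu\vee}[1]$ so produced satisfies $\theta^\vee[1]=\theta$ because every morphism entering its construction ($\be$, $a$, $c$, $d$, and the self-pairing on $A^\bu$ coming from $R\SHom({\mathbb I},{\mathbb I})\cong R\SHom({\mathbb I},{\mathbb I})^\vee$) is its own dual or paired with its dual. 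Compatibility of $\theta$ with $\xi$ is the statement that $\xi^\vee[1]$ and $\xi$ agree under $\theta$, which reduces to the symmetry of the Atiyah-class construction of \S\ref{dt124} exactly as in Huybrechts--Thomas \cite[\S 4.4]{HuTh}. Hence $\xi$ is a symmetric perfect relative obstruction theory.

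The main obstacle I anticipate is step (iii), specifically verifying $\theta^\vee[1]=\theta$ and the compatibility with $\xi$: the complex $K^\bu$ is assembled from several cones and octahedra, and keeping track of signs and of which morphism is dual to which through \eq{dt12eq43}--\eq{dt12eq48} is delicate. A clean way to organize it is to observe that $K^\bu$ is the "traceless, determinant-corrected" version of $A^\bu = R\pi_*(R\SHom({\mathbb I},{\mathbb I})\ot\om_\pi)[2]$ — morally $R\pi_*(R\SHom({\mathbb I},{\mathbb I})_0\ot\om_\pi)[2]$ with the obstruction-to-fixing-determinant (living in degrees $\ge 0$) removed — and the self-pairing on $R\SHom({\mathbb I},{\mathbb I})$ restricts to a self-pairing on this corrected complex, so the symmetry is inherited from the evident symmetry of $A^\bu$. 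The perfectness computation (ii) and the obstruction-theory axiom (i) are then routine diagram chases given the $\Ext$ vanishings already recorded in Proposition \ref{dt12prop1}.
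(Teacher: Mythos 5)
Your outline follows the paper's own four-step sketch --- vanishing of $h^i(K^\bu)$ outside $[-1,0]$, perfection via \cite[Lem.~2.10]{PaTh} or \cite[Lem.~4.2]{HuTh}, symmetry from dualizing the triangles \eq{dt12eq46}--\eq{dt12eq48} after fixing $\om_\pi\cong\cO_{X_\M}$, and the obstruction-theory axiom from $\xi\ci d=\phi\ci\be\ci a$ together with $h^0,h^{-1}$ information on $d,a,\be,\phi$ --- so the strategy is the intended one. But the cohomological bookkeeping that carries the actual content has concrete errors, and they sit exactly where the construction earns its keep. First, your claim that $h^0(C^\bu)=0$ ``by $n$-regularity'' is false: by \eq{dt12eq23} the fibre of $h^i(C^\bu)$ at $p$ is $\Ext^{i+2}(\cO_X(-n),I_p\ot\om_\pi)\cong\Ext^{1-i}(I_p,\cO_X(-n))^*$ (note the shift $[2]$ and the twist), so $h^0(C^\bu)$ has fibre $\Ext^1(I_p,\cO_X(-n))^*\cong\K$ by Proposition \ref{dt12prop1}(b), and $h^{-1}(C^\bu)$ has fibre a quotient of $H^1(\cO_X)$, which need not vanish since \S\ref{dt12} does not assume $H^1(\cO_X)=0$. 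Consequently $h^0(\be),h^{-1}(\be)$ are not automatically well behaved; the real point, which the truncations are designed for, is the cancellation between $\Ker h^i(\be)$ (the $H^{\ge 1}(\cO_X)$-type terms coming from $C^\bu$) and the image of $h^i(a)$ (the kernel of $h^i(\tau^{\ge 0}\ci\tr_{\mathbb I})$), and this is precisely the check your chase skips by invoking a false vanishing. Second, your degree bookkeeping for the truncations is off: $F^\bu=R\pi_*(\om_\pi)[2]$ has fibres $H^{i+2}(\cO_{X_p})$, so $\tau^{\le -1}F^\bu$ carries $H^0(\cO_{X_p})$ and $H^1(\cO_{X_p})$ in degrees $-2,-1$ (not ``$H^{\ge 1}$ in degrees $1,2,3$''), and it is the degree $-2$ piece $H^0(\cO_{X_p})$ that kills $h^{-2}(H^\bu)\cong\Hom(I_p,I_p)^{\phantom{*}}$ in \eq{dt12eq47}, while $\tau^{\ge 0}F^\bu$ carries $H^2,H^3(\cO_{X_p})$ in degrees $0,1$. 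With the degrees misplaced, the vanishing argument in your step (ii) does not go through as written.

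Two smaller points. The identification $h^{-1}(K^\bu_p)\cong\Ext^2(I_p,E_p)^*$ is not correct in general: when $H^1(\cO_X)\ne 0$ the groups one obtains are the modified ones of Remark \ref{dt12rem}, namely duals of $\Ext^1(I_p,I_p)/\id_{I_p}\bigl(H^1(\cO_X)\bigr)$ and of $\Ker\bigl(\tr:\Ext^2(I_p,I_p)\ra H^2(\cO_X)\bigr)$, and it is these, not $\Ext^1(I_p,E_p)$ and $\Ext^2(I_p,E_p)$, that are Serre-dual to one another. Also, the parenthetical claim that for $\rank\al\ne 1$ one has $K^\bu\cong G^\bu$ ``because the extra truncation terms vanish'' is wrong: $\tau^{\le -1}F^\bu$ never vanishes (it contains $H^0(\cO_X)$); the comparison instead rests on the invertibility of $\tr_{\mathbb I}\ci\id_{\mathbb I}$ in \eq{dt12eq35}, which splits $A^\bu\cong F^\bu\op G^\bu$ as in Lemma \ref{dt12lem3}. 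Your step (iii) is at the same level of detail as the paper's sketch and is fine in outline; the gap to repair is the degree $0,-1$ analysis in steps (i)--(ii), where the $H^{\ge 1}(\cO_X)$ contributions must be tracked honestly rather than assumed away.
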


This proves Theorem \ref{dt5thm8}. We will sketch the proof of
Theorem \ref{dt12thm6} in four steps, leaving the details to the
reader:
\begin{itemize}
\setlength{\itemsep}{0pt}
\setlength{\parsep}{0pt}
\item[(a)] Show that $h^i(K^\bu)=0$ for $i\ne 0,-1$.
\item[(b)] Show that $K^\bu$ is perfect of perfect amplitude
contained in~$[-1, 0]$.
\item[(c)] When $U$ is affine, construct $\theta:K^\bu\,{\buildrel\cong
\over\longra}\,K^{\bu\vee}[1]$ with $\theta^\vee[1]=\theta$.
\item[(d)] Show that $h^0(\xi)$ is an isomorphism and $h^{-1}(\xi)$
an epimorphism.
\end{itemize}
For (a), we have $h^i(A^\bu)=h^i(F^\bu)=0$ for $i>1$ and
$h^1(A^\bu)\cong h^1(F^\bu)\cong\cO_\M$, where
$h^1(A^\bu)\cong\cO_\M$ follows from $\Ext^3(I,I)\cong\K$ in
Proposition \ref{dt12prop1}(c). Since $h^i(E^\bu)=0$ for $i\ne -2$
by Serre vanishing,\index{Serre vanishing} taking the long exact
sequence $h^*(\text{---})$ in the third column of \eq{dt12eq29}
implies that $h^i(\check\la):h^{i-1}(C^\bu)\ra h^i(F^\bu)$ is an
isomorphism for $i=0,1$. But \eq{dt12eq36} yields $h^i(\check\la)=
h^i(\tr_{\mathbb I})\ci h^i(\de)$. Therefore $h^i(\tr_{\mathbb
I}):h^i(A^\bu)\ra h^i(F^\bu)$ is surjective for $i=0,1$. As
$h^1(A^\bu)\cong h^1(F^\bu)\cong\cO_\M$ we see that
$h^1(\tr_{\mathbb I})$ is an isomorphism.

Taking the long exact sequence $h^*(\text{---})$ in the first line
of \eq{dt12eq46} and using $h^1(\tr_{\mathbb I})$ an isomorphism and
$h^0(\tr_{\mathbb I})$ surjective then gives $h^i(H^\bu)=0$ for
$i>0$. Then $h^i(\tau^{\le -1}F^\bu)=0$ for $i\ge 0$ and
\eq{dt12eq47} imply that $h^i(K^\bu)=0$ for $i>0$. Similarly, from
the third row of \eq{dt12eq29}, the equation
$h^i(\la)=h^i(\check\de)\ci h^i(\id_{\mathbb I})$ from \eq{dt12eq36}
and the second line of \eq{dt12eq46} we get $h^i(\check H^\bu)=0$
for $i<-1$, and then $h^i(K^\bu)=0$ for $i<-1$ by \eq{dt12eq48}.
Step (b) then follows as for Lemma~\ref{dt12lem5}.

For (c), if $U$ is affine then choosing an isomorphism
$\om_\pi\cong\cO_{X_\M}$ induces isomorphisms $A^{\bu\vee}[1]\cong
A^\bu$ and $F^{\bu\vee}[1]\cong F^\bu$ as in \eq{dt12eq30}. We then
have $(\tau^{\ge 0}F^\bu)^\vee[1]\cong \tau^{\le -1}F^\bu$ and
$(\tau^{\le -1}F^\bu)^\vee[1]\cong \tau^{\ge 0}F^\bu$. Under these
identifications $\tau^{\ge 0}\ci\tr_{\mathbb I}$ and $\id_{\mathbb
I}\ci\tau^{\le -1}$ are dual morphisms. Hence the two distinguished
triangles \eq{dt12eq46} are dual, and we get isomorphisms
$H^{\bu\vee}[1]\cong \check H^\bu$, $\check H^{\bu\vee}[1]\cong
H^\bu$. In \eq{dt12eq46}--\eq{dt12eq48} $\check a,\ldots,\check e$
are dual to $a,\ldots,e$, and $K^{\bu\vee}[1]\cong K^\bu$ as we
want.

For (d), as $\xi\ci d=\phi\ci\be\ci a$ we have commutative diagrams
in $\coh(\M_\stp^{\al,n}(\tau'))$
\begin{equation*}
\xymatrix@R=7pt@C=40pt{ h^i(H^\bu) \ar[d]_{h^i(d)} \ar[r]^{h^i(\be\ci a)} &
h^i(B^\bu) \ar[r]^(0.4){h^i(\phi)} & h^i(L_{\M_\stp^{\al,n}(\tau')/U}) \\
h^i(K^\bu) \ar[urr]_{h^i(\xi)}}
\end{equation*}
for each $i\in\Z$. We know $h^0(\phi)$ is an isomorphism and
$h^{-1}(\phi)$ an epimorphism by Proposition \ref{dt12prop2}. By
similar arguments to (a), we show that $h^0(\be\ci a)$ is an
isomorphism and $h^{-1}(\be\ci a)$ an epimorphism, and from the
distinguished triangle \eq{dt12eq47} and $h^i(\tau^{\le -1}F^\bu)=0$
for $i>-1$ we see that $h^0(d)$ is an isomorphism and $h^{-1}(d)$ an
epimorphism. Step (d) follows.\index{obstruction theory!perfect|)}

\subsection{Deformation-invariance of the $PI^{\al,n}(\tau')$}
\label{dt128}

We now prove Theorem \ref{dt5thm9}. In \S\ref{dt121}--\S\ref{dt127}
we assumed that the numerical Grothendieck groups\index{Grothendieck
group!numerical} $K^\num(\coh(X_u))$ for $u\in U(\K)$ are all
canonically isomorphic {\it globally in\/} $U(\K)$, and we wrote
$K(\coh(X))$ for $K^\num(\coh(X_u))$ up to canonical isomorphism. We
first prove Theorem \ref{dt5thm9} under this assumption.

As in Definition \ref{dt12def1} we have a family of Calabi--Yau
3-folds $X\stackrel{\vp}{\longra}U$ with $X,U$ algebraic
$\K$-varieties and $U$ connected, and a relative very ample line
bundle $\cO_X(1)$ for $X\stackrel{\vp}{\longra}U$, and we suppose
$K^\num(\coh(X_u))$ for $u\in U(\K)$ are all globally canonically
isomorphic to $K(\coh(X))$. Then for $\al\in K(\coh(X))$, as in
\S\ref{dt125} we choose $n\gg 0$ large enough that $H^i(E_u(n))=0$
for all $i>0$ and all $\tau$-semistable sheaves $E_u$ on $X_u$ of
class $\al\in K^\num(\coh(X_u))$ for any $u\in U(\K)$.

Then \S\ref{dt121} constructs a projective $U$-scheme
$\M_\stp^{\al,n}(\tau')$, and \S\ref{dt125}--\S\ref{dt127} construct
perfect relative obstruction theories $\psi:G^\bu\ra
L_{\M_\stp^{\al,n}(\tau')/U}$ when $\rank\al\ne 1\mod\cha\K$ and
$\xi:K^\bu\ra L_{\M_\stp^{\al,n}(\tau')/U}$ for all $\al$. For each
$u\in U(\K)$ the fibre of $\M_\stp^{\al,n}(\tau')\ra U$ at $u$ is a
projective $\K$-scheme $\M_\stp^{\al,n}(\tau')_u$, and $\psi,\xi$
specialize to perfect obstruction theories $\psi_u,\xi_u$ for
$\M_\stp^{\al,n}(\tau')_u$, which are symmetric by the case
$U=\Spec\K$ in Theorems \ref{dt12thm5} and~\ref{dt12thm6}.

Using these perfect obstruction theories, Behrend and Fantechi
\cite{BeFa1} construct virtual classes\index{virtual class}
$[\M_\stp^{\al,n}(\tau')]^\vir$ in the relative Chow homology\index{Chow
homology} $A_0(\M_\stp^{\al,n}(\tau')\ra U)$, and
$[\M_\stp^{\al,n}(\tau')_u]^\vir$ in the absolute Chow homology
$A_0(\M_\stp^{\al,n}(\tau')_u)$. In \eq{dt5eq15} we define
$PI^{\al,n}(\tau')_u=\int_{[\M_\stp^{\al,n}(\tau')_u]^\vir}1$. Since
$\psi_u,\xi_u$ are the specializations of $\psi,\xi$ at $u\in U(\K)$
we have
\begin{equation*}
[\M_\stp^{\al,n}(\tau')_u]^\vir=u^*\bigl([\M_\stp^{\al,n}(\tau')]^\vir
\bigr).
\end{equation*}
By `conservation of number', as in \cite[Prop.~10.2]{Fult} for
instance, $[\M_\stp^{\al,n}(\tau')_u]^\vir$ has the same degree for
all $u\in U(\K)$, as $U$ is connected, which proves:

\begin{thm} Let\/ $\K$ be an algebraically closed field, $U$ a
connected algebraic $\K$-variety, $X\ra U$ a family of Calabi--Yau
$3$-folds $X_u$ over $\K,$ which may have $H^1(\cO_{X_u})\ne 0,$ and
$\cO_X(1)$ a relative very ample line on $X$. Suppose
$K^\num(\coh(X_u))$ is globally canonically isomorphic to
$K(\coh(X))$ for\/ $u\in U(\K)$. Then for all\/ $\al\in K(\coh(X))$
and\/ $n\gg 0$ the invariants\/ $PI^{\al,n}(\tau')_{u}$ of stable
pairs on each fibre $X_u$ of\/ $X\ra U,$ computed using the ample
line bundle\/ $\cO_{X_u}(1)$ on\/ $X_u,$ are independent of\/~$u\in
U(\K)$.
\label{dt12thm7}
\end{thm}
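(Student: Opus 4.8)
The plan is to reduce the statement to one about a single projective moduli $U$-scheme equipped with a relative symmetric obstruction theory, and then apply conservation of number. First I would dispose of the hypothesis: Theorem \ref{dt5thm9} only asks that the $K^\num(\coh(X_u))$ be canonically isomorphic locally in $U(\K)$, whereas it is convenient to have a global identification. By Theorem \ref{dt4thm8} there is a finite \'etale cover $\ti U\ra U$ with $\ti U$ connected such that, setting $\ti X=X\times_U\ti U$, the groups $K^\num(\coh(\ti X_{\ti u}))$ are all globally canonically isomorphic to a fixed $K(\coh(X))$. Since $PI^{\al,n}(\tau')_u$ depends only on the polarized fibre $(X_u,\cO_{X_u}(1))$, which is unchanged under pullback to $\ti U$, it suffices to prove the theorem for $\ti X\ra\ti U$. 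So from now on I assume the global identification, which is exactly the setting of Theorem \ref{dt12thm7}. Then, fixing $\al\in K(\coh(X))$ and choosing $n\gg 0$ so that every $\tau$-semistable sheaf of class $\al$ on any fibre is $n$-regular (possible as $U$ is of finite type), the relative form of Le Potier's theorem on coherent systems (Theorem \ref{dt12thm1}) gives a projective $U$-scheme $\M=\M_\stp^{\al,n}(\tau')$, in fact a fine moduli scheme since stable pairs have no automorphisms, carrying the universal pair ${\mathbb S}:\cO_{X_\M}(-n)\ra{\mathbb E}$, equivalently ${\mathbb I}=\cone({\mathbb S})\in D(X_\M)$.

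Next — the technical core — I would build a perfect, relatively symmetric obstruction theory for $\M/U$. The Atiyah class of ${\mathbb S}$ (Illusie's deformation theory, Theorems \ref{dt12thm3} and \ref{dt12thm4}) produces an obstruction theory $\phi:B^\bu\ra L_{\M/U}$ with $B^\bu=R\pi_*\bigl(R\SHom({\mathbb E},{\mathbb I})\ot\om_\pi\bigr)[2]$ (Proposition \ref{dt12prop2}), but $B^\bu$ is neither perfect (its $h^{-2}$ is nonzero) nor symmetric. I would cut it down using the identity and trace morphisms $\id_{\mathbb I},\tr_{\mathbb I}$ relating $A^\bu=R\pi_*(R\SHom({\mathbb I},{\mathbb I})\ot\om_\pi)[2]$ and $F^\bu=R\pi_*(\om_\pi)[2]$. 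When $\rank\al\ne 1\bmod\cha\K$ the composite $\tr_{\mathbb I}\ci\id_{\mathbb I}=(\rank\al-1)1_{F^\bu}$ is invertible, so $A^\bu\cong F^\bu\op G^\bu$ with $G^\bu=\cone(\tr_{\mathbb I})[-1]$, and $\psi=\phi\ci\be\ci\nu:G^\bu\ra L_{\M/U}$ is perfect, and symmetric for $U$ affine (Theorem \ref{dt12thm5}); here the octahedral axiom and Serre vanishing give the cohomology vanishing needed for perfectness, and $\theta=\check\nu\ci\nu$ with $\theta^\vee[1]=\theta$ gives symmetry via an isomorphism $\om_\pi\cong\cO_{X_\M}$, valid Zariski-locally on $U$. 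In the remaining cases ($\rank\al=1$, or $\cha\K\mid\rank\al-1$), where that composite vanishes, I would instead follow \S\ref{dt127}: work with the truncations $\tau^{\le -1}F^\bu,\tau^{\ge 0}F^\bu$, note via the auxiliary moduli stack ${\mathfrak J}^\al$ of line bundles with first Chern class $\ch_1(\al-[\cO_X(-n)])$ — smooth over $U$ precisely because $K^\num(\coh(X_u))$ is globally constant — that the composite $\phi\ci\be\ci\id_{\mathbb I}\ci\tau^{\le -1}$ vanishes (Proposition \ref{dt12prop4}), and assemble $K^\bu$ and $\xi:K^\bu\ra L_{\M/U}$ from the octahedral axiom, rechecking perfectness and (for $U$ affine) self-duality as in Theorem \ref{dt12thm6}.

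With the perfect relative obstruction theory in hand, the deformation-invariance is immediate: Behrend and Fantechi associate to it a virtual class $[\M]^\vir\in A_0(\M\ra U)$ whose fibrewise specializations are the absolute classes $[\M_u]^\vir=u^*([\M]^\vir)\in A_0(\M_u)$, using that $\psi_u,\xi_u$ are the specializations of $\psi,\xi$. Hence $PI^{\al,n}(\tau')_u=\int_{[\M_u]^\vir}1=\deg u^*([\M]^\vir)$, and by conservation of number (e.g.\ \cite[Prop.~10.2]{Fult}) this degree is independent of $u\in U(\K)$ since $U$ is connected. Combined with the reduction in the first paragraph, this proves Theorems \ref{dt12thm7} and \ref{dt5thm9}.

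The main obstacle is the construction of the perfect \emph{symmetric} relative obstruction theory, and specifically the case $\rank\al=1$ (or $\cha\K\mid\rank\al-1$), where the clean trace-splitting $A^\bu\cong F^\bu\op G^\bu$ collapses: there one is forced into the truncation-functor construction, and the key input is the vanishing of $\phi\ci\be\ci\id_{\mathbb I}\ci\tau^{\le -1}$, which rests on the smoothness of ${\mathfrak J}^\al\ra U$ — itself a consequence of the global constancy of $K^\num(\coh(X_u))$ — and one must recheck perfectness and self-duality by hand from the octahedral diagrams. A secondary subtlety is keeping all of Illusie's Atiyah-class machinery, and the duality isomorphisms coming from $\om_\vp=\vp^*L$ for a line bundle $L$ on $U$, compatible in the relative setting over a possibly non-affine base; this is why symmetry is asserted only for $U$ affine and the fibrewise symmetry of $\psi_u,\xi_u$ is deduced from the case $U=\Spec\K$.
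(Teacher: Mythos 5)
Your proposal is correct and takes essentially the same route as the paper: the projective fine moduli $U$-scheme from Le Potier's theorem, the Atiyah-class obstruction theory $\phi:B^\bu\ra L_{\M_\stp^{\al,n}(\tau')/U}$ cut down either via the trace/identity splitting (when $\rank\al\ne 1\bmod\cha\K$) or via the truncation construction using ${\mathfrak J}^\al$ (Theorems \ref{dt12thm5} and \ref{dt12thm6}), followed by Behrend--Fantechi relative virtual classes, specialization to fibres, and conservation of number over the connected base. The only superfluous step is your opening reduction via Theorem \ref{dt4thm8}: the statement at hand already assumes the global canonical isomorphism, and the paper invokes that \'etale-cover reduction only afterwards, to deduce Theorem \ref{dt5thm9} from this result.
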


This is the first part of Theorem \ref{dt5thm9}. If instead the
$K^\num(\coh(X_u))$ are only canonically isomorphic {\it locally
in\/} $U(\K)$, then by Theorem \ref{dt4thm8} we can pass to a finite
\'etale cover $\pi:\ti U\ra U$, such that the induced family of
Calabi--Yau 3-folds $\ti\vp:\ti X\ra\ti U$ has $K^\num(\coh(\ti
X_{\ti u}))$ globally canonically isomorphic for $\ti u\in\ti
U(\K)$, where $\ti u$ is of the form $(u,\io)$ for $u\in U(\K)$ and
$\io:K^\num(\coh(X_u))\,{\buildrel\cong\over\longra}\,K(\coh(X))$,
and $\ti X_{\ti u}=X_u$. Theorem \ref{dt12thm7} for this family
shows that $PI^{\al,n}(\tau')_{(u,\io)}$ is independent of
$(u,\io)\in\ti U(\K)$. But $PI^{\al,n}(\tau')_{(u,\io)}=
PI^{\al,n}(\tau')_u$ as $\ti X_{\ti u}=X_u$, and the proof of
Theorem \ref{dt5thm9} is complete.\index{obstruction theory|)}

\section[The proof of Theorem $\text{\ref{dt5thm10}}$]{The proof of
Theorem \ref{dt5thm10}}
\label{dt13}

In this section we will prove Theorem \ref{dt5thm10}, which says
that the invariants $PI^{\al,n}(\tau')$ counting stable pairs,
defined in \S\ref{dt54}, can be written in terms of the generalized
Donaldson--Thomas invariants $\bar{DT}{}^\be(\tau)$ in \S\ref{dt53}
by
\e
PI^{\al,n}(\tau')=\!\!\!\!\!\!\!\!\!\!\!\!\!\!\!
\sum_{\begin{subarray}{l} \al_1,\ldots,\al_l\in
C(\coh(X)),\\ l\ge 1:\; \al_1
+\cdots+\al_l=\al,\\
\tau(\al_i)=\tau(\al),\text{ all $i$}
\end{subarray} \!\!\!\!\!\!\!\!\! }
\begin{aligned}[t] \frac{(-1)^l}{l!} &\prod_{i=1}^{l}\bigl[
(-1)^{\bar\chi([\cO_X(-n)]-\al_1-\cdots-\al_{i-1},\al_i)} \\
&\bar\chi\bigl([\cO_{X}(-n)]\!-\!\al_1\!-\!\cdots\!-\!\al_{i-1},\al_i\bigr)
\bar{DT}{}^{\al_i}(\tau)\bigr],\!\!\!\!\!\!\!\!\!\!\!\!\!\!
\end{aligned}
\label{dt13eq1}
\e
for $n\gg 0$. As the $PI^{\al,n}(\tau')$ are deformation-invariant
by Theorem \ref{dt12thm7}, it follows by induction in Corollary
\ref{dt5cor4} that the $\bar{DT}{}^\al(\tau)$ are
deformation-invariant. Equation \eq{dt13eq1} is also useful for
computing the $\bar{DT}{}^\al(\tau)$ in examples.

\subsection{Auxiliary abelian categories $\A_p,\B_p$}
\label{dt131}

In order to relate the invariants of stable pairs and the
generalized Donaldson-Thomas invariants, we will introduce auxiliary
abelian categories $\A_p,\B_p$ and apply wall-crossing formulae in
$\B_p$ to obtain equation~\eq{dt13eq1}.

\begin{dfn} We continue to use the notation of
\S\ref{dt3}--\S\ref{dt5}, so that $X$ is a Calabi--Yau 3-fold with
ample line bundle $\cO_X(1)$, $\tau$ is Gieseker
stability\index{Gieseker stability} on the abelian category
$\coh(X)$ of coherent sheaves on $X$, and so on.

Fix some nonzero $\al\in K(\coh(X))$ with $\M_\rss^\al(\tau)\ne 0$,
for which we will prove \eq{dt13eq1}. Then $\al$ has Hilbert
polynomial $P_\al(t)$ with leading coefficient $r_\al$. Write
$p(t)=P_\al(t)/r_\al$ for the reduced Hilbert
polynomial\index{Hilbert polynomial} of $\al$. Let $d=\dim\al$. Then
$d=1,2$ or 3, and $p(t)=t^d+a_{d-1} t^{d-1}+\cdots+a_0$,
for~$a_0,\ldots,a_{d-1}\in\Q$.

Define $\A_p$\nomenclature[Ap]{$\A_p$}{an abelian subcategory of
$\tau$-semistable sheaves in $\coh(X)$} to be the subcategory of
$\coh(X)$ whose objects are zero sheaves and nonzero
$\tau$-semistable sheaves $E\in\coh(X)$ with $\tau([E])=p$, that is,
$E$ has reduced Hilbert polynomial $p$, and such that
$\Hom_{\A_p}(E,F)=\Hom(E,F)$ for all $E,F\in\A_p$. Then $\A_p$ is a
full and faithful abelian subcategory\index{abelian category}
of~$\coh(X)$.

If $E\in\A_p$ then the Hilbert polynomial $P_E$ of $E$ is a rational
multiple of $p(t)$. Since $P_E:\Z\ra\Z$ and $P_E(l)\ge 0$ for $l\gg
0$, we see that $P_E(t)\equiv\frac{k}{d!}p(t)$ for some
$k\in\Z_{\sst\ge 0}$. Let $P_\al(t)=\frac{N}{d!}p(t)$ for some
$N>0$. It will turn out that to prove \eq{dt13eq1}, we need only
consider sheaves $E\in\A_p$ with $P_E(t)\equiv\frac{k}{d!}p(t)$ for
$k=0,1,\ldots,N$, that is, we need consider only $\tau$-semistable
sheaves with {\it finitely many different Hilbert polynomials}.

By Huybrechts and Lehn \cite[Th.~3.37]{HuLe2}, the family of
$\tau$-semistable sheaves $E$ on $X$ with a fixed Hilbert polynomial
is bounded, so the family of $\tau$-semistable sheaves $E$ on $X$
with Hilbert polynomial $P_E(t)\equiv\frac{k}{d!}p(t)$ for any
$k=0,1,\ldots,N$ is also bounded. Hence by Serre
vanishing\index{Serre vanishing} \cite[Lem.~1.7.6]{HuLe2} we can
choose $n\gg 0$ such that every $\tau$-semistable sheaf $E$ on $X$
with Hilbert polynomial $P_E(t)\equiv\frac{k}{d!}\,p(t)$ for some
$k=0,1,\ldots,N$ has $H^i(E(n))=0$ for all $i>0$. That is,
$\Ext^i\bigl(\cO_X(-n), E\bigr)=0$ for $i>0$, so equation
\eq{dt3eq1} implies that
\e
\dim\Hom\bigl(\cO_X(-n),E\bigr)=\ts\frac{k}{d!}\,p(n)=
\bar\chi\bigl([\cO_X(-n)],[E]\bigr).
\label{dt13eq2}
\e
We use this $n$ to define $\M_\stp^{\al,n}(\tau')$ and
$PI^{\al,n}(\tau')$ in \S\ref{dt54}, and $\B_p$ below.

Now define a category $\B_p$\nomenclature[Bp]{$\B_p$}{abelian
category of coherent sheaves extended by vector spaces} to have
objects triples $(E,V,s)$, where $E$ lies in $\A_p$, $V$ is a
finite-dimensional $\C$-vector space, and
$s:V\ra\Hom\bigl(\cO_X(-n),E\bigr)$ is a $\C$-linear map. Given
objects $(E,V,s),(E',V',s')$ in $\B_p$, define morphisms
$(f,g):(E,V,s)\ra(E',V',s')$ in $\B_p$ to be pairs $(f,g)$, where
$f:E\ra E'$ is a morphism in $\A_p$ and $g:V\ra V'$ is a $\C$-linear
map, such that the following diagram commutes:
\begin{equation*}
\xymatrix@R=10pt@C=50pt{V \ar[r]^(0.4){s} \ar[d]^{{}\,g} &
\Hom\bigl(\cO_X(-n),E\bigr) \ar[d]^{{}\,f \ci} \\
V' \ar[r]^(0.4){s'}        & \Hom\bigl(\cO_X(-n),E'\bigr), }
\end{equation*}
where `$f\ci$' maps $t\mapsto f\ci t$.

Define $K(\A_p)$ to be the image of $K_0(\A_p)$ in
$K(\coh(X))=K^\num(\coh(X))$. Then each $E\in\A_p\subset\coh(X)$ has
numerical class $[E]\in K(\A_p)\subset K(\coh(X))$. Define
$K(\B_p)=K(\A_p)\op\Z$, and for $(E,V,s)$ in $\B_p$ define the
numerical class $[(E,V,s)]$ in $K(\B_p)$ to be $([E],\dim V)$.
\label{dt13def1}
\end{dfn}

For coherent sheaves, the auxiliary category $\B_p$ is a
generalization of the coherent systems introduced by Le Potier
\cite{LePo}. A version of the category $\B_p$ for representations of
quivers was discussed in \S\ref{dt74}. It is now straightforward
using the methods of \cite{Joyc3} to prove:

\begin{lem} The category\/ $\B_p$ is abelian and $\B_p,K(\B_p)$
satisfy Assumption {\rm\ref{dt3ass}} over\/ $\K=\C$. Also $\B_p$ is
noetherian\index{noetherian} and artinian,\index{artinian} and the moduli
stacks $\fM_{\B_p}^{\smash{(\be,d)}}$ are of \begin{bfseries}finite
type\end{bfseries} for all\/~$(\be,d)\in C(\B_p)$.
\label{dt13lem1}
\end{lem}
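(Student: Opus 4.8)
The plan is to follow the template already used for $\coh(X)$ and for $\modKQI$ in \cite{Joyc3,LePo}, treating $\B_p$ as an abelian category of ``coherent systems'' built on top of $\A_p$. First I would record that $\A_p$ is itself a full abelian subcategory of $\coh(X)$: for a morphism $f:E\to F$ in $\A_p$, the image $\Im f$ is simultaneously a quotient of the semistable sheaf $E$ and a subsheaf of the semistable sheaf $F$, so its reduced Hilbert polynomial equals $p$ and it is $\tau$-semistable, hence lies in $\A_p$; then $\Ker f$ and $\Coker f$ sit in short exact sequences with objects of $\A_p$ and reduced Hilbert polynomial $p$, so are semistable and lie in $\A_p$ (this is standard, \cite[\S 1.2]{HuLe2}). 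Given this, I would construct kernels and cokernels in $\B_p$ explicitly: for $(f,g):(E,V,s)\to(E',V',s')$, the kernel is $(\Ker f,\Ker g,s\vert_{\Ker g})$, using left-exactness of $\Hom(\cO_X(-n),-)$ to see that $s(\Ker g)\subseteq\Hom(\cO_X(-n),\Ker f)$; and the cokernel is $(\Coker f,\Coker g,\bar s{}')$, where $\bar s{}'$ is induced by composing $s'$ with $\Hom(\cO_X(-n),E')\to\Hom(\cO_X(-n),\Coker f)$, which kills the image of $g$ because $s'\ci g=f\ci s$ factors through $\Im f$. A routine diagram chase shows the canonical map from coimage to image is an isomorphism (it is an isomorphism on the $\A_p$-part and on the vector-space part, and the linear maps match), so $\B_p$ is abelian.

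Next I would verify Assumption~\ref{dt3ass}. Finiteness of $\Ext^i_{\B_p}$ follows from a long exact sequence expressing $\Ext^*_{\B_p}$ in terms of $\Ext^*_{\coh(X)}$ and $\Hom_\C$ between the finite-dimensional vector-space parts, together with finiteness of $\Ext^*$ in $\coh(X)$; note $\Ext^i_{\B_p}$ may be nonzero up to $i=4$, which does no harm. The condition that $[(E,V,s)]=0$ in $K(\B_p)=K(\A_p)\op\Z$ forces $(E,V,s)\cong 0$ holds since $\dim V=0$ gives $V=0$, and $[E]=0$ in $K(\A_p)\subseteq K^\num(\coh(X))$ forces $E=0$ because $\ch$ is injective on $K^\num(\coh(X))$ and $\ch(E)\ne 0$ for $0\ne E\in\coh(X)$ (or more simply because $[E]=0$ makes the Hilbert polynomial $P_E$ vanish). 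The genuinely technical point is to produce the extra data --- a stack in exact categories $\fF_{\B_p}$ on $\Sch_\C$ satisfying \cite[Assumptions 7.1 \& 8.1]{Joyc3}. I would build this from the data $\fF_{\coh(X)}$ for $\coh(X)$ (restricted to families of $\tau$-semistable sheaves with reduced Hilbert polynomial $p$), adjoined with a vector-bundle component on the base and a morphism to the relative $\Hom$-sheaf $\pi_{U*}\SHom(\cO_{X\times U}(-n),\mathcal E)$; the relevant flatness and base-change bookkeeping is exactly what Le Potier carries out for coherent systems in \cite{LePo}. \textbf{I expect this verification of the axioms for $\fF_{\B_p}$ to be the main obstacle}, though it is ``straightforward'' in the sense of being a mechanical, if lengthy, adaptation of \cite{Joyc3,LePo}.

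Then I would check that $\B_p$ is noetherian and artinian. The category $\A_p$ is noetherian because $\coh(X)$ is. It is also artinian: in a descending chain $\cdots\subseteq E_2\subseteq E_1$ in $\A_p$ the Hilbert polynomials are $P_{E_i}=\frac{k_i}{d!}p$ with $k_i$ a non-increasing sequence of non-negative integers, so the $k_i$ stabilize, and once $P_{E_{i+1}}=P_{E_i}$ the inclusion $E_{i+1}\subseteq E_i$ is an equality since $E_i$ is pure (being $\tau$-semistable) and $E_i/E_{i+1}$ has zero Hilbert polynomial. A subobject or quotient of $(E,V,s)$ in $\B_p$ consists of a subobject or quotient of $E$ in $\A_p$ together with a subspace or quotient of the finite-dimensional $V$, so chains in $\B_p$ stabilize as soon as both components do; hence $\B_p$ is noetherian and artinian, which in particular gives Proposition~\ref{dt3prop2} for $\B_p$.

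Finally, for the finite-type claim: for $(\be,d)\in C(\B_p)$ with $\be\ne 0$, the stack $\fM_{\B_p}^{(\be,d)}$ admits a $1$-morphism to $\fM^\be_\rss(\tau)$ recording the underlying sheaf, which is of finite type since Gieseker stability is permissible (Example~\ref{dt3ex1}); its fibres parametrize a vector space $V$ of fixed dimension $d$ together with $s:V\to\Hom(\cO_X(-n),E)$, and since $\dim\Hom(\cO_X(-n),E)$ is bounded on the finite-type, hence bounded, family $\fM^\be_\rss(\tau)$, the total space is of finite type, presented as $[W/\GL(d,\C)]$ for a finite-type $\C$-scheme $W$ over $\fM^\be_\rss(\tau)$. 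In the remaining case $\be=0$, $d>0$ one has $E=0$, $s=0$, so $\fM_{\B_p}^{(0,d)}\cong[\Spec\C/\GL(d,\C)]$, which is of finite type. This completes the proof.
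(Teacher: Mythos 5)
Your proof is correct and is essentially the argument the paper has in mind: the paper simply asserts the lemma is "straightforward using the methods of \cite{Joyc3}" (with families of objects of $\B_p$ handled as in Le Potier's coherent systems), and its only explicit justification is the finite-type claim via boundedness of $\tau$-semistable sheaves \cite[Th.~3.37]{HuLe2}, which is exactly your fibration-over-$\fM_\rss^\be(\tau)$ argument. Your componentwise kernels/cokernels, the Hilbert-polynomial argument for $\A_p$ (hence $\B_p$) being noetherian and artinian, and the adaptation of the moduli data $\fF_{\coh(X)}$ to $\fF_{\B_p}$ fill in precisely the details the paper omits.
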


Here $\fM_{\B_p}^{\smash{(\be,d)}}$ is of finite type as it is built
out of $\tau$-semistable sheaves $E$ in class $\be$ in $K(\coh(X))$,
which form a bounded family by \cite[Th.~3.37]{HuLe2}. Lemma
\ref{dt13lem1} means that we can apply the results of
\cite{Joyc3,Joyc4,Joyc5,Joyc6} to $\B_p$. Note that $\A_p$ embeds as
a full and faithful subcategory in $\B_p$ by $E\mapsto (E,0,0)$.
Every object $(E,V,s)$ in $\B_p$ fits into a short exact sequence
\e
\xymatrix{0 \ar[r] & (E,0,0) \ar[r] & (E,V,s) \ar[r] & (0,V,0)
\ar[r] & 0}
\label{dt13eq3}
\e
in $\B_p$, and $(0,V,0)$ is isomorphic to the direct sum of $\dim V$
copies of the object $(0,\C,0)$ in $\B_p$. Thus, regarding $\A_p$ as
a subcategory of $\B_p$, we see that $\B_p$ is generated over
extensions by $\A_p$ and one extra object~$(0,\C,0)$.

By considering short exact sequences \eq{dt13eq3} with $V=\C$ we see
that
\e
\begin{split}
\Ext^1_{\B_p}\bigl((0,\C,0),(E,0,0)\bigr)&=H^0(E(n))\cong
\Hom\bigl(\cO_X(-n),E\bigr)\\
&\cong\Ext^1_{D(X)}\bigl(\cO_X(-n)[-1],E\bigr),
\end{split}
\label{dt13eq4}
\e
where $\cO_X(-n)[-1]$ is the shift of the sheaf $\cO_X(-n)$ in the
derived category $D(X)$. Thus the extra element $(0,\C,0)$ in $\B_p$
behaves like $\cO_X(-n)[-1]$ in $D(X)$. In fact there is a natural
embedding functor $F:\B_p\ra D(X)$ which takes $(E,V,s)$ in $\B_p$
to the complex $\cdots\ra 0\ra V\ot\cO_X(-n)\,{\buildrel
s\over\longra}\, E\ra 0\ra \cdots$ in $D(X)$, where
$V\ot\cO_X(-n),E$ appear in positions $-1,0$ respectively. Then $F$
takes $\A_p$ to $\A_p\subset\coh(X)\subset D(X)$, and $(0,\C,0)$ to
$\cO_X(-n)[-1]$ in~$D(X)$.

Therefore we can think of $\B_p$ as the abelian subcategory of
$D(X)$ generated by $\A_p$ and $\cO_X(-n)[-1]$. But working in the
derived category would lead to complications about forming moduli
stacks of objects in $D(X)$, classifying objects up to
quasi-isomorphism, and so on, so we prefer just to use the explicit
description of $\B_p$ in Definition~\ref{dt13def1}.

Although $D(X)$ is a 3-Calabi--Yau triangulated
category,\index{triangulated category!3-Calabi--Yau} and $\B_p$ is
embedded in $D(X)$, it does not follow that $\B_p$ is a
3-Calabi--Yau abelian category,\index{abelian category!3-Calabi--Yau}
and we do not claim this. In \S\ref{dt32} we defined the Euler form
$\bar\chi$ of $\coh(X)$, and used the Calabi--Yau 3-fold property to
prove \eq{dt3eq14}, which was the crucial equation in proving the
wall-crossing formulae \eq{dt3eq27}, \eq{dt5eq14} for the invariants
$J^\al(\tau), \bar{DT}{}^\al(\tau)$. We will show that even though
$\B_p$ may not be a 3-Calabi--Yau abelian category, a weakened
version of \eq{dt3eq14} still holds in $\B_p$, which will be enough
to prove wall-crossing formulae for invariants in~$\B_p$.

\begin{dfn} Define $\bar\chi{}^{\smash{\B_p}}:K(\B_p)\times K(\B_p)\ra\Z$ by
\e
\begin{split}
\bar\chi{}^{\smash{\B_p}}\bigl((\be,d),(\ga,e)\bigr)&=
\bar\chi\bigl(\be-d[\cO_X(-n)],\ga-e[\cO_X(-n)]\bigr)\\
&=\bar\chi(\be,\ga)-d\bar\chi\bigl([\cO_X(-n)],\ga\bigr)
+e\bar\chi\bigl([\cO_X(-n)],\be\bigr).
\end{split}
\label{dt13eq5}
\e
This is the natural Euler form\index{Euler form} on $K(\B_p)$ induced by
the functor $F:\B_p\ra D(X)$, since $K^\num(D(X))=K^\num(\coh(X))$,
and
\begin{align*}
\bigl[F(E,V,s)\bigr]=\bigl[V\ot\cO_X(-n)\,{\buildrel s\over\longra}\,
E\bigr]&=\dim V\bigl[\cO_X(-n)[-1]\bigr]+[E]\\
&=[E]-\dim V\bigl[\cO_X(-n)\bigr]
\end{align*}
in $K^\num(D(X))$, and $\coh(X),D(X)$ have the same Euler
form~$\bar\chi$.
\label{dt13def2}
\end{dfn}

\begin{prop} Suppose $(E,V,s),(F,W,t)$ lie in $\B_p$ with\/ $\dim
V\!+\!\dim W\!\le\!1$ and\/ $P_E(t)\equiv\frac{k}{d!}\,p(t),$
$P_F(t)\equiv\frac{l}{d!}\,p(t)$ for some $k,l=0,1,\ldots,N$. Then
\e
\begin{split}
\bar\chi{}^{\smash{\B_p}}&\bigl([(E,V,s)],[(F,W,t)]\bigr)=\\
&\bigl(\dim\Hom_{\B_p}\bigl((E,V,s),(F,W,t)\bigr)
-\dim\Ext^1_{\B_p}\bigl((E,V,s),(F,W,t)\bigr)\bigr)-\\
&\bigl(\dim\Hom_{\B_p}\bigl((F,W,t),(E,V,s)\bigr)
-\dim\Ext^1_{\B_p}((F,W,t),(E,V,s)\bigr)\bigr).
\end{split}
\label{dt13eq6}
\e
\label{dt13prop1}
\end{prop}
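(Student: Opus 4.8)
The plan is to express $\Hom_{\B_p}$ and $\Ext^1_{\B_p}$ between the two objects in terms of the corresponding groups in $\coh(X)$, reduce \eq{dt13eq6} to the Calabi--Yau identity \eq{dt3eq14} for $\coh(X)$, and absorb the extra terms using the vanishing $H^i(F(n))=0$ for $i>0$ guaranteed by the bound $k,l\le N$ and the choice of $n$ in Definition \ref{dt13def1} (so that $\dim H^0(F(n))=\bar\chi([\cO_X(-n)],[F])$ as in \eq{dt13eq2}). Both sides of \eq{dt13eq6} reverse sign under exchanging $(E,V,s)$ and $(F,W,t)$, and under $\dim V+\dim W\le1$ the only cases are $(\dim V,\dim W)=(0,0)$, $(1,0)$ or $(0,1)$; so it suffices to treat $(0,0)$ and $(1,0)$.

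For $(0,0)$ both objects are sheaves $E,F\in\A_p\subset\coh(X)$. Semistability with a fixed reduced Hilbert polynomial is closed under extensions in $\coh(X)$, so $\A_p$ is an extension-closed full subcategory, giving $\Hom_{\B_p}((E,0,0),(F,0,0))=\Hom(E,F)$ and $\Ext^1_{\B_p}((E,0,0),(F,0,0))=\Ext^1(E,F)$; also $\bar\chi^{\B_p}(([E],0),([F],0))=\bar\chi([E],[F])$ by \eq{dt13eq5}. So \eq{dt13eq6} becomes \eq{dt3eq14}, which holds as $X$ is a Calabi--Yau $3$-fold. (No Hilbert polynomial bound is needed here.)

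For $(1,0)$, write $A=(E,\C,s)$ and $B=(F,0,0)$. I would apply $\Hom_{\B_p}(-,B)$ and $\Hom_{\B_p}(B,-)$ to the short exact sequence $0\ra(E,0,0)\ra A\ra(0,\C,0)\ra0$ of \eq{dt13eq3}. Directly from the definition of $\B_p$ one gets $\Hom_{\B_p}((0,\C,0),(F,0,0))=\Hom_{\B_p}((F,0,0),(0,\C,0))=\Ext^1_{\B_p}((F,0,0),(0,\C,0))=0$, while $\Ext^1_{\B_p}((0,\C,0),(F,0,0))\cong H^0(F(n))$ by \eq{dt13eq4}, and $\Ext^1_{\B_p}((F,0,0),(E,0,0))=\Ext^1(F,E)$ as in the previous case. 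The contravariant long exact sequence then reads
\begin{multline*}
0\ra\Hom_{\B_p}(A,B)\ra\Hom(E,F)\,{\buildrel\,\ci\,\bar s\,\over\longra}\,H^0(F(n))\ra\\
\Ext^1_{\B_p}(A,B)\ra\Ext^1(E,F)\,{\buildrel\,\pd\,\over\longra}\,\Ext^2_{\B_p}((0,\C,0),(F,0,0)),
\end{multline*}
where $\bar s\in\Hom(\cO_X(-n),E)$ corresponds to $s$, and the covariant one gives $\Hom_{\B_p}(B,A)=\Hom(F,E)$ and $\Ext^1_{\B_p}(B,A)=\Ext^1(F,E)$. Chasing dimensions through the displayed sequence and using \eq{dt3eq14} for $(E,F)$, the identity $\dim H^0(F(n))=\bar\chi([\cO_X(-n)],[F])$ from \eq{dt13eq2}, and $\bar\chi^{\B_p}(([E],1),([F],0))=\bar\chi([E],[F])-\bar\chi([\cO_X(-n)],[F])$ from \eq{dt13eq5}, one finds that the difference $(\dim\Hom_{\B_p}(A,B)-\dim\Ext^1_{\B_p}(A,B))-(\dim\Hom_{\B_p}(B,A)-\dim\Ext^1_{\B_p}(B,A))$ equals $\bar\chi^{\B_p}([A],[B])+\dim(\Im\pd)$. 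Hence \eq{dt13eq6} holds once $\pd=0$.

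The main obstacle is to show $\Ext^2_{\B_p}((0,\C,0),(F,0,0))=0$, which forces $\pd=0$; this is where both hypotheses are genuinely used. I would prove it via the embedding functor $F:\B_p\ra D(X)$ of \S\ref{dt131}, which sends $(E,V,s)$ to the two-term complex $[V\ot\cO_X(-n)\,{\buildrel s\over\longra}\,E]$ in degrees $-1,0$, so that $F(0,\C,0)=\cO_X(-n)[1]$, $F(F,0,0)=F$, and $\Ext^2_{D(X)}(\cO_X(-n)[1],F)=\Ext^1_{D(X)}(\cO_X(-n),F)=H^1(F(n))=0$. This $F$ is exact and fully faithful on the relevant objects because any homotopy would be a class in $\Hom_{\coh(X)}(E,\cO_X(-n))\cong H^3(E(n))^*=0$; the delicate step is that $\Ext^i_{\B_p}$ agrees with $\Ext^i_{D(X)}$ in degrees $\ge2$ too, i.e.\ that $F(\B_p)$ is extension-closed in $D(X)$ on these objects. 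Here the bound $\dim V+\dim W\le1$ kills the $\Hom$-pieces between the $\cO_X(-n)$-summands and $H^{>0}((-)(n))=0$ kills the remaining contributions, so $R\Hom_{D(X)}(F(A),F(B))$ is concentrated in degrees $[0,3]$; a middle term $C$ of an extension $F(B)\ra C\ra F(A)$ then has cohomology only in degrees $-1,0$ and lies in $F(\B_p)$. The same two observations in fact give a uniform proof of \eq{dt13eq6}: $\bar\chi^{\B_p}([A],[B])=\chi_{D(X)}(F(A),F(B))$ by \eq{dt13eq5}, the $[0,3]$-concentration together with $3$-Calabi--Yau Serre duality in $D(X)$ rewrites $\chi_{D(X)}(F(A),F(B))$ as $(\dim\Hom-\dim\Ext^1)(F(A),F(B))-(\dim\Hom-\dim\Ext^1)(F(B),F(A))$, and full faithfulness together with extension-closedness identify these $D(X)$-groups with $\Hom_{\B_p}$ and $\Ext^1_{\B_p}$.
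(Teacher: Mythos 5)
Your reduction is sound up to the final step, and it largely parallels the paper's own proof: the case split under $\dim V+\dim W\le 1$, the identifications $\Hom_{\B_p}((E,0,0),(F,0,0))=\Hom(E,F)$, $\Ext^1_{\B_p}((E,0,0),(F,0,0))=\Ext^1(E,F)$, $\Ext^1_{\B_p}((0,\C,0),(F,0,0))\cong H^0(F(n))$, and the bookkeeping showing that \eq{dt13eq6} is equivalent to the vanishing of the connecting map $\partial$ are all correct (the paper extracts the same information by computing $\Hom_{\B_p}$ and $\Ext^1_{\B_p}$ directly from diagrams, arriving at \eq{dt13eq10} and \eq{dt13eq14}). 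The genuine gap is your justification that $\partial=0$. You deduce it from $\Ext^2_{\B_p}((0,\C,0),(F,0,0))=0$, identified with $\Ext^2_{D(X)}(\cO_X(-n)[1],F)=H^1(F(n))=0$, and you justify this identification by saying that $\Ext^i_{\B_p}$ agrees with $\Ext^i_{D(X)}$ in degrees $\ge 2$ ``i.e.\ that $F(\B_p)$ is extension-closed in $D(X)$''. That ``i.e.'' is where the argument breaks: extension-closedness of the image (together with full faithfulness of the embedding, itself only sketched, since morphisms in $D(X)$ are roofs and not merely homotopy classes of chain maps) controls $\Hom$ and $\Ext^1$ only. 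Comparing the Yoneda $\Ext^2$ of the abelian category $\B_p$ with degree-$2$ morphisms in $D(X)$ needs more, e.g.\ realizing the image of $\B_p$ inside the heart of a t-structure, for which the degree-$2$ comparison map is injective; nothing of the sort is established, and the paper deliberately avoids any claim of this kind --- it never uses $\Ext^{\ge 2}_{\B_p}$ and does not even assert that $\B_p$ is 3-Calabi--Yau, which is exactly why the proposition is restricted to $\dim V+\dim W\le 1$ and bounded Hilbert polynomials. Your closing ``uniform proof'' rests on the same unestablished comparisons, so it does not close the gap either.

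The good news is that your framework is completed without any $\Ext^2$ statement: you only need the specific map $\partial:\Ext^1(E,F)\ra\Ext^2_{\B_p}((0,\C,0),(F,0,0))$ to vanish, i.e.\ that every class in $\Ext^1(E,F)$ lifts to $\Ext^1_{\B_p}((E,\C,s),(F,0,0))$. Given $0\ra F\ra G\ra E\ra 0$ in $\A_p$, applying $\Hom(\cO_X(-n),-)$ and using $H^1(F(n))=0$ (this is where $l\le N$ and the choice of $n$ in Definition \ref{dt13def1} enter, via \eq{dt13eq2}) shows $H^0(G(n))\ra H^0(E(n))$ is surjective, so $s$ lifts to $u\in H^0(G(n))$ and $0\ra(F,0,0)\ra(G,\C,u)\ra(E,\C,s)\ra 0$ is a $\B_p$-extension restricting to the given class. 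This is precisely the lifting argument the paper makes around \eq{dt13eq12}--\eq{dt13eq14}; with it in place of the $\Ext^2$ claim, your long-exact-sequence version of the proof is correct.
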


\begin{proof} The possibilities for $(\dim V,\dim W)$ are $(0,0),
(1,0)$ or $(0,1)$. For $(0,0)$ we have $V=W=s=t=0$, and then
$\bar\chi{}^{\smash{\B_p}}\bigl([(E,0,0)],[(F,0,0)]\bigr)=
\bar\chi\bigl([E],[F]\bigr)$, $\Hom_{\B_p}\bigl((E,0,0),
(F,0,0)\bigr)=\Hom(E,F)$, and so on, so \eq{dt13eq6} follows from
\eq{dt3eq14}. The cases $(1,0),(0,1)$ are equivalent after
exchanging $(E,V,s),(F,W,t)$, so it is enough to do the $(0,1)$
case. Thus we must verify \eq{dt13eq6} for $(E,0,0)$ and $(F,\C,t)$.

By Definition \ref{dt13def1}, $\Hom_{\B_p}\bigl(
(E,0,0),(F,\C,t)\bigr)$ is the vector space of $(f,0)$ for
$f\in\Hom(E,F)$ such that the following diagram commutes:
\begin{equation*}
\xymatrix@R=10pt@C=60pt{0 \ar[r] \ar[d] & \cO_X(-n)\ar[d]_t \\
E \ar[r]^f &F. }
\end{equation*}
This is no restriction on $f$, so
\e
\Hom_{\B_p}\bigl((E,0,0),(F,\C,t)\bigr)\cong \Hom(E,F).
\label{dt13eq7}
\e
Also $\Ext^1_{\B_p}\bigl((E,0,0),(F,\C,t)\bigr)$ corresponds to the
set of isomorphism classes of commutative diagrams with exact rows:
\e
\begin{gathered}
\xymatrix@R=13pt@C=17pt{ 0 \ar[r] & \C\ot \cO_X(-n)
\ar[rr]_(0.53){g\ot\id_{\cO_X(-n)}} \ar[d]^(0.45)t && Y\ot \cO_X(-n)
\ar[rr]
\ar[d]^(0.45)u && 0 \ar[r] \ar[d] & 0 \\
0 \ar[r] & F \ar[rr]^f && G \ar[rr]^{f'} && E \ar[r] & 0.}
\end{gathered}
\label{dt13eq8}
\e
Here $Y$ is a $\C$-vector space, $g:\C\ra Y$ is linear, $G\in\A_p$,
and $f,f',u$ are morphisms are in $\coh(X)$. By exactness of the top
row, $g$ is an isomorphism, so we can identify $Y=\C$ and
$g=\id_\C$. Then for any exact $0\ra F\,{\buildrel
f\over\longra}\,G\,{\buildrel f'\over\longra}\,E\ra 0$ in $\A_p$ we
define $u=f\ci t$ to complete \eq{dt13eq8}. Hence diagrams
\eq{dt13eq8} correspond up to isomorphisms with exact $0\ra F\ra
G\ra E\ra 0$ in $\A_p$, giving
\e
\Ext^1_{\B_p}\bigl((E,0,0),(F,\C,t)\bigr)\cong \Ext^1(E,F).
\label{dt13eq9}
\e

Similarly, $\Hom_{\B_p}\bigl((F,\C,t),(E,0,0)\bigr)$ is the set of
$(f,0)$ for $f\in\Hom(F,E)$ such that the following diagram
commutes:
\begin{equation*}
\xymatrix@R=10pt@C=60pt{ \cO_X(-n)\ar[d]^t \ar[r] & 0 \ar[d] \\
F \ar[r]^f &E. }
\end{equation*}
That is, we need $f\ci t=0$. So
\e
\begin{split}
\Hom_{\B_p}&\bigl((F,\C,t),(E,0,0)\bigr)\\
&\cong \Ker\bigl( \Hom(F,E)\,{\buildrel \ci t \over\longra}\,
\Hom(\cO_X(-n),E)\bigr).
\end{split}
\label{dt13eq10}
\e
And $\Ext^1_{\B_p}\bigl((F,\C,t),(E,0,0)\bigr)$ corresponds to the
set of isomorphism classes of commutative diagrams with exact rows:
\e
\begin{gathered}
\xymatrix@R=10pt@C=15pt{ 0 \ar[r] & 0 \ar[rr] \ar[d] && Y\ot
\cO_X(-n) \ar[rr]_{g\ot\id_{\cO_X(-n)}} \ar[d]^u && \C\ot \cO_X(-n)
\ar[r] \ar[d]_t & 0 \\
0 \ar[r] & E \ar[rr]^f && G \ar[rr]^{f'} && F \ar[r] & 0.}
\end{gathered}
\label{dt13eq11}
\e
Again, we identify $Y=\C$ and $g=\id_\C$. Then for a given exact
sequence $0\ra E\,{\buildrel f\over\longra}\,G\,{\buildrel
f'\over\longra}\,F\ra 0$ in $\A_p$, we want to know what are the
possibilities for $u$ to complete \eq{dt13eq11}. Applying
$\Hom\bigl(\cO_X(-n),-\bigr)$ to $0\ra E\,{\buildrel
f\over\longra}\,G\,{\buildrel f'\over\longra}\,F\ra 0$ yields an
exact sequence
\e
\begin{gathered}
\xymatrix@R=5pt{0 \ar[r] & \Hom\bigl(\cO_X(-n),E\bigr) \ar[r]_{f\ci}
& \Hom\bigl(\cO_X(-n),G\bigr) \ar[r]_(0.7){f'\ci} & \\
& \Hom\bigl(\cO_X(-n),F\bigr) \ar[r] & \Ext^1
\bigl(\cO_X(-n),E\bigr) \ar[r] &\cdots. }
\end{gathered}
\label{dt13eq12}
\e
But as $P_E(t)\equiv\frac{k}{d!}\,p(t),$ for $k\le N$, by choice of
$n$ in Definition \ref{dt13def1} we have
$\Ext^1\bigl(\cO_X(-n),E\bigr)=0$, so `$f'\ci$' in \eq{dt13eq12} is
surjective, and there exists at least one
$u\in\Hom\bigl(\cO_X(-n),G\bigr)$ with $t=f'\ci u$. If $u,\ti u$ are
possible choices for $u$ then $f'\ci(u-\ti u)=0$, so $u-\ti u$ lies
in the kernel of `$f'\ci$' in \eq{dt13eq12}, which is the image of
`$f\ci$' by exactness, and is isomorphic to
$\Hom\bigl(\cO_X(-n),E\bigr)$.

Na\"\i vely this appears to show that $\Ext^1_{\smash{\B_p}}
\bigl((F,\C,t),(E,0,0)\bigr)$ is the direct sum of $\Ext^1(F,E)$,
which represents the freedom to choose $G,f,f'$ in \eq{dt13eq11} up
to isomorphism, and $\Hom\bigl(\cO_X(-n),E\bigr)$, which
parametrizes the additional freedom to choose $u$ in \eq{dt13eq11}.
However, this is not quite true. Instead,
$\Ext^1_{\B_p}\bigl((F,\C,t),\ab(E,0,0)\bigr)$ parametrizes
isomorphism classes of diagrams \eq{dt13eq11}, up to isomorphisms
which are the identity on the second and fourth columns. Two
different choices $u,u'$ for $u$ in \eq{dt13eq11} might still be
isomorphic in this sense, through an isomorphism $g$ in the
following commutative diagram:
\e
\begin{gathered}
\xymatrix@R=10pt@C=5pt{&&&& \C\ot \cO_X(-n)\ar[dl]_(0.6)u
\ar[ddr]^(0.4){u'} \ar[rrr]_{\id_{\C\ot \cO_X(-n)}} &&& \C\ot
\cO_X(-n) \ar[dl]^(0.4)t \ar[ddr]^t \\
E \ar[drrr]_{\id_E} \ar[rrr]^f &&& G \ar[drr]_(0.4)g
\ar[rrr]_(0.85){f'}|(0.62)\hole &&& F
\ar[drr]_(0.3){\id_F} \\
&&& E \ar[rr]^(0.2)f && G \ar[rrr]^(0.2){f'} &&& F. }
\end{gathered}
\label{dt13eq13}
\e

Reasoning in the abelian category $\coh(X)$, as $f'\ci g=\id_F\ci
f'$ we have $f'\ci(g-\id_G)=0$, so $g-\id_G$ factorizes through the
kernel $f$ of $f'$, that is, $g-\id_G=f\ci h$, where $h:G\ra E$.
Also $g\ci f=f\ci\id_E=f$, so $(g-\id_G)\ci f=0$, and $f\ci h\ci
f=0$. As $f$ is injective this gives $h\ci f=0$. So $h$ factorizes
via the cokernel $f'$ of $f$, and $h=k\ci f'$ for $k:F\ra E$.
Therefore in \eq{dt13eq13} we may write $g=\id_G+f\ci k\ci f'$ for
$k\in\Hom(F,E)$. Hence, for any given choice $u$ in \eq{dt13eq13},
the equivalent choices $u'$ are of the form $u'=u+(f\ci k\ci f')\ci
u=u+f\ci k\ci t$. Thus we must quotient by the vector space of
morphisms $f\ci k\ci t$, for $k\in\Hom(F,E)$. As $f$ is injective,
this is isomorphic to the vector space of morphisms $k\ci t$ in
$\Hom\bigl(\cO_X(-n),E\bigr)$. This proves that there is an exact
sequence
\e
\begin{gathered}
0\ra \Coker\bigl( \Hom(F,E)\,{\buildrel \ci t
\over\longra}\, \Hom(\cO_X(-n),E)\bigr)\\
\longra
\Ext^1_{\B_p}\bigl((F,\C,t),(E,0,0)\bigr)\longra\Ext^1(F,E)\ra 0.
\end{gathered}
\label{dt13eq14}
\e

Now taking dimensions in equations \eq{dt13eq7}, \eq{dt13eq9},
\eq{dt13eq10} and \eq{dt13eq14}, and noting in \eq{dt13eq10} and
\eq{dt13eq14} that if $F:U\ra V$ is a linear map of
finite-dimensional vector spaces then $\dim\Ker F-\dim\Coker F=\dim
U-\dim V$, we see that
\begin{align*}
&\bigl(\dim\Hom_{\B_p}\bigl((E,0,0),(F,\C,t)\bigr)
-\dim\Ext^1_{\B_p}\bigl((E,0,0),(F,\C,t)\bigr)\bigr)-\\
&\bigl(\dim\Hom_{\B_p}\bigl((F,\C,t),(E,0,0)\bigr)
-\dim\Ext^1_{\B_p}((F,\C,t),(E,0,0)\bigr)\bigr)\\
&\;\> =\dim\Hom(E,F)-\dim\Ext^1(E,F)-\dim\Hom(F,E)+\dim\Ext^1(F,E)\\
&\qquad\qquad\qquad\qquad\qquad\qquad+\dim\Hom(\cO_X(-n),E)\\
&\;\> =\bar\chi([E],[F])+\bar\chi\bigl([\cO_X(-n)],[E]\bigr)
=\bar\chi{}^{\smash{\B_p}}\bigl([(E,0,0)],[(F,\C,t)]\bigr),
\end{align*}
using equations \eq{dt3eq14}, \eq{dt13eq2} which holds as
$P_E(t)\equiv\frac{k}{d!}\,p(t)$ for $k\le N$, and \eq{dt13eq5}.
This completes the proof of Proposition~\ref{dt13prop1}.
\end{proof}

\subsection{Three weak stability conditions on $\B_p$}
\label{dt132}\index{stability condition!weak}

\begin{dfn} It is easy to see that the positive cone $C(\B_p)$ of
$\B_p$ is
\begin{equation*}
C(\B_p)=\bigl\{(\be,d):\text{$\be\in C(\A)$ and $d\ge 0$ or $\be=0$
and $d>0$}\bigr\}.
\end{equation*}
Define weak stability conditions $(\dot\tau,\dot T,\le),(\ti\tau,\ti
T,\le),(\hat\tau,\hat T,\le)$ on $\B_p$~by:
\begin{itemize}
\setlength{\itemsep}{0pt}
\setlength{\parsep}{0pt}
\item $\dot T=\{-1,0\}$ with the natural order $-1<0$, and
$\dot\tau(\be,d)=0$ if $d=0$, and $\dot\tau(\be,d)=-1$ if $d>0$; and
\item $\ti T=\{0,1\}$ with the natural order $0<1$, and
$\ti\tau(\be,d)=0$ if $d=0$, and $\ti\tau(\be,d)=1$ if~$d>0$;
\item $\hat T=\{0\}$, and $\hat\tau(\be,d)=0$ for all~$(\be,d)$.
\end{itemize}

Since $\B_p$ is artinian by Lemma \ref{dt13lem1}, it is
$\dot\tau$-artinian, and as $\fM_\rss^{(\be,d)}(\dot\tau)$ is a
substack of $\fM_{\B_p}^{\smash{(\be,d)}}$ which is of finite type
by Lemma \ref{dt13lem1}, $\fM_\rss^{(\be,d)}(\dot\tau)$ is of finite
type for all $(\be,d)\in C(\B_p)$. Therefore $(\dot\tau,\dot T,\le)$
is {\it permissible}\index{stability condition!permissible} by
Definition \ref{dt3def4}, and similarly so are $(\ti\tau,\ti
T,\le),(\hat\tau,\hat T,\le)$. Note too that $(\hat\tau,\hat T,\le)$
{\it dominates\/}\index{stability condition!$(\tilde\tau,\tilde
T,\leqslant)$ dominates $(\tau,T,\leqslant)$} $(\dot\tau,\dot
T,\le), (\ti\tau,\ti T,\le)$, in the sense of
Definition~\ref{dt3def6}.
\label{dt13def3}
\end{dfn}

We can describe some of the moduli spaces
$\fM_\rss^{\smash{(\be,d)}}
(\dot\tau),\fM_\rss^{\smash{(\be,d)}}(\ti\tau)$.

\begin{prop}{\bf(a)} For all\/ $\be\in C(\A_p)$ we have natural stack
isomorphisms $\fM_\rss^{\smash{(\be,0)}}(\dot\tau)\cong
\fM_\rss^\be(\tau)$ identifying\/ $(E,0,0)$ with\/ $E,$ where
$\fM_\rss^\be(\tau)$ is as in\/ {\rm\S\ref{dt32}}. Also
$\fM_\rss^{\smash{(0,1)}}(\dot\tau)\!\cong\![\Spec\C/\bG_m]$ is the
point\/ $(0,\C,0),$ and\/
$\fM_\rss^{\smash{(\be,1)}}(\dot\tau)\!=\!\es$ for\/~$\be\!\ne\! 0$.

\smallskip

\noindent{\bf(b)} Let\/ $\al,n$ be as in Definition
{\rm\ref{dt13def1},} $\M_\stp^{\al,n}(\tau')$ the moduli scheme of
stable pairs $s:\cO_X(-n)\ra E$ from\/ {\rm\S\ref{dt12},} and\/
$\fM_\rss^{(\al,1)}(\ti\tau)$ the moduli stack of\/
$\ti\tau$-semistable objects in class $(\al,1)$ in $\B_p$.
Then\/~$\fM_\rss^{(\al,1)}(\ti\tau)\cong
\M_\stp^{\al,n}(\tau')\times[\Spec\C/\bG_m]$.
\label{dt13prop2}
\end{prop}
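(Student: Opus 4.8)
The idea is to reduce both parts to a pointwise analysis of subobjects in $\B_p$, and then upgrade the resulting bijections on $\C$-points to $1$-isomorphisms of Artin stacks using the fact that $\B_p,K(\B_p)$ satisfy Assumption \ref{dt3ass} (Lemma \ref{dt13lem1}) and that the stack data $\fF_{\B_p}$ is built from $\fF_{\coh(X)}$. First I would record the structure of subobjects of an object $(E,V,s)$ in $\B_p$: a subobject is a triple $(E',V',s')$ with $E'\subseteq E$ a subobject in $\A_p$, $V'\subseteq V$ a subspace, $s(V')\subseteq\Hom(\cO_X(-n),E')$ inside $\Hom(\cO_X(-n),E)$, and $s'=s\vert_{V'}$; the quotient is $(E/E',V/V',\bar s)$. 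In particular every $(E,V,s)$ has the distinguished subobject $(E,0,0)$, with quotient $(0,V,0)\cong(0,\C,0)^{\oplus\dim V}$, as in \eq{dt13eq3}.

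\textbf{Part (a).} An object of class $(\be,0)$ has $\dim V=0$, hence is $(E,0,0)$ with $E\in\A_p$ and $[E]=\be$; every subobject and quotient of it in $\B_p$ again lies in $\A_p$ and so has $\dot\tau$-value $0$, whence $(E,0,0)$ is automatically $\dot\tau$-semistable. Since $\Aut_{\B_p}((E,0,0))=\Aut(E)$ and (for $\be\in C(\A_p)$, so that $\tau(\be)=p$) the moduli stack $\fM_{\B_p}^{(\be,0)}$ is canonically the moduli stack $\fM_{\A_p}^\be=\fM_\rss^\be(\tau)$ of $\tau$-semistable sheaves of class $\be$, this gives $\fM_\rss^{(\be,0)}(\dot\tau)\cong\fM_\rss^\be(\tau)$ identifying $(E,0,0)$ with $E$. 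An object of class $(0,1)$ has $[E]=0$ in $K(\A_p)$, hence $E\cong 0$ by Assumption \ref{dt3ass}, and $\dim V=1$, so it is $(0,\C,0)$; this object is simple in $\B_p$, hence $\dot\tau$-stable, with $\Aut_{\B_p}((0,\C,0))=\bG_m$, giving $\fM_\rss^{(0,1)}(\dot\tau)\cong[\Spec\C/\bG_m]$. Finally, if $\be\ne 0$ and $(E,V,s)$ has class $(\be,1)$, then $E\ne 0$, and the proper nonzero subobject $(E,0,0)$ has $\dot\tau$-value $0$ while its quotient $(0,\C,0)$ has $\dot\tau$-value $-1<0$, so the inequality $\dot\tau([S])\le\dot\tau([E/S])$ of Definition \ref{dt3def3} fails; hence $\fM_\rss^{(\be,1)}(\dot\tau)=\es$.

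\textbf{Part (b).} An object of class $(\al,1)$ is, up to isomorphism, $(E,\C,s)$ with $E\in\A_p$, $[E]=\al$, and $s$ a morphism $\cO_X(-n)\ra E$. I would first show $(E,\C,s)$ is $\ti\tau$-semistable iff $s:\cO_X(-n)\ra E$ is a stable pair in the sense of Definition \ref{dt5def3}. Its proper nonzero subobjects are either $(E',0,0)$ with $E'\subseteq E$ in $\A_p$, which has $\ti\tau$-value $0$ and quotient of $\ti\tau$-value $1$, so never destabilizes; or $(E',\C,s')$ with $E'\subsetneq E$ in $\A_p$ such that $s$ factors through $E'$, which has $\ti\tau$-value $1$ and proper quotient $(E/E',0,0)$ of $\ti\tau$-value $0$, so destabilizes. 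Thus $\ti\tau$-semistability of $(E,\C,s)$ is equivalent to $s\ne 0$ (else $(0,\C,0)$ destabilizes) together with the condition that $s$ factors through no proper subobject of $E$ in $\A_p$; since $E\in\A_p$ already forces condition (i) of Definition \ref{dt5def3}, this is exactly condition (ii), i.e. stability of the pair, and then $(E,\C,s)$ is in fact $\ti\tau$-stable. For automorphisms, I expect the main technical point to be the lemma: if $s:\cO_X(-n)\ra E$ is a stable pair, the only $f\in\End(E)$ with $f\ci s=s$ is $f=\id_E$. Its proof: put $K=\Ker(f-\id_E)$; then $(f-\id_E)\ci s=0$ shows $s$ factors through $K$, so $K\ne 0$ as $s\ne 0$; semistability of $E$ forces $\tau([K])=\tau([E])=p$, and then stability of the pair forces $K=E$. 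Consequently a morphism $(f,g):(E,\C,s)\ra(E,\C,s)$ has $g\in\bG_m$ with $f\ci s=g\,s$, so $g^{-1}f\ci s=s$, hence $f=g\,\id_E$, giving $\Aut_{\B_p}((E,\C,s))\cong\bG_m$. Therefore $\fM_\rss^{(\al,1)}(\ti\tau)$ is a $\bG_m$-gerbe over its coarse moduli space; the pointwise correspondence above, checked at the level of families, identifies that coarse space with $\M_\stp^{\al,n}(\tau')$ of Theorem \ref{dt5thm7}. The universal stable pair ${\mathbb S}:\cO_{X_\M}(-n)\ra{\mathbb E}$ on $X_\M=X\times\M_\stp^{\al,n}(\tau')$ furnishes the $\M_\stp^{\al,n}(\tau')$-point $\bigl({\mathbb E},\cO_{\M_\stp^{\al,n}(\tau')},{\mathbb S}\bigr)$ of $\fM_\rss^{(\al,1)}(\ti\tau)$, i.e. a section of the gerbe, so the gerbe is trivial and $\fM_\rss^{(\al,1)}(\ti\tau)\cong\M_\stp^{\al,n}(\tau')\times[\Spec\C/\bG_m]$.

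\textbf{Main obstacle.} The pointwise (semi)stability computations are elementary; the genuine work is in the family-level bookkeeping — verifying that the moduli stacks of objects of $\B_p$ produced by the data $\fF_{\B_p}$ are the expected ones, i.e. that $\fM_{\A_p}^\be$ coincides with $\fM_\rss^\be(\tau)$, that a $T$-family of objects of $\B_p$ of class $(\al,1)$ is precisely a $T$-family of pairs together with a line bundle on $T$, and that the $\ti\tau$-semistable locus in such families cuts out exactly families of stable pairs. This should follow from the construction of $\fF_{\B_p}$ out of $\fF_{\coh(X)}$, openness of the stable-pair condition (as used in \S\ref{dt121}), and Theorems \ref{dt5thm7} and \ref{dt12thm1}, but it is where the technical care is needed.
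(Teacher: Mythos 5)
Your proof is correct and takes essentially the same route as the paper's: part (a) via the automatic semistability in class $(\be,0)$, the unique simple object $(0,\C,0)$ in class $(0,1)$, and the destabilizing sequence $0\to(E,0,0)\to(E,V,s)\to(0,V,0)\to 0$ for class $(\be,1)$ with $\be\ne 0$; and part (b) by matching pair stability with $\ti\tau$-semistability pointwise and exhibiting $\fM_\rss^{(\al,1)}(\ti\tau)$ as a trivial $\bG_m$-gerbe over $\M_\stp^{\al,n}(\tau')$, your section coming from the universal pair being exactly the paper's trivializing 1-morphism $\pi_1$. One small gloss: in your automorphism lemma, semistability of $E$ alone only gives $\tau([K])\le\tau([E])$; the equality follows because $\Im(f-\id_E)\cong E/K$ is both a quotient and a subsheaf of the semistable sheaf $E$ (the seesaw argument used in the paper's proof of Proposition \ref{dt12prop1}(a)), after which pair stability forces $K=E$ as you say.
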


\begin{proof} For (a), all objects $(E,0,0)$ in class $(\be,0)$ are
$\dot\tau$-semistable, so $\fM_\rss^{\smash{(\be,0)}}(\dot\tau)\!\ab
=\fM_{\B_p}^{\smash{(\be,0)}}\cong\fM_{\A_p}^{\smash{\be}}\cong
\fM_\rss^{\smash{\be}}(\tau)$. The unique object in class $(0,1)$ in
$\B_p$ up to isomorphism is $(0,\C,0)$, and it has no nontrivial
subobjects, so it is $\dot\tau$-semistable. The automorphism group
of $(0,\C,0)$ in $\B_p$ is $\bG_m$. Therefore
$\fM_\rss^{\smash{(0,1)}}(\dot\tau)\cong [\Spec\C/\bG_m]$ is the
point $(0,\C,0)$. Suppose $(E,V,s)$ lies in class $(\be,1)$ in
$\B_p$ for $\be\ne 0$ in $C(\A_p)$. Consider the short exact
sequence in~$\B_p$:
\e
\begin{gathered}
\xymatrix@R=10pt@C=20pt{ 0 \ar[r] & 0 \ar[r] \ar[d] & V\ot\cO_X(-n)
\ar[d]^s \ar[r]_{\id} & V\ot\cO_X(-n) \ar[d]
\ar[r] & 0\phantom{,} \\
0 \ar[r] & E \ar[r] & E \ar[r] & 0 \ar[r] & 0,}
\end{gathered}
\label{dt13eq15}
\e
that is, $0\ra (E,0,0)\ra (E,V,s)\ra (0,V,0)\ra 0$. We have
$[(E,0,0)]=(\be,0)$ and $[(0,V,0)]=(0,1)$ in $K(\B_p)$, and
$\dot\tau(\be,0)=0>-1=\dot\tau(0,1)$, so \eq{dt13eq15}
$\dot\tau$-destabilizes $(E,V,s)$. Thus any object $(E,V,s)$ in
class $(\be,1)$ in $\B_p$ is $\dot\tau$-unstable, and
$\fM_\rss^{\smash{(\be,1)}}(\dot\tau)=\es$, proving~(a).

For (b), points of $\M_\stp^{\al,n}(\tau')$ are morphisms
$s:\cO_X(-n)\ra E$ with $[E]=\al$, and points of
$\fM_\rss^{\smash{(\al,1)}}(\ti\tau)$ are triples $(E,V,s)$ with
$[E]=\al$, $\dim V=1$ and $s:V\ot\cO_X(-n)\ra E$ a morphism. Define
a 1-morphism $\pi_1:\M_\stp^{\al,n}(\tau')\ra
\fM_\rss^{\smash{(\al,1)}} (\ti\tau)$ by $\pi_1:\bigl(s:\cO_X(-n)\ra
E\bigr)\longmapsto(E,\C,s)$. It is straightforward to check that
$s:\cO_X(-n)\ra E$ is a $\tau'$-stable pair if and only if
$(E,\C,s)$ is $\ti\tau$-semistable in $\B_p$.

Define another 1-morphism $\pi_2:\fM_\rss^{\smash{(\al,1)}}(\ti\tau)
\ra\M_\stp^{\al,n}(\tau')$ by $\pi_2:(E,V,s)\mapsto\bigl(s(v):
\cO_X(-n)\ra E\bigr)$, for some choice of $0\ne v\in V$. If $v,v'$
are possible choices then $v'=\la v$ for some $\la\in\bG_m$, since
$\dim V=1$. The isomorphism $\la\id_E:E\ra E$ is an isomorphism
between the stable pairs $s(v):\cO_X(-n)\ra E$ and
$s(v'):\cO_X(-n)\ra E$, so they have the same isomorphism class, and
define the same point in $\M_\stp^{\al,n}(\tau')$. Thus $\pi_2$ is
well-defined.

On $\C$-points, $\pi_1,\pi_2$ define inverse maps. The scheme
$\M_\stp^{\al,n}(\tau')$ parametrizes isomorphism classes of objects
parametrized by $\smash{\fM^{(\al, 1)}_{ss}(\ti\tau)}$. Therefore,
by \cite[Rem.~3.19]{LaMo}, $\pi_2:\fM^{(\al,1)}_{ss}(\ti\tau)
\ra\M_\stp^{\al,n}(\tau')$ is a gerbe, which has fibre
$[\Spec\C/\bG_m]$. Also $\pi_1$ is a trivializing section of
$\pi_2$, so by \cite[Lem.~3.21]{LaMo}, $\fM^{(\al,1)}_{ss}(\ti\tau)$
is a trivial $\bG_m$-gerbe over $\M_\stp^{\al,n}(\tau')$, that
is,~$\fM_\rss^{\smash{(\al,1)}}(\ti\tau)\cong
\M_\stp^{\al,n}(\tau')\times[\Spec\C/\bG_m]$.
\end{proof}

\subsection{Stack function identities in $\SFa(\fM_{\B_p})$}
\label{dt133}

As in \S\ref{dt31} we have a Ringel--Hall algebra\index{Ringel--Hall
algebra} $\SFa(\fM_{\B_p})$ with multiplication $*$, and a Lie
subalgebra $\SFai(\fM_{\B_p})$. As in \S\ref{dt32}, since
$(\ti\tau,\ti T,\le)$ and $(\dot\tau,\dot T,\le)$ are permissible we
have elements $\bdss^{\smash{(\be,d)}}
(\ti\tau),\bdss^{\smash{(\be,d)}}(\dot\tau)$ in $\SFa(\fM_{\B_p})$
for $(\be,d)\in C(\B_p)$, and we define
$\bep^{\smash{(\be,d)}}(\ti\tau),\bep^{\smash{(\be,d)}}(\dot\tau)$
by \eq{dt3eq4}, which lie in $\SFai(\fM_{\B_p})$ by Theorem
\ref{dt3thm1}. Applying Theorem \ref{dt3thm2} with dominating
permissible stability condition $(\hat\tau,\hat T,\le)$ yields:

\begin{prop} For all\/ $(\be,d)$ in $C(\B_p)$ we have the identity
in $\SFa(\B_p):$
\e
\begin{gathered}
\bep^{(\be,d)}(\ti\tau)= \!\!\!\!\!\!\!\!\!\!\!\!\!
\sum_{\begin{subarray}{l}n\ge 1,\;(\be_1,d_1),\ldots,(\be_n,d_n)\in
C(\B_p):\\ (\be_1,d_1)+\cdots+(\be_n,d_n)=(\be,d)\end{subarray}}
\!\!\!\!\!\!\!\!\!\!\!\!\!\!\!\!\!\!\!\!\!\!\!
\begin{aligned}[t]
U\bigl((\be_1,d_1),&\ldots,(\be_n,d_n);\dot\tau,\ti\tau\bigr)\cdot\\
&\bep^{(\be_1,d_1)}(\dot\tau)*\cdots* \bep^{(\be_n,d_n)}(\dot\tau).
\end{aligned}
\end{gathered}
\label{dt13eq16}
\e
There are only finitely many nonzero terms in\/~\eq{dt13eq16}.
\label{dt13prop3}
\end{prop}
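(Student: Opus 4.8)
The plan is to obtain \eq{dt13eq16} as a direct application of the wall-crossing formula of Theorem \ref{dt3thm2}, namely equation \eq{dt3eq10}, to the abelian category $\B_p$ equipped with the three weak stability conditions $(\dot\tau,\dot T,\le),(\ti\tau,\ti T,\le),(\hat\tau,\hat T,\le)$ of Definition \ref{dt13def3}, with $(\hat\tau,\hat T,\le)$ playing the role of the dominating stability condition. Consequently the real content of the argument is the verification that the hypotheses of Theorem \ref{dt3thm2} are met in the $\B_p$ setting, and essentially all of this has already been arranged in \S\ref{dt131}--\S\ref{dt132}.

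First I would record that $\B_p,K(\B_p)$ satisfy Assumption \ref{dt3ass} over $\K=\C$ by Lemma \ref{dt13lem1}, so that the Ringel--Hall algebra $\SFa(\fM_{\B_p})$, its Lie subalgebra $\SFai(\fM_{\B_p})$, the elements $\bdss^{(\be,d)}(\dot\tau),\bdss^{(\be,d)}(\ti\tau)$, and their $\bep$-versions $\bep^{(\be,d)}(\dot\tau),\bep^{(\be,d)}(\ti\tau)$ defined via \eq{dt3eq4} all make sense, with the latter lying in $\SFai(\fM_{\B_p})$ by Theorem \ref{dt3thm1}. Next I would check, by the short case analysis over splittings $(\be,d)=(\be_1,d_1)+(\be_2,d_2)$ in $C(\B_p)$ (so $d=d_1+d_2$ with $d_1,d_2\ge 0$), that $\dot\tau$ and $\ti\tau$ satisfy the weak stability axiom of Definition \ref{dt3def3}, and that $\hat\tau$ does so trivially since all its values are $0$. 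Permissibility of all three then follows from Definition \ref{dt13def3}: $\B_p$ is artinian by Lemma \ref{dt13lem1}, hence $\dot\tau$- and $\ti\tau$-artinian, while $\fM_\rss^{(\be,d)}(\dot\tau),\fM_\rss^{(\be,d)}(\ti\tau)$ are open substacks of $\fM_{\B_p}^{(\be,d)}$, which is of finite type by Lemma \ref{dt13lem1}. Finally, since $\hat\tau\equiv 0$ the implication ``$\dot\tau(\ga)\le\dot\tau(\de)\Rightarrow\hat\tau(\ga)\le\hat\tau(\de)$'' holds vacuously, and likewise for $\ti\tau$, so $(\hat\tau,\hat T,\le)$ dominates both $(\dot\tau,\dot T,\le)$ and $(\ti\tau,\ti T,\le)$ in the sense of Definition \ref{dt3def6}.

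With these checks in hand, \eq{dt3eq10} applied with $\A=\B_p$, $\tau=\dot\tau$, $\ti\tau=\ti\tau$, $\hat\tau=\hat\tau$ and $\al=(\be,d)$ is precisely \eq{dt13eq16}. For the finiteness claim I would invoke the concluding part of Theorem \ref{dt3thm2}: over each point of $\fM_{\B_p}$ there are finitely many nonzero terms, and moreover every term on the right of \eq{dt13eq16} is supported on the open substack $\fM_\rss^{(\be,d)}(\hat\tau)$ of $\fM_{\B_p}$, which here is simply $\fM_{\B_p}^{(\be,d)}$; since this is of finite type by Lemma \ref{dt13lem1}, only finitely many terms are nonzero.

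As for obstacles: there is no genuinely hard step here, since all the substantial work has been front-loaded into the construction of $\B_p$ and the verification of Assumption \ref{dt3ass} in Lemma \ref{dt13lem1} (which itself rests on boundedness of the family of $\tau$-semistable sheaves with bounded Hilbert polynomial). The one point deserving care rather than difficulty is the global finiteness of the sum: the technical problem flagged at the end of \S\ref{dt33}, where changes of stability condition on $\coh(X)$ for $\dim X\ge 3$ need not be globally finite, does not intervene here precisely because the dominating stability condition $\hat\tau$ is the trivial one and $\fM_{\B_p}^{(\be,d)}$ is of finite type, so Theorem \ref{dt3thm2} applies directly rather than only its local stack function version.
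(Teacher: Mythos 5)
Your proposal is correct and follows exactly the paper's route: the paper obtains \eq{dt13eq16} by applying Theorem \ref{dt3thm2} to $\B_p$ with $(\hat\tau,\hat T,\le)$ as the dominating permissible weak stability condition, the permissibility and domination checks having already been made in Definition \ref{dt13def3} using Lemma \ref{dt13lem1}. Your extra remarks on finiteness via the finite type of $\fM_{\B_p}^{(\be,d)}$ match the discussion after \eq{dt3eq12} that underlies the finiteness claim in Theorem \ref{dt3thm2}.
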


We now take $(\be,d)=(\al,1)$ in \eq{dt13eq16}, where $\al$ is as
fixed in Definition \ref{dt13def1}. Then each term has
$d_1+\cdots+d_n=1$ with $d_i\ge 0$, so we have $d_k=1$ for some
$k=1,\ldots,n$ and $d_i=0$ for $i\ne k$. But
$\bep^{(\be_k,1)}(\dot\tau)$ is supported on
$\fM_\rss^{\smash{(\be_k,1)}}(\dot\tau)$ which is empty for
$\be_k\ne 0$ by Proposition \ref{dt13prop2}(a). Thus the only
nonzero terms in \eq{dt13eq16} have $(\be_i,d_i)=(\be_i,0)$ for
$i\ne k$ and $\be_i\in C(\A_p)$ and $(\be_k,d_k)=(0,1)$. Changing
notation to $\al_i=\be_i$ for $i<k$ and $\al_i=\be_{i+1}$ for $i\ge
k$ gives:
\e
\begin{split}
&\bep^{(\al,1)}(\ti\tau)=\\
&\sum_{\begin{subarray}{l}1\le k\le n,\\
\al_1,\ldots,\al_{n-1}\in C(\A_p):\\
\al_1+\cdots+\al_{n-1}=\al\end{subarray}
\!\!\!\!\!\!\!\!\!\!\!\!\!\!\!\!\!\!\!\!\!\!\!\!\!\!\!\!\! }
\begin{aligned}[t]
U\bigl((\al_1,0),\ldots,(\al_{k-1},0),(0,1),(\al_k,0),\ldots,
(\al_{n-1},0);\dot\tau,\ti\tau\bigr)&\cdot\\
\bep^{(\al_1,0)}(\dot\tau)*\cdots*\bep^{(\al_{k-1},0)}(\dot\tau)*
\bep^{(0,1)}(\dot\tau)&\\
*\,\bep^{(\al_k,0)}(\dot\tau)*\cdots*\bep^{(\al_{n-1},0)}(\dot\tau)&.
\end{aligned}
\end{split}
\label{dt13eq17}
\e

\begin{prop} In equation \eq{dt13eq17} we have
\e
\begin{split}
U\bigl((\al_1,0),\ldots,(\al_{k-1},0),&(0,1),(\al_k,0),\ldots,
(\al_{n-1},0);\dot\tau,\ti\tau\bigr)\\
&=\frac{(-1)^{n-k}}{(k-1)!(n-k)!}.
\end{split}
\label{dt13eq18}
\e
\label{dt13prop4}
\end{prop}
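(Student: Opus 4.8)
The plan is to unwind the definition \eq{dt3eq8} of $U$ directly, exploiting the very simple form of the weak stability conditions $(\dot\tau,\dot T,\le)$ and $(\ti\tau,\ti T,\le)$ on $\B_p$ from Definition \ref{dt13def3}. Write $(\ga_1,\ldots,\ga_n)$ for the list $\bigl((\al_1,0),\ldots,(\al_{k-1},0),(0,1),(\al_k,0),\ldots,(\al_{n-1},0)\bigr)$, so that $\ga_k=(0,1)$, every other $\ga_j$ has zero second component, and $\ga_1+\cdots+\ga_n=(\al,1)$. The first step is to determine which decompositions $0=a_0<\cdots<a_m=n$ and $0=b_0<\cdots<b_l=m$ appearing in \eq{dt3eq8} give a nonzero contribution. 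Since $\dot\tau(\be_i)$ depends only on whether the block $(a_{i-1},a_i]$ contains the index $k$, the condition $\dot\tau(\be_i)=\dot\tau(\ga_j)$ for all $a_{i-1}<j\le a_i$ forces the block containing $k$ to be exactly $\{k\}$; hence $k-1$ and $k$ both occur among the $a_i$, and $\be_{i_0}=(0,1)$ for the unique block index $i_0$ with $a_{i_0-1}=k-1$, $a_{i_0}=k$. Likewise $\ti\tau(\ga_i)$ is determined by whether the group $\{\be_{b_{i-1}+1},\ldots,\be_{b_i}\}$ contains $\be_{i_0}$, so the requirement $\ti\tau(\ga_i)=\ti\tau(\al,1)$ for all $i$ forces $l=1$. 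Thus the double sum collapses to a sum over the $a_i$ alone, subject to $\{k\}$ being one of the blocks, and with the coefficient reduced to $\sum S(\be_1,\ldots,\be_m;\dot\tau,\ti\tau)\prod_{i=1}^m\frac{1}{(a_i-a_{i-1})!}$.

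The second step is to evaluate $S(\be_1,\ldots,\be_m;\dot\tau,\ti\tau)$ on these surviving decompositions, checking conditions (a),(b) of Definition \ref{dt3def6} at each junction $i=1,\ldots,m-1$. Here $\dot\tau(\be_i)=-1$ precisely when $i=i_0$, and $\ti\tau(\be_1+\cdots+\be_i)=1$ precisely when $i\ge i_0$, while $\ti\tau(\be_{i+1}+\cdots+\be_m)=1$ precisely when $i<i_0$. Going through the cases, one finds: for $i\ge i_0$ condition (a) holds; for $i=i_0-1$ condition (b) holds; and for $i<i_0-1$ neither holds. Consequently $S=0$ unless $i_0\le 2$, i.e. unless the blocks to the left of $\{k\}$ consist of the single block $\{1,\ldots,k-1\}$ (vacuously when $k=1$), and in the surviving cases $r=m-i_0$ and $S=(-1)^{m-i_0}$, the sign of the number of blocks lying strictly to the right of $\{k\}$.

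The third step is to assemble the answer. The left block $\{1,\ldots,k-1\}$ contributes a factor $\frac{1}{(k-1)!}$ (equal to $1$ when $k=1$), the block $\{k\}$ contributes $\frac{1}{1!}$, and the remaining blocks range over all ways of cutting $\{k+1,\ldots,n\}$ into consecutive nonempty blocks of sizes $c_1,\ldots,c_r$ with $c_1+\cdots+c_r=n-k$, each such choice contributing $(-1)^r\prod_{j}\frac{1}{c_j!}$. Summing over $r\ge 0$ and over all such compositions picks out the coefficient of $x^{n-k}$ in $\sum_{r\ge 0}(-1)^r(e^x-1)^r=\bigl(1+(e^x-1)\bigr)^{-1}=e^{-x}$, namely $\frac{(-1)^{n-k}}{(n-k)!}$. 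Multiplying by the factor $\frac{1}{(k-1)!}$ from the left block gives $U=\frac{(-1)^{n-k}}{(k-1)!(n-k)!}$, as required; this also reproduces the claimed answer in the extreme case $k=1$.

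I expect the only genuine obstacle to be the bookkeeping in the $S$-coefficient computation: one must track carefully which of (a),(b) holds at each of the $m-1$ junctions, handle the boundary cases $m=1$, $k=1$ and $n=k$, and confirm that $i_0>2$ really does force a vanishing term. Once that is in place, the collapse of the $l$-sum to $l=1$ and the generating-function identity $\sum_{r\ge0}(-1)^r(e^x-1)^r=e^{-x}$ are routine.
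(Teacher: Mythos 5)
Your proposal is correct and follows essentially the same route as the paper's proof: you collapse \eq{dt3eq8} by forcing the block containing $(0,1)$ to be the singleton $\{k\}$ and $l=1$, evaluate $S$ by checking conditions (a),(b) of Definition \ref{dt3def6} at each junction (finding $S=0$ unless there is at most one block left of $\{k\}$, with sign given by the number of right blocks), and finish with the identity $\sum_{r\ge 0}(-1)^r(e^x-1)^r=e^{-x}$, which is exactly the content of the paper's Lemma \ref{dt13lem2}. The only differences are cosmetic, namely your uniform treatment of the cases $k=1$ and $k>1$ via the block index $i_0$, where the paper splits into $p=1$ and $p=2$.
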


\begin{proof} The coefficient $U(\cdots;\dot\tau,\ti\tau)$ is defined
in equation \eq{dt3eq8}. Consider some choices $l,m$, $a_i,b_i,
\be_i,\ga_i$ in this sum. There are two conditions in \eq{dt3eq8}.
The first, that $\dot\tau(\be_i)=\dot\tau(\al_j)$, $i=1,\ldots,m$,
$a_{i-1}<j\le a_i$, holds if and only if we have $a_{p-1}=k-1$ and
$a_p=k$ for some $p=1,\ldots,m$. The second, that $\ti\tau(\ga_i)
=\ti\tau((\al,1))$, $i=1,\ldots,l$, is equivalent to $l=1$, since if
$l>1$ then one $\ga_i$ is of the form $(\be,1)$, with
$\ti\tau(\ga_i)=1$ and the other $\ga_j$ are of the form $(\be,0)$,
with $\ti\tau(\ga_j)=0$. Thus we may rewrite \eq{dt3eq8} as
\ea
&U\bigl((\al_1,0),\ldots,(\al_{k-1},0),(0,1),(\al_k,0),
\ldots,(\al_{n-1},0);\dot\tau,\ti\tau\bigr)=
\label{dt13eq19}\\
&\sum_{\begin{subarray}{l} \phantom{wiggle}\\
1\le p\le m\le n,\;\> 0=a_0<a_1<\cdots<a_{p-1}=k-1, \;\>
k=a_p<a_{p+1}<\cdots<a_m=n.\\
\text{Define $\be_1,\ldots,\be_m\in C(\B_p)$ by
$\be_i=(\al_{a_{i-1}+1}+\cdots+\al_{a_i},0)$, $i<p$,}\\
\text{$\be_p=(0,1)$, $\be_i=(\al_{a_{i-1}}+\cdots+\al_{a_i-1},0)$,
$i>p$.}\end{subarray}
\!\!\!\!\!\!\!\!\!\!\!\!\!\!\!\!\!\!\!\!\!\!\!\!\!\!\!\!\!\!\!\!\!
\!\!\!\!\!\!\!\!\!\!\!\!\!\!\!\!\!\!\!\!\!\!\!\!\!\!\!\!\!\!\!\!\!
\!\!\!\!\!\!\!\!\!\!\!\!\!\!\!\!\!\!\!\!\!\!\!\!\!\!\!\!\!\!}
S(\be_1,\be_2,\ldots,\be_m;\dot\tau,\ti\tau)
\cdot\prod_{i=1}^m\frac{1}{(a_i-a_{i-1})!}\,. \nonumber
\ea

In \eq{dt13eq19} we have $\be_p=(0,1)$ and $\be_i=(\al'_i,0)$ for
$\al'_i\in C(\A_p)$, $i\ne p$. Using Definition \ref{dt13def3}, in
Definition \ref{dt3def6} we see that $i=1,\ldots,m-1$ satisfies
neither (a) nor (b) if $i<p-1$, satisfies (b) when $i=p-1$, and
satisfies (a) for $i\ge p$. Therefore
\e
S(\be_1,\be_2,\ldots,\be_m;\dot\tau,\ti\tau)=\begin{cases}
(-1)^{m-1}, & p=1, \\ (-1)^{m-2}, & p=2, \\ 0, & p>2. \end{cases}
\label{dt13eq20}
\e
Since $0=a_0<\cdots<a_{p-1}=k-1$, we see that $p=1$ if $k=1$, and
$p>1$ if $k>1$. So we divide into two cases $k=1$ in \eq{dt13eq21}
and $k>1$ in \eq{dt13eq22}, and rewrite \eq{dt13eq19} using
\eq{dt13eq20} in each case:
\ea
&U\bigl((0,1),(\al_1,0),
\ldots,(\al_{n-1},0);\dot\tau,\ti\tau\bigr)=
\sum_{\begin{subarray}{l} \phantom{wiggle}\\
1\le m\le n,\;\> 1=a_1<a_2<\cdots<a_m=n\\
\end{subarray}
\!\!\!\!\!\!\!\!\!\!\!\!\!\!\!\!\!\!\!\!\!\!\!\!\!\!\!\!\!\!\!\!\!
\!\!\!\!\!\!\!\!\!\!\!\!\!\!\!\!\!\!\!\!\!\!\!\!\!\!\!\!\!\!\!\!\! }
(-1)^{m-1}\cdot\prod_{i=2}^m\frac{1}{(a_i-a_{i-1})!}\,,
\label{dt13eq21}\\
&U\bigl((\al_1,0),\ldots,(\al_{k-1},0),(0,1),(\al_k,0),
\ldots,(\al_{n-1},0);\dot\tau,\ti\tau\bigr)=
\label{dt13eq22}\\
&\qquad\qquad\frac{1}{(k-1)!}\cdot\sum_{\begin{subarray}{l}
\phantom{wiggle}\\
2\le m\le n,\;\> k=a_2<a_3<\cdots<a_m=n\end{subarray}
\!\!\!\!\!\!\!\!\!\!\!\!\!\!\!\!\!\!\!\!\!\!\!\!\!\!\!\!\!\!\!\!\!
\!\!\!\!\!\!\!\!\!\!}
(-1)^{m-2}\cdot\prod_{i=3}^m\frac{1}{(a_i-a_{i-1})!}\,. \nonumber
\ea
Here the factor $1/(k-1)!$ in \eq{dt13eq22} is $1/(a_1-a_0)!$ in
\eq{dt13eq19}, since $a_0=0$, $a_1=k-1$, and $a_2=k$. We evaluate a
rewritten version of the sums in \eq{dt13eq21} and~\eq{dt13eq22}:

\begin{lem} For all\/ $l\ge 1$ we have
\e
\sum_{\begin{subarray}{l} \phantom{wiggle}\\
1\le m\le l,\;\> 0=a_0<a_1<\cdots<a_m=l.\\
\end{subarray}
\!\!\!\!\!\!\!\!\!\!\!\!\!\!\!\!\!\!\!\!\!\!\!\!\!\!\!\!\!\!\!\!\!
\!\!\!\!\!\!\!\!\!\!\!\!\!\!\!\!\!\!\!\!\!\!\!\!\!\!\!\!\!\!\!\!\! }
(-1)^m\cdot\prod_{i=1}^m\frac{1}{(a_i-a_{i-1})!}=\frac{(-1)^l}{l!}\,.
\label{dt13eq23}
\e
\label{dt13lem2}
\end{lem}

\begin{proof} Write $T_l$ for the l.h.s.\ of \eq{dt13eq23}.
Then in formal power series we have:
\e
\begin{aligned}
&\sum_{l=1}^\iy T_lt^l = \sum_{l=1}^\infty
\sum_{\begin{subarray}{l} \phantom{wiggle}\\
1\le m\le l,\;\> 0=a_0<a_1<\cdots<a_m=l.\\
\end{subarray}
\!\!\!\!\!\!\!\!\!\!\!\!\!\!\!\!\!\!\!\!\!\!\!\!\!\!\!\!\!\!\!\!\!
\!\!\!\!\!\!\!\!\!\!\!\!\!\!\!\!\!\!\!\!\!\!\!\!\!\!\!\!\!\!\!\!\! }
(-1)^m\cdot\prod_{i=1}^m\frac{t^{a_i-a_{i-1}}}{(a_i-a_{i-1})!}
=\sum_{l=1}^\infty
\sum_{\begin{subarray}{l} \phantom{wiggle}\\
1\le m\le l,\;\> b_1,\ldots, b_m\ge 1, \\
b_1+\cdots+b_m=l\end{subarray}
\!\!\!\!\!\!\!\!\!\!\!\!\!\!\!\!\!\!\!\!\!\!\!\!\!\!\!\!\!\!\!\!\!
\!\!\!\!\!\!\!\! }
(-1)^m\cdot\prod_{i=1}^m\frac{t^{b_i}}{(b_i)!}\\
&\;=\sum_{m=1}^\iy (-1)^m
\raisebox{-4pt}{\begin{Large}$\displaystyle\Bigl[$\end{Large}}
\sum_{j=1}^{\iy}\frac{t^j}{j!}
\raisebox{-4pt}{\begin{Large}$\displaystyle\Bigr]$\end{Large}}^m
=\sum_{m=1}^\iy(-1)^m(e^t-1)^m=\frac{-(e^t-1)}{1+(e^t-1)}=e^{-t}-1,
\end{aligned}
\!\!\!\!\!\!\!\!\!\!\!\!\!\!\!\!\!
\label{dt13eq24}
\e
where in the second step we set $b_i=a_i-a_{i-1}$, and in the third
we regard $l$ as defined by $b_1+\cdots+b_m=l$ and drop the sum over
$l$, and then replace the sum over $b_1,\ldots,b_m$ by an $m^{\rm
th}$ power of a sum over $j$. Equating coefficients of $t^l$ in
\eq{dt13eq24} gives~\eq{dt13eq23}.
\end{proof}

Now the r.h.s.\ of \eq{dt13eq21} agrees with the l.h.s.\ of
\eq{dt13eq23} with $l=n-1$, replacing $n,m,a_1,\ldots,a_m$ in
\eq{dt13eq21} by $l+1,m+1,a_0,\ldots,a_m$ respectively. Thus
\eq{dt13eq21} and Lemma \ref{dt13lem2} prove the case $k=1$ of
\eq{dt13eq18}. Similarly, apart from the factor $1/(k-1)!$, the
r.h.s.\ of \eq{dt13eq22} agrees with the l.h.s.\ of \eq{dt13eq23}
with $l=n-k$, replacing $n,m,a_2,\ldots,a_m$ in \eq{dt13eq21} by
$l+k,m+2,a_0,\ldots,a_m$ respectively. This gives the case $k>1$ of
\eq{dt13eq18}, and completes the proof of Proposition
\ref{dt13prop4}.
\end{proof}

Substituting \eq{dt13eq18} into \eq{dt13eq17} and replacing $n$ by
$l+1$ and $k$ by $k-1$ gives
\e
\begin{gathered}
\bep^{(\al,1)}(\ti\tau)=
\sum_{\begin{subarray}{l}\phantom{wiggle}\\
0\le k\le l,\;\> \al_1,\ldots,\al_l\in C(\A_p):\\
\al_1+\cdots+\al_l=\al\end{subarray}
\!\!\!\!\!\!\!\!\!\!\!\!\!\!\!\!\!\!\!\!\!\!\!\!\!\!\!\!\!
\!\!\!\!\!\!\!\!\!\!\!\!\!\!\!\!\!\! }
\begin{aligned}[t]
\frac{(-1)^{l-k}}{k!(l-k)!} \cdot
\bep^{(\al_1,0)}(\dot\tau)*\cdots*\bep^{(\al_k,0)}(\dot\tau)*
\bep^{(0,1)}(\dot\tau)&\\
*\,\bep^{(\al_{k+1},0)}(\dot\tau)*\cdots*\bep^{(\al_l,0)}(\dot\tau)&.
\end{aligned}
\end{gathered}
\label{dt13eq25}
\e
As in Theorem \ref{dt3thm3}, by \cite[Th.~5.4]{Joyc6} we can rewrite
the wall crossing formula\index{wall-crossing formula} \eq{dt13eq25} in
terms of the Lie bracket $[\,,\,]$ on $\SFai(\fM_{\B_p})$, rather
than the Ringel--Hall\index{Ringel--Hall algebra} multiplication $*$ on
$\SFa(\fM_{\B_p})$. In this case, we can do it explicitly.

\begin{prop} In the situation above we have
\e
\begin{split}
&\bep^{(\al,1)}(\ti\tau)=\\[-3pt]
&\sum_{l\ge 1,\;\> \al_1,\ldots,\al_l\in C(\A_p):
\;\>\al_1+\cdots+\al_l=\al
\!\!\!\!\!\!\!\!\!\!\!\!\!\!\!\!\!\!\!\!\!\!\!\!\!\!\!\!\!
\!\!\!\!\!\!\!\!\!\!\!\!\!\!\!\!\!\!\!\!\!\!\!\!\!\!\!\!\!
\!\!\!\!\!\!\!\!\!\!\!\!\!\!\!\!\!\!\!\!\!\!\!\!\!\!\! }
\frac{(-1)^l}{l!}[[\cdots[[\bep^{(0,1)}(\dot\tau),\bep^{(\al_1,0)}
(\dot\tau)],\bep^{(\al_2,0)}(\dot\tau)],\cdots],\bep^{(\al_l,0)}(\dot\tau)].
\end{split}
\label{dt13eq26}
\e
\label{dt13prop5}
\end{prop}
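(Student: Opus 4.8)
The plan is to derive \eq{dt13eq26} from the already-established wall-crossing formula \eq{dt13eq25} by an elementary comparison; this is the explicit incarnation in our setting of the general passage from Ringel--Hall products to iterated Lie brackets in Theorem \ref{dt3thm3} and \cite[Th.~5.4]{Joyc6}, and it can be done by hand here because $\bep^{(0,1)}(\dot\tau)$ enters every term of \eq{dt13eq25} exactly once, and linearly. Write $a=\bep^{(0,1)}(\dot\tau)$ and $x_\be=\bep^{(\be,0)}(\dot\tau)$ for $\be\in C(\A_p)$, and work in the completed Ringel--Hall algebra of \eq{dt3eq6}--\eq{dt3eq7}, with $B=\sum_\be x_\be$ and $R_B(u)=[u,B]$. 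Summing \eq{dt13eq25} over all $\al\in C(\A_p)$ collects the right-hand side into $\bigl(\sum_{m\ge 0}\tfrac{1}{m!}B^{*m}\bigr)*a*\bigl(\sum_{n\ge 0}\tfrac{(-1)^n}{n!}B^{*n}\bigr)-a=e^{B}*a*e^{-B}-a$, since for fixed length $l$ the inner sum over tuples $(\al_1,\ldots,\al_l)$ turns $x_{\al_1}*\cdots*x_{\al_k}*a*x_{\al_{k+1}}*\cdots*x_{\al_l}$ into $B^{*k}*a*B^{*(l-k)}$; meanwhile the $\al$-sum of the right-hand side of \eq{dt13eq26} is $\sum_{l\ge 1}\tfrac{(-1)^l}{l!}R_B^{l}(a)$ ($R_B$ applied $l$ times). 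Since $R_B=-\mathop{\rm ad}\nolimits_B$, this last expression equals $\sum_{l\ge 1}\tfrac{1}{l!}\mathop{\rm ad}\nolimits_B^{l}(a)=e^{\mathop{\rm ad}\nolimits_B}(a)-a$, and the standard identity $e^{B}*a*e^{-B}=e^{\mathop{\rm ad}\nolimits_B}(a)$ gives equality. Grading by the class in $K(\B_p)=K(\A_p)\oplus\Z$ is respected throughout, and for each fixed $\al$ only finitely many terms are nonzero (exactly as in \eq{dt3eq4} and Proposition \ref{dt13prop3}), so comparing $\al$-graded pieces recovers \eq{dt13eq26} with no convergence issue.

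To present this without any appeal to a completed algebra, the plan is to verify \eq{dt13eq26} monomial by monomial in $\SFa(\fM_{\B_p})$. Expanding each iterated bracket on the right of \eq{dt13eq26} via $[u,v]=u*v-v*u$ produces $\Q$-linear combinations of products $x_{\be_1}*\cdots*x_{\be_p}*a*x_{\ga_1}*\cdots*x_{\ga_q}$ with $\be_1+\cdots+\be_p+\ga_1+\cdots+\ga_q=\al$ --- precisely the shape of the terms in \eq{dt13eq25}. Tracking the expansion: each successive bracketing against $x_{\al_i}$ either appends $x_{\al_i}$ on the right with sign $+$ or prepends it on the left with sign $-$, the later prepends landing further left, so a given monomial $x_{\be_1}*\cdots*x_{\be_p}*a*x_{\ga_1}*\cdots*x_{\ga_q}$ (with $l=p+q$) arises from \eq{dt13eq26} once for each subset $S\subseteq\{1,\dots,l\}$ with $\md S=p$ --- which then pins down the unique tuple $(\al_1,\dots,\al_l)$ yielding it --- with coefficient $\tfrac{(-1)^l}{l!}(-1)^{\md S}=\tfrac{(-1)^q}{l!}$; summing over the $\binom lp$ such subsets gives total coefficient $\binom lp\tfrac{(-1)^q}{l!}=\tfrac{(-1)^q}{p!\,q!}$. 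In \eq{dt13eq25} the same monomial occurs only for $k=p$ and $(\al_1,\dots,\al_l)=(\be_1,\dots,\be_p,\ga_1,\dots,\ga_q)$, with coefficient $\tfrac{(-1)^{l-k}}{k!(l-k)!}=\tfrac{(-1)^q}{p!\,q!}$. The coefficients agree, so \eq{dt13eq25} and the expanded right-hand side of \eq{dt13eq26} are literally the same element of $\SFa(\fM_{\B_p})$, independently of any linear relations among such products. Finally $\bep^{(\al,1)}(\ti\tau)\in\SFai(\fM_{\B_p})$ by Theorem \ref{dt3thm1}, and the right-hand side of \eq{dt13eq26} lies there too since, by \S\ref{dt31}, $\SFai(\fM_{\B_p})$ is a Lie subalgebra of $\SFa(\fM_{\B_p})$ containing $a$ and all $x_\be$, so \eq{dt13eq26} holds in $\SFai(\fM_{\B_p})$ as stated.

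I do not anticipate a genuine obstacle here: the content is bookkeeping. The two points that need care are the sign convention in expanding the left-nested bracket $[[\cdots[a,x_1],\cdots],x_l]$ --- hence the relation $\mathop{\rm ad}\nolimits_B^{l}(a)=(-1)^l R_B^{l}(a)$ and the factor $(-1)^l/l!$ in \eq{dt13eq26} --- and the elementary identity $\binom lp/l!=1/(p!\,q!)$ used to collapse the sum over subsets $S$. One could instead quote \cite[Th.~5.4]{Joyc6} to rewrite \eq{dt13eq25} abstractly as an equation in $\SFai(\fM_{\B_p})$ in iterated brackets, but since $\bep^{(0,1)}(\dot\tau)$ enters linearly the self-contained argument above is shorter and yields the explicit coefficients appearing in \eq{dt13eq26}.
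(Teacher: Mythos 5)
Your proof is correct and takes essentially the same route as the paper's: both derive \eq{dt13eq26} from \eq{dt13eq25} by expanding the left-nested commutators into $2^l$ signed products of the $\bep^{(\al_i,0)}(\dot\tau)$ around $\bep^{(0,1)}(\dot\tau)$ and matching coefficients against \eq{dt13eq25}, your subset count $\binom{l}{p}\frac{(-1)^q}{l!}=\frac{(-1)^q}{p!\,q!}$ being exactly the combinatorics that the paper organizes instead by symmetrizing over $S_l$ acting on $(\al_1,\ldots,\al_l)$ in \eq{dt13eq27}--\eq{dt13eq28}. Your opening $e^{B}*a*e^{-B}=e^{\mathop{\rm ad}\nolimits_B}(a)$ heuristic does not appear in the paper, but the monomial-by-monomial comparison in your second paragraph is a complete and rigorous substitute, so nothing is missing.
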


\begin{proof} The term $[[\cdots[\bep^{(0,1)}(\dot\tau),\bep^{(\al_1,0)}
(\dot\tau)],\cdots],\bep^{(\al_l,0)}(\dot\tau)]$ in \eq{dt13eq26}
has $l$ ne\-sted commutators $[\,,\,]$, and so consists of $2^l$
terms. For each of these $2^l$ terms, let $k$ be the number of the
$l$ commutators in which we reverse the order of multiplication.
Then the sign of this term is $(-1)^k$, and $k$ $\bep^{(\al_i,0)}
(\dot\tau)$'s appear before $\bep^{(0,1)}(\dot\tau)$ in the product.
There are $\binom{l}{k}$ such terms for fixed $k$. Thus we have
\e
\begin{split}
[[\cdots&[[\bep^{(0,1)}(\dot\tau),\bep^{(\al_1,0)}(\dot\tau)],
\bep^{(\al_2,0)}(\dot\tau)],\cdots],\bep^{(\al_l,0)}(\dot\tau)]=\\
&\sum_{k=0}^l \binom{l}{k}\begin{aligned}[t]&\text{ terms of the
form }(-1)^k\bep^{(\al_{i_1},0)}(\dot\tau)*\cdots*
\bep^{(\al_{i_k},0)}(\dot\tau)\\
&\qquad*\bep^{(0,1)}(\dot\tau)*
\bep^{(\al_{i_{k+1}},0)}(\dot\tau)*\cdots*\bep^{(\al_{i_l},0)}(\dot\tau),
\end{aligned}
\end{split}
\label{dt13eq27}
\e
where $\{i_1,\ldots,i_l\}$ is some permutation of~$\{1,\ldots,l\}$.

Let us now sum \eq{dt13eq27} over all permutations of
$\{1,\ldots,l\}$, acting by permuting $\al_1,\ldots,\al_l$. The
permutations $\{i_1,\ldots,i_l\}$ are then also summed over all
permutations of $\{1,\ldots,l\}$, giving
\e
\begin{split}
\sum_{\si\in S_l}[[\cdots&[[\bep^{(0,1)}(\dot\tau),\bep^{(\al_{\si(1)},
0)}(\dot\tau)],\bep^{(\al_{\si(2)},0)}(\dot\tau)],\cdots],
\bep^{(\al_{\si(l)},0)}(\dot\tau)]=\\[-5pt]
&\sum_{\si\in S_l}\sum_{k=0}^l \binom{l}{k}\begin{aligned}[h]
(-1)^k\bep^{(\al_{\si(1)},0)}(\dot\tau)*\cdots*
\bep^{(\al_{\si(k)},0)}(\dot\tau)*\bep^{(0,1)}(\dot\tau)&\\
*\,\bep^{(\al_{\si(k+1)},0)}(\dot\tau)*\cdots*\bep^{(\al_{\si(l)},0)}
(\dot\tau)&,
\end{aligned}
\end{split}
\label{dt13eq28}
\e
where $S_l$ is the symmetric group of permutations
$\si:\{1,\ldots,l\}\ra\{1,\ldots,l\}$.

We now have
\begin{align*}
&\sum_{l\ge 1,\;\> \al_1,\ldots,\al_l\in C(\A_p):
\;\>\al_1+\cdots+\al_l=\al
\!\!\!\!\!\!\!\!\!\!\!\!\!\!\!\!\!\!\!\!\!\!\!\!\!\!\!\!\!
\!\!\!\!\!\!\!\!\!\!\!\!\!\!\!\!\!\!\!\!\!\!\!\!\!\!\!\!\!
\!\!\!\!\!\!\!\!\!\!\!\!\!\!\!\!\!\!\!\!\!\!\!\!\!\!\! }
\frac{(-1)^l}{l!}[[\cdots[[\bep^{(0,1)}(\dot\tau),\bep^{(\al_1,0)}
(\dot\tau)],\bep^{(\al_2,0)}(\dot\tau)],\cdots],\bep^{(\al_l,0)}(\dot\tau)]=\\
&\sum_{\begin{subarray}{l} l\ge 1,\\ \al_1,\ldots,\al_l\in C(\A_p):\\
\al_1+\cdots+\al_l=\al\end{subarray} \!\!\!\!\!\!\!\!\!\!\!\!\!\! }
\frac{(-1)^l}{(l!)^2}\sum_{\si\in S_l}\begin{aligned}[t]
[[\cdots[[\bep^{(0,1)}(\dot\tau),\bep^{(\al_{\si(1)},0)}
(\dot\tau)],\bep^{(\al_{\si(2)},0)}(\dot\tau)],\cdots]&,\\
\bep^{(\al_{\si(l)},0)}(\dot\tau)]&=
\end{aligned}\\
&\sum_{\begin{subarray}{l} l\ge 1,\\ \al_1,\ldots,\al_l\in C(\A_p):\\
\al_1+\cdots+\al_l=\al\end{subarray} \!\!\!\!\!\!\!\!\!\!\!\!\!\! }
\frac{(-1)^l}{(l!)^2}
\sum_{\si\in S_l}\sum_{k=0}^l \!\binom{l}{k}\begin{aligned}[t]
(-1)^k\bep^{(\al_{\si(1)},0)}(\dot\tau)\!*\!\cdots\!*\!
\bep^{(\al_{\si(k)},0)}(\dot\tau)\!*\!\bep^{(0,1)}(\dot\tau)&\\
*\,\bep^{(\al_{\si(k+1)},0)}(\dot\tau)*\cdots*\bep^{(\al_{\si(l)},0)}
(\dot\tau)=\,&
\end{aligned}\\
&\sum_{\begin{subarray}{l} l\ge 1,\\ \al_1,\ldots,\al_l\in C(\A_p):\\
\al_1+\cdots+\al_l=\al\end{subarray} \!\!\!\!\!\!\!\!\!\!\!\!\!\! }
\frac{(-1)^l}{l!}
\sum_{k=0}^l \binom{l}{k}\begin{aligned}[t]
(-1)^k\bep^{(\al_1,0)}(\dot\tau)*\cdots*
\bep^{(\al_k,0)}(\dot\tau)*\bep^{(0,1)}(\dot\tau)&\\
*\,\bep^{(\al_{k+1},0)}(\dot\tau)*\cdots*\bep^{(\al_l,0)}(\dot\tau)&=
\end{aligned}\\
&\sum_{\begin{subarray}{l}\phantom{wiggle}\\
0\le k\le l,\;\> \al_1,\ldots,\al_l\in C(\A_p):\\
\al_1+\cdots+\al_l=\al\end{subarray}
\!\!\!\!\!\!\!\!\!\!\!\!\!\!\!\!\!\!\!\!\!\!\!\!\!\!\!\!\!
\!\!\!\!\!\!\!\!\!\!\!\!\!\!\!\!\!\! }
\begin{aligned}[t]
\frac{(-1)^{l-k}}{k!(l-k)!} \cdot
\bep^{(\al_1,0)}(\dot\tau)*\cdots*\bep^{(\al_k,0)}(\dot\tau)*
\bep^{(0,1)}(\dot\tau)&\\
*\,\bep^{(\al_{k+1},0)}(\dot\tau)*\cdots*\bep^{(\al_l,0)}(\dot\tau)&
=\bep^{(\al,1)}(\ti\tau),
\end{aligned}
\end{align*}
using the fact that the sums over $\al_1,\ldots,\al_l\in C(\A_p)$
with $\al_1+\cdots+\al_l=\al$ are symmetric in permutations of
$\{1,\ldots,l\}$ in the first and third steps, \eq{dt13eq28} in the
second, and \eq{dt13eq25} in the fifth. This proves
equation~\eq{dt13eq26}.
\end{proof}

\subsection{A Lie algebra morphism $\ti\Psi{}^{\B_p}:\SFai(\fM_{\B_p})
\ra\ti L(\B_p)$}
\label{dt134}

We now define a Lie algebra morphism $\ti\Psi{}^{\B_p}:\SFai
(\fM_{\B_p})\ra\ti L(\B_p)$, which is a version of $\ti\Psi:
\SFai(\fM)\ra\ti L(X)$ in \S\ref{dt53} for our auxiliary abelian
category $\B_p$. Since as in \S\ref{dt131} we do not know $\B_p$ is
3-Calabi--Yau,\index{abelian category!3-Calabi--Yau} and also as we will
see below we only have good control of the Behrend
function\index{Behrend function|(} $\nu_{\smash{\fM_{\B_p}}}$ on a
bounded part of $\fM_{\smash{\B_p}}$, we will choose the Lie algebra
$\ti L(\B_p)$ to be small, a finite-dimensional, nilpotent Lie
algebra, and define $\ti\Psi{}^{\B_p}$ to be supported on
$\smash{\fM_{\B_p}^{(\be,d)}}$ for only finitely many~$(\be,d)\in
K(\B_p)$.

\begin{dfn} Define $\cal S$ to be the subset of $(\be,d)$ in
$C(\B_p)\subset K(\B_p)$ such that $P_\be(t)=\frac{k}{d!}p(t)$ for
$k=0,\ldots,N$ and $d=0$ or 1. (These were the conditions on
numerical classes in Proposition \ref{dt13prop1}.) Then $\cal S$ is
a finite set, as \cite[Th.~3.37]{HuLe2} implies that
$\tau$-semistable sheaves $E$ on $X$ with Hilbert polynomials
$\frac{k}{d!}p(t)$ for $k=0,\ldots,N$ can realize only finite many
numerical classes $\be\in K(\A_p)\subset K(\coh(X))$. Define a Lie
algebra $\ti L(\B_p)$ to be the $\Q$-vector space with basis of
symbols $\ti\la^{(\be,d)}$ for $(\be,d)\in \cal S$, with Lie bracket
\e
\begin{split}
&[\ti \la^{(\be,d)},\ti \la^{(\ga,e)}]=\\
&\;\>\begin{cases} (-1)^{\bar\chi^{\B_p}((\be,d),(\ga,e))}
\bar\chi^{\B_p}\bigl((\be,d),(\ga,e)\bigr)\ti \la^{(\be+\ga,d+e)}, &
(\be+\ga,d+e)\in{\cal S}, \\
0, & \text{otherwise,}\end{cases}
\end{split}
\label{dt13eq29}
\e
as in \eq{dt5eq4}. As $\bar\chi^{\B_p}$ is antisymmetric, and ${\cal
S}\subset K(\B_p)$ has the property that if $\ep,\ze,\eta\in \cal S$
and $\ep+\ze+\eta\in \cal S$ then $\ep+\ze,\ep+\eta,\ze+\eta\in \cal
S$, equation \eq{dt13eq29} satisfies the Jacobi identity, and makes
$\ti L(\B_p)$ into a finite-dimensional, nilpotent Lie algebra over
$\Q$. Now define a $\Q$-linear map
$\ti\Psi{}^{\B_p}:\SFai(\fM_{\B_p})\ra \ti L(\B_p)$ exactly as for
$\ti\Psi:\SFai(\fM)\ra\ti L(X)$ in Definition~\ref{dt5def1}.
\label{dt13def4}
\end{dfn}

We shall show that $\ti\Psi{}^{\B_p}$ is a {\it Lie algebra
morphism}, by modifying the proof for $\ti\Psi$ in Theorem
\ref{dt5thm5}. The two key ingredients in the proof of Theorem
\ref{dt5thm5} were, firstly, equation \eq{dt3eq14} writing the Euler
form $\bar\chi$ of $\coh(X)$ in terms of $\dim\Hom$ and $\dim\Ext^1$
in $\coh(X)$, and secondly, the identities \eq{dt5eq2}--\eq{dt5eq3}
for the Behrend function $\nu_{\fM}$ in Theorem \ref{dt5thm4}.
Proposition \ref{dt13prop1} proves the analogue of \eq{dt3eq14} in
the bounded part of $\B_p$ we need it for. Here is an analogue of
Theorem~\ref{dt5thm4}.

\begin{prop}{\bf(a)} If\/ $(\be,0)\in\cal S$ then
$\pi:\smash{\fM_{\B_p}^{(\be,0)}\ra\fM^\be}$ mapping
$(E,0,0)\ab\mapsto E$ is a $1$-isomorphism, and the Behrend
functions
satisfy\/~$\nu_{\smash{\fM_{\B_{\smash{p}}}^{(\be,0)}}}\equiv
\pi^*(\nu_\fM^\be)$.

If\/ $(\be,1)\in\cal S$ then
$\pi:\smash{\fM_{\B_{\smash{p}}}^{(\be,1)}}\ra\fM^\be$ mapping
$(E,V,s)\mapsto E$ is smooth of relative dimension
$\bar\chi\bigl([\cO_X(-n)],\be\bigr)-1,$
and\/~$\nu_{\fM_{\B_p}^{(\be,1)}}\equiv
(-1)^{\bar\chi([\cO_X(-n)],\be)-1}\pi^*(\nu_\fM^\be)$.

\noindent{\bf(b)} An analogue of Theorem {\rm\ref{dt5thm4}} holds in
$\B_p,$ with\/ $E_1,E_2\in\coh(X)$ replaced by
$(E_1,V_1,s_1),(E_2,V_2,s_2)\in\B_p$ such that\/ $[(E_1\op
E_2,V_1\op V_2,s_1\op s_2)]\in\cal S,$ and\/ $\Ext^1$ replaced by
$\Ext^1_{\B_p},$ and\/ $\bar\chi$ replaced by\/~$\bar\chi{}^{\B_p}$.
\label{dt13prop6}
\end{prop}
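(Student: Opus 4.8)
The plan is to prove (a) by identifying the stacks $\fM_{\B_p}^{(\be,0)}$ and $\fM_{\B_p}^{(\be,1)}$ explicitly as stacks over $\fM^\be$, and then to deduce (b) from Theorem \ref{dt5thm4} for $\coh(X)$ purely by Behrend function functoriality and the explicit $\Hom_{\B_p},\Ext^1_{\B_p}$ formulae already established in the proof of Proposition \ref{dt13prop1}, \emph{without} re-running the $\Crit(f)$/localization argument of \S\ref{dt10}. The point that makes this shortcut work is exactly part (a): on the $(\be,1)$-part of $\fM_{\B_p}$ the Behrend function is determined by that of $\fM$ together with a universal sign.

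\textbf{Part (a).} Since the objects of $\A_p$ are precisely the $\tau$-semistable sheaves with reduced Hilbert polynomial $p$, the assignment $(E,0,0)\mapsto E$ identifies $\fM_{\B_p}^{(\be,0)}$ with the open substack $\fM_\rss^\be(\tau)\subseteq\fM^\be$; as an open inclusion is smooth of relative dimension $0$, Proposition \ref{dt4prop2} (equivalently Theorem \ref{dt4thm1}(ii) and Corollary \ref{dt4cor1}) gives $\nu_{\fM_{\B_p}^{(\be,0)}}\equiv\pi^*(\nu_\fM^\be)$. For the $(\be,1)$-case write $m=\bar\chi([\cO_X(-n)],\be)$. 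By the choice of $n$ in Definition \ref{dt13def1}, every $E\in\A_p$ with $[E]=\be$ has $H^{>0}(E(n))=0$, so on any atlas $U\to\fM_\rss^\be(\tau)$ with universal sheaf $\cal E_U$ the sheaf $W_U=\pi_{U*}\bigl(\SHom(\cO_X(-n),\cal E_U)\bigr)$ is locally free of rank $m$. An object of $\fM_{\B_p}^{(\be,1)}$ over $E$ is a pair $(V,s)$ with $\dim V=1$ and $s\colon V\to\Hom(\cO_X(-n),E)$, so the base change of $\pi\colon\fM_{\B_p}^{(\be,1)}\to\fM_\rss^\be(\tau)$ along $U$ is the quotient stack $[\mathrm{Tot}(W_U)/\bG_m]$, with $\bG_m=\Aut(V)$ acting by scaling the fibres of $W_U$. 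As $\mathrm{Tot}(W_U)\to U$ is smooth of relative dimension $m$ and $\mathrm{Tot}(W_U)\to[\mathrm{Tot}(W_U)/\bG_m]$ is a smooth surjection of relative dimension $1$, the composite $[\mathrm{Tot}(W_U)/\bG_m]\to U$ is smooth of relative dimension $m-1$; hence $\pi\colon\fM_{\B_p}^{(\be,1)}\to\fM_\rss^\be(\tau)\hookrightarrow\fM^\be$ is smooth of relative dimension $\bar\chi([\cO_X(-n)],\be)-1$ onto an open substack, and $\nu_{\fM_{\B_p}^{(\be,1)}}\equiv(-1)^{m-1}\pi^*(\nu_\fM^\be)$ by Corollary \ref{dt4cor1}. (The case $\be=0$, where $\fM_{\B_p}^{(0,1)}\cong[\Spec\C/\bG_m]$ by Proposition \ref{dt13prop2}(a), is consistent with this.)

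\textbf{Part (b).} Since $[(E_1\op E_2,V_1\op V_2,s_1\op s_2)]\in{\cal S}$ forces $\dim V_1+\dim V_2\in\{0,1\}$, at least one of $\dim V_1,\dim V_2$ vanishes. If both vanish then $(E_i,V_i,s_i)=(E_i,0,0)$ are sheaves in $\A_p$, $\Hom_{\B_p},\Ext^1_{\B_p}$ coincide with $\Hom,\Ext^1$ in $\coh(X)$, the extension sheaves remain in $\A_p$ ($\A_p$ being an abelian subcategory, hence closed under extensions), $\bar\chi{}^{\B_p}$ agrees with $\bar\chi$ by \eq{dt13eq5}, and $\nu_{\fM_{\B_p}}$ of each object equals $\nu_\fM$ of the underlying sheaf by part (a); so the identities reduce verbatim to \eq{dt5eq2} and \eq{dt5eq3}. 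As \eq{dt5eq2} is symmetric and \eq{dt5eq3} antisymmetric under swapping the two objects, it remains to treat $\dim V_1=1$, $\dim V_2=0$, with $(E_1,V_1,s_1)$ a pair $s_1\colon\cO_X(-n)\to E_1$. Write $\be_i=[E_i]$, $P_i=\bar\chi([\cO_X(-n)],\be_i)$. Part (a) gives $\nu_{\fM_{\B_p}}(E_1,V_1,s_1)=(-1)^{P_1-1}\nu_\fM(E_1)$, $\nu_{\fM_{\B_p}}(E_2,0,0)=\nu_\fM(E_2)$, $\nu_{\fM_{\B_p}}(E_1\op E_2,V_1,s_1\op 0)=(-1)^{P_1+P_2-1}\nu_\fM(E_1\op E_2)$, while $\bar\chi{}^{\B_p}((\be_1,1),(\be_2,0))=\bar\chi(\be_1,\be_2)-P_2$ by \eq{dt13eq5}. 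Substituting, the $\B_p$-analogue of \eq{dt5eq2} reduces via \eq{dt5eq2} for $\coh(X)$ to the sign identity $(-1)^{2P_2}=1$. For the $\B_p$-analogue of \eq{dt5eq3}, use \eq{dt13eq7}, \eq{dt13eq9}, \eq{dt13eq10}, \eq{dt13eq14}: one has $\Ext^1_{\B_p}((E_2,0,0),(E_1,V_1,s_1))\cong\Ext^1(E_2,E_1)$ with middle objects the pairs on the $\coh(X)$-extensions $F$ of $E_2$ by $E_1$, so by part (a) the first integral is $(-1)^{P_1+P_2-1}$ times the first integral in \eq{dt5eq3}; and $\Ext^1_{\B_p}((E_1,V_1,s_1),(E_2,0,0))$ sits in the short exact sequence \eq{dt13eq14}, whose middle objects $(F,V_1,u)$ have $F$ running over the $\coh(X)$-extensions of $E_1$ by $E_2$, so the Behrend function on this family factors through the projection $\mathbb P\bigl(\Ext^1_{\B_p}(\cdots)\bigr)\setminus\mathbb P(\ker)\to\mathbb P(\Ext^1(E_1,E_2))$ (with fibre the cokernel of \eq{dt13eq14}, of Euler characteristic $1$). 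Splitting the integral over $\mathbb P\bigl(\Ext^1_{\B_p}(\cdots)\bigr)$ accordingly, and combining with \eq{dt5eq3} for $\coh(X)$ and the dimension counts from \eq{dt13eq7}, \eq{dt13eq10}, \eq{dt13eq14}, gives the $\B_p$-analogue of \eq{dt5eq3}.

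\textbf{Expected main obstacle.} The fiddliest step is the $\dim V=1$ case of (b): one must verify carefully that the family of middle sheaves of $\Ext^1_{\B_p}((E_1,V_1,s_1),(E_2,0,0))$ genuinely factors through $\mathbb P(\Ext^1(E_1,E_2))$ so that its Behrend function — known only through part (a) — can be integrated using \eq{dt5eq3} for $\coh(X)$, and then to keep all the signs and dimension counts straight. The only other point needing care is the identification in part (a) of $\fM_{\B_p}^{(\be,1)}$ as the $\bG_m$-quotient of $\mathrm{Tot}(W_U)$ over an atlas, and the descent of smoothness and relative dimension through the two smooth surjections involved; this is routine but should be written out.
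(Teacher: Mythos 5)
Your proposal is correct, but for part (b) it takes a genuinely different route from the paper. The paper proves (b) by upgrading the local analytic description: using (a) it writes an atlas for $\fM_{\B_p}^{(\be,1)}$ near $(E,V,s)$ in the complex analytic topology as $\Crit(f\ci\pi_U)=\Crit(f)\times W$, with $f:U\ra\C$ the function from Theorem \ref{dt5thm3} and $W$ open in $H^0(E(n))$, and then re-runs the whole Milnor-fibre/$\U(1)$-localization argument of \S\ref{dt10} inside $\B_p$. You instead avoid \S\ref{dt10} entirely and deduce the $\B_p$-analogues of \eq{dt5eq2}--\eq{dt5eq3} formally from Theorem \ref{dt5thm4} for $\coh(X)$, using (a) to write $\nu_{\fM_{\B_p}}$ on classes in $\cal S$ as a universal sign times $\pi^*(\nu_\fM)$, and the explicit computations \eq{dt13eq7}, \eq{dt13eq9}, \eq{dt13eq10}, \eq{dt13eq14} to convert the $\B_p$-integrals into $\coh(X)$-integrals: the first projective space is literally $\mathbb{P}(\Ext^1(E_2,E_1))$, while the second fibres over $\mathbb{P}(\Ext^1(E_1,E_2))$ with affine fibres of Euler characteristic $1$ plus a $\mathbb{P}(\ker)$ piece whose contribution $\dim\Coker\bigl(\Hom(E_1,E_2)\,{\buildrel\ci s_1\over\longra}\,\Hom(\cO_X(-n),E_2)\bigr)\cdot\nu_\fM(E_1\op E_2)$ is exactly what is needed to match $\dim\Ext^1_{\B_p}$ on the right-hand side; the \eq{dt5eq2}-analogue reduces to the even sign $(-1)^{2\bar\chi([\cO_X(-n)],\be_2)}$, as you say, and I have checked both balances. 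Your treatment of (a) is essentially the paper's (vector-bundle/projective-bundle description of $\fM_{\B_p}^{(\be,1)}$ over $\fM^\be$), with the small and correct refinement that the image of $\pi$ is the open substack $\fM_\rss^\be(\tau)$, which is all one needs for the Behrend-function statements. What the two approaches buy: the paper's is uniform and would survive in situations where $\nu_{\fM_{\B_p}}$ is not just a sign times a pullback, since it re-proves the identities from the critical-locus picture directly; yours is more elementary, requires no new analytic or Milnor-fibre input, and makes transparent that on the bounded part $\cal S$ the category $\B_p$ adds only a vector-bundle direction $H^0(E(n))$, contributing signs and Euler-characteristic-one fibres. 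When writing it up, do spell out the two points you flag: that $\nu_{\fM_{\B_p}}$ of the middle objects in $\Ext^1_{\B_p}((E_1,V_1,s_1),(E_2,0,0))$ depends only on the image class in $\Ext^1(E_1,E_2)$ (so the constructible function descends through the surjection of \eq{dt13eq14}), and the smooth-descent argument identifying the base change of $\pi$ with a $\bG_m$-quotient of the total space of the rank-$\bar\chi([\cO_X(-n)],\be)$ bundle over an atlas.
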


\begin{proof} The first part of (a) is immediate. For the second,
note that if $(\be,1)\in\cal S$ and $(E,V,s)$ is a point in
$\fM_{\B_{\smash{p}}}^{(\be,1)}$ then $[E]=\be$ in $K(\A_p)$, we may
identify $V\cong\C$, and then $s:\cO_X(-n)\ra E$, that is, $s\in
H^0(E(n))$. But by choice of $E$ and of $n$ in Definition
\ref{dt13def1}, we have $H^i(E(n))=0$ for $i>0$, so $H^0(E(n))$ is a
vector space of fixed dimension
$P_\be(n)=\bar\chi([\cO_X(-n)],\be)$. Furthermore, $E\mapsto
H^0(E(n))$ is a {\it vector bundle\/}\index{vector bundle!on Artin
stack} (in the Artin stack sense) over the stack $\fM^\be$, with
fibre $H^0(E(n))\cong\C^{\bar\chi([\cO_X(-n)],\be)}$ over~$E$.

Now consider the fibre of $\pi:\fM_{\B_p}^{(\be,1)}\ra\fM^\be$ over
$E$. It is a set of pairs $(V,s)$ with $V\cong\C$ and $s:V\ra
H^0(E(n))$ linear, satisfying a stability condition. This stability
condition requires $s\ne 0$, and selects an open set of such $s$.
Dividing out by automorphisms of $V$ turns $H^0(E(n))\sm 0$ into the
projective space $\mathbb{P}\bigl(H^0(E(n))\bigr)$. Hence the fibre
of $\pi$ over $E$ is an open subset of the projective space
$\mathbb{P}\bigl(H^0(E(n))\bigr)$. Since $E\mapsto H^0(E(n))$ is a
vector bundle over $\fM^\be$, $E\mapsto
\mathbb{P}\bigl(H^0(E(n))\bigr)$ is a projective space bundle over
$\fM^\be$. Therefore $\fM_{\B_{\smash{p}}}^{(\be,1)}$ is an open
subset of a smooth fibration over $\fM^\be$ with fibre
$\CP^{\bar\chi([\cO_X(-n)],\be)-1}$. So $\pi$ is smooth of relative
dimension $\bar\chi\bigl([\cO_X(-n)],\be\bigr)-1$. The Behrend
function equation follows from Theorem \ref{dt4thm1}(ii) and
Corollary~\ref{dt4cor1}.

For (b), we can now follow the proof of Theorem \ref{dt5thm4}, using
facts from (a) above. In Theorem \ref{dt5thm3}, we proved that an
atlas for $\fM^\be$ near $E$ may be written locally in the complex
analytic topology as $\Crit(f)$ for holomorphic $f:U\ra\C$, where
$U$ is an open neighbourhood of $0$ in $\Ext^1(E,E)$, and $U,f$ are
invariant under the complexification $G^{\sst\C}$ of a maximal
compact subgroup $G$ of $\Aut(E)$. From the second part of (a), it
follows that an atlas for $\fM_{\B_{\smash{p}}}^{(\be,1)}$ near
$E,V,s$ may be written locally in the complex analytic topology as
$\Crit(f)\times W$, where $W$ is an open set in $H^0(E(n))$. But
$\Crit(f)\times W=\Crit(f\ci\pi_U)$, where $f\ci\pi_U:U\times
W\ra\C$ is a holomorphic function on a smooth complex manifold.

Therefore, just as we can write the moduli stack $\fM$ locally as
$\Crit(f)$, and so use differential-geometric reasoning with the
Milnor fibres\index{Milnor fibre} of $f$ to prove
\eq{dt5eq2}--\eq{dt5eq3} in Theorem \ref{dt5thm4}, so we can write
the moduli stacks $\fM_{\B_{\smash{p}}}^{(\be,d)}$ for
$(\be,d)\in{\cal S}$ locally as $\Crit(f\ci\pi_U)$, and the proof of
Theorem \ref{dt5thm4} extends to give~(b).
\end{proof}

We can now follow the proof of Theorem \ref{dt5thm5} using
Proposition \ref{dt13prop1} in place of \eq{dt3eq14} and Proposition
\ref{dt13prop6}(b) in place of Theorem \ref{dt5thm4} to prove:

\begin{prop} $\ti\Psi{}^{\B_p}:\SFai(\fM_{\B_p})\ra\ti L(\B_p)$ is
a Lie algebra morphism.
\label{dt13prop7}
\end{prop}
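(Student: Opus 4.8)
The plan is to re-run the proof of Theorem~\ref{dt5thm5} from \S\ref{dt11} almost verbatim, replacing equation \eq{dt3eq14} by Proposition~\ref{dt13prop1} and Theorem~\ref{dt5thm4} by Proposition~\ref{dt13prop6}(b), and then to deal with the fact that $\ti L(\B_p)$ is supported only on the finite set ${\cal S}$ by a short case analysis. First, writing $\ti\Psi{}^{\chi,\Q,\B_p}:\oSFai(\fM_{\B_p},\chi,\Q)\ra\ti L(\B_p)$ for the map defined from $\ti\Psi{}^{\B_p}$ and $\bar\Pi^{\chi,\Q}_{\fM_{\B_p}}$ as in Definition~\ref{dt5def1}, so that $\ti\Psi{}^{\B_p}=\ti\Psi{}^{\chi,\Q,\B_p}\ci\bar\Pi^{\chi,\Q}_{\fM_{\B_p}}$ with $\bar\Pi^{\chi,\Q}_{\fM_{\B_p}}$ a Lie algebra morphism, it is enough to show $\ti\Psi{}^{\chi,\Q,\B_p}([f,g])=[\ti\Psi{}^{\chi,\Q,\B_p}(f),\ti\Psi{}^{\chi,\Q,\B_p}(g)]$. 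By $\Q$-bilinearity we may take $f$ supported on $\fM_{\B_p}^{(\be,e)}$ and $g$ on $\fM_{\B_p}^{(\ga,e')}$ for $(\be,e),(\ga,e')\in C(\B_p)$ (writing $e,e'$ for the vector-space components), so $[f,g]$ is supported on $\fM_{\B_p}^{(\be+\ga,e+e')}$. By Lemma~\ref{dt13lem1} and Assumption~\ref{dt3ass} for $\B_p$, these stacks are of finite type with affine geometric stabilizers, $\fExact_{\B_p}$ exists with representable, finite-type projections $\pi_1,\pi_2,\pi_3$, Propositions~\ref{dt2prop2} and~\ref{dt3prop1} hold, and — using \eq{dt13eq2} to identify $s$ with an element of $H^0(E(n))$ — objects of $\B_p$ of class in ${\cal S}$ admit a Quot-scheme presentation parallel to that of \S\ref{dt11}. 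This is all that \S\ref{dt11} uses to run the fibre-product computation and the manipulations with the relations Definition~\ref{dt2def10}(i)--(iii), and it produces the analogue of \eq{dt11eq20}, expressing $[f,g]$ as a $\Q$-linear combination of elements $[(U\times[\Spec\C/\bG_m],\rho)]$ supported on $\fM_{\B_p}^{(\be+\ga,e+e')}$.

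The crucial observation is that ${\cal S}$ is closed under decomposition inside $C(\B_p)$: if $(\be,e),(\ga,e')\in C(\B_p)$ with $(\be+\ga,e+e')\in{\cal S}$, then $e+e'\le 1$ forces $e,e'\in\{0,1\}$, while $P_\be,P_\ga$ are non-negative integral multiples of $\frac{1}{d!}\,p(t)$ summing to $P_{\be+\ga}=\frac{k}{d!}\,p(t)$ with $0\le k\le N$, so $P_\be=\frac{k_1}{d!}\,p(t)$, $P_\ga=\frac{k_2}{d!}\,p(t)$ with $0\le k_1,k_2\le N$, and hence $(\be,e),(\ga,e')\in{\cal S}$; moreover every $(F,W,t)$ fitting in an extension $0\ra D_1\ra(F,W,t)\ra D_2\ra 0$ with $[D_1]=(\be,e)$, $[D_2]=(\ga,e')$ has class $(\be+\ga,e+e')\in{\cal S}$. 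If $(\be+\ga,e+e')\notin{\cal S}$, then $\ti\Psi{}^{\chi,\Q,\B_p}([f,g])=0$ since $[f,g]$ is supported there, and $[\ti\Psi{}^{\chi,\Q,\B_p}(f),\ti\Psi{}^{\chi,\Q,\B_p}(g)]=0$ by \eq{dt13eq29}, so both sides agree. If $(\be+\ga,e+e')\in{\cal S}$, then $(\be,e),(\ga,e')\in{\cal S}$ by the observation, and running the argument of \eq{dt11eq21}--\eq{dt11eq29} with $\B_p$ in place of $\coh(X)$ shows, at a pair of $\C$-points given by $D_1=(E_1,V_1,s_1)$ and $D_2=(E_2,V_2,s_2)$, that the constructible function $F_{ij}$ of \eq{dt11eq25} equals
\[
\bigl(\dim\Hom_{\B_p}(D_1,D_2)-\dim\Hom_{\B_p}(D_2,D_1)+\dim\Ext^1_{\B_p}(D_2,D_1)-\dim\Ext^1_{\B_p}(D_1,D_2)\bigr)
\]
times the relevant pullback of $\nu_{\fM_{\B_p}}(D_1\op D_2)$; here the two Behrend identities used are exactly the analogues of \eq{dt5eq2}--\eq{dt5eq3} furnished by Proposition~\ref{dt13prop6}(b), whose hypothesis $[D_1\op D_2]\in{\cal S}$ holds and under which the middle objects of the extension integrals remain in ${\cal S}$. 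Applying Proposition~\ref{dt13prop1} — valid since $\dim V_1+\dim V_2=e+e'\le 1$ and the Hilbert polynomials are $\frac{k_1}{d!}\,p$, $\frac{k_2}{d!}\,p$ with $k_1,k_2\le N$ — identifies the bracketed quantity with $\bar\chi{}^{\B_p}\bigl((\be,e),(\ga,e')\bigr)$. Hence $F_{ij}\equiv(-1)^{\bar\chi{}^{\B_p}((\be,e),(\ga,e'))}\,\bar\chi{}^{\B_p}\bigl((\be,e),(\ga,e')\bigr)$ times the pullback of $\nu_{\fM_{\B_p}\times\fM_{\B_p}}$, and comparing with \eq{dt13eq29} gives $\ti\Psi{}^{\chi,\Q,\B_p}([f,g])=[\ti\Psi{}^{\chi,\Q,\B_p}(f),\ti\Psi{}^{\chi,\Q,\B_p}(g)]$, as required.

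I expect the main obstacle to be bookkeeping rather than new ideas: at each stage of the long computation of \S\ref{dt11} one must check that the numerical classes appearing — in particular the classes of the middle objects of all the short exact sequences arising in the Ringel--Hall products $f*g$, $g*f$, over which $\nu_{\fM_{\B_p}}$ is integrated — lie in the finite set ${\cal S}$ on which the Behrend function of $\fM_{\B_p}$ is under control via Proposition~\ref{dt13prop6} and on which Proposition~\ref{dt13prop1} applies. The closure property of ${\cal S}$ inside $C(\B_p)$ noted above is precisely what makes this automatic, so that no analytic or geometric input beyond Propositions~\ref{dt13prop1} and~\ref{dt13prop6} is needed; everything else is the verbatim translation of \S\ref{dt11}, together with the trivial vanishing case where the target class of $[f,g]$ lies outside ${\cal S}$.
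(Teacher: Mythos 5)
Your proposal is correct and takes essentially the same route as the paper: the paper's proof consists precisely of re-running the argument of Theorem \ref{dt5thm5} from \S\ref{dt11}, substituting Proposition \ref{dt13prop1} for equation \eq{dt3eq14} and Proposition \ref{dt13prop6}(b) for Theorem \ref{dt5thm4}. Your additional bookkeeping --- the closure of ${\cal S}$ under decomposition inside $C(\B_p)$, and the vanishing of both sides when the target class lies outside ${\cal S}$ --- is a correct spelling-out of what the paper leaves implicit in truncating $\ti L(\B_p)$ to classes in ${\cal S}$.
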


\subsection[Proof of Theorem $\text{\ref{dt5thm10}}$]{Proof of Theorem
\ref{dt5thm10}}
\label{dt135}

Finally we prove Theorem \ref{dt5thm10}. We will apply the Lie
algebra morphism $\ti\Psi{}^{\B_p}$ to the Lie algebra equation
\eq{dt13eq26}. Observe that the terms $(\al,1)$, $(1,0)$ and
$(\al_i,0)$ occurring in \eq{dt13eq26} all lie in $\cal S$. We will
prove that
\e
\begin{gathered}
\ti\Psi{}^{\B_p}\bigl(\bep^{(\al,1)}(\ti\tau)\bigr)=
-PI^{\al,n}(\tau')\ti \la^{(\al,1)},\qquad
\ti\Psi{}^{\B_p}\bigl(\bep^{(0,1)}(\dot\tau)\bigr)= -\ti \la^{(0,1)},\\
\text{and}\qquad \ti\Psi{}^{\B_p}\bigl(\bep^{(\al_i,0)}(\dot\tau)
\bigr)=-\bar{DT}{}^{\al_i}(\tau)\ti \la^{(\al_i,0)}.
\end{gathered}
\label{dt13eq31}
\e

For the first equation, there are no strictly $\ti\tau$-semistables
in $\fM_\rss^{(\al,1)}(\ti\tau)$, so $\bep^{(\al,1)}(\ti\tau)=
\bdss^{(\al,1)}(\ti\tau)$, and $\ti\Psi{}^{\B_p}\bigl(
\bep^{(\al,1)}(\ti\tau)\bigr)=\chi^{\rm na}\bigl(\fM_\rss^{(\al,1)}
(\ti\tau),\nu_{\fM_\rss^{(\al,1)}(\ti\tau)}\bigr)\ti \la^{(\al,1)}$
in the notation of Definition \ref{dt2def3}. But
$\fM_\rss^{\smash{(\al,1)}}(\ti\tau)\cong
\M_\stp^{\al,n}(\tau')\times[\Spec\C/\bG_m]$ by Proposition
\ref{dt13prop2}(b), so the projection
$\pi:\fM_\rss^{\smash{(\al,1)}} (\ti\tau)\ra\M_\stp^{\al,n}(\tau')$
is smooth of relative dimension $-1$, and
$\nu_{\fM_\rss^{\smash{(\al,1)}}
(\ti\tau)}=-\pi^*(\nu_{\M_\stp^{\al,n}(\tau')})$ by Theorem
\ref{dt4thm1}(ii) and Corollary \ref{dt4cor1}. Hence
\begin{equation*}
\chi^{\rm na}\bigl(\fM_\rss^{(\al,1)}
(\ti\tau),\nu_{\fM_\rss^{(\al,1)}(\ti\tau)}\bigr)=
-\chi\bigl(\M_\stp^{\al,n}(\tau'),\nu_{\M_\stp^{\al,n}
(\tau')}\bigr)=-PI^{\al,n}(\tau')
\end{equation*}
by \eq{dt5eq16}, proving the first equation of \eq{dt13eq31}. Now
$\fM_\rss^{\smash{(0,1)}}(\dot\tau)\cong[\Spec\C/\bG_m]$ by
Proposition \ref{dt13prop2}(a), so $\bep^{(0,1)}(\dot\tau)$ is just
the stack characteristic function of $[\Spec\C/\bG_m]$. But
$[\Spec\C/\bG_m]$ is a single point with Behrend function $-1$, so
the second equation follows. And the isomorphism
$\fM_\rss^{\smash{(\al_i,0)}}(\dot\tau)\cong\fM_\rss^{\al_i}
(\tau)\subset\fM$ identifies $\bep^{(\al_i,0)}(\dot\tau)$ with
$\bep^{\al_i}(\tau)$, so the third equation of \eq{dt13eq31} follows
from~\eq{dt5eq7}.

Hence, applying $\ti\Psi{}^{\B_p}$ (which is a Lie algebra morphism
by Proposition \ref{dt13prop7}) to \eq{dt13eq26} and substituting in
\eq{dt13eq31} gives an equation in the Lie algebra $\ti L(\B_p)$:
\e
\begin{split}
&-PI^{\al,n}(\tau')\ti \la^{(\al,1)}=\\[-3pt]
&\sum_{l\ge 1,\;\> \al_1,\ldots,\al_l\in C(\A_p):
\;\>\al_1+\cdots+\al_l=\al
\!\!\!\!\!\!\!\!\!\!\!\!\!\!\!\!\!\!\!\!\!\!\!\!\!\!\!\!\!
\!\!\!\!\!\!\!\!\!\!\!\!\!\!\!\!\!\!\!\!\!\!\!\!\!\!\!\!\!
\!\!\!\!\!\!\!\!\!\!\!\!\!\!\!\!\!\!\!\!\!\!\!\!\!\!\! }
\begin{aligned}[h]
\frac{(-1)^l}{l!}[[\cdots[[ -\ti \la^{(0,1)},
-\bar{DT}{}^{\al_1}(\tau)\ti \la^{(\al_1,0)}],-\bar{DT}{}^{\al_2}
(\tau)\ti \la^{(\al_2,0)}],\cdots]&,\\
-\bar{DT}{}^{\al_l}(\tau)\ti \la^{(\al_l,0)}]&.
\end{aligned}
\end{split}
\label{dt13eq32}
\e
Using the definitions \eq{dt13eq5} of $\bar\chi{}^{\B_p}$ and
\eq{dt13eq29} of the Lie bracket in $\ti L(\B_p)$, and noting that
the condition $\al_i\in C(\coh(X))$ with $\tau(\al_i)=\tau(\al)$ in
\eq{dt5eq17} corresponds to $\al_i\in C(\A_p)$ in \eq{dt13eq32}, we
see that \eq{dt13eq32} reduces to \eq{dt5eq17}. There are only
finitely many nonzero terms in each of these equations, as in
Proposition \ref{dt13prop3}. This completes the proof of
Theorem~\ref{dt5thm10}.\index{Behrend function|)}

%%%%%%%%%%%%%%%%%%%%%%%%%%%%%%%%%%%%%%%%%%%%%%%%%%%%%%%%%%%%%%%%%%%%%%%%
%%%%%%%%%%%%%%%%%%%%%%%%%  Bibliography  %%%%%%%%%%%%%%%%%%%%%%%%%%%%%%%
%%%%%%%%%%%%%%%%%%%%%%%%%%%%%%%%%%%%%%%%%%%%%%%%%%%%%%%%%%%%%%%%%%%%%%%%
\clearpage

%%%%%%%%%%%%%%%%%%%%%%%%%%%%%%%%%%%%%%%%%%%%%%%%%%%%%%%%%%%%%%%%%%%%%%%%
%%%%%%%%%%%%%%%%%%%%  Glossary of Notation  %%%%%%%%%%%%%%%%%%%%%%%%%%%%
%%%%%%%%%%%%%%%%%%%%%%%%%%%%%%%%%%%%%%%%%%%%%%%%%%%%%%%%%%%%%%%%%%%%%%%%
\clearpage
\printnomenclature[1.3cm]
%%%%%%%%%%%%%%%%%%%%%%%%%%%%%%%%%%%%%%%%%%%%%%%%%%%%%%%%%%%%%%%%%%%%%%%%
%%%%%%%%%%%%%%%%%%%%%%%%%%%%%  Index  %%%%%%%%%%%%%%%%%%%%%%%%%%%%%%%%%%
%%%%%%%%%%%%%%%%%%%%%%%%%%%%%%%%%%%%%%%%%%%%%%%%%%%%%%%%%%%%%%%%%%%%%%%%
\clearpage
\addcontentsline{toc}{section}{Index}
\printindex
%%%%%%%%%%%%%%%%%%%%%%%%%%%%%%%%%%%%%%%%%%%%%%%%%%%%%%%%%%%%%%%%%%%%%%%%
%%%%%%%%%%%%%%%%%%%%%%%%%%%%%  End  %%%%%%%%%%%%%%%%%%%%%%%%%%%%%%%%%%%%
%%%%%%%%%%%%%%%%%%%%%%%%%%%%%%%%%%%%%%%%%%%%%%%%%%%%%%%%%%%%%%%%%%%%%%%%
\end{document}